
\documentclass[10pt]{amsart}


\usepackage{latexsym,exscale,enumerate,amsfonts,amssymb,xparse, mathtools}
\usepackage[normalem]{ulem}
\usepackage{amsmath,amsthm,amsfonts,amssymb,amscd, stmaryrd,textcomp}
\usepackage[bookmarks=false]{hyperref}
\usepackage[usenames,dvipsnames]{xcolor}
\usepackage{enumitem}
\usepackage{verbatim}
\usepackage{ytableau}
\usepackage{euscript}

\usepackage{bbm}


\addtolength{\hoffset}{-1.6cm}
\addtolength{\textwidth}{3cm}
\hfuzz=6pc

\colorlet{green}{black!30!green}

\definecolor{myblue}{RGB}{109, 156, 179}

\usepackage{xcolor}
\definecolor{myorange}{rgb}{1,0.647,0}

\definecolor{mypurple}{cmyk}{.6,.9,0, .11}

\newcommand{\red}[1]{{\color{red}#1}}
\newcommand{\gr}[1]{{\color{green}#1}}
\newcommand{\bl}[1]{{\color{blue}#1}}

\newcommand{\grb}{\gr{\bullet}}
\newcommand{\blb}{\bl{\bullet}}
\newcommand{\redb}{\red{\bullet}}


\numberwithin{equation}{section}



\theoremstyle{definition}
\newtheorem{thm}{Theorem}[section]
\newtheorem{cor}[thm]{Corollary}

\newtheorem{conj}[thm]{Conjecture}
\newtheorem{lem}[thm]{Lemma}
\newtheorem{rem}[thm]{Remark}
\newtheorem{conv}[thm]{Convention}

\newtheorem{prop}[thm]{Proposition}
\newtheorem{defn}[thm]{Definition}
\newtheorem{example}[thm]{Example}
\newtheorem{question}[thm]{Question}

\newtheorem{nota}[thm]{Notation}
\newtheorem{hypothesis}[thm]{Hypothesis}

\usepackage{tikz}
\usetikzlibrary{cd}
\usetikzlibrary{snakes}
\usetikzlibrary{decorations.markings}
\usetikzlibrary{decorations.pathreplacing}
\usetikzlibrary{arrows,shapes,positioning}
\usetikzlibrary{decorations.pathmorphing}
\tikzset{anchorbase/.style={baseline={([yshift=-0.5ex]current bounding box.center)}},
tinynodes/.style={font=\tiny,text height=0.75ex,text depth=0.15ex},
smallnodes/.style={font=\scriptsize,text height=0.75ex,text depth=0.15ex},
>={Latex[length=1mm, width=1.5mm]},
overcross/.style={line width=4pt,color=white},
}
\tikzstyle directed=[postaction={decorate,decoration={markings,
	mark=at position #1 with {\arrow{>}}}}]
\tikzstyle rdirected=[postaction={decorate,decoration={markings,
	mark=at position #1 with {\arrow{<}}}}]
    
\newcommand{\CQGbox}[1]{
\begin{tikzpicture}
\node[draw, fill=white,rounded corners=4pt,inner sep=3pt] (X) at (0,.75) {\scs$#1$};
\end{tikzpicture}}

\newcommand{\CQGsbox}[1]{
\begin{tikzpicture}
\node[draw, fill=white,rounded corners=4pt,inner sep=2pt] (X) at (0,.75) {\scs$#1$};
\end{tikzpicture}}

\newcommand{\NB}[1]{
\begin{tikzpicture}[anchorbase]
\node[draw, thick,  fill=white,rounded corners=4pt,inner sep=3pt] (X) at (0,.75) {\scs$#1$};
\end{tikzpicture}}

\newcommand{\arc}[3]{arc[start angle=#1,end angle=#2,radius=#3]}

\newcommand{\Hom}{{\rm Hom}}

\newcommand{\End}{{\rm End}}

\renewcommand{\to}{\rightarrow}

\newcommand{\id}{{\rm id}}

\newcommand{\sgn}{{\rm sgn}}

\newcommand{\Sym}{\operatorname{Sym}}

\let\hat=\widehat
\let\tilde=\widetilde
\newcommand{\scs}{\scriptsize}
\def\C{{\mathbb C}}

\def\R{{\mathbb R}}
\def\Z{{\mathbb Z}}
\def\Q{{\mathbb Q}}
\def\K{{\mathbb K}}
\def\X{{\mathbb X}}
\def\Zge{\mathbb{Z}_{\ge 0}}

\newcommand{\sln}[1][N]{\mathfrak{sl}_{#1}}
\newcommand{\gln}[1][N]{\mathfrak{gl}_{#1}}

\newcommand{\slm}{\mathfrak{sl}_{m}}
\newcommand{\glm}{\mathfrak{gl}_{m}}
\newcommand{\spn}[1][2n]{\mathfrak{sp}_{#1}}
\newcommand{\son}[1][2n+1]{\mathfrak{so}_{#1}}
\newcommand{\som}[1][m]{\mathfrak{so}_{#1}}

\newcommand{\Rep}{\mathbf{Rep}}

\newcommand{\HT}{\mathbf{HT}}
\newcommand\mydots{\makebox[1em][c]{$\cdot$\hfil$\cdot$\hfil$\cdot$}}
\newcommand\mysdots{\makebox[.5em][c]{$\cdot$\hfil$\cdot$\hfil$\cdot$}}
\newcommand{\ot}{\otimes}

\DeclareMathOperator{\Kar}{Kar}

\newcommand{\SG}{\mathfrak{S}}

\newcommand{\one}{\mathbbm{1}}
\newcommand{\Br}{\mathrm{Br}}
\newcommand{\ev}{\mathbf{ev}}
\newcommand{\coev}{\mathbf{coev}}
\newcommand{\Cl}{\mathrm{Cl}}

\newcommand{\Gr}[2]{\mathrm{Gr}_{#1}(\C^{#2})}
\newcommand{\flip}{\mathrm{flip}}
\newcommand{\bP}{\overline{P}}
\newcommand{\mirror}{\mathsf{m}}
\newcommand{\rP}{\tilde{P}}

\newcommand{\tr}{\operatorname{Tr}}
\newcommand{\trq}{\operatorname{Tr}_q}

\newcommand{\dU}{\dot{U}_q}
\newcommand{\ZdU}{\prescript{}{\Z}{\dot{U}}_q}
\newcommand{\dS}{\dot{S}_q}
\newcommand{\ZdS}{\prescript{}{\Z}{\dot{S}}_q}
\newcommand{\dSn}{\dot{S}_q^{\le N}}
\newcommand{\ZdSn}{\prescript{}{\Z}{\dot{S}}_q^{\le N}}

\newcommand{\iqUsom}[1][m]{{U}^{\iota}_{-q^2}(\som[#1])}

\newcommand{\wt}{\mathbf{a}}
\newcommand{\wtt}{\mathbf{b}}
\newcommand{\wtn}{\mathbf{n}} 
\newcommand{\wtone}{\mathbf{1}}
\newcommand{\wtk}{\mathbf{k}} 
\newcommand{\RM}{{R}}
\newcommand{\QW}{c}
\newcommand{\iQW}{\prescript{\iota}{}{T}}
\newcommand{\SH}{\mathrm{SH}}

\newcommand{\ee}{\alpha}
\newcommand{\EE}{\mathcal{E}}
\newcommand{\FF}{\mathcal{F}}
\newcommand{\UU}{\mathcal{U}}
\newcommand{\U}{\check{\mathcal{U}}}
\newcommand{\cUU}{\check{\mathcal{U}}}

\newcommand{\XX}{\FECat}
\newcommand{\cXX}{\check{\FECat}}
\newcommand{\cXXtn}{\check{\FECat}^{\le 2n}}
\newcommand{\SSS}{\mathcal{S}}
\newcommand{\cSSS}{\check{\mathcal{S}}}
\newcommand{\SSSn}{\mathcal{S}^{\leq N}}
\newcommand{\cSSSn}{\check{\mathcal{S}}^{\leq N}}
\newcommand{\SSStn}{\mathcal{S}^{\leq 2n}}
\newcommand{\cSSStn}{\check{\mathcal{S}}^{\leq 2n}}

\newcommand{\hComp}{\star}

\newcommand{\parti}{\lambda}
\newcommand{\Schur}[1][\parti]{\mathfrak{s}_{#1}}

\newcommand{\swcl}{{\color{green} {\mathrm{LC}}}}
\newcommand{\swclp}{{\color{purple} {\mathrm{LC}}}}
\newcommand{\swcr}{{\color{green} {\mathrm{RC}}}}
\newcommand{\FEk}{\mathbf{X}^{(k)}}
\newcommand{\FEkk}{\mathbf{X}^{(k+1)}}
\newcommand{\FE}{\mathbf{X}}

\newcommand{\ourmod}{\mathrm{mod} \;}

\newcommand{\qsh}{\mathbf{q}}
\newcommand{\qdim}{\mathrm{dim}_{\qsh}}
\newcommand{\tsh}{\mathbf{t}}
\newcommand{\ssh}{\mathbf{s}}

\newcommand{\bgen}{\beta}
\newcommand{\bword}{\underline{\bgen}}
\newcommand{\brgroup}[1][m]{\mathrm{Br}_{#1}}

\renewcommand{\Im}{\operatorname{Im}}
\newcommand{\rker}{\operatorname{Rker}}
\newcommand{\lker}{\operatorname{Lker}}

\newcommand{\Ya}[1][]{\mathsf{Y}_{\!#1}}
\newcommand{\Pa}[1][]{\reflectbox{\rotatebox[origin=c]{180}{$\mathsf{Y}$}}^{\!#1}}
\newcommand{\cu}[1][]{\mathbf{U}_{#1}}
\newcommand{\ca}[1][]{\reflectbox{\rotatebox[origin=c]{180}{$\mathbf{U}$}}^{#1}}
\newcommand{\xx}{\mathsf{X}}
\newcommand{\hh}{\mathsf{H}}
\newcommand{\II}{\reflectbox{\rotatebox[origin=c]{90}{$\hh$}}}
\newcommand{\nx}{\mathsf{x}}

\newcommand{\ak}{\mathbb K}

\newcommand{\grdim}{{\rm grdim}}

\newcommand{\BC}{\mathbb{B}}
\newcommand{\EBC}{{^{\sigma}\mathbb{B}}}
\newcommand{\tauEBC}{{^{\tau}\mathbb{B}}}

\newcommand{\Cat}{\mathcal{A}}
\newcommand{\ECat}[1][\Cat]{#1^{\sigma}}
\newcommand{\ChCat}[1][\Cat]{\mathrm{Ch}(#1)}
\newcommand{\ChECat}{\ChCat[\ECat]}
\newcommand{\dgCat}[1][\Cat]{{\EuScript C}(#1)}
\newcommand{\dgECat}{{\EuScript C}(\ECat)}
\newcommand{\hCat}[1][\Cat]{{\mathcal K}(#1)}
\newcommand{\dCat}[1][\Cat]{{\EuScript D}(#1)}
\newcommand{\Vect}{\mathrm{Vect}}
\newcommand{\sVect}{\mathrm{sVect}}
\newcommand{\tauKzero}[1]{K_0^\tau(#1)}
\newcommand{\wKzero}[1]{K_0^\sigma(#1)}
\newcommand{\Kzero}[1]{K_0(#1)}
\newcommand{\Ide}{\mathcal{I}}
\newcommand{\RR}{\EuScript R}
\newcommand{\SLC}[1]{\{#1\}_{2n+1,\mathcal{S}}}

\newcommand{\Catn}{\Cat_n}

\newcommand{\CatBC}{\Cat(\mathbb{B})}
\newcommand{\ECatBC}{\ECat(\mathbb{B})}

\newcommand{\sigast}{{\sigma\ast}}
\newcommand{\sigbast}{{\sigma_b\ast}}
\newcommand{\tauast}{{\tau\ast}}

\newcommand{\sh}{\mathrm{sh}}

\newcommand{\For}{\mathsf{U}}

\newcommand{\LIP}{\mathbf{L}}

\newcommand{\pr}{\mathsf{pr}}
\newcommand{\inc}{\mathsf{in}}

\newcommand{\BFoam}[1][m]{{\EuScript B}_{#1}}
\newcommand{\EBFoam}[1][m]{({\EuScript{B}}_{#1})^{\tau}}
\newcommand{\BnFoam}[1][m]{{\EuScript B}_{#1}^{n}}
\newcommand{\EBnFoam}[1][m]{({\EuScript{B}}_{#1}^{n})^{\tau}}

\newcommand{\crazyF}{{F}}
\DeclareMathOperator{\ind}{ind}

\newcommand{\indX}{
  \mathchoice
    {X\mskip-12.5mu X}
    {X\mskip-12.5mu X}
    {X\mskip-12mu X}
    {X\mskip-11.5mu X}
}

\newcommand{\FECat}{
  \mathchoice
    {\mathcal{F}\mskip-6mu \mathcal{E}\!}
    {\mathcal{F}\mskip-6mu \mathcal{E}\!}
     {\mathcal{F}\mskip-5.5mu \mathcal{E}\!}
    {\mathcal{F}\mskip-5mu \mathcal{E}\!}
}

\newcommand{\FECatprime}{
  \mathchoice
    {\mathcal{F}\mskip-6mu \mathcal{E}'\!\!_{q}}
    {\mathcal{F}\mskip-6mu \mathcal{E}'\!\!_{q}}
     {\mathcal{F}\mskip-5.5mu \mathcal{E}'\!\!_{q}}
    {\mathcal{F}\mskip-5mu \mathcal{E}'\!\!_{q}}
}

\newcommand{\Ztwogroupoid}{\Omega_{\Z/2}}



%
\begin{document}
%

\title[]{Spin Link Homology}

\author{Elijah Bodish}
\address{Department of Mathematics, Massachusetts Institute of Technology, Building 2, Cambridge, MA 02139-4307, USA}
\email{ebodish@mit.edu}

\author{Ben Elias}
\address{Department of Mathematics, University of Oregon,
Fenton Hall, Eugene, OR 97403-1222, USA}
\email{belias@uoregon.edu}

\author{David E. V. Rose}
\address{Department of Mathematics, University of North Carolina, 
Phillips Hall CB \#3250, UNC-CH, Chapel Hill, NC 27599-3250, USA}
\email{davidrose@unc.edu}

\begin{abstract}
We put a new spin on Khovanov--Rozansky homology. 
That is, we equip $\Lambda^n$-colored $\sln[2n]$ Khovanov--Rozansky 
homology with an involution whose $\pm 1$-eigenspaces are link invariants.
When $n=1,2,3$ (and assuming technical conjectures for $n \geq 4$), 
we prove that this refined invariant categorifies
the spin-colored $\son$ quantum link polynomial.
Along the way, we partially develop the theory of 
quantum $\son$ webs 
and make contact with $\iota$quantum groups.
\end{abstract}

\maketitle

\setcounter{tocdepth}{1}

\tableofcontents

%
\section{Introduction}
%

For each finite-dimensional simple complex Lie algebra $\mathfrak{g}$, 
work of Reshetikhin--Turaev \cite{RT1} defines
a Laurent polynomial-valued invariant of a link $\mathcal{L} \subset S^3$ 
with components colored by finite-dimensional representations of the 
corresponding quantum group $U_q(\mathfrak{g})$.
In the case that $\mathfrak{g} = \sln[2]$ and the link is colored by the defining representation, 
this invariant is the much-celebrated Jones polynomial \cite{Jones}.
In pioneering work \cite{Kh1}, 
Khovanov showed that the Jones polynomial admits a \emph{categorification} 
taking the form of a bigraded 
homological link invariant $H(\mathcal{L})$ 
from which the Jones polynomial can be recovered by taking the Euler characteristic.
Subsequent works of Khovanov--Rozansky \cite{KhR} (and others \cite{Man,CK02,MS2}) 
construct analogous \emph{link homology theories} in the $\mathfrak{g}=\sln$ case. 
Khovanov--Rozansky homology is defined in terms of explicit chain complexes, 
and a number of subsequent formulations make it even more accessible 
and computable (e.g.~the cobordism and foam formulations given in \cite{BN2,QR1,ETW}). 
These explicit descriptions underlie many applications of 
Khovanov(--Rozansky) homology in low-dimensional topology 
(e.g.~\cite{Ras,MWW,RenWillis}).

In his ICM address \cite{Kh6}, Khovanov posed the ``difficult open problem''
of categorifying the polynomial invariants of links associated
to arbitrary $\mathfrak{g}$ and their irreducible representations. 
The first solution to this problem was obtained by Webster in \cite{Web}. 
Therein, he defines algebras that categorify tensor product representations 
of $U_q(\mathfrak{g})$, 
and constructs derived functors corresponding to tangles. 
These derived functors categorify the Reshetikhin--Turaev invariants of tangles, 
hence, in particular, the $U_q(\mathfrak{g})$ Reshetikhin--Turaev link invariants.
However, Webster's categorical invariants are famously difficult to compute:
for example, we are unaware of anyone having computed 
the Webster homology of the unknot outside of type $A$.
Lack of computability has severely hindered the development 
and application of these link homologies. 

In the present paper we provide a new construction for link homology 
(not a priori related to Webster's) in the case 
that $\mathfrak{g} = \son$ and the link is colored by the \emph{spin representation}.
Our construction is based on a new ``folded categorical skew Howe duality,'' 
a surprising connection between spin-colored $\son$ link invariants 
and $\Lambda^n$-colored $\sln[2n]$ link invariants that is only visible at the categorified level. 
Along the way, we study endomorphisms of tensor powers 
of the quantum spin representation in detail; 
this work thus constitutes the first steps towards solving Kuperberg's ``webs problem'' in type $B$.
We also make contact with $\iota$quantum groups and their categorifications.

\subsection{Our construction, in a nutshell}
	\label{ss:nutshell}

The starting point for our investigation 
is a meta principle pioneered by Lusztig \cite{Lus4}: 
that representation-theoretic structures in non-simply laced type should arise via 
the process of \emph{folding} categorical structures in simply laced type along diagram automorphisms.
This principle is well-known; nevertheless, 
until now, the implementation of this principle in the context of link homology 
has remained elusive. The example we study is the folding of the root system $A_{2n-1}$ 
to the root system of type $B_n$.

Our construction was originally motivated by examination of the invariants of colored unknots. 
Consider first the case $n=2$. 
The root system $B_2 (=C_2)$ can be viewed as folded from the root system $A_3$, 
with (the highest weight of) the $\sln[4]$ representation $\Lambda^2\C^4$
corresponding to (the highest weight of) the spin representation $S$ of $\son[5]$:
\[
\begin{tikzpicture}[anchorbase]
	\draw (0,0) to (2,0);
	\draw[fill=white] (0,0) circle (.125);
	\draw[fill=white] (1,0) circle (.125);
	\draw[fill=white] (2,0) circle (.125);
	\node at (1,0) {$\ast$};
\end{tikzpicture} 
\quad \xmapsto{\text{fold}} \quad
\begin{tikzpicture}[anchorbase]
	\draw (0,.05) to (1,.05);
	\draw (0,-.05) to (1,-.05);
	\draw[fill=white] (0,0) circle (.125);
	\draw[fill=white] (1,0) circle (.125);
	\node at (0,0) {$\ast$};
	\node at (.5,0) {$<$};
\end{tikzpicture} \, .
\]
The associated quantum invariants $P_\mathfrak{g}$ 
of unknots (colored by these representations) are 
\begin{equation}
	\label{eq:unknots}
P_{\sln[4]}\left( 
\begin{tikzpicture}[scale =.5, anchorbase]
	\draw[very thick] (0,0) node[right=5pt,yshift=5pt]{\scs$\Lambda^2$} circle (.5);
\end{tikzpicture} \right) 
= q^4 + q^2 + 2 + q^{-2} + q^{-4}
\quad \text{and} \quad
P_{\son[5]}\left( 
\begin{tikzpicture}[scale =.5, anchorbase]
	\draw[very thick] (0,0) node[right=5pt,yshift=5pt]{\scs$S$} circle (.5);
\end{tikzpicture} \right) 
= q^4 + q^2 + q^{-2} + q^{-4} \, .
\end{equation}
The starting observation is that the latter invariant is obtained from the former by removing 
the summand $2$, 
and that categorification provides a rigorous means for doing so.

When $N$ is understood, let $\Lambda^k$ denote the exterior product $\Lambda^k \C^N$, 
or its quantum analogue.
In the $\sln$ Khovanov--Rozansky theory, 
the invariant of a $\Lambda^k$-colored 
unknot is a degree-shifted version 
of the cohomology of the Grassmannian $\Gr{k}{N}$. 
In the case of \eqref{eq:unknots}, the $\sln[4]$ homology 
of the $\Lambda^2$-colored unknot is $\qsh^{-4} H^\ast(\Gr{2}{4})$, 
which has a basis indexed by partitions $\parti$ with Young diagrams 
fitting inside a $2 \times 2$ box; 
here the power of $\qsh$ indicates a shift 
of this graded vector space (down, by $4$). 
Observe that the graded dimension of $\qsh^{-4} H^\ast(\Gr{2}{4})$ 
is exactly $q^4 + q^2 + 2 + q^{-2} + q^{-4}$.
On the other hand, there is a natural involution on 
$H^\ast(\Gr{2}{4})$, given by taking the transpose partition 
(induced by the involution on $\Gr{2}{4}$ 
that takes the perpendicular $2$-plane). 
If we instead take the graded trace of this involution, 
we recover $q^4 + q^2 + q^{-2} + q^{-4}$.

This behavior persists for all $n \geq 2$.
Specifically, the $\sln[2n]$ Khovanov--Rozansky homology 
of the $\Lambda^n$-colored unknot is 
isomorphic to $\qsh^{-n^2} H^\ast(\Gr{n}{2n})$
and this space admits an analogous involution $\tau$ 
with graded trace that is equal to the spin-colored 
$\son$ unknot invariant. Precisely, 
\[
\tr \big(
\begin{tikzcd}[column sep=1pt]
\arrow[loop left, looseness=3, "\tau"] \phantom{\Gr{n}{2n}}  
\end{tikzcd}
\!\!\!\!\!\!\!\!\!\!\!\!\!\!\!\!\!\!\!\!\!\!\!\!\!\!\!\!
\qsh^{-n^2}H^\ast(\Gr{n}{2n})
	\big) 
	= \prod_{i=1}^n (q^{2i-1} + q^{1-2i})
	= P_{\son}\left( 
\begin{tikzpicture}[scale =.5, anchorbase]
	\draw[very thick] (0,0) node[right=5pt,yshift=5pt]{\scs$S$} circle (.5);
\end{tikzpicture} \right) \, .
\]
A consequence of the main results of this paper is that 
these observations may be extended to all links.

\begin{thm}[Corollary \ref{cor:decatastrace} and Remark \ref{rem:conditional}]
	\label{thm:introtrace}
Let $\mathcal{L} \subset S^3$ be a link. 
\begin{itemize}
\item For all $n \geq 1$, the $\Lambda^n$-colored $\sln[2n]$ Khovanov--Rozansky homology 
$H_{\sln[2n]}(\mathcal{L}^{\Lambda^n})$ admits an involution $\tau$ that preserves the bidegree. 
The bigraded eigenspaces of $\tau$ are link invariants.
\item Assume that $n=1,2,3$,  
or that Conjectures
\ref{conj:R3}, \ref{conj:TrX}, and \ref{conj:TrX2} hold.
Then
\begin{equation}
	\label{eq:introtrace}
\tr \big(
\begin{tikzcd}
H_{\sln[2n]}(\mathcal{L}^{\Lambda^n}) \arrow[loop left, looseness=3, "\tau"]
\end{tikzcd}
	\big) 
= \rP_{\son}(\mathcal{L}^S) \, ,
\end{equation}
i.e.,
the pair $\big( H_{\sln[2n]}(\mathcal{L}^{\Lambda^n}) , \tau \big)$ 
categorifies an appropriate renormalization $\rP_{\son}(\mathcal{L}^S)$
of the spin-colored $\son$ quantum link polynomials.
\qed
\end{itemize}
\end{thm}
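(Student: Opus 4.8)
The plan is to reduce the theorem to two ingredients: (i) the construction of a bidegree-preserving involution $\tau$ on $\sln[2n]$ Khovanov--Rozansky homology that is \emph{natural} with respect to the functoriality of the theory, and (ii) the identification of its graded trace with the spin-colored $\son$ invariant via a ``folded skew Howe duality'' relating the two diagrammatic/foam categories. For (i), I would work in a foam (or matrix-factorization) model of $\Lambda^n$-colored $\sln[2n]$ homology and exhibit $\tau$ on the level of the underlying category: the ``transpose partition'' involution on $H^\ast(\Gr{n}{2n})$ noted in the introduction should be realized by a symmetry of the foam category coming from the diagram automorphism of $A_{2n-1}$ that realizes the folding $A_{2n-1}\rightsquigarrow B_n$ (equivalently, the outer automorphism $\Lambda^k\leftrightarrow\Lambda^{2n-k}$ together with reversal of orientations). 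One checks that this symmetry is a strict (or at least coherent) monoidal involution commuting with the Rickard/complex assignments attached to crossings, so that applying it to the Khovanov--Rozansky complex of a link diagram $D$ yields a chain map $\tau_D$; invariance of the eigenspaces under Reidemeister moves then follows because $\tau$ commutes with the chain homotopy equivalences already established for $H_{\sln[2n]}$. This gives the first bullet, unconditionally for all $n$.

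For the second bullet, the core is to show that, after the renormalization $\rP_{\son}$, the graded trace of $\tau$ on $H_{\sln[2n]}(D)$ computes the Reshetikhin--Turaev invariant $\rP_{\son}(\mathcal{L}^S)$. I would set this up categorically: let $\mathcal{A}$ be the relevant foam $2$-category (an $\sln[2n]$ or $\mathfrak{gl}_{2n}$ web/foam category, a quotient of $\U$ via skew Howe duality), equipped with the folding involution $\sigma$; then the $\sigma$-twisted trace/Grothendieck group $K_0^\sigma(\mathcal{A})$ should be identified with (a module over) an $\iota$quantum group, specifically the coideal subalgebra $U^\iota_{-q^2}(\son[m])$ appearing in the paper's notation. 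Spin-colored $\son$ link invariants arise from the $\iota$quantum group side exactly as $\sln$ invariants arise from $\dot{U}_q$; taking the $\sigma$-twisted trace of the link complex intertwines these two pictures. Concretely I would: first decategorify the folded skew Howe duality to get the equality of the two unknot invariants (the display in \S\ref{ss:nutshell}); then extend to tangles by checking that the crossing complexes map, under twisted trace, to the braiding of the $\iota$quantum group module; and finally close up to links, tracking the renormalization factor $\rP_{\son}$ that accounts for the discrepancy between the naive quantum trace and the one adapted to the coideal.

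The main obstacle is the compatibility of the folding involution with the \emph{braiding}, i.e.\ verifying that $\tau$ applied to the Rickard complex of a crossing agrees (up to the necessary homotopy) with the image of the corresponding crossing on the $\son$/$\iota$quantum side. This is precisely where the Conjectures \ref{conj:R3}, \ref{conj:TrX}, \ref{conj:TrX2} enter: one needs a ``folded Reidemeister III'' relation and control of the trace of certain curl/endomorphism foams ($\mathrm{X}_m$), which are known by direct (if intricate) computation only for small $n$. For $n=1,2,3$ these computations are finite and can be carried out explicitly in the foam model; for general $n$ they reduce to the stated conjectures, which is why the second bullet is conditional there. A secondary technical point is pinning down the correct renormalization $\rP_{\son}$ so that \eqref{eq:introtrace} holds on the nose rather than merely up to an overall unit; this is bookkeeping once the unknot case fixes the normalization and functoriality propagates it.
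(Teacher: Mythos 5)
Your proposal contains a conceptual misstep that the paper itself explicitly flags. You propose to realize $\tau$ directly from the Dynkin diagram automorphism of $A_{2n-1}$, acting on the foam/web category by $\Lambda^k\leftrightarrow\Lambda^{2n-k}$ plus orientation reversal. The paper warns at the end of \S\ref{ss:nutshell} that ``it seems as though the Dynkin diagram automorphism of $A_{2n-1}$ has been directly transformed into an involution $\tau$ on type $A$ link homology. This is not at all what we do!'' Their $\tau$ lives on the \emph{dual} side of (categorical) skew Howe duality: it is an automorphism of the categorified quantum group $\cUU_q(\glm)$ swapping $\EE_i\one_{\wtn+\wt}\leftrightarrow\FF_i\one_{\wtn-\wt}$, more akin to a Chevalley involution than to the outer automorphism of $\sln[2n]$. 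Moreover it is generally only of order~$4$ on all of $\cUU_q(\glm)$ (Theorem~\ref{thm:nonaivesolution} proves no ``na\"ive'' genuine involution exists there for $m\ge3$), and becomes a true involution only after restricting to the subcategory $\cXX_q(\glm)$ generated by $\FF_i^{(k)}\EE_i^{(k)}$ and $\EE_i^{(k)}\FF_i^{(k)}$. This restriction to $\BnFoam$ is what makes the construction work and is not something one would discover from your starting point.

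A second gap: you assert that $\tau$ is a monoidal involution ``commuting with the Rickard/complex assignments attached to crossings,'' so that it directly yields a chain map $\tau_D$. In the paper this is false as stated---$\tau$ sends $\FEk$ to an isomorphic but not equal object, and the equivariant Rickard complexes of Definition~\ref{def:eRickard} require the carefully chosen sign corrections $(-1)^{\binom{n+1}{2}+\binom{k+1}{2}}\swcl_{n-k}$ (Lemma~\ref{lem:eRickdiff}) for the differential to be an equivariant morphism. Showing these modified complexes braid (Theorem~\ref{thm:Ctaubraid}) and satisfy Markov moves (Propositions~\ref{prop:SLHM1}--\ref{prop:SLHM2-}) needs the averaging/lifting argument of \S\ref{ss:lifting} and the cell-quotient functor $\Gamma$ to control homotopy equivalences; none of this is a formality.

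Finally, you mislocate the role of Conjectures~\ref{conj:R3}, \ref{conj:TrX}, \ref{conj:TrX2}. You place them in the braid-invariance step (``verifying that $\tau$ applied to the Rickard complex of a crossing agrees with the $\son$ side''). In fact the involution, the equivariant Rickard complexes, and the link invariant $H_{\son}(\mathcal{L}^S)$ exist \emph{unconditionally} for all $n$; the first bullet of the theorem is unconditional. The conjectures enter only in \emph{decategorification}: the paper proves Theorem~\ref{thm:decat} via the abstract characterization of $P_{\son}(\mathcal{L}^S)$ in Theorem~\ref{thm:characterization}, which hinges on (A1)~the devil's Serre relations, (A2)~a partial-Markov-trace identity \eqref{eq:TrX} for the elements $\xx_{m-1}^{(k)}$, and (A4)~the explicit formula for the braiding. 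Conjecture~\ref{conj:R3} supplies a rewriting algorithm (the analogue of reducing braid words so $s_{m-1}$ appears once), Conjecture~\ref{conj:TrX} is the partial trace formula, and Conjecture~\ref{conj:TrX2} is its lift to the trace of $\tauast$ on Hom spaces in $\BnFoam[2]$. These are all downstairs (combinatorial/representation-theoretic) obstructions; the paper's statement that ``the conjectural results $\ldots$ are primarily related to open questions in the decategorified setting'' is precisely the distinction your proposal misses.
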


The two parts of this theorem represent two distinct goals of this paper. 
The first is to define a new categorical link invariant, and this we achieve for all $n \geq 1$.
The second goal is to determine what link polynomial our invariant categorifies. 
We expect it to categorify the spin-colored link polynomial for all $n \geq 1$, 
but technical issues of a combinatorial nature (primarily in the decategorification) 
obstruct our ability to verify this for $n > 3$. 
With sufficient time, 
it should be possible to check the relevant conjectures for any fixed value of $n$.

Theorem \ref{thm:introtrace} is a slight reformulation and decategorification of our main construction, 
which defines \emph{spin link homology} $H_{\son}(\mathcal{L}^S)$, 
a link invariant, valued in bigraded super vector spaces, that refines 
$\Lambda^n$-colored $\sln[2n]$ Khovanov--Rozansky homology. 
As will be clear from our construction,
this invariant is as computable\footnote{
That said, we do not include computations beyond the unknot here. 
The paper is already quite long, 
and (our) PhD students need things to do.}
as the Khovanov--Rozansky theory.
We thus propose that $H_{\son}(\mathcal{L}^S)$ 
are the first readily computable link homologies  
associated to simple complex $\mathfrak{g} \neq \sln$.

Before proceeding, a word of caution.
As the picture is painted above, 
it seems as though the Dynkin diagram automorphism of $A_{2n-1}$ 
has been directly transformed into an involution $\tau$ on type $A$ link homology. 
This is not at all what we do! 
Our construction of $\tau$ arises through duality, 
which we now explain.

\subsection{Our construction, in more depth}\label{ss:inmoredepth}

Suppose one wishes to compute the $\sln$ Reshetikhin--Turaev invariant of a tangle
with components colored by fundamental representations. 
One can view this invariant as a morphism between an $m$-fold tensor product of the representations 
$\Lambda^a$ (with $0\leq a \leq N$), for some\footnote{The number $m$ could be higher than 
the number of boundary points of the tangle, 
as one may need to add additional copies 
$\Lambda^0$ and/or $\Lambda^N$ 
of the trivial representation 
to be the source or target of ``cup'' and ``cap'' tangles.} 
natural number $m$. 
Pioneering results of Cautis, Kamnitzer, Licata, and Morrison \cite{CKL,CKM} establish a duality between 
the subcategory of $\Rep(U_q(\sln))$ consisting of such $m$-fold tensor products, 
and the idempotented quantum group $\dU(\glm)$. 
In order to state the relevant representation theory correctly, 
we now replace $\sln$ with $\gln$; 
there is little distinction\footnote{The modern view is that categorical constructions 
are most naturally associated with $\gln$, 
so we denote them thusly; however, as in the literature, 
we continue to refer to the link invariants as $\sln$ (Khovanov--Rozansky) link homology.} 
between $\gln$ and $\sln$ link invariants. 

A consequence of the \emph{quantum skew Howe duality} 
proved in \cite{CKM}
is the existence of a full functor
\begin{equation}
	\label{eq:SHintro}
\dU(\glm) \xrightarrow{\SH} \Rep(U_q(\gln)) \, .
\end{equation}
Here, one views the idempotented algebra $\dU(\glm)$ as a category 
with one object for each $\glm$ weight $\wt = (a_1,\ldots,a_m)$, 
and $\SH(\wt) = \Lambda^\wt := \Lambda^{a_1} \ot \cdots \ot \Lambda^{a_m}$.
After factoring through a quotient $\dU^{\leq N}(\glm)$, 
the functor induced by \eqref{eq:SHintro} is fully faithful. 
Consequently, the link polynomials $P_{\sln}(\mathcal{L})$, 
which are defined using the braided monoidal structure on $\Rep(U_q(\gln))$, 
can be described entirely in terms of $\dU^{\leq N}(\glm)$. 
The same is true at the categorical level \cite{Cautis}, 
with $H_{\sln}(\mathcal{L})$ admitting a formulation in 
the bounded homotopy category of complexes over an analogous quotient 
$\cUU_q^{\leq N}(\glm)$ of the categorified quantum group $\cUU_q(\glm)$.
The categorified quantum group is reviewed in full detail in \S \ref{s:BCQG} 
and the analogous quotient is recalled in Definition \ref{def:CSQ} 
(where it is denoted $\cSSSn_q(\glm)$).
In order to precisely state our results, we remind the reader that 
$1$-morphisms of $\cUU_q(\glm)$ are generated by 
elements $\EE_i^{(k)} \one_{\wt}$ and $\FF_i^{(k)} \one_{\wt}$ that lift 
divided powers of the Chevalley generators of $\dU(\glm)$.

\begin{rem} 
A categorification of \cite{CKM} is given in \cite{QR1}.
Therein, a $2$-category of $\gln$ foams is constructed, 
which categorifies the image of \eqref{eq:SHintro}.
As in the decategorified case, 
this foam $2$-category is in duality with $\cUU_q(\glm)$.  \end{rem}

Now consider the case where $N = 2n$. 
As we show in this paper, 
upon passage through (categorical) skew Howe duality, 
the Dynkin automorphism of $\gln[2n]$ manifests as an involution on $\dU(\glm)$
which is akin to the Chevalley involution.
Namely, it swaps the Chevalley generators $e_i \leftrightarrow f_i$ in $\dU(\glm)$
and sends a weight $\wt$ to the weight $2\wtn - \wt = (2n-a_1, \ldots, 2n-a_m)$. 
Precisely, we prove the following result at the categorical level.

\begin{thm}[Theorems \ref{thm:symmetry} and \ref{thm:involution} 
	and Corollaries \ref{cor:order4}, \ref{cor:thickinvolution} and \ref{cor:atlastinv}]
	\label{thm:invintro}
For each $n \geq 1$, there is an order $4$ 
automorphism\footnote{Our automorphism $\tau_n$ 
is unrelated to other symmetries of $\cUU_q(\glm)$ 
appearing in the literature, despite its similar action on $1$-morphisms.}
$\tau_n$ of the 
categorified quantum group $\cUU_q(\glm)$ that swaps generating $1$-morphisms 
$\EE_i^{(k)} \one_{\wtn + \wt} \leftrightarrow \FF_i^{(k)} \one_{\wtn - \wt}$.
Further, $\tau_n$ restricts to an involution on the $2$-subcategory 
$\cXX_q(\glm) \subset \cUU_q(\glm)$ generated by the $1$-endomorphisms
$\FF_i^{(k)}\EE_i^{(k)} \one_{\wt}$ and $\EE_i^{(k)}\FF_i^{(k)} \one_{\wt}$.
This involution descends to the corresponding $2$-subcategory 
$\cXXtn_q(\glm) \subset \cUU_q^{\leq 2n}(\glm)$, 
where it extends the involution on $H^\ast(\Gr{n}{2n})$.
\qed
\end{thm}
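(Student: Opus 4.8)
The plan is to define $\tau_n$ directly on the diagrammatic generators-and-relations presentation of $\cUU_q(\glm)$ recalled in \S\ref{s:BCQG}, and then to verify that it extends to a $2$-functor. On objects, set $\tau_n(\wt) = 2\wtn - \wt$, so that $\wtn + \wt \mapsto \wtn - \wt$. On generating $1$-morphisms, set $\tau_n(\EE_i\one_\mu) = \FF_i\one_{2\wtn - \mu}$ and $\tau_n(\FF_i\one_\mu) = \EE_i\one_{2\wtn - \mu}$, extending covariantly to composites; divided-power $1$-morphisms and the thick calculus are then recovered at the end by observing that $\tau_n$ interchanges the defining idempotents of $\EE_i^{(k)}$ and $\FF_i^{(k)}$, which upgrades Theorem~\ref{thm:symmetry} to Corollary~\ref{cor:thickinvolution}. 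On generating $2$-morphisms --- dots, KLR crossings, and the cups and caps realizing biadjunction --- define $\tau_n$ by reversing the orientations of all strands of the underlying string diagram (so every $\EE$-strand becomes an $\FF$-strand and conversely), relabeling each region of weight $\mu$ by $2\wtn - \mu$, and multiplying by a sign determined by a fixed rule on the generators. The structural fact that makes an ``$\EE \leftrightarrow \FF$'' rule viable is that $\wtn = (n,\dots,n)$ pairs to zero with every simple coroot, so $\langle 2\wtn - \mu, \alpha_i^\vee \rangle = -\langle \mu, \alpha_i^\vee\rangle$; in the categorified quantum group the $\EE$-relations in a region of weight $\mu$ are mirror images of the $\FF$-relations in a region of weight $-\mu$, and this is precisely the bookkeeping needed. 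Since the relations of $\cUU_q(\glm)$ are local, it suffices to verify compatibility in the rank-one and rank-two configurations.

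The heart of the argument, and the step I expect to be the main obstacle, is this verification that $\tau_n$ carries each defining relation to a consequence of the relations. The relations internal to the $\EE_i$'s --- nilHecke dot-slides and the $\glm$-type KLR braid relations --- are formally invariant under the $\EE \leftrightarrow \FF$ recoloring and go through with only sign bookkeeping. The delicate relations are: (i) the curl and biadjunction relations, in which positive- and negative-degree bubbles in a region of weight $\mu$ evaluate to polynomials in dotted bubbles whose exponents depend on $\langle\mu,\alpha_i^\vee\rangle$; (ii) the $\sln[2]$ relations expressing $\EE_i\FF_i\one_\mu$ through $\FF_i\EE_i\one_\mu$ and vice versa, whose two forms --- indexed by the sign of $\langle\mu,\alpha_i^\vee\rangle$ --- are interchanged by $\tau_n$; and (iii) the fake-bubble and infinite-Grassmannian conventions, against which the sign rule in the definition must be calibrated. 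Checking these is conceptually routine but combinatorially unforgiving, and --- since $\tau_n$ is not among the symmetries of $\cUU_q(\glm)$ already in the literature --- there is no shortcut by citation; it is exactly here that the signs in the definition get pinned down. The decategorified shadow (an automorphism of $\dU(\glm)$ swapping $e_i \leftrightarrow f_i$ and sending $\wt \mapsto 2\wtn - \wt$, essentially the Chevalley involution composed with translation by $2\wtn$) is a useful sanity check but does not lift automatically.

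Granting well-definedness, order four (Corollary~\ref{cor:order4}) is a direct computation of $\tau_n \circ \tau_n$: on objects it is $\mu \mapsto 2\wtn - (2\wtn - \mu) = \mu$, and on $1$-morphisms it is the identity (two recolorings), but on $2$-morphisms the orientation-reversal-and-sign rule squares to an automorphism that fixes every underlying diagram and acts on the generators through a sign character; since that character is nontrivial but of order two, $\tau_n^2 \neq \id$ while $\tau_n^4 = \id$. For the restriction to $\cXX_q(\glm)$ (Theorem~\ref{thm:involution}), one checks first that $\tau_n$ interchanges the two families of generating $1$-endomorphisms, $\FF_i^{(k)}\EE_i^{(k)}\one_\mu \leftrightarrow \EE_i^{(k)}\FF_i^{(k)}\one_{2\wtn - \mu}$, and sends the generating $2$-morphisms of $\cXX_q(\glm)$ into $\cXX_q(\glm)$, so this $2$-subcategory is $\tau_n$-stable; one then checks that the sign character realizing $\tau_n^2$ restricts to the trivial character on $\cXX_q(\glm)$, whence $\tau_n|_{\cXX_q(\glm)}$ is an involution.

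Finally, for the descent (Corollary~\ref{cor:atlastinv}): the quotient $\cUU_q^{\leq 2n}(\glm)$ of Definition~\ref{def:CSQ} is obtained by killing the weights with a coordinate outside $\{0,1,\dots,2n\}$ together with the relations this forces, and since $\wt \mapsto 2\wtn - \wt$ permutes the cube $\{0,\dots,2n\}^m$, the $2$-functor $\tau_n$ passes to the quotient and restricts to an involution of $\cXXtn_q(\glm)$. On the $1$-morphism of $\cXXtn_q(\glm)$ whose endomorphism ring is $\qsh^{-n^2} H^\ast(\Gr{n}{2n})$ --- identified via skew Howe duality with $\sln[2n]$ and the colored-unknot computation --- this ring is generated by dots, which under the identification correspond to the Chern classes of the tautological sub- and quotient bundles of $\Gr{n}{2n}$; the $\EE \leftrightarrow \FF$ recoloring interchanges these two families, which is precisely the transpose-partition involution of $H^\ast(\Gr{n}{2n})$ induced by sending an $n$-plane to its orthogonal complement. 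Hence $\tau_n$ extends that involution, as claimed.
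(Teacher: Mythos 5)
Your overall architecture matches the paper's: define $\tau_n$ on weights by $\mu \mapsto 2\wtn - \mu$ and on generating $2$-morphisms by orientation reversal together with explicit signs, verify the defining relations, note that $\tau_n^2$ acts by signs (giving order dividing $4$), and then show the signs trivialize on $\cXX_q(\glm)$ and descend through the $2n$-bounded Schur quotient. Your order-four argument and the descent to $\cXXtn_q(\glm)$ are essentially the paper's (though the quotient is by the bubble ideal $\mathcal{J}^{\leq 2n}$ rather than literally by weights outside $\{0,\dots,2n\}^m$; the reason it descends is that transposition of partitions carries the "does not fit in an $(n+a_i)\times(n-a_i)$ box" condition to the corresponding condition in the reflected weight).

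The genuine gap is in the step where you write that "one then checks that the sign character realizing $\tau_n^2$ restricts to the trivial character on $\cXX_q(\glm)$." This is precisely where the real work lies, and it is not a routine check. You present the restriction to $\cXX_q(\glm)$ as a convenience, but in fact the paper proves (Theorem~\ref{thm:nonaivesolution}) that for $m\geq 3$ there is \emph{no} na\"ive choice of signs (constant $v_{ij}$, locally-defined $r_i$) making $\tau_n$ an involution on all of $\cUU_q(\glm)$; the restriction is forced. Your sign rule (which you leave unspecified) must therefore be chosen so as to satisfy the well-definedness conditions of Theorem~\ref{thm:symmetry} (the system~\eqref{tauconditions}), and one must then separately prove — not merely assert — that the resulting $\tau_n^2$, which a priori rescales mixed-color crossings by weight-dependent signs that cannot all be made trivial on $\cUU_q(\glm)$, nevertheless fixes every $2$-morphism lying in $\cXX_q(\glm)$. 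The paper's argument (Theorem~\ref{thm:wheninvolution}) does this via a topological induction on colored matching diagrams: one reduces to diagrams without same-colored crossings, proves the total sign $T(\mathcal{M})$ is invariant under the two graph Reidemeister moves by invoking that $\tau_n$ preserves the defining relations, then isotopes all strands of the "last" color to the far right so that every mixed-color crossing lives in a doubly-shaded square adjacent to the base weight $\wtn$, where the signs visibly cancel in pairs, and finishes by induction on the number of colors. Without identifying and carrying out an argument of this kind, the claim that $\tau_n^2 = \id$ on $\cXX_q(\glm)$ is unsupported, and with it the central claim of the theorem.

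A smaller point: your description of the transpose involution on $H^\ast(\Gr{n}{2n})$ via "interchanging Chern classes of the tautological sub- and quotient bundles" is a reasonable heuristic, but the paper realizes it directly through the definition of $\tau_n$ on new bubble generators, $e_r(\X_i)\mapsto h_r(\X_i)$, which is the standard involution of symmetric functions sending $\Schur \mapsto \Schur[\parti^t]$. This is what makes the descent verification (preservation of $\mathcal{J}^{\leq 2n}$) a one-line check, rather than something requiring the geometric picture.
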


To explain the final sentence of Theorem \ref{thm:invintro}, 
we point out that the endomorphism algebra of the identity $1$-morphism $\one_\wt$
of the weight $\wt$ in $\cUU^{\le N}_q(\glm)$ 
is isomorphic to $\otimes_{i=1}^m H^\ast(\Gr{a_i}{N})$, 
which is the $\sln$ Khovanov--Rozansky homology of the $\wt$-colored unlink
(up to degree shift). 
When $N=2n$, our involution $\tau$ of $\cXXtn_q(\glm)$ restricts in weight $\wtn = (n,\ldots,n)$
to give an involution of $\otimes_{i=1}^m H^\ast(\Gr{n}{2n})$,
which agrees with (the $m$-fold tensor product of) the involution from \S \ref{ss:nutshell}.
Note that the weight $\wtn$ corresponds to labeling each component of the unlink with $\Lambda^n$, 
the fundamental representation which folds to the spin representation in type $B$.

\begin{rem}
We began this project by searching for an extension of the involution on 
$H^\ast(\Gr{n}{2n})$ to all of $\cUU^{\le 2n}_q(\glm)$. 
We were bemused to find that it extended not to an involution, 
but an automorphism of order $4$. 
That its restriction to $\cXXtn_q(\glm)$ is an involution is crucial
to our constructions below. \end{rem}

The setting for our link invariant is the monoidal subcategory 
$\BnFoam \subset \cXXtn_q(\glm)$ of $1$-endomorphisms of the object $\wtn$, 
or, more precisely, the associated \emph{equivariant category} $\EBnFoam$ 
of this category with respect to the involution $\tau = \tau_n$.
Objects of $\EBnFoam$ are equivariant structures:
pairs $(X,\varphi_X)$, where $X$ is an object of $\BnFoam$ and $\varphi_X$ is an isomorphism
$\varphi_X \colon X \xrightarrow{\cong} \tau(X)$ in $\BnFoam$
satisfying $\tau(\varphi_X) \circ \varphi_X = \id_X$.
Morphisms 
$f \colon (X,\varphi_X) \to (Y,\varphi_Y)$ 
are morphisms 
$f \colon X \to Y$ in $\BnFoam$ satisfying 
$\varphi_Y \circ f = \tau(f) \circ \varphi_X$.

The equivariant category $\EBnFoam$ itself also admits a $\Z/2$-action, 
given by $\varphi_X \mapsto -\varphi_X$. 
In particular, the monoidal identity $\one = \one_{\wtn}$ of $\BnFoam$
gives rise to two equivariant objects $(\one,\id_\one)$ and $(\one, -\id_\one)$
which are exchanged by this $\Z/2$-action.

The Rickard complexes of \cite{CK,CKL,Cautis} 
(which generalize the Rouquier complex \cite{CR,Rou3}) 
associate a bounded complex 
$C(\bgen_{i_1}^\pm \cdots \bgen_{i_r}^\pm) \in \hCat[\BnFoam]$ 
to each word in the generators $\{\bgen_i\}_{i=1}^{m-1}$ 
of the $m$-strand braid group $\Br_m$.
We show that these complexes determine 
\emph{equivariant Rickard complexes}
$C^\tau(\bgen_{i_1}^\pm \cdots \bgen_{i_r}^\pm) \in \hCat[\EBnFoam]$. 
In the setting of $\hCat[\BnFoam]$, 
braid relations are categorified by canonical homotopy equivalences, 
and therefore the Rickard complexes canonically associate 
a complex $C(\bgen) \in \hCat[\BnFoam]$ to each $\bgen \in \Br_m$.
We show that the same is true for the equivariant Rickard complexes, 
and then apply appropriate representable functors to obtain invariants of links.

\begin{thm}[Theorems \ref{thm:Ctaubraid} and \ref{thm:typebLH}]
	\label{thm:SLHintro}
The equivariant Rickard complexes assign a complex 
	$C^\tau(\bgen) \in \hCat[\EBnFoam]$ to each $\bgen \in \Br_m$, 
	which is well-defined up to canonical homotopy equivalence.	
Let $\sVect_{\K}^\Z$ denote the category of $\Z$-graded super vector spaces 
and let $\RR \colon \EBnFoam \to \sVect_{\K}^\Z$
be the representable functor with even and odd components
\[
\RR(x)_{\bar{0}} = \Hom_{\EBnFoam}\!\big((\one,\id_{\one}),x\big) \, , \quad
	\RR(x)_{\bar{1}} = \Hom_{\EBnFoam}\!\big((\one, -\id_{\one}),x\big)  \, .
\]
Then, the homology $H_{\son}(\mathcal{L}_\bgen)$ 
of the complex $\qsh^{-mn^2}\RR(C^\tau(\bgen))$ is an invariant 
of the link $\mathcal{L}_\bgen \subset S^3$ arising as the braid closure of $\bgen$.
\qed
\end{thm}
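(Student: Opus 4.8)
The plan is to follow the now-standard Reshetikhin--Turaev-style recipe for building link invariants from a braided monoidal-type structure, but carried out in the equivariant category $\EBnFoam$. First I would establish that the equivariant Rickard complexes $C^\tau(\bgen_{i_1}^\pm\cdots\bgen_{i_r}^\pm)$ are well-defined up to canonical homotopy equivalence, independent of the chosen braid word. This is Theorem \ref{thm:Ctaubraid}, which I would assume; but let me sketch why it holds. The non-equivariant Rickard complexes $C(\bgen)$ already satisfy the braid relations up to \emph{canonical} homotopy equivalence in $\hCat[\BnFoam]$, by \cite{CK,CKL,Cautis}. Because the involution $\tau = \tau_n$ fixes the object $\wtn$ and acts on $\BnFoam = \BnFoam[m]$, and because $\tau$ visibly intertwines the generating $1$-morphisms $\EE_i^{(k)}\one \leftrightarrow \FF_i^{(k)}\one$ used to build Rickard complexes, one checks that $\tau$ sends $C(\bgen)$ to a complex canonically homotopy equivalent to $C(\bgen)$ itself; the chosen equivariant structures $\varphi$ are then transported along the canonical braid-relation homotopy equivalences. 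Functoriality of this transport --- i.e.\ that no sign or phase ambiguity accumulates around the two sides of a braid relation, or around the far-commutativity and Reidemeister-type relations in $\Br_m$ --- is exactly what needs care, and this is where the order-$4$ versus order-$2$ subtlety flagged in the remark after Theorem \ref{thm:invintro} becomes essential: $\tau$ restricted to $\cXXtn_q(\glm)$, hence to $\BnFoam$, is a genuine involution, so $\EBnFoam$ is an honest $\Z/2$-equivariantization and the coherence data is the usual one.

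Next I would apply the representable functor $\RR\colon\EBnFoam\to\sVect_\K^\Z$ termwise to the complex $C^\tau(\bgen)$ to obtain a complex of $\Z$-graded super vector spaces, apply the degree shift $\qsh^{-mn^2}$, and take homology. Since $\RR$ is additive and takes homotopy equivalences to homotopy equivalences (it is a functor between additive categories, hence descends to homotopy categories), and since $C^\tau(\bgen)$ is well-defined up to canonical homotopy equivalence by the previous step, the homology $H_{\son}(\mathcal{L}_\bgen)$ depends only on $\bgen\in\Br_m$, not on a word representing it. The shift $\qsh^{-mn^2}$ is dictated by the unknot normalization recorded in \S\ref{ss:nutshell}: each of the $m$ strands contributes a $\Lambda^n$-colored unknot, whose Khovanov--Rozansky homology is $\qsh^{-n^2}H^\ast(\Gr{n}{2n})$, and the $\RR$-pairing against $(\one,\pm\id_\one)$ extracts the graded trace of $\tau$; the global shift $\qsh^{-mn^2}$ makes the closure of the identity braid on $m$ strands have the expected invariant.

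Then I must pass from braids to links, i.e.\ show that the homology is invariant under the Markov moves (conjugation $\bgen\mapsto \gamma\bgen\gamma^{-1}$ in $\Br_m$, and stabilization $\Br_m\ni\bgen\mapsto \bgen\bgen_m^{\pm1}\in\Br_{m+1}$), since by Markov's theorem these generate the equivalence on braids inducing ambient isotopy of braid closures in $S^3$. Invariance under conjugation follows from the cyclicity of the trace-type construction: $\RR(C^\tau(\gamma\bgen\gamma^{-1}))$ is computed by pairing against $(\one,\pm\id_\one)$, and the composite Rickard complex $C^\tau(\gamma)\otimes C^\tau(\bgen)\otimes C^\tau(\gamma^{-1})$ becomes, after closing up, homotopy equivalent to $C^\tau(\bgen)$ closed up --- here one uses that $C^\tau(\gamma)$ and $C^\tau(\gamma^{-1})$ are mutually inverse up to homotopy in $\hCat[\EBnFoam]$. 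For stabilization, one uses the fact that in $\cXXtn_q(\glm)$ (equivalently in the foam model of \cite{QR1}) adding a strand colored by $\Lambda^n$ and a single crossing, then closing up, is implemented by a ``partial trace'' / adjunction that sends $C^\tau(\bgen\bgen_m^{\pm1})$ closed up to a degree-shifted copy of $C^\tau(\bgen)$ closed up; the degree shift is precisely compensated by the change from $\qsh^{-mn^2}$ to $\qsh^{-(m+1)n^2}$ together with the writhe-type contribution of the extra crossing. Concretely this reduces to a local computation: the partial $\qsh$-trace over the last tensor factor of the two Rickard complexes $C^\tau(\bgen_m^{+})$ and $C^\tau(\bgen_m^{-})$ in $\EBnFoam$ is the identity up to an overall shift, which one verifies using the explicit (small, two- or three-term) form of the Rickard complex together with the known $\sln[2n]$ Reidemeister-I behavior of $\Lambda^n$-colored Khovanov--Rozansky homology and the fact that $\tau$ acts compatibly on all the pieces.

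The main obstacle I expect is the stabilization (Markov II) step, specifically tracking the interaction between the equivariant structures $\varphi$ and the partial-trace/adjunction maps. In the non-equivariant setting this is essentially \cite{Cautis}, but the equivariantization introduces the possibility that the isomorphism realizing the partial trace of $C^\tau(\bgen_m^\pm)$ as a shift of the identity fails to be a morphism in $\EBnFoam$ --- i.e.\ that it does not commute with the $\varphi$'s --- or that it does so only up to a sign that would spoil the $\Z/2$-grading on $\RR$. Resolving this requires checking that $\tau$ acts on the relevant cup/cap foams (the adjunction units and counits for $\FF_i^{(n)}\EE_i^{(n)}\one$ and $\EE_i^{(n)}\FF_i^{(n)}\one$) in a way compatible with the chosen $\varphi$'s, which in turn comes down to the explicit description of $\tau_n$ on $2$-morphisms from Theorem \ref{thm:invintro} and Corollary \ref{cor:thickinvolution}. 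A secondary obstacle is bookkeeping the super-grading: one must confirm that the $\Z/2$-action $\varphi_X\mapsto-\varphi_X$ on $\EBnFoam$ is respected by the composition maps used in the braid-group functoriality and in the closure, so that $\RR$ genuinely lands in $\sVect_\K^\Z$ and the homology is well-defined as a bigraded super vector space; this is routine but pervasive, and I would handle it by defining everything functorially at the level of $\Z/2$-equivariant objects rather than checking signs by hand.
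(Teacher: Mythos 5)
Your outline is correct at a high level and your identification of the obstacles is perceptive, but the sketch is missing the two technical tools the paper actually deploys, and without them the argument for the first part (Theorem \ref{thm:Ctaubraid}) does not close.

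You write that the equivariant structures $\varphi$ are ``transported along the canonical braid-relation homotopy equivalences,'' but this by itself does not produce a homotopy equivalence \emph{in} $\EBnFoam$. A homotopy equivalence $f \colon C(\bword) \to C(\bword')$ that exists in $\dgCat[\BnFoam]$ by \cite{CK,CKL,Cautis} need not intertwine the equivariant structures, i.e.\ need not satisfy $\varphi_{\bword'} \circ f = \tau(f) \circ \varphi_{\bword}$. The paper's resolution is an averaging argument (Proposition \ref{prop:lift-equivariant-homotopy}): replace $f$ by $\tfrac{1}{2}(f + \tauast f)$, which is a chain map in $\dgECat$. One then has to know that the averaged map is still a homotopy equivalence, and that the resulting equivalence is canonical. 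Both facts are detected through the second tool: a full, strongly monoidal quotient functor $\Gamma \colon \BnFoam \to \Catn$ onto free graded $H^\ast(\Gr{n}{2n})$-modules (Proposition \ref{P:there-is-Gamma}), under which every $C(\bword)$ becomes a one-term complex of the form $(\qsh^{-n}\tsh^n)^{\epsilon(\bword)}\one_\wtn$. Lemma \ref{L:detect-equivalence-cell-quotient} shows a degree-zero chain map between such Rickard complexes is a homotopy equivalence iff $\Gamma(f)$ is an isomorphism and is nulhomotopic iff $\Gamma(f)=0$. Since $\Gamma$ commutes with $\tau$ and sends $\varphi_\bword$ to the identity (Lemmas \ref{lem:tauoncellquot} and \ref{L:equivariant-structure-map-cell-quotient}), the averaged map has the same image under $\Gamma$ as $f$, so it remains a homotopy equivalence, and all choices agree after $\Gamma$, giving canonicity (Theorem \ref{thm:Ctaubraid}). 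Your appeal to ``$\tau$ being a genuine involution on $\cXXtn_q(\glm)$'' is necessary (it is what makes $\EBnFoam$ an honest equivariantization) but it does not substitute for these two ingredients.

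Your treatment of the Markov moves is essentially the paper's route. For Markov I, the paper exhibits an explicit chain \emph{isomorphism} (with a Koszul sign $(-1)^{ij}$) realizing $\SLC{\bgen\bgen'}\cong \SLC{\bgen'\bgen}$ and checks $f \circ \tauast = \tauast \circ f$, rather than invoking cyclicity together with invertibility of $C^\tau(\gamma)$; both arguments work. For Markov II, your concern about whether the partial-trace equivalence commutes with the equivariant structures is exactly the right one, and the paper resolves it by writing down the homotopy equivalence of \cite{Wu} explicitly (separately for positive and negative stabilization, Propositions \ref{prop:SLHM2+} and \ref{prop:SLHM2-}), checking $f\circ\tauast = \tauast\circ f$ diagrammatically (no sign obstruction arises, e.g.\ because $n^n$ is self-transpose in the negative case), and then invoking the general fact (Lemma \ref{lem:GactC} and Remark \ref{rem:GactC}) that a $G$-equivariant homotopy equivalence of complexes restricts to one on isotypic subcomplexes.
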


\begin{rem} 
There is no difference between super vector spaces and $\Z/2$-graded vector spaces 
when only considered as monoidal categories.
Given this, one would not typically use super vector spaces unless the braiding on this braided monoidal category
(which does distinguish it from $\Z/2$-graded vector spaces) were relevant. 
While it is not relevant in this paper, we use the language of super vector spaces rather than
$\Z/2$-graded vector spaces for two reasons.
First, to avoid confusion, as many other gradings already appear in the paper. 
Second, because our results are conveniently stated in terms of super dimension 
(which, if one considers the relevant pivotal structure on super vector spaces, 
equals the dimension in this category).
\end{rem}

\subsection{Our construction, decategorified}
	\label{ss:introdecat}

For each $n \geq 1$, our construction produces a homological link invariant. 
Conditional upon the assumptions in Theorem \ref{thm:introtrace},
an appropriate normalization of 
the graded Euler characteristic of $H_{\son}(\mathcal{L}_\bgen)$ 
equals the spin-colored $\son$ link polynomial 
$\bP_{\son}(\mathcal{L}_\bgen^S) := P_{\son}(\mirror \mathcal{L}_\bgen^S)$ 
of the mirror link $\mirror \mathcal{L}$. 

\begin{thm}[Theorem \ref{thm:decat} and Remark \ref{rem:conditional}]
	\label{thm:introdecat}
Assume that $n=1,2,3$
or that Conjectures \ref{conj:R3}, \ref{conj:TrX}, and \ref{conj:TrX2} hold.
If $\beta \in \Br_m$ and $\epsilon(\bgen)$ denotes the braid exponent, 
then
\[
\bP_{\son}(\mathcal{L}_\beta^{\mathcal{S}}) 
	= (-1)^{n \epsilon(\bgen)+m\binom{n+1}{2}}
	q^{\frac{1}{2} n \epsilon(\bgen)} 
		\sum_{i} (-1)^i \dim_\qsh \big( H^{i}_{\son}(\mathcal{L}_\beta^\mathcal{S}) \big) \, .
\]
\qed
\end{thm}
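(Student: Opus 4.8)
The plan is to deduce Theorem \ref{thm:introdecat} from Theorem \ref{thm:introtrace} by unwinding the definition of $H_{\son}(\mathcal{L}_\bgen)$ in terms of the super-representable functor $\RR$ applied to the equivariant Rickard complex $C^\tau(\bgen)$, and then matching the resulting alternating sum against the graded trace of $\tau$ on ordinary $\sln[2n]$ Khovanov--Rozansky homology. The key identity to establish is that for any object $x$ of $\EBnFoam$ lying over $X \in \BnFoam$, the super dimension $\dim_\qsh \RR(x)_{\bar 0} - \dim_\qsh \RR(x)_{\bar 1}$ equals the graded trace $\tr(\tau \colon \End_{\BnFoam}(\one_\wtn \to X) \curvearrowleft)$-type quantity --- more precisely, that $\Hom_{\EBnFoam}((\one,\id),x) \oplus \Hom_{\EBnFoam}((\one,-\id),x)$ recovers $\Hom_{\BnFoam}(\one_\wtn, X)$ with its $\tau$-action, the even part being the $+1$-eigenspace and the odd part the $-1$-eigenspace. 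This is exactly the general yoga of equivariantization: $\Hom$ in the equivariant category against $(\one,\pm\id)$ picks out $(\pm 1)$-eigenspaces of $\tau$ acting on the corresponding $\Hom$-space downstairs. Granting this, the super Euler characteristic of $\qsh^{-mn^2}\RR(C^\tau(\bgen))$ becomes the graded trace of $\tau$ on the Euler characteristic of $\qsh^{-mn^2} \Hom_{\BnFoam}(\one_\wtn, C(\bgen))$, which is (a degree shift of) $H_{\sln[2n]}(\mathcal{L}_\bgen^{\Lambda^n})$.

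With that reduction in hand, the remaining work is bookkeeping with normalizations. First I would recall from the cited categorical skew Howe duality (\cite{Cautis}, and \S \ref{s:BCQG}) that $\Hom_{\cXXtn_q(\glm)}(\one_\wtn, C(\bgen))$, after the shift $\qsh^{-mn^2}$, computes $H_{\sln[2n]}$ of the $\Lambda^n$-colored closure of $\bgen$; the shift $mn^2$ is precisely $m$ copies of the $n^2$ appearing in the unknot normalization $\qsh^{-n^2}H^\ast(\Gr{n}{2n})$. Then I would invoke Theorem \ref{thm:introtrace}, equation \eqref{eq:introtrace}, which identifies $\tr(\tau \curvearrowright H_{\sln[2n]}(\mathcal{L}^{\Lambda^n}))$ with $\rP_{\son}(\mathcal{L}^S)$, the \emph{renormalized} spin-colored polynomial. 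The final step is to pin down how $\rP_{\son}$ differs from $\bP_{\son}(\mathcal{L}_\beta^S) = P_{\son}(\mirror\mathcal{L}_\beta^S)$: this accounts for the sign $(-1)^{n\epsilon(\bgen)+m\binom{n+1}{2}}$ and the Laurent monomial $q^{\frac12 n\epsilon(\bgen)}$. The exponent $\epsilon(\bgen)$ (writhe/braid exponent) enters through the framing-dependent normalization of Rickard complexes versus the framing-independent RT invariant, and the $m\binom{n+1}{2}$ sign and the $q$-power come from comparing the trace normalization on each of the $m$ unknot components $\qsh^{-n^2}H^\ast(\Gr{n}{2n})$ with the genuine spin-unknot value $\prod_{i=1}^n(q^{2i-1}+q^{1-2i})$; note $\binom{n+1}{2} = \sum_{i=1}^n i$ and $\frac12 n$ times something per crossing is the standard $\sln[2n]$ vs $\son$ eigenvalue rescaling under folding. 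I would verify the per-component and per-crossing factors by a direct computation on the colored unknot and on a single positive/negative crossing (a Reidemeister I comparison), which suffices by multiplicativity of both sides under disjoint union and the fact that the Rickard complex normalization is local.

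The main obstacle I anticipate is not the homological algebra --- equivariantization and Euler characteristics commute in the expected way --- but rather the precise determination of the overall scalar, i.e.\ proving that $\rP_{\son}$ and $\bP_{\son}$ are related exactly by $(-1)^{n\epsilon(\bgen)+m\binom{n+1}{2}}q^{\frac12 n\epsilon(\bgen)}$ and no other correction. This requires a careful, sign-sensitive analysis of: (i) the relationship between the categorical braiding conventions used for $C^\tau(\bgen)$ and the RT braiding on $U_q(\son)$-modules (including the ribbon element, which governs the $q^{\epsilon(\bgen)}$-type factors and the passage to the mirror); (ii) the super-vs-ordinary sign subtleties flagged in the remark after Theorem \ref{thm:SLHintro} — the odd part of $\RR$ contributes with a sign that must be tracked consistently through the Euler characteristic; and (iii) the fact that folding $\sln[2n] \rightsquigarrow \son$ rescales the quantum parameter, so that a monomial in $q$ on the type-$A$ side must be reinterpreted on the type-$B$ side, which is where the half-integer power $q^{\frac12 n\epsilon(\bgen)}$ originates. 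I would isolate (i)--(iii) into a normalization lemma proved by evaluating both sides on the colored unknot and the Hopf link (or just a single Reidemeister-I kink), where every quantity is explicitly computable, and then appeal to locality of the Rickard-complex construction and multiplicativity under connected sum / disjoint union to bootstrap to arbitrary $\bgen$. The $n=1,2,3$ (resp.\ conditional) hypothesis enters only through its role in Theorem \ref{thm:introtrace}; the normalization comparison itself is uniform in $n$.
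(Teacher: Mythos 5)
Your opening reduction is sound and matches a computation the paper also uses (see Corollary~\ref{cor:decatastrace}): the super-Euler characteristic of $\qsh^{-mn^2}\RR(C^\tau(\bgen))$ is the graded trace of $\tauast$ on the Euler characteristic of $\qsh^{-mn^2}\Hom_{\BnFoam}(\one_\wtn, C(\bgen))$, because $\RR^\pm$ pick out $(\pm 1)$-eigenspaces of $\tauast$ on the downstairs $\Hom$-spaces. But from that point on the proposal has a structural gap, and also misplaces the paper's logical dependencies.

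The gap is the ``normalization lemma plus locality bootstrap.'' You claim that after matching the unknot value and the Reidemeister-I kink, the identity extends to arbitrary $\bgen$ by ``locality of the Rickard-complex construction and multiplicativity under connected sum / disjoint union.'' This does not work, and it is precisely where the actual mathematical content of the theorem lives. The spin RT invariant of a braid closure is not determined by its values on unknots and kinks plus a formal locality principle; one must know enough of the algebra $\End_{U_q(\son)}(S^{\otimes m})$ and its trace to evaluate the closure of an arbitrary word in the braiding. The paper supplies this via Theorem~\ref{thm:characterization}: any family of algebras $A_m^n$ equipped with elements $\xx_i^{(k)}$ satisfying the quadratic relation~\eqref{eq:xiquad}, the devil's Serre relation~\eqref{eq:iSerre}, and trace-like maps $T_m^n$ obeying the partial-trace identity~\eqref{eq:TrX}, produces the spin polynomial. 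The proof of that theorem is an inductive algorithm reducing $T_m^n$ of a word in the $\xx_i^{(k)}$'s, using~\eqref{eq:xiquad} and~\eqref{eq:iSerre} (and Conjecture~\ref{conj:R3} when $n \ge 4$) to isolate the last strand, then~\eqref{eq:TrX} to peel it off and descend to $m-1$ strands. The paper's proof of Theorem~\ref{thm:decat} verifies that $\C(q^{1/2})\otimes\tauKzero{\EBnFoam}$ with $\xx_i^{(k)} := [(\FE_i^{(k)}, \pm\swcl_k)]_\tau$ and $T_m^n := (-1)^{m\binom{n+1}{2}}q^{-mn^2}\tr(\tauast|_{-})$ satisfies these hypotheses, invoking Corollaries~\ref{cor:K0(X2)}, \ref{cor:K0(Xm)} for A1, Definition~\ref{def:eRickard} for A4, and then checks the unknot value (A3) and the partial-trace formula~\eqref{eq:lastconj} (A2) --- the last only for $k=0,n$ in general plus a Markov-II trick for the middle values when $n \le 3$, which is where Conjecture~\ref{conj:TrX2} enters for $n\geq 4$. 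Your proposal contains none of this machinery and hence cannot close.

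Relatedly, your claim that the $n=1,2,3$ hypothesis ``enters only through its role in Theorem~\ref{thm:introtrace}'' inverts the paper's logic: Theorem~\ref{thm:introtrace} (i.e.\ Corollary~\ref{cor:decatastrace}) is derived \emph{from} Theorem~\ref{thm:decat}, not the other way around, so arguing from it would be circular. The conjectural hypotheses are needed exactly for the evaluation algorithm of Theorem~\ref{thm:characterization} and for verifying~\eqref{eq:lastconj} at the intermediate $k$'s; they are not a black box attached to a trace statement that you can then normalize away by unknot comparisons.
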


This result and Theorem \ref{thm:introtrace} are essentially reformulations of one another.
Indeed, for an equivariant object $(X,\varphi_X)$ in $\EBnFoam$, 
the space of morphisms $\Hom_{\BnFoam}(\one,X)$ admits an involutory action of $\tau$, 
whose $+1$-eigenspace is $\Hom_{\EBnFoam}((\one,\id_{\one}),-)$ 
and whose $-1$-eigenspace is $\Hom_{\EBnFoam}((\one,-\id_{\one}),-)$. 
Meanwhile, the dimension of a super vector space $V = V_{\bar{0}} \oplus V_{\bar{1}}$ 
is $\dim(V) = \dim (V_{\bar{0}}) - \dim (V_{\bar{1}})$; 
if $V_{\bar{0}}$ and $V_{\bar{1}}$ arise as the $\pm 1$-eigenspaces of an involution $\tau$ on $V$,
this also equals $\tr(\tau)$.

From this perspective, Theorem \ref{thm:SLHintro} can be repackaged as the statement that 
$\tau$ descends to an involution on the complex $\Hom_{\BnFoam}(\one,C(\bgen))$
that computes $\Lambda^n$-colored $\sln[2n]$ Khovanov--Rozansky homology
(and is compatible with braid/Markov moves).
Theorem \ref{thm:introdecat} then implies Theorem \ref{thm:introtrace}.
We emphasize that the conjectural results on which those theorems rely 
for $n \geq 4$ are primarily related to open questions in the decategorified setting.
Regardless, for all $n \geq 1$ we still obtain the involution $\tau$, and hence
our link invariants from Theorem \ref{thm:SLHintro}.

\subsection{Intertwiners for the spin representation}
	\label{ss:introinter}

Theorem \ref{thm:introdecat} relies on new results 
that we establish concerning the endomorphism algebras 
of tensor powers $S^{\otimes m}$ of the spin representation
in $\Rep(U_q(\son))$. 
We believe these results to be of independent interest.
Although they do not depend on our categorified construction,
they are informed by it.

For the moment, we forget the involution $\tau$ and focus on the
(surjective) algebra homomorphism
\begin{equation} \label{eq:inwtnintro} 
1_{\wtn} \dU(\gln[2]) 1_{\wtn} \to \End_{U_q(\gln[2n])}(\Lambda^n \ot \Lambda^n) 
	\end{equation}
arising from the $m=2$ case of 
\eqref{eq:SHintro} restricted to endomorphisms of the object $\wtn=(n,n)$. 
For $k \geq 1$, 
let $\chi^{(k)} := f^{(k)} e^{(k)} 1_{\wtn} \in \dU(\gln[2])$ 
where $e^{(k)} := \frac{1}{[k]!}e^k$ and $f^{(k)} := \frac{1}{[k]!}f^k$ 
are the divided power elements.
It is straightforward to compute from the relations of $\dU(\gln[2])$ 
(found in Definition \ref{def:Udotgl}) that
\begin{equation} 
	\label{eq:xxintro}
\chi^{(k)} \cdot \chi = [k][k+1] \chi^{(k)} + [k+1]^2 \chi^{(k+1)} \, .
	\end{equation}
Consequently, we obtain endomorphisms (also) denoted by
$\chi^{(k)} \in \End_{U_q(\gln[2n])}(\Lambda^n \ot \Lambda^n)$ 
that again satisfy \eqref{eq:xxintro}. 
Moreover, \cite{CKM} describes explicitly how the braiding 
$R_{n,n} \in \End_{U_q(\gln[2n])}(\Lambda^n \ot \Lambda^n)$ 
can be expressed as a linear combination of the elements 
$\{ \chi^{(k)} \}_{0\leq k \leq n}$.

The left-hand side of \eqref{eq:inwtnintro} is categorified by 
$\one_{\wtn} \cUU_q(\gln[2]) \one_{\wtn}$, 
with indecomposable $1$-morphisms $\FEk := \FF^{(k)} \EE^{(k)} \one_{\wtn}$
corresponding to $\chi^{(k)}$ in the Grothendieck ring.
There is a direct sum decomposition
\begin{equation} 
	\label{eq:XXintro}
\FEk \ot \FE \cong [k][k+1] \FEk \oplus [k+1]^2 \FE^{(k+1)}
	\end{equation}
lifting \eqref{eq:xxintro}, 
where here multiplication by a quantum integer corresponds to an appropriate 
direct sum of grading shifts of $\FE^{(k)}$. 
For example, $[2] \FE = \qsh^1 \FE \oplus \qsh^{-1} \FE$.
Note that e.g.~$[k][k+1] \FEk$ can be viewed either as a direct sum of shifted copies of $\FEk$, 
or equivalently as a formal tensor product of $\FEk$
with a ``multiplicity space'' whose graded dimension equals $[k][k+1]$.

Now we reintroduce the involution $\tau$. 
As we show, the $\FEk$ admit equivariant structures,
and $\tau$ induces a $\Z/2$-action on the multiplicity spaces in \eqref{eq:XXintro}.
In Corollary \ref{cor:traceonV}, we compute the trace of this action.
Up to a sign, the traces of $\tau$ on the above multiplicity spaces of dimension $[k][k+1]$ 
and $[k+1]^2$ are given by Laurent polynomials that we denote 
$``[k][k+1]"$ and $``[k+1][k+1]"$. 
Paralleling \eqref{eq:unknots}, these latter quantities are obtained from their unfolded counterparts 
by replacing certain coefficients with integers congruent to them modulo $2$.
For example,
\[ 
[3][4] = [6]+[4]+[2] = q^5 + 2 q^3 + 3 q + 3 q^{-1} + 2 q^{-3} + q^{-5}
	\]
while
\[
``[3][4]" = [6]-[4]+[2] = q^5 + q + q^{-1} + q^{-5} \, .
	\]
Please allow us to introduce this ``incorrect'' arithmetic on quantum numbers, 
which we sympathetically call the \emph{devil's arithmetic}, 
in \S \ref{s:typeB} below.

Theorems \ref{thm:introtrace} and \ref{thm:introdecat} suggest the existence of elements 
$\xx^{(k)} \in \End_{U_q(\son)}(S \otimes S)$ analogous to the elements $\chi^{(k)}$. 
Working solely at the decategorified level, we establish the following.

\begin{thm}[Theorems \ref{thm:X} and \ref{T:iqWeyl=typeBbraid}]
	\label{thm:introbraiding}
There is a basis $\lbrace \xx^{(k)}\rbrace_{k=0}^{n} \subset \End_{U_q(\son)}(S\ot S)$
in which the braiding is given by
\begin{equation}
	\label{eq:introbraiding}
\RM_{S, S}= q^{\frac{n}{2}} \sum_{k=0}^n q^{-k}\xx^{(k)} \, .
\end{equation}
The structure coefficients for multiplication in this basis are determined by
\begin{equation}
	\label{eq:xxintro2}
\xx^{(k)} \xx^{(1)}  = (-1)^{k} ``[k][k+1]"\xx^{(k)} + (-1)^k``[k+1][k+1]"\xx^{(k+1)} \, .
	\end{equation}
Further, the elements 
$\xx_i^{(k)}:= \id_S^{\ot i-1} \ot \xx^{(k)} \ot \id_S^{\ot m-i-1}$
for $1 \leq i \leq m-1$ and $0\leq k \leq n$
generate $\End_{U_q(\son)}((S)^{\ot m})$.
\qed
\end{thm}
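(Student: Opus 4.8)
The plan is to prove Theorem \ref{thm:introbraiding} by transporting structure across the folded skew Howe duality, using the $\sln[2n]$ side (where everything is understood via \cite{CKM}) as a black box and extracting the $\son$ data by taking $\tau$-traces. Concretely, I would first fix the surjection \eqref{eq:inwtnintro} and recall from \eqref{eq:xxintro} that the $\chi^{(k)}$ satisfy a known three-term recursion; combined with the expression for the braiding $R_{n,n}$ as a linear combination of the $\chi^{(k)}$ from \cite{CKM}, this pins down the entire algebra structure on $\End_{U_q(\gln[2n])}(\Lambda^n\ot\Lambda^n)$. The key new input is the involution $\tau$ on $\cXXtn_q(\glm)$ from Theorem \ref{thm:invintro}: after decategorifying, $\tau$ acts on $\End_{U_q(\gln[2n])}(\Lambda^n\ot\Lambda^n)$, it fixes each $\chi^{(k)}$ (since $\FEk$ carries an equivariant structure, by the discussion in \S\ref{ss:introinter}), and its trace on the multiplicity spaces in \eqref{eq:XXintro} is computed in Corollary \ref{cor:traceonV} to be the devil's-arithmetic quantum numbers $``[k][k+1]"$ and $``[k+1][k+1]"$.

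The main step is then to identify the $\tau$-invariants (or, more precisely, the appropriate eigenspace / graded-trace package) of the $\sln[2n]$ endomorphism algebra with $\End_{U_q(\son)}(S\ot S)$, and to read off the multiplication rule. I would define $\xx^{(k)}$ to be the image of $\chi^{(k)}$ under this identification, up to the sign normalization that makes the bookkeeping work (the $(-1)^k$ factors in \eqref{eq:xxintro2} should emerge from how $\tau$ interacts with the grading shifts, i.e.\ from the fact that $\tau$ sends $\qsh$ to $\qsh^{-1}$ on multiplicity spaces, turning ordinary quantum-number coefficients into alternating ones). Applying the trace of $\tau$ to \eqref{eq:XXintro} — equivalently, restricting \eqref{eq:xxintro} to $\tau$-fixed vectors and replacing each quantum-number multiplicity by its devil's-arithmetic counterpart — yields precisely \eqref{eq:xxintro2}. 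That the $\xx^{(k)}$ form a \emph{basis} follows because the $\chi^{(k)}$ do (for $0\le k\le n$, after passing to the quotient $\dSn$), and $\tau$ acts diagonalizably with the $\chi^{(k)}$ spanning a $\tau$-stable subspace on which we have control. The braiding formula \eqref{eq:introbraiding} then follows from the corresponding $\sln[2n]$ formula for $R_{n,n}$ in the $\chi^{(k)}$-basis by taking the same $\tau$-trace, with the overall power $q^{n/2}$ and the $q^{-k}$ coefficients surviving because $\tau$ preserves the internal $q$-grading.

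For the final generation statement — that the $\xx_i^{(k)}$ generate $\End_{U_q(\son)}(S^{\ot m})$ — I would argue by the same skew Howe philosophy: the $m=2$ generators $\chi^{(k)}$ placed in adjacent tensor factors generate $1_{\wtn}\dSn(\glm)1_{\wtn}$, hence their images generate $\End_{U_q(\gln[2n])}(\Lambda^{\wtn})=\End_{U_q(\gln[2n])}((\Lambda^n)^{\ot m})$ by fullness of \eqref{eq:SHintro}; then one takes $\tau$-fixed subalgebras on both sides. The cleanest route is to show that $\tau$ on the $\glm$ side is built from the local $m=2$ involution in each adjacent pair, so that $\big(\End_{U_q(\gln[2n])}((\Lambda^n)^{\ot m})\big)^\tau$ is generated by the local $\tau$-fixed pieces, which are exactly the $\xx^{(k)}$ inserted in adjacent strands; the $\xx_i^{(k)}$ with $|i-j|\ge 2$ are then automatically available as the remaining generators. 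I would also need to know that the surjection $\big(\End\big)^\tau \twoheadrightarrow \End_{U_q(\son)}(S^{\ot m})$ is actually an isomorphism, or at least that the source is generated by elements mapping to a generating set; this is where I expect to lean on the folded skew Howe duality established earlier in the paper rather than re-prove it.

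The hard part will be the bookkeeping that turns the clean categorical statement ``$\tau$ is an involution on $\cXXtn_q(\glm)$ whose traces on multiplicity spaces are the devil's quantum numbers'' into the precise decategorified multiplication rule \eqref{eq:xxintro2} with the correct signs and the correct basis normalization: one must check that $\tau$ acts on the multiplicity space of $\FEk$ inside $\FEkk\ot\FE$ (resp.\ inside itself) exactly as claimed, track how the grading-shift conventions interact with $\tau$'s reversal of $\qsh$, and verify that no extra sign or shift sneaks in when passing from the Grothendieck-group identity to an honest identity of endomorphisms. The algebra structure and the braiding formula on the $\sln[2n]$ side are entirely standard from \cite{CKM}; the genuinely new content — and the only place a real obstruction could hide — is the compatibility of $\tau$ with these structures, which is precisely what Theorem \ref{thm:invintro} and Corollary \ref{cor:traceonV} are set up to provide.
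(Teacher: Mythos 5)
Your proposed route is genuinely different from what the paper does, and as sketched it is circular. You want to \emph{define} $\xx^{(k)}$ as the image of $\chi^{(k)}$ under a ``folded skew Howe'' identification of (the weighted Grothendieck group of) the $\tau$-equivariant category with $\End_{U_q(\son)}(S\ot S)$, and you explicitly say you would ``lean on the folded skew Howe duality established earlier in the paper.'' But in the paper, the $m=2$ folded skew Howe duality (Theorem~\ref{thm:introKzero1}, i.e.\ Corollary~\ref{cor:XmnDecat}) is proved \emph{using} the basis $\{\xx^{(k)}\}$ and the change-of-basis formula of Proposition~\ref{itoxchangeofbasis} on the $\son$ side, together with the categorified divided-power relation (Theorem~\ref{T:div-powers-in-twisted-K0}). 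Theorem~\ref{thm:X} is an input to that isomorphism, not a consequence of it. In addition, your language of ``$\tau$-fixed subalgebra'' does not match the actual construction: the passage is to a \emph{weighted} Grothendieck group of an equivariant category, which is neither a subalgebra nor a quotient of $\End_{U_q(\gln[2n])}(\Lambda^n\ot\Lambda^n)$ in any naive sense, and there is no algebra homomorphism between the two sides to work with directly. For the $m>2$ generation statement the situation is worse: the analogous folded skew Howe duality is only Conjecture~\ref{conj:FSH} in the paper, so you would be assuming something unproven.

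The paper's actual proof of Theorem~\ref{thm:introbraiding} is a direct computation in $\Rep(U_q(\son))$, entirely independent of the categorification. It constructs the trivalent vertices $\Ya[i]\colon V_i\to S\otimes S$ explicitly in the $x_J$-basis (Proposition~\ref{prop:Y}), computes their eigenvalues under the braiding via Kirillov--Reshetikhin (Lemma~\ref{trivalenttwist}) and under $\hh^{(1)}$ (Lemma~\ref{SS1triangle}), defines the $\xx^{(k)}$ by the recursion in Definition~\ref{D:xxk-defn} with the devil's-product coefficients built in, proves they are a basis by writing the unitriangular change of basis to the $\II^{(k)}$ (Proposition~\ref{itoxchangeofbasis}), and then derives the braiding formula~\eqref{eq:introbraiding} by combining the braiding expansion in the $\II^{(k)}$-basis (Proposition~\ref{prop:braidI}) with that change of basis (Proposition~\ref{P:decat-spin-rickard}, using the combinatorial Lemma~\ref{L:rho-recurrence}). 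The generation statement comes from Wenzl's Theorem~\ref{thm:Wenzl} (surjectivity of $U'_{-q^2}(\som)\to\End_{U_q(\son)}(S^{\otimes m})$) together with the identification $\xx=C-\tfrac{1}{[2]}$ in Appendix~\ref{S:wenzlapproach}, not from a ``$\tau$-fixed subalgebra'' argument. If you want to preserve the spirit of your decategorification strategy, you would first need an independent proof of folded skew Howe duality for $m=2$; the dimension count (via~\eqref{eq:StSdecomp}) gets you a vector-space isomorphism, but matching the algebra structures is exactly where the direct $\son$-side computation re-enters and the circularity bites.
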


We propose the elements $\xx^{(k)}$ as a new \emph{canonical basis} 
for the endomorphism space $\End_{U_q(\son)}(S \ot S)$.
Indeed, they arise as a twisted decategorification of the 
indecomposable objects $\FEk = \FF^{(k)} \EE^{(k)} \one_{\wtn}$
which correspond to canonical basis elements in $\dU(\gln[2])$.

In more detail, 
there is a decategorification procedure for categories such as $\EBnFoam$, 
called the \emph{weighted Grothendieck group} $\tauKzero{-}$. 
We recall this construction (in the case of $\Z/2$-actions)
in \S \ref{subsec:equivariantization}.
Loosely stated, while the structure coefficients in the ordinary Grothendieck group 
are dimensions of multiplicity spaces, 
the structure coefficients of the weighted Grothendieck group 
are the super dimensions of those same multiplicity spaces. 
As we show,
an appropriate renormalization of the classes $[\FEk]_\tau \in \tauKzero{\EBnFoam[2]}$
satisfy equation \eqref{eq:xxintro2}.
Since all indecomposable $1$-morphisms in $\BnFoam[2]$ 
are of the form $\FEk$ for $0 \leq k \leq n$, 
standard results on weighted Grothendieck groups give the following.

\begin{thm}[Corollary \ref{cor:XmnDecat}]
	\label{thm:introKzero1}
For all $n \geq 1$,
there is an isomorphism of $\C(q)$-algebras
\[
\C(q) \ot_{\Z[q^\pm]} \tauKzero{\EBnFoam[2]} \xrightarrow{\cong} \End_{U_q(\son)}(S \ot S)
\]
that sends the class of (an appropriate equivariant structure on) $\FEk$ to $\xx^{(k)}$.
\qed
\end{thm}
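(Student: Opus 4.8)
The plan is to combine the structural identification of $\BnFoam[2]$ with general machinery for weighted Grothendieck groups of $\Z/2$-equivariant categories, and then to match structure coefficients on both sides. First I would invoke the fact, established earlier (via the categorified skew Howe duality of \cite{QR1,Cautis} restricted to the object $\wtn=(n,n)$ and passage to the quotient $\cSSStn_q(\gln[2])$), that the indecomposable $1$-morphisms of $\BnFoam[2]$ are exactly the $\FEk = \FF^{(k)}\EE^{(k)}\one_{\wtn}$ for $0\leq k\leq n$, and that the direct sum decomposition \eqref{eq:XXintro} holds in $\BnFoam[2]$. This tells us that the ordinary Grothendieck group $\Kzero{\BnFoam[2]}$ is free over $\Z[q^\pm]$ on the classes $[\FEk]$, with multiplication governed by \eqref{eq:xxintro}, and recovers $1_{\wtn}\dU(\gln[2])1_{\wtn}\cong\End_{U_q(\gln[2n])}(\Lambda^n\ot\Lambda^n)$ after base change to $\C(q)$.

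Next I would bring in the involution $\tau=\tau_n$ from Theorem \ref{thm:invintro}, which acts on $\BnFoam[2]$, together with the equivariant structures on the $\FEk$ (constructed earlier, cf.~\S\ref{ss:introinter}). By the general theory of weighted Grothendieck groups recalled in \S\ref{subsec:equivariantization}, $\tauKzero{\EBnFoam[2]}$ is free over $\Z[q^\pm]$ on the classes $[\FEk]_\tau$ of a choice of equivariant structure on each indecomposable, and the structure coefficients are the \emph{super}dimensions of the multiplicity spaces in \eqref{eq:XXintro} equipped with their induced $\Z/2$-action. Here I would cite Corollary \ref{cor:traceonV} (equivalently Corollary \ref{cor:traceonV}'s consequence) to identify these superdimensions as the devil's-arithmetic quantum numbers $``[k][k+1]"$ and $``[k+1][k+1]"$, up to the signs $(-1)^k$ appearing in \eqref{eq:xxintro2}; this is exactly the content needed to see that the decategorified multiplication in $\tauKzero{\EBnFoam[2]}$ matches \eqref{eq:xxintro2} after the appropriate renormalization of the $[\FEk]_\tau$ (the renormalization absorbing degree shifts of the form $\qsh^{mn^2}$-type and sign twists, as in the statement of Theorem \ref{thm:introbraiding}).

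Finally, I would compare with the target. Theorem \ref{thm:introbraiding} produces the basis $\{\xx^{(k)}\}_{k=0}^n$ of $\End_{U_q(\son)}(S\ot S)$ together with the multiplication rule \eqref{eq:xxintro2}; since $\End_{U_q(\son)}(S\ot S)$ is a commutative algebra freely spanned by $\{\xx^{(k)}\}$ over $\C(q)$ and generated by $\xx^{(1)}$ (as $\xx^{(1)}\mapsto\chi$ and iterating \eqref{eq:xxintro2} yields all $\xx^{(k)}$), the assignment $[\FEk]_\tau\mapsto\xx^{(k)}$ is forced to be a well-defined $\C(q)$-algebra isomorphism: it sends a $\Z[q^\pm]$-basis to a $\C(q)$-basis and intertwines the defining multiplication relations. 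The main obstacle I expect is not the abstract Grothendieck-group formalism but the bookkeeping of \emph{which} equivariant structure on each $\FEk$ to pick and how the grading shifts and signs in the renormalization conspire to land precisely on the coefficients in \eqref{eq:xxintro2} rather than on off-by-a-sign or off-by-a-shift variants; getting this normalization consistent with the one used for $\xx^{(k)}$ in Theorem \ref{thm:introbraiding} (and ultimately with the $q^{n/2}$ and $(-1)$-twists in the braiding formula \eqref{eq:introbraiding}) is the delicate point, and it relies essentially on the explicit trace computation of Corollary \ref{cor:traceonV} together with the order-$4$ subtlety flagged after Theorem \ref{thm:invintro} (i.e., that $\tau$ genuinely restricts to an \emph{involution} on $\cXXtn_q(\gln[2])$, so that equivariant structures exist and square to the identity).
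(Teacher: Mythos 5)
Your proposal is essentially correct and follows the same route as the paper's proof of Corollary \ref{cor:XmnDecat}: identify the indecomposables of $\BnFoam[2]$ as $\{\FEk\}_{0\le k\le n}$ (the paper does this via Lemma \ref{lem:stillindecomp} after transferring the relevant decompositions from $\BFoam$ via the quotient functor), invoke Proposition \ref{prop:computeequivGgp} to get a basis of $\tauKzero{\EBnFoam[2]}$ indexed by those indecomposables together with equivariant structures, use the trace computation of Corollary \ref{cor:traceonV} (packaged into the recursion of Theorem \ref{T:div-powers-in-twisted-K0} and Corollary \ref{cor:K0(X2)}) to match structure coefficients with \eqref{eq:xxintro2}, and conclude by comparing against the basis $\{\xx^{(k)}\}$ and its multiplication law via Proposition \ref{itoxchangeofbasis}. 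One correction, though: the renormalization you flag as the delicate point is purely a \emph{sign} --- the paper uses the equivariant structure $(-1)^{\binom{n+1}{2}+\binom{n-k+1}{2}+k}\swcl_k$ on $\FEk$, worked out explicitly in Corollary \ref{cor:K0(X2)} --- and involves \emph{no} $\qsh$-degree shift. The $\qsh^{mn^2}$-type shifts you mention enter only later, in the definition of the representable functor $\RR$ and of spin link homology; similarly, the factors $q^{n/2}$ and $q^{-k}$ in the braiding formula \eqref{eq:introbraiding} are an independent phenomenon that plays no role in the isomorphism being constructed here. If you were to build a $q$-power into the normalization of $[\FEk]_\tau$, the recursion \eqref{eq:xxintro2} would no longer be matched, so keeping the two kinds of renormalization separate is not just bookkeeping but necessary for the argument to close.
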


Since this is the folded analogue of \eqref{eq:inwtnintro}, 
we refer to this result as a \emph{folded skew Howe duality}. 
More generally, we propose the following generalization.

\begin{conj}
	\label{conj:FSH}
For all $n \geq 1$ and $m \geq 2$, 
there is an isomorphism of $\C(q)$-algebras
\[
\C(q) \ot_{\Z[q^\pm]} \tauKzero{\EBnFoam} \xrightarrow{\cong} \End_{U_q(\son)}(S^{\ot m})
\]
that sends the class of (an appropriate equivariant structure on) 
$\FF_i^{(k)} \EE_i^{(k)} \one_\wtn$ to $\xx_i^{(k)}$.
\end{conj}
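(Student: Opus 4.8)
The strategy is to upgrade the $m=2$ isomorphism of Theorem~\ref{thm:introKzero1} to all $m$ by matching explicit presentations of the two sides. The candidate map sends the class $[\FF_i^{(k)}\EE_i^{(k)}\one_\wtn]_\tau$ (normalized as in Corollary~\ref{cor:XmnDecat}) to $\xx_i^{(k)}$; one must check that it is well-defined (respects the defining relations of $\C(q)\ot_{\Z[q^\pm]}\tauKzero{\EBnFoam}$), surjective, and injective. Surjectivity is already contained in Theorem~\ref{thm:introbraiding}, which says the $\xx_i^{(k)}$ generate $\End_{U_q(\son)}(S^{\ot m})$. So the real work is to produce manageable presentations of $\C(q)\ot_{\Z[q^\pm]}\tauKzero{\EBnFoam}$ and of $\End_{U_q(\son)}(S^{\ot m})$ and to match them termwise.

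On the categorical side I would proceed in three steps. (1) Classify the indecomposable $1$-endomorphisms of $\wtn$ in $\BnFoam$: unfolded categorical skew Howe duality identifies the split Grothendieck group with $1_\wtn\dU^{\leq 2n}(\glm)1_\wtn \cong \End_{U_q(\gln[2n])}(\Lambda^\wtn)$, so the indecomposables are indexed by the corresponding web/canonical basis and the multiplicity spaces in tensor products are read off from that algebra's structure constants. (2) Equip each indecomposable with a $\tau$-equivariant structure --- available because $\tau$ is an honest involution on $\cXXtn_q(\glm)$ by Theorem~\ref{thm:invintro} --- and fix a coherent such system, the compatibility needed for the monoidal structure to descend to $\EBnFoam$ being a $\Z/2$-cocycle condition. (3) Compute the induced $\Z/2$-action on every multiplicity space; its super-traces are the ``devil's arithmetic'' quantum numbers of \S\ref{s:typeB}, generalizing the computation of Corollary~\ref{cor:traceonV}. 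These steps yield a presentation of $\C(q)\ot_{\Z[q^\pm]}\tauKzero{\EBnFoam}$ with generators $[\FF_i^{(k)}\EE_i^{(k)}\one_\wtn]_\tau$ and relations: the local relations descending from \eqref{eq:xxintro2}, far-commutativity, and coherence (``square-switch''/braid-type) relations descending from rank-$3$ identities in $\cUU_q(\glm)$.

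On the representation-theoretic side the goal is a matching presentation of $\End_{U_q(\son)}(S^{\ot m})$ in the generators $\xx_i^{(k)}$ --- essentially the decategorified type-$B$ spin webs problem. I see two routes. The direct route verifies \eqref{eq:xxintro2} in each adjacent pair, far-commutativity of $\xx_i^{(k)}$ and $\xx_j^{(\ell)}$ for $|i-j|\geq 2$, and the mixed relations for $|i-j|=1$ by computation in $U_q(\son)$, then proves these present the algebra by a dimension count against the $\son$ Reshetikhin--Turaev (Brauer-type) formula for $\dim\End_{U_q(\son)}(S^{\ot m})$. The more robust route realizes $\End_{U_q(\son)}(S^{\ot m})$ as an explicit quotient of an idempotented $\iota$quantum group of the $\iqUsom$ flavor via an $\iota$-analogue of quantum skew Howe duality, and transports the $\iota$quantum group's presentation. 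Matching either presentation termwise with the categorical one from the previous paragraph then yields well-definedness and injectivity of the comparison map simultaneously.

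The main obstacle is exactly the general-$m$ combinatorics that this paper flags as open: on the categorical side, classifying the indecomposables of $\BnFoam$ and, harder, computing the $\Z/2$-action of $\tau$ on all multiplicity spaces (establishing the uniform devil's-arithmetic structure constants) together with a coherent family of equivariant structures; on the representation side, a complete and provably sufficient set of relations for $\End_{U_q(\son)}(S^{\ot m})$, i.e.\ solving the type-$B$ spin webs problem. A persistent secondary difficulty is sign bookkeeping: ``an appropriate equivariant structure'' must be chosen so that the resulting map is an algebra homomorphism, which amounts to trivializing a $\Z/2$-valued obstruction; the fact that $\tau$ is only an involution after restricting from order $4$ on $\cUU_q(\glm)$ to $\cXXtn_q(\glm)$ means this must be handled with care. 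Once both presentations are in place, injectivity should reduce to a finite identity among devil's-arithmetic quantum numbers matching the Brauer-type dimension count --- the most mechanical part of the argument.
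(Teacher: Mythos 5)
The statement you are addressing is a conjecture, so the paper offers no proof; it instead reduces the conjecture (in \S\ref{ss:introinter}, immediately after the statement) to two sub-conjectures: that the relations \eqref{eq:xiquad} and \eqref{eq:iSerre} present $\End_{U_q(\son)}(S^{\ot m})$ as a $\C(q)$-algebra, and that $\tauKzero{\EBnFoam}$ is generated by the classes of the $\FEk_i$ and has the right dimension. The engine behind this reduction is the algebra $U'_{-q^2}(\som)$ sitting over both sides: Wenzl's Theorem~\ref{thm:Wenzl} gives a surjection $U'_{-q^2}(\som) \twoheadrightarrow \End_{U_q(\son)}(S^{\ot m})$, and Corollary~\ref{cor:K0(Xm)} (built on Theorems~\ref{T:div-powers-in-twisted-K0} and~\ref{T:iserre-in-twisted-K0}) gives an algebra homomorphism $U'_{-q^2}(\som) \to \C(q)\ot_{\Z[q^\pm]}\tauKzero{\EBnFoam}$. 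Given the two sub-conjectures, both maps are surjections with the same kernel, which yields the isomorphism without ever exhibiting it directly.

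Your plan is compatible in spirit and names the correct open obstacles (a type-$B$ spin webs presentation; the indecomposables of $\BnFoam$ and the $\Z/2$-action on their multiplicity spaces). But two points deserve recalibration. First, your categorical route --- classify all indecomposable $1$-morphisms, equip each with an equivariant structure, and compute the $\Z/2$-action on every multiplicity space --- is strictly heavier than the paper's. Working through $U'_{-q^2}(\som)$ lets one bypass the classification: it is enough that the classes $[\FEk_i]_\tau$ generate the weighted Grothendieck ring and that a dimension count matches $\End_{U_q(\son)}(S^{\ot m})$, since the quadratic and devil's Serre relations have already been lifted to $\tauKzero{\EBnFoam}$ in Theorems~\ref{T:div-powers-in-twisted-K0} and~\ref{T:iserre-in-twisted-K0}. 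Second, two of your stated worries are overstated: the monoidal structure on $\EBnFoam$ descends automatically once $\tau$ is a monoidal involution (Proposition~\ref{prop:invMD}), so there is no separate cocycle to trivialize, only normalization constants to choose --- which is exactly what the sign $(-1)^{\binom{n+1}{2}+\binom{n-k+1}{2}+k}$ in Corollary~\ref{cor:K0(Xm)} accomplishes; and the order-$4$ behaviour of $\tau$ on $\cUU_q(\glm)$ is irrelevant, because the construction is carried out entirely inside $\BnFoam\subset\cXXtn_q(\glm)$, where $\tau$ is a genuine involution (Corollary~\ref{cor:atlastinv}).
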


Further evidence for Conjecture \ref{conj:FSH} is as follows.
In Proposition \ref{prop:iSerre}, we prove that the elements 
$\xx_i$ and $\xx_{i \pm 1}$ satisfy a ``Reidemeister III''-like relation \eqref{eq:iSerre}
in $\End_{U_q(\son)}(S^{\ot m})$. 
In Theorem \ref{T:iserre-in-twisted-K0} and Corollary \ref{cor:XmnDecat}, 
we then prove that this relation is categorified by a direct sum decomposition in $\EBnFoam$. 
Consequently, Conjecture \ref{conj:FSH} would follow from the following two conjectures:
\begin{itemize}
\item That the ``Reidemeister III''-like relation \eqref{eq:iSerre}, 
	together with \eqref{eq:xxintro2}, 
	gives a presentation for the $\C(q)$-algebra
	$\End_{U_q(\son)}(S^{\ot m})$.
\item That the Grothendieck group $\tauKzero{\EBnFoam}$ is generated as a ring 
	by the classes of the $\FEk_i$, 
	and the dimensions of $\tauKzero{\EBnFoam}$ and $\End_{U_q(\son)}(S^{\ot m})$ are equal.
\end{itemize}
We discuss these conjectures further in the body of the paper.

\begin{rem}
Theorem \ref{thm:introKzero1} and Conjecture \ref{conj:FSH} can be viewed as a 
categorification of an instance of \cite[Theorem 1.1]{HongShen}. 
There, working in the setting of a connected almost simple algebraic group $G$ 
equipped with a Dynkin automorphism $\sigma$, 
Hong and Shen prove that the dimensions of invariant spaces for 
the algebraic group $G_\sigma$ associated to $(G,\sigma)$ via folding can 
be computed as the trace of the involution induced by $\sigma$ on invariant spaces for $G$.
In the $G = SL_{2n}$ case,
folded skew Howe duality asserts that the invariant space itself can be recovered via folding 
by considering involutions on the categories that categorify these invariant spaces.
\end{rem}

We emphasize that folded skew Howe duality is necessarily a product of categorification: 
it shows that the algebra $1_{\wtn} \dU(\glm) 1_{\wtn}$ has a ``secret'' relation 
with $\End_{U_q(\son)}(S^{\ot m})$ that is only visible by first considering the categorification 
$\one_{\wtn} \cUU_q(\glm) \one_{\wtn}$, 
then passing to the equivariant (subquotient) category $\EBnFoam$, 
and finally taking the weighted Grothendieck ring.
This procedure remains invisible at the decategorified level: 
there is no obvious method to modify the relations of \eqref{eq:XXintro} 
into those of \eqref{eq:xxintro2}. 
Said another way, if one only knows the ordinary dimension $\dim V_{\bar{0}} + \dim V_{\bar{1}}$ 
for a super vector space $V$, 
one can not deduce the value of the super dimension $\dim V_{\bar{0}} - \dim V_{\bar{1}}$.

Nevertheless, algebras related to $\tauKzero{\EBnFoam}$ 
\emph{have} previously appeared in the literature.

\subsection{Relationship to existing spin literature and further results}

The endomorphism algebra $\End_{U_q(\son)}(S^{\ot m})$ 
has previously been studied by Wenzl \cite{Wenzl-Spin} and Reshetikhin \cite{Resh-unpub1}, 
and also by Deligne \cite{Deligne-spin} and McNamara--Savage \cite{MS-spin} 
in the classical ($q=1$) setting.
All of these works (save for Wenzl's), 
use the diagrammatic language for monoidal categories 
to describe intertwiners between tensor products of $S$ 
and the vector representation of (quantum) $\son$.

In order to establish the results in \S \ref{ss:introinter}, 
we further develop the diagrammatic calculus for 
the spin representation of $U_q(\son)$.
Our advances in this direction are the expression \eqref{eq:introbraiding} 
for the braiding in terms of our canonical basis from Theorem \ref{thm:introbraiding}, 
the ``$H=I$'' relation given in Lemma \ref{lem:H1reln} below, 
and the interpretation of numerous structure coefficients 
as arising from the devil's arithmetic.

Reshetikhin notes that his relation \cite[Equation 5.29]{Resh-unpub1}, 
which we re\"{e}stablish below in Corollary \ref{C:Rstkn-Reln}, 
can be used to define a $q$-analogue of the Clifford algebra.
Later, in \cite[Equation 3.9]{Wenzl-Spin},
Wenzl constructs a distinguished endomorphism 
$C \in \End_{U_{q}(\son)}(S \otimes S)$ using these $q$-Clifford algebras. 
Via calculations with this element, Wenzl goes on to establish
a relation to an algebra defined twenty years 
earlier by Gavrilik--Klimyk\footnote{Their goal was to define a $q$-analogue of 
$\som$ which, unlike the usual Drinfeld--Jimbo quantum $\som$, 
was adapted to the chain of embeddings $\mathfrak{so}_{m-1}\subset \som$.}. 

\begin{defn}[{\cite{GK-Usom}}]
	\label{def:GK}
Let $U'_q(\som)$ be the $\C(q)$-algebra 
generated by $b_1, \ldots, b_{m-1}$ subject to the relations
\[
b_ib_j = b_jb_i \quad |i-j|>1\, , 
\]
and
\[
b_i^2b_{i\pm1} + b_{i\pm1}b_i^2 = [2]b_ib_{i\pm1}b_i + b_{i\pm1} \, .
\]
\end{defn} 

Wenzl's calculations in \cite[Section 4]{Wenzl-Spin} and \cite[Theorem 5.2]{Wenzl-Spin} 
give the following.

\begin{thm}[Wenzl]
	\label{thm:Wenzl}
There is a surjective algebra homomorphism
\begin{equation}
	\label{eq:Wenzlmap}
U'_{-q^2}(\som) \to \End_{U_q(\son)}(S^{\otimes m})
\end{equation}
such that 
$b_i\mapsto \id_S^{\otimes i-1}\otimes C \otimes \id_S^{\otimes m-i-1}$.
\end{thm}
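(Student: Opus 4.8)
The plan is to produce this surjective algebra homomorphism by combining Wenzl's construction of the $q$-Clifford element $C \in \End_{U_q(\son)}(S \ot S)$ with a verification that the images $\id_S^{\ot i-1} \ot C \ot \id_S^{\ot m-i-1}$ satisfy the defining relations of $U'_{-q^2}(\som)$ from Definition \ref{def:GK}. First I would recall from \cite[Equation 3.9]{Wenzl-Spin} the definition of $C$ via the $q$-Clifford algebra embedded (via the Reshetikhin relation, our Corollary \ref{C:Rstkn-Reln}) inside $\End_{U_q(\son)}(S \ot S)$; the key point is that $C$ is, up to normalization, a specific $U_q(\son)$-intertwiner built from the two commuting copies of the $q$-Clifford generators acting on the two tensor factors. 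The far-commutativity relation $b_ib_j = b_jb_i$ for $|i-j|>1$ is then immediate, since the operators act on disjoint tensor factors. The content is in the cubic ``$q$-Serre''-type relation $b_i^2 b_{i\pm 1} + b_{i\pm 1} b_i^2 = [2]_{-q^2}\, b_i b_{i\pm 1} b_i + b_{i\pm 1}$, where $[2]_{-q^2} = -q^2 - q^{-2}$.

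The core of the proof is therefore the following local computation, which lives entirely in $\End_{U_q(\son)}(S^{\ot 3})$: setting $C_1 = C \ot \id_S$ and $C_2 = \id_S \ot C$, one must show
\[
C_1^2 C_2 + C_2 C_1^2 = (-q^2 - q^{-2})\, C_1 C_2 C_1 + C_2
\]
and its mirror image with indices $1 \leftrightarrow 2$. I would carry this out by expanding $C$ in the $q$-Clifford generators on each factor and using the $q$-Clifford relations of \cite[Equation 3.9]{Wenzl-Spin}/Corollary \ref{C:Rstkn-Reln}, keeping careful track of the grading shifts that produce the $-q^2$ specialization of the quantum integer. This is exactly the calculation carried out in \cite[Section 4]{Wenzl-Spin}, so rather than redo it I would cite it: the relation $b_i^2 b_{i\pm 1} + b_{i\pm 1} b_i^2 = [2] b_i b_{i\pm 1} b_i + b_{i\pm 1}$ is established there for the images of the $C_i$, at parameter $-q^2$. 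Once all the defining relations hold for the images, the universal property of the free-presentation algebra $U'_{-q^2}(\som)$ gives a well-defined algebra homomorphism $b_i \mapsto C_i$.

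For surjectivity I would invoke \cite[Theorem 5.2]{Wenzl-Spin}, which identifies the subalgebra of $\End_{U_q(\son)}(S^{\ot m})$ generated by the $C_i$ with the whole endomorphism algebra — equivalently, that $S^{\ot m}$ generates the relevant category of intertwiners and the $C_i$ suffice to produce all of $\End_{U_q(\son)}(S^{\ot m})$. (At the classical limit this is the statement that the Gavrilik--Klimyk algebra surjects onto the centralizer, compatible with \cite{Deligne-spin,MS-spin}; the quantum case is Wenzl's.) Putting the homomorphism property and surjectivity together yields Theorem \ref{thm:Wenzl}.

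The main obstacle is bookkeeping rather than conceptual: verifying the cubic relation at the \emph{twisted} parameter $-q^2$ requires tracking the precise normalization of $C$ and the sign/degree conventions so that the quantum integer $[2]$ appearing is genuinely $[2]_{-q^2} = -(q^2 + q^{-2})$ and not $[2]_q$. Since this is precisely the content of \cite[Section 4]{Wenzl-Spin}, I would treat the hard computation as a citation; the only genuine work on our side is confirming that our conventions for $S$, for the braiding, and for the Reshetikhin relation in Corollary \ref{C:Rstkn-Reln} match Wenzl's up to a harmless rescaling of $C$.
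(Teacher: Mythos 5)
Your proposal is correct and matches the paper's approach: the paper does not reprove this result but simply cites Wenzl's computations in Section 4 (for the cubic relation) and Theorem 5.2 (for surjectivity) of \cite{Wenzl-Spin}, exactly the citations you invoke. The only additional content the paper supplies on this front — an independent in-house derivation of the cubic relation via Lemma~\ref{L:H-satisfies-i-quantum-gp-Serre} and the identification $C = \hh^{(1)}$ in Proposition~\ref{P:CequalsH1} — appears in a remark after the theorem and in the appendix, not in a proof of the theorem itself, so your decision to treat the hard computation as a citation is precisely what the paper does.
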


\begin{rem}
	\label{rem:nonclassical}
A consequence of Wenzl's theorem is that the vector space $S^{\otimes m}$ 
decomposes into a direct sum of irreducible representations of $U'_{-q^2}(\som)$. 
However, the irreducible $U'_{-q^2}(\som)$-representations that appear are not 
$q$- (or $-q^2$)-analogues of irreducible representations of $\som$. 
Wenzl overcomes this issue by studying ``non-classical" representations of 
$U'_{-q^2}(\som)$ first introduced in \cite{KlimykReps}; 
see \cite[Theorem 2.1]{Wenzl-Spin}. 
\end{rem}

In Appendix \ref{S:wenzlapproach}, 
we explicitly relate our approach to Wenzl's endomorphism $C$. 
Proposition \ref{P:CequalsH1} implies that
\begin{equation}
	\label{eq:CvsX}
\xx = C - \frac{1}{[2]} \id_{S \otimes S} \, .
\end{equation}
The most novel feature of our work in this direction is the 
simple formula \eqref{eq:introbraiding} for the braiding
after passing from Wenzl's $C$ to our $\xx$.
To our knowledge, such a formula has not previously 
appeared in the literature on the quantum spin representation. 
Further, our element $\xx$ also (conjecturally) sheds light on the kernel of \eqref{eq:Wenzlmap};
see Conjecture \ref{conj:itsthekernel}.

\begin{rem} 
With the computations of Lemma \ref{L:H-satisfies-i-quantum-gp-Serre}, 
we can reprove the existence of the homomorphism \eqref{eq:Wenzlmap}. 
Note that we do not reprove the surjectivity of \eqref{eq:Wenzlmap}, 
but rather use Wenzl's result to establish surjectivity in Theorem \ref{thm:introbraiding}. \end{rem}

\subsection{Categorifying $U'_{-q^2}(\som)$}
	\label{ss:catGK}

Taken together, our results thus far suggest an approach to
the categorification of the algebra $U'_{-q^2}(\som)$ itself. 
Inspired by \eqref{eq:CvsX} we make the following definition.

\begin{defn}
	\label{def:idpb}
Inside $U'_{-q^2}(\som[2])$, let $b = b_1$. For $k \geq 0$, define elements 
 $\{\nx^{(k)}\}_{k \geq 0} \subset U'_{-q^2}(\som[2])$
recursively as follows:
\begin{equation}
	\label{eq:idpb}
\begin{gathered}
\nx^{(0)} := 1 \, , \quad 
\nx^{(1)} = \nx := b-\frac{1}{[2]} \, , \\
\nx^{(k)} \nx = (-1)^{k}``[k+1][k]" \nx^{(k)} + (-1)^{k}``[k+1] [k+1]" \nx^{(k+1)} \, .
\end{gathered}
\end{equation}
Set $\nx_i := b_i - \frac{1}{[2]}$ in $U'_{-q^2}(\som)$ and 
define $\nx_i^{(k)}$ analogously. 
\end{defn}

It is easy to deduce that $\{\nx^{(k)}\}_{k \geq 0}$ is a basis for $U'_{-q^2}(\som[2])$. 
Paralleling Theorem \ref{thm:introKzero1}, 
we obtain a categorification of $U'_{-q^2}(\som[2])$ in which indecomposables 
correspond to this basis.

\begin{thm}[Corollary \ref{cor:K0(X2)}]
	\label{thm:introKzero2}
For all $n \geq 1$, consider the involution $\tau_n$ of 
$\one_{\wtn} \cUU_q(\gln[2]) \one_{\wtn}$ given by Theorem \ref{thm:invintro}.
There is an isomorphism of $\C(q)$-algebras
\[
U'_{-q^2}(\som[2]) \xrightarrow{\cong}  
\C(q) \ot_{\Z[q^\pm]} K_0^{\tau_n}((\one_{\wtn} \cUU_q(\gln[2]) \one_{\wtn})^{\tau_n}) \, .
\]
that is intertwined with the isomorphism in Theorem \ref{thm:introKzero1} 
by Wenzl's homomorphism \eqref{eq:Wenzlmap}.
In other words, it sends $\nx^{(k)}$ to the class of (an appropriate equivariant structure on) 
$\FEk = \FF^{(k)} \EE^{(k)} \one_{\wtn}$. \qed
\end{thm}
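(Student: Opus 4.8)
The plan is to leverage the fact that, because $m=2$, all the relations in Definition \ref{def:GK} are vacuous, so that $U'_{-q^2}(\som[2])$ is simply the polynomial ring $\C(q)[b]$; in particular any $\C(q)$-algebra homomorphism out of it is freely determined by the image of $b$, and \eqref{eq:idpb} exhibits $\{\nx^{(k)}\}_{k \ge 0}$ as a $\C(q)$-basis via a unitriangular change of basis from $\{b^k\}_{k \ge 0}$. On the categorical side I would first record, citing the structural results surrounding Theorem \ref{thm:invintro}, that the indecomposable $1$-morphisms of $\one_{\wtn}\cUU_q(\gln[2])\one_{\wtn}$ are, up to grading shift, exactly the $\FEk = \FF^{(k)}\EE^{(k)}\one_{\wtn}$ for $k \ge 0$, that each carries an appropriate $\tau_n$-equivariant structure, and hence --- by the standard facts on weighted Grothendieck groups recalled in \S\ref{subsec:equivariantization} --- that $K_0^{\tau_n}\big((\one_{\wtn}\cUU_q(\gln[2])\one_{\wtn})^{\tau_n}\big)$ is free over $\Z[q^\pm]$ on the (suitably renormalized) classes $[\FEk]_\tau$, $k \ge 0$.

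Granting this, I would \emph{define} the candidate map
\[
\Phi \colon U'_{-q^2}(\som[2]) \longrightarrow \C(q)\ot_{\Z[q^\pm]} K_0^{\tau_n}\big((\one_{\wtn}\cUU_q(\gln[2])\one_{\wtn})^{\tau_n}\big)
\]
by $b \mapsto [\FE]_\tau + \frac{1}{[2]}[\one_{\wtn}]_\tau$; this is a well-defined algebra homomorphism since the source is a polynomial ring. The heart of the argument is then the claim $\Phi(\nx^{(k)}) = [\FEk]_\tau$ for all $k \ge 0$, which I would prove by induction on $k$. The cases $k = 0,1$ are immediate from the definitions of $\nx^{(0)}$ and $\nx^{(1)} = \nx = b - \frac{1}{[2]}$, and the inductive step is a term-by-term comparison of the recursion \eqref{eq:idpb} defining $\nx^{(k+1)}$ with the multiplication rule in $K_0^{\tau_n}$. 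The latter is obtained by refining the decomposition \eqref{eq:XXintro} to one of equivariant objects and identifying the super-dimensions of the resulting multiplicity spaces with the devil's-arithmetic quantities (Corollary \ref{cor:traceonV}); by Corollary \ref{cor:XmnDecat} and Theorem \ref{T:iserre-in-twisted-K0} (cf.\ \eqref{eq:xxintro2}) this reads
\[
[\FEk]_\tau \cdot [\FE]_\tau = (-1)^{k} ``[k][k+1]"[\FEk]_\tau + (-1)^{k} ``[k+1][k+1]"[\FE^{(k+1)}]_\tau \, ,
\]
which is precisely \eqref{eq:idpb} under the identification $\nx^{(j)} \leftrightarrow [\FE^{(j)}]_\tau$ (using $``[k][k+1]" = ``[k+1][k]"$). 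Since $\Phi$ then carries the $\C(q)$-basis $\{\nx^{(k)}\}$ bijectively onto the $\C(q)$-basis $\{[\FEk]_\tau\}$, it is an isomorphism.

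Finally, to obtain the intertwining with Theorem \ref{thm:introKzero1}: the quotient functor $\cUU_q(\gln[2]) \to \cUU_q^{\le 2n}(\gln[2])$ induces, after equivariantization, a $\C(q)$-algebra map $\pi$ from $\C(q)\ot K_0^{\tau_n}\big((\one_{\wtn}\cUU_q(\gln[2])\one_{\wtn})^{\tau_n}\big)$ to $\C(q)\ot\tauKzero{\EBnFoam[2]}$ with $\pi([\FEk]_\tau) = [\FEk]_\tau$ for $k \le n$ and $\pi([\FEk]_\tau) = 0$ for $k > n$, since $\FEk$ vanishes in the truncation once $k > n$. Post-composing $\pi\circ\Phi$ with the isomorphism of Theorem \ref{thm:introKzero1} (which sends $[\FEk]_\tau \mapsto \xx^{(k)}$) yields an algebra map $U'_{-q^2}(\som[2]) \to \End_{U_q(\son)}(S\ot S)$ sending $b \mapsto \xx^{(1)} + \frac{1}{[2]}\id_{S\ot S} = \xx + \frac{1}{[2]}\id_{S\ot S} = C$, where the last equality is \eqref{eq:CvsX}. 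Wenzl's homomorphism \eqref{eq:Wenzlmap}, specialized to $m = 2$, also sends $b \mapsto C$; since both maps have the polynomial ring $U'_{-q^2}(\som[2])$ as source, they agree, giving the asserted commuting triangle.

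All of the genuine content lives in the results being cited --- the existence and structure of the categorification $(\one_{\wtn}\cUU_q(\gln[2])\one_{\wtn})^{\tau_n}$ and of its weighted Grothendieck group, and above all the identification in Corollaries \ref{cor:traceonV} and \ref{cor:XmnDecat} of the super-dimensions of the multiplicity spaces in \eqref{eq:XXintro} with the devil's-arithmetic numbers. Granting those, the only delicate points here are the sign and normalization bookkeeping needed to match \eqref{eq:idpb} on the nose --- i.e.\ choosing the ``appropriate'' equivariant structures on the $\FEk$ so that the signs $(-1)^k$ appear correctly --- and the passage from the untruncated category to the truncation in the compatibility statement; I expect the former to be the step that requires the most care.
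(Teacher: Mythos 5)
Your proposal is correct and follows essentially the same route as the paper: verify that the classes $[\FEk]_\tau$ (with an explicit sign on the equivariant structure, namely $(-1)^{\binom{n+1}{2}+\binom{n-k+1}{2}+k}\swcl_k$) satisfy the recursion \eqref{eq:idpb}, then invoke the basis classification of the weighted Grothendieck group. The only substantive differences are presentational. You build the map via the universal property of the polynomial ring and an induction, whereas the paper proceeds more directly: once the multiplication relation is checked, any assignment $\nx^{(k)}\mapsto\mathbf{x}^{(k)}$ extends linearly over the basis $\{\nx^{(k)}\}$ and is automatically multiplicative since the recursion determines all products, and bijectivity is then immediate from Proposition \ref{prop:computeequivGgp}. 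You also carefully spell out the compatibility with Wenzl's map via the quotient functor and \eqref{eq:CvsX}, which the paper leaves implicit in the formulation of Theorem \ref{thm:introKzero2}. One small citation slip: the relevant categorified multiplication relation for $m=2$ is Theorem \ref{T:div-powers-in-twisted-K0} (not Theorem \ref{T:iserre-in-twisted-K0}, which is the three-strand devil's Serre relation, nor Corollary \ref{cor:XmnDecat}, which lives in the truncated category $\EBnFoam$ rather than the untruncated one considered here); you correctly cite Corollary \ref{cor:traceonV} for the super-dimension computation that underlies it. You are right that the sign bookkeeping on the equivariant structures is the delicate point; the paper resolves it with the exponent $\binom{n+1}{2}+\binom{n-k+1}{2}+k$, which converts the signs $(-1)^{n+k+1}$ and $+1$ from Theorem \ref{T:div-powers-in-twisted-K0} into the uniform $(-1)^k$ appearing in \eqref{eq:idpb}.
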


We direct the reader distressed by the appearance of seemingly many 
(one for each $n \geq 1$) categorifications of $U'_{-q^2}(\som[2])$ to \S \ref{ss:dependence-on-n}. 
The $m=2$ case of the results there give equivalences between the 
$\one_{\wtn} \cUU_q(\gln[2]) \one_{\wtn}$ (for various $n$) 
that intertwine the involutions $\tau_n$.

Continuing the parallel, we expect that Theorem \ref{thm:introKzero2} generalizes from $m=2$ 
to all $m \geq 2$. 
Recall that $\cXX_q(\glm)$ is the full subcategory of
$\cUU_q(\glm)$ generated by the $1$-endomorphisms 
$\FF_i^{(k)}\EE_i^{(k)}$ and $\EE_i^{(k)}\FF_i^{(k)}$. 
We let $\BFoam := \one_{\wtn} \cXX_q(\glm) \one_{\wtn}$, suppressing $n$ from
the notation (all these categories as $n$ varies are identified in \S \ref{ss:dependence-on-n}). 

\begin{conj}
	\label{conj:introKzero2}
There is an isomorphism of $\C(q)$-algebras
\[
U'_{-q^2}(\som) \xrightarrow{\cong}  
\C(q) \ot_{\Z[q^\pm]} \tauKzero{\EBFoam}\, .
\]
that sends $\nx_i^{(k)}$ to the class of (an appropriate equivariant structure on) 
$\FEk_i = \FF_i^{(k)} \EE_i^{(k)} \one_{\wtn}$.
\end{conj}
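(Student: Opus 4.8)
The plan is to imitate the proof of Theorem~\ref{thm:introKzero2}, which is the $m=2$ case. First I would construct the homomorphism $\Phi\colon U'_{-q^2}(\som) \to \C(q) \ot_{\Z[q^\pm]} \tauKzero{\EBFoam}$ via the assignment of the statement, $\nx_i^{(k)} \mapsto [\FEk_i]$ (the class of $\FF_i^{(k)}\EE_i^{(k)}\one_{\wtn}$ equipped with the equivariant structure used to define $\tau_n$), so that $b_i = \nx_i + \tfrac{1}{[2]}$ maps to $[\FE_i] + \tfrac{1}{[2]}$, and then verify that the defining relations of Definition~\ref{def:GK} are respected. The far-commutation relations $b_ib_j = b_jb_i$ for $|i-j|>1$ hold because $\FF_i^{(k)}\EE_i^{(k)}\one_{\wtn}$ and $\FF_j^{(k)}\EE_j^{(k)}\one_{\wtn}$ tensor-commute $\tau$-equivariantly in $\cUU_q(\glm)$ for non-adjacent nodes. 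For the Serre-type relation, I would use the change of variables $b_i \leftrightarrow \nx_i$ carried out in Lemma~\ref{L:H-satisfies-i-quantum-gp-Serre}, which rewrites that single relation in terms of the $\nx_i^{(k)}$ as the pair consisting of the ``local'' identity \eqref{eq:idpb} among the $\nx^{(k)}$ and the ``Reidemeister~III''-type identity \eqref{eq:iSerre} relating $\nx_i$ and $\nx_{i\pm1}$. The former is the decategorification of the direct-sum decomposition underlying Theorem~\ref{thm:introKzero2} and Corollary~\ref{cor:XmnDecat}, while the latter is the decategorification of the decomposition of Theorem~\ref{T:iserre-in-twisted-K0}; since taking super-dimensions of multiplicity spaces turns these decompositions into identities in the weighted Grothendieck ring, $\Phi$ is a well-defined algebra map. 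Compatibility with the isomorphism of Theorem~\ref{thm:introKzero1} and with Wenzl's map~\eqref{eq:Wenzlmap} is then formal, since all three maps are determined on the generators $b_i$.

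It remains to show $\Phi$ is bijective, and this is where the work lies. For surjectivity one needs $\tauKzero{\EBFoam}$ to be generated as a ring by the classes $[\FEk_i]$. Since $\cXX_q(\glm)$ is by definition monoidally generated by the $\FF_i^{(k)}\EE_i^{(k)}$ and $\EE_i^{(k)}\FF_i^{(k)}$, and since $\langle \alpha_i^\vee, \wtn\rangle = 0$ forces the categorical $EF$/$FE$ relation to decompose $\EE_i^{(k)}\FF_i^{(k)}\one_{\wtn}$ into a sum of grading-shifts of the $\FEk_i$, the category $\BFoam = \one_{\wtn}\cXX_q(\glm)\one_{\wtn}$ is monoidally generated by the $\FEk_i$ alone. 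The real difficulty is to classify the indecomposable $1$-morphisms of $\BFoam$ --- equivalently the indecomposable summands of arbitrary tensor words in the $\FEk_i$ --- precisely enough to see both that their classes lie in the subring generated by the $[\FEk_i]$ and that the resulting ring has the right size. For injectivity I would fix a Poincar\'e--Birkhoff--Witt-type basis of $U'_{-q^2}(\som)$ (as in \cite{GK-Usom}) and argue, via a straightening argument using \eqref{eq:idpb} and \eqref{eq:iSerre} to bring words in the $\nx_i$ to a normal form, that these basis elements map to linearly independent classes; equivalently, one bounds the graded ranks of $\tauKzero{\EBFoam}$ above by those of $U'_{-q^2}(\som)$ using the classification of indecomposables, or one supplies a faithful action of $\C(q)\ot\tauKzero{\EBFoam}$ on the weighted Grothendieck group of a suitable $\tau$-equivariant module category matching the standard $U'_{-q^2}(\som)$-action.

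The main obstacle is exactly this classification of indecomposable $1$-morphisms of $\one_{\wtn}\cXX_q(\glm)\one_{\wtn}$ for $m \ge 3$: it is the categorified ``webs problem in type $B$'' for the spin representation, and both the surjectivity step and the dimension count feeding injectivity bottom out there. For $m=2$ these indecomposables are known to be exactly the $\FEk$ with $0 \le k \le n$, which is what powers Theorems~\ref{thm:introbraiding} and~\ref{thm:introKzero2}; no analogue is presently available for larger $m$, which is why the statement is posed as Conjecture~\ref{conj:introKzero2}. A route that might partially circumvent this is to deduce the conjecture from Conjecture~\ref{conj:FSH} together with the (conjectural) description of the kernel of~\eqref{eq:Wenzlmap} in Conjecture~\ref{conj:itsthekernel}: the $\le 2n$ quotient $\EBFoam \twoheadrightarrow \EBnFoam$ induces a surjection of weighted Grothendieck rings, Conjecture~\ref{conj:FSH} identifies its target with $\End_{U_q(\son)}(S^{\ot m})$, and a diagram chase comparing $\Phi$ with Wenzl's map would reduce Conjecture~\ref{conj:introKzero2} to matching $\Phi(K)$ with the kernel of that surjection, where $K$ is the conjectured kernel of~\eqref{eq:Wenzlmap} --- a comparison of ideals rather than a full classification of indecomposables, though still nontrivial.
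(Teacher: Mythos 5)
This statement is a conjecture, not a theorem, so the paper supplies no proof; what it does supply is the existence of the homomorphism $\Phi$, via Theorem~\ref{T:iserre-in-twisted-K0} and Corollary~\ref{cor:K0(Xm)}, and the $m=2$ case (Corollary~\ref{cor:K0(X2)}). Your reconstruction of that homomorphism is faithful to the paper's argument: far-commutation for $|i-j|>1$ is immediate from the tensor-commutation of distantly-colored generators, the divided-power recursion is Theorem~\ref{T:div-powers-in-twisted-K0}, and the devil's Serre relation is Theorem~\ref{T:iserre-in-twisted-K0}, which together with Proposition~\ref{prop:iSerreintro} (giving a presentation of $U'_{-q^2}(\som)$ in terms of the $\nx_i^{(k)}$) yields well-definedness. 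You are also right that $\EE_i^{(k)}\FF_i^{(k)}\one_{\wtn}\cong \FF_i^{(k)}\EE_i^{(k)}\one_{\wtn}$ in weight $\wtn$ (this is \eqref{eq:EF=FE}), so $\BFoam$ is monoidally generated by the $\FEk_i$ alone.

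Your diagnosis of the remaining gap is exactly the one the paper makes: bijectivity requires, on the one hand, that $\tauKzero{\EBFoam}$ be generated by the $[\FEk_i]$ (or, dually, a classification of the indecomposable $1$-morphisms and a proof that they are all either $\tau$-equivariantizable or in free $\tau$-orbits), and on the other a dimension count that is unavailable for $m\ge 3$. Indeed, the paper states essentially this bullet list immediately after Conjecture~\ref{conj:FSH}. Your proposed detour through Conjectures~\ref{conj:FSH} and~\ref{conj:itsthekernel} -- comparing $\Phi$ with Wenzl's homomorphism across the surjection $\tauKzero{\EBFoam}\twoheadrightarrow\tauKzero{\EBnFoam}$ and matching kernels -- is a sensible reformulation of the same open difficulty; note however that it does not dispense with the need to understand $\tauKzero{\EBFoam}$ itself (one still must show the quotient map on weighted Grothendieck groups has the expected kernel), so the essential obstruction you identified persists. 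In short, there is nothing wrong with your argument; you have, correctly, reproduced the partial progress the paper has and located the precise reason the statement is left conjectural.
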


We again discuss some evidence for this conjecture. 
The relations of Definition \ref{def:GK} can be transformed into the following statement.

\begin{prop}
	\label{prop:iSerreintro}
The elements $\nx_i^{(k)} \in U'_{-q^2}(\som)$ satisfy
\begin{equation}
	\label{eq:iSerreintro}
\begin{gathered}
\nx_i \nx_j = \nx_j \nx_i  \quad |i-j|>1 \, , \\
\nx_{i} \nx_{i\pm1} \nx_{i} 
	= \nx_{i}^{(2)} \nx_{i\pm1} + \nx_{i\pm1} \nx_{i}^{(2)} + [2] \nx_{i}^{(2)} + \nx_{i} \, .
\end{gathered}
\end{equation}
These relations (together with the definition of $\nx_i^{(2)}$) 
give a presentation for $U'_{-q^2}(\som)$.
\qed
\end{prop}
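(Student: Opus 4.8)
The plan is to show the two statements separately: first, that the given relations hold for the elements $\nx_i^{(k)}$; second, that they give a presentation. For the first part, the far-commutativity $\nx_i\nx_j=\nx_j\nx_i$ for $|i-j|>1$ is immediate from $b_ib_j=b_jb_i$ and the fact that $\nx_i$ is a polynomial in $b_i$. For the braid-like relation, I would substitute $b_i = \nx_i + \frac{1}{[2]}$ into the Gavrilik--Klimyk relation $b_i^2 b_{i\pm1} + b_{i\pm1} b_i^2 = [2] b_i b_{i\pm1} b_i + b_{i\pm1}$ from Definition \ref{def:GK} and expand. On the left one gets $(\nx_i^2 + \tfrac{2}{[2]}\nx_i + \tfrac1{[2]^2})b_{i\pm1} + b_{i\pm1}(\nx_i^2 + \tfrac{2}{[2]}\nx_i + \tfrac1{[2]^2})$, on the right $[2](\nx_i + \tfrac1{[2]})b_{i\pm1}(\nx_i+\tfrac1{[2]}) + b_{i\pm1}$; after also rewriting $b_{i\pm1}=\nx_{i\pm1}+\tfrac1{[2]}$ everywhere and cancelling, the relation should collapse to a relation purely among $\nx_i$, $\nx_{i\pm1}$. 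Then I would use the definition $\nx_i^{(2)}$ from \eqref{eq:idpb}, namely $\nx_i^{(2)} = \tfrac{1}{[2]^2}\big(\nx_i\nx_i - (-1)\,``[1][2]"\,\nx_i\big)$ — i.e. $\nx^{(1)}\nx = (-1)``[1][2]"\nx^{(1)} + (-1)``[2][2]"\nx^{(2)}$ with $``[2][2]"=[2]^2$ and $``[1][2]" = [2]$, so $\nx^{(2)} = \tfrac{1}{[2]^2}(\nx^2 + [2]\nx)$ — to replace occurrences of $\nx_i^2$ by $[2]^2\nx_i^{(2)} - [2]\nx_i$, and check that the resulting identity is exactly \eqref{eq:iSerreintro}. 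This is a finite, if slightly tedious, algebraic manipulation using only the quantum-integer identities for $[2]$; there is no conceptual difficulty, just bookkeeping.

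For the second part — that \eqref{eq:iSerreintro} together with the definition of $\nx_i^{(2)}$ presents $U'_{-q^2}(\som)$ — the cleanest route is to observe that the change of variables $b_i \leftrightarrow \nx_i$ is invertible over $\C(q)$ (one direction is $\nx_i = b_i - \tfrac1{[2]}$, the other $b_i = \nx_i + \tfrac1{[2]}$, and $[2]$ is invertible in $\C(q)$), so it induces an isomorphism of free algebras on the generators, and hence a bijection between two-sided ideals. Thus it suffices to check that the ideal generated by the Gavrilik--Klimyk relations corresponds, under this substitution, exactly to the ideal generated by the relations in \eqref{eq:iSerreintro} (after using the definition of $\nx_i^{(2)}$ to eliminate that auxiliary symbol, so that \eqref{eq:iSerreintro} becomes a genuine relation among the $\nx_i$ of the same degree as the original cubic relation). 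The computation in the first part shows one containment; the reverse containment is the same computation run backwards, since every step was an equivalence. Care is needed to confirm that no information is lost or added: the far-commutativity relations match up trivially, and the single cubic relation on each side is mapped to the single cubic relation on the other, so the ideals coincide.

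The main obstacle I anticipate is purely organizational: making sure the expansion of the cubic relation is carried out without error and that the substitution of $\nx_i^2 = [2]^2\nx_i^{(2)} - [2]\nx_i$ is applied consistently on both sides, since several terms of the form $\nx_i b_{i\pm1}\nx_i$, $\nx_i^2 b_{i\pm1}$, and $b_{i\pm1}\nx_i^2$ appear and the $\tfrac1{[2]}$-shifts interact. It may be cleanest to first record the intermediate identity obtained \emph{before} introducing $\nx_i^{(2)}$, namely $\nx_i^2\nx_{i\pm1} + \nx_{i\pm1}\nx_i^2 + [2](\nx_i\nx_{i\pm1}+\nx_{i\pm1}\nx_i) = [2]\nx_i\nx_{i\pm1}\nx_i + \text{(linear terms)}$, verify that it is equivalent to the Gavrilik--Klimyk relation, and only then substitute the definition of $\nx_i^{(2)}$ to arrive at \eqref{eq:iSerreintro}. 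Beyond that bookkeeping, the argument is routine, and the invertibility of the linear change of variables does all the real work for the presentation claim.
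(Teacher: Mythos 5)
Your overall plan is correct and essentially the same as the paper's (implicit) argument: the paper proves the identical identity for the images $\xx_i = \hh_i - \tfrac{1}{[2]}$ in Proposition~\ref{prop:iSerre} by substituting into the Gavrilik--Klimyk relation \eqref{E:H-satisfies-i-quantum-gp-Serre}, and that computation is purely formal in $b_i$ (resp.~$\hh_i$), so it applies verbatim to $\nx_i = b_i - \tfrac1{[2]}$ in $U'_{-q^2}(\som)$. The observation that $b_i \leftrightarrow \nx_i$ is an invertible affine change of variables over $\C(q)$, hence induces a bijection of two-sided ideals, is exactly the right way to upgrade the relation-check to a presentation claim.

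However, there is a concrete arithmetic error in the side remark that would derail the execution: you write $``[2][2]" = [2]^2$. By Definition~\ref{def:devil}, $``[2][2]" = [3]-[1] = q^2+q^{-2} = [2]_{q^2}$, which is \emph{not} $[2]^2 = q^2+2+q^{-2}$. (The paper's shorthand $``[n]^2"$ always means $``[n][n]"$, and by \eqref{E:devils-numbers-2} equals $[n]_{q^2}$.) There is also a sign dropped in solving the recursion \eqref{eq:idpb}: for $k=1$ one has $\nx\nx = -``[1][2]"\nx - ``[2][2]"\nx^{(2)}$, so the correct formula is
\[
\nx^{(2)} = \frac{-1}{[2]_{q^2}}\bigl(\nx^2 + [2]\,\nx\bigr)
\]
rather than the $\tfrac{1}{[2]^2}(\nx^2+[2]\nx)$ you wrote. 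Compare with \eqref{E:decat-serre-H^2}, which records exactly this. With the corrected $\nx^{(2)}$, the substitution argument you outline goes through; with your version, the coefficients will not match \eqref{eq:iSerreintro}, so the ``just bookkeeping'' step is precisely where care is needed.
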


In this context, we think of \eqref{eq:iSerreintro} 
as a variant on the usual quantum group Serre relations. 
We refer to it as the \emph{devil's Serre relation}.
Replacing $\nx_i$ with $\xx_i \in \End_{U_q(\son)}(S^{\otimes m})$ transforms
the devil's Serre relation into the ``Reidemeister III''-like 
relation \eqref{eq:iSerre} discussed above, 
so Conjecture \ref{conj:introKzero2} is again intertwined with 
Conjecture \ref{conj:FSH} by Wenzl's theorem.
As additional evidence for Conjecture \ref{conj:introKzero2}, 
Theorem \ref{T:iserre-in-twisted-K0} and Corollary \ref{cor:K0(Xm)}
provide a lift of the relations \eqref{eq:iSerreintro} 
to isomorphisms in the $\tau_n$-equivariant category of $\BFoam$
and establish the existence of the $\C(q)$-algebra homomorphism appearing 
in Conjecture \ref{conj:introKzero2}.

We make one further statement about the decategorified setting, 
inspired by categorification. 
Recall Wenzl's surjective homomorphism
\[ U'_{-q^2}(\som) \to \End_{U_q(\son)}(S^{\otimes m}). \]
We propose above that the left-hand side is categorified (in the equivariant sense) by $\BFoam$, 
and the right-hand side by $\EBnFoam$. 
The relationship between $\BFoam$ and $\EBnFoam$ is the projection to $\one_{\wtn}$ 
of the surjection
\[ \cUU_q(\glm) \to \cUU_q^{\leq 2n}(\glm) \, , \]
which categorifies the surjection from $\dU(\glm)$ to a particular Schur algebra. 
The object $\FEk_i$ is sent to the zero object in this quotient, 
for any $k > n$. 

Analogously, Wenzl's homomorphism satisfies
\[
\nx_i^{(k)}
\mapsto 
\begin{cases}
\xx_i^{(k)} & 1 \leq k \leq n \, , \\
0 & k > n \, ,
\end{cases}
\]
by e.g.~Remark \ref{rem:Xtoohigh} below. 
Denote by $I^{> n}\subset U_{-q^2}'(\som)$ the two sided ideal 
generated by the elements $\nx_i^{(n+1)}$ for $i=1,\ldots, m-1$. 
We propose that the ideal $I^{>n}$ is exactly the kernel of Wenzl's homomorphism.

\begin{conj} \label{conj:itsthekernel}
The algebra homomorphism in Theorem \ref{thm:Wenzl} induces an isomorphism
\[
U'_{-q^2}(\som)^{\le n} := U'_{-q^2}(\som)/I^{> n} 
\xrightarrow{\cong} \End_{U_q(\son)}(S^{\otimes m}) \, .
\]
\end{conj}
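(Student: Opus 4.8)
The plan is to prove Conjecture \ref{conj:itsthekernel} by a dimension count, reducing the surjection of Theorem \ref{thm:Wenzl} to an isomorphism after quotienting by $I^{>n}$. First I would observe that, since the homomorphism of \eqref{eq:Wenzlmap} is already surjective and factors through $U'_{-q^2}(\som)^{\le n}$ (because $\nx_i^{(n+1)} \mapsto \xx_i^{(n+1)} = 0$ by Remark \ref{rem:Xtoohigh}, as $\FEk_i$ is the zero object in $\cXXtn_q(\glm)$ for $k > n$), it suffices to show
\[
\dim_{\C(q)} U'_{-q^2}(\som)^{\le n} \leq \dim_{\C(q)} \End_{U_q(\son)}(S^{\otimes m}) \, .
\]
The right-hand side is computed by classical (or quantum) invariant theory for the spin representation: it equals $\sum_\mu (\dim \operatorname{Hom}(V_\mu, S^{\otimes m}))^2$, summed over the dominant weights $\mu$ appearing in $S^{\otimes m}$, and this is well understood (e.g.\ via the results of Wenzl \cite{Wenzl-Spin} and the non-classical representation theory of Remark \ref{rem:nonclassical}). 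So the content is an \emph{upper} bound on the dimension of $U'_{-q^2}(\som)^{\le n}$.

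For the upper bound I would use the presentation of $U'_{-q^2}(\som)^{\le n}$ by the devil's Serre relations of Proposition \ref{prop:iSerreintro} together with the vanishing relations $\nx_i^{(n+1)} = 0$. The strategy is to produce a spanning set of manageable size by a diamond-lemma / rewriting argument: the relation $\nx_i \nx_{i\pm1} \nx_i = \nx_i^{(2)}\nx_{i\pm 1} + \nx_{i\pm 1}\nx_i^{(2)} + [2]\nx_i^{(2)} + \nx_i$, read as a rewriting rule reducing the length-three ``braid-like'' word on the left, should let one push any monomial in the $\nx_i$ into a normal form analogous to a reduced-word basis. The vanishing relations $\nx_i^{(n+1)} = 0$ then impose, via \eqref{eq:idpb}, a bound on the ``$\nx_i$-degree'' appearing at each strand. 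Concretely, I expect the normal form to be indexed by the same combinatorial data that indexes a basis of $\End_{U_q(\son)}(S^{\otimes m})$ — e.g.\ sequences recording, for each $i$, which divided power $\nx_i^{(k)}$ with $0 \le k \le n$ occurs — so that the span has the right size on the nose. An alternative, cleaner route: use the categorified picture. By Theorem \ref{T:iserre-in-twisted-K0} and Corollary \ref{cor:K0(Xm)} the algebra homomorphism $U'_{-q^2}(\som) \to \tauKzero{\EBFoam}$ exists, and $\EBnFoam$ is by construction a quotient-type category of $\EBFoam$ in which $\FEk_i$ ($k>n$) becomes zero; if one knows $\tauKzero{\EBnFoam}$ has dimension matching $\End_{U_q(\son)}(S^{\otimes m})$ (the second bullet after Conjecture \ref{conj:FSH}), then combining the two Wenzl-intertwined triangles forces the kernel to be exactly $I^{>n}$.

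The main obstacle will be the upper bound on $\dim U'_{-q^2}(\som)^{\le n}$: the devil's Serre relation is not a standard Coxeter-type braid relation, so confirming that the naive rewriting terminates and is confluent (no unexpected collisions of critical pairs among overlapping length-three subwords, e.g.\ for $\nx_i \nx_{i\pm1}\nx_i\nx_{i\pm1}$ and in the $m \ge 4$ far-commutation cases) requires genuine care — this is precisely the kind of ``technical issue of a combinatorial nature'' flagged in the introduction, and is presumably why the statement is posed as a conjecture rather than a theorem. A secondary subtlety is controlling the interaction of the divided-power elements $\nx_i^{(k)}$, defined recursively by \eqref{eq:idpb}, with the Serre rewriting: one must check that imposing $\nx_i^{(n+1)} = 0$ really does truncate the normal form to $k \le n$ on every strand and does not secretly collapse further. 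If confluence can be established, the dimension count should close; if not, one falls back on the categorical route, whose own gap is the generation-and-dimension statement for $\tauKzero{\EBnFoam}$ listed as an open problem above.
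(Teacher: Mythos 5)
This is a conjecture, so there is no proof in the paper to compare against — only the sketch in Remark~\ref{rem:itsthekernelsketch}. Your outline is a reasonable alternative strategy but it is genuinely different from the paper's, and it demands more of the open combinatorics than the paper's sketch does. The paper's route is: (i) reduce to showing $U'_{-q^2}(\som)^{\le n}$ is \emph{finite-dimensional}, which would follow from Conjecture~\ref{conj:R3}; (ii) use complete reducibility of finite-dimensional $U'_{-q^2}(\som)$-modules (from \cite{KlimykReps}) to conclude that $U'_{-q^2}(\som)^{\le n}$ is semisimple, hence a direct sum of $\End(L)$'s; (iii) invoke Wenzl's classification \cite[Theorem 5.3(a)]{Wenzl-Spin} to match these simples $L$ to the summands of $S^{\otimes m}$; (iv) conclude by a double-commutant argument modeled on \cite[Theorem 4.4.1]{CKM}. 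Crucially, step (i) asks only for finiteness, not for an exact count — semisimplicity and Wenzl's classification then do the hard work of matching dimensions.

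Your proposal instead tries to prove the inequality $\dim U'_{-q^2}(\som)^{\le n} \le \dim \End_{U_q(\son)}(S^{\otimes m})$ directly by a diamond-lemma normal form. This is a legitimate (and arguably more elementary) strategy, but it is strictly stronger than what the paper's sketch requires: you would need the rewriting system not merely to terminate in a finite spanning set, but to produce a normal form whose cardinality matches the known dimension of the endomorphism algebra \emph{on the nose}. You correctly flag confluence as the obstacle, but the obstacle in your version is worse — a non-confluent rewriting system could still yield a finite spanning set (enough for the paper's sketch) while failing to give a sharp dimension bound (needed for yours). The semisimplicity step is exactly what lets the paper avoid this sharp-count requirement, and your outline does not use it. Your alternative categorified route via $\tauKzero{\EBnFoam}$ is again a third approach; the gap there (the generation-and-dimension statement for the weighted Grothendieck group) is also open, and the paper's sketch for Conjecture~\ref{conj:itsthekernel} deliberately avoids the categorified side entirely, staying within classical representation theory of the $\iota$quantum group.

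One smaller point worth tightening: you cite Remark~\ref{rem:Xtoohigh} for $\nx_i^{(n+1)} \mapsto 0$, but the reason $\FEk_i$ vanishes for $k > n$ is Lemma~\ref{lem:stillindecomp} (vanishing in the Schur quotient), and the corresponding decategorified fact $\xx^{(n+1)} = 0$ in $\End_{U_q(\son)}(S\otimes S)$ is Remark~\ref{rem:Xtoohigh}. Both justifications are available, but you should be clear which level you are working at: Wenzl's map $U'_{-q^2}(\som) \to \End_{U_q(\son)}(S^{\otimes m})$ lives entirely downstairs, so the relevant citation is Remark~\ref{rem:Xtoohigh}, not the categorified vanishing.
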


It is straightforward to verify this conjecture when $n=1$, 
by comparing the algebras with the Temperley--Lieb algebra.
One can also verify the case $n=2$ by direct calculation, 
using type $B_2 = C_2$ webs \cite{Kup,BERT}. 
For example, compare \eqref{eq:iSerreintro} to \cite[equation (5.56b)]{BERT}.

\begin{rem}
	\label{rem:itsthekernelsketch}
A consequence of the conjecture is that 
the algebra $U'_{-q^2}(\som)^{\le n}$ is a finite dimensional algebra. 
Assume that we knew that $U_{-q^2}'(\som)^{\le n}$ is finite dimensional. 
Since finite dimensional $U_{-q^2}'(\som)$-modules are completely reducible \cite{KlimykReps}, 
it would follow that $U_{-q^2}'(\som)^{\le n}$ is a finite direct sum of algebras $\End(L)$, 
where $L$ is a simple module for $U_{-q^2}'(\som)$ that is annihilated by $I^{>n}$.
This is the folded skew Howe duality analogue of \cite[Lemma 4.4.2]{CKM}. 
We expect that one can use \cite[Theorem 5.3(a)]{Wenzl-Spin} to show that 
each such $L$ is isomorphic to a direct summand of the $U_{-q^2}'(\som)$-module $S^{\otimes m}$, 
and that this exhausts all (isomorphism classes) of irreducible summands appearing in $S^{\otimes m}$. 
Then, Conjecture \ref{conj:itsthekernel} would follow by mimicking 
the proof of \cite[Theorem 4.4.1]{CKM}. 
\end{rem}

The takeaway is that the proof sketch in Remark \ref{rem:itsthekernelsketch} 
would prove Conjecture \ref{conj:itsthekernel} once one can show that 
$U_{-q^2}(\som)^{\le n}$ is finite dimensional.
The resolution of our Conjecture \ref{conj:R3} would imply finite-dimensionality
(so Proposition \ref{prop:R3} shows that we already 
know finite-dimensionality when $n=3$).

\subsection{Relationship to $\iota$quantum groups}
	\label{ss:iquantum}
	
The algebra $U'_q(\som)$ appearing in Theorem \ref{thm:Wenzl} 
is a special case of a so-called $\iota$quantum group.
The latter are algebras that arise in the theory of quantum symmetric pairs.
We refer the reader to Wang's 2022 ICM address \cite{WangICM} 
for comprehensive details and references to the vast body of work on this subject, 
and recall only the immediately pertinent information here.

Quantum symmetric pairs are parametrized 
by Satake diagrams \cite[Section 1.2]{WangICM}, 
which determine a semisimple Lie algebra $\mathfrak{g}$ 
and an involution $\theta$ of $\mathfrak{g}$. 
The pair $(\mathfrak{g}, \mathfrak{g}^{\theta})$ 
consisting of the Lie algebra and its fixed-point subalgebra 
is a (classical) symmetric pair. 
Analogously, a \emph{quantum symmetric pair} is a tuple
$(U_q(\mathfrak{g}), U^{\iota}_q(\mathfrak{g}^{\theta}))$
where $U_q^{\iota}(\mathfrak{g}^{\theta})$ is a 
(coideal) subalgebra of 
$U_q(\mathfrak{g})$, referred to as an $\iota$quantum group. 

\begin{rem}
In the quasi-split case\footnote{This means there are no 
``black dots'' on the Satake diagram.}, 
the Satake diagram is determined by a pair $(D, t)$, 
where $D$ is the Dynkin diagram associated to $\mathfrak{g}$, 
and $t$ is a Dynkin diagram involution\footnote{In \cite{WangICM} 
this Dynkin involution is denoted by $\tau$.},
see e.g.~\cite[Example 1.1]{WangICM}. 
A quasi-split pair is called split if $t = \id$
and, in this case, $\theta$ is the 
Chevalley involution which swaps generators $e_i$ and $f_i$, 
and acts by $-1$ on the Cartan subalgebra $\mathfrak{h}$.
\end{rem}

The algebra $U'_q(\som)$ can be identified with the $\iota$quantum group 
associated to the split symmetric pair $(\slm, \som)$; see~\cite[Remark 2.4]{Letzter}. 
In particular, there is an embedding $U'_q(\som) \to U_q(\mathfrak{sl}_m)$ given by 
$b_i\mapsto f_i+q^{-1}e_ik_i^{-1}$, which induces the isomorphism 
$U'_q(\som) \xrightarrow{\cong} U_q^{\iota}(\som)$. 
Henceforth, we identify $U_q'(\som)$ with $U_q^{\iota}(\som)$.

We now contrast our results with the future goals and recent results 
from the $\iota$quantum group literature. 
The punchline is that our constructions do \textbf{not} agree.
Hence, we are hopeful that the constructions in the present paper will 
shed new light on the theory of $\iota$quantum groups.

Using based modules for $U_q(\mathfrak{g})$ 
and the quasi $K$-matrix \cite[Theorem 3.1]{WangICM}, 
it is possible to construct the so-called $\iota$canonical basis 
in the restriction of a finite dimensional based $U_q(\mathfrak{g})$-module 
to $U^{\iota}_q(\mathfrak{g}^{\theta})$. 
Then, in analogy with Lusztig's construction of canonical bases for quantum groups
using tensor products of based modules, 
Bao and Wang \cite{BW-icanonical}, 
defined the $\iota$canonical basis in 
the modified (i.e.~idempotented) form of  
the $\iota$quantum group $U^{\iota}_q(\mathfrak{g}^{\theta})$. 

\begin{example}\label{Ex:A1-divided-powers}
In the special case of the split quantum symmetric pair 
corresponding to $(A_{1}, \id)$, 
the $\iota$canonical basis is computed explicitly in \cite[Example 3.3]{WangICM}. 
Here, there are two modified versions 
$\dU^{\iota}(\mathfrak{so}_2) 1_{\bar{0}}$ and $\dU^{\iota}(\mathfrak{so}_2) 1_{\bar{1}}$
of the $\iota$quantum group $U_q^{\iota}(\mathfrak{so}_2)$, 
where the parity corresponds to the parity of weights in irreducible 
$\sln[2]$ representations.

Denoting the generator of $\dU^{\iota}(\mathfrak{so}_2) 1_{\epsilon}$ 
by $b_{\epsilon}$, 
the $\iota$canonical basis consists of the $\iota$divided powers, 
which are given by
\begin{equation}\label{E:i-divided-powers}
b_{\epsilon}^{(0)} = 1 \, , \quad 
b_{\epsilon}^{(1)} = b_{\epsilon} \, , \quad \text{and} \quad 
b_{\epsilon}^{(k)}b_{\epsilon} = 
	[k+1]b_{\epsilon}^{(k+1)} + \delta_{\overline{k},\epsilon}[k]b_{\epsilon}^{(k-1)}
\end{equation}
where $\overline{k} = k \ \ourmod 2$.
\end{example}

From this example, one can see that the basis $b_{\epsilon}^{(k)}$ for $U^{\iota}_{-q^2}(\som[2])$ 
does \textbf{not} agree with our basis $\nx^{(k)}$ (nor is our basis dependent on parity). 
We henceforth refer to our elements $\{\nx^{(k)}\}$ 
as the \emph{devil's divided powers}.

\begin{rem}\label{R:Bao-Wang-approach}
Bao and Wang's $\iota$canonical basis for $U^{\iota}_q(\mathfrak{g}^{\theta})$ 
is characterized as the unique basis which is ``asymptotically compatible" 
with the $\iota$canonical basis of tensor products 
of highest weight $U_q(\mathfrak{g})$ modules \cite[Theorem G]{BW-icanonical}. 
The $\iota$canonical basis of a based $U_q(\mathfrak{g})$ module 
is defined using the quasi $K$-matrix \cite[Theorem E]{BW-icanonical},
which is an element of the larger ambient quantum group 
$U_q(\mathfrak{g})$ \cite[Proposition C]{BW-icanonical}.

It would be very interesting to give an algebraic description 
of the devil's divided power basis obtained from our approach 
to the categorification of the $\iota$quantum group $\iqUsom$.
One might imagine defining an $\iota$canonical basis for 
non-classical representations from Remark \ref{rem:nonclassical}, 
then describing the $\iota$canonical basis of $\iqUsom$ 
as the elements which are compatible with the $\iota$canonical basis 
in the non-classical representations. 
However, since non-classical representations are not restricted from $U_{-q^2}(\mathfrak{sl}_m)$,
it is unclear how to mimic Bao and Wang's construction. 
\end{rem}

Although Bao--Wang's theory of $\iota$canonical bases 
is developed for all Satake diagrams, 
until recently
there was only one quantum symmetric pair admitting a categorification: 
the quasi-split quantum symmetric pair corresponding to $(A_{2n}, t)$, 
where $t$ is the non-trivial Dynkin diagram automorphism \cite{BSWW}. 
In very recent work \cite{brundan2023nilbrauercats, brundan2023nilbrauer}, 
Brundan, Wang, and Webster 
found an approach to categorifying the based algebras 
in Example \ref{Ex:A1-divided-powers} using the nilBrauer category.
Evidently, 
our equivariant categorification of the $(A_1,\id)$ case is not 
na\"{i}vely related to the nilBrauer category, 
as they categorify different bases of the same algebra. 
The resolution to the problem posed in Remark \ref{R:Bao-Wang-approach} 
may clarify whether there is a categorical connection with the nilBrauer category. 

In another direction, 
recall that the Drinfeld--Jimbo quantum groups possess 
quantum Weyl group elements \cite[Chapter 8]{JantzenQgps}
that generate an action of the corresponding braid group 
on finite-dimensional representations.
Similarly, is is expected that there exists an $\iota$quantum Weyl group 
corresponding to the relative Weyl group of the associated Satake diagram. 
There has been extensive progress defining 
the $\iota$quantum analogue of the braid group action on 
$\iota$quantum groups themselves \cite[Section 7]{WangICM}. 
Further, there is a proposal for the $\iota$quantum Weyl group elements 
in $U_q^{\iota}(\mathfrak{so}_2)$ given in \cite[Section 16.3]{zhang-thesis}. 
These elements would act on modules for the $\iota$quantum group 
so that the $\iota$quantum braid group action on the $\iota$quantum group 
is ``conjugation" by the $\iota$quantum Weyl group elements. 

In our setting, the decategorification of the equivariant Rickard complexes 
provide natural candidates for the $\iota$quantum Weyl group elements. 
We propose the following.

\begin{defn}
The \emph{devil's quantum Weyl group generators} in $\iqUsom$
are the formal expressions
\[
\iQW_i := \sum_{k\ge 0} q^{-k} \nx_i^{(k)}
\]
for $1 \leq i \leq m-1$.
\end{defn}

As a consequence of Theorem \ref{thm:introbraiding}, 
the elements $\iQW_i$ act invertibly in any representation of $\iqUsom$ 
that contains $I^{> N}$ in its kernel for some $N > 0$. 
One can combine Theorem \ref{thm:introbraiding}, equation \eqref{eq:CvsX},
and Wenzl's Theorem \ref{thm:Wenzl} to yield the following.

\begin{thm}[Theorem \ref{T:iqWeyl=typeBbraid}]
	\label{T:enhanced-Wenzl}
The surjective algebra homomorphism in Theorem \ref{thm:Wenzl} is such that 
\[
\iQW_i \mapsto 
	q^{-n/2} \id_{S}^{\otimes (i-1)} \otimes R_{S,S} \otimes \id_{S}^{\otimes (m-i-1)} \, .
\]
\end{thm}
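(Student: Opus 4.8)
The plan is to assemble Theorem~\ref{T:enhanced-Wenzl} from three ingredients that are all available to us by this point: the formula \eqref{eq:introbraiding} for the braiding in terms of the basis $\{\xx^{(k)}\}$, the change-of-basis identity \eqref{eq:CvsX} relating $\xx$ to Wenzl's element $C$, and Wenzl's Theorem~\ref{thm:Wenzl}. First I would recall that the devil's quantum Weyl group generator is by definition $\iQW_i = \sum_{k \ge 0} q^{-k}\nx_i^{(k)}$, where $\nx_i = b_i - \tfrac{1}{[2]}$ and the higher $\nx_i^{(k)}$ are the devil's divided powers of Definition~\ref{def:idpb}. Wenzl's homomorphism \eqref{eq:Wenzlmap} sends $b_i \mapsto \id_S^{\otimes i-1} \otimes C \otimes \id_S^{\otimes m-i-1}$, hence $\nx_i = b_i - \tfrac{1}{[2]} \mapsto \id_S^{\otimes i-1} \otimes (C - \tfrac{1}{[2]}\id_{S\otimes S}) \otimes \id_S^{\otimes m-i-1} = \xx_i$, using \eqref{eq:CvsX}. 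So the first step is to check that Wenzl's homomorphism carries $\nx_i^{(k)}$ to $\xx_i^{(k)}$ for all $k$ (and to $0$ once $k>n$).

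The key point for that step is that the recursions defining $\{\nx^{(k)}\}$ in \eqref{eq:idpb} and $\{\xx^{(k)}\}$ in \eqref{eq:xxintro2} are \emph{identical}: both read $y^{(k)} y = (-1)^k ``[k][k+1]" y^{(k)} + (-1)^k ``[k+1][k+1]" y^{(k+1)}$ with $y^{(0)}=\id$ and $y^{(1)} = y$. Since Wenzl's map is an algebra homomorphism sending $\nx_i$ to $\xx_i$ and these divided powers are defined by the same polynomial recursion in the respective generators, an easy induction on $k$ gives $\nx_i^{(k)} \mapsto \xx_i^{(k)}$. For $k > n$ one invokes Remark~\ref{rem:Xtoohigh} (cited in the text), which says $\xx_i^{(k)} = 0$ in $\End_{U_q(\son)}(S^{\otimes m})$ when $k>n$; so the image of $\iQW_i$ is the \emph{finite} sum $\sum_{k=0}^n q^{-k} \xx_i^{(k)}$ localized in the $i$-th tensor factor.

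The final step is then purely bookkeeping with the braiding formula. By Theorem~\ref{thm:introbraiding}, $R_{S,S} = q^{n/2} \sum_{k=0}^n q^{-k}\xx^{(k)}$, so $\sum_{k=0}^n q^{-k}\xx^{(k)} = q^{-n/2} R_{S,S}$; tensoring on the $i$-th factor with identities gives
\[
\iQW_i \;\longmapsto\; \sum_{k=0}^n q^{-k}\, \id_S^{\otimes i-1} \otimes \xx^{(k)} \otimes \id_S^{\otimes m-i-1} \;=\; q^{-n/2}\, \id_S^{\otimes i-1} \otimes R_{S,S} \otimes \id_S^{\otimes m-i-1},
\]
which is exactly the claimed formula. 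In particular, since $R_{S,S}$ is invertible, this also reproves that each $\iQW_i$ acts invertibly in any representation killed by $I^{>N}$, as remarked just before the theorem.

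I do not anticipate a serious obstacle here: the theorem is essentially a formal consequence of results already proven (Theorems~\ref{thm:Wenzl} and \ref{thm:introbraiding}, equation \eqref{eq:CvsX}, Remark~\ref{rem:Xtoohigh}). The only place requiring a small amount of care is matching the two divided-power recursions and confirming that the devil's arithmetic quantities $``[k][k+1]"$ and $``[k+1][k+1]"$ appearing in \eqref{eq:idpb} and \eqref{eq:xxintro2} are literally the same Laurent polynomials — but this is guaranteed by the way \S\ref{s:typeB} introduces them uniformly. So the ``hard part,'' such as it is, is simply being scrupulous that no sign or shift discrepancy creeps in between the $\nx$-side and the $\xx$-side of Wenzl's homomorphism; once that is verified the proof is two lines.
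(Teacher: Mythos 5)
Your proof is correct and in substance follows the paper's: both rest on the identity $\xx = C - \tfrac{1}{[2]}\id$ (which is $\hh = C$, Proposition \ref{P:CequalsH1}), on the observation that the divided-power recursions \eqref{eq:idpb} and \eqref{eq:xidefn} have the same shape so that an algebra homomorphism carrying $\nx_i \mapsto \xx_i$ automatically carries $\nx_i^{(k)} \mapsto \xx_i^{(k)}$, and on the braiding formula of Proposition \ref{P:decat-spin-rickard} (equivalently Theorem \ref{thm:X}) for the final rewrite. The one genuine difference is the direction of the first step: you take Wenzl's homomorphism as a black box and \emph{compute} that it sends $\nx_i \mapsto \xx_i$ via \eqref{eq:CvsX}, whereas the paper first \emph{builds} an algebra homomorphism $\nx_i \mapsto \xx_i$ directly from Lemma \ref{L:H-satisfies-i-quantum-gp-Serre}, and only afterwards identifies it with Wenzl's map by invoking $\hh = C$. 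The paper's ordering simultaneously re-proves the existence (not the surjectivity) of Wenzl's homomorphism, which the remark following the theorem flags as a byproduct; your route does not supply that extra information but is perfectly adequate for the statement at hand.
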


\begin{rem}
One can view Theorem \ref{T:enhanced-Wenzl} as 
a type $B$ analogue of a well-known family of results in type $A$ 
that relate the quantum Weyl group elements with the $R$-matrix via Howe duality; 
see \cite[Theorem 6.5]{Valerio-qWeyl} and \cite[Corollary 6.2.3]{CKM}.
\end{rem}

We now outline the remainder of the paper.
First, we point out that the majority of our results
(e.g.~all those explicitly stated thus far)
are contained in:
\begin{itemize}
\item Section \ref{s:typeB}, where we study quantum $\son$ representation theory,
\item Section \ref{s:inv}, where we introduce an involution on a $2$-subcategory of 
	categorified quantum $\glm$,
\item Section \ref{s:ECQG}, where we study the equivariant category for this involution, and
\item Section \ref{s:SLH}, where we define and study our link homology.
\end{itemize}
Each of these sections is immediately preceded by a section developing 
pertinent background 
(on quantum groups in \S \ref{s:BQG}, 
on their categorifications in \S \ref{s:BCQG},
on decompositions/equivariant categories in \S \ref{sec:equiv}, 
and on type $A$ link homology in \S \ref{s:LH}).
The reader comfortable with this background may focus on the even-numbered 
sections and backfill the background material as needed.
(That said, some of our treatment of the background is novel, 
and these sections do contain some new concepts/results.)
We begin in \S \ref{s:Conv} with our categorical conventions.

\subsection*{Acknowledgements}
We thank
William Ballinger,
Jon Brundan,
Lily Gergle,
Jiuzu Hong,
Peter McNamara,
Alistair Savage,
and
Haihan Wu
for helpful discussions.
We also thank our once (and hopefully future) 
collaborator Logan Tatham, 
as aspects of this work
grew out of our previous collaboration with him.

\subsection*{Funding}

E.B. was partially supported by 
the NSF MSPRF-2202897
and the University of Oregon's Lokey Doctoral Fellowship.
B.E. was partially supported by
NSF grants DMS-2201387 and DMS-2039316.
D.R. was partially supported by 
NSF CAREER grant DMS-2144463 
and Simons Collaboration Grant 523992.

\section{Conventions}
	\label{s:Conv}

\subsection{The base ring $\K$}
	\label{ss:K}

In this paper we use constructions at two different categorical levels. 
Downstairs, 
we have structures at the categorical level of traditional representation theory,
e.g.~the quantum group $U_q(\mathfrak{g})$, 
its category of finite-dimensional representations $\Rep(U_q(\mathfrak{g}))$, 
and algebras of intertwiners (endomorphism spaces in $\Rep(U_q(\mathfrak{g}))$).
Upstairs, 
we have structures arising in categorical representations theory, 
e.g.~the categorified quantum group $\UU_q(\glm)$ and related subcategories and quotients.

Downstairs\footnote{All the algebras downstairs can be defined over more general rings. 
The resulting algebras have interesting modular representation theory, 
as well as connection to $3$-manifold invariants. 
However, the typical approach to categorification
(realizing the algebra as the Grothendieck group of an additive/triangulated category) 
means that downstairs the algebras are always in characteristic zero and generic $q$.}, 
our choice of scalars will not be particularly significant, provided the variable $q$ is generic. 
Typically, we will use $\C(q)$ or at times $\C(q^{\frac{1}{d}})$ for an appropriate integer $d$;
the essential feature here is that some constructions 
in the representation theory of $U_q(\mathfrak{g})$ 
require the invertibility of certain quantum integers 
and the existence of certain fractional powers of $q$. 
(E.g.~in type $B$, we require $[2]^{-1}$ and $q^{\frac{1}{2}}$.)

On the other hand, 
there are more subtleties for our choice of base ring upstairs, 
which we henceforth always denote by $\K$.
The categorified quantum group $\UU_q(\glm)$ will be a $\K$-linear category 
and our categorified link invariants will be $\K$-modules.
The reader who so desires may set $\K = \C$ and ignore any further technical discussion. 
However, there is general interest both in developing structures in categorical representation theory 
and in defining link homologies integrally (in the present setting, $\Z[\frac{1}{2}]$ is more appropriate) 
or over other fields. 
This leads to a number of technicalities, 
due both to subtleties when dealing with equivariant categories and 
to the choice of $\K$ in the existing literature.

We now mention some of these technicalities.
The category $\UU_q(\glm)$, which plays a central role for us upstairs, 
can be defined for any commutative ring $\K$.
However, some common assertions about $\UU_q(\glm)$ 
(e.g.~that certain objects are indecomposable) 
assume that $\K$ is an integral domain or is local.
Additionally, most literature concerning the Grothendieck group of $\UU_q(\glm)$ 
assumes that $\K$ is a field (however, note the exception \cite{KLMS}).  
Further, when dealing with equivariant categories associated with involutions, 
it will be essential that $2$ is invertible in $\K$.
Finally, the results in the literature that construct braid group actions in the 
context of categorical representation theory are generally only proved under the assumption 
that $\K$ is a field (although it is folklore that they hold integrally).

Although the goal at the outset of this project was to define the titular spin link homology, 
along the way we established results in categorical representation theory that we 
believe are of independent interest. 
Given that our results speak to a number of different audiences, 
we have attempted to work (and to set up for future work) 
over as general of a commutative ring $\K$ as possible.
We have attempted (and hopefully succeeded!) to clearly indicate what 
is being assumed about $\K$ in each section of the paper, 
and we often re\"{e}mphasize what is being assumed of $\K$ 
in most of our ``major'' results.
In particular, much of our work concerning the categorified quantum group $\UU_q(\glm)$ 
is done over a general integral domain,
as are our decategorification results when $m=2$. 
When considering equivariant categories associated with involutions, 
we will additionally impose the condition that $2$ is invertible 
(in order to diagonalize $\Z/2$-representations).
Finally, we will assume that $\K$ is a field (in which $2 \neq 0$)
for certain decategorification results/conjectures,
when dealing with the categorified braiding, 
and ultimately when defining our categorified link invariant.

To the last point, 
although we work over a field when defining spin link homology, 
the construction can in principle be carried out over the ring $\K = \Z[\frac{1}{2}]$.
This would require:
\begin{enumerate}
	\item A proof that the (bounded) Rickard complexes appearing 
		in Definition \ref{def:Rickard} are invertible over $\Z$,
	\item a proof that the Rickard complexes braid over $\Z$, and
	\item a subsequent adaptation of the proofs of 
		Propositions \ref{prop:SLHM1}, \ref{prop:SLHM2+}, and \ref{prop:SLHM2-}
		to the integral setting.
\end{enumerate}
We note that the first two items above are generally accepted folklore in the link homology community, 
but (to our knowledge) a proof\footnote{We encourage someone to fill this gap in the literature.} 
has not appeared in the literature. 

Meanwhile, we can say the most about the Grothendieck group of various ($2$-)categories considered
under additional assumptions on $\K$.
Assuming $\K$ is a field of characteristic zero, 
we can use results of Webster \cite{Web4,WebSchur} which state that $\UU_q(\glm)$ is mixed. 
Further, when $\K$ is algebraically closed, 
we can classify the indecomposable objects in the equivariant category of a mixed category.
It is under these assumptions that we can say the most about weighted Grothendieck groups, 
thus we expect the decategorification conjectures (for $m > 2$) 
in Sections \ref{ss:introinter} and \ref{ss:catGK} to be most-easily accessible under this assumption.
Nevertheless, we are able to establish some decategorification results over more general $\K$.

\subsection{Categorical conventions}
	\label{ss:conv}

We now record our conventions for various categorical constructions 
which take place ``upstairs,'' in the terminology of \S \ref{ss:K}. 
We state these results for categories, but remark that all constructions in this section extend to 
$2$-categories by applying them to the constituent $\Hom$-categories.
For us, ``$2$-category'' will always mean weak $2$-category 
(also called bicategories in the literature),
although some that appear will be strict.
As mentioned above, the base ring for these constructions is $\K$.

\subsubsection{(Graded) linear categories}

A $\K$-linear category will mean a category enriched in $\K$-modules;
such a category is additive if and only if it admits all finite 
coproducts, which are necessarily biproducts. 
We refer to biproducts as \emph{direct sums}. 
We can always pass from a $\K$-linear category to an additive $\K$-linear category 
by formally adjoining finite direct sums. 
Even in a $\K$-linear category which is not additive we can discuss direct sums, 
which may or may not exist.

Analogously, a $\Z$-graded $\K$-linear category is a category 
enriched in the category of $\Z$-graded $\K$-modules 
(which itself has morphisms the \emph{degree-zero} linear maps).
Throughout, we will denote various grading shift functors 
(that are not of a homological nature) by powers of $\qsh$.
For example, given a $\Z$-graded $\K$-module $V$ and $k \in \Z$, 
$\qsh^k V$ denotes the $\Z$-graded $\K$-module which, 
in degree $m$, agrees with $V$ in degree $m - k$.
Given a finitely generated free graded $\K$-module
(e.g.~a finite-dimensional graded vector space), 
we let $\qdim(V) \in \Zge[q^{\pm}]$ denote its graded dimension.
Our conventions therefore imply that $\qdim(\qsh^k V) = q^k \qdim(V)$.

The following standard construction allows to pass from a $\Z$-graded $\K$-linear category 
to an additive $\Z$-graded $\K$-linear category that is equipped with a grading shift autoequivalence.

\begin{defn}
	\label{def:Zadd}
Let $\Cat$ be a $\Z$-graded $\K$-linear category. 
The \emph{$\Z$-additive closure} of $\Cat$ is the category whose objects are formal 
expressions $\bigoplus_{i \in I} \qsh^{k_i} X_i$, where $I$ is a finite set, 
$k_i \in \Z$, and $X_i$ are objects of $\Cat$. 
Morphisms are given by matrices:
\[
\Hom \Big(\bigoplus_{i \in I} \qsh^{k_i} X_i , \bigoplus_{j \in J} \qsh^{\ell_j} Y_j \Big)
:= \bigoplus_{i \in I, j \in J} \qsh^{\ell_j - k_i} \Hom(X_i,Y_j) \, .
\]
\end{defn}

Next, for an additive category $\Cat$
we let $\Kzero{\Cat}$ denote its (split) Grothendieck group.
This is the quotient of the free abelian group 
generated by isomorphism classes $[X]$ of objects $X \in \Cat$ 
by the relation $[X \oplus Y] = [X] + [Y]$.
An additive category is \emph{Krull--Schmidt} if each object can be 
decomposed as a finite sum of objects with local endomorphism rings
(graded local, in the event that $\Cat$ is graded). 
In such a category, 
$\Kzero{\Cat}$ is free abelian with basis the classes of non-isomorphic 
indecomposable objects.
If $\Cat$ is $\Z$-graded and is equipped with a grading shift autoequivalence 
(e.g.~if we're in the setting of Definition \ref{def:Zadd}), 
we can endow $\Kzero{\Cat}$ with the structure of a $\Z[q^{\pm}]$-module 
via the relation $q [X] := [\qsh X]$.

A pre-additive category $\Cat$ is \emph{Karoubian} if every idempotent 
endomorphism in $\Cat$ splits.
Any Krull--Schmidt category is Karoubian, 
and in categories enriched in (graded) finitely dimensional $\K$-vector spaces (over a field $\K$), 
Karoubian implies Krull--Schmidt; see e.g.~\cite[Theorem 11.53]{soergelbook}.
If $\Cat$ is not necessarily Karoubian, 
we can pass to its \emph{Karoubi envelope} $\Kar(\Cat)$ 
wherein objects are idempotent endomorphisms. 
The category $\Kar(\Cat)$ is always Karoubian, 
and is equivalent to $\Cat$ in the event that $\Cat$ is itself Karoubian.

\subsubsection{Categories of complexes}

Next, we establish our conventions for homological algebra.
We emphasize at the outset that all complexes considered in this paper will be bounded.

Given a $\K$-linear category $\Cat$, we let $\Cat[\tsh^\pm]$ denote the category of 
(cohomologically indexed) finite sequences in $\Cat$. 
Explicitly, objects are sequences $X = (X_i)_{i \in \Z}$ with $X_i = 0$ for all but finitely many $i \in \Z$, 
and morphisms are given by
\[
\Hom_{\Cat[\tsh^\pm]}(X,Y) := \bigoplus_{k \in \Z} \Hom^k_{\Cat[\tsh^\pm]}(X,Y)
\, , \quad 
\Hom^k_{\Cat[\tsh^\pm]}(X,Y) := \prod_{i \in \Z} \Hom^k_{\Cat}(X_i,Y_{i+k}) \, .
\]
This category is enriched in $\Z$-graded $\K$-modules.
By slight abuse of notation, we will denote objects in $\Cat[\tsh^\pm]$ by $\bigoplus_{i \in \Z} \tsh^i X_i$. 
Here, in contrast to Definition \ref{def:Zadd}, we view the grading (shift) $\tsh$ as having a homological nature.

\begin{defn}\label{D:cats-of-complexes}
Let $\Cat$ be a $\ak$-linear category.
A bounded \emph{chain complex} over $\Cat$ is a pair $(X, d_X)$ with $X \in \Cat[\tsh^\pm]$ 
and $d_X \in \End^1_{\Cat[\tsh^\pm]}(X)$ such that $d_X^2=0$. 

The \emph{dg category of bounded chain complexes} over $\Cat$ is the category $\dgCat$ 
with objects bounded chain complexes $(X,d_X)$ over $\Cat$ and morphism spaces 
the complexes of $\K$-modules
\begin{equation}
	\label{eq:dgCatHom}
\Hom_{\dgCat}\big( (X,d_X), (Y,d_Y) \big) 
:= \Big( \bigoplus_{k \in \Z} \tsh^k \Hom^k_{\Cat[\tsh^\pm]}(X,Y) , D \Big)
\end{equation}
where the differential $D$ is defined by 
$D(f) = d_Y\circ f - (-1)^{|f|}f\circ d_X$.
\end{defn}

One benefit of working with $\dgCat$ is that
other familiar categories of chain complexes are easily recovered from it, 
so it provides a unified setting for studying homological algebra.
The (usual) category $\ChCat$ of bounded chain complexes has the same objects as $\dgCat$ 
and morphisms
\[
\Hom_{\ChCat}\big( (X,d_X), (Y,d_Y) \big) := 
	\ker\big( D \colon \Hom^0_{\Cat[\tsh^\pm]}(X,Y) \to \Hom^1_{\Cat[\tsh^\pm]}(X,Y) \big),
\]
while the homotopy category $\hCat$ of bounded chain complexes has the same objects as $\dgCat$ 
and morphisms given by zeroth homology:
\[
\Hom_{\hCat}\big( (X,d_X), (Y,d_Y) \big) := H^0 \big( \Hom_{\dgCat}\big( (X,d_X), (Y,d_Y) \big)  \big) \, .
\]
It follows that $\ChCat$ is a (non-full) subcategory of $\dgCat$, 
while $\hCat$ is a quotient of $\ChCat$.

Further unpacking the definitions, we see that morphisms in $\ChCat$ are \emph{chain maps}: 
morphisms $f \in \Hom^0_{\Cat[\tsh^\pm]}(X,Y)$ such that $d_Y \circ f = f \circ d_X$.
Similarly, morphisms in $\hCat$ are homotopy classes of chain maps: 
the quotient of the space of chain maps by those that are \emph{null-homotopic}, 
i.e.~those that can be written as $D(h) = d_Y \circ h + h \circ d_X$ for $h \in \Hom^{-1}_{\Cat[\tsh^\pm]}(X,Y)$.
We will denote the corresponding equivalence relation on chain maps by $\sim$.

A morphism $f \in \Hom^0_{\dgCat}\big( (X,d_X), (Y,d_Y) \big)$ in $\dgCat$ is called a 
\emph{homotopy equivalence} provided there exists $g \in \Hom^0_{\dgCat}\big( (Y,d_Y) , (X,d_X) \big)$
such that $\id_X \sim g \circ f$ and $\id_Y \sim f \circ g$. 
We will write $(X,d_X) \simeq (Y,d_Y)$ if there exists a homotopy equivalence between these complexes, 
and refer to such complexes as \emph{homotopy equivalent}.
Note that homotopy equivalent complexes are isomorphic in $\hCat$. 
In the case that $\Cat$ is abelian, we let $\dCat$ denote the \emph{bounded derived category} 
of $\Cat$, which is the localization of $\hCat$ at the class of quasi-isomorphisms.

Finally, note that we can consider $\dgCat$ in the event that $\Cat$ is itself $\Z$-graded.
In this case, the $\Hom$-spaces \eqref{eq:dgCatHom} are complexes of $\Z$-graded 
$\K$-modules, hence are $\Z \times \Z$-graded, 
while the $\Hom$-spaces in $\ChCat$ are only $\Z$-graded (by $\qsh$-degree).
We require homotopy equivalences and quasi-isomorphisms to have $\qsh$-degree zero, 
so this $\Z$-grading is inherited by $\hCat$ and $\dCat$.

\subsubsection{Over a field}

When $\K$ is a field, 
we denote the category of $\K$-vector spaces by $\Vect_\K$.
Given an abelian group $\Gamma$, we denote by $\Vect_\K^\Gamma$ the category of 
$\Gamma$-graded vector spaces and degree-zero linear maps.

We let $\sVect_{\K}$ denote the category of super vector spaces. 
The objects in this category are $\Z/2$-graded vector spaces 
and morphisms are degree-zero linear maps.
As $\K$-linear monoidal categories, 
$\sVect_{\K}$ and $\Vect_\K^{\Z/2}$ are indistinguishable, 
but $\sVect_{\K}$ has a non-trivial symmetric monoidal structure.
We refer to the categorical dimension\footnote{Computed in a general rigid symmetric monoidal category 
as the scalar multiple of the identity in the following composition of the braiding and the 
evaluation/coevaluation: $\one\rightarrow X\otimes X^*\rightarrow X^*\otimes X\rightarrow \one$.} 
computed in the symmetric monoidal category $\sVect_{\K}$ as the \emph{super dimension}. 
The latter is characterized by 
an even line having super dimension $+1$ and an odd line having super dimension $-1$.

In $\sVect_{\K}$, we will denote grading shift (in super degree) by $\ssh$, 
since we also consider $\Gamma$-graded super vector spaces $\sVect_\K^\Gamma$
and wish to reserve $\qsh$ to denote grading shift in that setting.
The super dimension 
of a $\Z$-graded super vector space $V = V_0 \oplus V_1$ 
is equal to 
\begin{equation}
	\label{eq:svsdim}
\dim_\qsh(V) = \dim_\qsh(V_0) - \dim_\qsh(V_1) \, .
\end{equation}
Note that we therefore use $\qdim$ to denote both the graded dimension 
of a graded vector space and 
the graded super dimension of a graded super vector space.
There is no cause for confusion, 
since we can view vector spaces as super vector spaces whose odd degree part is zero.

%
\section{Background on quantum groups and link invariants}
	\label{s:BQG}
%

\subsection{The quantum group}
	\label{ss:QG}
	
We recall background on the quantum group $U_q(\mathfrak{g})$ 
associated to a finite-dimensional simple complex Lie algebra $\mathfrak{g}$, 
for the purpose of establishing conventions and notation. 
Given such a Lie algebra $\mathfrak{g}$, 
there is an associated root system $\Phi(\mathfrak{g})$ and we let 
\[
\{ \ee_i \}_{i \in I} \subset \Phi_+(\mathfrak{g}) \subset \Phi(\mathfrak{g})
\]
denote the subsets of simple roots and positive roots, respectively.
The simple roots are indexed by the finite set $I$ of nodes in the 
corresponding Dynkin diagram,
and the cardinality $| I |$ of this subset is the rank of $\mathfrak{g}$.
If $W_{\mathfrak{g}}$ is the corresponding Weyl group (generated by reflections in the root hyperplanes), 
then we denote the standard $W_{\mathfrak{g}}$-invariant inner product on 
$\ee,\ee' \in \R \Phi(\mathfrak{g})$ by $(\ee , \ee')$, 
which is uniquely determined by the condition that $(\ee , \ee) = 2$ 
when $\ee \in \Phi(\mathfrak{g})$ is a root of minimal length (i.e.~a short root).
For $i,j \in I$, we will occasionally abbreviate by writing $i \cdot j := (\ee_i,\ee_j)$.
The coroot associated to a root $\alpha$ is $\ee^{\vee} := \frac{2}{(\ee , \ee)} \ee$, 
therefore the Cartan matrix $(a_{ij})_{i,j \in I}$ is given by
\[
a_{ij} := (\ee_i^{\vee} , \ee_j) 
= 2 \frac{(\ee_i ,\ee_j)}{(\ee_i , \ee_i)}
=2 \frac{i \cdot j}{i \cdot i} \, .
\]
Write $\rho,\rho^{\vee}\in \R\Phi(\mathfrak{g})$ to denote the elements such that $(\rho, \alpha_i^{\vee})=1$ and $(\rho^{\vee}, \alpha_i) = 1$, for all $i\in I$.

Let $q$ be an indeterminate.
Given $i \in I$ and $m \in \Zge$, set 
\[
q_i := q^{\frac{(\ee_i , \ee_i)}{2}} \, , \quad
[m]_i := [m]_{q_i} := \frac{q_i^m- q_i^{-m}}{q_i- q_i^{-1}}
\]
so if $\ee_i$ is a short simple root then $[m]_i$ agrees 
with the usual quantum integer $[m] := \dfrac{q^m - q^{-m}}{q-q^{-1}}$.

\begin{defn}
	\label{quantumgroupconventions}
Let $U_q(\mathfrak{g})$ be the $\C(q)$-algebra 
generated by elements 
$e_i, f_i, k_i^{\pm 1}$ for $i \in I$, subject to the following relations:
\begin{enumerate}
\item $k_i k_i^{-1} = 1 = k_i k_i^{-1}$,
\item $k_i k_j = k_j k_i$,
\item $k_i e_j= q^{(\ee_i , \ee_j)} e_j k_i$,
\item $k_i f_j= q^{-(\ee_i , \ee_j)}f_j k_i$,
\item $e_i f_i = f_i e_i + \dfrac{k_i - k_i^{-1}}{q_i - q_i^{-1}}$,
\item $e_i f_j = f_j e_i$ if $i\ne j$, and
\item the ``quantum Serre relations'' (of e.g.~\cite[4.3 (R5) and (R6)]{JantzenQgps}).
\end{enumerate}
\end{defn}

Define the divided powers by $e_i^{(a)} := \dfrac{e_i^a}{[a]_i !}$ and $f_i^{(a)} := \dfrac{f_i^a}{[a]_i !}$.  

The $\C(q)$-algebra $U_q(\mathfrak{g})$ is a Hopf algebra and, 
following the conventions in \cite{CP,ST},
the structure maps are given on generators as follows:
\begin{enumerate}
\item $\Delta(e_i) = e_i \otimes k_i + 1\otimes e_i$, 
	$\Delta(f_i) = f_i\otimes 1 + k_i^{-1}\otimes f_i$, 
	and $\Delta(k_i^{\pm}) = k_i ^{\pm}\otimes k_i^{\pm}$.
\item $\mathbf{S}(e_i) = -e_i k_i^{-1}$, 
	$\mathbf{S}(f_i) = - k_i f_i$, 
	and $\mathbf{S}(k_i^{\pm})= k_i^{\mp}$.
\item $\epsilon(e_i)= 0$, 
	$\epsilon(f_i) = 0$, 
	and $\epsilon(k_i) = 1$.
\end{enumerate}

In the present paper, we will be interested in the following quantum groups. 
In both cases, we let $\{ \epsilon_i \}_{i=1}^{N}$ be the standard basis for the 
vector space $\R^N$.

\begin{example}
	\label{ex:slm}
Let $\mathfrak{g} = \sln$, the simple Lie algebra of type $A_{N-1}$. 
In this case $I= \{1,\ldots,N-1\}$ and $\Phi(\sln)$ is the root system with simple roots: 
\[
\{\ee_i = \epsilon_i- \epsilon_{i+1} \}_{i=1}^{N-1} \subset \R^N \, .
\]
In this case, $\R \Phi(\sln)$ is a hyperplane in $\R^N$ and the inner product
is given by restricting the standard inner product $(\epsilon_i , \epsilon_j) = \delta_{ij}$
on $\R^N$ to this hyperplane.
\end{example}

\begin{example}
	\label{ex:son}
Let $\mathfrak{g} = \son$, the simple Lie algebra of type $B_n$. 
In this case $I= \{1,\ldots,n\}$ and $\Phi(\son)$ is the root system with simple roots: 
\[
\{\ee_i = \epsilon_i- \epsilon_{i+1} \}_{i=1}^{n-1} \cup \{\ee_n = \epsilon_n\} \subset \R^n \, .
\]
In this case, $\R \Phi(\son) = \R^n$, and the inner product $(\epsilon_i , \epsilon_j) = 2\delta_{ij}$
is a multiple of the standard inner product on $\R^n$.
\end{example}

Later, we will also be interested in the quantum group $\dU(\glm)$ associated with the (non-simple) 
Lie algebra $\glm$. See \S \ref{ss:idempotent} for the relevant definition.

\subsection{Representations of $U_q(\mathfrak{g})$}
	\label{ss:Rep}

We let $\Rep(U_q(\mathfrak{g}))$ denote the category of 
(type 1) finite-dimensional representations of $U_q(\mathfrak{g})$. 
Every representation $V \in \Rep(U_q(\mathfrak{g}))$ admits a weight space decomposition
$V \cong \bigoplus_{\mu \in X(\mathfrak{g})} V_\mu$ indexed by the weight lattice $X(\mathfrak{g})$.
Recall that the latter consists of all $\mu \in \R \Phi(\mathfrak{g})$ so that $(\ee^{\vee},\mu) \in \Z$.
When $v \in V_\mu$, we will write $\mathrm{wt}(v) = \mu$.
Further, $\Rep(U_q(\mathfrak{g}))$ is semisimple, with irreducibles $V(\lambda)$ indexed by 
dominant weights $\lambda \in X_+(\mathfrak{g})$.

Since $U_q(\mathfrak{g})$ is a Hopf algebra, $\Rep(U_q(\mathfrak{g}))$ is a 
rigid monoidal category. 
Moreover, this category is pivotal and braided.
We now review the latter in depth, since it will play an important role in our considerations.
For this, we must work with representations over the field $\C(q^{\frac{1}{d}})$, 
where $d$ is the index\footnote{This value for $d$ always suffices.
For certain $\mathfrak{g}$ it is possible to use a smaller value for $d$;
see e.g.~\cite{LeIntegrality} for details. 
(There, this parameter is denoted $D$.)} 
of the root lattice in the weight lattice, 
which we do for the duration.

Given $V,W \in \Rep(U_q(\mathfrak{g}))$,
let $\flip_{V, W}$ denote the $\C(q^{\frac{1}{d}})$-linear map $V\otimes W \longrightarrow W\otimes V$ 
that sends $v\otimes w \mapsto w\otimes v$. 
The braiding on $\Rep(U_q(\mathfrak{g}))$ is the invertible operator
\begin{equation}
	\label{eq:braiding}
\RM_{V, W}:= \flip_{V,W} \circ R \colon V\otimes W \longrightarrow W\otimes V
\end{equation}
where $R$ is the \emph{universal $R$-matrix}.
We now recall the approach to the later taken in \cite{KirResh}, 
following \cite{KamnTin, Tingley3, ST}.
For this, we need the following ingredients:
\begin{itemize}
\item Let $q^{(\mathrm{wt}(-) , \mathrm{wt}(-))}$
denote the operator that acts on weight vectors of the form $v \otimes w \in V_{\mu_1} \otimes W_{\mu_2}$ in
a tensor product $V \otimes W$ of finite-dimensional representations by
\begin{equation}
	\label{eq:qwtwt}
q^{(\mathrm{wt}(-) , \mathrm{wt}(-))} (v \otimes w) := q^{(\mu_1, \mu_2)} v\otimes w \, .
\end{equation}

\item Let $T_i$ be the operator that acts on weight vectors $v \in V_{\mu}$ as
\begin{equation}
	\label{eq:LT}
T_i(v) := \sum_{\substack{a, b \ge 0 \\ b-a = (\ee_i^{\vee} , \mu)}}
		(-q_i)^b e_i^{(a)} f_i^{(b)}(v).
\end{equation}
\end{itemize}

The element $T_i$ in \eqref{eq:LT} is equal to Lusztig's quantum Weyl group element $T_{i, +1}''$, 
in the simplified form given e.g.~in \cite[Remark 2.1]{Cautis}. 
These elements satisfy the relations of the type $\mathfrak{g}$ braid group $B_\mathfrak{g}$. 
We have that $\mathrm{wt}(T_i(v)) = s_i ( \mathrm{wt}(v))$, 
where $s_i \in W_\mathfrak{g}$ is the corresponding generator for the Weyl group.
Given any $w \in W_\mathfrak{g}$, we can choose a reduced expression 
$w=s_{i_1} \cdots s_{i_\ell}$ and set
\[
T_w := T_{i_1} \cdots T_{i_\ell}
\]
which is well-defined by Matsumoto's theorem \cite{Matsumoto}. 
Of particular importance is the operator $T_{w_0}$ 
that is associated with the longest element $w_0 \in W_{\mathfrak{g}}$.

In \cite[Theorem 3]{KirResh}, 
Kirillov--Reshetikhin describe the universal $R$ matrix 
in terms of the operator $T_{w_0}$. 
Their result, in the conventions\footnote{In \cite{KamnTin}, 
the authors state that they work with Lusztig's $T_{i,-1}''$;
however, it is clear from \cite[Lemma 5.6]{KamnTin} that they in fact 
work with $T_{i,+1}''$.} 
of \cite[Theorem 7.1]{KamnTin}, is as follows.

\begin{thm}
	\label{thm:KR}
$R = q^{(\mathrm{wt}(-) , \mathrm{wt}(-))} \circ (T_{w_0}^{-1}\otimes T_{w_0}^{-1}) \circ \Delta(T_{w_0})$
\end{thm}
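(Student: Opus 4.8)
The plan is to reduce the statement to a rank-one computation via Lusztig's symmetries; the mathematical content is classical (going back to Levendorskii--Soibelman and, in this form, to Kirillov--Reshetikhin), so the real work is the structural reduction together with careful matching of conventions against \cite[Theorem 7.1]{KamnTin} and \cite[Theorem 3]{KirResh}.

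First I would recall that on any tensor product $V\otimes W$ of finite-dimensional representations the universal $R$-matrix factors as $R = q^{(\mathrm{wt}(-),\mathrm{wt}(-))}\circ \Theta$, where $\Theta=\sum_{\beta\in\Phi_+(\mathfrak{g})}\Theta_\beta$ is the quasi-$R$-matrix (with $\Theta_\beta$ lying in a completion of the $-\beta$, $+\beta$ weight spaces of $U_q^-\otimes U_q^+$, and $\Theta_0=1\otimes1$), with all normalizations dictated by the Hopf-algebra conventions of Definition \ref{quantumgroupconventions}. Comparing with the asserted formula, it then suffices to prove the operator identity
\[
\Delta(T_{w_0}) = (T_{w_0}\otimes T_{w_0})\circ \Theta
\]
on every such $V\otimes W$, where all quantum Weyl group elements are the $T_i = T_{i,+1}''$ of \eqref{eq:LT}.

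Next comes the computational heart: the rank-one case. For a single $i\in I$, a direct calculation inside the rank-one subalgebra $U_{q_i}(\mathfrak{sl}_2)\subset U_q(\mathfrak{g})$ — evaluating both sides on weight vectors $v\otimes w$, using formula \eqref{eq:LT} for $T_i$ together with the $\mathfrak{sl}_2$ commutation relations and the standard divided-power binomial identities — yields $\Delta(T_i) = (T_i\otimes T_i)\circ \Theta_i$, where $\Theta_i = \sum_{k\ge 0}(-1)^k q_i^{-k(k-1)/2}(q_i-q_i^{-1})^k\, f_i^{(k)}\otimes e_i^{(k)}$ is the $i$-th rank-one quasi-$R$-matrix (up to the precise normalization fixed by our coproduct). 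To assemble the general case, fix a reduced expression $w_0 = s_{i_1}\cdots s_{i_N}$, so that $T_{w_0} = T_{i_1}\cdots T_{i_N}$ and $\Delta(T_{w_0}) = \Delta(T_{i_1})\cdots\Delta(T_{i_N})$; substituting the rank-one formula into each factor and moving the symmetries to the left — using that $T_{i_1}\cdots T_{i_{k-1}}$ conjugates $\Theta_{i_k}$ into $\Theta_{\beta_k}$ with $\beta_k = s_{i_1}\cdots s_{i_{k-1}}(\ee_{i_k})$ running over $\Phi_+(\mathfrak{g})$ — gives $\Delta(T_{w_0}) = (T_{w_0}\otimes T_{w_0})\circ\Theta_{\beta_1}\cdots\Theta_{\beta_N}$, and the Levendorskii--Soibelman factorization identifies the ordered product $\Theta_{\beta_1}\cdots\Theta_{\beta_N}$ with $\Theta$. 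This closes the identity and hence proves the theorem.

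The main obstacle is not the mathematics but the bookkeeping of conventions: our coproduct (from \cite{CP,ST}) differs from Lusztig's, our $T_i$ is the specific element $T_{i,+1}''$ of \eqref{eq:LT}, and one must pin down whether the correct statement involves $T_{w_0}$ or $T_{w_0}^{-1}$, $\Theta$ or its flip $\Theta_{21}$, and $q$ versus $q^{-1}$ in the diagonal factor $q^{(\mathrm{wt}(-),\mathrm{wt}(-))}$. (The cited source \cite{KamnTin} itself contains a convention slip of exactly this flavor, flagged in the footnote above.) In practice one resolves this by evaluating both sides of the claimed identity on the two-dimensional representation of each rank-one subalgebra, matching coefficients there, and then invoking \cite[Theorem 3]{KirResh} and \cite[Theorem 7.1]{KamnTin} for the general assembly in the stated conventions.
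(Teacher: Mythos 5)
Your outline is mathematically sound, but it is solving a different problem than the paper does: Theorem \ref{thm:KR} is not proved in the paper at all. It is quoted as a known result of Kirillov--Reshetikhin (\cite[Theorem 3]{KirResh}), restated in the conventions of \cite[Theorem 7.1]{KamnTin}; the paper's ``proof'' consists entirely of the sentences immediately preceding the theorem statement, which attribute and convention-match the cited formula (including the footnote flagging the $T_{i,\pm 1}''$ discrepancy in \cite{KamnTin}). So there is nothing in the paper against which to compare your argument step by step.

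That said, what you sketch is the standard route by which the Kirillov--Reshetikhin identity is established, and it holds together. The reduction to $\Delta(T_{w_0}) = (T_{w_0}\otimes T_{w_0})\circ\Theta$ via $R = q^{(\mathrm{wt}(-),\mathrm{wt}(-))}\circ\Theta$, the rank-one computation of $\Delta(T_i) = (T_i\otimes T_i)\circ\Theta_i$ inside $U_{q_i}(\mathfrak{sl}_2)$, and the telescoping over a reduced word for $w_0$ together with the Levendorskii--Soibelman factorization of $\Theta$ are exactly the ingredients one needs, and you correctly identify the real hazard as the bookkeeping of coproduct conventions, $T_{w_0}$ versus $T_{w_0}^{-1}$, $\Theta$ versus $\Theta_{21}$, and $q$ versus $q^{-1}$. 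Two small cautions if you were to write this out in full: the sign/order of the ordered product $\Theta_{\beta_1}\cdots\Theta_{\beta_N}$ depends on whether $\beta_k$ is enumerated via the reduced word read left-to-right or right-to-left, and your precise normalization of $\Theta_i$ (the power of $q_i$ and the $(q_i-q_i^{-1})^k$ factor) must be re-derived from the coproduct in Definition \ref{quantumgroupconventions} rather than imported from Lusztig's conventions, since the paper uses the \cite{CP,ST} coproduct. Both points are resolvable exactly as you suggest, by testing on the two-dimensional representation of each $\mathfrak{sl}_2$-subalgebra, so I do not regard them as gaps, merely as the places where the devil lives.
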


For our considerations in Section \ref{s:trivalent} below,
we will use that the operator $(T_{w_0}^{-1}\otimes T_{w_0}^{-1}) \circ \Delta(T_{w_0})$ 
admits a certain description.
For this, let $U_q(\mathfrak{g})^{\ge 0}$ and $U_q(\mathfrak{g})^{\le 0}$ 
denote the $\C(q)$-subalgebras of $U_q(\mathfrak{g})$ 
generated by $\{e_i, k_i^{\pm}\}_{i \in I}$ and $\{f_i,k_i^{\pm}\}_{i \in I}$, respectively.
These algebras are graded, 
respectively by $\Zge \Phi_+(\mathfrak{g})$ and $\Zge (-\Phi_+(\mathfrak{g}))$, 
and for $\mu \in \Zge \Phi_+(\mathfrak{g})$ we denote the homogeneous components as 
$U_q(\mathfrak{g})_{\mu}^{\ge 0}$ and $U_q(\mathfrak{g})_{-\mu}^{\le 0}$.
We let $U_q(\mathfrak{g})^{> 0}$ and $U_q(\mathfrak{g})^{< 0}$ 
denote the graded $\C(q)$-subalgebras generated by 
the $\{e_i\}_{i \in I}$ and $\{f_i\}_{i \in I}$ alone.

\begin{prop}
	\label{prop:R=xy}
There exist elements 
$\{x_{i,\mu}^+\}_{i=1}^{d_\mu} \subset U_q(\mathfrak{g})^{> 0}$ 
and $\{y_{i,\mu}^-\}_{i=1}^{d_\mu} \subset U_q(\mathfrak{g})^{< 0}$ so that
\[
\big(T_{w_0}^{-1}\otimes T_{w_0}^{-1} \big) \circ \Delta(T_{w_0})
= 1 \otimes 1 + \sum_{\substack{\mu \in \Zge \Phi_+(\mathfrak{g}) \\ \mu \neq 0}} \ \sum_{i=1}^{d_\mu} x_{i,\mu}^+ \otimes y_{i,\mu}^-
\]
\end{prop}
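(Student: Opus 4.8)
The plan is to prove Proposition \ref{prop:R=xy} by induction on the length of the longest element $w_0 \in W_{\mathfrak{g}}$, exploiting the standard braid-group factorization of $T_{w_0}$ together with the known commutation behavior of each $T_i$ with the Hopf structure maps. First I would recall the basic compatibility of Lusztig's operators $T_i = T_{i,+1}''$ with the comultiplication: for each simple $i$, there is a ``coproduct formula'' expressing $\Delta(T_i)$ in terms of $T_i \otimes T_i$ and a correction term supported in $U_q(\mathfrak{g})^{>0} \otimes U_q(\mathfrak{g})^{<0}$ (this is, up to conventions, the classical result of Lusztig and Kirillov--Reshetikhin underlying Theorem \ref{thm:KR}; in the rank-one case one checks it directly from \eqref{eq:LT}). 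Concretely, $(T_i^{-1} \otimes T_i^{-1}) \circ \Delta(T_i) = 1 \otimes 1 + (\text{terms in } U_q^{>0}_{\mu} \otimes U_q^{<0}_{-\mu}, \ \mu \neq 0)$, where the $\mu$ that occur are positive multiples of $\alpha_i$.

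Next I would set up the induction. Pick a reduced expression $w_0 = s_{i_1} \cdots s_{i_\ell}$, so $T_{w_0} = T_{i_1} \cdots T_{i_\ell}$ and $\Delta(T_{w_0}) = \Delta(T_{i_1}) \cdots \Delta(T_{i_\ell})$. Writing $\Phi_k := (T_{i_k}^{-1} \otimes T_{i_k}^{-1}) \circ \Delta(T_{i_k})$, a short manipulation gives
\[
(T_{w_0}^{-1} \otimes T_{w_0}^{-1}) \circ \Delta(T_{w_0})
= \big( (T_{i_\ell}^{-1}\cdots T_{i_2}^{-1})^{\otimes 2} \Phi_1 (T_{i_2}\cdots T_{i_\ell})^{\otimes 2} \big) \cdots \big( T_{i_\ell}^{-1}{}^{\otimes 2} \Phi_{\ell-1} T_{i_\ell}^{\otimes 2} \big) \Phi_\ell.
\]
So the statement reduces to showing that each factor $(T_w^{-1})^{\otimes 2} \Phi_k (T_w)^{\otimes 2}$ (for the appropriate $w = s_{i_{k+1}} \cdots s_{i_\ell}$) again has the form $1 \otimes 1 + \sum (\text{positive} \otimes \text{negative})$, and that such a form is closed under products. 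Closure under products is immediate from the grading: if $x^+ \in U_q^{>0}_\mu$ and $y^- \in U_q^{<0}_{-\mu}$, then a product of two such terms lands in $U_q^{>0}_{\mu+\mu'} \otimes U_q^{<0}_{-\mu-\mu'}$, and the ``$1 \otimes 1$'' pieces multiply to give back the leading $1 \otimes 1$.

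The heart of the argument — and what I expect to be the main obstacle — is controlling the conjugation $T_w \cdot ( \ ) \cdot T_w^{-1}$ applied to $\Phi_k$. The correction terms in $\Phi_k$ are supported in weights $\mu \in \Zge \alpha_{i_k}$, and one needs that conjugating by $T_w$ (for $w$ a product of the \emph{later} reflections in the reduced word) sends elements of $U_q(\mathfrak{g})^{>0}$ to $U_q(\mathfrak{g})^{>0}$ and elements of $U_q(\mathfrak{g})^{<0}$ to $U_q(\mathfrak{g})^{<0}$. This is precisely where the reduced-word combinatorics enter: $T_{i_k}$ applied to the subword $s_{i_{k+1}} \cdots s_{i_\ell}$ being reduced means $w^{-1}(\alpha_{i_k}) > 0$, and the standard facts about Lusztig's braid operators (e.g.\ $T_i(e_j) \in U_q^{>0}$ when $w^{-1}\alpha_i$ stays positive, and analogously for $f_j$; see \cite[Chapter 8]{JantzenQgps} or \cite{Lus4}) give exactly the needed stability of $U_q^{>0}$ and $U_q^{<0}$ under the relevant conjugations. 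I would isolate this as a lemma: \emph{if $w^{-1}(\alpha_i) \in \Phi_+$ then $T_w U_q^{>0}_{k\alpha_i} T_w^{-1} \subset U_q^{>0}$ and $T_w U_q^{<0}_{-k\alpha_i} T_w^{-1} \subset U_q^{<0}$.} Granting this lemma, the induction closes: each conjugated factor has the desired shape, the product of all of them does too, and collecting the homogeneous pieces yields the claimed elements $\{x^+_{i,\mu}\} \subset U_q(\mathfrak{g})^{>0}$ and $\{y^-_{i,\mu}\} \subset U_q(\mathfrak{g})^{<0}$ indexed by nonzero $\mu \in \Zge \Phi_+(\mathfrak{g})$. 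The only remaining care is bookkeeping: checking that the weights $\mu$ that survive are genuinely in $\Zge \Phi_+(\mathfrak{g}) \setminus \{0\}$ (which follows since every step only adds positive multiples of simple roots after positive conjugation) and that finitely many terms appear (clear, as everything is finite-dimensional in each weight).
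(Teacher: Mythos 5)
Your proof is correct in spirit but takes a genuinely different route from the paper. The paper's proof is essentially a citation: it recalls (from Tingley's notes) that the universal $R$-matrix admits a ``standard'' factorization $R = q^{(\mathrm{wt},\mathrm{wt})}\circ\Theta$, where the quasi-$R$-matrix $\Theta$ already has the claimed form $1\otimes1 + \sum x^+_\mu \otimes y^-_{-\mu}$, and then simply compares with Theorem \ref{thm:KR}. You instead re-derive the triangularity of $(T_{w_0}^{-1}\otimes T_{w_0}^{-1})\circ\Delta(T_{w_0})$ directly from the reduced-word factorization of $T_{w_0}$, the rank-one coproduct formula for the $T_i$, and Lusztig's preservation-of-positivity results for the braid-group action. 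Conceptually, your argument is a sketch of the proof of Theorem \ref{thm:KR} itself (this is essentially the Kirillov--Reshetikhin / Levendorskii--Soibelman / Lusztig approach), whereas the paper outsources that work and just reads off the shape of $\Theta$. The trade-off is the usual one: the paper's proof is one line but opaque without chasing references, while yours is self-contained up to two citable inputs (the rank-one formula for $\Delta(T_i)$ and the positivity lemma) and makes the reduced-word combinatorics visible.

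One small but real caution on your isolated lemma. Your manipulation produces factors of the form $(T_w^{-1})^{\otimes 2}\,\Phi_k\,(T_w)^{\otimes 2}$, so the conjugation being applied to an element $x$ appearing in $\Phi_k$ is $T_w^{-1}\,x\,T_w$, i.e.\ the automorphism $T_w^{-1}(\cdot)$ in Lusztig's convention $T_w(u) := T_w\,u\,T_w^{-1}$. The correct pairing of hypothesis and conclusion (see e.g.\ Lusztig, \emph{Introduction to Quantum Groups}, around Prop.\ 40.2 and its corollary) is: $\ell(w s_i) > \ell(w)$, i.e.\ $w(\alpha_i) > 0$, implies $T_w(e_i) \in U_q^{>0}$; whereas $\ell(s_i w) > \ell(w)$, i.e.\ $w^{-1}(\alpha_i) > 0$, implies $T_w^{-1}(e_i) \in U_q^{>0}$. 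You stated the hypothesis $w^{-1}(\alpha_i) \in \Phi_+$ (which is indeed what the reduced-word condition gives you) but wrote the conclusion as $T_w(\cdot)T_w^{-1} \subset U_q^{>0}$ rather than $T_w^{-1}(\cdot)T_w \subset U_q^{>0}$; as written the implication is false. Since the hypothesis you chose is the one matching $T_w^{-1}$, and $T_w^{-1}$ is what actually appears in your factorization identity, this looks like a transcription slip rather than a conceptual gap --- but it should be corrected, and you should also record the $U_q^{<0}$ analogue for the $f$-side. With that fixed, the induction closes as you describe.
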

\begin{proof}
As noted e.g.~in \cite[Lemma 3.3.13]{Tingley3} the ``standard'' $R$-matrix appearing in Theorem \ref{thm:KR} admits a description 
of the form
\[
q^{(\mathrm{wt}(-) , \mathrm{wt}(-))} \circ
\left(1 \otimes 1 + \sum_{\substack{\mu \in \Zge \Phi_+(\mathfrak{g}) \\ \mu \neq 0}} \ 
	\sum_{i=1}^{d_\mu} x_{i,\mu}^+ \otimes y_{i,\mu}^- \right)
\]
with $\{x_{i,\mu}^+\}_{i=1}^{d_\mu} \subset U_q(\mathfrak{g})^{> 0}$ 
and $\{y_{i,\mu}^-\}_{i=1}^{d_\mu} \subset U_q(\mathfrak{g})^{< 0}$. 
The result then follows from Theorem \ref{thm:KR}.
\end{proof}

\subsection{Quantum link polynomials}
	\label{ss:linkpoly}

Our approach to link invariants in the present paper is through the classical Alexander and Markov Theorems.
We let 
$\brgroup = \langle \bgen_1 , \ldots, \bgen_{m-1} \mid 
	\bgen_{i}\bgen_{i+1}\bgen_{i} = \bgen_{i+1}\bgen_{i}\bgen_{i+1} \rangle$
denote the (type $A$) braid group on $m$ strands.

\begin{thm}[\cite{Alexander}]
	\label{thm:Alexander}
Let $\mathcal{L} \subset S^3$ be a link in the $3$-sphere.
There is a braid $\beta$ such that the braid closure $\mathcal{L}_{\beta}$ is isotopic to $\mathcal{L}$. 
\end{thm}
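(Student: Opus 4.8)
The plan is to reduce to a two-dimensional statement about link diagrams and then run Alexander's classical ``arc pushing'' argument. First I would fix a generic projection of $\mathcal{L}$ to obtain a link diagram $D \subset \R^2$, and choose a base point $p \in \R^2$ lying off $D$; the braid axis is then the line through $p$ perpendicular to the projection plane, and the number of strands $m$ will be the number of intersection points of $D$ with a fixed generic ray emanating from $p$. Using the angular coordinate $\theta$ around $p$, each arc of $D$ either winds monotonically counterclockwise (``good'') or contains subarcs along which $\theta$ decreases (``bad''). The key observation is that once no bad subarcs remain, reading $D$ from $\theta = 0$ to $\theta = 2\pi$ exhibits $D$ as the closure of a word in the generators $\bgen_i^{\pm 1}$, i.e.\ of a braid $\beta \in \brgroup$ with $\mathcal{L}_\beta$ isotopic to $\mathcal{L}$.

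Next I would describe the reducing move. Given a bad subarc $\gamma$ of $D$, subdivide it into finitely many pieces, each short enough to meet a fixed generic ray from $p$ at most once and to be moved without encountering more than a controlled number of crossings. Each such piece can be replaced, via an ambient isotopy of $\mathcal{L}$ in $S^3$ that drags it across the braid axis --- entirely in front of, or entirely behind, the rest of the diagram --- by an arc that runs the ``long way'' around $p$ and is therefore good. Each move is an ambient isotopy, so it does not change the isotopy class of $\mathcal{L}$, and, with the subdivision and the over/under choices made appropriately, it strictly decreases a suitable complexity (for instance the number of bad subarcs, or the number of coherently oriented intersections of $D$ with a chosen ray from $p$). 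Iterating, after finitely many moves every arc of $D$ is good.

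The main obstacle, and the place requiring care, is termination together with the bookkeeping near crossings: one must choose the subdivision and the front/back option for each pushed piece so that no crossing is created or destroyed in a way that raises the complexity, and so that pieces pushed to the front do not interfere with those pushed to the back. This is handled exactly as in Alexander's original treatment; alternatively, one may invoke the Yamada--Vogel refinement, where the complexity is the number of non-coherently nested pairs of Seifert circles and Vogel's move strictly reduces it. Once all arcs are coherent, extracting the braid word and checking that its closure is isotopic to $\mathcal{L}$ is routine. We refer to \cite{Alexander} for the complete details.
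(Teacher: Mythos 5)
The paper does not prove this statement; it simply cites Alexander's 1923 paper, so there is no internal proof to compare against. Your sketch is a faithful summary of Alexander's classical ``arc-pushing'' argument (choose a braid axis, classify arcs as winding coherently or not, push the incoherent pieces across the axis while tracking a decreasing complexity, then read off the braid word), and your remark that the Yamada--Vogel move on Seifert circles gives a cleaner termination argument is also accurate; as the paper does, you appropriately defer the detailed bookkeeping to the cited reference.
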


\begin{thm}[\cite{Markov}]
	\label{thm:Markov}
Given two braids $\beta \in \brgroup$ and $\beta' \in \brgroup[m']$, 
their closures $\mathcal{L}_{\beta}$ and $\mathcal{L}_{\beta'}$ are isotopic if and only if 
$\beta'$ an be obtained from $\beta$ by a sequence of the following ``Markov moves.''
\begin{enumerate}
\item[(MI)] Conjugate $\beta$ in $\Br_m$.
\item[(MII)] Replace $\beta\in \Br_m$ by $\beta_{m+1}^{\pm 1} \beta \in \Br_{m+1}$ or vice-versa.
\end{enumerate}
\end{thm}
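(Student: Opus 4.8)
The plan is to recognize Theorem~\ref{thm:Markov} as the classical result of Markov, for which complete proofs already exist (Markov's original argument as completed in Birman's book, and the subsequent proofs of Morton, Traczyk, Birman--Menasco via braid foliations, and Lambropoulou--Rourke), so in the paper I would simply cite it. That said, here is the shape the proof takes if one carries it out.

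First I would dispatch the easy direction: neither Markov move changes the isotopy type of the closure. For (MI), writing $\bgen \in \Br_m$ as $\alpha\gamma$ and its conjugate as $\gamma\alpha$, the diagram of $\mathcal{L}_{\gamma\alpha}$ is obtained from that of $\mathcal{L}_{\alpha\gamma}$ by sliding the block $\alpha$ once around the closure arcs, which is an ambient isotopy. For (MII), the closure of $\bgen\,\bgen_m^{\pm 1}\in\Br_{m+1}$ differs diagrammatically from the closure of $\bgen\in\Br_m$ by a single Reidemeister~I move on the newly added strand, so the two links agree. This direction is what actually gets used in the paper (together with invariance under the braid relations) to conclude that a braid-conjugation- and stabilization-invariant assignment descends to a link invariant.

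For the hard direction I would argue as follows. Suppose $\bgen\in\Br_m$ and $\bgen'\in\Br_{m'}$ have isotopic closures. View closed braids as links in the complement of a fixed axis $A$, transverse to and winding monotonically around the pages of the open-book fibration by half-planes of constant angle. One puts the ambient isotopy carrying $\mathcal{L}_{\bgen}$ to $\mathcal{L}_{\bgen'}$ into general position with respect to this fibration, so that at all but finitely many times the link remains braided, and at the exceptional times it acquires finitely many arcs winding ``the wrong way'' around $A$. One then shows: (i) a local modification that removes or creates such a bad arc near $A$ is realized by a destabilization or stabilization (MII); (ii) changing the starting angle, or regrouping the braid, is realized by a conjugation (MI); and (iii) a strand passing through a page away from $A$, or an exchange of two such passages, leaves the braid word unchanged or alters it by a braid relation. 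Tracking a braid word through the isotopy and invoking (i)--(iii) gives the Markov equivalence of $\bgen$ and $\bgen'$.

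The hard part will be precisely this general-position analysis: arranging the isotopy to be generic with respect to the braid fibration, enumerating the elementary modifications that can occur, and verifying each is a Markov move --- equivalently, in the purely diagrammatic approach via Vogel's braiding algorithm, showing that the algorithm's output is independent of its choices up to Markov equivalence and that every Reidemeister move on the link diagram induces a sequence of Markov moves on the associated braid, the delicate cases being those Reidemeister moves that occur near the braid axis. This is exactly the technical content that makes Markov's theorem substantially deeper than Alexander's (Theorem~\ref{thm:Alexander}); since we invoke it only as a black box, I would not reproduce the argument here.
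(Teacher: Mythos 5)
The paper does not prove Theorem~\ref{thm:Markov}; it cites \cite{Markov} and treats it as a black box, which is exactly what you say you would do, so your approach matches the paper's. Your accompanying sketch of the two directions (the easy one via diagrammatic isotopy for (MI) and Reidemeister~I for (MII), the hard one via general position of an ambient isotopy with respect to the braid fibration, or equivalently Vogel/Lambropoulou--Rourke) is a faithful and correct summary of the standard proof; the only nit is that the paper's statement of (MII) writes $\beta_{m+1}^{\pm1}\beta$ where the generator should be $\beta_m^{\pm1}$ (as the paper itself uses later in \S\ref{ss:eHomology}), and you correctly use $\bgen_m^{\pm1}$.
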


Namely, we will first find invariants of braids $\beta\mapsto T_{\beta}\in X$ 
taking values in a set or category $X$ (possibly with additional structure).
We then compose with a function or functor $X \xrightarrow{[-]} A$ such that 
$[T_{\beta\beta'}]= [T_{\beta'\beta}]$ and $[T_{\beta}]= [T_{\beta_{m+1}^{\pm1} \beta }]$. 
It follows from Theorems \ref{thm:Alexander} and \ref{thm:Markov} that $[T_\bgen]$ 
is an invariant of the link $\mathcal{L}_{\bgen}$.
In practice, we will encounter $[T_{\bgen}]$ that are conjugation invariant, 
but only satisfy invariance under the second Markov move (MII) up to a sign and a power of $q$. 
In this case $[T_{\bgen}]$ will be an invariant of \emph{framed} links, 
i.e.~links endowed with a framing on their normal bundle. 
It is possible to introduce a renormalization factor to obtain an invariant of unframed links, 
but we prefer to work in the present setup. See Remarks \ref{rem:framing} and \ref{rem:framing2} for details.

In this section, we recall
the Reshetikhin--Turaev link invariants defined using the quantum group $U_q(\mathfrak{g})$
associated to a finite-dimensional simple complex Lie algebra $\mathfrak{g}$.
Our exposition is adapted to links presented as braid closures;
see \cite{RT1} for the original construction, which treats all link (and more-generally tangle) diagrams.

Let $\beta = \beta_{i_1}^{\epsilon_1}\cdots \beta_{i_d}^{\epsilon_d} \in \brgroup$ for $\epsilon_i = \pm1$.
Given an $m$-tuple $\vec{\lambda} = (\lambda_1, \ldots, \lambda_m)$ 
of highest weights of finite-dimensional irreducible representations of $\mathfrak{g}$, 
this induces a coloring of $\beta$, 
where the strand meeting the $k^{th}$ point at the bottom of $\beta$ is colored by the weight $\lambda_k$.
If the crossing $\beta_{i_s}^{\epsilon_s}$ is colored by the pair $(\lambda_{l_s},\lambda_{r_s})$ at its bottom, 
then we set
\[
\RM(\beta, \vec{\lambda}) := 
\RM_{\lambda_{l_1},\lambda_{r_1}}^{\epsilon_1} \cdots
\RM_{\lambda_{l_d}, \lambda_{r_d}}^{\epsilon_d}
\]
where here $\RM_{\lambda, \mu}$ denotes the isomorphism
$\RM_{V(\lambda),V(\mu)} \colon V(\lambda) \otimes V(\mu) \to  V(\mu) \otimes V(\lambda)$
from \eqref{eq:braiding},
tensored with appropriate identity morphisms.

We say that $\beta$ is \emph{$\vec{\lambda}$-balanced} if the strand meeting the $k^{th}$ point at the top of 
$\beta$ is also colored by $\lambda_k$. 
In this case, $R(\beta,\vec{\lambda})$ is a braid invariant valued in
$\End_{U_q(\mathfrak{g})} \big( V(\lambda_1) \otimes \cdots \otimes V(\lambda_d) \big)$. 
Since $\Rep\big( U_q(\mathfrak{g}) \big)$ is pivotal, 
composing with the quantum trace $\trq$ 
(i.e.~taking a closure in the associated graphical calculus)
yields an invariant of braid conjugacy classes 
taking values in $\C(q^{\frac{1}{d}})$.  

\begin{thm}[{\cite[Section 6.1]{RT1}}]
	\label{thm:RT}
The braid conjugacy invariant
\begin{equation}
	\label{eq:RT}
P_{\mathfrak{g}}(\mathcal{L}_{\beta}^{\vec{\lambda}})
:= \trq \big(\RM(\beta, \vec{\lambda}) \big)
\end{equation}
descends to give an invariant of the framed link $\mathcal{L}_{\beta}$.
\end{thm}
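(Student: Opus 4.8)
The plan is to verify invariance under the two Markov moves, following the recipe laid out just before the statement. The braid invariant $\RM(\beta,\vec\lambda)$ is built out of the braiding isomorphisms $\RM_{V,W}$, so the first step is to recall that $\Rep(U_q(\mathfrak{g}))$ is braided: the $\RM_{V,W}$ satisfy the hexagon axioms and hence the assignment $\bgen_i^{\pm 1} \mapsto \RM^{\pm 1}_{\ldots}$ (tensored with identities) is a well-defined representation of $\brgroup$ on $V(\lambda_1)\otimes\cdots\otimes V(\lambda_m)$. In particular $\RM(\beta,\vec\lambda)$ depends only on $\beta$, and when $\beta$ is $\vec\lambda$-balanced it lies in $\End_{U_q(\mathfrak{g})}\big(V(\lambda_1)\otimes\cdots\otimes V(\lambda_m)\big)$, since each $\RM_{V,W}$ is a morphism of $U_q(\mathfrak{g})$-modules.

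Next I would handle (MI). Here one uses that the quantum trace $\trq$ on the pivotal category $\Rep(U_q(\mathfrak{g}))$ is a categorical trace, hence cyclic: $\trq(f\circ g) = \trq(g\circ f)$ for composable $f,g$. Writing $\beta' = \gamma\beta\gamma^{-1}$ in $\brgroup$, functoriality of the braid representation gives $\RM(\beta',\vec\lambda) = \RM(\gamma,\vec\lambda)\,\RM(\beta,\vec\lambda)\,\RM(\gamma,\vec\lambda)^{-1}$ (after checking that conjugation preserves the $\vec\lambda$-balanced condition, which is immediate since $\gamma$ permutes the colors and $\gamma^{-1}$ undoes this), and cyclicity of $\trq$ then yields $\trq(\RM(\beta',\vec\lambda)) = \trq(\RM(\beta,\vec\lambda))$.

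For (MII), the key input is the behavior of the quantum trace under partial closure of the last strand. One shows that for $f \in \End_{U_q(\mathfrak{g})}\big(V(\lambda_1)\otimes\cdots\otimes V(\lambda_m)\big)$,
\[
\trq^{(m+1)}\big((f\otimes\id_{V(\lambda_{m+1})})\circ \RM^{\pm1}_{V(\lambda_m),V(\lambda_{m+1})}\big)
= \trq^{(m)}(f)\cdot\mu^{\pm}_{\lambda_m,\lambda_{m+1}}
\]
where the partial trace over the last tensor factor of the braiding is a scalar (a ribbon/twist factor). In fact this partial trace is $\trq\big(\RM^{\pm1}_{V(\lambda_m),V(\lambda_{m+1})}\big)$-type quantity, and a standard computation with the ribbon element (or directly from Theorem \ref{thm:KR} and the formula for the twist $\theta_{\lambda} = q^{-(\lambda,\lambda+2\rho)}$) identifies it with $q^{\pm c_{\lambda_m}}$ times a possibly-signed factor, which accounts for the dependence on the framing. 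Since this scalar is independent of $f$ and of the braid $\beta$, the value $\trq(\RM(\beta,\vec\lambda))$ changes only by this controlled factor under (MII); this is precisely the sense in which one obtains an invariant of \emph{framed} links as stated.

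The main obstacle is the (MII) computation: one must carefully track the ribbon/pivotal structure to see that the partial closure of a single positive or negative crossing against the pivotal structure on $V(\lambda_{m+1})$ produces a scalar, and to pin down that scalar. Everything else — functoriality of the braid representation, $U_q(\mathfrak{g})$-equivariance of $\RM$, and cyclicity of $\trq$ — is formal once one knows $\Rep(U_q(\mathfrak{g}))$ is a ribbon category, which is classical \cite{RT1}. Indeed, the cleanest route is to cite that $\Rep(U_q(\mathfrak{g}))$ is ribbon and invoke the general fact that the Reshetikhin--Turaev functor from the ribbon category of framed tangles is well-defined; the Markov-move verification above is then just the braid-closure shadow of that general theorem.
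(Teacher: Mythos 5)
The paper does not prove this theorem; it is stated with a citation to Reshetikhin--Turaev \cite[Section 6.1]{RT1}, and the subsequent Remark \ref{rem:framing} records exactly the framing factor $(-1)^{(2\lambda,\rho^\vee)}q^{\pm(\lambda,\lambda+2\rho)}$ that your (MII) analysis identifies. Your proposal is correct and follows the same standard ribbon-category route that RT take: the braided structure gives a well-defined braid representation, cyclicity of $\trq$ handles (MI), and the partial quantum trace of $\RM^{\pm 1}$ against the pivotal structure on the last factor produces the twist scalar, which is why the result is an invariant of \emph{framed} links rather than unframed ones.

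One small point worth making explicit: in the (MII) step the two strands entering the stabilizing crossing necessarily carry the \emph{same} color ($\lambda_{m+1}=\lambda_m$), since after closure they belong to a single component; this is what guarantees that $\RM^{\pm 1}_{V(\lambda_m),V(\lambda_{m+1})}$ is an endomorphism of $V(\lambda_m)^{\otimes 2}$ and that its partial trace over the last factor is a scalar multiple of $\id_{V(\lambda_m)}$ (the ribbon twist $\theta_{\lambda_m}^{\pm 1}$, possibly up to the sign coming from the non-standard pivotal structure of \cite{ST}), rather than some more complicated morphism.
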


A priori, 
$P_{\mathfrak{g}}(\mathcal{L}_{\beta}^{\vec{\lambda}})$
takes values in $\C(q^{\frac{1}{d}})$; 
however, it is possible to show\footnote{An even stronger result 
in \cite{LeIntegrality} shows that there exists $r(\mathcal{L}) \in \Q$ depending only on 
the linking matrix of $\mathcal{L}$ such that 
$q^{r(\mathcal{L})} P_{\mathfrak{g}}(\mathcal{L}^{\vec{\lambda}}) \in \Z[q^{\pm 2}]$.} 
that 
$P_{\mathfrak{g}}(\mathcal{L}_{\beta}^{\vec{\lambda}}) \in \Z[q^{\frac{1}{d}},q^{-\frac{1}{d}}]$.
We thus refer to 
$P_{\mathfrak{g}}(\mathcal{L}_{\beta}^{\vec{\lambda}})$
as the 
\emph{$\vec{\lambda}$-colored $U_q(\mathfrak{g})$ link polynomial}. 
When $\vec{\lambda} = (\lambda,\ldots,\lambda)$, we will denote this invariant simply by
$P_{\mathfrak{g}}(\mathcal{L}_{\beta}^{\lambda})$

\begin{rem}
	\label{rem:framing}
As defined in \eqref{eq:RT}, 
$P_{\mathfrak{g}}(\mathcal{L}_{\beta}^{\vec{\lambda}})$
changes by a factor of
$\nu_{\lambda} = (-1)^{( 2\lambda, \rho^{\vee})} q^{\pm (\lambda, \lambda+2\rho)}$
upon applying a positive/negative Reidemeister~I move on a $\lambda$-colored component,
i.e.~ graphically:
\begin{equation} \label{eq:curlsforframing}
P_{\mathfrak{g}} \left(
\begin{tikzpicture}[scale=.5,anchorbase]
	\draw[very thick] (.8,-.4) to [out=180,in=270] (0,1);
	\draw[overcross] (0,-1) to [out=90,in=180] (.8,.4);
	\draw[very thick] (0,-1) to [out=90,in=180] (.8,.4);
	\draw[very thick] (.8,.4) to [out=0,in=90] (1.1,0) node[right=-2pt, black]{\scs$\lambda$}
		to [out=270,in=0] (.8,-.4);
\end{tikzpicture}
\! \right)
= (-1)^{( 2\lambda, \rho^{\vee})} q^{(\lambda, \lambda+2\rho)}
P_{\mathfrak{g}} \left( \
\begin{tikzpicture}[scale=.5,anchorbase]
	\draw[very thick] (0,-1) to [out=90,in=270] node[right=-2pt, black]{\scs$\lambda$} (0,1);
\end{tikzpicture}
\! \right)
\quad \text{and} \quad
P_{\mathfrak{g}} \left(
\begin{tikzpicture}[scale=.5,anchorbase,yscale=-1]
	\draw[very thick] (.8,-.4) to [out=180,in=270] (0,1);
	\draw[overcross] (0,-1) to [out=90,in=180] (.8,.4);
	\draw[very thick] (0,-1) to [out=90,in=180] (.8,.4);
	\draw[very thick] (.8,.4) to [out=0,in=90] (1.1,0) node[right=-2pt, black]{\scs$\lambda$}
		to [out=270,in=0] (.8,-.4);
\end{tikzpicture}
\! \right)
= (-1)^{( 2\lambda, \rho^{\vee})} q^{-(\lambda, \lambda+2\rho)}
P_{\mathfrak{g}} \left( \
\begin{tikzpicture}[scale=.5,anchorbase]
	\draw[very thick] (0,-1) to [out=90,in=270] node[right=-2pt, black]{\scs$\lambda$} (0,1);
\end{tikzpicture}
\! \right) \, .
\end{equation}
It follows that two diagrams for a given link yield the same value of 
$P_{\mathfrak{g}}(\mathcal{L}_{\beta}^{\vec{\lambda}})$
if and only if they have the same writhe ($= \#\{\text{positive crossings}\} - \#\{\text{negative crossings}\}$).
\end{rem}

\begin{rem} \label{rem:framing2}
Let $P(\bgen)$ be a braid-conjugacy invariant which satisfies 
the $\pm$-Markov II moves only up to 
a factor of $c^{\pm1}$ for some invertible scalar $c$.
For example, $P(\beta) = P_{\mathfrak{g}}(\mathcal{L}_{\beta}^{\lambda})$ 
satisfies this property with $c = (-1)^{( 2\lambda, \rho^{\vee})} q^{(\lambda, \lambda+2\rho)}$ by \eqref{eq:curlsforframing}. 
It is well known that $P(\beta)$ then descends to an invariant of framed links. 
Since an appropriate citation has eluded us, 
we provide a quick sketch.
Let, $\epsilon(\bgen)$ denote the braid exponent, 
then $c^{-\epsilon(\bgen)}P(\bgen)$ is invariant under the Markov moves MI and MII, 
therefore is an invariant of unframed links. 
Since the braid exponent equals the writhe of the braid closure,
two braid closures have the same value for $P(\bgen)$ 
if they have the same writhe, i.e.~if they have the same (blackboard) framing.
\end{rem}

\begin{rem}
	\label{rem:Rconv}
The link polynomials 
$P_{\mathfrak{g}}(\mathcal{L}_{\beta}^{\vec{\lambda}})$
are defined using the braiding on $\Rep(U_q(\mathfrak{g}))$. 
The latter is essentially unique up to conventions \cite{KhoroshkinTolstoy}, 
but not literally unique since e.g.~the inverse to a braiding is again a braiding.
Since we wish to use Theorem \ref{thm:KR} and Proposition \ref{prop:R=xy} below, 
our conventions for the braiding/$R$-matrix are taken from \cite{KamnTin,ST} 
and they lead to behavior described in Remark \ref{rem:framing}.
However, it is common in the link homology literature to work with 
link polynomials that correspond to the inverse choice of $R$-matrix. 
We denote these link polynomials by 
$\bP_{\mathfrak{g}}(\mathcal{L}_{\beta}^{\vec{\lambda}})$
and observe that
\begin{equation}
	\label{eq:Pbar}
\bP_{\mathfrak{g}}(\mathcal{L}_{\beta}^{\vec{\lambda}})(q) = 
P_{\mathfrak{g}}(\mathcal{L}_{\beta}^{\vec{\lambda}})(q^{-1}) =
P_{\mathfrak{g}}(\mirror \mathcal{L}_{\beta}^{\vec{\lambda}})(q) =
P_{\mathfrak{g}}(\mathcal{L}_{\mirror \beta}^{\vec{\lambda}})(q)
\end{equation}
where here $\mirror \mathcal{L}$ and $\mirror \beta$ denote the mirror 
link/braid (obtained by switching over/under information at all crossings).
Consequently, we have
\[
\bP_{\mathfrak{g}} \left(
\begin{tikzpicture}[scale=.5,anchorbase]
	\draw[very thick] (.8,-.4) to [out=180,in=270] (0,1);
	\draw[overcross] (0,-1) to [out=90,in=180] (.8,.4);
	\draw[very thick] (0,-1) to [out=90,in=180] (.8,.4);
	\draw[very thick] (.8,.4) to [out=0,in=90] (1.1,0) node[right=-2pt, black]{\scs$\lambda$}
		to [out=270,in=0] (.8,-.4);
\end{tikzpicture}
\! \right)
= (-1)^{( 2\lambda, \rho^{\vee})} q^{-(\lambda, \lambda+2\rho)}
\bP_{\mathfrak{g}} \left( \
\begin{tikzpicture}[scale=.5,anchorbase]
	\draw[very thick] (0,-1) to [out=90,in=270] node[right=-2pt, black]{\scs$\lambda$} (0,1);
\end{tikzpicture}
\! \right)
\quad \text{and} \quad
\bP_{\mathfrak{g}} \left(
\begin{tikzpicture}[scale=.5,anchorbase,yscale=-1]
	\draw[very thick] (.8,-.4) to [out=180,in=270] (0,1);
	\draw[overcross] (0,-1) to [out=90,in=180] (.8,.4);
	\draw[very thick] (0,-1) to [out=90,in=180] (.8,.4);
	\draw[very thick] (.8,.4) to [out=0,in=90] (1.1,0) node[right=-2pt, black]{\scs$\lambda$}
		to [out=270,in=0] (.8,-.4);
\end{tikzpicture}
\! \right)
= (-1)^{( 2\lambda, \rho^{\vee})} q^{(\lambda, \lambda+2\rho)}
\bP_{\mathfrak{g}} \left( \
\begin{tikzpicture}[scale=.5,anchorbase]
	\draw[very thick] (0,-1) to [out=90,in=270] node[right=-2pt, black]{\scs$\lambda$} (0,1);
\end{tikzpicture}
\! \right) \, .
\]
\end{rem}

In Section \ref{ss:SLP}, we further study the invariant 
$P_{\mathfrak{g}}(\mathcal{L}_{\beta}^{\vec{\lambda}})$
for $\mathfrak{g} = \son$ and when each entry of $\vec{\lambda}$ corresponds to the spin representation.
Later, in Section \ref{s:LH}, we consider the $\mathfrak{g} = \sln$ case of the invariant 
$P_{\mathfrak{g}}(\mathcal{L}_{\beta}^{\vec{\lambda}})$ in more detail, 
and review its categorification via $\sln$ Khovanov--Rozansky homology. 
This forms the foundation for our categorification of the invariant from \S \ref{ss:SLP}.

%
\section{Type $B$ Intertwiners}
	\label{s:typeB}
%

In this section, we study representations of $U_q(\son)$ in depth. 
In this case, there is a distinguished \emph{spin representation} $S \in \Rep(U_q(\son))$, 
and our main result is the construction of a ``canonical basis'' for the 
endomorphism algebra $\End_{U_q(\son)}(S \otimes S)$.
Although we discuss this material before our categorification results that in appear in later sections, 
this presentation is somewhat ahistorical: 
the existence of this basis (and the corresponding structure coefficients for multiplication)
was initially suggested by the techniques appearing in \S \ref{s:ECQG}.

In order to state this result precisely, 
we need the following, which we affectionately refer to as the \emph{devil's product}.

\begin{defn}
	\label{def:devil}
Let $m \leq n \in \Zge$ and set
\[
``[m][n]" = ``[n][m]" := \sum_{i=0}^{m-1} (-1)^i [n+m-2i-1] \, .
\]
\end{defn}

In other words, the expression
\[
``[m][n]" = [n+m- 1] - [n+m - 3] + [n+m - 5] - \cdots + (-1)^{m-1} [n-m+1]
\]
is obtained from the usual expression for the product of quantum integers
\[
[m][n] = \sum_{i=0}^{m-1} [n+m-2i-1]
	= [n+m- 1] + [n+m - 3] + [n+m - 5] + \cdots + [n-m+1]
\]
by introducing alternating signs. It is essential that $m \leq n$ above; 
one does not obtain the same result by swapping $m$ and $n$ in the formula.
Note that if $m=0$, the summation is empty, so by convention $``[0][n]"=0$ for all $n \in \Z_{\geq 0}$.
See Appendix \ref{S:combinatorial-identities} for further discussion.

We now precisely record the main result of this section.

\begin{thm}
	\label{thm:X}
Let $S \in \Rep(U_q(\son))$ be the spin representation.
There is a basis 
\[
\lbrace \xx^{(i)}\rbrace_{i=0}^{n} \subset \End_{U_q(\son)}(S\ot S)
\]
such that
\[
\xx^{(i)} \xx^{(1)}  = (-1)^i``[i+1][i+1]"\xx^{(i+1)} + (-1)^{i} ``[i][i+1]"\xx^{(i)}
\]
and so that the braiding is given by
\[
\RM_{S, S}= q^{\frac{n}{2}} \sum_{i=0}^n q^{-i}\xx^{(i)} \, .
\]
\qed
\end{thm}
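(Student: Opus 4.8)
The plan is to transport structure from the quantum group $\dU(\gln[2])$ through skew Howe duality, so that the desired basis and relations in $\End_{U_q(\son)}(S \ot S)$ become visible. I would begin by establishing the analogous, ``unfolded'' picture: the surjective algebra map $1_{\wtn}\dU(\gln[2])1_{\wtn} \to \End_{U_q(\gln[2n])}(\Lambda^n \ot \Lambda^n)$ from \eqref{eq:inwtnintro}, with the elements $\chi^{(k)} = f^{(k)}e^{(k)}1_{\wtn}$ satisfying \eqref{eq:xxintro}, namely $\chi^{(k)}\chi = [k][k+1]\chi^{(k)} + [k+1]^2\chi^{(k+1)}$. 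One checks directly in $\dU(\gln[2])$ (using Definition \ref{def:Udotgl}) that this holds, and that $\{\chi^{(k)}\}_{k=0}^n$ descends to a basis of $\End_{U_q(\gln[2n])}(\Lambda^n \ot \Lambda^n)$ of the correct size $n+1 = \dim\Hom_{U_q(\gln[2n])}(\Lambda^n\ot\Lambda^n,\Lambda^n\ot\Lambda^n)$, since $\Lambda^n\ot\Lambda^n \cong \bigoplus_{j} \Lambda^{n-j,n+j}$-type summands; the braiding $R_{n,n}$ is then a known linear combination of the $\chi^{(k)}$ by \cite{CKM}.

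Next I would bring in the involution $\tau = \tau_n$ of Theorem \ref{thm:invintro}, restricted to $\one_{\wtn}\cUU_q(\gln[2])\one_{\wtn}$, together with Wenzl's surjection $U'_{-q^2}(\som)\to\End_{U_q(\son)}(S^{\ot m})$ (Theorem \ref{thm:Wenzl}) in the $m=2$ case. The key computation is the trace of the induced $\Z/2$-action on the multiplicity spaces in \eqref{eq:XXintro}: this is Corollary \ref{cor:traceonV}, which replaces $[k][k+1]$ and $[k+1]^2$ by their devil's-arithmetic counterparts $``[k][k+1]"$ and $``[k+1][k+1]"$. I would then define $\xx^{(k)}$ as the image under Wenzl's map of the devil's divided powers $\nx^{(k)}$ of Definition \ref{def:idpb}; equivalently, as $(-1)^{?}$-normalized twisted decategorifications of $\FEk = \FF^{(k)}\EE^{(k)}\one_{\wtn}$ via the weighted Grothendieck group $\tauKzero{\EBnFoam[2]}$ (Theorem \ref{thm:introKzero1}). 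With this definition, the multiplication relation $\xx^{(i)}\xx^{(1)} = (-1)^i``[i+1][i+1]"\xx^{(i+1)} + (-1)^i``[i][i+1]"\xx^{(i)}$ is immediate from the lift of \eqref{eq:xxintro} to the direct-sum decomposition \eqref{eq:XXintro} in $\EBnFoam[2]$ and the definition of super-dimension; one should track the signs carefully, as $[k][k+1]\FEk$ contributes $(-1)^k``[k][k+1]"$ to the trace. That $\{\xx^{(k)}\}_{k=0}^n$ is a \emph{basis} follows because it is a triangular change of basis (with respect to a power filtration in $\xx = \xx^{(1)}$, by the recursion) from a known basis of $\End_{U_q(\son)}(S\ot S)$, whose dimension is again $n+1$.

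Finally, for the braiding formula $\RM_{S,S} = q^{n/2}\sum_{k=0}^n q^{-k}\xx^{(k)}$, I would combine Theorem \ref{T:enhanced-Wenzl} (which identifies $q^{-n/2}R_{S,S}$ with the image of the devil's quantum Weyl generator $\iQW = \sum_{k\geq 0} q^{-k}\nx^{(k)}$) with the fact that $\nx^{(k)}$ maps to $\xx^{(k)}$ for $0\leq k\leq n$ and to $0$ for $k > n$ (Remark \ref{rem:Xtoohigh}); alternatively, one checks the formula directly by expanding the known expression for $R_{n,n}$ in the $\chi^{(k)}$ basis and applying the trace of $\tau$, matching the $q$-powers against the devil's arithmetic. \textbf{The main obstacle} I anticipate is the precise bookkeeping in Corollary \ref{cor:traceonV}: namely, verifying that the $\Z/2$-action of $\tau$ on each multiplicity space decomposes with the correct multiplicities of the trivial versus sign representation so that the trace equals exactly $``[k][k+1]"$ rather than some other integer combination congruent to $[k][k+1]$ mod $2$. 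This requires understanding the equivariant structures on the $\FEk$ concretely enough to diagonalize $\tau$ on $\Hom$-spaces — everything else is either a formal consequence of the weighted Grothendieck group formalism or a triangularity argument.
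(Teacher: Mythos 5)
Your proposal does not match the paper's actual proof of Theorem \ref{thm:X}, which is carried out entirely inside $\Rep(U_q(\son))$ without any reference to the categorification, the involution $\tau$, or Wenzl's theorem: the paper first constructs the idempotent-like basis $\{\II^{(i)}\}$ via explicit intertwiners $\Ya[i], \Pa[i]$ (Propositions \ref{prop:Y}--\ref{prop:Ibasis}), computes $\hh^{(1)}$ in that basis by working with explicit weight vectors of $S$ (Lemmata \ref{SS1triangle} and \ref{lem:H1reln}), \emph{defines} $\xx^{(i)}$ by the recursion and shows that the change of basis to $\{\II^{(n-i)}\}$ is explicitly unitriangular (Proposition \ref{itoxchangeofbasis}), and finally computes the braiding in the $\II^{(i)}$ basis via the quantum Weyl group (Proposition \ref{prop:braidI}, Theorem \ref{thm:KR}) and converts to the $\xx^{(i)}$ basis by the combinatorial identity of Lemma \ref{L:rho-recurrence}. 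The paper explicitly flags that its presentation is ``ahistorical'': the categorification only \emph{motivated} the discovery of the basis, but the proof is purely decategorified.

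Your route is therefore genuinely different, but it has a circularity problem and a real gap. Circularity: the results you want to cite for the basis and the braiding formula, namely Theorem \ref{thm:introKzero1} (i.e.\ Corollary \ref{cor:XmnDecat}) and Theorem \ref{T:enhanced-Wenzl} (i.e.\ Theorem \ref{T:iqWeyl=typeBbraid}), are both proved in the paper \emph{using} Proposition \ref{itoxchangeofbasis} and Proposition \ref{P:decat-spin-rickard}, which together \emph{are} the proof of Theorem \ref{thm:X}. Even if one avoids citing those results and works from scratch: (a) defining $\xx^{(k)}$ as the image of $\nx^{(k)}$ under Wenzl's map does make the multiplication relation automatic, but your ``triangular change of basis with respect to the power filtration in $\xx$'' only shows that $\{\xx^{(0)},\dots,\xx^{(n)}\}$ has the same span as $\{1,\xx,\dots,\xx^n\}$; linear independence of the latter is equivalent to $\xx$ having $n+1$ distinct eigenvalues on the multiplicity-free decomposition of $S\ot S$, and establishing this requires exactly the kind of explicit eigenvalue computation that Lemma \ref{SS1triangle} performs (or, equivalently, the vanishing $\xx^{(n+1)}=0$ of Remark \ref{rem:Xtoohigh}, which again relies on $\xx^{(n)}=\II^{(0)}$ from Proposition \ref{itoxchangeofbasis}); (b) for the braiding formula, ``expanding $R_{n,n}$ in the $\chi^{(k)}$ basis and applying the trace of $\tau$'' does not produce an operator identity in $\End_{U_q(\son)}(S\ot S)$ --- the trace of $\tau$ on multiplicity spaces gives scalar structure constants, not the action of $\RM_{S,S}$ on $S\ot S$ --- so this step needs to be replaced by a direct computation of $\RM_{S,S}$, which is precisely what Proposition \ref{prop:braidI} and Proposition \ref{P:decat-spin-rickard} do.
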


This result is established in Propositions \ref{itoxchangeofbasis} and \ref{P:decat-spin-rickard}.
Later on, Theorems 
\ref{T:div-powers-in-twisted-K0}, 
\ref{T:iserre-in-twisted-K0}, 
and \ref{thm:Ctaubraid} 
categorify this result.

\subsection{Representations of $U_q(\son)$}

In this section, we review the fundamental representations 
$\{V(\varpi_1),\ldots,V(\varpi_{n-1}),S\}$ of $U_q(\son)$.
In doing so, we will follow the conventions and notation
of Example \ref{ex:son}.
Further, as mentioned in Section \ref{ss:Rep}, 
we consider representations over the field $\C(q^{\frac{1}{2}})$.

For $1 \leq k \leq n-1$, 
the fundamental weights are $\varpi_k =\epsilon_1 + \cdots + \epsilon_{k}$ and
the representations $V(\varpi_k)$ are quantized analogues of the 
exterior powers $\Lambda^k(\C^{2n+1})$, 
while $\varpi_n = \frac{1}{2}(\epsilon_1 + \cdots + \epsilon_{n})$ and 
$S:=V(\varpi_{n})$ is the (quantum) $\emph{spin representation}$.
We begin by studying the latter in depth, since it plays a leading role in the present work.

For $i,j = \{1,\ldots,n\}$, let
\[
\| i,j \| :=
\begin{cases}
-2 & \text{if } j=i \\
2 & \text{if } j=i+1 \\
0 & \text{else}
\end{cases}
\]
and for $J \subset \{1,\ldots,n\}$ denote $\| i, J \| = \sum_{j\in J} \| i, j \| \in \{-2,0,2\}$. 
Also, set 
\[
q^J = \prod_{j\in J} (-q)(-q^2)^{n-j} = \prod_{j\in J} (-1)^{n-j+1} q^{2(n-j)+1}
\]
and $q^{-J} = (q^J)^{-1}$.

\begin{defn}\label{def:spin}
The \emph{spin representation} of $U_q(\son)$ is
the $\C(q^{\frac{1}{2}})$-vector space $S$ with basis $\{x_J\}_{J \in \mathcal{P}}$ 
where $\mathcal{P}$ is the power set of $\lbrace 1, 2, \ldots, n\rbrace$.
For $i \in \{1, \ldots, n-1\}$, the action of $U_q(\son)$ on $S$ is specified as follows:
\begin{enumerate}
\item $k_i^{\pm1} x_J = q^{\pm \| i, J \|}x_J$, 
\item $k_n^{\pm1} x_J = q^{\pm(1+ \| n, J \|)} x_J$
\item $e_i x_J = x_{J \smallsetminus \{ i \} \cup \{ i+1\} }$ if $i\in J$ and $i+1 \notin J$,
\item $e_n x_J = x_{J \smallsetminus \{ n\}}$ if $n\in J$,
\item $f_i x_J= x_{J \smallsetminus \{ i+1\} \cup \{ i\} }$ if $i+1\in J$ and $i\notin J$,
\item $f_n x_J = x_{J\cup \{ n \}}$ if $n\notin J$,
\end{enumerate}
and where all $e_j$ and $f_j$ for $j \in \{1,\ldots,n\}$ act by zero otherwise.
\end{defn}

Using the relations in Definition \ref{quantumgroupconventions},
it is straightforward to check that $S$ is indeed an irreducible $U_q(\son)$-representation.
Note that $x_{\emptyset} \in S$ is thus a highest weight vector with 
$\mathrm{wt}(x_{\emptyset}) = \varpi_n = \frac{1}{2}(\epsilon_1 + \cdots + \epsilon_{n})$,
and hence $S\cong V(\varpi_n)$. 

\begin{lem}\label{qWeylLemma}
Let $x_J \in S$ and $i\in \{ 1, \ldots, n \}$. Exactly one of the following holds:
\begin{enumerate}
\item $(\alpha_i^{\vee}, \mathrm{wt}(x_J)) = 1$ and $T_i x_J= -q_i f_i x_J$,
\item $(\alpha_i^{\vee}, \mathrm{wt}(x_J))= -1$ and $T_i x_J= e_i x_J$, or
\item $(\alpha_i^{\vee}, \mathrm{wt}(x_J)) = 0$ and $T_i x_J = x_J$. 
\end{enumerate}
\end{lem}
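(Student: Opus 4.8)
The plan is to compute everything explicitly on the basis $\{x_J\}_{J \in \mathcal{P}}$ of $S$, since Definition~\ref{def:spin} gives us the action of all generators in these terms and the operator $T_i$ is defined by the explicit formula \eqref{eq:LT}. The first step is to determine $(\alpha_i^\vee, \mathrm{wt}(x_J))$ as a function of $J$ and $i$. For $1 \le i \le n-1$ we have $\alpha_i = \epsilon_i - \epsilon_{i+1}$, $\alpha_i^\vee = \alpha_i$ (these are short... wait, in type $B_n$ with the normalization $(\epsilon_i,\epsilon_j) = 2\delta_{ij}$, the roots $\epsilon_i - \epsilon_{i+1}$ are long, with $(\alpha_i,\alpha_i) = 4$, so $\alpha_i^\vee = \tfrac12\alpha_i$), and one reads off from parts (1)--(2) of Definition~\ref{def:spin} that $(\alpha_i^\vee, \mathrm{wt}(x_J)) = \tfrac12\|i,J\| \in \{-1,0,1\}$, with the three cases corresponding respectively to ($i \in J$, $i+1 \notin J$), ($i+1 \in J$, $i \notin J$), and ($\{i,i+1\} \subseteq J$ or $\{i,i+1\} \cap J = \emptyset$). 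For $i = n$: $\alpha_n = \epsilon_n$, $(\alpha_n,\alpha_n) = 2$, so $\alpha_n^\vee = \alpha_n$, and part (2) gives $(\alpha_n^\vee, \mathrm{wt}(x_J)) = 1 + \|n,J\|$ divided by... actually $(\epsilon_n, \tfrac12(1 + \|n,J\|)\epsilon_n \cdot \text{stuff})$ — I should just say $(\alpha_n^\vee, \mathrm{wt}(x_J)) = 1$ if $n \notin J$ and $= -1$ if $n \in J$, which covers all cases (there is no ``$0$'' case for $i = n$, consistent with the trichotomy still being satisfied vacuously). This establishes which of (1), (2), (3) the weight pairing falls into for each $J$.

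\textbf{Second step: evaluate $T_i x_J$ via \eqref{eq:LT} in each case.} The key observation is that on $S$, the generators $e_i$ and $f_i$ are ``square-zero-ish'': from Definition~\ref{def:spin}, $e_i x_J$ and $f_i x_J$ are each either a single basis vector or zero, and moreover $e_i^2 x_J = 0 = f_i^2 x_J$ for all $J$ and all $i$ (applying $e_i$ twice would require $i \in J$ then $i \in (J \smallsetminus\{i\}\cup\{i+1\})$, impossible; similarly for $f_i$ and $e_n$, $f_n$). Hence $e_i^{(a)} x_J = 0$ for $a \ge 2$ and $f_i^{(b)} x_J = 0$ for $b \ge 2$, so the double sum in \eqref{eq:LT} collapses to at most the terms with $a, b \in \{0,1\}$. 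In case (1), $b - a = 1$ forces $(a,b) = (0,1)$, giving $T_i x_J = (-q_i)^1 e_i^{(0)} f_i^{(1)} x_J = -q_i f_i x_J$ — and here one checks $f_i x_J \ne 0$: indeed $(\alpha_i^\vee,\mathrm{wt}(x_J)) = 1$ means ($i \in J, i+1 \notin J$) for $i < n$ or ($n \notin J$) for $i = n$, which is precisely the condition for $f_i x_J \ne 0$... wait, for $i < n$ that's the condition for $e_i x_J \ne 0$, not $f_i$. Let me recheck: hmm, if $i \in J, i+1\notin J$ then $e_i x_J \ne 0$ and $f_i x_J = 0$. So in case (1) we'd get $T_i x_J = 0$, contradiction. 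I need to recompute the weight: $k_i x_J = q^{\|i,J\|} x_J$ with $\|i,J\| = -2$ when $i \in J$ (and $i+1 \notin J$), so $(\alpha_i^\vee, \mathrm{wt}(x_J)) = \tfrac12\|i,J\| = -1$ in that case — so ($i \in J, i+1\notin J$) is case (2), not case (1). Good: then case (2) has $b - a = -1$, impossible with $a,b \ge 0$ unless... $b - a = -1$ has no solution, so the sum is empty?? That can't be right either. The resolution: \eqref{eq:LT} with $b - a = (\alpha_i^\vee,\mathrm{wt}(x_J)) = -1$ does have solutions, e.g.\ $a = 1, b = 0$, giving $T_i x_J = (-q_i)^0 e_i^{(1)} f_i^{(0)} x_J = e_i x_J$; and higher $a$ give $e_i^{(a)}x_J = 0$. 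So case (2): $T_i x_J = e_i x_J$. Case (1): $(\alpha_i^\vee,\mathrm{wt}) = 1$ means ($i+1 \in J, i \notin J$) for $i < n$, or ($n \in J$) for $i=n$ — precisely when $f_i x_J \ne 0$; then $(a,b) = (0,1)$ and $T_i x_J = -q_i f_i x_J$. Case (3): $(\alpha_i^\vee,\mathrm{wt}) = 0$, so $b = a$; the $a = b = 0$ term gives $x_J$, and for $a \ge 1$ we need both $e_i^{(a)}$ and $f_i^{(b)} = f_i^{(a)}$ applied, but $e_i x_J = 0 = f_i x_J$ in this case (both $i, i+1 \in J$ or both $\notin J$), so all higher terms vanish: $T_i x_J = x_J$.

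\textbf{Third step: assemble the trichotomy.} Combining, exactly one of (1), (2), (3) holds because the three weight-pairing values $\{1, -1, 0\}$ are mutually exclusive and exhaustive (for $i = n$ only $\{1,-1\}$ occur, and case (3) is vacuous), and in each case we have verified the stated formula for $T_i x_J$ and that the relevant $e_i x_J$ or $f_i x_J$ is the expected single basis vector. \textbf{The main obstacle} I anticipate is purely bookkeeping: getting the sign and normalization conventions in \eqref{eq:LT} straight (note $q_i = q$ for $i < n$ since $\alpha_i$ is long with $(\alpha_i,\alpha_i)/2 = 2$... actually $q_i = q^{(\alpha_i,\alpha_i)/2} = q^2$ for $i<n$ and $q_n = q^{(\alpha_n,\alpha_n)/2} = q$), and correctly matching the inner-product normalization of Example~\ref{ex:son} (where $(\epsilon_i,\epsilon_j) = 2\delta_{ij}$) against the coroot pairing so that the three cases come out as $\pm 1, 0$ rather than $\pm 2, 0$. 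Once the conventions are pinned down, the square-zero property of $e_i, f_i$ on $S$ makes the sum in \eqref{eq:LT} collapse to a single term and the computation is immediate; I would present it as a short case analysis with a sentence each for cases (1), (2), (3).
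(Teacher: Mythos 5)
Your proposal is correct and reaches the same conclusion by essentially the same mechanism as the paper, just unpacked more explicitly. The paper computes $\mathrm{wt}(x_J)$, observes $(\alpha_i^\vee, \mathrm{wt}(x_J)) \in \{-1,0,1\}$, and then invokes the structural fact that $S$ therefore decomposes into $1$- and $2$-dimensional irreducibles under the $i$-th $U_{q_i}(\sln[2])$; your observation that $e_i^2 x_J = f_i^2 x_J = 0$ for all $J$ is precisely the combinatorial shadow of that decomposition, and it makes the collapse of the sum in \eqref{eq:LT} to a single term completely explicit. Your case analysis and the matching of nonvanishing conditions for $e_i x_J$, $f_i x_J$ against the sign of the weight pairing are correct (after the self-corrections), including the $q_i$ normalization and the observation that case~(3) is vacuous for $i=n$. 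The one presentational caveat: a clean writeup should omit the mid-stream false starts where you briefly swap the roles of cases~(1) and~(2); the corrected version is the right one, so just lead with it.
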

\begin{proof}
Using Definition \ref{def:spin} and the formulae for the simple roots from Example \ref{ex:son}, 
we compute that 
\begin{equation}
	\label{eq:wtxJ}
\mathrm{wt}(x_J) = \frac{1}{2} \left( \sum_{i \notin J} \epsilon_i - \sum_{i \in J} \epsilon_i  \right) \, .
\end{equation}
It is then an easy consequence that 
$(\alpha_i^{\vee}, \mathrm{wt}(x_J))\in \{ -1, 0, 1\}$ for all $\{i\}, J \subset \{1,\ldots,n\}$, 
thus $S$ decomposes as a sum of $1$- and $2$-dimensional irreducible representations 
upon restriction to the subalgebra
$U_{q_i}(\sln[2]) \subset U_q(\son)$ generated by $e_i, f_i$, and $k_i^{\pm 1}$. 
The result then follows from \eqref{eq:LT}.
\end{proof}

Using Lemma \ref{qWeylLemma}, 
we can compute the action of $T_{w_0}$ on certain basis vectors $x_J \in S$, 
where as always $w_0 \in W_{\son}$ denotes the longest element.
Recall that the type $B_n$ Weyl group $W_{\son}$ is the \emph{hyperoctahedral group}
which, as always, admits a presentation as a Coxeter group
\[
W_{\son} := \langle s_1,\ldots, s_n 
	\mid s_i^2 = e, \, s_is_{i+1}s_i = s_{i+1}s_{i}s_{i+1} \text{ if } i \leq n-2, \,
		s_{n-1}s_n s_{n-1}s_n = s_n s_{n-1}s_n  s_{n-1} \rangle \, .
\]
However, $W_{\son}$
can also be identified with the subgroup of $\mathrm{Perm}\big(\{-n,\ldots,-1,1,\ldots,n\} \big) \cong \SG_{2n}$
consisting of permutations that satisfy $\sigma(-i) = -\sigma(i)$ via the assignment:
\[
s_i \mapsto 
\begin{cases}
(i-n, i-1-n)(n-i, n-i+1) & \text{ if } 1 \leq i \leq n-1 \\
(-1,1) & \text{ if } i=n \, .
\end{cases}
\]
(Here $(k,\ell)$ denotes the transposition that interchanges $k$ and $\ell$.)
Under this inclusion, the longest element $w_0 \in W_{\son}$ is identified with 
the longest element of $\SG_{2n}$, the half-twist permutation.

\begin{lem}\label{tw0lemma}
\[
T_{w_0}(x_{\lbrace 1, \ldots, i\rbrace}) = q^{\lbrace i+1, \ldots, n\rbrace}x_{\lbrace i+1, \ldots, n\rbrace}
\]
\end{lem}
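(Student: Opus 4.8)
The statement is a computation of how the longest Weyl group element $T_{w_0}$ acts on the particular basis vectors $x_{\{1,\ldots,i\}} \in S$ (including $i=0$, where $x_\emptyset$ is the highest weight vector). The plan is to reduce to a sequence of applications of the simple operators $T_j$ via a carefully chosen reduced expression for $w_0$, using Lemma \ref{qWeylLemma} at each step to evaluate $T_j$ on a basis vector.

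First I would recall the combinatorial description of $w_0 \in W_{\son}$ from the paragraph just before the statement: under the embedding $W_{\son} \hookrightarrow \SG_{2n}$, $w_0$ is the half-twist, i.e.\ the signed permutation $i \mapsto -i$ on $\{1,\ldots,n\}$; equivalently, acting on weights via \eqref{eq:wtxJ}, $w_0$ sends $\epsilon_i \mapsto -\epsilon_i$ for all $i$. In particular $w_0(\mathrm{wt}(x_{\{1,\ldots,i\}})) = \mathrm{wt}(x_{\{i+1,\ldots,n\}})$, since $\mathrm{wt}(x_{\{1,\ldots,i\}}) = \tfrac12(\sum_{j>i}\epsilon_j - \sum_{j\le i}\epsilon_j)$ gets negated to $\tfrac12(\sum_{j\le i}\epsilon_j - \sum_{j>i}\epsilon_j) = \mathrm{wt}(x_{\{i+1,\ldots,n\}})$. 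So at the level of weights the claimed formula is forced; the content is the precise scalar $q^{\{i+1,\ldots,n\}}$.

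The key step is to choose a convenient reduced word for $w_0$ and track the scalar. A clean choice: $w_0 = (s_n)(s_{n-1}s_n s_{n-1}) \cdots$ built up so that $T_{w_0}$ can be applied to $x_{\{1,\ldots,i\}}$ one simple reflection at a time, at each stage landing on a basis vector $x_K$ for which one of the three cases of Lemma \ref{qWeylLemma} applies. Since $S$ restricted to each $U_{q_j}(\sln[2])$ is a sum of $1$- and $2$-dimensionals, every intermediate application of $T_j$ either fixes the vector, sends $x_K \mapsto e_j x_K = x_{K'}$, or sends $x_K \mapsto -q_j f_j x_K = -q_j x_{K'}$, with $K'$ obtained from $K$ by the evident single-element modification and $q_j = q$ for $j \le n-1$, $q_n = q$ as well in these conventions (here $\ee_n$ is a short root so $q_n = q^{(\ee_n,\ee_n)/2} = q$; one must be careful, as $(\ee_n,\ee_n)=2$ in the normalization of Example \ref{ex:son}). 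Tracking which steps fall into case (1) and accumulating the resulting factors of $-q_j$, together with the fact that the weight of $x_J$ under $k_n$ carries an extra shift (relation (2) in Definition \ref{def:spin}), should produce exactly the product $\prod_{j=i+1}^n (-1)^{n-j+1} q^{2(n-j)+1} = q^{\{i+1,\ldots,n\}}$. I would organize this as an induction on $n-i$ (or equivalently build $w_0$ from the minimal parabolic pieces), checking the base case $i = n$ (where $w_0$ still acts nontrivially but returns $x_{\{1,\ldots,n\}}$... actually $i=n$ gives $x_{\{1,\ldots,n\}} \mapsto q^\emptyset x_\emptyset = x_\emptyset$, consistent since $q^\emptyset$ is the empty product $=1$) and the base case $i=0$ separately.

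\textbf{Main obstacle.} The real bookkeeping difficulty is getting the reduced expression for $w_0$ (and the order in which the $T_j$ hit the vector) to be one along which \emph{every} intermediate vector stays in the basis $\{x_K\}$ — rather than producing a genuine linear combination — and then correctly accumulating the sign $(-1)^{n-j+1}$ and the $q$-power $q^{2(n-j)+1}$ for each $j$. I expect the cleanest route is a double induction: first reduce from $\son$ to $\mathfrak{so}_{2n-1}$ (peeling off the $s_n$-strand) to handle the spin-node contribution $(-q)$ and the $(-q^2)^{n-j}$ factors coming from the subsequent "type $A$" part of $w_0$, using Lemma \ref{tw0lemma} inductively for the smaller rank. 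Verifying that the parabolic subexpression indeed acts on $x_{\{1,\ldots,i\}}$ through a chain of case-(1)/case-(2) moves, with the claimed total scalar, is the crux; everything else is forced by the weight calculation above.
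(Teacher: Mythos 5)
The overall strategy is correct and aligned with the paper's: factor $w_0$, evaluate $T_{w_0}$ one simple reflection at a time via Lemma~\ref{qWeylLemma}, and accumulate the scalar. But your proposal stops short of actually producing a proof, and the scalar bookkeeping — which you correctly identify as the entire content once the weight calculation fixes the target up to scalar — is precisely what is left undone. You say ``I would organize this as an induction on $n-i$'' and ``I expect the cleanest route is a double induction,'' but neither induction is set up or carried out. As written, this is a plan, not a proof.

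There is also a substantive numerical slip that would derail the computation if you tried to finish it: you assert $q_j = q$ for $j \le n-1$. In the normalization of Example~\ref{ex:son}, $(\epsilon_i,\epsilon_j) = 2\delta_{ij}$, so the long simple roots $\ee_j = \epsilon_j - \epsilon_{j+1}$ have $(\ee_j,\ee_j) = 4$ and hence $q_j = q^2$ for $j \le n-1$, while only $q_n = q$. This distinction is exactly what produces the factor $(-1)^{n-j+1}q^{2(n-j)+1} = (-q)(-q^2)^{n-j}$ attached to each $j \in \{i+1,\ldots,n\}$ in $q^{\{i+1,\ldots,n\}}$: one $(-q_n)$ and $n-j$ copies of $(-q^2)$ from applying $T_n, T_{n-1}, \ldots, T_j$ in a chain of case-(1) moves. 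With $q_j = q$ throughout your scalar would come out wrong.

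The paper's proof is also structured differently, and more transparently, than what you sketch. Rather than a telescoping reduced word $(s_n)(s_{n-1}s_ns_{n-1})\cdots$ with a rank-reduction induction, the paper writes
\[
w_0 \;=\; w_{\{1,\ldots,n\}}^{\{i+1,\ldots,n\}}\,\cdot\, w_0^{1,\ldots,n-1}\,\cdot\, w_{\{1,\ldots,i\}}^{\{1,\ldots,n\}}
\]
with $w_{\{1,\ldots,i\}}^{\{1,\ldots,n\}} = (s_n)(s_{n-1}s_n)\cdots(s_{i+1}\cdots s_{n-1}s_n)$ and $w_{\{1,\ldots,n\}}^{\{i+1,\ldots,n\}} = (s_i\cdots s_n)\cdots(s_1\cdots s_n)$, and evaluates the three factors in turn: the first piece runs $x_{\{1,\ldots,i\}}$ down to the lowest weight vector $x_{\{1,\ldots,n\}}$ via an explicit $f$-chain, picking up exactly $q^{\{i+1,\ldots,n\}}$; the middle type-$A$ parabolic piece fixes $x_{\{1,\ldots,n\}}$ since $(\alpha_j^\vee, \mathrm{wt}(x_{\{1,\ldots,n\}})) = 0$ for $j \le n-1$; and the last piece runs back up to $x_{\{i+1,\ldots,n\}}$ via an $e$-chain with scalar $1$. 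Routing through the lowest weight vector makes each intermediate vector a basis element and keeps the scalar a clean product, rather than requiring a rank induction. You should fill in your plan along these lines, or at least verify that your telescoping word gives a chain that stays in the monomial basis at each step and accumulates the same scalar.
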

\begin{proof}
Set 
\[
w_{\{1, \ldots, i \} }^{ \lbrace 1, \ldots, n \rbrace} 
	:= (s_n)(s_{n-1}s_n)\cdots(s_{i+1}\cdots s_{n-1}s_n) 
	\, , \quad
w_{\lbrace 1, \ldots , n\rbrace}^{\lbrace i+1, \ldots , n\rbrace} 
	:= (s_{i}\cdots s_n)(s_{i-1}\cdots s_n)\cdots (s_1\cdots s_n) \, ,
\]
and let $w_0^{1, \ldots , n-1}$ denote the longest element of the parabolic subgroup 
$\langle s_1, \ldots, s_{n-1} \rangle \subset W_{\son}$.
For each $i \in \{1,\ldots,n\}$ the longest element $w_0 \in W_{\son}$ can be written as
\[
w_0= w_{\lbrace 1, \ldots, n\rbrace}^{\lbrace i+1, \ldots , n\rbrace} 
	w_0^{1, \ldots , n-1} w_{\lbrace1, \ldots , i\rbrace}^{\lbrace 1, \ldots n\rbrace}
\]
which e.g.~can be verified using the aforementioned inclusion $W_{\son} \hookrightarrow \SG_{2n}$.

Next, Definition \ref{def:spin} gives that
\[
x_{\lbrace1, \ldots , n\rbrace} 
= (f_n)(f_{n-1}f_n) \cdots (f_{i+2} \cdots f_{n-1}f_n) (f_{i+1} \cdots f_{n-1} f_n) x_{\lbrace 1, \ldots, i\rbrace}
\]
and
\[
x_{\lbrace i+1, \ldots, n\rbrace} 
= (e_{i} \cdots e_{n-1}e_n) \cdots (e_2\cdots e_{n-1}e_n)(e_1 \cdots e_{n-1}e_n)x_{\lbrace 1, \ldots n\rbrace} \, ,
\]
so repeated application of Lemma \ref{qWeylLemma} implies that
\[
T_{w_{\lbrace1, \ldots , i\rbrace}^{\lbrace 1, \ldots n\rbrace}}(x_{\lbrace 1, \ldots , i\rbrace}) 
	= q^{\lbrace i+1, \ldots, n \rbrace} x_{\lbrace 1, \ldots n\rbrace}
\]
and
\[
T_{w_{\lbrace 1, \ldots n\rbrace}^{\lbrace i+1, \ldots n\rbrace}}(x_{\lbrace 1, \ldots, n\rbrace})= x_{\lbrace i+1, \ldots, n\rbrace} \, .
\]
Since $(\alpha_i^{\vee}, \mathrm{wt}(x_{\{1,\ldots,n\}})) = 0$ for $1 \leq i \leq n-1$, 
Lemma \ref{qWeylLemma} also gives that 
\[
T_{w_0^{1, \ldots , n-1}}(x_{\lbrace 1, \ldots , n\rbrace})= x_{\lbrace 1, \ldots , n\rbrace}
\] 
and the result follows.
\end{proof}

We next discuss the remaining fundamental representations.

\begin{defn}
For $i= 1, 2, \ldots n-1$, 
let $V_i := V(\varpi_i)$ denote the Weyl module for $U_q(\son)$ 
with fixed highest weight vector $v_i^+ \in V_i$ of weight $\varpi_i= \epsilon_1+ \epsilon_2 + \cdots + \epsilon_i$.
\end{defn}

We will let $v_i^- \in V_i$ denote the unique vector in the lowest weight space of $V_i$ 
such that $v_i^+= T_{w_0}(v_i^-)$.
Note that $\mathrm{wt}(v_i^-) = w_0(\varpi_i) = - \varpi_i$.
Extending this notation, we will also write $V_0:= \C(q^{\frac{1}{2}})$ to denote the trivial $U_q(\son)$-module
(with fixed highest weight vector $v_0^+:= 1 = v_0^-$) and denote the 
distinguished highest and lowest weight vectors of $S$ 
by $v_n^+ := v_\emptyset$ and $v_n^- := v_{\{1,\ldots,n\}}$.

The representation $V_1$ admits the following explicit description, 
which we will use below to study certain morphisms in $\Rep(U_q(\son))$.

\begin{prop}
	\label{prop:V1basis}
The representation $V_1$ has a basis $\{ a_1, a_2, \ldots, a_n, u, b_n, \ldots, b_2, b_1 \}$
such that 
\[
\mathrm{wt}(a_i)=\epsilon_i \, , \quad \mathrm{wt}(u) = 0 \, , \quad \mathrm{wt}(b_i) = -\epsilon_i \, .
\]
With respect to this basis, 
the $U_q(\son)$ action is given by
\begin{enumerate}
\item $f_i a_i= a_{i+1}$ and $f_i b_{i+1} = b_i$ for $1 \leq i \leq n-1$,
\item $e_i a_i= a_{i-1}$ and $e_i b_i = b_{i+1}$ for $1 \leq i \leq n-1$,
\item $f_n a_n= u$ and $f_n u= [2]b_n$,
\item $e_n b_n = u$ and $e_n u= [2]a_n$,
\item $k_i v= q^{(\ee_i, \mathrm{wt}(v))} x$, for $1 \leq i \leq n$,
\end{enumerate}
and where all $e_i$ and $f_i$ for $1 \leq i \leq n$ act by zero otherwise.
\end{prop}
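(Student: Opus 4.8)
The plan is to verify directly that formulas (1)--(5) define an object of $\Rep(U_q(\son))$ on the $(2n+1)$-dimensional space $V_1$ with the stated weights, and then to identify this module abstractly with the Weyl module $V(\varpi_1)$. So I first take the vector space with basis $\{a_1,\dots,a_n,u,b_n,\dots,b_1\}$ (setting $a_0 = b_0 := 0$), declare the weights as in the statement, and define operators $e_i, f_i, k_i^{\pm1}$ by (1)--(5). The relations of Definition \ref{quantumgroupconventions} involving only the $k_i$ --- namely $k_i k_j = k_j k_i$ and $k_i e_j = q^{(\ee_i,\ee_j)} e_j k_i$, $k_i f_j = q^{-(\ee_i,\ee_j)} f_j k_i$ --- are immediate from (5) once one observes that each $e_i$ (resp.\ $f_i$) shifts weight by $+\ee_i$ (resp.\ $-\ee_i$); likewise $e_i f_j = f_j e_i$ for $i\neq j$ holds because, away from the node $n$, the operators act on the linear chain $a_1\to a_2\to\cdots\to a_n\to u\to b_n\to\cdots\to b_1$, and for $i\neq j$ the operators $e_i,f_j$ move along non-overlapping segments.

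Next, the relation $e_i f_i - f_i e_i = (k_i - k_i^{-1})/(q_i - q_i^{-1})$ is checked basis vector by basis vector. For $1\le i\le n-1$ this is a rank-one $\sln[2]$-computation on the two-step strings $a_i\to a_{i+1}$ and $b_{i+1}\to b_i$; every other basis vector lies in the common kernel of $e_i$ and $f_i$ and has $(\ee_i^{\vee},\mathrm{wt}) = 0$. The one point requiring care is $i = n$: in the conventions of Example \ref{ex:son} one has $(\ee_n,\ee_n) = 2$, so $\ee_n$ is the short root and $q_n = q$; on the three-step string $a_n\to u\to b_n$ one computes that $e_n f_n - f_n e_n$ acts by $[2]$, $0$, $-[2]$ on $a_n$, $u$, $b_n$ respectively, which matches $(k_n - k_n^{-1})/(q - q^{-1})$ evaluated at the weights $\epsilon_n$, $0$, $-\epsilon_n$ since $(q^2 - q^{-2})/(q - q^{-1}) = [2]$. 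This is exactly why the factors $[2]$ appear in (3) and (4).

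It then remains to check the quantum Serre relations. The only pairs $\{i,j\}$ with $i\neq j$ and nonzero Cartan entries are the ``type $A$'' bonds, for which $a_{ij} = a_{ji} = -1$, and the double bond $\{n-1,n\}$, for which $a_{n-1,n} = -1$ and $a_{n,n-1} = -2$; so one must verify the cubic Serre relation for the $A$-bonds and for node $n-1$ acting against node $n$, and the quartic Serre relation for node $n$ against node $n-1$. Since each $e_i$ moves a basis vector at most one step along the weight chain, every Serre expression annihilates all but a few basis vectors supported near the relevant indices, and the relations collapse to a finite list of identities in $\Z[q^{\pm1}]$; I would organize the verification by the position of the support of a basis vector relative to $i,i\pm1$ (and $n$). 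This bookkeeping is the main, though entirely routine, obstacle. Alternatively one can bypass it by matching (1)--(5), after an evident relabeling and rescaling of basis vectors, with a standard model of the vector representation of $U_q(\mathfrak{so}_{2n+1})$ in the literature; I prefer the self-contained route.

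Finally, once $V_1$ is known to be an object of the semisimple category $\Rep(U_q(\son))$ (see \S\ref{ss:Rep}), the identification is immediate. The vector $a_1$ is killed by every $e_i$ and has weight $\epsilon_1 = \varpi_1$; writing $V_1 = \bigoplus_k L_k$ as a sum of irreducibles and decomposing $a_1$ accordingly, each component is a highest weight vector of weight $\varpi_1$, hence nonzero only in a summand isomorphic to $V(\varpi_1)$. Since $a_1\neq 0$, some irreducible summand of $V_1$ is isomorphic to $V(\varpi_1)$, and as the vector representation of type $B_n$ has dimension $2n+1 = \dim V_1$ we conclude $V_1\cong V(\varpi_1)$, with $a_1$ corresponding (up to scalar) to the highest weight vector $v_1^+$. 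Rescaling so that $a_1 = v_1^+$ yields the basis asserted in the proposition.
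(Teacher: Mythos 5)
Your proposal is correct and matches the approach the paper intends; the paper itself dismisses this proposition as "an exercise using Definition~\ref{quantumgroupconventions}," and your proof is exactly that exercise carried out carefully. You verify that formulas (1)--(5) satisfy the defining relations of $U_q(\son)$, correctly flag the $i=n$ commutator (where $q_n=q$ and the $[2]$'s in (3)--(4) are forced) and the Serre relation for the double bond as the only non-immediate checks, and then identify the resulting module with $V(\varpi_1)$ by observing that $a_1$ is a highest weight vector of weight $\varpi_1$ in a semisimple category of the right total dimension $2n+1$.

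One small caution: your phrase "the operators $e_i, f_j$ move along non-overlapping segments" for $i\neq j$ is a bit loose when $|i-j|=1$, since the segments do share an endpoint (e.g.\ $a_{i+1}$); the commutativity $e_i f_j = f_j e_i$ there isn't disjointness but a short case check (both compositions vanish or agree on the overlapping vertices), which you would of course do in the advertised basis-vector verification. This does not affect the correctness of the argument, only the heuristic phrasing.
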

\begin{proof}
An exercise using Definition \ref{quantumgroupconventions}.
\end{proof}

If we identify $v_1^+ = a_1$, then a computation using \eqref{eq:LT} 
shows that $v_1^{-} = b_1$.

\subsection{Trivalent vertices}
	\label{s:trivalent}

We now begin our study of morphisms in $\Rep(U_q(\son))$.
It is a standard fact that
\begin{equation}
	\label{eq:StSdecomp}
S\otimes S \cong \C(q^{\frac{1}{2}}) \oplus \left( \bigoplus_{i=1}^{n-1}V(\varpi_i) \right) \oplus V(2\varpi_n) 
	= \left( \bigoplus_{i=0}^{n-1} V_i  \right) \oplus V(2\varpi_n)
\end{equation}
and we first describe the inclusions of the submodules $V_i$ for $0 \leq i \leq n-1$.
Note: in this subsection we will break our conventions from \S \ref{ss:QG}
by using the symbol $I$ to denote arbitrary subsets of $\{1,\ldots,n\}$ 
(as opposed to letting it denote the set of Dynkin nodes, 
as is done in other sections of the paper).
Given such $I \subset \{1,\ldots,n\}$, we will denote its complement by 
$I^c = \{1,\ldots,n\} \smallsetminus I$.

\begin{prop}
	\label{prop:Y}
For each $0 \leq i \leq n-1$, 
there is a unique map $\Ya[i] \colon V_i \longrightarrow S\otimes S$ 
of $U_q(\son)$-modules such that 
\begin{equation}\label{highestwti}
\Ya[i](v_i^+):=  \sum_{\substack {I \cap J = \emptyset \\ I \cup J  = \lbrace i+1, \ldots, n\rbrace}} q^{J} x_{I} \otimes x_J \, .
\end{equation}
\end{prop}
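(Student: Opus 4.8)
The plan is to reduce the statement to a single concrete check. Since $\Rep(U_q(\son))$ is semisimple and, by \eqref{eq:StSdecomp}, each $V_i$ with $0 \le i \le n-1$ occurs in $S \otimes S$ with multiplicity one, Schur's lemma (recall that at generic $q$ the Weyl module $V_i = V(\varpi_i)$ is simple) shows $\Hom_{U_q(\son)}(V_i, S \otimes S)$ is one-dimensional; thus there is a unique ``trivalent vertex'' $V_i \to S \otimes S$ up to scalar, and the only issue is the normalization. Now $V_i$ is generated by $v_i^+$, so any module map out of $V_i$ is determined by the image of $v_i^+$; and by the universal property of the simple integrable module $V(\varpi_i)$, a module map $V_i \to S\otimes S$ sending $v_i^+$ to a given vector $w$ exists precisely when $w$ has weight $\varpi_i$ and is annihilated by every $e_j$, $j \in \{1,\dots,n\}$. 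Writing $y_i$ for the right-hand side of \eqref{highestwti}, the entire proposition therefore reduces to verifying that $y_i$ is a nonzero vector of weight $\varpi_i$ killed by all $e_j$. Nonvanishing is immediate, since the $x_I \otimes x_J$ are linearly independent and each $q^J$ is a unit.

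For the weight, I would use \eqref{eq:wtxJ}: for a partition $I \sqcup J = \{i+1,\dots,n\}$, each element of $\{1,\dots,i\}$ lies in neither $I$ nor $J$, and a one-line computation gives $\mathrm{wt}(x_I) + \mathrm{wt}(x_J) = \epsilon_1 + \cdots + \epsilon_i = \varpi_i$ for every summand; in particular $y_i$ has the correct $k_j$-eigenvalues. For the annihilation $e_j y_i = 0$, I would use the coproduct $\Delta(e_j) = e_j \otimes k_j + 1 \otimes e_j$ together with Definition \ref{def:spin}, split into three cases. If $1 \le j \le i$, then $j \notin I$ and $j \notin J$ for every summand, so $e_j x_I = e_j x_J = 0$ and each term dies termwise. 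If $i+1 \le j \le n-1$, then $e_j$ acts as zero on any summand having $j, j+1$ in the same part, while the remaining summands come in pairs $(I,J) \leftrightarrow (I^\sharp, J^\sharp)$ obtained by interchanging $j$ and $j+1$; on the pair, $e_j$ produces two copies of the same basis vector $x_{I \smallsetminus \{j\} \cup \{j+1\}} \otimes x_J$, with coefficients $q^2 q^J$ (from the $e_j \otimes k_j$ term, using $\|j,J\|=2$) and $q^{J^\sharp}$, and these cancel because $q^{J^\sharp} = q^J \cdot (q^{\{j\}}/q^{\{j+1\}}) = -q^2 q^J$. The case $j = n$ is analogous, with the pairing $(I,J) \leftrightarrow (I \smallsetminus \{n\},\, J \cup \{n\})$: the two contributions to $x_{I \smallsetminus \{n\}} \otimes x_J$ are $q \cdot q^J$ (using $k_n x_J = q x_J$ since $\|n,J\|=0$) and $q^{J \cup \{n\}} = q^J q^{\{n\}} = -q\, q^J$, which again cancel. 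Uniqueness of $\Ya[i]$ is then automatic from the fact that $v_i^+$ generates $V_i$.

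The structural half of the argument is routine; the real work, and the main place an error could creep in, is the scalar bookkeeping in the two cancellation computations. There one must keep track of the $q^J$'s via multiplicativity $q^{J_1 \sqcup J_2} = q^{J_1} q^{J_2}$ and the explicit value $q^{\{j\}} = (-1)^{n-j+1} q^{2(n-j)+1}$, and match them against the $k_j$-eigenvalues generated by the coproduct, being careful that the sign in $(-q)(-q^2)^{n-j}$ is exactly what makes the two contributions to each basis vector negatives of one another. Once both signs are pinned down the cancellations are exact, which establishes that $y_i$ is a highest weight vector of weight $\varpi_i$ and hence produces the desired $\Ya[i]$.
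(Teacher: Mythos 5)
Your proof is correct and follows essentially the same route as the paper: reduce to showing the proposed image of $v_i^+$ is a nonzero vector of weight $\varpi_i$ annihilated by every $e_j$, then verify annihilation via termwise vanishing for $j\le i$ and pairwise cancellation (using the multiplicativity of $q^J$ and the sign in $q^{\{j\}}=(-q)(-q^2)^{n-j}$) for $i+1\le j\le n$. The only difference is that you spell out the structural preliminaries (multiplicity one from \eqref{eq:StSdecomp}, uniqueness from the highest-weight universal property) and the $j=n$ case, which the paper states briefly or leaves to the reader; the cancellation computations match the paper's exactly.
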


\begin{proof}
Equation \eqref{eq:wtxJ} implies that $\Ya[i](v_i^+)$ is a non-zero vector in the $\varpi_i$-weight space of $S\otimes S$, 
so it suffices to show that it is a highest weight vector, i.e.~that $U_q(\son)^{> 0} \cdot \Ya[i](v_i^+)= 0$. 
Recall that $e_\ell$ acts on tensor products via $\Delta(e_\ell) = e_\ell \otimes k_\ell + 1 \otimes e_\ell$.
For $\ell \in \lbrace 1, \ldots, n-1\rbrace$, we thus compute:
\begin{align*}
e_\ell \cdot \Ya[i](v_i^+) &= \sum_{\substack {I\cap J = \emptyset \\ I\cup J  = \lbrace i+1, \ldots, n\rbrace}} q^{J} e_\ell (x_I) \otimes k_\ell (x_J)
	+ \sum_{\substack {I\cap J = \emptyset \\ I\cup J  = \lbrace i+1, \ldots, n\rbrace}} q^{J} x_I \otimes e_\ell (x_J) \\
&= \sum_{\substack {I \cap J = \emptyset \\ I \cup J = \lbrace i+1, \ldots, n\rbrace \\ \ell \in I, \ell+1\notin I}} 
	q^{J}q^{\| \ell, J \|} x_{I\smallsetminus \lbrace \ell \rbrace \cup \lbrace \ell+1\rbrace}\otimes x_{J}
		+ \sum_{\substack {I\cap J = \emptyset \\ I\cup J  = \lbrace i+1, \ldots, n\rbrace \\ \ell \in J, \ell+1 \notin J}} 
			q^{J} x_I\otimes x_{J\smallsetminus \lbrace \ell \rbrace \cup \lbrace \ell+1\rbrace}
\end{align*}
which is zero when $\ell \le i$, since both summations are empty.
If $\ell \in \lbrace i+1, \ldots , n-1\rbrace$, we then have
\[
\sum_{\substack{I\cap J = \lbrace \ell +1 \rbrace \\ I\cup J = \lbrace i+1, \ldots , n\rbrace \smallsetminus \lbrace \ell \rbrace}} 
	\left(q^{J \cup \lbrace \ell \rbrace \smallsetminus \lbrace \ell+1\rbrace} + q^{J}q^{\| \ell, J \|}\right)x_I\otimes x_J \, .
\]
Since 
\[
q^{J \cup \lbrace \ell \rbrace \smallsetminus \lbrace \ell +1\rbrace} 
= \frac{(-q)(-q^2)^{n-\ell}}{(-q)(-q^2)^{n-(\ell+1)}} q^J = -q^{2} q^J = - q^J q^{\| \ell , J \|} \, ,
\]
it follows that $e_\ell \cdot \Ya[i](v_i^+)= 0$ in this case as well. 
Similarly, one can compute that $e_n\cdot \Ya[i](v_i^+) = 0$. 
\end{proof}

We next compute the value of $\Ya[i]$ on our distinguished lowest-weight vectors $v_i^- \in V_i$.

\begin{lem}
We have that
\begin{equation}
	\label{eq:Yvminus}
\Ya[i](v_i^-)=  \sum_{\substack {I\cap J = \emptyset \\ I\cup J  = \lbrace i+1, \ldots, n\rbrace}} q^{J} x_{J^c}\otimes x_{I^c} \, .
\end{equation}
\end{lem}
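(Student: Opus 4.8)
The plan is to use that every homomorphism of $U_q(\son)$-modules commutes with the quantum Weyl group operators $T_w$ of \S\ref{ss:Rep}. Since $\Ya[i]$ is such a homomorphism and $v_i^- = T_{w_0}^{-1}(v_i^+)$ by definition, it follows that $\Ya[i](v_i^-) = T_{w_0}^{-1}\bigl(\Ya[i](v_i^+)\bigr)$, where on the right $T_{w_0}$ acts on $S\otimes S$ via $\Delta(T_{w_0})$. Thus the lemma amounts to identifying $T_{w_0}^{-1}$ applied to the explicit highest weight vector \eqref{highestwti}. Rather than wrestle with $\Delta(T_{w_0})^{-1}$ head-on (which via Theorem \ref{thm:KR} and Proposition \ref{prop:R=xy} drags in the $R$-matrix correction terms), I would instead verify that the right-hand side $w$ of \eqref{eq:Yvminus} is a \emph{lowest} weight vector of $S\otimes S$ of weight $-\varpi_i$, and then pin it down by a multiplicity argument.

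For the weight: using \eqref{eq:wtxJ} and that $I$ and $J$ partition $\{i+1,\dots,n\}$, one checks $\mathrm{wt}(x_{J^c})+\mathrm{wt}(x_{I^c}) = -(\epsilon_1+\cdots+\epsilon_i) = -\varpi_i$ for every term. For the lowest-weight condition, the slick route is to introduce the linear involution $\psi\colon S\to S$ with $\psi(x_J) = x_{J^c}$. From the complementation identities $\|i,J^c\| = -\|i,J\|$ for $i\le n-1$ and $\|n,J^c\| = -2-\|n,J\|$ together with Definition \ref{def:spin}, one verifies that $\psi$ intertwines the $U_q(\son)$-action with its Chevalley twist: $\psi(e_\ell v) = f_\ell\psi(v)$, $\psi(f_\ell v) = e_\ell\psi(v)$, and $\psi(k_\ell v) = k_\ell^{-1}\psi(v)$. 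Since $\Delta\circ\omega = (\omega\otimes\omega)\circ\Delta^{\mathrm{op}}$ for the Chevalley involution $\omega$, the operator $\Phi := \flip_{S,S}\circ(\psi\otimes\psi)$ likewise intertwines the $U_q(\son)$-action on $S\otimes S$ with its Chevalley twist, and inspection of \eqref{highestwti} gives $\Phi\bigl(\Ya[i](v_i^+)\bigr) = w$. As $\Ya[i](v_i^+)$ is killed by every $e_\ell$ (Proposition \ref{prop:Y}), its image $w$ is killed by every $f_\ell$, so $w$ is a lowest weight vector of weight $-\varpi_i$. (Alternatively, $f_\ell\cdot w = 0$ can be checked by a cancellation argument directly mirroring the proof of Proposition \ref{prop:Y}.)

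Finally, a lowest weight vector of weight $-\varpi_i$ generates a submodule of lowest weight $-\varpi_i$; since $w_0 = -\mathrm{id}$ on the weight lattice in type $B_n$, such a submodule is a sum of copies of $V(\varpi_i)=V_i$, which by \eqref{eq:StSdecomp} occurs in $S\otimes S$ with multiplicity one. Hence the $(-\varpi_i)$-lowest-weight space of $S\otimes S$ is one-dimensional, and as $\Ya[i](v_i^-)$ is a nonzero vector in it we conclude $w = c\,\Ya[i](v_i^-)$ for some scalar $c\in\C(q^{1/2})$. The last step --- and the one genuine obstacle --- is to show $c=1$; I would do this by comparing the coefficient of a single convenient basis vector, the natural choice being $x_{\{1,\dots,i\}}\otimes x_{\{1,\dots,n\}}$, whose coefficient in $w$ is $q^{\{i+1,\dots,n\}}$ and whose coefficient in $T_{w_0}^{-1}\Ya[i](v_i^+)$ can be computed with Lemma \ref{tw0lemma} (applied to the initial-interval sets $\{1,\dots,i\}$ and $\{1,\dots,n\}$, noting that the second factor $x_{\{1,\dots,n\}}$ is the lowest weight vector of $S$ and is therefore annihilated by the negative part of $\Delta(T_{w_0})$), once one accounts for the contributions of the remaining terms of \eqref{highestwti}; equivalently, one checks directly that $\Delta(T_{w_0})(w) = \Ya[i](v_i^+)$. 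In every version the bookkeeping obstacle is tracking the scalars $q^J$ under the $k_\ell^{-1}$'s in the coproduct and under $\psi$; once that is organized, \eqref{eq:Yvminus} drops out.
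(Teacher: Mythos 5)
Your reduction to the two claims ``$w$ is a lowest weight vector of weight $-\varpi_i$'' and ``the scalar is $1$'' matches the paper's strategy, and your intertwiner argument for the first claim is a nice improvement: the paper only asserts the lowest-weight condition and leaves it to the reader, whereas your map $\Phi = \flip_{S,S}\circ(\psi\otimes\psi)$ gives a clean structural proof. (I checked the details: $\psi(x_J)=x_{J^c}$ does satisfy $\psi(e_\ell v)=f_\ell\psi(v)$, etc., via $\|i,J\|+\|i,J^c\|=0$ for $i<n$ and $\|n,J\|+\|n,J^c\|=-2$; $\Delta\circ\omega=(\omega\otimes\omega)\circ\Delta^{\mathrm{op}}$ holds for the paper's coproduct; and $\Phi$ of $\Ya[i](v_i^+)$ is exactly $w$.) The multiplicity-one argument is the same as the paper's.

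The gap is in the normalization step, and it is the one you half-anticipated. Your proposed basis vector $x_{\{1,\dots,i\}}\otimes x_{\{1,\dots,n\}}$ puts the lowest weight vector $x_{\{1,\dots,n\}}$ in the \emph{second} tensor slot. Proposition~\ref{prop:R=xy} writes $(T_{w_0}^{-1}\otimes T_{w_0}^{-1})\circ\Delta(T_{w_0}) = 1\otimes 1 + \sum x^+_\mu\otimes y^-_\mu$ with $x^+_\mu$ raising and $y^-_\mu$ lowering. When the target's first slot is the lowest weight vector (the paper's choice $x_{\{1,\dots,n\}}\otimes x_{\{1,\dots,i\}}$), no $x^+_\mu$ applied to anything else can land in $x_{\{1,\dots,n\}}$, so the correction terms contribute nothing and the coefficient is read off from $1\otimes 1$ alone. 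When instead the lowest weight vector sits in the second slot, as in your choice, $y^-_\mu$ is a \emph{lowering} operator and there is nothing preventing $y^-_\mu(x_{I^c})\propto x_{\{1,\dots,n\}}$ for $I\ne\emptyset$; correspondingly $x^+_\mu$ can raise $x_{J^c}$ for $J\ne\emptyset$ to $x_{\{1,\dots,i\}}$. So the correction terms genuinely contribute, your parenthetical claim that the second factor being lowest weight kills the negative part is not the right mechanism, and the ``accounting'' you defer is where the real work lies. Either swap to the paper's basis vector $x_{\{1,\dots,n\}}\otimes x_{\{1,\dots,i\}}$ (compare coefficients after applying $(T_{w_0}^{-1}\otimes T_{w_0}^{-1})\circ\Delta(T_{w_0})$ to both sides, using Lemma~\ref{tw0lemma} on the $\Ya[i](v_i^+)$ side), or carry out your alternative suggestion of verifying $\Delta(T_{w_0})(w)=\Ya[i](v_i^+)$ directly, but the latter needs an actual computation that you do not supply.
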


\begin{proof}
Let $w_i^- \in S \otimes S$ denote the right-hand side of \eqref{eq:Yvminus}.
We leave it as an exercise to verify that 
$\mathrm{wt}(w_i^-) = -\varpi_i$ and that $w_i^{-}$ is a lowest weight vector, i.e.~that
$U_q(\son)^{< 0} \cdot w_i^- = 0$. 
It then follows that 
$\Ya[i](v_i^{-}) = \chi \cdot w_i^-$
for some $\chi \in \C(q^{\frac{1}{2}})$. 
We will show that $\chi =1$ by computing the value of 
$(T_{w_0}^{-1}\otimes T_{w_0}^{-1}) \circ \Delta(T_{w_0})$ on both $\Ya[i](v_i^{-})$ and $\chi \cdot w_i^-$.

First, since $x_{\{1,\ldots,n\}} \in S$ is a lowest weight vector, 
Proposition \ref{prop:R=xy} gives that 
\begin{equation}
	\label{computeRmatrix1}
\left((T_{w_0}^{-1}\otimes T_{w_0}^{-1}) \circ \Delta(T_{w_0}) \right)(w_i^-) = 
	\chi \cdot x_{\lbrace 1,\ldots, n\rbrace}\otimes x_{\lbrace 1, \ldots, i\rbrace}
		+ \sum_{\substack{I', J' \\ I' \ne\lbrace 1, \ldots, n\rbrace}} \xi_{I',J'} x_{I'} \otimes x_{J'},
\end{equation}
for some $\xi_{I',J'}\in \C(q^{\frac{1}{2}})$.
Next, we compute that
\begin{align*}
\big((T_{w_0}^{-1}\otimes T_{w_0}^{-1}) \circ \Delta(T_{w_0}) \big) (\Ya[i](v_i^-)) 
	&= \big(T_{w_0}^{-1}\otimes T_{w_0}^{-1}  \circ \Ya[i] \big) (T_{w_0}v_i^-) \\
	&= \big(T_{w_0}^{-1}\otimes T_{w_0}^{-1}  \circ \Ya[i] \big) (v_i^+) \\
	&= \sum_{\substack {I\cap J = \emptyset \\ I\cup J  = \lbrace i+1, \ldots, n\rbrace}} q^{J} T_{w_0}^{-1}(x_I)\otimes T_{w_0}^{-1}(x_J) \, .
\end{align*}
Now, Lemma \ref{tw0lemma} gives that 
$T_{w_0}^{-1}(x_{\emptyset}) = x_{\lbrace 1, \ldots, n\rbrace}$ 
and $T_{w_0}^{-1}(x_{\lbrace i+1, \ldots, n\rbrace}) = q^{-\lbrace i+1, \ldots, n\rbrace}x_{\lbrace 1, \ldots, i\rbrace}$, 
while $T_{w_0}^{-1}x_I\in \C(q^{\frac{1}{2}}) \cdot x_{I^c}$ for all $I$,
so
\begin{equation}
	\label{computeRmatrix2}
\begin{multlined}
\big((T_{w_0}^{-1}\otimes T_{w_0}^{-1}) \circ \Delta(T_{w_0}) \big) (\Ya[i](v_i^-)) = 
	q^{-\lbrace i+1, \ldots, n\rbrace}q^{\lbrace i+1, \ldots, n\rbrace}
		x_{\lbrace 1,\ldots, n\rbrace}\otimes x_{\lbrace 1, \ldots, i\rbrace} \\
		+ \sum_{\substack{I', J' \\ I' \ne\lbrace 1, \ldots, n\rbrace}} \zeta_{I',J'} x_{I'} \otimes x_{J'} \, ,
\end{multlined}
\end{equation}
for some $\zeta_{I',J'}\in \C(q^{\frac{1}{2}})$. The result then follows by comparing the
coefficients of $x_{\lbrace 1,\ldots, n\rbrace}\otimes x_{\lbrace 1, \ldots, i\rbrace}$ in
\eqref{computeRmatrix1} and \eqref{computeRmatrix2}.
\end{proof}

\begin{example}
	\label{ex:Y1}
We record the action of $\Ya[1]$ on the basis for $V_1$ from Proposition \ref{prop:V1basis}.
For $\ell \in \{1,\ldots,n\}$, 
let $\sigma_\ell \colon \lbrace 1\rbrace^c \longrightarrow \lbrace \ell \rbrace^c$ 
be the unique order preserving bijection, 
e.g.~if $n=4$ then $\sigma_3(2) = 1, \sigma_3(3) = 2$, and $\sigma_3(4) = 4$.
We then have
\[
\begin{gathered}
\Ya[1](a_\ell) = \sum_{I\subset \lbrace 2, \ldots, n\rbrace} q^Ix_{\sigma_\ell(\{2,\ldots,n\} \smallsetminus I)}\ot x_{\sigma_\ell(I)}
\ , \quad
\Ya[1](b_\ell) = \sum_{I\subset \lbrace 2, \ldots, n\rbrace} 
	q^Ix_{\sigma_\ell(\{2,\ldots,n\} \smallsetminus I)\cup \lbrace \ell \rbrace}\ot x_{\sigma_\ell(I) \cup \lbrace \ell \rbrace} \\
\Ya[1](u) = \sum_{I\subset \lbrace 2, \ldots, n\rbrace} q^Ix_{\sigma_n(\{2,\ldots,n\} \smallsetminus I)\cup\lbrace n\rbrace}\ot x_{\sigma_n(I)} 
	+ \sum_{I\subset \lbrace 2, \ldots, n\rbrace} q^{-1}\cdot 
		q^Ix_{\sigma_n(\{2,\ldots,n\} \smallsetminus I)}\ot x_{\sigma_n(I)\cup \lbrace n\rbrace}  \, .
\end{gathered}
\]
\end{example}

Next, we compute the composition of the braiding \eqref{eq:braiding} 
with the morphisms $\Ya[i]$ from Proposition \ref{prop:Y}. 
Below, we will use this result to give a formula for the braiding on $S \otimes S$ 
in terms of (morphisms built from) the morphisms $\Ya[i]$.

\begin{lem}
	\label{trivalenttwist}
For $0 \leq i \leq n-1$, we have
\[
\RM_{S, S}\circ \Ya[i] = q^{-\left(\frac{n-2i}{2} \right)} q^{-\lbrace i+1, \ldots, n\rbrace} \Ya[i] \, .
\]
\end{lem}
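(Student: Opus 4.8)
The plan is to first reduce the claim to the determination of a single scalar, and then to pin that scalar down by tracking one matrix coefficient through the Kirillov--Reshetikhin formula for the $R$-matrix.

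First I would note that $\RM_{S,S}\circ \Ya[i]$ is again a morphism of $U_q(\son)$-modules $V_i \to S\ot S$, since both $\RM_{S,S}$ and $\Ya[i]$ are. By the decomposition \eqref{eq:StSdecomp}, each $V_i$ with $0\le i\le n-1$ occurs in $S\ot S$ with multiplicity one, so $\Hom_{U_q(\son)}(V_i, S\ot S)$ is one-dimensional and spanned by $\Ya[i]$ (Proposition \ref{prop:Y}). Hence $\RM_{S,S}\circ \Ya[i] = c_i\,\Ya[i]$ for a unique scalar $c_i\in\C(q^{\frac12})$, and it remains to show $c_i = q^{-(n-2i)/2}\,q^{-\{i+1,\ldots,n\}}$.

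To compute $c_i$ I would evaluate both sides on the distinguished lowest weight vector $v_i^-\in V_i$, using the explicit formula \eqref{eq:Yvminus} for $\Ya[i](v_i^-)$. On the source side, $\RM_{S,S}(\Ya[i](v_i^-)) = c_i\,\Ya[i](v_i^-)$, and from \eqref{eq:Yvminus} the coefficient of the basis vector $x_{\{1,\ldots,n\}}\ot x_{\{1,\ldots,i\}}$ in $\Ya[i](v_i^-)$ is $q^{\emptyset}=1$ (it comes from the unique term $I=\{i+1,\ldots,n\}$, $J=\emptyset$), so the coefficient of $x_{\{1,\ldots,n\}}\ot x_{\{1,\ldots,i\}}$ on this side is exactly $c_i$. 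On the other side I would invoke Theorem \ref{thm:KR}, writing $\RM_{S,S} = \flip_{S,S}\circ q^{(\mathrm{wt}(-),\mathrm{wt}(-))}\circ(T_{w_0}^{-1}\ot T_{w_0}^{-1})\circ\Delta(T_{w_0})$. Since a $U_q(\son)$-module morphism intertwines the quantum Weyl group operators (these act on any module, and tensor products thereof, by the universal formula \eqref{eq:LT} in the $e_i,f_i$), we have $\Delta(T_{w_0})(\Ya[i](v_i^-)) = \Ya[i](T_{w_0}v_i^-) = \Ya[i](v_i^+)$, which by \eqref{highestwti} equals $\sum_{I\sqcup J=\{i+1,\ldots,n\}} q^J\, x_I\ot x_J$.

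Now I would extract the coefficient of $x_{\{1,\ldots,n\}}\ot x_{\{1,\ldots,i\}}$ after applying $\flip_{S,S}\circ q^{(\mathrm{wt},\mathrm{wt})}\circ(T_{w_0}^{-1}\ot T_{w_0}^{-1})$ to this vector, equivalently the coefficient of $x_{\{1,\ldots,i\}}\ot x_{\{1,\ldots,n\}}$ before the final $\flip_{S,S}$. By Lemma \ref{tw0lemma} --- and the fact recorded in its proof that $T_{w_0}^{-1}x_I\in\C(q^{\frac12})\,x_{I^c}$ for every $I$ --- the only summand of $\Ya[i](v_i^+)$ whose image under $T_{w_0}^{-1}\ot T_{w_0}^{-1}$ is proportional to $x_{\{1,\ldots,i\}}\ot x_{\{1,\ldots,n\}}$ is $x_{\{i+1,\ldots,n\}}\ot x_\emptyset$ (with coefficient $q^\emptyset=1$); Lemma \ref{tw0lemma} gives $T_{w_0}^{-1}(x_{\{i+1,\ldots,n\}}) = q^{-\{i+1,\ldots,n\}}x_{\{1,\ldots,i\}}$ and $T_{w_0}^{-1}(x_\emptyset)=x_{\{1,\ldots,n\}}$, so after $T_{w_0}^{-1}\ot T_{w_0}^{-1}$ the coefficient is $q^{-\{i+1,\ldots,n\}}$. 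Then $q^{(\mathrm{wt},\mathrm{wt})}$, being diagonal in the tensor basis, multiplies by $q^{(\mathrm{wt}(x_{\{1,\ldots,i\}}),\,\mathrm{wt}(x_{\{1,\ldots,n\}}))}$; using \eqref{eq:wtxJ} together with $(\epsilon_\ell,\epsilon_{\ell'})=2\delta_{\ell\ell'}$ (Example \ref{ex:son}) this pairing works out to $i-\tfrac n2$, contributing a factor $q^{-(n-2i)/2}$. Comparing with the source side yields $c_i = q^{-(n-2i)/2}\,q^{-\{i+1,\ldots,n\}}$, which is the claim.

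The main obstacle here is purely bookkeeping: verifying that exactly one summand survives in the coefficient extraction, and computing the weight pairing correctly; no idea beyond Theorem \ref{thm:KR} and Lemma \ref{tw0lemma} is needed. As a consistency check, $c_i$ must be --- up to a sign --- a square root of a ratio of ribbon twist eigenvalues attached to $V(\varpi_i)$ and $S$, which is compatible with the half-integer exponent $-(n-2i)/2$ appearing in the answer.
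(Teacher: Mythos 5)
Your proof is correct, but it takes a genuinely different route from the paper's. The paper evaluates both sides on the highest weight vector $v_i^+$ and invokes Proposition~\ref{prop:R=xy}: writing $(T_{w_0}^{-1}\ot T_{w_0}^{-1})\circ\Delta(T_{w_0}) = 1\ot 1 + \sum x^+_{i,\mu}\ot y^-_{i,\mu}$ with $y^-$ a lowering operator of nonzero weight, one observes that the second tensor factor $x_\emptyset$ (the highest weight vector of $S$) can only arise from the $1\ot 1$ term, so only $x_{\{i+1,\ldots,n\}}\ot x_\emptyset$ survives, and then $q^{(\mathrm{wt},\mathrm{wt})}$, flip, and the known coefficient $q^{\{i+1,\ldots,n\}}$ of $x_\emptyset\ot x_{\{i+1,\ldots,n\}}$ in $\Ya[i](v_i^+)$ give the scalar. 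You instead evaluate on the lowest weight vector $v_i^-$, use the naturality of the quantum Weyl group operator (so $\Delta(T_{w_0})\circ\Ya[i]=\Ya[i]\circ T_{w_0}$, which converts $v_i^-$ into $v_i^+$), and then pick out a single term using the fact that $T_{w_0}^{-1}$ maps each $x_I$ into $\C(q^{1/2})\,x_{I^c}$ --- a fact appearing in the proof of \eqref{eq:Yvminus} rather than in the proof of the lemma in question. The two arguments cost about the same; the paper's only needs $\Ya[i](v_i^+)$ and the abstract triangular decomposition of $R$, whereas yours needs \eqref{eq:Yvminus} (itself proved by a similar $T_{w_0}$-intertwining computation) and the explicit involution-like action of $T_{w_0}^{-1}$, but avoids Proposition~\ref{prop:R=xy} entirely. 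Both the weight pairing $(\mathrm{wt}(x_{\{1,\ldots,i\}}),\mathrm{wt}(x_{\{1,\ldots,n\}}))=-(n-2i)/2$ and the $T_{w_0}^{-1}$ coefficients you extract are correct.
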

\begin{proof}
The decomposition \eqref{eq:StSdecomp} implies that
\[
\dim \big( \Hom_{U_q(\mathfrak{so}_{2n+1})}(V_i , S\otimes S) \big) =1
\]
so $\RM_{S, S}\circ \Ya[i]$ is necessarily a scalar multiple of $\Ya[i]$. 
We identify this scalar by computing the value of each on the highest weight vector $v_i^+ \in V_i$.
Proposition \ref{prop:Y} gives that
\[
\Ya[i](v_i^+) = x_{\{i+1,\ldots,n\}} \otimes x_{\emptyset}
	+ \sum_{\substack {I \cap J = \emptyset, J \neq \emptyset \\ I \cup J  
		= \lbrace i+1, \ldots, n\rbrace}} q^{J} x_{I} \otimes x_J
\]
so Proposition \ref{prop:R=xy} gives that
\[
\big((T_{w_0}^{-1}\otimes T_{w_0}^{-1}) \circ \Delta(T_{w_0}) \big)(\Ya[i](v_i^+)) =
	x_{\{i+1,\ldots,n\}} \otimes x_{\emptyset} 
		+ \sum_{\substack{I', J' \\ J' \ne \emptyset}} \xi_{I',J'} x_{I'} \otimes x_{J'} \, ,
\]
for some $\xi_{I',J'}\in \C(q^{\frac{1}{2}})$. Theorem \ref{thm:KR} and equation \eqref{eq:braiding} then give that the coefficient of 
$x_{\emptyset} \ot x_{\lbrace i+1, \ldots, n\rbrace}$ in $\RM_{S, S}\circ \Ya[i](v_i^+)$ is 
\[
q^{(\mathrm{wt}(-),\mathrm{wt}(-))}(x_{\lbrace i+1, \ldots, n\rbrace}\ot x_{\emptyset}) 
	= q^{(\sum_{\ell=1}^n \epsilon_\ell/2, \sum_{\ell=1}^i\epsilon_\ell/2 - \sum_{\ell=i+1}^n\epsilon_\ell/2)} 
		= q^{-\left(\frac{n-2i}{2} \right)} \, .
\]
The result now follows from observing that the coefficient of 
$x_{\emptyset}\ot x_{\lbrace i+1, \ldots, n\rbrace}$ in $\Ya[i](v_i^+)$ is $q^{\lbrace i+1, \ldots, n\rbrace}$. 
\end{proof}

\begin{rem}
Since $V_0$ is the monoidal identity, we will later draw the map $\Ya[0]$ as a cup, so that $R_{S,S} \circ \Ya[0]$ is a curl.  
In this case we see that Lemma \ref{trivalenttwist} recovers the (negative) twist coefficient 
for the braiding from Remark \ref{rem:framing}. 
In particular, we have
\begin{equation}\label{E:spin-twist}
q^{-\frac{n}{2}}q^{-\{1, \dots, n\}} 
	= (-1)^{\binom{n+1}{2}}q^{-\frac{n(2n+1)}{2}}
	= (-1)^{(2\varpi_n, \rho^{\vee})}q^{-(\varpi_n, \varpi_n + 2\rho)},
\end{equation}
where for the type $B_n$ root system:
\begin{equation} 2 \rho = \sum_{i=1}^{n}(2(n-i)+1) \epsilon_i, \quad
\rho^{\vee} = \frac{1}{2} \sum_{i=1}^{n}(n-i+1) \epsilon_i, \quad
\varpi_n = \frac{1}{2} \sum_{i=1}^{n} \epsilon_i.
\end{equation}
\end{rem}

Next, note that the fundamental $U_q(\son)$-representations $V_1,\ldots,V_{n-1},V_n=S$ are all self-dual, 
since, for $1 \leq i \leq n$, the highest weight of $V_i^\ast$ is $-w_0(\varpi_i) = -(-\varpi_i) = \varpi_i$.
We hence can consider the compositions
\begin{equation}
	\label{eq:cupcap1}
V_0 = \C(q^{\frac{1}{2}}) \to V_i \otimes V_i^\ast \xrightarrow{\cong} V_i \otimes V_i
\quad \text{and} \quad
V_i \otimes V_i \xrightarrow{\cong} V_i^\ast \otimes V_i \to \C(q^{\frac{1}{2}}) = V_0
\end{equation}
of these isomorphisms with the canonical coevaluation and evaluation maps.
We now aim to give an explicit description of these morphisms.

For each $1 \leq i \leq n$,
fix a basis $\mathbb{B}_i$ for $V_i$ that contains our distinguished highest and lowest weight vectors $v_i^+$ and $v_i^-$.
Given $v \in \mathbb{B}_i$, we let $v^\ast$ denote the corresponding vector in the dual basis $\mathbb{B}_i^\ast$ of $V_i^\ast$.
The assignment $v_i^+\mapsto (v_i^-)^\ast$ then determines an isomorphism 
$\varphi_i \colon V_i \xrightarrow{\cong} V_i^\ast$.

\begin{example}
The isomorphisms $\varphi_1 \colon V_1 \to V_1^\ast$ and $\varphi_n \colon S \to S^\ast$
have the following explicit descriptions:
\begin{equation}\label{varphi1}
\varphi_1(a_i) = (-q^{-2})^{i-1}b_i^\ast \, , \quad  
\varphi_1(u) = (-q^{-2})^n[2]u^\ast  \, , \quad 
\varphi_1(b_i) = -(-q^{-2})^{2n-i}a_i^\ast \, ,
\end{equation}
and
\begin{equation}\label{varphin}
\varphi_n(x_I)= q^{-I}x_{I^c}^\ast \, .
\end{equation}
\end{example}

We now describe the morphisms in \eqref{eq:cupcap1}.

\begin{prop}\label{prop:cups-caps}
For each $1\leq i \leq n$,
there is a unique $U_q(\son)$-module homomorphism
\[
\cu[i] \colon \C(q^{\frac{1}{2}}) \longrightarrow V_i \otimes V_i
\]
such that
\begin{equation}
	\label{eq:cupi}
\cu[i](1) = \sum_{v \in \mathbb{B}_i} v \otimes \varphi_i^{-1}(v^\ast) \, ,
\end{equation}
and there is a unique $U_q(\son)$-module homomorphism
\[
\ca[i] \colon V_i\otimes V_i \longrightarrow \C(q^{\frac{1}{2}})
\] 
such that
\[
\ca[i](v_i^+\otimes v_i^-) = 1 \, .
\]
Consequently, these maps agree with those in \eqref{eq:cupcap1}.
\end{prop}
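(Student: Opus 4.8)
The plan is to deduce both assertions from Schur's lemma together with the rigid structure on $\Rep(U_q(\son))$ recorded in \S\ref{ss:Rep}. First I would note the relevant dimension count: since $V_i$ is irreducible and self-dual (as observed just above, the highest weight of $V_i^\ast$ is $-w_0(\varpi_i) = \varpi_i$), semisimplicity and Schur's lemma give that $\Hom_{U_q(\son)}(\one, V_i \otimes V_i)$ and $\Hom_{U_q(\son)}(V_i \otimes V_i, \one)$ are each one-dimensional (both identify with $\End_{U_q(\son)}(V_i)$ via the self-duality of $V_i$). This already settles uniqueness for $\cu[i]$, since a $U_q(\son)$-module map out of the monoidal unit is determined by the image of $1$; and it reduces uniqueness for $\ca[i]$ to exhibiting \emph{some} module map $V_i \otimes V_i \to \one$ that does not annihilate $v_i^+ \otimes v_i^-$, for then that map spans the $\Hom$-space and the normalization $\ca[i](v_i^+ \otimes v_i^-) = 1$ pins it down uniquely.

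For existence I would take $\cu[i]$ and $\ca[i]$ to be the two composites displayed in \eqref{eq:cupcap1}, so that the final sentence of the proposition then holds by construction. Here one uses the canonical coevaluation $\coev_{V_i}\colon \one \to V_i \otimes V_i^\ast$ and evaluation $\ev_{V_i}\colon V_i^\ast \otimes V_i \to \one$ of the rigid category $\Rep(U_q(\son))$; both are morphisms of $U_q(\son)$-modules (a standard fact for ribbon Hopf algebras), with $\coev_{V_i}(1) = \sum_{v} v \otimes v^\ast$ for an arbitrary basis of $V_i$ --- this element being the image of $\id_{V_i}$ under $\End(V_i) \cong V_i \otimes V_i^\ast$, hence independent of the basis --- and $\ev_{V_i}(\xi \otimes v) = \xi(v)$. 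One also uses that $\varphi_i\colon V_i \to V_i^\ast$ is genuinely a module isomorphism: the weight of $(v_i^-)^\ast$ is $-\mathrm{wt}(v_i^-) = \varpi_i$, the highest weight of $V_i^\ast$, so $(v_i^-)^\ast$ is a highest weight vector, and since $\Hom_{U_q(\son)}(V_i,V_i^\ast)$ is one-dimensional the $U_q(\son)$-linear extension of $v_i^+ \mapsto (v_i^-)^\ast$ is well-defined, and is an isomorphism by irreducibility of source and target. Setting $\cu[i] := (\id_{V_i} \otimes \varphi_i^{-1}) \circ \coev_{V_i}$ and $\ca[i] := \ev_{V_i} \circ (\varphi_i \otimes \id_{V_i})$, each is a composite of module maps; evaluating gives $\cu[i](1) = \sum_{v \in \mathbb{B}_i} v \otimes \varphi_i^{-1}(v^\ast)$ (in particular \eqref{eq:cupi} is well-posed, independent of the choice of $\mathbb{B}_i$) and $\ca[i](v_i^+ \otimes v_i^-) = \ev_{V_i}\big((v_i^-)^\ast \otimes v_i^-\big) = 1$, which is exactly the nonvanishing needed to complete the uniqueness argument from the first paragraph.

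I do not expect a genuine obstacle: the argument is a bookkeeping exercise combining Schur's lemma with the standard rigidity data. The only points requiring a little care are (i) confirming that the prescribed $\varphi_i$ is a module map, which is the short weight computation above, and (ii) fixing conventions so that the ``canonical'' (co)evaluation of $\Rep(U_q(\son))$ has the explicit formulas quoted and is a morphism of modules --- both classical and citable. Should one wish to avoid the abstract coevaluation entirely, an alternative is to \emph{define} $\cu[i]$ by the formula \eqref{eq:cupi} and verify directly that $\cu[i](1)$ is a weight-zero invariant vector (mirroring the highest-weight computation in the proof of Proposition \ref{prop:Y}), then obtain $\ca[i]$ from the same one-dimensionality; the composite description is cleaner, however, and makes agreement with \eqref{eq:cupcap1} automatic.
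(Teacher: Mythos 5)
Your proposal is correct and takes essentially the same approach as the paper: defining $\cu[i] := (\id \otimes \varphi_i^{-1}) \circ \coev$ and $\ca[i] := \ev \circ (\varphi_i \otimes \id)$ and using the rigid structure to see these are module maps. The paper's proof is terser (it leaves the Schur's lemma dimension count and the verification that $\varphi_i$ is a module map implicit in the phrase ``characterized by the indicated formulae''), but your supplied details are accurate and are exactly the ones the paper is suppressing.
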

\begin{proof}
Our choice of Hopf algebra structure $(U_q(\son);\Delta,\mathbf{S},\epsilon)$ ensures that the canonical 
linear maps $\coev \colon \C(q^{\frac{1}{2}}) \longrightarrow V_i\otimes V_i^\ast$ 
and $\ev \colon V_i^\ast \otimes V_i \longrightarrow \C(q^{\frac{1}{2}})$ intertwine the action of $U_q(\son)$. 
Thus, $\cu[i] := (\id \otimes \varphi_i^{-1}) \circ \coev$ and 
$\ca[i] := \ev \circ (\varphi_i \otimes \id)$ are $U_q(\son)$-module homomorphisms
and are characterized by the indicated formulae.
\end{proof}

We will write the morphisms in Proposition \ref{prop:cups-caps} using 
the graphical calculus for monoidal categories as follows:
\[
\cu[i]= \begin{tikzpicture}[scale =.5, smallnodes, anchorbase]
\draw[very thick, black] (1,2.25) node[above=-2pt]{$i$} to [out=270,in=0] (.5,1.5) to [out=180,in=270] (0,2.25) node[above=-2pt]{$i$};
\end{tikzpicture} \quad , \qquad 
\ca[i]=\begin{tikzpicture}[scale =.5, smallnodes, anchorbase, yscale=-1]
\draw[very thick, black] (1,2.25) node[below=-1pt]{$i$} to [out=270,in=0] (.5,1.5) to [out=180,in=270] (0,2.25) node[below=-1pt]{$i$};
\end{tikzpicture} \, .
\]
By construction, these morphism satisfy the relations:
\begin{equation}
	\label{eq:snake}
\begin{tikzpicture}[scale=.2, anchorbase]
	\draw[very thick] (-2,-4) node[below=-2pt]{\scs$i$} to (-2,1) to [out=90,in=180] (-1,2) to [out=0,in=90] (0,1)
		to (0,-1) to [out=270,in=180] (1,-2) to [out=0,in=270] (2,-1) to (2,4);
\end{tikzpicture}
\;\; = \;\;
\begin{tikzpicture}[scale=.2, anchorbase]
	\draw[very thick, black] (0,-4) node[below=-2pt]{\scs$i$} -- (0,4);
\end{tikzpicture}
\;\; = \;\;
\begin{tikzpicture}[scale=.2, anchorbase,xscale=-1]
	\draw[very thick] (-2,-4) node[below=-2pt]{\scs$i$} to (-2,1) to [out=90,in=180] (-1,2) to [out=0,in=90] (0,1)
		to (0,-1) to [out=270,in=180] (1,-2) to [out=0,in=270] (2,-1) to (2,4);
\end{tikzpicture}
\end{equation}

\begin{rem}\label{R:coherent-self-duality}
The full monoidal subcategory of $U_q(\son)$-modules generated by the self-dual representations $V_i$ for $i=0, 1, \ldots, n$ 
is a strict pivotal category, where we use the (non-standard) pivotal structure from \cite{ST}. 
With this choice of pivotal structure, all Frobenius-Schur indicators are $+1$, and therefore morphisms in the category can be described by 
an unoriented graphical calculus, see~\cite{Sel1, Sel2}. 
It is possible to show that our choice of cup and cap morphisms ($\cu[i]$ and $\ca[i]$, respectively)
agree with any other choice of cups and caps, up to simultaneous rescaling. 
See e.g.~\cite[Remark 5.2]{BERT} for further details.
\end{rem}

\begin{example}
By \eqref{varphin}, we have 
\begin{equation}\label{spincupcapformulas}
\ca[n](x_I\ot x_{J}) = q^{-I}\delta_{J, I^c} \, , \quad 
\cu[n](1) = \sum_{I\subset \lbrace 1, \ldots, n\rbrace}q^{I}x_{I^c}\ot x_{I},
\end{equation}
and 
\begin{equation}
	\label{eq:circlen}
\begin{tikzpicture}[scale =.5, anchorbase]
	\draw[very thick] (0,0) node[left,xshift=-5pt]{\scs$n$} circle (.5);
\end{tikzpicture} :=
\ca[n]\circ \cu[n] (1) 
= \sum_{I \subset \{1,\ldots,n\}} \frac{q^I}{q^{I^c}} 
= (-1)^{\binom{n+1}{2}}\prod_{i=1}^n(q^{2i-1}+ q^{1-2i})
\end{equation}
Note also that that $\cu[n] = \Ya[0]$. 
\end{example}

Establishing the last equality in \eqref{eq:circlen} is a straightforward exercise, 
as is the following generalization.

\begin{lem}
	\label{lem:dvalue}
If $d_i:= \prod_{\ell=1}^i\dfrac{[4\ell-2]}{[2\ell-1]} = \prod_{\ell=1}^i (q^{2\ell-1}+ q^{-2\ell+ 1})$,
then 
\[
\sum_{\substack {I\cup J = \lbrace i+1, \ldots , n\rbrace \\ I\cap J 
	= \emptyset}}q^{J}q^{-I} = (-1)^{\binom{n-i+1}{2}}d_{n-i} \, . 
\]
\qed
\end{lem}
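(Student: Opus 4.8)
The plan is to recognize the left-hand side as a product over the elements of $\{i+1,\ldots,n\}$ and then identify that product with $(-1)^{\binom{n-i+1}{2}} d_{n-i}$. This statement generalizes the last equality in \eqref{eq:circlen} (which is the case $i=0$), and the proof is the same in spirit.

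First I would unwind the definition of $q^J$. Since $q^J = \prod_{j\in J}(-1)^{n-j+1}q^{2(n-j)+1}$ and $q^{-I} = (q^I)^{-1} = \prod_{j\in I}(-1)^{n-j+1}q^{-(2(n-j)+1)}$ (using that $(-1)^{n-j+1}$ is its own inverse), the summand $q^J q^{-I}$ is multiplicative over the elements of $I \sqcup J = \{i+1,\ldots,n\}$: each $j$ in that set contributes a factor $(-1)^{n-j+1}q^{2(n-j)+1}$ when $j \in J$ and a factor $(-1)^{n-j+1}q^{-(2(n-j)+1)}$ when $j \in I$. Summing over all $2^{n-i}$ ordered disjoint decompositions $(I,J)$ of $\{i+1,\ldots,n\}$ then distributes across the product, so that
\[
\sum_{\substack{I\cup J = \{i+1,\ldots,n\}\\ I\cap J = \emptyset}} q^{J} q^{-I}
= \prod_{j=i+1}^{n}\Big( (-1)^{n-j+1}q^{2(n-j)+1} + (-1)^{n-j+1}q^{-(2(n-j)+1)} \Big).
\]

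Next I would reindex by $\ell = n-j$, so that $j$ ranging over $\{i+1,\ldots,n\}$ corresponds to $\ell$ ranging over $\{0,1,\ldots,n-i-1\}$, and separate the signs from the rest. The sign contribution is $\prod_{\ell=0}^{n-i-1}(-1)^{\ell+1} = (-1)^{\sum_{k=1}^{n-i}k} = (-1)^{\binom{n-i+1}{2}}$, while the remaining factor is $\prod_{\ell=0}^{n-i-1}(q^{2\ell+1}+q^{-(2\ell+1)}) = \prod_{\ell=1}^{n-i}(q^{2\ell-1}+q^{-(2\ell-1)})$, which is exactly $d_{n-i}$ by definition. Combining the two pieces gives the claimed identity.

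There is no substantive obstacle here; the only thing that requires care is the bookkeeping with the sign $(-1)^{n-j+1}$ built into the definition of $q^{J}$, and checking the degenerate case $i=n$, where both sides reduce to the empty product $1 = (-1)^{0}\, d_{0}$.
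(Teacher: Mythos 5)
Your proof is correct, and the argument you give — recognizing that the sum over decompositions $I \sqcup J = \{i+1,\ldots,n\}$ distributes as a product of binomial factors, one per element of the index set, then separating signs from $q$-powers and reindexing — is the natural and expected one. The paper itself omits the proof, describing it as a straightforward exercise; your write-up supplies exactly what was left to the reader.
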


We denote the morphisms $\Ya[i] \colon V_i \to S \otimes S$ 
from Proposition \ref{prop:Y} in our graphical calculus as follows:
\[
\begin{tikzpicture}[scale =.5, smallnodes,anchorbase]
	\draw[very thick, gray] (0,0) node[above=-2pt]{$S$} to [out=270,in=150] (.5,-.75);
	\draw[very thick, gray] (1,0) node[above=-2pt]{$S$} to [out=270,in=30] (.5,-.75);
	\draw[very thick, black] (.5,-.75) to (.5,-1.5) node[below=-2pt]{$i$};
\end{tikzpicture}
:= \Ya[i] \, .
\]
Here, and in the following, we use the color {\color{gray} gray} to graphically denote the spin representation.
Composing these morphisms with (tensor products of) the cap and cup morphisms from Proposition \ref{prop:cups-caps} 
yields the following morphisms in $\Rep(U_q(\son))$.

\begin{defn}
	\label{defn:Pa}
For $0 \leq i \leq n-1$,
let $\Pa[i] \colon S\otimes S\longrightarrow V_i$ be the $U_q(\son)$-module morphisms given 
in terms of the graphical calculus as follows:
\begin{equation}
	\label{eq:Pa}
\Pa[i]=\begin{tikzpicture}[scale =.5, smallnodes,anchorbase,rotate=180]
	\draw[very thick, gray] (0,0) node[below]{$S$} to [out=270,in=150] (.5,-.75);
	\draw[very thick, gray] (1,0) node[below]{$S$} to [out=270,in=30] (.5,-.75);
	\draw[very thick, black] (.5,-.75) to (.5,-1.5) node[above=-2pt]{$i$};
\end{tikzpicture}
:=
\begin{tikzpicture}[scale =.35, smallnodes,anchorbase]
	\draw[very thick, gray] (3,-2.5) node[below]{$S$} to (3,0) to [out=90,in=0] (1.5,1.5) to [out=180,in=90] (0,0) to [out=270,in=150] (.5,-.75);
	\draw[very thick, gray] (2,-2.5) node[below]{$S$} to (2,0) to [out=90,in=0] (1.5,.5) to [out=180,in=90] (1,0) to [out=270,in=30] (.5,-.75);
	\draw[very thick, black] (.5,-.75) to (.5,-1.5) to [out=270,in=0] (-.125,-2) to [out=180,in=270] (-.75,-1.5) to (-.75,2) node[above=-2pt]{$i$};
\end{tikzpicture} \, .
\end{equation}
\end{defn}

In particular, we have that $\Pa[0] = \ca[n]$.

\begin{lem}\label{bigon}
The following equalities hold between morphisms in $\Rep(U_q(\son))$:
\begin{equation}\label{E:left-twist-vs-right-twist}
\begin{tikzpicture}[scale =.35, smallnodes,anchorbase]
	\draw[very thick, gray] (-3,2.5) node[above=-2pt]{$S$} to (-3,0) to [out=270,in=180] (-1.5,-1.5) to [out=0,in=270] (0,0) to [out=90,in=330] (-.5,.75);
	\draw[very thick, gray] (-2,2.5) node[above=-2pt]{$S$} to (-2,0) to [out=270,in=180] (-1.5,-.5) to [out=0,in=270] (-1,0) to [out=90,in=210] (-.5,.75);
	\draw[very thick, black] (-.5,.75) to (-.5,1.5) to [out=90,in=180] (.125,2) to [out=0,in=90] (.75,1.5) to (.75,-2) node[below=-2pt]{$i$};
\end{tikzpicture}
=
\begin{tikzpicture}[scale =.5, smallnodes,anchorbase,rotate=180]
	\draw[very thick, gray] (0,0) node[above=-2pt]{$S$} to [out=90,in=210] (.5,.75);
	\draw[very thick, gray] (1,0) node[above=-2pt]{$S$} to [out=90,in=330] (.5,.75);
	\draw[very thick, black] (.5,.75) to (.5,1.5) node[below=-2pt]{$i$};
\end{tikzpicture}
=
\begin{tikzpicture}[scale =.35, smallnodes,anchorbase]
	\draw[very thick, gray] (3,2.5) node[above=-2pt]{$S$} to (3,0) to [out=270,in=0] (1.5,-1.5) to [out=180,in=270] (0,0) to [out=90,in=210] (.5,.75);
	\draw[very thick, gray] (2,2.5) node[above=-2pt]{$S$} to (2,0) to [out=270,in=0] (1.5,-.5) to [out=180,in=270] (1,0) to [out=90,in=330] (.5,.75);
	\draw[very thick, black] (.5,.75) to (.5,1.5) to [out=90,in=0] (-.125,2) to [out=180,in=90] (-.75,1.5) to (-.75,-2) node[below=-2pt]{$i$};
\end{tikzpicture} \, ,
\end{equation}
\begin{equation}\label{bigonequation}
\begin{tikzpicture}[scale=.175,tinynodes, anchorbase]
	\draw [very thick] (0,.75) to (0,2.5) node[above,yshift=-3pt]{$n{-}i$};
	\draw [very thick,gray] (0,-2.75) to [out=30,in=330] node[right,xshift=-2pt]{$S$} (0,.75);
	\draw [very thick,gray] (0,-2.75) to [out=150,in=210] node[left,xshift=2pt]{$S$} (0,.75);
	\draw [very thick] (0,-4.5) node[below,yshift=2pt]{$n{-}i$} to (0,-2.75);
\end{tikzpicture} :=
\Pa[n-i]\circ \Ya[n-i] = (-1)^{\binom{i+1}{2}}d_i \cdot \id_{V_{n{-}i}}
= (-1)^{\binom{i+1}{2}} d_i
\begin{tikzpicture}[scale=.175, tinynodes, anchorbase]
	\draw [very thick] (0,-4.5) node[below,yshift=2pt]{$n{-}i$} to (0,2.5);
\end{tikzpicture} \, ,
\end{equation}
and
\begin{equation}\label{bigoniszero}
\begin{tikzpicture}[scale=.175,tinynodes, anchorbase]
	\draw [very thick] (0,.75) to (0,2.5) node[above,yshift=-3pt]{$n-i$};
	\draw [very thick,gray] (0,-2.75) to [out=30,in=330] node[right,xshift=-2pt]{$S$} (0,.75);
	\draw [very thick,gray] (0,-2.75) to [out=150,in=210] node[left,xshift=2pt]{$S$} (0,.75);
	\draw [very thick] (0,-4.5) node[below,yshift=2pt]{$n-j$} to (0,-2.75);
\end{tikzpicture} :=
\Pa[n-i]\circ \Ya[n-j] = 0 \, , \qquad \text{for $i\ne j$.}
\end{equation}
\end{lem}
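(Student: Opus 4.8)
The plan is to prove the three displayed equations in turn, in each case exploiting that the fundamental representations $V_0,\dots,V_n$ are irreducible and pairwise non-isomorphic, so that the relevant $\Hom$-spaces are at most one-dimensional. The identity \eqref{bigoniszero} is then immediate: for $i\neq j$ we have $\varpi_{n-i}\neq\varpi_{n-j}$, so $V_{n-i}$ and $V_{n-j}$ are non-isomorphic irreducibles and $\Hom_{U_q(\son)}(V_{n-j},V_{n-i})=0$, whence $\Pa[n-i]\circ\Ya[n-j]=0$. (One could instead read this off from the weight-space bookkeeping, but Schur's lemma is cleaner.)

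For \eqref{bigonequation}, Schur's lemma gives $\End_{U_q(\son)}(V_{n-i})=\C(q^{\frac12})\cdot\id_{V_{n-i}}$, so $\Pa[n-i]\circ\Ya[n-i]=c_i\cdot\id_{V_{n-i}}$ for a scalar $c_i$, which I would compute by evaluating on the highest weight vector $v_{n-i}^+$. Unwinding Definition \ref{defn:Pa}, the composite $\Pa[n-i]\circ\Ya[n-i]$ is the ``bigon'' obtained by joining two copies of the trivalent vertex along their two spin legs, and the coefficient of $v_{n-i}^+$ in its value on $v_{n-i}^+$ is obtained by pairing $\Ya[n-i](v_{n-i}^+)=\sum_{I\sqcup J=\{n-i+1,\dots,n\}}q^{J}\,x_I\otimes x_J$ (Proposition \ref{prop:Y}) against the appropriate (co)evaluation data. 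Using the spin cap formula $\ca[n](x_I\otimes x_J)=q^{-I}\delta_{J,I^c}$ from \eqref{spincupcapformulas}, this collapses to the single sum $\sum_{I\sqcup J=\{n-i+1,\dots,n\}}q^{J}q^{-I}$, which Lemma \ref{lem:dvalue} (applied with its index equal to $n-i$) evaluates to $(-1)^{\binom{i+1}{2}}d_i$. Hence $c_i=(-1)^{\binom{i+1}{2}}d_i$, consistent with the $i=n$ case where $\Pa[0]=\ca[n]$, $\Ya[0]=\cu[n]$, and the statement recovers \eqref{eq:circlen}.

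For \eqref{E:left-twist-vs-right-twist}, each of the three diagrams is a morphism $V_i\to S\otimes S$, and by \eqref{eq:StSdecomp} this $\Hom$-space is one-dimensional, spanned by $\Ya[i]$. Thus the left- and right-``rotated'' vertices equal $c_i^{L}\,\Ya[i]$ and $c_i^{R}\,\Ya[i]$ respectively, and it remains to show $c_i^{L}=c_i^{R}=1$. I would compute $c_i^{L}$ by evaluating the left-hand diagram on $v_i^+$ and $c_i^{R}$ by evaluating the right-hand diagram on $v_i^-$ (bending a spin leg ``the other way'' pairs cleanly with lowest rather than highest weights), using respectively $\Ya[i](v_i^+)$ from Proposition \ref{prop:Y}, the formula $\Ya[i](v_i^-)=\sum q^{J}x_{J^c}\otimes x_{I^c}$, the cup/cap formulas \eqref{eq:cupi} and \eqref{spincupcapformulas}, and the bookkeeping of the weights $q^{J}$. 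Alternatively—and perhaps more cleanly—one can invoke Remark \ref{R:coherent-self-duality}: the monoidal subcategory generated by $V_0,\dots,V_n$ is strictly pivotal with all Frobenius--Schur indicators $+1$, hence carries an unoriented, rotation-invariant graphical calculus, and the maps $\cu[i],\ca[i],\cu[n],\ca[n]$ are the pivotal ones up to a simultaneous rescaling that cancels when a single vertex is rotated; \eqref{E:left-twist-vs-right-twist} is precisely rotation-invariance of $\Ya[i]$.

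The main obstacle is the scalar-tracking: both \eqref{bigonequation} and especially \eqref{E:left-twist-vs-right-twist} hinge on showing the constant is \emph{exactly} $(-1)^{\binom{i+1}{2}}d_i$ (respectively exactly $1$), and the sign and power-of-$q$ bookkeeping forced by the convention $q^{J}=\prod_{j\in J}(-1)^{n-j+1}q^{2(n-j)+1}$ is delicate. The model for handling this is the earlier computation that pins down the scalar in $\Ya[i](v_i^-)$ via the $R$-matrix and $T_{w_0}$; the same style of argument—matching the coefficient of a single extreme-weight basis vector such as $x_{\{1,\dots,n\}}\otimes x_{\{1,\dots,i\}}$ or $x_\emptyset\otimes x_{\{i+1,\dots,n\}}$—should carry through here.
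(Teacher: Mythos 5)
Your proposal is correct and follows essentially the same route as the paper: Schur's lemma for \eqref{bigoniszero}, the pivotal/rotation-invariance argument of Remark \ref{R:coherent-self-duality} for \eqref{E:left-twist-vs-right-twist}, and the explicit extremal-weight evaluation (via Proposition \ref{prop:Y} and Lemma \ref{lem:dvalue}) for the scalar in \eqref{bigonequation}. One caution on the scalar computation: the paper evaluates the closed theta diagram $\ca[n]\circ(\id_S\otimes\ca[n]\otimes\id_S)\circ(\Ya[i]\otimes\Ya[i])$ on $v_i^+\otimes v_i^-$, which produces a \emph{double} sum over pairs $(I_1,J_1)$, $(I_2,J_2)$ that the two spin caps collapse to $\sum_{I\sqcup J}q^Jq^{-I}$; your phrasing suggests applying $\ca[n]$ once to $\Ya[n-i](v_{n-i}^+)$ alone, which would give zero (the Kronecker delta $\delta_{J,I^c}$ never fires for $I,J\subset\{n-i+1,\dots,n\}$ unless $i=n$), so one must keep both copies of the vertex in play and use $\Ya[n-i](v_{n-i}^-)$ as well before the collapse occurs.
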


\begin{proof}
Equation \eqref{E:left-twist-vs-right-twist} is a consequence of our graphical calculus describing a pivotal category, 
see Remark \ref{R:coherent-self-duality}. When $i\ne j$,  $V(\varpi_i)$ and $V(\varpi_j)$ are non-isomorphic irreducible representations, 
so equation \eqref{bigoniszero} follows from Schur's Lemma.

To verify equation \eqref{bigonequation}, 
note that, since $\Hom_{U_q(\son)} \big( V_i \otimes V_i, \C(q^{\frac{1}{2}}) \big)$ is $1$-dimensional,
there is some scalar $\chi \in \C(q^{\frac{1}{2}})$ so that
\[
\begin{tikzpicture}[scale =.35, smallnodes,anchorbase,rotate=180]
	\draw[very thick, gray] (2.5,.75) to [out=330,in=90] (3,0) to [out=270,in=0] (1.5,-1.5) 
		node[above=-2pt]{$S$} to [out=180,in=270] 
			(0,0) to [out=90,in=210] (.5,.75);
	\draw[very thick, gray] (2.5,.75) to [out=210,in=90] (2,0) to [out=270,in=0] (1.5,-.5) 
		node[above=-2pt]{$S$} to [out=180,in=270]
			(1,0) to [out=90,in=330] (.5,.75);
	\draw[very thick, black] (.5,.75) to (.5,1.5) node[below=-2pt]{$i$};
	\draw[very thick, black] (2.5,.75) to (2.5,1.5) node[below=-2pt]{$i$};
\end{tikzpicture}
=:
\ca[n]\circ (\id_S\ot \ca[n]\ot \id_S)\circ (\Ya[i]\ot \Ya[i]) = \chi \cdot \ca[i] \, .
\]
Since $\ca[n](v_i^+\ot v_i^-)= 1$, it follows that 
\begin{align*}
\chi &=\ca[n]\circ (\id_S\ot \ca[n]\ot \id_S)\circ (\Ya[i]\ot \Ya[i])(v_i^+\ot v_i^-) \\
&= \sum_{\substack{I_1\cup J_1 = I_2\cup J_2 = \lbrace i+1, \ldots , n\rbrace \\ I_1\cap J_1 = I_2\cap J_2 = \emptyset}}q^{J_1}q^{J_2}\ca[n]\circ (\id_S\ot \ca[n]\ot \id_S)\left(x_{I_1}\ot x_{J_1}\ot x_{J_2^c} \ot x_{I_2^c}\right) \\
&= \sum_{\substack{I_1\cup J_1 = I_2\cup J_2 = \lbrace i+1, \ldots , n\rbrace \\ I_1\cap J_1 = I_2\cap J_2 = \emptyset}}q^{J_1}q^{J_2}\ca[n](x_{J_1}\ot x_{J_2^c})\ca[n](x_{I_1}\ot x_{I_2^c}) \\
&= \sum_{\substack{I_1\cup J_1 = I_2\cup J_2 = \lbrace i+1, \ldots , n\rbrace \\ I_1\cap J_1 = I_2\cap J_2 = \emptyset}}q^{J_1}q^{J_2}q^{-J_1}\delta_{J_1, J_2}q^{-I_1}\delta_{I_1, I_2} \\
&= \sum_{\substack {I\cup J = \lbrace i+1, \ldots , n\rbrace \\ I\cap J = \emptyset}}q^{J}q^{-I} 
=(-1)^{\binom{n-i+1}{2}}d_{n-i}
\end{align*}
with the last equality holding by Lemma \ref{lem:dvalue}.
The result then follows from Definition \ref{defn:Pa} using \eqref{eq:snake}.
\end{proof}

\begin{prop}
	\label{prop:Ibasis}
Set $\II^{(i)}:= \Ya[i]\circ \Pa[i]$, for $i=0, 1, \ldots, n-1$.
For $i,j\in \lbrace 0, \ldots, n-1\rbrace$, we have that
\begin{equation}\label{bigonschur}
\II^{(i)}\II^{(j)}= (-1)^{\binom{n-i+1}{2}}d_{n-i}\delta_{i,j}\cdot \II^{(i)}.
\end{equation}
If we further let $\II^{(n)}:= \id_{S \otimes S}$,
then $\{ \II^{(i)} \}_{i=0}^{n}$ is a basis for $\End_{U_q(\son)}(S \otimes S)$.
\end{prop}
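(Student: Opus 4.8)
The plan is to establish the relation \eqref{bigonschur} by a direct computation, and then deduce the basis claim from the decomposition \eqref{eq:StSdecomp} together with Schur's lemma. First I would compute $\II^{(i)}\II^{(j)} = \Ya[i]\circ \Pa[i] \circ \Ya[j] \circ \Pa[j]$. The middle composition $\Pa[i] \circ \Ya[j]$ is exactly the ``bigon'' morphism $V_j \to V_i$ studied in Lemma \ref{bigon}: by \eqref{bigoniszero} it vanishes when $i \neq j$, and by \eqref{bigonequation} it equals $(-1)^{\binom{n-i+1}{2}} d_{n-i}$ times the identity on $V_i$ when $i = j$ (noting that the index $n-i$ appearing in Lemma \ref{bigon} ranges over the same set $\{0,\ldots,n-1\}$ as $i$, so this is just a relabeling). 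Substituting this in gives $\II^{(i)}\II^{(j)} = (-1)^{\binom{n-i+1}{2}} d_{n-i}\, \delta_{i,j}\, \Ya[i]\circ \Pa[i] = (-1)^{\binom{n-i+1}{2}} d_{n-i}\, \delta_{i,j}\, \II^{(i)}$, which is \eqref{bigonschur}.

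For the basis claim, I would argue as follows. By \eqref{eq:StSdecomp}, $S \otimes S$ decomposes as a multiplicity-free direct sum of the pairwise non-isomorphic irreducibles $V_0, V_1, \ldots, V_{n-1}$ and $V(2\varpi_n)$. Hence, by Schur's lemma, $\End_{U_q(\son)}(S \otimes S)$ is commutative of dimension $n+1$, with a basis given by the projections onto the $n+1$ isotypic summands (rescaled arbitrarily). Since $\Ya[i]$ is (up to scalar) the unique inclusion $V_i \hookrightarrow S\otimes S$ and $\Pa[i]$ is (up to scalar) the unique surjection $S \otimes S \twoheadrightarrow V_i$, for $0 \leq i \leq n-1$ the composite $\II^{(i)} = \Ya[i]\circ \Pa[i]$ is a nonzero endomorphism factoring through $V_i$, hence is a nonzero scalar multiple of the projection onto the $V_i$-isotypic component; in particular it is nonzero precisely because $d_{n-i} \neq 0$ as a Laurent polynomial (visible from the product formula $d_{n-i} = \prod_{\ell=1}^{n-i}(q^{2\ell-1}+q^{-2\ell+1})$ in Lemma \ref{lem:dvalue}, which is evident from \eqref{bigonequation} being a genuine nonzero relation). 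Together with $\II^{(n)} = \id_{S\otimes S}$, whose $V(2\varpi_n)$-component is nonzero, the $n+1$ elements $\{\II^{(i)}\}_{i=0}^{n}$ project nontrivially onto distinct isotypic summands and span a space of the same dimension $n+1$; I would make this precise by noting that in the basis of isotypic projectors the change-of-basis matrix from $\{\II^{(i)}\}$ is triangular with nonzero diagonal entries (each $\II^{(i)}$ for $i < n$ is a nonzero multiple of the $i$-th projector, while $\II^{(n)}$ is the sum of all projectors), hence invertible. Therefore $\{\II^{(i)}\}_{i=0}^{n}$ is a basis.

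The main obstacle, such as it is, is purely bookkeeping: one must be careful that the morphisms $\Ya[i]$ and $\Pa[i]$ as defined really do span, respectively generate, the relevant one-dimensional $\Hom$-spaces — i.e.\ that they are nonzero — and that the index conventions in Lemma \ref{bigon} (which is stated with labels $n-i$) match the indexing here (labels $i$). The nonvanishing of $\Ya[i]$ is immediate from Proposition \ref{prop:Y} (its value on $v_i^+$ is a nonzero weight vector), and the nonvanishing of $\Pa[i]$ then follows since $d_i \neq 0$. No genuinely hard step is expected; the argument is essentially a formal consequence of semisimplicity plus the nonvanishing of the bigon coefficients already recorded in Lemma \ref{bigon}.
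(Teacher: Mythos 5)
Your proof is correct and takes the same route as the paper: \eqref{bigonschur} falls out of Lemma~\ref{bigon} by inserting the bigon $\Pa[i]\circ\Ya[j]$, and the basis claim follows from the multiplicity-free decomposition \eqref{eq:StSdecomp} plus Schur's lemma; the paper states exactly this in two sentences, and you have merely fleshed out the details (correctly).
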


\begin{proof}
Equation \eqref{bigonschur} follows from equation \eqref{bigonequation} and equation \eqref{bigoniszero}. 
The result then follows from the decomposition \eqref{eq:StSdecomp} and another application of Schur's lemma.
\end{proof}

We conclude this section with a description of the braiding $R_{S,S} \in \End_{U_q(\son)}(S \otimes S)$ 
in terms of the basis from Proposition \ref{prop:Ibasis}.

\begin{prop}
	\label{prop:braidI}
For $1 \leq i \leq n$, set
\begin{equation}
	\label{eq:braidcoeff}
b_{n-i} = \frac{q^{\frac{n}{2}}}{d_i} \left((q^{-2})^{\binom{i+1}{2}} - (-1)^{\binom{i+1}{2}}\right) \, .
\end{equation}
The braiding on $S \otimes S$ is given by
\[
\RM_{S, S}= q^{\frac{n}{2}}\id_{S\ot S} + \sum_{i=1}^n b_{n-i} \II^{(n-i)} \, .
\]
\end{prop}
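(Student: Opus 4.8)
The plan is to expand $\RM_{S, S}$ in the basis $\{\II^{(i)}\}_{i=0}^{n}$ of $\End_{U_q(\son)}(S \ot S)$ from Proposition \ref{prop:Ibasis}, writing $\RM_{S, S} = \sum_{j=0}^{n} c_j \II^{(j)}$ for unique scalars $c_j$, and then to solve for the $c_j$ by pairing with the trivalent morphisms $\Ya[i]$ (which detect the summands $V_i \subset S \ot S$ for $i \leq n-1$) together with one direct computation on a highest weight vector (which detects the remaining summand $V(2\varpi_n)$). First I would fix $i \in \{0, \ldots, n-1\}$ and compose $\RM_{S, S} = \sum_{j=0}^{n} c_j \II^{(j)}$ with $\Ya[i]$ on the right. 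By Lemma \ref{trivalenttwist}, the left-hand side equals $q^{-(n-2i)/2} q^{-\{i+1,\ldots,n\}} \Ya[i]$. On the other hand, since $\II^{(n)} = \id$ and $\II^{(j)} = \Ya[j] \circ \Pa[j]$ for $j \leq n-1$, equations \eqref{bigonequation} and \eqref{bigoniszero} collapse the sum:
\[
\RM_{S, S} \circ \Ya[i] = \Big( c_n + (-1)^{\binom{n-i+1}{2}} d_{n-i}\, c_i \Big) \Ya[i] \, .
\]
Since $V_i$ is irreducible and $\Ya[i](v_i^+) \neq 0$, the map $\Ya[i]$ is injective, so comparing coefficients yields
\[
c_n + (-1)^{\binom{n-i+1}{2}} d_{n-i}\, c_i = q^{-(n-2i)/2} q^{-\{i+1,\ldots,n\}}
\]
for every $i \in \{0, \ldots, n-1\}$; as each $d_{n-i}$ is a product of the invertible elements $q^{2\ell-1} + q^{-2\ell+1}$, this determines $c_i$ once $c_n$ is known.

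To compute $c_n$ I would evaluate $\RM_{S, S}$ on the highest weight vector $v_\emptyset \ot v_\emptyset$, of weight $2\varpi_n$. By Theorem \ref{thm:KR} and Proposition \ref{prop:R=xy}, the operator $(T_{w_0}^{-1} \ot T_{w_0}^{-1}) \circ \Delta(T_{w_0})$ is $1 \ot 1$ plus a sum of terms $x^+ \ot y^-$ with $x^+ \in U_q(\son)^{> 0}$, and each such $x^+$ annihilates the highest weight vector $v_\emptyset$, so this operator fixes $v_\emptyset \ot v_\emptyset$. Hence, by \eqref{eq:braiding},
\[
\RM_{S, S}(v_\emptyset \ot v_\emptyset) = \flip\big( q^{(\varpi_n, \varpi_n)}\, v_\emptyset \ot v_\emptyset \big) = q^{n/2}\, v_\emptyset \ot v_\emptyset \, ,
\]
using $(\varpi_n, \varpi_n) = n/2$ for the type $B_n$ form $(\epsilon_i, \epsilon_j) = 2\delta_{ij}$. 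Since $v_\emptyset \ot v_\emptyset$ lies in the $V(2\varpi_n)$-isotypic component of $S \ot S$, it is annihilated by each $\II^{(j)}$ with $j \leq n-1$ (which factors through $V_j \not\cong V(2\varpi_n)$) and fixed by $\II^{(n)} = \id$, so $c_n = q^{n/2}$ — this is the stated coefficient of $\id_{S \ot S}$.

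Finally I would substitute $c_n = q^{n/2}$ and simplify. The only bookkeeping required is the identity $q^{\{i+1,\ldots,n\}} = (-1)^{\binom{n-i+1}{2}} q^{(n-i)^2}$, which follows from $\sum_{j=i+1}^{n} (n-j+1) = \binom{n-i+1}{2}$ and $\sum_{j=i+1}^{n} \big(2(n-j)+1\big) = (n-i)^2$; writing $k := n-i$, it gives
\[
q^{-(n-2i)/2} q^{-\{i+1,\ldots,n\}} = q^{n/2} (-1)^{\binom{k+1}{2}} (q^{-2})^{\binom{k+1}{2}} \, ,
\]
and plugging this into the relation above yields $c_i = \frac{q^{n/2}}{d_k}\big( (q^{-2})^{\binom{k+1}{2}} - (-1)^{\binom{k+1}{2}} \big) = b_{n-i}$, which is precisely \eqref{eq:braidcoeff}. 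I do not expect a genuine obstacle here: the only mildly delicate point is that the composition-with-$\Ya[i]$ step implicitly uses that the normalized $\II^{(i)}$ are the idempotents projecting onto the multiplicity-one summands $V_i \subset S \ot S$, which is exactly the content of \eqref{bigonschur} together with the decomposition \eqref{eq:StSdecomp}; the rest is the sign and power-of-$q$ bookkeeping just described.
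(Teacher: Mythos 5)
Your proof is correct and follows essentially the same route as the paper's: determine the coefficient of $\id_{S\ot S}$ by evaluating on the highest weight vector $x_\emptyset \ot x_\emptyset$ using Theorem \ref{thm:KR} and Proposition \ref{prop:R=xy}, then pin down the remaining coefficients by composing with $\Ya[n-i]$ and invoking Lemmas \ref{trivalenttwist} and \ref{bigon}. The only difference is presentational — you spell out the injectivity of $\Ya[i]$ and record the intermediate identity $q^{\{i+1,\ldots,n\}} = (-1)^{\binom{n-i+1}{2}} q^{(n-i)^2}$, both of which the paper leaves implicit.
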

\begin{proof}
By Proposition \ref{prop:Ibasis}, there exist scalars $b_j \in \C(q^{\frac{1}{2}})$ for $0 \leq j \leq n$
so that 
\[
\RM_{S, S}= b_n \id_{S\ot S} + \sum_{i=1}^n b_{n-i} \II^{(n-i)} \, .
\]
Plugging the highest weight vector $x_\emptyset \otimes x_\emptyset \in S \otimes S$ 
into $R_{S,S}$ and using Proposition \ref{prop:R=xy} gives that $b_n= q^{\frac{n}{2}}$. 
For the remaining coefficients, Lemmata \ref{trivalenttwist} and \ref{bigon} give
\[
q^{\frac{n-2i}{2}}q^{-\lbrace n-i+1, \ldots, n\rbrace} \Ya[n-i]
=\RM_{S, S} \circ \Ya[n-i] = q^{\frac{n}{2}} \Ya[n-i] + (-1)^{\binom{i+1}{2}} d_i b_{n-i} \Ya[n-i] \, ,
\]
thus
\[
b_{n-i} = \frac{(-1)^{\binom{i+1}{2}}q^{\frac{n}{2}}}{d_i}\left(q^{-i}q^{-\lbrace n-i+1, \ldots, n\rbrace} - 1\right) 
	= \frac{q^{\frac{n}{2}}}{d_i}\left((q^{-2})^{\binom{i+1}{2}} - (-1)^{\binom{i+1}{2}}\right) \, . \qedhere
\]
\end{proof}

\subsection{$\hh$- and $\xx$-morphisms}

In this section, we consider two additional bases $\{\hh^{(i)}\}_{i=0}^n$ and $\{\xx^{(i)}\}_{i=0}^n$
for the algebra $\End_{U_q(\son)}(S \otimes S$).
The first is obtained in a straightforward way: 
diagrammatically, it is the rotation of the basis from Proposition \ref{prop:Ibasis}. 
The latter basis is a novel construction of this paper and is more subtle. 
Its construction was motivated by our categorified considerations appearing later, 
and is the ``canonical basis'' for this algebra, in a certain sense.

To begin, define $\hh^{(i)}$ using graphical calculus as follows:
\begin{equation}
	\label{eq:Hi}
\hh^{(i)} =
\begin{tikzpicture}[scale=.4, rotate=90, tinynodes, anchorbase]
	\draw[very thick,gray] (-1,0) node[below,yshift=2pt,xshift=2pt]{$S$} to (0,1);
	\draw[very thick,gray] (1,0) node[above,yshift=-4pt,xshift=2pt]{$S$} to (0,1);
	\draw[very thick,gray] (0,2.5) to (-1,3.5) node[below,yshift=2pt,xshift=-2pt]{$S$};
	\draw[very thick,gray] (0,2.5) to (1,3.5) node[above,yshift=-4pt,xshift=-2pt]{$S$};
	\draw[very thick] (0,1) to node[below,yshift=2pt]{$i$} (0,2.5);
\end{tikzpicture}
:=
\begin{tikzpicture}[scale=.35, tinynodes, anchorbase]
	\draw[very thick,gray] (-1,-1) node[below=-2pt]{$S$} to [out=90,in=210] (0,1);
	\draw[very thick,gray] (2,4.5) node[above=-2pt]{$S$} to [out=270,in=0] (1.25,.25) to [out=180,in=330] (0,1);
	\draw[very thick,gray] (0,2.5) to [out=150,in=0] (-1.25,3.25) to [out=180,in=90] (-2,-1) node[below=-2pt]{$S$};
	\draw[very thick,gray] (0,2.5) to [out=30,in=270] (1,4.5) node[above=-2pt]{$S$};
	\draw[very thick] (0,1) to node[right,xshift=-2pt]{$i$} (0,2.5);
\end{tikzpicture}
\end{equation}
We begin by computing the values of the ``triangles'':
\[
\begin{tikzpicture}[scale=.3,smallnodes,anchorbase,rotate=180]
	\draw[very thick] (-1,0) to node[above=-1pt]{$1$} (1,0);
	\draw[very thick,gray] (-1,0) to (0,1.732);
	\draw[very thick,gray] (1,0) to (0,1.732);
	\draw[very thick] (0,1.732) to (0,3.232) node[below=-1pt]{$i$};
	\draw[very thick,gray] (-2.3,-.75) node[above=-2pt,xshift=2pt]{$S$} to (-1,0);
	\draw[very thick,gray] (2.3,-.75) node[above=-2pt,xshift=-2pt]{$S$} to (1,0);
\end{tikzpicture}
:= \hh^{(1)} \circ \Ya[i] \, .
\]

\begin{lem}\label{SS1triangle}
For $0 \leq i \leq n-1$,
\[
\hh^{(1)}\circ \Ya[i] = (-1)^{n-i}\frac{[2(n-i)+1]}{[2]} \Ya[i]
\]
\end{lem}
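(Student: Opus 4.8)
The statement asserts that $\hh^{(1)} \circ \Ya[i] = (-1)^{n-i}\frac{[2(n-i)+1]}{[2]}\Ya[i]$, i.e., that the composite of the map $\Ya[i] \colon V_i \to S \otimes S$ with the ``H-shaped'' endomorphism $\hh^{(1)} = \Ya[1] \circ \Pa[1] \in \End_{U_q(\son)}(S \otimes S)$ acts on the image of $\Ya[i]$ by an explicit scalar. The plan is to exploit the fact that $\Hom_{U_q(\son)}(V_i, S\otimes S)$ is one-dimensional by \eqref{eq:StSdecomp}, so $\hh^{(1)} \circ \Ya[i]$ is automatically a scalar multiple of $\Ya[i]$; the entire content is to compute that scalar.

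First I would evaluate both sides on the highest weight vector $v_i^+ \in V_i$, using the explicit formula \eqref{highestwti} for $\Ya[i](v_i^+)$ together with the definitions \eqref{eq:Pa} of $\Pa[1]$ and \eqref{eq:Hi} of $\hh^{(1)}$. Concretely, $\hh^{(1)} \circ \Ya[i] = \Ya[1] \circ (\Pa[1] \circ \Ya[i])$, so the key intermediate object is the morphism $\Pa[1] \circ \Ya[i] \colon V_i \to V_1$. By Schur's lemma this is zero unless $i = 1$ — but wait, that would make the whole composite vanish for $i \neq 1$, contradicting the claim. So in fact the correct reading is that $\hh^{(1)}$ involves a bigon through $V_1$ only after first using cups/caps on $S$, and the ``$\Pa[1] \circ \Ya[i]$'' appearing here is really the composite of $\Ya[i]$ with a cap on one pair of $S$-legs and then a projection — precisely the zig-zag morphism drawn in \eqref{eq:Hi}. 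The cleaner approach: I would instead recognize $\hh^{(1)} \circ \Ya[i]$ as $\Ya[1]$ composed with the morphism $S \otimes S \to V_1$ obtained by capping, and track where the highest weight vector goes coefficient by coefficient, isolating the coefficient of a convenient weight vector such as $x_\emptyset \otimes x_{\{i+1,\ldots,n\}}$ (the same normalization point used in the proof of Lemma \ref{trivalenttwist}).

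The actual computation is a weighted sum over subsets: applying the cap $\ca[n]$ (or a partial cap on $S$-legs) to $\Ya[i](v_i^+) = \sum_{I \sqcup J = \{i+1,\ldots,n\}} q^J x_I \otimes x_J$, one uses the explicit formula \eqref{spincupcapformulas}, $\ca[n](x_I \otimes x_J) = q^{-I}\delta_{J, I^c}$, and the action of $e_n, f_n$ on $V_1$ from Proposition \ref{prop:V1basis} (in particular the $[2]$ appearing in $f_n u = [2] b_n$, which is where the $1/[2]$ in the answer comes from). I would reduce to an identity of the form $\sum_{I \sqcup J = \{i+1,\ldots,n\}} (\pm) q^{\pm I} q^{\pm J} = (-1)^{n-i}\frac{[2(n-i)+1]}{[2]}$, which should follow from Lemma \ref{lem:dvalue} (the computation of $\sum q^J q^{-I} = (-1)^{\binom{n-i+1}{2}} d_{n-i}$) together with elementary quantum-integer manipulations — note $\frac{[2(n-i)+1]}{[2]}$ is exactly a telescoping-type ratio, and $d_{n-i} = \prod_{\ell=1}^{n-i}(q^{2\ell-1}+q^{1-2\ell})$, so the needed identity is $\frac{\text{(bigon through } V_1\text{)}}{\text{(bigon value)}} = \frac{[2(n-i)+1]}{[2]}$ up to sign, which can be verified by a short induction on $n-i$ or by directly expanding both sides.

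The main obstacle I anticipate is bookkeeping: carefully tracking the various $q$-power normalizations $q^J$, $q^{-I}$, the Hopf-algebra coproduct conventions in applying $e_n$ to tensor products, and the sign $(-1)^{n-i}$ — signs in this paper come from the alternating factors $(-1)^{n-j+1}$ hidden inside $q^J$. A secondary subtlety is making sure the bigon in $\hh^{(1)}$ is set up with the pivotal conventions of Remark \ref{R:coherent-self-duality} so that no stray scalar is introduced when rotating $\Pa[1]$ past $\Ya[1]$; using \eqref{E:left-twist-vs-right-twist} should handle this. Once the scalar is pinned down on $v_i^+$, the one-dimensionality of $\Hom_{U_q(\son)}(V_i, S \otimes S)$ finishes the proof immediately.
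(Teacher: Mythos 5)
Your proposal follows essentially the same route as the paper: use one-dimensionality of $\Hom_{U_q(\son)}(V_i, S\otimes S)$ to reduce to a scalar, evaluate on $v_i^+$, and extract the scalar by isolating the coefficient of a single weight vector (the paper projects to $x_{\{i+1,\ldots,n\}}\otimes x_\emptyset$, the mirror of your choice) after expanding $\hh^{(1)}$ via its defining zig-zag as a composite of $\cu[1]$, two $\Ya[1]$'s, and two $\ca[n]$'s, and plugging in the formulas from Example \ref{ex:Y1}, \eqref{spincupcapformulas}, \eqref{varphi1}, and Proposition \ref{prop:V1basis}. Two small corrections: the identification $\hh^{(1)} = \Ya[1]\circ\Pa[1]$ you wrote first is actually $\II^{(1)}$, not $\hh^{(1)}$ (as you yourself observe — $\hh^{(1)}$ is the pivotal rotation of $\II^{(1)}$, which does not factor through $V_1\hookrightarrow S\otimes S$), and the paper's closing computation does not invoke Lemma \ref{lem:dvalue} or an induction on $n-i$; after the surviving terms are identified (only $v=u$ with $I=\emptyset$, and $v=b_\ell, a_\ell$ with $I=\{\ell\}$ contribute) the sum collapses to a single geometric series $\sum_{\ell=i+1}^n q^{2n+2i-4\ell+4}+q^{2n+2i-4\ell+2}$ plus the $v=u$ term $q^{-2n+2i}$, which one recognizes directly as $(-1)^{n-i}[2(n-i)+1]/[2]$.
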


\begin{proof}
Again, the decomposition \ref{eq:StSdecomp} implies that
there is some scalar $\chi$ so that $\hh^{(1)}\circ \Ya[i]= \chi \Ya[i]$. 
Let 
\[
\pi_{\lbrace i+1, \ldots, n\rbrace\ot \emptyset} \in \End_{\C(q^{1/2})}(S \otimes S)
\]
denote the \emph{linear operator} which projects to $x_{\lbrace i+1, \ldots, n\rbrace}\ot x_{\emptyset}$ 
with respect to the basis $\lbrace x_I\ot x_J\rbrace_{I,J \subset \{1,\ldots,n\}}$ of $S \otimes S$.
(These are not $U_q(\son)$-module homomorphisms.)
It follows from \eqref{highestwti} that
\[
\chi \cdot q^{\emptyset}\cdot x_{\lbrace i+1, \ldots, n\rbrace}\ot x_{\emptyset}
	= \pi_{\lbrace i+1, \ldots, n\rbrace\ot \emptyset} \circ \chi \Ya[i] (v_i^+)
	= \pi_{\lbrace i+1, \ldots, n\rbrace \ot \emptyset}\circ \hh^{(1)}\circ \Ya[i](v_i^+) \, .
\]
Combining \eqref{eq:Hi} with \eqref{eq:Pa} gives
\[
\hh^{(1)}\circ \Ya[i]= (\ca[n]\ot \id_{S\otimes S}\ot \ca[n])\circ (\id_S\ot\Ya[1]\ot\Ya[1]\ot \id_S)\circ (\id_S\ot \cu[1] \ot\id_S)\circ \Ya[i] \, ,
\]
so \eqref{highestwti} and \eqref{eq:cupi} imply that
$\pi_{ \lbrace i+1, \ldots, n\rbrace \ot \emptyset}\circ \hh^{(1)}\circ \Ya[i](v_i^+)$ is equal to
\begin{equation}
	\label{eq:HYmess}
\sum_{\substack{I\subset\lbrace i+1, \ldots, n\rbrace \\ v\in \lbrace a_i, u, b_j\rbrace}} 
	q^I\pi_{\lbrace i+1, \ldots, n\rbrace\ot \emptyset}\circ(\ca[n]\ot \id_{S\ot S}\ot \ca[n])
		(x_{\lbrace i+1, \ldots, n\rbrace \smallsetminus I}\ot \Ya[1](v)\ot \Ya[1](\varphi_1^{-1}(v^\ast))\ot x_I) \, .
\end{equation}
We now work to simplify this expression.

For $K \subset \{1,\ldots,n\}$, similarly let $\pi_{K} \in \End_{\C(q^{1/2})}(S)$ denote 
the linear operator projecting to $x_K$ with respect to the basis $\lbrace x_I\rbrace_{I \subset \{1,\ldots,n\}}$. 
Let $\ell \in \lbrace 1, \ldots, n\rbrace$ and $I \subset \lbrace i+1, \ldots, n\rbrace$.
Using Example \ref{ex:Y1}, along with equation \eqref{spincupcapformulas},
we see that
\[
\pi_{\emptyset}\circ (\id_S\ot \ca[n])(\Ya[1](b_\ell) \ot x_I) = 0 \, ,
\]
and, unless $I = \{ \ell\}$, we also have
\[
\pi_{\emptyset}\circ (\id_S\ot \ca[n])(\Ya[1](a_\ell) \ot x_I) = 0 \, .
\]
Similarly, one finds that
\[
\pi_{\emptyset}\circ (\id_S\ot \ca[n])(\Ya[1](u) \ot x_I) \ne 0
\]
if and only if $I = \emptyset$ and 
\[
\pi_{\lbrace i+1, \ldots, n\rbrace} \circ (\ca[n]\ot \id_S)(x_I\ot \Ya[1](u)) \ne 0
\]
if and only if $I = \lbrace i+1, \ldots, n\rbrace$. 

These computations, together with equation \eqref{varphi1}, imply that \eqref{eq:HYmess} simplifies to
\begin{multline*}
\sum_{\ell=i+1}^n (-1)^{\ell-1} (q^2)^{\ell-1} q^{\lbrace \ell\rbrace}
	\pi_{\lbrace i+1, \ldots, n\rbrace\ot \emptyset} \circ (\ca[n] \ot \id_{S\ot S} \ot \ca[n]) 
		(x_{\lbrace i+1, \ldots, n\rbrace - \lbrace \ell \rbrace} \ot \Ya[1](b_\ell) \ot \Ya[1](a_\ell) \ot x_{\lbrace \ell \rbrace}) \\
+\frac{(-1)^n(q^2)^nq^{\emptyset}}{[2]}\pi_{\lbrace i+1, \ldots, n\rbrace\ot \emptyset}
	\circ(\ca[n]\ot \id_{S\ot S}\ot \ca[n])(x_{\lbrace i+1, \ldots, n\rbrace}\ot \Ya[1](u)\ot \Ya[1](u)\ot x_{\emptyset}).
\end{multline*}
Finally, using our explicit formulae for $\Ya[1]$ and $\ca[n]$, we conclude that
\begin{align*}
\chi &= \sum_{\ell=i+1}^n (-1)^{\ell-1}(q^2)^{\ell-1}q^{\lbrace \ell\rbrace}q^{\lbrace 2, \ldots, n\rbrace}q^{\lbrace i+2, \ldots, n\rbrace}
		q^{-\lbrace i+1\ldots, \hat{\ell}, \ldots, n\rbrace}q^{-\lbrace 1, \ldots, \hat{\ell}, \ldots, n\rbrace} \\
&\qquad + (-1)^n\frac{(q^2)^nq^{\emptyset}q^{-1}q^{\lbrace i+2, \ldots, n\rbrace}q^{-1}
	q^{\lbrace 2, \ldots, n\rbrace}q^{-\lbrace i+1, \ldots, n\rbrace}q^{-\lbrace 1, \ldots, n\rbrace}}{[2]}\\
&= \frac{q^{\lbrace 2, \ldots, n\rbrace}q^{\lbrace i+2, \ldots, n\rbrace}}{q^{\lbrace i+1, \ldots, n\rbrace}q^{\lbrace 1, \ldots, n\rbrace}}
	\left(\sum_{\ell=i+1}^n(-1)^{\ell-1}(q^2)^{\ell-1}q^{\lbrace \ell\rbrace}q^{\lbrace \ell\rbrace}q^{\lbrace \ell\rbrace} + (-1)^n \frac{(q^2)^nq^{-2}}{[2]}\right) \\
&= (-1)^n q^{-\lbrace 1, i+1 \rbrace} \left( \sum_{\ell=i+1}^n q^{6n-4\ell+1}+ \frac{q^{2n-2}}{[2]} \right) \\
&= \frac{(-1)^{n+i}}{[2]} \left( \sum_{\ell=i+1}^n (q^{2n+2i-4\ell+4}+ q^{2n+2i-4\ell+2})+ q^{-2n+2i} \right) \\
&= (-1)^{n-i} \frac{[2(n-i)+1]}{[2]} \, .  \qedhere
\end{align*}
\end{proof}

Using this, we now express the morphism $\hh^{(1)}$ in terms of the basis from Proposition \ref{prop:Ibasis}. 
Recall the devil's product from Definition \ref{def:devil}, 
and the notation $\II^{(i)}:= \Ya[i]\circ \Pa[i]$ and $\II^{(n)}= \id_{S\otimes S}$ introduced in Proposition \ref{prop:Ibasis},
both will be used extensively for the remainder of this section.

\begin{lem}\label{lem:H1reln}
We have that
\[
\hh^{(1)}= \dfrac{1}{[2]}\id_{S\ot S} + \sum_{i=1}^{n} (-1)^{\binom{i}{2}}\dfrac{``[i][i+1]"}{d_i} \II^{(n-i)} \, .
\]
\end{lem}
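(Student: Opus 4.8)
The plan is to express $\hh^{(1)}$ in the basis $\{\II^{(i)}\}_{i=0}^n$ of Proposition~\ref{prop:Ibasis} by computing its action on the irreducible summands of $S\ot S$. Since $\End_{U_q(\son)}(S\ot S)$ is spanned by $\{\II^{(i)}\}$, and $\II^{(i)}=\Ya[i]\circ\Pa[i]$ is (a scalar multiple of) the projection onto the $V_i$-summand for $0\le i\le n-1$ while $\II^{(n)}=\id_{S\ot S}$, any morphism $\phi$ is determined by the scalars $c_i$ with which it acts on $V_i\subset S\ot S$ (i.e.\ $\phi\circ\Ya[i]=c_i\,\Ya[i]$ for $i<n$, and $c_n$ the coefficient on $V(2\varpi_n)$). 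First I would write $\hh^{(1)}=\sum_{i=0}^n \mu_i\,\II^{(n-i)}$ for unknown scalars $\mu_i$ (reindexing so $\mu_0$ is the coefficient of $\II^{(n)}=\id$), and use Lemma~\ref{SS1triangle}, which gives $\hh^{(1)}\circ\Ya[n-i]=(-1)^{i}\frac{[2i+1]}{[2]}\Ya[n-i]$ for $1\le i\le n$, together with the orthogonality relation \eqref{bigonschur}: $\II^{(a)}\II^{(b)}=(-1)^{\binom{n-a+1}{2}}d_{n-a}\,\delta_{a,b}\,\II^{(a)}$.

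The key computation is then to solve the resulting triangular linear system. Composing $\hh^{(1)}=\sum_{j=0}^n\mu_j\II^{(n-j)}$ with $\Ya[n-i]$ and using $\II^{(n-j)}\circ\Ya[n-i] = \Ya[n-j]\circ\Pa[n-j]\circ\Ya[n-i]$, which by \eqref{bigonequation}--\eqref{bigoniszero} equals $(-1)^{\binom{i+1}{2}}d_i\,\delta_{i,j}\,\Ya[n-i]$ when $1\le i\le n$ (and, for $j=0$, $\II^{(n)}\circ\Ya[n-i]=\id\circ\Ya[n-i]=\Ya[n-i]$), I get for each $1\le i\le n$
\[
(-1)^i\frac{[2i+1]}{[2]} = \mu_0 + \mu_i\,(-1)^{\binom{i+1}{2}}d_i \, ,
\]
while evaluating on the highest weight vector $x_\emptyset\ot x_\emptyset$ of $V(2\varpi_n)$ (using that $\II^{(n-j)}$ kills $V(2\varpi_n)$ for $j\ge 1$, being a projection to $V_{n-j}$) pins down $\mu_0=\hh^{(1)}$'s eigenvalue on $V(2\varpi_n)$. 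A direct check from \eqref{eq:Hi} and Definition~\ref{def:spin} gives that this eigenvalue is $\tfrac{1}{[2]}$ (consistent with the $i=0$ case of the $\Ya[i]$-formula, since $[2\cdot 0+1]/[2]=1/[2]$ and the $d_0$-term vanishes). Hence $\mu_0 = \tfrac{1}{[2]}$ and, solving,
\[
\mu_i = \frac{(-1)^{\binom{i+1}{2}}}{d_i}\left( (-1)^i\frac{[2i+1]}{[2]} - \frac{1}{[2]} \right) = \frac{(-1)^{\binom{i+1}{2}}}{[2]\,d_i}\left( (-1)^i[2i+1] - 1 \right).
\]

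The last step is to identify $\dfrac{(-1)^{\binom{i+1}{2}}}{[2]d_i}\bigl((-1)^i[2i+1]-1\bigr)$ with $(-1)^{\binom{i}{2}}\dfrac{``[i][i+1]"}{d_i}$, i.e.\ to verify the quantum-integer identity
\[
``[i][i+1]" = \frac{(-1)^i[2i+1]-1}{(-1)^i[2]} = \frac{[2i+1]-(-1)^i}{[2]} \, ,
\]
after matching signs $(-1)^{\binom{i+1}{2}} = (-1)^i(-1)^{\binom{i}{2}}$. This is a purely combinatorial check: by Definition~\ref{def:devil}, $``[i][i+1]" = \sum_{k=0}^{i-1}(-1)^k[2i-2k] = [2i]-[2i-2]+\cdots+(-1)^{i-1}[2]$, and multiplying by $[2]=q+q^{-1}$ telescopes (since $[2][2j]=[2j+1]+[2j-1]$) to leave $[2i+1]$ minus a boundary term $(-1)^i[1]=(-1)^i$; I expect this to be the routine but slightly fiddly part, best handled by the telescoping argument or by citing Appendix~\ref{S:combinatorial-identities}. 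The main obstacle is not any single hard step but keeping the sign bookkeeping ($\binom{i}{2}$ vs.\ $\binom{i+1}{2}$, the $(-1)^i$ from Lemma~\ref{SS1triangle}, and the sign in \eqref{bigonschur}) consistent throughout; a careful evaluation on highest weight vectors at each stage is what makes the argument airtight.
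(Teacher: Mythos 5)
Your proposal is correct and takes essentially the same approach as the paper: expand $\hh^{(1)}$ in the basis $\{\II^{(i)}\}_{i=0}^n$, pin down the coefficient of $\II^{(n)}=\id_{S\ot S}$ by evaluating on the highest weight vector, and solve the diagonal system produced by Lemma~\ref{SS1triangle} together with the orthogonality of the $\II^{(i)}$. The final simplification to $(-1)^{\binom{i}{2}}``[i][i+1]"/d_i$ via the identity $[2]\cdot``[i][i+1]" = [2i+1]-(-1)^i$ (i.e.\ equation~\eqref{eq:2devil}) and the sign relation $(-1)^{\binom{i+1}{2}}(-1)^i=(-1)^{\binom{i}{2}}$ is exactly how the paper concludes.
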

\begin{proof}
By Proposition \ref{prop:Ibasis}, there are scalars $\chi, \lambda_{0}, \ldots, \lambda_{n-1}\in \C(q^{\frac{1}{2}})$ such that
\begin{equation}\label{H1unknowns}
\hh^{(1)}= \chi \id_{S\ot S} + \sum_{\ell=0}^{n-1} \lambda_{\ell} \II^{(\ell)}.
\end{equation}
Since $\II^{(\ell)}(x_{\emptyset}\ot x_{\emptyset}) = 0$ for $0 \leq \ell \leq n-1$, 
a similar argument to the one given in the proof of Lemma \ref{SS1triangle} implies that $\chi = 1/[2]$. 
Next, composing \eqref{H1unknowns} with $\Pa[n-i]$ 
and applying Lemma \ref{SS1triangle},
we obtain
\[
(-1)^k\frac{[2(n-(n-i)) + 1]}{[2]} = \frac{1}{[2]} + \lambda_{n-i}(-1)^{\binom{i+1}{2}}d_i
\]
and so
\[
\lambda_{n-i} = (-1)^{\binom{i+1}{2}} (-1)^i \left( \frac{[2i+1]+(-1)^{i+1}}{[2]d_i} \right) \, .
\]
The result then follows from $m=i$, $n=i+1$ case of the identity
\begin{equation}
	\label{eq:2devil}
[2]``[m][n]" = [n+m] + (-1)^{m-1} [n-m]
\end{equation}
(which holds for $m \leq n$)
and the identity
\[
(-1)^{\binom{i+1}{2}} (-1)^i = (-1)^{\binom{i}{2}} \, . \qedhere
\]
\end{proof}

We now introduce the distinguished elements of $\End_{U_q(\son)}(S \otimes S)$ 
appearing in Theorem \ref{thm:X}.

\begin{defn}
	\label{D:xxk-defn}
Let 
\begin{equation}
	\label{eq:x1defn}
\xx:= \xx^{(1)} := \hh^{(1)}- \dfrac{1}{[2]} \II^{(n)} = \II^{(n-1)} + \sum_{\ell=2}^{n}(-1)^{\binom{\ell}{2}} \dfrac{``[\ell][\ell+1]"}{d_\ell} \II^{(n-\ell)} \, .
\end{equation}
More generally, let $\xx^{(0)}:=\id_{S\otimes S}$ and for $0 \leq i \leq n-1$, set
\begin{equation}
	\label{eq:xidefn}
\xx^{(i+1)} := \dfrac{(-1)^i}{``[i+1]^2"}\left(\xx^{(i)}\xx - (-1)^{i} ``[i][i+1]"\xx^{(i)}\right) \, .
\end{equation}
\end{defn}

By design we have that
\begin{equation}
	\label{E:xxk-recursion}
\xx^{(i)}\xx = (-1)^i``[i+1]^2"\xx^{(i+1)} + (-1)^i``[i][i+1]"\xx^{(i)}.
\end{equation}
Further, \eqref{eq:xidefn} implies that for $1 \leq i \leq n$, 
\begin{equation}
	\label{E:xxk-expanded}
\xx^{(i)} = (-1)^{\binom{i}{2}}\dfrac{(\xx+(-1)^i``[i-1][i]")\cdot (\xx+(-1)^{i-1}``[i-2][i-1]")\cdots (\xx+(-1)``[0][1]")}{``[i]^2"\cdot ``[i-1]^2"\cdots ``[1]^2"} \, .
\end{equation}
We now give an explicit unitriangular change of basis from $\{\xx^{(i)}\}_{i=0}^n$ to $\{ \II^{(n-i)} \}_{i=0}^n$, 
generalizing \eqref{eq:x1defn}.

\begin{prop}\label{itoxchangeofbasis}
For $1 \leq i \leq n$,
\begin{equation}
	\label{eq:ItoX}
\xx^{(i)} = \sum_{\ell=i}^n \prescript{n-i}{}\lambda_{n-\ell} \II^{(n-\ell)}
\end{equation}
where, for $i \leq \ell \leq n$,
\begin{equation}\label{lambdacoeff}
\prescript{n-i}{}\lambda_{n-\ell}:= \dfrac{(-1)^{\binom{\ell-i+1}{2}}}{d_\ell} \cdot \prod_{t=1}^i\dfrac{``[\ell+1- t][\ell+t]"}{``[t]^2"} \, .
\end{equation}
Further, $\prescript{n-i}{}\lambda_{n-i} = 1$ and thus $\{ \xx^{(i)} \}_{i=0}^n$ is a basis for $\End_{U_q(\son)}(S\otimes S)$.
\end{prop}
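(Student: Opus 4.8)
The plan is to prove \eqref{eq:ItoX} by induction on $i$, using the defining recursion \eqref{E:xxk-recursion}--\eqref{eq:xidefn} together with the multiplication rule \eqref{bigonschur} for the elements $\II^{(\ell)}$. The base case $i=1$ is exactly \eqref{eq:x1defn}: since $``[1]^2"=``[1][1]"=[1]=1$ and $``[1][2]"=[2]=d_1$, the coefficients appearing in \eqref{eq:x1defn} are precisely the values $\prescript{n-1}{}\lambda_{n-\ell}=(-1)^{\binom{\ell}{2}}``[\ell][\ell+1]"/d_\ell$, which is the $i=1$ instance of \eqref{lambdacoeff} (and in particular $\prescript{n-1}{}\lambda_{n-1}=1$). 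A first observation to carry through the induction: for $i\geq 1$ the element $\xx^{(i)}$ involves only $\II^{(0)},\dots,\II^{(n-1)}$ and not $\II^{(n)}=\id$; since a product $\II^{(a)}\II^{(b)}$ with $a,b\leq n-1$ again lies in the span of $\II^{(0)},\dots,\II^{(n-1)}$ by \eqref{bigonschur}, this property propagates to $\xx^{(i)}\xx$ and hence to $\xx^{(i+1)}$, so \eqref{bigonschur} always applies with no special cases.

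Assuming \eqref{eq:ItoX} for $\xx^{(i)}$, the computational heart is the product $\xx^{(i)}\xx$. Writing $\xx=\sum_{m=1}^{n}\prescript{n-1}{}\lambda_{n-m}\II^{(n-m)}$ (the base case) and $\xx^{(i)}=\sum_{\ell=i}^{n}\prescript{n-i}{}\lambda_{n-\ell}\II^{(n-\ell)}$, and using \eqref{bigonschur} to kill all cross terms $\II^{(n-\ell)}\II^{(n-m)}$ with $\ell\neq m$, one gets
\[
\xx^{(i)}\xx=\sum_{\ell=i}^{n}\prescript{n-i}{}\lambda_{n-\ell}\,\prescript{n-1}{}\lambda_{n-\ell}\,(-1)^{\binom{\ell+1}{2}}d_\ell\,\II^{(n-\ell)}.
\]
Since $\binom{\ell}{2}+\binom{\ell+1}{2}=\ell^2$, the factor $\prescript{n-1}{}\lambda_{n-\ell}(-1)^{\binom{\ell+1}{2}}d_\ell$ collapses to $(-1)^{\ell}``[\ell][\ell+1]"$. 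Plugging this into \eqref{eq:xidefn} and simplifying signs ($(-1)^{i+\ell}=(-1)^{\ell-i}$, $(-1)^{2i}=1$) shows that the coefficient of $\II^{(n-\ell)}$ in $\xx^{(i+1)}$ equals
\[
\frac{\prescript{n-i}{}\lambda_{n-\ell}}{``[i+1]^2"}\Big((-1)^{\ell-i}``[\ell][\ell+1]"-``[i][i+1]"\Big).
\]
For $\ell=i$ the parenthesis is $``[i][i+1]"-``[i][i+1]"=0$, so $\xx^{(i+1)}$ has no $\II^{(n-i)}$ component; this is what keeps the change of basis unitriangular (and re-confirms the ``no $\id$ term'' property).

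For $\ell>i$ I invoke the devil's-product identity
\[
``[\ell][\ell+1]"-(-1)^{\ell-i}``[i][i+1]"=``[\ell-i][\ell+i+1]"\qquad(0\leq i\leq\ell),
\]
which follows immediately from \eqref{eq:2devil}: multiplying both sides by $[2]$ and applying \eqref{eq:2devil} reduces each side to $[2\ell+1]-(-1)^{\ell-i}[2i+1]$, and $[2]$ is invertible. This rewrites the coefficient above as $(-1)^{\ell-i}\,\prescript{n-i}{}\lambda_{n-\ell}\,``[\ell-i][\ell+i+1]"/``[i+1]^2"$. A direct manipulation of \eqref{lambdacoeff} — peeling off the $t=i+1$ factor $``[\ell-i][\ell+i+1]"/``[i+1]^2"$ and using $\binom{\ell-i}{2}-\binom{\ell-i+1}{2}=-(\ell-i)$ — identifies this with $\prescript{n-(i+1)}{}\lambda_{n-\ell}$, closing the induction and proving \eqref{eq:ItoX}.

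It remains to treat $\prescript{n-i}{}\lambda_{n-i}=1$: this is the $\ell=i$ case of \eqref{lambdacoeff}, i.e.\ the identity $\prod_{t=1}^{i}``[i+1-t][i+t]"=d_i\prod_{t=1}^{i}``[t]^2"$, a pure statement about the devil's product which I would establish in Appendix~\ref{S:combinatorial-identities} (at worst one knows $\prescript{n-i}{}\lambda_{n-i}\neq 0$, since each devil's product is a nonzero Laurent polynomial). Granting it, \eqref{eq:ItoX} exhibits the transition matrix from the ordered basis $(\II^{(n)},\II^{(n-1)},\dots,\II^{(0)})$ of $\End_{U_q(\son)}(S\otimes S)$ (Proposition~\ref{prop:Ibasis}) to $(\xx^{(0)},\xx^{(1)},\dots,\xx^{(n)})$ as unitriangular (with $\xx^{(0)}=\id=\II^{(n)}$), hence invertible, so $\{\xx^{(i)}\}_{i=0}^{n}$ is a basis. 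The only real obstacle is bookkeeping: arranging the binomial-exponent sign gymnastics so that \eqref{lambdacoeff} emerges on the nose, and isolating the two combinatorial identities (the telescoping one above, immediate from \eqref{eq:2devil}, and the product identity behind $\prescript{n-i}{}\lambda_{n-i}=1$).
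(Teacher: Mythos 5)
Your proof is correct and follows essentially the same inductive route as the paper: establish the base case $i=1$ from \eqref{eq:x1defn}, then use the recursion \eqref{eq:xidefn} together with the orthogonality relation \eqref{bigonschur} to diagonalize $\xx^{(i)}\xx$ and match coefficients of each $\II^{(n-\ell)}$ against \eqref{lambdacoeff}. The one notable presentational difference is at the crux of the induction: you distill the coefficient simplification into the self-contained devil's-product identity $``[\ell][\ell+1]"-(-1)^{\ell-i}``[i][i+1]"=``[\ell-i][\ell+i+1]"$ and verify it by clearing $[2]$ and invoking \eqref{eq:2devil}, whereas the paper reaches the same conclusion by unfolding both devil's products from Definition~\ref{def:devil} and reindexing the resulting sums directly; your packaging is tidier and makes visible the identity that is actually doing the work. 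You also defer the calculation $\prescript{n-i}{}\lambda_{n-i}=1$ to Appendix~\ref{S:combinatorial-identities}, while the paper proves it inline (multiplying top and bottom of \eqref{lambdacoeff} at $\ell=i$ by $[2]^i$, applying \eqref{eq:2devil} termwise, and pairing the factors according to the parity of $t$ until everything collapses to $d_i$); since the proposition asserts the exact value $1$ rather than mere nonvanishing, that argument does need to be carried out somewhere, but your plan of isolating it as a combinatorial lemma is reasonable.
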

\begin{proof}
Note that $\prescript{n-1}{}\lambda_{n-\ell} = (-1)^{\binom{\ell}{2}}\frac{``[\ell][\ell+1]"}{d_\ell}$ 
so $\prescript{n-1}{}\lambda_{n-1}=1$ and thus \eqref{eq:x1defn} implies the claim when $i=1$. 
We now compute inductively that
{\allowdisplaybreaks
\begin{align*}
\xx^{(i+1)} &\stackrel{{\eqref{eq:xidefn}}}{=} \dfrac{(-1)^i}{``[i+1]^2"}\left(\xx^{(i)}\xx - (-1)^{i} ``[i][i+1]"\xx^{(i)}\right) \\
&\stackrel{{\eqref{eq:ItoX}}}{=} \dfrac{(-1)^i}{``[i+1]^2"} 
	\Big( \sum_{\ell=i}^n \prescript{n-i}{}\lambda_{n-\ell} \II^{(n-\ell)} \Big)
		\Big( \Big( \sum_{\ell=1}^n(-1)^{\binom{\ell}{2}}\frac{``[\ell][\ell+1]"}{d_\ell} \II^{(n-\ell)} \Big) 
			- (-1)^{i} ``[i][i+1]" \Big) \\
&\stackrel{{\eqref{bigonschur}}}{=} \dfrac{(-1)^i}{``[i+1]^2"} \sum_{\ell=i}^n \prescript{n-i}{}\lambda_{n-\ell} 
	\big( (-1)^\ell ``[\ell][\ell+1]" - (-1)^{i} ``[i][i+1]" \big) \II^{(n-\ell)} \\
&= \dfrac{(-1)^i}{``[i+1]^2"} \sum_{\ell=i+1}^n \prescript{n-i}{}\lambda_{n-\ell} 
	\big( (-1)^\ell ``[\ell][\ell+1]" - (-1)^{i} ``[i][i+1]" \big) \II^{(n-\ell)} \\
&= \dfrac{1}{``[i+1]^2"} \sum_{s=1}^{n-i} \prescript{n-i}{}\lambda_{n-i-s} 
	\big( (-1)^{s} ``[i+s][i+s+1]" - ``[i][i+1]" \big) \II^{(n-i-s)} \\
&= \dfrac{1}{``[i+1]^2"} \sum_{s=1}^{n-i} \prescript{n-i}{}\lambda_{n-i-s} 
	\Big( \sum_{j=0}^{i+s-1} (-1)^{j-s} [2i-2(j-s)] - \sum_{j=0}^{i-1} (-1)^j [2i-2j] \Big) \II^{(n-i-s)} \\
&= \dfrac{1}{``[i+1]^2"} \sum_{s=1}^{n-i} \prescript{n-i}{}\lambda_{n-i-s} 
	\Big( (-1)^s \sum_{j=0}^{s-1} (-1)^{j} [2i+s+1+s-2j-1] \Big) \II^{(n-i-s)} \\
&= \dfrac{1}{``[i+1]^2"} \sum_{s=1}^{n-i} \prescript{n-i}{}\lambda_{n-i-s}  (-1)^s ``[s][2i+s+1]" \II^{(n-i-s)} \\
&= \sum_{\ell=i+1}^{n} \prescript{n-i}{}\lambda_{n-\ell}  (-1)^{\ell-i} \frac{``[\ell-i][\ell+i+1]" }{``[i+1]^2"} \II^{(n-\ell)} \\
&= \sum_{\ell=i+1}^{n}  \dfrac{(-1)^{\binom{\ell-i+1}{2}}}{d_\ell} \cdot \prod_{t=1}^i\dfrac{``[\ell+1- t][\ell+t]"}{``[t]^2"} 
	(-1)^{\ell-i} \frac{``[\ell-i][\ell+i+1]" }{``[i+1]^2"} \II^{(n-\ell)} \\
&= \sum_{\ell=i+1}^{n}  \dfrac{(-1)^{\binom{\ell-(i+1)-1}{2}}}{d_\ell} \cdot \prod_{t=1}^{i+1} 
	\dfrac{``[\ell+1- t][\ell+t]"}{``[t]^2"} \II^{(n-\ell)}
\end{align*}
}
which establishes \eqref{eq:ItoX}.

Next, multiplying both numerator and denominator of \eqref{lambdacoeff} by $[2]^i$ 
(in the $\ell = i$ case) and using \eqref{eq:2devil}, we compute
{\allowdisplaybreaks
\begin{align*}
\prescript{n-i}{}\lambda_{n-i} &= \dfrac{1}{d_i}\prod_{t=1}^i \dfrac{[2i+1]+(-1)^{i-t}[2t-1]}{[2t]} \\
	&=  \dfrac{1}{d_i \prod_{t=1}^i [2t]} \prod_{\substack{t=1 \\ t \equiv i \, \ourmod 2}}^i \big([2i+1]+[2t-1]\big)
		\prod_{\substack{t=1 \\ t \equiv i+1 \, \ourmod 2}}^i \big([2i+1]-[2t-1]\big) \\
&= \frac{\Big( \frac{[2i][2]}{[1]}\frac{[2i-2][6]}{[3]}\frac{[2i-4][10]}{[5]}\frac{[2i-6][14]}{[7]}\cdots\Big)
	\Big(\frac{[4i-2][2]}{[2i-1]}\frac{[4i-6][4]}{[2i-3]}\frac{[4i-10][6]}{[2i-5]}\frac{[4i-14][8]}{[2i-7]}\cdots\Big)}
		{d_i \prod_{t=1}^i[2t]} \\
&=\dfrac{1}{d_i}\prod_{t=1}^i \dfrac{[4t-2]}{[2t-1]} = 1 \, .
\end{align*}}

Finally, recall that Proposition \ref{prop:Ibasis} gives that $\{ \II^{(i)} \}_{i=0}^n$ is a basis for $\End_{U_q(\son)}(S\otimes S)$. 
Since there is a unitriangular matrix relating this basis and $\{ \xx^{(i)} \}_{i=0}^n$, the latter is also a basis.
\end{proof}

\begin{rem}
	\label{rem:Xtoohigh}
In Definition \ref{D:xxk-defn}, we defined $\xx^{(i)}$ for $0 \leq i \leq n$;
however, we could have used the same recursion to define $\xx^{(i)}$ for all $i \ge 0$. 
In fact, nothing is gained, since for all $i > n$, we find that $\xx^{(i)}=0$.
To see this, it suffices to show that $\xx^{(n+1)}=0$. 
For this, Proposition \ref{itoxchangeofbasis} gives that $\xx^{(n)}= \II^{(0)}$ and thus
\[
\xx^{(n)} \xx = \II^{(0)} \xx \stackrel{\eqref{eq:x1defn},\eqref{bigonequation}}{=}
(-1)^{\binom{n}{2}}(-1)^{\binom{n+1}{2}}``[n][n+1]"\II^{(0)}
	= (-1)^{n}``[n][n+1]" \xx^{(n)}.
\]
and therefore $\xx^{(n+1)} := \xx^{(n)}\xx - (-1)^{n}``[n][n+1]"\xx^{(n)}= 0$.
In particular, this implies that when $i=n+1$, 
we can interpret the right-hand side of \eqref{E:xxk-expanded}
as the minimal polynomial of $\xx$. 
\end{rem}

We now arrive at the main result of this section, 
the description given in Theorem \ref{thm:X} 
of the braiding on $\End_{U_q(\son)}(S \otimes S)$ in terms of the basis $\{ \xx^{(i)} \}_{i=0}^n$.
(Note that we have already established the rest of this theorem.)
For this, we first need a technical result whose proof we relegate to the appendix.

\begin{lem}\label{L:rho-recurrence}
Fix $\ell \ge 1$, and set $\rho_{\ell+1}^{(\ell)}:=1$. For $1 \le t \le \ell$ consider the recurrence relation
\begin{equation}\label{E:rho-recurrence}
\rho_{t}^{(\ell)}:= (-1)^{\binom{\ell+2-t}{2}} + q^{-1}\dfrac{``[\ell+1-t][\ell+t]"}{``[t]^2"}\rho_{t+1}^{(\ell)} \, .
\end{equation}
Then $\rho_{1}^{(\ell)} = (q^{-2})^{\binom{\ell+1}{2}}$.
\end{lem}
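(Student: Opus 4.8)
The plan is to prove a closed form for every $\rho_t^{(\ell)}$ by downward induction on $t$, starting from the base case $\rho_{\ell+1}^{(\ell)} = 1$, and then read off the value at $t=1$.

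First I would rewrite the coefficient $\dfrac{``[\ell+1-t][\ell+t]"}{``[t]^2"}$ appearing in \eqref{E:rho-recurrence} in terms of ordinary quantum integers, using the identity $[2]``[m][n]" = [n+m] + (-1)^{m-1}[n-m]$ of \eqref{eq:2devil} (valid for $m \le n$). Taking $m = n = t$ gives $``[t]^2" = [2t]/[2]$ (as $[0]=0$), while taking $m = \ell+1-t \le \ell+t = n$ gives $[2]``[\ell+1-t][\ell+t]" = [2\ell+1] + (-1)^{\ell-t}[2t-1]$. Thus \eqref{E:rho-recurrence} becomes
\[
\rho_t^{(\ell)} = (-1)^{\binom{\ell+2-t}{2}} + q^{-1}\,\frac{[2\ell+1] + (-1)^{\ell-t}[2t-1]}{[2t]}\,\rho_{t+1}^{(\ell)} \, .
\]
It is convenient to clear denominators: setting $P_t := \rho_t^{(\ell)}\prod_{s=t}^{\ell}[2s]$, one has $P_{\ell+1}=1$, the polynomial recurrence
\[
P_t = (-1)^{\binom{\ell+2-t}{2}}\prod_{s=t}^{\ell}[2s] \;+\; q^{-1}\big([2\ell+1] + (-1)^{\ell-t}[2t-1]\big)\,P_{t+1} \, ,
\]
and the claim is equivalent to $P_1 = q^{-\ell(\ell+1)}\prod_{s=1}^{\ell}[2s]$.

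Computing the first few values --- $P_\ell = q^{-2}[2\ell]$, then $P_{\ell-1}$ (which gives $\rho_{\ell-1}^{(\ell)} = q^{-2}\big(q^{-2\ell}[2] - [2\ell-4]\big)/[2\ell-2]$, not a monomial), then $P_{\ell-2}$, and so on --- reveals the closed form for $P_t$, and I would prove it by downward induction on $t$. The inductive step, after substituting the formula for $P_{t+1}$, reduces to a Laurent-polynomial identity in $q$; this is handled using $[2t] = [t](q^t+q^{-t})$ together with the product-to-sum rules $[a]+[b] = (q^{(a-b)/2}+q^{-(a-b)/2})[\tfrac{a+b}{2}]$ and $[a]-[b] = [\tfrac{a-b}{2}](q^{(a+b)/2}+q^{-(a+b)/2})$ (for $a\ge b$ of equal parity), after which the bulk of the terms cancel in pairs. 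Specializing to $t=1$, where $\prod_{s=1}^{\ell}[2s]\ne 0$ may be divided out, then gives $\rho_1^{(\ell)} = q^{-\ell(\ell+1)} = (q^{-2})^{\binom{\ell+1}{2}}$.

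The main obstacle --- as befits a statement confined to Appendix \ref{S:combinatorial-identities} --- is the bookkeeping in the inductive step: the closed form for $P_t$ involves nested sums of quantum integers whose length grows with $\ell-t$, and carrying it through the recursion is lengthy though routine. An alternative that avoids guessing the intermediate closed form is to unroll the recurrence outright,
\[
\rho_1^{(\ell)} = \sum_{i=0}^{\ell} q^{-i}(-1)^{\binom{\ell+1-i}{2}}\prod_{s=1}^{i}\frac{[2\ell+1]+(-1)^{\ell-s}[2s-1]}{[2s]} \, ,
\]
and to show this alternating sum collapses to $q^{-\ell(\ell+1)}$; this trades the closed form for an equally delicate cancellation. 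Either way, I do not see how to run an induction on $\ell$ alone, since evaluating $\rho_1^{(\ell)}$ from the recursion requires all of $\rho_2^{(\ell)},\dots,\rho_\ell^{(\ell)}$ and the coefficients genuinely depend on $\ell$.
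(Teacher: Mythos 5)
Your primary strategy --- find a closed form for $\rho_t^{(\ell)}$ (equivalently $P_t$) valid for all $1 \le t \le \ell+1$ and prove it by downward induction on $t$ --- is unlikely to succeed, and the paper itself flags this. The remark immediately following Example~\ref{Ex:expand-rho} states: ``At $q=1$, one can use similar techniques to analyze the integers $\rho_t^{(\ell)}$ for $t>1$. However, for generic $q$ and $t > 1$, the desired cancellations do not work! At present, we do not have a closed formula for $\rho_t^{(\ell)}$ for $t > 1$, and it seems to be a difficult problem.'' Your sketch never actually exhibits the closed form you propose to induct on; it asserts that ``computing the first few values reveals the closed form'' and that the inductive step ``is handled'' by product-to-sum manipulations ``after which the bulk of the terms cancel in pairs.'' That is precisely the step that fails: the intermediate quantities $\rho_t^{(\ell)}$ for $1 < t \le \ell$ do not collapse, as your own computation of $\rho_{\ell-1}^{(\ell)} = q^{-2}\big(q^{-2\ell}[2]-[2\ell-4]\big)/[2\ell-2]$ already hints. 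The proof cannot proceed one $t$ at a time through a uniform formula.

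Your ``alternative'' of unrolling the recurrence into the single alternating sum for $\rho_1^{(\ell)}$ and showing it collapses is closer to the paper's actual route, but you dismiss it as ``equally delicate cancellation'' without a concrete mechanism. The missing idea is that after unrolling, the products $\prod_{s=1}^{i} ``[\ell+1-s][\ell+s]"/``[s]^2"$ can be rewritten via \eqref{E:devils-numbers-2} and the formulas following \eqref{E:dividequantum} as products of quantum binomial coefficients ${b+i \brack \bullet}_{q^2}$ times ladders of two-element quantum numbers $[2]_k$. The paper packages these into the auxiliary sums $A,B,A',B'$ and observes a Pascal-triangle structure (visible at $q=1$ in display~\eqref{eq:PT}): the $q$-Pascal identities \eqref{E:q-pascal}--\eqref{E:q-pascal2} yield the coupled recursions of Lemma~\ref{L:A=ABB}, and iterating them (Lemma~\ref{L:rho-difference}) peels a factor $(-q^{-8n-2})$ or $(-q^{-8n-6})$ off $\rho_1^{(\ell)}$ at each step, depending on $\ell \bmod 4$, until only a base case survives. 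Crucially, this telescoping works \emph{only} at the top, $t=1$; it is a global rewriting of the fully-unrolled sum rather than a formula propagated down through the intermediate $\rho_t^{(\ell)}$. Without the Pascal-triangle recognition or some equivalent device, the ``delicate cancellation'' does not resolve.
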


\begin{proof}
See Appendix \ref{S:combinatorial-identities}.
\end{proof}

\begin{prop}\label{P:decat-spin-rickard}
The braiding $R_{S,S} \in \End_{U_q(\son)}(S \otimes S)$ is given in the basis $\{ \xx^{(i)} \}_{i=0}^n$
as:
\begin{equation}
	\label{eq:Xbraiding}
\RM_{S, S} = q^{\frac{n}{2}} \sum_{i=0}^n q^{-i}\xx^{(i)} \, .
\end{equation}
\end{prop}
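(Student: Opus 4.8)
The plan is to convert the known expansion of $R_{S,S}$ in the basis $\{\II^{(n-i)}\}$ from Proposition \ref{prop:braidI} into the basis $\{\xx^{(i)}\}$ using the unitriangular change of basis from Proposition \ref{itoxchangeofbasis}, and then verify that the resulting coefficients collapse to $q^{\frac{n}{2}}q^{-i}$ by means of the combinatorial identity in Lemma \ref{L:rho-recurrence}. More precisely, since $\{\xx^{(i)}\}_{i=0}^n$ is a basis, we may write $R_{S,S} = q^{\frac{n}{2}}\sum_{i=0}^n c_i\, \xx^{(i)}$ for scalars $c_i \in \C(q^{\frac{1}{2}})$, and the goal is to show $c_i = q^{-i}$ for all $i$.

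First I would pin down $c_0$ and $c_n$ directly. Plugging the highest weight vector $x_\emptyset \otimes x_\emptyset$ into $R_{S,S}$ and using Proposition \ref{prop:R=xy} (as in the proof of Proposition \ref{prop:braidI}) gives the coefficient of $\id_{S\otimes S} = \xx^{(0)} = \II^{(n)}$, which forces $c_0 = 1 = q^0$; here one uses that $\II^{(n-\ell)}(x_\emptyset\otimes x_\emptyset) = 0$ for $\ell \geq 1$ and that $\prescript{n-i}{}\lambda_{n-\ell}$ is upper-unitriangular so $\xx^{(i)}$ for $i\geq 1$ also kills $x_\emptyset \otimes x_\emptyset$. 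For $c_n$, recall $\xx^{(n)} = \II^{(0)}$ (from Remark \ref{rem:Xtoohigh} / Proposition \ref{itoxchangeofbasis}), so $c_n$ is read off from the $\II^{(0)}$-coefficient of $R_{S,S}$, which by Proposition \ref{prop:braidI} is $q^{-\frac{n}{2}}b_0 = \frac{1}{d_n}\big((q^{-2})^{\binom{n+1}{2}} - (-1)^{\binom{n+1}{2}}\big)$; one checks this against $q^{-n}$ using Lemma \ref{trivalenttwist} and \eqref{E:spin-twist}.

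The main computation is the general case. Using Lemma \ref{trivalenttwist} and Lemma \ref{bigon}, precomposing $R_{S,S} = q^{\frac{n}{2}}\sum_i c_i \xx^{(i)}$ with $\Ya[n-\ell]$ and comparing coefficients of $\Ya[n-\ell]$ gives, via the expansion \eqref{eq:ItoX} of each $\xx^{(i)}$ and the relation $\II^{(n-j)}\circ \Ya[n-\ell] = (-1)^{\binom{\ell+1}{2}}d_\ell \,\delta_{j\ell}\,\Ya[n-\ell]$, a triangular linear system relating the $c_i$ to the twist eigenvalues $q^{-\left(\frac{n-2(n-\ell)}{2}\right)}q^{-\{n-\ell+1,\dots,n\}} = q^{-(2\ell-n)/2 \cdot (-1)\cdots}$. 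Solving this system from $\ell = n$ downward, the recursion for $c_i$ that emerges is exactly the recurrence \eqref{E:rho-recurrence} for $\rho_t^{(\ell)}$ (with an appropriate identification of indices and after dividing out by $q^{\frac n2}$ and the $d_\ell$ factors), so Lemma \ref{L:rho-recurrence} yields $c_i = q^{-i}$. Concretely, I would show that $\sum_{i=0}^{n-\ell}\, {}^{?}\lambda\, c_{?} $ telescopes: the coefficient of $\Ya[n-\ell]$ in $R_{S,S}\circ\Ya[n-\ell]$ equals $q^{\frac n2}\sum_{i\le \ell} c_i \cdot \prescript{n-i}{}\lambda_{n-\ell}\cdot(-1)^{\binom{\ell+1}{2}}d_\ell$, set this equal to the twist eigenvalue times $\Ya[n-\ell]$, substitute the conjectural $c_i = q^{-i}$, factor out $(-1)^{\binom{\ell+1}{2}}d_\ell \prescript{n-\ell}{}\lambda_{n-\ell} = (-1)^{\binom{\ell+1}{2}}d_\ell$ appropriately, and observe that the remaining sum $\sum_{i} q^{-i}\, \frac{\prescript{n-i}{}\lambda_{n-\ell}}{\prescript{n-\ell}{}\lambda_{n-\ell}}$ is precisely $\rho_1^{(\ell)} = (q^{-2})^{\binom{\ell+1}{2}}$ after re-indexing $t = \ell - i + 1$, using the explicit product formula \eqref{lambdacoeff} for $\prescript{n-i}{}\lambda_{n-\ell}$ to match the ratio $\prescript{n-i}{}\lambda_{n-\ell}/\prescript{n-i-1}{}\lambda_{n-\ell} = q\cdot\frac{``[t]^2"}{``[\ell+1-t][\ell+t]"}$ appearing in \eqref{E:rho-recurrence}. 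Matching this to the twist eigenvalue $q^{-\frac n2}q^{-\{n-\ell+1,\dots,n\}}\cdot q^{\frac n2} = (-1)^{\binom{\ell+1}{2}}q^{-(2\ell^2-\ell)/1\cdots}$, i.e.\ to $(-1)^{\binom{\ell+1}{2}}(q^{-2})^{\binom{\ell+1}{2}}\cdot(\text{known powers})$, closes the argument.

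The hard part will be bookkeeping the many sign and $q$-power factors so that the triangular system genuinely reduces to the clean recurrence of Lemma \ref{L:rho-recurrence}; in particular one must correctly track the twist eigenvalue $q^{-\left(\frac{n-2(n-\ell)}{2}\right)}q^{-\{n-\ell+1,\dots,n\}}$ from Lemma \ref{trivalenttwist} (rewriting $q^{-\{n-\ell+1,\dots,n\}}$ using \eqref{E:spin-twist}-style formulae as $(-1)^{\binom{\ell+1}{2}}$ times an explicit power of $q$) and reconcile it with the $``[\,\cdot\,][\,\cdot\,]"$ ratios coming from \eqref{lambdacoeff}. Once the indices are aligned, Lemma \ref{L:rho-recurrence} does the real work, so the remainder is a routine—if delicate—verification that I would carry out by induction on $\ell$, with the base cases $\ell = 0$ (giving $c_0 = 1$) and $\ell = 1$ (giving $c_1 = q^{-1}$, checkable directly from \eqref{eq:x1defn} and Lemma \ref{lem:H1reln}) as sanity checks.
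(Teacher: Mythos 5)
Your proposal follows essentially the same route as the paper's proof: both expand $\xx^{(i)}$ in the $\II$-basis via Proposition \ref{itoxchangeofbasis}, match coefficients against the known $\II$-expansion of $R_{S,S}$, and close the argument with Lemma \ref{L:rho-recurrence}; the only structural difference is that you propose to re-derive the linear system that Proposition \ref{prop:braidI} already records (by composing with $\Ya[n-\ell]$ and invoking Lemmata \ref{trivalenttwist} and \ref{bigon}), whereas the paper simply cites it. Two small bookkeeping slips worth fixing: the $\II^{(0)}$-coefficient of $R_{S,S}$ is $b_0$, not $q^{-n/2}b_0$; and $c_n$ is \emph{not} read off from the $\II^{(0)}$-coefficient alone, since every $\xx^{(i)}$ with $1\le i\le n$ contributes a $\prescript{n-i}{}{\lambda}_{0}\,\II^{(0)}$ term to the expansion — so you would determine $c_n$ from the full $\ell=n$ equation of your triangular system, not in isolation.
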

\begin{proof}
Proposition \ref{itoxchangeofbasis} gives that
\begin{align*}
q^{\frac{n}{2}} \sum_{i=0}^n q^{-i}\xx^{(i)} 
&= q^{\frac{n}{2}} \id_{S\ot S} + q^{\frac{n}{2}} \sum_{i=1}^n q^{-i} \sum_{\ell=i}^n  \prescript{n-i}{}\lambda_{n-\ell} \II^{(n-\ell)} \\
&= q^{\frac{n}{2}}\id_{S\ot S} +  \sum_{\ell=1}^n q^{\frac{n}{2}} \left( \sum_{i=1}^\ell q^{-i} (\prescript{n-i}{}\lambda_{n-\ell}) \right) \II^{(n-\ell)} \, .
\end{align*}
The result then follows from Proposition \ref{prop:braidI} 
and the following calculation employing Lemma \ref{L:rho-recurrence}:
\begin{align*}
q^{\frac{n}{2}} \sum_{i=1}^\ell q^{-i} (\prescript{n-i}{}\lambda_{n-\ell})
&= \frac{q^{\frac{n}{2}}}{d_\ell} \sum_{i=1}^\ell q^{-i}(-1)^{\binom{\ell-i+1}{2}} 
	\prod_{t=1}^i \frac{``[\ell+1-t][\ell+t]"}{``[t]^2"} \\
&= \frac{q^{\frac{n}{2}}}{d_\ell} q^{-1} \frac{``[\ell][\ell+1]"}{``[1]^2"} \rho_{2}^{(\ell)} \\
&= \frac{q^{\frac{n}{2}}}{d_\ell} \left(\rho_{1}^{(\ell)} - (-1)^{\binom{\ell+1}{2}}\right) \\
&= \frac{q^{\frac{n}{2}}}{d_\ell} \left( (q^{-2})^{\binom{\ell+1}{2}} - (-1)^{\binom{\ell+1}{2}} \right)
\stackrel{\eqref{eq:braidcoeff}}{=} b_{n-\ell} \, . \qedhere
\end{align*}
\end{proof}

\begin{rem}
	\label{rem:inverse}
By Proposition \ref{itoxchangeofbasis}, 
the inverse braiding $\RM_{S,S}^{-1} \in \End_{U_q(\son)}(S \otimes S)$ 
can also be expressed as a $\C(q^{\frac{1}{2}})$-linear combination 
of the elements $\{ \xx^{(i)} \}_{i=0}^n$.
It is possible to show 
(either by analogous arguments to those used thus far, 
or as a consequence of Theorem \ref{thm:Ctaubraid} below) that
\begin{equation}
	\label{eq:invR}
\RM_{S, S}^{-1} = q^{-\frac{n}{2}} \sum_{i=0}^n q^{i}\xx^{(i)} \, .
\end{equation}

Note that \eqref{E:xxk-recursion} allows for any product of the 
$\{ \xx^{(i)} \}_{i=0}^n$ to be expanded as a linear combination of these elements.
Since they are linearly independent, 
this implies that \eqref{E:xxk-recursion} in fact suffices to confirm the formula \eqref{eq:invR}.
We will use this latter fact below.
\end{rem}

\subsection{Further relations in $\Rep\big(U_q(\son)\big)$}
	\label{ss:Webson}

In this section, we establish relations between various morphisms in the 
subcategory of $\Rep(U_q(\son))$ monoidally generated by the fundamental representations $S$ and $V_1=V(\varpi_1)$.
Our purpose is two-fold. 
First, these relations will be used later for our decategorification results.
Second, this lays the groundwork for giving a generators-and-relations presentation for the 
fundamental subcategory of $\Rep(U_q(\son))$, akin to that obtained for $\sln$ in \cite{CKM} and $\spn$ in \cite{BERT}, 
which we will pursue in future work.

The relations established here will be most conveniently described using the graphical language for monoidal categories, 
and we will use our conventions for diagrams established above (e.g.~gray strands correspond to $S$), but with one simplification:
since we only consider $V_k$ when $k=1$, we will drop the label $1$ from our previous diagrammatics and let 
unlabelled black strands correspond to $V_1$. 
We also abbreviate
\begin{equation}
	\label{eq:H1}
\hh := \hh^{(1)} = 
\begin{tikzpicture}[scale=.3, rotate=90, tinynodes, anchorbase]
	\draw[very thick,gray] (-1,0) to (0,1);
	\draw[very thick,gray] (1,0) to (0,1);
	\draw[very thick,gray] (0,2.5) to (-1,3.5);
	\draw[very thick,gray] (0,2.5) to (1,3.5);
	\draw[very thick] (0,1) to (0,2.5);
\end{tikzpicture}
=:
\begin{tikzpicture}[scale=.3, rotate=90, tinynodes, anchorbase]
	\draw[very thick,gray] (-1,1) to (1,1);
	\draw[very thick,gray] (-1,2.5) to (1,2.5);
	\draw[very thick] (0,1) to (0,2.5);
\end{tikzpicture} \, .
\end{equation}

\begin{example}\label{E:VSS-trivalent-twist}
In our present conventions, the $i=1$ case of Lemma \ref{trivalenttwist} is the relation
\[
\begin{tikzpicture}[scale =.35, smallnodes,anchorbase, rotate=180]
	\draw[very thick] (.5,.75) to (.5,1.5);
	\draw[very thick,gray] (1,-1.5) to [out=90,in=270] (0,0) to [out=90,in=210] (.5,.75);
	\begin{scope}
		\clip (0,-1.25) rectangle (1,-.25);
		\draw[overcross] (0,-1.5) to [out=90,in=270] (1,0) to [out=90,in=330] (.5,.75);
	\end{scope}
	\draw[very thick,gray] (0,-1.5) to [out=90,in=270] (1,0) to [out=90,in=330] (.5,.75);
\end{tikzpicture}
= q^{\frac{-n+2}{2}}q^{-\{2, \dots, n\}}
\begin{tikzpicture}[scale =.35, smallnodes,anchorbase, rotate=180]
	\draw[very thick] (.5,.625) to (.5,1.5);
	\draw[very thick,gray] (0,-1) to [out=90,in=210] (.5,.625);
	\draw[very thick,gray] (1,-1) to [out=90,in=330] (.5,.625);
\end{tikzpicture}
\]
which immediately implies that
\[
\begin{tikzpicture}[scale =.35, smallnodes,anchorbase, rotate=180,xscale=-1]
	\draw[very thick] (.5,.75) to (.5,1.5);
	\draw[very thick,gray] (1,-1.5) to [out=90,in=270] (0,0) to [out=90,in=210] (.5,.75);
	\begin{scope}
		\clip (0,-1.25) rectangle (1,-.25);
		\draw[overcross] (0,-1.5) to [out=90,in=270] (1,0) to [out=90,in=330] (.5,.75);
	\end{scope}
	\draw[very thick,gray] (0,-1.5) to [out=90,in=270] (1,0) to [out=90,in=330] (.5,.75);
\end{tikzpicture}
= q^{\frac{n-2}{2}}q^{\{2, \dots, n\}}
\begin{tikzpicture}[scale =.35, smallnodes,anchorbase, rotate=180]
	\draw[very thick] (.5,.625) to (.5,1.5);
	\draw[very thick,gray] (0,-1) to [out=90,in=210] (.5,.625);
	\draw[very thick,gray] (1,-1) to [out=90,in=330] (.5,.625);
\end{tikzpicture} \, .
\]
\end{example}

\begin{example}
	\label{eg:devilscircle}
Similarly, in our present color-coding, \eqref{eq:circlen} reads as:
\[
\begin{tikzpicture}[scale =.5, anchorbase]
	\draw[very thick,gray] (0,0) circle (.5);
\end{tikzpicture} 
	=(-1)^{\binom{n+1}{2}}\prod_{i=1}^n(q^{2i-1}+ q^{1-2i})
\]
Observe that the $i=\ell=n$ case of the equality $\prescript{n-i}{}\lambda_{n-i} = 1$
established in Proposition \ref{itoxchangeofbasis} gives that this latter quantity equals
\[
(-1)^{n+{n \choose 2}} \prod_{t=1}^{n} \frac{``[n+1-t][n+t]"}{``[t]^2"} \, .
\]
\end{example}

\begin{lem}\label{L:basic-relns-SSV-trivalent-1}
The relations
\[
\begin{tikzpicture}[scale=.175,tinynodes, anchorbase]
	\draw [very thick, gray] (0,.75) to (0,2.5) ;
	\draw [very thick] (0,-2.75) to [out=30,in=330]  (0,.75);
	\draw [very thick, gray] (0,-2.75) to [out=150,in=210]  (0,.75);
	\draw [very thick, gray] (0,-4.5) to (0,-2.75);
\end{tikzpicture}
= 
(-1)^n\frac{[2n+1]}{[2]}
\begin{tikzpicture}[scale=.175, tinynodes, anchorbase]
	\draw [very thick, gray] (0,-4.5) to (0,2.5);
\end{tikzpicture}
\qquad \text{and} \qquad
\begin{tikzpicture}[scale=.175,smallnodes,anchorbase]
	\draw[very thick] (-1,0) to (1,0);
	\draw[very thick, gray] (-1,0) to (0,1.732);
	\draw[very thick, gray] (1,0) to (0,1.732);
	\draw[very thick] (0,1.732) to (0,3.232);
	\draw[very thick, gray] (-2.3,-.75) to (-1,0);
	\draw[very thick, gray] (2.3,-.75) to (1,0);
\end{tikzpicture}
=
(-1)^{n-1}\frac{[2n-1]}{[2]}
\begin{tikzpicture}[scale =.5, smallnodes,anchorbase]
	\draw[very thick, gray] (0,0) to [out=90,in=210] (.5,.75);
	\draw[very thick, gray] (1,0) to [out=90,in=330] (.5,.75);
	\draw[very thick] (.5,.75) to (.5,1.5);
\end{tikzpicture}.
\]
hold in $\Rep(U_q(\son))$.
\end{lem}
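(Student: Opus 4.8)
The plan is to compute both ``circle-with-a-tadpole'' morphisms directly from the definitions of $\Ya[i]$, $\Pa[i]$, $\hh^{(1)}$, and the already-established structure constants, reducing each to a scalar multiple of an identity morphism via Schur's lemma. For the first relation, note that $\Ya[0] = \cu[n]$, so the left-hand side is $\Pa[0] \circ \hh^{(1)} \circ \Ya[0]$ composed appropriately, or more precisely the closure of $\hh^{(1)}$ against a cup on one side; since $\End_{U_q(\son)}(S) \cong \C(q^{\frac 12})$, this is a scalar. I would first expand $\hh^{(1)}$ in the basis $\{\II^{(i)}\}$ using Lemma \ref{lem:H1reln}, then observe that capping off the two ``bottom'' strands of each $\II^{(n-\ell)} = \Ya[n-\ell] \circ \Pa[n-\ell]$ with $\cu[n]$ produces a ``theta''-type diagram that vanishes unless $n-\ell = 0$ by \eqref{bigoniszero}, since $\cu[n] = \Ya[0]$ factors through $V_0$. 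Thus only the $\II^{(n)} = \id_{S\ot S}$ term (coefficient $1/[2]$) and the $\II^{(0)}$ term survive, and the surviving contributions are controlled by the circle value $\eqref{eq:circlen}$ and $\eqref{bigonequation}$ with $i=0$. Collecting the two scalars, one expects the total coefficient to be $\frac{1}{[2]}\bigl(\,\text{something}\,\bigr)$; careful bookkeeping with $d_n$, the sign $(-1)^{\binom{n+1}{2}}$, and the devil's-product identity \eqref{eq:2devil} with $m=1$, $n$ replaced by $n$ (giving $[2]``[1][n]" = [n+1] + [n-1]$, but here more relevantly $\hh^{(1)}\circ\Ya[0]$ scalar) should collapse everything to $(-1)^n \frac{[2n+1]}{[2]}$. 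Alternatively — and this is cleaner — the first relation is exactly the $i=0$ case of Lemma \ref{SS1triangle}: there $\hh^{(1)}\circ\Ya[i] = (-1)^{n-i}\frac{[2(n-i)+1]}{[2]}\Ya[i]$, and with $i=0$ and $\Ya[0]=\cu[n]$ this reads $\hh^{(1)}\circ\cu[n] = (-1)^n\frac{[2n+1]}{[2]}\cu[n]$, which, drawn as a diagram, is precisely the claimed identity (closing the free $S\ot S$ strand of the left-hand diagram against $\cu[n]$, or rather viewing the tadpole as $\hh^{(1)}$ precomposed with a cup).

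For the second relation, the diagram on the left is $\Pa[1] \circ (\text{cup on }S\ot S)$ read into $V_1$, i.e.\ it is the ``triangle'' $\hh^{(1)}$ but with the incoming $S\ot S$ replaced by $\cu[n]$ and read out along the black ($V_1$) strand; equivalently it is $\Pa[n-(n-1)}\circ\Ya[n-n] = \Pa[1]\circ\Ya[0]$-type composite after unfolding, which by \eqref{bigoniszero} with $i=1, j=0$... no: more carefully it is the closure producing $\hh^{(1)}$'s counterpart. I would instead observe that the second diagram is the ``partial trace'' of $\hh = \hh^{(1)}$: cap off two of the four legs of $\hh^{(1)}$ with $\cu[n]$, landing in $\Hom_{U_q(\son)}(V_1 , V_1)$-valued data but read off as a trivalent vertex. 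Using $\hh^{(1)} = \Ya[1]\circ\Pa[1] + (\text{lower terms})$ — wait, that is not the expansion; rather use Lemma \ref{lem:H1reln} and then partially close against $\cu[n]=\Ya[0]$. Only terms $\II^{(n-\ell)}=\Ya[n-\ell]\circ\Pa[n-\ell]$ with $\Pa[n-\ell]$ nonzero after precomposition with $\cu[n]=\Ya[0]$ survive; by \eqref{bigoniszero} this forces $n-\ell = 1$, i.e.\ $\ell = n-1$, plus the $\II^{(n)}=\id$ term which, partially closed against a cup, yields $\frac{1}{[2]}\cdot$(the trivalent vertex), and the surviving $\II^{(1)}$ term contributes $(-1)^{\binom{n-1}{2}}\frac{``[n-1][n]"}{d_{n-1}}$ times a theta-evaluation $\eqref{bigonequation}$ with index $n-i = n-1$, i.e.\ $i=1$, giving $(-1)^{\binom{2}{2}} d_1 = -d_1 = -(q+q^{-1}) = -[2]$. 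Combining $\frac{1}{[2]}$ with this and invoking $\eqref{eq:2devil}$ in the form $[2]``[n-1][n]" = [2n-1] + (-1)^{n-2}[1] = [2n-1] + (-1)^n$, the coefficient should simplify to $(-1)^{n-1}\frac{[2n-1]}{[2]}$.

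The main obstacle will be keeping the many signs straight: there are contributions from $(-1)^{\binom{\cdot}{2}}$ in the change-of-basis coefficients $\prescript{n-i}{}\lambda$, from $(-1)^{\binom{n-i+1}{2}}$ in the theta values $\eqref{bigonequation}$, and from the $(-1)^{m-1}$ in the devil's-product identity $\eqref{eq:2devil}$, and getting their product to land on exactly $(-1)^n$ resp.\ $(-1)^{n-1}$ requires care. My plan to control this is to avoid the basis expansions entirely and instead derive both relations as direct corollaries: the first is literally Lemma \ref{SS1triangle} at $i=0$ after identifying $\Ya[0] = \cu[n]$ (Definition \ref{defn:Pa} and the remark following it note $\Pa[0]=\ca[n]$, and $\Ya[0]$ is drawn as a cup per the remark after Lemma \ref{trivalenttwist}); and the second follows by applying $\hh^{(1)}$ to $\Ya[1]$, i.e.\ Lemma \ref{SS1triangle} at $i=1$, which gives $\hh^{(1)}\circ\Ya[1] = (-1)^{n-1}\frac{[2n-1]}{[2]}\Ya[1]$, and then recognizing the left-hand diagram of the second relation as exactly $\hh^{(1)}\circ\Ya[1]$ drawn in ``vertical'' form (the black strand being $V_1$, the two gray strands fusing into it). Thus the proof reduces to a diagrammatic identification plus citing Lemma \ref{SS1triangle}, and the only real content is matching pictures to formulas — I would spell this identification out explicitly and note that $\frac{[2(n-1)+1]}{[2]} = \frac{[2n-1]}{[2]}$ to close.
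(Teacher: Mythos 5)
Your final plan is correct. For the second relation it coincides with the paper's proof exactly: it is the $i=1$ case of Lemma~\ref{SS1triangle}, after a pivotal rotation converting $\hh^{(1)}\circ\Ya[1]\in\Hom_{U_q(\son)}(V_1,S\otimes S)$ into the claimed morphism $S\otimes S\to V_1$. For the first relation you take a genuinely different route. The paper proves it by a separate direct computation, invoking the cup/cap formulas \eqref{spincupcapformulas}, the evaluations in Lemma~\ref{bigon}, and an external citation to \cite[Equation 1.1a]{BodWu}. You instead observe that it too is a rotation of Lemma~\ref{SS1triangle}, now at $i=0$: since $\Ya[0]=\cu[n]$, bending one output $S$-leg of each side of $\hh^{(1)}\circ\Ya[0]=(-1)^n\frac{[2n+1]}{[2]}\Ya[0]$ down via the snake relation \eqref{eq:snake} turns the left-hand side into the tadpole $S\to S$ (two trivalent vertices joined by one $S$-edge and one $V_1$-edge, one free $S$-leg each, which is exactly the lemma's first diagram) and turns $\Ya[0]$ on the right into $\id_S$; the scalar carries through because the pivotal structure has all Frobenius--Schur indicators $+1$ (Remark~\ref{R:coherent-self-duality}). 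Your route is self-contained and treats both relations uniformly as the $i=0,1$ cases of the same triangle identity, which is arguably cleaner than outsourcing the first to an external reference. Two small cleanups: the parenthetical about ``closing the free $S\ot S$ strand $\ldots$ against $\cu[n]$'' misdescribes the move (you bend a single output leg with a cap, you do not compose with a cup), and the lengthy first attempt you sketch --- expanding $\hh^{(1)}$ in the $\{\II^{(i)}\}$ basis and tracking which thetas vanish --- contains several confusions that, as you yourself recognize, are bypassed entirely by the clean plan at the end; delete it.
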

\begin{proof}
The first equality follows from Remark \ref{spincupcapformulas}, Lemma \ref{bigon}, and \cite[Equation 1.1a]{BodWu}. 
We leave it to the reader to fill in the details, using the outline of the proof in \cite[Lemma 4.1]{BodWu}. 
The second equality is the $i=1$ case of Lemma \ref{SS1triangle}.
\end{proof}

Several of the remaining proofs in this section will use the fact that ``fork-slide'' relations hold 
in this graphical language, e.g.
\[
\begin{tikzpicture}[rotate=180,scale=.35,smallnodes,anchorbase]
	\draw[very thick,gray] (.5,-1) to [out=90,in=270] (-1,2);
	\draw[overcross] (-.5,-1) to [out=90,in=270] (.5,1);
	\draw[very thick] (-.5,-1) to [out=90,in=270] (.5,1);
	\draw[very thick,gray] (.5,1) to [out=30,in=270] (1,2);
	\draw[very thick,gray] (.5,1) to [out=150,in=270] (0,2);
\end{tikzpicture}
=
\begin{tikzpicture}[rotate=180,scale=.35,smallnodes,anchorbase]
	\draw[very thick,gray] (.5,-1) to [out=90,in=270] (-1,2);
	\draw[overcross] (-.5,-.25) to [out=30,in=270] (1,2);
	\draw[overcross] (-.5,-.25) to [out=150,in=270] (0,2);
	\draw[very thick] (-.5,-1) to (-.5,-.25);
	\draw[very thick,gray] (-.5,-.25) to [out=30,in=270] (1,2);
	\draw[very thick,gray] (-.5,-.25) to [out=150,in=270] (0,2);
\end{tikzpicture} \, .
\]
Such relations are consequences of the naturality of the braiding.

\begin{lem}\label{L:basic-relns-SSV-trivalent-2}
The relations
\[
\begin{tikzpicture}[scale =.35, smallnodes,anchorbase, rotate=180]
	\draw[very thick,gray] (.5,.75) to (.5,1.5);
	\draw[very thick] (1,-1.5) to [out=90,in=270] (0,0) to [out=90,in=210] (.5,.75);
	\begin{scope}
		\clip (0,-1.25) rectangle (1,-.25);
		\draw[overcross] (0,-1.5) to [out=90,in=270] (1,0) to [out=90,in=330] (.5,.75);
	\end{scope}
	\draw[very thick,gray] (0,-1.5) to [out=90,in=270] (1,0) to [out=90,in=330] (.5,.75);
\end{tikzpicture}
= (-1)^n q^{-2n}
\begin{tikzpicture}[scale =.35, smallnodes,anchorbase, rotate=180]
	\draw[very thick,gray] (.5,.625) to (.5,1.5);
	\draw[very thick,gray] (0,-1) to [out=90,in=210] (.5,.625);
	\draw[very thick] (1,-1) to [out=90,in=330] (.5,.625);
\end{tikzpicture}
\qquad \text{and} \qquad 
\begin{tikzpicture}[scale =.35, smallnodes,anchorbase, rotate=180,xscale=-1]
	\draw[very thick,gray] (.5,.75) to (.5,1.5);
	\draw[very thick] (1,-1.5) to [out=90,in=270] (0,0) to [out=90,in=210] (.5,.75);
	\begin{scope}
		\clip (0,-1.25) rectangle (1,-.25);
		\draw[overcross] (0,-1.5) to [out=90,in=270] (1,0) to [out=90,in=330] (.5,.75);
	\end{scope}
	\draw[very thick, gray] (0,-1.5) to [out=90,in=270] (1,0) to [out=90,in=330] (.5,.75);
\end{tikzpicture}
= (-1)^n q^{2n}
\begin{tikzpicture}[scale =.35, smallnodes,anchorbase, rotate=180]
	\draw[very thick,gray] (.5,.625) to (.5,1.5);
	\draw[very thick] (0,-1) to [out=90,in=210] (.5,.625);
	\draw[very thick, gray] (1,-1) to [out=90,in=330] (.5,.625);
\end{tikzpicture}
\]
hold in $\Rep(U_q(\son))$.
\end{lem}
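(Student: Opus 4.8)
The plan is to mimic the proof of Lemma~\ref{trivalenttwist} (equivalently, the proof behind Example~\ref{E:VSS-trivalent-twist}): first argue that both sides of each identity are forced to be scalar multiples of the bare mixed trivalent vertex, and then pin down the scalar. The one-dimensionality input is clean: using self-duality $S \cong S^\ast$ and the decomposition \eqref{eq:StSdecomp}, one has
\[
\Hom_{U_q(\son)}(S \otimes V_1, S) \cong \Hom_{U_q(\son)}(V_1, S^\ast \otimes S) \cong \Hom_{U_q(\son)}(V_1, S \otimes S) \cong \C(q^{\frac12}),
\]
and likewise $\Hom_{U_q(\son)}(V_1 \otimes S, S)$ is one-dimensional; concretely the mixed trivalent vertex on the right-hand side of each identity is obtained from $\Ya[1]$ (or $\Pa[1]$) by bending one of its $S$-legs with the cup/cap morphisms $\cu[n],\ca[n]$ of Proposition~\ref{prop:cups-caps}, and is nonzero. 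Hence each left-hand side, being a composite of a braiding isomorphism with such a vertex, is a scalar multiple of the corresponding vertex. Moreover the two asserted identities are horizontal mirrors of one another (the \texttt{xscale=-1} versions), i.e.\ they differ by replacing the crossing with its inverse; a short argument of the form ``$\mu'\circ R = c\mu$ and $\mu\circ R^{-1} = c'\mu'$ force $cc'=1$'' then shows the two scalars are reciprocal, so it suffices to compute a single one, say $(-1)^n q^{-2n}$.

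To compute this scalar I would reduce to the already-established Example~\ref{E:VSS-trivalent-twist}. Realize the mixed vertex $S\otimes V_1\to S$ as $\Pa[1]$ with one $S$-leg bent through $\ca[n]$ (together with the identification $V_1\cong V_1^\ast$ on the output), and then use naturality of the braiding to drag the $S$-over-$S$ crossing of Example~\ref{E:VSS-trivalent-twist} around the cap, converting it into the $S$-over-$V_1$ crossing that appears here. The resulting scalar is a product of: the twist factor $q^{\frac{-n+2}{2}}q^{-\{2,\dots,n\}}$ from Example~\ref{E:VSS-trivalent-twist}; the spin twist \eqref{E:spin-twist} incurred when yanking the cap past the crossing; and a bigon ($d_\bullet$-)scalar coming from \eqref{bigonequation} and Lemma~\ref{SS1triangle} when the diagram is re-closed. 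The content of the computation is that these telescope to exactly $(-1)^n q^{-2n}$.

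As a fallback, if making the diagrammatic manipulation rigorous turns out to be awkward, one can instead argue exactly as in the proof of Lemma~\ref{trivalenttwist}: the braiding is $U_q(\son)$-equivariant, so it carries the highest weight vector of the $S$-summand of $S\otimes V_1$ to a scalar multiple of the highest weight vector of the $S$-summand of $V_1\otimes S$, the scalar being read off from the leading term $q^{(\mathrm{wt}(-),\mathrm{wt}(-))}$ of the $R$-matrix via Theorem~\ref{thm:KR} and Proposition~\ref{prop:R=xy}; one then evaluates the mixed trivalent vertices on these vectors using the explicit formulas of Definition~\ref{def:spin}, Proposition~\ref{prop:V1basis} and Example~\ref{ex:Y1} and compares. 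This route requires first pinning down the highest weight vector of $S \subset S\otimes V_1$ inside its $(n+1)$-dimensional $\varpi_n$-weight space (spanned by $u\otimes x_\emptyset$ and the $a_i\otimes x_{\{i\}}$, by \eqref{eq:wtxJ}), which is a short linear-algebra computation with $\Delta(e_j) = e_j\otimes k_j + 1\otimes e_j$.

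The main obstacle in either route is purely the sign-and-$q$-power bookkeeping — checking that the various twist factors, the $q^{\pm\{2,\dots,n\}}$ contributions, and the $d_\bullet$-bigon scalars combine to the clean answer $(-1)^n q^{\mp 2n}$ rather than something messier. The conceptual ingredients (one-dimensionality of the Hom spaces and equivariance of the braiding) are immediate, and given them the argument is entirely parallel to Lemma~\ref{trivalenttwist}.
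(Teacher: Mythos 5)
Your plan is in the right spirit (reduce to known twist scalars via naturality of the braiding), and the one-dimensionality of $\Hom_{U_q(\son)}(S,V_1\otimes S)$ and $\Hom_{U_q(\son)}(S,S\otimes V_1)$ is correct, as is the observation that the two asserted identities have reciprocal scalars. The fallback weight-vector route (equivariance of the $R$-matrix plus the leading term from Theorem~\ref{thm:KR}, parallel to Lemma~\ref{trivalenttwist}) is also sound, though heavier than needed.

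However, the paper's proof is a direct diagrammatic transformation and needs no one-dimensionality step: a single fork-slide pulls the $S$-over-$V_1$ crossing through the trivalent vertex, producing a self-crossing of $S$ (a curl) together with an $S$-over-$S$ crossing at the vertex; the curl is evaluated by the $i=0$ case of Lemma~\ref{trivalenttwist} (i.e.\ \eqref{E:spin-twist}) and the $S$-over-$S$ crossing by Example~\ref{E:VSS-trivalent-twist}. You don't mention the fork-slide, which is the step that makes the argument short.

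More substantively, your proposed factor list has a gap. Writing $q^{\{1,\ldots,n\}}=(-1)^{\binom{n+1}{2}}q^{n^2}$ and $q^{\{2,\ldots,n\}}=(-1)^{\binom{n}{2}}q^{(n-1)^2}$, the two scalars you name as straight factors — $q^{\frac{2-n}{2}}q^{-\{2,\ldots,n\}}$ from Example~\ref{E:VSS-trivalent-twist} and the spin twist $q^{-n/2}q^{-\{1,\ldots,n\}}$ from \eqref{E:spin-twist} — multiply to $(-1)^n q^{\,n-2n^2}$, not $(-1)^nq^{-2n}$. The correct combination is the positive spin curl together with the \emph{negative}-crossing coefficient $q^{\frac{n-2}{2}}q^{\{2,\ldots,n\}}$ from Example~\ref{E:VSS-trivalent-twist}, which gives exactly $(-1)^{\binom{n+1}{2}+\binom{n}{2}}q^{\frac{-n(2n+1)+(2n^2-3n)}{2}}=(-1)^nq^{-2n}$. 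Moreover, no bigon factor should appear at all: the bigon scalar from \eqref{bigonequation} is $(-1)^{\binom{i+1}{2}}d_i$, a genuine (non-monomial) Laurent polynomial, so no bigon value could fix your $q$-power mismatch; and if you realize the mixed vertex by bending $\Pa[1]$, any normalization bigon would appear identically on both sides and cancel. So "the bookkeeping works out" is not merely unverified here — the list as written cannot telescope, and the argument needs restructuring (drop the bigon, and use the other sign of the $S$-$S$ twist) to close.
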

\begin{proof}
The first relation follows by applying a fork-slide, 
resolving a curl using the $i=0$ case of Lemma \ref{trivalenttwist},
and then using Example \ref{E:VSS-trivalent-twist}.
The second is an immediate consequence of the first.
\end{proof}

\begin{lem}\label{L:SV-VS-basis}
The morphisms
\begin{equation}\label{E:SV-VS-basis}
\begin{tikzpicture}[scale=.3, rotate=90, tinynodes, anchorbase]
	\draw[very thick, gray] (-1,0) to (0,1);
	\draw[very thick] (1,0) to (0,1);
	\draw[very thick] (0,2.5) to (-1,3.5);
	\draw[very thick, gray] (0,2.5) to (1,3.5);
	\draw[very thick, gray] (0,1) to (0,2.5);
\end{tikzpicture}
\qquad \text{and} \qquad 
\begin{tikzpicture}[scale=.3, tinynodes, anchorbase]
	\draw[very thick] (-1,0) to (0,1);
	\draw[very thick, gray] (1,0) to (0,1);
	\draw[very thick, gray] (0,2.5) to (-1,3.5);
	\draw[very thick] (0,2.5) to (1,3.5);
	\draw[very thick, gray] (0,1) to (0,2.5);
\end{tikzpicture}
\end{equation}
are a basis of $\Hom_{U_q(\son)}(V_1\otimes S, S\otimes V_1)$.
The vertical reflections of these diagrams give a basis for $\Hom_{U_q(\son)}(S\otimes V_1,V_1\otimes S)$.
\end{lem}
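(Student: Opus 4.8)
The plan is to compute $\dim \Hom_{U_q(\son)}(V_1 \otimes S, S \otimes V_1)$ and show it equals $2$, then exhibit the two diagrams in \eqref{E:SV-VS-basis} as a linearly independent set inside this $2$-dimensional space.

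First I would compute the dimension by decomposing $V_1 \otimes S$. Since $\Rep(U_q(\son))$ is semisimple, $\dim \Hom_{U_q(\son)}(V_1 \otimes S, S \otimes V_1)$ equals $\sum_\mu (\dim \Hom(V(\mu), V_1 \otimes S))(\dim \Hom(V(\mu), S \otimes V_1))$, so it suffices to decompose $V_1 \otimes S$ into irreducibles. The standard tensor product decomposition (which one can verify, say, by a quantum character/Weyl-dimension-formula argument, or by noting that $V_1 \otimes S$ is a quantization of the corresponding $\son$-representation and using classical branching) gives $V_1 \otimes S \cong S \oplus V(\varpi_1 + \varpi_n)$, a sum of two non-isomorphic irreducibles, each with multiplicity one. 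The same decomposition holds for $S \otimes V_1$ by a symmetric argument (or by applying the braiding). Hence $\dim \Hom_{U_q(\son)}(V_1 \otimes S, S \otimes V_1) = 1 \cdot 1 + 1 \cdot 1 = 2$.

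Next I would check that the two morphisms in \eqref{E:SV-VS-basis} are linearly independent. Both are nonzero: the first is built from the trivalent vertex $\Ya[1] \colon V_1 \to S \otimes S$ (nonzero by Proposition \ref{prop:Y}) composed with the cup/cap morphisms from Proposition \ref{prop:cups-caps}, using the snake relations \eqref{eq:snake}, and a similar statement holds for the second; one sees they are nonzero by evaluating on an explicit highest-weight vector using the formulae of Proposition \ref{prop:V1basis} and Definition \ref{def:spin}. To see they are not scalar multiples of one another, I would evaluate both on a carefully chosen weight vector in $V_1 \otimes S$ — for instance a vector of the form $a_1 \otimes x_J$ or $u \otimes x_J$ — and observe that the resulting vectors in $S \otimes V_1$ lie in different weight spaces, or have components along different basis vectors, using the explicit description of $\Ya[1]$ in Example \ref{ex:Y1} together with the action of the cups and caps \eqref{spincupcapformulas}, \eqref{varphi1}. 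Since the space is $2$-dimensional and the two diagrams are linearly independent, they form a basis.

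Finally, the statement about $\Hom_{U_q(\son)}(S \otimes V_1, V_1 \otimes S)$ follows by an identical argument — or more cleanly, by applying the pivotal structure (Remark \ref{R:coherent-self-duality}): rotating the diagrams in \eqref{E:SV-VS-basis} by $180^\circ$ (equivalently, pre- and post-composing with the self-duality cups and caps $\cu[1]$, $\cu[n]$, $\ca[1]$, $\ca[n]$ and using \eqref{eq:snake}) gives a bijection $\Hom_{U_q(\son)}(V_1 \otimes S, S \otimes V_1) \xrightarrow{\cong} \Hom_{U_q(\son)}(S \otimes V_1, V_1 \otimes S)$ sending one basis to the other. The main obstacle I anticipate is the linear independence check: establishing the decomposition $V_1 \otimes S \cong S \oplus V(\varpi_1 + \varpi_n)$ rigorously at the quantum level requires a little care (one should cite or prove that it matches the classical decomposition), and producing an explicit vector that witnesses linear independence of the two diagrams involves a moderately delicate computation with the formulae for $\Ya[1]$ and the cup/cap maps — though nothing conceptually hard, just bookkeeping with subsets $J \subset \{1,\dots,n\}$ and the scalars $q^J$.
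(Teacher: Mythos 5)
Your overall strategy matches the paper's exactly in its first step: both establish the decomposition $V_1 \otimes S \cong S \oplus V(\varpi_1 + \varpi_n)$ (and the symmetric one for $S \otimes V_1$), so the $\Hom$-space has dimension $2$, and then both reduce the lemma to checking linear independence of the two diagrams. Where you diverge is in how linear independence is verified. You propose evaluating the two morphisms on explicit weight vectors in $V_1 \otimes S$ using the closed formulas for $\Ya[1]$, $\cu$, $\ca$, and comparing the outputs in $S \otimes V_1$; you correctly flag that this is a bookkeeping-heavy computation. The paper instead makes the linear independence check purely diagrammatic: assuming a linear relation $x \cdot (\text{first diagram}) + y \cdot (\text{second diagram}) = 0$, it post-composes with a trivalent vertex in two different ways (on the bottom and on the side), applies Lemma \ref{L:basic-relns-SSV-trivalent-1} (the triangle and bubble evaluations), and obtains a $2\times 2$ linear system in $x,y$ with coefficient matrix
\[
\begin{pmatrix} (-1)^{n-1}\frac{[2n-1]}{[2]} & (-1)^{n}\frac{[2n+1]}{[2]} \\ (-1)^{n}\frac{[2n+1]}{[2]} & (-1)^{n-1}\frac{[2n-1]}{[2]} \end{pmatrix},
\]
whose determinant $([2n-1]^2 - [2n+1]^2)/[2]^2$ is manifestly nonzero. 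This trick buys you a completely elementary linear independence argument that avoids any explicit basis manipulation in $S$, at the cost of having already proved the triangle/bigon evaluations. Your route is also correct in principle, but you would need to actually nail down the witnessing vector and carry out the computation, whereas the paper's scheme closes the argument in one line once Lemma \ref{L:basic-relns-SSV-trivalent-1} is in hand. If you go the explicit-vector route, you should also be a bit careful that the decomposition $V_1 \otimes S \cong S \oplus V(\varpi_1 + \varpi_n)$ is stated, not derived, in the paper too --- so both treatments implicitly rely on this being known.
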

\begin{proof}
Since $V_1\otimes S \cong V(\varpi_1{+}\varpi_n) \oplus S \cong S \otimes V_1$, 
it suffices to show linear independence. 
Suppose there were $x,y \in \C(q^{\frac{1}{2}})$ such that $x$ times the first morphism in \eqref{E:SV-VS-basis}
plus $y$ times the second morphism is zero.
Using Lemma \ref{L:basic-relns-SSV-trivalent-1} to evaluate this relation in two different ways by composing 
respectively on the bottom and the side with trivalent vertices,
we obtain the system of linear equations
\[
(-1)^{n-1}\frac{[2n-1]}{[2]}x + (-1)^n\frac{[2n+1]}{[2]}y = 0 \quad \text{and} \quad (-1)^n\frac{[2n+1]}{[2]}x + (-1)^{n-1}\frac{[2n-1]}{[2]} y= 0 \, .
\] 
This system has unique solution $x=0=y$. 
The argument for their vertical reflections is identical.
\end{proof}

\begin{prop}\label{P:SV-braiding}
The braiding $R_{S,V_{1}} \in \Hom_{U_q(\son)}(S \otimes V_1,V_1 \otimes S)$ and its inverse $R_{S,V_{1}}^{-1}$ are given by
\[
\begin{tikzpicture}[scale=.4, anchorbase]
	\draw[very thick] (1,0) to [out=90,in=270] (0,1.5);
	\draw[overcross] (0,0) to [out=90,in=270] (1,1.5);
	\draw[very thick, gray] (0,0) to [out=90,in=270] (1,1.5);
\end{tikzpicture}
= 
q
\begin{tikzpicture}[scale=.3, rotate=90, tinynodes, anchorbase]
	\draw[very thick] (-1,0) to (0,1);
	\draw[very thick, gray] (1,0) to (0,1);
	\draw[very thick, gray] (0,2.5) to (-1,3.5);
	\draw[very thick] (0,2.5) to (1,3.5);
	\draw[very thick, gray] (0,1) to (0,2.5);
\end{tikzpicture}
+q^{-1}
\begin{tikzpicture}[scale=.3, tinynodes, anchorbase]
	\draw[very thick, gray] (-1,0) to (0,1);
	\draw[very thick] (1,0) to (0,1);
	\draw[very thick] (0,2.5) to (-1,3.5);
	\draw[very thick, gray] (0,2.5) to (1,3.5);
	\draw[very thick, gray] (0,1) to (0,2.5);
\end{tikzpicture}
\qquad \text{and} \qquad 
\begin{tikzpicture}[scale=.4, anchorbase,xscale=-1]
	\draw[very thick] (1,0) to [out=90,in=270] (0,1.5);
	\draw[overcross] (0,0) to [out=90,in=270] (1,1.5);
	\draw[very thick, gray] (0,0) to [out=90,in=270] (1,1.5);
\end{tikzpicture}
= 
q^{-1}
\begin{tikzpicture}[scale=.3, rotate=90, tinynodes, anchorbase,xscale=-1]
	\draw[very thick] (-1,0) to (0,1);
	\draw[very thick, gray] (1,0) to (0,1);
	\draw[very thick, gray] (0,2.5) to (-1,3.5);
	\draw[very thick] (0,2.5) to (1,3.5);
	\draw[very thick, gray] (0,1) to (0,2.5);
\end{tikzpicture}
+q
\begin{tikzpicture}[scale=.3, tinynodes, anchorbase,xscale=-1]
	\draw[very thick, gray] (-1,0) to (0,1);
	\draw[very thick] (1,0) to (0,1);
	\draw[very thick] (0,2.5) to (-1,3.5);
	\draw[very thick, gray] (0,2.5) to (1,3.5);
	\draw[very thick, gray] (0,1) to (0,2.5);
\end{tikzpicture}
\]
\end{prop}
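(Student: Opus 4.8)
The plan rests on the two-dimensionality of $\Hom_{U_q(\son)}(S\otimes V_1,V_1\otimes S)$ established in Lemma~\ref{L:SV-VS-basis}: writing $A,B$ for the two basis morphisms (the vertical reflections of the diagrams in \eqref{E:SV-VS-basis}), we have $\RM_{S,V_1}=xA+yB$ for unique $x,y\in\C(q^{\frac12})$, so the whole content is to compute these two scalars (and then the two for $\RM_{S,V_1}^{-1}$). Conceptually, since $\RM_{S,V_1}$ is a $U_q(\son)$-intertwiner it preserves the isotypic decomposition $S\otimes V_1\cong V(\varpi_1{+}\varpi_n)\oplus S\cong V_1\otimes S$, hence is determined by its scalar action on the two summands; the bigon evaluations \eqref{bigonequation}--\eqref{bigoniszero} together with Lemma~\ref{L:basic-relns-SSV-trivalent-1} record the change of basis between $\{A,B\}$ and the isotypic-projector-type morphisms (in particular one of $A,B$ annihilates the $V(\varpi_1{+}\varpi_n)$-summand). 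So two evaluations suffice to pin down $x$ and $y$.

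Concretely, I would evaluate everything on vectors of the form $x_\emptyset\otimes w$ with $w\in V_1$ a weight vector. Since $x_\emptyset$ is a highest weight vector of $S$, the strictly positive part $\sum x^+_\mu\otimes y^-_\mu$ of the $R$-matrix from Proposition~\ref{prop:R=xy} annihilates the first tensor factor, so by Theorem~\ref{thm:KR} and \eqref{eq:braiding} one gets the clean formula
$\RM_{S,V_1}(x_\emptyset\otimes w)=q^{(\varpi_n,\mathrm{wt}(w))}\,w\otimes x_\emptyset$; in particular, on the highest weight vector, $\RM_{S,V_1}(x_\emptyset\otimes a_1)=q^{(\varpi_n,\varpi_1)}\,a_1\otimes x_\emptyset=q\,(a_1\otimes x_\emptyset)$, using $(\varpi_n,\varpi_1)=1$ for the type $B_n$ form of Example~\ref{ex:son}. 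This already computes $\RM_{S,V_1}$ on the whole subspace $x_\emptyset\otimes V_1$, which meets both isotypic summands (e.g.\ via $x_\emptyset\otimes a_1$ and $x_\emptyset\otimes u$). It then remains to evaluate the basis diagrams $A$ and $B$ on $x_\emptyset\otimes a_1$ and $x_\emptyset\otimes u$ by contracting against the explicit trivalent vertex formulas of Example~\ref{ex:Y1} and the cup/cap formulas \eqref{varphi1}, \eqref{varphin}, and to solve the resulting $2\times2$ linear system; this should yield $x=q$, $y=q^{-1}$. As a cross-check one can instead extract the $S$-summand scalar from the ribbon structure: the double braiding $\RM_{V_1,S}\circ\RM_{S,V_1}$ acts on the $S$-summand by $\theta_S\theta_{V_1}\theta_S^{-1}=\theta_{V_1}=q^{-4n}$, where $\theta_{V_1}$ is computed from the analogue of \eqref{E:spin-twist} for $V_1=V(\varpi_1)$.

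The formula for $\RM_{S,V_1}^{-1}$ then follows either by inverting the $2\times2$ matrix just obtained, or more cleanly by applying the mirror symmetry $\mirror$ of the diagrammatic category, which sends a braiding to its inverse while reflecting the basis diagrams (cf.\ Remark~\ref{rem:Rconv}); this interchanges the coefficients up to $q\mapsto q^{-1}$ and produces $q^{-1}$ and $q$ on the reflected diagrams. I expect the main obstacle to be the bookkeeping in the middle step: identifying precisely how the diagrammatic basis $\{A,B\}$ sits relative to the isotypic decomposition — equivalently, carrying the exact normalizations through the bigon relations of Lemma~\ref{L:basic-relns-SSV-trivalent-1} — so that the change of basis delivers the clean coefficients $q,q^{-1}$ rather than some rescaled version; a secondary nuisance is fixing the sign/normalization of the $S$-summand scalar, most painlessly done by specializing to $n=1$.
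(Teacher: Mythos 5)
Your proposal is correct in outline and takes a genuinely different route from the paper. Both proofs start from the two-dimensionality of $\Hom_{U_q(\son)}(S\otimes V_1, V_1\otimes S)$ established in Lemma~\ref{L:SV-VS-basis}, writing $\RM_{S,V_1}=xA+yB$ and reducing to a $2\times 2$ scalar computation. The paper then \emph{closes} the braiding diagram with trivalent vertices on the bottom and on the side, reads off the scalars using Lemma~\ref{L:basic-relns-SSV-trivalent-2} on the left-hand side and Lemma~\ref{L:basic-relns-SSV-trivalent-1} on the right-hand side, and solves the resulting system $(-1)^n q^{\mp 2n}=(-1)^{n-1}\frac{[2n-1]}{[2]}x^{(\mp)}+(-1)^n\frac{[2n+1]}{[2]}y^{(\mp)}$. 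You instead propose to \emph{evaluate} on the vectors $x_\emptyset\otimes a_1$ and $x_\emptyset\otimes u$, using the fact that the strictly-positive part of the $R$-matrix in Proposition~\ref{prop:R=xy} annihilates the highest weight vector $x_\emptyset$, so that $\RM_{S,V_1}(x_\emptyset\otimes w)=q^{(\varpi_n,\mathrm{wt}(w))}\,w\otimes x_\emptyset$. Your normalization check $(\varpi_n,\varpi_1)=1$ is correct given $(\epsilon_i,\epsilon_j)=2\delta_{ij}$ from Example~\ref{ex:son}. The two methods are really two different ``evaluations'' of an intertwiner in a two-dimensional space: the paper pairs against a closed diagram, you pair against a vector. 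The paper's route is shorter in practice because the bigon/triangle lemmas were already set up for exactly this purpose, whereas your route requires expanding $A$ and $B$ against the explicit formulas for $\Ya[1]$, $\cu[1]$, $\ca[n]$, etc.\ and keeping track of several weight-space components (the weight-$\varpi_n$ subspace of $V_1\otimes S$ is $(n{+}1)$-dimensional); both eventually solve the same kind of $2\times 2$ system. Your treatment of $\RM_{S,V_1}^{-1}$ via mirror symmetry is a legitimate alternative to the paper's pivotal rotation argument.

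One small caveat on your ribbon cross-check: the double braiding on the $S$-summand of $S\otimes V_1$ acts by $\theta_S/(\theta_S\theta_{V_1})=\theta_{V_1}^{-1}$, not $\theta_S\theta_{V_1}\theta_S^{-1}=\theta_{V_1}$, and with the paper's conventions (see equation \eqref{eq:curlsforframing}) the positive twist coefficient for $V_1=V(\varpi_1)$ is $q^{(\varpi_1,\varpi_1+2\rho)}=q^{4n}$, not $q^{-4n}$. This does not affect your main argument since the cross-check is ancillary, but you would need to straighten out these signs before using it to pin down a normalization.
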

\begin{proof}
The second equality follows from the first using pivotality 
(e.g.~by rotating all diagrams by $90^\circ$), so it suffices to establish the first.
By Lemma \ref{L:SV-VS-basis}, 
there exist $x,y \in \C(q^{\frac{1}{2}})$ such that the braiding on $S \otimes V_1$
is $x$ times the first diagram in \eqref{E:SV-VS-basis} plus $y$ times the second. 
Attaching a trivalent vertex to this equality of morphisms on the bottom and right hand side respectively
and applying Lemma \ref{L:basic-relns-SSV-trivalent-2} gives the equations
\begin{align*}
(-1)^n q^{-2n} &= (-1)^{n-1}\frac{[2n-1]}{[2]}x + (-1)^n\frac{[2n+1]}{[2]}y \\ 
(-1)^n q^{2n} &= (-1)^n\frac{[2n+1]}{[2]}x + (-1)^{n-1}\frac{[2n-1]}{[2]}y
\end{align*}
respectively.
It is easy to verify that $x= q$ and $y=q^{-1}$ gives the unique solution.
\end{proof}

It is convenient to rewrite Proposition \ref{P:SV-braiding} as:
\begin{equation}
	\label{E:rung-swap}
\begin{tikzpicture}[scale=.3, tinynodes, anchorbase]
	\draw[very thick, gray] (-1,0) to (0,1);
	\draw[very thick] (1,0) to (0,1);
	\draw[very thick] (0,2.5) to (-1,3.5);
	\draw[very thick, gray] (0,2.5) to (1,3.5);
	\draw[very thick, gray] (0,1) to (0,2.5);
\end{tikzpicture}
= 
-q^{2}
\begin{tikzpicture}[scale=.3, rotate=90, tinynodes, anchorbase]
	\draw[very thick] (-1,0) to (0,1);
	\draw[very thick, gray] (1,0) to (0,1);
	\draw[very thick, gray] (0,2.5) to (-1,3.5);
	\draw[very thick] (0,2.5) to (1,3.5);
	\draw[very thick, gray] (0,1) to (0,2.5);
\end{tikzpicture}
+
q
\begin{tikzpicture}[scale=.4, anchorbase]
	\draw[very thick] (1,0) to [out=90,in=270] (0,1.5);
	\draw[overcross] (0,0) to [out=90,in=270] (1,1.5);
	\draw[very thick, gray] (0,0) to [out=90,in=270] (1,1.5);
\end{tikzpicture}
\qquad \text{and} \qquad
\begin{tikzpicture}[scale=.3, tinynodes, anchorbase]
	\draw[very thick] (-1,0) to (0,1);
	\draw[very thick, gray] (1,0) to (0,1);
	\draw[very thick, gray] (0,2.5) to (-1,3.5);
	\draw[very thick] (0,2.5) to (1,3.5);
	\draw[very thick, gray] (0,1) to (0,2.5);
\end{tikzpicture}
= 
-q^{-2}
\begin{tikzpicture}[scale=.3, rotate=90, tinynodes, anchorbase]
	\draw[very thick, gray] (-1,0) to (0,1);
	\draw[very thick] (1,0) to (0,1);
	\draw[very thick] (0,2.5) to (-1,3.5);
	\draw[very thick, gray] (0,2.5) to (1,3.5);
	\draw[very thick, gray] (0,1) to (0,2.5);
\end{tikzpicture}
+
q^{-1}
\begin{tikzpicture}[scale=.4, anchorbase]
	\draw[very thick] (0,0) to [out=90,in=270] (1,1.5);
	\draw[overcross] (1,0) to [out=90,in=270] (0,1.5);
	\draw[very thick, gray] (1,0) to [out=90,in=270] (0,1.5);
\end{tikzpicture} \, .
\end{equation}

\begin{cor}\label{C:Rstkn-Reln}
The relations
\[
\begin{tikzpicture}[scale=.4, anchorbase]
	\draw[very thick] (1,0) to [out=90,in=270] (0,1.5);
	\draw[overcross] (0,0) to [out=90,in=270] (1,1.5);
	\draw[very thick] (0,0) to [out=90,in=270] (1,1.5);
	\draw[very thick, gray] (1,0) to (0,0);
	\draw[very thick, gray] (1,0) to (1,-1);
	\draw[very thick, gray] (0,0) to (0,-1);
\end{tikzpicture}
= -q^{-2} \;
\begin{tikzpicture}[scale=.4, anchorbase]
	\draw[very thick] (1,0) to (1,1);
	\draw[very thick] (0,0) to (0,1);
	\draw[very thick, gray] (1,0) to (0,0);
	\draw[very thick, gray] (1,0) to (1,-1);
	\draw[very thick, gray] (0,0) to (0,-1);
\end{tikzpicture}
+ (-1)^nq^{-2n-1}
\begin{tikzpicture}[scale=.4, tinynodes, anchorbase]
	\draw[very thick, gray] (0,0) to [out=90,in=180] (.5,.625) 
		to [out=0,in=90] (1,0);
	\draw[very thick] (0,2) to [out=270,in=180] (.5,1.375)
		to [out=0,in=270] (1,2);
\end{tikzpicture}
\qquad \text{and} \qquad 
\begin{tikzpicture}[scale=.4, anchorbase]
	\draw[very thick] (0,0) to [out=90,in=270] (1,1.5);
	\draw[overcross] (1,0) to [out=90,in=270] (0,1.5);
	\draw[very thick] (1,0) to [out=90,in=270] (0,1.5);
	\draw[very thick, gray] (1,0) to (0,0);
	\draw[very thick, gray] (1,0) to (1,-1);
	\draw[very thick, gray] (0,0) to (0,-1);
\end{tikzpicture}
= -q^{2} \;
\begin{tikzpicture}[scale=.4, anchorbase]
	\draw[very thick] (1,0) to (1,1);
	\draw[very thick] (0,0) to (0,1);
	\draw[very thick, gray] (1,0) to (0,0);
	\draw[very thick, gray] (1,0) to (1,-1);
	\draw[very thick, gray] (0,0) to (0,-1);
\end{tikzpicture}
+ (-1)^nq^{2n+1}
\begin{tikzpicture}[scale=.4, tinynodes, anchorbase]
	\draw[very thick, gray] (0,0) to [out=90,in=180] (.5,.625) 
		to [out=0,in=90] (1,0);
	\draw[very thick] (0,2) to [out=270,in=180] (.5,1.375)
		to [out=0,in=270] (1,2);
\end{tikzpicture}
\]
hold in $\Hom_{U_q(\son)}(S \otimes S, V_1 \otimes V_1)$.
\end{cor}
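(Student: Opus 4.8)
The plan is to derive both displayed relations from the mixed rung-swap \eqref{E:rung-swap} (equivalently, Proposition \ref{P:SV-braiding}) together with the triangle and bigon evaluations of Lemmas \ref{L:basic-relns-SSV-trivalent-1}, \ref{L:basic-relns-SSV-trivalent-2}, \ref{SS1triangle}, and \ref{bigon}. First I would observe that both sides of each equation already lie in $\Hom_{U_q(\son)}(S\otimes S, V_1\otimes V_1)$, being composites of trivalent vertices, cups, caps, and braidings. It then suffices to prove the first relation, since the second follows from the identical argument applied to the opposite crossing, i.e.\ using the second displayed equation of \eqref{E:rung-swap}; passing to the opposite crossing just interchanges $q\leftrightarrow q^{-1}$ and the accompanying signs.

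For the first relation: the left-hand side is the composite of the $V_1$-$V_1$ braiding with the intertwiner $S\otimes S\to V_1\otimes V_1$ built from two trivalent $S\otimes S$-$V_1$ vertices joined along an internal gray ($S$-)strand. I would first apply a fork-slide (naturality of the braiding) to push the $V_1$-$V_1$ crossing downward through one of the two vertices; this rewrites it as a single trivalent vertex post-composed with a crossing of the internal gray strand past the remaining $V_1$-strand. That mixed $S$-$V_1$ crossing is exactly what \eqref{E:rung-swap} resolves, and doing so produces two terms: a ``rung'' term and a residual ``crossing'' term.

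In the rung term, the diagram reassembles into a pair of trivalent vertices joined by a gray rung forming a bigon; evaluating that bigon using the $SS$-triangle of Lemma \ref{SS1triangle} and the triangle identities of Lemma \ref{L:basic-relns-SSV-trivalent-1} collapses it onto the ``parallel $V_1$-strands'' diagram, and assembling the scalars contributed by \eqref{E:rung-swap} and Proposition \ref{P:SV-braiding} produces the coefficient $-q^{-2}$. In the residual crossing term, the gray strand that was braided through now closes into a curl; resolving that curl by the $i=0$ case of Lemma \ref{trivalenttwist} and then evaluating the remaining gray bigon via \eqref{bigonequation} (with \eqref{bigoniszero} killing the through-$V_i$ contributions for $i\ge 1$) leaves precisely the ``gray cup / black cap'' diagram $\cu[1]\circ\ca[n]$, with accumulated scalar $(-1)^n q^{-2n-1}$; the sign here is the spin twist sign from \eqref{E:spin-twist} combined with the $q$-powers coming from Lemmas \ref{trivalenttwist} and \ref{L:basic-relns-SSV-trivalent-2}.

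The step I expect to be the main obstacle is the scalar bookkeeping: each fork-slide, curl-resolution, and bigon-collapse contributes a sign and a power of $q$, and one must check that these combine to exactly $-q^{-2}$ and $(-1)^n q^{-2n-1}$ (resp.\ $-q^2$ and $(-1)^n q^{2n+1}$ for the second relation). As a cross-check --- and an alternative proof that avoids planar manipulation --- one can instead note that $\Hom_{U_q(\son)}(S\otimes S, V_1\otimes V_1)$ is finite-dimensional with an explicit basis given by $\cu[1]\circ\ca[n]$ together with the trivalent-vertex composites through the common irreducible summands $V_i$; expanding both sides of each relation in this basis and pairing against the vertically reflected basis morphisms reduces each relation to a short list of scalar identities, all of which are supplied by Lemmas \ref{L:basic-relns-SSV-trivalent-1}, \ref{L:basic-relns-SSV-trivalent-2}, \ref{SS1triangle}, and \ref{bigon}.
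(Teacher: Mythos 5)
Your overall strategy matches the paper's: prove the first relation by a fork-slide followed by \eqref{E:rung-swap}, and obtain the second from the first by passing to the inverse crossing (equivalently, $q\leftrightarrow q^{-1}$). However, the specific cleanup you propose diverges from what actually makes the coefficients come out, and I think your intermediate diagrams do not match.

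The paper's route is: fork-slide, then apply Lemma \ref{L:basic-relns-SSV-trivalent-2} immediately, producing a global prefactor of $(-1)^n q^{-2n}$, \emph{then} apply \eqref{E:rung-swap} (giving coefficients $-q^{-2}$ and $q^{-1}$), and finally clean up both terms using Lemma \ref{L:basic-relns-SSV-trivalent-2} once more together with invertibility of the braiding. The arithmetic is tight: the first term picks up $(-1)^nq^{2n}$ from the inverse-crossing version of Lemma \ref{L:basic-relns-SSV-trivalent-2}, cancelling the prefactor and leaving $-q^{-2}$; the second term needs no extra scalar (the $V_1$ and $S$ cap/cup interlock is removed by a Reidemeister II move, i.e.\ invertibility), giving $(-1)^nq^{-2n}\cdot q^{-1}=(-1)^nq^{-2n-1}$.

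Your cleanup invokes different lemmas whose scalars do not match these targets. You propose evaluating the rung term via Lemma \ref{SS1triangle} and Lemma \ref{L:basic-relns-SSV-trivalent-1}, whose scalars are of the form $(-1)^{n-i}\frac{[2(n-i)+1]}{[2]}$; these carry a $[2]$ in the denominator that does not appear in $-q^{-2}$ and would require nontrivial cancellation you have not exhibited. More seriously, you describe the residual term as ``the gray strand closes into a curl'' to be resolved by the $i=0$ case of Lemma \ref{trivalenttwist}, which produces the $S$-$S$ twist $(-1)^{\binom{n+1}{2}}q^{-n(2n+1)/2}$ from \eqref{E:spin-twist}, followed by a bigon contributing $(-1)^{\binom{i+1}{2}}d_i$; neither of these is visible in the claimed coefficient $(-1)^nq^{-2n-1}$, and the paper's intermediate at this stage has no $S$-$S$ curl --- the relevant crossing is an $S$-$V_1$ crossing against a trivalent vertex, precisely the situation covered by Lemma \ref{L:basic-relns-SSV-trivalent-2}. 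So the diagram manipulation you propose goes off track, and the scalar bookkeeping you flag as the ``main obstacle'' would not merely be tedious but would fail as described.

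Your fallback --- expand both sides in the basis of $\Hom_{U_q(\son)}(S\otimes S, V_1\otimes V_1)$ and compare pairings --- is legitimate and would rescue the argument; it amounts to a coefficient-by-coefficient verification rather than a diagrammatic rewrite, and is not the route the paper takes, but it would produce a correct (if less illuminating) proof.
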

\begin{proof}
We prove the first equality, which implies the second. 
Applying a fork-slide, then Lemma \ref{L:basic-relns-SSV-trivalent-2}, we find
\[
\begin{tikzpicture}[scale=.4, anchorbase]
	\draw[very thick] (1,0) to [out=90,in=270] (0,1.5);
	\draw[overcross] (0,0) to [out=90,in=270] (1,1.5);
	\draw[very thick] (0,0) to [out=90,in=270] (1,1.5);
	\draw[very thick, gray] (1,0) to (0,0);
	\draw[very thick, gray] (1,0) to (1,-1);
	\draw[very thick, gray] (0,0) to (0,-1);
\end{tikzpicture}
= (-1)^nq^{-2n}
\begin{tikzpicture}[scale=.3,smallnodes,anchorbase]
	\draw[very thick] (1,-1) to [out=150,in=270] 
		(0,2); 
	\draw[overcross] (0,-2) 
		to [out=90,in=210] (1,1);
	\draw[very thick,gray] (0,-2) 
		to [out=90,in=210] (1,1);
	\draw[very thick] (1,1) to (1,2);
	\draw[very thick,gray] (1,-2) to (1,-1); 
	\draw[very thick,gray] (1,-1) to [out=30,in=330] (1,1); 
\end{tikzpicture} \, .
\]
We then compute
\[
(-1)^n q^{-2n}
\begin{tikzpicture}[scale=.3,smallnodes,anchorbase]
	\draw[very thick] (1,-1) to [out=150,in=270] 
		(0,2); 
	\draw[overcross] (0,-2) 
		to [out=90,in=210] (1,1);
	\draw[very thick,gray] (0,-2) 
		to [out=90,in=210] (1,1);
	\draw[very thick] (1,1) to (1,2);
	\draw[very thick,gray] (1,-2) to (1,-1); 
	\draw[very thick,gray] (1,-1) to [out=30,in=330] (1,1); 
\end{tikzpicture}
=
(-1)^n q^{-2n}
\begin{tikzpicture}[scale=.3,smallnodes,anchorbase]
	\draw[very thick] (1.5,-.75) to [out=210,in=270] 
		(0,2); 
	\draw[overcross] (0,-2) 
		to [out=90,in=150] (1.5,.75);
	\draw[very thick,gray] (0,-2) 
		to [out=90,in=150] (1.5,.75);
	\draw[very thick] (1.5,.75) to [out=30,in=270] (2,2);
	\draw[very thick,gray] (2,-2) to [out=90,in=330] (1.5,-.75); 
	\draw[very thick,gray] (1.5,-.75) to (1.5,.75); 
\end{tikzpicture}
\stackrel{\eqref{E:rung-swap}}{=}
(-1)^n q^{-2n}
\left(
-q^{-2}
\begin{tikzpicture}[scale=.3,smallnodes,anchorbase]
	\draw[very thick] (-1,1) to [out=270,in=180] (0,-.5) to [out=0,in=240] (.75,0); 
	\draw[very thick,overcross] (-1,-1) to [out=90,in=180] (0,.5) to [out=0,in=120] (.75,0); 
	\draw[very thick,gray] (-1,-1) to [out=90,in=180] (0,.5) to [out=0,in=120] (.75,0); 
	\draw[very thick, gray] (.75,0) to (1.5,0);
	\draw[very thick] (2,1) to [out=270,in=60] (1.5,0); 
	\draw[very thick,gray] (2,-1) to [out=90,in=300] (1.5,0); 
\end{tikzpicture}
+
q^{-1}
\begin{tikzpicture}[scale=.3,smallnodes,anchorbase]
	\draw[very thick] (0,1) to [out=270,in=180] (1,-.5) to [out=0,in=270] (2,1);
	\draw[overcross] (0,-1) to [out=90,in=180] (1,.5) to [out=0,in=90] (2,-1);
	\draw[very thick,gray] (0,-1) to [out=90,in=180] (1,.5) to [out=0,in=90] (2,-1);
\end{tikzpicture}
\right)
\]
and the result follows from Lemma \ref{L:basic-relns-SSV-trivalent-2} 
and invertibility of the braiding.
\end{proof}

\begin{lem}\label{L:H-satisfies-i-quantum-gp-Serre}
The relation
\[
\begin{tikzpicture}[scale=.4, anchorbase]
	\draw[very thick] (0,1.5) to (1,1.5);
	\draw[very thick] (0,.75) to (1,.75);
	\draw[very thick] (1,2.25) to (2,2.25);
	\draw[very thick, gray] (0,0) to (0,3);
	\draw[very thick, gray] (1,0) to (1,3);
	\draw[very thick, gray] (2,0) to (2,3);
\end{tikzpicture}
+ 
\begin{tikzpicture}[scale=.4, anchorbase]
	\draw[very thick] (0,1.5) to (1,1.5);
	\draw[very thick] (0,2.25) to (1,2.25);
	\draw[very thick] (1,.75) to (2,.75);
	\draw[very thick, gray] (0,0) to (0,3);
	\draw[very thick, gray] (1,0) to (1,3);
	\draw[very thick, gray] (2,0) to (2,3);
\end{tikzpicture}
= 
-[2]_{q^2}
\begin{tikzpicture}[scale=.4, anchorbase]
	\draw[very thick, gray] (0,0) to (0,3);
	\draw[very thick, gray] (1,0) to (1,3);
	\draw[very thick, gray] (2,0) to (2,3);
	\draw[very thick] (0,.75) to (1,.75);
	\draw[very thick] (1,1.5) to (2,1.5);
	\draw[very thick] (0,2.25) to (1,2.25);
\end{tikzpicture}
+
\begin{tikzpicture}[scale=.4, anchorbase]
	\draw[very thick, gray] (0,0) to (0,3);
	\draw[very thick, gray] (1,0) to (1,3);
	\draw[very thick, gray] (2,0) to (2,3);
	\draw[very thick] (1,1.5) to (2,1.5);
\end{tikzpicture}
\]
holds in $\End_{U_q(\son)}(S^{\otimes 3})$.
\end{lem}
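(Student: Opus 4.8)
Translating the three ladder diagrams into composites on the tensor factors of $S^{\otimes 3}$, the asserted identity reads
\[
\hh_1^2 \hh_2 + \hh_2 \hh_1^2 = -[2]_{q^2}\, \hh_1 \hh_2 \hh_1 + \hh_2 \,,
\]
where $\hh_i$ denotes the rung $\hh = \hh^{(1)}$ of \eqref{eq:H1} inserted on strands $i,i+1$. Since $[2]_{-q^2} = -[2]_{q^2}$, this is precisely the $i=1$ instance of the defining relation of $U'_{-q^2}(\som)$ from Definition \ref{def:GK} (the commuting relations $b_ib_j = b_jb_i$ for $|i-j|>1$ being vacuous on three strands), so the computation is exactly what is needed to reconstruct Wenzl's algebra homomorphism of Theorem \ref{thm:Wenzl}.

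The plan is to verify the displayed equality by a direct computation in the diagrammatic calculus for the monoidal subcategory of $\Rep(U_q(\son))$ generated by $S$ and $V_1$, using only the relations assembled in \S\ref{ss:Webson}. First one unpacks each $\hh_i$ as its $V_1$-rung, so that both sides become ladder diagrams built from the trivalent vertices $\Ya[1],\Pa[1]$ and (in the rotated rungs) the cups and caps $\cu[n],\ca[n]$. Next, in each of the words $\hh_1^2\hh_2$, $\hh_2\hh_1^2$ and $\hh_1\hh_2\hh_1$ one repeatedly applies the ``fork-slide'' relations (naturality of the braiding) together with Proposition \ref{P:SV-braiding}/\eqref{E:rung-swap} to slide rungs and trivalent vertices into a common position; whenever a crossing of two $S$-strands decorated with rungs is produced it is resolved via Corollary \ref{C:Rstkn-Reln}, and the bigons and ``$SSV$-triangles'' appearing along the way are collapsed using Lemmas \ref{L:basic-relns-SSV-trivalent-1}, \ref{L:basic-relns-SSV-trivalent-2} and \ref{SS1triangle}, together with Lemma \ref{lem:H1reln} on the occasions when a stacked pair $\hh^{(1)}\circ\hh^{(1)}$ must be re-expanded through the $V_i$-strands. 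After these reductions both sides are exhibited as $\C(q^{\frac12})$-linear combinations of a fixed crossingless spanning set of three-strand diagrams in $\End_{U_q(\son)}(S^{\otimes 3})$ — alternatively one caps off with trivalent vertices to land in one-dimensional $\Hom$-spaces and compares there — and the identity follows by matching coefficients.

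The genuine difficulty is entirely combinatorial: keeping track of the scalars produced by the bigon and triangle evaluations, each of which is an instance of the devil's arithmetic of Definition \ref{def:devil}, so that the final comparison reduces to identities among ``incorrect'' quantum numbers of the type \eqref{eq:2devil} (see also Appendix \ref{S:combinatorial-identities}); one must also ensure that the chosen family of crossingless three-strand diagrams genuinely spans, so that equality of coefficients implies equality of morphisms. A cleaner organization of the bookkeeping is available after passing to $\xx_i := \hh_i - \tfrac{1}{[2]}\id$: the $i=1$ case of \eqref{E:xxk-recursion} reads $\xx_1\xx_1 = -[2]_{q^2}\,\xx_1^{(2)} - [2]\,\xx_1$, and, by the algebraic change of variables underlying Proposition \ref{prop:iSerreintro}, the displayed identity is then equivalent to the $\xx$-analogue $\xx_1\xx_2\xx_1 = \xx_1^{(2)}\xx_2 + \xx_2\xx_1^{(2)} + [2]\,\xx_1^{(2)} + \xx_1$ of the devil's Serre relation \eqref{eq:iSerreintro}; this variant can be established by the same diagrammatic reductions, now with the crossings that appear controlled by the formula \eqref{eq:Xbraiding} for $\RM_{S,S}$ in the $\xx$-basis.
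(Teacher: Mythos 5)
Your reduction of the diagrammatic identity to the algebraic statement $\hh_2\hh_1^2 + \hh_1^2\hh_2 = -[2]_{q^2}\,\hh_1\hh_2\hh_1 + \hh_2$ is correct, and the toolbox you list --- fork-slides, the rung swaps \eqref{E:rung-swap} from Proposition~\ref{P:SV-braiding}, Corollary~\ref{C:Rstkn-Reln}, Lemma~\ref{L:basic-relns-SSV-trivalent-2} --- is exactly what the paper's proof uses. What your plan does not isolate is the single organizing consequence of \eqref{E:rung-swap} that turns this from a laborious normal-form comparison into a three-step computation: applying \eqref{E:rung-swap} twice yields (the paper's \eqref{eq:ladderswap}) that a single $V_1$-rung joining strands $1$ and $3$, with the $S$-strand $2$ passing over it, equals $q\,\hh_2\hh_1 + q^{-1}\,\hh_1\hh_2$. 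Substituting this twice into the left-hand side of the lemma immediately produces $-[2]_{q^2}\hh_1\hh_2\hh_1$ plus two ``long-rung'' terms, one of which is then reduced by one fork-slide, one application of Corollary~\ref{C:Rstkn-Reln}, and one application of Lemma~\ref{L:basic-relns-SSV-trivalent-2}, after which the long-rung terms cancel and only $\hh_2$ survives. There is no need to exhibit a crossingless spanning set of $\End_{U_q(\son)}(S^{\otimes 3})$, nor to cap off into one-dimensional $\Hom$-spaces, and --- contrary to what you suggest --- no devil's arithmetic enters this proof at all: the only scalars that occur are $q^{\pm 1}$, $-q^{-2}$, and the factors $(-1)^n q^{\pm 2n}$ from Lemma~\ref{L:basic-relns-SSV-trivalent-2}, which combine to a bare $q^{-1}$. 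Lemmas~\ref{L:basic-relns-SSV-trivalent-1}, \ref{SS1triangle}, and \ref{lem:H1reln} are not used; the last of these would take you out of the diagrammatic calculus into the $\II^{(i)}$-basis and only complicate matters.

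Your final paragraph also inverts the paper's logical order. Proposition~\ref{prop:iSerre} derives the $\xx$-Serre relation $\xx_1\xx_2\xx_1 = \xx_1^{(2)}\xx_2+\xx_2\xx_1^{(2)}+[2]\xx_1^{(2)}+\xx_1$ \emph{from} the present lemma by a purely algebraic substitution, not the other way around, so proving the $\xx$-version first would be circular. And a direct diagrammatic attack on the $\xx$-version is not a cleaner bookkeeping: the $\xx^{(i)}$ are not single diagrams but the unitriangular combinations of Proposition~\ref{itoxchangeofbasis}, and the braiding formula \eqref{eq:Xbraiding} you propose as a crutch is itself only established (Proposition~\ref{P:decat-spin-rickard}) via the recurrence of Lemma~\ref{L:rho-recurrence}, whose proof occupies Appendix~\ref{S:combinatorial-identities}. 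That route is a substantial detour compared to the short $\hh$-computation the paper actually performs.
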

\begin{proof}
The relations \eqref{E:rung-swap} imply
\begin{equation}
	\label{eq:ladderswap}
\begin{tikzpicture}[scale=.4, anchorbase]
	\draw[very thick] (0,1.5) to (2,1.5);
	\draw[very thick, gray] (0,0) to (0,3);
	\draw[overcross] (1,0) to (1,3);
	\draw[very thick, gray] (1,0) to (1,3);
	\draw[very thick, gray] (2,0) to (2,3);
\end{tikzpicture}
=
q \;
\begin{tikzpicture}[scale=.4, anchorbase]
	\draw[very thick] (0,.875) to (1,.875);
	\draw[very thick] (1,2.125) to (2,2.125);
	\draw[very thick, gray] (0,0) to (0,3);
	\draw[very thick, gray] (1,0) to (1,3);
	\draw[very thick, gray] (2,0) to (2,3);
\end{tikzpicture}
+
q^{-1} \;
\begin{tikzpicture}[scale=.4, anchorbase]
	\draw[very thick] (1,.875) to (2,.875);
	\draw[very thick] (0,2.125) to (1,2.125);
	\draw[very thick, gray] (0,0) to (0,3);
	\draw[very thick, gray] (1,0) to (1,3);
	\draw[very thick, gray] (2,0) to (2,3);
\end{tikzpicture}
\end{equation}
which gives
\[
\begin{tikzpicture}[scale=.4, anchorbase]
	\draw[very thick] (0,1.5) to (1,1.5);
	\draw[very thick] (0,.75) to (1,.75);
	\draw[very thick] (1,2.25) to (2,2.25);
	\draw[very thick, gray] (0,0) to (0,3);
	\draw[very thick, gray] (1,0) to (1,3);
	\draw[very thick, gray] (2,0) to (2,3);
\end{tikzpicture}
+ 
\begin{tikzpicture}[scale=.4, anchorbase]
	\draw[very thick] (0,1.5) to (1,1.5);
	\draw[very thick] (0,2.25) to (1,2.25);
	\draw[very thick] (1,.75) to (2,.75);
	\draw[very thick, gray] (0,0) to (0,3);
	\draw[very thick, gray] (1,0) to (1,3);
	\draw[very thick, gray] (2,0) to (2,3);
\end{tikzpicture}
=
-[2]_{q^2}
\begin{tikzpicture}[scale=.4, anchorbase]
	\draw[very thick, gray] (0,0) to (0,3);
	\draw[very thick, gray] (1,0) to (1,3);
	\draw[very thick, gray] (2,0) to (2,3);
	\draw[very thick] (0,.75) to (1,.75);
	\draw[very thick] (1,1.5) to (2,1.5);
	\draw[very thick] (0,2.25) to (1,2.25);
\end{tikzpicture}
+q^{-1} \;
\begin{tikzpicture}[scale=.4, anchorbase]
	\draw[very thick] (0,1.5) to (2,1.5);
	\draw[overcross] (1,0) to (1,3);
	\draw[very thick, gray] (1,0) to (1,3);
	\draw[very thick] (0,.75) to (1,.75);
	\draw[very thick, gray] (0,0) to (0,3);
	\draw[very thick, gray] (2,0) to (2,3);
\end{tikzpicture}
+q \:
\begin{tikzpicture}[scale=.4, anchorbase]
	\draw[very thick] (0,1.5) to (2,1.5);
	\draw[overcross] (1,0) to (1,3);
	\draw[very thick, gray] (1,0) to (1,3);
	\draw[very thick] (0,2.25) to (1,2.25);
	\draw[very thick, gray] (0,0) to (0,3);
	\draw[very thick, gray] (2,0) to (2,3);
\end{tikzpicture} \, .
\]
Applying a forkslide, Corollary \ref{C:Rstkn-Reln}, 
and Lemma \ref{L:basic-relns-SSV-trivalent-2}, 
we simplify the last diagram as
\[
\begin{tikzpicture}[scale=.4, anchorbase]
	\draw[very thick] (0,1.5) to (2,1.5);
	\draw[overcross] (1,0) to (1,3);
	\draw[very thick, gray] (1,0) to (1,3);
	\draw[very thick] (0,2.25) to (1,2.25);
	\draw[very thick, gray] (0,0) to (0,3);
	\draw[very thick, gray] (2,0) to (2,3);
\end{tikzpicture} 
= 
-q^{-2}
\begin{tikzpicture}[scale=.4, anchorbase]
	\draw[very thick] (0,1.5) to (2,1.5);
	\draw[overcross] (1,0) to (1,3);
	\draw[very thick, gray] (1,0) to (1,3);
	\draw[very thick] (0,.75) to (1,.75);
	\draw[very thick, gray] (0,0) to (0,3);
	\draw[very thick, gray] (2,0) to (2,3);
\end{tikzpicture} 
+
(-1)^n q^{-2n-1} (-1)^n q^{2n} \;
\begin{tikzpicture}[scale=.4, anchorbase]
	\draw[very thick, gray] (0,0) to (0,3);
	\draw[very thick, gray] (1,0) to (1,3);
	\draw[very thick, gray] (2,0) to (2,3);
	\draw[very thick] (1,1.5) to (2,1.5);
\end{tikzpicture}
\]
and the result follows.
\end{proof}

It is natural to consider Lemma \ref{L:H-satisfies-i-quantum-gp-Serre} in terms of the morphisms
$\{\xx^{(i)}\}_{i=0}^n$ from Definition \ref{D:xxk-defn}.
For $m \geq 3$ and $1 \leq r \leq m-1$, let
\[
\hh_r := \id_{S^{\otimes r-1}} \otimes \hh \otimes \id_{S^{\otimes m-r-1}}
	\in \End_{U_q(\son)}(S^{\otimes m})
\]
and
\[
\xx_r^{(i)} := \id_{S^{\otimes r-1}} \otimes \xx^{(i)} \otimes \id_{S^{\otimes m-r-1}}
	\in \End_{U_q(\son)}(S^{\otimes m}) \, .
\]
As above, we abbreviate $\xx_r := \xx_r^{(1)}$ and by definition $\xx_r^{(0)} = \id_{S^{\otimes m}}$.
When $|r-s| \geq 2$, these morphisms satisfy the far-commutativity relation:
\[
\hh_r \hh_s = \hh_s \hh_r
\qquad \text{and}\qquad
\xx_r^{(i)} \xx_s^{(j)} = \xx_s^{(j)} \xx_r^{(i)} \, .
\]
In this notation, 
Lemma \ref{L:H-satisfies-i-quantum-gp-Serre} is the relation
\begin{equation}
	\label{E:H-satisfies-i-quantum-gp-Serre}
\hh_2 \hh_1 \hh_1 + \hh_1 \hh_1 \hh_2 = 
	-[2]_{q^2} \hh_1 \hh_2 \hh_1 + \hh_2
\end{equation}
which we now express in terms of the $\xx_i$.

\begin{prop}[devil's Serre relation]
	\label{prop:iSerre}
The relations 
\begin{equation}
	\label{eq:iSerre}
\xx_{i} \xx_{i\pm1} \xx_{i} 
	= \xx_{i}^{(2)} \xx_{i\pm1} + \xx_{i\pm1} \xx_{i}^{(2)} + [2] \xx_{i}^{(2)} + \xx_{i}
\end{equation}
hold in $\End_{U_q(\son)}(S^{\otimes m})$ 
whenever $\min(i,i\pm1) \geq 1$ and $\max(i,i\pm1) \leq m$.
\end{prop}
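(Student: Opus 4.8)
The plan is to derive \eqref{eq:iSerre} directly from the ``$\hh$-Serre relation'' \eqref{E:H-satisfies-i-quantum-gp-Serre} of Lemma~\ref{L:H-satisfies-i-quantum-gp-Serre} via the change of generators $\hh_i = \xx_i + \tfrac{1}{[2]}\id$ recorded in \eqref{eq:x1defn}. Lemma~\ref{L:H-satisfies-i-quantum-gp-Serre} is stated for $\hh_1,\hh_2$ acting on $S^{\otimes 3}$; tensoring on both sides with identity morphisms promotes it to a relation between $\hh_i$ and $\hh_{i+1}$ in $\End_{U_q(\son)}(S^{\otimes m})$ for every admissible $i$, and reflecting the defining picture of $\hh$ (which is invariant under the left--right flip of its two spin strands) gives the mirror relation $\hh_{i}\hh_{i+1}\hh_{i+1} + \hh_{i+1}\hh_{i+1}\hh_{i} = -[2]_{q^2}\hh_{i+1}\hh_{i}\hh_{i+1} + \hh_{i}$, which handles the ``$i-1$'' case of \eqref{eq:iSerre}. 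So it suffices to treat one case; set $a = \xx_i$, $b = \xx_{i+1}$, and $c = \tfrac{1}{[2]}$, noting that $c\,\id$ is central.

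First I would expand \eqref{E:H-satisfies-i-quantum-gp-Serre} after substituting $\hh_i = a + c\,\id$ and $\hh_{i+1} = b + c\,\id$, and solve for the only cubic term $aba$. Collecting by monomial type ($aba$; $a^2b$ and $ba^2$; $ab$ and $ba$; $a^2$; $b$; $a$; $\id$), the resulting coefficients are rational functions of $q$ that simplify using the elementary identities $2 + [2]_{q^2} = [2]^2$ and (equivalently) $``[2][2]" = [3]-1 = [2]_{q^2}$: the coefficient of $ab+ba$ becomes $-[2]$, that of $a^2$ becomes $-[2]$, and the pure-$\id$ and pure-$b$ contributions cancel identically. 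The outcome is
\[
[2]_{q^2}\, aba \;=\; -\bigl( a^2 b + b a^2 + [2]\,(ab+ba) + [2]\,a^2 \bigr) \;-\; 2a \, .
\]

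Second I would identify the right-hand side of \eqref{eq:iSerre} with this same expression, divided by $[2]_{q^2}$. By the $i=1$ instance of the recursion in Definition~\ref{D:xxk-defn} (that is, \eqref{eq:xidefn}), together with the devil's-product values $``[1][2]" = [2]$ and $``[2]^2" = ``[2][2]" = [3]-1 = [2]_{q^2}$ coming from Definition~\ref{def:devil} (or \eqref{eq:2devil}), one has $\xx_i^{(2)} = -\tfrac{1}{[2]_{q^2}}\bigl(a^2 + [2]\,a\bigr)$. Substituting this into $\xx_i^{(2)}\xx_{i+1} + \xx_{i+1}\xx_i^{(2)} + [2]\xx_i^{(2)} + \xx_i$ and clearing $[2]_{q^2}$ yields $-\bigl(a^2b + ba^2 + [2](ab+ba) + [2]a^2 + [2]^2 a\bigr) + [2]_{q^2}a$, which coincides with the display above since $[2]_{q^2} - [2]^2 = -2$. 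As $[2]_{q^2}$ is invertible over $\C(q^{\frac{1}{2}})$, dividing through gives \eqref{eq:iSerre}.

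The argument has no conceptual difficulty — its content is entirely the bookkeeping in the first step. The main point to verify is that \emph{every} coefficient collapses as claimed, in particular that the identity terms cancel and the leftover multiple of $a$ is exactly the $-2a$ predicted by the definition of $\xx_i^{(2)}$, so that no renormalization is needed and the lone $\xx_i$ in \eqref{eq:iSerre} appears with coefficient precisely $1$. As a consistency check, the $n=1$ specialization forces $\xx_i^{(2)} = 0$ by Remark~\ref{rem:Xtoohigh}, and \eqref{eq:iSerre} then reduces to the Temperley--Lieb relation $\xx_i\xx_{i\pm1}\xx_i = \xx_i$, which indeed holds since for $n=1$ one has $\xx_i = \II^{(0)}$, a cup-cap morphism satisfying the required relation.
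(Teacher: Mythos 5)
Your proof is correct and follows essentially the same route as the paper's: both substitute $\hh = \xx + \tfrac{1}{[2]}\id$ into the $\hh$-Serre relation \eqref{E:H-satisfies-i-quantum-gp-Serre} and reduce everything to elementary quantum-integer arithmetic, differing only in bookkeeping (you solve for $aba$ and match it against the right-hand side of \eqref{eq:iSerre}, while the paper computes the difference $\xx_1\xx_2\xx_1 - \xx_1^{(2)}\xx_2 - \xx_2\xx_1^{(2)}$ directly). Your handling of the mirror case by the left--right flip of $\hh$ corresponds to the paper's remark that ``the other relation is obtained by conjugating with appropriate braids.''
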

\begin{proof}
It suffices to consider the $m=3$ case, 
and we will establish the relation
\[
\xx_1\xx_2\xx_1 = \xx_1^{(2)}\xx_2 + \xx_2\xx_1^{(2)} + [2]\xx_1^{(2)} + \xx_1
\]
since the other relation is obtained by conjugating with appropriate braids
(or by making a similar computation).
Recall from Definition \ref{D:xxk-defn}, that
\[
\xx = \hh- \frac{1}{[2]} \qquad \text{and} \qquad \xx^{(2)}= \frac{-1}{[2]_{q^2}}\xx \big( \xx+ [2] \big)
\]
so
\begin{equation}\label{E:decat-serre-H^2}
\xx^{(2)}= \frac{-1}{[2]_{q^2}} \Big( \hh- \frac{1}{[2]} \Big) \Big( \hh + \frac{[3]}{[2]} \Big) 
\qquad \text{and} \qquad 
\frac{-1}{[2]_{q^2}}\hh^2 = \xx^{(2)} + \frac{1}{[2]}\hh - \frac{[3]}{[2]^2[2]_{q^2}} \, .
\end{equation}
Directly computing, we find
\begin{align*}
\xx_1\xx_2\xx_1 
&= \hh_1\hh_2\hh_1-\frac{1}{[2]}\big(\hh_1\hh_2 + \hh_1^2 + \hh_2\hh_1\big) + \frac{1}{[2]^2}\big(2\hh_1 + \hh_2\big) - \frac{1}{[2]^3} \\
&\stackrel{\eqref{E:H-satisfies-i-quantum-gp-Serre}}{=}
	\frac{-1}{[2]_{q^2}}\big( \hh_2\hh_1^2 + \hh_1^2\hh_2 - \hh_2 \big)
		-\frac{1}{[2]}\big(\hh_1\hh_2 + \hh_1^2 + \hh_2\hh_1\big) 
			+ \frac{1}{[2]^2}\big(2\hh_1 + \hh_2\big) - \frac{1}{[2]^3} \, .
\end{align*}
Similarly,
\[
\xx_1^{(2)}\xx_2 + \xx_2\xx_1^{(2)} \stackrel{\eqref{E:decat-serre-H^2}}{=}  
	\frac{-1}{[2]_{q^2}} \left( \Big(\hh_1- \frac{1}{[2]} \Big) \Big( \hh_1 + \frac{[3]}{[2]} \Big) \Big( \hh_2- \frac{1}{[2]} \Big) 
		+ \Big( \hh_2- \frac{1}{[2]} \Big) \Big( \hh_1- \frac{1}{[2]} \Big) \Big( \hh_1 + \frac{[3]}{[2]} \Big) \right)
 \]
 which expands as
 \[
 \frac{-1}{[2]_{q^2}}\bigg(\hh_1^2\hh_2 + \hh_2\hh_1^2 -\frac{2}{[2]}\hh_1^2 + \frac{[2]_{q^2}}{[2]}\hh_1\hh_2 
 	+ \frac{[2]_{q^2}}{[2]}\hh_2\hh_1 + 2\frac{[2]_{q^2}}{[2]^2}\hh_1 - 2\frac{[3]}{[2]^2}\hh_2 + 2\frac{[3]}{[2]^3}\bigg) \, .
\]
Thus, we see that $\xx_1\xx_2\xx_1 -  \xx_1^{(2)}\xx_2 - \xx_2\xx_1^{(2)}$ equals
\[
-\frac{[2]}{[2]_{q^2}} \hh_1^2 + \frac{1}{[2][2]_{q^2}} \stackrel{\eqref{E:decat-serre-H^2}}{=} [2]\xx_1^{(2)} + \hh_1 -\frac{[3]}{[2][2]_{q^2}} 
+ \frac{1}{[2][2]_{q^2}} =  [2]\xx_1^{(2)} + \xx_1 \, . \qedhere
\]
\end{proof}

We now establish the connection discussed above in Section \ref{ss:iquantum}
between $\iota$quantum groups 
and our elements $\xx^{(i)} \in \End_{U_q(\son)}(S\otimes S)$.

\begin{thm}\label{T:iqWeyl=typeBbraid}
The surjective $\C(q)$-algebra homomorphism 
$U'_{-q^2}(\som) \rightarrow \End_{U_q(\son)}(S^{\otimes m})$ 
from Wenzl's Theorem \ref{thm:Wenzl} is such that
\[
\nx_i^{(k)} \mapsto \xx_i^{(k)} \quad \text{and} \quad 
\sum_{k= 0}^{n} q^{-k} \nx_i^{(k)} \mapsto 
	q^{-\frac{n}{2}} \id_S^{\otimes i-1} \otimes R_{S,S} \otimes \id_S^{\otimes m-i-1} \, .
\]
\end{thm}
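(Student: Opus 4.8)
The plan is to produce the homomorphism abstractly and then pin down the normalization by matching it with the representation-theoretic data already established. First I would invoke Wenzl's Theorem \ref{thm:Wenzl}, which gives a surjective $\C(q)$-algebra homomorphism $\Phi \colon U'_{-q^2}(\som) \to \End_{U_q(\son)}(S^{\otimes m})$ sending $b_i \mapsto \id_S^{\otimes i-1} \otimes C \otimes \id_S^{\otimes m-i-1}$. Combining this with equation \eqref{eq:CvsX} (that $\xx = C - \frac{1}{[2]}\id_{S\otimes S}$, established in Proposition \ref{P:CequalsH1}), we get $\Phi(b_i) = \id_S^{\otimes i-1}\otimes\big(\xx + \tfrac{1}{[2]}\id_{S\otimes S}\big)\otimes\id_S^{\otimes m-i-1}$, hence $\Phi(\nx_i) = \Phi(b_i - \tfrac{1}{[2]}) = \xx_i$ directly from the definition $\nx_i := b_i - \frac{1}{[2]}$ in Definition \ref{def:idpb}.

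Next I would show $\Phi(\nx_i^{(k)}) = \xx_i^{(k)}$ by induction on $k$. The base cases $k=0,1$ are immediate: $\nx_i^{(0)}=1 \mapsto \id = \xx_i^{(0)}$ and $\nx_i^{(1)} = \nx_i \mapsto \xx_i = \xx_i^{(1)}$ as just observed. For the inductive step, the recursion \eqref{eq:idpb} defining $\nx_i^{(k+1)}$ reads $\nx_i^{(k)}\nx_i = (-1)^k ``[k+1][k]"\nx_i^{(k)} + (-1)^k``[k+1][k+1]"\nx_i^{(k+1)}$, which (after matching the $``[k][k+1]" = ``[k+1][k]"$ symmetry in Definition \ref{def:devil}) is formally identical to the recursion \eqref{E:xxk-recursion}, respectively \eqref{eq:xidefn}, defining $\xx_i^{(k+1)}$ from $\xx_i^{(k)}$ and $\xx_i$ in $\End_{U_q(\son)}(S^{\otimes m})$. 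Applying the algebra homomorphism $\Phi$ to the $\nx$ recursion and using the inductive hypothesis $\Phi(\nx_i^{(k)}) = \xx_i^{(k)}$, $\Phi(\nx_i)=\xx_i$, we can solve for $\Phi(\nx_i^{(k+1)})$ (the scalar $(-1)^k``[k+1][k+1]"$ is invertible in $\C(q)$) and identify it with $\xx_i^{(k+1)}$. This requires knowing $\xx_i^{(k)}$ is indeed defined for the relevant range, and that $\nx_i^{(k)} \mapsto 0$ for $k > n$ is consistent with $\xx_i^{(k)}=0$ there — the latter is Remark \ref{rem:Xtoohigh}, which also tells us the recursion terminates correctly, so no inconsistency arises.

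For the second assertion, I would simply sum: applying $\Phi$ to $\sum_{k=0}^n q^{-k}\nx_i^{(k)}$ gives $\sum_{k=0}^n q^{-k}\xx_i^{(k)} = \id_S^{\otimes i-1} \otimes \big(\sum_{k=0}^n q^{-k}\xx^{(k)}\big) \otimes \id_S^{\otimes m-i-1}$, and by Proposition \ref{P:decat-spin-rickard} (equation \eqref{eq:Xbraiding}) we have $\sum_{k=0}^n q^{-k}\xx^{(k)} = q^{-n/2}R_{S,S}$, giving the claimed formula $\iQW_i \mapsto q^{-n/2}\id_S^{\otimes i-1}\otimes R_{S,S}\otimes\id_S^{\otimes m-i-1}$.

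I expect the genuinely substantive input to be Proposition \ref{P:CequalsH1} (equation \eqref{eq:CvsX}), which is relegated to Appendix \ref{S:wenzlapproach} and requires matching our $\hh^{(1)}$-based element with Wenzl's $q$-Clifford-algebra construction of $C$; everything in the present proof downstream of that identity is a formal bookkeeping argument combining the matching recursions with Wenzl's surjection. The one point demanding care is ensuring the devil's-product symmetry conventions ($``[k][k+1]"$ versus $``[k+1][k]"$, with the constraint $m\le n$ in Definition \ref{def:devil}) line up precisely between \eqref{eq:idpb} and \eqref{eq:xidefn}, so that the two recursions really do transport to one another under $\Phi$; this is a routine but not entirely trivial check since the arguments of the devil's product are swapped between the two defining formulas.
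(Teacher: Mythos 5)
Your proof is correct and uses the same three essential inputs as the paper's argument: Proposition \ref{P:CequalsH1} (identifying $\hh$ with Wenzl's $C$, hence $\xx = C - \tfrac{1}{[2]}$), the observation that the recursion in Definition \ref{def:idpb} transports to \eqref{E:xxk-recursion} under an algebra homomorphism (noting $``[k+1][k]"=``[k][k+1]"$ and $``[k+1][k+1]"=``[k+1]^2"$), and Proposition \ref{P:decat-spin-rickard} for the braiding. The only organizational difference is that the paper first re-derives the existence of a homomorphism $\nx_i\mapsto\xx_i$ from Lemma \ref{L:H-satisfies-i-quantum-gp-Serre} and then recognizes it as Wenzl's map via $C=\hh$, whereas you start directly from Wenzl's homomorphism and compute its effect on $\nx_i$; this makes your route slightly leaner (you bypass Lemma \ref{L:H-satisfies-i-quantum-gp-Serre}) while the paper's route has the side benefit of re-proving existence of the map independently of Wenzl, but both are substantively the same proof.
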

\begin{proof}
Recall from Definition \ref{def:idpb} that 
$\nx_i \in U'_{-q^2}(\som)$ are defined by $\nx_i := b_i - \frac{1}{[2]}$ 
where the $b_i$ are the standard generators of $U'_{-q^2}(\som)$ in Definition \ref{def:GK}.
Lemma \ref{L:H-satisfies-i-quantum-gp-Serre} then implies that the assignment
$\nx_i \mapsto \xx_i$ indeed defines an algebra homomorphism 
$U'_{-q^2}(\som)\rightarrow \End_{U_q(\son)}(S^{\otimes m})$.
Comparing Definition \ref{def:idpb} and \eqref{eq:xidefn}, 
we see that this sends $\nx_i^{(k)} \mapsto \xx_i^{(k)}$. 
In Proposition \ref{P:CequalsH1}, we show that our endomorphism
$\hh \in \End_{U_q(\son)}(S^{\otimes 2})$ agrees with Wenzl's endomorphism $C$, 
hence this is precisely Wenzl's homomorphism from Theorem \ref{thm:Wenzl}.
The remaining claim follows from Proposition \ref{P:decat-spin-rickard}.
\end{proof}

%
\subsection{Spin link polynomials}
	\label{ss:SLP}
%

We now characterize the spin-colored link polynomials $P_{\son}(\mathcal{L}_{\beta}^S)$ 
in terms of the elements $\xx_r^{(i)}$. 
Indeed, by Proposition \ref{P:decat-spin-rickard}, 
the endomorphism $R(\beta,S) \in \End_{U_q(\son)}(S^{\otimes m})$ 
assigned to a braid $\beta \in \Br_m$
can be written as a linear combination of $\{\xx_r^{(i)}\}_{1 \leq r \leq m-1}$
which, for each $r$, satisfy \eqref{eq:xidefn}, i.e.
\begin{equation}
	\label{eq:xiquad}
\xx_r^{(i+1)} := \dfrac{(-1)^i}{``[i+1]^2"}\left(\xx_r^{(i)}\xx_r - (-1)^{i} ``[i][i+1]"\xx_r^{(i)}\right) \, .
\end{equation}
We now show that, modulo a pair of conjectures that we have only verified 
in low rank ($n=1,2,3$), 
the link invariant $P_{\son}(\mathcal{L}_{\beta}^S)$ is characterized by 
the braiding formula \eqref{eq:Xbraiding}, the relations \eqref{eq:xidefn} and \eqref{eq:iSerre}, 
and a compatibility between 
the quantum traces on 
$\End_{U_q(\son)}(S^{\otimes m})$ and $\End_{U_q(\son)}(S^{\otimes m-1})$.
We will use this characterization for our decategorification results in 
\S \ref{ss:decat}.

To begin, we conjecture that relations \eqref{eq:xidefn} and \eqref{eq:iSerre} 
suffice to establish more-general versions of Proposition \ref{prop:iSerre}.

\begin{conj}
	\label{conj:R3}
Let $A_m^n$ be a unital $\C(q)$-algebra and
let $\{\xx_r^{(i)}\}_{\substack{1 \leq r \leq m-1 \\ 0 \leq i \leq n}} \subset A_m^n$ 
be a collection of elements
satisfying $\xx_r^{(0)} = 1$ and 
equations \eqref{eq:xidefn} and \eqref{eq:iSerre}. 
Then, given $1 \leq a,b,c \leq n$, there is a relation of the form
\[
\xx_{i}^{(a)} \xx_{i\pm1}^{(b)} \xx_{i}^{(c)} = 
	\xi \cdot \xx_{i\pm1}^{(a')} \xx_{i}^{(b')} \xx_{i\pm1}^{(c')} + \mathrm{LOT}_{a,b,c}
\]
in $A_m^n$, where $\xi \in \C(q)$, $1 \leq a',b',c' \leq n$, 
and $\mathrm{LOT}_{a,b,c}$ is a linear combination of terms of the form 
$\xx_{i}^{(k)} \xx_{i\pm1}^{(\ell)}$ and $\xx_{i\pm1}^{(\ell)} \xx_{i}^{(k)}$.
\end{conj}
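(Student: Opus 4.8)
The plan is to prove Conjecture \ref{conj:R3} abstractly, working purely inside the algebra $A_m^n$ using only the defining relations: $\xx_r^{(0)}=1$, the quadratic recursion \eqref{eq:xidefn} (equivalently \eqref{E:xxk-recursion}, which lets one rewrite $\xx_r^{(i)}\xx_r$ as a $\C(q)$-combination of $\xx_r^{(i+1)}$ and $\xx_r^{(i)}$), and the devil's Serre relation \eqref{eq:iSerre}. The idea is that these relations already encode a confluent rewriting system for three-letter words in the two families $\{\xx_i^{(a)}\}$ and $\{\xx_{i\pm1}^{(b)}\}$, just as the ordinary $R3$ relation for $\son[5]$ or $\spn$ webs follows formally from the quadratic relations plus a single Serre-type relation (compare \cite[equation (5.56b)]{BERT}). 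First I would observe that by symmetry (conjugating by the braid that reverses the strand order, which exchanges $i$ and $i\pm1$) it suffices to treat one of the two cases, say the ``$+$'' case, and by a further symmetry it suffices to produce a relation expressing $\xx_i^{(a)}\xx_{i+1}^{(b)}\xx_i^{(c)}$ in terms of words that are shorter or have the roles of $i,i+1$ swapped.

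The key steps, in order, are as follows. \textbf{Step 1:} Using \eqref{E:xxk-recursion} repeatedly, express every $\xx_r^{(i)}$ for $2 \le i \le n$ as a polynomial in the single generator $\xx_r = \xx_r^{(1)}$ with $\C(q)$-coefficients; this is \eqref{E:xxk-expanded}. Thus $A_m^n$ is generated by $\xx_1,\dots,\xx_{m-1}$, and every three-letter word $\xx_i^{(a)}\xx_{i+1}^{(b)}\xx_i^{(c)}$ is a $\C(q)$-combination of words $\xx_i^{p}\xx_{i+1}^{q}\xx_i^{r}$ in the \emph{power-one} generators. \textbf{Step 2:} Reduce to controlling the ``leading'' word: show by induction on $a+b+c$ that it suffices to establish the claim for the top word $\xx_i^{a}\xx_{i+1}^{b}\xx_i^{c}$, since lower words either already have the desired form or can be handled by the inductive hypothesis (the subtraction of lower-order terms in \eqref{E:xxk-expanded} only produces $\mathrm{LOT}$-type contributions). \textbf{Step 3:} Prove, by induction using \eqref{eq:iSerre} and far-commutativity, the family of ``straightening'' identities
\[
\xx_i^{a}\,\xx_{i+1}\,\xx_i^{c} \;=\; \sum \xi_{a',c'}\, \xx_{i+1}^{a'}\,\xx_i\,\xx_{i+1}^{c'} \;+\; \mathrm{LOT},
\]
starting from the base case $a=c=1$ which is precisely \eqref{eq:iSerre} rewritten via $\xx_i^{(2)} = \tfrac{-1}{``[2]^2"}(\xx_i^2 - ``[1][2]"\,\xx_i)$ (note $``[1][2]"=[2]$, $``[2]^2"=[2]_{q^2}$ in the relevant normalization, as in the proof of Proposition \ref{prop:iSerre}). \textbf{Step 4:} Bootstrap from a single middle letter $\xx_{i+1}$ to an arbitrary power $\xx_{i+1}^{b}$ by inserting \eqref{E:xxk-recursion} on the middle factor and iterating Step 3; then re-collapse powers of the power-one generators back into divided powers $\xx^{(k)}$ using \eqref{E:xxk-expanded} in reverse, which is valid since that change of basis is unitriangular (Proposition \ref{itoxchangeofbasis}), so no denominators or new obstructions appear. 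Assembling Steps 1--4 and undoing the reduction of Step 2 yields the asserted relation, with $\xi$ an explicit element of $\C(q)$ (one can track it as a product of the leading coefficients appearing in Step 3).

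The main obstacle I expect is \textbf{Step 3}: controlling the induction on the exponents $a$ and $c$ while keeping the right-hand side of the form ``one swapped three-letter word plus genuine lower-order terms.'' The danger is that naive iteration of \eqref{eq:iSerre} produces, at intermediate stages, words of length four or five in which the two families are interleaved in a way that does not obviously reduce — e.g. terms like $\xx_i\xx_{i+1}\xx_i\xx_{i+1}$. Resolving this requires showing that such longer alternating words are themselves reducible: one must verify a confluence/diamond-lemma statement, namely that the two ways of simplifying $\xx_i\xx_{i+1}\xx_i\xx_{i+1}$ (apply \eqref{eq:iSerre} to the first three letters versus the last three) agree modulo lower terms. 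This is the analogue of the ``$R3$ is a consequence of quadratic + Serre'' folklore, and it is exactly where a genuine computation is unavoidable — but it is a finite check that depends only on the coefficients in \eqref{eq:iSerre} and \eqref{E:xxk-recursion}, not on $n$, so once done it applies uniformly. (This is consistent with the authors only claiming to have \emph{verified} the conjecture for $n=1,2,3$ via explicit web calculations, suggesting the abstract confluence argument, while expected to work, has not been pushed through in general.)
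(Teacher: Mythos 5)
This is a \emph{conjecture} in the paper, and remains open: the paper does not prove it in general. Its only substantiation is Proposition \ref{prop:R3}, which verifies the statement for $n=1,2,3$ by exhibiting the explicit relations \eqref{eq:k1l}--\eqref{eq:k3l} and asking the reader to check them case by case. Your proposal claims to prove the conjecture for all $n$, and the claim on which it hinges --- that the confluence check in Step 3 ``is a finite check that depends only on the coefficients in \eqref{eq:iSerre} and \eqref{E:xxk-recursion}, not on $n$, so once done it applies uniformly'' --- is not justified, and is in fact the very thing that has resisted a uniform argument.

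The concrete trouble is that the conjecture's conclusion is itself $n$-dependent in a way your rewriting scheme does not respect. The swapped three-letter word on the right-hand side must have $a',b',c'\leq n$, and $\mathrm{LOT}$ must consist of two-letter words. But the rewriting move you isolate from \eqref{E:xxk-recursion} sends $\xx_r^{(i)}\xx_r$ to a combination of $\xx_r^{(i+1)}$ and $\xx_r^{(i)}$, i.e.\ it drives the divided-power index \emph{upward}. Expanding $\xx_i^{(a)}\xx_{i\pm1}^{(b)}\xx_i^{(c)}$ into power-one generators (your Step 1), straightening (Step 3), and re-collapsing (Step 4) naturally produces divided powers with index up to $a+b+c$, with no mechanism to cap them at $n$. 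Something must intervene to force the indices back down, and that something is the truncation relation $\xx^{(n+1)}=0$ (equivalently $\xx^{(n)}\xx = (-1)^n ``[n][n+1]"\,\xx^{(n)}$), which is exactly what makes the top relations in \eqref{eq:k1l}--\eqref{eq:k3l} (for instance $\xx_i^{(3)}\xx_{i\pm1}^{(3)}\xx_i^{(3)} = \xx_i^{(3)}$, a nine-to-three degree collapse in the power-one generators) possible. This truncation is $n$-dependent by its very nature, and your proposal never invokes it. Tellingly, the paper itself says that the relations for $A_m^2$ are obtained from those of $A_m^3$ ``by setting $\xx_r^{(3)}=0$'': the verification data changes qualitatively with $n$, which already shows the check is not $n$-uniform. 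The relation $\xx_i^{(2)}\xx_{i\pm1}^{(2)}\xx_i^{(2)} = -[2]\xx_{i\pm1}\xx_i^{(3)}\xx_{i\pm1}+\cdots$ in \eqref{eq:k2l} is a clean example: its swapped word has $b'=3$, so it satisfies the conjecture's shape for $n=3$ but violates it for $n=2$ unless $\xx^{(3)}=0$ is imposed.

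So your own flagging of Step 3 as ``where a genuine computation is unavoidable'' is correct, but your conclusion that the computation is $n$-independent begs the question. The single diamond-lemma overlap you single out, $\xx_i\xx_{i+1}\xx_i\xx_{i+1}$, is just the first in a growing family of ambiguities ($\xx_i\xx_{i+1}^{(b)}\xx_i$, $\xx_i^{(a)}\xx_{i+1}\xx_i^{(c)}$, and so on for $a,b,c$ up to $n$), and the reductions change shape at each boundary case where the top divided power appears. Steps 2 and 4 are reasonable book-keeping, but both presuppose that Step 3 produces only terms of the conjecture's form, which is precisely the unproved content and is why the authors stop at $n\leq 3$.
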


\begin{prop}\label{prop:R3}
Conjecture \ref{conj:R3} holds when $n=1,2,3$.
\end{prop}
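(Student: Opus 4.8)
The plan is to reduce the statement to an explicit finite computation in the subalgebra of $A_m^n$ generated by $x := \xx_i$ and $y := \xx_{i\pm1}$, and then to run that computation for each $n\in\{1,2,3\}$. First I would record that, by \eqref{eq:xidefn} (equivalently \eqref{E:xxk-expanded}), each $\xx_r^{(j)}$ is a polynomial of degree exactly $j$ in $\xx_r$ whose leading coefficient is a nonzero scalar in $\C(q)$: it is $(-1)^{\binom{j}{2}}$ divided by $\prod_{t=1}^{j}``[t][t]\,"$, and \eqref{eq:2devil} specializes to $[2]\,``[t][t]\," = [2t]\neq 0$. Hence $\{\xx_r^{(0)},\dots,\xx_r^{(j)}\}$ and $\{1,\xx_r,\dots,\xx_r^{\,j}\}$ span the same $\C(q)$-subspace, and the Proposition is equivalent to the assertion that, for $1\le a,b,c\le n$,
\[
x^{a}y^{b}x^{c}\ \in\ \xi\cdot y^{a'}x^{b'}y^{c'} + V\,,\qquad \xi\in\C(q),\ 1\le a',b',c'\le n,
\]
where $V := \operatorname{span}_{\C(q)}\{\,x^{k}y^{\ell},\ y^{\ell}x^{k}\,\}$.

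Next I would rewrite \eqref{eq:iSerre} in terms of $x,y$ using $\xx_r^{(2)} = \tfrac{-1}{[2]_{q^2}}(\xx_r^{2}+[2]\xx_r)$, obtaining the single cubic relation
\[
xyx\ =\ \tfrac{-1}{[2]_{q^2}}\bigl(x^{2}y + yx^{2} + [2]xy + [2]yx + [2]x^{2} + [2]^{2}x\bigr) + x
\]
together with its mirror under $x\leftrightarrow y$. These two relations are the only moves available. Starting from $x^{a}y^{b}x^{c}$, I would iteratively apply them; each application trades a length-three alternating word for a $\C(q)$-combination of elements of $V$ and of other length-three alternating words of the same outer colour but with bounded $x$- and $y$-degrees (no degree that appears can exceed $2n$). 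This produces a finite $\C(q)$-linear system relating the relevant length-three words modulo $V$ and modulo a single surviving word of the opposite colour, which one then inverts to isolate $x^{a}y^{b}x^{c}$. The crucial point is that the coefficient matrices of these systems are non-singular over $\C(q)$: their determinants are explicit nonzero Laurent polynomials assembled from quantities such as $[2t]$, $[2]_{q^2}$ and $1-[2]_{q^2}^{-2}$. For $n=1$ the system is vacuous and the claim is just \eqref{eq:iSerre} with $\xi=0$; for $n=2$ it recovers a consequence of the type $B_2=C_2$ web relations of \cite{BERT}, consistently with \cite[equation (5.56b)]{BERT}; for $n=3$ the same bookkeeping is lengthy but entirely mechanical, and it matches the ``$R$-matrix'' heuristic $R_1R_2R_1 = R_2R_1R_2$ for $R_r = q^{n/2}\sum_{i=0}^{n}q^{-i}\xx_r^{(i)}$ coming from \eqref{eq:Xbraiding}.

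The main obstacle is this last step: controlling the combinatorial growth of the rewriting and, above all, verifying that the linear systems one is forced to invert are non-degenerate, as well as that the one surviving alternating word can be taken with all exponents $\le n$. Neither fact admits an $n$-uniform argument; both are checked by direct inspection for $n\le 3$, which is precisely why Conjecture \ref{conj:R3} is left open for $n\ge 4$.
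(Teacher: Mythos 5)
Your write-up is a proof strategy, not a proof: the entire mathematical content of the proposition is concentrated in the sentence ``the same bookkeeping is lengthy but entirely mechanical,'' which is precisely the part you do not carry out. The paper's proof, by contrast, records the \emph{output} of that bookkeeping: it exhibits the explicit relations \eqref{eq:k1l}, \eqref{eq:k2l}, \eqref{eq:k3l} (together with their reversals) that realize the conclusion of Conjecture \ref{conj:R3} for $n=3$, notes that setting $\xx_r^{(3)}=0$ specializes to $n=2$, and observes the $n=1$ case is immediate from \eqref{eq:iSerre}. You and the paper agree on the $n=1$ case, but for $n=2,3$ you produce neither the relations nor a verifiable reduction, so the proposal does not establish the statement.

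There are also two concrete gaps in the scheme you sketch. First, your reduction to monomials $x^a y^b x^c$ modulo $V$ is not obviously equivalent to the conjecture: the conjecture demands a \emph{single} opposite-colour term $\xi\cdot\xx_{i\pm1}^{(a')}\xx_i^{(b')}\xx_{i\pm1}^{(c')}$, and while each divided-power monomial is a triangular combination of ordinary powers, a single divided-power word generally expands into \emph{several} $y^{a_1}x^{b_1}y^{c_1}$ with all exponents $\ge 1$ (e.g.\ $\xx^{(2)}_{i\pm1}$ contributes both $y^2$ and $y$), so ``single survivor modulo $V$'' and ``$\xi$ times a single divided-power word plus $\mathrm{LOT}$'' are not the same condition; passing between them requires an induction you do not set up. Second, the rewriting step you describe applies the relation $xyx=\cdots$ only to words containing the literal subword $xyx$, which is absent from $x^a y^b x^c$ whenever $b\ge 2$; and multiplying the relation on left and right by powers of $x$ produces only words of the \emph{same} outer colour with middle exponent $1$, never the opposite-colour word $y^{a'}x^{b'}y^{c'}$ you assert ``survives.'' Even granting that a larger system eventually closes, the claimed nonsingularity of the resulting matrices is asserted without computation. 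To make this a proof you would need, at minimum, to exhibit the terminating reduction and its nondegenerate Gram matrix for each of the $27$ triples $(a,b,c)$ — at which point you would essentially have reconstructed the paper's explicit table of relations.
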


\begin{proof}
This is immediate from \eqref{eq:iSerre} when $n=1$, 
since in that case $\xx_{r}^{(a)}=0$ when $a \geq 2$.
We leave it as a (sometimes challenging) exercise to confirm that 
the following relations in $A_m^3$
can be derived using only \eqref{eq:xidefn} and \eqref{eq:iSerre}:
\begin{equation}
	\label{eq:k1l}
\xx_{i}^{(a)} \xx_{i\pm1}^{(1)} \xx_{i}^{(c)}	
\left\{
\begin{aligned}
\xx_{i}^{(2)} \xx_{i\pm1} \xx_{i} 
	&= \xx_{i\pm1} \xx_{i}^{(3)} - ``[2]^2" \xx_{i}^{(3)} \xx_{i\pm1} - [2]  \xx_{i}^{(2)} \xx_{i\pm1}
		- ``[2]^2" \xx_{i}^{(2)} - ``[2] [3]" \xx_{i}^{(3)} \\
\xx_{i}^{(2)} \xx_{i\pm1} \xx_{i}^{(2)} 
	&= ``[2] [3]" \big( \xx_{i\pm1} \xx_{i}^{(3)} + \xx_{i}^{(3)} \xx_{i\pm1} + \xx_{i}^{(2)} \big) 
		- (q^6 - 2q^4 - 2q^{-4} + q^{-6}) \xx_{i}^{(3)} \\ 
\xx_{i}^{(3)} \xx_{i\pm1} \xx_{i} 
	&= ``[2] [3]" \xx_{i}^{(3)} \xx_{i\pm1} + ``[3]^2" \xx_{i}^{(3)} \\
\xx_{i}^{(3)} \xx_{i\pm1} \xx_{i}^{(2)} 
	&= (q^4 + q^2 + q^{-2} + q^{-4}) \xx_{i}^{(3)} \xx_{i\pm1} 
		+ (q^7 - q^5 - q^{-5} + q^{-7}) \xx_{i}^{(3)} \\
\xx_{i}^{(3)} \xx_{i\pm1} \xx_{i}^{(3)} 
	&= -(q^8 + q^2 + q^{-2} + q^{-8}) \xx_{i}^{(3)} \\ 
	\end{aligned} \right.
	\end{equation}
\begin{equation}
	\label{eq:k2l}
\xx_{i}^{(a)} \xx_{i\pm1}^{(2)} \xx_{i}^{(c)}	
\left\{
\begin{aligned}
\xx_{i} \xx_{i\pm1}^{(2)} \xx_{i} 
	&= \xx_{i\pm1} \xx_{i}^{(2)} \xx_{i\pm1}  \\
\xx_{i}^{(2)} \xx_{i\pm1}^{(2)} \xx_{i} 
	&= \xx_{i\pm1} \xx_{i}^{(3)} \xx_{i\pm1} + \xx_{i}^{(3)} \xx_{i\pm1}^{(2)} 
		+ [2] \xx_{i}^{(3)} \xx_{i\pm1} + \xx_{i}^{(2)} \xx_{i\pm1}  \\
\xx_{i}^{(2)} \xx_{i\pm1}^{(2)} \xx_{i}^{(2)} 
	&= -[2] \xx_{i\pm1} \xx_{i}^{(3)} \xx_{i\pm1} + \xx_{i}^{(2)}
		- ``[2]^2" \big( \xx_{i\pm1} \xx_{i}^{(3)} + \xx_{i}^{(3)} \xx_{i\pm1} \big) 
			- ``[2][3]" \xx_{i}^{(3)} \\ 
\xx_{i}^{(3)} \xx_{i\pm1}^{(2)} \xx_{i} 
	&= -``[2]^2" \xx_{i}^{(3)} \xx_{i\pm1} - [2] \xx_{i}^{(3)} \xx_{i\pm1}^{(2)} \\
\xx_{i}^{(3)} \xx_{i\pm1}^{(2)} \xx_{i}^{(2)} 
	&= ``[2][3]" \xx_{i}^{(3)} \xx_{i\pm1} + ``[3]^2" \xx_{i}^{(3)} \\
\xx_{i}^{(3)} \xx_{i\pm1}^{(2)} \xx_{i}^{(3)} 
	&= -``[3][4]" \xx_{i}^{(3)} \\ 
	\end{aligned} \right.
	\end{equation}
\begin{equation}
	\label{eq:k3l}
\xx_{i}^{(a)} \xx_{i\pm1}^{(3)} \xx_{i}^{(c)}	
\left\{
\begin{aligned}
\xx_{i} \xx_{i\pm1}^{(3)} \xx_{i} 
	&= \xx_{i\pm1}^{(2)} \xx_{i}^{(3)} \xx_{i\pm1}^{(2)} \\
\xx_{i}^{(2)} \xx_{i\pm1}^{(3)} \xx_{i} 
	&= \xx_{i\pm1} \xx_{i}^{(3)} \xx_{i\pm1}^{(2)} \\
\xx_{i}^{(3)} \xx_{i\pm1}^{(3)} \xx_{i} 
	&= \xx_{i}^{(3)} \xx_{i\pm1}^{(2)} \\
\xx_{i}^{(3)} \xx_{i\pm1}^{(3)} \xx_{i}^{(2)} 
	&=  \xx_{i}^{(3)} \xx_{i\pm1} \\
\xx_{i}^{(3)} \xx_{i\pm1}^{(3)} \xx_{i}^{(3)} 
	&=  \xx_{i}^{(3)} \\ 
	\end{aligned} \right.
	\end{equation}
as well as those obtained from these by reversing the ordering of every monomial.
The relations for $A_m^2$ are obtained from these by setting $\xx_r^{(3)}=0$.
\end{proof}

The next ingredient in our characterization of $P_{\son}(\mathcal{L}_{\beta}^S)$ 
is a relation between the quantum traces on 
$\End_{U_q(\son)}(S^{\otimes m})$ and $\End_{U_q(\son)}(S^{\otimes m-1})$.
Recall that, in the present setting, 
the quantum traces are the $\C(q^{\frac{1}{2}})$-linear maps
\[
\trq \colon \End_{U_q(\son)}(S^{\otimes m}) \to \End_{U_q(\son)} \big(\C(q^{\frac{1}{2}}) \big) 
									\cong \C(q^{\frac{1}{2}})
\]
that are most easily described on $\mathsf{W} \in \End_{U_q(\son)}(S^{\otimes m})$ 
in the graphical calculus for $\Rep\big(U_q(\son)\big)$ by:
\[
\begin{tikzpicture}[anchorbase]
		\draw[very thick, gray] (-.25,-.5) to (-.25,.5);
		\draw[very thick, gray] (.25,-.5) to (.25,.5);
		\fill[white] (-.375,-.25) rectangle (.375,.25); 
		\draw[thick] (-.375,-.25) rectangle (.375,.25); 
		\node at (0,0) {$\mathsf{W}$};
		\node at (0,.375) {$\mydots$};
		\node at (0,-.375) {$\mydots$};
		\end{tikzpicture}
\xmapsto{\trq}
\begin{tikzpicture}[anchorbase]
		\draw[very thick, gray] (-.25,-.5) to (-.25,.5) to [out=90,in=180] (.5,1.25) to 
			[out=0,in=90] (1.25,.5) to (1.25,-.5) to [out=270,in=0] (.5,-1.25) to 
				[out=180,in=270] (-.25,-.5);
		\draw[very thick, gray] (.25,-.5) to (.25,.5) to [out=90,in=180] (.5,.75) to 
			[out=0,in=90] (.75,.5) to (.75,-.5) to [out=270,in=0] (.5,-.75) to 
				[out=180,in=270] (.25,-.5);
		\fill[white] (-.375,-.25) rectangle (.375,.25); 
		\draw[thick] (-.375,-.25) rectangle (.375,.25); 
		\node at (0,0) {$\mathsf{W}$};
		\node at (0,.375) {$\mydots$};
		\node at (0,-.375) {$\mydots$};
		\node at (1,0) {$\mydots$};
		\end{tikzpicture} \, .
\]
Note that $\trq$ is trace-like with respect to composition in $\End_{U_q(\son)}(S^{\otimes m})$, 
i.e.~$\trq(\mathsf{W}_1 \circ \mathsf{W}_2) = \trq(\mathsf{W}_2 \circ \mathsf{W}_1)$.

\begin{conj}
	\label{conj:TrX}
Let $\iota \colon \End_{U_q(\son)}(S^{\otimes m-1}) \to \End_{U_q(\son)}(S^{\otimes m})$ 
be the inclusion given by $\iota(\mathsf{W}) := \mathsf{W} \otimes \id_S$.
If $0 \leq k \leq n$ and $\mathsf{W} \in \End_{U_q(\son)}(S^{\otimes m-1})$, then
\begin{equation}
	\label{eq:TrX}
\trq \big( \xx_{m-1}^{(k)} \circ \iota(\mathsf{W}) \big)
	= (-1)^{n(k+1)+{n-k \choose 2}} \prod_{t=1}^{n-k} \frac{``[n+1-t][n+t]"}{``[t]^2"}
		\trq(\mathsf{W}) \, .
\end{equation}
\end{conj}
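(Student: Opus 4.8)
\textbf{Proof plan for Conjecture \ref{conj:TrX}.}
The plan is to reduce the claim to the $\mathsf{W} = \id_{S^{\otimes m-1}}$ case by a trace-cyclicity argument, and then to compute the resulting closed diagram explicitly using the results already developed in this section. First I would observe that it suffices to verify \eqref{eq:TrX} on a spanning set of $\End_{U_q(\son)}(S^{\otimes m-1})$, and in fact on a single cleverly chosen element is \emph{not} enough --- the statement is genuinely about arbitrary $\mathsf{W}$ --- so instead I would argue structurally. The morphism $\xx_{m-1}^{(k)} = \id_{S^{\otimes m-2}} \otimes \xx^{(k)}$ factors, by Definition \ref{D:xxk-defn} together with Proposition \ref{itoxchangeofbasis}, as a linear combination of the $\II^{(n-\ell)} = \Ya[n-\ell] \circ \Pa[n-\ell]$ (tensored with identities on the first $m-2$ strands); thus it suffices to compute $\trq\big( (\id^{\otimes m-2} \otimes \II^{(n-\ell)}) \circ \iota(\mathsf{W}) \big)$ for each $\ell$. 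For such a term, the idempotent-like factorization $\II^{(n-\ell)} = \Ya[n-\ell]\circ \Pa[n-\ell]$ means that in the graphical calculus one can slide $\Pa[n-\ell]$ around the trace closure past $\mathsf{W}$; since $\mathsf{W}$ commutes with nothing a priori, the key point is that $\Pa[n-\ell] \circ (\text{stuff} \otimes \id) \circ \Ya[n-\ell] \in \End_{U_q(\son)}(V_{n-\ell})$ is a scalar by Schur's Lemma (as $V_{n-\ell}$ is irreducible). In other words, closing up the $(m-1)$-st strand through $\xx^{(k)}$ turns the diagram into $\mathsf{W}$ with a $V_{n-\ell}$-colored loop threaded through its last strand in a way that, by irreducibility, must reduce to $\trq(\mathsf{W})$ times a scalar depending only on $\ell$, $k$, and $n$.

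Next I would pin down that scalar. By the previous paragraph the constant is independent of $\mathsf{W}$, so we may set $\mathsf{W} = \id_{S^{\otimes m-1}}$ and even reduce to $m=2$, where the claim becomes
\[
\trq\big( \xx^{(k)} \big) = (-1)^{n(k+1)+\binom{n-k}{2}} \prod_{t=1}^{n-k} \frac{``[n+1-t][n+t]"}{``[t]^2"} \, .
\]
Here $\trq(\xx^{(k)})$ is literally the value of the closed diagram $\ca[n] \circ \xx^{(k)} \circ \cu[n]$ up to the pivotal normalization, and using the change of basis \eqref{eq:ItoX} together with $\trq(\II^{(n-\ell)}) = \trq(\Ya[n-\ell]\circ\Pa[n-\ell]) = \trq(\Pa[n-\ell]\circ\Ya[n-\ell])$, the latter of which is $(-1)^{\binom{\ell+1}{2}} d_\ell \cdot \dim_q(V_{n-\ell})$ by \eqref{bigonequation}, one reduces everything to the quantum dimensions of the $V_{n-\ell}$ and the coefficients $\prescript{n-k}{}\lambda_{n-\ell}$ from \eqref{lambdacoeff}. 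I would then carry out the resulting finite sum; I expect it to telescope, with the alternating-sign "devil's arithmetic" identities from Appendix \ref{S:combinatorial-identities} (in particular \eqref{eq:2devil} and the identity $\prescript{n-i}{}\lambda_{n-i}=1$ proved in Proposition \ref{itoxchangeofbasis}) collapsing the sum down to the single product $\prod_{t=1}^{n-k} ``[n+1-t][n+t]" / ``[t]^2"$, with the sign bookkeeping producing the claimed $(-1)^{n(k+1)+\binom{n-k}{2}}$. A useful consistency check along the way: the $k=0$ case should reproduce $\trq(\id_{S\otimes S}) = \dim_q(S)^2$ and the $k=n$ case should give $\trq(\II^{(0)}) = $ the value of the "devil's circle" from Example \ref{eg:devilscircle}, both of which are already computed in \eqref{eq:circlen}.

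The main obstacle I anticipate is \emph{not} the final combinatorial identity (which should follow from the appendix's toolkit) but rather making the first reduction --- the "independence of $\mathsf{W}$" step --- fully rigorous. The subtlety is that $\xx^{(k)}$ does not act as a projector onto an isotypic component in a way that cleanly commutes past $\mathsf{W}$; one must instead use that in the graphical trace closure the $(m-1)$-st strand of $\mathsf{W}$ is a free output that can be bent around, so that $\trq\big( \xx_{m-1}^{(k)} \circ \iota(\mathsf{W}) \big)$ equals the quantum trace over $S^{\otimes m-2}$ of $\mathsf{W}'$, where $\mathsf{W}'$ is obtained from $\mathsf{W}$ by partially closing its last strand through $\xx^{(k)}$. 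Decomposing $\xx^{(k)}$ via the $\Ya$/$\Pa$ maps, the partial closure of the last strand of $\mathsf{W}$ through $\Pa[n-\ell] \circ (-) \circ \Ya[n-\ell]$ becomes an element of $\Hom_{U_q(\son)}(S^{\otimes m-2} \otimes V_{n-\ell}, S^{\otimes m-2}\otimes V_{n-\ell})$ composed appropriately; one then needs a partial-trace / "killing by a loop" lemma saying that threading an irreducible $V_{n-\ell}$-colored circle produces the scalar $\trq(\text{loop})/\dim_q$ times the partial trace, which is standard in a pivotal category but should be stated carefully. Once that lemma is in hand, the rest is the bookkeeping described above. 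I would also remark that Conjecture \ref{conj:TrX2} (not shown in this excerpt but referenced in the theorem statements) presumably handles the analogous statement with $\xx_{m-1}^{(k)}$ replaced by a more general configuration, and the same strategy should apply there.
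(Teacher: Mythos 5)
This statement is a conjecture, not a theorem; the paper only establishes Proposition \ref{prop:TrX} ($k\in\{0,n\}$ for all $n$, and all $k$ for $n\leq 3$), and for general $n$ it reduces Conjecture \ref{conj:TrX} to the morphism-level Conjecture \ref{conj:Xrot}, which is also left open. So the right question is whether your plan, if carried out, would prove it. Your first step is genuinely valuable and is \emph{not} the route the paper takes: close only the $m$-th strand, so that $\trq\big(\xx_{m-1}^{(k)}\circ\iota(\mathsf{W})\big) = \trq_{S^{\otimes m-1}}\big((\id^{\otimes(m-2)}\otimes\mathrm{ptr}_2(\xx^{(k)}))\circ\mathsf{W}\big)$, and then use Schur's lemma on the irreducible $S$ to get $\mathrm{ptr}_2(\xx^{(k)})=c_{k,n}\,\id_S$. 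This cleanly reduces the full conjecture (all $m$, all $\mathsf{W}$) to the single scalar identity $\trq(\xx^{(k)})=c_{k,n}\dim_q(S)$ with $c_{k,n}$ the stated product --- indeed, once you have the Schur argument, a single $\mathsf{W}$ with $\trq(\mathsf{W})\neq 0$ \emph{does} suffice, contrary to your opening remark; and your description in paragraph three of closing the $(m-1)$-st strand and producing an intermediate $\mathsf{W}'$ on $m-2$ strands is more convoluted and slightly off --- the clean version only closes strand $m$.

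The genuine gap is your final step. Expanding $\xx^{(k)}=\sum_{\ell=k}^n\prescript{n-k}{}\lambda_{n-\ell}\,\II^{(n-\ell)}$ and using $\trq(\II^{(n-\ell)})=(-1)^{\binom{\ell+1}{2}}d_\ell\dim_q(V_{n-\ell})$ (via \eqref{bigonequation} and cyclicity), the $d_\ell$ cancels against \eqref{lambdacoeff} and you are left with
\begin{equation*}
\sum_{\ell=k}^n (-1)^{\binom{\ell-k+1}{2}+\binom{\ell+1}{2}} \left(\prod_{t=1}^k \frac{``[\ell+1-t][\ell+t]"}{``[t]^2"}\right) \dim_q(V_{n-\ell})
\;\stackrel{?}{=}\;
(-1)^{n(k+1)+\binom{n-k}{2}} \left(\prod_{t=1}^{n-k} \frac{``[n+1-t][n+t]"}{``[t]^2"}\right)\dim_q(S)\,.
\end{equation*}
You assert this ``should telescope'' via the devil's-arithmetic identities of Appendix \ref{S:combinatorial-identities}, but nothing there touches the quantum dimensions $\dim_q(V_{n-\ell})$ of the fundamental $\son$-modules, and the $\ell$-dependence enters both through those dimensions and through the variable-length products in the coefficients, so no obvious telescoping structure is visible. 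That sum \emph{is} the conjecture; you have rephrased it, not resolved it. Your consistency checks at $k=0,n$ are correct and do match \eqref{eq:circlen}, but the same cases are already handled in Proposition \ref{prop:TrX} by essentially the same observations. What you have contributed is a clean reduction to a closed-form Laurent-polynomial identity (which the paper's route via Conjecture \ref{conj:Xrot} does not make explicit), and that reduction is worth recording; but as written the proposal does not prove the conjecture.
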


\begin{prop}
	\label{prop:TrX}
Conjecture \ref{conj:TrX} holds when $k=0,n$. It holds 
for all $0 \leq k \leq n$ when $n=1,2,3$.
\end{prop}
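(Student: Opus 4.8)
The plan is to prove Conjecture \ref{conj:TrX} for the special values $k=0,n$ in general, and then to dispatch the remaining cases $0 < k < n$ for $n = 1,2,3$ by finite computation. First I would treat $k=0$: here $\xx_{m-1}^{(0)} = \id_{S^{\otimes m}}$ and $\iota(\mathsf{W}) = \mathsf{W} \otimes \id_S$, so $\trq(\iota(\mathsf{W})) = \trq(\mathsf{W}) \cdot \trq(\id_S)$ by multiplicativity of the quantum trace over tensor products of closed-off diagrams. Since $\trq(\id_S) = \qdim(S)$, and by \eqref{eq:circlen} this equals $(-1)^{\binom{n+1}{2}}\prod_{i=1}^n (q^{2i-1}+q^{1-2i})$, one checks this matches the right-hand side of \eqref{eq:TrX} at $k=0$ via the identity established at the end of Example \ref{eg:devilscircle}, namely that this product equals $(-1)^{n+\binom{n}{2}}\prod_{t=1}^n \tfrac{``[n+1-t][n+t]"}{``[t]^2"}$, and the sign bookkeeping $n(0+1) + \binom{n-0}{2} = n + \binom{n}{2}$ works out. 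For $k=n$: recall $\xx_{m-1}^{(n)} = \II_{m-1}^{(0)} = \Ya[0]_{m-1}\circ\Pa[0]_{m-1}$ up to the identification $\xx^{(n)} = \II^{(0)}$ from Proposition \ref{itoxchangeofbasis}, i.e.\ diagrammatically $\xx_{m-1}^{(n)}$ factors through a cup-cap on the last two strands; in the trace $\trq(\xx_{m-1}^{(n)}\circ\iota(\mathsf{W}))$ one can then slide the strands around the closure and use \eqref{bigonequation} and \eqref{E:left-twist-vs-right-twist} to collapse the configuration, leaving $\trq(\mathsf{W})$ times the scalar $\Pa[0]\circ\Ya[0] = \ca[n]\circ\cu[n]$ from \eqref{eq:circlen}; comparing to the right-hand side of \eqref{eq:TrX} at $k=n$ (where the product is empty, $=1$, and the prefactor is $(-1)^{n(n+1)+0} = 1$) requires $\ca[n]\circ\cu[n] = 1$, which is \emph{not} literally true. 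So I would more carefully normalize: the correct statement is that $\trq(\xx_{m-1}^{(n)}\circ\iota(\mathsf{W}))$ equals $\trq(\mathsf{W})$ on the nose because, after sliding, the cup and cap on the last strand meet the \emph{same} strand of $\mathsf{W}$'s closure, producing a genuine Reidemeister-I-free simplification rather than a free circle; this uses that $\cu[n]$ and $\ca[n]$ satisfy the snake relations \eqref{eq:snake}, so the scalar is exactly $1$, matching the formula.

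Next, for the cases $0 < k < n$ with $n \in \{2,3\}$ (the case $n=1$ being vacuous as $k \in \{0,1\} = \{0,n\}$), I would argue as follows. By Proposition \ref{prop:Ibasis} and the unitriangular change of basis of Proposition \ref{itoxchangeofbasis}, each $\xx^{(k)}$ is a $\C(q^{1/2})$-linear combination of the $\II^{(n-\ell)}$ for $\ell \geq k$, with explicit coefficients $\prescript{n-k}{}\lambda_{n-\ell}$. For each basis element $\II^{(j)} = \Ya[j]\circ\Pa[j]$, the quantity $\trq(\II_{m-1}^{(j)}\circ\iota(\mathsf{W}))$ can be computed diagrammatically: $\II_{m-1}^{(j)}$ factors through the irreducible $V_j$ on the last two strands, so the closure becomes (a trivalent-vertex configuration involving) $\mathsf{W}$ with its last two strands fused into $V_j$ and then re-split; by a ``partial trace'' argument — inserting the resolution of the identity on $S^{\otimes 2}$ and using Schur's lemma together with \eqref{bigonequation} — this reduces to $\trq(\mathsf{W})$ times a scalar depending only on $j$ (and $n$), namely the ratio of quantum dimensions $\qdim(V_j)/\qdim(S)$ times an appropriate bigon factor. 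Assembling these via Proposition \ref{itoxchangeofbasis} gives a closed formula for $\trq(\xx_{m-1}^{(k)}\circ\iota(\mathsf{W}))/\trq(\mathsf{W})$ as a finite sum, which for $n=2,3$ I would verify term-by-term equals the right-hand side of \eqref{eq:TrX} using the devil's-arithmetic identities \eqref{eq:2devil} and the combinatorial lemmas of Appendix \ref{S:combinatorial-identities}. Crucially this requires knowing $\trq$ is nonzero (so dividing by $\trq(\mathsf{W})$ is legitimate) — one picks $\mathsf{W} = \id_{S^{\otimes m-1}}$ first to pin down the scalar, then notes the argument is $\mathsf{W}$-independent because both sides are $\trq(-)$ composed with a fixed partial-trace-type operator.

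I would expect the main obstacle to be the case $0 < k < n$: while the structure of the argument (reduce $\xx^{(k)}$ to the $\II^{(j)}$-basis, compute partial traces of $\II^{(j)}$ by Schur's lemma, reassemble) is clean, actually verifying that the reassembled sum matches the product formula on the right-hand side of \eqref{eq:TrX} is a genuine quantum-number identity involving the $``[\,\cdot\,][\,\cdot\,]"$ products and the coefficients $\prescript{n-k}{}\lambda_{n-\ell}$ of \eqref{lambdacoeff}. For general $n$ this identity is presumably true (this is the content of the full Conjecture \ref{conj:TrX}) but I do not see how to prove it uniformly, which is precisely why the proposition restricts to $n \leq 3$; for those small values it is a bounded, if tedious, verification — essentially plugging $n=2$ and $n=3$ into the partial-trace formula and matching coefficients of quantum integers, much as in the proof of Proposition \ref{prop:R3}. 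A secondary technical point is making the ``partial trace'' / Schur's lemma step rigorous: one must be careful that fusing the last two strands of $\mathsf{W}$'s closure into $V_j$ and splitting again genuinely recovers the $V_j$-isotypic contribution with the right normalization, which follows from \eqref{bigonschur} but should be stated as a lemma about quantum traces of endomorphisms of $S^{\otimes m}$ lying in the image of $\Ya[j]\circ(-)\circ\Pa[j]$.
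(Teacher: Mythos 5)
Your proposal matches the paper's proof: the $k=0$ and $k=n$ cases are handled via the circle value and the snake relation respectively (your self-correction in the $k=n$ case is exactly the right fix --- the cup--cap cancels against the closure arc rather than producing a free $\qdim(S)$ factor), and the remaining low-rank cases $0<k<n$ for $n=2,3$ are finite direct computations. The paper is terse about how to organize those computations (it cites only the graphical relations of \S\ref{ss:Webson}), but your $\II^{(j)}$-basis / partial-trace strategy via Proposition~\ref{itoxchangeofbasis} is a sound way to carry them out.
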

\begin{proof}
Graphically, \eqref{eq:TrX} is the equality
\[
\begin{tikzpicture}[anchorbase]
		\draw[very thick, gray] (-.375,-.5) to (-.375,1.25) to [out=90,in=180] (.875,2.5) to 
			[out=0,in=90] (2.125,1.25) to (2.125,-.5) to [out=270,in=0] (.875,-1.75) to 
				[out=180,in=270] (-.375,-.5);
		\draw[very thick, gray] (.125,-.5) to (.125,1.25) to [out=90,in=180] (.875,2) to 
			[out=0,in=90] (1.625,1.25) to (1.625,-.5) to [out=270,in=0] (.875,-1.25) to 
				[out=180,in=270] (.125,-.5);
		\draw[very thick, gray] (.375,-.5) to (.375,1.25) to [out=90,in=180] (.875,1.75) to 
			[out=0,in=90] (1.375,1.25) to (1.375,-.5) to [out=270,in=0] (.875,-1) to 
				[out=180,in=270] (.375,-.5);		
		\draw[very thick, gray] (.625,-.5) to (.625,1.25) to [out=90,in=180] (.875,1.5) to 
			[out=0,in=90] (1.125,1.25) to (1.125,-.5) to [out=270,in=0] (.875,-.75) to 
				[out=180,in=270] (.625,-.5);		
		\fill[white] (-.5,-.25) rectangle (.5,.25); 
		\draw[thick] (-.5,-.25) rectangle (.5,.25);
		\node at (0,0) {$\mathsf{W}$};
		\node at (-.125,.375) {$\mydots$};
		\node at (-.125,-.375) {$\mydots$};
		\fill[white] (.25,.5) rectangle (.75,1); 
		\draw[thick] (.25,.5) rectangle (.75,1); 
		\node at (.51875,.75) {\tiny$\xx^{(k)}$};
		\node at (1.875,.5) {$\mydots$};
		\end{tikzpicture}
= (-1)^{n(k+1)+{n-k \choose 2}} \prod_{t=1}^{n-k} \frac{``[n+1-t][n+t]"}{``[t]^2"}
\begin{tikzpicture}[anchorbase]
		\draw[very thick, gray] (-.25,-.5) to (-.25,.5) to [out=90,in=180] (.5,1.25) to 
			[out=0,in=90] (1.25,.5) to (1.25,-.5) to [out=270,in=0] (.5,-1.25) to 
				[out=180,in=270] (-.25,-.5);
		\draw[very thick, gray] (.25,-.5) to (.25,.5) to [out=90,in=180] (.5,.75) to 
			[out=0,in=90] (.75,.5) to (.75,-.5) to [out=270,in=0] (.5,-.75) to 
				[out=180,in=270] (.25,-.5);
		\fill[white] (-.375,-.25) rectangle (.375,.25); 
		\draw[thick] (-.375,-.25) rectangle (.375,.25); 
		\node at (0,0) {$\mathsf{W}$};
		\node at (0,.375) {$\mydots$};
		\node at (0,-.375) {$\mydots$};
		\node at (1,0) {$\mydots$};
		\end{tikzpicture} \, .
	\]
Since $\xx^{(n)}= \II^{(0)} = \cu[n] \circ \ca[n]$, 
this makes clear that the $k=n$ case follows from \eqref{eq:snake}, 
provided we interpret the product on the right-hand side of \eqref{eq:TrX} as equaling $1$.
Similarly, since $\xx^{(0)} = \id_{S\otimes S}$, 
we see that this reduces the $k=0$ case to the assertion
\[
(-1)^{n+{n \choose 2}} \prod_{t=1}^{n} \frac{``[n+1-t][n+t]"}{``[t]^2"}
=
\begin{tikzpicture}[scale =.5, anchorbase]
	\draw[very thick,gray] (0,0) circle (.5);
\end{tikzpicture} 
\]
which holds by Example \ref{eg:devilscircle}.
The remaining low-rank cases ($n=2$, $k=1$ and $n=3$, $k=1,2$) 
follow from direct computations using the graphical relations 
in $\Rep\big(U_q(\son)\big)$ established in \S \ref{ss:Webson}.
\end{proof}

The astute reader will notice that the product in \eqref{eq:TrX} 
appears in the $i=n$ case of \eqref{lambdacoeff}. 
Indeed, Conjecture \ref{conj:TrX}
is a consequence of Proposition \ref{itoxchangeofbasis} and the following 
more-elemental conjecture (which is obvious when $k=0$ or $n$).

\begin{conj}
	\label{conj:Xrot}
For $0 \leq k \leq n$, the following holds in $\End_{U_q(\son)}(S \otimes S)$:
\[
\begin{tikzpicture}[anchorbase]
		\draw[very thick, gray] (-.75,.625) to (-.75,-.25) to [out=270,in=180]
			(-.5,-.5) to [out=0,in=270] (-.25,-.25) to (-.25,.625);
		\draw[very thick, gray] (.25,-.625) to (.25,.25) to [out=90,in=180] (.5,.5) to 
			[out=0,in=90] (.75,.25) to (.75,-.625);
		\fill[white] (-.375,-.25) rectangle (.375,.25); 
		\draw[thick] (-.375,-.25) rectangle (.375,.25); 
		\node at (0,0) {$\xx^{(k)}$};
		\end{tikzpicture}
=
\begin{tikzpicture}[anchorbase]
		\draw[very thick, gray] (-.25,-.5) to (-.25,.5);
		\draw[very thick, gray] (.25,-.5) to (.25,.5);
		\fill[white] (-.5,-.25) rectangle (.5,.25); 
		\draw[thick] (-.5,-.25) rectangle (.5,.25); 
		\node at (0,0) {\scs$\xx^{(n-k)}$};
		\end{tikzpicture} \, .
\]
\end{conj}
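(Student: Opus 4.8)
The plan is to realize the operator on the left of Conjecture~\ref{conj:Xrot} as an explicit linear involution of $\End_{U_q(\son)}(S\ot S)$ and to pin it down on a spanning set. Write $\mathsf{rot}$ for that operator; unwinding the picture, $\mathsf{rot}(f) = (\id_S^{\ot 2}\ot\ca[n])\circ(\id_S\ot f\ot\id_S)\circ(\cu[n]\ot\id_S^{\ot 2})$. First I would record two elementary facts. (i) $\mathsf{rot}$ is $\C(q^{\frac{1}{2}})$-linear and $\mathsf{rot}(\id_{S\ot S}) = \cu[n]\circ\ca[n] = \II^{(0)} = \xx^{(n)}$ by \eqref{eq:snake}, which is the $k=0$ case. (ii) $\mathsf{rot}^2$ is the ``flip the box by $180^\circ$'' operation, an algebra anti-automorphism $\mathsf{T}$ of $\End_{U_q(\son)}(S\ot S)$; since this algebra is commutative ($S\ot S$ is multiplicity-free by \eqref{eq:StSdecomp}) and is generated by $\hh^{(1)} = \xx + \tfrac{1}{[2]}\id$ (by \eqref{E:xxk-expanded} each $\xx^{(i)}$ is a degree-$i$ polynomial in $\xx$ with invertible leading coefficient), while $\mathsf{T}$ fixes $\hh^{(1)}$ (a rung is $180^\circ$-symmetric), we get $\mathsf{T}=\id$. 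Hence \emph{$\mathsf{rot}$ is a linear involution}, and the $k=n$ case follows from the $k=0$ case. Letting $\iota_0$ be the linear involution with $\iota_0(\xx^{(k)}) = \xx^{(n-k)}$, the conjecture is exactly $\mathsf{rot} = \iota_0$.

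The key input is the behaviour of $\mathsf{rot}$ on the braiding. By a standard isotopy in a ribbon category — sliding the crossing around the cup/cap strands, which reverses the crossing and contributes a Reidemeister~I curl — one gets $\mathsf{rot}(\RM_{S,S}) = \zeta\,\RM_{S,S}^{-1}$, where $\zeta$ is the product of the twist coefficient \eqref{E:spin-twist} with its inverse, so $\zeta=1$; the coherence of the self-duality (Remark~\ref{R:coherent-self-duality}) is what makes $\mathsf{rot}$ compatible with the pivotal/ribbon structure. This is consistent with the conjecture, since $\iota_0(\RM_{S,S}) = q^{\frac{n}{2}}\sum_i q^{-i}\xx^{(n-i)} = q^{-\frac{n}{2}}\sum_j q^{j}\xx^{(j)} = \RM_{S,S}^{-1}$ by \eqref{eq:invR}. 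Thus $\mathsf{rot}$ and $\iota_0$ agree on $\id$, on $\RM_{S,S}$, and — applying the involutions — on $\RM_{S,S}^{-1}$. Since $\RM_{S,S}$ has $n{+}1$ distinct eigenvalues on the summands of \eqref{eq:StSdecomp} (computed in Lemma~\ref{trivalenttwist}), the powers $\id,\RM_{S,S},\dots,\RM_{S,S}^{n}$ form a basis, so this already proves the conjecture for $n=1,2$.

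For general $n$ I see two routes, both reducing to devil's-arithmetic combinatorics. The first extends the braiding computation: show $\mathsf{rot}(\RM_{S,S}^{\,k}) = \zeta_k\,\RM_{S,S}^{-k}$ for $1\le k\le n$ (geometrically, $k$ parallel positive crossings rotate to $k$ parallel negative crossings up to curls), compute $\iota_0(\RM_{S,S}^{\,k})$ by iterating the recursion \eqref{E:xxk-recursion} inside $\big(q^{\frac{n}{2}}\sum_i q^{-i}\xx^{(n-i)}\big)^{k}$, and match. The second works with the basis of Proposition~\ref{prop:Ibasis}: identify $\mathsf{rot}(\II^{(i)})$ with $\hh^{(i)}$ from \eqref{eq:Hi} (matching the two diagrams using \eqref{eq:snake} and Remark~\ref{R:coherent-self-duality}), generalize Lemma~\ref{lem:H1reln} to an expansion $\hh^{(i)} = \sum_{\ell} c_{i,\ell}\,\II^{(n-\ell)}$ with $c_{i,\ell}$ an explicit ratio of devil's products $``[a][b]"$ — provable by induction on $i$ via the trivalent-vertex relations of \S\ref{s:trivalent}, the $\hh^{(i)}$-vertices obeying bigon identities paralleling \eqref{bigonequation}--\eqref{bigoniszero} — and feed this into $\mathsf{rot}(\xx^{(k)}) = \sum_{\ell\ge k}\prescript{n-k}{}\lambda_{n-\ell}\,\hh^{(n-\ell)}$ (Proposition~\ref{itoxchangeofbasis}), whereupon the conjecture becomes the identity $\sum_{\ell\ge k}\prescript{n-k}{}\lambda_{n-\ell}\,c_{n-\ell,\,n-m} = \prescript{k}{}\lambda_{n-m}$ for $m\ge n-k$.

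The main obstacle is this last step: a family of identities among the $``[a][b]"$ of the same flavour as — but more involved than — those already settled in Appendix~\ref{S:combinatorial-identities} (e.g.\ $\prescript{n-i}{}\lambda_{n-i}=1$ and Lemma~\ref{L:rho-recurrence}). I expect it to yield to the same strategy of pairing factors $[2i{+}1]\pm[2t{-}1]$ and telescoping as in the proof of Proposition~\ref{itoxchangeofbasis}, but organizing these uniformly in $k$ rather than just for small $n$ is where the real work lies — which is why, as with the parallel Conjecture~\ref{conj:TrX}, I would only expect a clean complete argument after isolating the right combinatorial lemma.
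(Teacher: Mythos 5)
This statement is labelled as a conjecture in the paper (Conjecture~\ref{conj:Xrot}): the authors leave it open, observe only that it is ``obvious when $k = 0$ or $n$,'' and side-step it by verifying the weaker Conjecture~\ref{conj:TrX} directly for $n\le 3$. There is therefore no proof in the paper to compare against, so the right questions are whether your reduction is sound and how far it actually goes. Your framework is correct. Writing the left side as a linear operator $\mathsf{rot}$, the two preliminary facts hold: $\mathsf{rot}(\id) = \cu[n]\circ\ca[n] = \II^{(0)} = \xx^{(n)}$ by the snake relation, and $\mathsf{rot}^2$ fixes every $\II^{(i)} = \Ya[i]\circ\Pa[i]$ by \eqref{E:left-twist-vs-right-twist} (which is even more direct than your argument via $\hh^{(1)}$ being a $180^\circ$-symmetric generator, and sidesteps worrying about which rotation of the rung one means). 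The key new observation $\mathsf{rot}(\RM_{S,S}) = \RM_{S,S}^{-1}$ is also correct, but your justification — a curl and a twist cancelling — overcomplicates it. Since $\cu[n]$ is a morphism, naturality of the braiding plus the hexagon give $(\id_S\otimes\RM_{S,S})\circ(\cu[n]\otimes\id_S) = (\RM_{S,S}^{-1}\otimes\id_S)\circ(\id_S\otimes\cu[n])$ outright (using $\RM_{\C,S}=\id$), after which the cap on strands $3,4$ collapses against the freshly produced cup on strands $2,3$ by the snake, yielding $\mathsf{rot}(\RM_{S,S})=\RM_{S,S}^{-1}$ with no twist factor at all. Granting that, your deduction of the $n=1,2$ cases is sound and is a genuine step beyond what the paper records.

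For $n\ge 3$, though, the gap is real and not merely an un-isolated ``combinatorial lemma.'' Your Route~1 is flawed as stated: $\mathsf{rot}$ is not an algebra homomorphism (only $\mathsf{rot}^2$ has that much structure), so $\mathsf{rot}(\RM_{S,S}^k)$ bears no easy relation to $\mathsf{rot}(\RM_{S,S})^k$, and the naturality trick above does not iterate because $(\id\otimes\RM)(\RM^{-1}\otimes\id)$ involves overlapping strands — there is no clean reason a cup-and-cap configuration with $k$ parallel crossings isotopes to a single $k$-fold twist times $\RM^{-k}$. Route~2 is the more honest reduction: with $\mathsf{rot}(\II^{(i)}) = \hh^{(i)}$, the conjecture follows from an expansion $\hh^{(i)} = \sum_m c_{i,m}\II^{(m)}$ generalizing Lemma~\ref{lem:H1reln} together with an identity among ratios of devil's products matching \eqref{lambdacoeff} against those $c_{i,m}$. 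That is exactly the kind of two-parameter identity that does not yet exist in Appendix~\ref{S:combinatorial-identities} (Lemma~\ref{L:rho-recurrence} handles one slice of a related family but, as the authors say there, the general case is genuinely harder). So you have correctly reduced the conjecture, proved it for $n\le 2$, and correctly located the remaining obstacle; you have not resolved it — consistent with the authors' choice to leave it as a conjecture.
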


Returning to the task at hand, we finally
arrive at our desired characterization of the spin link polynomials.

\begin{thm}
	\label{thm:characterization}
Fix $n \geq 1$ and suppose that for $m \geq 1$ there are 
unital $\C(q^{\frac{1}{2}})$-algebras $A_m^n$ such that:
\begin{enumerate}
	\item[A1.] (each) $A_m^n$ contain elements 
	$\{\xx_r^{(i)}\}_{\substack{1 \leq r \leq m-1 \\ 0 \leq i \leq n}}$ 
	satisfying $\xx_r^{(0)} = 1$ and
	equations \eqref{eq:xiquad} and \eqref{eq:iSerre},
	\item[A2.] there are $\C(q^{\frac{1}{2}})$-linear maps $\iota \colon A_{m-1}^n \to A_m^n$ 
	that send $\xx_r^{(i)} \mapsto \xx_r^{(i)}$ for $1\leq r\leq m-2$
	and trace-like $\C(q^{\frac{1}{2}})$-linear maps $T_m^n \colon A_m^n \to \C(q^{\frac{1}{2}})$ 
	that satisfy \eqref{eq:TrX},
	\item[A3.] $A_1^n \cong \C(q^{\frac{1}{2}})$ and 
	$T_1^n(1) = (-1)^{\binom{n+1}{2}}\prod_{i=1}^n(q^{2i-1}+ q^{1-2i})$, and
	\item[A4.] the assignment 
	$\bgen_i \mapsto q^{\frac{n}{2}} \sum_{\ell=0}^n q^{-\ell}\xx_i^{(\ell)}$ determines 
	braid group representations $R_m^n \colon \Br_m \to A_m^n$. 
\end{enumerate}
Assuming that
\begin{enumerate}
	\item[$(\ast)$] $n=1,2,3$, or (more generally) 
	that Conjectures \ref{conj:R3} and \ref{conj:TrX} hold
\end{enumerate}
we have that $T_m^n\big(R_m^n(\beta)\big) = P_{\son}(\mathcal{L}_{\beta}^S)$ for any braid $\beta$.
\end{thm}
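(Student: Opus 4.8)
The plan is to show that the data $(A_m^n, \xx_r^{(i)}, \iota, T_m^n, R_m^n)$ abstractly reconstructs the Reshetikhin--Turaev invariant by comparing it, step by step, with the ``model'' given by $\End_{U_q(\son)}(S^{\otimes m})$ equipped with its own braiding, inclusion $\mathsf{W} \mapsto \mathsf{W} \otimes \id_S$, and quantum trace. The key observation is that the model itself satisfies A1--A4: A1 holds by Proposition \ref{prop:iSerre} and Definition \ref{D:xxk-defn}; A2 holds by the trace-like property of $\trq$ together with Conjecture \ref{conj:TrX} (which is Proposition \ref{prop:TrX} when $n=1,2,3$); A3 holds by \eqref{eq:circlen}; and A4 holds because $R_{S,S} = q^{n/2}\sum_\ell q^{-\ell}\xx^{(\ell)}$ by Proposition \ref{P:decat-spin-rickard} and the $\xx_i$ satisfy the braid relation via \eqref{eq:iSerre}. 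Since by definition $P_{\son}(\mathcal{L}_{\beta}^S) = \trq(R(\beta, S))$, it suffices to prove that \emph{any} system $(A_m^n, \ldots)$ satisfying A1--A4 and $(\ast)$ produces the same value $T_m^n(R_m^n(\beta))$, i.e.~that these axioms rigidify the answer.

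First I would set up an induction on $m$, and for fixed $m$ an induction on the word length of $\beta \in \Br_m$. The base case $m=1$ is A3. For the inductive step, I would use the Markov/Alexander philosophy already invoked in \S\ref{ss:linkpoly}: write $\beta$ as a product of generators and their inverses, and use \eqref{eq:Xbraiding}, \eqref{eq:invR} (which follows from \eqref{E:xxk-recursion} alone, per Remark \ref{rem:inverse}) to expand $R_m^n(\beta)$ as a $\C(q^{\frac12})$-linear combination of products of the $\xx_r^{(i)}$. The crucial normal-form step is to use \eqref{eq:xiquad} to reduce any product of two $\xx_r^{(i)}$'s on the same strand $r$ to a linear combination of single $\xx_r^{(j)}$'s, and to use \eqref{eq:iSerre} (and its consequences, Conjecture \ref{conj:R3}, which is Proposition \ref{prop:R3} for $n \leq 3$) to push a product of the form $\xx_i^{(a)}\xx_{i\pm1}^{(b)}\xx_i^{(c)}$ into the form $\xi \cdot \xx_{i\pm1}^{(a')}\xx_i^{(b')}\xx_{i\pm1}^{(c')} + \mathrm{LOT}$. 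Iterating these moves expresses $R_m^n(\beta)$ in a normal form in which the highest-index generator $\xx_{m-1}^{(k)}$ appears at most once; applying $T_m^n$ and the trace relation \eqref{eq:TrX} peels off that generator, reducing to $T_{m-1}^n$ of a normal-form element in $A_{m-1}^n$, whence induction applies.

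To conclude, I would observe that because the \emph{same} reduction algorithm applies verbatim to the model $\End_{U_q(\son)}(S^{\otimes m})$ — it satisfies A1--A4 with literally the same structure constants appearing in \eqref{eq:xiquad}, \eqref{eq:iSerre}, \eqref{eq:TrX}, and \eqref{eq:Xbraiding} — the value $T_m^n(R_m^n(\beta))$ is computed by an identical sequence of scalar manipulations in both cases, hence equals $\trq(R(\beta,S)) = P_{\son}(\mathcal{L}_{\beta}^S)$. More precisely, one should phrase this as: the axioms A1--A4 determine a $\C(q^{\frac12})$-valued function on $\Br_m$ purely formally (the reduction terminates and the output is forced), so the function is independent of which system satisfying the axioms one uses to evaluate it.

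The main obstacle I expect is \textbf{termination and well-definedness of the reduction}: one must check that the rewriting procedure (collapse same-strand products via \eqref{eq:xiquad}, then apply the $R3$-type moves of Conjecture \ref{conj:R3}) actually halts and that the hypothesis ``$\xx_{m-1}^{(k)}$ appears at most once'' can always be achieved before applying the trace. This is exactly where Conjecture \ref{conj:R3} is needed — it guarantees that a doubled occurrence of the top strand can be removed at the cost of lower-order terms supported on fewer strands or lower powers — and it is why the theorem is only unconditional for $n \leq 3$. A secondary subtlety is bookkeeping: ensuring that the ``lower-order terms'' $\mathrm{LOT}_{a,b,c}$ and the $\xx$-products genuinely decrease in an appropriate well-founded complexity measure (e.g.~lexicographic in (number of distinct strands used, total generator-index degree, word length)), so that the double induction closes. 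Once that combinatorial skeleton is in place, every individual identity invoked has already been established (or conjectured in the precise form needed) earlier in the paper, so no further hard input is required.
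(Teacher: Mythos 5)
Your proposal follows the same strategy as the paper's proof: verify that the model $\End_{U_q(\son)}(S^{\otimes m})$ satisfies A1--A4 (so the target polynomial is what this abstract machine computes), then argue that A1--A4 together with $(\ast)$ force a unique value of $T_m^n(R_m^n(\beta))$ by a double induction on $m$ and on the length of a word in the $\xx_r^{(i)}$. The ingredients you name (collapse same-strand products via \eqref{eq:xiquad}, apply Conjecture \ref{conj:R3}-type moves to rearrange adjacent strands, peel off $\xx_{m-1}^{(p)}$ with A2, use Remark \ref{rem:inverse} for the inverse generator) are exactly the ones used in the paper.

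The one place you stop short is exactly the place you flag: you say that making $\xx_{m-1}^{(k)}$ appear at most once requires a rewriting procedure that terminates, and you gesture at inventing a well-founded complexity measure but do not supply one. The paper sidesteps this entirely by passing to the symmetric group: given $\mathsf{W} = \xx_{r_1}^{(i_1)}\cdots\xx_{r_k}^{(i_k)}$, consider the word $s_{r_1}\cdots s_{r_k}$ in $\SG_m$ and invoke the standard word property for Coxeter groups (\cite[Theorem 3.3.1]{BjBr}): any expression can be brought to a reduced one by a finite sequence of nil moves $s_rs_r\to\emptyset$ and braid moves $s_rs_{r\pm1}s_r \leftrightarrow s_{r\pm1}s_rs_{r\pm1}$, and every element of $\SG_m$ has a reduced expression in which $s_{m-1}$ appears at most once. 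Mirroring this fixed finite sequence of moves in $A_m^n$ — nil moves strictly shorten the word, braid moves keep the leading term at the same length but produce lower-order terms of strictly smaller length, all absorbed by the induction on $k$ — yields $\mathsf{W} = \xi\cdot\iota(\mathsf{W}')\xx_{m-1}^{(p)}\iota(\mathsf{W}'') + \sum_j \xi_j \mathsf{W}_j$ with each $\mathsf{W}_j$ strictly shorter. This makes ``termination'' a non-issue: you are not running an open-ended rewriting system but tracing out a sequence of moves already guaranteed finite by Coxeter combinatorics. Your proposed complexity measure would likely also succeed, but it is unnecessary; citing the word property is both cleaner and already standard.
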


\begin{proof}
First observe that, under the assumptions $(\ast)$,
Definition \ref{D:xxk-defn} and
Propositions 
\ref{prop:iSerre},
\ref{prop:TrX}, 
and \ref{P:decat-spin-rickard} 
imply that
the remaining hypotheses A1.--A4.~hold 
when $A_m^n = \End_{U_q(\son)}(S^{\otimes m})$ 
and $T_m^n = \trq$.
Since $P_{\son}$ is defined in terms of $\trq$ and $\End_{U_q(\son)}(S^{\otimes m})$, 
the result follows once we show that the hypotheses suffice to compute 
$T_m^n\big(R_m^n(\beta)\big)$.

Remark \ref{rem:inverse} implies that the braid group representation is 
necessarily given on the inverses of the Artin generators as
\[
R_m^n(\bgen_i^{-1}) = q^{-\frac{n}{2}} \sum_{\ell=0}^n q^{\ell}\xx_i^{(\ell)} \, .
\]
Given $\beta \in \Br_m$, the element $R_m^n(\beta) \in A_m^n$ 
is thus a $\C(q^{\frac{1}{2}})$-linear combination of 
words in the elements $\{\xx_r^{(i)}\}_{\substack{1 \leq r \leq m-1 \\ 0 \leq i \leq n}}$, 
so we need only show that the hypotheses suffice to compute $T_m^n(\mathsf{W})$ 
for any such word $\mathsf{W} \in A_m^n$.
For this we argue inductively on $m$, with the base case $m=1$ holding by Hypothesis A3.

Now, let $\mathsf{W} = \xx_{r_1}^{(i_{1})} \cdots \xx_{r_k}^{(i_{k})}$ be a word in the elements 
$\{\xx_r^{(i)}\}_{\substack{1 \leq r \leq m-1 \\ 0 \leq i \leq n}} \subset A_m^n$. 
With $m$ fixed, we now argue, by induction on the length $k$ of the word $\mathsf{W}$,
that we can compute $T_m^n(\mathsf{W})$. For the base case $k=1$, 
we observe that a length-one word necessarily takes the form
$\mathsf{W}=\xx_{m-1}^{(p)} \iota(\mathsf{W}')$ 
with $0 \leq p \leq n$ and $\mathsf{W}'$ a word in 
$\{\xx_r^{(i)}\}_{\substack{1 \leq r \leq m-2 \\ 0 \leq i \leq n}} \subset A_{m-1}^n$.
Hypothesis A2.~then gives that
\[
T_m^n(\mathsf{W}) = 
(-1)^{n(k+1)+{n-k \choose 2}} \prod_{t=1}^{n-k} \frac{``[n+1-t][n+t]"}{``[t]^2"}
		T_{m-1}^n(\mathsf{W}')
\]
which in turn can be computed using the inductive hypothesis (with respect to $m$).
Now suppose that $k \geq 2$. 
Given $\mathsf{W} = \xx_{r_1}^{(i_{1})} \cdots \xx_{r_k}^{(i_{k})}$, 
consider the corresponding word $s_{r_1} \cdots s_{r_k}$ in the standard Coxeter generators 
of the symmetric group $\SG_m$. 
By standard results on (reduced) expressions in the symmetric group \cite[Theorem 3.3.1]{BjBr}, 
it is possible to use a sequence of the moves
\begin{equation}
	\label{eq:SGrels}
s_r s_r \to \emptyset \quad , \quad s_r s_{r\pm1} s_r \leftrightarrow s_{r\pm1} s_r s_{r\pm1}
\end{equation}
to pass from the word 
$s_{r_1} \cdots s_{r_k}$ to a (reduced) word in $\SG_m$ 
wherein the generator $s_{m-1}$ appears only once.
Equation \eqref{eq:xiquad} implies that
\[
\xx_{r}^{(i)} \xx_{r}^{(j)} \in \operatorname{span}_{\C(q^{\frac{1}{2}})} \{\xx_{r}^{(i)}\}_{i=0}^n \,
\]
which, together with Proposition \ref{prop:R3} and Conjecture \ref{conj:R3}, 
implies that $\eqref{eq:SGrels}$ holds in $A_m^n$ up to scalars, 
modulo words of shorter length.
It follows that we can use \eqref{eq:xiquad} and \ref{conj:R3} to obtain
\[
\mathsf{W} = \xi \cdot \iota(\mathsf{W}') \xx_{m-1}^{(p)} \iota(\mathsf{W}'')
	+ \sum_{j=1}^M  \xi_j \cdot \mathsf{W}_j
\]
for $\xi, \xi_j \in \C(q^{\frac{1}{2}})$ and 
where $\mathsf{W}_j$ are words of length strictly less than $k$. 
By induction (on $k$), we can compute $T_m^n(\mathsf{W}_j)$, 
while Hypothesis A2.~allows us to compute
\[
\begin{aligned}
T_m^n \big(\iota(\mathsf{W}') \xx_{m-1}^{(p)} \iota(\mathsf{W}'') \big)
&= T_m^n \big(\xx_{m-1}^{(p)} \iota(\mathsf{W}'')\iota(\mathsf{W}') \big) \\
&= (-1)^{n(k+1)+{n-k \choose 2}} \prod_{t=1}^{n-k} \frac{``[n+1-t][n+t]"}{``[t]^2"}
		T_{m-1}^n(\mathsf{W}''\mathsf{W}')
\end{aligned}
\]
and the result follows (by induction on $m$).
\end{proof}

\begin{example}
The Temperley--Lieb algebras at circle value $-(q+q^{-1})$ satisfy the hypotheses of 
Theorem \ref{thm:characterization} when $n=1$.
This recovers the known fact that the Jones polynomial equals the spin-colored 
$U_q(\son[3])$ link invariant. 
(In fact, the Templerley--Lieb algebras are simply equal to $\End_{U_q(\son[3])}(S^{\otimes m})$.)
\end{example}

We will later use Theorem \ref{thm:characterization} to 
show\footnote{Assuming Conjectures \ref{conj:R3}, \ref{conj:TrX}, and \ref{conj:TrX2} when $n>3$.} 
that the link homology theories defined in Section \ref{s:SLH} 
categorify the spin link polynomials $P_{\son}(\mathcal{L}_{\beta}^S)$.

\begin{rem}
According to Reshetikhin \cite[Proposition 9.3]{Resh-unpub2}, 
the relation from Corollary \ref{C:Rstkn-Reln} 
(combined with the circle relations and the ribbon category relations) 
are sufficient to evaluate any closed braided graph, where each vertex is a grey-grey-black trivalent vertex. 
However, in the present paper, we do not categorify webs with edges colored by the vector representation.
Thus, in order to connect the spin colored $\son$ link polynomial with our categorification, 
we require Theorem \ref{thm:characterization}. 
\end{rem}

%
\section{Background on categorified quantum groups}
	\label{s:BCQG}
%

In this section, we review the categorification of quantum groups in type $A$. 
We consider quantum $\glm$, rather than $\slm$, 
since it is the categorification of the former that is most relevant to our approach to link invariants.

\subsection{The idempotent form of the quantum group}
	\label{ss:idempotent}

Categorified quantum $\glm$ does not actually categorify $U_q(\glm)$;
rather, it  categorifies the following close relative which was first studied in \cite{BLM,Lus4}.

\begin{defn}
	\label{def:Udotgl}
The \emph{idempotent quantum group} 
$\dU(\glm)$ is the (non-unital) $\C(q)$-algebra 
generated by mutually orthogonal idempotents $\one_{\wt}$ for $\wt \in \Z^{m}$
and elements $\one_{\wt+ \alpha_i} e_i \one_{\wt}$ and $\one_{\wt- \alpha_i} f_i \one_{\wt}$
for $1 \leq i \leq m$.
Set $e_i^{(r)} := \frac{1}{[r]!} e_i^r$ and $f_i^{(r)} := \frac{1}{[r]!} f_i^r$, 
then the relations are as follows:
\begin{itemize}
\item $e_i^{(r)}f_i^{(s)}\one_{\wt} = \sum_t {\alpha_i^{\vee}(\wt) + r-s \brack t}f_i^{(s-t)}e_i^{(r-t)}\one_{\wt}$,
\item for $i \ne j$, $e_i^{(r)}f_j^{(s)}\one_{\wt} = f_j^{(s)}e_i^{(r)}\one_{\wt}$,
\item for $|i-j|=1$, $e_i e_j e_i\one_{\wt} = (e_i^{(2)} e_j + e_j e_i^{(2)})\one_{\wt}$ and
		$f_i f_j f_i\one_{\wt} = (f_i^{(2)} f_j + f_j f_i^{(2)})\one_{\wt}$,
\item for $|i-j|>1$,  $e_i^{(r)} e_j^{(s)}\one_{\wt} = e_j^{(s)}e_i^{(r)}\one_{\wt}$ and
		$f_i^{(r)} f_j^{(s)}\one_{\wt} = f_j^{(s)} f_i^{(r)}\one_{\wt}$, and
\item $e_i^{(s)} e_i^{(r)}\one_{\wt} = {r+s \brack r}e_i^{(r+s)}\one_{\wt}$ and
		$f_i^{(s)} f_i^{(r)}\one_{\wt} = {r+s \brack r} f_i^{(r+s)}\one_{\wt}$.
\end{itemize}

The \emph{integral idempotent quantum group} 
$\ZdU(\glm)$ is the (non-unital) $\Z[q^{\pm}]$-subalgebra of $\dU(\glm)$
generated by mutually orthogonal idempotents $\one_{\wt}$ for $\wt \in \Z^{m}$
and elements $\one_{\wt+ r\alpha_i} e_i^{(r)} \one_{\wt}$ and $\one_{\wt- \alpha_i} f_i^{(r)} \one_{\wt}$
for $1 \leq i \leq m$ and $r\ge 1$. \end{defn}

Note that the (non-integral) idempotent quantum group is recovered from the integral version 
as $\dU(\glm) := \C(q) \otimes_{\Z[q^{\pm}]} \ZdU(\glm)$.

\begin{rem}\label{R:dU-is-category}
Since $\dU(\glm)$ is an algebra equipped with a system of mutually orthogonal idempotents 
indexed by $\wt \in \Z^m$, we can consider it as a category wherein the objects are $\wt \in \Z^m$ 
and $\Hom_{\dU(\glm)}(\wt,\wtt)$ consists of elements of the form $\one_{\wtt} x \one_\wt$.
\end{rem}

\subsection{The categorified quantum group}
	\label{ss:CQG}
	
For the rest of this chapter, let $\K$ be an integral domain.
We will explicitly assume $\K$ is a field when we state results about the Grothendieck group.

We next define our version of the categorified quantum group $\UU_q(\glm)$.
The original definition of $\UU_q(\glm)$ given in \cite{MSV2} by Mackaay--Sto\v{s}i\'{c}--Vaz (MSV) is 
a (signed) analogue of the 
Khovanov--Lauda--Rouquier \cite{KL3,Rou2} categorified quantum group $\UU_q(\slm)$ wherein 
the $\slm$ weight lattice (the set of objects in $\UU_q(\slm)$) 
is replaced by the $\glm$ weight lattice.
Our version is related, but not identical, to that used by MSV, 
and we now clarify the differences for the expert.  
Recall that bubbles are certain 2-morphisms 
living in the endomorphism algebra of an identity $1$-morphism.
The two differences are as follows:
\begin{itemize}
\item 
In MSV, bubbles may be identified with symmetric functions in an alphabet of variables. 
From the perspective of the $\gln$ foam $2$-category in \cite{QR1}, 
which is related to $\UU_q(\glm)$ via categorical skew Howe duality, 
one should consider symmetric functions in $m$ alphabets, 
and these bubbles should be viewed as symmetric functions in a \emph{difference} of two adjacent alphabets.
We extend scalars in the endomorphism algebras of all identity $1$-morphisms accordingly. 
This modification has previously appeared (in a slightly different guise) in work of Webster \cite{WebSchur}.

\item We work with a different orientation for the $\slm$ Dynkin diagram, 
and hence different scalars associated to degree-zero bubbles. \end{itemize}

\begin{rem} \label{rmk:Laudaparams} 
Experts may be familiar with Lauda's paper \cite{LaudaParameters}, 
which parametrizes all choices of scalars one might use for the degree-zero bubbles. 
In the conventions thereof, for each $\glm$ weight $\wt = (a_1, \ldots, a_m) \in \Z^m$ we set
\begin{subequations}
\begin{equation} 
		\label{eq:ourbubbles}
	c_{i,\wt}^- = (-1)^{{a_i}} \, , \quad c_{i,\wt}^+ = (-1)^{{a_i}-1}. \end{equation}
Via \cite[Equations (2.1), (2.2)]{LaudaParameters}, 
this determines the parameters
\begin{equation} t_{i,i}=-1 \, , \quad t_{i,i-1}=-1 \, , \quad t_{i,i+1}=1 \, , \quad t_{i,k}=1 \text{ otherwise.} \end{equation}
\end{subequations}
Note that \eqref{eq:ourbubbles} are the values of the anti-clockwise and clockwise degree-zero bubbles, respectively.
\end{rem}

Before proceeding, we note that
neither of the aforementioned differences affect any salient properties of $\UU_q(\glm)$, 
and the following categorification result of Khovanov--Lauda still holds.

\begin{thm}[{\cite{KL3}}]
	\label{thm:KL}
Suppose $\K$ is a field. 
There is an isomorphism of $\C(q)$-algebras
\[
\C(q) \otimes_{\Z[q^\pm]} \Kzero{\Kar(\UU_q(\glm))} \cong \dU(\glm) \, . 
\]
\qed
\end{thm}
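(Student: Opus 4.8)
The statement to be proved is Theorem \ref{thm:KL}, asserting that $\C(q) \otimes_{\Z[q^\pm]} \Kzero{\Kar(\UU_q(\glm))} \cong \dU(\glm)$ as $\C(q)$-algebras, under the assumption that $\K$ is a field. The plan is to reduce this to the established categorification theorem of Khovanov--Lauda for $\UU_q(\slm)$ (cited as \cite{KL3}), together with a comparison of $\dU(\slm)$ and $\dU(\glm)$. The essential point is that the modifications we have made to the MSV conventions (extending scalars on identity $1$-morphisms, and altering the Dynkin orientation / degree-zero bubble scalars as recorded in Remark \ref{rmk:Laudaparams}) do not change the Grothendieck group: changing the parameters $t_{i,j}$ within Lauda's framework \cite{LaudaParameters} yields an equivalent (in fact isomorphic up to rescaling generating $2$-morphisms) $2$-category, and the scalar extension on bubbles does not affect which idempotents split or the classes of indecomposables.

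First I would recall the Khovanov--Lauda categorification result in its original form: over a field $\K$, $\C(q) \otimes_{\Z[q^\pm]} \Kzero{\Kar(\UU_q(\slm))} \cong \dU(\slm)$, where $\dU(\slm)$ is the Lusztig idempotented form. This is precisely \cite[Theorem]{KL3} (the main theorem of that paper); the proof there establishes that the indecomposable $1$-morphisms, after passing to the Karoubi envelope, descend to Lusztig's canonical basis, and that the relations in $\Kzero{}$ match those of $\dU(\slm)$. Second, I would observe that our $\UU_q(\glm)$ differs from (the signed analogue of) $\UU_q(\slm)$ only in that the object set is the $\glm$-weight lattice $\Z^m$ rather than the $\slm$-weight lattice $\Z^m/(1,\ldots,1)\Z$, and in the scalar conventions. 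The passage from $\slm$ weights to $\glm$ weights is the same modification MSV make in \cite{MSV2}, and it is harmless: one checks directly from Definition \ref{def:Udotgl} that $\dU(\glm) = \bigoplus_{\wt} \one_\wt \dU(\glm) \one_\wt$ is, as a $\C(q)$-algebra with system of idempotents, obtained from $\dU(\slm)$ by ``unfolding'' each $\slm$-weight into its fiber of $\glm$-weights, and correspondingly $\UU_q(\glm)$ is obtained from $\UU_q(\slm)$ by the same unfolding of the object set (the generating $1$- and $2$-morphisms, and all relations, are pulled back along the projection $\Z^m \to \Z^m/(1,\ldots,1)\Z$). Since Karoubi completion and split Grothendieck group both commute with such a disjoint-union-style unfolding of objects, Theorem \ref{thm:KL} follows from the $\slm$ case.

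Third, I would address the scalar conventions. By Remark \ref{rmk:Laudaparams}, our choices \eqref{eq:ourbubbles} correspond to a specific point in Lauda's parameter space \cite{LaudaParameters}. Lauda shows that all choices of parameters satisfying the admissibility conditions yield isomorphic $2$-categories (up to rescaling the generating $2$-morphisms by signs/units), so in particular the Grothendieck group is insensitive to this choice, and the computation of $\Kzero{}$ in \cite{KL3} applies verbatim. The extension of scalars in the endomorphism algebras of identity $1$-morphisms — replacing symmetric functions in one alphabet by symmetric functions in a difference of two adjacent alphabets — is the modification appearing in Webster's work \cite{WebSchur}; this change of the bubble algebra is an isomorphism of graded rings (both are polynomial/symmetric-function rings of the appropriate rank), so it too leaves $\Kzero{\Kar(-)}$ unchanged. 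Assembling these three observations gives the isomorphism of the theorem, with the explicit correspondence sending the class of the indecomposable $1$-morphism $\EE_i^{(k)} \one_{\wt}$ (resp.\ $\FF_i^{(k)} \one_\wt$) to the divided power $e_i^{(k)} \one_\wt$ (resp.\ $f_i^{(k)} \one_\wt$) in $\dU(\glm)$.

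\textbf{Main obstacle.} The genuine content is entirely imported from \cite{KL3} (and \cite{LaudaParameters}, \cite{WebSchur}); the only thing requiring care is bookkeeping that the three modifications — $\glm$ versus $\slm$ weights, the change of parameters, and the scalar extension on bubbles — are each Grothendieck-group-neutral, and that they interact correctly (e.g.\ that indecomposability of the lifts of canonical basis elements is preserved, which uses that $\K$ is a field so that Krull--Schmidt holds in the relevant Hom-finite setting, cf.\ the discussion in \S\ref{ss:conv}). I expect verifying the compatibility of the parameter change with the $\glm$-unfolding to be the fussiest point, but it presents no real difficulty.
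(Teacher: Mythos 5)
The paper does not give a proof of this theorem: it is stated with a $\qed$ in the statement and cites \cite{KL3}, relying on the surrounding discussion (the reduction to \cite{MSV2} for the $\glm$ weight lattice, \cite{LaudaParameters} for the change of signs, and \cite{WebSchur} for the bubble modification). Your proposal is essentially the expansion one would supply, and your reductions for the $\glm$-versus-$\slm$ weight lattice and the Lauda parameter change are correct in substance.

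However, your third step contains a factual error that you should repair. You assert that the extension of the bubble algebra ``is an isomorphism of graded rings (both are polynomial/symmetric-function rings of the appropriate rank).'' It is not: in the MSV version the degree-zero endomorphism ring of an identity $1$-morphism is generated by bubbles, which by \eqref{newbub} only realize $\Sym(\X_{i+1}-\X_i)$ for adjacent alphabets, whereas Definition \ref{def:CQG} adjoins new bubble generators for all of $\Sym(\X_1|\cdots|\X_m)$, which is a strictly larger graded ring (compare Poincar\'e series). The correct reason this does not affect $\Kzero{\Kar(-)}$ is not that the rings are isomorphic but that the extension is by a central, positively graded factor with unchanged degree-zero part: the endomorphism algebras remain graded local, no new degree-zero idempotents or isomorphisms are introduced, and the sesquilinear form controlling indecomposables via the Khovanov--Lauda Hom formula (cf.\ Remark \ref{rmk:homformula}) is simply rescaled by the extra factor. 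Equivalently, this is the base-change argument used by Webster in \cite{WebSchur}, and you should invoke it explicitly rather than appeal to a ring isomorphism that does not hold.
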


We now give our definition of $\UU_q(\glm)$.

\begin{defn}
	\label{def:CQG}
Let $m \geq 1$. The categorified quantum group $\UU_q(\glm)$ is the 
$\Z$-additive closure of the $\Z$-graded $\K$-linear $2$-category given as follows.
\begin{itemize}[leftmargin=*]
\item \textbf{Objects} are elements $\wt \in \Z^m$.
\item \textbf{$1$-morphisms} are generated by 
\[
\EE_i \one_{\wt} \colon \wt \to \wt + \ee_i \, , \quad
\FF_i \one_{\wt}  \colon \wt \to \wt - \ee_i,
\]
for $1 \le i \le m-1$. Here $\ee_i = (0,\ldots,1,-1,\ldots,0)$.
\item \textbf{$2$-morphisms} are $\K$-linear combinations of ``string diagrams'' that are generated via 
horizontal and vertical composition by the generators:
\begin{equation}
	\label{eq:CQGgens}
\begin{gathered}
\begin{tikzpicture}[anchorbase,scale=1]
\draw[thick,->] (0,0) node[below=-1pt]{\scs $i$} to node[black]{$\bullet$} (0,1);
\node at (.5,.75){$\wt$};
\node at (-.75,.75){$\wt{+}\ee_i$};
\end{tikzpicture}
\in \End^2(\EE_i \one_{\wt})
\, , \quad
\begin{tikzpicture}[anchorbase,scale=1]
\draw[thick,->] (0,0) node[below=-1pt]{\scs $i$} to [out=90,in=270] (.5,1);
\draw[thick,->] (.5,0) node[below=-1pt]{\scs $j$} to [out=90,in=270] (0,1);
\node at (.875,.75){$\wt$};
\node at (-1,.75){$\wt{+}\ee_i{+}\ee_j$};
\end{tikzpicture}
\in \Hom^{- i \cdot j}(\EE_i \EE_j \one_\wt, \EE_j \EE_i \one_\wt) \, , \\
\begin{tikzpicture}[anchorbase,scale=1]
\draw[thick,<-] (-.25,0) to [out=90,in=180] (0,.5) 
	to [out=0,in=90] (.25,0) node[below=-1pt]{\scs $i$};
\node at (.5,.5){$\wt$};
\end{tikzpicture}
\in \Hom^{1+a_i-a_{i+1}}(\FF_i \EE_i \one_\wt, \one_\wt)
\, , \quad
\begin{tikzpicture}[anchorbase,scale=1]
\draw[thick,->] (-.25,0) node[below=-1pt]{\scs $i$} to [out=90,in=180] (0,.5) 
	to [out=0,in=90] (.25,0);
\node at (.5,.5){$\wt$};
\end{tikzpicture}
\in \Hom^{1-a_i+a_{i+1}}(\EE_i \FF_i \one_\wt, \one_\wt) \, , \\
\begin{tikzpicture}[anchorbase,scale=1]
\draw[thick,->] (-.25,0) node[above=-1pt]{\scs $i$} to [out=270,in=180] (0,-.5) 
	to [out=0,in=270] (.25,0);
\node at (.5,-.5){$\wt$};
\end{tikzpicture}
\in \Hom^{1+a_i-a_{i+1}}(\one_\wt , \FF_i \EE_i \one_\wt)
\, , \quad
\begin{tikzpicture}[anchorbase,scale=1]
\draw[thick,<-] (-.25,0) to [out=270,in=180] (0,-.5) 
	to [out=0,in=270] (.25,0) node[above=-1pt]{\scs $i$};
\node at (.5,-.5){$\wt$};
\end{tikzpicture}
\in \Hom^{1-a_i+a_{i+1}}(\one_\wt , \EE_i \FF_i \one_\wt)
\end{gathered}
\end{equation}
and for each $f \in \Sym(\X_1| \cdots |\X_m)$, a \emph{new bubble} generator in each weight:
\begin{equation}
	\label{eq:newbub}
\begin{tikzpicture}[anchorbase,scale=1]
\node at (0,0){\NB{f}};
\node at (.5,.25){$\wt$};
\end{tikzpicture}
\in \End^{2 \deg(f)} (\one_\wt) \, .
\end{equation}
Here, the alphabets $\X_1,\ldots,\X_m$ are viewed as formal.
That is, for each $1 \leq i \leq m$, $\Sym(\X_i)$ is the ring of symmetric functions, 
and $\Sym(\X_1| \cdots |\X_m) \cong \bigotimes_{i=1}^m \Sym(\X_i)$.
\end{itemize}
These $2$-morphisms are subject to the following local relations:
\begin{enumerate}[leftmargin=*]
\item \textbf{Adjunction and cyclicity:} diagrams that are related by planar 
isotopy (rel. boundary) are equal. This is guaranteed by the following relations:
\begin{enumerate}
\item The cap and cup morphisms in \eqref{eq:CQGgens}
are the units and counits for (graded) biadjunctions 
between $\EE_i \one_\wt$ and $\FF_i \one_{\wt+\ee_i}$.
\item The dot and crossing morphisms in \eqref{eq:CQGgens} 
are cyclic with respect to this biadjoint structure, 
meaning that the two ways of building a dotted downward strand are equal, 
as are the two ways of building a downward crossing.
\end{enumerate}

\item \textbf{Dot slide:}
\begin{equation} \label{dotslide}
\begin{tikzpicture}[anchorbase,scale=1]
\draw[thick,->] (0,0) node[below=-1pt]{\scs $i$} to [out=90,in=270] (.5,1);
\draw[thick,->] (.5,0) node[below=-1pt]{\scs $j$} to [out=90,in=270] 
	node[pos=.7,yshift=-.5pt]{$\bullet$} (0,1);
\node at (.875,.75){$\wt$};
\end{tikzpicture}
-
\begin{tikzpicture}[anchorbase,scale=1]
\draw[thick,->] (0,0) node[below=-1pt]{\scs $i$} to [out=90,in=270] (.5,1);
\draw[thick,->] (.5,0) node[below=-1pt]{\scs $j$} to [out=90,in=270] 
	node[pos=.3,yshift=-.5pt]{$\bullet$} (0,1);
\node at (.875,.75){$\wt$};
\end{tikzpicture}
=
\begin{tikzpicture}[anchorbase,scale=1]
\draw[thick,->] (0,0) node[below=-1pt]{\scs $i$} to [out=90,in=270] 
	node[pos=.3,yshift=-.5pt]{$\bullet$}(.5,1);
\draw[thick,->] (.5,0) node[below=-1pt]{\scs $j$} to [out=90,in=270] (0,1);
\node at (.875,.75){$\wt$};
\end{tikzpicture}
-
\begin{tikzpicture}[anchorbase,scale=1]
\draw[thick,->] (0,0) node[below=-1pt]{\scs $i$} to [out=90,in=270] 
	node[pos=.7,yshift=-.5pt]{$\bullet$}(.5,1);
\draw[thick,->] (.5,0) node[below=-1pt]{\scs $j$} to [out=90,in=270] (0,1);
\node at (.875,.75){$\wt$};
\end{tikzpicture}
=
\begin{cases}
\begin{tikzpicture}[anchorbase]
\draw[thick,->] (0,0) node[below=-1pt]{\scs $i$} to (0,1);
\draw[thick,->] (.375,0) node[below=-1pt]{\scs $i$} to (.375,1);
\node at (.75,.75){$\wt$};
\end{tikzpicture} & \text{if } i=j \\ \\
0 & \text{else}
\end{cases}
\end{equation}

\item \textbf{Quadratic KLR relation:}
\begin{equation} \label{eq:quadKLR}
\begin{tikzpicture}[anchorbase,yscale=1]
\draw[thick,->] (0,0) node[below=-1pt]{\scs $i$} to [out=90,in=270] (.5,.75) 
	to [out=90,in=270] (0,1.5);
\draw[thick,->] (.5,0) node[below=-1pt]{\scs $j$} to [out=90,in=270] (0,.75) 
	to [out=90,in=270] (.5,1.5);
\node at (.75,1.25){$\wt$};
\end{tikzpicture}
=
\begin{cases}
0 & \text{if } i=j \\ \\
(j-i)
\left(
\begin{tikzpicture}[anchorbase]
\draw[thick,->] (0,0) node[below=-1pt]{\scs $i$} to 
	node{$\bullet$} (0,1);
\draw[thick,->] (.375,0) node[below=-1pt]{\scs $j$} to (.375,1);
\node at (.75,.75){$\wt$};
\end{tikzpicture}
-
\begin{tikzpicture}[anchorbase]
\draw[thick,->] (0,0) node[below=-1pt]{\scs $i$} to (0,1);
\draw[thick,->] (.375,0) node[below=-1pt]{\scs $j$} to 
	node {$\bullet$} (.375,1);
\node at (.75,.75){$\wt$};
\end{tikzpicture} 
\right) & \text{if } i \cdot j = -1 \\
\begin{tikzpicture}[anchorbase]
\draw[thick,->] (0,0) node[below=-1pt]{\scs $i$} to (0,1);
\draw[thick,->] (.375,0) node[below=-1pt]{\scs $j$} to (.375,1);
\node at (.75,.75){$\wt$};
\end{tikzpicture} & \text{if } i \cdot j = 0 \\
\end{cases}
\end{equation}

\item \textbf{Cubic KLR relation:}
\begin{equation} \label{eq:cubicKLR}
\begin{tikzpicture}[anchorbase,yscale=.75]
\draw[thick,->] (0,0) node[below=-1pt]{\scs $i$} to [out=90,in=270] (1,2);
\draw[thick,->] (.5,0) node[below=-1pt]{\scs $j$} to [out=90,in=270] (0,1) 
	to [out=90,in=270] (.5,2);
\draw[thick,->] (1,0) node[below=-1pt]{\scs $k$} to [out=90,in=270] (0,2);
\node at (1.25,1.5){$\wt$};
\end{tikzpicture}
-
\begin{tikzpicture}[anchorbase,yscale=.75]
\draw[thick,->] (0,0) node[below=-1pt]{\scs $i$} to [out=90,in=270] (1,2);
\draw[thick,->] (.5,0) node[below=-1pt]{\scs $j$} to [out=90,in=270] (1,1) 
	to [out=90,in=270] (.5,2);
\draw[thick,->] (1,0) node[below=-1pt]{\scs $k$} to [out=90,in=270] (0,2);
\node at (1.25,1.5){$\wt$};
\end{tikzpicture}
=
\begin{cases}
(j-i)
\begin{tikzpicture}[anchorbase,yscale=.75]
\draw[thick,->] (0,0) node[below=-1pt]{\scs $i$} to [out=90,in=270] (0,2);
\draw[thick,->] (.5,0) node[below=-1pt]{\scs $j$} to (.5,2);
\draw[thick,->] (1,0) node[below=-1pt]{\scs $i$} to [out=90,in=270] (1,2);
\node at (1.25,1.5){$\wt$};
\end{tikzpicture} & \text{if } k=i \text{ and } i \cdot j = -1 \\ \\
0 & \text{else}
\end{cases}
\end{equation}

\item \textbf{New bubble relations:}
\begin{enumerate}
\item \textbf{Algebra compatibility:}
\[
\begin{tikzpicture}[anchorbase,scale=1]
\node at (-.5,0){\NB{f}};
\node at (0,0){\NB{g}};
\node at (.5,.25){$\wt$};
\end{tikzpicture} 
=
\begin{tikzpicture}[anchorbase,scale=1]
\node at (0,0){\NB{fg}};
\node at (.5,.25){$\wt$};
\end{tikzpicture},
\qquad
\begin{tikzpicture}[anchorbase,scale=1]
\node at (0,0){\NB{f}};
\node at (.5,.25){$\wt$};
\end{tikzpicture}
+
\begin{tikzpicture}[anchorbase,scale=1]
\node at (0,0){\NB{g}};
\node at (.5,.25){$\wt$};
\end{tikzpicture}
=
\begin{tikzpicture}[anchorbase,scale=1]
\node at (0,0){\NB{f+g}};
\node at (.5,.25){$\wt$};
\end{tikzpicture}
\]

\item \textbf{Bubbles to new bubbles:}
Let us define spaded bubble notation:
\begin{equation} \label{realbubdef}
\begin{tikzpicture}[anchorbase,scale=1.25]
\draw[thick,->] (.25,0) arc[start angle=0,end angle=360,radius=.25] 
	node[pos=.5,left=-1pt]{\scs$i$} node[pos=.75,black]{$\bullet$} 
		node[pos=.75,black,below]{\scs$\spadesuit{+}r$};
\node at (.375,.375){$\wt$};
\end{tikzpicture}
:=
\begin{tikzpicture}[anchorbase,scale=1.25]
\draw[thick,->] (.25,0) arc[start angle=0,end angle=360,radius=.25] 
	node[pos=.5,left=-1pt]{\scs$i$} node[pos=.75,black]{$\bullet$} 
		node[pos=.75,black,below]{\scs$r{-}1{-}a_i{+}a_{i+1}$};
\node at (.375,.375){$\wt$};
\end{tikzpicture}
\in \End^{2r}(\one_\wt) \, , \quad
\begin{tikzpicture}[anchorbase,scale=1.25]
\draw[thick,<-] (.25,0) arc[start angle=0,end angle=360,radius=.25] 
	node[pos=.5,left=-1pt]{\scs$i$} node[pos=.75,black]{$\bullet$} 
		node[pos=.75,black,below]{\scs$\spadesuit{+}r$};
\node at (.375,.375){$\wt$};
\end{tikzpicture}
:=
\begin{tikzpicture}[anchorbase,scale=1.25]
\draw[thick,<-] (.25,0) arc[start angle=0,end angle=360,radius=.25] 
	node[pos=.5,left=-1pt]{\scs$i$} node[pos=.75,black]{$\bullet$} 
		node[pos=.75,black,below]{\scs$r{-}1{+}a_i{-}a_{i+1}$};
\node at (.375,.375){$\wt$};
\end{tikzpicture}
\in \End^{2r}(\one_\wt).
\end{equation}
When the number of dots is a non-negative integer, this picture is called a \emph{real bubble}, 
and represents a genuine composition of a cup, a cap, and a number of dots. 
When the number of dots is a negative integer, both sides of \eqref{realbubdef} 
are formal symbols, often called \emph{fake bubbles}. 
The following equation \eqref{newbub} serves two purposes: 
it gives a definition of fake bubbles as genuine morphisms, 
and it provides a relation between real bubbles and symmetric functions: 

\begin{equation} \label{newbub}
\begin{tikzpicture}[anchorbase,scale=1.25]
\draw[thick,->] (.25,0) arc[start angle=0,end angle=360,radius=.25] 
	node[pos=.5,left=-1pt]{\scs$i$} node[pos=.75,black]{$\bullet$} 
		node[pos=.75,black,below]{\scs$\spadesuit{+}r$};
\node at (.375,.375){$\wt$};
\end{tikzpicture}
=
(-1)^{a_i}
\begin{tikzpicture}[anchorbase,scale=1]
\node at (0,0){\NB{h_r(\X_{i+1} - \X_i)}};
\node at (1.5,.25){$\wt$};
\end{tikzpicture}
\, , \quad
\begin{tikzpicture}[anchorbase,scale=1.25]
\draw[thick,<-] (.25,0) arc[start angle=0,end angle=360,radius=.25] 
	node[pos=.5,left=-1pt]{\scs$i$} node[pos=.75,black]{$\bullet$} 
		node[pos=.75,black,below]{\scs$\spadesuit{+}r$};
\node at (.375,.375){$\wt$};
\end{tikzpicture}
=
(-1)^{{a_i}-1}
\begin{tikzpicture}[anchorbase,scale=1]
\node at (0,0){\NB{h_r(\X_{i} - \X_{i+1})}};
\node at (1.5,.25){$\wt$};
\end{tikzpicture}
\, .
\end{equation}
Thus when $r < 0$, a spaded bubble with decoration $\spadesuit{+}r$ is zero.

\item \textbf{Dots to new bubbles:}
\begin{equation} \label{dotstonewbubbles}
\begin{tikzpicture}[anchorbase,yscale=1]
\draw[thick,->] (0,0) node[below=-1pt]{\scs $i$} to [out=90,in=270] node{$\bullet$} (0,1.5);
\end{tikzpicture}
=
\begin{tikzpicture}[anchorbase,yscale=1]
\draw[thick,->] (0,0) node[below=-1pt]{\scs $i$} to [out=90,in=270] (0,1.5);
\node at (-.75,.75){\NB{e_1(\X_i)}};
\end{tikzpicture}
-
\begin{tikzpicture}[anchorbase,yscale=1]
\draw[thick,->] (0,0) node[below=-1pt]{\scs $i$} to [out=90,in=270] (0,1.5);
\node at (.75,.75){\NB{e_1(\X_i)}};
\end{tikzpicture}
=
\begin{tikzpicture}[anchorbase,yscale=1]
\draw[thick,->] (0,0) node[below=-1pt]{\scs $i$} to [out=90,in=270] (0,1.5);
\node at (.875,.75){\NB{e_1(\X_{i+1})}};
\end{tikzpicture}
-
\begin{tikzpicture}[anchorbase,yscale=1]
\draw[thick,->] (0,0) node[below=-1pt]{\scs $i$} to [out=90,in=270] (0,1.5);
\node at (-.875,.75){\NB{e_1(\X_{i+1})}};
\end{tikzpicture}
\end{equation}
\end{enumerate}

\item \textbf{Extended $\sln[2]$ relations:}
\begin{gather} \label{extendedreln1}
\begin{tikzpicture}[anchorbase,yscale=1]
\draw[thick,->] (0,0) node[below=-1pt]{\scs $i$} to [out=90,in=270] (0,1.5);
\draw[thick,<-] (.5,0) node[below=-1pt]{\scs $i$} to [out=90,in=270] (.5,1.5);
\node at (.75,1.25){$\wt$};
\end{tikzpicture}
=
\begin{tikzpicture}[anchorbase,yscale=1]
\draw[thick,->] (0,0) node[below=-1pt]{\scs $i$} to [out=90,in=270] (.5,.75) 
	to [out=90,in=270] (0,1.5);
\draw[thick,<-] (.5,0) node[below=-1pt]{\scs $i$} to [out=90,in=270] (0,.75) 
	to [out=90,in=270] (.5,1.5);
\node at (.75,1.25){$\wt$};
\end{tikzpicture}
-
\displaystyle \sum_{\substack{p+q+r = \\ a_i -a_{i+1}-1}}
\begin{tikzpicture}[anchorbase,yscale=1]
\draw[thick,->] (0,0) node[below=-1pt]{\scs $i$} to [out=90,in=180] (.25,.5) 
	to [out=0,in=90] node{$\bullet$} node[above,xshift=1pt]{\scs$q$} (.5,0);
\draw[thick,->] (1.5,.75) arc[start angle=0,end angle=360,radius=.25] 
	node[pos=.5,left=-1pt]{\scs$i$} node[pos=.75,black]{$\bullet$} 
		node[pos=.75,black,below]{\scs$\spadesuit{+}r$};
\draw[thick,<-] (0,1.5) to [out=270,in=180] (.25,1) 
	to [out=0,in=270] node{$\bullet$} node[below,xshift=1pt]{\scs$p$} 
		(.5,1.5) node[above=-1pt]{\scs $i$};
\node at (1.5,1.25){$\wt$};
\end{tikzpicture} \\
\label{extendedreln2}
\begin{tikzpicture}[anchorbase,yscale=1]
\draw[thick,<-] (0,0) node[below=-1pt]{\scs $i$} to [out=90,in=270] (0,1.5);
\draw[thick,->] (.5,0) node[below=-1pt]{\scs $i$} to [out=90,in=270] (.5,1.5);
\node at (.75,1.25){$\wt$};
\end{tikzpicture}
=
\begin{tikzpicture}[anchorbase,yscale=1]
\draw[thick,<-] (0,0) node[below=-1pt]{\scs $i$} to [out=90,in=270] (.5,.75) 
	to [out=90,in=270] (0,1.5);
\draw[thick,->] (.5,0) node[below=-1pt]{\scs $i$} to [out=90,in=270] (0,.75) 
	to [out=90,in=270] (.5,1.5);
\node at (.75,1.25){$\wt$};
\end{tikzpicture}
-
\displaystyle \sum_{\substack{p+q+r = \\ -a_i+a_{i+1}-1}}
\begin{tikzpicture}[anchorbase,yscale=1]
\draw[thick,<-] (0,0) to [out=90,in=180] (.25,.5) to [out=0,in=90] 
	node{$\bullet$} node[above,xshift=1pt]{\scs$q$} (.5,0)
		node[below=-1pt]{\scs $i$};
\draw[thick,<-] (1.5,.75) arc[start angle=0,end angle=360,radius=.25] 
	node[pos=.5,left=-1pt]{\scs$i$} node[pos=.75,black]{$\bullet$} 
		node[pos=.75,black,below]{\scs$\spadesuit{+}r$};
\draw[thick,->] (0,1.5) node[above=-1pt]{\scs $i$} to [out=270,in=180] (.25,1) 
	to [out=0,in=270] node{$\bullet$} node[below,xshift=1pt]{\scs$p$} (.5,1.5);
\node at (1.5,1.25){$\wt$};
\end{tikzpicture}
\end{gather}
(By convention, the summations on the right-hand side are zero when this requires $p+q+r < 0$.)
\item \textbf{Mixed $\EE,\FF$ relations:} for $i \neq j$, 
\begin{equation} \label{eq:mixedef}
\begin{tikzpicture}[anchorbase,yscale=1]
\draw[thick,->] (0,0) node[below=-1pt]{\scs $i$} to [out=90,in=270] (0,1.5);
\draw[thick,<-] (.5,0) node[below=-1pt]{\scs $j$} to [out=90,in=270] (.5,1.5);
\node at (.75,1.25){$\wt$};
\end{tikzpicture}
=
\begin{tikzpicture}[anchorbase,yscale=1]
\draw[thick,->] (0,0) node[below=-1pt]{\scs $i$} to [out=90,in=270] (.5,.75) 
	to [out=90,in=270] (0,1.5);
\draw[thick,<-] (.5,0) node[below=-1pt]{\scs $j$} to [out=90,in=270] (0,.75) 
	to [out=90,in=270] (.5,1.5);
\node at (.75,1.25){$\wt$};
\end{tikzpicture}
\, , \quad
\begin{tikzpicture}[anchorbase,yscale=1]
\draw[thick,<-] (0,0) node[below=-1pt]{\scs $i$} to [out=90,in=270] (0,1.5);
\draw[thick,->] (.5,0) node[below=-1pt]{\scs $j$} to [out=90,in=270] (.5,1.5);
\node at (.75,1.25){$\wt$};
\end{tikzpicture}
=
\begin{tikzpicture}[anchorbase,yscale=1]
\draw[thick,<-] (0,0) node[below=-1pt]{\scs $i$} to [out=90,in=270] (.5,.75) 
	to [out=90,in=270] (0,1.5);
\draw[thick,->] (.5,0) node[below=-1pt]{\scs $j$} to [out=90,in=270] (0,.75) 
	to [out=90,in=270] (.5,1.5);
\node at (.75,1.25){$\wt$};
\end{tikzpicture}
\end{equation}
\end{enumerate}
\end{defn}

\begin{rem}
We collect several easy consequences of the new bubble relations. 
Most important are the standard \textbf{Bubble relations}, 
which are signed versions of the relations originally appearing in \cite{Lau1}.
\begin{enumerate}
\item \textbf{Non-positive degree relations:}
\[
\begin{tikzpicture}[anchorbase,scale=1.25]
\draw[thick,->] (.25,0) arc[start angle=0,end angle=360,radius=.25] 
	node[pos=.5,left=-1pt]{\scs$i$} node[pos=.75,black]{$\bullet$} 
		node[pos=.75,black,below]{\scs$\spadesuit{+}r$};
\node at (.375,.375){$\wt$};
\end{tikzpicture}
=
\begin{cases}
0 & \text{if } r < 0 \\
(-1)^{a_{i}} & \text{if } r = 0
\end{cases} \, , \quad
\begin{tikzpicture}[anchorbase,scale=1.25]
\draw[thick,<-] (.25,0) arc[start angle=0,end angle=360,radius=.25] 
	node[pos=.5,left=-1pt]{\scs$i$} node[pos=.75,black]{$\bullet$} 
		node[pos=.75,black,below]{\scs$\spadesuit{+}r$};
\node at (.375,.375){$\wt$};
\end{tikzpicture}
=
\begin{cases}
0 & \text{if } r < 0 \\
(-1)^{a_{i}-1} & \text{if } r = 0
\end{cases}
\]
Compare with Remark \ref{rmk:Laudaparams}.

\item \textbf{Infinite Grassmannian relation:} the equality
\[
\left(
\sum_{r=0}^\infty 
\begin{tikzpicture}[anchorbase,scale=1.25]
\draw[thick,->] (.25,0) arc[start angle=0,end angle=360,radius=.25] 
	node[pos=.5,left=-1pt]{\scs$i$} node[pos=.75,black]{$\bullet$} 
		node[pos=.75,black,below]{\scs$\spadesuit{+}r$};
\node at (.375,.375){$\wt$};
\end{tikzpicture}
t^r \right)
\left(
\sum_{r=0}^\infty 
\begin{tikzpicture}[anchorbase,scale=1.25]
\draw[thick,<-] (.25,0) arc[start angle=0,end angle=360,radius=.25] 
	node[pos=.5,left=-1pt]{\scs$i$} node[pos=.75,black]{$\bullet$} 
		node[pos=.75,black,below]{\scs$\spadesuit{+}r$};
\node at (.375,.375){$\wt$};
\end{tikzpicture}
t^r \right) = -1
\]
holds in the formal power series ring $\End(\one_\wt)[[t]]$.
\end{enumerate}

Lastly, 
\textbf{symmetric generators slide:} 
let $f \in \Sym(\X_1| \cdots | \X_i+\X_{i+1} | \cdots | \X_m)$, then
\begin{equation}
	\label{eq:symslide}
\begin{tikzpicture}[anchorbase,yscale=1]
\draw[thick,->] (0,0) node[below=-1pt]{\scs $i$} to [out=90,in=270] (0,1.5);
\node at (.5,.75){\NB{f}};
\node at (1,1.25){$\wt$};
\end{tikzpicture}
=
\begin{tikzpicture}[anchorbase,yscale=1]
\draw[thick,->] (0,0) node[below=-1pt]{\scs $i$} to [out=90,in=270] (0,1.5);
\node at (-.5,.75){\NB{f}};
\node at (.5,1.25){$\wt$};
\end{tikzpicture} \, .
\end{equation}
\end{rem}

\begin{rem}
	\label{rem:gl1}
When $m=1$, there are no generating $1$-morphisms $\EE_i \one_{\wt}$ or $\FF_i \one_{\wt}$,
so all $1$-morphisms in $\UU_q(\gln[1])$ are (direct sums of shifts of) identity $1$-morphisms $\one_{\wt}$.
Hence, $\End(\one_{\wt}) \cong \Sym(\X_1)$ is the $\Z$-graded $\K$-algebra of new bubbles \eqref{eq:newbub}.

By convention, $\UU_q(\gln[0])$ is a $2$-category with one object 
(the zero weight in a zero-dimensional lattice). 
Again all $1$-morphisms are direct sums of shifts of the identity $1$-morphism $\one_0$,
but now $\End(\one_{0}) = \K$. 
\end{rem}

\begin{rem}
Note that each strand in a string diagram in $\UU_q(\glm)$
carries a label $i \in \{1,\ldots,m-1\}$, i.e.~a label by a node of 
the $\glm$ Dynkin diagram.
We will refer to these labels as \emph{colors}, 
since later we will depict them using colored strands. 
See Convention \ref{conv:colors}.
\end{rem}

\begin{rem}
	\label{rem:monoidal2cat}
The $2$-category $\coprod_{m \geq 0} \UU_q(\glm)$ is monoidal, 
via the \emph{external tensor product} 
\[
\boxtimes \colon \UU_q(\gln[m_1]) \times \UU_q(\gln[m_2]) \to \UU_q(\gln[m_1+m_2])
\]
which is given on objects by 
\[
\big( (a_1,\ldots,a_{m_1}) , (b_1,\ldots,b_{m_2}) \big) \mapsto (a_1,\ldots,a_{m_1}, b_1,\ldots,b_{m_2})
\]
and on $1$- and $2$-morphisms by ``raising'' the colors on the second factor by $m_{1}$ 
and then concatenating the $1$- and $2$-morphisms. 
For example, 
$\boxtimes \colon \UU_q(\gln[2]) \times \UU_q(\gln[3]) \to \UU_q(\gln[5])$
sends
\[
\big( \EE_1 \one_{(a,b)} , \FF_1 \EE_2 \one_{(c,d,e)} \big) \mapsto \EE_1 \FF_3 \EE_4 \one_{(a,b,c,d,e)}
\quad , \quad
\Bigg(
\begin{tikzpicture}[anchorbase,scale=.75]
\draw[thick,->] (0,0) node[below=-1pt]{\scs $1$} to node[black]{$\bullet$} (0,1);
\node at (.625,.75){\scs$(a,b)$};
\end{tikzpicture}
\ , \
\begin{tikzpicture}[anchorbase,scale=.75]
\draw[thick,<-] (0,0) node[below=-1pt]{\scs $1$} to [out=90,in=270] (.5,1);
\draw[thick,->] (.5,0) node[below=-1pt]{\scs $2$} to [out=90,in=270] (0,1);
\node at (1.25,.75){\scs$(c,d,e)$};
\end{tikzpicture}
\Bigg)
\mapsto
\begin{tikzpicture}[anchorbase,scale=.75]
\draw[thick,->] (0,0) node[below=-1pt]{\scs $1$} to node[black]{$\bullet$} (0,1);
\draw[thick,<-] (.5,0) node[below=-1pt]{\scs $3$} to [out=90,in=270] (1,1);
\draw[thick,->] (1,0) node[below=-1pt]{\scs $4$} to [out=90,in=270] (.5,1);
\node at (2.25,.75){\scs$(a,b,c,d,e)$};
\end{tikzpicture} \, .
\]
\end{rem}

\subsection{Digression on parameters} \label{ssec:rescaling}

The parameters governing the relations in quantum groups have evolved over time. 
We have chosen our sign conventions in the definition of $\UU_q(\glm)$ above carefully, 
to match signs appearing in foams and singular Soergel bimodules, for future ease of use. 
However, we will need to use relations derived by Lauda \cite{Lau1} 
and Khovanov--Lauda--Mackaay--Sto\v{s}i\'{c} (KLMS) \cite{KLMS}, 
which were provided in a version of the $2$-category defined with different sign conventions. 
In \cite[Section 3.1]{LaudaParameters}, Lauda constructs a functor $\crazyF$ 
that is an equivalence from one particular version of the categorified quantum group to any other version. 
By taking the source of $\crazyF$ to be the version used in \cite{Lau1} and \cite{KLMS} 
and the target of $\crazyF$ to be our version $\UU_q(\glm)$, 
we are able to translate relations in \cite{KLMS} into relations in $\UU_q(\glm)$.

\begin{rem}
In Remark \ref{rmk:Laudaparams} we discussed the parameters used for $\UU_q(\glm)$. 
To apply \cite[Section 3.1]{LaudaParameters}, 
one also needs to know the parameters for the version used by \cite{Lau1} and \cite{KLMS}. 
In the language of \cite[Section 3.1]{LaudaParameters} this is $c_{i,\wt} = 1$.
\end{rem}

Lauda's functor $\crazyF$ rescales the cups and caps by certain scalars 
which depend in a complicated way on the ambient weight. 
Fortunately, when $\crazyF$ is applied to a bubble, which has a cap paired with a cup in the same ambient weight, 
the complicated individual scaling factors multiply to an easy overall scaling factor. 
Indeed, an $i$-colored clockwise bubble in region $\wt$ is rescaled by $(-1)^{{a_i}-1}$. 
This is as it must be, since the degree zero clockwise bubble in \cite{KLMS} is equal to the scalar $1$ 
(times an identity map), while the degree zero clockwise bubble in $\UU_q(\glm)$ is the scalar $(-1)^{{a_i}-1}$. 
Moreover, whenever any $i$-colored clockwise cup and clockwise cap appear in a region of the
same weight $\wt$, even if they are not closed into a bubble, they will contribute a scaling factor of $(-1)^{{a_i}-1}$. 
An example of such a pair appears in the RHS of \eqref{extendedreln1}.

Similarly, a pair of an $i$-colored counterclockwise cup and counterclockwise cap in weight $\wt$ 
will contribute $(-1)^{{a_i}}$ to the scaling factor. 
For example, the reader can confirm that the bubble slide relations from \cite[Proposition 5.7]{Lau1} 
pick up a sign when translated to a relation in $\UU_q(\glm)$, giving
\begin{equation}
	\label{eq:bubslide1}
\begin{tikzpicture}[anchorbase,yscale=1]
\draw[thick,<-] (.5,0) node[below=-1pt]{\scs $i$} to [out=90,in=270] (.5,1.5);
\draw[thick,->] (1.5,.75) arc[start angle=0,end angle=360,radius=.25] 
	node[pos=.5,left=-1pt]{\scs$i$} node[pos=.75,black]{$\bullet$} 
		node[pos=.75,black,below]{\scs$\spadesuit{+}r$};
\node at (1.75,1.25){$\wt$};
\end{tikzpicture}
=
- \sum_{k=0}^2 
(-1)^k \binom{2}{k} \;
\begin{tikzpicture}[anchorbase]
\draw[thick,<-] (.5,0) node[below=-1pt]{\scs $i$} to [out=90,in=270] node[pos=.5]{$\bullet$} 
	node[pos=.5,right=-1pt]{\scs$k$} (.5,1.5);
\draw[thick,->] (0,.75) arc[start angle=0,end angle=360,radius=.25] 
	node[pos=.5,left=-1pt]{\scs$i$} node[pos=.75,black]{$\bullet$} 
		node[pos=.75,black,below]{\scs$\spadesuit{+}r{-}k$};
\node at (1.25,1.25){$\wt$};
\end{tikzpicture} \, .
\end{equation}
Meanwhile, the usual curl relations \cite[Proposition 5.4]{Lau1} hold verbatim, e.g.~
\begin{equation}
	\label{eq:curl1}
\begin{tikzpicture}[anchorbase]
\draw[thick,->] (0,-.75) node[below=-1pt]{\scs $i$} to [out=90,in=180] (.5,.25) to [out=0,in=90] (.75,0) 
	to [out=270,in=0] (.5,-.25) to [out=180,in=270] (0,.75);
\node at (.75,.5){$\wt$};
\end{tikzpicture}
=
- \sum_{p+q= a_{i+1} - a_i}
\begin{tikzpicture}[anchorbase]
\draw[thick,->] (0,-.75) node[below=-1pt]{\scs $i$} to node[pos=.5]{$\bullet$} node[pos=.5,right=-1pt]{\scs$p$} (0,.75);
\draw[thick,<-] (1.25,0) arc[start angle=0,end angle=360,radius=.25] 
	node[pos=.5,left=-1pt]{\scs$i$} node[pos=.75,black]{$\bullet$} 
		node[pos=.75,black,below]{\scs$\spadesuit{+}q$};
\node at (1.75,.5){$\wt$};
\end{tikzpicture} \, .
\end{equation}

While upwards-oriented crossings are not rescaled by $\crazyF$, 
sideways crossings do pick up a scaling factor, since they are defined as compositions of 
upward crossings with caps and cups. 
Again, the general scaling factor on a single sideways crossing is complicated,
but the overall scaling factor on a pair of uni-colored sideways crossings (one facing right, one facing left) 
in the same region is $-1$, independent of the weight $\wt$. 
For example, this explains the coefficient of the first term on the RHS of \eqref{extendedreln1}, 
which differs from the relation in \cite{Lau1} by a sign.

\subsection{Thick calculus}
	\label{ss:thickcalc}

The $2$-category $\UU_q(\glm)$ is not Karoubian, 
e.g.~the idempotent $2$-morphism
\begin{equation} \label{idempotentEE}
\begin{tikzpicture}[anchorbase,scale=1]
\draw[thick,->] (0,0) node[below=-1pt]{\scs $i$} to [out=90,in=270] (.5,1);
\draw[thick,->] (.5,0) node[below=-1pt]{\scs $i$} to [out=90,in=270] 
	node[pos=.75,yshift=-1pt]{$\bullet$} (0,1);
\node at (.875,.75){$\wt$};
\end{tikzpicture}
\in \Hom^{0}(\EE_i \EE_i\one_\wt)
\end{equation}
does not have an image $1$-morphism. 
In \cite{KLMS}, KLMS construct a diagrammatic category equivalent to the Karoubi envelope of $\UU_q(\gln[2])$, 
which is commonly called the \emph{thick calculus}, which we now recall.
As above, we use a different sign convention from \cite{KLMS} in this paper; 
see Remark \ref{rmk:rescalingredux} which discusses the relevant rescaling.

\begin{defn} 
	\label{def:Thickgl2}
Set $m=2$ and let $\EE := \EE_1$ and $\FF := \FF_1$. 
The $2$-category $\U_q(\gln[2])$ is obtained from $\UU_q(\gln[2])$ 
by adjoining new \emph{divided power} $1$-morphisms
\[
\EE^{(k)} \one_{\wt} \colon \wt \to \wt + k \ee_1 \, , \quad
\FF^{(k)} \one_{\wt}  \colon \wt \to \wt - k \ee_1 
\]
(where $\EE^{(1)} \one_\wt = \EE \one_\wt$ and $\FF^{(1)} \one_\wt = \FF \one_\wt$), together with
new \emph{merge/split} $2$-morphisms
\begin{equation}
	\label{eq:MS}
\begin{gathered}
\begin{tikzpicture}[anchorbase,yscale=-1]
\draw[ultra thick,<-] (0,0) node[above=-2pt]{\scs$k{+}\ell$} to [out=90,in=270] (0,.375);
\draw[ultra thick] (0,.375) to [out=150,in=270] (-.25,1) node[below=-2pt]{\scs$k$};
\draw[ultra thick] (0,.375) to [out=30,in=270] (.25,1) node[below=-2pt]{\scs$\ell$};
\node at (.5,.25){\scs$\wt$};
\end{tikzpicture}
\in \Hom^{-k \ell}(\EE^{(k)} \EE^{(\ell)} \one_\wt,\EE^{(k+ \ell)} \one_\wt) \, , \quad
\begin{tikzpicture}[anchorbase,yscale=-1]
\draw[ultra thick] (0,0) node[above=-2pt]{\scs$k{+}\ell$} to [out=90,in=270] (0,.375);
\draw[ultra thick,->] (0,.375) to [out=150,in=270] (-.25,1) node[below=-2pt]{\scs$k$};
\draw[ultra thick,->] (0,.375) to [out=30,in=270] (.25,1) node[below=-2pt]{\scs$\ell$};
\node at (.5,.25){\scs$\wt$};
\end{tikzpicture}
\in \Hom^{-k \ell}(\FF^{(k)} \FF^{(\ell)} \one_\wt,\FF^{(k+ \ell)} \one_\wt) \\
\begin{tikzpicture}[anchorbase,scale=1]
\draw[ultra thick] (0,0) node[below=-2pt]{\scs$k{+}\ell$} to [out=90,in=270] (0,.375);
\draw[ultra thick,->] (0,.375) to [out=150,in=270] (-.25,1) node[above=-2pt]{\scs$k$};
\draw[ultra thick,->] (0,.375) to [out=30,in=270] (.25,1) node[above=-2pt]{\scs$\ell$};
\node at (.5,.75){\scs$\wt$};
\end{tikzpicture}
\in \Hom^{-k \ell}(\EE^{(k+ \ell)} \one_\wt, \EE^{(k)} \EE^{(\ell)} \one_\wt) \, , \quad
\begin{tikzpicture}[anchorbase,scale=1]
\draw[ultra thick,<-] (0,0) node[below=-2pt]{\scs$k{+}\ell$} to [out=90,in=270] (0,.375);
\draw[ultra thick] (0,.375) to [out=150,in=270] (-.25,1) node[above=-2pt]{\scs$k$};
\draw[ultra thick] (0,.375) to [out=30,in=270] (.25,1) node[above=-2pt]{\scs$\ell$};
\node at (.5,.75){\scs$\wt$};
\end{tikzpicture}
\in \Hom^{-k \ell}(\FF^{(k+ \ell)} \one_\wt, \FF^{(k)} \FF^{(\ell)} \one_\wt) \, .
\end{gathered}
\end{equation}
Here, the labels on the strands in the string diagram correspond to their \emph{thickness}, 
which is the index $k$ in $\EE^{(k)}$, rather than their Dynkin label.
(There is a unique Dynkin label when $m=2$, thus no need to record it.)
By convention, any (diagrammatically) ``thin'' strand has thickness $1$; 
see \eqref{eq:explode} below.

These $2$-morphisms satisfy the oriented (co)associativity relations:
\begin{equation}
	\label{eq:assoc}
\begin{tikzpicture}[scale=.2, xscale=-1,tinynodes, anchorbase]
	\draw [ultra thick] (-1,-1) node[below,yshift=2pt]{$m$} to [out=90,in=210] (0,.75);
	\draw [ultra thick] (1,-1) node[below,yshift=2pt]{$\ell$} to [out=90,in=330] (0,.75);
	\draw [ultra thick] (3,-1) node[below,yshift=2pt]{$k$} to [out=90,in=330] (1,2.5);
	\draw [ultra thick, directed=.5] (0,.75) to [out=90,in=210] (1,2.5);
	\draw [ultra thick, ->] (1,2.5) to (1,4.25) node[above,yshift=-3pt]{$k{+}\ell{+}m$};
\end{tikzpicture}
=
\begin{tikzpicture}[scale=.2, tinynodes, anchorbase]
	\draw [ultra thick] (-1,-1) node[below,yshift=2pt]{$k$} to [out=90,in=210] (0,.75);
	\draw [ultra thick] (1,-1) node[below,yshift=2pt]{$\ell$} to [out=90,in=330] (0,.75);
	\draw [ultra thick] (3,-1) node[below,yshift=2pt]{$m$} to [out=90,in=330] (1,2.5);
	\draw [ultra thick, directed=.5] (0,.75) to [out=90,in=210] (1,2.5);
	\draw [ultra thick, ->] (1,2.5) to (1,4.25) node[above,yshift=-3pt]{$k{+}\ell{+}m$};
\end{tikzpicture}
\, , \quad
\begin{tikzpicture}[scale=.2, xscale=-1,tinynodes, anchorbase,yscale=-1]
	\draw [ultra thick,<-] (-1,-1) node[above=-2pt]{$k$} to [out=90,in=210] (0,.75);
	\draw [ultra thick,<-] (1,-1) node[above=-2pt]{$\ell$} to [out=90,in=330] (0,.75);
	\draw [ultra thick,<-] (3,-1) node[above=-2pt]{$m$} to [out=90,in=330] (1,2.5);
	\draw [ultra thick, rdirected=.5] (0,.75) to [out=90,in=210] (1,2.5);
	\draw [ultra thick] (1,2.5) to (1,4.25) node[below=-2pt]{$k{+}\ell{+}m$};
\end{tikzpicture}
=
\begin{tikzpicture}[scale=.2, tinynodes, anchorbase,yscale=-1]
	\draw [ultra thick,<-] (-1,-1) node[above=-2pt]{$k$} to [out=90,in=210] (0,.75);
	\draw [ultra thick,<-] (1,-1) node[above=-2pt]{$\ell$} to [out=90,in=330] (0,.75);
	\draw [ultra thick,<-] (3,-1) node[above=-2pt]{$m$} to [out=90,in=330] (1,2.5);
	\draw [ultra thick, rdirected=.5] (0,.75) to [out=90,in=210] (1,2.5);
	\draw [ultra thick] (1,2.5) to (1,4.25) node[below=-2pt]{$k{+}\ell{+}m$};
\end{tikzpicture} \, .
\end{equation}
As a consequence, there is a unique $2$-morphism $\EE^{(k)} \to \EE^k$ built from splits called the \emph{full split},
denoted
\[
\begin{tikzpicture}[anchorbase,scale=1]
\draw[ultra thick,directed=.5] (0,-.25) node[below=-2pt]{\scs$k$} to (0,.25);
\draw[thick,->] (0,.25) to [out=180,in=270] (-.5,.75);
\draw[thick,->] (0,.25) to [out=150,in=270] (-.25,.75);
\node at (0,.5) {$\mydots$};
\draw[thick,->] (0,.25) to [out=30,in=270] (.25,.75);
\draw[thick,->] (0,.25) to [out=0,in=270] (.5,.75);
\end{tikzpicture} \, .
\]
Similarly, there is a unique $2$-morphism $\EE^k \to \EE^{(k)}$ built from merges, 
called the \emph{full merge}, that is denoted analogously.
The full merge/split morphisms satisfy the following relations:
\begin{equation}
	\label{eq:MStoHT}
\begin{tikzpicture}[anchorbase,scale=1]
\draw[ultra thick,directed=.5] (0,-.25) to node[right=-1pt]{\scs$k$} (0,.25);
\draw[thick,->] (0,.25) to [out=180,in=270] (-.5,.75);
\draw[thick,->] (0,.25) to [out=150,in=270] (-.25,.75);
\node at (0,.5) {$\mydots$};
\draw[thick,->] (0,.25) to [out=30,in=270] (.25,.75);
\draw[thick,->] (0,.25) to [out=0,in=270] (.5,.75);
\node at (.625,.25){\scs$\wt$};
\draw[thick] (-.5,-.75) to [out=90,in=180] (0,-.25);
\draw[thick] (-.25,-.75) to [out=90,in=210] (0,-.25);
\node at (0,-.5) {$\mydots$};
\draw[thick] (.25,-.75) to [out=90,in=330] (0,-.25);
\draw[thick] (.5,-.75) to [out=90,in=0] (0,-.25);
\end{tikzpicture}
=
\begin{tikzpicture}[anchorbase,scale=1]
\draw[thick,->] (-.375,-.75) to (-.375,.75);
\draw[thick,->] (.375,-.75) to (.375,.75);
\node at (0,.5) {$\mydots$};
\node at (0,-.5) {$\mydots$};
\path[fill=white] (-.5,-.25) rectangle (.5,.25);
\draw[very thick] (-.5,-.25) rectangle (.5,.25);
\node at (0,0) {\small$\HT_k$};
\node at (.75,.25){\scs$\wt$};
\end{tikzpicture}
\, , \quad
\begin{tikzpicture}[anchorbase,scale=1]
\draw[ultra thick,rdirected=.5] (0,-.25) to node[right=-1pt]{\scs$k$} (0,.25);
\draw[thick] (0,.25) to [out=180,in=270] (-.5,.75);
\draw[thick] (0,.25) to [out=150,in=270] (-.25,.75);
\node at (0,.5) {$\mydots$};
\draw[thick] (0,.25) to [out=30,in=270] (.25,.75);
\draw[thick] (0,.25) to [out=0,in=270] (.5,.75);
\node at (.625,.25){\scs$\wt$};
\draw[thick,<-] (-.5,-.75) to [out=90,in=180] (0,-.25);
\draw[thick,<-] (-.25,-.75) to [out=90,in=210] (0,-.25);
\node at (0,-.5) {$\mydots$};
\draw[thick,<-] (.25,-.75) to [out=90,in=330] (0,-.25);
\draw[thick,<-] (.5,-.75) to [out=90,in=0] (0,-.25);
\end{tikzpicture}
=
\begin{tikzpicture}[anchorbase,scale=1]
\draw[thick,<-] (-.375,-.75) to (-.375,.75);
\draw[thick,<-] (.375,-.75) to (.375,.75);
\node at (0,.5) {$\mydots$};
\node at (0,-.5) {$\mydots$};
\path[fill=white] (-.5,-.25) rectangle (.5,.25);
\draw[very thick] (-.5,-.25) rectangle (.5,.25);
\node at (0,0) {\small$\HT_k$};
\node at (.75,.25){\scs$\wt$};
\end{tikzpicture}
\end{equation}
and
\begin{equation}
	\label{eq:explode}
\begin{tikzpicture}[anchorbase,scale=1]
\draw[ultra thick,directed=.6] (0,-.75) node[below=-2pt]{\scs$k$} to (0,-.375);
\draw[thick] (0,-.375) to [out=180,in=270] node[pos=.95]{$\bullet$}  (-1.125,0) 
	node[left=-1pt]{\scs$k{-}1$} to [out=90,in=180] (0,.375);
\draw[thick] (0,-.375) to [out=150,in=270] node[pos=.85]{$\bullet$} (-.25,0) 
	node[left=-1pt]{\scs$k{-}2$} to [out=90,in=210] (0,.375);
\node at (0,0) {$\mydots$};
\draw[thick] (0,-.375) to [out=30,in=270] node[pos=.85]{$\bullet$} (.25,0) to [out=90,in=330] (0,.375);
\draw[thick] (0,-.375) to [out=0,in=270] (.5,0) to [out=90,in=0] (0,.375);
\draw[ultra thick,directed=.6] (0,.375) to (0,.75) node[above=-2pt]{\scs$k$} ;
\node at (.625,.5){\scs$\wt$};
\end{tikzpicture}
=
\begin{tikzpicture}[anchorbase,scale=1]
\draw[ultra thick,->] (0,-.75) node[below=-2pt]{\scs$k$} to (0,.75) node[white,above=-2pt]{\scs$k$};
\node at (.375,.5){\scs$\wt$};
\end{tikzpicture} 
\, , \quad
\begin{tikzpicture}[anchorbase,scale=1,rotate=180]
\draw[ultra thick,directed=.6] (0,-.75) node[above=-2pt]{\scs$k$} to (0,-.375);
\draw[thick] (0,-.375) to [out=180,in=270] node[pos=.95]{$\bullet$}  (-1.125,0) 
	node[right=-1pt]{\scs$k{-}1$} to [out=90,in=180] (0,.375);
\draw[thick] (0,-.375) to [out=150,in=270] node[pos=.85]{$\bullet$} (-.25,0) 
	node[right=-1pt]{\scs$k{-}2$} to [out=90,in=210] (0,.375);
\node at (0,0) {$\mydots$};
\draw[thick] (0,-.375) to [out=30,in=270] node[pos=.85]{$\bullet$} (.25,0) to [out=90,in=330] (0,.375);
\draw[thick] (0,-.375) to [out=0,in=270] (.5,0) to [out=90,in=0] (0,.375);
\draw[ultra thick,directed=.6] (0,.375) to (0,.75) node[below=-2pt]{\scs$k$} ;
\node at (-.625,-.625){\scs$\wt$};
\end{tikzpicture}
=
\begin{tikzpicture}[anchorbase,scale=1]
\draw[ultra thick,<-] (0,-.75) node[below=-2pt]{\scs$k$} to (0,.75) node[white,above=-2pt]{\scs$k$};
\node at (.375,.5){\scs$\wt$};
\end{tikzpicture} 
\end{equation}
Here $\HT_k$ denotes any string diagram 
depicting a reduced word for the half-twist permutation 
(i.e.~the longest element of $\SG_k$).
\end{defn}

Our definition above is a presentation of $\U_q(\gln[2])$ by generators and relations. 
In \cite{KLMS} they give many more relations than these, and do not discuss which relations
are truly needed in a presentation. 
In the following lengthy remark, 
we sketch a proof for why our definition above agrees with theirs, 
i.e.~why the relations above imply the more complicated
relations in \cite{KLMS}.

\begin{rem} \label{rmk:presentKar}
Consider an idempotent $2$-endomorphism $e \in \End(X)$ 
of a $1$-morphism $X$ in a $2$-category $\Cat$.
Let $\Cat(e)$ be the partial idempotent completion, which adjoins the image $\Im(e)$ of $e$ as a new object. 
It is straightforward to extend a presentation of $\Cat$ to obtain a presentation of $\Cat(e)$.
One need only adjoin two new $2$-morphisms $\pi_e \colon X \to \Im(e)$ and $\iota_e \colon \Im(e) \to X$, 
which satisfy two new relations:
\begin{equation} 
	\label{eq:KarEnv} 
\pi_e \circ \iota_e = \id_{\Im(e)} \, , \quad \iota_e \circ \pi_e = e \, . 
\end{equation}
Although there may be many new relations in $\Cat(e)$, 
all may be obtained as a consequence of in \eqref{eq:KarEnv} and existing relations in $\Cat$.
Further, if the idempotent $e$ could already be split in $\Cat$ as factoring through an object $Y$, 
then $\Im(e)$ and $Y$ will be isomorphic in $\Cat(e)$, 
and there will be an equivalence of categories $\Cat \cong \Cat(e)$.

In the Karoubi envelope of $\UU_q(\gln[2])$,
the object $\EE^k$ splits into $k!$ indecomposable summands, 
all of which happen to be isomorphic up to grading shift; 
see e.g.~\cite[Section 9.2]{Lau1}. 
The paper \cite{KLMS} proceeds based on the observation that
the partial idempotent completion which adds the images of all these idempotents 
is equivalent to the partial idempotent completion which adds just one of them. 
In fact, it does not adjoin the image of any idempotents in the decomposition of $\EE^k$, 
but rather adjoins the split and merge maps that factor through a single object $\EE^{(k)}$, 
which is isomorphic to each of these images, up to shift.

A straightforward (if perhaps less motivated) way to proceed is to 
note that one of the idempotents in the decomposition of $\EE^k$ factors in a nice way, 
as a composition $e = f \circ g$ where $g \circ e = g$. 
Specifically, we can take
$g = \HT_k$ and $f = x_1^{k-1} x_2^{k-2} \ldots$, where $x_i$ represents a dot on the $i$-th strand; 
see e.g.~\eqref{idempotentEE}.
For such $e, f, g$, the morphism $g \in \End(X)$ can be factored in $\Cat(e)$ 
as $g = \iota'_e \circ \pi_e$ 
by computing
\[
g = g \circ e = g \circ \iota_e \circ \pi_e
\]
and setting $\iota'_e := g \circ \iota_e$.
Since one can recover $\iota_e$ as 
\[
\iota_e = e \circ \iota_e = f \circ g \circ \iota_e = f \circ \iota'_e \, ,
\]
this provides a new presentation of $\Cat(e)$ 
wherein the generator $\iota_e$ is replaced by $\iota'_e$ and the relations \eqref{eq:KarEnv} 
are replaced with
\begin{equation}
	\label{eq:KarEnv2}
	\pi_e \circ f \circ \iota_e' = \id_{\Im(e)} \, , \quad \iota_e' \circ \pi_e = g \, .
\end{equation}
The relations \eqref{eq:MStoHT} and \eqref{eq:explode} are exactly those in \eqref{eq:KarEnv2};
this justifies our presentation for $\U_q(\gln[2])$. \end{rem}

\begin{conv}\label{Conv:thickcapscups}
\emph{Thick caps/cups} and \emph{thick crossings} are defined in 
$\U_q(\gln[2])$ using ``thin'' caps/cups and crossings via \eqref{eq:explode}. 
For example,
\begin{equation}\label{eqn:thickcapdefn}
\begin{tikzpicture}[anchorbase,scale=1]
\draw[ultra thick,black,<-] (-.25,0) to [out=90,in=180] (0,.5) 
	to [out=0,in=90] (.25,0) node[below=-2pt]{\scs$k$};
\node at (.5,.5){$\wt$};
\end{tikzpicture}
:=
\begin{tikzpicture}[anchorbase,scale=1]
\draw[ultra thick,black,->] (-.5,-1) to (-.5,-1.5);
\draw[thick,black] (-.5,-1) to [out=30,in=150] node[black]{$\bullet$} 
	node[black,below=-2pt]{\tiny$k{-}1$} (.5,-1);
\draw[thick,black] (-.5,-1) to [out=150,in=180] (0,-.5) node[black]{$\bullet$} 
	node[black,below=-2pt]{\tiny$k{-}2$} to [out=0,in=30] (.5,-1);
\draw[thick,black] (-.5,-1) to [out=180,in=180] (0,0) 
	node[rotate=90,xshift=-6pt,black]{$\mydots$} to [out=0,in=0] (.5,-1);
\draw[ultra thick,black] (.5,-1) to (.5,-1.5) node[below=-2pt]{\scs$k$};
\end{tikzpicture}
\, , \quad
\begin{tikzpicture}[anchorbase,scale=1]
\draw[ultra thick,black,->] (-.25,0) node[below=-2pt]{\scs $k$} to [out=90,in=180] (0,.5) 
	to [out=0,in=90] (.25,0);
\node at (.5,.5){$\wt$};
\end{tikzpicture}
:=
\begin{tikzpicture}[anchorbase,scale=1]
\draw[ultra thick,black] (-.5,-1) to (-.5,-1.5) node[below=-2pt]{\scs$k$};
\draw[thick,black] (-.5,-1) to [out=30,in=150] (.5,-1);
\draw[thick,black] (-.5,-1) to [out=150,in=180] (0,-.5) node[black]{$\bullet$} 
	node[black,above=-2pt]{\tiny$k{-}2$} node[rotate=90,xshift=-6pt,black]{$\mysdots$}
		to [out=0,in=30] (.5,-1);
\draw[thick,black] (-.5,-1) to [out=180,in=180] (0,0) node[black]{$\bullet$} 
	node[black,above=-2pt]{\tiny$k{-}1$} to [out=0,in=0] (.5,-1);
\draw[ultra thick,black,->] (.5,-1) to (.5,-1.5);
\end{tikzpicture}
\end{equation}
and
\begin{equation}\label{eqn:thickcrossingdefn}
\begin{tikzpicture}[anchorbase,scale=1]
\draw[ultra thick,black,->] (0,0) node[below=-2pt]{\scs$k$} to [out=90,in=270] (.5,1);
\draw[ultra thick,->] (.5,0) node[below=-2pt]{\scs$\ell$} to [out=90,in=270] (0,1);
\node at (.875,.75){$\wt$};
\end{tikzpicture}
:=
\begin{tikzpicture}[anchorbase,scale=1]
\draw[ultra thick,->] (-.75,1) node[below,xshift=1pt]{$\mydots$} to (-.75,1.25);
\draw[ultra thick,black,->] (.75,1) node[below,xshift=-1pt,black]{$\mydots$} to (.75,1.25);
\draw[ultra thick,black] (-.75,-1) to (-.75,-1.25) node[below=-2pt]{\scs$k$};
\draw[ultra thick] (.75,-1) to (.75,-1.25) node[below=-2pt]{\scs$\ell$};
\draw[thick,black] (-.75,-1) to [out=150,in=210] node[pos=.85,black]{$\bullet$} 
	node[pos=.85,above,black]{\tiny$k{-}1$} (.75,1);
\draw[thick,black] (-.75,-1) to [out=30,in=330] node[pos=.85,black]{$\bullet$} (.75,1);
\draw[thick,black] (-.75,-1) to [out=0,in=270] (1.25,.5) to [out=90,in=0] (.75,1);
\draw[thick] (.75,-1) to [out=150,in=210] node[pos=.75]{$\bullet$} 
	node[pos=.85,below,xshift=10pt]{\tiny$\ell{-}2$} (-.75,1);
\draw[thick] (.75,-1) to [out=30,in=330] (-.75,1);
\draw[thick] (.75,-1) to [out=180,in=270] (-1.25,.5) node{$\bullet$} 
	node[left]{\tiny$\ell{-}1$}
	to [out=90,in=180] (-.75,1);
\node at (1.5,.75){$\wt$};
\end{tikzpicture} \, .
\end{equation}
These $2$-morphisms interact with merge/split morphisms in a straightforward manner,
e.g.
\begin{equation}
	\label{eq:otherthick}
\begin{tikzpicture}[anchorbase,yscale=-1]
\draw[ultra thick,black,<-] (1,1) node[below=-2pt]{\scs$k{+}\ell$} to (1,.375) to [out=270,in=0] (.5,-.125) 
	to [out=180,in=270] (0,.375);
\draw[ultra thick,black] (0,.375) to [out=150,in=270] (-.25,1) node[below=-2pt]{\scs$k$};
\draw[ultra thick,black] (0,.375) to [out=30,in=270] (.25,1) node[below=-2pt]{\scs$\ell$};
\node at (1.25,0){$\wt$};
\end{tikzpicture}
=
\begin{tikzpicture}[anchorbase,scale=1]
\draw[ultra thick,black,<-] (0,0) node[below=-2pt]{\scs$k{+}\ell$} to [out=90,in=270] (0,.375);
\draw[ultra thick,black] (0,.375) to [out=150,in=270] (-.25,.75) to [out=90,in=0] (-.5,1) to [out=180,in=90]
	(-.75,.75) to (-.75,0) node[below=-2pt]{\scs$\ell$};
\draw[ultra thick,black] (0,.375) to [out=30,in=270] (.25,.75) to [out=90,in=0] (-.5,1.5) to [out=180,in=90]
	(-1.25,.75) to (-1.25,0) node[below=-2pt]{\scs$k$};
\node at (.5,1.125){$\wt$};
\end{tikzpicture}
\, , \quad
\begin{tikzpicture}[anchorbase,scale=1]
\draw[ultra thick,black] (-.25,-.5) node[below=-2pt]{\scs$k_1$} to [out=90,in=210] (0,0);
\draw[ultra thick,black] (.25,-.5) node[below=-2pt]{\scs$k_2$} to [out=90,in=330] (0,0);
\draw[ultra thick,black,->] (0,0) to [out=90,in=270] (.5,1);
\draw[ultra thick,->] (.75,-.5) node[below=-2pt]{\scs$\ell$} to [out=90,in=270] (0,1);
\node at (.875,.75){$\wt$};
\end{tikzpicture}
=
\begin{tikzpicture}[anchorbase,scale=1]
\draw[ultra thick,black] (-.25,-.5) node[below=-2pt]{\scs$k_1$} to [out=90,in=210] (.5,.625);
\draw[ultra thick,black] (.25,-.5) node[below=-2pt]{\scs$k_2$} to [out=90,in=330] (.5,.625);
\draw[ultra thick,black,->] (.5,.625) to [out=90,in=270] (.5,1);
\draw[ultra thick,->] (.75,-.5) node[below=-2pt]{\scs$\ell$} to [out=90,in=270] (0,1);
\node at (.875,.75){$\wt$};
\end{tikzpicture} \, .
\end{equation}
Further, crossings satisfy the relations
\begin{equation}
	\label{eq:bypass}
\begin{tikzpicture}[anchorbase,scale=1.25]
\draw[ultra thick,black,->] (.5,.25) to (.5,.5) node[above=-2pt]{\scs$k{+}j$};
\draw[ultra thick,black,directed=.3] (0,-.5) node[below=-2pt]{\scs$k$} to [out=90,in=210] (.5,.25);
\draw[ultra thick,black,directed=.6] (.5,-.25) to [out=30,in=330] node[right=-1pt]{\scs$j$} (.5,.25);
\draw[ultra thick,black,->] (.5,-.25) to [out=150,in=270] (0,.5) node[above=-2pt]{\scs$\ell$};
\draw[ultra thick,black,directed=.75] (.5,-.5) node[below=-2pt]{\scs$\ell{+}j$} to (.5,-.25);
\node at (1,.25){$\wt$};
\end{tikzpicture}
=
\begin{tikzpicture}[anchorbase,scale=1.25]
\draw[ultra thick,black,->] (.25,.125) to [out=150,in=270] (0,.5) node[above=-2pt]{\scs$\ell$};
\draw[ultra thick,black,->] (.25,.125) to [out=30,in=270] (.5,.5) node[above=-2pt]{\scs$k{+}j$};
\draw[ultra thick,black,directed=.7] (.25,-.125) to (.25,.125);
\draw[ultra thick,black,directed=.6] (0,-.5) node[below=-2pt]{\scs$k$} to [out=90,in=210] (.25,-.125);
\draw[ultra thick,black,directed=.6] (.5,-.5) node[below=-2pt]{\scs$\ell{+}j$} to [out=90,in=330] (.25,-.125);
\node at (.75,.25){$\wt$};
\end{tikzpicture}
\quad , \quad
\begin{tikzpicture}[anchorbase,scale=1.25,xscale=-1]
\draw[ultra thick,black,->] (.5,.25) to (.5,.5) node[above=-2pt]{\scs$\ell{+}j$};
\draw[ultra thick,black,directed=.3] (0,-.5) node[below=-2pt]{\scs$\ell$} to [out=90,in=210] (.5,.25);
\draw[ultra thick,black,directed=.6] (.5,-.25) to [out=30,in=330] node[left=-1pt]{\scs$j$} (.5,.25);
\draw[ultra thick,black,->] (.5,-.25) to [out=150,in=270] (0,.5) node[above=-2pt]{\scs$k$};
\draw[ultra thick,black,directed=.75] (.5,-.5) node[below=-2pt]{\scs$k{+}j$} to (.5,-.25);
\node at (-.25,.25){$\wt$};
\end{tikzpicture}
=
\begin{tikzpicture}[anchorbase,scale=1.25,xscale=-1]
\draw[ultra thick,black,->] (.25,.125) to [out=150,in=270] (0,.5) node[above=-2pt]{\scs$k$};
\draw[ultra thick,black,->] (.25,.125) to [out=30,in=270] (.5,.5) node[above=-2pt]{\scs$\ell{+}j$};
\draw[ultra thick,black,directed=.7] (.25,-.125) to (.25,.125);
\draw[ultra thick,black,directed=.6] (0,-.5) node[below=-2pt]{\scs$\ell$} to [out=90,in=210] (.25,-.125);
\draw[ultra thick,black,directed=.6] (.5,-.5) node[below=-2pt]{\scs$k{+}j$} to [out=90,in=330] (.25,-.125);
\node at (-.25,.25){$\wt$};
\end{tikzpicture} \, .
\end{equation}
\end{conv}

\begin{rem} \label{rmk:rescalingredux} 
Continuing the discussion of \S\ref{ssec:rescaling}, we describe how the functor $\crazyF$ 
extends to the thick calculus. 
Any pair of a clockwise cup and clockwise cap in the same weight $\wt$, with label $i$ and thickness $k$, 
will be rescaled by $(-1)^{k(a_i-1) + \binom{k}{2}}$. 
For counterclockwise cap/cup pairs, the scaling factor is $(-1)^{ka_i + \binom{k}{2}}$. 
A pair of sideways crossings with thickness $k$ and $\ell$ (in the same weight $\wt$) 
will be rescaled by $(-1)^{k \ell}$. 
\end{rem}

While the relations in Definition \ref{def:Thickgl2} 
are the only ones needed in a presentation of $\U_q(\gln[2])$, 
many other useful relations, 
which are (difficult) consequences of these, 
are provided in \cite{KLMS}. 
The rescaling in Remark \ref{rmk:rescalingredux} 
allows us to translate these relations to our conventions, 
and we now record those which are most important to us. 
In what follows, $P(k)$ denotes the set of partitions with at most $k$ rows, 
and $P(k,\ell)$ denotes the set of partitions fitting in a $k \times \ell$ rectangle. 
Given $\parti \in P(k,\ell)$, 
$\parti^c \in P(k,\ell)$ denotes its complement (in a $k \times \ell$ rectangle), 
$\bar{\parti} \in P(\ell,k)$ denotes the transpose partition, 
and $\hat{\parti} := \overline{\parti^c} \in P(\ell,k)$; see \cite[Page 14]{KLMS}.
If $\parti= (\parti, \ldots, \parti_k)\in P(k)$, then we write $|\parti|:= \sum_{i=1}^k\parti_i$. 

\begin{prop}[{\cite{KLMS}}]
Let $\parti \in P(k)$ be a partition and set
\begin{equation}
	\label{eq:SymDec}
\begin{tikzpicture}[anchorbase,scale=1]
\draw[ultra thick,->] (0,-.75) node[below=-2pt]{\scs$k$} to node{$\CQGbox{\mathfrak{s}_\parti}$}
	(0,.75) node[white,above=-2pt]{\scs$k$};
\node at (.25,.5){$\wt$};
\end{tikzpicture} 
:=
\begin{tikzpicture}[anchorbase,scale=1]
\draw[ultra thick,directed=.6] (0,-.75) node[below=-2pt]{\scs$k$} to (0,-.375);
\draw[thick] (0,-.375) to [out=180,in=270] node[pos=.95]{$\bullet$}  (-1.125,0) 
	node[left=-1pt,yshift=2pt]{\scs$k{-}1$} node[left=-1pt,yshift=-4pt]{\scs$+\parti_1$} 
		to [out=90,in=180] (0,.375);
\draw[thick] (0,-.375) to [out=150,in=270] node[pos=.85]{$\bullet$} (-.25,0) 
	node[left=-1pt,yshift=2pt]{\scs$k{-}2$} node[left=-1pt,yshift=-4pt]{\scs$+\parti_2$} 
		to [out=90,in=210] (0,.375);
\node at (0,0) {$\mydots$};
\draw[thick] (0,-.375) to [out=30,in=270] node[pos=.85]{$\bullet$} (.25,0) 
	node[right=-1pt,yshift=2pt]{\scs$1{+}$} node[right=-1pt,yshift=-4pt]{\scs$\parti_{k-1}$} 
		to [out=90,in=330] (0,.375);
\draw[thick] (0,-.375) to [out=0,in=270] node[pos=.95]{$\bullet$} (1.25,0) 
	node[right=-1pt]{\scs$\parti_k$} to [out=90,in=0] (0,.375);
\draw[ultra thick,directed=.6] (0,.375) to (0,.75) node[above=-2pt]{\scs$k$} ;
\node at (1,.625){$\wt$};
\end{tikzpicture}
\end{equation}
The following (as well as their $180^\circ$ rotations) hold in $\U_q(\gln[2])$.
\begin{itemize}
\item The assignment sending a Schur polynomial 
$\mathfrak{s}_{\parti}(x_1,\ldots,x_k)$ to the element in 
\eqref{eq:SymDec} determines a $\K$-algebra isomorphism
$\K[x_1,\ldots,x_k]^{\SG_k} \xrightarrow{\cong} \End(\EE^{(k)} \one_\wt)$.
In particular,
\begin{equation}
\begin{tikzpicture}[anchorbase,scale=1]
\draw[ultra thick,->] (0,-.75) node[below=-2pt]{\scs$k$} to 
	node[pos=.3]{$\CQGbox{\mathfrak{s}_\mu}$} node[pos=.7]{$\CQGbox{\mathfrak{s}_\lambda}$}
	(0,.75) node[white,above=-2pt]{\scs$k$};
\node at (.5,.5){$\wt$};
\end{tikzpicture} 
=
\sum_{\nu \in P(k)}
c_{\lambda,\mu}^{\nu}
\begin{tikzpicture}[anchorbase,scale=1]
\draw[ultra thick,->] (0,-.75) node[below=-2pt]{\scs$k$} to node{$\CQGbox{\mathfrak{s}_\nu}$}
	(0,.75) node[white,above=-2pt]{\scs$k$};
\node at (.375,.5){$\wt$};
\end{tikzpicture},
\end{equation}
where $c_{\lambda,\mu}^{\nu}$ is a Littlewood-Richardson coefficient.
\item Decorations migrate according to the coproduct for symmetric functions, e.g.
\begin{equation}
	\label{eq:migrate}
\begin{tikzpicture}[anchorbase,xscale=1.25]
\draw[ultra thick] (0,0) node[below=-2pt]{\scs$k{+}\ell$} to [out=90,in=270] node[pos=.5]{\tiny$\CQGsbox{\Schur[\nu]}$} (0,.5);
\draw[ultra thick,->] (0,.5) to [out=150,in=270] (-.25,1.125) node[above=-2pt]{\scs$k$};
\draw[ultra thick,->] (0,.5) to [out=30,in=270] (.25,1.125) node[above=-2pt]{\scs$\ell$};
\node at (.5,.75){\scs$\wt$};
\end{tikzpicture}
=
\sum
c_{\lambda,\mu}^{\nu}
\begin{tikzpicture}[anchorbase,xscale=1.25]
\draw[ultra thick] (0,0) node[below=-2pt]{\scs$k{+}\ell$} to [out=90,in=270] (0,.375);
\draw[ultra thick,->] (0,.375) to [out=150,in=270] node[pos=.5]{\tiny$\CQGsbox{\Schur}$} (-.25,1.125) node[above=-2pt]{\scs$k$};
\draw[ultra thick,->] (0,.375) to [out=30,in=270] node[pos=.5]{\tiny$\CQGsbox{\Schur[\mu]}$} (.25,1.125) node[above=-2pt]{\scs$\ell$};
\node at (.625,.75){\scs$\wt$};
\end{tikzpicture}
\end{equation}

\item For $\parti \in P(k,\ell)$ and  $\mu \in P(\ell,k)$,
\begin{equation}
	\label{eq:CQGdigon}
\begin{tikzpicture}[anchorbase,scale=1]
\draw[ultra thick] (0,-.75) to (0,-.375);
\draw[ultra thick] (0,-.375) to [out=150,in=270] (-.375,0) node{\scs$\CQGbox{\mathfrak{s}_{\parti}}$}
	to [out=90,in=210] (0,.375);
\draw[ultra thick] (0,-.375) to [out=30,in=270] (.375,0) node{\scs$\CQGbox{\mathfrak{s}_{\mu}}$}
	to [out=90,in=330] (0,.375);
\draw[ultra thick,->] (0,.375) to (0,.75);
\end{tikzpicture}
=
\begin{cases}
(-1)^{|\hat{\parti}|}
\begin{tikzpicture}[anchorbase,scale=1]
\draw[ultra thick,->] (0,-.5) to (0,.5);
\end{tikzpicture} & \text{if } \mu = \hat{\parti} \\ \\
0 & \text{else.}
\end{cases}
\end{equation}

\item There is an idempotent decomposition of the identity
\begin{equation}
	\label{eq:EEdecomp}
\sum_{\parti \in P(k,\ell)}
(-1)^{|\hat{\parti}|}
\begin{tikzpicture}[anchorbase,scale=1.25]
\draw[ultra thick] (-.25,-.75) node[below=-2pt]{\scs$k$} to [out=90,in=210] (0,-.125);
\draw[ultra thick] (.25,-.75) node[below=-2pt]{\scs$\ell$} to [out=90,in=330] 
	node{\scs$\CQGbox{\mathfrak{s}_{\hat{\parti}}}$} (0,-.125);
\draw[ultra thick] (0,-.125) to [out=90,in=270] (0,.125);
\draw[ultra thick,->] (0,.125) to [out=150,in=270] 
	node{\scs$\CQGbox{\mathfrak{s}_{\parti}}$} (-.25,.75) node[above=-2pt]{\scs$k$};
\draw[ultra thick,->] (0,.125) to [out=30,in=270] (.25,.75) node[above=-2pt]{\scs$\ell$};
\node at (.5,.5){$\wt$};
\end{tikzpicture}
=
\begin{tikzpicture}[anchorbase,scale=1.25]
\draw[ultra thick,->] (-.25,-.75) node[below=-2pt]{\scs$k$} to 
	(-.25,.75) node[white, above=-2pt]{\scs$k$};
\draw[ultra thick,->] (.25,-.75) node[below=-2pt]{\scs$\ell$} to 
	(.25,.75) node[white, above=-2pt]{\scs$\ell$};
\node at (.5,.5){$\wt$};
\end{tikzpicture}
\end{equation}
Together with \eqref{eq:CQGdigon}, 
this implies that 
$\EE^{(k)} \EE^{(\ell)} \one_{\wt} \cong \bigoplus_{k+\ell \brack \ell} \EE^{(k+\ell)} \one_{\wt}$
and that
$\FF^{(k)} \FF^{(\ell)} \one_{\wt} \cong \bigoplus_{k+\ell \brack \ell} \FF^{(k+\ell)} \one_{\wt}$.
\item For any $b_1, \ldots, b_k\in \Z_{\ge 0}$ and any $0\le r < k$
\begin{equation}\label{eqn:sgnfromdemazure}
\begin{tikzpicture}[anchorbase,scale=1]
\draw[ultra thick,directed=.6] (0,-.75) node[below=-2pt]{\scs$k$} to (0,-.375);
\draw[thick] (0,-.375) to [out=180,in=270] node[pos=.95]{$\bullet$}  (-1.125,0) 
	node[left=-1pt,yshift=2pt]{\scs$b_1$}
		to [out=90,in=180] (0,.375);
\draw[thick] (0,-.375) to [out=150,in=270] node[pos=.85]{$\bullet$} (-.25,0) 
	node[left=-1pt,yshift=2pt]{\scs$b_r$}
		to [out=90,in=210] (0,.375);
\node[yshift=-2pt] at (-.625,-.125) {$\mydots$};
\node[yshift=-2pt] at (.625,-.125) {$\mydots$};
\draw[thick] (0,-.375) to [out=30,in=270] node[pos=.85]{$\bullet$} (.25,0) 
	node[right=-1pt,yshift=2pt]{\scs$b_{r+1}$}
		to [out=90,in=330] (0,.375);
\draw[thick] (0,-.375) to [out=0,in=270] node[pos=.95]{$\bullet$} (1.25,0) 
	node[right=-1pt]{\scs$b_k$} to [out=90,in=0] (0,.375);
\draw[ultra thick,->] (0,.375) to (0,.75);
\node at (1,.625){$\wt$};
\end{tikzpicture}
= -1 \cdot
\begin{tikzpicture}[anchorbase,scale=1]
\draw[ultra thick,directed=.6] (0,-.75) node[below=-2pt]{\scs$k$} to (0,-.375);
\draw[thick] (0,-.375) to [out=180,in=270] node[pos=.95]{$\bullet$}  (-1.25,0) 
	node[left=-1pt,yshift=2pt]{\scs$b_1$}
		to [out=90,in=180] (0,.375);
\draw[thick] (0,-.375) to [out=150,in=270] node[pos=.85]{$\bullet$} (-.25,0) 
	node[left=-1pt,yshift=2pt]{\scs$b_{r+1}$}
		to [out=90,in=210] (0,.375);
\node[yshift=-2pt] at (-.625,-.125) {$\mydots$};
\node[yshift=-2pt] at (.625,-.125) {$\mydots$};
\draw[thick] (0,-.375) to [out=30,in=270] node[pos=.85]{$\bullet$} (.25,0) 
	node[right=-1pt,yshift=2pt]{\scs$b_r$}
		to [out=90,in=330] (0,.375);
\draw[thick] (0,-.375) to [out=0,in=270] node[pos=.95]{$\bullet$} (1.125,0) 
	node[right=-1pt]{\scs$b_k$} to [out=90,in=0] (0,.375);
\draw[ultra thick,->] (0,.375) to (0,.75);
\node at (1,.625){$\wt$};
\end{tikzpicture} \, .
\end{equation}

\item Generalizing \eqref{eq:curl1}, the thick curl relation:
\begin{equation}
	\label{eq:thickcurl}
\begin{aligned}
\begin{tikzpicture}[anchorbase]
\draw[ultra thick,->] (0,-.75) node[below=-2pt]{\scs $k$} to [out=90,in=180] (.5,.25) to [out=0,in=90] (.75,0) 
	node{\tiny$\CQGsbox{\Schur[\nu]}$} to [out=270,in=0] (.5,-.25) to [out=180,in=270] (0,.75);
\node at (.75,.5){\scs$(a_1,a_2)$};
\end{tikzpicture}
&=
(-1)^{k^2} \sum_{\parti,\mu}
c_{\parti,\mu}^{\nu-a_1+a_2}
\begin{tikzpicture}[anchorbase]
\draw[ultra thick,->] (0,-.75) node[below=-2pt]{\scs $k$} to node[pos=.5]{\tiny$\CQGsbox{\Schur}$} (0,.75);
\draw[ultra thick,<-] (1.375,0) arc[start angle=0,end angle=360,radius=.375] 
	node[pos=.5]{\tiny$\CQGsbox{\mathfrak{s}_\mu^\spadesuit}$};
\node at (1.75,.5){\scs$(a_1,a_2)$};
\end{tikzpicture} \\
&=
(-1)^{k(k+a_i-1)} \sum_{\parti,\mu}
c_{\parti,\mu}^{\nu-a_1+a_2}
\begin{tikzpicture}[anchorbase]
\draw[ultra thick,->] (0,-.75) node[below=-2pt]{\scs $k$} to node[pos=.5]{\tiny$\CQGsbox{\Schur}$} (0,.75);
\node at (1.25,0){$\NB{\Schur[\mu](\X_1 - \X_2)}$}; 
\node at (1.75,.5){\scs$(a_1,a_2)$};
\end{tikzpicture} 
\, ,
\end{aligned}
\end{equation}
holds.
Here, the notation
$\nu \pm r$ stands for the partition $(\nu_1\pm r,\ldots,\nu_k \pm r)$
and
$\Schur[\mu]^\spadesuit=\Schur[\mu-r]$ 
for $r$ such that the resulting bubble has degree $2|\mu|$.

\item There are decompositions into indecomposable $1$-morphisms:
\begin{equation}
	\label{eq:EF}
\EE^{(k)}\FF^{(\ell)} \one_{(a_1,a_2)} \cong \bigoplus_{j=0}^{\min(k,\ell)} \bigoplus_{k-\ell+a_1-a_2 \brack j} 
	\FF^{(\ell-j)} \EE^{(k-j)} \one_{(a_1,a_2)} \quad  \text{if } k - \ell +a_1-a_2 \geq 0
\end{equation}
and
\begin{equation}
	\label{eq:FE}
\FF^{(\ell)}\EE^{(k)} \one_{(a_1,a_2)} \cong \bigoplus_{j=0}^{\min(k,\ell)} \bigoplus_{k-\ell-a_1+ a_2 \brack j} 
	\EE^{(k-j)} \FF^{(\ell-j)} \one_{(a_1,a_2)} \quad  \text{if } \ell-k-a_1+ a_2 \geq 0 
\end{equation}
given via the \emph{Sto\v{s}i\'{c} formulae:}
\begin{subequations}
\label{eq:Stosic}
\begin{equation}
	\label{eq:StosicEF}
\begin{tikzpicture}[anchorbase,scale=1.25]
\draw[ultra thick,->] (-.25,-.75) node[below=-2pt]{\scs$k$} to (-.25,.75);
\draw[ultra thick,<-] (.25,-.75) node[below=-2pt]{\scs$\ell$} to (.25,.75);
\node at (.75,.5){\scs$(a_1,a_2)$};
\end{tikzpicture}
=
\sum_{j=0}^{\min(k,\ell)}
\sum_{\substack{\parti,\mu,\nu \in P(j) \\ y \in P(j,k-j) \\ z \in P(j,\ell-j)}}
(-1)^{\frac{j(j+1)}{2}+|y|+|z|+(k+\ell)j}
c_{\parti,\mu,\nu,y,z}^{N_j}
\begin{tikzpicture}[anchorbase,scale=1]
\draw[ultra thick,->] (-.75,1) to (-.75,1.5) node[above=-2pt]{\scs$k$};
\draw[ultra thick,rdirected=.5] (.75,1) to (.75,1.5) node[above=-2pt]{\scs$\ell$};
\draw[ultra thick,rdirected=.25] (-.75,1) to [out=330,in=210] 
	node[pos=.7]{\tiny$\CQGbox{\mathfrak{s}_{\parti}}$} node[pos=.3,above]{\scs$j$} (.75,1);
\draw[ultra thick,directed=.55] (-.75,-2) to [out=150,in=270] (-1,-1.25) node{\tiny$\CQGbox{\mathfrak{s}_{\bar{y}}}$} 
	to [out=90,in=270] (.375,0) to [out=90,in=270] (-1,.75) to [out=90,in=210] (-.75,1);
\draw[ultra thick,rdirected=.55] (.75,-2) to [out=30,in=270] (1,-1.25) node{\tiny$\CQGbox{\mathfrak{s}_{\bar{z}}}$} 
	to [out=90,in=270] (-.375,0) to [out=90,in=270] (1,.75) to [out=90,in=330] (.75,1);
\node at (-.75,0) {\scs$\ell{-}j$};
\node at (.75,0) {\scs$k{-}j$};
\draw[ultra thick,directed=.7] (-.75,-2) to [out=30,in=150] 
	node[pos=.3]{\tiny$\CQGbox{\mathfrak{s}_{\mu}}$} node[pos=.7,below]{\scs$j$} (.75,-2);
\draw[ultra thick,directed=.5] (0,-1.125) circle (.375);
\draw node at (.375,-1.125) {\tiny$\CQGbox{\mathfrak{s}_\nu^\spadesuit}$};
\draw node at (-.5,-1.25) {\scs$j$};
\draw[ultra thick,directed=.5] (-.75,-2.5) node[below=-2pt]{\scs$k$} to (-.75,-2);
\draw[ultra thick,<-] (.75,-2.5) node[below=-2pt]{\scs$\ell$} to (.75,-2);
\node at (1.625,1){\scs$(a_1,a_2)$};
\end{tikzpicture}
\end{equation}
\begin{equation}
	\label{eq:StosicFE}
\begin{tikzpicture}[anchorbase,scale=1.25,xscale=-1]
\draw[ultra thick,->] (-.25,-.75) node[below=-2pt]{\scs$k$} to (-.25,.75);
\draw[ultra thick,<-] (.25,-.75) node[below=-2pt]{\scs$\ell$} to (.25,.75);
\node at (-.75,.5){\scs$(a_1,a_2)$};
\end{tikzpicture}
=
\sum_{j=0}^{\min(k,\ell)}
\sum_{\substack{\parti,\mu,\nu \in P(j) \\ y \in P(j,k-j) \\ z \in P(j,\ell-j)}}
(-1)^{\frac{j(j+1)}{2}+|y|+|z|}
c_{\parti,\mu,\nu,y,z}^{M_j}
\begin{tikzpicture}[anchorbase,scale=1,xscale=-1]
\draw[ultra thick,->] (-.75,1) to (-.75,1.5) node[above=-2pt]{\scs$k$};
\draw[ultra thick,rdirected=.5] (.75,1) to (.75,1.5) node[above=-2pt]{\scs$\ell$};
\draw[ultra thick,rdirected=.25] (-.75,1) to [out=330,in=210] 
	node[pos=.7]{\tiny$\CQGbox{\mathfrak{s}_{\parti}}$} node[pos=.3,above]{\scs$j$} (.75,1);
\draw[ultra thick,directed=.55] (-.75,-2) to [out=150,in=270] (-1,-1.25) node{\tiny$\CQGbox{\mathfrak{s}_{\bar{y}}}$} 
	to [out=90,in=270] (.375,0) to [out=90,in=270] (-1,.75) to [out=90,in=210] (-.75,1);
\draw[ultra thick,rdirected=.55] (.75,-2) to [out=30,in=270] (1,-1.25) node{\tiny$\CQGbox{\mathfrak{s}_{\bar{z}}}$} 
	to [out=90,in=270] (-.375,0) to [out=90,in=270] (1,.75) to [out=90,in=330] (.75,1);
\node at (-.75,0) {\scs$\ell{-}j$};
\node at (.75,0) {\scs$k{-}j$};
\draw[ultra thick,directed=.7] (-.75,-2) to [out=30,in=150] 
	node[pos=.3]{\tiny$\CQGbox{\mathfrak{s}_{\mu}}$} node[pos=.7,below]{\scs$j$} (.75,-2);
\draw[ultra thick,directed=.5] (0,-1.125) circle (.375);
\draw node at (.375,-1.125) {\tiny$\CQGbox{\Schur[\nu]^\spadesuit}$};
\draw node at (-.5,-1.25) {\scs$j$};
\draw[ultra thick,directed=.5] (-.75,-2.5) node[below=-2pt]{\scs$k$} to (-.75,-2);
\draw[ultra thick,<-] (.75,-2.5) node[below=-2pt]{\scs$\ell$} to (.75,-2);
\node at (-1.625,1){\scs$(a_1,a_2)$};
\end{tikzpicture}
\end{equation}
\end{subequations}
Here, $N_j := (a_1- a_2 + k - \ell - j)^j$, $M_j := (a_2- a_1 + \ell - k - j)^j$, and 
$c_{\parti,\mu,\nu,y,z}^{\rho}$ is an iterated Littlewood-Richardson coefficient, 
defined by the equality
$\Schur \Schur[\mu] \Schur[\nu] \Schur[y] \Schur[z] = \sum c_{\parti,\mu,\nu,y,z}^{\rho} \Schur[\rho]$.
Observe that the signs in \eqref{eq:Stosic} differ from those in \cite{KLMS}; 
see Remark \ref{rmk:rescalingredux}.

Moreover, the $1$-morphisms
\[
\begin{cases}
\FF^{(\ell)}\EE^{(k)} \one_{(a_1,a_2)} & \text{if } \ell - k \leq a_1 - a_2 \\
\EE^{(k)}\FF^{(\ell)} \one_{(a_1,a_2)} & \text{if } k - \ell \leq a_2 - a_1 \\
\end{cases}
\]
(that appear on the right-hand side of \eqref{eq:EF} and \eqref{eq:FE}) 
constitute all of the indecomposable $1$-morphisms in $\cUU_q(\gln[2])$, 
up to grading shift. \qed
\end{itemize}
\end{prop}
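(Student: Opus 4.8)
The plan is to obtain every item in the proposition from the corresponding statement in \cite{KLMS} by transporting it along Lauda's rescaling equivalence. Recall from \S\ref{ssec:rescaling} that the functor $\crazyF$ of \cite[Section 3.1]{LaudaParameters} is an equivalence of $2$-categories from the version $\UU_q^{\mathrm{KLMS}}(\glm)$ of the categorified quantum group used in \cite{Lau1,KLMS} to our $\UU_q(\glm)$; it fixes objects, $1$-morphisms, dots, and upward crossings, and rescales cups, caps, and sideways crossings by explicit weight-dependent scalars. Since $\crazyF$ is an equivalence, it extends (uniquely up to natural isomorphism) to an equivalence of Karoubi envelopes, hence restricts to an equivalence between the partial idempotent completions defining the two versions of the thick calculus. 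First I would fix such an extension $\crazyF \colon \U_q^{\mathrm{KLMS}}(\gln[2]) \to \U_q(\gln[2])$ and, using the presentation discussed in Remark \ref{rmk:presentKar}, normalize it so that it sends the presenting merge/split generators \eqref{eq:MS} to the corresponding merge/split generators (this is possible because the defining relations \eqref{eq:assoc}, \eqref{eq:MStoHT}, \eqref{eq:explode} impose no scalar obstruction to such a normalization). In particular full merges and full splits, and hence the decorated boxes $\mathfrak{s}_\parti$ of \eqref{eq:SymDec} --- which are built only from full splits, dots, and full merges, with no cups, caps, or crossings --- are fixed by $\crazyF$.

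Granting this, the proposition becomes a bookkeeping exercise in the rescaling scalars recorded in Remark \ref{rmk:rescalingredux}: a clockwise cup--cap pair of thickness $k$, label $i$, in a common weight $\wt = (\ldots, a_i, \ldots)$ contributes $(-1)^{k(a_i-1)+\binom{k}{2}}$; a counterclockwise such pair contributes $(-1)^{ka_i+\binom{k}{2}}$; a pair of oppositely oriented sideways crossings of thicknesses $k$ and $\ell$ in a common weight contributes $(-1)^{k\ell}$; all other pieces are fixed. I would then run through the items. The algebra isomorphism $\K[x_1,\ldots,x_k]^{\SG_k} \cong \End(\EE^{(k)} \one_\wt)$, its compatibility with Littlewood--Richardson multiplication, and the migration rule \eqref{eq:migrate} involve only merges, splits, and dots, so they transport verbatim from \cite{KLMS}. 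The digon relation \eqref{eq:CQGdigon} and the idempotent decomposition \eqref{eq:EEdecomp} each involve a single matched thick cup--cap pair, and the transported coefficient is the stated $(-1)^{|\hat\parti|}$; the direct-sum decompositions $\EE^{(k)}\EE^{(\ell)} \one_\wt \cong \bigoplus_{k+\ell \brack \ell} \EE^{(k+\ell)} \one_\wt$ and $\FF^{(k)}\FF^{(\ell)} \one_\wt \cong \bigoplus_{k+\ell \brack \ell} \FF^{(k+\ell)} \one_\wt$ then follow formally. The sign in \eqref{eqn:sgnfromdemazure}, and the prefactors $(-1)^{k^2}$, $(-1)^{k(k+a_i-1)}$ in the thick curl \eqref{eq:thickcurl}, arise from transporting the (differently signed) relations in \cite{KLMS}: the extra signs are exactly those attached to the thin cup--cap pairs in the exploded identity \eqref{eq:explode} (as in the discussion of \eqref{extendedreln1} in \S\ref{ssec:rescaling}), to the clockwise thick cup--cap pair closing the curl, and, for the second form of \eqref{eq:thickcurl}, to the new-bubble conversion \eqref{newbub}.

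The main obstacle is the Sto\v{s}i\'{c} formulae \eqref{eq:Stosic}: each term on the right-hand side of \eqref{eq:StosicEF} and \eqref{eq:StosicFE} is an intricate diagram built from several thick crossings, nested cup--cap pairs, and a decorated bubble, placed among a configuration of weights that must be read off with care, and one must check that the product of all the individual rescaling scalars collapses to precisely $(-1)^{\frac{j(j+1)}{2}+|y|+|z|+(k+\ell)j}$ and $(-1)^{\frac{j(j+1)}{2}+|y|+|z|}$ respectively --- in contrast to the signs in \cite{KLMS}. I expect this step to require decomposing each term into matched cup--cap pairs and matched pairs of oppositely oriented sideways crossings, summing the exponents from Remark \ref{rmk:rescalingredux}, and reducing modulo $2$; the discrepancy $(k+\ell)j$ between \eqref{eq:StosicEF} and \eqref{eq:StosicFE} is a useful consistency check, since the two diagrams are related by a reflection in a horizontal axis under which clockwise caps become counterclockwise cups (and vice versa), and tracking how this reflection changes the total scalar accounts for exactly that discrepancy. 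Finally, the classification of indecomposable $1$-morphisms requires no new work: $\crazyF$ is an additive equivalence, so it carries the complete list of indecomposables in $\U_q^{\mathrm{KLMS}}(\gln[2])$ established in \cite{KLMS} bijectively, up to grading shift, onto $\FF^{(\ell)}\EE^{(k)} \one_{(a_1,a_2)}$ (for $\ell - k \le a_1 - a_2$) and $\EE^{(k)}\FF^{(\ell)} \one_{(a_1,a_2)}$ (for $k - \ell \le a_2 - a_1$), and the decompositions \eqref{eq:EF}, \eqref{eq:FE} are then read off from \eqref{eq:Stosic}.
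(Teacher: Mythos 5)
Your proposal takes essentially the same route as the paper, which records this proposition as a citation to \cite{KLMS} and explains the (implicit) proof in the preceding text and Remark~\ref{rmk:rescalingredux}: transport the KLMS relations through Lauda's rescaling equivalence $\crazyF$, extended to the partial idempotent completion so as to fix merges/splits/dots, and then translate signs by tallying the scalars on thick cup--cap pairs and sideways-crossing pairs. One small inaccuracy in your bookkeeping: the digon \eqref{eq:CQGdigon} and the idempotent decomposition \eqref{eq:EEdecomp} contain no cup--cap pairs at all --- they are built solely from merges, splits, and Schur decorations, all of which are fixed by your normalized $\crazyF$ --- so the coefficient $(-1)^{|\hat\parti|}$ is inherited verbatim from \cite{KLMS} rather than produced by rescaling (indeed, the thick cup--cap scalars $(-1)^{k(a_i-1)+\binom{k}{2}}$ could not have yielded a $\parti$-dependent sign in the first place). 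Otherwise the plan, including the acknowledgement that the Sto\v{s}i\'{c} sign check is the delicate step, matches what the paper does.
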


Now we pass from $\UU_q(\gln[2])$ to the setting of $\UU_q(\glm)$ for general $m$.
When $m \ge 3$, 
there is no combinatorial description of the Karoubi envelope of $\UU_q(\glm)$,
and, indeed, the canonical basis for $\dU(\glm)$, 
which the indecomposable 1-morphisms in $\Kar(\UU_q(\glm))$ 
categorify when working in characteristic zero \cite{Web4}, 
is not even known explicitly. 
However, using the $m=2$ case, 
it is possible to describe a partial idempotent completion $\U_q(\glm)$ of $\UU_q(\glm)$ 
that contains divided power morphisms
\[
\EE_i^{(k)} \one_{\wt} \colon \wt \to \wt + k \ee_i \, , \quad
\FF_i^{(k)} \one_{\wt}  \colon \wt \to \wt - k \ee_i 
\]
for each $1 \leq i \leq m-1$. See e.g.~\cite[Definition 2.2]{QR1}.

\begin{defn}
	\label{def:thickCQG}
Let $m \geq 1$. The \emph{thick categorified quantum group} $\U_q(\glm)$ is the 
$\Z$-additive closure of the $2$-category given as follows.
\begin{itemize}[leftmargin=*]
\item Objects are elements $\wt \in \Z^m$.
\item $1$-morphisms are generated by
\[
\EE_i^{(k)} \one_{\wt} \colon \wt \to \wt + k \ee_i \, , \quad
\FF_i^{(k)} \one_{\wt}  \colon \wt \to \wt - k \ee_i
\]
for $1 \leq i \leq m-1$, $k \in \Z_+$, and $\ee_i = (0,\ldots,1,-1,\ldots,0)$.
\item $2$-morphisms are $\K$-linear combinations of ``string diagrams'' that are generated via 
horizontal and vertical composition by those in \eqref{eq:CQGgens} and for each 
Dynkin node $i \in \{1,\ldots,m-1\}$ merge/split generators \eqref{eq:MS}, 
modulo the relations in Definition \ref{def:CQG} and the relations \eqref{eq:MStoHT} 
and \eqref{eq:explode}.
\end{itemize}
\end{defn}

\begin{prop} The category $\U_q(\glm)$ is equivalent to the partial idempotent completion of $\UU_q(\glm)$ 
which adjoins all direct summands of $\EE_i^k \one_\wt$ and $\FF_i^k \one_\wt$ for $1 \le i \le m-1$ and $k \ge 1$.
\end{prop}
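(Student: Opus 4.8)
The plan is to argue by reduction to the $m=2$ case, which is Remark~\ref{rmk:presentKar} and the preceding discussion of partial idempotent completions. First I would recall the setup: given a $2$-category $\Cat$, an idempotent $2$-endomorphism $e$, and a factorization $e = f \circ g$ with $g \circ e = g$, the partial idempotent completion $\Cat(e)$ admits a presentation obtained from any presentation of $\Cat$ by adjoining two new $2$-morphisms $\pi_e, \iota'_e$ and the two relations \eqref{eq:KarEnv2}. This is exactly the mechanism already used in Remark~\ref{rmk:presentKar} to produce the presentation of $\U_q(\gln[2])$ from $\UU_q(\gln[2])$, and the point is that the same bookkeeping works uniformly in $\UU_q(\glm)$, one Dynkin node at a time.

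Concretely, for each Dynkin node $i \in \{1,\ldots,m-1\}$ and each $k \ge 2$, the object $\EE_i^k \one_\wt$ (respectively $\FF_i^k \one_\wt$) carries the idempotent $e_{i,k}$ which, when restricted to the $\sln[2]$-subcategory associated to the colour $i$, is the idempotent whose image is $\EE^{(k)}$ in the $m=2$ calculus. Since all of these idempotents decompose into a direct sum of (shifts of) mutually isomorphic indecomposables with $\EE_i^{(k)} \one_\wt$ among them, adjoining the image of $e_{i,k}$ for all $k$ is equivalent to adjoining all direct summands of the $\EE_i^k \one_\wt$, exactly as in the $m=2$ case \cite[Section 9.2]{Lau1}. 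Choosing the same factorization $e_{i,k} = f_{i,k} \circ g_{i,k}$ as in Remark~\ref{rmk:presentKar} (namely $g_{i,k} = \HT_k$ on the $i$-coloured strands and $f_{i,k}$ the corresponding product of dots), the new generators $\pi_{e_{i,k}}$ and $\iota'_{e_{i,k}}$ become precisely the full merge and full split $2$-morphisms \eqref{eq:MS} attached to colour $i$, and the two relations \eqref{eq:KarEnv2} become \eqref{eq:MStoHT} and \eqref{eq:explode} for that colour. Thus the presentation in Definition~\ref{def:thickCQG} is, tautologically, a presentation of the partial idempotent completion of $\UU_q(\glm)$ adjoining all summands of $\EE_i^k\one_\wt$ and $\FF_i^k\one_\wt$.

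The main obstacle is verifying that no relations among the newly adjoined generators are missed beyond \eqref{eq:MStoHT} and \eqref{eq:explode}; that is, that the generators-and-relations description in Definition~\ref{def:thickCQG} is not merely a quotient of the genuine partial idempotent completion but literally equal to it. For a single colour this is precisely the content of Remark~\ref{rmk:presentKar} combined with the fact (recorded in the thick calculus section) that the relations of Definition~\ref{def:Thickgl2} suffice to derive all the more complicated relations of \cite{KLMS}. The subtlety in passing from one colour to many is the interaction between merge/split morphisms of different colours, and between a merge/split of colour $i$ and the $2$-morphism generators (dots, crossings, cups, caps, new bubbles) of other colours. I would handle this by observing that all such interactions are forced: the full split of colour $i$ is by definition built from thin colour-$i$ strands via \eqref{eq:explode}, so any $2$-morphism involving a thick colour-$i$ strand together with other-coloured data is equal, after exploding, to a $2$-morphism in $\UU_q(\glm)$, where all relations already hold. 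Hence adding the merge/split generators for every colour and imposing only \eqref{eq:MStoHT}, \eqref{eq:explode} yields no further quotient: every relation in the partial idempotent completion pulls back, via exploding thick strands, to a relation in $\UU_q(\glm)$ already imposed. The remaining step — that the resulting partial idempotent completion does contain \emph{all} summands of $\EE_i^k\one_\wt$ and $\FF_i^k\one_\wt$, not just those of the $\sln[2]$-type idempotents — follows because, as in the $m=2$ case, the object $\EE_i^{(k)}\one_\wt$ is already indecomposable (its endomorphism ring is graded local, being $\Sym$ in $k$ variables over an integral domain $\K$), and iterating merges/splits produces every summand of $\EE_i^k\one_\wt$ up to shift and isomorphism, so no additional idempotents need be split.
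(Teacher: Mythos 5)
Your argument is correct and matches the paper's approach: both reduce to the $m=2$ case by adjoining idempotents one colour at a time via Remark \ref{rmk:presentKar}, then invoke the KLMS result to see that the presentation of Definition \ref{def:thickCQG} is the whole partial idempotent completion. The only imprecision is the parenthetical that $\End(\EE_i^{(k)}\one_\wt)$ is graded local over a general integral domain $\K$ — that requires $\K$ local — though indecomposability still follows for any domain $\K$ (idempotents of a positively graded ring lie in degree zero), and in any case this point is not actually needed for the claim.
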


\begin{proof}[Proof (Sketch)] 
The discussion in Remark \ref{rmk:presentKar} implies that one can present partial idempotent 
completions by adding one idempotent at a time, independently of other idempotents. 
Meanwhile, \cite{KLMS} proves that $\U_q(\gln[2])$ 
is equivalent to the partial idempotent completion which adds all summands of $\EE^k \one_\wt$ and $\FF^k \one_\wt$. 
Thus using the construction of \cite{KLMS} independently for each $\EE_i \one_\wt$ and $\FF_i \one_\wt$, 
one has added all direct summands of $\EE_i^k \one_\wt$ and $\FF_i^k \one_\wt$. 
\end{proof}

As in Remark \ref{rem:monoidal2cat}, we will view $\coprod_{m \geq 0} \cUU_q(\glm)$
as a monoidal $2$-category under the analogously defined external tensor product $\boxtimes$.

\begin{conv}
	\label{conv:colors}
Note that the strands in the string diagrams for $\U_q(\glm)$ require 
both a Dynkin label $i \in \{1,\ldots,m-1\}$ and 
a thickness label $k \in \Z_+$, e.g.~we have
\[
\begin{tikzpicture}[anchorbase,scale=1]
\draw[ultra thick] (0,0) node[below=-2pt]{\scs$i,k{+}\ell$} to [out=90,in=270] (0,.375);
\draw[ultra thick,->] (0,.375) to [out=150,in=270] (-.25,1) node[above=-2pt,xshift=-2pt]{\scs$i,k$};
\draw[ultra thick,->] (0,.375) to [out=30,in=270] (.25,1) node[above=-2pt,xshift=2pt]{\scs$i,\ell$};
\node at (.625,.75){$\wt$};
\end{tikzpicture}
\in \Hom^{-k \ell}(\EE_i^{(k+ \ell)} \one_\wt, \EE_i^{(k)} \EE_i^{(\ell)} \one_\wt)
\]
Instead of including both labels, 
we will denote the Dynkin label by coloring the relevant strands according to the 
following coloring of the $\glm$ Dynkin diagram:
\[
\begin{tikzpicture}[anchorbase,scale=1]
\draw[thick] (-1.75,0) to (-1.5,0);
\node at (-1.25,0) {$\mydots$};
\draw[thick] (-1,0) to (1,0);
\node at (1.25,0) {$\mydots$};
\draw[thick] (1.5,0) to (1.75,0);
%
\node at (-1.75,0) {\large$\bullet$};
\node at (-1.75,-.25) {\small$1$};
\node[red] at (-.75,0) {$\bullet$};
\node[red] at (-.75,-.25) {\small$i{-}1$};
\node[green] at (0,0) {\large$\bullet$};
\node[green] at (0,-.25) {\small$i$};
\node[blue] at (.75,0) {$\bullet$};
\node[blue] at (.75,-.25) {\small$i{+}1$};
\node at (1.75,0) {$\bullet$};
\node at (1.75,-.25) {\small$m{-}1$};
\end{tikzpicture} \; \; .
\]
To elaborate, we typically use {\color{green} green} as our arbitrary index $i$ in the Dynkin diagram. 
It is always assumed that {\color{red} red} is one less than {\color{green} green} 
and {\color{blue} blue} is one more than {\color{green} green}. 
Meanwhile, 
\textbf{black} represents an arbitrary index
(possibly equal to {\color{red} red}, {\color{green} green}, {\color{blue} blue}, 
or distant from these colors).
\end{conv}

\begin{example} 
Pairing Convention \ref{conv:colors} with
the conventions discussed in Remark \ref{rmk:Laudaparams}, 
we have $t_{\grb,\grb} = -1$, $t_{\redb,\grb} = +1$, $t_{\grb,\redb} = -1$, etc.
\end{example}

\begin{conv}
Convention \ref{Conv:thickcapscups} explains how to define thick caps/cups and thick crossings for $\U_q(\gln[2])$. 
We use the same recipe to define thick caps/cups and (unicolored) thick crossings for $\U_q(\glm)$. 
We similarly define thick crossings for different colors as 
\begin{equation}\label{eqn:multicolorthickcrossingdefn}
\begin{tikzpicture}[anchorbase,scale=1]
\draw[ultra thick,green,->] (0,0) node[below=-2pt]{\scs$k$} to [out=90,in=270] (.5,1);
\draw[ultra thick,->] (.5,0) node[below=-2pt]{\scs$\ell$} to [out=90,in=270] (0,1);
\node at (.875,.75){$\wt$};
\end{tikzpicture}
:=
\begin{tikzpicture}[anchorbase,scale=1]
\draw[ultra thick,->] (-.75,1) node[below,xshift=1pt]{$\mydots$} to (-.75,1.25);
\draw[ultra thick,green,->] (.75,1) node[below,xshift=-1pt,black]{$\mydots$} to (.75,1.25);
\draw[ultra thick,green] (-.75,-1) to (-.75,-1.25) node[below=-2pt]{\scs$k$};
\draw[ultra thick] (.75,-1) to (.75,-1.25) node[below=-2pt]{\scs$\ell$};
\draw[thick,green] (-.75,-1) to [out=150,in=210] node[pos=.85,black]{$\bullet$} 
	node[pos=.85,above,black]{\tiny$k{-}1$} (.75,1);
\draw[thick,green] (-.75,-1) to [out=30,in=330] node[pos=.85,black]{$\bullet$} (.75,1);
\draw[thick,green] (-.75,-1) to [out=0,in=270] (1.25,.5) to [out=90,in=0] (.75,1);
\draw[thick] (.75,-1) to [out=150,in=210] node[pos=.75]{$\bullet$} 
	node[pos=.85,below,xshift=10pt]{\tiny$\ell{-}2$} (-.75,1);
\draw[thick] (.75,-1) to [out=30,in=330] (-.75,1);
\draw[thick] (.75,-1) to [out=180,in=270] (-1.25,.5) node{$\bullet$} 
	node[left]{\tiny$\ell{-}1$}
	to [out=90,in=180] (-.75,1);
\node at (1.5,.75){$\wt$};
\end{tikzpicture} \, .
\end{equation}
These thick crossings still satisfy fork-slide relations with merge/split morphisms 
as in Convention \ref{Conv:thickcapscups}.
For example, 
\begin{equation}
	\label{eq:multicolorotherthick}
\begin{tikzpicture}[anchorbase,scale=1]
\draw[ultra thick,green] (-.25,-.5) node[below=-2pt]{\scs$k_1$} to [out=90,in=210] (0,0);
\draw[ultra thick,green] (.25,-.5) node[below=-2pt]{\scs$k_2$} to [out=90,in=330] (0,0);
\draw[ultra thick,green,->] (0,0) to [out=90,in=270] (.5,1);
\draw[ultra thick,->] (.75,-.5) node[below=-2pt]{\scs$\ell$} to [out=90,in=270] (0,1);
\node at (.875,.75){$\wt$};
\end{tikzpicture}
=
\begin{tikzpicture}[anchorbase,scale=1]
\draw[ultra thick,green] (-.25,-.5) node[below=-2pt]{\scs$k_1$} to [out=90,in=210] (.5,.625);
\draw[ultra thick,green] (.25,-.5) node[below=-2pt]{\scs$k_2$} to [out=90,in=330] (.5,.625);
\draw[ultra thick,green,->] (.5,.625) to [out=90,in=270] (.5,1);
\draw[ultra thick,->] (.75,-.5) node[below=-2pt]{\scs$\ell$} to [out=90,in=270] (0,1);
\node at (.875,.75){$\wt$};
\end{tikzpicture} \, ,
\end{equation}
which follows using \eqref{eq:cubicKLR}, 
in the case where $i$ and $j$ are green and $k$ is black.
\end{conv}

%
\section{Involutions on (part of) the categorified quantum group}\label{s:inv}
%

Recall from Section \ref{ss:inmoredepth} that we seek an involution 
of the $2$-category $\U_q(\glm)$, in order to carry out the equivariantization procedure 
sketched there (and defined concretely below in \S\ref{sec:equiv}).
For the sake of this paper, we work with the following definition, 
rather than the more-general notion in which the stated equality of functors is replaced 
by a natural isomorphism.

\begin{defn}\label{D:2-involution}
A \emph{(strict) involution} of a $\ak$-linear $2$-category $\Cat$ is a a $\ak$-linear $2$-functor 
$\sigma \colon \Cat\rightarrow \Cat$ such that $\sigma\circ \sigma = \id_{\Cat}$. 
\end{defn}

Now, fix $n \ge 1$ and $m \ge 1$.
Let $\wtn$ denote the $\glm$ weight $(n,n,\ldots,n)$, 
thus $2\wtn = (2n,2n,\ldots,2n)$.
To begin, we seek an involution $\tau$ of $\UU_q(\glm)$ that
\begin{itemize}
	\item is given on $\glm$ weights by sending $\wt \mapsto 2\wtn - \wt$,
	\item is determined on $1$-morphisms by $\EE_i \one_\wt \mapsto \FF_i \one_{2\wtn-\wt}$, and
	\item acts on new bubble $2$-morphisms by 
		$\mathfrak{s}_{\lambda}(\X_i)\mapsto \mathfrak{s}_{\lambda^{t}}(\X_i)$.
\end{itemize}

The condition on objects and $1$-morphisms guarantee that $\tau$ maps 
the indecomposable $1$-morphism $\FE_i:=\FF_i\EE_i\one_{\wtn}$ to $\EE_i\FF_i\one_{\wtn}\cong \FE_i$. 
As outlined in Section \ref{ss:introinter}, an equivariant structure on $\FE_i$ gives a natural candidate 
for a $1$-morphism categorifying the distinguished elements $\xx_i \in \End_{U_q(\son)}(S^{\otimes m})$.
The condition on $2$-morphisms is present in order for this involution to be compatible with 
the involution on $H^\ast(\Gr{n}{2n})$ from Section \ref{ss:nutshell}. 

As it turns out, these requirements 
(together with integrality assumptions)
essentially determine $\tau$ on the generating morphisms of $\UU_q(\glm)$, 
up to signs. 
For example, taken together with equation \eqref{dotstonewbubbles}, 
they imply that the dot endomorphism in $\End^2(\EE_i \one_\wt)$ must be sent to 
minus the dot endomorphism in $\End^2(\FF_i \one_{2\wtn - \wt})$.
In \S \ref{ss:symmetries}, 
we find conditions on the requisite signs that guarantee that 
the above recipe for $\tau$ is a well-defined automorphism of $\UU_q(\glm)$. 
Next, in \S \ref{ss:inv?} we investigate further conditions on these signs which 
would imply that $\tau$ is an involution. 
As it turns out, 
when $m \geq 3$ it is surprisingly difficult to satisfy these conditions!
(See Theorem \ref{thm:nonaivesolution} for the precise statement.) 
Fortunately, as we show in \S \ref{ss:invonpart},
we \emph{are} able to construct a symmetry $\tau$ of order $4$ 
for all $m \geq 1$ which restricts to an involution on an appropriate $2$-subcategory 
that suffices for our present considerations.
To conclude this section, 
in \S \ref{ss:dependence-on-n} we discuss the dependence on our chosen value of $n$
and in \S \ref{ss:extended-to-thick} we extend $\tau$ to thick calculus.

For the remainder of this section, 
$\K$ is permitted to be any integral domain for which $2 \neq 0$.

\subsection{A family of symmetries}
	\label{ss:symmetries}

We now define a family of autoequivalences of categorified $\glm$, 
one for each\footnote{Actually, in Definition \ref{def:symmetry}, we do so for 
$n \geq 0$; we use the $n=0$ case in \S \ref{ss:dependence-on-n}.
In fact, our definition works for all $n \in \Z$, 
but those for $n < 0$ are not of use for us.} 
$n \geq 1$.
By slight abuse of notation, we will denote all of these
autoequivalences by $\tau$.

\begin{rem}\label{rem:notKL}
Each of these symmetries will send $\EE_i$ to $\FF_i$, 
which is similar to the first automorphism of $\UU_q(\slm)$ defined by 
Khovanov--Lauda in \cite[\S 3.3.2]{KL3}.
It would be easy to confuse our automorphism with theirs, 
as both act by rescaling the generating $2$-morphisms and then reversing 
the orientation on string diagrams.
However, these automorphisms are distinct: 
as discussed above, ours rescales the ``dot'' endomorphism by $-1$ and
does not rescale the uni-colored crossing, while the opposite is true for 
the Khovanov--Lauda symmetry.
If $2=0$ in $\K$ (a case not permitted by our assumptions on $\K$ above), 
then $1= -1$ and the symmetry we define below will agree with Khovanov--Lauda's automorphism.
\end{rem}

Given the action on $2$-morphisms, 
our symmetry is covariant for both $1$-morphism and $2$-morphism composition.
Moving forward, we will say that
$\tau$ is \emph{covariant} to mean that it is covariant for $2$-morphism composition, 
and \emph{monoidal} to mean that it is covariant for $1$-morphism composition.

\begin{defn}
	\label{def:symmetry}
Fix $n \geq 0$. For each $i \in \{1, \ldots, m-1\}$, 
let $r_i$, $r_i'$, $l_i$, and $l_i'$ be functions from the $\glm$ weight lattice $\Z^m$ to $\Z/2$. 
For each pair $i, j \in \{1, \ldots, m-1\}$, 
let $v_{i,j}$  be a function from $\Z^m$ to $\Z/2$.
Associated to this data, define the following map $\tau$ on the generating data of $\UU_q(\glm)$. 
In the next theorem, we will state precise conditions on $r_i$, $r_i'$, $l_i$, $l_i'$, and $v_{i,j}$ 
which imply that $\tau$ extends to a $2$-functor $\tau \colon \UU_q(\glm) \to \UU_q(\glm)$.
\begin{itemize}
\item \textbf{Objects:} $\wtn + \wt \xmapsto{\tau} \wtn - \wt$ (i.e.~$\wt \xmapsto{\tau} 2\wtn-\wt$).
	
\item \textbf{$1$-morphisms:} $\EE_i \one_{\wtn+\wt} \xmapsto{\tau} \FF_i \one_{\wtn-\wt}$.

\item \textbf{$2$-morphisms:} 
\begin{equation}
\begin{gathered} \label{eq:inv}
\begin{tikzpicture}[anchorbase,scale=1]
\draw[thick,green,->] (0,0) to node[black]{$\bullet$} (0,1);
\node at (.5,.75){\scs$\wtn{+}\wt$};
\end{tikzpicture}
\xmapsto{\tau}
- \begin{tikzpicture}[anchorbase,scale=1]
\draw[thick,green,<-] (0,0) to node[black]{$\bullet$} (0,1);
\node at (.5,.75){\scs$\wtn{-}\wt$};
\end{tikzpicture}
\, , \quad
\begin{tikzpicture}[anchorbase,scale=1]
\node at (0,0){\NB{e_r(\X_i)}};
\node at (.875,.375){\scs$\wtn{+}\wt$};
\end{tikzpicture}
\xmapsto{\tau}
\begin{tikzpicture}[anchorbase,scale=1]
\node at (0,0){\NB{h_r(\X_i)}};
\node at (.875,.375){\scs$\wtn{-}\wt$};
\end{tikzpicture}
\, , \\
\begin{tikzpicture}[anchorbase,scale=1]
\draw[thick,green,->] (0,0) to [out=90,in=270] (.5,1);
\draw[thick,->] (.5,0) to [out=90,in=270] (0,1);
\node at (.875,.75){\scs$\wtn{+}\wt$};
\end{tikzpicture}
\xmapsto{\tau}
(-1)^{v_{\grb\bullet}(\wt)}
\begin{tikzpicture}[anchorbase,scale=1]
\draw[thick,green,<-] (0,0) to [out=90,in=270] (.5,1);
\draw[thick,<-] (.5,0) to [out=90,in=270] (0,1);
\node at (.875,.75){\scs$\wtn{-}\wt$};
\end{tikzpicture}
\, , \\
\begin{tikzpicture}[anchorbase,scale=1]
\draw[thick,green,->] (-.25,0) to [out=90,in=180] (0,.5) 
	to [out=0,in=90] (.25,0);
\node at (.625,.5){\scs$\wtn{+}\wt$};
\end{tikzpicture}
\xmapsto{\tau}
(-1)^{r_{\grb}(\wt)}
\begin{tikzpicture}[anchorbase,scale=1]
\draw[thick,green,<-] (-.25,0) to [out=90,in=180] (0,.5) 
	to [out=0,in=90] (.25,0);
\node at (.625,.5){\scs$\wtn{-}\wt$};
\end{tikzpicture}
\, , \quad
\begin{tikzpicture}[anchorbase,scale=1]
\draw[thick,green,->] (-.25,0) to [out=270,in=180] (0,-.5) 
	to [out=0,in=270] (.25,0);
\node at (.625,-.5){\scs$\wtn{+}\wt$};
\end{tikzpicture}
\xmapsto{\tau}
(-1)^{r'_{\grb}(\wt)}
\begin{tikzpicture}[anchorbase,scale=1]
\draw[thick,green,<-] (-.25,0) to [out=270,in=180] (0,-.5) 
	to [out=0,in=270] (.25,0);
\node at (.625,-.5){\scs$\wtn{-}\wt$};
\end{tikzpicture}
\, , \\
\begin{tikzpicture}[anchorbase,scale=1]
\draw[thick,green,<-] (-.25,0) to [out=90,in=180] (0,.5) 
	to [out=0,in=90] (.25,0);
\node at (.625,.5){\scs$\wtn{+}\wt$};
\end{tikzpicture}
\xmapsto{\tau}
(-1)^{l_{\grb}(\wt)}
\begin{tikzpicture}[anchorbase,scale=1]
\draw[thick,green,->] (-.25,0) to [out=90,in=180] (0,.5) 
	to [out=0,in=90] (.25,0);
\node at (.625,.5){\scs$\wtn{-}\wt$};
\end{tikzpicture}
\, , \quad
\begin{tikzpicture}[anchorbase,scale=1]
\draw[thick,green,<-] (-.25,0) to [out=270,in=180] (0,-.5) 
	to [out=0,in=270] (.25,0);
\node at (.625,-.5){\scs$\wtn{+}\wt$};
\end{tikzpicture}
\xmapsto{\tau}
(-1)^{l'_{\grb}(\wt)}
\begin{tikzpicture}[anchorbase,scale=1]
\draw[thick,green,->] (-.25,0) to [out=270,in=180] (0,-.5) 
	to [out=0,in=270] (.25,0);
\node at (.625,-.5){\scs$\wtn{-}\wt$};
\end{tikzpicture} \, .
\end{gathered}
\end{equation}
\end{itemize}
\end{defn}

\begin{rem}\label{R:tau-depends-on-n}
We emphasize that $\tau$ depends on the fixed integer $n>0$ appearing in $\wtn$, 
although our notation suppresses this.
For further discussion regarding dependence on $n$, in some special cases, see Section \ref{ss:dependence-on-n}. 
\end{rem}

\begin{thm}
	\label{thm:symmetry}
The map $\tau$ defined above extends to a unique 
(invertible, monoidal, covariant, graded, $\K$-linear) $2$-functor $\tau \colon \UU_q(\glm) \to \UU_q(\glm)$ 
if and only if the following equalities hold (in $\Z/2$) for all $\wt$ in the weight lattice, 
and for all $i, j, k \in \{1, \ldots, m-1\}$:
\begin{subequations} \label{tauconditions}
\begin{gather} 
\label{primesign} 
r_i'(\wt) \equiv r_i(\wt + \ee_i) \, , \quad l_i'(\wt) \equiv l_i(\wt - \ee_i). \\
\label{bubblesign} 
r_i(\wt) + l_i(\wt - \ee_i) \equiv a_i - a_{i+1} = \langle \ee_i^\vee , \wt \rangle \, , \\	
\label{samecrossingsign} 
v_{i,i}(\wt) \equiv 0 \, , \\
\label{diffcrossingsign} 
v_{i,j}(\wt) + v_{j,i}(\wt) \equiv i \cdot j = \langle \ee_i^\vee, \ee_j \rangle \, \\
 \label{R3sign} 
v_{i,j}(\wt) + v_{i,k}(\wt) + v_{j,k}(\wt) + v_{i,j}(\wt + \ee_k) + v_{i,k}(\wt + \ee_j) + v_{j,k}(\wt + \ee_i) \equiv 0 \, . \end{gather}
\end{subequations}
\end{thm}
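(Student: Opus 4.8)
The plan is to verify that the relations of $\UU_q(\glm)$ from Definition \ref{def:CQG} are preserved by the assignment $\tau$, since $\UU_q(\glm)$ is presented by generators and relations; then the conditions \eqref{tauconditions} will emerge as precisely the constraints needed. The forward direction (conditions imply $\tau$ is a $2$-functor) and the converse (being a $2$-functor forces the conditions) are handled simultaneously by checking each relation and recording the sign constraint it produces. First I would note that covariance and monoidality are automatic from the definition on generators (we declared $\tau$ on $2$-morphisms to reverse orientation without reversing vertical or horizontal order), so the only content is relation-preservation; invertibility follows once we check $\tau^2$ is covered by the same formalism (or simply observe $\tau$ has an evident inverse of the same shape), and gradedness is clear since every generator is sent to a $2$-morphism of the same degree (the dot to minus a dot, $e_r \mapsto h_r$ both of degree $2r$, the cup/cap signs preserve degree because $a_i - a_{i+1}$ flips sign when $\wt \mapsto 2\wtn - \wt$, matching the degree shift $1 \pm (a_i - a_{i+1})$).

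The bulk of the work is a relation-by-relation audit. For the \textbf{adjunction/cyclicity} axioms (1a), (1b): applying $\tau$ to a zigzag identity for the $(\EE_i, \FF_i)$ biadjunction produces a zigzag for the $(\FF_i, \EE_i)$ biadjunction up to the product of cup/cap signs appearing; demanding this product be trivial, together with the requirement that dotted-downward-strand and downward-crossing be well-defined cyclically, yields \eqref{primesign} (relating the ``downward'' cup/cap signs $r_i', l_i'$ to the ``upward'' ones $r_i, l_i$ in a shifted weight). For the \textbf{dot slide} \eqref{dotslide} and \textbf{quadratic/cubic KLR} relations \eqref{eq:quadKLR}, \eqref{eq:cubicKLR}: these involve only dots and upward crossings; the dot contributes $-1$ and a crossing $\redb\grb$-colored contributes $(-1)^{v_{i,j}(\wt)}$. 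The quadratic relation with $i = j$ (a double crossing equals zero) forces nothing new but the $i=j$ relation that produces a ``dot difference'' together with the $R^2$-type relation forces $v_{i,i}(\wt) \equiv 0$, i.e.\ \eqref{samecrossingsign}; the quadratic relation for $i \cdot j = -1$ (crossing-squared equals signed dot-difference) forces the coefficient $(j-i)$ to be preserved, hence $v_{i,j}(\wt) + v_{j,i}(\wt) \equiv i \cdot j$, i.e.\ \eqref{diffcrossingsign}, after tracking that $\tau$ reverses the roles of $i$ and $j$ in the crossing; the cubic/braid relation \eqref{eq:cubicKLR} is the source of \eqref{R3sign}, since sliding a strand past a crossing across three weight regions accumulates the six listed $v$-signs and they must cancel. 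For the \textbf{new bubble relations} (5a)--(5c): algebra compatibility is immediate since $e_r, h_r \mapsto$ generators under the ring automorphism $\mathfrak{s}_\lambda \mapsto \mathfrak{s}_{\lambda^t}$ of $\Sym$, which swaps $e \leftrightarrow h$; the ``bubbles to new bubbles'' equation \eqref{newbub} is where \eqref{bubblesign} comes in — a real bubble is a cup composed with a cap composed with dots, so $\tau$ of it picks up $(-1)^{r_i(\wt) + l_i(\wt') + (\text{dots})}$ where the dot power is governed by the shift $r - 1 \mp (a_i - a_{i+1})$, and matching against the sign $(-1)^{a_i}$ vs.\ $(-1)^{a_i - 1}$ on the two sides (together with the fact that $\X_{i+1} - \X_i \mapsto \X_i - \X_{i+1}$ under $e \leftrightarrow h$ on the level of complete/elementary symmetric functions) forces $r_i(\wt) + l_i(\wt - \ee_i) \equiv a_i - a_{i+1}$; ``dots to new bubbles'' \eqref{dotstonewbubbles} is consistent because $\tau$(dot) $= -$dot and $\tau(e_1(\X_i)) = h_1(\X_i) = e_1(\X_i)$. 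Finally the \textbf{extended $\sln[2]$ relations} \eqref{extendedreln1}, \eqref{extendedreln2} and the \textbf{mixed $\EE,\FF$ relations} \eqref{eq:mixedef} must be checked: \eqref{eq:mixedef} interchanges the two ``mixed'' identities under $\tau$ (upward-$i$/downward-$j$ becomes downward-$i$/upward-$j$) and is consistent once $v_{i,i} \equiv 0$ and the cup/cap signs match; the extended $\sln[2]$ relations, which are the most intricate, become each other under $\tau$ and the consistency check here recovers no condition beyond \eqref{primesign}, \eqref{bubblesign}, \eqref{samecrossingsign} once one carefully tracks the signs on the sideways crossings appearing on the right-hand side (recall the sideways-crossing-pair rescaling discussed in \S\ref{ssec:rescaling}).

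The main obstacle I anticipate is the bookkeeping in the extended $\sln[2]$ relations and the cubic KLR relation: these are the relations with the most generators, the most weight regions, and sideways crossings (which are composites, so their $\tau$-image accumulates several elementary signs). In particular one must be scrupulous that $\tau$ applied to a sideways crossing, defined via \eqref{extendedreln1}-style composites of upward crossings with cups and caps, is again the correctly-signed sideways crossing in the target — this is where a sign error would most plausibly creep in, and where the condition \eqref{R3sign} is genuinely needed rather than formal. A secondary subtlety is confirming there are no \emph{additional} necessary conditions: one must argue that \eqref{tauconditions} is a complete list, which follows because every relation in Definition \ref{def:CQG} has been accounted for and each produced only constraints already on the list (the brace-summations in the extended relations and the infinite-Grassmannian consequence impose nothing new once \eqref{bubblesign} holds). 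Once all relations are verified, uniqueness is automatic since $\tau$ is specified on generators, and this completes the proof.
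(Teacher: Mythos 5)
Your proposal takes the same approach as the paper: extend $\tau$ to all of $\UU_q(\glm)$ by checking each defining relation in Definition \ref{def:CQG}, and read off the constraints that each relation forces. Your attribution of conditions to relations matches the paper's exactly (biadjunction $\Rightarrow$ \eqref{primesign}, real-bubble evaluation $\Rightarrow$ \eqref{bubblesign}, dot slide $\Rightarrow$ \eqref{samecrossingsign}, quadratic KLR $\Rightarrow$ \eqref{diffcrossingsign}, cubic KLR $\Rightarrow$ \eqref{R3sign}, with the extended $\sln[2]$ and mixed relations giving nothing new), and the only minor omission is that you check \eqref{diffcrossingsign} only in the case $i \cdot j = -1$, whereas the case $i \cdot j = 0$ is also needed (and gives $v_{i,j}(\wt) \equiv v_{j,i}(\wt)$); this would surface immediately if the plan were carried out, so it is not a genuine gap.
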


Here, and for the remainder of this section, 
we use $\equiv$ to denote equality modulo $2$. 
Note that $i \cdot j$ in \eqref{diffcrossingsign} is odd when $i$ and $j$ are adjacent 
in the Dynkin diagram, and even otherwise.

In Remark \ref{rem:togetsymmetry} below 
we demonstrate that functions satisfying \eqref{tauconditions} 
do indeed exist, and in Theorem \ref{thm:involution}
we fix a choice (with additional desired properties) 
that we use for the duration.
For easy reference, 
whenever \eqref{tauconditions} holds, 
the value of $\tau$ on non-generating morphisms 
is as follows:

\begin{equation}
	\label{eq:tauonSchur}
\begin{tikzpicture}[anchorbase,scale=1]
\node at (0,0){\NB{\Schur(\X_i)}};
\node at (.875,.375){\scs$\wtn{+}\wt$};
\end{tikzpicture}
\xmapsto{\tau}
\begin{tikzpicture}[anchorbase,scale=1]
\node at (0,0){\NB{\Schur[\parti^t](\X_i)}};
\node at (.875,.375){\scs$\wtn{-}\wt$};
\end{tikzpicture} \, 
\end{equation}
\begin{equation}
	\label{eq:downdot}
\begin{tikzpicture}[anchorbase,scale=1]
\draw[thick,green,<-] (0,0) to node[black]{$\bullet$} (0,1);
\node at (.5,.75){\scs$\wtn{+}\wt$};
\end{tikzpicture}
\xmapsto{\tau}
- \begin{tikzpicture}[anchorbase,scale=1]
\draw[thick,green,->] (0,0) to node[black]{$\bullet$} (0,1);
\node at (.5,.75){\scs$\wtn{-}\wt$};
\end{tikzpicture}
\end{equation}
\begin{align*}
\begin{tikzpicture}[anchorbase,scale=1]
\draw[thick,green,->] (0,0) to [out=90,in=270] (.5,1);
\draw[thick,<-] (.5,0) to [out=90,in=270] (0,1);
\node at (1,.75){\scs$\wtn{+}\wt$};
\end{tikzpicture}
&\xmapsto{\tau}
(-1)^{v_{\bullet \grb}(\wt - \ee_{\bullet}) + r_{\bullet}(\wt) + r_{\bullet}(\wt + \ee_{\grb})}
\begin{tikzpicture}[anchorbase,scale=1]
\draw[thick,green,<-] (0,0) to [out=90,in=270] (.5,1);
\draw[thick,->] (.5,0) to [out=90,in=270] (0,1);
\node at (1,.75){\scs$\wtn{-}\wt$};
\end{tikzpicture} \\
\begin{tikzpicture}[anchorbase,scale=1]
\draw[thick,green,<-] (0,0) to [out=90,in=270] (.5,1);
\draw[thick,->] (.5,0) to [out=90,in=270] (0,1);
\node at (1,.75){\scs$\wtn{+}\wt$};
\end{tikzpicture}
&\xmapsto{\tau}
(-1)^{v_{\bullet  \grb}(\wt - \ee_{\grb}) + l_{\grb}(\wt - \ee_{\grb}) + l_{\grb}(\wt - \ee_{\grb} + \ee_{\bullet})}
\begin{tikzpicture}[anchorbase,scale=1]
\draw[thick,green,->] (0,0) to [out=90,in=270] (.5,1);
\draw[thick,<-] (.5,0) to [out=90,in=270] (0,1);
\node at (1,.75){\scs$\wtn{-}\wt$};
\end{tikzpicture}
\end{align*}
and
\begin{equation} 
	\label{eq:downwardcross}
\begin{tikzpicture}[anchorbase,scale=1]
\draw[thick,green,<-] (0,0) to [out=90,in=270] (.5,1);
\draw[thick,<-] (.5,0) to [out=90,in=270] (0,1);
\node at (.875,.75){\scs$\wtn{+}\wt$};
\end{tikzpicture}
\xmapsto{\tau}
(-1)^{v_{\grb\bullet}'(\wt)} 
\begin{tikzpicture}[anchorbase,scale=1]
\draw[thick,green,->] (0,0) to [out=90,in=270] (.5,1);
\draw[thick,->] (.5,0) to [out=90,in=270] (0,1);
\node at (.875,.75){\scs$\wtn{-}\wt$};
\end{tikzpicture},
\end{equation}
where
\begin{equation} \label{primev} 
v_{i,j}'(\wt) \equiv v_{i,j}(\wt - \ee_i - \ee_j) + r_i(\wt) + r_j(\wt) + r_i(\wt - \ee_j) + r_j(\wt - \ee_i) \, .
\end{equation}
Equation \ref{eq:tauonSchur} follows from noting that the standard involution of symmetric functions, 
defined by sending elementary symmetric functions to complete symmetric functions as in equation \eqref{eq:inv}, 
sends the Schur function associated to a partition to the Schur function associated to the transpose partition. 
It is routine to use equation \eqref{eq:inv}, which defines $\tau$ on generators, 
along with the defining relations of $\UU_q(\glm)$, 
to verify equations \eqref{eq:downdot} and \eqref{eq:downwardcross}

\begin{rem} 
We view \eqref{primesign} (and \eqref{primev}) as expressing the primed variables 
in terms of the unprimed ones, 
and the remaining equations as conditions on the unprimed variables. 
Note that \eqref{samecrossingsign} also pairs with \eqref{primev} 
to imply that $v_{i,i}'(\wt) \equiv 0$.  \end{rem}

\begin{proof}[Proof (of Theorem \ref{thm:symmetry})]
We need to check under what conditions $\tau$ preserves each relation in $\UU_q(\glm)$, 
and match these conditions to the conditions in \eqref{tauconditions}. 
First we will derive each of the conditions above by checking a particular relation:
\begin{itemize}
\item The biadjunction relation gives \eqref{primesign}.
\item Evaluation of real bubbles gives \eqref{bubblesign}.
\item Cyclicity of the crossing gives \eqref{primev}, and the formula on sideways crossings.
\item The dot slide relation gives \eqref{samecrossingsign}.
\item The quadratic KLR relation gives \eqref{diffcrossingsign}.
\item The cubic KLR relation gives \eqref{R3sign}.
\end{itemize}
Then, we confirm that the remaining relations are satisfied under the conditions \eqref{tauconditions}. 
The reader wishing to skip the computations will not miss anything important by skipping ahead. \\

\begin{itemize}[leftmargin=*]

\item \textbf{Adjunction and cyclicity of dots:}

Consider the action of $\tau$ on the (downward, right) biadjunction relation:
\begin{equation} \label{eq:biadjcomp}
\left(
\begin{tikzpicture}[anchorbase,scale=1]
\draw[thick,green,<-] (0,0) to (0,1); 
\node at (.5,.75){\scs$\wtn{+}\wt$};
\end{tikzpicture}
=
\begin{tikzpicture}[anchorbase,scale=1]
\draw[thick,green,->] (-.5,.75) to (-.5,0) to [out=270,in=180] (-.25,-.5) to [out=0,in=270] 
	(0,0) to [out=90,in=180] (.25,.5) to [out=0,in=90] (.5,0) to (.5,-.75);
\node at (.75,.5){\scs$\wtn{+}\wt$};
\end{tikzpicture}
\right)
\xmapsto{\tau}
\left(
\begin{tikzpicture}[anchorbase,scale=1]
\draw[thick,green,->] (0,0) to (0,1); 
\node at (.5,.75){\scs$\wtn{-}\wt$};
\end{tikzpicture}
\stackrel{?}{=}
(-1)^{r_{\grb}(\wt)} (-1)^{r'_{\grb}(\wt-\ee_{\grb})} \,
\begin{tikzpicture}[anchorbase,scale=1]
\draw[thick,green,<-] (-.5,.75) to (-.5,0) to [out=270,in=180] (-.25,-.5) to [out=0,in=270] 
	(0,0) to [out=90,in=180] (.25,.5) to [out=0,in=90] (.5,0) to (.5,-.75);
\node at (.875,.5){\scs$\wtn{-}\wt$};
\end{tikzpicture}
\right) \, .
\end{equation}
This relation is preserved by $\tau$ if and only if $r_{\grb}(\wt) \equiv r'_{\grb}(\wt - \ee_{\grb})$. 
Checking the same relation with orientations reversed gives $l_{\grb}(\wt) \equiv l'_{\grb}(\wt + \ee_{\grb})$.
These two equations are equivalent to \eqref{primesign}, and that equation also gives the
other versions of the biadjunction relation (downward-left and upward-right).

There is a similar isotopy relation which states that the two ways of defining the dot endomorphism of $\FF_i \one_\wt$ as the 
left and right mates of the generating dot endomorphism of $\EE_i \one_\wt$ agree.
This follows similarly to \eqref{eq:biadjcomp}, 
with an extra factor of $-1$ (coming from the dot) on both sides of the equation. 
As a consequence, we see that \eqref{primesign} implies equation \eqref{eq:downdot} above. \\

\item \textbf{Bubbles to new bubbles:}

Before continuing, we discuss the action of $\tau$ on bubbles.
Recall from the discussion following \eqref{realbubdef} that there
are two kinds of bubbles: real and fake. 
Real bubbles are genuine compositions of a cup, a cap, and dots; 
in the formula \eqref{realbubdef}, the number of dots is a non-negative integer. 
Thus, $\tau$ acts on real bubbles based on its action on the cup, cap, and dot, e.g.
\begin{equation} \label{eq:realbubbles} 
\begin{tikzpicture}[anchorbase,scale=1.25]
\draw[thick,green,<-] (.25,0) arc[start angle=0,end angle=360,radius=.25] 
	node[pos=.75,black]{$\bullet$} node[pos=.75,black,below]{\scs$c$};
\node at (.375,.375){\scs$\wtn{+}\wt$};
\end{tikzpicture}
\xmapsto{\tau}
(-1)^{r_{\grb}(\wt) + l'_{\grb}(\wt) + c} \
\begin{tikzpicture}[anchorbase,scale=1.25]
\draw[thick,green,->] (.25,0) arc[start angle=0,end angle=360,radius=.25] 
	node[pos=.75,black]{$\bullet$} node[pos=.75,black,below]{\scs$c$};
\node at (.5,.375){\scs$\wtn{-}\wt$};
\end{tikzpicture} \, .
\end{equation}
On the other hand, equation \eqref{newbub} shows that real bubbles
are also equal to a new bubble symmetric function, which, by \eqref{eq:inv}, 
is acted on by the involution $\tau$ via $e_r \mapsto h_r$.
These actions must agree for $\tau$ to be well-defined, 
the consequences of which we now explore.

Consider a real clockwise bubble of degree $2r \geq 0$, 
then $c = a_{i+1} - a_i - 1 + r \geq 0$.
Equation \eqref{newbub} gives that the real clockwise bubble is equal to $(-1)^{n+a_i-1}h_r(\X_{i} - \X_{i+1})$, 
which is sent by $\tau$ to $(-1)^{n+a_i - 1} e_r(\X_{i} - \X_{i+1})$. 
Meanwhile, equation \eqref{eq:realbubbles} shows that a real clockwise bubble is also sent by $\tau$ 
to $(-1)^{r_{\grb}(\wt) + l'_{\grb}(\wt) + c}$ times a counterclockwise bubble, 
and the counterclockwise bubble is equal to $(-1)^{n-a_i} h_r(\X_{i+1} - \X_i)$. 
Since $e_r(\X) = (-1)^r h_r(-\X)$, the two actions of $\tau$ agree provided
\[
(-1)^{n+a_i - 1} = (-1)^{n-a_i} (-1)^r (-1)^{r_{\grb}(\wt) + l'_{\grb}(\wt) + a_{i+1} - a_i - 1 + r}.
\]
Cancelling terms, this gives
$r_{\grb}(\wt) + l'_{\grb}(\wt) + a_{i+1} - a_i \equiv 0$, 
which is precisely \eqref{bubblesign}, 
after applying \eqref{primesign} to replace $l'_{\grb}(\wt)$ with $l_{\grb}(\wt - \ee_i)$.

The corresponding computation for counterclockwise bubbles is left to the reader. 
Given \eqref{primesign}, the two actions of $\tau$ again agree if and only if \eqref{bubblesign} holds. 
To see this most effortlessly, one should study a bubble in a region labeled $\wtn + (\wt - \ee_i)$.

Lastly, recall that fake bubbles (where the number of dots in \eqref{realbubdef} is a negative number) 
are defined via \eqref{newbub} as formal symmetric functions, 
and $\tau$ acts on them accordingly. 
Since fake bubbles are not genuine compositions of a cap, cup, and dots, 
there is no requirement for $\tau$ to act on them ``as though they were compositions'', 
e.g.~by the sign $(-1)^{r_{\grb}(\wt) + l'_{\grb}(\wt) + c}$ (where $c$ is a negative number) 
for fake clockwise green bubbles. 
However, fake bubbles do indeed satisfy this formula: \eqref{eq:realbubbles} still holds when $c < 0$.
The same computation as above serves as the proof, 
where now one uses \eqref{bubblesign} rather than deriving \eqref{bubblesign}. \\

\item \textbf{Cyclicity of crossings:}

Now consider the isotopy relation in $\UU_q(\glm)$ 
which states that both the left and right mates under adjunction of the upward crossing agree,
i.e.~that a downward crossing may be defined as either mate.
For this purpose we temporarily assume that \eqref{eq:downwardcross} 
is the definition of $\tau$ on the downward crossing, 
and check both compatibilities.

We compute:
\begin{equation}
\begin{tikzpicture}[anchorbase,scale=1]
\draw[thick,green,<-] (0,0) to [out=90,in=270] (.5,1);
\draw[thick,<-] (.5,0) to [out=90,in=270] (0,1);
\node at (1,.75){\scs$\wtn{+}\wt$};
\end{tikzpicture}
=
\begin{tikzpicture}[anchorbase,scale=1]
\draw[thick,green,->] (-.5,1.75) to (-.5,0) \arc{180}{360}{.25} to [out=90,in=270] (.5,1) \arc{180}{0}{.25} to (1,-.75);
\draw[thick,->] (-1,1.75) to (-1,0) \arc{180}{360}{.75} to [out=90,in=270] (0,1) \arc{180}{0}{.75} to (1.5,-.75);
\node at (2,.75){\scs$\wtn{+}\wt$};
\end{tikzpicture}
\xmapsto{\tau}
(-1)^{\substack{v_{\grb\bullet}(\wt-\ee_{\bullet}-\ee_{\grb}) + r_\bullet(\wt) + r_{\grb}(\wt-\ee_{\bullet})\\
	+ r'_{\bullet}(\wt-\ee_{\grb}-\ee_{\bullet}) + r'_{\grb}(\wt-\ee_{\grb}) }}
\begin{tikzpicture}[anchorbase,scale=1]
\draw[thick,green,<-] (-.5,1.75) to (-.5,0) \arc{180}{360}{.25} to [out=90,in=270] (.5,1) \arc{180}{0}{.25} to (1,-.75);
\draw[thick,<-] (-1,1.75) to (-1,0) \arc{180}{360}{.75} to [out=90,in=270] (0,1) \arc{180}{0}{.75} to (1.5,-.75);
\node at (2,.75){\scs$\wtn{-}\wt$};
\end{tikzpicture}
\end{equation}
Thus, for consistency with \eqref{eq:inv}, we need
\begin{equation}
	\label{eq:crossinv}
 v_{\grb\bullet}'(\wt) \equiv 
 	v_{\grb\bullet}(\wt-\ee_{\bullet}-\ee_{\grb}) + r_\bullet(\wt) + r_{\grb}(\wt-\ee_{\bullet}) 
 		+ r'_{\bullet}(\wt-\ee_{\grb}-\ee_{\bullet}) + r'_{\grb}(\wt-\ee_{\grb}) \, .
\end{equation}
Using \eqref{primesign}, one obtains \eqref{primev}, as desired. 
Note that when $\bullet = \grb$, many terms cancel and \eqref{eq:crossinv} (or \eqref{primev}) simply becomes
$v_{\grb\grb}'(\wt) \equiv v_{\grb\grb}(\wt-2\ee_{\grb})$.

For the other mate, we compute:
\begin{equation}
\begin{tikzpicture}[anchorbase,scale=1]
\draw[thick,green,<-] (0,0) to [out=90,in=270] (.5,1);
\draw[thick,<-] (.5,0) to [out=90,in=270] (0,1);
\node at (1,.75){\scs$\wtn{+}\wt$};
\end{tikzpicture}
=
\begin{tikzpicture}[anchorbase,scale=1,xscale=-1]
\draw[thick,->] (-.5,1.75) to (-.5,0) \arc{180}{360}{.25} to [out=90,in=270] (.5,1) \arc{180}{0}{.25} to (1,-.75);
\draw[thick,green,->] (-1,1.75) to (-1,0) \arc{180}{360}{.75} to [out=90,in=270] (0,1) \arc{180}{0}{.75} to (1.5,-.75);
\node at (-1.5,.75){\scs$\wtn{+}\wt$};
\end{tikzpicture}
\xmapsto{\tau}
(-1)^{\substack{v_{\grb\bullet}(\wt-\ee_{\grb}-\ee_{\bullet})  + l'_{\grb}(\wt) + l'_{\bullet}(\wt-\ee_{\grb}) \\
+ l_{\grb}(\wt-\ee_{\grb}-\ee_{\bullet}) + l_{\bullet}(\wt-\ee_{\bullet})
}}
\begin{tikzpicture}[anchorbase,scale=1,xscale=-1]
\draw[thick,<-] (-.5,1.75) to (-.5,0) \arc{180}{360}{.25} to [out=90,in=270] (.5,1) \arc{180}{0}{.25} to (1,-.75);
\draw[thick,green,<-] (-1,1.75) to (-1,0) \arc{180}{360}{.75} to [out=90,in=270] (0,1) \arc{180}{0}{.75} to (1.5,-.75);
\node at (-1.5,.75){\scs$\wtn{-}\wt$};
\end{tikzpicture} \, ,
\end{equation}
which requires
\[
v_{\grb\bullet}'(\wt) \equiv 
	v_{\grb\bullet}(\wt-\ee_{\grb}-\ee_{\bullet}) + l_{\grb}(\wt-\ee_{\grb}-\ee_{\bullet}) 
		+ l_{\bullet}(\wt-\ee_{\bullet}) + l'_{\grb}(\wt) + l'_{\bullet}(\wt-\ee_{\grb}) \, .
\]
Substituting \eqref{eq:crossinv} (which we now assume) for the left-hand side 
and applying \eqref{primesign} gives the condition
\[
r_\bullet(\wt) + r_{\grb}(\wt-\ee_{\bullet}) + r_{\bullet}(\wt-\ee_{\grb}) + r_{\grb}(\wt) + 
l_{\grb}(\wt-\ee_{\grb}-\ee_{\bullet}) + l_{\bullet}(\wt-\ee_{\bullet}) + l_{\grb}(\wt - \ee_{\grb}) + l_{\bullet}(\wt-\ee_{\grb} - \ee_{\bullet}) \equiv 0.
\]
Using \eqref{bubblesign} four times, this becomes
\[
\langle \ee_{\bullet}^\vee, \wt \rangle + \langle \ee_{\grb}^\vee, \wt - \ee_{\bullet} \rangle 
	+ \langle \ee_{\bullet}^\vee, \wt - \ee_{\grb} \rangle + \langle \ee_{\grb}^\vee, \wt \rangle
\equiv 0
\]
which holds by the symmetry 
$\langle \ee_{\grb}^{\vee}, \ee_{\bullet} \rangle = \langle \ee_{\bullet}^{\vee}, \ee_{\grb} \rangle$
of the type $A$ Cartan matrix. 
Hence, \eqref{eq:crossinv} is the only requirement, 
so no additional relations are needed for cyclicity of crossings.

Before studying the remaining relations, let us record how $\tau$ acts on the rightward sideways crossing:
\begin{equation} 
	\label{rightwardcross} 
\begin{multlined}
\begin{tikzpicture}[anchorbase,scale=1]
\draw[thick,green,->] (0,0) to [out=90,in=270] (.5,1);
\draw[thick,<-] (.5,0) to [out=90,in=270] (0,1);
\node at (1,.75){\scs$\wtn{+}\wt$};
\end{tikzpicture}	
=
\begin{tikzpicture}[anchorbase,scale=1]
\draw[thick,->] (-.5,1.5) to (-.5,0) to [out=270,in=180] (-.25,-.25) to [out=0,in=270] (0,0) 
	to [out=90,in=270] (.5,1) to [out=90,in=180] (.75,1.25) to [out=0,in=90] (1,1) to (1,-.5);
\draw[thick,green,->] (.5,-.5) to (.5,0) to [out=90,in=270] (0,1) to (0,1.5);
\node at (1.5,.75){\scs$\wtn{+}\wt$};
\end{tikzpicture}
\xmapsto{\tau} (-1)^{v_{\bullet \grb}(\wt - \ee_{\bullet}) + r_{\bullet}(\wt) + r'_{\bullet}(\wt - \ee_{\bullet} + \ee_{\grb})}
\begin{tikzpicture}[anchorbase,scale=1]
\draw[thick,green,<-] (0,0) to [out=90,in=270] (.5,1);
\draw[thick,->] (.5,0) to [out=90,in=270] (0,1);
\node at (1,.75){\scs$\wtn{-}\wt$};
\end{tikzpicture} \\
\stackrel{\eqref{primesign}}{=} (-1)^{v_{\bullet \grb}(\wt - \ee_{\bullet}) + r_{\bullet}(\wt) + r_{\bullet}(\wt + \ee_{\grb})}
\begin{tikzpicture}[anchorbase,scale=1]
\draw[thick,green,<-] (0,0) to [out=90,in=270] (.5,1);
\draw[thick,->] (.5,0) to [out=90,in=270] (0,1);
\node at (1,.75){\scs$\wtn{-}\wt$};
\end{tikzpicture}
\end{multlined}
\end{equation}
and the leftward sideways crossing:
\begin{equation} 
	\label{leftwardcross} 
\begin{multlined}
\begin{tikzpicture}[anchorbase,scale=1]
\draw[thick,green,<-] (0,0) to [out=90,in=270] (.5,1);
\draw[thick,->] (.5,0) to [out=90,in=270] (0,1);
\node at (1,.75){\scs$\wtn{+}\wt$};
\end{tikzpicture}	
=
\begin{tikzpicture}[anchorbase,scale=1,xscale=-1]
\draw[thick,green,->] (-.5,1.5) to (-.5,0) to [out=270,in=180] (-.25,-.25) to [out=0,in=270] (0,0) 
	to [out=90,in=270] (.5,1) to [out=90,in=180] (.75,1.25) to [out=0,in=90] (1,1) to (1,-.5);
\draw[thick,->] (.5,-.5) to (.5,0) to [out=90,in=270] (0,1) to (0,1.5);
\node at (-1,.75){\scs$\wtn{+}\wt$};
\end{tikzpicture}
\xmapsto{\tau} (-1)^{v_{\bullet \grb}(\wt - \ee_{\grb}) + l'_{\grb}(\wt) + l_{\grb}(\wt - \ee_{\grb} + \ee_{\bullet})}
\begin{tikzpicture}[anchorbase,scale=1]
\draw[thick,green,->] (0,0) to [out=90,in=270] (.5,1);
\draw[thick,<-] (.5,0) to [out=90,in=270] (0,1);
\node at (1,.75){\scs$\wtn{+}\wt$};
\end{tikzpicture} \\
\stackrel{\eqref{primesign}}{=} (-1)^{v_{\bullet  \grb}(\wt - \ee_{\grb}) + l_{\grb}(\wt - \ee_{\grb}) + l_{\grb}(\wt - \ee_{\grb} + \ee_{\bullet})}
\begin{tikzpicture}[anchorbase,scale=1]
\draw[thick,green,->] (0,0) to [out=90,in=270] (.5,1);
\draw[thick,<-] (.5,0) to [out=90,in=270] (0,1);
\node at (1,.75){\scs$\wtn{+}\wt$};
\end{tikzpicture} \, .
\end{multlined}
\end{equation}

\item \textbf{Dot slide:}

Now consider the dot slide relation \eqref{dotslide} when $i = j = \grb$.
We compute that
\[
\left(
\begin{tikzpicture}[anchorbase,scale=1]
\draw[thick,green,->] (0,0) to [out=90,in=270] (.5,1);
\draw[thick,green,->] (.5,0) to [out=90,in=270] 
	node[pos=.7,yshift=-.5pt,black]{$\bullet$} (0,1);
\node at (.875,.75){\scs$\wtn{+}\wt$};
\end{tikzpicture}
-
\begin{tikzpicture}[anchorbase,scale=1]
\draw[thick,green,->] (0,0) to [out=90,in=270] (.5,1);
\draw[thick,green,->] (.5,0) to [out=90,in=270] 
	node[pos=.3,yshift=-.5pt,black]{$\bullet$} (0,1);
\node at (.875,.75){\scs$\wtn{+}\wt$};
\end{tikzpicture}
\right)
\mapsto
(-1)^{v_{\grb \grb}(\wt) + 1}
\left(
\begin{tikzpicture}[anchorbase,scale=1]
\draw[thick,green,<-] (0,0) to [out=90,in=270] (.5,1);
\draw[thick,green,<-] (.5,0) to [out=90,in=270] 
	node[pos=.7,yshift=.5pt,black]{$\bullet$} (0,1);
\node at (.875,.75){\scs$\wtn{-}\wt$};
\end{tikzpicture}
-
\begin{tikzpicture}[anchorbase,scale=1]
\draw[thick,green,<-] (0,0) to [out=90,in=270] (.5,1);
\draw[thick,green,<-] (.5,0) to [out=90,in=270] 
	node[pos=.3,yshift=.5pt,black]{$\bullet$} (0,1);
\node at (.875,.75){\scs$\wtn{-}\wt$};
\end{tikzpicture}
\right)
\stackrel{\eqref{dotslide}}{=}
(-1)^{v_{\grb \grb}(\wt)}
\begin{tikzpicture}[anchorbase,scale=1]
\draw[thick,green,<-] (0,0) to [out=90,in=270] (0,1);
\draw[thick,green,<-] (.5,0) to [out=90,in=270] (.5,1);
\node at (.875,.75){\scs$\wtn{-}\wt$};
\end{tikzpicture}
\]
and
\[
\left(
\begin{tikzpicture}[anchorbase,scale=1]
\draw[thick,green,->] (0,0) to [out=90,in=270] 
	node[pos=.3,yshift=-.5pt,black]{$\bullet$}(.5,1);
\draw[thick,green,->] (.5,0) to [out=90,in=270] (0,1);
\node at (.875,.75){\scs$\wtn{+}\wt$};
\end{tikzpicture}
-
\begin{tikzpicture}[anchorbase,scale=1]
\draw[thick,green,->] (0,0) to [out=90,in=270] 
	node[pos=.7,yshift=-.5pt,black]{$\bullet$}(.5,1);
\draw[thick,green,->] (.5,0) to [out=90,in=270] (0,1);
\node at (.875,.75){\scs$\wtn{+}\wt$};
\end{tikzpicture}
\right)
\mapsto
(-1)^{v_{\grb \grb}(\wt) + 1}
\left(
\begin{tikzpicture}[anchorbase,scale=1]
\draw[thick,green,<-] (0,0) to [out=90,in=270] 
	node[pos=.3,yshift=.5pt,black]{$\bullet$}(.5,1);
\draw[thick,green,<-] (.5,0) to [out=90,in=270] (0,1);
\node at (.875,.75){\scs$\wtn{-}\wt$};
\end{tikzpicture}
-
\begin{tikzpicture}[anchorbase,scale=1]
\draw[thick,green,<-] (0,0) to [out=90,in=270] 
	node[pos=.7,yshift=.5pt,black]{$\bullet$}(.5,1);
\draw[thick,green,<-] (.5,0) to [out=90,in=270] (0,1);
\node at (.875,.75){\scs$\wtn{-}\wt$};
\end{tikzpicture}
\right)
\stackrel{\eqref{dotslide}}{=}
(-1)^{v_{\grb \grb}(\wt)}
\begin{tikzpicture}[anchorbase,scale=1]
\draw[thick,green,<-] (0,0) to [out=90,in=270] (0,1);
\draw[thick,green,<-] (.5,0) to [out=90,in=270] (.5,1);
\node at (.875,.75){\scs$\wtn{-}\wt$};
\end{tikzpicture}
\]
However, the (far) right-hand side of \eqref{dotslide} is sent by $\tau$ 
to the identity map of $\FF_{\grb} \FF_{\grb} \one_{n-\wt}$ with no sign. 
For this relation to be preserved by $\tau$, we therefore need $v_{\grb \grb}(\wt) \equiv 0$, 
which is \eqref{samecrossingsign}.

Finally, it is easy to confirm that the dot slide relation when $i \ne j$ induces no constraints,
essentially because $-1 \cdot 0 = 0$. \\

\item \textbf{Quadratic KLR:}

The $i=j$ case of the quadratic KLR relation \eqref{eq:quadKLR} is obvious.
In the case where $i \cdot j = 0$, 
the left-hand side picks up the sign $(-1)^{v_{i,j}(\wt) + v_{j,i}(\wt)}$ under $\tau$, 
and the right-hand side picks up no sign. 
Therefore, this relation is preserved by $\tau$ if and only if $v_{i,j}(\wt) \equiv v_{j,i}(\wt)$ when $i \cdot j = 0$, 
which gives one case of \eqref{diffcrossingsign}.

Lastly, suppose that $i \cdot j = -1$. 
Under $\tau$, the left-hand side picks up the sign $(-1)^{v_{i,j}(\wt) + v_{j,i}(\wt)}$ 
and the right-hand side picks up the sign $(-1)$ from the dot. 
This relation is preserved by $\tau$ if and only if $v_{i,j}(\wt) \equiv v_{j,i}(\wt) + 1$ when $i \cdot j = -1$, 
which gives the other case of \eqref{diffcrossingsign}.
In more detail, suppose that $j=i+1$ (the $j=i-1$ case is similar).
Following Convention \ref{conv:colors}, 
we compute
\[
\left(
\begin{tikzpicture}[anchorbase,yscale=1,scale=.75]
\draw[thick,green,->] (0,0) to [out=90,in=270] (.5,.75) to [out=90,in=270] (0,1.5);
\draw[thick,blue,->] (.5,0) to [out=90,in=270] (0,.75) to [out=90,in=270] (.5,1.5);
\node at (1,1.25){\scs$\wtn{+}\wt$};
\end{tikzpicture}
=
\begin{tikzpicture}[anchorbase]
\draw[thick,green,->] (0,0) to node[black]{$\bullet$} (0,1);
\draw[thick,blue,->] (.375,0) to (.375,1);
\node at (.75,.75){\scs$\wtn{+}\wt$};
\end{tikzpicture}
-
\begin{tikzpicture}[anchorbase]
\draw[thick,green,->] (0,0) to (0,1);
\draw[thick,blue,->] (.375,0) to node[black]{$\bullet$} (.375,1);
\node at (.75,.75){\scs$\wtn{+}\wt$};
\end{tikzpicture} 
\right)
\xmapsto{\tau}
\left(
(-1)^{v_{\grb \blb}(\wt) + v_{\blb \grb}(\wt)}
\begin{tikzpicture}[anchorbase,yscale=1,scale=.75]
\draw[thick,green,<-] (0,0) to [out=90,in=270] (.5,.75) to [out=90,in=270] (0,1.5);
\draw[thick,blue,<-] (.5,0) to [out=90,in=270] (0,.75) to [out=90,in=270] (.5,1.5);
\node at (1,1.25){\scs$\wtn{-}\wt$};
\end{tikzpicture}
=
-
\begin{tikzpicture}[anchorbase]
\draw[thick,green,<-] (0,0) to node[black]{$\bullet$} (0,1);
\draw[thick,blue,<-] (.375,0) to (.375,1);
\node at (.875,.75){\scs$\wtn{-}\wt$};
\end{tikzpicture}
+
\begin{tikzpicture}[anchorbase]
\draw[thick,green,<-] (0,0) to (0,1);
\draw[thick,blue,<-] (.375,0) to node[black]{$\bullet$} (.375,1);
\node at (.875,.75){\scs$\wtn{-}\wt$};
\end{tikzpicture} 
\right) \, .
\]
Since
\[
-
\begin{tikzpicture}[anchorbase]
\draw[thick,green,<-] (0,0) to node[black]{$\bullet$} (0,1);
\draw[thick,blue,<-] (.375,0) to (.375,1);
\node at (.875,.75){\scs$\wtn{-}\wt$};
\end{tikzpicture}
+
\begin{tikzpicture}[anchorbase]
\draw[thick,green,<-] (0,0) to (0,1);
\draw[thick,blue,<-] (.375,0) to node[black]{$\bullet$} (.375,1);
\node at (.875,.75){\scs$\wtn{-}\wt$};
\end{tikzpicture} 
=
-
\begin{tikzpicture}[anchorbase,yscale=1,scale=.75]
\draw[thick,green,<-] (0,0) to [out=90,in=270] (.5,.75) to [out=90,in=270] (0,1.5);
\draw[thick,blue,<-] (.5,0) to [out=90,in=270] (0,.75) to [out=90,in=270] (.5,1.5);
\node at (1,1.25){\scs$\wtn{-}\wt$};
\end{tikzpicture} \, ,
\]
this requires the $i\cdot j=-1$ case of \eqref{diffcrossingsign}. \\

\item \textbf{Cubic KLR:}

First, consider the $k=i$ and $i \cdot j = -1$ case of the cubic KLR relation \eqref{eq:cubicKLR}.
When $j=i-1$ (following Convention \ref{conv:colors}, as usual), 
we have
\begin{multline*}
\left(
\begin{tikzpicture}[anchorbase,yscale=.75]
\draw[thick,->,green] (0,0) to [out=90,in=270] (1,2);
\draw[thick,->,red] (.5,0) to [out=90,in=270] (0,1) 
	to [out=90,in=270] (.5,2);
\draw[thick,->,green] (1,0) to [out=90,in=270] (0,2);
\node at (1.375,1.5){\scs$\wtn{+}\wt$};
\end{tikzpicture}
-
\begin{tikzpicture}[anchorbase,yscale=.75]
\draw[thick,->,green] (0,0) to [out=90,in=270] (1,2);
\draw[thick,->,red] (.5,0) to [out=90,in=270] (1,1) 
	to [out=90,in=270] (.5,2);
\draw[thick,->,green] (1,0) to [out=90,in=270] (0,2);
\node at (1.375,1.5){\scs$\wtn{+}\wt$};
\end{tikzpicture} =
-
\begin{tikzpicture}[anchorbase,yscale=.75]
\draw[thick,->,green] (0,0) to [out=90,in=270] (0,2);
\draw[thick,->,red] (.5,0) to (.5,2);
\draw[thick,->,green] (1,0) to [out=90,in=270] (1,2);
\node at (1.375,1.5){\scs$\wtn{+}\wt$};
\end{tikzpicture}
\right) \mapsto \\
\left(
(-1)^{\substack{v_{\grb \grb}(\wt) + v_{\redb \grb}(\wt+\ee_{\grb}) \\ 
	+ v_{\grb \redb}(\wt+\ee_{\grb})}}
\begin{tikzpicture}[anchorbase,yscale=.75]
\draw[thick,<-,green] (0,0) to [out=90,in=270] (1,2);
\draw[thick,<-,red] (.5,0) to [out=90,in=270] (0,1) 
	to [out=90,in=270] (.5,2);
\draw[thick,<-,green] (1,0) to [out=90,in=270] (0,2);
\node at (1.375,1.5){\scs$\wtn{-}\wt$};
\end{tikzpicture}
-
(-1)^{\substack{v_{\grb \grb}(\wt+\ee_{\redb}) + v_{\redb \grb}(\wt) \\ 
	+ v_{\grb \redb}(\wt)}}
\begin{tikzpicture}[anchorbase,yscale=.75]
\draw[thick,<-,green] (0,0) to [out=90,in=270] (1,2);
\draw[thick,<-,red] (.5,0) to [out=90,in=270] (1,1) 
	to [out=90,in=270] (.5,2);
\draw[thick,<-,green] (1,0) to [out=90,in=270] (0,2);
\node at (1.375,1.5){\scs$\wtn{-}\wt$};
\end{tikzpicture} =
-
\begin{tikzpicture}[anchorbase,yscale=.75]
\draw[thick,<-,green] (0,0) to [out=90,in=270] (0,2);
\draw[thick,<-,red] (.5,0) to (.5,2);
\draw[thick,<-,green] (1,0) to [out=90,in=270] (1,2);
\node at (1.375,1.5){\scs$\wtn{-}\wt$};
\end{tikzpicture}
\right)
\end{multline*}
which, by \eqref{samecrossingsign} and \eqref{diffcrossingsign}, gives
\[
-
\begin{tikzpicture}[anchorbase,yscale=.75]
\draw[thick,<-,green] (0,0) to [out=90,in=270] (1,2);
\draw[thick,<-,red] (.5,0) to [out=90,in=270] (0,1) 
	to [out=90,in=270] (.5,2);
\draw[thick,<-,green] (1,0) to [out=90,in=270] (0,2);
\node at (1.375,1.5){\scs $n{-}\wt$};
\end{tikzpicture}
+
\begin{tikzpicture}[anchorbase,yscale=.75]
\draw[thick,<-,green] (0,0) to [out=90,in=270] (1,2);
\draw[thick,<-,red] (.5,0) to [out=90,in=270] (1,1) 
	to [out=90,in=270] (.5,2);
\draw[thick,<-,green] (1,0) to [out=90,in=270] (0,2);
\node at (1.375,1.5){\scs$\wtn{-}\wt$};
\end{tikzpicture} =
-
\begin{tikzpicture}[anchorbase,yscale=.75]
\draw[thick,<-,green] (0,0) to [out=90,in=270] (0,2);
\draw[thick,<-,red] (.5,0) to (.5,2);
\draw[thick,<-,green] (1,0) to [out=90,in=270] (1,2);
\node at (1.375,1.5){\scs$\wtn{-}\wt$};
\end{tikzpicture} \, .
\]
This is exactly \eqref{eq:cubicKLR}, so no new constraints are imposed.
The $j=i+1$ case is nearly identical.

For all remaining cases, we need only verify that
\[
(-1)^{v_{i,k}(\wt) + v_{i,j}(\wt+\ee_{k}) + v_{j,k}(\wt+\ee_i)}
=
(-1)^{v_{i,k}(\wt+\ee_j) + v_{i,j}(\wt) + v_{j,k}(\wt)} \, ,
\]
i.e.~that
\[
v_{i,j}(\wt) + v_{j,k}(\wt) + v_{i,k}(\wt)
\equiv
v_{i,j}(\wt+\ee_{k}) + v_{j,k}(\wt+\ee_{i}) + v_{i,k}(\wt+\ee_{j}).
\]
This is exactly \eqref{R3sign}.
Note that if $i,j,k$ are not all distinct, 
this condition is already a consequence of \eqref{samecrossingsign} and \eqref{diffcrossingsign}
(e.g.~this relation was already implicit in our check of the $j=i-1$, $k=i$ case above).
\end{itemize}

Having established the necessity of \eqref{tauconditions}, 
we now complete the check that they are sufficient.

\begin{itemize}[leftmargin=*]
	
\item \textbf{Dots to new bubbles:}
If we rotate \eqref{dotstonewbubbles} by 180 degrees, we get
\begin{equation} \label{dotstonewbubblesrot}
\begin{tikzpicture}[anchorbase,yscale=1]
\draw[thick,<-] (0,0) node[below=-1pt]{\scs $i$} to [out=90,in=270] node{$\bullet$} (0,1.5);
\end{tikzpicture}
=
-
\begin{tikzpicture}[anchorbase,yscale=1]
\draw[thick,<-] (0,0) node[below=-1pt]{\scs $i$} to [out=90,in=270] (0,1.5);
\node at (-.75,.75){\NB{e_1(\X_i)}};
\end{tikzpicture}
+
\begin{tikzpicture}[anchorbase,yscale=1]
\draw[thick,<-] (0,0) node[below=-1pt]{\scs $i$} to [out=90,in=270] (0,1.5);
\node at (.75,.75){\NB{e_1(\X_i)}};
\end{tikzpicture}
=
-
\begin{tikzpicture}[anchorbase,yscale=1]
\draw[thick,<-] (0,0) node[below=-1pt]{\scs $i$} to [out=90,in=270] (0,1.5);
\node at (.875,.75){\NB{e_1(\X_{i+1})}};
\end{tikzpicture}
+
\begin{tikzpicture}[anchorbase,yscale=1]
\draw[thick,<-] (0,0) node[below=-1pt]{\scs $i$} to [out=90,in=270] (0,1.5);
\node at (-.875,.75){\NB{e_1(\X_{i+1})}};
\end{tikzpicture}.
\end{equation}
Meanwhile, if we apply $\tau$ to \eqref{dotstonewbubbles}, 
we get the same relation as \eqref{dotstonewbubblesrot} 
(albeit in a different ambient weight, and with both sides mutliplied by $-1$). 
Here, we use that $\tau(e_1) = h_1 = e_1$.

\item \textbf{Extended $\sln[2]$ relations:}

Consider the two relations \eqref{extendedreln1} and \eqref{extendedreln2}, 
with ambient weight $\wtn + \wt$ on the right. 
Applying $\tau$ appears to swap the two relations, 
though it changes the ambient weight to $\wtn - \wt$ and may produce signs on each diagram. 
The change in ambient weight ensures that the two summations range over the same set of $\{p, q, r\}$. 
No signs are produced on the left-hand side, so we need only check that no overall $-1$ sign 
is introduced in each diagram 
appearing on the right-hand side.

We confirm this for \eqref{extendedreln2}, since the case of \eqref{extendedreln1} is similar.
In the first diagram on the right-hand side of \eqref{extendedreln2}, 
using \eqref{rightwardcross} and \eqref{leftwardcross}, 
the two sideways crossings produce a total sign with exponent
$2 v_{i,i}(\wt - \ee_i) + r_i(\wt) + r_i(\wt + \ee_i) + l_i(\wt) + l_i(\wt - \ee_i)$.
Applying \eqref{bubblesign}, the result is equivalent modulo 2 to
$\langle \ee_i^\vee, \wt \rangle + \langle \ee_i^\vee, \wt + \ee_i \rangle$.
Since $\langle \ee_i^\vee, \ee_i \rangle = 2$, this exponent is even, as desired.

In each term of the summation on the right-hand side, 
the overall sign produced by $\tau$ is 
\[
(-1)^{p+q} (-1)^{r_i(\wt) + r'_i(\wt) + l_i(\wt)+l'_i(\wt)} (-1)^{a_{i+1} - a_i - 1 + r} \, . 
\]
Here we've e.g.~used \eqref{eq:realbubbles} with $c = a_{i+1} - a_i - 1 + r$. 
Since $p+q+r = a_{i+1} - a_i - 1$, 
the overall exponent is equivalent $\ourmod 2$ to
\begin{align*}
r_i(\wt) + r'_i(\wt) + l_i(\wt)+l'_i(\wt) 
	&\stackrel{\eqref{primesign}}{\equiv} r_i(\wt) + r_i(\wt+\ee_i) + l_i(\wt)+l_i(\wt-\ee_i) \\
	&\stackrel{\eqref{bubblesign}}{\equiv} \langle \ee_i^\vee , \wt \rangle + \langle \ee_i^\vee , \wt + \ee_i \rangle
	\equiv 2 \langle \ee_i^\vee , \wt \rangle + \langle \ee_i^\vee , \ee_i \rangle \equiv 0
\end{align*}
as desired. \\

\item \textbf{Mixed $\EE,\FF$ relations:}

Consider the first relation in \eqref{eq:mixedef}, with ambient weight $\wtn + \wt$ on the right. 
The identity map is sent by $\tau$ to an identity map, with no sign. 
Once again, using \eqref{rightwardcross} and \eqref{leftwardcross}, 
the overall sign on the right-hand side has exponent
\begin{equation} 
v_{j,i}(\wt - \ee_j) + r_{j}(\wt) + r_{j}(\wt + \ee_{i}) 
	+ v_{i,j}(\wt - \ee_j) + l_{j}(\wt - \ee_{j}) + l_{j}(\wt - \ee_{j} + \ee_{i})  \, . 
\end{equation}
Applying \eqref{bubblesign} and \eqref{diffcrossingsign}, 
this is equivalent $\ourmod 2$ to
\[
\langle \ee_j^\vee , \ee_i \rangle +\langle \ee_j^\vee , \wt \rangle + \langle \ee_j^\vee , \wt + \ee_i \rangle \equiv 0
\]
as desired.
\end{itemize}
This concludes the proof of Theorem \ref{thm:symmetry}.
\end{proof}

\begin{rem} \label{rem:togetsymmetry}
Let us discuss what it takes to define functions $r_i, l_i, r'_i, l'_i, v_{i,j} \colon \Z^m \to \Z/2$ 
for $i,j \in \{1,\ldots,m-1\}$ that satisfy \eqref{tauconditions}. 
Since \eqref{tauconditions} has no conditions relating $v_{i,j}$ to the remaining functions, 
we can discuss the choice of $v_{i,j}$ separately from the others. 
However, we emphasize that in subsequent sections, 
we impose further conditions on $\tau$ such that $v_{i,j}$ will cease to be decoupled from the remaining functions.

Observe that one can use \eqref{primesign} to define $r'_i$ and $l'_i$ in terms of $r_i$ and $l_i$ 
and can then use \eqref{bubblesign} to define $l_i$ in terms of $r_i$.
We thus can choose each $r_i$ freely, and extrapolate the values of $r'_i, l_i, l'_i$.
Similarly, \eqref{samecrossingsign} and \eqref{diffcrossingsign} can be used to define $v_{i,j}$ for $i \ge j$ 
in terms of $v_{i,j}$ for $i < j$.
Next, \eqref{samecrossingsign} implies that \eqref{diffcrossingsign} holds when $i = j$, 
and together \eqref{samecrossingsign} and \eqref{diffcrossingsign} imply that \eqref{R3sign} holds 
whenever $\{i,j,k\}$ are not distinct.
Lastly, if \eqref{R3sign} holds for $\{i,j,k\}$ then it also holds for any permutation of $\{i,j,k\}$.

Thus, it suffices to find $v_{i,j}$ for $i < j$ that satisfy \eqref{R3sign}.
This can be accomplished in a number of ways, e.g.~we can let each such $v_{i,j} \colon \Z^m \to \Z/2$ be constant.
\end{rem}

\subsection{Is it an involution?} \label{ss:inv?}

Remark \ref{rem:togetsymmetry} shows it is easy to find functions 
that determine a $2$-automorphism $\tau$ as in Theorem \ref{thm:symmetry}.
However, we desire more, namely that $\tau$ be an involution.
Our next result shows that any choice gets us close.

\begin{cor} 
	\label{cor:order4}
When the conditions \eqref{tauconditions} are satisfied 
(so that $\tau$ is well-defined), 
we have $\tau^4 = \id_{\UU_q(\glm)}$. \end{cor}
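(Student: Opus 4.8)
The plan is to compute $\tau^2$ explicitly and show it is an involution by recognizing it as rescaling of generators by signs that square to $+1$. First I would note that $\tau$ reverses string-diagram orientations, so $\tau^2$ preserves orientations, and by Theorem \ref{thm:symmetry} any orientation-preserving $2$-automorphism of $\UU_q(\glm)$ fixing objects is determined by how it rescales the generating $2$-morphisms (the dot, the upward crossing, the four cups/caps) and how it acts on new bubbles. Since $\tau$ sends $e_r(\X_i) \mapsto h_r(\X_i) \mapsto e_r(\X_i)$ (the standard involution of $\Sym$ squares to the identity), $\tau^2$ acts trivially on all new bubbles. On the dot, $\tau$ sends the up-dot to minus the down-dot and (by \eqref{eq:downdot}) the down-dot to minus the up-dot, so $\tau^2$ fixes the dot. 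Thus $\tau^2$ is a sign-rescaling automorphism: it multiplies each generating cup, cap, and crossing by a sign depending on the ambient weight, and fixes dots and bubbles.

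Next I would identify these signs precisely from the formulas in \eqref{eq:inv}, \eqref{eq:downwardcross}, \eqref{rightwardcross}, \eqref{leftwardcross}, and \eqref{primev}. For the crossing, $\tau^2$ rescales the upward crossing $\EE_i\EE_j\one_{\wtn+\wt} \to \EE_j\EE_i\one_{\wtn+\wt}$ by $(-1)^{v_{i,j}(\wt) + v_{j,i}'(2\wtn - \wt - \ee_i - \ee_j)}$ or a similar combination; using \eqref{primev} and \eqref{samecrossingsign}/\eqref{diffcrossingsign}, the $v$-terms cancel in pairs and what remains is a sum of $r_i$-values, which one checks — using \eqref{primesign} and \eqref{bubblesign} to rewrite everything in terms of inner products $\langle \ee_i^\vee, -\rangle$ — is an even integer. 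For the cups and caps, $\tau^2$ rescales the right cup by $(-1)^{r_i(\wt) + r_i'(2\wtn - \wt)}$; by \eqref{primesign} this equals $(-1)^{r_i(\wt) + r_i(2\wtn - \wt + \ee_i)}$, and one does \emph{not} expect this to vanish. Indeed I expect $\tau^2$ to be a nontrivial sign automorphism of order $2$ — this is consistent with $\tau$ having order $4$ rather than $2$, which the authors explicitly flag as the surprising phenomenon. So the real content is: $\tau^2$ rescales cups/caps by signs, fixes dots/bubbles/crossings (up to the sign check above), and every such rescaling sign squares to $+1$ trivially, giving $\tau^4 = \id$.

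The cleanest way to finish is therefore: (i) show $\tau^2$ fixes dots and new bubbles outright; (ii) show $\tau^2$ fixes the upward crossing (the sign vanishes by the cancellation argument above, which is essentially the same computation already carried out in the ``cyclicity of crossings'' step of the proof of Theorem \ref{thm:symmetry}); (iii) observe that whatever signs $\tau^2$ attaches to the cups and caps, applying $\tau^2$ once more attaches the \emph{same} signs again (the signs depend only on the ambient weight, and $\tau^2$ fixes weights), so $\tau^4$ attaches the square of each sign, namely $+1$. Hence $\tau^4 = \id_{\UU_q(\glm)}$ as it agrees with the identity on all generating $2$-morphisms, fixes all objects, and preserves $1$-morphism composition.

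The main obstacle is bookkeeping in step (ii): one must carefully track which ambient-weight-shifted values of $v_{i,j}$, $v_{i,j}'$, $r_i$, $r_i'$ appear when composing $\tau$ with itself on a crossing, since $\tau$ simultaneously reverses orientation and shifts the ambient weight $\wt \mapsto 2\wtn - \wt$, so the second application of $\tau$ sees weight arguments like $2\wtn - \wt$ rather than $\wt$. I would handle this by working out $\tau^2$ directly on the \emph{rightward sideways crossing} using \eqref{rightwardcross} twice (noting that $\tau$ of a sideways crossing is again expressed via sideways crossings with the opposite handedness, so two applications return to the original handedness), collecting the total sign exponent, and then repeatedly applying \eqref{primesign} and \eqref{bubblesign} to collapse it to a multiple of $2$. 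This is routine but sign-delicate; everything else is formal.
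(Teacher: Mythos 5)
Your core argument in step (iii) is exactly the paper's proof, and it is correct: since $\tau$ fixes objects after two applications and sends each generating $2$-morphism to a signed multiple of (possibly another) generator, $\tau^2$ acts on every generating $2$-morphism by a scalar in $\{\pm 1\}$, and because $\tau^2$ is $\K$-linear and fixes the ambient weight, $\tau^4$ acts by the square of that scalar, namely $+1$. The paper's proof is three sentences saying precisely this; there is nothing more to it.

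However, you obscure this by also trying to establish (i) and (ii), and step (ii) is incorrect. You claim $\tau^2$ fixes the upward crossing because "the $v$-terms cancel in pairs and what remains \ldots\ is an even integer." In fact, chasing \eqref{eq:inv} and \eqref{eq:downwardcross}, $\tau^2$ multiplies the upward crossing in weight $\wtn+\wt$ by $(-1)^{v_{\grb\bullet}(\wt) + v'_{\grb\bullet}(-\wt)}$. The vanishing of this exponent modulo $2$ is exactly condition \eqref{naivev} from Lemma \ref{lem:inv?}, which is \emph{not} among the conditions \eqref{tauconditions} assumed here — indeed, Theorem \ref{thm:nonaivesolution} shows that for $m \geq 3$ it often \emph{cannot} be arranged (at least na\"{i}vely). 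So this sign is genuinely non-trivial in general, and trying to show it vanishes is both wrong and unnecessary. You should abandon steps (i) and (ii) entirely, since the squaring observation in (iii) already applies uniformly to every generator: it does not matter which signs $\tau^2$ picks up, only that they are signs attached pointwise in a way that $\tau^2$ itself preserves.
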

	
\begin{proof} 
It is clear that $\tau^2$ acts as the identity on all objects and $1$-morphisms. 
Moreover, $\tau^2$ sends each diagram to itself, up to multiplication by some sign. 
Therefore, $\tau^4$ is the identity on $2$-morphisms. \end{proof}

Hence, any such $\tau$ has order $2$ or $4$ (since $\tau$ is clearly not the identity). 
Additional requirements must be imposed on the functions determining $\tau$ 
to guarantee that it has order $2$.
	
\begin{lem} 
	\label{lem:inv?}
Assume that \eqref{tauconditions} is satisfied. 
Then, $\tau$ is an involution on $\UU_q(\glm)$ if and only if the following hold in $\Z/2$ for all weights $\wt$:
\begin{subequations} \label{tauinvolution} 
\begin{equation} \label{lfromr} r_{\grb}(\wt) + l_{\grb}(-\wt) \equiv 0, \end{equation}
\begin{equation} \label{bubblesignalt} r_i(\wt) + r_i(\ee_i - \wt) \equiv \langle \ee_i^\vee, \wt \rangle. \end{equation}
\begin{equation} \label{vfortauinvolution} v_{\grb \bullet}(\wt-\ee_{\grb}-\ee_{\bullet}) + v_{\grb \bullet}(-\wt) + r_{\grb}(\wt) + r_{\bullet}(\wt) + r_{\bullet}(\wt-\ee_{\grb}) + r_{\grb}(\wt - \ee_{\bullet}) \equiv 0. \end{equation}
\end{subequations}
\end{lem}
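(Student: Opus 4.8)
The plan is to mirror the structure of the proof of Theorem \ref{thm:symmetry}: determine, relation by relation in $\UU_q(\glm)$, under what condition on the sign data applying $\tau$ twice returns the original relation (equivalently, when $\tau^2$ fixes each generating $2$-morphism), and then match the resulting constraints to \eqref{lfromr}, \eqref{bubblesignalt}, and \eqref{vfortauinvolution}. Since Corollary \ref{cor:order4} already gives $\tau^4 = \id$, and $\tau^2$ acts trivially on objects and $1$-morphisms (it sends $\wtn + \wt$ to $\wtn - (-\wt) = \wtn + \wt$ and $\EE_i \one_{\wtn + \wt}$ through $\FF_i \one_{\wtn - \wt}$ back to $\EE_i \one_{\wtn + \wt}$), the only content is that $\tau^2$ fixes each generating $2$-morphism on the nose rather than up to a sign. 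So $\tau$ is an involution if and only if, for each generator in \eqref{eq:inv}, the product of the two signs picked up (once from applying $\tau$ in weight $\wtn + \wt$, once from applying $\tau$ in weight $\wtn - \wt$) is trivial.

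First I would dispense with the generators whose $\tau$-signs are already pinned down. The dot picks up $(-1)$ each time, so $\tau^2$ fixes it automatically; similarly the new bubbles are handled by the fact that the symmetric-function involution $e_r \leftrightarrow h_r$ is an involution, so nothing is required there. Next I would treat the cap/cup generators: applying \eqref{eq:inv} twice to the rightward cap $\EE_i\FF_i \one \to \one$ picks up $(-1)^{r_{\grb}(\wt)}$ then $(-1)^{l_{\grb}(2\wtn - (\wtn+\wt) - \text{shift})}$; tracking the ambient weights carefully (the cap in weight $\wtn - \wt$ has the roles of $r$ and $l$ swapped relative to the cap in weight $\wtn + \wt$), this collapses to the condition $r_{\grb}(\wt) + l_{\grb}(-\wt) \equiv 0$, which is \eqref{lfromr}. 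The other three (co)evaluation generators then give no new constraint once \eqref{lfromr} and \eqref{primesign} are in hand, exactly as in the symmetry proof the ``primed'' variables were extrapolated; here one should double-check this reduction by chasing through \eqref{eq:downwardcross} and \eqref{primev}, but it should be formal. The remaining genuinely new constraint from the oriented structure is \eqref{bubblesignalt}: imposing that $\tau^2$ fix a real $i$-colored bubble of degree $2r$ — using \eqref{eq:realbubbles} twice, once in weight $\wtn + \wt$ and once in weight $\wtn - \wt$ — gives $r_i(\wt) + l'_i(\wt) + c + r_i(-\wt) + l'_i(-\wt) + c \equiv 0$, and then \eqref{primesign} and \eqref{lfromr} convert the $l'$ terms into $r$ terms, producing $r_i(\wt) + r_i(\ee_i - \wt) \equiv \langle \ee_i^\vee, \wt\rangle$. (A cleaner route: combine \eqref{bubblesign} with \eqref{lfromr} directly.)

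Then I would do the crossings. The uni-colored crossing picks up $(-1)^{v_{i,i}(\wt)}$, which is trivial by \eqref{samecrossingsign}, so $\tau^2$ fixes it. For a general upward crossing $\EE_i\EE_j \to \EE_j\EE_i$, applying $\tau$ twice picks up $(-1)^{v_{i,j}(\wt)}$ from the first application and $(-1)^{v'_{j,i}(\cdot)}$ — the sign on the resulting \emph{downward} crossing — from the second; substituting the formula \eqref{primev} for $v'$ (with the appropriate ambient weight $-\wt$ after the first pass) yields precisely \eqref{vfortauinvolution} after rearranging and using \eqref{diffcrossingsign} to symmetrize $v_{i,j} \leftrightarrow v_{j,i}$. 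I would then verify that the sideways crossings (via \eqref{rightwardcross}, \eqref{leftwardcross}) and the downward crossing \eqref{eq:downwardcross} impose nothing beyond \eqref{vfortauinvolution} together with \eqref{lfromr} and \eqref{bubblesignalt} — the bookkeeping here parallels the ``cyclicity of crossings'' bullet in the proof of Theorem \ref{thm:symmetry} and is the step most likely to hide a sign error, so that is where I would be most careful. Finally, all the non-generating relations (extended $\sln[2]$, mixed $\EE\FF$, KLR relations) are automatically $\tau^2$-invariant once $\tau^2$ fixes each generator, so no further conditions arise. The main obstacle is purely organizational: keeping the ambient-weight arguments of $r_i, l_i, v_{i,j}$ straight through two applications of $\tau$ — in particular remembering that the second application happens in weight $\wtn - \wt$ and that $\tau$ reverses orientation, which swaps the roles of the ``right'' and ``left'' cap/cup signs — and then recognizing that the three accumulated constraints are exactly \eqref{tauinvolution}.
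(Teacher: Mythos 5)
Your proposal takes the same basic route as the paper's proof: since Corollary~\ref{cor:order4} already gives $\tau^4=\id$, one records the sign that $\tau^2$ introduces on each generating $2$-morphism and demands it vanish, then reduces the resulting constraints to \eqref{tauinvolution}. The paper packages the generator constraints as three ``naive'' equations --- $r_{\grb}(\wt)+l_{\grb}(-\wt)\equiv 0$ from the cap, $r'_{\grb}(\wt)+l'_{\grb}(-\wt)\equiv 0$ from the cup (redundant given \eqref{primesign}), and $v'_{\grb\bullet}(\wt)+v_{\grb\bullet}(-\wt)\equiv 0$ from the crossing --- and then converts these to \eqref{tauinvolution}. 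Your outline matches this, but two of your intermediate steps would not survive being carried out literally.

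First, your purported derivation of \eqref{bubblesignalt} from ``imposing that $\tau^2$ fix a real bubble'' does not work. A real bubble is a composition of a cap, a cup, and dots, so once $\tau^2$ fixes caps, cups, and dots it fixes all real bubbles automatically; computing the sign the way you describe and then using \eqref{primesign}, \eqref{bubblesign}, and \eqref{lfromr} yields $\langle\ee_i^\vee,\wt\rangle+\langle\ee_i^\vee,-\wt\rangle\equiv 0$, a tautology, not \eqref{bubblesignalt}. The correct origin of \eqref{bubblesignalt} is the ``cleaner route'' you parenthetically mention: it is the restatement of \eqref{bubblesign} obtained by using \eqref{lfromr} to eliminate the function $l$ in favor of $r$. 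It is not a new $\tau^2$ constraint.

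Second, the crossing analysis has an index slip that matters. Applying $\tau$ to the upward crossing in weight $\wtn+\wt$ (with $i$ the leftmost bottom label) gives $(-1)^{v_{i,j}(\wt)}$ times the downward crossing in weight $\wtn-\wt$; applying $\tau$ again via \eqref{eq:downwardcross} gives $(-1)^{v'_{i,j}(-\wt)}$, with the \emph{same} index order --- not $v'_{j,i}$. The constraint $v_{i,j}(\wt)+v'_{i,j}(-\wt)\equiv 0$ then turns into \eqref{vfortauinvolution} directly by \eqref{primev}, after replacing $\wt$ by $-\wt$, with no use of \eqref{diffcrossingsign}. If one instead writes $v'_{j,i}$ and then ``symmetrizes'' with \eqref{diffcrossingsign} as you propose, an extra summand $i\cdot j$ appears, which is $1$ mod $2$ for adjacent $i,j$, so that route actually produces the wrong equation. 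With these two points corrected the argument is sound and is essentially the paper's.
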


\begin{proof} By a straightforward examination, 
$\tau^2$ sends each of the generating $2$-morphisms of $\UU_q(\glm)$ to a signed multiple of itself,
e.g.
\[
\begin{tikzpicture}[anchorbase,scale=1]
\draw[thick,green,->] (-.25,0) to [out=90,in=180] (0,.5) 
	to [out=0,in=90] (.25,0);
\node at (.625,.5){\scs$\wtn{+}\wt$};
\end{tikzpicture}
\xmapsto{\tau}
(-1)^{r_{\grb}(\wt)}
\begin{tikzpicture}[anchorbase,scale=1]
\draw[thick,green,<-] (-.25,0) to [out=90,in=180] (0,.5) 
	to [out=0,in=90] (.25,0);
\node at (.625,.5){\scs$\wtn{-}\wt$}; 
\end{tikzpicture}
\xmapsto{\tau}
(-1)^{r_{\grb}(\wt)+l_{\grb}(-\wt)}
\begin{tikzpicture}[anchorbase,scale=1]
\draw[thick,green,->] (-.25,0) to [out=90,in=180] (0,.5) 
	to [out=0,in=90] (.25,0);
\node at (.625,.5){\scs$\wtn{+}\wt$};
\end{tikzpicture} \, .
\]
Similar computations for the other generating $2$-morphisms give the requirements:
\begin{subequations} \label{tauinvolutionnaive} 
\begin{equation} \label{naiverl} r_{\grb}(\wt) + l_{\grb}(-\wt) \equiv 0, \end{equation}
\begin{equation} \label{naiveprime} r'_{\grb}(\wt) + l'_{\grb}(-\wt) \equiv 0, \end{equation}
\begin{equation} \label{naivev} v'_{\grb \bullet}(\wt) + v_{\grb \bullet}(-\wt) \equiv 0 \, . \end{equation}
\end{subequations}
(We also note that the automorphism of symmetric functions sending 
$e_r \mapsto h_r$ is indeed a well-known involution.)

Thus, we need only match \eqref{tauinvolutionnaive} to \eqref{tauinvolution}. 
First, \eqref{naiverl} is just a reprinting of \eqref{lfromr}. 
Using \eqref{primesign}, \eqref{naiveprime} is equivalent to
\[r_{\grb}(\wt + \ee_{\grb}) + l_{\grb}(-\wt - \ee_{\grb}) \equiv 0\, , \]
which is equivalent to \eqref{naiverl}. 
If we view \eqref{lfromr} as defining the $l$ function in terms of the $r$ function, 
then rewriting \eqref{bubblesign} so that it represents a condition on the $r$ function gives \eqref{bubblesignalt}. 
Finally, \eqref{naivev} is equivalent to \eqref{vfortauinvolution}, using \eqref{primev}.
\end{proof}

We now ask: does such a $\tau$ of order $2$ exist? 
That is, is there any choice of functions for which both
\eqref{tauconditions} holds (so $\tau$ is well-defined) \textbf{and} 
\eqref{tauinvolution} holds (so $\tau$ is an involution)? 
We now show that ``na\"{i}ve'' solutions to these formulae do not exist.
Here, by na\"{i}ve, we refer to two additional constraints: 
that $v_{i,j}(\wt)$ is a constant function and that $r_i(\wt)$ is a function ``defined locally'', 
in a sense explained below. 
The choice of $\tau$ that we use in applications, 
described in Theorem \ref{thm:involution}, is indeed na\"{i}ve in this sense, so it will not be an involution
(though as mentioned above, it fortunately restricts to an involution on a $2$-subcategory 
that is sufficient for our applications).

\begin{rem} Most of the relations in \eqref{tauconditions} and
\eqref{tauinvolution} can be used to define $r'$, $l$, $l'$, or $v'$ in terms of $r$ and $v$. 
As noted in Remark \ref{rem:togetsymmetry}, 
we need only define $v_{i,j}$ for all $i < j$ and check \eqref{R3sign} 
for $i < j < k$ in order to define $v_{i,j}$ for all $i, j$ such that 
\eqref{samecrossingsign}, \eqref{diffcrossingsign}, and \eqref{R3sign} all hold. 
Thus, it suffices to define $r_i$ and $v_{i,j}$ for all $i<j$. 
These functions must satisfy \eqref{bubblesignalt} and 
\eqref{vfortauinvolution} and \eqref{R3sign} for $i < j < k$. \end{rem}

It is impossible for \eqref{bubblesignalt} to hold when $r_i(\wt)$ is a constant function, 
so one must abandon the possibility that the sign on the cups and caps is independent of the ambient weight. 
However, it might be desirable for $v_{i,j}(\wt)$ to be a constant function, 
allowing the sign on crossings to be independent of the ambient weight and thus only dependent on the coloring. 
In \eqref{vfortauinvolution} and \eqref{R3sign}, each $v_{i,j}$ appears an even number of times. 
If each $v_{i,j}$ is a constant function, then \eqref{R3sign} holds, and \eqref{vfortauinvolution} is replaced by
\[ r_i(\wt) + r_j(\wt) + r_i(\wt - \ee_j) + r_j(\wt - \ee_i) \equiv 0. \]
Applying \eqref{bubblesignalt}, we derive the equivalent formula
\begin{equation} \label{rrsign} 
r_i(\wt) + r_j(\wt) + r_i(\ee_i + \ee_j - \wt) + r_j(\ee_i + \ee_j - \wt) 
	= \langle \ee_i^\vee + \ee_j^\vee, \wt \rangle,\end{equation}
which bears a comforting similarity to \eqref{bubblesignalt}, but for the function $r_i + r_j$.

A second property one might desire is that the formula for $\tau$ acting on $\UU_q(\glm)$ 
is compatible with a formula for $\tau$ on $\UU_q(\mathfrak{gl}_2)$, 
with regards to the standard inclusions of $\mathfrak{gl}_2$ into $\glm$. 
For $\mathfrak{gl}_2$, there is a single function $r \colon \Z^2 \to \Z/2$, and it satisfies
\begin{equation} \label{globalrsign} 
	r(a,b) + r(1 - a, -1-b) \equiv a + b \end{equation} 
by \eqref{bubblesignalt}. 
We thus say that the family of functions $\{r_i\}$ is \emph{defined locally} 
if there exists a function $r \colon \Z^2 \to \Z/2$ satisfying \eqref{globalrsign}, for which
\begin{equation} r_i(a_1, \ldots, a_i, a_{i+1}, \ldots, a_m) = r(a_i, a_{i+1}) \, . \end{equation}

\begin{thm} \label{thm:nonaivesolution} 
If $m \ge 3$, then there is no solution to \eqref{tauconditions} and \eqref{tauinvolution} 
for which $v_{i,j}$ is a constant function for all $i, j$, and $\{r_i\}$ is defined locally. \end{thm}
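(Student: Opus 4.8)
The plan is to derive a contradiction from the combined constraints by exploiting the ``local'' structure of the $r_i$ together with the fact that, with $v_{i,j}$ constant, \eqref{R3sign} and \eqref{vfortauinvolution} collapse to the purely $r$-theoretic identity \eqref{rrsign}. So first I would record the two key reductions: (i) by \eqref{bubblesignalt}, writing $r_i(\wt) = r(a_i, a_{i+1})$ where $r\colon \Z^2 \to \Z/2$ satisfies \eqref{globalrsign}; and (ii) since every $v_{i,j}$ appears an even number of times in \eqref{vfortauinvolution} when the $v$'s are constant, the involution condition for crossings reduces to \eqref{rrsign}, namely
\[
r_i(\wt) + r_j(\wt) + r_i(\ee_i + \ee_j - \wt) + r_j(\ee_i + \ee_j - \wt) \equiv \langle \ee_i^\vee + \ee_j^\vee, \wt \rangle
\]
for all $i,j \in \{1,\dots,m-1\}$ and all $\wt$. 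The whole argument then takes place at the level of the single function $r$: I must show there is no $r\colon \Z^2 \to \Z/2$ with \eqref{globalrsign} such that the induced local family $\{r_i\}$ satisfies \eqref{rrsign} whenever $m \ge 3$.

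The crux is the case where $i$ and $j$ are adjacent Dynkin nodes, say $j = i+1$, since this is where $m \ge 3$ actually enters (it is the first $m$ for which two distinct local functions share a coordinate). Here $\ee_i = (\dots, 1, -1, 0, \dots)$ and $\ee_{i+1} = (\dots, 0, 1, -1, \dots)$ overlap in the middle slot, so $\ee_i + \ee_{i+1} = (\dots, 1, 0, -1, \dots)$. Writing $\wt$ locally as $(a, b, c)$ in the three relevant coordinates, $r_i(\wt) = r(a,b)$, $r_{i+1}(\wt) = r(b,c)$, while $r_i(\ee_i+\ee_{i+1}-\wt) = r(1-a, -b)$ and $r_{i+1}(\ee_i+\ee_{i+1}-\wt) = r(-b, -1-c)$, and $\langle \ee_i^\vee + \ee_{i+1}^\vee, \wt\rangle = (a-b) + (b-c) = a - c$. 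Thus \eqref{rrsign} becomes
\[
r(a,b) + r(b,c) + r(1-a,-b) + r(-b,-1-c) \equiv a - c \pmod 2
\]
for \emph{all} $a,b,c \in \Z$. I would then fix $b = 0$ and compare two instances to isolate a contradiction: setting $c = 0$ gives $r(a,0) + r(0,0) + r(1-a,0) + r(0,-1) \equiv a$, whereas setting $a = 0$ gives $r(0,0) + r(0,c) + r(1,0) + r(0,-1-c) \equiv -c$; meanwhile \eqref{globalrsign} with $(a,b) = (0,0)$ forces $r(0,0) + r(1,-1) \equiv 0$ and similar low-degree identities pin down enough values that the two displayed equations become incompatible as $a$ (resp.\ $c$) ranges over $\Z$ --- the parity-$a$ term on one side cannot be matched. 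The bookkeeping is to show that no single $\Z/2$-valued $r$ can make $r(a,0) + r(1-a,0)$ grow like $a \bmod 2$ while \eqref{globalrsign} simultaneously controls $r(a,0) + r(1-a,-1)$.

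I expect the main obstacle to be organizing this finite-but-fiddly parity computation cleanly: one must carefully separate which constraints come from \eqref{globalrsign} (a relation of the form $r(\wt) + r(\ee_i - \wt) = \langle\ee_i^\vee,\wt\rangle$ for the single $\mathfrak{gl}_2$ coordinate pair) versus \eqref{rrsign} for the pair $(i, i+1)$, and to phrase the contradiction in a way that does not secretly assume more than locality and constancy of $v$. A clean way to present it is to define $g(a) := r(a,0)$ and $h(a) := r(a,-1)$, translate \eqref{globalrsign} into $g(a) + h(1-a) \equiv a$ and the $b=0, c=0$ case of the adjacent-node relation into $g(a) + g(1-a) \equiv a + g(0) + r(0,-1)$, then observe the right side of the latter is $a$ plus a constant, so $g(a) + g(1-a)$ would have to be a nonconstant function of $a \bmod 2$, i.e.\ $g(a) + g(1-a) \equiv a + \epsilon$; evaluating at $a=0$ and $a=1$ gives $g(0)+g(1) \equiv \epsilon$ and $g(1) + g(0) \equiv 1 + \epsilon$, forcing $0 \equiv 1$, the desired contradiction. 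Verifying that $\epsilon$ is genuinely forced (i.e.\ that $g(0) + r(0,-1)$ plays the role of the constant, using also the $a=0$ specialization to confirm consistency) is the one place careful checking is needed, but it is elementary once the notation above is set up.
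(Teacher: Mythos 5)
Your proof is correct, and it arrives at the same intermediate reduction as the paper --- after restating the constraints as a condition on a single function $r\colon \Z^2 \to \Z/2$ satisfying \eqref{globalrsign}, and specializing \eqref{rrsign} to adjacent nodes $j = i+1$ with $\wt = (a,b,c)$, you both obtain
\[
r(a,b) + r(b,c) + r(1-a,-b) + r(-b,-1-c) \equiv a + c \pmod 2.
\]
From there the two arguments diverge. The paper uses \eqref{globalrsign} twice to rewrite $r(1-a,-b)$ and $r(-b,-1-c)$, deduces $r(a,b) + r(a,b-1) \equiv r(b,c) + r(b+1,c)$, defines $\epsilon_{a,b} := r(a,b) + r(a,b-1)$, argues it is independent of $a$ (and equals the analogous quantity in the other coordinate), and finally, after the normalization $r(0,0) = 0$, derives $r(0,-1) = \epsilon_0 = r(1,0)$, contradicting the instance $r(1,0) + r(0,-1) \equiv 1$ of \eqref{globalrsign}. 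Your route is shorter: setting $b = c = 0$ in the displayed relation yields $g(a) + g(1-a) \equiv a + \epsilon$ with $g(a) := r(a,0)$ and $\epsilon := r(0,0) + r(0,-1)$ a manifest constant, and the left side is invariant under $a \mapsto 1-a$ while the right side flips by $1 \pmod 2$, which is an immediate contradiction. This avoids both the independence lemma and the final reinvocation of \eqref{globalrsign}, so it is cleaner; the concern in your last paragraph about whether $\epsilon$ is ``genuinely forced'' is unnecessary, since $\epsilon = r(0,0)+r(0,-1)$ does not involve $a$ at all. One can view the paper's version as making explicit a slightly stronger structural fact (the $a$-independence and $c$-independence of $\epsilon_{a,b}$), but for the stated theorem your symmetry argument suffices and is the more elementary of the two.
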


\begin{proof} 
Following the discussion above, regardless of the value of $v_{i,j}$ for $i < j$, 
we need only define a function $r$ satisfying \eqref{globalrsign} for which \eqref{rrsign} holds. 
Without loss of generality, 
we can assume that $r(0,0) = 0$, since adding a constant function to $r$ will not affect whether $r$ is a solution.

Assume that $m=3$, $i=1$, $j=2$ and let $\wt = (a,b,c)$;
more generally, we can choose $j = i+1$ and study three consecutive coordinates of a 
general $\glm$ weight for $m \ge 3$. 
We then see that \eqref{rrsign} is equivalent to
\[ r(a,b) + r(b,c) + r(1-a,-b) + r(-b, -1-c) \equiv a + c \, , \]
while \eqref{globalrsign} gives
\[ r(1-a,-b) + r(a,b-1) \equiv a+b+1 \quad \text{and} \quad r(-b,-1-c) + r(b+1,c) \equiv b+c+1 \, . \] 
Thus
\[ r(a,b) + r(b,c) + r(1-a,-b) + r(-b, -1-c) \equiv r(a,b) + r(a,b-1) + r(b,c) + r(b+1,c) + a + c \]
so we conclude that
\begin{equation}\label{eq:theproblem}
r(a,b) + r(a,b-1) \equiv r(b,c) + r(b+1,c) \end{equation}
in $\Z/2$.

Let $\epsilon_{a,b} := r(a,b) + r(a,b-1)$, then \eqref{eq:theproblem} shows that this
element of $\Z/2$ is equal to $r(b,c) + r(b+1,c)$ for any $c$, 
implying that $\epsilon_{a,b}$ is independent of $a$. 
We thus write it as $\epsilon_b$, i.e.~
\[
r(a,b) + r(a,b-1) \equiv \epsilon_b  \quad \text{and} \quad r(b,c) + r(b+1,c) \equiv \epsilon_b \]
for all $a,c \in \Z$.
	
Recalling our assumption that $r(0,0) = 0$ we compute
\[ r(0,-1) = \epsilon_0 \quad \text{and} \quad r(1,0) = \epsilon_0 \, , \]
but this contradicts \eqref{globalrsign}, which requires that
$r(1,0) + r(0,-1) \equiv 1$.
\end{proof}

It is still an open question, and a surprisingly thorny one, 
whether there is any (non-na\"{i}ve) choice of functions for which $\tau$ is well-defined and an involution.

\begin{rem}
It is interesting to replace the function $v_{ij}(\wt)$ with
\[ w_{ij}(\wt) := v_{ij}(-\wt) + r_i(\wt) + r_j(\wt) \, . \]
After doing so, 
\eqref{vfortauinvolution} can be rephrased (using \eqref{bubblesignalt}) as
\begin{equation} w_{ij}(\wt) + w_{ij}(\ee_i + \ee_j - \wt) \equiv \langle \ee_i^\vee + \ee_j^\vee, \wt \rangle \, , \end{equation}
which bears a comforting similarity to \eqref{bubblesignalt}. 
One can now construct functions $q_r$ and $q_w$ (depending on indices $i < j < k$) with the symmetry
\begin{equation} q(\wt) + q(\ee_i + \ee_j + \ee_k - \wt) \equiv 0 \end{equation}
by the formulas
\[ q_w(\wt) := w_{ij}(\wt) + w_{jk}(\wt) + w_{ik}(\wt) + w_{ij}(\ee_i + \ee_j + \ee_k - \wt) 
	+ w_{jk}(\ee_i + \ee_j + \ee_k - \wt) + w_{ik}(\ee_i + \ee_j + \ee_k - \wt) \, , \]
and
\[ q_r(\wt) := r_i(\wt - \ee_k) + r_i(\wt - \ee_j) + r_j(\wt - \ee_i) 
	+ r_j(\wt - \ee_k) + r_k(\wt - \ee_i) + r_k(\wt - \ee_j) \, . \]
The final equation \eqref{R3sign} can then be reformulated as
\begin{equation} q_w \equiv q_r \, . \end{equation}
\end{rem}

\subsection{An involution on part of $\UU_q(\glm)$}\label{ss:invonpart}

Although we are unable 
to find $r_{i}, v_{i,j} \colon \Z^m \to \Z/2$ 
such that the $2$-automorphism $\tau$ from Theorem \ref{thm:symmetry} is an involution, 
we now show that it is possible to find such functions so that $\tau$
restricts to an involution on a certain full $2$-subcategory of $\UU_q(\glm)$. 
Moreover, the functions we construct are such that
$\{r_i\}$ is defined locally and $v_{i,j}$ is a constant function for all $i,j$;
c.f.~Theorem \ref{thm:nonaivesolution}.

\begin{defn}\label{D:FE-generated-subcat}
Fix a $\glm$ weight $\wt \in \Z^m$. 
Let $\XX_q(\glm) \one_\wt$ denote the full $2$-subcategory of $\UU_q(\glm)$ generated 
by the $1$-endomorphisms $\EE_i\FF_i \one_\wt$ and $\FF_i\EE_i \one_\wt$, and their grading shifts. 
Let $\XX_q(\glm)$ be the full $2$-subcategory containing all $\XX_q(\glm) \one_\wt$ as $\wt$ varies.
\end{defn}

Since $\EE_i \FF_i \one_\wt$ and $\FF_i\EE_i \one_\wt$ are $1$-endomorphisms 
(mapping from $\wt$ to $\wt$), 
$\XX_q(\glm) \one_\wt = \one_\wt \XX_q(\glm) \one_\wt$ is a monoidal category. 
There is no interaction between $\XX_q(\glm) \one_{\wt}$ and $\XX_q(\glm) \one_{\wt'}$ for $\wt \ne \wt'$.

\begin{thm}
	\label{thm:wheninvolution}
If $\tau$ is well-defined, 
then it restricts to an involution on $\XX_q(\glm)$ if and only if $l_i(\wt) \equiv r_i(-\wt)$ for all $\wt$.
\end{thm}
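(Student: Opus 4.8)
The plan is to revisit the proof of Theorem~\ref{thm:symmetry} and Lemma~\ref{lem:inv?}, and to check which of the obstructions to $\tau$ being an involution actually vanish once we restrict attention to the $2$-subcategory $\XX_q(\glm)$. Recall from Lemma~\ref{lem:inv?} that, assuming \eqref{tauconditions} holds, $\tau$ is an involution on all of $\UU_q(\glm)$ precisely when \eqref{lfromr}, \eqref{bubblesignalt}, and \eqref{vfortauinvolution} hold. The key observation is that $\XX_q(\glm)$ is generated by the $1$-endomorphisms $\EE_i\FF_i\one_\wt$ and $\FF_i\EE_i\one_\wt$; its $2$-morphisms are built only from ``cup/cap turnbacks'' and the new bubbles, together with dots and crossings that may appear inside such configurations. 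In particular, a generic generating morphism of $\UU_q(\glm)$ (a bare dot on $\EE_i$, a bare upward crossing between two distant strands, etc.) need not restrict to an endomorphism inside $\XX_q(\glm)$; only certain \emph{composites} do. So I would first carefully identify, morphism-type by morphism-type, which composites of generators land in $\XX_q(\glm)$, and then redo the $\tau^2$ computation from the proof of Lemma~\ref{lem:inv?} for exactly those composites.

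Concretely, the structure of the argument would be: (i) For the ``\textbf{only if}'' direction, exhibit a morphism in $\XX_q(\glm)$ whose $\tau^2$-eigenvalue is $(-1)^{r_i(\wt)+l_i(-\wt)}$, forcing $l_i(\wt)\equiv r_i(-\wt)$ — the obvious candidate is the composite $\one_\wt \to \FF_i\EE_i\one_\wt \to \one_\wt$ of a cup followed by a cap (a bubble), or one of the four ``digon-like'' endomorphisms of $\EE_i\FF_i\one_\wt$ built from a right cap and a right cup, whose $\tau^2$ action multiplies by exactly that sign by the computation displayed in the proof of Lemma~\ref{lem:inv?}. (ii) For the ``\textbf{if}'' direction, one shows that the single condition $l_i(\wt)\equiv r_i(-\wt)$ is enough to guarantee that $\tau^2$ acts as the identity on \emph{all} $2$-morphisms of $\XX_q(\glm)$. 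Here the crucial simplification is that, inside $\XX_q(\glm)$, dots and crossings only occur ``enclosed'' by cups and caps, so the problematic signs $(-1)$ attached to bare dots by $\tau$ always appear in pairs (the dot on $\EE_i$ produces a $-1$, and the dot on the $\FF_i$ that is forced to accompany it in a turnback produces another $-1$), and similarly a bare crossing in $\XX_q(\glm)$ is always part of a configuration where the sign $v'_{\grb\bullet}$ pairs with $v_{\grb\bullet}$ in a way controlled by \eqref{primev} and \eqref{vfortauinvolution}. One then checks that, given $l_i(\wt)\equiv r_i(-\wt)$, condition \eqref{lfromr} is automatic, \eqref{bubblesignalt} is a consequence of \eqref{bubblesign} combined with $l_i(\wt)\equiv r_i(-\wt)$ (this is precisely the computation already appearing at the end of the proof of Lemma~\ref{lem:inv?}), and the remaining condition \eqref{vfortauinvolution} need only be verified for those $v$-signs that actually arise in $\XX_q(\glm)$, where — by the pairing just described — the offending terms involving bare crossings cancel and what remains reduces to an identity in the $r_i$ alone that follows from \eqref{bubblesignalt}.

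In practice I would organize step (ii) as follows. Every object of $\XX_q(\glm)\one_\wt$ is a direct sum of grading-shifted tensor products of the $\FE_i = \FF_i\EE_i\one_\wt$ and $\EE_i\FF_i\one_\wt$, and $\tau$ sends such a tensor word to the ``mirror'' tensor word (reading $\FE_i \leftrightarrow \EE_i\FF_i\one_{2\wtn-\wt}$), so $\tau^2$ fixes every object on the nose. For $2$-morphisms, use that $\End_{\XX_q(\glm)}(X)$ is spanned by string diagrams; each such diagram, drawn with the turnbacks making it a morphism in $\XX_q(\glm)$, is a composite of: new bubbles (on which $\tau^2$ acts trivially, since $e_r\mapsto h_r\mapsto e_r$), cup--cap pairs in a common ambient weight (on which $\tau^2$ multiplies by $(-1)^{r_i(\wt)+l_i(-\wt)}$, which is $+1$ by hypothesis, and likewise $(-1)^{r'_i+l'_i(-\wt)}=+1$ using \eqref{primesign} and the hypothesis), and dot/crossing generators that, being interior to the diagram, carry their mates along so the extra signs cancel as in the extended $\mathfrak{sl}_2$ and mixed $\EE\FF$ checks in the proof of Theorem~\ref{thm:symmetry}. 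Summing up, the diagram is sent to itself with no sign.

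The main obstacle I anticipate is \textbf{not} the sign bookkeeping per se — the proofs of Theorem~\ref{thm:symmetry} and Lemma~\ref{lem:inv?} already contain essentially all of the needed identities — but rather the precise statement of \emph{which} string diagrams constitute a spanning set for the $2$-morphisms of $\XX_q(\glm)$, and the verification that for each of them the ``bare dot'' and ``bare crossing'' generators truly appear only in turnback-enclosed pairs. In other words, the real content is a normal-form / generation statement for morphisms in the $\FE$-generated subcategory, after which the involution claim falls out by reusing the $\tau^2$ computations already in hand. I would therefore spend most of the write-up making that generation statement precise (pointing to the analogous discussion around Definition~\ref{D:FE-generated-subcat} and to standard facts about $\UU_q(\glm)$), and then dispatch the sign check quickly by invoking \eqref{primesign}, \eqref{bubblesign}, \eqref{primev}, and the hypothesis $l_i(\wt)\equiv r_i(-\wt)$.
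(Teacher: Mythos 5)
Your ``only if'' direction and the handling of the $r$ and $l$ conditions are fine: the cup--cap composition gives $\tau^2$-eigenvalue $(-1)^{r_i(\wt)+l_i(-\wt)}$, and \eqref{lfromr}, \eqref{bubblesignalt} follow from \eqref{bubblesign} once you impose $l_i(\wt)\equiv r_i(-\wt)$. The reduction to the $\FF_i\EE_i\one_\wt$-generated subcategory is also essentially the paper's first step (via \eqref{extendedreln1}, \eqref{extendedreln2}). The gap is in the treatment of different-colored crossings.

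You assert that inside $\XX_q(\glm)$ ``the offending terms involving bare crossings cancel'' because each crossing ``pairs with its mate in a way controlled by \eqref{primev} and \eqref{vfortauinvolution}.'' This is not true at the level of a given diagram, and the paper explicitly warns against it. In a generic diagram in $\FECatprime(\glm)$, two crossings with the same pair of colors can sit in regions of different ambient weight, and the sign $\tau^2$ assigns to each depends on that weight through the $r_i$ terms in $v'_{\grb\bullet}$; the two signs need not agree, so they do not trivially cancel in pairs. (See the remark following the proof: a red--blue crossing sitting inside a green-shaded region carries a different $\tau^2$-sign than one sitting in the white region.) Establishing that the net sign is nonetheless $+1$ requires an actual argument. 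The paper supplies it by (i) stripping the diagram down to a colored oriented matching diagram; (ii) proving that the total sign $T(\mathcal{M})$ is invariant under the graph Reidemeister II and III moves, by exploiting the fact that $\tau^2$, being a $2$-functor, respects the relations of $\UU_q(\glm)$; and (iii) using these moves to isotope the last color to the far right, so that every multi-colored crossing lands in a ``doubly-shaded square'' abutting the white ($\wtn$-weight) region, at which point the two crossings of each type in such a square visibly carry equal signs and cancel, completing an induction on the number of colors. Your proposal substitutes for all of this a ``normal-form'' hope that is acknowledged but never filled in, and the specific pairing you suggest would fail without the isotopy step. So the write-up as planned would not close the ``if'' direction; you need something like the paper's Reidemeister-invariance lemma and the color-peeling induction to make the cancellation honest.
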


\begin{proof}
If $\tau$ is well-defined (i.e.~\eqref{tauconditions} is satisfied), then 
$\tau^2$ is the identity on new bubble generators and, since $v_{i,i} = 0$, 
it is also the identity on same-colored crossings. 
As observed in the proof of Lemma \ref{lem:inv?}, 
the condition $l_i(\wt) \equiv r_i(-\wt)$ is equivalent to the condition that $\tau^2$ acts as the identity on 
cap/cup generators in weight $\wtn+\wt$. 
This, in particular, establishes the ``only if'' assertion of the statement, 
since $\XX_q(\glm)$ contains all such $2$-morphisms. 

We thus assume that the condition $l_i(\wt) \equiv r_i(-\wt)$ holds for all $\wt \in \Z^m$.
It remains to consider the action of $\tau^2$ on crossings of differently colored strands.
Here, $\tau^2$ introduces rescalings by hard-to-control signs; 
the key observation is that we can track these contributions for $2$-morphisms lying in $\XX_q(\glm)$.

To begin,
consider the full subcategory $\FECatprime(\glm)$ of $\XX_q(\glm)$ 
generated by objects of the form $\FF_i \EE_i \one_{\wt}$. 
We claim that if $\tau^2$ is the identity on $\FECatprime(\glm)$, then it is the identity on all of $\XX_q(\glm)$. 
This is a consequence of \eqref{extendedreln1}, 
which describes the identity of $\EE_i \FF_i \one_{\wt}$ as a sum of diagrams which
factor through $\FF_i \EE_i \one_{\wt}$ and $\one_{\wt}$. 
More precisely, using \eqref{extendedreln2}, any morphism in $\XX_q(\glm)$ factors as a 
composition $f \circ g \circ h$ where $g$ is a morphism in $\FECatprime(\glm)$ and $f$ and $h$ 
are built from cups, caps, dots, and same-colored crossings. 
Since $f$ and $h$ are fixed by $\tau^2$, we deduce the claim.

For the rest of the proof, we focus on $\FECatprime(\glm) \one_{\wt}$ for some fixed weight $\wt$. 
Let $D$ be a diagram which represents a morphism in $\FECatprime(\glm) \one_{\wt}$, 
and consider the diagram $D'$ obtained from $D$ by replacing all same-colored crossings with identity maps:
\[
\begin{tikzpicture}[anchorbase,scale=1]
\draw[thick,->] (0,0) to [out=90,in=270] (.5,1);
\draw[thick,->] (.5,0) to [out=90,in=270] (0,1);
\end{tikzpicture}
\leadsto
\begin{tikzpicture}[anchorbase,scale=1]
\draw[thick,->] (0,0) to (0,1);
\draw[thick,->] (.5,0) to (.5,1);
\end{tikzpicture}
\]
The morphisms $D$ and $D'$ are completely unrelated; 
nonetheless, if $\tau^2$ fixes $D'$ then it fixes $D$. 
Hence, it suffices to show that $\tau^2$ acts as the identity on diagrams
with no same-colored crossings. 
Similarly, we can assume that our diagrams have no dots or new bubbles
($\tau^2$ will act as the identity on any diagram if and only if it acts as the identity on the diagram 
with all dots and new bubbles removed).
Since \cite[Proposition 3.11]{KL3} shows that $\Hom$-spaces in $\UU_q(\glm)$ 
(and hence in $\FECatprime(\glm) \one_{\wt}$)
are spanned by diagrams where the only closed components are bubbles, 
\eqref{newbub} implies that we may further assume that $D$ has no closed components.
 
After this simplification, 
the diagrams which remain can be viewed as a union of 
transversely intersecting colored $1$-manifolds, as in the diagram $\mathcal{M}$ here:
\begin{equation} \label{bigexample}
\mathcal{M} =
\begin{tikzpicture}[anchorbase,scale=.5]
\draw[thick,red,<-] (4,0) to [out=90,in=270] (0,4);
\draw[thick,red,<-] (1,4) to [out=270,in=270] (6,4);
\draw[thick,red,->] (5,0) to [out=90,in=270] (7,4);
\draw[thick,green,->] (2,4) to [out=270,in=90] (0,0);
\draw[thick,green,->] (1,0) to [out=90,in=90] (6,0);
\draw[thick,green,<-] (3,4) to [out=270,in=90] (7,0);
\draw[thick,blue,->] (4,4) to [out=270,in=90] (2,1) to (2,0);
\draw[thick,blue,->] (3,0) to [out=90,in=180] (5.5,.5) to [out=0,in=90] (8,0);
\draw[thick,blue,<-] (5,4) to [out=270,in=90] (9,0);
\end{tikzpicture} \, .
\end{equation}
We call such diagrams \emph{(colored oriented) matching diagram}. 
For such a diagram $\mathcal{M}$, we let $T(\mathcal{M}) \in \Z/2$ denote 
the exponent of the sign obtained when acting by $\tau^2$.

While not entirely necessary for the proof, the following visualization trick helps to clarify the situation. 
Take a matching diagram and label (i.e.~shade) the regions with subsets of the Dynkin vertices 
$\{1,2, \ldots, m-1\}$, akin to a Venn diagram. 
Shade the rightmost region with the empty set (i.e.~white), and, for each color $i$, 
the condition that a region contains $i$ alternates across each $i$-colored strand. 
An example of a shaded matching diagram with two colors is:
\begin{equation} \label{eq:twocolorexample}
\mathcal{M} =
\raisebox{-0.5\height}{\includegraphics{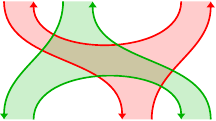}}
\, .
\end{equation}
The central region of this picture is shaded with both red and green. 
(We encourage the reader to shade \eqref{bigexample}.)

By virtue of the fact that our boundary is an object in $\FECatprime(\glm)$, 
the shading will always satisfy the property that, 
when standing on an $i$-colored strand and facing along the orientation, 
the $i$-shaded region is on your left. 
Moreover, the shading exactly records the weight of that region: 
it is $\wt + \sum_i \alpha_i$, 
where the sum is over those $i$ present in the shading.

Now we continue our simplification process, ignoring the shading for a moment. 
Remember that there are no same-colored crossings, 
so all crossings are transverse intersections of different-colored 1-manifolds.

\noindent{\textbf{Claim}:} $T(\mathcal{M})$ is invariant under the type II and III graph Reidemeister moves:
\begin{equation}
	\label{eq:gRM}
\mathrm{gRII} :
\begin{tikzpicture} [scale=.35,anchorbase]
	\draw[thick] (1,0) to [out=90,in=270] (0,1.5) to [out=90,in=270] (1,3);
	\draw[thick] (0,0) to [out=90,in=270] (1,1.5) to [out=90,in=270] (0,3);
\end{tikzpicture}
\sim
\begin{tikzpicture}[scale=.35, anchorbase]
	\draw[very thick] (1,0) to (1,3);
	\draw[very thick] (0,0) to (0,3);
\end{tikzpicture}
\quad , \quad
\mathrm{gRIII} :
\begin{tikzpicture}[scale=.2, tinynodes, anchorbase]
	\draw[thick] (-2,-3) to [out=90,in=270] (2,3);
	\draw[thick] (0,-3) to [out=90,in=270] (-2,0) to [out=90,in=270] (0,3);
	\draw[thick] (2,-3) to [out=90,in=270] (-2,3);	
\end{tikzpicture}
\sim
\begin{tikzpicture}[scale=.2, tinynodes, anchorbase]
	\draw[thick] (-2,-3) to [out=90,in=270] (2,3);
	\draw[thick] (0,-3) to [out=90,in=270] (2,0) to [out=90,in=270] (0,3);
	\draw[thick] (2,-3) to [out=90,in=270] (-2,3);	
\end{tikzpicture} \, .
\end{equation}

For gRII, there are four cases for the left-hand side:
\[
\begin{tikzpicture} [scale=.5,anchorbase]
	\draw[thick,->] (1,0) to [out=90,in=270] (0,1.5) to [out=90,in=270] (1,3);
	\draw[thick,green,->] (0,0) to [out=90,in=270] (1,1.5) to [out=90,in=270] (0,3);
\end{tikzpicture}
\quad , \quad
\begin{tikzpicture} [scale=.5,anchorbase, rotate=180]
	\draw[thick,green,->] (1,0) to [out=90,in=270] (0,1.5) to [out=90,in=270] (1,3);
	\draw[thick,->] (0,0) to [out=90,in=270] (1,1.5) to [out=90,in=270] (0,3);
\end{tikzpicture}
\quad , \quad
\begin{tikzpicture} [scale=.5,anchorbase]
	\draw[thick,<-] (1,0) to [out=90,in=270] (0,1.5) to [out=90,in=270] (1,3);
	\draw[thick,green,->] (0,0) to [out=90,in=270] (1,1.5) to [out=90,in=270] (0,3);
\end{tikzpicture}
\quad , \quad
\begin{tikzpicture} [scale=.5,anchorbase]
	\draw[thick,->] (1,0) to [out=90,in=270] (0,1.5) to [out=90,in=270] (1,3);
	\draw[thick,green,<-] (0,0) to [out=90,in=270] (1,1.5) to [out=90,in=270] (0,3);
\end{tikzpicture}
\]
(here, \textbf{black} denotes a color distinct from {\color{green} green}). 
However, we have already checked that $\tau$ is a $2$-functor, and hence preserves the relations of $\UU_q(\glm)$. 
Each diagram above is one side of a relation such as \eqref{eq:mixedef} or \eqref{eq:quadKLR}, 
and the other side of the relation is fixed by $\tau^2$. 
Thus the contribution of both sides of gRII to $T(\mathcal{M})$ must be $0 \ \ourmod 2$.

For gRIII, it suffices to check the braid-like orientation
\[
\begin{tikzpicture}[scale=.2, tinynodes, anchorbase]
	\draw[very thick,->] (-2,-3) to [out=90,in=270] (2,3);
	\draw[very thick,->] (0,-3) to [out=90,in=270] (-2,0) to [out=90,in=270] (0,3);
	\draw[very thick,->] (2,-3) to [out=90,in=270] (-2,3);	
\end{tikzpicture}
\sim
\begin{tikzpicture}[scale=.2, tinynodes, anchorbase]
	\draw[very thick,->] (-2,-3) to [out=90,in=270] (2,3);
	\draw[very thick,->] (0,-3) to [out=90,in=270] (2,0) to [out=90,in=270] (0,3);
	\draw[very thick,->] (2,-3) to [out=90,in=270] (-2,3);	
\end{tikzpicture}
\]
since the others can be obtained\footnote{See e.g.~\cite{Polyak}, which accomplishes the more difficult 
task of showing that only five oriented link Reidemeister moves suffice to obtain all others. 
Our task here is easier since we work with graphs, and have already established all versions of gRII.} 
from this using gRII moves. 
Again, $\tau^2$ is a $2$-functor, so it preserves the relation \eqref{eq:cubicKLR}.
Thus, it must have the same sign on both sides of gRIII, which concludes the proof of the claim.
\hfill $\blacktriangle$ \\

Any two matching diagrams with the same underlying matching 
are related by the moves \eqref{eq:gRM}, hence have the same value of $T(\mathcal{M})$.
We now pass to different matching diagram $\mathcal{M}'$ 
having the same underlying matching as our given diagram $\mathcal{M}$, 
for which the value $T(\mathcal{M}')$ is easy to compute.

Pick the last color appearing in $\mathcal{M}$ 
and use gRII and gRIII to ``pull all instances of this color'' to the far right.
For example, 
if we start with \eqref{bigexample} then this color is blue 
and the result is
\[
\mathcal{M} =
\begin{tikzpicture}[anchorbase,scale=.5]
\draw[thick,red,<-] (4,0) to [out=90,in=270] (0,4);
\draw[thick,red,<-] (1,4) to [out=270,in=270] (6,4);
\draw[thick,red,->] (5,0) to [out=90,in=270] (7,4);
\draw[thick,green,->] (2,4) to [out=270,in=90] (0,0);
\draw[thick,green,->] (1,0) to [out=90,in=90] (6,0);
\draw[thick,green,<-] (3,4) to [out=270,in=90] (7,0);
\draw[thick,blue,->] (4,4) to [out=270,in=90] (2,1) to (2,0);
\draw[thick,blue,->] (3,0) to [out=90,in=180] (5.5,.5) to [out=0,in=90] (8,0);
\draw[thick,blue,<-] (5,4) to [out=270,in=90] (9,0);
\end{tikzpicture}
\; \sim \; 
\mathcal{M}' :=
\begin{tikzpicture}[anchorbase,scale=.5]
\draw[thick,red,<-] (4,0) to (3.5,2);
\draw[thick,red] (0,4) to (3.5,2);
\draw[thick,red,<-] (1,4) to (3.5,2); 
\draw[thick,red] (6,4) to (3.5,2);
\draw[thick,red,->] (5,0) to (3.5,2);
\draw[thick,red] (7,4) to (3.5,2);
\draw[thick,green] (2,4) to (3.5,2);
\draw[thick,green,->] (0,0) to (3.5,2);
\draw[thick,green] (1,0) to (3.5,2);
\draw[thick,green,->] (6,0) to (3.5,2);
\draw[thick,green,<-] (3,4) to (3.5,2);
\draw[thick,green] (7,0) to (3.5,2);
\draw[dashed,fill=white] (3.5,2) circle (1);
\node at (3.5,2) {$\mathcal{M}_{rg}$};
\draw[thick,blue,->] (4,4) to [out=270,in=120] (6.5,1.375) to [out=300,in=90] (2,0);
\draw[thick,blue,->] (3,0) to [out=90,in=180] (5.5,.5) to [out=0,in=90] (8,0);
\draw[thick,blue,<-] (5,4) to [out=270,in=90] (9,0);
\end{tikzpicture}
\]
for 
\[
\mathcal{M}_{rg} =
\begin{tikzpicture}[anchorbase,scale=.5]
\draw[thick,red,<-] (4,0) to [out=90,in=270] (0,4);
\draw[thick,red,<-] (1,4) to [out=270,in=270] (6,4);
\draw[thick,red,->] (5,0) to [out=90,in=270] (7,4);
\draw[thick,green,->] (2,4) to [out=270,in=90] (0,0);
\draw[thick,green,->] (1,0) to [out=90,in=90] (6,0);
\draw[thick,green,<-] (3,4) to [out=270,in=90] (7,0);
\draw[dashed] (3.5,2) ellipse (3 and 1.5);
\end{tikzpicture}
=
\begin{tikzpicture}[anchorbase,scale=.5]
\draw[thick,red,<-] (4,0) to (3.5,2);
\draw[thick,red] (0,4) to (3.5,2);
\draw[thick,red,<-] (1,4) to (3.5,2); 
\draw[thick,red] (6,4) to (3.5,2);
\draw[thick,red,->] (5,0) to (3.5,2);
\draw[thick,red] (7,4) to (3.5,2);
\draw[thick,green] (2,4) to (3.5,2);
\draw[thick,green,->] (0,0) to (3.5,2);
\draw[thick,green] (1,0) to (3.5,2);
\draw[thick,green,->] (6,0) to (3.5,2);
\draw[thick,green,<-] (3,4) to (3.5,2);
\draw[thick,green] (7,0) to (3.5,2);
\draw[dashed,fill=white] (3.5,2) circle (1);
\node at (3.5,2) {$\mathcal{M}_{rg}$};
\end{tikzpicture} \, .
\]
For the remainder of this proof, we will use ``blue'' to mean this final color.
In general, the subdiagram $\mathcal{M}_{rg}$ 
containing the other colors will be an arbitrary colored matching diagram (without blue), 
and the subdiagram to the right of that can be an arbitrary matching diagram (with only blue).

The meeting of ``blue'' and ``non-blue'' strands occur only in a proscribed fashion.
Consider the shading on $\mathcal{M}'$:
\[\mathcal{M}' =
\raisebox{-0.5\height}{\includegraphics{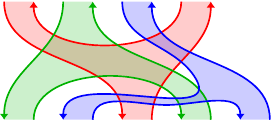}}
\, .
\]
The only intersections of blue strands and non-blue strands occur when
blue-shaded strips cross over non-blue strips:
\[
\raisebox{-0.5\height}{\includegraphics{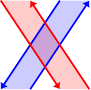}}
\]
Said another way, each region with blue in its shading is either pure blue, or blue and one other color, 
and in the latter case the region is a square. We call these \emph{doubly-shaded squares}.

The advantage of having done this manipulation is that each crossing in a doubly-shaded square 
is adjacent to a white-colored region, i.e.~a region in weight $\wt$. 
Hence, in each doubly-shaded square there are two crossings of the form:
\begin{equation} \label{thisone}
\raisebox{-0.5\height}{\includegraphics{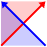}}
\end{equation}
and two crossings of the form:
\begin{equation} \label{thatone}
\raisebox{-0.5\height}{\includegraphics{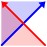}}
\, .
\end{equation}
The signs induced by $\tau^2$ on the two instances of \eqref{thisone} are the same, and therefore cancel out. 
The same goes for \eqref{thatone}.

Thus $\tau^2$ fixes $\mathcal{M}'$ if and only if it fixes $\mathcal{M}_{rg}$, 
a colored matching with one fewer color. 
By induction on the number of colors, $\tau^2$ fixes all
diagrams representing morphisms in $\FECatprime(\glm)$.
\end{proof}

\begin{rem} 
If we directly consider the original diagram $\mathcal{M}$ 
from \eqref{bigexample}, there would be red-blue crossings in the green-shaded region.
The action of $\tau^2$ on this diagram can intoduce different signs on these crossings than the ones
in the white region, which makes it harder to argue that all signs cancel out. 
However, our proof shows that they must. \end{rem}

We now fix our preferred choice of involution on $\XX_q(\glm)$, which we use henceforth.

\begin{thm}
	\label{thm:involution}
If 
\begin{equation}
	\label{eq:ourfns}
r_i(\wt) := \begin{cases} 	a_i & \text{if } \langle \ee_i^\vee, \wt \rangle \equiv 1  \\
							0 & \text{else} \end{cases}
\, , \quad
l_i(\wt) \equiv r_i(-\wt)
\, , \quad
v_{i,j}(\wt) := \begin{cases} 	1 & \text{if } j = i+1 \\
						0 & \text{else} \end{cases}
\end{equation}
then $\tau$ restricts to an involution on $\XX_q(\glm)$.
\end{thm}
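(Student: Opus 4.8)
The plan is to verify that the explicit functions in \eqref{eq:ourfns} satisfy all the hypotheses of the two earlier theorems that govern $\tau$: first Theorem \ref{thm:symmetry}, so that $\tau$ is a well-defined $2$-automorphism of $\UU_q(\glm)$, and then Theorem \ref{thm:wheninvolution}, so that this well-defined $\tau$ restricts to an \emph{involution} on the $2$-subcategory $\XX_q(\glm)$. The second condition of Theorem \ref{thm:wheninvolution} is built into the definition \eqref{eq:ourfns} (we have set $l_i(\wt) \equiv r_i(-\wt)$ by fiat), so the crux of the proof is checking the system \eqref{tauconditions}.

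First I would dispense with the conditions not involving $r_i$. Since $v_{i,j}$ is the constant-in-$\wt$ function taking value $1$ precisely when $j = i+1$ and $0$ otherwise, \eqref{samecrossingsign} is immediate ($v_{i,i}\equiv 0$), and \eqref{diffcrossingsign} holds because $i\cdot j = \langle \ee_i^\vee,\ee_j\rangle$ is odd exactly when $|i-j|=1$, i.e.\ exactly when one of $j=i+1$ or $i=j+1$ holds, which is exactly when $v_{i,j}(\wt)+v_{j,i}(\wt)\equiv 1$. For \eqref{R3sign}, each $v_{\bullet\bullet}$ appears twice (in weights $\wt$ and $\wt+\ee_k$, etc.) and is independent of the weight argument, so all six terms cancel in pairs and the sum is $0$; as noted in Remark \ref{rem:togetsymmetry}, this is automatic for constant $v_{i,j}$.

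Next come the conditions that constrain $r_i$ (and, via \eqref{primesign}, the derived functions $r_i',l_i,l_i'$). I would first observe that, for the local function $r(a,b)$ defined by $r(a,b)=a$ if $a-b$ is odd and $0$ else, equation \eqref{primesign} \emph{defines} $r_i'$ and $l_i'$, so there is nothing to check there beyond consistency. The genuine constraint is \eqref{bubblesign}: $r_i(\wt) + l_i(\wt - \ee_i) \equiv \langle \ee_i^\vee,\wt\rangle = a_i - a_{i+1}$. Using $l_i(\wt)\equiv r_i(-\wt)$ this becomes $r_i(\wt) + r_i(\ee_i - \wt) \equiv a_i - a_{i+1}$, which is exactly \eqref{bubblesignalt}. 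So I would just verify \eqref{bubblesignalt} directly: writing $\wt$ in the two relevant coordinates as $(a,b)$, the claim is $r(a,b) + r(1-a,-1-b) \equiv a+b$. If $a-b$ is even, then $(1-a)-(-1-b) = 2 - a + b \equiv a - b$ is also even, so both terms vanish and we need $a+b\equiv 0$; and indeed $a-b$ even forces $a+b$ even. If $a-b$ is odd, then $(1-a)-(-1-b)$ is odd, so the left side is $a + (1-a) = 1 \equiv a+b$ since $a-b$ odd forces $a+b$ odd. This is the one small computation that makes everything work, and I would present it as the heart of the argument.

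Finally I would note that, having verified \eqref{tauconditions}, Theorem \ref{thm:symmetry} gives that $\tau$ is a well-defined $2$-automorphism, and then Theorem \ref{thm:wheninvolution} applies verbatim: its criterion ``$l_i(\wt)\equiv r_i(-\wt)$ for all $\wt$'' holds by construction, so $\tau$ restricts to an involution on $\XX_q(\glm)$. The main obstacle is essentially bookkeeping rather than genuine difficulty --- one must be careful that $r_i$ is ``defined locally'' in the precise sense of \S\ref{ss:inv?} (so that the coordinate-wise check above suffices for the full $\glm$ weight lattice) and that the parity computation for \eqref{bubblesignalt} is handled in both the even and odd cases; everything else is a direct appeal to the two earlier theorems. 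I would close by remarking, as the statement around Theorem \ref{thm:nonaivesolution} already anticipates, that this $\tau$ is \emph{not} an involution on all of $\UU_q(\glm)$ when $m\geq 3$ — it is genuinely necessary to restrict to $\XX_q(\glm)$ — so this theorem is the best one can do with a na\"{i}ve choice of sign functions.
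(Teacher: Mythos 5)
Your proof is correct and follows the same route as the paper's: verify \eqref{tauconditions} so Theorem \ref{thm:symmetry} applies, then invoke Theorem \ref{thm:wheninvolution} via the built-in choice $l_i(\wt)\equiv r_i(-\wt)$. The paper merely asserts ``straightforward to check,'' so your explicit parity computation for \eqref{bubblesignalt} is a welcome expansion, not a departure.
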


\begin{proof} 
Recall that $\langle \ee_i^\vee, \wt \rangle = a_i - a_{i+1}$.
It is straightforward to check that these functions satisfy \eqref{tauconditions}, 
where we let $r_i'(\wt)$ and $l_i'(\wt)$ be defined by \eqref{primesign}. 
Since $l_i(\wt) \equiv r_i(-\wt)$, 
$\tau$ restricts to an involution on $\XX_q(\glm)$ by Theorem \ref{thm:wheninvolution}. \end{proof}

\subsection{Dependence on $n$}\label{ss:dependence-on-n}

Although the notation obscures it, 
the assignments in Definition \ref{def:symmetry} depend on $n$, 
e.g.~via the action of $\tau$ on objects. 
On the other hand, the formulae in Theorem \ref{thm:involution} giving 
our preferred choice of $\tau$ do not depend on $n$. 
In this section, we clarify the dependence on $n$ for 
our preferred $\tau$.

\begin{nota}
For this section (and in various other places when we want to emphasize the pertinent value of $n$),
we will write $\tau_n$ to denote the preferred $\tau$ which acts on objects by $\wt\mapsto 2\wtn-\wt$. 
\end{nota}

Note that $\tau_0$ sends $\wt$ to $-\wt$, while $\tau_n$ sends $\wtn + \wt$ to $\wtn - \wt$. 
Suppose there were an automorphism $\sh_n$ of $\UU_q(\glm)$ which mapped $\wt$ to $\wt+\wtn$.
Then, by conjugating $\tau_0$ by $\sh_n$, we obtain an automorphism which would act as
\[
\wtn+\wt\xmapsto{\sh_n^{-1}} \wt \xmapsto{\tau_0} -\wt\xmapsto{\sh_n} \wtn-\wt. 
\]
Ideally, this automorphism would be identified with $\tau_n$. 

We now set out to define automorphisms 
$\sh_n \colon \UU_q(\glm)\rightarrow \UU_q(\glm)$ 
that send $\wt \mapsto \wt+\wtn$ and satisfy
$\sh_n\circ \tau_0 = \tau_n\circ \sh_n$.

\begin{prop}
	\label{prop:sh-n}
The following assignments determine a $2$-functor $\sh_1 \colon \UU_q(\glm) \to \UU_q(\glm)$.
\begin{itemize}
\item \textbf{Objects:} $\wt \xmapsto{\sh_1} \wt+ \wtone$.
	
\item \textbf{$1$-morphisms:} 
$\EE_i \one_{\wt} \xmapsto{\sh_1} \EE_i \one_{\wt+ \wtone}$
and
$\FF_i \one_{\wt} \xmapsto{\sh_1} \FF_i \one_{\wt+ \wtone}$.

\item \textbf{$2$-morphisms:} 
\begin{equation}
\begin{gathered} \label{eq:sh-1}
\begin{tikzpicture}[anchorbase,scale=1]
\draw[thick,green,->] (0,0) to node[black]{$\bullet$} (0,1);
\node at (.5,.75){\scs$\wt$};
\end{tikzpicture}
\xmapsto{\sh_1}
\begin{tikzpicture}[anchorbase,scale=1]
\draw[thick,green,->] (0,0) to node[black]{$\bullet$} (0,1);
\node at (.5,.75){\scs$\wt{+}\wtone$};
\end{tikzpicture}
\, , \quad
\begin{tikzpicture}[anchorbase,scale=1]
\node at (0,0){\NB{e_r(\X_i)}};
\node at (.875,.375){\scs$\wt$};
\end{tikzpicture}
\xmapsto{\sh_1}
\begin{tikzpicture}[anchorbase,scale=1]
\node at (0,0){\NB{e_r(\X_i)}};
\node at (.875,.375){\scs$\wt{+}\wtone$};
\end{tikzpicture}
\, , \\
\begin{tikzpicture}[anchorbase,scale=1]
\draw[thick,green,->] (0,0) to [out=90,in=270] (.5,1);
\draw[thick,->] (.5,0) to [out=90,in=270] (0,1);
\node at (.875,.75){\scs$\wt$};
\end{tikzpicture}
\xmapsto{\sh_1}
\begin{tikzpicture}[anchorbase,scale=1]
\draw[thick,green,->] (0,0) to [out=90,in=270] (.5,1);
\draw[thick,->] (.5,0) to [out=90,in=270] (0,1);
\node at (.875,.75){\scs$\wt{+}\wtone$};
\end{tikzpicture}
\, , \\
\begin{tikzpicture}[anchorbase,scale=1]
\draw[thick,green,->] (-.25,0) to [out=90,in=180] (0,.5) 
	to [out=0,in=90] (.25,0);
\node at (.625,.5){\scs$\wt$};
\end{tikzpicture}
\xmapsto{\sh_1}
(-1)^{\rho_{\grb}(\wt)}
\begin{tikzpicture}[anchorbase,scale=1]
\draw[thick,green,->] (-.25,0) to [out=90,in=180] (0,.5) 
	to [out=0,in=90] (.25,0);
\node at (.625,.5){\scs$\wt{+}\wtone$};
\end{tikzpicture}
\, , \quad
\begin{tikzpicture}[anchorbase,scale=1]
\draw[thick,green,->] (-.25,0) to [out=270,in=180] (0,-.5) 
	to [out=0,in=270] (.25,0);
\node at (.625,-.5){\scs$\wt$};
\end{tikzpicture}
\xmapsto{\sh_1}
(-1)^{\rho'_{\grb}(\wt)}
\begin{tikzpicture}[anchorbase,scale=1]
\draw[thick,green,->] (-.25,0) to [out=270,in=180] (0,-.5) 
	to [out=0,in=270] (.25,0);
\node at (.625,-.5){\scs$\wt{+}\wtone$};
\end{tikzpicture}
\, , \\
\begin{tikzpicture}[anchorbase,scale=1]
\draw[thick,green,<-] (-.25,0) to [out=90,in=180] (0,.5) 
	to [out=0,in=90] (.25,0);
\node at (.625,.5){\scs$\wt$};
\end{tikzpicture}
\xmapsto{\sh_1}
(-1)^{\lambda_{\grb}(\wt)}
\begin{tikzpicture}[anchorbase,scale=1]
\draw[thick,green,<-] (-.25,0) to [out=90,in=180] (0,.5) 
	to [out=0,in=90] (.25,0);
\node at (.625,.5){\scs$\wt{+}\wtone$};
\end{tikzpicture}
\, , \quad
\begin{tikzpicture}[anchorbase,scale=1]
\draw[thick,green,<-] (-.25,0) to [out=270,in=180] (0,-.5) 
	to [out=0,in=270] (.25,0);
\node at (.625,-.5){\scs$\wt$};
\end{tikzpicture}
\xmapsto{\tau}
(-1)^{\lambda'_{\grb}(\wt)}
\begin{tikzpicture}[anchorbase,scale=1]
\draw[thick,green,<-] (-.25,0) to [out=270,in=180] (0,-.5) 
	to [out=0,in=270] (.25,0);
\node at (.625,-.5){\scs$\wt{+}\wtone$};
\end{tikzpicture} \, ,
\end{gathered}
\end{equation}
where
\[
\rho_{\grb}(\wt) = a_i \, , \quad \lambda_{\grb}(\wt) = \rho_{\grb}(\wt+ \alpha_{\grb}) + 1 \, , \quad 
\rho'_{\grb}(\wt) =  \lambda_{\grb}(\wt)+1 \, , \quad \text{and} \quad \lambda_{\grb}'(\wt) = \rho_{\grb}(\wt) + 1 \, .
\]
\end{itemize}
\end{prop}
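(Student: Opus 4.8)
The plan is to verify that $\sh_1$ as defined respects all the defining relations of $\UU_q(\glm)$ (Definition \ref{def:CQG}), exactly as was done for $\tau$ in the proof of Theorem \ref{thm:symmetry}, and then note that the well-definedness conditions one obtains are precisely those satisfied by the functions $\rho_i, \lambda_i, \rho_i', \lambda_i'$ given in the statement. The key simplification is that $\sh_1$ does \emph{not} reverse orientations of string diagrams, is covariant and monoidal, fixes the dot and the upward crossing (up to no sign), and acts on new bubbles by the identity $e_r(\X_i) \mapsto e_r(\X_i)$ — in particular it fixes $\Sym(\X_1|\cdots|\X_m)$ pointwise. So the only generators on which $\sh_1$ can introduce a sign are the four cup/cap generators, governed by the four functions $\rho_i, \rho_i', \lambda_i, \lambda_i'$. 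This is structurally much simpler than the $\tau$ case: there are no crossing-sign functions $v_{i,j}$ to track, and hence no analogue of the recalcitrant conditions \eqref{diffcrossingsign} and \eqref{R3sign}.

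The concrete steps: (1) Check the biadjunction/cyclicity relations; as in \eqref{eq:biadjcomp} this forces $\rho_i'(\wt) \equiv \rho_i(\wt + \ee_i)$ and $\lambda_i'(\wt) \equiv \lambda_i(\wt - \ee_i)$, which the stated formulae satisfy by construction (the offset in the arguments matches, once one recalls $\rho_i(\wt+\alpha_i) = \rho_i(\wt+\ee_i) = a_i + 1 \cdots$ — more precisely one just unwinds $\rho'_{\grb}(\wt) = \lambda_{\grb}(\wt)+1 = \rho_{\grb}(\wt+\alpha_{\grb})$). (2) Check evaluation of real bubbles against \eqref{newbub}: since $\sh_1$ fixes the complete/elementary symmetric functions and the sign prefactors $(-1)^{a_i}$ versus $(-1)^{a_i+n}$ in degree zero must be reconciled; this produces a condition of the shape $\rho_i(\wt) + \lambda_i(\wt - \ee_i) \equiv \langle \ee_i^\vee, \wtone \rangle = 0$ or $\pm 1$ depending on the color position — here one uses that $\wtone = (1,\ldots,1)$ so $\langle \ee_i^\vee, \wtone \rangle = 0$; wait, more care is needed since the weight shifts by $\wtone$ but the bubble-defining sign $(-1)^{a_i}$ in \eqref{newbub} changes by $(-1)^{1}$, giving the ``$+1$'' appearing in $\lambda_{\grb}(\wt) = \rho_{\grb}(\wt+\alpha_{\grb})+1$. (3) Check the dot-slide, quadratic KLR, cubic KLR, extended $\sln[2]$, mixed $\EE\FF$, dots-to-new-bubbles, and symmetric-generators-slide relations: because $\sh_1$ fixes dots, upward crossings, and new bubbles with no sign, and only rescales cups/caps, the verification in each case reduces to checking that the cup/cap signs cancel, using the relations among $\rho, \lambda$ already extracted in (1)–(2) — this is exactly parallel to the last block of the proof of Theorem \ref{thm:symmetry} (the ``extended $\sln[2]$'' and ``mixed $\EE,\FF$'' computations), but easier since there are no $v$-functions. (4) Conclude that $\sh_1$ extends uniquely to a $\K$-linear, graded, monoidal, covariant $2$-functor $\UU_q(\glm) \to \UU_q(\glm)$.

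I expect the main obstacle to be purely bookkeeping: correctly tracking the degree-zero bubble normalization. In $\UU_q(\glm)$ the clockwise (resp.\ counterclockwise) degree-zero $i$-colored bubble in weight $\wt$ equals the scalar $(-1)^{a_i}$ (resp.\ $(-1)^{a_i - 1}$), per Remark \ref{rmk:Laudaparams}; after shifting the weight by $\wtone$ this scalar changes by $(-1)^{1} = -1$, which is precisely why the formulae for $\rho_i', \lambda_i, \lambda_i'$ each differ from the ``naive'' biadjunction-forced value by $+1$. One must be scrupulous that these extra $+1$'s are mutually consistent across the biadjunction relation, the bubble-evaluation relation, and the cyclicity-of-dot relation; the stated definitions are engineered to make this work, so the verification will go through, but it is the step most prone to sign errors. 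Everything else follows the template of Theorem \ref{thm:symmetry} verbatim. Finally, I would record (as is presumably done in the sequel to this Proposition, en route to the claimed identity $\sh_n \circ \tau_0 = \tau_n \circ \sh_n$) that $\sh_n := \sh_1^{\circ n}$ — or the evident analogous direct definition — gives the desired automorphism sending $\wt \mapsto \wt + \wtn$; but for the statement as excerpted, only the $n=1$ case of well-definedness is asserted, so the proof ends at step (4).
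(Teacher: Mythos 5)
Your outline is correct and matches the paper's (very brief) proof: one checks that $\sh_1$ preserves each relation of $\UU_q(\glm)$ exactly as in the proof of Theorem \ref{thm:symmetry}, noting that the computation is simpler since $\sh_1$ does not reverse orientations, fixes dots, upward crossings, and new bubbles without sign, and involves no $v$-functions, so the only constraints concern the four cup/cap sign functions $\rho_\grb,\rho'_\grb,\lambda_\grb,\lambda'_\grb$ via the biadjunction and \eqref{newbub} relations. The only cosmetic difference is that you record the biadjunction constraint in the form $\rho'_\grb(\wt)\equiv\rho_\grb(\wt+\ee_\grb)$ (the exact analogue of \eqref{primesign}), whereas the paper records it as $\lambda_\grb(\wt+\alpha_\grb)\equiv 1+\rho_\grb(\wt)$ — both are satisfied by the stated functions, so this is the same verification packaged differently; also note that in your step (2) the initial "$(-1)^{a_i+n}$" should read "$(-1)^{a_i+1}$" (as you go on to correct), since the shift is by $\wtone$.
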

\begin{proof}
A straightforward check, similar to the proof of Theorem \ref{thm:symmetry}. 
Note that preserving equation \eqref{newbub} imposes the requirements 
\[
\lambda_{\grb}(\wt)+\rho'_{\grb}(\wt)+1 \equiv 0 
\quad \text{and} \quad 
\lambda'_{\grb}(\wt) + \rho_{\grb}(\wt) +1 \equiv 0 \, ,
\]
while preserving the biadjunction relation (c.f~equation \eqref{eq:biadjcomp}) 
imposes the requirement
\[
\lambda_{\grb}(\wt+\alpha_{\grb}) \equiv 1+ \rho_{\grb}(\wt) \, . \qedhere
\]
\end{proof}

\begin{defn}
Let $\sh_n:=\sh_1\circ \sh_1\circ \dots \circ \sh_1$ be $\sh_1$ composed with itself $n$ times.
\end{defn} 

Note that $\sh_n$ acts on cup and cap generators of $\UU_q(\glm)$ as follows:
\begin{equation}
\begin{gathered} \label{eq:sh-n}
\begin{tikzpicture}[anchorbase,scale=1]
\draw[thick,green,->] (-.25,0) to [out=90,in=180] (0,.5) 
	to [out=0,in=90] (.25,0);
\node at (.625,.5){\scs$\wt$};
\end{tikzpicture}
\xmapsto{\sh_n}
(-1)^{\sum_{k=0}^{n-1}\rho_{\grb}(\wt+\wtk)}
\begin{tikzpicture}[anchorbase,scale=1]
\draw[thick,green,->] (-.25,0) to [out=90,in=180] (0,.5) 
	to [out=0,in=90] (.25,0);
\node at (.625,.5){\scs$\wt{+}\wtn$};
\end{tikzpicture}
\, , \quad
\begin{tikzpicture}[anchorbase,scale=1]
\draw[thick,green,->] (-.25,0) to [out=270,in=180] (0,-.5) 
	to [out=0,in=270] (.25,0);
\node at (.625,-.5){\scs$\wt$};
\end{tikzpicture}
\xmapsto{\sh_n}
(-1)^{\sum_{k=0}^{n-1}\rho'_{\grb}(\wt+\wtk)}
\begin{tikzpicture}[anchorbase,scale=1]
\draw[thick,green,->] (-.25,0) to [out=270,in=180] (0,-.5) 
	to [out=0,in=270] (.25,0);
\node at (.625,-.5){\scs$\wt{+}\wtn$};
\end{tikzpicture}
\, , \\
\begin{tikzpicture}[anchorbase,scale=1]
\draw[thick,green,<-] (-.25,0) to [out=90,in=180] (0,.5) 
	to [out=0,in=90] (.25,0);
\node at (.625,.5){\scs$\wt$};
\end{tikzpicture}
\xmapsto{\sh_n}
(-1)^{\sum_{k=0}^{n-1}\lambda_{\grb}(\wt+\wtk)}
\begin{tikzpicture}[anchorbase,scale=1]
\draw[thick,green,<-] (-.25,0) to [out=90,in=180] (0,.5) 
	to [out=0,in=90] (.25,0);
\node at (.625,.5){\scs$\wt{+}\wtn$};
\end{tikzpicture}
\, , \quad
\begin{tikzpicture}[anchorbase,scale=1]
\draw[thick,green,<-] (-.25,0) to [out=270,in=180] (0,-.5) 
	to [out=0,in=270] (.25,0);
\node at (.625,-.5){\scs$\wt$};
\end{tikzpicture}
\xmapsto{\sh_n}
(-1)^{\sum_{k=0}^{n-1}\lambda'_{\grb}(\wt+\wtk)}
\begin{tikzpicture}[anchorbase,scale=1]
\draw[thick,green,<-] (-.25,0) to [out=270,in=180] (0,-.5) 
	to [out=0,in=270] (.25,0);
\node at (.625,-.5){\scs$\wt{+}\wtn$};
\end{tikzpicture} \, .
\end{gathered}
\end{equation}
Here, we let $\mathbf{k} := (k,\ldots,k) \in \Z^m$ for $k \geq 0$.

\begin{prop}\label{P:sh-n-conjugation-is-tau-n}
We have 
$\tau_n = \sh_n\circ \tau_0 \circ \sh_{n}^{-1}$. 
\end{prop}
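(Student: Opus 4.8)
The plan is to verify the identity $\tau_n = \sh_n \circ \tau_0 \circ \sh_n^{-1}$ by checking that both sides agree on objects, on generating $1$-morphisms, and on each generating $2$-morphism of $\UU_q(\glm)$. Since both sides are $2$-functors (we have constructed $\tau_0$, $\tau_n$ via Theorem \ref{thm:symmetry}/Theorem \ref{thm:involution} and $\sh_n$ via Proposition \ref{prop:sh-n}, so $\sh_n$ is invertible with inverse $\sh_{-n} := \sh_{-1}^{\circ n}$ — or one defines $\sh_n^{-1}$ directly by the inverse assignments), it suffices to compare them on generators. On objects: $\sh_n^{-1}$ sends $\wtn + \wt \mapsto \wt$, then $\tau_0$ sends $\wt \mapsto -\wt$, then $\sh_n$ sends $-\wt \mapsto \wtn - \wt$; this matches $\tau_n$, which sends $\wtn + \wt \mapsto \wtn - \wt$. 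On $1$-morphisms the check is identical: $\sh_n^{-1}$ preserves $\EE_i$, $\tau_0$ sends $\EE_i \mapsto \FF_i$, $\sh_n$ preserves $\FF_i$, matching $\tau_n(\EE_i \one_{\wtn+\wt}) = \FF_i \one_{\wtn - \wt}$.

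The substance is in the $2$-morphisms, and there the only nontrivial comparison is the sign bookkeeping. For the dot generator and the new-bubble generators $\sh_n$ and $\sh_n^{-1}$ act trivially (up to a sign on bubbles only via $\rho,\lambda$, but the dot and the $e_r$-bubbles are fixed), so the composite acts exactly as $\tau_0$ does: the dot picks up $-1$ and $e_r \mapsto h_r$, which agrees with $\tau_n$. For the uni-colored crossing, $\sh_n^{\pm 1}$ act trivially and $v_{i,i} \equiv 0$ for all three functors, so both sides fix it. The genuine computation is for the cup and cap generators (and, via cyclicity, for the differently-colored and sideways crossings, but those are determined by the generators already checked together with the relations, which all three $2$-functors preserve). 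For the rightward cup in region $\wtn + \wt$: applying $\sh_n^{-1}$ produces the sign $(-1)^{\sum_{k=0}^{n-1}\rho_{\grb}(\wt - \wtk)}$ and lands in region $\wt$; applying $\tau_0$ (the $n=0$ instance of our preferred $\tau$) produces $(-1)^{r_i(-\wt)}$ with $r_i$ as in \eqref{eq:ourfns}, landing in region $-\wt$; applying $\sh_n$ produces $(-1)^{\sum_{k=0}^{n-1}\rho_{\grb}(-\wt + \wtk)}$, landing in $\wtn - \wt$. The total sign exponent is
\[
\sum_{k=0}^{n-1}\big(\rho_{\grb}(\wt - \wtk) + \rho_{\grb}(-\wt + \wtk)\big) + r_i(-\wt) \pmod 2 \, ,
\]
and this must equal the exponent $r_i(\wt)$ by which $\tau_n$ rescales the same cup. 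The main obstacle — really the only one — is verifying this congruence: one expands $\rho_{\grb}(\wtt) = b_i$ (the $i$-th coordinate) and notes each summand $\rho_{\grb}(\wt - \wtk) + \rho_{\grb}(\wtk - \wt) = (a_i - k) + (k - a_i) \equiv 0 \pmod 2$ entry by entry, so the sum collapses and one is left needing $r_i(-\wt) \equiv r_i(\wt)$, i.e.\ that $r_i$ as defined in \eqref{eq:ourfns} satisfies $r_i(-\wt) \equiv r_i(\wt) \pmod 2$. Since $\langle \ee_i^\vee, -\wt\rangle = -\langle \ee_i^\vee, \wt\rangle$ has the same parity as $\langle \ee_i^\vee, \wt\rangle$, the case distinction in \eqref{eq:ourfns} is governed by the same condition, and in the nonzero case $r_i(-\wt) = -a_i \equiv a_i = r_i(\wt) \pmod 2$; hence the congruence holds. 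The cap generator and the primed versions are handled identically using $\rho'_{\grb}, \lambda_{\grb}, \lambda'_{\grb}$ and the relations $l_i(\wt) \equiv r_i(-\wt)$, $l'_i, r'_i$ determined by \eqref{primesign} — each $\sh_1$-contribution cancels in pairs the same way.

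Finally, for the differently-colored crossings (sideways and downward), rather than recompute signs one invokes that $\tau_n$, $\tau_0$, and $\sh_n$ all respect the defining relations of $\UU_q(\glm)$, in particular cyclicity and the mixed relations, together with the fact that both $\tau_n$ and $\sh_n \circ \tau_0 \circ \sh_n^{-1}$ are $2$-functors agreeing on the cup, cap, dot, same-colored crossing, and new-bubble generators. Since these generate $\UU_q(\glm)$ as a $2$-category and the two $2$-functors agree on all of them, they are equal. I expect no serious difficulty beyond the parity bookkeeping described above; the argument is essentially a telescoping of the per-$\sh_1$ sign contributions, and the only input about our preferred choice of functions is the parity symmetry $r_i(-\wt) \equiv r_i(\wt)$, which is immediate from \eqref{eq:ourfns}.
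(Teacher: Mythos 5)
Your overall strategy — verify the identity on generators, with the real content being the cups and caps — coincides with the paper's. The paper recasts the goal as $\tau_n \circ \sh_n = \sh_n \circ \tau_0$, which avoids any explicit formula for $\sh_n^{-1}$ and lets both composites be applied to a cap in weight $\wt$; the parity check then reduces to comparing $\sum_{k=0}^{n-1}\rho_{\grb}(\wt + \wtk)$ with $\sum_{k=0}^{n-1}\lambda_{\grb}(-\wt + \wtk)$ via the relation $\lambda_{\grb}(\wtt) = \rho_{\grb}(\wtt + \alpha_{\grb}) + 1$. Your formulation works too, but the sign bookkeeping you write down has slips: the exponent $\sh_n^{-1}$ contributes on a cap in $\wtn + \wt$ is $\sum_{k=0}^{n-1}\rho_{\grb}(\wt+\wtk)$, not $\sum_k \rho_{\grb}(\wt - \wtk)$ (they differ by the always-even $n(n-1)$, so your answer survives); and after $\tau_0$ reverses the orientation, the final $\sh_n$ rescales a \emph{leftward} cap, so the contribution is $\sum_k \lambda_{\grb}(-\wt+\wtk)$, not $\sum_k \rho_{\grb}(-\wt + \wtk)$ (again congruent mod $2$). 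These cancellations happen to hold, but they aren't justified in your write-up and obscure what makes the identity work.

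The genuine gap is your final paragraph. The generating set of $\UU_q(\glm)$ in Definition \ref{def:CQG} explicitly includes the upward crossing of \emph{any} pair of Dynkin labels $i,j$, so the cup, cap, dot, same-colored crossing, and new-bubble generators do \emph{not} generate the $2$-category, and two $2$-functors agreeing on that proper subset are not forced to agree. Indeed, Theorem \ref{thm:symmetry} parametrizes the permissible signs $v_{ij}(\wt)$ on these crossings precisely because the relations (quadratic and cubic KLR, mixed $\EE\FF$) pin down only $v_{ij} + v_{ji}$ and a cocycle condition, not $v_{ij}$ itself; so ``the relations determine the rest'' is false. The argument the paper gives — in a single sentence — is the right one: for the preferred $\tau$ of Theorem \ref{thm:involution}, $v_{ij}$ is a \emph{constant} function of $\wt$, while $\sh_n^{\pm 1}$ rescales no crossing at all, so the sign $\tau_0$ contributes in weight $\wt$ equals the sign $\tau_n$ contributes in weight $\wtn + \wt$, and the crossing case is automatic once cups and caps check out. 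You in fact used exactly this reasoning for $i = j$ (noting $v_{ii} \equiv 0$ is weight-independent); you just need the analogous observation, with $v_{ij}$ constant rather than zero, for $i \neq j$, in place of the invalid ``generated by relations'' appeal.
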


\begin{proof}
A direct computation using \eqref{eq:inv} and \eqref{eq:sh-n}.
Since our preferred $\tau$ from Theorem \eqref{thm:involution} uses constant functions for the $v_{ij}$'s, 
the claim follows from checking that $\tau_n\circ \sh_n = \sh_n\circ \tau_0$ on cups and caps. 
We verify this for rightward caps, leaving the other three cases to the reader. To this end, observe that
\[
\begin{tikzpicture}[anchorbase,scale=1]
\draw[thick,green,->] (-.25,0) to [out=90,in=180] (0,.5) 
	to [out=0,in=90] (.25,0);
\node at (.625,.5){\scs$\wt$};
\end{tikzpicture}
\xmapsto{\sh_n}
(-1)^{\sum_{k=0}^{n-1}\rho_{\grb}(\wt+\wtk)}
\begin{tikzpicture}[anchorbase,scale=1]
\draw[thick,green,->] (-.25,0) to [out=90,in=180] (0,.5) 
	to [out=0,in=90] (.25,0);
\node at (.625,.5){\scs$\wt{+}\wtn$};
\end{tikzpicture}
\xmapsto{\tau_n}
(-1)^{r_{\grb}(\wt)}(-1)^{\sum_{k=0}^{n-1}\rho_{\grb}(\wt+\wtk)}
\begin{tikzpicture}[anchorbase,scale=1]
\draw[thick,green,<-] (-.25,0) to [out=90,in=180] (0,.5) 
	to [out=0,in=90] (.25,0);
\node at (.625,.5){\scs$\wtn{-}\wt$};
\end{tikzpicture}
\]
and
\[
\begin{tikzpicture}[anchorbase,scale=1]
\draw[thick,green,->] (-.25,0) to [out=90,in=180] (0,.5) 
	to [out=0,in=90] (.25,0);
\node at (.625,.5){\scs$\wt$};
\end{tikzpicture}
\xmapsto{\tau_0}
(-1)^{r_{\grb}(\wt)}
\begin{tikzpicture}[anchorbase,scale=1]
\draw[thick,green,<-] (-.25,0) to [out=90,in=180] (0,.5) 
	to [out=0,in=90] (.25,0);
\node at (.625,.5){\scs${-}\wt$};
\end{tikzpicture}
\xmapsto{\sh_n}
(-1)^{\sum_{k=0}^{n-1}\lambda_{\grb}(-\wt+\wtk)}(-1)^{r_{\grb}(\wt)}
\begin{tikzpicture}[anchorbase,scale=1]
\draw[thick,green,<-] (-.25,0) to [out=90,in=180] (0,.5) 
	to [out=0,in=90] (.25,0);
\node at (.625,.5){\scs$\wtn{-}\wt$};
\end{tikzpicture} \, .
\]
Using equation \eqref{eq:sh-1}, we compute (modulo $2$) that
\[
\sum_{k=0}^{n-1}\lambda_{\grb}(-\wt+\wtk) 
= \sum_{k=0}^{n-1} \rho_{\grb}(-\wt + \wtk+ \alpha_{\grb}) + 1 
= \sum_{k=1}^{n-1}-a_i+k+1+1 
\equiv \sum_{k=1}^{n-1}a_i+k 
= \sum_{k=0}^{n-1}\rho_{\grb}(\wt + \wtk) \, ,
\]
so $\tau_n\circ \sh_n = \sh_n \circ \tau_0$ on rightward caps. 
\end{proof}

\begin{rem}
Proposition \ref{P:sh-n-conjugation-is-tau-n} 
shows that all of the automorphisms $\tau_n$ are conjugate.
Nevertheless,
in Section \ref{ss:schur-FE} we will consider involutions induced by $\tau_n$ on 
certain quotients of $\UU_q(\glm)$ that depend on a specific value of $n$. 
Hence, we will make use of each $\tau_n$.
\end{rem}

\subsection{Extending to thick calculus}\label{ss:extended-to-thick}

We now observe that our automorphism $\tau$ extends to the thick calculus 
of Section \ref{ss:thickcalc} and record its action there.

\begin{prop}\label{prop:tauactsonthicker}
Suppose that $\tau$ satisfies the conditions of Theorem \ref{thm:symmetry}. 
There is a unique extension of $\tau$ to $\cUU_q(\glm)$ such that
$\mathcal{E}_i^{(k)}\one_{n+\wt}\mapsto \mathcal{F}_i^{(k)}\one_{n-\wt}$ and 
\begin{equation}
	\label{eq:tauonMS}
\begin{tikzpicture}[anchorbase,yscale=-1]
\draw[ultra thick,green,<-] (0,0) node[above=-2pt]{\scs$k{+}\ell$} to [out=90,in=270] (0,.375);
\draw[ultra thick,green] (0,.375) to [out=150,in=270] (-.25,1) node[below=-2pt]{\scs$k$};
\draw[ultra thick,green] (0,.375) to [out=30,in=270] (.25,1) node[below=-2pt]{\scs$\ell$};
\node at (.625,.25){\scs$\wtn{+}\wt$};
\end{tikzpicture}
\xmapsto{\tau}
\begin{tikzpicture}[anchorbase,yscale=-1]
\draw[ultra thick,green] (0,0) node[above=-2pt]{\scs$k{+}\ell$} to [out=90,in=270] (0,.375);
\draw[ultra thick,green,->] (0,.375) to [out=150,in=270] (-.25,1) node[below=-2pt]{\scs$k$};
\draw[ultra thick,green,->] (0,.375) to [out=30,in=270] (.25,1) node[below=-2pt]{\scs$\ell$};
\node at (.625,.25){\scs$\wtn{-}\wt$};
\end{tikzpicture} 
\xmapsto{\tau} 
\begin{tikzpicture}[anchorbase,yscale=-1]
\draw[ultra thick,green,<-] (0,0) node[above=-2pt]{\scs$k{+}\ell$} to [out=90,in=270] (0,.375);
\draw[ultra thick,green] (0,.375) to [out=150,in=270] (-.25,1) node[below=-2pt]{\scs$k$};
\draw[ultra thick,green] (0,.375) to [out=30,in=270] (.25,1) node[below=-2pt]{\scs$\ell$};
\node at (.625,.25){\scs$\wtn{+}\wt$};
\end{tikzpicture}
\, , \quad 
\begin{tikzpicture}[anchorbase,scale=1]
\draw[ultra thick,green] (0,0) node[below=-2pt]{\scs$k{+}\ell$} to [out=90,in=270] (0,.375);
\draw[ultra thick,green,->] (0,.375) to [out=150,in=270] (-.25,1) node[above=-2pt]{\scs$k$};
\draw[ultra thick,green,->] (0,.375) to [out=30,in=270] (.25,1) node[above=-2pt]{\scs$\ell$};
\node at (.625,.75){\scs$\wtn{+}\wt$};
\end{tikzpicture}
\xmapsto{\tau} \begin{tikzpicture}[anchorbase,scale=1]
\draw[ultra thick,green,<-] (0,0) node[below=-2pt]{\scs$k{+}\ell$} to [out=90,in=270] (0,.375);
\draw[ultra thick,green] (0,.375) to [out=150,in=270] (-.25,1) node[above=-2pt]{\scs$k$};
\draw[ultra thick,green] (0,.375) to [out=30,in=270] (.25,1) node[above=-2pt]{\scs$\ell$};
\node at (.625,.75){\scs$\wtn{-}\wt$};
\end{tikzpicture} 
\xmapsto{\tau} 
\begin{tikzpicture}[anchorbase,scale=1]
\draw[ultra thick,green] (0,0) node[below=-2pt]{\scs$k{+}\ell$} to [out=90,in=270] (0,.375);
\draw[ultra thick,green,->] (0,.375) to [out=150,in=270] (-.25,1) node[above=-2pt]{\scs$k$};
\draw[ultra thick,green,->] (0,.375) to [out=30,in=270] (.25,1) node[above=-2pt]{\scs$\ell$};
\node at (.625,.75){\scs$\wtn{+}\wt$};
\end{tikzpicture}.
\end{equation}
Moreover, under this extension
\begin{equation}
\begin{gathered}
\begin{tikzpicture}[anchorbase,scale=1]
\draw[ultra thick,green,->] (-.25,0) node[below=-2pt]{\scs $k$} to [out=90,in=180] (0,.5) 
	to [out=0,in=90] (.25,0);
\node at (.625,.5){\scs$\wtn{+}\wt$};
\end{tikzpicture}
\mapsto (-1)^{\sum_{s=0}^{k-1}r_{\grb}(\wt - s \alpha_{\grb})}
\begin{tikzpicture}[anchorbase,scale=1]
\draw[ultra thick,green,<-] (-.25,0) to [out=90,in=180] (0,.5) 
	to [out=0,in=90] (.25,0) node[below=-2pt]{\scs$k$};
\node at (.625,.5){\scs$\wtn{-}\wt$};
\end{tikzpicture}
\, , \quad
\begin{tikzpicture}[anchorbase,scale=1]
\draw[ultra thick,green,<-] (-.25,0) to [out=90,in=180] (0,.5) 
	to [out=0,in=90] (.25,0) node[below=-2pt]{\scs$k$};
\node at (.625,.5){\scs$\wtn{+}\wt$};
\end{tikzpicture}
\mapsto (-1)^{\sum_{s=0}^{k-1}\ell_{\grb}(\wt + s \alpha_{\grb})}
\begin{tikzpicture}[anchorbase,scale=1]
\draw[ultra thick,green,->] (-.25,0) node[below=-2pt]{\scs $k$} to [out=90,in=180] (0,.5) 
	to [out=0,in=90] (.25,0);
\node at (.625,.5){\scs$\wtn{-}\wt$};
\end{tikzpicture}
\, , \\
\begin{tikzpicture}[anchorbase,scale=1, yscale=-1]
\draw[ultra thick,green,->] (-.25,0) node[below=-12pt]{\scs $k$} to [out=90,in=180] (0,.5) 
	to [out=0,in=90] (.25,0);
\node at (.625,.5){\scs$\wtn{+}\wt$};
\end{tikzpicture}
\mapsto (-1)^{\sum_{s=0}^{k-1}r'_{\grb}(\wt + s \alpha_{\grb})}
\begin{tikzpicture}[anchorbase,yscale=-1]
\draw[ultra thick,green,<-] (-.25,0) to [out=90,in=180] (0,.5) 
	to [out=0,in=90] (.25,0) node[below=-12pt]{\scs$k$};
\node at (.625,.5){\scs$\wtn{-}\wt$};
\end{tikzpicture}
\, , \quad
\begin{tikzpicture}[anchorbase,yscale=-1]
\draw[ultra thick,green,<-] (-.25,0) to [out=90,in=180] (0,.5) 
	to [out=0,in=90] (.25,0) node[below=-12pt]{\scs$k$};
\node at (.625,.5){\scs$\wtn{+}\wt$};
\end{tikzpicture}
\mapsto (-1)^{\sum_{s=0}^{k-1}\ell'_{\grb}(\wt - s \alpha_{\grb})}
\begin{tikzpicture}[anchorbase,yscale=-1]
\draw[ultra thick,green,->] (-.25,0) node[below=-12pt]{\scs $k$} to [out=90,in=180] (0,.5) 
	to [out=0,in=90] (.25,0);
\node at (.625,.5){\scs$\wtn{-}\wt$};
\end{tikzpicture}
\, ,
\end{gathered}
\end{equation}
\begin{equation}
\begin{tikzpicture}[anchorbase,scale=1]
\draw[ultra thick,green,->] (0,0) node[below=-2pt]{\scs$k$} to [out=90,in=270] (.5,1);
\draw[ultra thick,->] (.5,0) node[below=-2pt]{\scs$\ell$} to [out=90,in=270] (0,1);
\node at (.875,.75){\scs$\wtn{+}\wt$};
\end{tikzpicture}
\mapsto (-1)^{\sum_{\substack{0\le s\le k-1 \\ 0 \le t\le \ell-1}}v_{\grb \bullet}(\wt+s\alpha_{\grb} + t\alpha_{\bullet})}
\begin{tikzpicture}[anchorbase,scale=1]
\draw[ultra thick,green,<-] (0,0) node[below=-2pt]{\scs$k$} to [out=90,in=270] (.5,1);
\draw[ultra thick,<-] (.5,0) node[below=-2pt]{\scs$\ell$} to [out=90,in=270] (0,1);
\node at (.875,.75){\scs$\wtn{-}\wt$};
\end{tikzpicture} \, ,
\end{equation}
and
\begin{equation}\label{eqn:tauonthickdot}
\begin{tikzpicture}[anchorbase,scale=1]
\draw[ultra thick,green,->] (0,-.75) node[below=-2pt]{\scs$k$} to node[black]{$\CQGbox{\mathfrak{s}_\parti}$}
	(0,.75) node[white,above=-2pt]{\scs$k$};
\node at (.5,.5){\scs$\wtn{+}\wt$};
\end{tikzpicture}
\mapsto (-1)^{|\parti|}
\begin{tikzpicture}[anchorbase,scale=1]
\draw[ultra thick,green,<-] (0,-.75) node[below=-2pt]{\scs$k$} to node[black]{$\CQGbox{\mathfrak{s}_\parti}$}
	(0,.75) node[white,above=-2pt]{\scs$k$};
\node at (.5,.5){\scs$\wtn{-}\wt$};
\end{tikzpicture} \, . 
\end{equation}
\end{prop}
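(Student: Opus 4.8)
The plan is to use the generators-and-relations presentation of $\cUU_q(\glm)$ from Definition \ref{def:thickCQG}. Its generating $2$-morphisms are exactly those of $\UU_q(\glm)$ together with the merge/split morphisms \eqref{eq:MS}, and its relations are those of Definition \ref{def:CQG} together with the two additional families \eqref{eq:MStoHT} and \eqref{eq:explode}. Since Theorem \ref{thm:symmetry} already guarantees that $\tau$ respects every relation of $\UU_q(\glm)$, to extend $\tau$ it suffices to declare its value on the merge/split generators by \eqref{eq:tauonMS} and to check that this assignment is compatible with \eqref{eq:MStoHT} and \eqref{eq:explode}; uniqueness of the extension is then automatic. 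I would also note that \eqref{eq:tauonMS} is essentially forced: covariance for $1$-morphism composition together with $\mathcal{E}_i^{(k)}\one_{\wtn+\wt}\mapsto\mathcal{F}_i^{(k)}\one_{\wtn-\wt}$ requires an $\mathcal{E}_i$-merge to go to an $\mathcal{F}_i$-merge with reversed orientation, and the vanishing of the accompanying scalar can be pinned down by applying $\tau$ to either \eqref{eq:MStoHT} or \eqref{eq:explode}.

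For the relation check, the key point is that both sides of \eqref{eq:MStoHT} and of \eqref{eq:explode} are composites of thin caps, cups, dots, uni-colored crossings, merge/split generators, and (in \eqref{eq:MStoHT}) a half-twist diagram $\HT_k$. Under $\tau$ the half-twist is fixed without sign, because uni-colored crossings carry the sign $v_{i,i}\equiv 0$ by \eqref{samecrossingsign}; the merge/split generators are sign-free by \eqref{eq:tauonMS}; and each dot contributes $-1$ by \eqref{eq:inv} and \eqref{eq:downdot}. Relation \eqref{eq:MStoHT} has no dots, so both sides are carried to themselves. In \eqref{eq:explode} the dot decorations on the $k$ thin strands contribute the same factor on both sides and cancel, while the identity thick strand is fixed. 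Hence $\tau$ respects the additional relations and extends uniquely to a $2$-automorphism of $\cUU_q(\glm)$.

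Finally, the explicit formulas follow by expanding each thick morphism into thin pieces via Convention \ref{Conv:thickcapscups} and equation \eqref{eq:SymDec}, and then pushing $\tau$ through using \eqref{eq:inv}, \eqref{eq:downdot}, \eqref{rightwardcross}, \eqref{leftwardcross}, \eqref{eq:downwardcross}, \eqref{eq:realbubbles}, and the extension \eqref{eq:tauonMS}. A thick cap of thickness $k$, written out via \eqref{eqn:thickcapdefn}, is a composite of $k$ thin caps in the ambient weights $\wtn+\wt,\ \wtn+\wt-\alpha_{\grb},\ \dots,\ \wtn+\wt-(k-1)\alpha_{\grb}$ (plus dots and merge/splits, which contribute identical signs on both sides and drop out), giving the total sign $(-1)^{\sum_{s=0}^{k-1}r_{\grb}(\wt-s\alpha_{\grb})}$; the other three cap/cup cases are identical. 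The thick crossing of colors $\grb,\bullet$, expanded via \eqref{eqn:thickcrossingdefn}/\eqref{eqn:multicolorthickcrossingdefn}, is $k\ell$ thin crossings in the weights $\wtn+\wt+s\alpha_{\grb}+t\alpha_{\bullet}$ for $0\le s\le k-1$ and $0\le t\le \ell-1$, each contributing $(-1)^{v_{\grb\bullet}(\cdot)}$, which yields the stated double sum. For \eqref{eqn:tauonthickdot} one uses that the Schur box $\Schur[\parti]$ is, by \eqref{eq:SymDec}, a full split followed by $\binom{k}{2}+|\parti|$ dots and a full merge; applying $\tau$ gives $(-1)^{\binom{k}{2}+|\parti|}$ times the analogous $\mathcal{F}$-diagram, and the $\binom{k}{2}$ dots of that diagram are exactly those built into the downward Schur-box convention, leaving the residual sign $(-1)^{|\parti|}$.

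I expect the sign bookkeeping in this last step to be the main obstacle: one must verify that iterating the weight-dependent thin formulas over all strands of a thick morphism telescopes into precisely the stated closed-form sums, and that every auxiliary dot and (co)unit decoration implicit in the thick-strand conventions genuinely contributes the same sign on both sides of each identity and therefore cancels. All remaining points are direct applications of the presentation of $\cUU_q(\glm)$ together with Theorem \ref{thm:symmetry}.
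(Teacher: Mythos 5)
Your proposal follows essentially the same route as the paper: extend $\tau$ to the merge/split generators by \eqref{eq:tauonMS} and verify the two new relations \eqref{eq:MStoHT} and \eqref{eq:explode}, then compute the thick formulas by expanding via Convention \ref{Conv:thickcapscups}. The check of \eqref{eq:MStoHT} via \eqref{samecrossingsign} is exactly right. However, your justification that $\tau$ preserves \eqref{eq:explode} --- namely that ``the dot decorations on the $k$ thin strands contribute the same factor on both sides and cancel'' --- does not accurately describe what happens and leaves a real step unaccounted for. Applying $\tau$ to the left-hand side of the first identity of \eqref{eq:explode} reverses orientation and introduces $(-1)^{\binom{k}{2}}$ from the dots, yielding a downward split--dots--merge diagram whose dot pattern is $k-1, k-2, \dots, 0$ \emph{from left to right}; but the downward diagram in the second identity of \eqref{eq:explode} carries the \emph{mirrored} pattern. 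Matching the two costs an additional $(-1)^{\binom{k}{2}}$, which is exactly the content of equation \eqref{eqn:rearrangedots} (itself a consequence of \eqref{eqn:sgnfromdemazure}); these two factors of $(-1)^{\binom{k}{2}}$ then cancel. Since this rearrangement identity is also the key to every one of the explicit thick formulas --- you implicitly lean on it in your discussion of \eqref{eqn:tauonthickdot} --- it needs to be stated and cited explicitly; it is the one nontrivial diagrammatic fact underlying both the well-definedness check and the sign bookkeeping you flag at the end. With that ingredient made explicit, your argument coincides with the paper's.
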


\begin{proof}
Since $\tau$ preserves the relations of $\UU_q(\glm)$, 
it follows from Definition \ref{def:thickCQG} that the extension of $\tau$ to $\cUU_q(\glm)$ 
is well-defined if and only if it preserves the relations in \eqref{eq:MStoHT} and \eqref{eq:explode}. 
The former follows by \eqref{samecrossingsign}, 
i.e.~since $\tau$ introduces no signs on thin uni-colored crossings, regardless of ambient weight.
The latter is a consequence of $\tau$ acting on dots by $-1$ 
(i.e.~the first relation in \eqref{eq:inv}) and the formula:
\begin{equation}\label{eqn:rearrangedots}
\begin{tikzpicture}[anchorbase,scale=1]
\draw[ultra thick,directed=.6] (0,-.75) node[below=-2pt]{\scs$k$} to (0,-.375);
\draw[thick] (0,-.375) to [out=180,in=270] node[pos=.95]{$\bullet$}  (-1.125,0) 
	node[left=-1pt]{\scs$k{-}1$} to [out=90,in=180] (0,.375);
\draw[thick] (0,-.375) to [out=150,in=270] node[pos=.85]{$\bullet$} (-.25,0) 
	node[left=-1pt]{\scs$k{-}2$} to [out=90,in=210] (0,.375);
\node at (0,0) {$\mydots$};
\draw[thick] (0,-.375) to [out=30,in=270] node[pos=.85]{$\bullet$} (.25,0) to [out=90,in=330] (0,.375);
\draw[thick] (0,-.375) to [out=0,in=270] (.5,0) to [out=90,in=0] (0,.375);
\draw[ultra thick,directed=.6] (0,.375) to (0,.75) node[above=-2pt]{\scs$k$} ;
\node at (.625,.5){\scs$\wt$};
\end{tikzpicture}
= (-1)^{\binom{k}{2}} \,
\begin{tikzpicture}[anchorbase,scale=1, xscale=-1]
\draw[ultra thick,directed=.6] (0,-.75) node[below=-2pt]{\scs$k$} to (0,-.375);
\draw[thick] (0,-.375) to [out=180,in=270] node[pos=.95]{$\bullet$}  (-1.125,0) 
	node[right=-1pt]{\scs$k{-}1$} to [out=90,in=180] (0,.375);
\draw[thick] (0,-.375) to [out=150,in=270] node[pos=.85]{$\bullet$} (-.25,0) 
	node[right=-1pt]{\scs$k{-}2$} to [out=90,in=210] (0,.375);
\node at (0,0) {$\mydots$};
\draw[thick] (0,-.375) to [out=30,in=270] node[pos=.85]{$\bullet$} (.25,0) to [out=90,in=330] (0,.375);
\draw[thick] (0,-.375) to [out=0,in=270] (.5,0) to [out=90,in=0] (0,.375);
\draw[ultra thick,directed=.6] (0,.375) to (0,.75) node[above=-2pt]{\scs$k$} ;
\node at (-.625,.5){\scs$\wt$};
\end{tikzpicture}
\end{equation}
which is a consequence of \eqref{eqn:sgnfromdemazure}.

The action of $\tau$ on thick caps follows from applying $\tau$, as defined in \eqref{eq:inv}, 
to the right hand side of \eqref{eqn:thickcapdefn} then using \eqref{eqn:rearrangedots}. 
The action of $\tau$ on thick cups is analogous. 
The action of $\tau$ on thick crossings (with one or two colors) 
follows from applying $\tau$ to the right hand side of 
\eqref{eqn:thickcrossingdefn} or \eqref{eqn:multicolorthickcrossingdefn}, 
then using \eqref{eqn:rearrangedots} in conjunction with \eqref{eq:otherthick}, in the case of one color crossings, 
and \eqref{eq:multicolorotherthick}, in the case of two color crossings. 
Finally, the action of $\tau$ on $\Schur \in \End(\EE_i^{(k)}\one_{\wt})$ 
comes from applying $\tau$ to the right hand side of \eqref{eq:SymDec}. \end{proof}

\begin{cor}
	\label{cor:ourthicktau}
Our preferred $\tau$ from Theorem \ref{thm:involution}, 
acts on thick cap, cup, and upward crossing $2$-morphisms as follows:
\begin{equation}
	\label{eq:tauonthickcap}
\begin{tikzpicture}[anchorbase,scale=.875]
\draw[ultra thick,green,->] (-.25,0) node[below=-2pt]{\scs $k$} to [out=90,in=180] (0,.5) 
	to [out=0,in=90] (.25,0);
\node at (.375,.75){\tiny$\wtn{+}\wt$};
\end{tikzpicture}
\xmapsto{\tau}
\begin{cases}
\begin{tikzpicture}[anchorbase,scale=.875]
\draw[ultra thick,green,<-] (-.25,0) to [out=90,in=180] (0,.5) 
	to [out=0,in=90] (.25,0) node[below=-2pt]{\scs$k$};
\node at (.375,.75){\tiny$\wtn{-}\wt$};
\end{tikzpicture} 
& \text{if } \langle \alpha_{\grb}^{\vee} , \wt \rangle \equiv 0 \\
(-1)^{k\cdot a_{\grb} - \binom{k}{2}}
\begin{tikzpicture}[anchorbase,scale=.875,xshift=-5pt]
\draw[ultra thick,green,<-] (-.25,0) to [out=90,in=180] (0,.5) 
	to [out=0,in=90] (.25,0) node[below=-2pt]{\scs$k$};
\node at (.375,.75){\tiny$\wtn{-}\wt$};
\end{tikzpicture} 
& \text{if } \langle \alpha_{\grb}^{\vee} , \wt \rangle \equiv 1
\end{cases}
\, , \quad
\begin{tikzpicture}[anchorbase,scale=.875]
\draw[ultra thick,green,<-] (-.25,0) node[below=-2pt]{\scs $k$} to [out=90,in=180] (0,.5) 
	to [out=0,in=90] (.25,0);
\node at (.375,.75){\tiny$\wtn{+}\wt$};
\end{tikzpicture}
\xmapsto{\tau}
\begin{cases}
\begin{tikzpicture}[anchorbase,scale=.875]
\draw[ultra thick,green,<-] (-.25,0) to [out=90,in=180] (0,.5) 
	to [out=0,in=90] (.25,0) node[below=-2pt]{\scs$k$};
\node at (.375,.75){\tiny$\wtn{-}\wt$};
\end{tikzpicture} 
& \text{if } \langle \alpha_{\grb}^{\vee} , \wt \rangle \equiv 0 \\
(-1)^{-k\cdot a_{\grb} - \binom{k}{2}}
\begin{tikzpicture}[anchorbase,scale=.875,xshift=-5pt]
\draw[ultra thick,green,<-] (-.25,0) to [out=90,in=180] (0,.5) 
	to [out=0,in=90] (.25,0) node[below=-2pt]{\scs$k$};
\node at (.375,.75){\tiny$\wtn{-}\wt$};
\end{tikzpicture} 
& \text{if } \langle \alpha_{\grb}^{\vee} , \wt \rangle \equiv 1
\end{cases}
\end{equation}
\begin{equation}
	\label{eq:tauonthickcup}
\begin{tikzpicture}[anchorbase,yscale=-1,scale=.875]
\draw[ultra thick,green,->] (-.25,0) node[below=-12pt]{\scs $k$} to [out=90,in=180] (0,.5) 
	to [out=0,in=90] (.25,0);
\node at (.375,.75){\tiny$\wtn{+}\wt$};
\end{tikzpicture}
\xmapsto{\tau}
\begin{cases}
\begin{tikzpicture}[anchorbase,yscale=-1,scale=.875]
\draw[ultra thick,green,<-] (-.25,0) to [out=90,in=180] (0,.5) 
	to [out=0,in=90] (.25,0) node[below=-12pt]{\scs$k$};
\node at (.375,.75){\tiny$\wtn{-}\wt$};
\end{tikzpicture} 
& \text{if } \langle \alpha_{\grb}^{\vee} , \wt \rangle \equiv 0 \\
(-1)^{k\cdot a_{\grb} + \binom{k+1}{2}}
\begin{tikzpicture}[anchorbase,yscale=-1,scale=.875]
\draw[ultra thick,green,<-] (-.25,0) to [out=90,in=180] (0,.5) 
	to [out=0,in=90] (.25,0) node[below=-12pt]{\scs$k$};
\node at (.375,.75){\tiny$\wtn{-}\wt$};
\end{tikzpicture} 
& \text{if } \langle \alpha_{\grb}^{\vee} , \wt \rangle \equiv 1
\end{cases}
\, , \quad
\begin{tikzpicture}[anchorbase,yscale=-1,scale=.875]
\draw[ultra thick,green,<-] (-.25,0) node[below=-12pt]{\scs $k$} to [out=90,in=180] (0,.5) 
	to [out=0,in=90] (.25,0);
\node at (.375,.75){\tiny$\wtn{+}\wt$};
\end{tikzpicture}
\xmapsto{\tau}
\begin{cases}
\begin{tikzpicture}[anchorbase,yscale=-1,scale=.875]
\draw[ultra thick,green,->] (-.25,0) to [out=90,in=180] (0,.5) 
	to [out=0,in=90] (.25,0) node[below=-12pt]{\scs$k$};
\node at (.375,.75){\tiny$\wtn{-}\wt$};
\end{tikzpicture} 
& \text{if } \langle \alpha_{\grb}^{\vee} , \wt \rangle \equiv 0 \\
(-1)^{-k\cdot a_{\grb} + \binom{k+1}{2}}
\begin{tikzpicture}[anchorbase,yscale=-1,scale=.875]
\draw[ultra thick,green,->] (-.25,0) to [out=90,in=180] (0,.5) 
	to [out=0,in=90] (.25,0) node[below=-12pt]{\scs$k$};
\node at (.375,.75){\tiny$\wtn{-}\wt$};
\end{tikzpicture} 
& \text{if } \langle \alpha_{\grb}^{\vee} , \wt \rangle \equiv 1
\end{cases}
\end{equation}
\begin{equation}
	\label{eq:tauonthickcrossing}
\begin{tikzpicture}[anchorbase,scale=1]
\draw[ultra thick,green,->] (0,0) node[below=-2pt]{\scs$k$} to [out=90,in=270] (.5,1);
\draw[ultra thick,->] (.5,0) node[below=-2pt]{\scs$\ell$} to [out=90,in=270] (0,1);
\node at (.875,.75){\scs$\wtn{+}\wt$};
\end{tikzpicture}
\mapsto \begin{cases}
(-1)^{k\ell}
\begin{tikzpicture}[anchorbase,scale=1]
\draw[ultra thick,green,<-] (0,0) node[below=-2pt]{\scs$k$} to [out=90,in=270] (.5,1);
\draw[ultra thick,blue,<-] (.5,0) node[below=-2pt]{\scs$\ell$} to [out=90,in=270] (0,1);
\node at (.875,.75){\scs$\wtn{-}\wt$};
\end{tikzpicture} \, & \text { if } \bullet = \blb, \\
\begin{tikzpicture}[anchorbase,scale=1]
\draw[ultra thick,green,<-] (0,0) node[below=-2pt]{\scs$k$} to [out=90,in=270] (.5,1);
\draw[ultra thick,<-] (.5,0) node[below=-2pt]{\scs$\ell$} to [out=90,in=270] (0,1);
\node at (.875,.75){\scs$\wtn{-}\wt$};
\end{tikzpicture} \, & \text{ otherwise.}
\end{cases}
\end{equation}	
(Recall our color Convention \ref{conv:colors}.) 
Consequently, $\tau^2$ acts as the identity on all uni-colored thick diagrams. \qed
\end{cor}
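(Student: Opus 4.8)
The plan is to feed the explicit functions from Theorem~\ref{thm:involution} (together with the derived identities $r_i'(\wt) \equiv r_i(\wt + \ee_i)$ and $l_i'(\wt) \equiv l_i(\wt - \ee_i)$ of \eqref{primesign}, and $l_i(\wt)\equiv r_i(-\wt)$) into the general formulas of Proposition~\ref{prop:tauactsonthicker} and to simplify the resulting signs modulo $2$. By that proposition, $\tau$ already sends each thick cap, cup and crossing to the appropriate reflection/reorientation, up to a sign whose exponent is one of the sums $\sum_{s=0}^{k-1} r_{\grb}(\wt - s\ee_{\grb})$, $\sum_{s=0}^{k-1} l_{\grb}(\wt + s\ee_{\grb})$, $\sum_{s=0}^{k-1} r'_{\grb}(\wt + s\ee_{\grb})$, $\sum_{s=0}^{k-1} l'_{\grb}(\wt - s\ee_{\grb})$, or $\sum_{0 \le s \le k-1,\ 0 \le t \le \ell-1} v_{\grb\bullet}(\wt + s\ee_{\grb} + t\ee_{\bullet})$. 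So the whole content of the corollary is the evaluation of these five sums with the preferred choice of functions.

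The crucial elementary fact is that $\langle\ee_i^\vee,\ee_i\rangle = 2$, so $\langle\ee_i^\vee,\wt + c\ee_i\rangle \equiv \langle\ee_i^\vee,\wt\rangle \pmod{2}$ for every $c\in\Z$; hence along each of the lines $\wt + c\ee_i$ above, the parity of $\langle\ee_i^\vee,-\rangle$ is constant. By the definition of $r_i$ in \eqref{eq:ourfns}, $r_i$ restricted to such a line is identically $0$ when $\langle\ee_i^\vee,\wt\rangle$ is even, and equals the $i$-th coordinate of the weight modulo $2$ when $\langle\ee_i^\vee,\wt\rangle$ is odd; the same then holds for $l_i, r_i', l_i'$ after applying \eqref{primesign} and $l_i(\wt)\equiv r_i(-\wt)$. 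Since the $\grb$-th coordinate of $\wt + c\ee_{\grb}$ is $a_{\grb} + c$, each nontrivial sum becomes an arithmetic progression. For example $\sum_{s=0}^{k-1} r_{\grb}(\wt - s\ee_{\grb}) \equiv \sum_{s=0}^{k-1}(a_{\grb} - s) \equiv k a_{\grb} - \binom{k}{2}$, which is the sign in the first case of \eqref{eq:tauonthickcap}. Reindexing gives $\sum_{s=0}^{k-1} r'_{\grb}(\wt + s\ee_{\grb}) = \sum_{s=1}^{k} r_{\grb}(\wt + s\ee_{\grb}) \equiv \sum_{s=1}^{k}(a_{\grb} + s) \equiv k a_{\grb} + \binom{k+1}{2}$, and the two counterclockwise cases are identical except that $l_i(\wt)\equiv r_i(-\wt)$ contributes the coordinate $-a_{\grb}$; altogether one recovers the exponents $\pm k a_{\grb} - \binom{k}{2}$ and $\pm k a_{\grb} + \binom{k+1}{2}$ appearing in \eqref{eq:tauonthickcap} and \eqref{eq:tauonthickcup}. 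For the crossing, $v_{i,j}$ is constant, so the double sum equals $k\ell\, v_{\grb\bullet}$, which is $k\ell$ exactly when $\bullet$ is the Dynkin node $\grb + 1$, i.e.~$\bullet = \blb$, and $0$ otherwise, giving \eqref{eq:tauonthickcrossing}.

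The final assertion is then immediate bookkeeping. For uni-colored morphisms $\bullet = \grb$, so $v_{\grb\grb}=0$ and the thick crossing picks up no sign under $\tau$, hence none under $\tau^2$. Applying $\tau$ twice to a thick cap or cup multiplies the two signs just computed, where in the second application $\wt$ is replaced by $-\wt$ (so $a_{\grb}$ by $-a_{\grb}$), and in every case the product is trivial; $\tau^2$ acts by $(-1)^{2|\parti|}=1$ on the thick dot $\Schur[\parti]$ by \eqref{eqn:tauonthickdot}; and $\tau^2$ fixes each merge/split $2$-morphism since \eqref{eq:tauonMS} carries no signs. As $\tau^2$ also fixes every object and $1$-morphism, it follows that $\tau^2$ restricts to the identity on the sub-$2$-category generated by uni-colored thick morphisms. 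I do not expect a genuine obstacle here: the only thing demanding care is tracking how the ambient weight shifts by multiples of $\ee_{\grb}$ (or $\ee_{\bullet}$) as one traverses a thick strand, and invoking \eqref{primesign} with the correct shift.
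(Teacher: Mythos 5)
Your proposal is correct and follows the same route the paper intends: substitute the explicit $r_i, l_i, v_{i,j}$ of Theorem~\ref{thm:involution} (and their primed versions via \eqref{primesign} and $l_i \equiv r_i\circ(-1)$) into the sign exponents of Proposition~\ref{prop:tauactsonthicker}, observe that $\langle\ee_{\grb}^\vee,\ee_{\grb}\rangle=2$ keeps the parity of $\langle\ee_{\grb}^\vee,-\rangle$ constant along the relevant lines so each sum reduces to an arithmetic progression in the $\grb$-th coordinate, and then check $\tau^2$ on the remaining uni-colored generators (dots via \eqref{eqn:tauonthickdot}, merges/splits via \eqref{eq:tauonMS}). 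The paper leaves this computation implicit (hence the inline $\qed$); your write-up supplies exactly the bookkeeping it elides.
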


Theorem \ref{thm:involution} also extends to thick calculus, 
after introducing the relevant $2$-categories. 
Note that for any weight $\wt$, 
we have $\EE_i^{(k)}\FF_i^{(k)}\one_{\wt} = \one_{\wt}\EE_i^{(k)}\FF_i^{(k)}\one_{\wt}$. 

\begin{defn}\label{D:cXX-2cat}
Fix a weight $\wt$. 
Let $\cXX_q(\glm) \one_\wt$ denote the full $2$-subcategory of $\cUU_q(\glm)$ generated 
by the $1$-endomorphisms $\EE_i^{(k)}\FF_i^{(k)}\one_{\wt}$ and $\FF_i^{(k)}\EE_i^{(k)}\one_{\wt}$, 
and their grading shifts. 
Let $\cXX_q(\glm)$ be the full $2$-subcategory containing all $\cXX_q(\glm) \one_\wt$ as $\wt$ varies.
\end{defn}

As before, each $\cXX_q(\glm) \one_{\wt}$ is a monoidal category 
and there is no interaction between $\cXX_q(\glm) \one_{\wt}$ and $\cXX_q(\glm) \one_{\wt'}$ 
for $\wt \ne \wt'$.

\begin{cor}
	\label{cor:thickinvolution}
Let $\tau$ be as in Theorem \ref{thm:involution}. 
The extension of $\tau$ to $\cUU_q(\glm)$ restricts to an involution on $\cXX_q(\glm)$. 
\end{cor}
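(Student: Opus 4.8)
The plan is to reduce to Theorem \ref{thm:wheninvolution} via an idempotent-completion argument. By Proposition \ref{prop:tauactsonthicker}, the chosen $\tau$ of Theorem \ref{thm:involution} extends to a (unique) $2$-functor on $\cUU_q(\glm)$, and this extension permutes the generating $1$-morphisms of $\cXX_q(\glm)$, sending $\EE_i^{(k)}\FF_i^{(k)}\one_\wt \leftrightarrow \FF_i^{(k)}\EE_i^{(k)}\one_{2\wtn-\wt}$; hence $\tau(\cXX_q(\glm)) \subseteq \cXX_q(\glm)$, and it remains to show that $\tau^2$ fixes every $2$-morphism of $\cXX_q(\glm)$ (it automatically fixes every object and $1$-morphism).

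First I would record that every $1$-morphism of $\cXX_q(\glm)$ is a direct summand of a $1$-morphism in the $\Z$-additive closure (Definition \ref{def:Zadd}) of $\XX_q(\glm)$, via inclusion and projection $2$-morphisms that are built \emph{only} from merge/split $2$-morphisms and ``thin'' single-colour generators (dots, cups, caps, new bubbles, and uni-colored crossings) --- in particular, no differently-colored crossings appear. Indeed, a generating $1$-morphism $\EE_i^{(k)}\FF_i^{(k)}\one_\wt$ of $\cXX_q(\glm)$ is a direct summand of $(\EE_i\FF_i\one_\wt)^{\circ k}$, a $1$-morphism of $\XX_q(\glm)\one_\wt$: one uses the full merge/split maps of \eqref{eq:explode} to pass between $\EE_i^{(k)}$ (resp.\ $\FF_i^{(k)}$) and $\EE_i^k$ (resp.\ $\FF_i^k$), and the $\mathfrak{sl}_2$ decompositions \eqref{eq:EF}, \eqref{eq:FE} --- realized by the Sto\v{s}i\'{c} formulae \eqref{eq:Stosic}, which involve only colour-$i$ cups, caps, dots and crossings (and bubbles) --- to straighten $(\EE_i\FF_i\one_\wt)^{\circ k}$ and locate the summand $\EE_i^{(k)}\FF_i^{(k)}\one_\wt$. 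The same holds for $\FF_i^{(k)}\EE_i^{(k)}\one_\wt$, and a general $1$-morphism of $\cXX_q(\glm)$ is a composite of such single-colour blocks, so tensoring and composing the per-block data gives the claim. (No differently-colored crossing is ever needed, since the blocks are decomposed in place, one colour at a time.)

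Next I would note that $\tau^2$ acts as the identity on all of the structure $2$-morphisms just described: it fixes merge/split generators by \eqref{eq:tauonMS}; it fixes new bubble generators since $e_r \leftrightarrow h_r$ is an involution of $\Sym$; it fixes dots since $\tau$ scales them by $-1$; it fixes uni-colored crossings since $v_{i,i} \equiv 0$ by \eqref{samecrossingsign}; and it fixes cups and caps because our choice $l_i(\wt) = r_i(-\wt)$ makes $\tau^2$ scale a cup or cap in weight $\wtn+\wt$ by $(-1)^{r_i(\wt)+l_i(-\wt)} = (-1)^{2 r_i(\wt)} = 1$ (cf.\ the proof of Lemma \ref{lem:inv?}); likewise for their thick versions by Corollary \ref{cor:ourthicktau}. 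Consequently, given any $2$-morphism $f \colon X \to Y$ in $\cXX_q(\glm)$, pick $\widetilde X, \widetilde Y$ in the $\Z$-additive closure of $\XX_q(\glm)$ together with inclusions $\iota_X \colon X \to \widetilde X$, $\iota_Y \colon Y \to \widetilde Y$ and projections $\pi_X, \pi_Y$ as above (so $\pi_X\circ\iota_X = \id_X$, $\pi_Y\circ\iota_Y = \id_Y$), and set $\widetilde f := \iota_Y \circ f \circ \pi_X$, so that $f = \pi_Y \circ \widetilde f \circ \iota_X$. Since $\XX_q(\glm)$ is a \emph{full} $2$-subcategory of $\UU_q(\glm) \subset \cUU_q(\glm)$ and $\widetilde X, \widetilde Y$ lie in its $\Z$-additive closure, the morphism $\widetilde f$ is a $2$-morphism of that closure, on which $\tau^2$ is the identity by Theorems \ref{thm:wheninvolution} and \ref{thm:involution}. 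Therefore $\tau^2(f) = \tau^2(\pi_Y)\circ\tau^2(\widetilde f)\circ\tau^2(\iota_X) = \pi_Y \circ \widetilde f \circ \iota_X = f$, as desired.

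The main obstacle is the first step: verifying that the reduction of an arbitrary $1$-morphism of $\cXX_q(\glm)$ to a summand of an $\XX_q(\glm)$-object can always be carried out with colour-local structure maps. This is folklore in the categorified $\mathfrak{sl}_2$ calculus, but checking it cleanly requires tracking intermediate weights through the Sto\v{s}i\'{c} formulae \eqref{eq:Stosic} (at each step one of \eqref{eq:EF}, \eqref{eq:FE} applies, according to the sign of $\langle \alpha_i^\vee, \wt'\rangle$). A more hands-on alternative would be to adapt the coloured-matching-diagram argument from the proof of Theorem \ref{thm:wheninvolution} directly to the thick calculus, eliminating uni-colored thick crossings via Corollary \ref{cor:ourthicktau} and deducing thick analogues of the graph Reidemeister moves gRII, gRIII from the thick (co)associativity and fork-slide relations \eqref{eq:assoc}, \eqref{eq:bypass}; the argument above seems cleaner.
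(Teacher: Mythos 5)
Your overall strategy coincides with the paper's: reduce the action of $\tau^2$ on an arbitrary $2$-morphism of $\cXX_q(\glm)$ to its action on the full subcategory $\XX_q(\glm)$ --- where Theorems~\ref{thm:wheninvolution} and \ref{thm:involution} already give $\tau^2=\id$ --- by factoring through inclusion and projection $2$-morphisms built only from uni-colored thick generators, on which $\tau^2$ is the identity by Corollary~\ref{cor:ourthicktau}. Your closing observation that $\widetilde f := \iota_Y \circ f \circ \pi_X$ lives in the additive closure of the \emph{full} subcategory $\XX_q(\glm)$, and is therefore fixed by $\tau^2$, matches the paper's reasoning.

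The gap is in the first step, the one you flag as the ``main obstacle.'' You assert, with no restriction on the weight $\wt$, that $\EE_i^{(k)}\FF_i^{(k)}\one_{\wt}$ is a direct summand of $(\EE_i\FF_i\one_{\wt})^{\otimes k}$ realized by merge/split and uni-colored crossings, but you give no proof, and the Sto\v{s}i\'{c} formulae do not straightforwardly produce the needed degree-zero dual set. The paper does \emph{not} make this unrestricted assertion. It instead performs a two-step, sign-sensitive reduction. When $\langle\alpha_i^\vee,\wt\rangle > 0$, equation~\eqref{eq:StosicEF} writes $\id$ of $\EE_i^{(k)}\FF_i^{(k)}\one_{\wt}$ as a sum of uni-colored maps factoring through $\FF_i^{(\ell)}\EE_i^{(\ell)}\one_{\wt}$ for $0\le\ell\le k$, and dually via \eqref{eq:StosicFE} when $\langle\alpha_i^\vee,\wt\rangle < 0$. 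Only after this reduction --- so only for $\EE_i^{(k)}\FF_i^{(k)}\one_{\wt}$ when $\langle\alpha_i^\vee,\wt\rangle\le 0$, and $\FF_i^{(k)}\EE_i^{(k)}\one_{\wt}$ when $\langle\alpha_i^\vee,\wt\rangle\ge 0$ --- does the paper factor through the tensor power $(\EE_i\FF_i\one_{\wt})^{\otimes k}$ (resp.\ $(\FF_i\EE_i\one_{\wt})^{\otimes k}$). The sign condition is essential to the justification: in that weight regime the divided-power object is \emph{indecomposable} with $\End^0 = \K\cdot\id$, it is the top term when $(e_if_i 1_{\wt})^k$ is expanded in the $\sln[2]$-string canonical basis, and hence a nonzero degree-zero composite $p\circ\iota$ of the uni-colored candidate maps is automatically an isomorphism. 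In your unrestricted setting, $\EE_i^{(k)}\FF_i^{(k)}\one_{\wt}$ is decomposable when $\langle\alpha_i^\vee,\wt\rangle>0$, its degree-zero endomorphism ring is strictly larger than $\K$, and the same line of reasoning does not go through. Whether or not your universal summand claim is true, you have not proved it; inserting the paper's Sto\v{s}i\'{c} pre-reduction repairs the proof without altering anything else in your argument.
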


\begin{proof}
Fix $\wt \in \Z^m$ and consider the monoidal category $\cXX_q(\glm) \one_{\wt}$, 
which is generated by $\EE_i^{(k)} \FF_i^{(k)} \one_{\wt}$ and $\FF_i^{(k)}\EE_i^{(k)} \one_{\wt}$. 
Note that if $a_i - a_{i+1} > 0$, 
then we can use the Sto\v{s}i\'{c} formula \eqref{eq:StosicEF} to write the identity morphism 
of $\EE_i^{(k)} \FF_i^{(k)} \one_{\wt}$ as a linear combination of morphisms 
that factor through the objects $\FF_i^{(\ell)} \EE_i^{(\ell)} \one_{\wt}$ for $0 \leq \ell \leq k$. 
(The objects being factored through cannot be rewritten in the same way.)
Similarly, if $a_i - a_{i+1} < 0$ then we can use \eqref{eq:StosicFE} 
to write the identity morphism of $\FF_i^{(k)} \EE_i^{(k)} \one_{\wt}$
as a linear combination of morphisms that factor through $\EE_i^{(\ell)} \FF_i^{(\ell)} \one_{\wt}$ for $0 \leq \ell \leq k$.
As observed in Corollary \ref{cor:ourthicktau}, 
$\tau^2$ acts as the identity on all uni-colored diagrams,
hence on tensor products of uni-colored diagrams. 
Since the diagrams in the Sto\v{s}i\'{c} formula are uni-colored,
we see that $\tau^2$ will equal the identity if and only if it 
acts as the identity the full subcategory generated 
by $\EE_i^{(k)} \FF_i^{(k)} \one_{\wt}$ for $a_i - a_{i+1} \leq 0$ 
and $\FF_i^{(k)} \EE_i^{(k)} \one_{\wt}$ for $a_i - a_{i+1} \geq 0$. 

Next, we observe that when $a_i - a_{i+1} \leq 0$ 
the identity morphism of $\EE_i^{(k)} \FF_i^{(k)} \one_{\wt}$
factors through the object $(\EE_i \FF_i \one_{\wt})^{\otimes k}$. 
This follows by observing that, in this case, 
$\EE_i^{(k)} \FF_i^{(k)} \one_{\wt}$ is a summand of $(\EE_i \FF_i \one_{\wt})^{\otimes k}$
(which can be confirmed using Theorem \ref{thm:KL}
by expressing $(e_i f_i \one_{\wt})^{k} \in \ZdU(\glm)$ in terms of the 
canonical basis for this $\sln[2]$ string).
Alternatively, this can be done explicitly, e.g.~when $a_i - a_{i+1} \leq 0$ 
and $k=3$, we have
\[
\begin{tikzpicture}[anchorbase,scale=.75]
\draw[ultra thick,green,->] (.5,1) to (.5,1.5) node[above=-2pt]{\scs$3$};
\draw[ultra thick,green] (2,1) to (2,1.5) node[above=-2pt]{\scs$3$};
	\draw[thick,green,directed=.5] (.5,-1) to [out=150,in=270] (0,0) to [out=90,in=210] 
		node[black,pos=.5]{$\bullet$} node[black,pos=.5,left=-1pt]{\scs$2$} (.5,1);
	\draw[thick,green,directed=.5] (.5,-1) to [out=90,in=270] (1,0) 
		to [out=90,in=270] node[black,pos=.75]{$\bullet$} (.5,1);
	\draw[thick,green,directed=.5] (.5,-1) to [out=30,in=270] (2,0) to [out=90,in=330] (.5,1);
	\draw[thick,green,rdirected=.5] (2,-1) to [out=150,in=270] (.5,0) to [out=90,in=210] (2,1);
	\draw[thick,green,rdirected=.5] (2,-1) to [out=90,in=270] 
		node[black,pos=.25]{$\bullet$} (1.5,0) to [out=90,in=270] (2,1);
	\draw[thick,green,rdirected=.5] (2,-1) to [out=30,in=270] 
		node[black,pos=.5]{$\bullet$} node[black,pos=.5,right=-1pt]{\scs$2$}
		(2.5,0) to [out=90,in=330] (2,1);
\draw[ultra thick,green] (.5,-1) to (.5,-1.5) node[below=-2pt]{\scs$3$};
\draw[ultra thick,green,->] (2,-1) to (2,-1.5) node[below=-2pt]{\scs$3$};
\node at (2.75,1){\small$\wt$};
\end{tikzpicture}
=
\begin{tikzpicture}[anchorbase,scale=.75]
\draw[ultra thick,green,->] (0,-1) to (0,1) node[above=-2pt]{\scs$3$};
\draw[ultra thick,green,->] (1,1) to (1,-1) node[below=-2pt]{\scs$3$};
\node at (1.375,.5){\small$\wt$};
\end{tikzpicture}
\]
and similar formulae (for all $k\geq 2$) can be deduced from \eqref{extendedreln1}.
Similarly, when $a_i - a_{i+1} \geq 0$ 
the identity morphism of $\FF_i^{(k)} \EE_i^{(k)} \one_{\wt}$
factors through the object $(\FF_i \EE_i \one_{\wt})^{\otimes k}$. 
Again, since $\tau^2$ is the identity on tensor products of uni-colored diagrams, 
we see that $\tau^2$ is the identity if and only if it acts as the identity on the full subcategory 
generated by $(\EE_i \FF_i \one_{\wt})^{\otimes k}$ for $a_i - a_{i+1} \leq 0$
and $(\FF_i \EE_i \one_{\wt})^{\otimes k}$ for $a_i - a_{i+1} \geq 0$.
This holds by Theorem \ref{thm:involution}.
\end{proof}

%
\section{Background on decompositions and equivariant categories} \label{sec:equiv}
%

There are two main goals in this section. 
First, we study direct sum decompositions of objects into indecomposables via composition pairings. 
Second, in Section \ref{subsec:equivariantization}, 
we recall the definition and elementary structure theory of equivariant categories. 
We combine these ideas to discuss the classification of
indecomposable objects in equivariant categories. 
This material is well-known to experts, and mostly adapted from the overview given in \cite{EliasFolding}.

Some basic results we will repeatedly quote in the sequel are the following. \begin{itemize}
\item Lemma \ref{lem:basisforV}, 
which gives a basis for certain multiplicity spaces in the presence of known direct sum decompositions.
\item Proposition \ref{prop:SummandViaEV}, 
which states that one can decompose objects in equivariant categories by computing 
eigenspaces of a certain involution $\sigast$ acting on multiplicity spaces.
\item Lemma \ref{L:Ben's-linear-algebra-lemma}, 
which gives an efficient technique for computing the action of $\sigast$ on multiplicity spaces.
\end{itemize}
Lemma \ref{L:Ben's-linear-algebra-lemma} did not previously appear in \cite{EliasFolding}.

We work with $\K$-linear categories over a commutative ring $\K$. 
Until \S \ref{subsec:equivariantizationmixed},
there are very few restrictions we need to place on the base ring $\K$ for the results above to hold, 
so long as one studies objects whose (degree zero) morphism spaces are well-understood as $\K$-modules. 
However, in \cite{EliasFolding} and elsewhere in the literature, 
such results are typically proven under the assumption that $\K$ is an algebraically closed field. 
Where it streamlines the exposition, we do assume that $\K$ is an algebraically closed field.
Supplementary material dealing with more general $\K$ can be found in Appendix \ref{SS:equivariantsupplement}.

\subsection{Composition pairings and endopositive objects} 
	\label{LIP}

We now discuss direct sum decompositions, and tools for studying them in various contexts. 
Throughout, we are motivated by the case of $\K$-linear categories when $\K$ is a field,
but we state results that hold over general base rings. 

\begin{defn}[{\cite[Definition 11.64]{soergelbook}}] \label{def:compositionpairing} 
Let $\K$ be a commutative ring and let $X$ and $Y$ be objects
in a graded additive $\K$-linear category $\Cat$. 
The \emph{na\"{i}ve composition pairing} of $Y$ at $X$ is the $\K$-bilinear pairing
\[ 
\beta_{X,Y} \colon \Hom(Y, X)\times \Hom(X, Y)\longrightarrow \End(X) 
\]
defined by
$\beta_{X,Y}(f,g) = f \circ g$.
\end{defn}

The na\"{i}ve composition pairing is valued in $\End(X)$ rather than in $\K$, 
but we will soon have cause to discuss $\K$-valued bilinear pairings as well.

\begin{defn} \label{defn:dualsets} 
Given sets of morphisms $\{p_1, \ldots, p_d\} \in \Hom(Y, X)$ 
and $\{\iota_1, \ldots, \iota_d\} \in \Hom(X, Y)$, 
we call them \emph{dual sets} with respect to the na\"{i}ve composition pairing $\beta_{X,Y}$ if
\begin{equation} \label{dualsets} \beta_{X,Y}(p_j,\iota_\ell) = \delta_{j\ell} \id_X \, . \end{equation}
Similarly, given $\K$-modules $V$ and $V'$ and a $\K$-bilinear pairing $\beta \colon V \times V' \to \K$, 
sets $\{p_1, \ldots, p_d\} \in V$ and $\{\iota_1, \ldots, \iota_d\} \in V'$ are called \emph{dual sets} if
\[ \beta(p_j, \iota_\ell) = \delta_{j \ell} \, . \]
In either context, 
the \emph{na\"ive rank} of the pairing is the maximal integer $d$ such that one can find dual sets of size $d$. 
\end{defn}

The following lemma is tautological, following from the definition of direct sums and summands.

\begin{lem} \label{lem:dualsets} 
The na\"{i}ve rank of the composition pairing is the multiplicity of $X$ as a direct summand of $Y$. \end{lem}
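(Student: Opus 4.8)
The plan is to prove the equality by establishing the two inequalities separately, using the structural morphisms of a direct sum decomposition in one direction and orthogonal idempotents in the other.

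First I would show that the na\"ive rank is at least the multiplicity. Suppose $X$ occurs as a direct summand of $Y$ with multiplicity $d$, so that there is an isomorphism $Y \cong X^{\oplus d} \oplus Z$ for some object $Z$ of $\Cat$. Composing this isomorphism with the canonical inclusions and projections of the biproduct produces morphisms $\iota_j \colon X \to Y$ and $p_j \colon Y \to X$ for $1 \leq j \leq d$ satisfying $p_j \circ \iota_\ell = \delta_{j\ell}\,\id_X$, i.e.\ a pair of dual sets of size $d$ with respect to $\beta_{X,Y}$ in the sense of Definition \ref{defn:dualsets}. Hence the na\"ive rank of $\beta_{X,Y}$ is $\geq d$.

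Conversely, I would show the na\"ive rank is at most the multiplicity. Given dual sets $\{p_1,\ldots,p_d\} \subset \Hom(Y,X)$ and $\{\iota_1,\ldots,\iota_d\} \subset \Hom(X,Y)$, set $e_j := \iota_j \circ p_j \in \End(Y)$. The relation $p_j \circ \iota_\ell = \delta_{j\ell}\,\id_X$ shows that the $e_j$ are mutually orthogonal idempotents, each split through $X$ by the maps $p_j$ and $\iota_j$. In a Karoubian category — the setting relevant to the sequel, to which the general case reduces by passing to $\Kar(\Cat)$ without changing $\Hom$-spaces between objects of $\Cat$ — the idempotent $e := \sum_{j=1}^d e_j$ splits, and $\Im(e) \cong \bigoplus_{j=1}^d \Im(e_j) \cong X^{\oplus d}$ is a direct summand of $Y$ (with complement $\Im(\id_Y - e)$). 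Therefore $X$ occurs as a summand of $Y$ with multiplicity $\geq d$, and combining the two bounds gives the lemma.

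I do not expect any genuine obstacle: both inequalities follow immediately from unwinding the definitions of biproducts, dual sets, and multiplicity, which is precisely why the statement is tautological. The only point meriting a sentence of care is the appeal to the splitting of idempotents in the second direction, which is automatic in the Krull--Schmidt and Karoubian categories appearing later in the paper, and can otherwise be arranged by working in the Karoubi envelope.
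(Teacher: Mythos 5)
Your proof is correct and follows essentially the same (tautological) route as the paper's one-sentence argument: dual sets are precisely the orthogonal inclusion/projection data for $d$ copies of $X$ inside $Y$, and conversely a decomposition $Y \cong X^{\oplus d} \oplus Z$ hands you dual sets for free. The only place you go beyond the paper is the care you take over whether $\id_Y - \sum_j e_j$ splits — a genuinely subtle point in a non-Karoubian additive category (a split idempotent need not have a split complement) — but since the categories in play in this paper are Karoubian (or one reads ``multiplicity'' as the maximal number of orthogonal retracts, which sidesteps the issue), the paper treats it as invisible, and your remark about passing to $\Kar(\Cat)$ resolves it cleanly.
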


\begin{proof} 
Dual sets give (orthogonal) projection and inclusion maps for $d$ copies of $X$ appearing as summands within $Y$. 
\end{proof}

\begin{rem} \label{rem:findrankd} 
For a $\K$-valued bilinear pairing $\beta$, 
if one finds elements $\{p_1, \ldots, p_d\}$ and $\{\iota_1, \ldots, \iota_d\}$ for which the $d \times d$ matrix
with entries $\beta(p_j,\iota_\ell)$ has invertible determinant in $\K$, 
the na\"{i}ve rank of the pairing is at least $d$. 
This follows using standard techniques in linear algebra (e.g.~Cramer's rule) to find
$\K$-linear combinations $\{p_1', \ldots, p_d'\}$ of the original elements $\{p_j\}$ 
which are dual to $\{\iota_\ell\}$. \end{rem}

One drawback of the na\"{i}ve composition pairing is that it is valued in the $\K$-algebra $\End(X)$.  
When $\End(X)$ is a (graded) local ring with (homogeneous) 
maximal ideal $J(X)$, then $\End(X)/J(X)$ is a division algebra. 
In this case we can use techniques of linear algebra over division algebras.

\begin{defn}[{\cite[Definition 11.70]{soergelbook}}]\label{def:localcompositionpairing}
Let $\K$ be a commutative ring and let $X$ and $Y$ be objects
in a graded additive $\K$-linear category $\Cat$. 
Suppose that $\End(X)$ is a (graded) local ring with (homogeneous) maximal ideal $J(X)$.
The \emph{local composition pairing},
which we also denote $\beta_{X,Y}$, is defined in the same way as the na\"{i}ve composition pairing
except that one takes the image of $f \circ g$ 
in the division algebra $\End(X)/J(X)$.
\end{defn}

We point out the following stronger result in this case. 

\begin{lem}[{\cite[Corollary 11.71]{soergelbook}}] \label{lem:EMTWlemma}
Let $X$ have (graded) local endomorphism algebra. 
The (graded) rank\footnote{One can make sense of the rank of a bilinear pairing valued in a division ring.} 
of the local composition pairing $\beta_{X, Y}$
is equal to the (graded) multiplicity of $X$ as a summand of $Y$. \qed
\end{lem}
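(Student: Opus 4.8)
The plan is to deduce this from Lemma \ref{lem:dualsets}, which already identifies the na\"{i}ve rank of the $\End(X)$-valued composition pairing $\beta_{X,Y}$ of Definition \ref{def:compositionpairing} with the multiplicity of $X$ as a summand of $Y$. Hence it suffices to prove that the (graded) rank of the local composition pairing from Definition \ref{def:localcompositionpairing} agrees with the na\"{i}ve rank of Definition \ref{defn:dualsets}; the lemma then follows by passing between dual sets for the two pairings. Throughout, write $D := \End(X)/J(X)$, which is a (graded) division ring by the standing hypothesis on $X$.

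One inequality is immediate. If $\{p_1,\dots,p_d\} \subset \Hom(Y,X)$ and $\{\iota_1,\dots,\iota_d\} \subset \Hom(X,Y)$ are homogeneous dual sets for $\beta_{X,Y}$, so that $p_j \circ \iota_\ell = \delta_{j\ell}\id_X$, then reducing modulo $J(X)$ gives elements of the same degrees satisfying the same equations in $D$, hence dual with respect to the local pairing. By Lemma \ref{lem:dualsets} this shows that the graded rank of the local pairing is at least the na\"{i}ve graded rank, which equals the graded multiplicity of $X$ in $Y$.

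The substantive direction is that the na\"{i}ve rank is at least the local rank, and the key input is a matrix lifting argument over the local ring $\End(X)$. Starting from homogeneous $p_j \in \Hom(Y,X)$, $\iota_\ell \in \Hom(X,Y)$ whose composites reduce to dual sets in $D$ (equivalently, using Remark \ref{rem:findrankd}, whose composite matrix is invertible over $D$), form $M := (p_j\circ\iota_\ell) \in M_d(\End(X))$. Since $\End(X)$ is local, $M_d(J(X))$ is the Jacobson radical of $M_d(\End(X))$ with quotient $M_d(D)$; as the image of $M$ in $M_d(D)$ is invertible, $M$ itself is invertible in $M_d(\End(X))$, and $M^{-1}$ is again a homogeneous matrix whose entries have the degrees dictated by those of the $p_j$ and $\iota_\ell$. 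Setting $\iota'_\ell := \sum_k \iota_k \circ (M^{-1})_{k\ell}$ then yields $p_j \circ \iota'_\ell = \sum_k M_{jk}(M^{-1})_{k\ell} = \delta_{j\ell}\id_X$, i.e.~genuine homogeneous dual sets for $\beta_{X,Y}$ of size $d$. Combining the two inequalities proves the statement; cf.~\cite[Corollary 11.71]{soergelbook}.

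The only point requiring real care is the graded bookkeeping: one must check that the rank of a pairing valued in a graded division ring is a well-defined Laurent polynomial (tracking, for each $n$, the multiplicity of $\qsh^n X$), and that the matrix $M$ above is homogeneous in the sense that its $(j,\ell)$ entry lies in a single graded component of $\End(X)$, so that $M^{-1}$ remains homogeneous and the corrected dual sets $\{p_j\},\{\iota'_\ell\}$ are homogeneous of matching degrees. This is routine once conventions are fixed; the ungraded content is just the familiar fact that the multiplicity of an indecomposable object with local endomorphism ring is detected by the local composition pairing.
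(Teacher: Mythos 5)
Your proof is correct. Note, however, that the paper does not actually supply a proof of this lemma: it is cited directly from \cite[Corollary 11.71]{soergelbook} and given a \qed, so there is nothing in the paper itself to compare against. The argument you give—reducing to Lemma \ref{lem:dualsets} via a matrix-lifting step, using that a matrix over the local ring $\End(X)$ is invertible if and only if its image in $M_d(\End(X)/J(X))$ is invertible—is the standard proof of this statement, and is essentially the argument given in the cited reference (which in turn packages the usual Krull--Schmidt/idempotent-lifting technique).

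One cosmetic remark on the graded bookkeeping you flag: the cleanest way to organize it is to observe that, after fixing degrees $k_1,\dots,k_d$ and taking $p_j \in \Hom^{-k_j}(Y,X)$, $\iota_\ell \in \Hom^{k_\ell}(X,Y)$, the matrix $M = (p_j \circ \iota_\ell)$ is precisely a degree-zero endomorphism of $\bigoplus_j \qsh^{k_j} X$. The graded Jacobson radical of $\End(\bigoplus_j \qsh^{k_j} X)$ is then the matrix ideal with entries in appropriate graded pieces of $J(X)$, and invertibility modulo that radical implies invertibility; this replaces the ad hoc "homogeneous matrix" language with the intrinsic fact that $\bigoplus_j \qsh^{k_j} X$ again has a (graded) semiperfect endomorphism ring. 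The conclusion and the degree accounting are unchanged.
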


\begin{rem} 
This lemma says that we need not find true dual sets to find summands of $Y$, 
but need only find sets which are dual modulo the maximal ideal of $\End(X)$. 
This is analogous to classical results involving idempotent lifting 
modulo the Jacobson radical \cite[Proposition 11.69]{soergelbook}. 
Those results require the category to be linear over a complete ring $\K$; 
however, as shown in loc.~cit., 
these assumptions can be omitted when the idempotent is factored 
as a composition of inclusion and projection maps. 
\end{rem}

Rather than working modulo the maximal ideal (or when this is not necessarily possible), 
we can instead focus on situations where the composition pairing is valued in $\K$ rather than in $\End(X)$. 
We thus introduce the following notion.

\begin{defn} 
Let $\K$ be a commutative ring and $\Cat$ be a graded additive $\K$-linear category. 
An object $X \in \Cat$ will be called \emph{endopositive} if the graded $\K$-algebra $\End(X)$ 
is supported in degrees $\ge 0$ and $\End^0(X) = \K \cdot \id_X$. \end{defn}

If $\K$ is a field, then any endopositive object $X$ is indecomposable in $\Cat$, 
and even in $\Kar(\Cat)$, since the endomorphism ring of $X$ is graded local. 
In general, $\K$ need not be a domain, or local, so $\End(X)$ might have nontrivial idempotents. 
The reader should view endopositive objects as a stand-in for indecomposable objects 
when working in $\K$-linear categories over general commutative rings. 
In particular, endopositive objects become indecomposable after base change to a field.

For any endopositive object, computing composition pairings one degree at a time gives 
the following $\K$-valued pairing.

\begin{defn} 
Let $\Cat$ be a graded additive $\K$-linear category 
over a commutative ring $\K$ and let $X$ be an endopositive object. 
The \emph{($\K$-valued) graded composition pairing of $Y$ at $\qsh^k X$} 
is the following restriction of the na\"{i}ve composition pairing:
\begin{equation}
\beta^k_{X, Y} \colon \Hom^{-k}(Y, X)\times \Hom^{k}(X, Y) 
	\longrightarrow \End^0(X) = \K \cdot \id_{X} \, .
\end{equation} \end{defn}

\begin{cor} \label{cor:gLCP} 
Let $X$ be endopositive. 
The na\"{i}ve rank of $\beta_{X, Y}^k$ is equal to the multiplicity of $\qsh^k X$ as a summand of $Y$. \qed \end{cor}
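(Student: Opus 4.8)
The plan is to deduce this directly from Lemma~\ref{lem:dualsets}, by identifying the $\K$-valued pairing $\beta^k_{X,Y}$ with the na\"ive composition pairing of $Y$ at the object $\qsh^k X$. First I would record that $\qsh^k X$ is again endopositive: by the conventions of Definition~\ref{def:Zadd} there is a canonical isomorphism of graded $\K$-algebras $\End(\qsh^k X)\cong\End(X)$ (the two grading shifts cancel), so $\End(\qsh^k X)$ is supported in non-negative degrees with $\End^0(\qsh^k X)=\K\cdot\id_{\qsh^k X}$. In particular the na\"ive composition pairing $\beta_{\qsh^k X,Y}$ of Definition~\ref{def:compositionpairing} takes values in $\End(\qsh^k X)$, and I fix once and for all the identification $\End^0(\qsh^k X)=\K\cdot\id_{\qsh^k X}\cong\K$.

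Next I would unwind the two notions of dual set. A decomposition exhibiting $d$ copies of $\qsh^k X$ as a summand of $Y$ is the same as a pair of dual sets $\{p_1,\dots,p_d\}\subset\Hom^0(Y,\qsh^k X)$ and $\{\iota_1,\dots,\iota_d\}\subset\Hom^0(\qsh^k X,Y)$ for $\beta_{\qsh^k X,Y}$ in the sense of Definition~\ref{defn:dualsets}, i.e.\ with $p_j\circ\iota_\ell=\delta_{j\ell}\,\id_{\qsh^k X}$, and $p_j\circ\iota_\ell$ automatically lies in $\End^0(\qsh^k X)$. By Definition~\ref{def:Zadd} one has $\Hom^0(Y,\qsh^k X)=\Hom^{-k}(Y,X)$ and $\Hom^0(\qsh^k X,Y)=\Hom^{k}(X,Y)$, and composition carries $\Hom^{-k}(Y,X)\times\Hom^{k}(X,Y)$ into $\End^0(\qsh^k X)\cong\K\cdot\id$. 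Under the identification above, the condition $p_j\circ\iota_\ell=\delta_{j\ell}\,\id$ becomes precisely $\beta^k_{X,Y}(p_j,\iota_\ell)=\delta_{j\ell}$; that is, $\{p_j\}$ and $\{\iota_\ell\}$ are dual sets for the $\K$-valued pairing $\beta^k_{X,Y}$. Hence the two notions of ``dual set of size $d$'' literally coincide, so $\beta^k_{X,Y}$ and $\beta_{\qsh^k X,Y}$ have the same na\"ive rank.

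Finally, Lemma~\ref{lem:dualsets} applied to the object $\qsh^k X$ identifies the na\"ive rank of $\beta_{\qsh^k X,Y}$ with the multiplicity of $\qsh^k X$ as a direct summand of $Y$, which completes the argument. I do not expect a serious obstacle: the only points requiring a moment's care are checking that the relevant $\Hom$-spaces and compositions sit in the claimed degrees (immediate from Definition~\ref{def:Zadd}) and observing that, because $X$ is endopositive, $\End^0(\qsh^k X)$ is already equal to $\K\cdot\id$, so one need not pass to a residue division algebra as in Lemma~\ref{lem:EMTWlemma} and the na\"ive pairing suffices throughout.
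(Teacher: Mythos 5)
Your proof is correct and fills in the details of what the paper's one-line proof ("This follows from Lemma~\ref{lem:dualsets}") leaves implicit, via exactly the intended route: identifying the degree-$k$ slice $\beta^k_{X,Y}$ with the degree-zero part of the na\"ive composition pairing $\beta_{\qsh^k X,Y}$ through $\Hom^0(Y,\qsh^k X)=\Hom^{-k}(Y,X)$ and $\Hom^0(\qsh^k X,Y)=\Hom^{k}(X,Y)$, and then reading off the multiplicity of $\qsh^k X$ from Lemma~\ref{lem:dualsets}. The observation that $\qsh^k X$ is again endopositive (so the pairing lands in $\K\cdot\id$ and no passage to a residue division ring is needed) is exactly the point that makes the reduction work.
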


\begin{proof} This follows from Lemma \ref{lem:dualsets}. \end{proof}

\begin{rem} \label{rem:mixedInd}
If $X$ is endopositive and $\K$ is a field, then $\End(X)$ is graded local, 
the graded Jacobson radical $J(X)$ is equal to $\End^{>0}(X)$, 
and the inclusion of degree zero endomorphisms induces an isomorphism 
$\K\cdot \id_{X}= \End^0(X)\cong \End(X)/J(X)$. 
In particular, $\beta^k_{X,Y}$ is a graded slice of the local composition pairing 
as in Definition \ref{def:localcompositionpairing}. \end{rem}

Informally, one can view the na\"{i}ve rank of $\beta_{X,Y}^k$ as 
the ``number of independent inclusion maps'' from $\qsh^k X$ to $Y$; 
however, in general there is no canonical subspace of $\Hom(X,Y)$ 
of this dimension spanned by such inclusion maps. 
We now define a canonical quotient of $\Hom(X,Y)$ 
(having this dimension, when $\K$ is a field), 
which can be viewed as the analogue of a
multiplicity space.

\begin{defn}
Let the right and left kernels of $\beta^k_{X, Y}$ be the $\K$-linear subspaces
\begin{align*}
\rker(\beta^k_{X, Y}) 
&:= \{ g \in \Hom^k(X, Y) \mid \beta^k_{X, Y}(f, g)= 0 
	\text{ for all } f\in \Hom^{-k}(Y, X) \} \\
\lker(\beta^k_{X, Y}) 
&:= \{ f \in \Hom^{-k}(Y, X) \mid \beta^k_{X, Y}(f, g)= 0 
	\text{ for all } g\in \Hom^{k}(X, Y) \}
\end{align*}
and set
\[
V^k(X, Y) := \Hom^k(X, Y) \big/ \rker(\beta^k_{X, Y}) \, , \quad
V^{-k}(Y, X) := \Hom^{-k}(Y, X) \big/ \lker(\beta^k_{X, Y}) \, .
\]
The induced pairing
\begin{equation}
\LIP^k_{X, Y} \colon V^{-k}(Y, X)\times V^k(X, Y) \to \K
\end{equation}
is the \emph{non-degenerate graded composition pairing}.
\end{defn}

\begin{rem} 
The notation $V^k(X,Y)$ and $V^{-k}(Y,X)$ does not treat $X$ and $Y$ symmetrically, 
so one must know from context that we consider the composition pairing of $Y$ at $X$, and not vice versa. 
In practice, this distinction will be obvious, as $X$ will be the object which is endopositive. \end{rem}

The following is clear.

\begin{lem} \label{lem:gLIP} 
Let $X$ be endopositive. 
The na\"{i}ve rank of the graded composition pairing (of $Y$ at $\qsh^k X$)
equals the na\"{i}ve rank of the non-degenerate graded composition pairing. 
If $\K$ is a field, this rank is equal to $\dim V^k(X,Y)$. \qed 
\end{lem}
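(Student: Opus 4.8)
The plan is to unwind the definitions and observe that all three quantities in question — the na\"{i}ve rank of $\beta^k_{X,Y}$, the na\"{i}ve rank of $\LIP^k_{X,Y}$, and (over a field) $\dim V^k(X,Y)$ — are standard invariants of a $\K$-bilinear pairing, so the lemma is essentially a bookkeeping exercise.

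First I would address the equality of the two na\"{i}ve ranks. The non-degenerate graded composition pairing $\LIP^k_{X,Y}$ is obtained from $\beta^k_{X,Y}$ precisely by quotienting out the left and right kernels. A set $\{p_1,\dots,p_d\} \subset \Hom^{-k}(Y,X)$ and $\{\iota_1,\dots,\iota_d\} \subset \Hom^{k}(X,Y)$ is dual with respect to $\beta^k_{X,Y}$ (after identifying $\End^0(X) = \K\cdot\id_X$ with $\K$) exactly when their images in $V^{-k}(Y,X)$ and $V^k(X,Y)$ are dual with respect to $\LIP^k_{X,Y}$: the forward implication is immediate, and since a dual set automatically has linearly independent images (if $\sum c_j \bar p_j = 0$ in $V^{-k}(Y,X)$ then pairing against $\iota_\ell$ gives $c_\ell = 0$), the reverse implication also holds by lifting dual elements of the quotients to arbitrary representatives. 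Hence dual sets of size $d$ exist upstairs iff they exist in the quotient, giving equality of na\"{i}ve ranks. This is the content of the first sentence, and it uses only the definitions of $\rker$, $\lker$, $V^{\pm k}$, and dual sets from Definition \ref{defn:dualsets}.

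Next I would handle the field case. When $\K$ is a field, $\LIP^k_{X,Y}$ is a non-degenerate pairing between the finite-dimensional (by the standing hypotheses on the objects considered) $\K$-vector spaces $V^{-k}(Y,X)$ and $V^k(X,Y)$; non-degeneracy is built into the construction since we quotiented by the kernels on both sides. A non-degenerate bilinear pairing between two finite-dimensional vector spaces over a field identifies each with the dual of the other, so in particular $\dim V^{-k}(Y,X) = \dim V^k(X,Y)$, and choosing dual bases exhibits a dual set of this common size, while no larger dual set can exist (dual sets are linearly independent, as noted above). Therefore the na\"{i}ve rank of $\LIP^k_{X,Y}$ equals $\dim V^k(X,Y)$. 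Combining with the previous paragraph finishes the proof.

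I do not expect any real obstacle here; the statement is elementary linear algebra once the definitions are in hand, and the only mild subtlety is the reverse implication in the dual-set comparison, which is handled by the observation that dual sets always lift to dual sets because duality forces linear independence of the images. If one wanted to be careful about finite-dimensionality over a field, one would invoke the standing assumptions (stated at the start of Section \ref{sec:equiv}) that one works with objects whose degree-zero morphism spaces are well-behaved $\K$-modules, so that $\Hom^{\pm k}(\cdot,\cdot)$ and hence the quotients $V^{\pm k}$ are finitely generated.
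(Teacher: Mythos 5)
Your proof is correct and is exactly the elementary unwinding the paper has in mind — the paper supplies no written proof (the \qed follows the statement directly), because the two claims are immediate from the definitions, as you show. The only remark is that your "reverse implication" discussion is slightly more elaborate than needed: once you lift $\bar p_j$, $\bar\iota_\ell$ to arbitrary representatives, $\beta^k_{X,Y}(p_j,\iota_\ell)=\LIP^k_{X,Y}(\bar p_j,\bar\iota_\ell)=\delta_{j\ell}$ by the very definition of the induced pairing, so the linear-independence observation is not actually needed there (though it is useful for the field case).
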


We also make the following observations.

\begin{lem} \label{lem:Vadditive}
Let $X$ be an endopositive object in $\Cat$, and $Y_1, Y_2$ be arbitrary objects. 
The isomorphism $\Hom(X,Y_1 \oplus Y_2) \cong \Hom(X,Y_1) \oplus \Hom(X,Y_2)$ induces an isomorphism 
$V^k(X,Y_1 \oplus Y_2) \cong V^k(X,Y_1) \oplus V^k(X,Y_2)$ for each $k \in \Z$. 
Similarly, $V^{-k}(Y_1 \oplus Y_2,X) \cong V^{-k}(Y_1,X) \oplus V^{-k}(Y_2,X)$. \end{lem}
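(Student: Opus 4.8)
The plan is to show that the decomposition $\Hom(X, Y_1 \oplus Y_2) \cong \Hom(X, Y_1) \oplus \Hom(X, Y_2)$ is compatible with passing to the quotient $V^k(X, -)$, and symmetrically for $V^{-k}(-, X)$. First I would fix $k \in \Z$ and recall that the graded composition pairing $\beta^k_{X, Y}$ has right kernel $\rker(\beta^k_{X, Y}) \subseteq \Hom^k(X, Y)$, so that $V^k(X, Y) = \Hom^k(X, Y)/\rker(\beta^k_{X, Y})$. The whole statement reduces to the claim that, under the canonical isomorphism $\Phi \colon \Hom^k(X, Y_1 \oplus Y_2) \xrightarrow{\cong} \Hom^k(X, Y_1) \oplus \Hom^k(X, Y_2)$ induced by the projections $\pi_i \colon Y_1 \oplus Y_2 \to Y_i$, we have $\Phi\big(\rker(\beta^k_{X, Y_1 \oplus Y_2})\big) = \rker(\beta^k_{X, Y_1}) \oplus \rker(\beta^k_{X, Y_2})$.

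The key computation is the following. Write $Y = Y_1 \oplus Y_2$ with inclusions $\iota_i \colon Y_i \to Y$ and projections $\pi_i \colon Y \to Y_i$, so $\id_Y = \iota_1 \pi_1 + \iota_2 \pi_2$. Given $g \in \Hom^k(X, Y)$, set $g_i := \pi_i \circ g \in \Hom^k(X, Y_i)$, so $\Phi(g) = (g_1, g_2)$. For any $f \in \Hom^{-k}(Y, X)$, writing $f_i := f \circ \iota_i \in \Hom^{-k}(Y_i, X)$, one has
\[
\beta^k_{X, Y}(f, g) = f \circ g = f \circ (\iota_1 \pi_1 + \iota_2 \pi_2) \circ g = f_1 \circ g_1 + f_2 \circ g_2 = \beta^k_{X, Y_1}(f_1, g_1) + \beta^k_{X, Y_2}(f_2, g_2).
\]
Since $f \mapsto (f_1, f_2)$ is exactly the (analogous) isomorphism $\Hom^{-k}(Y, X) \cong \Hom^{-k}(Y_1, X) \oplus \Hom^{-k}(Y_2, X)$, and is in particular a bijection, it follows that $g \in \rker(\beta^k_{X, Y})$ iff $\beta^k_{X, Y_1}(f_1, g_1) + \beta^k_{X, Y_2}(f_2, g_2) = 0$ for all $f_1, f_2$; taking $f_2 = 0$ (resp.\ $f_1 = 0$) and letting $f_1$ (resp.\ $f_2$) range over all of $\Hom^{-k}(Y_1, X)$ (resp.\ $\Hom^{-k}(Y_2, X)$) shows this is equivalent to $g_1 \in \rker(\beta^k_{X, Y_1})$ and $g_2 \in \rker(\beta^k_{X, Y_2})$. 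Hence $\Phi$ identifies $\rker(\beta^k_{X, Y})$ with $\rker(\beta^k_{X, Y_1}) \oplus \rker(\beta^k_{X, Y_2})$, and $\Phi$ descends to the desired isomorphism $V^k(X, Y) \cong V^k(X, Y_1) \oplus V^k(X, Y_2)$ on quotients. The argument for $\lker$ and $V^{-k}$ is verbatim the same with the roles of $f$ and $g$ interchanged.

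There is no real obstacle here — the only thing to be careful about is that the two halves of the pairing decompose simultaneously (i.e.\ one must use that both $f$ and $g$ split along $Y_1 \oplus Y_2$ via mutually dual $\iota_i, \pi_i$), and that endopositivity of $X$ is used only to know that $\End^0(X) = \K \cdot \id_X$, so that the pairing really is $\K$-valued and the notion of right/left kernel makes sense; it plays no further role. I would also remark at the end that this lemma iterates to finite direct sums $Y = \bigoplus_{j} Y_j$ by induction, which is the form in which it will typically be applied. One minor point worth stating explicitly in the write-up: the isomorphism is natural in the sense that it is induced by the canonical additive-category isomorphism on $\Hom$-spaces, so no choices are involved.
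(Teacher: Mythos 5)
Your proof is correct and takes the same route as the paper's: the paper's entire proof is the assertion that the natural $\Hom$-space isomorphism identifies the kernel of the graded composition pairing for $Y_1 \oplus Y_2$ with the direct sum of the kernels for $Y_1$ and $Y_2$, calling this "straightforward to verify." You have simply supplied the verification, via the identity $f \circ g = f\iota_1\pi_1 g + f\iota_2\pi_2 g$ and the observation that $f_1, f_2$ range independently.
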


\begin{proof} It is straightforward to verify that the natural isomorphism of $\Hom$-spaces induces an isomorphism
	\[ \lker(\beta^k_{X,Y_1 \oplus Y_2}) \cong \lker(\beta^k_{X,Y_1}) \oplus \lker(\beta^k_{X,Y_2}). \]
This implies the first statement, and the second is similar. \end{proof}

\begin{nota} For endopositive $X$, 
let $V(X,Y) := \bigoplus_k V^k(X,Y)$ and $V(Y,X) := \bigoplus_k V^{-k}(Y,X)$. \end{nota}

\begin{cor} \label{cor:basisforVwhensumofX} 
Let $X$ be endopositive.
If $Y \cong \bigoplus_{j=1}^d \qsh^{k_j} X$ for some shifts $k_j \in \Z$,
then $V(Y,X)$ (resp.~$V(X,Y)$) is a free graded $\K$-module 
with graded rank $\sum_{j=1}^d q^{k_j}$ (resp.~$\sum_{j=1}^d q^{-k_j}$). 
If one chooses a decomposition as above with projection maps $\{p_j\}$ and inclusion maps $\{\iota_j\}$, 
then $\{p_j\} \subset \Hom(Y,X)$ descends to a basis for $V(Y,X)$ 
and $\{\iota_j\} \subset \Hom(X,Y)$ descends to a basis for $V(X,Y)$. \end{cor}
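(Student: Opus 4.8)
The plan is to reduce Corollary \ref{cor:basisforVwhensumofX} to the preceding results, most directly Lemma \ref{lem:Vadditive} and the endopositivity of $X$, by treating the decomposition $Y \cong \bigoplus_{j=1}^d \qsh^{k_j} X$ as given and computing $V(X,Y)$ and $V(Y,X)$ summand-by-summand. First I would apply Lemma \ref{lem:Vadditive} repeatedly (induction on $d$) to obtain natural isomorphisms $V^k(X,Y) \cong \bigoplus_{j=1}^d V^k(X,\qsh^{k_j} X)$ and $V^{-k}(Y,X) \cong \bigoplus_{j=1}^d V^{-k}(\qsh^{k_j}X,X)$ for each $k \in \Z$, summing over $k$ to get the analogous statements for $V(X,Y)$ and $V(Y,X)$. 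This immediately transfers the basis question to the single-summand case: it suffices to show that for each $j$, the class of $\iota_j$ generates $V(X,\qsh^{k_j}X)$ as a free graded $\K$-module of graded rank $q^{-k_j}$, and dually that the class of $p_j$ generates $V(\qsh^{k_j}X,X)$ of graded rank $q^{k_j}$, with $\LIP_{X,Y}$ respecting this direct sum decomposition.

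Next I would verify the single-summand computation. Since $X$ is endopositive, $\Hom^\bullet(X,X)$ is supported in degrees $\ge 0$ with $\End^0(X) = \K \cdot \id_X$, so $\Hom^{-k_j}(X,\qsh^{k_j}X) = \Hom^{0}(X,X) = \K\cdot\id_X$ concentrated in the single degree $k_j$ (i.e.\ as a graded $\K$-module, $\Hom(X,\qsh^{k_j}X)$ has a piece $\K\cdot\id_X$ sitting in degree $k_j$, contributing $q^{-k_j}$ to the graded rank of the relevant quotient), while the strictly-lower-degree parts vanish. The graded composition pairing $\beta^{k}_{X,\qsh^{k_j}X}$ pairs $\Hom^{-k}(\qsh^{k_j}X,X)$ with $\Hom^{k}(X,\qsh^{k_j}X)$ and lands in $\End^0(X)=\K$; the only $k$ for which a nonzero pairing is possible given endopositivity is $k = k_j$, where the pairing is $(f,g)\mapsto f\circ g \in \K\cdot\id_X$, and here $\beta^{k_j}(p_j,\iota_j) = p_j\circ\iota_j = \id_X$ shows the pairing is the perfect rank-one pairing on $\K\times\K$. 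For all other $k$ the quotients $V^k$ and $V^{-k}$ vanish because the pairing is zero and everything lies in the kernel — here I would note that endopositivity forces $\Hom^{<0}(X,X)=0$, so one of the two factors in each such $\beta^k$ is already zero, making the left or right kernel everything. Thus $V(X,\qsh^{k_j}X) = \K\cdot[\iota_j]$ in graded degree $k_j$ (graded rank $q^{-k_j}$) and $V(\qsh^{k_j}X,X) = \K\cdot[p_j]$ (graded rank $q^{k_j}$), and $\LIP$ is perfect.

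Finally I would assemble the pieces: under the Lemma \ref{lem:Vadditive} isomorphisms, $\{[\iota_j]\}_{j=1}^d$ maps to the union of the single-element bases for each $V(X,\qsh^{k_j}X)$, hence is a basis for $V(X,Y)$, and dually $\{[p_j]\}$ is a basis for $V(Y,X)$; summing the graded ranks gives $\sum_j q^{-k_j}$ and $\sum_j q^{k_j}$ respectively. I do not anticipate a serious obstacle here — the statement is essentially bookkeeping once Lemma \ref{lem:Vadditive} is in hand — but the one point requiring a little care is being precise about gradings and shift conventions (the paper's convention is $\qdim(\qsh^k V) = q^k \qdim(V)$, so $\Hom$ from $X$ into $\qsh^{k_j}X$ picks up the shift as stated, and one must check the signs of the exponents line up with Corollary \ref{cor:gLCP}). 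A second minor subtlety worth a sentence is that over a general commutative ring $\K$ the decomposition of $Y$ is assumed, not derived, so no idempotent-lifting is needed; the naive composition pairing already computes everything because the chosen $\{p_j\},\{\iota_j\}$ are genuine dual sets in the sense of Definition \ref{defn:dualsets}.
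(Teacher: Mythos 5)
Your argument takes essentially the same route as the paper's proof: verify the single-summand case and assemble via Lemma~\ref{lem:Vadditive}, then identify the images of the $\iota_j$ and $p_j$ as bases. One degree-bookkeeping slip worth fixing: since $\Hom(X,\qsh^{k_j}X) = \qsh^{k_j}\End(X)$ by Definition~\ref{def:Zadd}, so $\Hom^m(X,\qsh^{k_j}X) = \End^{m-k_j}(X)$, the identity map sits in degree $k_j$, and the relevant homogeneous piece is $\Hom^{k_j}(X,\qsh^{k_j}X)$, not $\Hom^{-k_j}(X,\qsh^{k_j}X)$ as written; your parenthetical aside (``sitting in degree $k_j$'') already corrects this, so the conclusion stands. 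The paper's proof sidesteps the explicit degree computation by checking only $Y=X$ and then observing that under $\Hom(X,Y)\cong\bigoplus_j\qsh^{k_j}\End(X)$ the identity maps become the chosen inclusions.
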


\begin{proof} 
It is immediate to verify when $Y = X$ that $V(Y,X)$ and $V(X,Y)$ are free of rank $1$ over $\K$, 
spanned by the identity map. 
The general result for $Y = \bigoplus_j \qsh^{k_j} X$ then follows from Lemma \ref{lem:Vadditive}. 
Note that under the isomorphism 
\[ 
\Hom(X,Y) \cong \Hom(X,\bigoplus_j \qsh^{k_j} X) \cong \bigoplus_j \qsh^{k_j} \End(X) 
\] 
the identity maps of $X$ on the right-hand side become the chosen inclusion maps on the left-hand side. \end{proof}

\begin{example}
When pondering Corollary \ref{cor:basisforVwhensumofX}, 
it is important to realize that the space $\Hom(X,Y)$ is typically much larger than $V(X,Y)$. 
Suppose that $X$ is endopositive and $Y \cong \qsh X \oplus \qsh^{-1} X$.
Then, $\Hom^{-1}(X,Y)$ is free over $\K$ of rank $1$ spanned by $\iota_{-1}$
and is isomorphic to $V^{-1}(X,Y)$. 
However, $\Hom^1(X,Y)$ is spanned by $\iota_1$ together with $\iota_{-1} \circ f$ for $f \in \End^2(X)$. 
Since $X$ is endopositive, any term of the form $\iota_{-1} \circ f$ lies in $\rker(\beta^1_{X,Y})$.
Thus, as guaranteed by Corollary \ref{cor:basisforVwhensumofX}, 
we see that $V^{1}(X,Y)$ is free of rank $1$, spanned by the image of $\iota_1$.
\end{example}

This observation will be significant later when we consider equivariance:
an automorphism of $\Cat$ will induce an automorphism of $V^k(X,Y)$, 
but need not preserve the span elements in $\Hom^k(X,Y)$ that descend to a basis. 
There are a number of tools for studying the 
action of an automorphism on multiplicity spaces.
The first is the following, which allows us to work directly with $V(X,Y)$
by identify elements in the left and right kernels.

\begin{lem} \label{lem:kernelisjacobson}
If $X$ is endopositive, then
\[
\rker(X,-) := \bigoplus_k \rker(\beta^k_{X, -})
\quad 
\text{and}
\quad
\lker(-,X) := \bigoplus_k \lker(\beta^k_{X, -})
\]
are left and right ideals in $\Cat$, respectively.
Further, any positive degree endomorphism in $\End(X)$ is in $\rker(X,-)\cap\lker(-,X)$.
\end{lem}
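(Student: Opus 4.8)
The plan is to verify the two claims of Lemma \ref{lem:kernelisjacobson} directly from the definitions, exploiting endopositivity of $X$ at each step. The essential point throughout is that $\End^0(X) = \K \cdot \id_X$ and $\End^{<0}(X) = 0$, so all of the graded composition pairings $\beta^k_{X,-}$ genuinely take values in $\K$, and so the left/right kernels are honest $\K$-submodules to which linear-algebra reasoning applies.

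First I would show that $\rker(X,-) = \bigoplus_k \rker(\beta^k_{X,-})$ is a left ideal, i.e.~that it is stable under post-composition by arbitrary morphisms in $\Cat$. Fix a homogeneous $g \in \rker(\beta^k_{X,Y})$, so $g \colon X \to Y$ has degree $k$ and $f \circ g = 0$ for all $f \in \Hom^{-k}(Y,X)$. Given a homogeneous morphism $h \colon Y \to Z$ of degree $\ell$, I must show $h \circ g \in \rker(\beta^{k+\ell}_{X,Z})$; that is, for any $f' \in \Hom^{-(k+\ell)}(Z,X)$ we need $f' \circ (h \circ g) = 0$. But $f' \circ h \in \Hom^{-k}(Y,X)$, so this is exactly $(f' \circ h) \circ g = 0$ by the defining property of $g$. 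Since a general morphism is a $\K$-linear combination of homogeneous ones and the kernels are $\K$-submodules, this suffices. The argument for $\lker(-,X)$ being a right ideal is the mirror image: if $f \colon Y \to X$ satisfies $f \circ g = 0$ for all $g \in \Hom^{k}(X,Y)$, and $h \colon W \to Y$ is homogeneous, then for any $g' \colon X \to W$ one has $(f \circ h) \circ g' = f \circ (h \circ g') = 0$ because $h \circ g' \in \Hom^\bullet(X,Y)$.

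Second I would prove that any positive-degree endomorphism $\phi \in \End^{k}(X)$ with $k > 0$ lies in $\rker(\beta^k_{X,X}) \cap \lker(\beta^k_{X,X})$, hence in $\rker(X,-) \cap \lker(-,X)$ (taking $Y = X$). For the right-kernel membership: for any $f \in \Hom^{-k}(X,X) = \End^{-k}(X)$ we need $f \circ \phi = 0$; but $\End^{-k}(X) = 0$ since $k > 0$ and $\End(X)$ is supported in non-negative degrees, so $f = 0$ and the product vanishes. The left-kernel membership is identical, pairing $\phi$ against $g \in \Hom^k(X,X) = \End^k(X)$: the product $\phi \circ g$ lands in $\End^{2k}(X)$, but the relevant pairing $\beta^k_{X,X}$ records only the $\End^0(X)$-component, and for $\phi$ to be in $\lker$ we test against all $g$ — again using $\End^{-k}(X) = 0$ on the appropriate side to force the composition testing $\phi$ to be zero. (Concretely: $\phi \in \lker(\beta^k_{X,X})$ means $\phi \circ g = 0$ in $\End^0(X)$ for all $g \in \Hom^k(X,X)$, and since $\phi \circ g \in \End^{2k}(X)$ with $2k > 0$, its degree-zero part is automatically zero.)

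I do not anticipate a serious obstacle here; this is a bookkeeping lemma whose content is entirely the interplay between the grading support condition defining endopositivity and the fact that the pairings $\beta^k_{X,-}$ are $\K$-valued. The only point requiring a moment's care is keeping the variance straight — which kernel is an ideal on which side — and ensuring that, when passing from homogeneous to arbitrary morphisms, one genuinely uses that $\rker$ and $\lker$ are defined as $\K$-submodules (a direct sum over degrees $k$), so $\K$-linearity of composition closes the argument. I would state the proof in roughly the three-paragraph structure above, spending one short paragraph on each claim.
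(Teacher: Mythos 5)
Your proof is correct in its essentials and follows the same route as the paper's: verify ideal closure directly (a one-line composition check), and dispatch the final claim by observing that $\Hom^{-k}(X,X) = 0$ for $k > 0$. The first paragraph (both ideal-closure arguments) and the $\rker$ half of the second paragraph are fine. The one flaw is the parenthetical at the end of the second paragraph: you write ``$\phi \in \lker(\beta^k_{X,X})$ means $\phi \circ g = 0$ in $\End^0(X)$ for all $g \in \Hom^k(X,X)$,'' but under the paper's conventions $\lker(\beta^k_{X,X}) \subset \Hom^{-k}(X,X)$, so an element $\phi \in \End^k(X)$ of positive degree $k$ can only sit inside $\lker(\beta^{-k}_{X,X})$, and the condition to verify is $\phi \circ g = 0$ for all $g \in \Hom^{-k}(X,X)$. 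Since that testing space is zero — the same fact that handled $\rker$ — the condition is vacuous. The degree count you invoke ($\phi \circ g \in \End^{2k}(X)$ with $2k > 0$, hence no degree-zero part) is therefore not the right mechanism: $\phi \circ g$ would have degree zero, not $2k$, if $g$ were taken from the correct degree, and anyway there are no such $g$. You do state the correct reason earlier in that same sentence (``using $\End^{-k}(X) = 0$ on the appropriate side''), so nothing is actually missing from the argument; the parenthetical should simply be replaced by the observation that $\lker(\beta^{-k}_{X,X})$ is all of $\Hom^k(X,X)$ because the testing space $\Hom^{-k}(X,X)$ vanishes.
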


\begin{proof} 
The statement about ideals is true more generally for any (endopositive) object $X$.
To see that $\rker(X,-)$ is a left ideal, we must show that given $g \in \rker(X,Y) \subset \Hom(X,Y)$ 
and any morphism $f \colon Y \to Z$, the composition $f \circ g$ is in $\rker(X,Z)$. 
Clearly, it suffices to consider homogenous $g$ and $f$.
Thus, the claim is that if $g \in \rker(\beta^k_{X, Y})$ 
and $f \in \Hom^{\ell}(Y,Z)$, then $f \circ g \in \rker(\beta^{k+\ell}_{X, Y})$, 
which is immediate from the definition of $\rker(\beta^k_{X, Y})$.
The argument for $\lker(-,X)$ is analogous.
Finally, if $k > 0$ then $\Hom^{-k}(X,X)$ is zero, 
which implies the final result.
\end{proof}

If $(\Cat,\otimes,\one)$ is a monoidal category, 
we can identify further elements in the kernel of $\beta^k_{X,Y}$
using the actions of the endomorphism algebra $\End(\one)$ 
on homomorphism spaces in $\Cat$ via tensor product on the left and right.

\begin{lem}
	\label{lem:EndKer}
Let $X$ be an endopositive object in a monoidal category $\Cat$.
For $\ell > 0$ and any object $Y$, the left or right action by $\End^{\ell}(\one)$ 
sends $\Hom^{k-\ell}(X, Y)$ to the right kernel of $\beta^{k}_{X, Y}$. 
\end{lem}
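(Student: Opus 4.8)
\textbf{Proof plan for Lemma \ref{lem:EndKer}.}

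The plan is to reduce the claim to the fact (established in Lemma \ref{lem:kernelisjacobson}) that the positive-degree part of $\End(X)$ is contained in the right kernel of every composition pairing $\beta^k_{X,Y}$, together with the observation that left and right tensoring by an element of $\End^\ell(\one)$ with $\ell > 0$ produces a positive-degree endomorphism of $X$ after composing with a projection. Concretely, fix $\ell > 0$, an object $Y$, a morphism $g \in \Hom^{k-\ell}(X,Y)$, and an element $c \in \End^\ell(\one)$; I will show that $c \cdot g := (c \otimes \id_Y) \circ \lambda$-type composition (i.e. the left action of $\End(\one)$ on $\Hom(X,Y)$, using the unitor isomorphisms $X \cong \one \otimes X$ and $Y \cong \one \otimes Y$) lies in $\rker(\beta^k_{X,Y})$, and similarly for the right action. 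By definition of the right kernel it suffices to check that $f \circ (c \cdot g) = 0$ in $\End^0(X) = \K \cdot \id_X$ for every $f \in \Hom^{-k}(Y,X)$.

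The key step is a naturality/interchange computation: for $f \in \Hom^{-k}(Y,X)$ and $g \in \Hom^{k-\ell}(X,Y)$, the composite $f \circ (c \cdot g)$ equals $c \cdot (f \circ g)$, where now $c$ acts on $\End(X)$ by left tensoring and using the unitor of $X$. This is just the statement that the left $\End(\one)$-action is natural with respect to post-composition, which follows from the interchange law for the monoidal $2$-structure (or, if $\Cat$ is merely monoidal, from functoriality of $\otimes$ and naturality of the left unitor). Now $f \circ g \in \End^{k-\ell}(X) + \cdots$; more precisely $f \circ g \in \End^{k-\ell}(X)$ as a homogeneous morphism, and $c \cdot (f\circ g) \in \End^{k}(X)$. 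Since $\ell > 0$ and $f \circ g$ has degree $k-\ell$, the element $c \cdot (f\circ g)$ has degree $k \geq \ell > 0$ only if $k > 0$; but in fact the point is sharper: $c \cdot (f \circ g)$ is, by construction, the image of $f\circ g$ under tensoring with $c$, hence lies in the (two-sided) ideal $\End^{>0}(X) \cdot \End(X) + \End(X)\cdot \End^{>0}(X)$ generated by the strictly positive part — because $c$ contributes degree $\ell > 0$. Therefore $c\cdot(f\circ g) \in \End^{>0}(X)$, and by Lemma \ref{lem:kernelisjacobson} (or directly, since $\End^0(X) = \K\cdot\id_X$ forces a degree-zero element, and a positive-degree element projects to $0$ there) we get $f \circ (c\cdot g) = c \cdot (f \circ g) \in \End^{>0}(X)$, which is disjoint from $\End^0(X)$. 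Hence $\beta^k_{X,Y}(f, c\cdot g) = 0$ for all $f$, so $c \cdot g \in \rker(\beta^k_{X,Y})$. The right-action case is verbatim the same argument with right tensoring and the right unitor.

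The main obstacle — really the only subtlety — is bookkeeping with the unitor isomorphisms and the interchange law to justify the identity $f \circ (c \cdot g) = c \cdot (f \circ g)$ cleanly; this is routine naturality but should be stated carefully so that the degree of $c$ is genuinely ``carried along'' to land in $\End^{>0}(X)$. Once that identity is in hand, the conclusion is immediate from endopositivity ($\End^{<0}(X) = 0$ and $\End^0(X) = \K \cdot \id_X$, so the degree-$\ell$ shift by $c$ lands outside degree $0$). I would phrase the proof in two short paragraphs: first record the naturality identity, then deduce membership in the right kernel; no lengthy calculation is needed.
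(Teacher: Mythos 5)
Your overall strategy is sound and genuinely different from the paper's, but the degree bookkeeping in the middle of your argument is wrong, and as a result the intermediate claims don't parse.

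The paper's proof factors $c\cdot g = g\circ(c\cdot\id_X)$, notes $c\cdot\id_X \in \End^{\ell}(X) \subset \End^{>0}(X)$, invokes the last sentence of Lemma \ref{lem:kernelisjacobson} to place $c\cdot\id_X$ in $\rker(X,-)$, and then uses the left-ideal property of $\rker(X,-)$ (the first part of that lemma). You instead move $c$ across the pairing: you test $c\cdot g$ against an arbitrary $f\in\Hom^{-k}(Y,X)$ and compute $f\circ(c\cdot g) = c\cdot(f\circ g)$ via the interchange law. That identity is correct and this route is in fact slightly more elementary, because once you have it, the conclusion needs only the vanishing of $\End^{<0}(X)$ rather than the ideal structure from Lemma \ref{lem:kernelisjacobson}. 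Where you go wrong is in the degrees: with $f\in\Hom^{-k}(Y,X)$ and $g\in\Hom^{k-\ell}(X,Y)$, the composite $f\circ g$ has degree $(-k)+(k-\ell) = -\ell$, not $k-\ell$; and $c\cdot(f\circ g)$ then has degree $0$, not $k$. Your assertion that ``$c\cdot(f\circ g)\in\End^{>0}(X)$'' is therefore impossible as stated — a degree-zero map cannot lie in $\End^{>0}(X)$. The correct, and much cleaner, final step is: $f\circ g\in\End^{-\ell}(X)=0$ by endopositivity (since $\ell>0$), whence $c\cdot(f\circ g)=0$ and $\beta^k_{X,Y}(f,c\cdot g)=0$. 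Fix those degree computations and replace the ``ideal generated by $\End^{>0}(X)$'' detour with the one-line vanishing of $\End^{-\ell}(X)$, and you have a correct and arguably leaner proof than the paper's.
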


\begin{proof}
This follows immediately from Lemma \ref{lem:kernelisjacobson}, 
since 
\[ \id_{X} \cdot \End^{\ell}(\one) \subset \End^{\ell}(X) 
\supset \End^{\ell}(\one) \cdot \id_{X} \, .\qedhere \]
\end{proof}

Finally, we record a method for working
with $V(X,Y)$ indirectly.

\begin{rem} \label{rem:howtodealwithVk}
Let $X$ be endopositive. 
Suppose we are given maps $\{\iota_1, \ldots, \iota_d\} \subset \Hom^k(X,Y)$ which are ``candidate inclusions'' 
and maps $\{p_1, \ldots, p_d\} \subset \Hom^{-k}(Y,X)$ which are ``candidate projections.''
If we compute the pairing matrix $\LIP^k_{X,Y}(p_j,\iota_\ell)$ and the determinant is invertible in $\K$, 
then these maps descend to linearly independent sets in $V^k(X,Y)$ and $V^{-k}(Y,X)$, respectively. 
If we know somehow that $\{\iota_1, \ldots, \iota_d\}$ descend to a spanning set of $V^k(X,Y)$ 
(e.g.~using Corollary \ref{cor:basisforVwhensumofX}, or because we work over a field and $d = \dim V^k(X,Y)$), 
then we obtain bases of $V^k(X,Y)$ and $V^{-k}(Y,X)$. 
In this case, an arbitrary morphism $f \in \Hom^k(X,Y)$ need not be in the span of $\{\iota_1, \ldots, \iota_d\}$, 
but its image in $V^k(X,Y)$ is in (the image of) this span. 
By non-degeneracy, $f = \sum_{j=1}^d a_j \iota_j$ in $V^k(X,Y)$ 
if and only if they have the same pairing against all $p_j$. 
Thus, one can compute the image of $f$ in $V^k(X,Y)$ 
by computing all the pairings $\beta^k(p_j,f)$ 
and using elementary linear algebra.
\end{rem}

\subsection{Endopositive families of objects}
	\label{ss:PGC}
	
We now focus on categories with distinguished collections of endopositive objects.

\begin{defn} \label{defn:endoposfamily} 
Let $\K$ be a commutative ring and 
let $\BC$ be a set indexing a family of distinguished objects $\{ X_b \}_{b \in \BC}$ 
in a graded additive $\K$-linear category $\Cat$.
We call $\{X_b\}_{b \in \BC}$ an \emph{endopositive family} of objects 
provided each $X_b$ is endopositive and $\Hom(X_b,X_{b'})$ 
is concentrated in strictly positive degrees when $b \ne b'$. 
We let $\CatBC$ denote the full subcategory of $\Cat$ whose objects are 
isomorphic to direct sums of shifts of objects in the corresponding endopositive family. \end{defn}

Given an endopositive family,
if $b \ne b'$, then $X_b$ is not isomorphic to any grading shift of $X_{b'}$, 
as such an isomorphism would require a morphism of non-positive degree 
(either $X_b \to X_{b'}$ or $X_{b'} \to X_b$). 
Similarly, $X_b$ is not isomorphic to a nonzero grading shift of itself.
Note also that the condition of being an endopositive family is preserved by base change.

We informally summarize the condition that $\{ X_b\}$ is an endopositive family with the equation:
\begin{equation} \label{homdim} 
\grdim_{\ak} \Hom(X_{b}, X_{b'})\in \delta_{b, b'} + q \Z_{\ge 0}[[q]] \, .
\end{equation}
This uses several abuses of notation! 
We have not assumed that $\Hom(X_b,X_{b'})$ is a free module over $\K$, except in degree $0$ when $b = b'$. 
Also, $\K$ is not necessarily a field, yet for simplicity we choose to write $\grdim_{\K}$ 
to denote the graded rank of a $\K$-module. 
Nonetheless, \eqref{homdim} encapsulates the idea of an endopositive family, 
and is mathematically accurate whenever $\K$ is a field.

\begin{example} \label{ex:CQG2}
For any commutative ring $\K$, 
the monoidal category $\one_{\wtn} \cXX_q(\gln[2]) \one_{\wtn}$ 
has an endopositive family $\{ \FF^{(k)} \EE^{(k)} \one_{\wtn} \}_{k \geq 0}$. 
Note that $\EE^{(k)} \FF^{(k)} \one_{\wtn} \cong \FF^{(k)} \EE^{(k)} \one_{\wtn}$, 
by \eqref{eq:EF} and \eqref{eq:FE}. 
See \cite[Proposition 5.15]{KLMS} for a description of a (positively graded) basis for morphism spaces 
(the summary in \cite[Example 2.21]{ELau} may also be helpful). 
In this example, 
every object in $\one_{\wtn} \cXX_q(\gln[2]) \one_{\wtn}$ 
is a direct sum of shifts of distinguished objects. \end{example}

\begin{rem} \label{rem:karoubianoverfield}
Let $\K$ be a field.
As in Remark \ref{rem:mixedInd}, 
the endopositive objects $X_b$ are indecomposable with graded local endomorphism rings. 
Moreover, since every object in $\CatBC$ 
admits a decomposition into objects with graded local endomorphism rings, 
one concludes that the category $\CatBC$ is Karoubian and graded Krull--Schmidt, 
see e.g.~\cite[Theorem 11.50]{soergelbook}. 
The set $\BC$ therefore indexes the indecomposable objects of $\CatBC$, 
up to isomorphism and grading shift. \end{rem}

When $\K$ is a field, Corollary \ref{cor:gLCP} and Lemma \ref{lem:gLIP} 
show that if $\{X_b\}_{b \in \BC}$ is an endopositive familiy and $Y \in \CatBC$, 
then $Y\cong \bigoplus_{\substack{k\in \Z \\ b \in \BC}} \qsh^k X_b^{\oplus d_{b,k}}$ 
for $d_{b, k}= \dim_{\K} V^k(X_b, Y)$.
More generally, we have the following.

\begin{lem} \label{lem:basisforV}
Let $\{X_b\}_{b \in \BC}$ be an endopositive family in $\Cat$, over any commutative ring $\K$, 
and let $Y$ be an object of $\CatBC$. 
Fix any decomposition
\[Y \cong \bigoplus_{b \in \BC} \bigoplus_{j=1}^{d_b} \qsh^{k_{b,j}} X_b \, , \]
i.e.~fix inclusion maps $\{p_{b,j}\}_{j=1}^{d_b}$ and $\{\iota_{b,j}\}_{j=1}^{d_b}$ for the decomposition above. 
Then, for each $b \in \BC$, 
the space $V(Y,X_b)$ is free over $\K$ with graded dimension $\sum_{j=1}^{d_b} q^{k_{b,j}}$ 
and the elements $\{p_{b,j}\}$ descend to a basis. 
Similarly, $V(X_b,Y)$ is free with graded dimension $\sum_{j=1}^{d_b} q^{-k_{b,j}}$ 
and basis $\{\iota_{b,j}\}$. \end{lem}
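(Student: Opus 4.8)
The plan is to peel off, one member of the endopositive family at a time, the direct summands of $Y$ and thereby reduce to the situation of Corollary~\ref{cor:basisforVwhensumofX}, where $Y$ is a finite direct sum of shifts of a single object. Concretely, first I would apply Lemma~\ref{lem:Vadditive}, iterated over the fixed (finite) decomposition $Y\cong\bigoplus_{b'}\bigoplus_{j}\qsh^{k_{b',j}}X_{b'}$, to obtain
\[
V(X_b,Y)\;\cong\;\bigoplus_{b'\in\BC}\bigoplus_{j=1}^{d_{b'}}V\big(X_b,\qsh^{k_{b',j}}X_{b'}\big)\,,
\]
and the analogous splitting of $V(Y,X_b)$; under these, the maps $\iota_{b,j}$ and $p_{b,j}$ correspond to the evident generators of the summands with $b'=b$, by the biproduct relations $p_{b'',j''}\circ\iota_{b',j}=\delta_{(b'',j''),(b',j)}\,\id$. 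Second, I would show that $V\big(X_b,\qsh^{m}X_{b'}\big)=0$ whenever $b'\neq b$, for every shift $m$: since $\{X_c\}_{c\in\BC}$ is an endopositive family (Definition~\ref{defn:endoposfamily}), $\Hom(X_b,X_{b'})$ and $\Hom(X_{b'},X_b)$ are concentrated in strictly positive degrees, so $\Hom^{d}(X_b,\qsh^{m}X_{b'})=\Hom^{d-m}(X_b,X_{b'})$ is zero for $d\leq m$, while for $d>m$ the pairing space $\Hom^{-d}(\qsh^{m}X_{b'},X_b)=\Hom^{m-d}(X_{b'},X_b)$ is zero and hence every element of $\Hom^{d}(X_b,\qsh^{m}X_{b'})$ lies in $\rker(\beta^{d}_{X_b,\qsh^{m}X_{b'}})$; thus $V(X_b,\qsh^{m}X_{b'})=0$, and symmetrically $V(\qsh^{m}X_{b'},X_b)=0$ with $\lker$ replacing $\rker$. (No field hypothesis is needed for this.) Combining the two points gives $V(X_b,Y)\cong V(X_b,Y_b)$ and $V(Y,X_b)\cong V(Y_b,X_b)$, where $Y_b:=\bigoplus_{j=1}^{d_b}\qsh^{k_{b,j}}X_b$ is the $X_b$-isotypic part of the decomposition and the isomorphisms carry $\iota_{b,j}$, $p_{b,j}$ to the corresponding maps for $Y_b$; Corollary~\ref{cor:basisforVwhensumofX} applied to $Y_b$ then delivers freeness, the stated graded dimensions, and the bases $\{\iota_{b,j}\}$ and $\{p_{b,j}\}$.

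I do not expect a serious obstacle: given Lemma~\ref{lem:Vadditive} and Corollary~\ref{cor:basisforVwhensumofX}, the argument is short, and its only genuinely new content is the degree count yielding $V(X_b,\qsh^{m}X_{b'})=0$ for $b'\neq b$ — which is exactly where the strict-positivity half of the endopositive-family hypothesis is used. The one place worth a moment's care is verifying that the additivity isomorphism of Lemma~\ref{lem:Vadditive} sends the distinguished maps $\iota_{b,j}$ and $p_{b,j}$ to the expected generators of the single-summand pieces; this follows from the biproduct relations quoted above, but it is what makes ``$\{\iota_{b,j}\}$ (resp.\ $\{p_{b,j}\}$) descends to a basis'' a precise statement rather than merely ``$V(X_b,Y)$ is free of the right rank''.

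If one prefers a self-contained argument avoiding Lemma~\ref{lem:Vadditive}, one can instead write each $f\in\Hom^{k}(X_b,Y)$ as $f=\sum_{j:\,k_{b,j}=k}c_j\iota_{b,j}+f'$, with $c_j\in\K$ determined by $p_{b,j}\circ f=c_j\,\id_{X_b}$, and check directly that $f'\in\rker(\beta^{k}_{X_b,Y})$. Expanding an arbitrary $g\in\Hom^{-k}(Y,X_b)$ as $\sum_{b',j}(g\circ\iota_{b',j})\circ p_{b',j}$ via the completeness relation $\sum_{b',j}\iota_{b',j}\circ p_{b',j}=\id_Y$, one sees that in each summand one of the two composites $X_b\to X_{b'}$ or $X_{b'}\to X_b$ has non-positive degree and therefore vanishes — by endopositivity when $b'=b$ and by strict positivity of the cross-$\Hom$ spaces when $b'\neq b$ — so $g\circ f'=0$; linear independence of the $\iota_{b,j}$ in $V^{k}(X_b,Y)$ then follows by pairing against the $p_{b,j}$, and the dual statement for $V(Y,X_b)$ is symmetric (one can also simply invoke the case already proved for the opposite category, which again carries $\{X_b\}$ as an endopositive family). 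In this formulation the genuine subtlety is visible: one must control the \emph{entire} right kernel, not merely the pairings against the chosen projections $p_{b,j}$.
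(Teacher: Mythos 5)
Your proposal is correct and takes essentially the same route as the paper's own proof: reduce via Lemma~\ref{lem:Vadditive} to single summands, observe that $V(X_b,\qsh^m X_{b'})$ vanishes for $b'\neq b$ (which the paper states more tersely as ``$V(X_b,X_{b'})$ is free of graded rank $\delta_{b,b'}$'' --- your degree-count derivation is the correct justification), and conclude as in Corollary~\ref{cor:basisforVwhensumofX}. The self-contained alternative you sketch at the end is also sound but adds no new content; the first argument is the one the paper has in mind.
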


\begin{proof} 
This follows from Lemma \ref{lem:Vadditive}, similar to the proof of Corollary \ref{cor:basisforVwhensumofX}, 
after noting that $V(X_b,X_{b'})$ is free over $\K$ of graded rank $\delta_{b,b'}$, 
spanned by the identity map of $X_b$.
\end{proof}

Finally, we record a supplement to Lemma \ref{lem:kernelisjacobson}
in the presence of an endopositive family.
When paired with that result, this will guarantee that,
when examining the graded composition pairing at $X_b$,
any morphism which factors through $X_{b'}$ for $b' \ne b$ is automatically in the kernel.

\begin{lem} \label{lem:btob'inker}
Let If $\{X_b\}_{b \in \BC}$ be an endopositive family.
If $b \ne b' \in \BC$, then any morphism in $\Hom(X_b,X_{b'})$ 
is in $\rker(X_b,-) \cap \lker(-,X_{b'})$. 
\end{lem}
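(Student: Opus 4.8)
<br>

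The plan is to reduce the statement to the two parts of Lemma~\ref{lem:kernelisjacobson} together with the defining property \eqref{homdim} of an endopositive family. Fix $b \neq b'$ and a homogeneous morphism $f \in \Hom^{\ell}(X_b, X_{b'})$; by \eqref{homdim}, $\Hom(X_b, X_{b'})$ is supported in strictly positive degrees, so $\ell > 0$. First I would show $f \in \rker(X_b, -)$, i.e.\ that $f \in \rker(\beta^{\ell}_{X_b, X_{b'}})$. For this, take an arbitrary homogeneous $g \in \Hom^{-\ell}(X_{b'}, X_b)$; since $b \neq b'$ the space $\Hom(X_{b'}, X_b)$ is also supported in strictly positive degrees, but $-\ell < 0$, so $g = 0$. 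Hence $\beta^{\ell}_{X_b, X_{b'}}(g, f) = g \circ f = 0$ trivially, and $f$ lies in the right kernel. (Alternatively, one invokes Lemma~\ref{lem:kernelisjacobson}: $\rker(X_b, -)$ is a left ideal containing every positive-degree endomorphism of $X_b$, and $f = f \circ \id_{X_b}$; but the direct argument above is cleaner here since it uses only the support condition.)

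Next I would show $f \in \lker(-, X_{b'})$, i.e.\ $f \in \lker(\beta^{\ell}_{X_{b'}, X_{b'}})$ in the sense that pairing $f \in \Hom^{-(-\ell)}(X_b, X_{b'}) = \Hom^{\ell}(X_b, X_{b'})$ against the composition pairing at $X_{b'}$ vanishes. Concretely, for any homogeneous $h \in \Hom^{-\ell}(X_{b'}, X_b)$ we have $\beta^{\ell}_{X_{b'}, X_{b'}}(f, h) = f \circ h \in \End^0(X_{b'}) = \K \cdot \id_{X_{b'}}$; but again $\Hom^{-\ell}(X_{b'}, X_b) = 0$ because $b \neq b'$ forces strictly positive support and $-\ell < 0$, so $h = 0$ and the pairing is zero. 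Thus $f$ lies in the left kernel $\lker(\beta^{\ell}_{X_{b'}, X_b})$, which by definition sits inside $\lker(-, X_{b'})$.

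The argument is entirely formal and there is no real obstacle: the only subtlety is bookkeeping the degree conventions so that ``$f$ has positive degree'' forces the partner morphism in the pairing to have negative degree, hence to vanish by the support hypothesis \eqref{homdim} on an endopositive family. The statement is really just the observation that when $b \neq b'$ there are \emph{no} morphisms of degree $\leq 0$ in either direction between $X_b$ and $X_{b'}$, so every composition pairing involving $f$ is a pairing against the zero space. Combining the two paragraphs above gives $f \in \rker(X_b, -) \cap \lker(-, X_{b'})$, and since $f$ was an arbitrary homogeneous element (and both kernels are $\K$-submodules closed under taking homogeneous components), the same holds for every $f \in \Hom(X_b, X_{b'})$, as claimed. $\qed$
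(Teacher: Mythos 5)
Your proof is correct and uses the same idea as the paper's: when $b \neq b'$, the support condition on an endopositive family forces $\Hom^{-\ell}(X_{b'}, X_b) = 0$ for $\ell > 0$, so the pairings against $f$ are vacuous. (Minor notational slip in your second paragraph: the pairing should be $\beta^{-\ell}_{X_{b'},X_b}$, not $\beta^{\ell}_{X_{b'},X_{b'}}$, but the argument stands.)
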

\begin{proof}
If $b \ne b'$, \eqref{homdim} implies that for each $k \in \Z$, 
at least one of $\Hom^{-k}(X_{b'}, X_b)$ or $\Hom^{k}(X_b, X_{b'})$ is identically zero. 
\end{proof}

\subsection{Mixed categories}
	\label{ss:mixed}

In the literature, endopositive families are typically studied in the context of mixed categories, 
which also possess a duality functor.

\begin{defn}[c.f.~{\cite[Definition 3.9]{EliasFolding}}] \label{def:mixed} 
Let $\K$ be a commutative ring, and $\Cat$ be a graded additive $\K$-linear category. 
We call $\Cat$ a \emph{positively graded category} if 
\begin{itemize} 
	\item all morphism spaces in $\Cat$ are free $\mathbb{Z}$-graded 
		$\K$-modules of finite rank in each degree, and
	\item $\Cat$ is equipped with an endopositive family $\{ X_b \}_{b \in \BC}$ 
		such that $\Cat = \CatBC$.
\end{itemize}
Suppose further that $\Cat$ is equipped with a $\K$-linear 
functor $\mathbb{D} \colon \Cat \to \Cat^{\mathrm{op}}$ such that 
$\mathbb{D}^2 = \one_\Cat$ and $\mathbb{D}(\qsh^k X) = \qsh^{-k} \mathbb{D}(X)$. 
Then, we call such a (positively graded) $\Cat$ \emph{mixed} provided 
for each $b \in \BC$, there is some $b^* \in \BC$ such that $\mathbb{D}(X_b)\cong X_{b^*}$. 
If $b = b^*$ for all $b \in \BC$, then we say $\Cat$ is \emph{self-dual mixed}. \end{defn}

As usual, we extend these notions to $2$-categories via their $\Hom$-categories.

Note that Remark \ref{rem:karoubianoverfield} implies that 
a positively graded category over a field $\K$ is Karoubian and graded Krull--Schmidt, 
and $\BC$ indexes the indecomposable objects up to isomorphism and grading shift.
	
\begin{example}
	\label{ex:CQGmixed}
When $\K$ is a field of characteristic zero, 
the $2$-category $\Kar(\UU_q(\glm))$ is self-dual mixed, by \cite{Web4}. 
\end{example}

\begin{rem} With apologies to Webster, the definition of mixed in Definition \ref{def:mixed}
(which is adapted from \cite[Definition 3.9]{EliasFolding})
differs from the definition of mixed in \cite[Definitions 1.2 and 1.11]{Web4}. 
The primary feature of both definitions is \eqref{homdim}.  
Webster's definition assumes the category is self-dual.
When $\K$ is a field, our definition coincides in the self-dual case with the special case of Webster's 
where the orthodox and canonical bases coincide; see \cite[Corollary 1.14]{Web4}.
\end{rem}

We briefly note several advantages of a self-dual mixed category. 
The first is that it gives a reason to prefer $X_b$ over $\qsh^k X_b$, 
as only when $k=0$ is $\qsh^k X_b$ self-dual.
The second is that any automorphism of the category which commutes with shifts and duality 
must preserve the self-dual distinguished objects $X_b$. 
The third (which we do not use in this paper) is that if $X_b$ and $Y$ are both self-dual objects, 
then one can identify $\Hom(X_b,Y)$ with $\Hom(Y,X_b)$ using $\mathbb{D}$. 
This transforms the composition pairing into a bilinear form on $\Hom(X_b,Y)$.

\subsection{Equivariant categories} 
	\label{subsec:equivariantization}

In the theory of quantum groups and Hecke algebras, 
there are certain algebras with fixed bases such that the structure constants 
for multiplication have the form $P-Q$ for $P, Q\in \Z_{\ge 0}[q^{\pm}]$. 
As an immediately relevant example, recall \eqref{eq:unknots} which stated that
\begin{equation}
P_{\sln[4]}\left( 
\begin{tikzpicture}[scale =.5, anchorbase]
	\draw[very thick] (0,0) node[right=5pt,yshift=5pt]{\scs$\Lambda^2$} circle (.5);
\end{tikzpicture} \right) 
= [5]+[1]
\quad \text{and} \quad
P_{\son[5]}\left( 
\begin{tikzpicture}[scale =.5, anchorbase]
	\draw[very thick] (0,0) node[right=5pt,yshift=5pt]{\scs$S$} circle (.5);
\end{tikzpicture} \right) 
= [5]-[1] \, .
\end{equation}
The latter value appears as a structure constant in $\End_{U_q(\son[5])}(S\otimes S)$, 
when computing $\xx^{(2)} \cdot \xx^{(2)}$, 
so centralizer algebras for type $B$ quantum groups 
are examples of such algebras.

Lusztig intuited that such structure constants may be realized
by the trace of an involution acting on a graded vector space, 
making $P$ (resp. $Q$) the graded dimension of the $+1$ (resp. $-1$) eigenspace. 
See e.g.~\cite[Remark 14.4.14]{Lus4}.
We now briefly summarize the general exposition in \cite{EliasFolding} 
concerning how such algebras may be realized as the weighted Grothendieck rings 
of certain $G$-equivariant categories. 
We then give precise definitions only for the group $G=\Z/2$, 
since that is the case that concerns us in the present work.
Throughout this section, $\K$ can be an arbitrary commutative ring, 
although some of the definitions will be trivial when $\K$ has characteristic two.

Let $\Cat$ be a (graded) additive $\K$-linear category with a strict action of a group $G$. 
We can then form the \emph{$G$-equivariantization} of $\Cat$, denoted $\Cat^G$. 
The objects of $\Cat^G$ are pairs $(X,\varphi)$, 
where $X$ is an object of $\Cat$ such that $g \cdot X \cong X$ for all $g \in G$, 
and $\varphi$ is a family of isomorphisms $\varphi_g \colon X \to g \cdot X$ 
that satisfy a natural compatibility constraint. 

The category $\Cat^G$ has a strict action of the dual group $G^* := \Hom(G,\K^{\times})$. 
The action of a homomorphism $\xi \colon G \to \K^\times$ will rescale $\varphi_g$ by $\xi(g)$. 
That is,
\[ 
\xi \cdot (X,\varphi) = (X,\xi \cdot \varphi) 
\, , \quad  (\xi \cdot \varphi)_g = \xi(g) \varphi_g \, . 
\]

Given an element $g \in G$, 
we can form the \emph{$g$-weighted Grothendieck group} of $\Cat^G$, 
denoted $K_{0}^{g}(\Cat^G)$. 
This is the quotient of the usual (additive) Grothendieck group, 
base changed to $\K$ (or any subring containing the images of all $\xi \in G^*$), 
by the relation $[\xi \cdot (X,\varphi)] = \xi(g) [(X,\varphi)]$ for all $\xi \in G^*$. 
For more details see \cite[Section 3.1]{EliasFolding}.

In the case that $\Cat$ is monoidal and/or is equipped with a duality functor 
and the $G$-action preserves these structures, 
$\Cat^G$ inherits these structures and the action of $G^*$ respects them. 
In this way, $K_{0}^{g}(\Cat^G)$ inherits the structure of a ring and/or inherits the bar involution 
(the anti-linear action of duality on the Grothendieck group).

We now restrict our attention to $G = \Z/2$.

\begin{defn}[c.f.~{\cite[Definition 3.1]{EliasFolding}}]
A \emph{strict action} of $\Z/2$ on an additive $\ak$-linear category $\Cat$ is 
an additive $\ak$-linear autoequivalence $\sigma \colon \Cat \rightarrow \Cat$ 
and a natural isomorphism $\mathsf{s} \colon \sigma \circ \sigma\xrightarrow{\cong} \one_{\Cat}$ such 
that\footnote{Here, $\otimes$ denotes horizontal composition of natural transformations; 
this is the tensor product on the monoidal category of endofunctors of $\Cat$.} 
$\mathsf{s} \otimes \id_{\sigma} = \id_{\sigma} \otimes \mathsf{s}$ 
as natural transformations $\sigma \circ \sigma \circ \sigma \to \sigma$.
\end{defn}

\begin{rem}
From the group $\Z/2$ (here written multiplicatively), 
one obtains a monoidal category $\Ztwogroupoid$, 
often called the $2$-groupoid of $\Z/2$. 
It has objects $\pm 1$, tensor product given by the group operation, 
and the only morphisms are identities. 
Equivalent to the definition above, 
a \emph{strict action} of $\Z/2$ on $\Cat$ 
is a strong\footnote{A monoidal functor always comes with data of coherence maps;
a strong monoidal functor refers to when these maps are isomorphisms 
and a strict monoidal functor refers to when these isomorphisms are equalities.} 
(but not necessarily strict) monoidal functor 
$F \colon \Ztwogroupoid \rightarrow \End(\Cat)$ for which $F(+1)=\one_{\Cat}$ 
and $F(-1) =: \sigma$.
\end{rem}

\begin{rem}
	\label{rem:involution}
We call $\sigma$ an \emph{involution} if $\sigma \circ \sigma = \one_{\Cat}$ is an equality of functors. 
In this case, we can use $\mathsf{s} =\id_{\one_{\Cat}}$ to obtain a strict action of $\Z/2$ on $\Cat$. 
\end{rem}

\textbf{From here on, 
we assume that $\sigma$ is an involution and that} $\mathsf{s} = \id_{\one_{\Cat}}$.

\begin{rem}
Consider an involution $\sigma$ on a $2$-category in the sense of Definition \ref{D:2-involution}. 
If $\wt$ is an object fixed by $\sigma$, then $\sigma$ induces a monoidal functor 
on the endomorphism category of $\wt$. 
This functor will be an involution in the sense of Remark \ref{rem:involution}. 
\end{rem}

\begin{defn}[{\cite[Definition 3.5]{EliasFolding}}]\label{def:equivariant}
Let $\Cat$ be a category and let $\sigma \colon \Cat \rightarrow \Cat$ be an involution. 
An \emph{equivariant object} is a pair $(X, \varphi_X \colon X \xrightarrow{\cong} \sigma X)$ 
such that $\sigma(\varphi_X)\circ \varphi_X = \id_X$. 
A \emph{morphism of equivariant objects} from $(X, \varphi_X)$ to $(Y, \varphi_Y)$ is a morphism 
$f\in \Hom(X,Y)$ such that $\varphi_Y\circ f= \sigma(f)\circ \varphi_X$. 
Equivariant objects and morphisms of equivariant objects form the \emph{equivariant category} $\ECat$.
\end{defn}

Applying $\sigma$ to the equation $\sigma(\varphi_X) \circ \varphi_X = \id_X$,
we see that $\varphi_X\circ \sigma(\varphi_X) = \id_{\sigma(X)}$. 
Thus, the condition determining an equivariant object 
can be rewritten simply as $\sigma(\varphi_X) = \varphi_X^{-1}$.
We will refer to the choice of isomorphism $\varphi_X \colon X \to \sigma(X)$ 
satisfying this condition as an \emph{equivariant structure} on $X$. 
We say that $X$ is \emph{equivariantizable} if some equivariant structure exists.

We now establish a perspective on the $\Hom$-spaces in $\ECat$ that will prove useful.

\begin{prop}
	\label{prop:sigast}
Let $\sigma$ be an involution of a $\K$-linear category $\Cat$ and let $(X, \varphi_X)$ and $(Y, \varphi_Y)$ be objects in $\ECat$. 
For $f\in \Hom(X, Y)$, the formula
\begin{equation}
	\label{eq:sigastdef}
\sigast f = \sigma(\varphi_Y)\circ \sigma(f)\circ \varphi_X
\end{equation}
determines a $\K$-linear action of the group $\langle \sigast\rangle \cong \Z/2$ on $\Hom(X, Y)$. 
This action respects composition, i.e.~
if further $(Z, \varphi_Z)$ is an object in $\ECat$ and $g \in \Hom(Y,Z)$, 
then 
\begin{equation}
	\label{eq:sigasthomo}
\sigast (g\circ f) = (\sigast g) \circ (\sigast f) \, .
\end{equation}
\end{prop}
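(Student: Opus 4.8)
The plan is to verify the three assertions in turn, each of which reduces to a short diagram chase using only the identity $\sigma^2 = \id_{\Cat}$ and the defining property $\sigma(\varphi_X) \circ \varphi_X = \id_X$ of an equivariant structure (equivalently, $\sigma(\varphi_X) = \varphi_X^{-1}$, as noted just before the statement). First I would check that $\sigast f$ is in fact a morphism $X \to Y$: since $\varphi_X \colon X \to \sigma X$, $\sigma(f)\colon \sigma X \to \sigma Y$, and $\sigma(\varphi_Y)\colon \sigma Y \to Y$, the composite $\sigma(\varphi_Y)\circ\sigma(f)\circ\varphi_X$ indeed lands in $\Hom(X,Y)$. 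Its $\K$-linearity in $f$ is immediate, as $\sigma$ is a $\K$-linear functor and pre- and post-composition with the fixed isomorphisms $\varphi_X$ and $\sigma(\varphi_Y)$ are $\K$-linear.

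Next, to see that $\sigast$ squares to the identity (so that it really generates a $\Z/2$-action), I would compute $\sigast(\sigast f)$ directly. Applying $\sigma$ to $\sigast f = \sigma(\varphi_Y)\circ\sigma(f)\circ\varphi_X$ and using $\sigma^2 = \id_{\Cat}$ gives $\sigma(\sigast f) = \varphi_Y \circ f \circ \sigma(\varphi_X)$; substituting this into \eqref{eq:sigastdef} yields
\[
\sigast(\sigast f) = \sigma(\varphi_Y) \circ \varphi_Y \circ f \circ \sigma(\varphi_X) \circ \varphi_X.
\]
The equivariance conditions for $Y$ and $X$ give $\sigma(\varphi_Y)\circ\varphi_Y = \id_Y$ and $\sigma(\varphi_X)\circ\varphi_X = \id_X$, so the right-hand side collapses to $f$.

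Finally, for the compatibility with composition \eqref{eq:sigasthomo}, I would expand both sides. Since $\sigma$ is a functor, $\sigast(g \circ f) = \sigma(\varphi_Z)\circ\sigma(g)\circ\sigma(f)\circ\varphi_X$. On the other hand,
\[
(\sigast g) \circ (\sigast f) = \sigma(\varphi_Z)\circ\sigma(g)\circ\varphi_Y\circ\sigma(\varphi_Y)\circ\sigma(f)\circ\varphi_X,
\]
and the middle factor $\varphi_Y \circ \sigma(\varphi_Y)$ equals $\id_{\sigma Y}$ — this is the image under $\sigma$ of the equivariance identity for $Y$, or equivalently $\sigma(\varphi_Y) = \varphi_Y^{-1}$. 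The two expressions therefore agree.

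Since every step uses only equalities already recorded in the excerpt, there is no genuine obstacle; the one thing to be careful about is bookkeeping the source and target of each $\varphi$ (note $\varphi_X \colon X \to \sigma X$ but $\sigma(\varphi_X) \colon \sigma X \to X$), so that the cancellations $\sigma(\varphi)\circ\varphi = \id$ and $\varphi \circ \sigma(\varphi) = \id$ are invoked on the correct objects and in the correct direction.
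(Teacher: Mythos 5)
Your proof is correct and follows essentially the same approach as the paper: compute $\sigast(\sigast f)$ directly, apply $\sigma^2 = \id$, and collapse using the equivariance identities $\sigma(\varphi)\circ\varphi = \id$ (equivalently $\sigma(\varphi) = \varphi^{-1}$). The only difference is that you spell out the compatibility with composition, which the paper leaves to the reader.
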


\begin{proof}
We have that $\sigma(\varphi_X) = \varphi_X^{-1}$ and $\sigma(\varphi_Y) = \varphi_Y^{-1}$.
We compute
\begin{align*}
\sigast(\sigast f) = \sigast(\sigma(\varphi_Y)\circ \sigma(f)\circ \varphi_X) 
			&= \sigma(\varphi_Y) \circ \sigma \big( \sigma(\varphi_Y)\circ \sigma(f)\circ \varphi_X \big) \circ \varphi_X \\
			&= \varphi_Y^{-1} \circ \varphi_Y \circ f \circ \varphi_X^{-1} \circ \varphi_X = f \, ,
\end{align*}
which shows that $\sigast$ generates a $\Z/2$-action. 
Since composition and $\sigma$ are $\K$-linear, $\sigast f$ is linear in $f$. 
We leave the reader to check compatibility with composition.
\end{proof}

We \textbf{strongly emphasize} that the definition of $\sigast$ on $\Hom(X,Y)$ 
depends on the choice of isomorphisms $\varphi_X$ and $\varphi_Y$,
i.e.~on the choice of equivariant objects $(X, \varphi_X)$ and $(Y, \varphi_Y)$.

\begin{cor}
	\label{cor:HomAsInv}
Let $(X, \varphi_X)$ and $(Y, \varphi_Y)$ be equivariant objects and
let $\Hom(X, Y)^{\sigma}$ denote the $(\sigast)$-invariants in $\Hom(X,Y)$.
Then, $\Hom_{\ECat}\big( (X, \varphi_X), (Y, \varphi_Y) \big) = \Hom(X, Y)^{\sigma}$. 
\end{cor}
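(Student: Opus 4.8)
The statement to prove is Corollary \ref{cor:HomAsInv}: that for equivariant objects $(X,\varphi_X)$ and $(Y,\varphi_Y)$, the morphism space $\Hom_{\ECat}\big((X,\varphi_X),(Y,\varphi_Y)\big)$ coincides with the $\sigast$-invariants $\Hom(X,Y)^\sigma$ inside $\Hom(X,Y)$.

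The plan is simply to unwind the two relevant definitions and observe they match. By Definition \ref{def:equivariant}, a morphism $f \in \Hom(X,Y)$ lies in $\Hom_{\ECat}\big((X,\varphi_X),(Y,\varphi_Y)\big)$ precisely when $\varphi_Y \circ f = \sigma(f) \circ \varphi_X$. On the other hand, by Proposition \ref{prop:sigast}, $f$ lies in $\Hom(X,Y)^\sigma$ precisely when $\sigast f = f$, i.e. when $\sigma(\varphi_Y) \circ \sigma(f) \circ \varphi_X = f$. So the first step is to show these two conditions are equivalent.

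For the equivalence: starting from the equivariant-morphism condition $\varphi_Y \circ f = \sigma(f)\circ \varphi_X$, I would compose on the left with $\sigma(\varphi_Y) = \varphi_Y^{-1}$ (using that $(Y,\varphi_Y)$ is an equivariant object, so $\sigma(\varphi_Y)\circ\varphi_Y = \id_Y$, hence $\sigma(\varphi_Y) = \varphi_Y^{-1}$) to obtain $f = \sigma(\varphi_Y)\circ\sigma(f)\circ\varphi_X = \sigast f$. Conversely, starting from $\sigast f = f$, i.e. $\sigma(\varphi_Y)\circ\sigma(f)\circ\varphi_X = f$, compose on the left with $\varphi_Y$ (which inverts $\sigma(\varphi_Y)$) to recover $\sigma(f)\circ\varphi_X = \varphi_Y\circ f$, which is exactly the equivariant-morphism condition. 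Both implications use only the identity $\sigma(\varphi_Y)\circ\varphi_Y = \id_Y$ and $\varphi_Y\circ\sigma(\varphi_Y)=\id_{\sigma(Y)}$, established in the discussion following Definition \ref{def:equivariant} (and recalled in the proof of Proposition \ref{prop:sigast}).

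There is essentially no obstacle here — this is a formal bookkeeping argument — so the "hard part" is only to be careful that the $\K$-linear structure on both sides agrees, which is immediate since $\Hom_{\ECat}\big((X,\varphi_X),(Y,\varphi_Y)\big)$ is by construction a $\K$-submodule of $\Hom(X,Y)$ and $\Hom(X,Y)^\sigma$ is the fixed-point $\K$-submodule of the $\K$-linear involution $\sigast$ from Proposition \ref{prop:sigast}. Since the two $\K$-submodules of $\Hom(X,Y)$ consist of exactly the same elements by the equivalence above, they are equal as $\K$-modules, completing the proof.
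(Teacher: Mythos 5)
Your proof is correct and matches the paper's, which likewise reduces the statement to the observation that $\sigma(\varphi_Y)=\varphi_Y^{-1}$ makes the condition $\sigast f = f$ equivalent to the defining condition $\varphi_Y\circ f = \sigma(f)\circ\varphi_X$ for a morphism in $\ECat$. You simply spell out the composition steps in a bit more detail.
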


\begin{proof}
Since $\sigma(\varphi_Y)= \varphi_Y^{-1}$, 
we see that $f= \sigast f$ if and only if $\varphi_Y \circ f = \sigma(f)\circ \varphi_X$. 
\end{proof}

\begin{rem}
	\label{rem:indECat}
Corollary \ref{cor:HomAsInv} immediately shows that 
if $(X,\varphi_X)$ is decomposable in $\ECat$, 
then $X \in \Cat$ is decomposable. 
Indeed, any idempotents in $\End_{\ECat}\big((X,\varphi)\big) = \End(X)^\sigma$ 
giving a direct sum decomposition in $\ECat$ also give such a decomposition in $\Cat$.
The converse of this statement need not be true; see Proposition \ref{prop:indECat} below.
Note also that if $X$ is endopositive, 
then Corollary \ref{cor:HomAsInv} implies that $(X,\varphi_X)$ is endopositive.
\end{rem}

Next, we review the weighted notion of Grothendieck group that can be applied to the category $\ECat$.
If $(X, \varphi_X)$ is an object in $\ECat$, 
then $\id_X = \sigma(\varphi_X) \circ \varphi_X = \sigma(-\varphi_X) \circ (-\varphi_X)$.
Thus, $(X, -\varphi_X)$ is an object of $\ECat$ as well. 
For $f \in \Hom(X,Y)$, it is easy to verify that
\[
f \in \Hom_{\ECat}\big( (X, \varphi_X) , (Y, \varphi_Y) \big) 
\iff f \in \Hom_{\ECat}\big( (X, -\varphi_X) , (Y, -\varphi_Y) \big) \, .
\]
This motivates the following.

\begin{defn}[{\cite[Definition 3.6]{EliasFolding}}]
Let $\sgn \colon \ECat \rightarrow \ECat$ be the involutive functor given on 
objects by $\sgn \big( (X, \varphi_X) \big) = (X, -\varphi_X)$ 
and on morphisms by $\sgn(f)= f$. 
\end{defn}

In the context of the discussion at the start of this section,
one should view $\sgn$ as the generator of the dual group $(\Z/2)^*$.
We now introduce the weighted Grothendieck group of $\ECat$ 
as a quotient of the usual additive Grothendieck group $\Kzero{\ECat}$.
Classes in the latter are denoted $[(X,\varphi_X)]$.

\begin{defn}[{\cite[Definition 3.7]{EliasFolding}}]
	\label{def:weightedG}
The $\sigma$-\emph{weighted Grothendieck group} of $\ECat$, 
denoted $\wKzero{\ECat}$, 
is the quotient of the Grothendieck group $\Kzero{\ECat} \ot_{\Z} \Z[\frac{1}{2}]$
by the relation 
\[
[(X,-\varphi_X)] \equiv -[(X, \varphi_X)] \, .
\] 
\end{defn}

For an object $(X,\varphi_X) \in \ECat$, we continue to write $[(X,\varphi_X)]$ 
for its class in the ordinary Grothendieck group $\Kzero{\ECat}$
and denote by $[(X, \varphi_X)]_{\sigma}$ the image of this class in $\wKzero{\ECat}$.

\begin{rem}
We change the base of the weighted Grothendieck group 
from $\Z$ to $\Z[\frac{1}{2}]$ to eliminate $2$-torsion; 
as a consequence, if $(X,\varphi_X) \cong (X,-\varphi_X)$ in $\ECat$, 
then $[(X,\varphi_X)]_{\sigma} = 0$.
\end{rem}

Finally, 
we precisely record properties of $\Cat$ inherited by $\ECat$ 
under the assumption that $\sigma$ is compatible with them. 
The proof is straightforward.

\begin{prop}
	\label{prop:invMD}
Let $\Cat$ be a $\ak$-linear additive category with an involution $\sigma$.
\begin{itemize}
	\item If $\Cat$ is monoidal and $\sigma$ is a monoidal functor, 
		then $(X, \varphi_X)\otimes (Y, \varphi_Y) := (X\otimes Y, \varphi_X\otimes \varphi_Y)$ 
		defines a monoidal structure on $\ECat$. Tensor product of morphisms is given as in $\Cat$.
	\item If $\Cat$ has a contravariant duality functor $\mathbb{D}$ as in Definition \ref{def:mixed} 
		such that $\mathbb{D} \circ \sigma = \sigma \circ \mathbb{D}$, 
		then $\mathbb{D}(X, \varphi):= (\mathbb{D}(X), \mathbb{D}(\sigma(\varphi)))$ 
		defines a contravariant duality on $\ECat$.
		The duality is given on morphisms as in $\Cat$.  \qed
\end{itemize}
\end{prop}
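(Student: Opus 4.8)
The plan is to verify the axioms of a monoidal category (resp.~of a contravariant duality) for the claimed structures on $\ECat$ by direct diagram chasing, using throughout that $\sigma$ is a strict $\ak$-linear involution, so that $\sigma(g \circ f) = \sigma(g) \circ \sigma(f)$ and $\sigma^2 = \one_\Cat$, together with the fact that an equivariant object is precisely a pair $(X,\varphi_X)$ with $\sigma(\varphi_X) = \varphi_X^{-1}$. For the first bullet I would also use that $\sigma$ being a monoidal functor gives $\sigma(X \otimes Y) = \sigma(X)\otimes\sigma(Y)$, $\sigma(f\otimes g) = \sigma(f)\otimes\sigma(g)$, and $\sigma(\one) = \one$; I will treat $\sigma$ as strict monoidal, noting at the end how the coherence data of $\sigma$ enters otherwise.

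For the monoidal structure: First I would check that $(X\otimes Y, \varphi_X\otimes\varphi_Y)$ is an object of $\ECat$, via $\sigma(\varphi_X\otimes\varphi_Y)\circ(\varphi_X\otimes\varphi_Y) = (\sigma(\varphi_X)\circ\varphi_X)\otimes(\sigma(\varphi_Y)\circ\varphi_Y) = \id_X\otimes\id_Y = \id_{X\otimes Y}$, using bifunctoriality of $\otimes$ and the equivariance of $(X,\varphi_X)$ and $(Y,\varphi_Y)$. Next I would verify that $f\otimes g$ is a morphism in $\ECat$ whenever $f,g$ are, via $(\varphi_{X'}\otimes\varphi_{Y'})\circ(f\otimes g) = (\varphi_{X'}\circ f)\otimes(\varphi_{Y'}\circ g) = (\sigma(f)\circ\varphi_X)\otimes(\sigma(g)\circ\varphi_Y) = \sigma(f\otimes g)\circ(\varphi_X\otimes\varphi_Y)$. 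Functoriality and bifunctoriality of $\otimes$ on $\ECat$, as well as $\K$-bilinearity, are then inherited from $\Cat$, since morphisms of $\ECat$ and their composites and tensor products are computed in $\Cat$. For the unit I would take $(\one,\id_\one)$; the content to check is that the associator $\alpha_{X,Y,Z}$ and the unitors of $\Cat$ define morphisms in $\ECat$, e.g.~$(\varphi_X\otimes(\varphi_Y\otimes\varphi_Z))\circ\alpha_{X,Y,Z} = \sigma(\alpha_{X,Y,Z})\circ((\varphi_X\otimes\varphi_Y)\otimes\varphi_Z)$, which is exactly the statement that $\sigma$, as a monoidal functor, commutes with $\alpha$ (and similarly for the unitors). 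The pentagon and triangle identities in $\ECat$ then follow from those in $\Cat$ because the forgetful functor $\ECat\to\Cat$ is faithful and strict monoidal.

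For the duality: First I would confirm the source and target of $\mathbb{D}(\sigma(\varphi_X))$: since $\sigma^2 = \id$ we have $\sigma(\varphi_X)\colon\sigma(X)\to X$, so $\mathbb{D}(\sigma(\varphi_X))\colon\mathbb{D}(X)\to\mathbb{D}(\sigma(X)) = \sigma(\mathbb{D}(X))$ using contravariance of $\mathbb{D}$ and $\mathbb{D}\circ\sigma = \sigma\circ\mathbb{D}$. The equivariance condition is then $\sigma(\mathbb{D}(\sigma(\varphi_X)))\circ\mathbb{D}(\sigma(\varphi_X)) = \id_{\mathbb{D}(X)}$; here the left factor equals $\mathbb{D}(\sigma(\sigma(\varphi_X))) = \mathbb{D}(\varphi_X)$, and $\mathbb{D}(\varphi_X)\circ\mathbb{D}(\sigma(\varphi_X)) = \mathbb{D}(\sigma(\varphi_X)\circ\varphi_X) = \mathbb{D}(\id_X) = \id_{\mathbb{D}(X)}$ by contravariance. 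That $\mathbb{D}(f)$ is a morphism in $\ECat$ whenever $f$ is: from $\varphi_Y\circ f = \sigma(f)\circ\varphi_X$, applying $\sigma$ gives $\sigma(\varphi_Y)\circ\sigma(f) = f\circ\sigma(\varphi_X)$, and applying $\mathbb{D}$ (reversing composition and using $\mathbb{D}\sigma = \sigma\mathbb{D}$) yields $\mathbb{D}(\sigma(\varphi_X))\circ\mathbb{D}(f) = \sigma(\mathbb{D}(f))\circ\mathbb{D}(\sigma(\varphi_Y))$, which is precisely the condition for $\mathbb{D}(f)$ to be a morphism $(\mathbb{D}(Y),\mathbb{D}(\sigma(\varphi_Y)))\to(\mathbb{D}(X),\mathbb{D}(\sigma(\varphi_X)))$. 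Finally $\mathbb{D}^2 = \one_{\ECat}$ reduces to $\mathbb{D}^2(X) = X$ together with $\sigma(\mathbb{D}(\sigma(\varphi_X))) = \mathbb{D}(\varphi_X)$ and hence $\mathbb{D}(\sigma(\mathbb{D}(\sigma(\varphi_X)))) = \mathbb{D}^2(\varphi_X) = \varphi_X$, and on morphisms $\mathbb{D}^2(f) = f$ as in $\Cat$; the shift compatibility $\mathbb{D}(\qsh^k(X,\varphi_X)) = \qsh^{-k}\mathbb{D}(X,\varphi_X)$ is immediate from the corresponding property on $\Cat$, the equivariant structure being unchanged by shifting. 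I do not anticipate a genuine obstacle, as the proposition is bookkeeping in the definitions; the only real subtlety is the monoidal case when $\sigma$ is not strictly monoidal, where one must thread the coherence isomorphisms of $\sigma$ through the computations above (taking the unit of $\ECat$ to be $(\one,\sigma_0)$ for the unit coherence map $\sigma_0\colon\one\xrightarrow{\cong}\sigma(\one)$, and verifying the correspondingly modified equivariance conditions for $\alpha$ and the unitors) — a point that is moot in all our applications, where $\sigma$ comes from a (strict) $2$-functor and $\otimes$ is horizontal composition.
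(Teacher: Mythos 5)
Your verification is correct and is exactly the routine diagram chase the paper leaves to the reader (the proposition is stated with \qed and no proof). Both bullets check out: the monoidal part reduces to bifunctoriality of $\otimes$ and monoidality of $\sigma$, and the duality part correctly tracks the contravariance (order-reversal) of $\mathbb{D}$ together with $\mathbb{D}\sigma=\sigma\mathbb{D}$ and $\sigma^2=\id$; your closing remark about non-strictly monoidal $\sigma$ is a reasonable caveat, moot in the paper's applications.
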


\subsection{Indecomposable objects in equivariant categories} 
	\label{subsec:equivariantizationmixed}
	
Now, assume that $\K$ is an integral domain.
We aim to characterize the indecomposable objects in $\ECat$.
Observe that if $X \in \Cat$ is an equivariantizable object, then $X \cong \sigma(X)$.
However, we warn the reader that, in general, the converse does not hold 
without additional assumptions. 
In particular, the theory greatly simplifies under Hypothesis \ref{hypo:assumeme} below, 
and simplifies further under the assumption that
$\K$ is an algebraically closed field of characteristic not equal to $2$. 
The case when $\K$ is not algebraically closed is discussed in Appendix \ref{SS:equivariantsupplement}.

\begin{nota} \label{nota:mixedsigma}
Let $\K$ be an integral domain where $2$ is invertible 
and let $\Cat$ be a graded additive $\K$-linear category equipped with an involution $\sigma$ 
which commutes with the shift autoequivalence $\qsh$. 
Let $\{X_b\}_{b \in \BC}$ be an endopositive family which is preserved by $\sigma$, 
up to isomorphism. 
Define an action of $\sigma$ on $\BC$ by $\sigma (X_b) \cong X_{\sigma(b)}$. 
Since $\sigma(X_{\sigma(b)}) \cong X_b$, 
there is a decomposition
\[
\BC = \BC^{\mathrm{fix}} \sqcup \BC^{\mathrm{free}}
\]
where $\sigma(b) = b$ for all $b\in \BC^{\mathrm{fix}}$ and $\sigma(b) \ne b$ for all $b\in \BC^{\mathrm{free}}$.
\end{nota}

In other words, $\BC^{\mathrm{fix}}$ is the set of fixed points in $\BC$ under the $\sigma$-action 
and $\BC^{\mathrm{free}}$ is the union of all orbits of size two.

\begin{rem} If $\Cat$ is self-dual mixed and $\sigma$ commutes with $\mathbb{D}$, 
then $\sigma$ preserves the set of self-dual indecomposable objects.
Hence, it automatically preserves $\{X_b\}_{b \in \BC}$, 
up to isomorphism. \end{rem}

For the remainder of this section, 
\textbf{we assume the following hypothesis.}

\begin{hypothesis} \label{hypo:assumeme} 
Assume we are in the situation of Notation \ref{nota:mixedsigma}
and that $X_b$ is equivariantizable for each $b \in \BC^{\mathrm{fix}}$.
\end{hypothesis}

When $\K$ is an algebraically closed field with $2 \ne 0$, 
the second part of this hypothesis is superfluous, 
since it is straightforward to show that
$X_b \cong \sigma(X_b)$ implies that $X_b$ is equivariantizable. 
The results cited below from \cite{EliasFolding} are sketched there 
under the assumption that $\K$ is an algebraically closed field where $2 \ne 0$. 
We prove them more generally under Hypothesis \ref{hypo:assumeme} in Appendix \ref{SS:equivariantsupplement}.

\begin{lem}[{\cite[Proposition 3.10]{EliasFolding}}]
	\label{lem:phib}
For all $b\in \BC^{\mathrm{fix}}$, 
there is an isomorphism $\varphi_b \colon X_b \xrightarrow{\cong} \sigma(X_b)$ 
such that $\sigma(\varphi_b) \circ \varphi_b= \id_{X_b}$. 
For all orbits $\{ b, \sigma(b)\} \subset \BC^{\mathrm{free}}$, 
there is an isomorphism 
\[
\psi_b \colon X_b \oplus X_{\sigma(b)} \xrightarrow{\cong} \sigma(X_b\oplus X_{\sigma(b)})
\] 
such that $\sigma(\psi_b)\circ \psi_b= \id$.
As equivariant objects, $(X_b, \varphi_b) \not \cong (X_b, -\varphi_b)$ for $b \in \BC^{\mathrm{fix}}$ while
\begin{equation} \label{notfixedwhathappens}
(X_b \oplus X_{\sigma(b)}, \psi_b) \cong (X_b \oplus X_{\sigma(b)},-\psi_b)
\end{equation}
for $b\in \BC^{\mathrm{free}}$. \qed
\end{lem}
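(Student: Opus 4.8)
The plan is to prove the four assertions of the lemma separately, using the explicit structure of an endopositive family and working under Hypothesis~\ref{hypo:assumeme} and Notation~\ref{nota:mixedsigma} (so $\K$ is a domain with $2$ invertible). First, for $b\in\BC^{\mathrm{fix}}$, the existence of an isomorphism $\varphi_b\colon X_b\xrightarrow{\cong}\sigma(X_b)$ with $\sigma(\varphi_b)\circ\varphi_b=\id_{X_b}$ is exactly the statement that $X_b$ is equivariantizable, which is Hypothesis~\ref{hypo:assumeme}. I would additionally record that $\varphi_b$ may be taken homogeneous of degree $0$: since $X_b$ and $\sigma(X_b)$ are isomorphic endopositive objects, the degree-zero component $(\varphi_b)_0$ is an isomorphism, and comparing degree-zero parts of $\sigma(\varphi_b)\circ\varphi_b=\id_{X_b}$ (using that $\sigma$ preserves the grading) gives $\sigma((\varphi_b)_0)\circ(\varphi_b)_0=\id_{X_b}$, so $(\varphi_b)_0$ is itself an equivariant structure. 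We henceforth take $\varphi_b$ of degree $0$.

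For an orbit $\{b,\sigma(b)\}\subset\BC^{\mathrm{free}}$, applying $\sigma$ to $\sigma(X_b)\cong X_{\sigma(b)}$ yields $X_b\cong\sigma(X_{\sigma(b)})$. I would fix a degree-zero isomorphism $u\colon X_b\xrightarrow{\cong}\sigma(X_{\sigma(b)})$, set $v:=\sigma(u)^{-1}\colon X_{\sigma(b)}\xrightarrow{\cong}\sigma(X_b)$, and define $\psi_b\colon X_b\oplus X_{\sigma(b)}\to\sigma(X_b)\oplus\sigma(X_{\sigma(b)})=\sigma(X_b\oplus X_{\sigma(b)})$ to be the ``anti-diagonal'' map with $\psi_b|_{X_b}=u$ and $\psi_b|_{X_{\sigma(b)}}=v$. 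Since $\sigma$ is an involutive functor, $\sigma(v)=\sigma(\sigma(u)^{-1})=u^{-1}$, and a direct computation gives $\sigma(\psi_b)\circ\psi_b|_{X_b}=\sigma(v)\circ u=\id_{X_b}$ and $\sigma(\psi_b)\circ\psi_b|_{X_{\sigma(b)}}=\sigma(u)\circ v=\id_{X_{\sigma(b)}}$, so $(X_b\oplus X_{\sigma(b)},\psi_b)$ is an equivariant object.

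The substantive step, which I expect to require the most care, is the non-isomorphism $(X_b,\varphi_b)\not\cong(X_b,-\varphi_b)$ for $b\in\BC^{\mathrm{fix}}$. Combining Corollary~\ref{cor:HomAsInv} with the computation in Proposition~\ref{prop:sigast} (source $(X_b,\varphi_b)$, target $(X_b,-\varphi_b)$), the space $\Hom_{\ECat}\big((X_b,\varphi_b),(X_b,-\varphi_b)\big)$ is precisely the $(-1)$-eigenspace of the involution $\sigast$ on $\End(X_b)$ given by $\sigast f=\sigma(\varphi_b)\circ\sigma(f)\circ\varphi_b$. Because $\varphi_b$ has degree $0$ and $\sigma$ preserves degree, $\sigast$ preserves the grading of $\End(X_b)$; and because $\sigma(\varphi_b)\circ\varphi_b=\id_{X_b}$, it fixes $\id_{X_b}$, hence acts as the identity on $\End^0(X_b)=\K\cdot\id_{X_b}$ (endopositivity of $X_b$, Definition~\ref{defn:endoposfamily}). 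As $2$ is invertible in the domain $\K$, the $(-1)$-eigenspace of $\sigast$ meets $\End^0(X_b)$ only in $0$, so every $f\in\Hom_{\ECat}\big((X_b,\varphi_b),(X_b,-\varphi_b)\big)$ lies in $\End^{>0}(X_b)$. No such $f$ can be an isomorphism: if $g$ were a two-sided inverse, comparing degree-zero components in $g\circ f=\id_{X_b}$ forces $0=\id_{X_b}$, a contradiction.

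Finally, I would establish \eqref{notfixedwhathappens} for $b\in\BC^{\mathrm{free}}$ by exhibiting the automorphism $f:=\id_{X_b}\oplus(-\id_{X_{\sigma(b)}})$ of $X_b\oplus X_{\sigma(b)}$ (note $f=f^{-1}$) and checking directly that $-\psi_b\circ f=\sigma(f)\circ\psi_b$: with $\psi_b$ the anti-diagonal map above and $\sigma(f)=\id_{\sigma(X_b)}\oplus(-\id_{\sigma(X_{\sigma(b)})})$, both sides act by $-u$ on $X_b$ and by $v$ on $X_{\sigma(b)}$. Hence $f$ defines an isomorphism $(X_b\oplus X_{\sigma(b)},\psi_b)\xrightarrow{\cong}(X_b\oplus X_{\sigma(b)},-\psi_b)$ in $\ECat$, which completes the proof. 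The only genuine obstacle is the grading bookkeeping in the third step; everything else is a matter of writing down the explicit maps and verifying the equivariance identities.
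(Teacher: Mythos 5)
Your proof is correct and follows essentially the same route as the paper's treatment in Appendix~\ref{SS:equivariantsupplement}: existence of $\varphi_b$ for $b\in\BC^{\mathrm{fix}}$ is exactly Hypothesis~\ref{hypo:assumeme}, the anti-diagonal $\psi_b$ for $b\in\BC^{\mathrm{free}}$ is the same as \eqref{eq:psib} (your $u,v$ parametrization matches $\sigma(\varphi_b^{-1}),\varphi_b$ there), and the non-isomorphism/isomorphism statements are precisely the verifications the paper labels ``easy to verify.'' The only substance you add beyond the paper's sketch is spelling out that $\varphi_b$ may be taken in degree zero (which the paper gets for free from endopositivity) and the degree-zero-eigenspace argument ruling out an isomorphism $(X_b,\varphi_b)\cong(X_b,-\varphi_b)$ — both of which are correct and in the same spirit.
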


\begin{prop}[{\cite[Proposition 3.10]{EliasFolding}}]
	\label{prop:indECat}
Fix isomorphisms $\varphi_b$ as in Lemma \ref{lem:phib} for all $b \in \BC^{\mathrm{fix}}$.
If $(X, \varphi_X)$ is an indecomposable object in $\ECat$ for which $X \in \CatBC$, 
then exactly one of the following holds.
\begin{itemize}
\item $(X,\varphi_X) \cong \qsh^k (X_b, \varphi_b)$ for some $b\in \BC^{\mathrm{fix}}$ and $k \in \Z$,
\item $(X,\varphi_X) \cong \qsh^k (X_b, -\varphi_b)$ for some $b\in \BC^{\mathrm{fix}}$ and $k \in \Z$, or
\item $(X,\varphi_X) \cong \qsh^k (X_b\oplus X_{\sigma(b)}, \psi_b) \cong \qsh^k (X_b\oplus X_{\sigma(b)},-\psi_b)$ 
	for some $b\in \BC^{\mathrm{free}}$ and $k \in \Z$. \qed
\end{itemize}
\end{prop}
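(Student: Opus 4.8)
The plan is to combine the decomposition theory for the endopositive family $\{X_b\}_{b\in\BC}$ with the $\sigast$-eigenspace technology recorded in Proposition~\ref{prop:SummandViaEV} and Lemma~\ref{L:Ben's-linear-algebra-lemma}. First I would fix a decomposition of $X$ in $\Cat$: since $X\in\CatBC$, Lemma~\ref{lem:basisforV} (over a field, or in the generality of Appendix~\ref{SS:equivariantsupplement} under Hypothesis~\ref{hypo:assumeme}) gives $X\cong\bigoplus_{b\in\BC,\,k\in\Z}\qsh^{k}X_b^{\oplus d_{b,k}}$, where $d_{b,k}$ is the graded rank of the multiplicity module $V^{k}(X_b,X)$. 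Because $\varphi_X$ is an isomorphism $X\xrightarrow{\cong}\sigma(X)$ and $\sigma(X_b)\cong X_{\sigma(b)}$, uniqueness of such decompositions (Remark~\ref{rem:karoubianoverfield}) forces $d_{b,k}=d_{\sigma(b),k}$ for all $b,k$; in particular every indecomposable summand of $X$ in $\Cat$ is either of the form $\qsh^{k}X_b$ with $b\in\BC^{\mathrm{fix}}$ or occurs in a $\sigma$-swapped pair $\{\qsh^{k}X_b,\qsh^{k}X_{\sigma(b)}\}$ with $\{b,\sigma(b)\}\subset\BC^{\mathrm{free}}$.

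Next I would check that the three kinds of objects appearing in the statement are indeed indecomposable in $\ECat$. For $b\in\BC^{\mathrm{fix}}$ fix $\varphi_b$ as in Lemma~\ref{lem:phib}; by Corollary~\ref{cor:HomAsInv} we have $\End_{\ECat}\big(\qsh^{k}(X_b,\pm\varphi_b)\big)=\End(X_b)^{\sigast}\subseteq\End^{0}(X_b)=\K\cdot\id$, so these objects are endopositive, hence indecomposable, and $(X_b,\varphi_b)\not\cong(X_b,-\varphi_b)$ again by Lemma~\ref{lem:phib}. For an orbit $\{b,\sigma(b)\}\subset\BC^{\mathrm{free}}$ fix $\psi_b$ as in Lemma~\ref{lem:phib}; by Corollary~\ref{cor:HomAsInv} the equivariant endomorphism ring of $\qsh^{k}(X_b\oplus X_{\sigma(b)},\psi_b)$ is $\big(\End^{0}(X_b\oplus X_{\sigma(b)})\big)^{\sigast}$, and since $\sigast$ interchanges the two matrix blocks this fixed ring has no nontrivial idempotents, so this object is indecomposable as well. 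This also records the non-isomorphisms needed for the ``exactly one'' clause, together with the identification \eqref{notfixedwhathappens}.

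Then I would match $(X,\varphi_X)$ against these atoms using Proposition~\ref{prop:SummandViaEV}. The involution $\sigast$ acts on $\Hom(X_b,X)$ (for $b\in\BC^{\mathrm{fix}}$, via $\varphi_b$ and $\varphi_X$) and descends to $V^{k}(X_b,X)$ by \eqref{eq:sigasthomo}; the multiplicity of $\qsh^{k}(X_b,\varepsilon\varphi_b)$ in $(X,\varphi_X)$ for $\varepsilon=\pm1$ equals the rank of the $\varepsilon$-eigenspace of $\sigast$ on $V^{k}(X_b,X)$, which Lemma~\ref{L:Ben's-linear-algebra-lemma} lets one compute concretely. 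For an orbit $\{b,\sigma(b)\}$, $\sigast$ (via $\psi_b$) identifies $V^{k}(X_b,X)$ with $V^{k}(X_{\sigma(b)},X)$, and the multiplicity of $\qsh^{k}(X_b\oplus X_{\sigma(b)},\psi_b)$ equals $d_{b,k}$. Summing these contributions over all $b$ and $k$ produces a decomposition of $(X,\varphi_X)$ in $\ECat$ into the atomic objects above; since $(X,\varphi_X)$ is assumed indecomposable, exactly one atom can appear, and with multiplicity one, which is precisely the asserted trichotomy, with mutual exclusivity already noted.

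The step I expect to be the main obstacle is establishing Proposition~\ref{prop:SummandViaEV} — and the idempotent lifting it rests on — in the generality of Hypothesis~\ref{hypo:assumeme} rather than merely over an algebraically closed field. One needs $2\in\K^{\times}$ in order to diagonalize the $\sigast$-involution on the multiplicity module $V^{k}(X_b,X)$, and, more delicately, one must lift that eigenspace decomposition to an honest direct-sum decomposition of the equivariant object $(X,\varphi_X)$ even though $\End(X)$ is not semisimple (its degree-zero part is only ``block lower-triangular'', with radical the positive-degree-in-degree-zero terms). Carrying this out — via factoring idempotents through inclusion/projection maps, as in the non-complete idempotent-lifting discussed after Lemma~\ref{lem:EMTWlemma} — is exactly the material deferred to Appendix~\ref{SS:equivariantsupplement}; with those results in hand the argument above goes through verbatim.
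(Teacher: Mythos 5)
Your route is correct in substance and essentially matches the informal eigenspace discussion recorded at the end of Appendix~\ref{SS:equivariantsupplement} (the paragraph of \S\ref{ss:nice} that treats inclusion/projection pairs lying in $\sigast$-eigenspaces), which establishes Proposition~\ref{prop:indECat} under Hypothesis~\ref{hypo:assumeme} \emph{as a byproduct} of proving Proposition~\ref{prop:SummandViaEV}. However, the paper's formal classification proof — Proposition~\ref{prop:indECatfunky}, which specializes to Proposition~\ref{prop:indECat} — uses a different and cleaner device that you do not mention: the induction functor $\ind \colon \Cat \to \ECat$ of~\eqref{eq:ind}. One observes that any $(X,\varphi_X)$ is a direct summand of $\ind X \cong (X,\varphi_X)\oplus(X,-\varphi_X)$, and that $\ind$ is additive, so $\ind X$ decomposes explicitly into shifts of the objects $\ind X_b$; these are each shown directly to be indecomposable with graded local endomorphism ring ($\K$, or a quadratic extension $\K'$), so a Krull--Schmidt argument immediately gives the classification. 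What the $\ind$ route buys: it sidesteps entirely the need to diagonalize $\sigast$ on $V^k(X_b,X)$, lift eigenbases, and verify that orthogonal families of eigenvector inclusions assemble into a complete decomposition of $(X,\varphi_X)$ — which is exactly the step you flag as the ``main obstacle'' and propose to discharge via idempotent lifting. It also applies without Hypothesis~\ref{hypo:assumeme}, producing the extra class of indecomposables $(X_b\oplus X_b,\psi_b)$ when $X_b$ is $\sigma$-fixed but not equivariantizable. What your route buys: it proves the multiplicity formula of Proposition~\ref{prop:SummandViaEV} at the same time, whereas the $\ind$ argument gives the classification first and then derives the multiplicity statement afterward (as Proposition~\ref{prop:SummandViaEVfunky}). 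One small caveat to keep in mind: your appeal to Remark~\ref{rem:karoubianoverfield} for uniqueness of the $\Cat$-decomposition implicitly assumes $\K$ is a field; over a general integral domain one should instead invoke Lemma~\ref{lem:basisforV}, which the paper does.
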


It is relatively straightforward to compute the morphism spaces between the 
self-dual indecomposable equivariant objects $(X_b,\pm \varphi_b)$ and $(X_b\oplus X_{\sigma(b)}, \psi_b)$. 

\begin{cor}
	\label{cor:ECatsdm}
In $\ECat$, 
the objects $\{(X_b,\varphi_b), (X_b,-\varphi_b)\}_{b \in \BC^{\mathrm{fix}}} 
	\cup \{(X_b \oplus X_{\sigma(b)},\psi_b)\}_{b \in \BC^{\mathrm{free}}}$ 
form an endopositive family. 
If $\Cat$ is self-dual mixed, then $\ECat$ is self-dual mixed. \end{cor}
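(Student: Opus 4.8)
The plan is to verify the two assertions in turn, using Corollary~\ref{cor:HomAsInv} to identify morphism spaces in $\ECat$ with $(\sigast)$-invariants in $\Cat$, together with Proposition~\ref{prop:indECat} and Lemma~\ref{lem:phib} for the bookkeeping of the distinguished objects. First I would check that each listed object is endopositive. For $b \in \BC^{\mathrm{fix}}$, Remark~\ref{rem:indECat} already gives that $(X_b,\pm\varphi_b)$ is endopositive, since $X_b$ is. For $b \in \BC^{\mathrm{free}}$, write $A = X_b \oplus X_{\sigma(b)}$; by Corollary~\ref{cor:HomAsInv}, $\End_{\ECat}(A,\psi_b) = \End_\Cat(A)^{\sigast}$, and $\End_\Cat(A)$ is supported in non-negative degrees because the off-diagonal terms $\Hom(X_b,X_{\sigma(b)})$ and $\Hom(X_{\sigma(b)},X_b)$ lie in strictly positive degrees ($X_b \not\cong X_{\sigma(b)}$). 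In degree $0$ we have $\End^0_\Cat(A) = \K\cdot\mathrm{proj}_{X_b} \oplus \K\cdot\mathrm{proj}_{X_{\sigma(b)}}$, and the key local computation is that $\sigast$ interchanges these two idempotent projections: since $\sigma$ carries $X_b$ to $X_{\sigma(b)}$ up to isomorphism and there are no degree-$0$ maps between non-isomorphic members of the family, the degree-$0$ part of $\psi_b$ is ``anti-diagonal'' (a block isomorphism $X_b \xrightarrow{\cong} \sigma(X_{\sigma(b)})$, $X_{\sigma(b)} \xrightarrow{\cong} \sigma(X_b)$), so conjugating $\mathrm{proj}_{X_b}$ through it yields $\mathrm{proj}_{X_{\sigma(b)}}$ in degree $0$; combined with $\sigast(\id_A)=\id_A$ this pins down the action, giving $(\End^0_\Cat(A))^{\sigast} = \K\cdot\id_A$.

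Next I would check that $\Hom$-spaces between \emph{distinct} listed objects are concentrated in strictly positive degrees. When the underlying $\Cat$-objects involve different $X_b$'s this is immediate from the endopositivity of $\{X_b\}$ in $\Cat$. The only remaining case is $\Hom_{\ECat}((X_b,\varphi_b),(X_b,-\varphi_b))$ for $b \in \BC^{\mathrm{fix}}$, which by Corollary~\ref{cor:HomAsInv} is the $(-1)$-eigenspace of the involution $\sigast$ determined by $\varphi_b$ on $\End_\Cat(X_b)$; its degree-$0$ part vanishes because $\End^0_\Cat(X_b) = \K\cdot\id_{X_b}$ lies in the $(+1)$-eigenspace and $2$ is invertible in $\K$. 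Distinctness of the listed objects in $\ECat$ (and that none is a nonzero shift of another) follows from Proposition~\ref{prop:indECat} and the non-isomorphism statements in Lemma~\ref{lem:phib}. Together these show the listed objects form an endopositive family; and since every object of $\ECat$ decomposes into shifts of these (over a field by Krull--Schmidt, and in general by the classification of indecomposables in Proposition~\ref{prop:indECat}), $\ECat$ is generated by this family in the sense of Definition~\ref{def:mixed}.

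Finally, assuming $\Cat$ is self-dual mixed, I would argue that $\ECat$ is too. The morphism spaces of $\ECat$ are free graded $\K$-modules of finite rank in each degree: over a field this is clear since they are eigenspaces of an involution on such a module, and the general integral-domain case is handled in Appendix~\ref{SS:equivariantsupplement}. By Proposition~\ref{prop:invMD} the duality $\mathbb{D}$ extends to $\ECat$ via $\mathbb{D}(X,\varphi) = (\mathbb{D}X,\mathbb{D}(\sigma\varphi))$ — here one uses that $\mathbb{D}$ commutes with $\sigma$, which is part of the self-dual mixed setup — with $\mathbb{D}^2 = \one$ and the correct interaction with $\qsh$. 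Since $\mathbb{D}(X_b)\cong X_b$, Proposition~\ref{prop:indECat} forces $\mathbb{D}(A,\psi_b)\cong (A,\psi_b)$ for $b \in \BC^{\mathrm{free}}$ (the two signs coinciding there by~\eqref{notfixedwhathappens}), and $\mathbb{D}(X_b,\varphi_b)\cong (X_b,\pm\varphi_b)$ for $b \in \BC^{\mathrm{fix}}$.

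The main obstacle I anticipate is ruling out the minus sign in the last line, i.e.~showing $\mathbb{D}$ does not interchange $(X_b,\varphi_b)$ with $(X_b,-\varphi_b)$. The plan there is to choose the equivariant structure $\varphi_b$ compatibly with a \emph{symmetric} self-duality isomorphism $c_b \colon X_b \xrightarrow{\cong} \mathbb{D}(X_b)$ (one with $\mathbb{D}(c_b) = c_b$, which exists since $X_b$ is endopositive and $2$ is invertible, as any two such isomorphisms differ by a unit scalar whose square must be $1$), and then to verify by a short computation with the formula~\eqref{eq:sigastdef} for $\sigast$, using $\mathbb{D}\sigma = \sigma\mathbb{D}$ and $\mathbb{D}(c_b)=c_b$, that $c_b$ — possibly adjusted by a unit of $\End_\Cat(X_b)$ solving the relevant $\Z/2$-coboundary condition, whose degree-$0$ obstruction is a scalar that one shows is $1$ — intertwines $\varphi_b$ with the equivariant structure on $\mathbb{D}(X_b)$, hence is an isomorphism $(X_b,\varphi_b)\to\mathbb{D}(X_b,\varphi_b)$ in $\ECat$. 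Granting this, every distinguished object of $\ECat$ is self-dual, so $\ECat$ is self-dual mixed.
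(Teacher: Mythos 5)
Your argument for the first claim (endopositivity of the family) is correct and closely matches what the paper does (see \S\ref{ss:nice} in Appendix~\ref{SS:equivariantsupplement}, which carries out the $\End_{\ECat}^0(X_b \oplus X_{\sigma(b)}, \psi_b) = \K\cdot\id$ computation). The paper itself offers no explicit proof of the corollary (it is quoted from \cite{EliasFolding}), so a direct comparison is not possible, but your reconstruction of this part is sound.

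For the self-dual mixed claim, you correctly isolate the crux: one must show $\mathbb{D}(X_b,\varphi_b)\cong(X_b,\varphi_b)$ rather than $(X_b,-\varphi_b)$ for $b\in\BC^{\mathrm{fix}}$. However, your proposed resolution contains a real gap. You assert that a \emph{symmetric} self-duality $c_b$ with $\mathbb{D}(c_b)=c_b$ exists ``since $X_b$ is endopositive and $2$ is invertible, as any two such isomorphisms differ by a unit scalar whose square must be~$1$.'' The reasoning is off: the only constraint from $\mathbb{D}^2=\one$ is that $\mathbb{D}(c_b)=\nu c_b$ with $\nu^2=1$, i.e.~$\nu=\pm1$, and this invariant $\nu$ is unchanged under replacing $c_b$ by $\alpha c_b$ for any unit $\alpha$ (since $\mathbb{D}(\alpha c_b)=\alpha\nu c_b=\nu(\alpha c_b)$). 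Nor does averaging help: if $\nu=-1$, then $\tfrac12(c_b+\mathbb{D}(c_b))=0$. So symmetric self-dualities need not exist in general, and the sign $\nu$ (an analogue of the Frobenius--Schur indicator) must be shown to be $+1$ by some other mechanism specific to the self-dual mixed setup --- an argument you do not supply. You also leave vague what ``the relevant $\Z/2$-coboundary condition'' is and how its ``degree-$0$ obstruction'' is shown to be $1$; as written, this is precisely the assertion to be proved, not a proof of it.
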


\begin{rem}
For general $G$, the equivariant category of a self-dual mixed category is mixed, but not necessarily self-dual. 
The equivariant category is self-dual mixed if and only if all representations of $G$ are self dual, 
as is the case for $G= \Z/2$. 
\end{rem}

\begin{nota} 
	\label{not:ECatBC}
Using a slight abuse of notation, 
the full subcategory of $\ECat$ whose objects are direct sums of shifts of the endopositive family in 
Corollary \ref{cor:ECatsdm} will be denoted $\ECatBC$. \end{nota}

Next, we discuss an effective algorithm to take an equivariant object $(Y,\varphi_Y)$ in $\ECat$
(and not necessarily in $\ECatBC$) 
and find all indecomposable summands of the form in Corollary \ref{cor:ECatsdm}. 
Recall that if $X$ and $Y$ are equipped with equivariant structures, 
then $\Hom(X,Y)$ admits an action of $\sigast$ which depends on the choice of those structures. 
When $X = X_b$ for $b \in \BC^{\mathrm{fix}}$, we always assume that the choice of 
equivariant structure for this action is $\varphi_b$ (rather than $-\varphi_b$), 
and we denote the corresponding action as $\sigbast$. 

We leave it to the reader to verify that $\sigbast$ preserves the kernels 
$\rker(\beta^k_{X_b, Y})$ and $\lker(\beta^k_{X_b, Y})$ of the graded composition pairing. 
Therefore, $\sigbast$ descends to an action on $V^k(X_b,Y)$ and $V^{-k}(Y,X_b)$ for all $k \in \Z$. 
It is not hard to show that any element of the $+1$-eigenspace of $\sigbast$ 
is indeed a valid inclusion map making $(X_b,\varphi_b)$ a direct summand of $(Y,\varphi_Y)$, 
while any element of the $-1$-eigenspace gives a direct summand of the form $(X_b, -\varphi_b)$.

\begin{prop}[{\cite[Claim 3.16]{EliasFolding}}]
	\label{prop:SummandViaEV}
Let $(Y, \varphi_Y)$ be an object in $\ECat$.
\begin{enumerate}
\item For $b\in \BC^{\mathrm{free}}$, 
	the multiplicity of $\qsh^k (X_b\oplus X_{\sigma(b)}, \psi_b)$ in $(Y,\varphi_Y)$ 
	is equal to the multiplicity of $\qsh^k X_b$ in $Y$.

\item For $b\in \BC^{\mathrm{fix}}$, 
	the multiplicity of $\qsh^k (X_b,\varphi_b)$ in $(Y,\varphi_Y)$ 
	equals the dimension of the $+1$-eigenspace of $\sigbast$ in $V^k(X_b, Y)$. 
	
\item For $b\in \BC^{\mathrm{fix}}$, 
	the multiplicity of $\qsh^k (X_b,-\varphi_b)$ in $(Y,\varphi_Y)$ 
	equals the dimension of the {$-1$-eigenspace} of $\sigbast$ in $V^k(X_b, Y)$. 
\end{enumerate}
\end{prop}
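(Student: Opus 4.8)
The plan is to compute each of the three multiplicities as the na\"ive rank of a graded composition pairing taken \emph{inside} $\ECat$, using the endopositive-family machinery of Section \ref{LIP}, and then to translate that pairing into the language of $\sigast$-eigenspaces. First I would record that each object appearing in the statement --- namely $(X_b,\varphi_b)$ and $(X_b,-\varphi_b)$ for $b\in\BC^{\mathrm{fix}}$, and $(X_b\oplus X_{\sigma(b)},\psi_b)$ for $b\in\BC^{\mathrm{free}}$ --- is endopositive in $\ECat$; this is part of Corollary \ref{cor:ECatsdm} (and for the first two follows already from Remark \ref{rem:indECat}). Hence by Corollary \ref{cor:gLCP} and Lemma \ref{lem:gLIP} applied inside $\ECat$, the multiplicity of $\qsh^k$ of any such object $Z$ in $(Y,\varphi_Y)$ equals $\dim V^k_{\ECat}(Z,(Y,\varphi_Y))$, i.e.~the na\"ive rank of the non-degenerate graded composition pairing between $\Hom^{-k}_{\ECat}((Y,\varphi_Y),Z)$ and $\Hom^k_{\ECat}(Z,(Y,\varphi_Y))$ (over a general integral domain one reads ``dimension'' as this na\"ive rank). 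So everything reduces to understanding these $\ECat$-Hom-spaces and the composition map between them.

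The key translation step is as follows. By Corollary \ref{cor:HomAsInv}, for $b\in\BC^{\mathrm{fix}}$ we have $\Hom^k_{\ECat}((X_b,\varphi_b),(Y,\varphi_Y)) = \Hom^k(X_b,Y)^{\sigbast,+}$, the $+1$-eigenspace of $\sigbast$, and a one-line unwinding of the defining equation $\varphi_Y\circ f = \sigma(f)\circ(-\varphi_b)$ of a morphism of equivariant objects shows $\Hom^k_{\ECat}((X_b,-\varphi_b),(Y,\varphi_Y)) = \Hom^k(X_b,Y)^{\sigbast,-}$; the analogous statements hold for the opposite Hom-spaces, and by \eqref{eq:sigasthomo} the actions of $\sigast$ on $\Hom(X_b,Y)$ and $\Hom(Y,X_b)$ are compatible with composition. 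Crucially, $\sigbast$ acts trivially on $\End^0(X_b) = \K\cdot\id_{X_b}$, because $\sigbast(\id_{X_b}) = \sigma(\varphi_b)\circ\varphi_b = \id_{X_b}$ since $\varphi_b$ is an equivariant structure. From this, using that $2$ is invertible, I would deduce: (i) the graded composition pairing $\beta^k_{X_b,Y}$, valued in $\K\cdot\id_{X_b}$, is invariant for the diagonal $\Z/2$-action, hence vanishes on the mixed pairs $\Hom^{-k}(Y,X_b)^{\sigbast,+}\times\Hom^k(X_b,Y)^{\sigbast,-}$ and its opposite; and (ii) $\rker(\beta^k_{X_b,Y})$ is $\sigbast$-stable, so passing to eigenspaces commutes with passing to $V^k(X_b,Y)$, giving $V^k_{\ECat}((X_b,\pm\varphi_b),(Y,\varphi_Y)) = V^k(X_b,Y)^{\sigbast,\pm}$. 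Since $\LIP^k_{X_b,Y}$ is non-degenerate and block-diagonal with respect to the $\pm$-decompositions, each block is itself non-degenerate, so the na\"ive rank of the block between the $+$-eigenspaces is $\dim V^k(X_b,Y)^{\sigbast,+}$ and likewise for the $-$ block. Combined with the first step, this yields parts (2) and (3).

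For part (1), with $b\in\BC^{\mathrm{free}}$, I would use the explicit form of $\psi_b$ from Lemma \ref{lem:phib}: $\psi_b$ is built from the canonical isomorphisms interchanging $X_b$ and $X_{\sigma(b)}$ under $\sigma$, so a $\sigast$-invariant morphism $X_b\oplus X_{\sigma(b)}\to Y$ is determined freely by its $X_b$-component (the $X_{\sigma(b)}$-component being then forced). This gives $\K$-linear isomorphisms $\Hom^k_{\ECat}((X_b\oplus X_{\sigma(b)},\psi_b),(Y,\varphi_Y)) \cong \Hom^k(X_b,Y)$ and $\Hom^{-k}_{\ECat}((Y,\varphi_Y),(X_b\oplus X_{\sigma(b)},\psi_b)) \cong \Hom^{-k}(Y,X_b)$, intertwining the composition pairings (both valued in $\K$ after identifying $\End^0_{\ECat}$ of the equivariant object with $\End^0(X_b) = \K$). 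Hence the relevant na\"ive rank in $\ECat$ equals the na\"ive rank of $\beta^k_{X_b,Y}$ in $\Cat$, which by Corollary \ref{cor:gLCP} is the multiplicity of $\qsh^k X_b$ in $Y$.

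I expect the main obstacle to be the careful bookkeeping in part (1): pinning down $\psi_b$ precisely enough to see that the forgetful functor induces the stated Hom-space isomorphisms compatibly with composition and with the degree-zero endomorphism identifications. The eigenspace argument in parts (2)--(3) is essentially linear algebra over $\K$ once one has the triviality of $\sigbast$ on $\K\cdot\id_{X_b}$ in hand; the only point to tread carefully is that over a general integral domain one should phrase everything in terms of na\"ive ranks and dual sets rather than dimensions of eigenspaces, reverting to honest dimensions only when $\K$ is a field.
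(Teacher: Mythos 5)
Your proposal is correct and takes essentially the same route as the paper's proof (given in Appendix \ref{SS:equivariantsupplement} under Hypothesis \ref{hypo:assumeme}). For parts (2)–(3), the paper also diagonalizes the $\sigbast$-action on $\Hom^k(X_b,Y)$ and $\Hom^{-k}(Y,X_b)$, observes that $\pm 1$-eigenvectors pair only with $\pm 1$-eigenvectors, and notes that $\rker$ is $\sigbast$-stable so eigenbases of $V^k(X_b,Y)$ lift; for part (1), the paper constructs the same identification you describe, just written as the explicit matrices $I(\iota)$ and $P(p)$, and verifies directly that the induced composition pairing in $\ECat$ agrees with $\beta^k_{X_b,Y}$ via $P(p)\circ I(\iota) = \beta^k_{X_b,Y}(p,\iota)\cdot\id$. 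The differences are purely stylistic: you phrase it in terms of the abstract $V^k_{\ECat}$ machinery while the paper works with explicit inclusion/projection maps, but both exploit the same three facts — $\sigbast$ fixes $\End^0(X_b)$, the pairing is block-diagonal for the eigenspace decomposition (since $2\in\K^\times$), and in the free case the forgetful map to the $X_b$-component is an isomorphism of $\Hom$-spaces compatible with composition.
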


The above results allow for the computation of the weighted Grothendieck group of $\ECatBC$.
Observe that \eqref{notfixedwhathappens} implies that $[(X_b\oplus X_{\sigma(b)}, \psi_b)]_{\sigma}=0$ 
whenever $b \in \BC^{\mathrm{free}}$, since in this case
\[
[(X_b\oplus X_{\sigma(b)}, \psi_b)]_{\sigma} 
= [(X_b\oplus X_{\sigma(b)}, -\psi_b)]_{\sigma} = -[(X_b\oplus X_{\sigma(b)}, \psi_b)]_{\sigma} \, .
\]

\begin{prop}[{\cite[Prop. 3.17 and Cor. 3.19]{EliasFolding}}]
	\label{prop:computeequivGgp}
The $\sigma$-weighted Grothendieck group $\wKzero{\ECatBC}$ has a basis
\[
\big\{ [(X_b, \varphi_b)]_{\sigma} \big\}_{b\in \BC^{\mathrm{fix}}}
\]
in bijection with $\BC^{\mathrm{fix}}$. 
Moreover, if $(Y, \varphi_Y)$ is any object in $\ECatBC$, then 
\begin{equation}
	\label{eq:KzeroDecomp}
[(Y, \varphi_Y)]_{\sigma} 
= \sum_{k\in \Z}\sum_{b\in \BC^{\mathrm{fix}}} \tr(\sigbast |_{V^k(X_b, Y)}) q^k [(X_b, \varphi_b)]_{\sigma} \, . 
\end{equation}
\qed
\end{prop}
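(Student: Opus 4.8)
The statement to prove is Proposition \ref{prop:computeequivGgp}: that $\wKzero{\ECatBC}$ has basis $\{[(X_b,\varphi_b)]_\sigma\}_{b\in\BC^{\mathrm{fix}}}$, together with the decomposition formula \eqref{eq:KzeroDecomp}. The plan is to deduce everything from the structure theory already assembled: Corollary \ref{cor:ECatsdm} (the endopositive family in $\ECat$), Proposition \ref{prop:indECat} (classification of indecomposables), and Proposition \ref{prop:SummandViaEV} (multiplicities via eigenspaces of $\sigbast$). Throughout, $\K$ is an integral domain with $2$ invertible, and Hypothesis \ref{hypo:assumeme} is in force. First I would establish that $\wKzero{\ECatBC}$ is \emph{spanned} by $\{[(X_b,\varphi_b)]_\sigma\}_{b\in\BC^{\mathrm{fix}}}$: by Remark \ref{rem:karoubianoverfield}-style reasoning (more precisely, since $\ECatBC$ is the full subcategory on direct sums of shifts of the endopositive family of Corollary \ref{cor:ECatsdm}), every object of $\ECatBC$ is a finite direct sum of shifts of the objects $(X_b,\pm\varphi_b)$ for $b\in\BC^{\mathrm{fix}}$ and $(X_b\oplus X_{\sigma(b)},\psi_b)$ for $b\in\BC^{\mathrm{free}}$. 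Hence $\Kzero{\ECatBC}\otimes_\Z\Z[\tfrac12]$ is spanned by the classes of these shifted objects. In $\wKzero{\ECatBC}$, the defining relation $[(X,-\varphi_X)]_\sigma=-[(X,\varphi_X)]_\sigma$ together with the observation (recorded just before the proposition, using \eqref{notfixedwhathappens}) that $[(X_b\oplus X_{\sigma(b)},\psi_b)]_\sigma=0$ for $b\in\BC^{\mathrm{free}}$ collapses the spanning set to $\{q^k[(X_b,\varphi_b)]_\sigma\}_{b\in\BC^{\mathrm{fix}},k\in\Z}$, i.e.~to the $\Z[q^\pm]$-span (equivalently $\Z[\tfrac12][q^\pm]$-span) of $\{[(X_b,\varphi_b)]_\sigma\}_{b\in\BC^{\mathrm{fix}}}$.

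Next I would prove \textbf{linear independence}. The classes $[(X,\varphi_X)]$ of the indecomposable objects listed in Proposition \ref{prop:indECat} form a $\Z$-basis of $\Kzero{\ECatBC}$ (the category is Krull--Schmidt over the domain $\K$ after passing to the relevant field, or one argues directly from the endopositive family; the point is that non-isomorphic indecomposables give independent classes, and Proposition \ref{prop:indECat} enumerates all indecomposables in $\ECatBC$). Passing to $\Kzero{\ECatBC}\otimes\Z[\tfrac12]$ keeps this a basis. Now I impose the single relation $[(X,-\varphi_X)]_\sigma=-[(X,\varphi_X)]_\sigma$. On the basis elements indexed by $b\in\BC^{\mathrm{fix}}$, this identifies $q^k[(X_b,-\varphi_b)]$ with $-q^k[(X_b,\varphi_b)]$, so the quotient by the span of $\{q^k([(X_b,-\varphi_b)]+[(X_b,\varphi_b)])\}$ is free with basis the images of $\{q^k[(X_b,\varphi_b)]\}_{b\in\BC^{\mathrm{fix}},k\in\Z}$; on the basis elements indexed by $b\in\BC^{\mathrm{free}}$ the relation reads $[(X_b\oplus X_{\sigma(b)},\psi_b)]_\sigma=-[(X_b\oplus X_{\sigma(b)},\psi_b)]_\sigma$ (using \eqref{notfixedwhathappens}), forcing these classes to $0$, and since $2$ is invertible this kills exactly the $\BC^{\mathrm{free}}$-part with no further collateral. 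A careful bookkeeping of which subgroup of $\Kzero{\ECatBC}\otimes\Z[\tfrac12]$ is being quotiented — showing it meets the $\Z[\tfrac12][q^\pm]$-span of $\{[(X_b,\varphi_b)]\}_{b\in\BC^{\mathrm{fix}}}$ only in $0$ — yields that $\{[(X_b,\varphi_b)]_\sigma\}_{b\in\BC^{\mathrm{fix}}}$ is $\Z[\tfrac12][q^\pm]$-linearly independent, completing the proof that it is a basis.

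Finally, for the decomposition formula \eqref{eq:KzeroDecomp}: given $(Y,\varphi_Y)\in\ECatBC$, decompose it into the indecomposables of Proposition \ref{prop:indECat}. By Proposition \ref{prop:SummandViaEV}(2),(3), the multiplicity of $q^k(X_b,\varphi_b)$ minus the multiplicity of $q^k(X_b,-\varphi_b)$ equals $\dim(\text{+1-eigenspace})-\dim(\text{$-1$-eigenspace})$ of $\sigbast$ acting on $V^k(X_b,Y)$, which is precisely $\tr(\sigbast|_{V^k(X_b,Y)})$ (here one uses $2$ invertible / char $\ne 2$ so that $\sigbast$ is diagonalizable with eigenvalues $\pm1$). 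Summing $q^k[(X_b,\varphi_b)]_\sigma$ against these coefficients, and using that the $\BC^{\mathrm{free}}$-summands contribute $0$ to $\wKzero{\ECatBC}$ by \eqref{notfixedwhathappens}, gives exactly \eqref{eq:KzeroDecomp}. The main obstacle, I expect, is not any single computation but getting the linear-independence bookkeeping cleanly right: one must argue that the relation defining $\wKzero{}$ does not create unexpected $\Z[\tfrac12][q^\pm]$-linear dependencies among the $[(X_b,\varphi_b)]_\sigma$ for distinct $b\in\BC^{\mathrm{fix}}$, and for this the enumeration in Proposition \ref{prop:indECat} (each indecomposable appears exactly once) and the invertibility of $2$ are both essential. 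I would present this as an explicit description of the quotient map on bases rather than an abstract argument.
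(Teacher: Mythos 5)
Your proposal is correct and matches the paper's own argument, which is relegated to Appendix~\ref{SS:equivariantsupplement}: there the proof of the generalization (Proposition~\ref{prop:computeequivGgpfunky}) is exactly ``an immediate consequence'' of the classification of indecomposables (your Proposition~\ref{prop:indECat}/\ref{prop:indECatfunky}), the multiplicity-via-$\sigbast$-eigenspaces computation (your Proposition~\ref{prop:SummandViaEV}/\ref{prop:SummandViaEVfunky}), and the vanishing of the free-orbit classes in $\wKzero{-}$. Your spanning/independence bookkeeping is a careful unpacking of that same reduction, with the right emphasis on \eqref{notfixedwhathappens} and the invertibility of $2$.
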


In light of \eqref{eq:KzeroDecomp},
we conclude this section with remarks on the practicalities of computing the trace of $\sigbast$. 
It is often easy to compute $\sigbast$ on $\Hom^k(X_b,Y)$, 
but computing the action on $V^k(X_b,Y)$ can be difficult. 
In \cite{EliasFolding}, the action of $\sigbast$ on $V^k(X_b,Y)$ could be computed directly, 
since one could find a subset $\{\iota_1, \ldots, \iota_d\} \subset \Hom^k(X_b,Y)$ 
that is permuted by $\sigbast$ and which descends to a basis of $V^k(X_b,Y)$.
See e.g.~\cite[Sections 5.1, 5.4]{EliasFolding}.
 
In our setting, finding such a ``basis of inclusion maps'' which is permuted by $\sigbast$ is significantly more difficult.
In some instances, we are able to overcome this difficulty by computing directly, 
e.g.~the computations in \S \ref{s:Divpower}.
In others, e.g.~ in \S \ref{s:Serre}, we instead compute the action of $\sigbast$ on
$V^k(X_b,Y)$ using the techniques of Remark \ref{rem:howtodealwithVk}.
The following result records this method explicitly. 

\begin{lem}\label{L:Ben's-linear-algebra-lemma}
Suppose that $V^k(X_b,Y)$ is free over $\K$ of rank $d$ 
and choose $\{\iota_1, \ldots, \iota_d\} \subset \Hom^k(X_b,Y)$ 
and $\{p_1, \ldots, p_d\} \subset \Hom^{-k}(Y,X_b)$ 
which descend to bases of $V^k(X_b,Y)$ and $V^{-k}(Y,X_b)$ respectively.
Assume also that the pairing matrix $C$ between these bases is invertible over $\K$. 
If $S$ is the pairing matrix between $\{\sigbast \iota_1, \ldots, \sigbast \iota_d\}$ and $\{p_1, \ldots, p_d\}$, 
then $C^{-1}S$ is the matrix encoding the action of $\sigbast$ on $V^k(X_b, Y)$, 
in the basis induced by $\{\iota_1, \ldots, \iota_d\}$.
\end{lem}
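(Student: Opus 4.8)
The plan is to reduce everything to a one-line computation with the non-degenerate graded composition pairing $\LIP^k_{X_b, Y}$. First I would recall, as in the discussion preceding Proposition \ref{prop:SummandViaEV}, that $\sigbast$ preserves both $\rker(\beta^k_{X_b, Y})$ and $\lker(\beta^k_{X_b, Y})$, and hence descends to $\K$-linear involutions on $V^k(X_b, Y)$ and $V^{-k}(Y, X_b)$. The only inputs are that $\sigbast$ respects composition (Proposition \ref{prop:sigast}), is involutive, and acts as the identity on $\End^0(X_b) = \K \cdot \id_{X_b}$ --- the last because $\sigbast \id_{X_b} = \sigma(\varphi_b) \circ \varphi_b = \id_{X_b}$, using $\sigma(\varphi_b) = \varphi_b^{-1}$ from Lemma \ref{lem:phib}. (Concretely, for $g \in \rker(\beta^k_{X_b,Y})$ and any $f \in \Hom^{-k}(Y,X_b)$ one has $\beta^k(f, \sigbast g) = \sigbast\big(\beta^k(f,\sigbast g)\big) = \beta^k(\sigbast f, g) = 0$, since $\beta^k(f,\sigbast g)$ is a scalar and $\sigbast f \in \Hom^{-k}(Y,X_b)$; the $\lker$ case is symmetric.) Since $V^k(X_b, Y)$ is free over $\K$ with basis the images $[\iota_1], \dots, [\iota_d]$, there is a unique matrix $M \in \mathrm{Mat}_d(\K)$ with $\sigbast [\iota_\ell] = \sum_m M_{m\ell} [\iota_m]$, and $M$ is by definition the matrix encoding the action of $\sigbast$ on $V^k(X_b,Y)$ in the $\iota$-basis. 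The goal is thus to identify $M$ with $C^{-1}S$.

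Next I would observe that, directly from the definition of $\rker(\beta^k_{X_b,Y})$, each $p_j$ pairs to zero against every element of $\rker(\beta^k_{X_b,Y})$, so $\beta^k_{X_b,Y}(p_j, -)$ factors through the quotient $V^k(X_b,Y)$ and is computed by $\LIP^k_{X_b,Y}([p_j], -)$. With the natural indexing conventions this gives $C_{j\ell} = \beta^k_{X_b,Y}(p_j, \iota_\ell) = \LIP^k_{X_b,Y}([p_j], [\iota_\ell])$ and $S_{j\ell} = \beta^k_{X_b,Y}(p_j, \sigbast \iota_\ell) = \LIP^k_{X_b,Y}([p_j], \sigbast[\iota_\ell])$, where for the second we use that $[\sigbast \iota_\ell] = \sigbast[\iota_\ell]$ is exactly the induced action from the previous paragraph. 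Expanding $\sigbast[\iota_\ell]$ in the $\iota$-basis and using $\K$-bilinearity of $\LIP^k_{X_b,Y}$ then yields $S_{j\ell} = \sum_m M_{m\ell}\,\LIP^k_{X_b,Y}([p_j],[\iota_m]) = \sum_m C_{jm} M_{m\ell}$, i.e. $S = CM$.

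Finally, since $C$ is invertible over $\K$ by hypothesis, I conclude $M = C^{-1}S$, which is the assertion. I do not expect any genuine obstacle here: the mathematical content is elementary linear algebra over a commutative ring. The one point requiring care is bookkeeping --- keeping the conventions consistent about which $\Hom$-space sits in which slot of $\beta^k$, and whether the entries of $C$ and $S$ record $\LIP^k_{X_b,Y}([p_j], [\iota_\ell])$ or its transpose --- so that the answer comes out as $C^{-1}S$ rather than $S C^{-1}$ or a transpose thereof. The only other step worth spelling out, though it is relegated to the reader in the text before Proposition \ref{prop:SummandViaEV}, is the verification that $\sigbast$ descends to $V^k(X_b,Y)$ and $V^{-k}(Y,X_b)$, which is handled by the short argument indicated in the first paragraph.
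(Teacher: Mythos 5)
Your proof is correct and follows essentially the same route as the paper: define the matrix encoding the action of $\sigbast$ on $V^k(X_b,Y)$, show $S = CM$ by bilinearity of the induced pairing, and invert $C$. You additionally spell out the verification (left to the reader in the text preceding Proposition \ref{prop:SummandViaEV}) that $\sigbast$ preserves the kernels and hence descends, which is a welcome clarification but not a genuinely different argument.
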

\begin{proof}
Let $x\mapsto \overline{x}$ denote the quotient map $\Hom^k(X_b,Y) \twoheadrightarrow V^k(X_b,Y)$, 
so $\{\overline{\iota_1}, \dots, \overline{\iota_d}\}$ is a basis for $V^k(X_b, Y)$. 
The action of $\sigbast$ on $V^{k}(X_b, Y)$ is determined by the matrix $\Sigma$ 
which describes how to write $\sigbast \overline{\iota_r}$ 
in terms of the basis $\{\overline{\iota_1}, \dots, \overline{\iota_d}\}$. 
It is easy to see that the pairing matrix $S$ is equal to 
the composition of $\Sigma$ and the pairing matrix $C$, i.e.~$S= C\cdot\Sigma$. 
Thus, $\Sigma = C^{-1}S$. 
\end{proof}

%
\section{An equivariant category from the categorified quantum group}
	\label{s:ECQG}
%

In this section we assume that $\K$ is an integral domain and $2$ is invertible in $\K$.

\subsection{The $\FF \EE \one_{\wtn}$ equivariant category}
	\label{ss:FE}	

We assume now, and for the duration, 
that we have chosen the functions in Theorem \ref{thm:involution} and fixed $n \geq 1$.
We thus have the involution $\tau \colon \cXX_q(\glm) \to \cXX_q(\glm)$ 
from Corollary \ref{cor:thickinvolution}.
Since $\tau$ preserves the object $\one_{\wtn}$, 
it preserves the endomorphism category of $\one_{\wtn}$,
which is the following monoidal category.

\begin{defn}
\label{def:BFoam}
The monoidal category $\BFoam$ is $\one_{\wtn}\cXX_q(\glm)\one_{\wtn}$. 
Equivalently, it is the full $2$-subcategory of $\cUU_q(\glm)$ generated by the $1$-morphisms
$\EE_i^{(k)}\FF_i^{(k)}\one_{\wtn}$ and $\FF_i^{(k)}\EE_i^{(k)}\one_{\wtn}$
for $i\in \{1, \ldots, m-1\}$ and $k\in \Z_{\ge 0}$.
\end{defn}

Note that both $\tau$ and $\BFoam$ depend on $n$, 
though, in light of the results from \S\ref{ss:dependence-on-n}, the dependence is not significant.
Hence, the absence of this dependence in our notation.
We now aim to apply the techniques of \S \ref{subsec:equivariantizationmixed} 
to study the corresponding equivariant category $\EBFoam$.
Since the involution $\tau$ from Corollary \ref{cor:thickinvolution} gives a monoidal involution of $\BFoam$, 
Proposition \ref{prop:invMD} shows that $\EBFoam$ is monoidal. 

\begin{rem}
	\label{rem:BFoamSDM}
Example \ref{ex:CQGmixed} gives that $\Kar(\UU_q(\glm))$ is self-dual mixed when $\K$ is a field of characteristic zero, 
thus the same is true for the full $2$-subcategory $\Kar(\BFoam)$. 
By Corollary \ref{cor:ECatsdm}, its $\tau$-equivariant category is also self-dual mixed.
\end{rem}

We begin by exploring equivariant objects.
Note that
\begin{equation}
	\label{eq:easyStosic}
\begin{tikzpicture}[anchorbase,yscale=1]
\draw[ultra thick, green,->] (0,0) node[below,yshift=2pt] {\scs$k$} to (0,1.5);
\draw[ultra thick, green,<-] (.5,0) node[below,yshift=2pt] {\scs$k$} to (.5,1.5);
\node at (.875,1.25){$\wtn$};
\end{tikzpicture}
=
\begin{tikzpicture}[anchorbase,yscale=1]
\draw[ultra thick, green,->] (0,0) node[below,yshift=2pt] {\scs$k$} to [out=90,in=270] (.5,.75) 
	to [out=90,in=270] (0,1.5);
\draw[ultra thick, green,<-] (.5,0) node[below,yshift=2pt] {\scs$k$} to [out=90,in=270] (0,.75) 
	to [out=90,in=270] (.5,1.5);
\node at (.875,1.25){$\wtn$};
\end{tikzpicture}
\, , \quad
\begin{tikzpicture}[anchorbase,yscale=1]
\draw[ultra thick, green,<-] (0,0) node[below,yshift=2pt] {\scs$k$} to (0,1.5);
\draw[ultra thick, green,->] (.5,0) node[below,yshift=2pt] {\scs$k$} to (.5,1.5);
\node at (.875,1.25){$\wtn$};
\end{tikzpicture}
=
\begin{tikzpicture}[anchorbase,yscale=1]
\draw[ultra thick, green,<-] (0,0) node[below,yshift=2pt] {\scs$k$} to [out=90,in=270] (.5,.75) 
	to [out=90,in=270] (0,1.5);
\draw[ultra thick, green,->] (.5,0) node[below,yshift=2pt] {\scs$k$} to [out=90,in=270] (0,.75) 
	to [out=90,in=270] (.5,1.5);
\node at (.875,1.25){$\wtn$};
\end{tikzpicture}
\end{equation}
which are special cases of the Sto\v{s}i\'{c} formulae \eqref{eq:Stosic} when $k=\ell$. Consequently 
\begin{equation}
	\label{eq:EF=FE}
\EE_i^{(k)} \FF_i^{(k)} \one_{\wtn} \cong \FF_i^{(k)}\EE_i^{(k)} \one_{\wtn}.
\end{equation}
Moreover, these $1$-morphisms are indecomposable 
(see \eqref{eq:EF} and \eqref{eq:FE}).

\begin{rem}\label{rem:FiEiep}
More generally, for fixed $i$ and over any commutative ring $\K$, 
the sets of objects $\{\FF_i^{(k)} \EE_i^{(k)} \one_{\wtn}\}_{k \ge 0}$ and $\{\EE_i^{(k)} \FF_i^{(k)} \one_{\wtn}\}_{k \ge 0}$ 
each form an endopositive family.
This generalizes Example \ref{ex:CQG2}.
To see this, note that the size of the morphism space 
e.g.~between $\FF_i^{(k)} \EE_i^{(k)} \one_{\wtn}$ and $\FF_i^{(l)} \EE_i^{(l)} \one_{\wtn}$ 
depends on the ambient context. 
That they form an endopositive family within $\BFoam[2]$ was shown explicitly in \cite[Proposition 5.15]{KLMS}. 
For $m > 2$, 
the morphism space in $\BFoam$ is larger than in $\BFoam[2]$, 
since, at the very least, there are additional (positive degree) new bubble generators in 
$\End_{\BFoam}(\one_\wtn) \cong \Sym(\X_1| \cdots |\X_m)$.
In fact, this is the only difference: as shown in \cite[Proposition 3.11]{KL3}, 
if the color $j$ does not appear on the boundary of a diagram, 
then, as a module over $\End_{\BFoam}(\one_\wtn)$, 
there is a set of generators for the $\Hom$-space that does not contain the color $j$.
\end{rem}

\begin{rem} \label{rmk:homformula}
Further, it is possible to describe a basis for the $\Hom$-spaces in $\BFoam$.
There is a certain sesquilinear form on the quantum group $\dU(\glm)$
which gives an upper bound on the size of morphism spaces in $\UU_q(\glm)$; 
see \cite[Corollary 3.14]{KL3}. 
In the proof of \cite[Proposition 3.11]{KL3}, Khovanov--Lauda give diagrammatic arguments 
to find a spanning set for morphism spaces. 
By construction,
the size of this spanning set agrees with a corresponding value of the sesquilinear form \cite[Proposition 3.12]{KL3}. 

Khovanov--Lauda call the category \emph{nondegenerate} \cite[Definition 3.15]{KL3} if their spanning set is actually a basis, 
in which case the sesquilinear form precisely controls the size of morphism spaces. 
We refer to this as the \emph{Khovanov--Lauda Hom formula}. 
They use an action of the categorified quantum group on cohomology of partial flag varieties to prove nondegeneracy 
when $\K$ is a field \cite[Theorem 1.3]{KL3}. 

Upon inspection, the diagrammatic spanning argument is valid over $\Z$. 
Since linearly independent elements in a $\mathbb{Q}$-vector space are necessarily linearly 
independent in the $\Z$-module that they generate, 
nondegeneracy over $\Z$ (and hence over any commutative ring), 
follows from nondegeneracy over $\Q$.
\end{rem}

The following objects and morphisms will be of fundamental importance moving forward.

\begin{nota}
Let $\FEk_i :=\FF_{i}^{(k)}\EE_{i}^{(k)}\one_{\wtn}$, viewed as an object in the monoidal category $\BFoam$,
and denote
\[
\swcl_{k} :=
\begin{tikzpicture}[anchorbase,scale=.75]
\draw[ultra thick,green,<-] (0,0) node[below=-2pt]{\scs$k$} to [out=90,in=270] (.5,1);
\draw[ultra thick,green,->] (.5,0) node[below=-2pt]{\scs$k$} to [out=90,in=270] (0,1);
\node at (.875,.75){\scs$\wtn$};
\end{tikzpicture} 
\, , \quad
\swcr_{k} :=
\begin{tikzpicture}[anchorbase,scale=.75]
\draw[ultra thick,green,->] (0,0) node[below=-2pt]{\scs$k$} to [out=90,in=270] (.5,1);
\draw[ultra thick,green,<-] (.5,0) node[below=-2pt]{\scs$k$} to [out=90,in=270] (0,1);
\node at (.875,.75){\scs$\wtn$};
\end{tikzpicture} \, .
\]
(Here, as usual, we follow Convention \ref{conv:colors}, i.e.~$\swcl_k$ and $\swcr_k$ are $i$-colored.) 
We will also denote $\FEk_i$ by $\FEk_{\grb}$
and abbreviate $\FE_i := \FE_i^{(1)}$.
\end{nota}

\begin{prop}
	\label{prop:eX}
For each $k \geq 0$,
the pair $(\FEk_{\grb},\swcl_k)$ is an object in $\EBFoam$. 
The family $\{(\FEk_{\grb},\swcl_k)\}_{k \geq 0}$ is endopositive.
\end{prop}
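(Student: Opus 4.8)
The plan is to verify two separate assertions: first, that each pair $(\FEk_{\grb}, \swcl_k)$ is a genuine equivariant object in $\EBFoam$, and second, that $\{(\FEk_{\grb},\swcl_k)\}_{k\geq 0}$ forms an endopositive family in the sense of Definition \ref{defn:endoposfamily}. For the first assertion, recall from Definition \ref{def:equivariant} that I must produce an isomorphism $\varphi \colon \FEk_{\grb} \to \tau(\FEk_{\grb})$ with $\tau(\varphi)\circ\varphi = \id$. By the action of $\tau$ on objects, $\tau(\FF_i^{(k)}\EE_i^{(k)}\one_{\wtn}) = \EE_i^{(k)}\FF_i^{(k)}\one_{\wtn}$ (the weight $\wtn$ is fixed since $2\wtn - \wtn = \wtn$), and by \eqref{eq:EF=FE} this is isomorphic to $\FEk_{\grb}$ via the Sto\v{s}i\'{c}-type morphisms \eqref{eq:easyStosic}. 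I claim the crossing $\swcl_k$ is exactly such an isomorphism: it is a map $\FEk_{\grb} = \FF_i^{(k)}\EE_i^{(k)}\one_{\wtn} \to \EE_i^{(k)}\FF_i^{(k)}\one_{\wtn} = \tau(\FEk_{\grb})$, and it is invertible by \eqref{eq:easyStosic} (its inverse is $\swcr_k$, up to a scalar I will need to pin down). The condition $\tau(\swcl_k)\circ\swcl_k = \id$ then follows from computing $\tau(\swcl_k)$ using Corollary \ref{cor:ourthicktau}, equation \eqref{eq:tauonthickcrossing}: since the two strands of $\swcl_k$ carry the same color $i$ (the ``otherwise'' case), $\tau$ sends the crossing to the oppositely-oriented crossing with \emph{no sign}, i.e.\ $\tau(\swcl_k) = \swcr_k$ in the relevant weight. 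So I must check that $\swcr_k \circ \swcl_k = \id_{\FEk_{\grb}}$ (not merely a nonzero scalar multiple), which is precisely the content of the second displayed equation in \eqref{eq:easyStosic} — the Sto\v{s}i\'{c} formula at $k=\ell$ specializing to the identity. The main subtlety here is the normalization of the Sto\v{s}i\'{c} morphisms and confirming the composite is $\id$ on the nose rather than $\pm q^{c}\,\id$; I expect this follows directly from how the Sto\v{s}i\'{c} formulae are normalized in \eqref{eq:Stosic}, perhaps after checking the lowest-order term.

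For the endopositivity of the family, I would invoke Remark \ref{rem:FiEiep}, which already records that $\{\FF_i^{(k)}\EE_i^{(k)}\one_{\wtn}\}_{k\geq 0}$ is an endopositive family in $\BFoam$ over any commutative ring — this is the statement that each $\FEk_{\grb}$ is endopositive (graded endomorphism ring supported in non-negative degrees with degree-zero part $\K\cdot\id$) and that $\Hom_{\BFoam}(\FEk_{\grb}, \FE_{\grb}^{(\ell)})$ is concentrated in strictly positive degrees for $k\neq\ell$. The justification in Remark \ref{rem:FiEiep} reduces the general-$m$ case to the $m=2$ case of \cite[Proposition 5.15]{KLMS} (plus the observation via \cite[Proposition 3.11]{KL3} that enlarging $m$ only adds positive-degree new-bubble generators). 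Since the $\swcl_k$ play no role in the definition of an endopositive \emph{family} (which only concerns $\Hom$-spaces in $\BFoam$, not the equivariant structures), the second assertion is essentially a restatement of Remark \ref{rem:FiEiep} and requires no new work — I would simply cite it.

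I expect the first assertion to be where the real (if modest) effort lies: carefully applying \eqref{eq:tauonthickcrossing} to confirm $\tau(\swcl_k) = \swcr_k$ and then verifying $\tau(\swcl_k)\circ \swcl_k = \swcr_k\circ\swcl_k = \id_{\FEk_{\grb}}$ via \eqref{eq:easyStosic}. One point to be careful about: \eqref{eq:tauonthickcrossing} is stated for a crossing in ambient weight $\wtn + \wt$, and here $\wt = 0$ (the relevant region is $\wtn$ itself, on the right of the diagram), so the sign is trivially $+1$ in the unicolored case regardless. Thus the plan is: (1) identify $\swcl_k$ as a morphism $\FEk_{\grb} \to \tau(\FEk_{\grb})$ and note its invertibility via \eqref{eq:easyStosic}; (2) compute $\tau(\swcl_k) = \swcr_k$ using \eqref{eq:tauonthickcrossing}; (3) verify $\swcr_k \circ \swcl_k = \id$ from the $k=\ell$ Sto\v{s}i\'{c} identity, establishing $(\FEk_{\grb},\swcl_k) \in \EBFoam$; (4) cite Remark \ref{rem:FiEiep} for endopositivity of the family. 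The only genuine obstacle is bookkeeping around Sto\v{s}i\'{c}-morphism normalizations in step (3), and I anticipate this is already resolved by the conventions fixed in \eqref{eq:Stosic}.
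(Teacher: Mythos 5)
Your plan matches the paper's approach: treat $\swcl_k$ as a morphism $\FEk_\grb \to \tau(\FEk_\grb)$, compute $\tau(\swcl_k) = \swcr_k$ via Corollary \ref{cor:ourthicktau}, verify $\swcr_k\circ\swcl_k = \id$ from \eqref{eq:easyStosic}, and cite Remark \ref{rem:FiEiep}. Two imprecisions deserve correction, however. In step (2) you lean solely on \eqref{eq:tauonthickcrossing}, but that formula governs \emph{upward} crossings, whereas $\swcl_k$ is a \emph{sideways} crossing; the computation of $\tau(\swcl_k)$ must also go through \eqref{eq:tauonthickcap} and \eqref{eq:tauonthickcup} (which the paper cites alongside \eqref{eq:tauonthickcrossing}), since a sideways crossing is built from an upward crossing together with caps and cups. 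Fortunately these caps and cups sit in regions of weight $\wtn + j\alpha_{\grb}$ where $\langle\alpha_{\grb}^\vee, j\alpha_{\grb}\rangle = 2j$ is even, so no signs enter and your conclusion $\tau(\swcl_k) = \swcr_k$ is correct even though the justification is incomplete.

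The more substantive gap is in step (4). You assert that the $\swcl_k$ ``play no role in the definition of an endopositive family (which only concerns $\Hom$-spaces in $\BFoam$, not the equivariant structures),'' but the family $\{(\FEk_\grb,\swcl_k)\}_{k\geq 0}$ lives in $\EBFoam$, so Definition \ref{defn:endoposfamily} is imposed on $\Hom_{\EBFoam}$ --- and these $\Hom$-spaces are cut out of $\Hom_{\BFoam}$ precisely by the equivariant structures, via Corollary \ref{cor:HomAsInv}. The conclusion still holds, because $\Hom_{\EBFoam}$ is the $\tauast$-fixed subspace of $\Hom_{\BFoam}$: the positivity and vanishing constraints inherit from Remark \ref{rem:FiEiep}, and $\id$ is always an equivariant morphism so the degree-zero endomorphism ring of each $(\FEk_\grb,\swcl_k)$ remains $\K\cdot\id$. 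This transfer of endopositivity from $\Cat$ to $\ECat$ is exactly the final sentence of Remark \ref{rem:indECat}, which the paper cites alongside Remark \ref{rem:FiEiep}; your proof needs that citation too, rather than treating the equivariant version as a ``restatement.''
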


\begin{proof}
We have $\swcl_k \in \Hom_{\BFoam}\big(\FEk_\grb, \tau(\FEk_\grb) \big)$
and a computation using
equations \eqref{eq:tauonthickcap}, \eqref{eq:tauonthickcup}, and \eqref{eq:tauonthickcrossing}
shows that $\tau(\swcl_k) = \swcr_k$. 
We then compute
\[
\tau(\swcl_k) \circ \swcl_k = \swcr_k \circ \swcl_k \stackrel{\eqref{eq:easyStosic}}{=} \id_{\FEk_\grb}
\]
so $(\FEk_\grb, \swcl_k)$ is an equivariant object, per Definition \ref{def:equivariant}. 
The assertion that the family of all such objects is endopositive follows from
Remarks \ref{rem:indECat} and \ref{rem:FiEiep}.
\end{proof}

\begin{rem}
The monoidal unit in $\BFoam$ is $\one_{\wtn}$ 
and the monoidal unit of $\EBFoam$ is the pair $(\one_{\wtn}, \id_{\one_{\wtn}})$. 
In the notation above, this is the same as $(\FE_{\grb}^{(0)}, \swcl_{0})$ for any color $i$. 
\end{rem}

In the following sections,
we study the monoidal structure of $\EBFoam$ and establish certain tensor product decompositions 
via the techniques in \S\ref{sec:equiv}. 
We use the notation established there.

\subsection{Divided power relation for $\FE_i$}
	\label{s:Divpower}

Our main goal in this subsection is to show that relation \eqref{E:xxk-recursion} 
holds for $(\FE_{\grb}^{(k)}, \swcl_k)$ in the weighted Grothendieck group of $\EBFoam$.

\begin{lem}
The sets
\[
\left\{
\begin{tikzpicture}[anchorbase,scale=.75]
\draw[ultra thick,green,<-] (-.25,-.5) to [out=90,in=270] (-.75,.5) to (-.75,1.5) node[above=-2pt]{\scs$k$};
\draw[ultra thick,green,->] (-.25,1) to (-.25,1.5) node[above=-2pt]{\scs$k$};
\draw[thick,green,rdirected=.75] (-.25,1) to [out=330,in=270] node[black,pos=.5]{\scs$\bullet$} node[black,pos=.5,below=-1pt]{\scs$p$} (.25,1) to (.25,1.5);
\draw[ultra thick,green,directed=.55] (.25,0) to [out=150,in=270] (-.5,.75) to [out=90,in=210] (-.25,1);
\draw[thick,green,->] (.25,0) to [out=30,in=270] node[black,pos=.5]{\scs$\bullet$} node[black,pos=.5,right=-2pt]{\scs$q$} (.75,1.5);
\draw[ultra thick,green,directed=.55] (.25,-.5) node[below=-2pt]{\scs$k$} to (.25,0);
\node at (1.25,1){\scs$\wtn$};
\end{tikzpicture}
\right\}_{\substack{0\leq p \leq k \\ 0 \leq q \leq k-1}}
\quad \text{and} \qquad
\left\{
\begin{tikzpicture}[anchorbase,scale=.75]
\draw[ultra thick,green,rdirected=.35] (-.25,0) to [out=150,in=270] 
	(-.75,1.5) node[above=-2pt]{\scs$k$};
\draw[thick,green,rdirected=.75] (-.25,0) to [out=30,in=270] node[black,pos=.5]{\scs$\bullet$} node[black,pos=.5,right=-2pt]{\scs$p$} (.25,1.5);
\draw[ultra thick,green,->] (.25,0) to [out=150,in=270] (-.25,1.5) node[above=-2pt]{\scs$k$};
\draw[thick,green,->] (.25,0) to [out=30,in=270] node[black,pos=.5]{\scs$\bullet$} node[black,pos=.5,right=-2pt]{\scs$q$} (.75,1.5);
\draw[ultra thick,green,rdirected=.55] (-.25,-.5) node[below=-2pt,xshift=-5pt]{\scs$k{+}1$} to (-.25,0);
\draw[ultra thick,green,directed=.55] (.25,-.5) node[below=-2pt,xshift=5pt]{\scs$k{+}1$} to (.25,0);
\node at (1.25,1){\scs$\wtn$};
\end{tikzpicture}
\right\}_{0 \leq p, q \leq k}
\]
descend to bases for $V(\FEk_\grb, \FEk_\grb \FE_\grb)$ and $V(\FEkk_\grb , \FEk_\grb \FE_\grb)$, respectively.
\end{lem}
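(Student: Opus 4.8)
The plan is to use the direct sum decomposition $\FEk_\grb \FE_\grb \cong [k][k+1] \FEk_\grb \oplus [k+1]^2 \FEkk_\grb$ (which is the $m=2$ case of \eqref{eq:XXintro}, obtained by combining the Sto\v{s}i\'{c} formulae \eqref{eq:StosicEF}--\eqref{eq:StosicFE} with \eqref{eq:EF=FE}, as recalled in \S\ref{s:BCQG}), and then invoke Lemma \ref{lem:basisforV}. Since $\{\FEk_\grb\}_{k \geq 0}$ is an endopositive family in $\BFoam$ (Remark \ref{rem:FiEiep}), Lemma \ref{lem:basisforV} says that once we fix any decomposition of $\FEk_\grb \FE_\grb$ into shifts of objects in this family, the chosen projection maps descend to bases of the spaces $V(\FEk_\grb \FE_\grb, \FE^{(\ell)}_\grb)$. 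So the content of the lemma is really: (i) the two displayed sets have the right graded cardinality, namely $[k][k+1]$ and $[k+1]^2$ respectively, matching the multiplicities of $\FEk_\grb$ and $\FEkk_\grb$ in $\FEk_\grb \FE_\grb$; and (ii) each displayed set actually descends to a \emph{linearly independent} (equivalently, spanning) set in the appropriate $V$-space, which by Lemma \ref{lem:gLIP} and Remark \ref{rem:findrankd} can be checked by showing the composition pairing matrix against a suitable dual set is invertible over $\K$.

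First I would verify the cardinality count. For the first set, the index $p$ ranges over $k+1$ values and $q$ over $k$ values, and tracking degrees (the dot labeled $p$ contributes, the dot labeled $q$ contributes, and the thick crossings/merges contribute according to Convention \ref{Conv:thickcapscups} and \eqref{eq:MS}) should give graded dimension exactly $[k][k+1]$; similarly the second set has $(k+1)^2$ elements with graded dimension $[k+1]^2$. This is a routine but careful degree bookkeeping using the degree conventions in Definition \ref{def:Thickgl2} and \eqref{eq:CQGgens}. Next, I would exhibit dual elements: for the first family, the natural candidates are the ``mirror'' diagrams obtained by reflecting in a horizontal axis (i.e.\ the corresponding maps $\FEk_\grb \FE_\grb \to \FEk_\grb$), and then compute the pairing matrix by composing, using the thick-calculus relations — the bigon/digon relations \eqref{eq:CQGdigon}, the curl relations \eqref{eq:thickcurl}, the decomposition of identity \eqref{eq:EEdecomp}, and the Sto\v{s}i\'{c} formula — to reduce each composite to a multiple of $\id_{\FEk_\grb}$ plus terms in the radical (which vanish in the degree-zero part $\End^0(\FEk_\grb) = \K \cdot \id$ because $\FEk_\grb$ is endopositive). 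I expect the pairing matrix to be unitriangular (or at worst triangular with units on the diagonal) with respect to a suitable ordering of $(p,q)$, so that its determinant is a unit in $\K$. The same strategy applies to the second family with $\FEkk_\grb$ in place of $\FEk_\grb$.

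The main obstacle will be step (ii): producing the right dual sets and actually evaluating the composition pairings. The diagrams involve thick strands of variable thickness $k$, and the relevant simplifications (reducing a composite of a split, some dots, a thick crossing, and a merge down to the degree-zero endomorphism) require nontrivial manipulation with the KLMS relations, in particular the thick curl relation \eqref{eq:thickcurl} with its Littlewood--Richardson coefficients and the Sto\v{s}i\'{c} formulae. One technical point to handle with care is the ambient weight: we are at $\wtn = (n,\ldots,n)$, so $\langle \alpha_i^\vee, \wtn\rangle = 0$, which means many of the summation ranges in the Sto\v{s}i\'{c} formulae and extended $\mathfrak{sl}_2$ relations collapse (cf.\ \eqref{eq:easyStosic}), simplifying the computation considerably. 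A cleaner alternative, which I would pursue if the direct computation becomes unwieldy, is to appeal to the Khovanov--Lauda Hom formula (Remark \ref{rmk:homformula}): the sesquilinear form on $\dU(\gln[2])$ computes $\grdim \Hom_{\BFoam[2]}(\FEk_\grb, \FEk_\grb\FE_\grb)$ exactly, and since the color $j \neq i$ never appears on the boundary, the same holds in $\BFoam$ up to tensoring with $\Sym$ of the other alphabets (which does not affect the degree-zero truncation relevant to the $V$-space). Combined with the cardinality count, this forces the displayed spanning sets to be bases, bypassing the explicit pairing computation. Either way, once (i) and (ii) are in hand, Lemma \ref{lem:basisforV} (or equivalently Corollary \ref{cor:basisforVwhensumofX} applied summand-by-summand) immediately yields the claim.
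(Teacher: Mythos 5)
Your overall strategy is the right one, and it is essentially the paper's: use the decomposition $\FEk_\grb \FE_\grb \cong [k][k+1]\,\FEk_\grb \oplus [k+1]^2\,\FEkk_\grb$ coming from \eqref{eq:EEdecomp} and \eqref{eq:Stosic}, plus Lemma \ref{lem:basisforV}. But you then add a step (ii) — verifying linear independence via a composition-pairing matrix, or via the Khovanov--Lauda Hom formula — which you sketch but never carry out, and which the paper's proof dispenses with entirely. This is because you have missed the key observation that closes the argument: the two displayed families of diagrams are not merely ``candidate inclusions'' whose spanning and independence must be checked from scratch; they \emph{are} the inclusion maps realizing the stated decomposition. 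Concretely, unwinding the Sto\v{s}i\'{c} formula \eqref{eq:StosicEF} at weight $\wtn$ (where $\langle\alpha_\grb^\vee,\wtn\rangle=0$, so many terms in the summation vanish or simplify as in \eqref{eq:easyStosic}) together with \eqref{eq:EEdecomp} and the dot-migration relation \eqref{eq:migrate} produces exactly these diagrams — the cup with $p$ dots and the free thin strand with $q$ dots in the first family, the dotted fork in the second — as the summand-inclusion maps indexed by the pairs $(p,q)$. Once you recognize this, Lemma \ref{lem:basisforV} applies verbatim: any fixed decomposition's inclusion maps descend to a basis, no pairing check required.

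Because you treat the diagrams as unidentified maps, your proof as written has a genuine gap: neither ``Plan A'' (the pairing-matrix computation you acknowledge may become unwieldy) nor ``Plan B'' (dimension count via the Hom formula, combined with a spanning argument you do not supply) is actually completed. The cardinality/degree count in your step (i) is correct and is a useful sanity check, but on its own it cannot distinguish a spanning set from an arbitrary collection of the right size. So: same route, same key citations, but the proof is not yet closed; identify the displayed morphisms with the decomposition's inclusion maps and the lemma follows immediately, as the paper intends.
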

\begin{proof}
Immediate from \eqref{eq:EEdecomp} and \eqref{eq:Stosic}, 
together with Lemma \ref{lem:basisforV}.
\end{proof}

We now consider the $\tauast$-action from Proposition \ref{prop:sigast} 
on $V(\FEk_\grb, \FEk_\grb \FE_\grb)$ and $V(\FEkk_\grb , \FEk_\grb \FE_\grb)$,
with respect to the isomorphisms $\FEk_\grb \xrightarrow{\swcl_k} \tau(\FEk_\grb)$ and  
$\FEk_\grb \FE_\grb \xrightarrow{\swcl_k \otimes \swcl_1} \tau(\FEk_\grb \FE_\grb)$.

\begin{prop}
	\label{prop:tauast2strand}
Considered as elements in $V(\FEk_\grb, \FEk_\grb \FE_\grb)$ and $V(\FEkk_\grb , \FEk_\grb \FE_\grb)$, 
we have that
\[
\tauast 

\right) \, .
\end{align*}
The last gray terms in this computation are again in the kernel of the graded composition pairing 
since they are compositions with positive degree endomorphisms of the endopositive object $\FE_i^{(k)}$.
The earlier gray terms are in the kernel by Lemma \ref{lem:btob'inker} and Remark \ref{rem:FiEiep}, 
since they factor through $\FE_i^{(\ell)}$ for $\ell \ne k$. 
We henceforth use both arguments tacitly when marking morphisms gray.

By Lemma \ref{lem:EndKer}, all terms in the final summation are in the kernel of the graded composition pairing unless
$j=u$, thus we find
\begin{align*}
\tauast 
 \, .
\qedhere
\]
\endgroup
\end{proof}

\begin{cor}
	\label{cor:traceonV}
We have
\[
\tr(\tauast |_{V(\FEk_\grb, \FEk_\grb \FE_\grb)}) 
	= (-1)^{n+k+1} \sum_{j=0}^{k-1} (-1)^{j} [2(k-j)]
		= (-1)^{n+k+1} ``[k][k+1]"
\]
and
\[
\tr(\tauast |_{V(\FEkk_\grb , \FEk_\grb \FE_\grb)}) 
	= \sum_{j=0}^{k} (-1)^j [2(k-j)+1]
		= ``[k+1]^2" \, .
\]
\end{cor}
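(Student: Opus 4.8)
The plan is to compute both graded traces $\sum_m\tr(\tauast|_{V^m})q^m$ directly from the explicit description of $\tauast$ furnished by Proposition \ref{prop:tauast2strand}, relative to the bases recorded in the lemma immediately preceding it. Since $\tauast$ is degree-preserving, the first step is to pin down the $q$-degree of each basis vector. For $V(\FEkk_\grb,\FEk_\grb\FE_\grb)$, the basis vector indexed by $(p,q)$ is assembled from two splits of thicknesses $(k,1)$ --- each of degree $-k$ by Definition \ref{def:Thickgl2} --- together with $p+q$ dots of degree $2$, so it lies in degree $2(p+q)-2k$. For $V(\FEk_\grb,\FEk_\grb\FE_\grb)$ the degree again depends only on $p+q$ (the remainder of each diagram being dot-free and independent of $p,q$); since the graded rank is the palindromic polynomial $[k][k+1]=\sum_{j=0}^{k-1}[2(k-j)]$ and $p+q$ runs over $\{0,\dots,2k-1\}$, the $(p,q)$-vector must sit in degree $2(p+q)-(2k-1)$.

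Next I would extract the traces. For $V(\FEkk_\grb,\FEk_\grb\FE_\grb)$ the situation is clean: Proposition \ref{prop:tauast2strand} identifies $\tauast$ with the signed permutation $(p,q)\mapsto(-1)^{p+q}(q,p)$, whose transposed pairs $\{(p,q),(q,p)\}$ with $p\neq q$ contribute $0$ and whose fixed points $(p,p)$, $0\le p\le k$, contribute $q^{\deg(p,p)}=q^{4p-2k}$; thus $\tr\!\big(\tauast|_{V(\FEkk_\grb,\FEk_\grb\FE_\grb)}\big)=\sum_{p=0}^{k}q^{4p-2k}$. For $V(\FEk_\grb,\FEk_\grb\FE_\grb)$, Proposition \ref{prop:tauast2strand} shows that $\tauast$ preserves the grading by $s:=p+q$ (equivalently by $q$-degree) and acts on the $s$-graded piece as $(-1)^{s+n+k+1}$ times the operator $(p,q)\mapsto\sum_{y}(\delta_{y,q}-\delta_{y,k})(y,p+q-y)$. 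Reading off the diagonal, the indices $(p,p)$ with $p\le k-1$ each contribute $(-1)^{n+k+1}$ (sitting in degree $4p-2k+1$), and one must additionally treat the indices $(k,q)$; collecting everything yields a Laurent polynomial of the form $(-1)^{n+k+1}\sum_{s}\varepsilon_s\,q^{2s-2k+1}$ for integers $\varepsilon_s$.

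To conclude, I would match these expressions against the claimed devil's products, noting that the sums $\sum_{j=0}^{k-1}(-1)^j[2(k-j)]$ and $\sum_{j=0}^{k}(-1)^j[2(k-j)+1]$ appearing in the statement are simply $``[k][k+1]"$ and $``[k+1]^2"$ unpacked via Definition \ref{def:devil}. Using \eqref{eq:2devil} one has $``[k+1]^2"=[2k+2]/[2]=\sum_{p=0}^{k}q^{2k-4p}$, which equals $\sum_{p=0}^{k}q^{4p-2k}$, and likewise $``[k][k+1]"=\big([2k+1]+(-1)^{k-1}\big)/[2]$, whose coefficients one checks agree with $\sum_s\varepsilon_s q^{2s-2k+1}$; these are routine telescoping identities for quantum integers.

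The step I expect to be the main obstacle is the treatment of the ``boundary'' summands in Proposition \ref{prop:tauast2strand}: a term $(y,p+q-y)$ with $p+q-y\ge k$ is not among the chosen basis vectors, and it is \emph{not} zero in the multiplicity space --- it cannot be, for otherwise $\tauast$ would fail to square to the identity when $k$ is odd, contradicting Proposition \ref{prop:sigast} --- so it must first be rewritten in the valid basis before the diagonal can be read off correctly. Controlling these rewritings, in particular their contribution to the diagonal entries at $(k,q)$ (and hence the integers $\varepsilon_s$), together with confirming the degree bookkeeping of the previous paragraph, is where essentially all of the work lies; the rest is elementary linear algebra and quantum-number arithmetic.
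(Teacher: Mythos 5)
Your approach matches the paper's: extract the diagonal entries of $\tauast$ from Proposition \ref{prop:tauast2strand}, weight by $q$-degree, and sum. The degree bookkeeping is correct, and the second trace (the $\FEkk_\grb$ case, where $\tauast$ acts by the signed swap $(p,q)\mapsto(-1)^{p+q}(q,p)$, so only the fixed points $(p,p)$ survive) is handled correctly and agrees with the paper.

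The gap is in your final paragraph, where the anticipated \emph{main obstacle} rests on a false claim. Proposition \ref{prop:tauast2strand} expresses $\tauast$ of the basis vector $(p,q)\in V(\FEk_\grb,\FEk_\grb\FE_\grb)$ as $(-1)^{p+q+n+k+1}\big[(q,p)-(k,p+q-k)\big]$, and the only summand with a second label $\ge k$ is $(q,k)$, which occurs precisely when $p=k$. This out-of-range morphism vanishes in the multiplicity space, and the paper reads off the diagonal coefficient $(-1)^{p+q+n+k+1}(\delta_{p,q}-\delta_{p,k})$ at face value, with no further rewriting required. You assert the vanishing cannot happen because it would force $\tauast^2\ne\id$ when $k$ is odd, but that is simply wrong: taking $(q,k)$ to be zero gives $\tauast(k,q)=(-1)^{q+n}(k,q)$ and hence $\tauast^2(k,q)=(k,q)$ with no parity condition, while for $p<k$ one checks that the two contributions of $(k,p+q-k)$ to $\tauast^2(p,q)$ cancel, yielding $\tauast^2(p,q)=(p,q)$ as well. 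So the work you flag as the crux (controlling rewritings of boundary terms) is both unnecessary and unperformed, and the genuine remainder of the argument---the double-sum reorganization that evaluates the diagonal weighted by $q^{2(p+q-k)+1}$ to $(-1)^{n+k+1}\,``[k][k+1]"$, which the paper carries out explicitly---is only gestured at. As written the proposal does not establish the first trace identity.
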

\begin{proof}
Using Proposition \ref{prop:tauast2strand}, we compute
\begin{align*}
\tr(\tauast |_{V(\FEk_\grb, \FEk_\grb \FE_\grb)}) 
&= \sum_{\substack{0 \leq p \leq k \\ 0 \leq q \leq k-1}} 
	(-1)^{p+q+n+k+1} q^{2(p+q-k)+1} 
		(\delta_{p,q} - \delta_{p,k}) \\
&= (-1)^{n+k+1} q^{1-2k} \sum_{d=0}^{2k-1} 
	(-1)^d q^{2d} \sum_{\substack{p+q=d \\ 0 \leq p \leq k \\ 0 \leq q \leq k-1}} 
		(\delta_{p,q} - \delta_{p,k}) \, .
\end{align*}
Now, 
\[
\sum_{\substack{p+q=d \\ 0 \leq p \leq k \\ 0 \leq q \leq k-1}} \delta_{p,q} = 
\begin{cases} 	1 & d \text{ is even and } d \leq 2k-2 \\
			0 & \text{else}
	\end{cases}
\quad \text{and} \quad
\sum_{\substack{p+q=d \\ 0 \leq p \leq k \\ 0 \leq q \leq k-1}} \delta_{p,k} 
= 
\begin{cases} 	1 & k \leq d \leq 2k-1 \\
			0 & \text{else}
	\end{cases} \, ,
\]
so
\[
\tr(\tauast |_{V(\FEk_\grb, \FEk_\grb \FE_\grb)}) 
=
(-1)^{n+k+1} q^{1-2k} 
\left( 
\sum_{r=0}^{k-1} q^{4r} 
- \sum_{s=k}^{2k-1}(-1)^s q^{2s} 
\right) = (-1)^{n+k+1} \sum_{j=0}^{k-1} (-1)^{j} [2(k-j)] \, .
\]

For the second statement, we have
\begin{align*}
\tr(\tauast |_{V(\FEkk_\grb , \FEk_\grb \FE_\grb)})
= \sum_{0 \leq p,q \leq k} 
	(-1)^{p+q} q^{2(p+q-k)} \delta_{p,q}
&= \sum_{0 \leq p \leq k} q^{4p-2k} \\
&= \sum_{j=0}^{k} (-1)^j [2(k-j)+1] \, . \qedhere
\end{align*}
\end{proof}

\begin{thm}\label{T:div-powers-in-twisted-K0}
The following equality holds in $\tauKzero{\EBFoam}$:
\[
[(\FEk_{\grb}, \swcl_{k})]_{\tau}\cdot [(\FE_{\grb}, \swcl_{1})]_{\tau} 
	= (-1)^{n+k+1} ``[k][k+1]" 
		[(\FEk_{\grb},\swcl_{k})]_{\tau}
		+ ``[k+1]^2" 
			[(\FEkk_{\grb},\swcl_{k+1})]_{\tau} \, .
\]
\end{thm}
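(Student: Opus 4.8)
The plan is to identify the left-hand side with the class of a single equivariant object and then apply the decomposition formula for the weighted Grothendieck group. By Proposition \ref{prop:invMD} and the definition of the ring structure on $\tauKzero{\EBFoam}$, the product $[(\FEk_{\grb}, \swcl_{k})]_{\tau}\cdot [(\FE_{\grb}, \swcl_{1})]_{\tau}$ equals the class $[(\FEk_{\grb}\FE_{\grb}, \swcl_{k}\otimes\swcl_{1})]_{\tau}$ of the tensor-product equivariant object in $\EBFoam$. So the task is to expand this class in the basis of $\tauKzero{\EBFoam}$ furnished by Proposition \ref{prop:computeequivGgp}, via the formula \eqref{eq:KzeroDecomp}.

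Next I would pin down which indecomposable objects can contribute. Iterating the Sto\v{s}i\'{c} formulae \eqref{eq:EF} and \eqref{eq:FE} together with \eqref{eq:EEdecomp} (equivalently, invoking \eqref{eq:XXintro}) gives the underlying direct sum decomposition $\FEk_{\grb}\FE_{\grb}\cong [k][k+1]\,\FEk_{\grb}\oplus [k+1]^2\,\FEkk_{\grb}$ in $\BFoam$, so only $b\in\{\FEk_{\grb},\FEkk_{\grb}\}$ appears on the right-hand side of \eqref{eq:KzeroDecomp}. I would then check that these lie in $\BC^{\mathrm{fix}}$ in the sense of Notation \ref{nota:mixedsigma}: each $\FEk_{\grb}$ is endopositive (Remark \ref{rem:FiEiep}), is fixed by $\tau$ up to isomorphism since $\tau(\FEk_{\grb})=\EE_i^{(k)}\FF_i^{(k)}\one_{\wtn}\cong\FEk_{\grb}$ by \eqref{eq:EF=FE}, and is equivariantizable by Proposition \ref{prop:eX}; consequently Hypothesis \ref{hypo:assumeme} is satisfied over the integral domain $\K$ (with $2$ invertible) fixed at the start of Section \ref{s:ECQG}, so Proposition \ref{prop:computeequivGgp} is available. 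As the distinguished equivariant structures I would take $\varphi_{\FEk_{\grb}}:=\swcl_{k}$ and $\varphi_{\FEkk_{\grb}}:=\swcl_{k+1}$, the structures provided by Proposition \ref{prop:eX}.

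Finally, \eqref{eq:KzeroDecomp} expresses $[(\FEk_{\grb}\FE_{\grb}, \swcl_{k}\otimes\swcl_{1})]_{\tau}$ as the sum $\tr(\tauast|_{V(\FEk_{\grb},\FEk_{\grb}\FE_{\grb})})\,[(\FEk_{\grb},\swcl_{k})]_{\tau}+\tr(\tauast|_{V(\FEkk_{\grb},\FEk_{\grb}\FE_{\grb})})\,[(\FEkk_{\grb},\swcl_{k+1})]_{\tau}$, where the $\tauast$-actions are taken with respect to the equivariant structures $\swcl_{k}$ on $\FEk_{\grb}$, $\swcl_{k+1}$ on $\FEkk_{\grb}$, and $\swcl_{k}\otimes\swcl_{1}$ on $\FEk_{\grb}\FE_{\grb}$, exactly as set up in Proposition \ref{prop:tauast2strand}. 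The two graded traces appearing here are precisely those evaluated in Corollary \ref{cor:traceonV}, namely $(-1)^{n+k+1}``[k][k+1]"$ and $``[k+1]^2"$. Substituting these values yields the asserted identity.

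The substantive content — computing $\tauast$ on the multiplicity spaces (Proposition \ref{prop:tauast2strand}) and extracting its traces (Corollary \ref{cor:traceonV}) — is already in hand, so what remains is an assembly step; I do not expect a serious obstacle. The only points needing care are bookkeeping: matching the grading convention in \eqref{eq:KzeroDecomp} (a sum $\sum_k q^k\tr(\cdot|_{V^k})$) with the graded-trace convention under which Corollary \ref{cor:traceonV} is phrased, and confirming that the hypotheses of \S\ref{subsec:equivariantizationmixed}, in particular Hypothesis \ref{hypo:assumeme}, hold in our setting so that Proposition \ref{prop:computeequivGgp} genuinely applies. If anything counts as the hard part it is this last verification, which is routine given Proposition \ref{prop:eX} and Remark \ref{rem:FiEiep}.
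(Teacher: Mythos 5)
Your proposal is correct and takes essentially the same route as the paper: the paper's proof is a single sentence invoking Proposition \ref{prop:computeequivGgp} and Corollary \ref{cor:traceonV}, and your write-up simply fills in the bookkeeping (the monoidal product on $\tauKzero{\EBFoam}$, the underlying decomposition from \eqref{eq:XXintro}, the verification of Hypothesis \ref{hypo:assumeme} via Proposition \ref{prop:eX} and Remark \ref{rem:FiEiep}) that the paper leaves implicit.
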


\begin{proof}
This follows from Proposition \ref{prop:computeequivGgp} and Corollary \ref{cor:traceonV}. 
\end{proof}

This result implies Theorem \ref{thm:introKzero2} from the introduction.

\begin{cor}
	\label{cor:K0(X2)}
Let $\K$ be a an integral domain in which $2$ is invertible.
The assignment 
$\nx^{(k)} \mapsto [(\FE^{(k)}, (-1)^{{n+1 \choose 2}+{n-k+1 \choose 2}+k} \swcl_{k})]_{\tau}$
determines an isomorphism
\[
U'_{-q^2}(\som[2])
\xrightarrow{\cong} \C(q) \otimes_{\Z[q^\pm]} \tauKzero{\EBFoam[2]} \, .
\]
\end{cor}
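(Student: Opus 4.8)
The plan is to build an explicit $\C(q)$-algebra homomorphism and then check that it carries a basis to a basis. Recall from Definition \ref{def:GK} that for $m=2$ the algebra $U'_{-q^2}(\som[2])$ is generated by the single element $b=b_1$ subject to no relations, so it is the polynomial algebra $\C(q)[b]$; consequently, for any $\C(q)$-algebra $R$ and any $r\in R$, there is a unique $\C(q)$-algebra homomorphism $\C(q)[b]\to R$ with $b\mapsto r$ (no commutativity of $R$ needed, since $b$ generates a commutative subalgebra). Since $\EBFoam[2]$ is monoidal (Proposition \ref{prop:invMD}), its $\tau$-weighted Grothendieck group is a ring, and after base change $\C(q)\otimes_{\Z[q^\pm]}\tauKzero{\EBFoam[2]}$ is a $\C(q)$-algebra with unit $[(\FE^{(0)},\swcl_0)]_\tau=[(\one_\wtn,\id_{\one_\wtn})]_\tau$ (the remark after Proposition \ref{prop:eX}). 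I would therefore define $\Phi\colon U'_{-q^2}(\som[2])\to \C(q)\otimes_{\Z[q^\pm]}\tauKzero{\EBFoam[2]}$ to be the unique $\C(q)$-algebra homomorphism with $\Phi(b)=(-1)^{n+1}[(\FE^{(1)},\swcl_1)]_\tau+\tfrac{1}{[2]}[(\FE^{(0)},\swcl_0)]_\tau$; equivalently, using $\nx=b-\tfrac{1}{[2]}$ from Definition \ref{def:idpb}, $\Phi(\nx^{(1)})=(-1)^{n+1}[(\FE^{(1)},\swcl_1)]_\tau$.

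Next I would prove by induction on $k$ that $\Phi(\nx^{(k)})=\epsilon_k\,[(\FE^{(k)},\swcl_k)]_\tau$, where $\epsilon_k:=(-1)^{\binom{n+1}{2}+\binom{n-k+1}{2}+k}$. Note that $\epsilon_k\,[(\FE^{(k)},\swcl_k)]_\tau=[(\FE^{(k)},\epsilon_k\swcl_k)]_\tau$ by the defining relation of the weighted Grothendieck group, which is exactly the form of the assignment in the statement. The cases $k=0,1$ are immediate, since one checks from the closed formula that $\epsilon_0=1$ and $\epsilon_1=(-1)^{n+1}$. For the inductive step, I would solve the recursion of Definition \ref{def:idpb},
\[
\nx^{(k)}\nx=(-1)^{k}``[k+1][k]"\,\nx^{(k)}+(-1)^{k}``[k+1][k+1]"\,\nx^{(k+1)},
\]
for $\nx^{(k+1)}$ (legitimate since $``[k+1][k+1]"$ is a nonzero Laurent polynomial, hence invertible in $\C(q)$), apply $\Phi$, and evaluate $\Phi(\nx^{(k)})\Phi(\nx^{(1)})=\epsilon_k\epsilon_1\,[(\FE^{(k)},\swcl_k)]_\tau\cdot[(\FE^{(1)},\swcl_1)]_\tau$ using Theorem \ref{T:div-powers-in-twisted-K0} (and $``[k+1][k]"=``[k][k+1]"$, $``[k+1][k+1]"=``[k+1]^2"$). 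Matching the coefficient of $[(\FE^{(k)},\swcl_k)]_\tau$ yields the consistency identity $(-1)^{n+k+1}\epsilon_1=(-1)^k$, which holds, and matching the remaining term gives $\Phi(\nx^{(k+1)})=\epsilon_{k+1}[(\FE^{(k+1)},\swcl_{k+1})]_\tau$ with $\epsilon_{k+1}=(-1)^{n+k+1}\epsilon_k$; one then confirms $(-1)^{n+k+1}\epsilon_k=\epsilon_{k+1}$ from the closed formula using $\binom{n-k+1}{2}-\binom{n-k}{2}=n-k$. Both identities are elementary parity computations.

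Finally I would deduce that $\Phi$ is an isomorphism. By Proposition \ref{prop:eX}, $\{(\FE^{(k)}_{\grb},\swcl_k)\}_{k\ge 0}$ is an endopositive family; for $m=2$, $\BFoam[2]=\one_\wtn\cXX_q(\gln[2])\one_\wtn$ has the endopositive family $\{\FE^{(k)}\}_{k\ge 0}$ with every object a direct sum of shifts of these (Example \ref{ex:CQG2}), and $\tau$ restricts to an involution of $\BFoam[2]$ (Corollary \ref{cor:thickinvolution}) fixing each $\FE^{(k)}$ up to isomorphism, so in the notation of \S\ref{subsec:equivariantizationmixed} we have $\BC^{\mathrm{fix}}=\Z_{\ge 0}$, $\BC^{\mathrm{free}}=\emptyset$, and Hypothesis \ref{hypo:assumeme} holds. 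Taking $\varphi_k:=\swcl_k$ in Proposition \ref{prop:computeequivGgp} (with $\EBFoam[2]=\ECatBC$ by the structure theory of Section \ref{sec:equiv}, and Appendix \ref{SS:equivariantsupplement} for a general integral domain $\K$ in which $2$ is invertible), we conclude that $\{[(\FE^{(k)},\swcl_k)]_\tau\}_{k\ge 0}$ is a $\Z[\tfrac12][q^\pm]$-basis of $\tauKzero{\EBFoam[2]}$, hence a $\C(q)$-basis after base change. Since $\{\nx^{(k)}\}_{k\ge 0}$ is a basis of $U'_{-q^2}(\som[2])$ (as noted immediately after Definition \ref{def:idpb}) and $\Phi$ sends it, up to the invertible scalars $\epsilon_k$, to this basis, $\Phi$ is a $\C(q)$-linear, hence $\C(q)$-algebra, isomorphism; in particular the target is commutative, as it must be. The genuinely hard input — the multiplication rule of Theorem \ref{T:div-powers-in-twisted-K0} — is already available, so the main obstacle is bookkeeping: keeping the signs $\epsilon_k$ consistent through the recursion, and ensuring the weighted-Grothendieck-group computation of Proposition \ref{prop:computeequivGgp} is valid over a general integral domain rather than only over an algebraically closed field, which is precisely where Appendix \ref{SS:equivariantsupplement} enters.
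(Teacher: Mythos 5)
Your proof is correct and takes essentially the same approach as the paper: both apply Theorem \ref{T:div-powers-in-twisted-K0} to match the weighted Grothendieck class recursion with the defining recursion for $\nx^{(k)}$, and then invoke Proposition \ref{prop:computeequivGgp} (valid under Hypothesis \ref{hypo:assumeme}, which the paper's proof of Proposition \ref{prop:eX} established for $\BFoam$) to identify $\{[(\FE^{(k)},\swcl_k)]_\tau\}_{k\ge 0}$ as a basis. The only difference is presentational: the paper verifies the relation $\mathbf{x}^{(k)}\mathbf{x}^{(1)}=(-1)^k``[k][k+1]"\mathbf{x}^{(k)}+(-1)^k``[k+1]^2"\mathbf{x}^{(k+1)}$ directly and then asserts the assignment is an algebra homomorphism, whereas you make this step fully precise by observing that $U'_{-q^2}(\som[2])=\C(q)[b]$ is free on one generator (so the homomorphism exists and is unique once $\Phi(b)$ is chosen) and then deriving the sign $\epsilon_k$ by induction from the recursion. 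Your sign bookkeeping ($\epsilon_{k+1}=(-1)^{n+k+1}\epsilon_k$ via $\binom{n-k+1}{2}-\binom{n-k}{2}=n-k$) matches what the paper's displayed identity encodes, and your appeal to Appendix \ref{SS:equivariantsupplement} for the general-integral-domain version of Proposition \ref{prop:computeequivGgp} is the correct reference the paper's terse citation implicitly relies on.
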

\begin{proof}
Set 
$\mathbf{x}^{(k)} := [(\FE^{(k)}, (-1)^{{n+1 \choose 2}+{n-k+1 \choose 2}+k} \swcl_{k})]_{\tau}$.
Theorem \ref{T:div-powers-in-twisted-K0} gives that
\begin{multline*}
(-1)^{2{n+1 \choose 2}+{n-k+1 \choose 2}+{n \choose 2}+k+1} \mathbf{x}^{(k)} \mathbf{x}^{(1)}
	= (-1)^{n+2k+1+{n+1 \choose 2}+{n-k+1 \choose 2}} ``[k][k+1]" \mathbf{x}^{(k)} \\
		+ (-1)^{{n+1 \choose 2}+{n-k \choose 2}+k+1} ``[k+1]^2" \mathbf{x}^{(k+1)} \, ,
\end{multline*}
i.e.~that 
\[
\mathbf{x}^{(k)} \mathbf{x}^{(1)}
	= (-1)^{k} ``[k][k+1]" \mathbf{x}^{(k)} + (-1)^{k} ``[k+1]^2" \mathbf{x}^{(k+1)} \, .
\]
This is exactly the relation that defines $\nx^{(k)} \in U'_{-q^2}(\som[2])$ 
and $\{ \nx^{(k)}\}_{k \geq 0}$ are a $\C(q)$-basis for $U'_{-q^2}(\som[2])$,
so $\nx^{(k)} \mapsto \mathbf{x}^{(k)}$ defines a $\C(q)$-algebra homomorphism
$U'_{-q^2}(\som[2]) \to \C(q) \otimes_{\Z[q^\pm]} \tauKzero{\EBFoam[2]}$.
Since Hypothesis \ref{hypo:assumeme} holds for $\BFoam$, 
Proposition \ref{prop:computeequivGgp} implies that
this homomorphism is an isomorphism.
\end{proof}


\subsection{Devil's Serre relation for $\FE_i$ and $\FE_j$}
	\label{s:Serre}

We now show that a categorification of the devil's Serre relation for $U'_{-q^2}(\som)$
holds for $\FE_i$ and $\FE_{i \pm 1}$.
Precisely, we show that the relation \eqref{eq:iSerre} holds 
for the classes of $(\FE_i, \swcl_1)$ and $(\FE_{i\pm 1}, \swclp_1)$ 
in the weighted Grothendieck group of $\EBFoam$. 

We begin by recording some isomorphisms in $\U_q(\glm)$. 
For each $1\le i \le m-1$, 
equations \eqref{extendedreln1}, \eqref{extendedreln2}, and \eqref{eq:StosicFE} 
give the following isomorphisms with indicated projection maps:
\begin{equation}\label{eq:sl2relns-for-serre1}
\begin{pmatrix}
\begin{tikzpicture} [scale=.35,anchorbase]
	\draw[thick,green,<-] (1,0) to [out=90,in=270] (0,1.5);
	\draw[thick,green,->] (0,0) to [out=90,in=270] (1,1.5);
\end{tikzpicture}
&
\begin{tikzpicture} [scale=.35,anchorbase]
	\draw[thick,green,->] (0,0) to [out=90,in=180] (.5,.75) to [out=0,in=90] (1,0);
\end{tikzpicture}
\end{pmatrix}^T \colon
\EE_i\FF_i\one_{\wt} \cong \FF_i\EE_i\one_{\wt}\oplus \one_{\wt} \, , \quad \text{if} \ \alpha_i^{\vee}(\wt) = 1 \, ,
\end{equation}
\begin{equation}\label{eq:sl2relns-for-serre2}
\begin{pmatrix}
\begin{tikzpicture} [scale=.35,anchorbase,xscale=-1]
	\draw[thick,green,<-] (1,0) to [out=90,in=270] (0,1.5);
	\draw[thick,green,->] (0,0) to [out=90,in=270] (1,1.5);
\end{tikzpicture}
&
\begin{tikzpicture} [scale=.35,anchorbase,xscale=-1]
	\draw[thick,green,->] (0,0) to [out=90,in=180] (.5,.75) to [out=0,in=90] (1,0);
\end{tikzpicture}
\end{pmatrix}^T \colon
\FF_i\EE_i\one_{\wt} \cong \EE_i\FF_i\one_{\wt}\oplus \one_{\wt} \, , \quad \text{if} \ \alpha_i^{\vee}(\wt) = -1 \, ,
\end{equation}
\begin{equation}\label{eq:sl2relns-for-serre3}
\begin{pmatrix}
\begin{tikzpicture} [scale=.35,anchorbase,xscale=-1]
	\draw[thick,green,<-] (1,0) to [out=90,in=270] (0,1.5);
	\draw[thick,green,->] (0,0) to [out=90,in=270] (1,1.5);
\end{tikzpicture}
&
\begin{tikzpicture} [scale=.35,anchorbase,xscale=-1]
	\draw[thick,green,->] (0,0) to [out=90,in=180] (.5,.75) to [out=0,in=90] (1,0);
\end{tikzpicture}
&
\begin{tikzpicture} [scale=.35,anchorbase,xscale=-1]
	\draw[thick,green,->] (0,0) to [out=90,in=180] (.5,.75) 
		node[black]{\scs$\bullet$} to [out=0,in=90] (1,0);
\end{tikzpicture}
\end{pmatrix}^T \colon
\FF_i\EE_i\one_{\wt} \cong \EE_i\FF_i\one_{\wt}\oplus [2]\one_{\wt} \, , \quad \text{if} \ \alpha_i^{\vee}(\wt)= -2 \, ,
\end{equation}
and
\begin{equation}\label{eq:sl2relns-for-serre4}
\begin{pmatrix}
\begin{tikzpicture} [scale=.35,anchorbase,xscale=-1]
	\draw[ultra thick,green,<-] (1,0) to [out=90,in=270] (0,1.5);
	\draw[ultra thick,green,->] (0,0) to [out=90,in=270] (1,1.5);
\end{tikzpicture}
&
\begin{tikzpicture}[anchorbase,scale=.75]
\draw[thick,green,rdirected=.35] (-.25,.5) to [out=150,in=270] (.25,1.25);
\draw[thick,green,<-] (-.25,1.25) to [out=270,in=30] (.25,.5);
\draw[thick,green,rdirected=.6] (-.25,.5) to [out=30,in=150] (.25,.5);
\draw[ultra thick,green,rdirected=.5] (.25,.5) to [out=270,in=90] (.25,.125);
\draw[ultra thick,green,->] (-.25,.5) to [out=270,in=90] (-.25,.125);
\end{tikzpicture}
\end{pmatrix}^T \colon
\FF_i^{(2)}\EE_i^{(2)} \cong \EE_i^{(2)}\FF_i^{(2)}\oplus \EE_i\FF_i, \quad \text{if} \ \alpha_i^{\vee}(\wt) = -1 \, .
\end{equation}
Further, for $j\ne i$ and all $\wt$,
we have
\begin{equation}\label{eq:sl3relns-for-serre1}
\begin{tikzpicture} [scale=.35,anchorbase]
	\draw[ultra thick,green,->] (0,0) to [out=90,in=270] (1,1.5);
	\draw[ultra thick,purple,<-] (1,0) to [out=90,in=270] (0,1.5);
\end{tikzpicture} \colon
\EE_i^{(k)}\FF_j^{(l)}\one_{\wt} \cong \FF_j^{(l)}\EE_i^{(k)}\one_{\wt}.
\end{equation}
which follows as a consequence of 
\eqref{eq:mixedef}, \eqref{dotslide}, \eqref{eqn:multicolorthickcrossingdefn}, and \eqref{eq:multicolorotherthick}.

\begin{conv}
Recall our convention for colors in Convention \ref{conv:colors} 
is that strands corresponding to the $i$th Dynkin node are {\color{green}green}, 
while strands corresponding to the $i-1$ and $i+1$ Dynkin nodes are colored 
{\color{red}red} and {\color{blue}blue}, respectively. 
In this subsection, we will color $j\in \{i\pm 1\}$ using {\color{purple}purple} 
--- the color obtained from combining blue and red.
Further, for the remainder of this subsection, 
we use the convention that all thick strands have thickness $2$.
(As always, thin strands are assumed to have thickness $1$.)
\end{conv}

Suppose that $j=i\pm 1$, then 
\cite[Theorem 3]{Stosic} implies that, for all $\wt$, 
there are isomorphisms with indicated projection maps:
\begin{equation}\label{eq:sl3relns-for-serre2}
\begin{aligned}
\begin{pmatrix}
\begin{tikzpicture}[anchorbase,scale=.25]
\draw[ultra thick,green,->] (-.5,.5) to (-.5,2);
\draw[thick,green] (-1,-1) to [out=90,in=210] (-.5,.5);
\draw[thick,green] (1,-1) to [out=90,in=330] (-.5,.5);
\draw[thick,purple,->] (0,-1) to [out=90,in=270] (.5,2);
\end{tikzpicture}
&
\begin{tikzpicture}[anchorbase,scale=.25,xscale=-1]
\draw[ultra thick,green,->] (-.5,.5) to (-.5,2);
\draw[thick,green] (-1,-1) to [out=90,in=210] (-.5,.5);
\draw[thick,green] (1,-1) to [out=90,in=330] (-.5,.5);
\draw[thick,purple,->] (0,-1) to [out=90,in=270] (.5,2);
\end{tikzpicture}
\end{pmatrix}^T \colon
\EE_i\EE_j\EE_i\one_{\wt} &\cong \EE_i^{(2)}\EE_j\one_{\wt} \oplus \EE_j\EE_i^{(2)}\one_{\wt} \\
\begin{pmatrix}
\begin{tikzpicture}[anchorbase,scale=.25]
\draw[ultra thick,green] (-.5,.5) to (-.5,2);
\draw[thick,green,<-] (-1,-1) to [out=90,in=210] (-.5,.5);
\draw[thick,green,<-] (1,-1) to [out=90,in=330] (-.5,.5);
\draw[thick,purple,<-] (0,-1) to [out=90,in=270] (.5,2);
\end{tikzpicture}
&
\begin{tikzpicture}[anchorbase,scale=.25,xscale=-1]
\draw[ultra thick,green] (-.5,.5) to (-.5,2);
\draw[thick,green,<-] (-1,-1) to [out=90,in=210] (-.5,.5);
\draw[thick,green,<-] (1,-1) to [out=90,in=330] (-.5,.5);
\draw[thick,purple,<-] (0,-1) to [out=90,in=270] (.5,2);
\end{tikzpicture}
\end{pmatrix}^T \colon
\FF_i\FF_j\FF_i\one_{\wt} &\cong \FF_i^{(2)}\FF_j\one_{\wt} \oplus \FF_j\FF_i^{(2)}\one_{\wt} \, .
\end{aligned}
\end{equation}

Our aim is to decompose 
\[
(\FE_i, \swcl_1) \otimes (\FE_{j}, \swclp_1) \otimes (\FE_i,\swcl_1) 
= (\FE_i\FE_j\FE_i, \swcl_1\otimes\swclp_1\otimes\swcl_1) 
\]
in $\Kar(\EBFoam)$. The first step is to decompose $\FE_i\FE_j\FE_i$ in $\U_q(\glm)$.

\begin{lem}\label{lem:X1X2X1decomposition}
Let $j=i\pm1$. 
There is an isomorphism
\begin{equation}\label{eq:X1X2X1decomposition}
\FE_i\FE_j\FE_i\cong \FE_i^{(2)}\FE_j\oplus \FE_j\FE_i^{(2)}\oplus [2]\FE_i^{(2)}\oplus \FE_i \oplus A\oplus B
\end{equation}
in $\U_q(\glm)$,
where $A = A_1 \oplus A_2$ and $B = B_1 \oplus B_2$ for 
$A_1 :=  \FF_j\EE_i\FF_i\EE_j\one_{\wtn}$, $B_1 := \FF_i^{(2)}\EE_j\FF_j\EE_i^{(2)}\one_{\wtn}$,
$\tau(A_1) =: A_2$, and $\tau(B_1) =: B_2$.
Moreover, the set
\[\{\FE_i^{(2)}\FE_j, \FE_j\FE_i^{(2)}, \FE_i^{(2)}, \FE_i,  A_1, A_2, B_1, B_2\}\]
of summands appearing in \eqref{eq:X1X2X1decomposition} is an endopositive family. \end{lem}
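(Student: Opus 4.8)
\textbf{Plan for the proof of Lemma \ref{lem:X1X2X1decomposition}.}
The plan is to build the decomposition \eqref{eq:X1X2X1decomposition} by combining the $\mathfrak{sl}_2$- and $\mathfrak{sl}_3$-type isomorphisms recalled in \eqref{eq:sl2relns-for-serre1}--\eqref{eq:sl3relns-for-serre2}, applied in the ambient weight $\wtn = (n,\ldots,n)$. First I would observe that at the weight $\wtn$ we have $\langle \alpha_i^\vee,\wtn\rangle = 0$ for every $i$, but after applying one $\EE_i$ or $\FF_i$ the relevant coweight pairing becomes $\pm 1$, so the relevant instances of \eqref{eq:sl2relns-for-serre1}, \eqref{eq:sl2relns-for-serre2}, \eqref{eq:sl2relns-for-serre4} all apply. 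Concretely, I would start from $\FE_i \FE_j \FE_i \one_\wtn = \FF_i \EE_i \FF_j \EE_j \FF_i \EE_i \one_\wtn$, slide the middle $\FF_j \EE_j$ past the adjacent $\EE_i$ and $\FF_i$ using \eqref{eq:sl3relns-for-serre1} where colors differ, and use \eqref{eq:sl2relns-for-serre1}/\eqref{eq:sl2relns-for-serre2} to resolve the $\EE_i \FF_i$ (resp.\ $\FF_i \EE_i$) pairs that arise in the various weights. This produces a direct sum whose terms are: (a) terms of the form $\FF_i \EE_i (\FF_j \EE_j) \FF_i \EE_i$ with the middle $\mathfrak{sl}_j$-string contracted, which after a further application of \eqref{eq:sl3relns-for-serre2} and the divided-power decomposition $\EE_i \EE_i \cong [2]\EE_i^{(2)}$ (cf.\ \eqref{eq:EEdecomp} with $k=\ell=1$) reassembles into $\FE_i^{(2)}\FE_j \oplus \FE_j \FE_i^{(2)} \oplus [2]\FE_i^{(2)} \oplus \FE_i$; (b) the terms $A_1 = \FF_j \EE_i \FF_i \EE_j \one_\wtn$ and $A_2 = \tau(A_1) = \FF_i \EE_j \FF_j \EE_i \one_\wtn$ coming from the ``crossed'' resolution; and (c) the heavier terms $B_1 = \FF_i^{(2)}\EE_j \FF_j \EE_i^{(2)}\one_\wtn$ and $B_2 = \tau(B_1)$ arising when the $\EE_i \EE_i$ merges to $\EE_i^{(2)}$ on one side. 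I would organize this bookkeeping as a sequence of isomorphisms, tracking grading shifts carefully, until it matches the right-hand side of \eqref{eq:X1X2X1decomposition}. The equalities $\tau(A_1) = A_2$ and $\tau(B_1) = B_2$ are then immediate from the definition of $\tau$ on objects ($\EE_i \one_{\wtn+\wt} \leftrightarrow \FF_i \one_{\wtn - \wt}$, fixing $\wtn$) together with \eqref{eq:EF=FE}; note that $\FE_i^{(2)}\FE_j$, $\FE_j\FE_i^{(2)}$, $\FE_i^{(2)}$, $\FE_i$ are each individually $\tau$-fixed up to isomorphism.

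\textbf{Endopositivity of the family.} For the second assertion I would invoke the Khovanov--Lauda Hom formula (Remark \ref{rmk:homformula}), which reduces the computation of morphism spaces in $\BFoam$ to the sesquilinear form on $\dU(\glm)$, valid over any commutative ring since nondegeneracy holds over $\Q$ and hence over $\Z$. Under skew Howe duality / the isomorphism of Theorem \ref{thm:KL}, the classes of the eight listed objects correspond to explicit elements of $1_{\wtn}\dU(\glm)1_{\wtn}$, and one must check: (i) each object $X$ in the list has $\End^0(X) = \K \cdot \id_X$ and $\End(X)$ supported in non-negative degrees (endopositivity of the individual objects), and (ii) for distinct objects $X \ne X'$ in the list, $\Hom(X,X')$ is supported in strictly positive degrees. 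Step (i) for $\FE_i^{(k)}$-type objects is already known (Remark \ref{rem:FiEiep}, generalizing \cite[Proposition 5.15]{KLMS}); for $A_1, A_2, B_1, B_2$ it follows the same way — these are of the form $\FF_\bullet^{(\bullet)}\EE_\bullet^{(\bullet)}\cdots$ and the self-pairing in $\dU(\glm)$ evaluates to $1 + q\Z_{\ge 0}[[q]]$. Step (ii) is the content of \eqref{homdim} and amounts to checking that the corresponding matrix entries of the sesquilinear form lie in $q\Z_{\ge 0}[[q]]$; since all eight objects are non-isomorphic (they decategorify to distinct canonical-basis-type elements, or are distinguished by their weight gradings as $1$-morphisms of $\cUU_q(\glm)$) and are self-dual up to grading shift, the pairings are forced into strictly positive degree.

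\textbf{Main obstacle.} The hard part will be the bookkeeping in the first half: carrying out the chain of applications of \eqref{eq:sl2relns-for-serre1}--\eqref{eq:sl3relns-for-serre2} and verifying that the grading shifts assemble exactly into the stated multiplicities, in particular that the ``$[2]\FE_i^{(2)} \oplus \FE_i$'' piece comes out with the correct shifts rather than, say, $[2]\FE_i^{(2)} \oplus \qsh^{\pm 1}\FE_i$. I expect the cleanest route is to verify the identity at the level of Grothendieck groups first — i.e.\ expand the product $\chi_i \cdot \chi_j \cdot \chi_i$ (where $\chi_\bullet$ is the class of $\FE_\bullet$) in $1_{\wtn}\dU(\glm)1_{\wtn}$ using the relations of Definition \ref{def:Udotgl}, matching it against the classes of the eight proposed summands — and then lift each summand to an honest direct-sum decomposition using the explicit projection/inclusion maps supplied in \eqref{eq:sl2relns-for-serre1}--\eqref{eq:sl3relns-for-serre2}. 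Once the decategorified identity is confirmed, upgrading to $\U_q(\glm)$ is a matter of assembling the displayed maps, and the endopositivity of the family (needed to apply Lemmas \ref{lem:basisforV} and \ref{L:Ben's-linear-algebra-lemma} later) guarantees the lifted decomposition is essentially unique.
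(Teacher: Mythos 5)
Your proposal follows essentially the same route as the paper: expand $\FE_i\FE_j\FE_i\one_\wtn$ and iterate the $\mathfrak{sl}_2$/$\mathfrak{sl}_3$-type isomorphisms \eqref{eq:sl2relns-for-serre1}--\eqref{eq:sl3relns-for-serre2} (the paper extracts the $[2]\FE_i^{(2)}$ piece via \eqref{eq:sl2relns-for-serre3} at $\alpha_i^\vee(\wt)=-2$ rather than through $\EE_i\EE_i\cong[2]\EE_i^{(2)}$, but the accounting is equivalent), and cite the Khovanov--Lauda Hom formula / Remark \ref{rmk:homformula} for endopositivity. One small slip: $\tau(A_1)=\EE_j\FF_i\EE_i\FF_j\one_\wtn$, not $\FF_i\EE_j\FF_j\EE_i\one_\wtn$ as you wrote, though the two agree up to isomorphism via \eqref{eq:sl3relns-for-serre1}, so it does not affect the argument.
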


\begin{proof}
We have the following chain of isomorphisms:
\begin{align*}
\FF_i\EE_i\FF_j\EE_j\FF_i\EE_i\one_{\wtn}
	&\stackrel{{\eqref{eq:sl3relns-for-serre1}}}{\cong} 
		\FF_i\FF_j\EE_i\FF_i\EE_j\EE_i\one_{\wtn} \\
	&\stackrel{{\eqref{eq:sl2relns-for-serre1}}}{\cong} 
		\FF_i\FF_j\FF_i\EE_i\EE_j\EE_i\one_{\wtn}\oplus \FF_i\FF_j\EE_j\EE_i\one_{\wtn}  \\
	&\!\!\!\!\!\! \stackrel{{\eqref{eq:sl3relns-for-serre2},\eqref{eq:sl2relns-for-serre2}}}{\cong} 
		\FF_i^{(2)}\FF_j\EE_i^{(2)}\EE_j\one_{\wtn} \oplus \FF_i^{(2)}\FF_j\EE_j\EE_i^{(2)}\one_{\wtn} \oplus 
			\FF_j\FF_i^{(2)}\EE_i^{(2)}\EE_j\one_{\wtn}\oplus \FF_j\FF_i^{(2)}\EE_j\EE_i^{(2)}\one_{\wtn}  \\
		&\qquad \qquad \oplus \FF_i\EE_j\FF_j\EE_i\one_{\wtn} \oplus \FF_i\EE_i\one_{\wtn} \\
	&\!\!\!\!\!\!\!\!\!\!\! \stackrel{{\eqref{eq:sl2relns-for-serre3},\eqref{eq:sl2relns-for-serre4},
	\eqref{eq:sl3relns-for-serre1}}}{\cong} 
		\FF_i^{(2)}\EE_i^{(2)}\FF_j\EE_j\one_{\wtn} \oplus \FF_i^{(2)}\EE_j\FF_j\EE_i^{(2)}\one_{\wtn}\oplus 	
			[2]\FF_i^{(2)}\EE_i^{(2)}\one_{\wtn} \oplus \FF_j\EE_i^{(2)}\FF_i^{(2)}\EE_j\one_{\wtn} \\
		&\qquad \qquad \qquad \oplus \FF_j\EE_i\FF_i\EE_j\one_{\wtn}\oplus 
			\FF_j\EE_j\FF_i^{(2)}\EE_i^{(2)}\one_{\wtn} \oplus  \EE_j\FF_i\EE_i\FF_j\one_{\wtn} 
				\oplus \FF_i\EE_i\one_{\wtn} \\
	&\stackrel{{\eqref{eq:sl3relns-for-serre1}}}{\cong}
		\FF_i^{(2)}\EE_i^{(2)}\FF_j\EE_j\one_{\wtn} \oplus \FF_j\EE_j\FF_i^{(2)}\EE_i^{(2)}\one_{\wtn} 
			\oplus [2]\FF_i^{(2)}\EE_i^{(2)}\one_{\wtn} \oplus \FF_i\EE_i\one_{\wtn}  \\
	&\qquad \qquad \oplus \FF_j\EE_i\FF_i\EE_j\one_{\wtn} \oplus \EE_j\FF_i\EE_i\FF_j\one_{\wtn}
		\oplus \FF_i^{(2)}\EE_j\FF_j\EE_i^{(2)}\one_{\wtn} \oplus \FF_j\EE_i^{(2)}\FF_i^{(2)}\EE_j\one_{\wtn} \, .
\end{align*}
This yields \eqref{eq:X1X2X1decomposition},
since $\tau(\FF_j\EE_i\FF_i\EE_j\one_{\wtn})= \EE_j\FF_i\EE_i\FF_j\one_{\wtn}$ 
and $\tau(\FF_i^{(2)}\EE_j\FF_j\EE_i^{(2)}\one_{\wtn})= \FF_j\EE_i^{(2)}\FF_i^{(2)}\EE_j\one_{\wtn}$.

That the summands appearing form an endopositive family is a straightforward but tedious verification 
using the Khovanov--Lauda Hom formula; see Remark \ref{rmk:homformula}. 
It can also be checked directly, as we now demonstrate for a pair of relevant 
$\Hom$-spaces. (The others can be treated similarly.)

The proof of \cite[Proposition 3.11]{KL3} gives that the following diagrams:
\[
\begin{tikzpicture}[anchorbase,scale=.25]
\draw[thick,green,->] (-1,-2) to (-1,2) ;
\draw[thick,green,<-] (1,-2) to (1,2) ;
\draw[thick,purple,<-] (-2,-2) to (-2,2) ;
\draw[thick,purple,->] (2,-2) to (2,2) ;
\end{tikzpicture}
\qquad
\begin{tikzpicture}[anchorbase,scale=.25]
\draw[thick,green,<-] (-1,2) .. controls (-.5,1) and (.5,1) .. (1,2) ;
\draw[thick,green,->] (-1,-2) .. controls (-.5,-1) and (.5,-1) .. (1,-2) ;
\draw[thick,purple,<-] (-2,-2) to (-2,2) ;
\draw[thick,purple,->] (2,-2) to (2,2) ;
\end{tikzpicture}
\qquad
\begin{tikzpicture}[anchorbase,scale=.25]
\draw[thick,green,->] (-1,-2) to (-1,2) ;
\draw[thick,green,<-] (1,-2) to (1,2) ;
\draw[thick,purple,->] (-2,2) .. controls (-1.5,0) and (1.5,0) .. (2,2) ;
\draw[thick,purple,<-] (-2,-2) .. controls (-1.5,0) and (1.5,0) .. (2,-2) ;
\end{tikzpicture}
\qquad
\begin{tikzpicture}[anchorbase,scale=.25]
\draw[thick,green,<-] (-1,2) .. controls (-.5,1) and (.5,1) .. (1,2) ;
\draw[thick,green,->] (-1,-2) .. controls (-.5,-1) and (.5,-1) .. (1,-2) ;
\draw[thick,purple,->] (-2,2) .. controls (-1.5,0) and (1.5,0) .. (2,2) ;
\draw[thick,purple,<-] (-2,-2) .. controls (-1.5,0) and (1.5,0) .. (2,-2) ;
\end{tikzpicture} \, ,
\]
possibly adorned with
dots on strands and positive degree new-bubbles in the far right region, 
span $\End_{\UU_q(\glm)}(\FF_j\EE_i\FF_i\EE_j\one_{\wtn})$.
From \eqref{eq:CQGgens}, we compute that the degrees of these diagrams 
are $0, +4, +4$, and $+6$ respectively. 
It follows that $\End_{\UU_q(\glm)}(\FF_j\EE_i\FF_i\EE_j\one_{\wtn})$ is 
one-dimensional in degree zero and all other endomorphisms are in strictly positive degree, 
hence $\FF_j\EE_i\FF_i\EE_j\one_{\wtn}$ is an endopositive object.

The analogous diagrams for $\Hom_{\UU_q(\glm)}(\FF_j\EE_i\FF_i\EE_j\one_{\wtn}, \EE_j\FF_i\EE_i\FF_j\one_{\wtn})$ are as follows.
\[
\begin{tikzpicture}[anchorbase,scale=.25]
\draw[thick,green,->] (-1,-2) .. controls (2,1) .. (1,2) ;
\draw[thick,green,<-] (1,-2) .. controls (-2,1) .. (-1,2) ;
\draw[thick,purple,<-] (-2,-2) to (2,2) ;
\draw[thick,purple,->] (2,-2) to (-2,2) ;
\end{tikzpicture}
\qquad
\begin{tikzpicture}[anchorbase,scale=.25]
\draw[thick,green,->] (-1,2) .. controls (-.5,1) and (.5,1) .. (1,2) ;
\draw[thick,green,->] (-1,-2) .. controls (-.5,-1) and (.5,-1) .. (1,-2) ;
\draw[thick,purple,<-] (-2,-2) to (2,2) ;
\draw[thick,purple,->] (2,-2) to (-2,2) ;
\end{tikzpicture}
\qquad
\begin{tikzpicture}[anchorbase,scale=.25]
\draw[thick,green,->] (-1,-2) to (1,2) ;
\draw[thick,green,<-] (1,-2) to (-1,2) ;
\draw[thick,purple,<-] (-2,2) .. controls (-1.5,0) and (1.5,0) .. (2,2) ;
\draw[thick,purple,<-] (-2,-2) .. controls (-1.5,0) and (1.5,0) .. (2,-2) ;
\end{tikzpicture}
\qquad
\begin{tikzpicture}[anchorbase,scale=.25]
\draw[thick,green,<-] (-1,2) .. controls (-.5,1) and (.5,1) .. (1,2) ;
\draw[thick,green,->] (-1,-2) .. controls (-.5,-1) and (.5,-1) .. (1,-2) ;
\draw[thick,purple,<-] (-2,2) .. controls (-1.5,0) and (1.5,0) .. (2,2) ;
\draw[thick,purple,->] (-2,-2) .. controls (-1.5,0) and (1.5,0) .. (2,-2) ;
\end{tikzpicture}
\]
Again, \eqref{eq:CQGgens} gives that the degrees are $+2, +4, +4$, and $+6$. 
Thus, $\Hom_{\UU_q(\glm)}(\FF_j\EE_i\FF_i\EE_j\one_{\wtn}, \EE_j\FF_i\EE_i\FF_j\one_{\wtn})$ 
can only be non-zero in strictly positive degrees. 
\end{proof}

We now proceed to compute the class of $(\FE_i\FE_j\FE_i, \swcl_1\otimes\swclp_1\otimes\swcl_1)$ 
in the weighted Grothendieck group, 
using the technique from Remark \ref{rem:howtodealwithVk}, 
as implemented in Lemma \ref{L:Ben's-linear-algebra-lemma}.
This will be accomplished via a sequence of Lemmata which 
establish and study bases for the relevant factors in the domain of the 
non-degenerate graded composition pairing.
For the rest of this subsection, 
all diagrams are now assumed to have the $\glm$ weight $\wtn$ on the far right.

\begin{nota}
When applying the relation \eqref{eq:quadKLR} for strands labelled by $i$ and $j=i\pm 1$, 
we often deal with scalars of the form $\pm (i-j)$. For $j=i\pm 1$, we write
\[
\epsilon_{ij}:=j-i \quad \text{and} \quad \epsilon_{ji}:=i-j \, .
\]
We also write $\varphi_{ji}:=(-1)^{a_j}$, when $\wt=\wtn+\alpha_i$. 
By equation \eqref{newbub}, this is the value of a counterclockwise $j$-colored 
degree-zero bubble in weight $\wtn+\alpha_i$. 
Note that $\epsilon_{ij}\varphi_{ji}=(-1)^{n-1}$ and $\epsilon_{ji}\varphi_{ji}=(-1)^n$ for $j=i\pm 1$.
\end{nota}

\begin{lem}
	\label{lem:prin}
The maps 
\[
\pr:=\begin{tikzpicture}[anchorbase,scale=.25]
\draw[thick,green,->] (-3,1) to (-3,-2) ;
\draw[thick,green,->] (3,-2) to (3,1) ;
\draw[thick,green,->] (-2,-2) .. controls (-1.25,.25) and (1.25,.25) .. (2,-2) ;
\draw[thick,purple,->] (1,-2) .. controls (.75,-.5) and (-.75,-.5) .. (-1,-2) ;
\end{tikzpicture} 
\quad \text{and} \quad
\inc:=\begin{tikzpicture}[anchorbase,scale=.25]
\draw[thick,green,<-] (-3,-1) to (-3,2) ;
\draw[thick,green,<-] (3,2) to (3,-1) ;
\draw[thick,green,<-] (-2,2) .. controls (-1.25,-.25) and (1.25,-.25) .. (2,2) ;
\draw[thick,purple,<-] (1,2) .. controls (.75,.5) and (-.75,.5) .. (-1,2) ;
\end{tikzpicture}
\]
descend to bases for $V(\FE_i\FE_j\FE_i, \FE_i)$ and $V(\FE_i, \FE_i\FE_j\FE_i)$, 
respectively. 
\end{lem}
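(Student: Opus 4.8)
The goal is to show that the two stated diagrams descend to bases of the one-dimensional (over $\K$, up to the degree bookkeeping) spaces $V(\FE_i\FE_j\FE_i,\FE_i)$ and $V(\FE_i,\FE_i\FE_j\FE_i)$. By Lemma \ref{lem:X1X2X1decomposition}, in the decomposition \eqref{eq:X1X2X1decomposition} the summand $\FE_i$ appears with multiplicity $1$ and no grading shift, and $\{\FE_i^{(2)}\FE_j,\FE_j\FE_i^{(2)},\FE_i^{(2)},\FE_i,A_1,A_2,B_1,B_2\}$ is an endopositive family. Hence Lemma \ref{lem:basisforV} (with $X_b = \FE_i = \FE_i^{(1)}$) gives that $V(\FE_i\FE_j\FE_i,\FE_i)$ is free of graded rank $1$, spanned by the projection map coming from \emph{any} chosen decomposition, and similarly $V(\FE_i,\FE_i\FE_j\FE_i)$ is free of rank $1$ spanned by the corresponding inclusion. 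So the only real content is to check that the \emph{specific} morphisms $\pr$ and $\inc$ written above are valid projection/inclusion maps for the $\FE_i$-summand — equivalently, by Remark \ref{rem:findrankd} and Corollary \ref{cor:gLCP}, that the scalar $\pr \circ \inc \in \End^0(\FE_i) = \K\cdot\id_{\FE_i}$ is invertible in $\K$.

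\textbf{Key steps.} First I would compute the composite $\pr\circ\inc$ diagrammatically. Stacking the two pictures produces a closed-off configuration: a cup-cap pair of $i$-colored strands (the ``inner'' $\FE_i$ passing through) composed with a bigon of $j$-colored strands sitting in a region of weight $\wtn + \alpha_i$, all multiplied against the two ``through'' $i$-strands that carry the source and target $\FE_i$. The $j$-colored bigon evaluates by the bubble evaluation \eqref{newbub} / \eqref{eq:curl1} to $\pm 1$ times an identity (this is where $\varphi_{ji} = (-1)^{a_j}$ enters, since the relevant bubble lives in weight $\wtn+\alpha_i$); the resulting $i$-colored curl evaluates via the $\mathfrak{sl}_2$ relations \eqref{extendedreln1}–\eqref{extendedreln2} (or directly \eqref{eq:curl1}), with the ambient weight $\alpha_i^\vee(\wtn) = 0$ making the relevant curl a single degree-zero term. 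After resolving, $\pr\circ\inc$ should come out to be $\pm\id_{\FE_i}$ — a unit in $\K$ regardless of characteristic — which by Corollary \ref{cor:gLCP}/Lemma \ref{lem:EMTWlemma} shows that $\pr$ and $\inc$ realize $\FE_i$ as a summand with multiplicity exactly matching the graded rank $1$ already known. Since $V(\FE_i\FE_j\FE_i,\FE_i)$ and $V(\FE_i,\FE_i\FE_j\FE_i)$ are free of rank $1$, a single element pairing to a unit against a single element on the other side is automatically a basis of each; this is exactly the hypothesis feeding Lemma \ref{L:Ben's-linear-algebra-lemma} in the subsequent computation.

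\textbf{Main obstacle.} The only delicate point is the sign/scalar bookkeeping in evaluating $\pr\circ\inc$: one must track the bubble-evaluation sign through the rescaling conventions of \S\ref{ssec:rescaling} (the functor $\crazyF$), the sign from resolving the $i$-colored curl via \eqref{eq:curl1} in weight with $\alpha_i^\vee(\wtn)=0$, and the placement of the $j$-bigon in weight $\wtn+\alpha_i$ rather than $\wtn$. None of these can make the answer zero — each contributes a sign — so invertibility over any $\K$ (in particular any integral domain with $2$ invertible) is never in question; the work is purely to pin down which sign, which is a short but sign-sensitive diagram chase rather than a conceptual difficulty. I would record the resulting normalization explicitly since it is needed to fix $\pr$ and $\inc$ as an honest dual pair for the pairing-matrix computations in the lemmas that follow.
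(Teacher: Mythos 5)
Your approach is correct but takes a different route from the paper's. The paper's proof identifies $\pr$ by tracing the explicit isomorphisms constructed in the proof of Lemma \ref{lem:X1X2X1decomposition} and simplifying the resulting projection via the mixed $\EE,\FF$ relation \eqref{eq:mixedef}, so that Lemma \ref{lem:basisforV} immediately gives the basis claim with no pairing computed. You instead verify the given diagrams a posteriori: since Lemma \ref{lem:basisforV} shows $V^0(\FE_i\FE_j\FE_i,\FE_i)$ and $V^0(\FE_i,\FE_i\FE_j\FE_i)$ are free of rank $1$ over the integral domain $\K$, checking that $\pr\circ\inc$ is a unit in $\K$ forces $\pr$ and $\inc$ to descend to bases. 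This is precisely the alternative the paper itself offers for $\inc$ (the proof ends ``\ldots or follows from the computation of $\pr\circ\inc$ in the proof of Lemma \ref{L:tau-star-Serre1} below''), and extending it to cover $\pr$ as well is perfectly sound. You do defer the actual evaluation of $\pr\circ\inc$ as a claim that it equals $\pm\id_{\FE_i}$; that computation is carried out in the paper's proof of Lemma \ref{L:tau-star-Serre1} (where it comes out to $-\id_{\FE_i}$) and does not logically depend on the present lemma, so the reliance is not circular, but a self-contained write-up along your lines would need to include it. The tradeoff is that the tracing argument explains where the diagrams $\pr$ and $\inc$ come from, while your pairing check is more modular and would equally certify any other candidate dual pair once written down.
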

\begin{proof}
We use Lemma \ref{lem:basisforV}. 
Tracing through the isomorphisms in the proof of Lemma \ref{lem:X1X2X1decomposition}, 
we find the projection map $\FE_i\FE_j\FE_i\rightarrow \FE_i$ is
\[
\begin{tikzpicture}[anchorbase,scale=.25]
\draw[thick,green,->] (-3,2) to (-3,-2) ;
\draw[thick,green,->] (3,-2) to (3,2) ;
\draw[thick,green,->] (-2,-2) .. controls (-1.25,.25) and (1.25,.25) .. (2,-2) ;
\draw[thick, purple,->] (1,-2) to [out=90,in=270] (2,0) to [out=90,in=90]
	(-2,0) to [out=270,in=90] (-1,-2);
\end{tikzpicture} 
\stackrel{{\eqref{eq:mixedef}}}{=} \pr \, .
\]
The statement for the map $\inc$ can be obtained similarly, 
or follows from the computation of $\pr \circ \inc$ in the proof of Lemma \ref{L:tau-star-Serre1} below.
\end{proof}

\begin{lem}\label{L:tau-star-Serre1}
The equality 
$\tauast \inc = \inc$ 
holds in $V(\FE_i, \FE_i\FE_j\FE_i)$. 
\end{lem}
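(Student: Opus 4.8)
The statement asserts that $\tauast\inc - \inc$ lies in the right kernel $\rker(\beta^\bullet_{\FE_i,\,\FE_i\FE_j\FE_i})$. Equivalently, since by Lemma \ref{lem:prin} the class of $\inc$ is a $\K$-basis of the rank-one module $V(\FE_i,\FE_i\FE_j\FE_i)$ and $\tauast$ descends to an involution of this module (Proposition \ref{prop:sigast}, together with the fact --- recorded just before Proposition \ref{prop:SummandViaEV} --- that $\tauast$ preserves the kernels of the graded composition pairing), we have $\tauast\inc = c\cdot\inc$ in $V(\FE_i,\FE_i\FE_j\FE_i)$ for some $c\in\K$ with $c^2=1$, and the content is that $c=+1$. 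I would pin down $c$ using the rank-one case of Lemma \ref{L:Ben's-linear-algebra-lemma}: with the dual pair $\inc$, $\pr$ from Lemma \ref{lem:prin}, the scalar $c$ equals $(\pr\circ\inc)^{-1}(\pr\circ\tauast\inc)$, so it suffices to show $\pr\circ\tauast\inc = \pr\circ\inc$ in $\End^0(\FE_i)$.

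The first real step is to unfold $\tauast$. Using the definition of $\sigast$ (Proposition \ref{prop:sigast}), $\tau^2=\id$, monoidality of $\tau$ (Theorem \ref{thm:symmetry}), and the identities $\tau(\swcl_k)=\swcr_k$ and its purple analogue (established in the course of Proposition \ref{prop:eX}), one has
\[
\tauast\inc \;=\; \bigl(\swcr_1\otimes\tau(\swclp_1)\otimes\swcr_1\bigr)\circ\tau(\inc)\circ\swcl_1 ,
\]
where $\tau(\swclp_1)$ is the purple sideways crossing. Next I would compute $\tau(\inc)$ from the action of $\tau$ on the generating $2$-morphisms (Theorem \ref{thm:symmetry}, Corollary \ref{cor:ourthicktau}). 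Since $\inc$ is built from the identities on $\FF_i$ and $\EE_i$, a green coevaluation, and a purple coevaluation, $\tau$ reverses all orientations (exchanging the $\FF$- and $\EE$-strands) and introduces a sign governed by the parities of $\langle\alpha_i^\vee,-\rangle$ and $\langle\alpha_j^\vee,-\rangle$ in the regions adjacent to the cups; evaluating these near $\wtn=(n,\dots,n)$ gives $\tau(\inc)=(-1)^{e}\,\inc'$ for an explicit $e$, where $\inc'$ is the orientation-reversed ``mirror'' of $\inc$.

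It then remains to simplify the composite $(-1)^{e}\,(\swcr_1\otimes\tau(\swclp_1)\otimes\swcr_1)\circ\inc'\circ\swcl_1$. The morphisms $\swcl_1,\swcr_1$ are the sideways crossings exchanging $\FF_i$ and $\EE_i$; I would slide them along $\inc'$ --- passing the green crossings across the purple coevaluation using the far-commutativity relation \eqref{eq:mixedef} and naturality of crossings --- so that each green crossing meets its partner and uncrosses. Uncrossing produces the identity plus correction terms coming from \eqref{eq:quadKLR}, the curl relation \eqref{eq:curl1}, and \eqref{eq:easyStosic}. Every correction term factors through a positive-degree endomorphism of the endopositive object $\FE_i$, or through $\FE^{(\ell)}_i$ with $\ell\ne 1$, hence pairs to zero against $\pr$ by Lemmas \ref{lem:kernelisjacobson}, \ref{lem:btob'inker}, and \ref{lem:EndKer}. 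Composing the surviving term with $\pr$ and collecting $(-1)^{e}$ with the signs from Corollary \ref{cor:ourthicktau} for the crossings and from each application of \eqref{eq:quadKLR}/\eqref{eq:curl1}, one checks the product is $+1$, so $\pr\circ\tauast\inc=\pr\circ\inc$ and therefore $c=1$.

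The genuine difficulty here is the sign bookkeeping, not the diagrammatics. Because $\tau$ acts on cups and caps with a sign depending on weight parities, and these parities jump each time one crosses an $i$- or $j$-colored strand, the constituent signs in $\tau(\inc)$ and in the uncrossing corrections are interlocked, and one must verify carefully that all of them --- the $(-1)^e$, the scalars $\epsilon_{ij}$ from \eqref{eq:quadKLR}, and the $\binom{k}{2}$-type signs from Corollary \ref{cor:ourthicktau} --- cancel to $+1$. A related subtlety is that in $\inc$ the green coevaluation straddles the purple one, so the green crossings cannot cancel locally; one must first perform the relevant strand-slides via \eqref{eq:mixedef} before the cancellations become visible. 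This is the step I expect to be most delicate and error-prone.
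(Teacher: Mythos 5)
Your proposal takes essentially the same approach as the paper: compute $\tau(\inc)$ explicitly, show $\pr\circ\tauast\inc=\pr\circ\inc$ by diagrammatic simplification, and conclude via the rank-one case of Lemma \ref{L:Ben's-linear-algebra-lemma}. The paper executes the simplification through the curl relation \eqref{eq:curl1}, bubble evaluation, the quadratic KLR relation, and \eqref{extendedreln1} (finding both sides equal $-\id_{\FE_i}$), which is the same sequence of relations your sliding argument invokes, just presented in a slightly different order.
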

\begin{proof}
First, we compute 
\[
\pr \circ \inc = 
\begin{tikzpicture}[anchorbase,scale=.25]
\draw[thick,green,->] (-3,2) to (-3,-2) ;
\draw[thick,green,->] (3,-2) to (3,2) ;
\draw[thick, purple, ->] (1,0) arc (0:360:1);
\draw[thick, green,<-] (2,0) arc (0:360:2);
\end{tikzpicture}
\stackrel{\eqref{eq:mixedef}}{=}
\begin{tikzpicture}[anchorbase,scale=.25]
\draw[thick,green,->] (-3,2) to (-3,-2) ;
\draw[thick,green,->] (3,-2) to (3,2) ;
\draw[thick, purple, ->] (.5,0) arc (0:360:1);
\draw[thick, green,<-] (1.5,0) arc (0:360:1);
\end{tikzpicture}
\stackrel{\eqref{eq:quadKLR}}{=}
\epsilon_{ji}\cdot 
\begin{tikzpicture}[anchorbase,scale=.25]
\draw[thick,green,->] (-3,2) to (-3,-2) ;
\draw[thick,green,->] (3,-2) to (3,2) ;
\draw[thick, purple, ->] (-.5,0) arc (0:360:1);
\draw[thick, green,<-] (2.5,0) arc (0:360:1);
\filldraw [black] (-.5,0) circle (5pt);
\end{tikzpicture}
-\epsilon_{ji}\cdot 
\begin{tikzpicture}[anchorbase,scale=.25]
\draw[thick,green,->] (-3,2) to (-3,-2) ;
\draw[thick,green,->] (3,-2) to (3,2) ;
\draw[thick, purple, ->] (-.5,0) arc (0:360:1);
\draw[thick, green,<-] (2.5,0) arc (0:360:1);
\filldraw [black] (.5,0) circle (5pt);
\end{tikzpicture}
\quad.
\]
The weight to the far right is $\wtn$, 
so the clockwise undotted $i$-colored bubble in weight $\wtn+\alpha_i$
has degree $2(1-\alpha_i^{\vee}(\wtn+\alpha_i))= -2$ and thus equals zero. 
Hence,
\[
\pr\circ \inc 
=
-\epsilon_{ji}\cdot \begin{tikzpicture}[anchorbase,scale=.25]
\draw[thick,green,->] (-3,2) to (-3,-2) ;
\draw[thick,green,->] (3,-2) to (3,2) ;
\draw[thick, purple, ->] (-.5,0) arc (0:360:1);
\draw[thick, green,<-] (2.5,0) arc (0:360:1);
\filldraw [black] (.5,0) circle (5pt);
\end{tikzpicture}
\stackrel{\eqref{realbubdef},\eqref{newbub}}{=}
-\epsilon_{ji}\varphi_{ji}(-1)^{n} \id_{\FE_i} = -\id_{\FE_i}.
\]

Next, we compute $\pr\circ (\tauast \inc)$. 
Since
\[
\tau(\inc) = 
(-1)^{\ell_i'(\alpha_i) + r_j'(0)}
\begin{tikzpicture}[anchorbase,scale=.25]
\draw[thick,green,->] (-3,-1) to (-3,2) ;
\draw[thick,green,->] (3,2) to (3,-1) ;
\draw[thick,green,->] (-2,2) .. controls (-1.25,-.25) and (1.25,-.25) .. (2,2) ;
\draw[thick,purple,->] (1,2) .. controls (.75,.5) and (-.75,.5) .. (-1,2) ;
\end{tikzpicture}
\stackrel{{\eqref{tauconditions},\eqref{eq:ourfns}}}{=}
\begin{tikzpicture}[anchorbase,scale=.25]
\draw[thick,green,->] (-3,-1) to (-3,2) ;
\draw[thick,green,->] (3,2) to (3,-1) ;
\draw[thick,green,->] (-2,2) .. controls (-1.25,-.25) and (1.25,-.25) .. (2,2) ;
\draw[thick,purple,->] (1,2) .. controls (.75,.5) and (-.75,.5) .. (-1,2) ;
\end{tikzpicture}
\]
we have
\begin{align*}
\pr\circ (\tauast \inc) &= 
\begin{tikzpicture}[anchorbase,scale=.2]
\draw[thick,green] (-3,-4) to (3,-1.5) ;
\draw[thick,green] (3,-4) to (-3,-1.5) ;
\draw[thick,green] (-3,-1.5) to (-3,1) ;
\draw[thick,green] (3,-1.5) to (3,1) ;
\draw[thick,green] (-3,1) to (-2,2) ;
\draw[thick,green] (3,1) to (2,2) ;
\draw[thick,green] (2,1) to (3,2) ;
\draw[thick,green] (-2,1) to (-3,2) ;
\draw[thick,green] (-3,2) to (-3,4) ;
\draw[thick,green] (3,2) to (3,4) ;
\draw[thick, green,<-] (2,2) arc (0:180:2);
\draw[thick, green,<-] (2,1) arc (0:-180:2);
\draw[thick,purple] (1,1) to (-1,2) ;
\draw[thick,purple] (-1,1) to (1,2) ;
\draw[thick, purple,->] (1,2) arc (0:180:1);
\draw[thick, purple] (1,1) arc (0:-180:1);
\end{tikzpicture}
\stackrel{\eqref{eq:curl1}, \eqref{realbubdef}}{=}
 -\sum_{\substack{p+q=0 \\ p\ge 0}}
 \begin{tikzpicture}[anchorbase,scale=.2]
\draw[thick,green] (-3,-4) to (3,-1.5) ;
\draw[thick,green] (3,-4) to (-3,-1.5) ;
\draw[thick,green] (-3,-1.5) to (-3,1) ;
\draw[thick,green] (3,-1.5) to (3,1) ;
\draw[thick,green] (-3,1) to (-2,2) ;
\draw[thick,green] (3,1) to (2,2) ;
\draw[thick,green] (2,1) to (3,2) ;
\draw[thick,green] (-2,1) to (-3,2) ;
\draw[thick,green] (-3,2) to (-3,4) ;
\draw[thick,green] (3,2) to (3,4) ;
\draw[thick, green,<-] (2,2) arc (0:180:2);
\draw[thick, green,<-] (2,1) arc (0:-180:2);
\draw[thick, purple,<-] (.75,.5) arc (0:360:.75);
\draw[thick, purple,<-] (.75,2.5) arc (0:-360:.75);
\filldraw [black] (-.75,.5) circle (5pt) node[anchor=east]{\tiny{$q{+}\spadesuit$}} ;
\filldraw [black] (-.75,2.5) circle (5pt) node[anchor=east]{\scs{$p$}} ;
\end{tikzpicture}
\stackrel{\eqref{newbub}}{=}
-(-1)^{n-1}
 \begin{tikzpicture}[anchorbase,scale=.2]
\draw[thick,green] (-3,-4) to (3,-1.5) ;
\draw[thick,green] (3,-4) to (-3,-1.5) ;
\draw[thick,green] (-3,-1.5) to (-3,1) ;
\draw[thick,green] (3,-1.5) to (3,1) ;
\draw[thick,green] (-3,1) to (-2,2) ;
\draw[thick,green] (3,1) to (2,2) ;
\draw[thick,green] (2,1) to (3,2) ;
\draw[thick,green] (-2,1) to (-3,2) ;
\draw[thick,green] (-3,2) to (-3,4) ;
\draw[thick,green] (3,2) to (3,4) ;
\draw[thick, green,<-] (2,2) arc (0:180:2);
\draw[thick, green,<-] (2,1) arc (0:-180:2);
\draw[thick, purple,->] (1,1.5) arc (0:360:1);
\end{tikzpicture} \\
&\stackrel{\eqref{eq:mixedef}}{=}
(-1)^n
 \begin{tikzpicture}[anchorbase,scale=.2]
\draw[thick,green] (-3,-4) to (3,-1.5) ;
\draw[thick,green] (3,-4) to (-3,-1.5) ;
\draw[thick,green] (-3,-1.5) to (-3,1) ;
\draw[thick,green] (3,-1.5) to (3,1) ;
\draw[thick,green] (-3,1) to (-2,2) ;
\draw[thick,green] (3,1) to (2,2) ;
\draw[thick,green] (2,1) to (3,2) ;
\draw[thick,green] (-2,1) to (-3,2) ;
\draw[thick,green] (-3,2) to (-3,4) ;
\draw[thick,green] (3,2) to (3,4) ;
\draw[thick, green,<-] (2,2) arc (0:180:2);
\draw[thick, green,<-] (2,1) arc (0:-180:2);
\draw[thick, purple,->] (1,3.5) arc (0:360:1);
\end{tikzpicture}
\stackrel{\eqref{eq:quadKLR}}{=}
(-1)^n\epsilon_{ji}
 \begin{tikzpicture}[anchorbase,scale=.2]
\draw[thick,green] (-3,-4) to (3,-1.5) ;
\draw[thick,green] (3,-4) to (-3,-1.5) ;
\draw[thick,green] (-3,-1.5) to (-3,1) ;
\draw[thick,green] (3,-1.5) to (3,1) ;
\draw[thick,green] (-3,1) to (-2,2) ;
\draw[thick,green] (3,1) to (2,2) ;
\draw[thick,green] (2,1) to (3,2) ;
\draw[thick,green] (-2,1) to (-3,2) ;
\draw[thick,green] (-3,2) to (-3,4) ;
\draw[thick,green] (3,2) to (3,4) ;
\draw[thick, green,<-] (2,2) to (-2,2) ;
\draw[thick, green,<-] (2,1) arc (0:-180:2);
\draw[thick, purple,->] (1,3.5) arc (0:360:1);
\filldraw [black] (0,2.5) circle (5pt) ;
\end{tikzpicture}
-(-1)^n\epsilon_{ji}
 \begin{tikzpicture}[anchorbase,scale=.2]
\draw[thick,green] (-3,-4) to (3,-1.5) ;
\draw[thick,green] (3,-4) to (-3,-1.5) ;
\draw[thick,green] (-3,-1.5) to (-3,1) ;
\draw[thick,green] (3,-1.5) to (3,1) ;
\draw[thick,green] (-3,1) to (-2,2) ;
\draw[thick,green] (3,1) to (2,2) ;
\draw[thick,green] (2,1) to (3,2) ;
\draw[thick,green] (-2,1) to (-3,2) ;
\draw[thick,green] (-3,2) to (-3,4) ;
\draw[thick,green] (3,2) to (3,4) ;
\draw[thick, green,<-] (2,2) to (-2,2) ;
\draw[thick, green,<-] (2,1) arc (0:-180:2);
\draw[thick, purple,->] (1,3.5) arc (0:360:1);
\filldraw [black] (0,2) circle (5pt) ;
\end{tikzpicture} \\
&\stackrel{\eqref{eq:quadKLR},\eqref{dotslide}}{=}
(-1)^{n+1}\epsilon_{ji}
\begin{tikzpicture}[anchorbase,scale=.2]
\draw[thick,green,<-] (-3,-4) to (3,-1.5) ;
\draw[thick,green] (3,-4) to (-3,-1.5) ;
\draw[thick,green] (-3,-1.5) to (3,2) ;
\draw[thick,green] (3,-1.5) to (-3,2) ;
\draw[thick,green] (-3,2) to (-3,4) ;
\draw[thick,green,->] (3,2) to (3,4) ;
\draw[thick, purple,->] (1,3.5) arc (0:360:1);
\end{tikzpicture}
\stackrel{\eqref{extendedreln1},\eqref{newbub}}{=}
(-1)^{n+1}\epsilon_{ji}\varphi_{ji}
\begin{tikzpicture}[anchorbase,scale=.2]
\draw[thick,green,->] (-3,-4) to (-3,4) ;
\draw[thick,green,<-] (3,-4) to (3,4) ;
\end{tikzpicture} = -\id_{\FE_i} \, .
\end{align*}
It follows from Lemma \ref{L:Ben's-linear-algebra-lemma} that $\tauast \inc = \inc$.
\end{proof}

\begin{lem}
	\label{lem:prinplusminus}
The maps 
\[
\pr_{-1}:=
\begin{tikzpicture}[anchorbase,scale=.2]
\draw[thick,green] (3,-3) to (2,1) ;
\draw[thick,green,<-] (-3,-3) to (-2,1) ;
\draw[thick,green,<-] (2,-3) to (-2,1) ;
\draw[thick,green] (-2,-3) to (2,1) ;
\draw[ultra thick,green,->] (2,1) to (2,3) ;
\draw[ultra thick,green] (-2,1) to (-2,3) ;
\draw[thick, purple,->] (1,-3) arc (0:180:1);
\end{tikzpicture}
\quad \text{and} \quad 
\pr_{+1} :=
\begin{tikzpicture}[anchorbase,scale=.2]
\draw[thick,green] (3,-3) to (2,1) ;
\draw[thick,green,<-] (-3,-3) to (-2,1) ;
\draw[thick,green,<-] (2,-3) to (-2,1) ;
\draw[thick,green] (-2,-3) to (2,1) ;
\draw[ultra thick,green,->] (2,1) to (2,3) ;
\draw[ultra thick,green] (-2,1) to (-2,3) ;
\draw[thick, purple,->] (1,-3) arc (0:180:1) ;
\node at (0,-2) {\scs$\bullet$};
\end{tikzpicture}
\]
descend to a basis for $V^{-1}(\FE_i\FE_j\FE_i, \FE_i^{(2)})$ 
and $V^{+1}(\FE_i\FE_j\FE_i, \FE^{(2)})$, respectively. 
Flipping each of $\pr_{-1}$ and $\pr_{+1}$ upside down and reversing orientation
yields maps $\inc_{-1}$ and $\inc_{+1}$ that descend to a basis for 
$V^{-1}(\FE_i^{(2)}, \FE_i\FE_j\FE_i)$ and $V^{+1}(\FE^{(2)}, \FE_i\FE_j\FE_i)$, respectively. 
\end{lem}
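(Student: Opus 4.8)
The plan is to apply Lemma \ref{lem:basisforV} to the decomposition \eqref{eq:X1X2X1decomposition} established in Lemma \ref{lem:X1X2X1decomposition}. That lemma exhibits $\FE_i^{(2)}$ as a summand of $\FE_i\FE_j\FE_i$ with graded multiplicity $q+q^{-1}=[2]$; equivalently, $V(\FE_i\FE_j\FE_i,\FE_i^{(2)})$ is free over $\K$ of graded rank $[2]$, with a one-dimensional piece in degree $+1$ and a one-dimensional piece in degree $-1$. Thus it suffices to produce, for each sign $\epsilon \in \{+1,-1\}$, a single morphism $\pr_{\epsilon} \colon \FE_i\FE_j\FE_i \to \FE_i^{(2)}$ of degree $\epsilon$ that is not in the left kernel of the graded composition pairing $\LIP^{\epsilon}_{\FE_i^{(2)},\FE_i\FE_j\FE_i}$; non-degeneracy and a rank count (or Corollary \ref{cor:basisforVwhensumofX}, given that $\FE_i^{(2)}$ is endopositive by Proposition \ref{prop:eX} or Remark \ref{rem:FiEiep}) then force it to descend to a basis of the appropriate one-dimensional graded piece.

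First I would verify that $\pr_{-1}$ and $\pr_{+1}$ have the claimed degrees. Using \eqref{eq:CQGgens} together with Convention \ref{Conv:thickcapscups} and \eqref{eqn:thickcapdefn} to compute degrees of the constituent caps, cups, crossings, and the (purple–green–green) diagrammatic pieces, one checks that the diagram underlying $\pr_{-1}$ has degree $-1$ and that adding one dot (as in $\pr_{+1}$) raises the degree by $2$ to give $+1$. Next I would exhibit candidate inclusion maps $\inc_{-1}$ and $\inc_{+1}$, obtained by flipping $\pr_{-1}$ and $\pr_{+1}$ upside down and reversing all orientations, and compute the $2\times 2$ pairing. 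Actually, since the two graded pieces sit in different degrees, the only pairings that can be nonzero are $\pr_{-1}\circ\inc_{-1}$ (degree $0$) and $\pr_{+1}\circ\inc_{-1}$ together with $\pr_{-1}\circ\inc_{+1}$ — wait, $\pr_{+1}\circ\inc_{-1}$ has degree $0$ as well, as does $\pr_{-1}\circ\inc_{+1}$, while $\pr_{+1}\circ\inc_{+1}$ has degree $+2$ and hence lands in $\End^{2}(\FE_i^{(2)})$, which is killed in the graded composition pairing since $\FE_i^{(2)}$ is endopositive. So the relevant computations are: $\pr_{-1}\circ\inc_{-1}$, $\pr_{-1}\circ\inc_{+1}$, and $\pr_{+1}\circ\inc_{-1}$, each landing in $\End^{0}(\FE_i^{(2)})=\K\cdot\id$. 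The core task is to show $\pr_{-1}\circ\inc_{-1}=\pm\id_{\FE_i^{(2)}}$ (up to an invertible scalar in $\K$), which certifies $\pr_{-1}$ and $\inc_{-1}$ as dual bases of the degree $-1$ pieces; a similar computation involving $\pr_{+1}$ and $\inc_{-1}$ (or the appropriate pairing against a suitably chosen degree $-1$ inclusion, together with \eqref{eqn:sgnfromdemazure} to absorb the extra dot) certifies the degree $+1$ pieces.

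The diagrammatic reductions I would carry out are of the same flavor as those in the proof of Lemma \ref{L:tau-star-Serre1}: close up the composition into a diagram with some closed components, use the mixed $\EE,\FF$ relations \eqref{eq:mixedef} and \eqref{eq:sl3relns-for-serre1} to pull the purple strand away from the green strands, use \eqref{eq:quadKLR}, the curl relation \eqref{eq:curl1}, and bubble slides \eqref{eq:bubslide1} to reduce green curls and bubbles, and finally evaluate the remaining degree-zero bubbles via \eqref{realbubdef} and \eqref{newbub}. Because we are now working with thick (thickness $2$) strands on the target side, I would additionally need \eqref{eq:MStoHT}, \eqref{eq:explode}, and the thick (co)associativity and fork-slide relations \eqref{eq:assoc}, \eqref{eq:otherthick}, \eqref{eq:bypass} to resolve the thick cap/cup appearing in $\pr_{\pm 1}$, and equation \eqref{eqn:sgnfromdemazure} to handle the dot in $\pr_{+1}$. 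The main obstacle I anticipate is purely bookkeeping: tracking the signs (powers of $-1$ depending on the ambient weight $\wtn$, the parity of $n$, and the binomial coefficients $\binom{k}{2}$ introduced by thick cups/caps) through this sequence of reductions, and confirming that the net scalar on each relevant pairing is a \emph{unit} in $\K$ rather than merely nonzero in characteristic zero — though since $\K$ is an integral domain with $2$ invertible and the scalars appearing will be $\pm 1$, this should come out cleanly. An alternative, slightly more economical route would be to avoid the thick-calculus manipulations entirely by instead invoking Lemma \ref{lem:basisforV} applied to the explicit projection maps for the summand $\FE_i^{(2)}$ that arise by tracing through the chain of isomorphisms in the proof of Lemma \ref{lem:X1X2X1decomposition} (exactly as was done for $\FE_i$ in Lemma \ref{lem:prin}); one would then only need to check that the resulting projection maps agree, up to sign, with $\pr_{-1}$ and $\pr_{+1}$, which is a finite diagrammatic identity. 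I would likely present whichever of the two is shorter to typeset.
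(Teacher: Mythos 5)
Your main approach contains a degree-bookkeeping error that would send the computation off a cliff. You claim that $\pr_{-1}\circ\inc_{-1}$ has degree $0$ and declare "the core task is to show $\pr_{-1}\circ\inc_{-1}=\pm\id_{\FE_i^{(2)}}$." But $\pr_{-1}$ is a degree $-1$ map $\FE_i\FE_j\FE_i\to\FE_i^{(2)}$ and $\inc_{-1}$ is a degree $-1$ map $\FE_i^{(2)}\to\FE_i\FE_j\FE_i$, so $\pr_{-1}\circ\inc_{-1}\in\End^{-2}(\FE_i^{(2)})$, which is zero by endopositivity. Nothing is learned from it. The graded composition pairing by construction pairs $V^{-k}(Y,X)$ against $V^k(X,Y)$; since $\pr_{-1}$ represents a class in $V^{-1}(\FE_i\FE_j\FE_i,\FE_i^{(2)})$, its nondegeneracy partner is $\inc_{+1}\in V^{+1}(\FE_i^{(2)},\FE_i\FE_j\FE_i)$, and dually $\pr_{+1}$ must be paired against $\inc_{-1}$. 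These mixed-sign compositions are exactly the degree-zero ones, and it is precisely $\pr_{+1}\circ\inc_{-1}=(-1)^n\id$ and $\pr_{-1}\circ\inc_{+1}=(-1)^n\id$ that the paper establishes (in the proof of Lemma \ref{L:tau-star-Serre2}) and which the paper itself notes would suffice for the $\inc_{\pm1}$ half of the lemma.

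The "alternative route" you sketch at the end --- tracing the projection maps through the chain of isomorphisms in the proof of Lemma \ref{lem:X1X2X1decomposition}, applying Lemma \ref{lem:basisforV}, and then simplifying those explicit diagrams (via \eqref{dotslide}, \eqref{eq:cubicKLR}, \eqref{eq:quadKLR}) to recognize them as $\pr_{-1}$ and $\pr_{+1}$ --- is in fact exactly what the paper does, and it is the correct and shorter route for the projection half. If you repair the degree confusion above and use the mixed pairings $\pr_{\pm1}\circ\inc_{\mp1}$, your pairing-based argument also works, and the two methods together account precisely for both halves of the lemma as written.
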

\begin{proof}
We again use Lemma \ref{lem:basisforV}. 
The projections realized by the isomorphisms 
in the proof of Lemma \ref{lem:X1X2X1decomposition} are
\[
\begin{tikzpicture}[anchorbase,scale=.2]
\draw[thick,green] (3,-3) to (2,1) ;
\draw[thick,green,<-] (-3,-3) to (-2,1) ;
\draw[thick,green,<-] (2,-3) to (-2,1) ;
\draw[thick,green] (-2,-3) to (2,1) ;
\draw[ultra thick,green,->] (2,1) to (2,3) ;
\draw[ultra thick,green] (-2,1) to (-2,3) ;
\draw[thick,purple] (1,-3) to (2,-1) ;
\draw[thick,purple,<-] (-1,-3) to (-2,-1) ;
\draw[thick, purple] (2,-1) arc (0:180:2);
\end{tikzpicture}
\quad \text{and} \quad 
\begin{tikzpicture}[anchorbase,scale=.2]
\draw[thick,green] (3,-3) to (2,1) ;
\draw[thick,green,<-] (-3,-3) to (-2,1) ;
\draw[thick,green,<-] (2,-3) to (-2,1) ;
\draw[thick,green] (-2,-3) to (2,1) ;
\draw[ultra thick,green,->] (2,1) to (2,3) ;
\draw[ultra thick,green] (-2,1) to (-2,3) ;
\draw[thick,purple] (1,-3) to (2,-1) ;
\draw[thick,purple,<-] (-1,-3) to (-2,-1) ;
\draw[thick, purple] (2,-1) arc (0:180:2);
\node at (0,1) {\scs$\bullet$};
\end{tikzpicture} \quad .
\]
Applying \eqref{dotslide} to the second diagram, 
and then using \eqref{eq:cubicKLR} and \eqref{eq:quadKLR}, 
we obtain the indicated maps. 
The argument for $\inc_{\pm1}$ is similar, 
or follows from the computation that 
$\pr_{\pm1}\circ \inc_{\mp1}= (-1)^n\id_{\FE^{(2)}}$ 
given in the proof of Lemma \ref{L:tau-star-Serre2} below.
\end{proof}

\begin{lem}\label{L:tau-star-Serre2}
The equalities
$\tauast \inc_{\pm 1}=(-1)^n\inc_{\pm 1}$
hold in $V^{+1}(\FE^{(2)}, \FE_i\FE_j\FE_i)$. 
\end{lem}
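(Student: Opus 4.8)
The plan is to compute the action of $\tauast$ on $V^{\pm 1}(\FE^{(2)}, \FE_i\FE_j\FE_i)$ via Lemma \ref{L:Ben's-linear-algebra-lemma}, taking $X = \FE^{(2)}$ with equivariant structure $\swcl_2$ and $Y = \FE_i\FE_j\FE_i$ with equivariant structure $\swcl_1 \otimes \swclp_1 \otimes \swcl_1$. By Lemma \ref{lem:prinplusminus}, $V^{+1}(\FE^{(2)}, \FE_i\FE_j\FE_i)$ and $V^{-1}(\FE_i\FE_j\FE_i, \FE^{(2)})$ are each free of rank one over $\K$, spanned by the images of $\inc_{+1}$ and $\pr_{-1}$ respectively (and symmetrically with $\inc_{-1}$, $\pr_{+1}$). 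Using the identification $\End^0(\FE^{(2)}) = \K \cdot \id_{\FE^{(2)}}$, the pairing ``matrix'' of Lemma \ref{L:Ben's-linear-algebra-lemma} is the scalar $C = \pr_{\mp 1} \circ \inc_{\pm 1}$, and the action of $\tauast$ on $V^{\pm 1}(\FE^{(2)}, \FE_i\FE_j\FE_i)$ is the scalar $C^{-1} \cdot \big(\pr_{\mp 1} \circ (\tauast \inc_{\pm 1})\big)$.

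First I would record the values $\pr_{\mp 1} \circ \inc_{\pm 1}$. This is a direct diagrammatic calculation in $\cUU_q(\glm)$: stack the two ladder diagrams for $\pr_{\mp 1}$ and $\inc_{\pm 1}$, resolve the resulting $j$-colored bubble using \eqref{eq:quadKLR} and \eqref{dotslide}, collapse the green thickness-$2$ strands using the (co)associativity relations \eqref{eq:assoc} and the $k = \ell$ Sto\v{s}i\'{c} identity \eqref{eq:easyStosic}, and evaluate the leftover degree-zero bubbles via \eqref{newbub}. I expect both compositions to equal $(-1)^n \id_{\FE^{(2)}}$, as is implicit in the proof of Lemma \ref{lem:prinplusminus}; this pins down $C = (-1)^n$.

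The substantive computation is $\pr_{\mp 1} \circ (\tauast \inc_{\pm 1})$. Here I would expand $\tauast \inc_{\pm 1} = \tau(\swcl_1 \otimes \swclp_1 \otimes \swcl_1) \circ \tau(\inc_{\pm 1}) \circ \swcl_2$ using \eqref{eq:sigastdef}, then apply Corollary \ref{cor:ourthicktau} and Proposition \ref{prop:tauactsonthicker} to evaluate $\tau$ on the thickness-$2$ green cups/caps, the thin purple cup/cap, and (in the $(+1)$ case) the single dot, recording the weight-dependent signs $(-1)^{k a_{\grb} \pm \binom{k}{2}}$ and $(-1)^{k a_{\grb} + \binom{k+1}{2}}$ with the far-right region carrying weight $\wtn$. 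The resulting diagram is then simplified exactly in the style of the proof of Lemma \ref{L:tau-star-Serre1}: resolve curls with \eqref{eq:curl1} and the thick curl \eqref{eq:thickcurl}, slide bubbles past strands using \eqref{eq:bubslide1}, rewrite bubbles via \eqref{newbub}, commute mixed crossings with \eqref{eq:mixedef}, and collapse the remaining thin configurations with \eqref{eq:quadKLR} and the extended $\sln[2]$ relation \eqref{extendedreln1}. I expect the final answer to be $\pr_{\mp 1} \circ (\tauast \inc_{\pm 1}) = (-1)^n \, \big(\pr_{\mp 1} \circ \inc_{\pm 1}\big)$, whence $\tauast \inc_{\pm 1} = (-1)^n \inc_{\pm 1}$ by Lemma \ref{L:Ben's-linear-algebra-lemma}.

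The main obstacle will be the sign bookkeeping. The $\tau$-signs on thick cups and caps from Corollary \ref{cor:ourthicktau} depend on $\langle \alpha_i^\vee, \wt \rangle$ and $\langle \alpha_j^\vee, \wt \rangle$ in the ambient region, and these weights vary between the regions of the diagrams defining $\inc_{\pm 1}$; one must combine them correctly with the $\varphi_{ji}$, $\epsilon_{ij}$, and $(-1)^n$ factors produced when resolving the $j$-colored bubbles, and with the $(-1)$ coming from the dot in the $(+1)$ case. A secondary point is ensuring that every term appearing along the way which is not a $\K$-multiple of $\id_{\FE^{(2)}}$ --- in particular anything factoring through $\FE_i^{(\ell)}$ with $\ell \neq 2$ or through the summands $A_i$, $B_i$ of \eqref{eq:X1X2X1decomposition} --- is correctly discarded; this is justified by Lemmata \ref{lem:kernelisjacobson} and \ref{lem:btob'inker}, since the summands in \eqref{eq:X1X2X1decomposition} form an endopositive family by Lemma \ref{lem:X1X2X1decomposition}.
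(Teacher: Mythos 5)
Your proposal matches the paper's proof: both invoke Lemma \ref{L:Ben's-linear-algebra-lemma} with the same dual pair $\pr_{\mp1}$, $\inc_{\pm1}$, first verify $\pr_{\mp 1}\circ\inc_{\pm1}=(-1)^n\id_{\FE^{(2)}}$, then compute $\pr_{\mp1}\circ(\tauast\inc_{\pm1})=\id_{\FE^{(2)}}$ via exactly the diagrammatic relations you list (curl, bubble slide, \eqref{newbub}, \eqref{eq:mixedef}, \eqref{eq:quadKLR}, \eqref{extendedreln1}--\eqref{extendedreln2}), discarding kernel terms by Lemmata \ref{lem:kernelisjacobson} and \ref{lem:btob'inker}. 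The paper likewise observes that the $(+1)$ case differs from the $(-1)$ case only by the extra $-1$ contributed by $\tau$ acting on the dot, so the two cases are handled by essentially one computation; the outline you give would reproduce the paper's argument once the diagrammatics are carried out.
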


\begin{proof}
We begin by computing
\begin{align*}
\pr_{+1}\circ \inc_{-1}&= 
\begin{tikzpicture}[anchorbase,scale=.2]
\draw[thick,green] (4,0) to (2,4) ;
\draw[thick,green] (-4,0) to (-2,4) ;
\draw[thick,green] (2,0) to (-2,4) ;
\draw[thick,green] (-2,0) to (2,4) ;
\draw[ultra thick,green,->] (2,4) to (2,6) ;
\draw[ultra thick,green] (-2,4) to (-2,6) ;
\draw[thick, purple, ->] (1,0) arc (0:360:1);
\node at (0,1) {\scs$\bullet$};
\draw[thick,green] (4,0) to (2,-4) ;
\draw[thick,green] (-4,0) to (-2,-4) ;
\draw[thick,green] (2,0) to (-2,-4) ;
\draw[thick,green] (-2,0) to (2,-4) ;
\draw[ultra thick,green] (2,-4) to (2,-6) ;
\draw[ultra thick,green,->] (-2,-4) to (-2,-6) ;
\end{tikzpicture}
\stackrel{\eqref{eq:mixedef}, \eqref{dotslide}}{=}
\begin{tikzpicture}[anchorbase,scale=.2]
\draw[thick,green] (4,0) to (2,4) ;
\draw[thick,green] (-4,0) to (-2,4) ;
\draw[thick,green] (2,0) to (-2,4) ;
\draw[thick,green] (-2,0) to (2,4) ;
\draw[ultra thick,green,->] (2,4) to (2,6) ;
\draw[ultra thick,green] (-2,4) to (-2,6) ;
\draw[thick, purple, ->] (-1,0) arc (0:360:1);
\filldraw [black] (-2,-1) circle (6pt) ;
\draw[thick,green] (4,0) to (2,-4) ;
\draw[thick,green] (-4,0) to (-2,-4) ;
\draw[thick,green] (2,0) to (-2,-4) ;
\draw[thick,green] (-2,0) to (2,-4) ;
\draw[ultra thick,green] (2,-4) to (2,-6) ;
\draw[ultra thick,green,->] (-2,-4) to (-2,-6) ;
\end{tikzpicture}
\stackrel{\eqref{eq:quadKLR}}{=}
\epsilon_{ji}
\begin{tikzpicture}[anchorbase,scale=.2]
\draw[thick,green] (4,0) to (2,4) ;
\draw[thick,green] (-4,0) to (-2,4) ;
\draw[thick,green] (2,0) to (-2,4) ;
\draw[thick,green] (0,0) to (2,4) ;
\draw[ultra thick,green,->] (2,4) to (2,6) ;
\draw[ultra thick,green] (-2,4) to (-2,6) ;
\draw[thick, purple, ->] (-1,0) arc (0:360:1);
\filldraw [black] (-2,-1) circle (6pt) ;
\draw[thick,green] (4,0) to (2,-4) ;
\draw[thick,green] (-4,0) to (-2,-4) ;
\draw[thick,green] (2,0) to (-2,-4) ;
\draw[thick,green] (0,0) to (2,-4) ;
\draw[ultra thick,green] (2,-4) to (2,-6) ;
\draw[ultra thick,green,->] (-2,-4) to (-2,-6) ;
\filldraw [black] (-2,1) circle (6pt) ;
\end{tikzpicture}
-\epsilon_{ji}
\begin{tikzpicture}[anchorbase,scale=.2]
\draw[thick,green] (4,0) to (2,4) ;
\draw[thick,green] (-4,0) to (-2,4) ;
\draw[thick,green] (2,0) to (-2,4) ;
\draw[thick,green] (0,0) to (2,4) ;
\draw[ultra thick,green,->] (2,4) to (2,6) ;
\draw[ultra thick,green] (-2,4) to (-2,6) ;
\draw[thick, purple, ->] (-1,0) arc (0:360:1);
\filldraw [black] (-2,-1) circle (6pt) ;
\draw[thick,green] (4,0) to (2,-4) ;
\draw[thick,green] (-4,0) to (-2,-4) ;
\draw[thick,green] (2,0) to (-2,-4) ;
\draw[thick,green] (0,0) to (2,-4) ;
\draw[ultra thick,green] (2,-4) to (2,-6) ;
\draw[ultra thick,green,->] (-2,-4) to (-2,-6) ;
\filldraw [black] (0,0) circle (6pt) ;
\end{tikzpicture} \\
&\stackrel{\eqref{extendedreln2}, \eqref{dotslide}}{=}
\epsilon_{ji}
\begin{tikzpicture}[anchorbase,scale=.2]
\draw[thick,green] (5,0) to (3,4) ;
\draw[thick,green] (-5,0) to (-3,4) ;
\draw[thick,green] (-1,0) to (-3,4) ;
\draw[thick,green] (1,0) to (3,4) ;
\draw[ultra thick,green,->] (3,4) to (3,6) ;
\draw[ultra thick,green] (-3,4) to (-3,6) ;
\draw[thick, purple, ->] (-2,0) arc (0:360:1);
\filldraw [black] (-3,-1) circle (6pt) ;
\filldraw [black] (-3,1) circle (6pt) ;
\draw[thick,green] (5,0) to (3,-4) ;
\draw[thick,green] (-5,0) to (-3,-4) ;
\draw[thick,green] (-1,0) to (-3,-4) ;
\draw[thick,green] (1,0) to (3,-4) ;
\draw[ultra thick,green] (3,-4) to (3,-6) ;
\draw[ultra thick,green,->] (-3,-4) to (-3,-6) ;
\end{tikzpicture}
-\epsilon_{ji}
\begin{tikzpicture}[anchorbase,scale=.2]
\draw[thick,green] (5,0) to (3,4) ;
\draw[thick,green] (-5,0) to (-3,4) ;
\draw[thick,green] (-1,0) to (-3,4) ;
\draw[thick,green] (1,0) to (3,4) ;
\draw[ultra thick,green,->] (3,4) to (3,6) ;
\draw[ultra thick,green] (-3,4) to (-3,6) ;
\draw[thick, purple, ->] (-2,0) arc (0:360:1);
\filldraw [black] (-3,-1) circle (6pt) ;
\draw[thick,green] (5,0) to (3,-4) ;
\draw[thick,green] (-5,0) to (-3,-4) ;
\draw[thick,green] (-1,0) to (-3,-4) ;
\draw[thick,green] (1,0) to (3,-4) ;
\draw[ultra thick,green] (3,-4) to (3,-6) ;
\draw[ultra thick,green,->] (-3,-4) to (-3,-6) ;
\filldraw [black] (1,0) circle (6pt) ;
\end{tikzpicture}
+\epsilon_{ji}
\begin{tikzpicture}[anchorbase,scale=.2]
\draw[thick,green] (5,0) to (3,4) ;
\draw[thick,green] (-5,0) to (-3,4) ;
\draw[ultra thick,green,->] (3,4) to (3,6) ;
\draw[ultra thick,green] (-3,4) to (-3,6) ;
\draw[thick, purple, ->] (-2,0) arc (0:360:1);
\filldraw [black] (-3,-1) circle (6pt) ;
\draw[thick,green] (5,0) to (3,-4) ;
\draw[thick,green] (-5,0) to (-3,-4) ;
\draw[ultra thick,green] (3,-4) to (3,-6) ;
\draw[ultra thick,green,->] (-3,-4) to (-3,-6) ;
\draw[thick,green] (-3,4) to (3,4) ;
\draw[thick,green] (-3,-4) to (1,-1) ;
\draw[thick,green] (3,-4) to (-1,-1) ;
\draw[thick, green, <-] (1,-1) arc (0:180:1);
\end{tikzpicture} \\
&\stackrel{{\eqref{eq:mixedef}, \eqref{dotslide},\eqref{eq:curl1}}}{=}
-\epsilon_{ji}
\begin{tikzpicture}[anchorbase,scale=.2]
\draw[thick,green] (-5,0) to (-3,4) ;
\draw[thick,green] (-1,0) to (-3,4) ;
\draw[ultra thick,green,->] (3,-6) to (3,6) ;
\draw[ultra thick,green] (-3,4) to (-3,6) ;
\draw[thick, purple, ->] (0,0) arc (0:360:1);
\filldraw [black] (-1,-1) circle (6pt) ;
\draw[thick,green] (-5,0) to (-3,-4) ;
\draw[thick,green] (-1,0) to (-3,-4) ;
\draw[ultra thick,green,->] (-3,-4) to (-3,-6) ;
\end{tikzpicture}
\stackrel{\eqref{eq:quadKLR}}{=}
-\epsilon_{ji}^2
\begin{tikzpicture}[anchorbase,scale=.2]
\draw[thick,green] (-5,0) to (-3,4) ;
\draw[thick,green] (-1,0) to (-3,4) ;
\draw[ultra thick,green,->] (3,-6) to (3,6) ;
\draw[ultra thick,green] (-3,4) to (-3,6) ;
\draw[thick, purple, ->] (2,0) arc (0:360:1);
\filldraw [black] (1,1) circle (6pt) ;
\filldraw [black] (1,-1) circle (6pt) ;
\draw[thick,green] (-5,0) to (-3,-4) ;
\draw[thick,green] (-1,0) to (-3,-4) ;
\draw[ultra thick,green,->] (-3,-4) to (-3,-6) ;
\end{tikzpicture}
+\epsilon_{ji}^2
\begin{tikzpicture}[anchorbase,scale=.2]
\draw[thick,green] (-5,0) to (-3,4) ;
\draw[thick,green] (-1,0) to (-3,4) ;
\draw[ultra thick,green,->] (3,-6) to (3,6) ;
\draw[ultra thick,green] (-3,4) to (-3,6) ;
\draw[thick, purple, ->] (2,0) arc (0:360:1);
\draw[thick,green] (-5,0) to (-3,-4) ;
\draw[thick,green] (-1,0) to (-3,-4) ;
\draw[ultra thick,green,->] (-3,-4) to (-3,-6) ;
\filldraw [black] (1,-1) circle (6pt) ;
\filldraw [black] (-1,0) circle (6pt) ;
\end{tikzpicture} \\
&\stackrel{\eqref{eq:quadKLR}}{=}
\begin{tikzpicture}[anchorbase,scale=.2]
\draw[ultra thick,green,->] (3,-6) to (3,6) ;
\draw[ultra thick,green,<-] (-3,-6) to (-3,6) ;
\draw[thick, purple, ->] (1,0) arc (0:360:1);
\filldraw [black] (0,-1) circle (6pt) ;
\end{tikzpicture}
\stackrel{\eqref{newbub}}{=}(-1)^n\id_{\FE^{(2)}}.
\end{align*} 
Note, for use below,
that the same computation shows that $\pr_{-1}\circ \inc_{+1}= (-1)^n\id_{\FE^{(2)}}$. 

It follows from equations \eqref{eq:inv} and \eqref{eq:tauonMS} and Theorem \ref{thm:involution} 
that $\tau(\inc_{-1})$ is obtained from $\inc_{-1}$ by reversing orientation and multiplying by $+1$.
Now, we compute 
\begin{align*}
\pr_{+1}\circ (\tauast \inc_{-1}) &= 
\begin{tikzpicture}[anchorbase,scale=.2]
\draw[thick,green] (-3,-5) to (5,1) ;
\draw[thick,green] (3,-5) to (-5,1) ;
\draw[thick,green] (-3,-5) to (-5,-1) ;
\draw[thick,green] (3,-5) to (5,-1) ;
\draw[thick,green] (-3,5) to (5,-1) ;
\draw[thick,green] (3,5) to (-5,-1) ;
\draw[thick,green] (-3,5) to (-5,1) ;
\draw[thick,green] (3,5) to (5,1) ;
\draw[thick,purple] (-1,-1) to (1,1) ;
\draw[thick,purple] (-1,1) to (1,-1) ;
\draw[thick, purple, ->] (1,1) arc (0:180:1);
\draw[thick, purple] (1,-1) arc (0:-180:1);
\filldraw [black] (0,2) circle (6pt) ;
\draw[ultra thick,green] (3,-5) to (3,-6) ;
\draw[ultra thick,green,<-] (-3,-5) to (-3,-6) ;
\draw[ultra thick,green,->] (3,5) to (3,6) ;
\draw[ultra thick,green,<-] (-3,5) to (-3,6) ;
\draw[ultra thick,green,->] (3,-6) to (-3,-8) ;
\draw[ultra thick,green] (-3,-6) to (3,-8) ;
\end{tikzpicture}
\stackrel{\eqref{eq:curl1}}{=}
-
\begin{tikzpicture}[anchorbase,scale=.2]
\draw[thick,green] (-3,-5) to (5,1) ;
\draw[thick,green] (3,-5) to (-5,1) ;
\draw[thick,green] (-3,-5) to (-5,-1) ;
\draw[thick,green] (3,-5) to (5,-1) ;
\draw[thick,green] (-3,5) to (5,-1) ;
\draw[thick,green] (3,5) to (-5,-1) ;
\draw[thick,green] (-3,5) to (-5,1) ;
\draw[thick,green] (3,5) to (5,1) ;
\draw[thick, purple, ->] (1,1.2) arc (0:360:1);
\draw[thick, purple,->] (1,-1.2) arc (0:-360:1);
\filldraw [black] (0,2.2) circle (6pt) ;
\filldraw [black] (0,-.2) circle (6pt) node[anchor=east]{\tiny{${\spadesuit+0}$}} ;
\draw[ultra thick,green] (3,-5) to (3,-6) ;
\draw[ultra thick,green,<-] (-3,-5) to (-3,-6) ;
\draw[ultra thick,green,->] (3,5) to (3,6) ;
\draw[ultra thick,green,<-] (-3,5) to (-3,6) ;
\draw[ultra thick,green,->] (3,-6) to (-3,-8) ;
\draw[ultra thick,green] (-3,-6) to (3,-8) ;
\end{tikzpicture}
\stackrel{\eqref{newbub},\eqref{eq:mixedef},\eqref{dotslide}}{=}
-(-1)^{n-1}
\begin{tikzpicture}[anchorbase,scale=.2]
\draw[thick,green] (-3,-5) to (5,1) ;
\draw[thick,green] (3,-5) to (-5,1) ;
\draw[thick,green] (-3,-5) to (-5,-1) ;
\draw[thick,green] (3,-5) to (5,-1) ;
\draw[thick,green] (-3,5) to (5,-1) ;
\draw[thick,green] (3,5) to (-5,-1) ;
\draw[thick,green] (-3,5) to (-5,1) ;
\draw[thick,green] (3,5) to (5,1) ;
\draw[thick, purple, ->] (-1,1) arc (0:360:1);
\filldraw [black] (-2,2) circle (6pt) ;
\draw[ultra thick,green] (3,-5) to (3,-6) ;
\draw[ultra thick,green,<-] (-3,-5) to (-3,-6) ;
\draw[ultra thick,green,->] (3,5) to (3,6) ;
\draw[ultra thick,green,<-] (-3,5) to (-3,6) ;
\draw[ultra thick,green,->] (3,-6) to (-3,-8) ;
\draw[ultra thick,green] (-3,-6) to (3,-8) ;
\end{tikzpicture} \\
&\stackrel{\eqref{eq:quadKLR}}{=}
(-1)^n\epsilon_{ji}
\begin{tikzpicture}[anchorbase,scale=.2]
\draw[thick,green] (-3,-5) to (5,1) ;
\draw[thick,green] (3,-5) to (-5,1) ;
\draw[thick,green] (-3,-5) to (-5,-1) ;
\draw[thick,green] (3,-5) to (5,-1) ;
\draw[thick,green] (-3,5) to (5,-1) ;
\draw[thick,green] (3,5) to (-5,-1) ;
\draw[thick,green] (-3,5) to (-5,1) ;
\draw[thick,green] (3,5) to (5,1) ;
\draw[thick, purple, ->] (-1.5,2.75) arc (0:360:1);
\filldraw [black] (-2.5,3.75) circle (6pt);
\filldraw [black] (-2.5,1.75) circle (6pt);
\draw[ultra thick,green] (3,-5) to (3,-6) ;
\draw[ultra thick,green,<-] (-3,-5) to (-3,-6) ;
\draw[ultra thick,green,->] (3,5) to (3,6) ;
\draw[ultra thick,green,<-] (-3,5) to (-3,6) ;
\draw[ultra thick,green,->] (3,-6) to (-3,-8) ;
\draw[ultra thick,green] (-3,-6) to (3,-8) ;
\end{tikzpicture}
-(-1)^n\epsilon_{ji}
\begin{tikzpicture}[anchorbase,scale=.2]
\draw[thick,green] (-3,-5) to (5,1) ;
\draw[thick,green] (3,-5) to (-5,1) ;
\draw[thick,green] (-3,-5) to (-5,-1) ;
\draw[thick,green] (3,-5) to (5,-1) ;
\draw[thick,green] (-3,5) to (5,-1) ;
\draw[thick,green] (3,5) to (-5,-1) ;
\draw[thick,green] (-3,5) to (-5,1) ;
\draw[thick,green] (3,5) to (5,1) ;
\draw[thick, purple, ->] (-1.5,2.75) arc (0:360:1);
\filldraw [black] (-2.5,3.75) circle (6pt) ;
\draw[ultra thick,green] (3,-5) to (3,-6) ;
\draw[ultra thick,green,<-] (-3,-5) to (-3,-6) ;
\draw[ultra thick,green,->] (3,5) to (3,6) ;
\draw[ultra thick,green,<-] (-3,5) to (-3,6) ;
\draw[ultra thick,green,->] (3,-6) to (-3,-8) ;
\draw[ultra thick,green] (-3,-6) to (3,-8) ;
\filldraw [black] (-1,2) circle (6pt) ;
\end{tikzpicture}
\stackrel{\eqref{eq:otherthick},\eqref{eq:quadKLR},\eqref{dotslide}}{=}
(-1)^{n+1}\epsilon_{ji}
\begin{tikzpicture}[anchorbase,scale=.2]
\draw[thick,green] (3,-5) to (-5,1) ;
\draw[thick,green] (3,-5) to (5,-1) ;
\draw[thick,green] (-3,5) to (5,-1) ;
\draw[thick,green] (-3,5) to (-5,1) ;
\draw[thick, purple, ->] (-1.5,2.75) arc (0:360:1);
\filldraw [black] (-2.5,3.75) circle (6pt) ;
\draw[ultra thick,green] (3,-5) to (3,-6) ;
\draw[ultra thick,green] (-3,-5) to (-3,-6) ;
\draw[ultra thick,green,->] (3,5) to (3,6) ;
\draw[ultra thick,green,<-] (-3,5) to (-3,6) ;
\draw[ultra thick,green,->] (3,-6) to (-3,-8) ;
\draw[ultra thick,green] (-3,-6) to (3,-8) ;
\draw[ultra thick,green] (-3,-5) to (3,5) ;
\end{tikzpicture} \\
&\stackrel{\eqref{eq:otherthick},\eqref{eq:StosicFE}}{=}
(-1)^{n+1}\epsilon_{ji}
\begin{tikzpicture}[anchorbase,scale=.2]
\draw[thick,green] (-3,5) to (-1,1) ;
\draw[thick,green] (-3,5) to (-5,1) ;
\draw[thick,green] (-5,1) to (-3,-3) ;
\draw[thick,green] (-1,1) to (-3,-3) ;
\draw[thick, purple, ->] (-2,1) arc (0:360:1);
\filldraw [black] (-3,2) circle (6pt) ;
\draw[ultra thick,green,->] (3,-6) to (3,6) ;
\draw[ultra thick,green,->] (-3,6) to (-3,5) ;
\draw[ultra thick,green,->] (-3,-3) to (-3,-6) ;
\end{tikzpicture}
\stackrel{\eqref{eq:mixedef},\eqref{dotslide}}{=}
(-1)^{n+1}\epsilon_{ji}
\begin{tikzpicture}[anchorbase,scale=.2]
\draw[thick,green] (-3,5) to (-1,1) ;
\draw[thick,green] (-3,5) to (-5,1) ;
\draw[thick,green] (-5,1) to (-3,-3) ;
\draw[thick,green] (-1,1) to (-3,-3) ;
\draw[thick, purple, ->] (0,1) arc (0:360:1);
\filldraw [black] (-1,2) circle (6pt) ;
\draw[ultra thick,green,->] (3,-6) to (3,6) ;
\draw[ultra thick,green,->] (-3,6) to (-3,5) ;
\draw[ultra thick,green,->] (-3,-3) to (-3,-6) ;
\end{tikzpicture} \\
&\stackrel{\eqref{eq:quadKLR},\eqref{dotslide}}{=}
-(-1)^{n+1}\epsilon_{ji}^2
\begin{tikzpicture}[anchorbase,scale=.2]
\draw[thick, purple, ->] (1,1) arc (0:360:1);
\filldraw [black] (0,2) circle (6pt) ;
\draw[ultra thick,green,->] (3,-6) to (3,6) ;
\draw[ultra thick,green,->] (-3,6) to (-3,-6) ;
\end{tikzpicture}
\stackrel{\eqref{newbub}}{=}
-(-1)^{n+1}(-1)^n
\begin{tikzpicture}[anchorbase,scale=.2]
\draw[ultra thick,green,->] (3,-6) to (3,6) ;
\draw[ultra thick,green,->] (-3,6) to (-3,-6) ;
\end{tikzpicture}
=\id_{\FE^{(2)}} \, .
\end{align*}
Lemma \ref{L:Ben's-linear-algebra-lemma} then implies that $\tauast \inc_{-1} = (-1)^n\inc_{-1}$.

The argument for $\tauast \inc_{+1}$ is similar.
We pair the equality $\pr_{-1}\circ \inc_{+1} = (-1)^n\id_{\FE^{(2)}}$
established above with the computation of $\pr_{-1}\circ (\tauast \inc_{+1})$
and apply Lemma \ref{L:Ben's-linear-algebra-lemma}.
The computation of $\pr_{-1}\circ (\tauast \inc_{+1})$ is exactly the same as for $\pr_{+1}\circ (\tauast \inc_{-1})$ 
except that a $-1$ factor appears when computing $\tau(\inc_{+1})$ due to the presence of the dot, 
and we see
\[
\begin{tikzpicture}[anchorbase,scale=.2]
\draw[thick,purple] (-1,-1) to (1,1) ;
\draw[thick,purple] (-1,1) to (1,-1) ;
\draw[thick, purple, ->] (1,1) arc (0:180:1);
\draw[thick, purple] (1,-1) arc (0:-180:1);
\filldraw [black] (0,-2) circle (6pt) ;
\end{tikzpicture}
\qquad \text{instead of} \qquad 
\begin{tikzpicture}[anchorbase,scale=.2]
\draw[thick,purple] (-1,-1) to (1,1) ;
\draw[thick,purple] (-1,1) to (1,-1) ;
\draw[thick, purple, ->] (1,1) arc (0:180:1);
\draw[thick, purple] (1,-1) arc (0:-180:1);
\filldraw [black] (0,2) circle (6pt) ;
\end{tikzpicture} \, . 
\]
Regardless, we find that $\pr_{-1}\circ (\tauast \inc_{+1}) = \id_{\FE^{(2)}}$
so $\tauast \inc_{+1}=(-1)^n\inc_{+1}$.
\end{proof}

Similar arguments to the proofs of Lemmata \ref{lem:prinplusminus} 
and \ref{lem:prin} give the following.

\begin{lem}\label{lem:FE2FE}
The maps
\[
\pr_{\FE^{(2)}_i\FE_j}:=
\begin{tikzpicture}[anchorbase,scale=.25]
\draw[ultra thick,green] (-1.5,.5) to (-1.5,2);
\draw[thick,green,<-] (-2.5,-2) to [out=90,in=210] (-1.5,.5);
\draw[thick,green,<-] (1.5,-2) to [out=90,in=330] (-1.5,.5);
\draw[ultra thick,green,->] (-.5,.5) to (-.5,2);
\draw[thick,green] (-1.5,-2) to [out=90,in=210] (-.5,.5);
\draw[thick,green] (2.5,-2) to [out=90,in=330] (-.5,.5);
\draw[thick,purple,<-] (-.5,-2) to [out=90,in=270] (.5,2);
\draw[thick,purple,->] (.5,-2) to [out=90,in=270] (1.5,2);
\end{tikzpicture}
\quad \text{and} \quad
\pr_{\FE_j\FE^{(2)}_i}:=
\begin{tikzpicture}[anchorbase,scale=.25,xscale=-1]
\draw[ultra thick,green,->] (-1.5,.5) to (-1.5,2);
\draw[thick,green] (-2.5,-2) to [out=90,in=210] (-1.5,.5);
\draw[thick,green] (1.5,-2) to [out=90,in=330] (-1.5,.5);
\draw[ultra thick,green] (-.5,.5) to (-.5,2);
\draw[thick,green,<-] (-1.5,-2) to [out=90,in=210] (-.5,.5);
\draw[thick,green,<-] (2.5,-2) to [out=90,in=330] (-.5,.5);
\draw[thick,purple,->] (-.5,-2) to [out=90,in=270] (.5,2);
\draw[thick,purple,<-] (.5,-2) to [out=90,in=270] (1.5,2);
\end{tikzpicture}
\]
descend to a bases for 
$V(\FE_i\FE_j\FE_i, \FE^{(2)}_i\FE_j)$ and $V(\FE_i\FE_j\FE_i,\FE_j\FE^{(2)}_i)$. 
Flipping these diagrams upside down and reversing orientation yields maps, 
denoted $\inc_{\FE^{(2)}_i\FE_j}$ and $\inc_{\FE_j\FE^{(2)}_i}$, 
which descends to bases for 
$V(\FE^{(2)}_i\FE_j, \FE_i\FE_j\FE_i)$ and $V(\FE_j\FE^{(2)}_i, \FE_i\FE_j\FE_i)$. \qed
\end{lem}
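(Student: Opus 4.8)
The plan is to mirror the proofs of Lemmata \ref{lem:prin} and \ref{lem:prinplusminus}, appealing to the decomposition of $\FE_i\FE_j\FE_i$ already recorded in Lemma \ref{lem:X1X2X1decomposition} together with the general principle of Lemma \ref{lem:basisforV}. Recall that in \eqref{eq:X1X2X1decomposition} both $\FE_i^{(2)}\FE_j$ and $\FE_j\FE_i^{(2)}$ occur as summands with multiplicity exactly one (i.e.~with graded multiplicity $q^0 = 1$), and that these summands were shown there to belong to an endopositive family. Hence, by Lemma \ref{lem:basisforV}, each of $V(\FE_i\FE_j\FE_i,\FE_i^{(2)}\FE_j)$ and $V(\FE_i\FE_j\FE_i,\FE_j\FE_i^{(2)})$ is a free $\K$-module of rank one, and \emph{any} genuine projection realizing the corresponding summand descends to a basis vector; dually for the inclusion maps. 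So the content of the lemma is purely the identification of these projection/inclusion maps with the explicitly drawn diagrams $\pr_{\FE^{(2)}_i\FE_j}$, $\pr_{\FE_j\FE^{(2)}_i}$, $\inc_{\FE^{(2)}_i\FE_j}$, $\inc_{\FE_j\FE^{(2)}_i}$.

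To carry this out, I would trace through the chain of isomorphisms displayed in the proof of Lemma \ref{lem:X1X2X1decomposition}. The summand $\FE_i^{(2)}\FE_j$ (resp.~$\FE_j\FE_i^{(2)}$) arises from the successive applications of \eqref{eq:sl3relns-for-serre1}, \eqref{eq:sl2relns-for-serre1}, \eqref{eq:sl3relns-for-serre2}, \eqref{eq:sl2relns-for-serre2}, \eqref{eq:sl2relns-for-serre3}, \eqref{eq:sl2relns-for-serre4} and one further application of \eqref{eq:sl3relns-for-serre1}; composing the relevant component projection maps at each stage produces a string diagram which, after repeatedly using the mixed $\EE,\FF$-slide \eqref{eq:mixedef}, the dot-slide \eqref{dotslide}, the cubic KLR relation \eqref{eq:cubicKLR}, and the quadratic KLR relation \eqref{eq:quadKLR} to absorb curls and reorder strands, collapses to exactly $\pr_{\FE^{(2)}_i\FE_j}$ (resp.~$\pr_{\FE_j\FE^{(2)}_i}$). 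This is the same kind of bookkeeping performed for $\pr$ in Lemma \ref{lem:prin} and for $\pr_{\pm 1}$ in Lemma \ref{lem:prinplusminus}; in particular one must keep careful track of the degree-zero bubble scalars $\varphi_{ji}$ and the signs $\epsilon_{ij},\epsilon_{ji}$ produced when splitting dots off crossings.

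For the inclusion maps I would take the parallel route: either trace the inverse isomorphisms through the same chain, or --- more efficiently --- simply verify that $\pr_{\FE^{(2)}_i\FE_j}\circ\inc_{\FE^{(2)}_i\FE_j}$ and $\pr_{\FE_j\FE^{(2)}_i}\circ\inc_{\FE_j\FE^{(2)}_i}$ are invertible scalar multiples of the relevant identity $1$-morphisms (necessarily $\pm\id$ up to a power of $q$, by degree reasons), exactly as $\pr\circ\inc = -\id_{\FE_i}$ and $\pr_{\pm 1}\circ\inc_{\mp 1} = (-1)^n\id_{\FE^{(2)}}$ were checked inside the proofs of Lemmata \ref{L:tau-star-Serre1} and \ref{L:tau-star-Serre2}. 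Since the diagram obtained by flipping $\pr_{\FE^{(2)}_i\FE_j}$ upside down and reversing orientation is an honest morphism into $\FE_i\FE_j\FE_i$, this composite being a nonzero scalar multiple of the identity guarantees that the flipped diagram is an inclusion realizing the corresponding summand, hence descends to a basis of the rank-one module $V(\FE^{(2)}_i\FE_j,\FE_i\FE_j\FE_i)$ (resp.~$V(\FE_j\FE^{(2)}_i,\FE_i\FE_j\FE_i)$).

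The main obstacle is the diagrammatic simplification in the second paragraph: composing the projection components coming from the seven-step isomorphism chain yields a fairly tangled string diagram, and reducing it to the compact forms drawn in the statement requires a careful sequence of forkslides, curl removals, and KLR relations, together with vigilant sign tracking. Once that reduction is in hand, everything else --- freeness of rank one and the inclusion/projection verification --- follows formally from Lemmata \ref{lem:basisforV}, \ref{lem:X1X2X1decomposition}, and degree considerations, exactly as in the analogous earlier lemmas.
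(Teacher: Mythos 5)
Your proposal is correct and follows precisely the strategy the paper itself signals with the phrase ``Similar arguments to the proofs of Lemmata \ref{lem:prinplusminus} and \ref{lem:prin} give the following'' --- namely, invoke Lemma \ref{lem:basisforV} with the multiplicity-one occurrence of $\FE_i^{(2)}\FE_j$ and $\FE_j\FE_i^{(2)}$ in \eqref{eq:X1X2X1decomposition}, trace the composite projections through the isomorphism chain and simplify, and establish the inclusions either symmetrically or via the pairing computation $\pr_{\FE^{(2)}_i\FE_j}\circ\inc_{\FE^{(2)}_i\FE_j} = -\id$ performed in the proof of Lemma \ref{L:tau-star-Serre3}. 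There is nothing to add.
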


\begin{lem}\label{L:tau-star-Serre3}
The equalities
\[
\tauast \inc_{\FE^{(2)}_i\FE_j} = -\inc_{\FE^{(2)}_i\FE_j}
\quad \text{and} \quad
\tauast \inc_{\FE_j\FE^{(2)}_i} = -\inc_{\FE_j\FE^{(2)}_i}
\]
hold in $V(\FE^{(2)}_i\FE_j, \FE_i\FE_j\FE_i)$ and $V(\FE_j\FE^{(2)}_i, \FE_i\FE_j\FE_i)$, 
respectively.
\end{lem}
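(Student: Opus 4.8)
The strategy mirrors the proofs of Lemmas \ref{L:tau-star-Serre1} and \ref{L:tau-star-Serre2}: apply Lemma \ref{L:Ben's-linear-algebra-lemma} with $d=1$. Lemma \ref{lem:FE2FE} provides bases $\{\inc_{\FE^{(2)}_i\FE_j}\}$, $\{\pr_{\FE^{(2)}_i\FE_j}\}$ for the rank-one spaces $V(\FE^{(2)}_i\FE_j, \FE_i\FE_j\FE_i)$ and $V(\FE_i\FE_j\FE_i, \FE^{(2)}_i\FE_j)$, and similarly for $\FE_j\FE^{(2)}_i$. So it suffices to compute the ``pairing matrices'' $C$ and $S$, which are here just scalars: $C = \pr_{\FE^{(2)}_i\FE_j} \circ \inc_{\FE^{(2)}_i\FE_j}$ and $S = \pr_{\FE^{(2)}_i\FE_j} \circ (\tauast \inc_{\FE^{(2)}_i\FE_j})$, and then $C^{-1}S$ records the action of $\tauast$. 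We expect to find $C^{-1}S = -1$ in both cases.

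First I would compute the scalar $C$. Composing $\pr_{\FE^{(2)}_i\FE_j}$ with $\inc_{\FE^{(2)}_i\FE_j}$ produces a closed-ish diagram involving a thick-strand digon in color $i$ and a purple ($j = i \pm 1$) circle. One simplifies using \eqref{eqn:thickcrossingdefn}/\eqref{eq:otherthick} to absorb thick crossings into merge/split morphisms, applies \eqref{eq:mixedef} to free-slide the purple strand, then \eqref{eq:quadKLR} and the bubble relations \eqref{newbub}, \eqref{realbubdef} to evaluate. Based on the analogous normalizations in Lemmas \ref{L:tau-star-Serre1} and \ref{L:tau-star-Serre2} (where one gets $-\id$ and $(-1)^n\id$), I anticipate $C = (-1)^{n} \id_{\FE^{(2)}_i\FE_j}$ or a similar unit scalar; the precise sign will emerge from counting counterclockwise degree-zero bubbles $\varphi_{ji}$ and the relations $\epsilon_{ij}\varphi_{ji}=(-1)^{n-1}$, $\epsilon_{ji}\varphi_{ji}=(-1)^n$.

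Next I would compute $S = \pr_{\FE^{(2)}_i\FE_j} \circ (\tauast \inc_{\FE^{(2)}_i\FE_j})$. Here $\tauast \inc_{\FE^{(2)}_i\FE_j} = \tau(\varphi_{\FE_i\FE_j\FE_i}) \circ \tau(\inc_{\FE^{(2)}_i\FE_j}) \circ \varphi_{\FE^{(2)}_i\FE_j}$, where the equivariant structures are $\swcl_1 \otimes \swclp_1 \otimes \swcl_1$ on $\FE_i\FE_j\FE_i$ and $\swcl_2 \otimes \swclp_1$ on $\FE^{(2)}_i\FE_j$. I would use Theorem \ref{thm:involution}, Proposition \ref{prop:tauactsonthicker}, and Corollary \ref{cor:ourthicktau} (specifically \eqref{eq:tauonthickcap}, \eqref{eq:tauonthickcup}, \eqref{eq:tauonthickcrossing} and \eqref{eqn:tauonthickdot}) to write $\tau(\inc_{\FE^{(2)}_i\FE_j})$ explicitly as a signed, orientation-reversed diagram, then pre- and post-compose with the Rickard-type isomorphisms $\swcl$, $\swclp$, $\swcl_2$ and simplify. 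A useful shortcut, as in the earlier lemmas, is to use the already-computed identity $\pr_{\FE^{(2)}_i\FE_j} \circ \inc_{\FE^{(2)}_i\FE_j} = C$ together with Rickard-complex-type relations (e.g.~\eqref{eq:tauRickdiff} referenced in the preceding subsection) to collapse the resulting diagram back to a scalar multiple of $\id_{\FE^{(2)}_i\FE_j}$ without brute-forcing the whole KLR calculation. The computation for $\tauast \inc_{\FE_j\FE^{(2)}_i}$ is entirely parallel (indeed one can also obtain it from the $\FE^{(2)}_i\FE_j$ case by composing with appropriate braid/crossing isomorphisms, as $\tau$ is compatible with \eqref{eq:sl3relns-for-serre2}).

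\textbf{The main obstacle.} The hard part will be the diagrammatic bookkeeping of signs in computing $S$: $\tau$ acts on thick cups, caps, and crossings with signs depending in a nontrivial way on $\langle \alpha_i^\vee, \wt \rangle$ at each region and on the thicknesses, and the diagram $\inc_{\FE^{(2)}_i\FE_j}$ passes through several distinct weight regions (the ambient weight is $\wtn$ on the far right but changes as strands are crossed). Carefully tracking these region weights and the associated signs from Corollary \ref{cor:ourthicktau}, while simultaneously reducing the diagram via fork-slides \eqref{eq:otherthick}, \eqref{eq:multicolorotherthick}, the Sto\v{s}i\'{c} formulae, and bubble evaluations, is where errors are most likely; the payoff is that everything must collapse to $-\id$, consistent with the decategorified devil's Serre relation \eqref{eq:iSerreintro} in which $\xx^{(2)}_i\xx_{i\pm1}$ appears with coefficient $+1$ but $[(\FE^{(2)}_i,\swcl_2)]_\tau$ carries the sign $(-1)^{n}$ from Corollary \ref{cor:K0(X2)}, so the relative sign in $V(\FE^{(2)}_i\FE_j, \FE_i\FE_j\FE_i)$ is forced to be $-1$.
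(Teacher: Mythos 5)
Your proposal takes essentially the same route as the paper: compute the scalar $C=\pr\circ\inc$, compute the sign from $\tau(\inc)$ via Theorem \ref{thm:involution}/Corollary \ref{cor:ourthicktau}, evaluate $S=\pr\circ\tauast\inc$ diagrammatically, and conclude $\tauast\inc=-\inc$ by Lemma \ref{L:Ben's-linear-algebra-lemma}; the second case is then dispatched as ``entirely parallel,'' just as in the paper. One small correction: the paper actually finds $\pr_{\FE^{(2)}_i\FE_j}\circ \inc_{\FE^{(2)}_i\FE_j}=-\id$ (not $(-1)^n\id$), and it does not use the Rickard-complex shortcut you suggest, instead grinding through the KLR relations directly.
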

\begin{proof}
First, we compute
\begin{align*}
\pr_{\FE^{(2)}_i\FE_j}\circ \inc_{\FE^{(2)}_i\FE_j} &= \begin{tikzpicture}[anchorbase,scale=.2]
\draw[thick,green] (-3,0) to (-2,5) ;
\draw[thick,green] (-2,0) to (-1,5) ;
\draw[thick,green] (2,0) to (-2,5) ;
\draw[thick,green] (3,0) to (-1,5) ;
\draw[thick,purple] (-1,0) to (1,6) ;
\draw[thick,purple,->] (0,0) to (2,6) ;
\draw[ultra thick,green] (-2,5) to (-2,6) ;
\draw[ultra thick,green,->] (-1,5) to (-1,6) ;
\draw[thick,green] (-3,0) to (-2,-5) ;
\draw[thick,green] (-2,0) to (-1,-5) ;
\draw[thick,green] (2,0) to (-2,-5) ;
\draw[thick,green] (3,0) to (-1,-5) ;
\draw[thick,purple,->] (-1,0) to (1,-6) ;
\draw[thick,purple] (0,0) to (2,-6) ;
\draw[ultra thick,green,->] (-2,-5) to (-2,-6) ;
\draw[ultra thick,green] (-1,-5) to (-1,-6) ;
\end{tikzpicture}
\stackrel{\eqref{eq:mixedef},\eqref{eq:quadKLR},\eqref{dotslide}}{=}
\epsilon_{ij}\epsilon_{ji}
\begin{tikzpicture}[anchorbase,scale=.2]
\draw[thick,purple,<-] (2,-6) to (2,6) ;
\draw[thick,purple,->] (3,-6) to (3,6) ;
\draw[ultra thick,green] (0,-6) to (0,-1) ;
\draw[ultra thick,green,->] (0,1) to (0,6) ;
\draw[ultra thick,green,<-] (-1,-6) to (-1,-1) ;
\draw[ultra thick,green] (-1,1) to (-1,6) ;
\draw[thick, green] (1,0) arc (0:360:1);
\draw[thick, green] (0,0) arc (0:360:1);
\filldraw [black] (1,0) circle (6pt) ;
\filldraw [black] (2,0) circle (6pt) ;
\end{tikzpicture}
-\epsilon_{ij}\epsilon_{ji}
\begin{tikzpicture}[anchorbase,scale=.2]
\draw[thick,purple,<-] (2,-6) to (2,6) ;
\draw[thick,purple,->] (3,-6) to (3,6) ;
\draw[ultra thick,green] (0,-6) to (0,-1) ;
\draw[ultra thick,green,->] (0,1) to (0,6) ;
\draw[ultra thick,green,<-] (-1,-6) to (-1,-1) ;
\draw[ultra thick,green] (-1,1) to (-1,6) ;
\draw[thick, green] (1,0) arc (0:360:1);
\draw[thick, green] (0,0) arc (0:360:1);
\filldraw [black] (0,0) circle (6pt) ;
\filldraw [black] (1,0) circle (6pt) ;
\end{tikzpicture}
-\epsilon_{ij}\epsilon_{ji}
\begin{tikzpicture}[anchorbase,scale=.2]
\draw[thick,purple,<-] (2,-6) to (2,6) ;
\draw[thick,purple,->] (3,-6) to (3,6) ;
\draw[ultra thick,green] (0,-6) to (0,-1) ;
\draw[ultra thick,green,->] (0,1) to (0,6) ;
\draw[ultra thick,green,<-] (-1,-6) to (-1,-1) ;
\draw[ultra thick,green] (-1,1) to (-1,6) ;
\draw[thick, green] (1,0) arc (0:360:1);
\draw[thick, green] (0,0) arc (0:360:1);
\filldraw [black] (2,0) circle (6pt) ;
\filldraw [black] (3,0) circle (6pt) ;
\end{tikzpicture}
+\epsilon_{ij}\epsilon_{ji}
\begin{tikzpicture}[anchorbase,scale=.2]
\draw[thick,purple,<-] (2,-6) to (2,6) ;
\draw[thick,purple,->] (3,-6) to (3,6) ;
\draw[ultra thick,green] (0,-6) to (0,-1) ;
\draw[ultra thick,green,->] (0,1) to (0,6) ;
\draw[ultra thick,green,<-] (-1,-6) to (-1,-1) ;
\draw[ultra thick,green] (-1,1) to (-1,6) ;
\draw[thick, green] (1,0) arc (0:360:1);
\draw[thick, green] (0,0) arc (0:360:1);
\filldraw [black] (0,0) circle (6pt) ;
\filldraw [black] (3,0) circle (6pt) ;
\end{tikzpicture} \\
&=
\begin{tikzpicture}[anchorbase,scale=.2]
\draw[thick,purple,<-] (2,-6) to (2,6) ;
\draw[thick,purple,->] (3,-6) to (3,6) ;
\draw[ultra thick,green] (0,-6) to (0,-1) ;
\draw[ultra thick,green,->] (0,1) to (0,6) ;
\draw[ultra thick,green,<-] (-1,-6) to (-1,-1) ;
\draw[ultra thick,green] (-1,1) to (-1,6) ;
\draw[thick, green] (1,0) arc (0:360:1);
\draw[thick, green] (0,0) arc (0:360:1);
\filldraw [black] (0,0) circle (6pt) ;
\filldraw [black] (1,0) circle (6pt) ;
\end{tikzpicture}
\stackrel{\eqref{dotslide},\eqref{extendedreln2}}{=}
\begin{tikzpicture}[anchorbase,scale=.2]
\draw[thick,purple,<-] (4,-6) to (4,6) ;
\draw[thick,purple,->] (5,-6) to (5,6) ;
\draw[ultra thick,green] (-1,-6) to (-1,-1) ;
\draw[ultra thick,green,->] (-1,1) to (-1,6) ;
\draw[ultra thick,green,<-] (2,-6) to (2,-1) ;
\draw[ultra thick,green] (2,1) to (2,6) ;
\draw[thick, green] (3,0) arc (0:360:1);
\draw[thick, green] (0,0) arc (0:360:1);
\filldraw [black] (0,0) circle (6pt) ;
\filldraw [black] (3,0) circle (6pt) ;
\end{tikzpicture}
\stackrel{\eqref{eq:explode}}{=}
-
\begin{tikzpicture}[anchorbase,scale=.2]
\draw[thick,purple,<-] (2,-6) to (2,6) ;
\draw[thick,purple,->] (3,-6) to (3,6) ;
\draw[ultra thick,green,->] (-1,-6) to (-1,6) ;
\draw[ultra thick,green,<-] (0,-6) to (0,6) ;
\end{tikzpicture} \, .
\end{align*}
In the penultimate step, 
we have omitted terms 
which are are quickly seen to be zero using Lemma \ref{lem:btob'inker},
as they contain endomorphisms of $\FE_i^{(2)}$ that factor through $\FE_i$.

Using equation \eqref{eq:inv}, 
we see that $\tau(\inc_{\FE_i^{(2)}\FE_j})$ 
is obtained from $\inc_{\FE_i^{(2)}\FE_j}$ by reversing orientation and 
multipliplying by
\[
\begin{multlined}
(-1)^{v_{ij}(0) + v_{ji}(0) + r_j(\alpha_j) + r_j(\alpha_i+\alpha_j) + v_{ji}(0) + \ell_i(0) + \ell_i(\alpha_j) 
	+ v_{ij}'(\alpha_i + \alpha_j) + v_{ii}(0) + \ell_i(0) + \ell_i(\alpha_i)} \quad \\
\quad \stackrel{\eqref{primev}}{=}(-1)^{\ell_i(\alpha_j)+ r_i(\alpha_i + \alpha_j) + r_i(\alpha_i) + \ell_i(\alpha_i)} 
\stackrel{\eqref{eq:ourfns}}{=} (-1)^{r_i(-\alpha_j) + r_i(\alpha_i + \alpha_j) + r_i(\alpha_i)+ r_i(-\alpha_i)} \, .
\end{multlined}
\]
It follows from Theorem \ref{thm:involution} that this is equal to 
\[
(-1)^{r_i(-\alpha_j) + r_i(\alpha_i + \alpha_j)} = \begin{cases} (-1)^{1+ 0}\quad \text{if} \quad j=i-1 \\
(-1)^{0+1}\quad \text{if} \quad j=i+1,
\end{cases}
\]
which, in either case, is equal to $-1$. 
Thus, 
\begin{align*}
-1\cdot \pr_{\FE_i^{(2)}\FE_j}\circ \tauast \inc_{\FE_i^{(2)}\FE_j} &= \begin{tikzpicture}[anchorbase,scale=.2]
\draw[thick,green] (-3,1) to (-2,5) ;
\draw[thick,green] (-2,1) to (-1,5) ;
\draw[thick,green] (2,1) to (-2,5) ;
\draw[thick,green] (3,1) to (-1,5) ;
\draw[thick,purple] (-1,1) to (1,6) ;
\draw[thick,purple,->] (0,1) to (2,6) ;
\draw[ultra thick,green] (-2,5) to (-2,6) ;
\draw[ultra thick,green,->] (-1,5) to (-1,6) ;
\draw[thick,green] (-3,-1) to (-2,-5) ;
\draw[thick,green] (-2,-1) to (-1,-5) ;
\draw[thick,green] (2,-1) to (-2,-5) ;
\draw[thick,green] (3,-1) to (-1,-5) ;
\draw[thick,purple] (-1,-1) to (1,-6) ;
\draw[thick,purple] (0,-1) to (2,-6) ;
\draw[ultra thick,green] (-2,-5) to (-2,-6) ;
\draw[ultra thick,green] (-1,-5) to (-1,-6) ;
\draw[ultra thick,green] (-2,-6) to (-1,-8) ;
\draw[ultra thick,green,->] (-1,-6) to (-2,-8) ;
\draw[thick,purple] (-1,-1) to (0,1) ;
\draw[thick,purple] (0,-1) to (-1,1) ;
\draw[thick,purple] (2,-8) to (1,-6) ;
\draw[thick,purple,<-] (1,-8) to (2,-6) ;
\draw[thick,green] (3,1) to (2,-1) ;
\draw[thick,green] (3,-1) to (2,1) ;
\draw[thick,green] (-3,1) to (-2,-1) ;
\draw[thick,green] (-3,-1) to (-2,1) ;
\end{tikzpicture} 
\stackrel{\eqref{eq:cubicKLR},\eqref{eq:mixedef}}{=}
\begin{tikzpicture}[anchorbase,scale=.2]
\draw[thick,green] (-3,1) to (-2,5) ;
\draw[thick,green] (-2,1) to (-1,5) ;
\draw[thick,green] (2,1) .. controls (-1,1) ..  (-2,5) ;
\draw[thick,green] (3,1) .. controls (2,4) ..  (-1,5) ;
\draw[thick,purple,->] (-1,0) to (2,6) ;
\draw[thick,purple] (0,0) to (1,6) ;
\draw[ultra thick,green] (-2,5) to (-2,6) ;
\draw[ultra thick,green,->] (-1,5) to (-1,6) ;
\draw[thick,green] (-3,-1) to (-2,-5) ;
\draw[thick,green] (-2,-1) to (-1,-5) ;
\draw[thick,green] (2,-1) to (-2,-5) ;
\draw[thick,green] (3,-1) to (-1,-5) ;
\draw[thick,purple] (-1,0) to (1,-6) ;
\draw[thick,purple] (0,0) to (2,-6) ;
\draw[ultra thick,green] (-2,-5) to (-2,-6) ;
\draw[ultra thick,green] (-1,-5) to (-1,-6) ;
\draw[ultra thick,green] (-2,-6) to (-1,-8) ;
\draw[ultra thick,green,->] (-1,-6) to (-2,-8) ;
\draw[thick,purple] (2,-8) to (1,-6) ;
\draw[thick,purple,<-] (1,-8) to (2,-6) ;
\draw[thick,green] (3,1) to (2,-1) ;
\draw[thick,green] (3,-1) to (2,1) ;
\draw[thick,green] (-3,1) to (-2,-1) ;
\draw[thick,green] (-3,-1) to (-2,1) ;
\end{tikzpicture} 
+\epsilon_{ji}
\begin{tikzpicture}[anchorbase,scale=.2]
\draw[thick,green] (-3,1) to (-2,5) ;
\draw[thick,green] (-2,1) to (-1,5) ;
\draw[thick,green] (2,1) to (-2,5) ;
\draw[thick,green] (3,1) to (-1,5) ;
\draw[ultra thick,green] (-2,5) to (-2,6) ;
\draw[ultra thick,green,->] (-1,5) to (-1,6) ;
\draw[thick,green] (-3,-1) to (-2,-5) ;
\draw[thick,green] (-2,-1) to (-1,-5) ;
\draw[thick,green] (2,-1) to (-2,-5) ;
\draw[thick,green] (3,-1) to (-1,-5) ;
\draw[thick,purple] (-1,-1) to (1,-6) ;
\draw[thick,purple] (0,-1) to (2,-6) ;
\draw[ultra thick,green] (-2,-5) to (-2,-6) ;
\draw[ultra thick,green] (-1,-5) to (-1,-6) ;
\draw[ultra thick,green] (-2,-6) to (-1,-8) ;
\draw[ultra thick,green,->] (-1,-6) to (-2,-8) ;
\draw[thick,purple] (2,-8) to (1,-6) ;
\draw[thick,purple,<-] (1,-8) to (2,-6) ;
\draw[thick,green] (3,1) to (2,-1) ;
\draw[thick,green] (3,-1) to (2,1) ;
\draw[thick,green] (-3,1) to (-2,-1) ;
\draw[thick,green] (-3,-1) to (-2,1) ;
\draw[thick, purple,<-] (2,6) arc (0:-180:1);
\draw[thick, purple] (0,-1) arc (0:180:.5);
\end{tikzpicture}
\stackrel{\eqref{eq:cubicKLR}}{\equiv}
\begin{tikzpicture}[anchorbase,scale=.2]
\draw[thick,green] (-3,1) to (-2,5) ;
\draw[thick,green] (-2,1) to (-1,5) ;
\draw[thick,green] (2,1) .. controls (-1,1) ..  (-2,5) ;
\draw[thick,green] (3,1) .. controls (2,4) ..  (-1,5) ;
\draw[ultra thick,green] (-2,5) to (-2,6) ;
\draw[ultra thick,green,->] (-1,5) to (-1,6) ;
\draw[thick,green] (-3,-1) to (-2,-5) ;
\draw[thick,green] (-2,-1) to (-1,-5) ;
\draw[thick,green] (2,-1) to (-2,-5) ;
\draw[thick,green] (3,-1) to (-1,-5) ;
\draw[thick,purple] (-1,0) to (1,-6) ;
\draw[thick,purple] (1,-2.5) to (2,-6) ;
\draw[thick,purple] (1,-2.5) to (2.5,-1) ;
\draw[thick,purple] (2.5,-1) .. controls (3,0) ..  (2.5,1) ;
\draw[ultra thick,green] (-2,-5) to (-2,-6) ;
\draw[ultra thick,green] (-1,-5) to (-1,-6) ;
\draw[ultra thick,green] (-2,-6) to (-1,-8) ;
\draw[ultra thick,green,->] (-1,-6) to (-2,-8) ;
\draw[thick,purple] (2,-8) to (1,-6) ;
\draw[thick,purple,<-] (1,-8) to (2,-6) ;
\draw[thick,purple,->] (-1,0) to (2,6) ;
\draw[thick,purple] (2.5,1) .. controls (1,2) ..  (0,6) ;
\draw[thick,green] (3,1) to (2,-1) ;
\draw[thick,green] (3,-1) to (2,1) ;
\draw[thick,green] (-3,1) to (-2,-1) ;
\draw[thick,green] (-3,-1) to (-2,1) ;
\end{tikzpicture} 
\stackrel{\eqref{eq:cubicKLR}}{=}
\begin{tikzpicture}[anchorbase,scale=.2]
\draw[thick,green] (-3,1) to (-2,5) ;
\draw[thick,green] (-2,1) to (-1,5) ;
\draw[thick,green] (3,-1) .. controls (-1,1) ..  (-2,5) ;
\draw[thick,green] (3,1) .. controls (2,4) ..  (-1,5) ;
\draw[ultra thick,green] (-2,5) to (-2,6) ;
\draw[ultra thick,green,->] (-1,5) to (-1,6) ;
\draw[thick,green] (-3,-1) to (-2,-5) ;
\draw[thick,green] (-2,-1) to (-1,-5) ;
\draw[thick,green] (2,-1) to (-2,-5) ;
\draw[thick,green] (3,-1) to (-1,-5) ;
\draw[thick,purple] (-1,0) to (1,-6) ;
\draw[thick,purple] (1,-2.5) to (2,-6) ;
\draw[thick,purple] (1,-2.5) to (2.5,-1) ;
\draw[thick,purple] (2.5,-1) to (4.5,2) ;
\draw[ultra thick,green] (-2,-5) to (-2,-6) ;
\draw[ultra thick,green] (-1,-5) to (-1,-6) ;
\draw[ultra thick,green] (-2,-6) to (-1,-8) ;
\draw[ultra thick,green,->] (-1,-6) to (-2,-8) ;
\draw[thick,purple] (2,-8) to (1,-6) ;
\draw[thick,purple,<-] (1,-8) to (2,-6) ;
\draw[thick,purple,->] (-1,0) .. controls (4,1) ..  (2,6) ;
\draw[thick,purple] (4.5,2) .. controls (0,4) ..  (0,6) ;
\draw[thick,green] (3,1) to (2,-1) ;
\draw[thick,green] (-3,1) to (-2,-1) ;
\draw[thick,green] (-3,-1) to (-2,1) ;
\end{tikzpicture} 
\\
&\stackrel{\eqref{eq:mixedef},\eqref{eq:cubicKLR}}{=}
\begin{tikzpicture}[anchorbase,scale=.2]
\draw[thick,green] (-3,1) to (-2,5) ;
\draw[thick,green] (-2,1) to (-1,5) ;
\draw[thick,green] (3,-1) .. controls (-1,1) ..  (-2,5) ;
\draw[thick,green] (-2,-5) .. controls (2,0) ..  (-1,5) ;
\draw[ultra thick,green] (-2,5) to (-2,6) ;
\draw[ultra thick,green,->] (-1,5) to (-1,6) ;
\draw[thick,green] (-3,-1) to (-2,-5) ;
\draw[thick,green] (-2,-1) to (-1,-5) ;
\draw[thick,green] (3,-1) to (-1,-5) ;
\draw[thick,purple] (1,-2.5) to (2,-6) ;
\draw[thick,purple] (1,-2.5) to (2.5,-1) ;
\draw[thick,purple] (2.5,-1) to (4.5,2) ;
\draw[ultra thick,green] (-2,-5) to (-2,-6) ;
\draw[ultra thick,green] (-1,-5) to (-1,-6) ;
\draw[ultra thick,green] (-2,-6) to (-1,-8) ;
\draw[ultra thick,green,->] (-1,-6) to (-2,-8) ;
\draw[thick,purple] (2,-8) to (1,-6) ;
\draw[thick,purple,<-] (1,-8) to (2,-6) ;
\draw[thick,purple,->] (1.5,-1.5) .. controls (2,1) ..  (2,6) ;
\draw[thick,purple] (1.5,-1.5) .. controls (0,-3) .. (1,-6) ;
\draw[thick,purple] (4.5,2) to (0,6) ;
\draw[thick,green] (-3,1) to (-2,-1) ;
\draw[thick,green] (-3,-1) to (-2,1) ;
\end{tikzpicture} 
\stackrel{\eqref{eq:quadKLR},\eqref{dotslide}}{\equiv}
\begin{tikzpicture}[anchorbase,scale=.2]
\draw[thick,green] (-3,1) to (-2,5) ;
\draw[thick,green] (-2,1) to (-1,5) ;
\draw[thick,green] (-1,-5) .. controls (2,-1) ..  (-2,5) ;
\draw[thick,green] (-2,-5) .. controls (2,1) ..  (-1,5) ;
\draw[ultra thick,green] (-2,5) to (-2,6) ;
\draw[ultra thick,green,->] (-1,5) to (-1,6) ;
\draw[thick,green] (-3,-1) to (-2,-5) ;
\draw[thick,green] (-2,-1) to (-1,-5) ;
\draw[ultra thick,green] (-2,-5) to (-2,-6) ;
\draw[ultra thick,green] (-1,-5) to (-1,-6) ;
\draw[ultra thick,green] (-2,-6) to (-1,-8) ;
\draw[ultra thick,green,->] (-1,-6) to (-2,-8) ;
\draw[thick,purple,<-] (1,-8) .. controls (4,0) .. (1,6) ;
\draw[thick,purple,->] (2,-8) .. controls (2,0) ..  (2,6) ;
\draw[thick,green] (-3,1) to (-2,-1) ;
\draw[thick,green] (-3,-1) to (-2,1) ;
\filldraw [black] (1,2) circle (6pt) ;
\filldraw [black] (1,-2) circle (6pt) ;
\end{tikzpicture}
\stackrel{\eqref{eq:otherthick},\eqref{eq:explode},\eqref{eqn:sgnfromdemazure},\eqref{extendedreln2}}{=}
-
\begin{tikzpicture}[anchorbase,scale=.2]
\draw[thick,purple,<-] (1,-8) to (1,6) ;
\draw[thick,purple,->] (2,-8) to  (2,6) ;
\draw[ultra thick,green,<-] (-2,-8) to (-2,6) ;
\draw[ultra thick,green,->] (-1,-8) to (-1,6) ;
\end{tikzpicture} \, .
\end{align*}
It then follows from Lemma \ref{L:Ben's-linear-algebra-lemma} 
that $\tauast \inc_{\FE_i^{(2)}\FE_j} = - \inc_{\FE_i^{(2)}\FE_j}$. 

Similar computations (which we leave as an exercise) 
show that $\tauast \inc_{\FE_j\FE^{(2)}_i} = -\inc_{\FE_j\FE^{(2)}_i}$. 
\end{proof}

\begin{thm}\label{T:iserre-in-twisted-K0}
The following equality holds in $\tauKzero{\EBFoam}$:
\begin{multline*}
[(\FE_i, \swcl_1)]_{\tau} \cdot [(\FE_{i \pm 1}, \swclp_1)]_{\tau} \cdot
	[(\FE_i, \swcl_1)]_{\tau} =
[(\FE_i, \swcl_1)]_{\tau} + (-1)^n[2] [(\FE_i^{(2)}, \swcl_2)]_{\tau} \\
	- [(\FE^{(2)}_i,\swcl_2)]_{\tau}  [(\FE_{i\pm 1}, \swclp_1)]_{\tau}
		- [(\FE_{i\pm 1}, \swclp_1)]_{\tau} [(\FE^{(2)}_i,\swcl_2)]_{\tau} \, .
\end{multline*}
\end{thm}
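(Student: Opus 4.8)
The plan is to compute the class $[(\FE_i\FE_j\FE_i, \swcl_1\otimes\swclp_1\otimes\swcl_1)]_{\tau}$ directly via Proposition \ref{prop:computeequivGgp}, using the decomposition of $\FE_i\FE_j\FE_i$ in $\U_q(\glm)$ established in Lemma \ref{lem:X1X2X1decomposition} together with the $\tauast$-trace computations assembled in Lemmata \ref{L:tau-star-Serre1}, \ref{L:tau-star-Serre2}, and \ref{L:tau-star-Serre3}. Recall that $j = i\pm 1$, and, writing $\FE = \FE_i\FE_j\FE_i$ for brevity, that Lemma \ref{lem:X1X2X1decomposition} gives an isomorphism in $\U_q(\glm)$
\[
\FE \cong \FE_i^{(2)}\FE_j\oplus \FE_j\FE_i^{(2)}\oplus [2]\FE_i^{(2)}\oplus \FE_i \oplus A_1 \oplus A_2 \oplus B_1 \oplus B_2
\]
with $A_2 = \tau(A_1)$ and $B_2 = \tau(B_1)$, and that the eight summands form an endopositive family. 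Each $A_2$ (resp.~$B_2$) lies in the $\tau$-orbit of $A_1$ (resp.~$B_1$) and these orbits have size two, so by Proposition \ref{prop:computeequivGgp} (and the vanishing $[(X_b\oplus X_{\sigma(b)},\psi_b)]_\tau = 0$ for free orbits) the pairs $A_1\oplus A_2$ and $B_1\oplus B_2$ contribute nothing to $[(\FE,\varphi)]_\tau$. Thus only the four $\tau$-fixed summands $\FE_i^{(2)}\FE_j$, $\FE_j\FE_i^{(2)}$, $\FE_i^{(2)}$, and $\FE_i$ matter.

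For each of these four fixed summands $X_b$ (equipped with the equivariant structure $\swcl$ induced by $\tau$ on the corresponding composite of $\swcl_k$'s), the coefficient of $[(X_b,\swcl)]_\tau$ in $[(\FE,\varphi)]_\tau$ is $\sum_k \tr(\tauast|_{V^k(X_b,\FE)})\, q^k$. First I would verify that for each $X_b$ the equivariant structure I use is the one making $(X_b, \swcl_\bullet)$ the distinguished equivariant object, i.e.~that $\tau$ applied to the relevant tensor product $\swcl_{k_1}\otimes\cdots$ gives back the correct sign; this is immediate from Proposition \ref{prop:eX} and the computation $\tau(\swcl_k) = \swcr_k$. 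Then: for the summand $\FE_i$, Lemma \ref{L:tau-star-Serre1} shows $\tauast$ acts as $+1$ on the one-dimensional $V(\FE_i,\FE)$, giving coefficient $+1$; for the summand $\FE_i^{(2)}$ appearing with multiplicity space of graded dimension $[2] = q + q^{-1}$, Lemma \ref{L:tau-star-Serre2} shows $\tauast$ acts as $(-1)^n$ on both basis vectors $\inc_{\pm 1}$, giving coefficient $(-1)^n[2]$; for the summands $\FE_i^{(2)}\FE_j$ and $\FE_j\FE_i^{(2)}$, each with one-dimensional multiplicity space in degree $0$, Lemma \ref{L:tau-star-Serre3} shows $\tauast$ acts as $-1$, giving coefficient $-1$ in each case. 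Feeding these into \eqref{eq:KzeroDecomp} yields
\[
[(\FE,\varphi)]_\tau = [(\FE_i,\swcl_1)]_\tau + (-1)^n[2]\,[(\FE_i^{(2)},\swcl_2)]_\tau - [(\FE_i^{(2)}\FE_j, \swcl_2\otimes\swclp_1)]_\tau - [(\FE_j\FE_i^{(2)}, \swclp_1\otimes\swcl_2)]_\tau \, ,
\]
and since $\tauKzero{\EBFoam}$ is a ring with $[(X\otimes Y,\varphi_X\otimes\varphi_Y)]_\tau = [(X,\varphi_X)]_\tau[(Y,\varphi_Y)]_\tau$ (Proposition \ref{prop:invMD}), the right-hand side is exactly the claimed expression, with the left-hand side equal to $[(\FE_i,\swcl_1)]_\tau[(\FE_{i\pm1},\swclp_1)]_\tau[(\FE_i,\swcl_1)]_\tau$.

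The bulk of the work — and the main obstacle — is entirely contained in the three preceding Lemmata, in particular the sign bookkeeping in Lemma \ref{L:tau-star-Serre3}, where one must carefully track how $\tau$ rescales the cup/cap/crossing generators appearing in the inclusion maps $\inc_{\FE_i^{(2)}\FE_j}$ and $\inc_{\FE_j\FE_i^{(2)}}$ using the formulas in Theorem \ref{thm:involution} and Proposition \ref{prop:tauactsonthicker}, and then simplify $\pr_{X_b}\circ(\tauast\inc_{X_b})$ using the thick-calculus relations to identify it as a scalar multiple of the identity. Since those computations are already carried out in the excerpt, the proof of the theorem itself reduces to the organizational step of collecting the three trace values, discarding the free-orbit contributions, and invoking the ring structure on the weighted Grothendieck group; I would also double-check that no grading shifts have been dropped (e.g.~that $V(\FE_i^{(2)},\FE)$ is genuinely concentrated in degrees $\pm 1$ so that the $[2]$ bookkeeping is correct), which follows from Lemma \ref{lem:prinplusminus} and Lemma \ref{lem:basisforV}.
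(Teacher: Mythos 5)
Your proof is correct and follows exactly the same route as the paper's own proof, which simply cites Proposition \ref{prop:computeequivGgp} and Lemmata \ref{lem:X1X2X1decomposition}, \ref{L:tau-star-Serre1}, \ref{L:tau-star-Serre2}, and \ref{L:tau-star-Serre3}. Your write-up just makes explicit the bookkeeping that the paper leaves to the reader (discarding the free orbits $A$ and $B$, reading off the trace coefficients from the three Lemmata, and invoking the ring structure from Proposition \ref{prop:invMD}).
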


\begin{proof}
Using Proposition \ref{prop:computeequivGgp}, 
the claim follows from Lemmata 
\ref{lem:X1X2X1decomposition}, 
\ref{L:tau-star-Serre1},
\ref{L:tau-star-Serre2}, 
and \ref{L:tau-star-Serre3}.
\end{proof}

Combining Theorems \ref{T:div-powers-in-twisted-K0} and \ref{T:iserre-in-twisted-K0}, 
we arrive at the following result, 
which establishes the existence of the $\C(q)$-algebra homomorphism appearing 
in Conjecture \ref{conj:introKzero2} from the introduction.

\begin{cor}
	\label{cor:K0(Xm)}
The assignment
$\nx^{(k)}_i \mapsto [(\FE^{(k)}_i, (-1)^{{n+1 \choose 2}+{n-k+1 \choose 2}+k} \swcl_{k})]_{\tau}$
determines a $\C(q)$-algebra homomorphism $U'_{-q^2}(\som[2]) \rightarrow \tauKzero{\EBnFoam}$.
\end{cor}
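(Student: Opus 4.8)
The statement asserts that the assignment $\nx_i^{(k)} \mapsto [(\FE_i^{(k)}, (-1)^{\binom{n+1}{2}+\binom{n-k+1}{2}+k}\swcl_k)]_\tau$ extends to a $\C(q)$-algebra homomorphism $U'_{-q^2}(\som) \to \tauKzero{\EBFoam}$. By Proposition~\ref{prop:iSerreintro}, a presentation for $U'_{-q^2}(\som)$ is given by the generators $\nx_i^{(k)}$ (with the definition of $\nx_i^{(k)}$ from $\nx_i$ built into the data), subject to: the recursive definition of the $\nx_i^{(k)}$ from $\nx_i$ via \eqref{eq:idpb}, the far-commutativity relation $\nx_i \nx_j = \nx_j \nx_i$ for $|i-j|>1$, and the devil's Serre relation \eqref{eq:iSerreintro}. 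Thus the task reduces to verifying that the proposed images satisfy each of these three families of relations in $\tauKzero{\EBFoam}$.

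\textbf{Key steps.} First, I would unpack the normalization exactly as in the proof of Corollary~\ref{cor:K0(X2)}: writing $\mathbf{x}_i^{(k)} := [(\FE_i^{(k)}, (-1)^{\binom{n+1}{2}+\binom{n-k+1}{2}+k}\swcl_k)]_\tau$, Theorem~\ref{T:div-powers-in-twisted-K0} gives, after the sign bookkeeping carried out in Corollary~\ref{cor:K0(X2)}, the relation
\[
\mathbf{x}_i^{(k)} \mathbf{x}_i^{(1)} = (-1)^k ``[k][k+1]" \mathbf{x}_i^{(k)} + (-1)^k ``[k+1]^2" \mathbf{x}_i^{(k+1)},
\]
which is precisely the recursion \eqref{eq:idpb} defining $\nx_i^{(k)}$; so the images respect the defining recursion for divided powers for each fixed $i$. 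Second, I would check far-commutativity: for $|i-j|>1$, the objects $\FE_i$ and $\FE_j$ are supported on disjoint colors, the $1$-morphisms $\FE_i \FE_j \one_\wtn$ and $\FE_j \FE_i \one_\wtn$ are canonically isomorphic in $\BFoam$ via a (uni-colored on each strand) crossing, and this isomorphism is $\tau$-equivariant since $\tau$ acts trivially on crossings of non-adjacent colors by Corollary~\ref{cor:ourthicktau} (the relevant $v_{i,j}$ vanishes for $|i-j|>1$ in \eqref{eq:ourfns}); hence $[(\FE_i \FE_j, \swcl_1 \otimes \swclp_1)]_\tau = [(\FE_j \FE_i, \swclp_1 \otimes \swcl_1)]_\tau$ and the normalization signs match trivially. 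Third, and this is the substantive input, Theorem~\ref{T:iserre-in-twisted-K0} gives the lift of the devil's Serre relation to $\tauKzero{\EBFoam}$; I would re-express that identity in terms of the normalized classes $\mathbf{x}_i^{(k)}$, verify that all the normalization signs conspire to reproduce \eqref{eq:iSerreintro} exactly (using $(-1)^{\binom{n-1+1}{2}} = (-1)^{\binom{n}{2}}$, the value $(-1)^n[2]$ versus $[2]$ adjusting against the sign on $\mathbf{x}_i^{(2)}$, etc.), and conclude. Since a presentation of $U'_{-q^2}(\som)$ is confirmed by Proposition~\ref{prop:iSerreintro}, once all three relation-families hold the universal property of a presented algebra yields the desired $\C(q)$-algebra homomorphism.

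\textbf{Main obstacle.} The conceptual content is already done: Theorems~\ref{T:div-powers-in-twisted-K0} and \ref{T:iserre-in-twisted-K0} are the hard categorical computations, and Proposition~\ref{prop:iSerreintro} supplies the presentation. The only real work left is the sign accounting — tracking how the normalization factor $(-1)^{\binom{n+1}{2}+\binom{n-k+1}{2}+k}$ interacts with the signs $(-1)^{n+k+1}$, $(-1)^n$ that appear in the two categorified relations, and confirming everything collapses to the clean signs $(-1)^k$ in \eqref{eq:idpb} and the sign-free form of \eqref{eq:iSerreintro}. This is routine but error-prone; I would organize it by first pinning down the normalization on the two special cases $k=1,2$ that appear in the Serre relation (where $\binom{n}{2}$ and $\binom{n-1}{2}$ enter), then checking the general-$k$ divided-power recursion, exactly mirroring the bookkeeping already spelled out in the proof of Corollary~\ref{cor:K0(X2)}. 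One should also note explicitly that Hypothesis~\ref{hypo:assumeme} holds for $\BFoam$ (needed so that $\tauKzero{\EBFoam}$ behaves as expected and Proposition~\ref{prop:computeequivGgp} applies), which is the same observation invoked in Corollary~\ref{cor:K0(X2)}.
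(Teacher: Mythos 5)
Your proposal is correct and follows essentially the same route as the paper's proof: reduce via the presentation in Proposition~\ref{prop:iSerreintro} to checking the divided-power recursion (Theorem~\ref{T:div-powers-in-twisted-K0} with the sign bookkeeping from Corollary~\ref{cor:K0(X2)}), far-commutativity (via a $\tau$-equivariant crossing isomorphism between $\FE_i\FE_\ell$ and $\FE_\ell\FE_i$ for $|i-\ell|>1$), and the devil's Serre relation (Theorem~\ref{T:iserre-in-twisted-K0} plus normalization-sign accounting). Your extra remark pinning the equivariance of the far-commuting crossing to the vanishing of $v_{i,\ell}$ for $|i-\ell|>1$ in \eqref{eq:ourfns} is a correct and slightly more explicit justification than the paper's terse citation of the diagrammatic isomorphism, but it is not a genuinely different argument.
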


\begin{proof}
Set 
$\mathbf{x}_i^{(k)} := [(\FE^{(k)}_i, (-1)^{{n+1 \choose 2}+{n-k+1 \choose 2}+k} \swcl_{k})]_{\tau}$
and $\mathbf{x}_i := \mathbf{x}_i^{(1)}$.
In light of Corollary \ref{cor:K0(X2)} and Proposition \ref{prop:iSerreintro},
it suffices to show that
\begin{equation}
	\label{eq:iSerrecat1}
\mathbf{x}_i \mathbf{x}_\ell = \mathbf{x}_\ell \mathbf{x}_i \quad \text{for } |i-\ell| >1
\end{equation}
and
\begin{equation}
	\label{eq:iSerrecat2}
\mathbf{x}_{i} \mathbf{x}_{i\pm1} \mathbf{x}_{i} 
	= \mathbf{x}_{i}^{(2)} \mathbf{x}_{i\pm1} + \mathbf{x}_{i\pm1} \mathbf{x}_{i}^{(2)} 
		+ [2] \mathbf{x}_{i}^{(2)} + \mathbf{x}_{i} \, .
\end{equation}

Equation \eqref{eq:iSerrecat1} follows from the isomorphism
\[
\begin{tikzpicture} [scale=.25,anchorbase]
	\draw[thick,green,<-] (0,0) to [out=90,in=270] (2,3);
	\draw[thick,green,->] (1,0) to [out=90,in=270] (3,3);
	\draw[thick,<-] (2,0) to [out=90,in=270] (0,3);
	\draw[thick,->] (3,0) to [out=90,in=270] (1,3);
\end{tikzpicture} \colon
[(\FE_i, \swcl_{1})]_{\tau} \otimes [(\FE_\ell, \mathrm{LC}_1)]_{\tau} 
\cong
[(\FE_\ell, \mathrm{LC}_1)]_{\tau} \otimes [(\FE_i, \swcl_{1})]_{\tau} \, .
\]
Finally, Theorem \ref{T:iserre-in-twisted-K0} gives that
\begin{multline*}
(-1)^{{n+1 \choose 2}+{n \choose 2}+1} \mathbf{x}_{i} \mathbf{x}_{i\pm1} \mathbf{x}_{i} 
= (-1)^{{n+1 \choose 2}+{n \choose 2}+1} \mathbf{x}_{i}  
	+ (-1)^{n+{n+1 \choose 2}+{n-1 \choose 2}+2} [2] \mathbf{x}_{i}^{(2)} \\
		- (-1)^{{n-1 \choose 2}+2+{n \choose 2}+1} 
			(\mathbf{x}_{i}^{(2)} \mathbf{x}_{i\pm1} + \mathbf{x}_{i\pm1} \mathbf{x}_{i}^{(2)} )
\end{multline*}
which simplifies to \eqref{eq:iSerrecat2}.
\end{proof}

%
\section{Background on link homology}
	\label{s:LH}
%

In this section, we review background material on link homology in type $A$.
We also discuss certain representation-theoretic results which will allow us to 
deduce invariance results for our type $B$ link homologies (defined below in Section \ref{s:SLH}) 
from their type $A$ counterparts.

\subsection{Type $A$ link polynomials via Howe duality}
	\label{ss:Alinkpoly}

In type $A$, work of 
Cautis--Kamnitzer--Licata \cite{CKL} and Cautis--Kamnitzer--Morrison \cite{CKM} 
shows that the quantum $\sln$ link polynomials $P_{\sln}(\mathcal{L}_{\beta}^{\vec{\lambda}})$ 
can be computed using an 
auxiliary quantum group $U_q(\glm)$ associated to the (non-simple) Lie algebra $\glm$. 
This approach proceeds through (a quantization of) the Howe duality between $\glm$ and $\gln$
that we now briefly recall.
Consider the vector space $\Lambda(\C^m \otimes \C^N)$, 
which admits actions of $\glm$ and $\gln$ that generate each others commutant. 
The weight space decomposition for the $\glm$ action (in degree $k$) is given by
\begin{equation}
	\label{eq:skewHowe}
\Lambda^k(\C^m \otimes \C^N) \cong 
\Lambda^k(\underbrace{\C^N \oplus \cdots \oplus \C^N}_{m}) \cong
\bigoplus_{\sum a_i = k} \Lambda^{a_1}(\C^N) \otimes \cdots \otimes \Lambda^{a_m}(\C^N) \, .
\end{equation}
Most importantly for us, the (symmetric) braiding on the $\sln$ modules appearing as summands in the 
right-hand side of \eqref{eq:skewHowe} admits a description in terms of the Weyl group 
action associated with the action of $\glm$ on the left-hand side.
In the quantized setting, this remains true: the braiding on $U_q(\sln)$ modules admits a 
description in terms of the quantum Weyl group action for $U_q(\glm)$.

To formulate quantum skew Howe duality in the form most useful for our considerations, 
we use the idempotent form  $\dU(\glm)$ of the quantum group from Section \ref{ss:idempotent}. 
Recall from Remark \ref{R:dU-is-category} that $\dU(\glm)$ is a category with objects $\wt \in \Z^m$.

\begin{thm}[{\cite[Theorem 4.3]{CKL}, \cite[Theorem 6.2.1]{CKM}}]
	\label{thm:qWG}
There is a full functor 
\[
\SH_m^N \colon \dU(\glm) \to \Rep \big(U_q(\sln) \big)
\]
given on objects by
\begin{equation}
	\label{eq:SH}
\SH_m^N(\wt) = \Lambda^{a_1}(\C^N) \otimes \cdots \otimes \Lambda^{a_m}(\C^N) =: \Lambda^{\wt}(\C^N) \, .
\end{equation}
If we define the \emph{renormalized quantum Weyl group} elements of $\dU(\glm)$ via
\begin{equation}
	\label{eq:qWG}
\QW_i^\pm \one_{\wt} := \sum_{s\ge 0}(-q)^{\pm (s- a_{i+1})} f_i^{(\alpha_i^{\vee}(\wt) + s)} e_i^{(s)}\one_{\wt} \, ,
\end{equation}
then 
\begin{equation}
	\label{eq:SHR}
\SH_m^N( \QW_i^\pm\one_{\wt} ) 
	= (-q^{1/N})^{\mp a_i a_{i+1}} R^{\mp1}_{\Lambda^{a_i}(\C^N),\Lambda^{a_{i+1}}(\C^N)} \, . \qed
\end{equation}
\end{thm}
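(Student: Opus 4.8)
The plan is to reconstruct the argument of Cautis--Kamnitzer--Licata \cite{CKL} and Cautis--Kamnitzer--Morrison \cite{CKM} in the conventions of \S\ref{ss:QG} (those of \cite{KamnTin,ST}). First I would define $\SH_m^N$ on objects by \eqref{eq:SH}, and on the generating $1$-morphisms $e_i^{(k)}\one_\wt$ and $f_i^{(k)}\one_\wt$ of $\dU(\glm)$ by the explicit $U_q(\gln)$-intertwiners $\Lambda^{a_i}(\C^N)\otimes\Lambda^{a_{i+1}}(\C^N) \to \Lambda^{a_i\pm k}(\C^N)\otimes\Lambda^{a_{i+1}\mp k}(\C^N)$ coming from the commuting $U_q(\glm)$-action on the $q$-exterior algebra $\Lambda_q^\bullet(\C^m\otimes\C^N)\cong\bigoplus_\wt\Lambda^\wt(\C^N)$ (equivalently, suitable compositions of the fundamental (co)evaluations). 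One then checks that these morphisms satisfy the defining relations of Definition \ref{def:Udotgl}; since finitely many relations are involved and each is an identity between morphisms in the semisimple category $\Rep(U_q(\gln))$, this is a bounded computation. This produces the functor and, simultaneously, the weight-space decomposition \eqref{eq:skewHowe}.

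Next comes fullness. The cleanest route is diagrammatic: by \cite{CKM} the full monoidal subcategory of $\Rep(U_q(\gln))$ on the objects $\Lambda^\wt(\C^N)$ is presented by the $\sln$ web (ladder) calculus, and each ladder generator is manifestly $\SH_m^N(e_i^{(k)}\one_\wt)$ or $\SH_m^N(f_i^{(k)}\one_\wt)$, so every morphism between such objects lies in the image. Alternatively, one may run a double-centralizer argument: at $q=1$ classical skew Howe duality identifies $\End_{U(\gln)}(\Lambda^\bullet(\C^m\otimes\C^N))$ with the image of $U(\glm)$, and a flatness/semicontinuity argument in $q$ transports this to generic $q$, using that $\Hom$-spaces in $\Rep(U_q(\gln))$ between tensor products of fundamental representations are free of the expected rank.

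The last point is the braiding formula \eqref{eq:SHR}. The renormalized quantum Weyl group element $\QW_i^\pm\one_\wt$ of \eqref{eq:qWG} is, up to the prefactor, Lusztig's operator $T_i$ for the $\sln[2]$-string indexed by $i$ acting on $\Lambda^{a_i}(\C^N)\otimes\Lambda^{a_{i+1}}(\C^N)$ regarded as a $U_{q_i}(\sln[2])$-module --- indeed \eqref{eq:qWG} is the simplified form of $T''_{i,+1}$ from \eqref{eq:LT}, cf.\ Lemma \ref{qWeylLemma}. Using the description of the $U_q(\sln)$-braiding via the longest-element quantum Weyl group operator (Theorem \ref{thm:KR} with Proposition \ref{prop:R=xy}), $T_i$ becomes $\flip\circ R=\RM_{\Lambda^{a_i},\Lambda^{a_{i+1}}}$ up to the $q^{(\mathrm{wt},\mathrm{wt})}$-normalization and a ribbon twist. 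The discrepancy is precisely that between the bilinear form on $\gln$ weights (which is what appears upstairs, since the skew-Howe picture is $\gln$-flavoured) and the form on $\sln$ weights used to define $\RM$; evaluating both sides on a highest-weight vector as in Lemma \ref{trivalenttwist} pins the scalar down to $(-q^{1/N})^{\mp a_i a_{i+1}}$.

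I expect the main obstacle to be establishing fullness in a self-contained way over $\C(q)$: the $q=1$ double-centralizer statement is classical, but the specialization argument needs control of the dimensions of $\Hom$-spaces, which is most naturally obtained through the web calculus of \cite{CKM}. So a genuinely self-contained proof would reproduce that calculus together with careful bookkeeping of the normalizations of \cite{KamnTin,ST} required for \eqref{eq:SHR}. In practice, since the statement is quoted verbatim from \cite{CKL,CKM}, one cites those references and only re-derives the normalization constant in \eqref{eq:SHR} in the present conventions.
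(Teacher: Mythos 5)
The paper does not prove this theorem: it is stated with a \qed mark as a result quoted verbatim from \cite[Theorem~4.3]{CKL} and \cite[Theorem~6.2.1]{CKM}, translated into the conventions of \S\ref{ss:QG}. There is no ``paper's own proof'' to compare against, and you correctly note this at the end of your proposal.

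That said, your reconstruction of the argument of \cite{CKL,CKM} is sound as an outline, and your identification of the one nontrivial adaptation --- re-deriving the normalization constant $(-q^{1/N})^{\mp a_ia_{i+1}}$ in the $\ST$/Kamnitzer--Tingley conventions --- is the right thing to flag. Two small remarks. First, the element $\QW_i^\pm\one_\wt$ in \eqref{eq:qWG} is not literally the operator $T_i=T''_{i,+1}$ of \eqref{eq:LT}: the orders of $e$ and $f$ differ, the exponent of $-q$ is shifted by $a_{i+1}$, and the $\pm$ gives both a braiding and its inverse; the word ``renormalized'' in the name is doing real work, and if you carry out the last paragraph you must track the precise relation between $\QW_i^\pm$ and $T_{i,\pm1}''$ rather than treating them as equal ``up to prefactor.'' Second, the cleanest derivation of the prefactor is not via a highest-weight-vector computation as in Lemma~\ref{trivalenttwist} but simply by comparing the $q^{(\mathrm{wt},\mathrm{wt})}$-term of Theorem~\ref{thm:KR} for $\gln$ (with $(\epsilon_i,\epsilon_j)=\delta_{ij}$) against the $\sln$ convention (restricting the form to the trace-zero hyperplane), which directly produces the $q^{a_ia_{i+1}/N}$; the sign $(-1)^{a_ia_{i+1}}$ comes from the $(-q)$ versus $q$ discrepancy between Lusztig's $T_i$ and the $R$-matrix, and is exactly the twist coefficient $(-1)^{(2\lambda,\rho^\vee)}$ in Remark~\ref{rem:framing} in the fundamental case. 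Neither point is a gap; both are bookkeeping issues you would need to resolve if you actually carried the sketch through.
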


By convention, $\Lambda^a(\C^N) = 0$ if $a<0$ or $a > N$, so \eqref{eq:qWG} sends $\wt \in \Z^m$ to zero if any 
$a_i < 0$ or if any $a_i > N$. It follows that the functor $\SH_m^N$ factors through the following quotients of $\dU(\glm)$.

\begin{defn}
	\label{def:qSchur}
The \emph{integral $q$-Schur algebra} $\ZdS(\glm)$ is the quotient of $\ZdU(\glm)$ by the ideal generated by 
all weight idempotents $\one_{\wt}$ such that some $a_i< 0$. 
The \emph{integral $N$-bounded quotient} of $\ZdS(\glm)$, denoted $\ZdSn(\glm)$, 
is the quotient of $\ZdS(\glm)$ by the ideal generated by the $\one_{\wt}$ such that some $a_i> N$.
\end{defn}

As in the case of the quantum group, there are non-integral versions of the above
\[
\dS(\glm) := \C(q) \otimes_{\Z[q^{\pm}]} \ZdS(\glm)
\, , \quad
\dSn(\glm) := \C(q) \otimes_{\Z[q^{\pm}]} \ZdSn(\glm)
\]
which can also be defined as quotients of $\dU(\glm)$.
The following now refines the first statement of Theorem \ref{thm:qWG}.

\begin{thm}[{\cite[Theorem 4.4.1]{CKM}}]
	\label{thm:SH}
The functor $\dSn(\glm) \to \Rep \big( U_q(\sln) \big)$ is fully faithful. \qed
\end{thm}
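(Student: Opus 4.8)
The plan is to follow \cite[Theorem 4.4.1]{CKM}: fullness of $\SH_m^N$ is already given by Theorem \ref{thm:qWG}, so the only thing to prove is faithfulness of the induced functor $\overline{\SH}\colon\dSn(\glm)\to\Rep(U_q(\sln))$. First I would check $\overline{\SH}$ is well defined and still full: $\SH_m^N$ sends $\one_\wt$ to the identity of $\Lambda^\wt(\C^N)=\Lambda^{a_1}(\C^N)\otimes\cdots\otimes\Lambda^{a_m}(\C^N)$, and $\Lambda^a(\C^N)=0$ whenever $a\notin\{0,\dots,N\}$, so the defining ideal of $\dSn(\glm)$ lies in the kernel of $\SH_m^N$; the quotient functor is full since $\SH_m^N$ is and $\dU(\glm)\to\dSn(\glm)$ is surjective on $\Hom$-spaces. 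Next, since the Chevalley generators $e_i,f_i$ of $\dU(\glm)$ shift a weight $\wt$ by $\pm\alpha_i$, which has vanishing coordinate sum, one has $\one_\wtt\dSn(\glm)\one_\wt=0$ unless $|\wt|:=\sum a_i$ equals $|\wtt|$ (and likewise when some entry is out of range). Thus faithfulness reduces to injectivity of
\[
\one_\wtt\,\dSn(\glm)\,\one_\wt \longrightarrow \Hom_{U_q(\sln)}\!\big(\Lambda^\wt(\C^N),\Lambda^\wtt(\C^N)\big)
\]
for all $\wt,\wtt$ with $|\wt|=|\wtt|=k$ and all entries in $\{0,\dots,N\}$.

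The second step is to pass from $\sln$ to $\gln$ and invoke quantum skew Howe duality. The space $\Lambda^k(\C^m\otimes\C^N)$ carries commuting actions of $U_q(\glm)$ and $U_q(\gln)$, and by \eqref{eq:skewHowe} it is, as a $U_q(\gln)$-module, $\bigoplus_\wt\Lambda^\wt(\C^N)$ over the $N$-bounded weights $\wt$ with $|\wt|=k$. I would use two consequences of skew Howe duality: the semisimple decomposition $\Lambda^k(\C^m\otimes\C^N)\cong\bigoplus_\lambda V^m(\lambda)\otimes V^N(\lambda^t)$ as a $U_q(\glm)\otimes U_q(\gln)$-module, with $\lambda$ running over partitions of $k$ fitting in an $m\times N$ rectangle; and the double-centralizer statement that the image of $U_q(\glm)$ in $\End_{\C(q)}(\Lambda^k(\C^m\otimes\C^N))$ is exactly $\End_{U_q(\gln)}(\Lambda^k(\C^m\otimes\C^N))$. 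Since the out-of-range idempotents already act as $0$, the latter says $\overline{\SH}$ in degree $k$ induces a surjection $\dSn(\glm)\one^{(k)}\twoheadrightarrow\End_{U_q(\gln)}(\Lambda^k(\C^m\otimes\C^N))$, where $\one^{(k)}:=\sum_{|\wt|=k}\one_\wt$, and the former gives $\dim_{\C(q)}\End_{U_q(\gln)}(\Lambda^k(\C^m\otimes\C^N))=\sum_\lambda(\dim V^m(\lambda))^2$. Moreover restriction from $U_q(\gln)$ to $U_q(\sln)$ is fully faithful on modules of a fixed $\gl_1$-degree — two $\gln$-irreducibles of degree $k$ have distinct $\sln$-restrictions, and the extra central-equivariance constraint on a morphism between degree-$k$ modules is vacuous — so $\Hom_{U_q(\gln)}(\Lambda^\wt,\Lambda^\wtt)=\Hom_{U_q(\sln)}(\Lambda^\wt,\Lambda^\wtt)$ whenever $|\wt|=|\wtt|$.

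It then remains to show the surjection $\dSn(\glm)\one^{(k)}\to\End_{U_q(\gln)}(\Lambda^k(\C^m\otimes\C^N))$ is an isomorphism; this is the main obstacle, and is the point where the genuine representation theory enters. I would settle it by a dimension count. Over $\C(q)$ the $N$-bounded $q$-Schur algebra $\dSn(\glm)\one^{(k)}$ is semisimple, with simple modules the Weyl modules $V^m(\lambda)$ indexed by partitions $\lambda$ of $k$ with at most $m$ rows \emph{and} largest part at most $N$ — the part bound being precisely what passage to the $N$-bounded quotient enforces, since every weight of $V^m(\lambda)$ has all entries at most $\lambda_1$. Hence $\dim_{\C(q)}\dSn(\glm)\one^{(k)}=\sum_\lambda(\dim V^m(\lambda))^2$, exactly the dimension of $\End_{U_q(\gln)}(\Lambda^k(\C^m\otimes\C^N))$ found above, and a surjection of $\C(q)$-spaces of equal finite dimension is an isomorphism. (Alternatively, one may simply cite that the $N$-bounded $q$-Schur algebra is cellular/quasi-hereditary and that $\bigoplus_\wt\Lambda^\wt(\C^N)$ is a faithful module for it.) Finally, unwinding the identifications and noting that the invertible scalars in \eqref{eq:SHR} do not affect bijectivity, the composite $\one_\wtt\dSn(\glm)\one_\wt\xrightarrow{\cong}\Hom_{U_q(\gln)}(\Lambda^\wt,\Lambda^\wtt)=\Hom_{U_q(\sln)}(\Lambda^\wt,\Lambda^\wtt)$ is precisely the map induced by $\overline{\SH}$, so $\overline{\SH}$ is faithful, hence fully faithful.
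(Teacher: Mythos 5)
The paper does not supply a proof of this statement: it is presented as a citation, with the attribution ``[\cite[Theorem 4.4.1]{CKM}]'' in the theorem header and a \verb|\qed| closing the statement, so the source for comparison is CKM itself rather than anything in the paper. Your sketch is correct and is essentially the argument in CKM: fullness already holds for $\SH_m^N$, the induced functor on $\dSn(\glm)$ is well-defined because out-of-range idempotents are sent to zero, the $\gl_1$-degree lets you pass freely between $U_q(\gln)$- and $U_q(\sln)$-intertwiners, and faithfulness becomes a dimension count using the semisimple decomposition of $\Lambda^k(\C^m\otimes\C^N)$ under $U_q(\glm)\otimes U_q(\gln)$ together with the fact that the $N$-bounded quotient of the (generic $q$) semisimple Schur algebra retains exactly the blocks indexed by partitions with $\leq m$ rows and $\leq N$ columns. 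The one step worth stating with a bit more care is the identification of the simples surviving the quotient: you want to observe that the ideal generated by out-of-range idempotents is, by semisimplicity of $\dS(\glm)\one^{(k)}$ over $\C(q)$, a product of full matrix blocks, and that a block $\End(V^m(\lambda))$ meets the ideal if and only if $V^m(\lambda)$ has a weight with some entry $>N$, which happens if and only if $\lambda_1>N$ (every weight of $V^m(\lambda)$ has entries $\leq\lambda_1$, since an SSYT has at most one $i$ per column). Otherwise the proposal is a faithful reconstruction of the CKM argument.
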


Theorems \ref{thm:qWG} and \ref{thm:SH} now provide a description of the $U_q(\sln)$ link 
polynomial purely in terms of the $N$-bounded quotient of the $q$-Schur algebra.

\begin{prop}
	\label{prop:qSlinkpoly}
There is a unique $\C(q)$-valued bilinear form $(-,-)_N$ on $\dSn(\glm)$ such that 
\begin{enumerate}
\item $(\one_{\wtt} e_i x \one_{\wt}, \one_{\wtt} y \one_{\wt})_N 
	= (\one_{\wtt- \alpha_i} x \one_{\wt}, \one_{\wtt- \alpha_i} f_i y \one_{\wt})_N$,
\item $(\one_{\wtt} f_i x \one_{\wt}, \one_{\wtt} y \one_{\wt})_N 
	= (\one_{\wtt+\alpha_i} x \one_{\wt}, \one_{\wtt+\alpha_i} e_i y \one_{\wt})_N$,
\item $(\one_{\wtt} x e_i\one_{\wt}, \one_{\wtt} y \one_{\wt})_N 
	= (\one_{\wtt} x \one_{\wt+ \alpha_i}, \one_{\wtt} y f_i\one_{\wt+\alpha_i})_N$, 
\item $(\one_{\wtt} x f_i\one_{\wt}, \one_{\wtt} y \one_{\wt})_N
	= (\one_{\wtt} x \one_{\wt- \alpha_i}, \one_{\wtt} y e_i\one_{\wt+\alpha_i})_N$,
\item $(\one_{\wtt} x \one_{\wt}, \one_{\wtt'}y\one_{\wt'})_N = 0$, unless $\wtt= \wtt'$ and $\wt = \wt'$, and
\item $(\one_{\wt}, \one_{\wt})_N = \prod_{i=1}^m{N \brack a_i}$.
\end{enumerate}
Given $x \in \one_{\wt}\dSn(\glm)\one_{\wt}$, this bilinear form satisfies $\trq(\SH_m^N(x))= (\one_{\wt}, x)_N$. 
Consequently, if $\beta = \beta_{i_1}^{\epsilon_1}\cdots \beta_{i_d}^{\epsilon_d} \in \brgroup$
is an $\wt$-balanced braid, then 
\begin{equation}
	\label{eq:SHlp}
\bP_{\sln}(\mathcal{L}_\beta^{\wt}) = 
(-q^{1/N})^{\epsilon(\bgen,\wt)}
\big( \one_{\wt}, \QW_{i_1}^{\epsilon_1} \cdots \QW_{i_d}^{\epsilon_d} \one_{\wt} \big)_N \, .
\end{equation}
Here, $\mathcal{L}_\beta^{\wt}$ denotes the coloring of the braid closure 
$\mathcal{L}_\beta$ determined by $\wt$, 
and $\epsilon(\bgen,\wt)$ is\footnote{As the notation suggests, 
this can be viewed as a colored analogue of the exponent sum of $\beta$.} 
the sum over the crossings in $\beta \one_\wt$ 
of $\pm a b$, where $a$ and $b$ are the labels coloring the strands of the crossing 
and the sign is given by the sign of the crossing.
\end{prop}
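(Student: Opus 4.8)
The plan is to prove the three assertions in order: first the \emph{uniqueness} of the form, then its \emph{existence} via skew Howe duality and the quantum trace, then the identity $\trq(\SH_m^N(x))=(\one_\wt,x)_N$ as an immediate consequence of the construction, and finally the link polynomial formula \eqref{eq:SHlp} by combining with Theorems \ref{thm:qWG} and \ref{thm:RT}.

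For uniqueness, I would observe that $\dSn(\glm)$ is spanned over $\C(q)$ by elements $\one_\wtt M\one_\wt$ with $M$ a monomial in the divided powers $e_i^{(k)},f_i^{(k)}$. Relations (1)--(4) (iterated to handle divided powers) allow one to transfer such generators from the first argument of the pairing onto the second, adjusting weights accordingly; after all generators have been moved off the first argument one is left with a pairing of the form $(\one_\mu,\one_\mu z\one_\mu)_N$, and one then moves the generators of $z$ back and forth, using the defining relations of $\dSn(\glm)$ (notably $ef=fe+\cdots$) to put $z$ into a normal form with all $f$'s left of all $e$'s and to commute the resulting $e$'s back onto the first argument. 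Each commutator $ef-fe$ produces terms of strictly smaller total degree, so an induction on the number of generators shows that every pairing is a $\C(q)$-combination of the $(\one_\mu,\one_\mu)_N$; property (5) governs which weights can occur and property (6) supplies the values, so any form satisfying (1)--(6) is unique.

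For existence, let $\psi$ be the weight-preserving anti-automorphism of $\dU(\glm)$ determined by $e_i\mapsto f_i$, $f_i\mapsto e_i$, $\one_\wt\mapsto\one_\wt$; one checks on the defining relations of $\dU(\glm)$ from Definition \ref{def:Udotgl} that $\psi$ is well defined, and since it fixes all weight idempotents it descends to an anti-automorphism of $\dSn(\glm)$. Given $x,y\in\one_\wtt\dSn(\glm)\one_\wt$, the element $\psi(x)y$ lies in $\one_\wt\dSn(\glm)\one_\wt$, so $\SH_m^N(\psi(x)y)$ is an endomorphism of $\Lambda^\wt(\C^N)$ in the pivotal category $\Rep(U_q(\sln))$; set $(\one_\wtt x\one_\wt,\one_\wtt y\one_\wt)_N:=\trq\big(\SH_m^N(\psi(x)y)\big)$ and extend by $0$ whenever source or target weights differ. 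Property (5) is then built in, and property (6) holds because the quantum trace of the identity of $\Lambda^\wt(\C^N)=\Lambda^{a_1}(\C^N)\otimes\cdots\otimes\Lambda^{a_m}(\C^N)$ is $\prod_i{N\brack a_i}$; properties (1)--(4) follow from $\psi$ being an anti-homomorphism with $\psi(e_i)=f_i$, together with functoriality of $\SH_m^N$ and cyclicity of the quantum trace (cyclicity being needed only for (3) and (4)). Taking $x=\one_\wt$, so $\psi(\one_\wt)=\one_\wt$, yields $(\one_\wt,y)_N=\trq(\SH_m^N(y))$; this is the second assertion, and by uniqueness this form is the one characterized by (1)--(6).

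Finally, if $\beta=\beta_{i_1}^{\epsilon_1}\cdots\beta_{i_d}^{\epsilon_d}$ is $\wt$-balanced, then $\QW_{i_1}^{\epsilon_1}\cdots\QW_{i_d}^{\epsilon_d}\one_\wt$ is a genuine endomorphism of $\wt$ in $\dSn(\glm)$, so the previous step gives $(\one_\wt,\QW_{i_1}^{\epsilon_1}\cdots\QW_{i_d}^{\epsilon_d}\one_\wt)_N=\trq\big(\SH_m^N(\QW_{i_1}^{\epsilon_1}\cdots\QW_{i_d}^{\epsilon_d}\one_\wt)\big)$. Rewriting each factor with \eqref{eq:SHR} turns the image into $(-q^{1/N})^{-\epsilon(\bgen,\wt)}$ times the composite of inverse braidings computing the Reshetikhin--Turaev invariant in the mirror/inverse-$R$-matrix convention of Remark \ref{rem:Rconv}; by Theorem \ref{thm:RT} and \eqref{eq:Pbar} the quantum trace of that composite is $\bP_{\sln}(\mathcal{L}_\beta^\wt)$, and solving for it gives \eqref{eq:SHlp}. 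The main obstacle I expect is precisely this last bookkeeping: matching the signs and fractional powers of $q$ in \eqref{eq:SHR} against the convention defining $\bP_{\sln}$, verifying that the accumulated scalar is exactly $(-q^{1/N})^{\epsilon(\bgen,\wt)}$ with $\epsilon(\bgen,\wt)$ as in the statement, and using the $\wt$-balanced hypothesis to guarantee that the relevant element is an endomorphism so that the trace formula from the previous paragraph applies. The existence argument is conceptually clean once $\psi$ is in hand, and uniqueness is a routine, if slightly tedious, induction.
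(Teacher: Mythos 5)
Your proof is correct and takes the same approach as the paper's: uniqueness by algorithmic reduction to the values on idempotents, existence via $(x,y)_N := \trq(\SH_m^N(\psi(x)y))$ with $\psi$ the anti-involution sending $e_i \mapsto f_i$, and \eqref{eq:SHlp} by substituting \eqref{eq:SHR} and invoking Theorem \ref{thm:RT} and Remark \ref{rem:Rconv}. One small caution on your uniqueness step: termination of the reduction relies on the $N$-bounded quotient killing weight idempotents whose entries escape $\{0,\ldots,N\}$, not on a strict drop in total degree from the $ef$-commutation relations (the leading term $f^{(s-t)}e^{(r-t)}$ with $t=0$ has the same degree), which is why the paper cites the annular evaluation algorithm of \cite{QRS}.
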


\begin{proof}
Properties (1) -- (6) suffice to compute the value of such a bilinear form on $\dSn(\glm)$, 
e.g.~using the triangular decomposition for $\dU(\glm)$ 
or the annular evaluation algorithm from \cite[Theorems 1.2 and 3.2]{QRS}.
It thus suffices to show that such a form exists, and satisfies $\trq(\SH_m^N(x))= (\one_{\wt}, x)_N$.

In fact, we can essentially use the latter as the definition. 
Consider the $\C(q)$-linear anti-algebra involution of $\dU(\glm)$ given by
$\one_{\wtt} e_i \one_{\wt} \mapsto \one_{\wt} f_i \one_{\wtt}$.
This involution descends to $\dSn(\glm)$ and we denote the image of 
$x \in \dSn(\glm)$ under this involution by $\bar{x}$.
For $x,y \in \dSn(\glm)$, set
\[
(x,y)_N := \trq(\SH_m^N(\bar{x} y))
\]
which clearly satisfies (1) and (2).
Properties (3) and (4) hold since $\trq$ is trace-like, 
and (5) then follows from the mutual orthogonality of the weight idempotents $\one_\wt$.
Lastly, (6) holds since $\trq(\SH_m^N(\one_\wt))$ computes the quantum $\sln$ 
invariant of the $\wt$-colored $m$-component unlink, which equals $\prod_{i=1}^m{N \brack a_i}$.
\end{proof}

\begin{rem}
Note that, in light of \eqref{eq:SHR}, 
it is the link polynomials $\bP_{\sln}(\mathcal{L})$ from Remark \ref{rem:Rconv} 
that appear in \eqref{eq:SHlp}.
\end{rem}

\subsection{Rickard complexes and colored $\sln$ link homology}
	\label{ss:Rickard}

We next discuss the categorification of the Howe duality approach to  
$\sln$ link polynomials from \S \ref{ss:Alinkpoly}.
The construction we present is entirely parallel to the decategorified story:
first, one assigns to each braid $\beta \in \Br_m$ an invariant living in (the homotopy category of)
a categorified analogue of the Schur algebra for $\glm$. 
Passing to an $N$-bounded quotient and applying a trace-like functor yields a complex of graded vector spaces, 
whose homology is a link invariant which categorifies the $\sln$ link polynomial.

To begin in detail, we have categorical analogues of the $q$-Schur algebra and its $N$-bounded quotient.

\begin{defn}
	\label{def:CSQ}
Let $\SSS_q(\glm)$ (respectively $\cSSS_q(\glm)$) be the quotient 
of $\UU_q(\glm)$ (respectively $\cUU_q(\glm)$) given 
by evaluating\footnote{A consequence is that this kills all weights $\wt$ such that $a_i <0$ for some $1 \leq i \leq m$, 
since, by convention, symmetric polynomials in alphabets of negative cardinality are the zero ring.}
all formal alphabets $\X_i$ in weight $\wt$ to alphabets of cardinality equal to $a_i$.

Let $\SSSn_q(\glm)$ (respectively $\cSSSn_q(\glm)$) be the (further) 
quotient by the ideal $\mathcal{J}^{\leq N}$ generated by all new bubble morphisms
$\begin{tikzpicture}[anchorbase,scale=1]
\node at (0,0){\NB{\Schur(\X_i)}};
\node at (.75,.125){\scs$\wt$};
\end{tikzpicture}$
wherein the partition $\lambda$ does not fit inside an $a_i \times (N-a_i)$ box.
\end{defn}

\begin{rem}
	\label{rem:size}
One might instead expect $\SSSn_q(\glm)$ and $\cSSSn_q(\glm)$ to be the quotient by $\glm$ weights 
whose entries do not lie strictly between $0$ and $N$.
Such weights are indeed killed in $\SSSn_q(\glm)$ and $\cSSSn_q(\glm)$ 
since $a \times (N-a)$ is not a box in this case (thus we quotient by $\id_{\one_\wt} = \Schur[\emptyset]$).
Our definition is a further quotient which ensures that $\Hom$-spaces in $\cSSSn_q(\glm)$ have the correct size for the computation of 
(undeformed) $\sln$ link homology.
\end{rem}

\begin{rem}
Let $\K$ be a field of characteristic zero. 
As explained in \cite{WebSchur}, the $2$-category $\Kar(\SSS_q(\glm))$ is self-dual mixed; 
see Example \ref{ex:CQGmixed}.
It follows from \cite[Lemma 1.15]{Web4} that $\Kar(\SSSn_q(\glm))$ is self-dual mixed.
\end{rem}

We call the $2$-categories $\SSS_q(\glm)$ and $\SSSn_q(\glm)$,
as well as their (partially) Karoubi completed 
variants,
the \emph{categorified Schur quotient} and the \emph{categorified $N$-bounded Schur quotient}, 
respectively. The appropriateness of this terminology is given by the following.

\begin{thm}
	\label{thm:SchurCategorification}
Let $\K$ be a field. There are $\Z[q^{\pm}]$-algebra isomorphisms
\[
\Kzero{\Kar(\SSS_q(\glm))} \cong \ZdS(\glm)
\, , \quad
\Kzero{\Kar(\SSSn_q(\glm))} \cong \ZdSn(\glm) \, .
\]
\end{thm}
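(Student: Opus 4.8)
The plan is to deduce both isomorphisms from Theorem~\ref{thm:KL} --- in the integral form proved in \cite{KLMS} --- by checking that the two defining quotient functors $\UU_q(\glm) \xrightarrow{\pi_1} \SSS_q(\glm) \xrightarrow{\pi_2} \SSSn_q(\glm)$ categorify the decategorified quotient maps $\ZdU(\glm) \twoheadrightarrow \ZdS(\glm) \twoheadrightarrow \ZdSn(\glm)$ of Definition~\ref{def:qSchur}. Since $\pi_1$ and $\pi_2$ are monoidal and bijective on objects, it suffices to show that on split Grothendieck groups $\Kar(\pi_1)$ induces the quotient of $\ZdU(\glm)$ by the two-sided ideal generated by the idempotents $\one_\wt$ with $\text{some } a_i < 0$, and that $\Kar(\pi_2)$ induces the further quotient by the ideal generated by the $\one_\wt$ with $\text{some } a_i > N$; these quotients are $\ZdS(\glm)$ and $\ZdSn(\glm)$ respectively. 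I would first record that, over a field, these are graded categories whose $\Hom$-spaces are non-negatively graded and finite-dimensional in each degree with $\K$ in degree zero for endomorphisms of indecomposables, so that (by the standard arguments recalled in \S\ref{sec:equiv}) $\Kar(\SSS_q(\glm))$ and $\Kar(\SSSn_q(\glm))$ are Krull--Schmidt and both Grothendieck groups are free $\Z[q^{\pm}]$-modules on the indecomposables.

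For the identification of $\Kar(\pi_1)$ on $K_0$: the passage from $\UU_q(\glm)$ to $\SSS_q(\glm)$ amounts to evaluating the formal alphabets $\X_i$ to alphabets of size $a_i$ in weight $\wt$; this forces $\one_\wt = 0$ whenever $\text{some } a_i < 0$ (symmetric functions in an alphabet of negative cardinality form the zero ring), matching the definition of $\ZdS(\glm)$ exactly, and on the surviving weights it imposes only relations of strictly positive degree (e.g.\ $e_r(\X_i) = 0$ for $r > a_i$, and their consequences on all $\Hom$-spaces). Consequently $\pi_1$ carries each indecomposable $1$-morphism $B$ either to zero --- precisely when $B$ is supported on a weight with a negative entry, i.e.\ when $[B]$ lies in the claimed ideal --- or to an indecomposable (a quotient of the graded-local ring $\End_{\UU_q(\glm)}(B)$ being graded-local or zero), and the Khovanov--Lauda $\Hom$-formula together with nondegeneracy (Remark~\ref{rmk:homformula}), adapted to the quotient, shows that no two surviving indecomposables are identified. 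This yields $\Kzero{\Kar(\SSS_q(\glm))} \cong \ZdU(\glm)/\langle \one_\wt : \text{some } a_i < 0\rangle = \ZdS(\glm)$, a ring isomorphism since $\pi_1$ is monoidal. The case of $\pi_2$ is parallel: the extra ideal $\mathcal{J}^{\leq N}$ of Definition~\ref{def:CSQ} contains $\id_{\one_\wt} = \Schur[\emptyset](\X_i)$ precisely when $\text{some } a_i > N$, by the box convention of Remark~\ref{rem:size}, matching the passage from $\ZdS(\glm)$ to $\ZdSn(\glm)$; on the weights with all $a_i \leq N$ the generators of $\mathcal{J}^{\leq N}$ have degree $2|\lambda| \geq 2$, so imposing them again only changes $\Hom$-algebras in positive degrees (cutting the symmetric-function rings down to the expected finite-rank $H^\ast(\Gr{a_i}{N})$-type size, cf.\ Remark~\ref{rem:size}) and leaves the split Grothendieck group unchanged.

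The main obstacle is precisely the ``no collapsing'' step: one must know that the $\Hom$-spaces of $\SSS_q(\glm)$ and, especially, of $\SSSn_q(\glm)$ have exactly the graded dimensions predicted by the bilinear pairings on $\ZdS(\glm)$ and $\ZdSn(\glm)$ (cf.\ Proposition~\ref{prop:qSlinkpoly} and Remark~\ref{rem:size}). In characteristic zero this is packaged by the statement that $\Kar(\SSS_q(\glm))$ and $\Kar(\SSSn_q(\glm))$ are self-dual mixed with indecomposables indexed by the canonical bases of the respective Schur algebras, which can be extracted from \cite{WebSchur}; over an arbitrary field one must instead run the Khovanov--Lauda spanning-set and nondegeneracy argument directly inside the quotient categories, which is the genuinely technical point and where it is cleanest to appeal to the existing treatments of these categorifications in \cite{MSV2, WebSchur, QR1} rather than reprove them from scratch.
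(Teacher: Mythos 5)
The paper's own proof is essentially a one‑sentence citation: it appeals to \cite{MSV2} for the Schur algebra statement and asserts that the same techniques handle the $N$‑bounded quotient. Your more elaborate outline --- deduce the result from the categorification of $\dU(\glm)$ by tracking the two quotient functors $\pi_1,\pi_2$ at the level of split Grothendieck groups --- is a genuinely different presentation. It is roughly the strategy one would expect, and you correctly identify the actual content: (i) Krull--Schmidt for the Karoubi envelopes over a field; (ii) identifying which indecomposables die under $\pi_1,\pi_2$ with the appropriate two‑sided ideals; (iii) the ``no collapsing'' step, i.e.\ that the $\Hom$‑spaces of $\SSS_q(\glm)$ and especially of $\SSSn_q(\glm)$ have exactly the graded dimensions demanded by the bilinear form on the Schur algebra, so that distinct surviving indecomposables remain non‑isomorphic in the quotient.

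Two concrete caveats. First, you invoke Theorem~\ref{thm:KL} ``in the integral form proved in \cite{KLMS}'', but \cite{KLMS} treats only rank one (i.e.\ $m=2$); for general $m$ there is no off‑the‑shelf $\Z[q^{\pm}]$‑level version of the Khovanov--Lauda theorem to start from, and Theorem~\ref{thm:KL} as stated is only a $\C(q)$‑isomorphism. This means you cannot literally deduce the Schur case as a quotient of an integral $\UU$‑result; rather, the integral statement has to be established inside the quotient category itself, which is precisely what \cite{MSV2} does. Second, and you acknowledge this, step (iii) is the genuinely nontrivial part, and your outline ultimately delegates it to \cite{MSV2,WebSchur,QR1} (for $\SSS$) and to an unreferenced adaptation for $\SSSn$. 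In the end your argument and the paper's therefore rest on the same external inputs; your contribution is to make the ideal‑bookkeeping explicit, which is a fine clarification but does not close the gap that the paper's terse ``same techniques'' phrase is also eliding for the $N$‑bounded case.
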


\begin{proof}
The statement for the (categorified) Schur algebra is the main result of \cite{MSV2}. 
The techniques therein can be used to establish the result for the $N$-bounded quotient.
\end{proof}

Before continuing, we 
make precise the final claim from Remark \ref{rem:size}.

\begin{prop}
	\label{prop:size}
Let $\K$ be a commutative ring and 
let $\one_{\wt} X \one_{\wt}$ be a $1$-endomorphism in $\cSSSn_q(\glm)$.
Then, $\Hom_{\cSSSn_q(\glm)}(\one_\wt, X)$ is a free $\K$-module and
if $x \in \dSn(\glm)$ denotes the 
class of $X$ in $\dSn(\glm)$, then
\begin{equation}
	\label{eq:dimformula}
\qdim \big( \qsh^{\sum_{i=1}^m a_i(a_i-N)} \Hom_{\cSSSn_q(\glm)}(\one_\wt, X) \big) 
= (\one_\wt, x)_N \, .
\end{equation}
\end{prop}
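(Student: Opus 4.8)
The plan is to prove both assertions by matching $\Hom_{\cSSSn_q(\glm)}(\one_\wt, X)$, as a graded $\K$-module, with the value $(\one_\wt, x)_N$ via the recursive characterization of $(-,-)_N$ in Proposition~\ref{prop:qSlinkpoly}, reducing everything to the Khovanov--Lauda diagrammatic basis for $\cUU_q(\glm)$.

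First I would establish freeness. By the Khovanov--Lauda Hom formula together with nondegeneracy (Remark~\ref{rmk:homformula}), for every $1$-morphism $Z = \one_\wt Z \one_\wt$ of $\cUU_q(\glm)$ the space $\Hom_{\cUU_q(\glm)}(\one_\wt, Z)$ is a free graded $\K$-module with a distinguished basis of string diagrams whose graded rank is independent of $\K$ (being governed by the Khovanov--Lauda semilinear form, which is defined over $\Z$). Passing to $\cSSS_q(\glm)$ and then to the $N$-bounded quotient $\cSSSn_q(\glm)$ amounts, on morphism spaces, to quotienting by the span of those basis diagrams that carry a new bubble $\mathfrak{s}_\lambda(\X_i)$ in a region of weight $\wt$ with $\lambda \not\subseteq a_i \times (N - a_i)$, together with the evaluation of the formal alphabets. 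The main obstacle is to check that this quotient ideal is spanned by a sub-collection of the distinguished basis, so that $\Hom_{\cSSSn_q(\glm)}(\one_\wt, X)$ remains a free graded $\K$-module whose rank is again computed over $\Z$. The cleanest route is to identify $\cSSSn_q(\glm)$ with the $\gln$ foam $2$-category of \cite{QR1} (or to invoke the explicit foam bases produced there), where such a basis-compatible quotient is visible; alternatively one reexamines the Khovanov--Lauda spanning argument directly, noting as in Remark~\ref{rmk:homformula} that nondegeneracy passes from $\Z$ to every commutative ring. Once this is in place, the graded rank may be computed after base change to $\Q$.

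Next I would treat the base case $X = \one_\wt$. By the new-bubble relations and nondegeneracy, $\End_{\cUU_q(\glm)}(\one_\wt) \cong \Sym(\X_1 | \cdots | \X_m) \cong \bigotimes_{i=1}^m \Sym(\X_i)$; the Schur quotient evaluates $\X_i$ to an alphabet of cardinality $a_i$, and the $N$-bounded quotient then imposes precisely the defining ideal of $H^\ast(\Gr{a_i}{N})$, so $\End_{\cSSSn_q(\glm)}(\one_\wt) \cong \bigotimes_{i=1}^m H^\ast(\Gr{a_i}{N})$ as graded $\K$-algebras. With the grading convention of \eqref{eq:newbub} this has graded dimension $\prod_{i=1}^m q^{a_i(N - a_i)} {N \brack a_i}$, so $\qdim\big(\qsh^{\sum_i a_i(a_i - N)} \End_{\cSSSn_q(\glm)}(\one_\wt)\big) = \prod_{i=1}^m {N \brack a_i} = (\one_\wt, \one_\wt)_N$, matching property~(6) of Proposition~\ref{prop:qSlinkpoly}.

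Finally I would run the recursion. Since the $1$-morphisms of $\cSSSn_q(\glm)$ are generated by $\EE_i^{(k)}$ and $\FF_i^{(k)}$, and both sides of \eqref{eq:dimformula} are additive under direct sums and grading shifts, one reduces to the case in which $X$ is a composite of such generators and inducts on the length of that composite. Stripping off the outermost generator and using the biadjunctions of $\cSSSn_q(\glm)$ (which descend from $\cUU_q(\glm)$, carrying the shifts recorded in \eqref{eq:CQGgens}) together with the symmetry of $(-,-)_N$, one gets a graded isomorphism of the form $\Hom_{\cSSSn_q(\glm)}(\one_\wt, \EE_i X') \cong \qsh^{s} \Hom_{\cSSSn_q(\glm)}(\FF_i\one_\wt, X')$ (and likewise with $\EE \leftrightarrow \FF$); decategorified, this is exactly property~(1) or~(2) of Proposition~\ref{prop:qSlinkpoly}, and a short bookkeeping check shows the shift $s$ is accounted for by the change in the normalizing exponent $\sum_j a_j(a_j - N)$ across $\wt \mapsto \wt \mp \alpha_i$. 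When no generators remain one is left with $\Hom_{\cSSSn_q(\glm)}(\one_\wt, \one_{\wt'})$, which vanishes unless $\wt = \wt'$ by orthogonality of the weight idempotents (property~(5)) and is otherwise the base case. Since properties~(1)--(6) determine $(-,-)_N$ uniquely, this proves \eqref{eq:dimformula}. (Over a field of characteristic zero one could instead bypass the recursion, using that $\Kar(\cSSSn_q(\glm))$ is mixed together with Theorem~\ref{thm:SchurCategorification} to identify the Cartan pairing on $\dSn(\glm)$ with $(-,-)_N$; but the diagrammatic route has the advantage of working over an arbitrary $\K$.)
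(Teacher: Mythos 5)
Your plan is in the right spirit—compare a categorified recursion against the recursive characterization of $(-,-)_N$—but there are two gaps, one acknowledged and one not, and both are resolved differently in the paper.

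On freeness: you flag the "main obstacle" (that $\mathcal{J}^{\leq N}$ cuts out a sub-collection of the KL spanning set) but do not close it, and this is a genuine claim about the $N$-bounded Schur quotient that the nondegeneracy results of \cite{KL3} do not directly supply. The paper sidesteps this entirely: it invokes the annular evaluation algorithm of \cite{QRS} to show that $\Hom_{\cSSSn_q(\glm)}(\one_\wt, X)$ is a direct summand of a finite direct sum of shifted copies of $\End_{\cSSSn_q(\glm)}(\one_{\wt_l})$, each of which is shown to be a free $\Z$-module by combining the surjection \eqref{eq:End1=Gr} with a lower bound on dimensions coming from $2$-representations. Freeness then follows because a direct summand of a free $\Z$-module is free, with no need to produce a basis of the quotient.

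On the recursion: as written, it does not terminate. Stripping $\EE_i$ via biadjunction gives $\Hom(\one_\wt, \EE_i X') \cong \qsh^{s}\Hom(\FF_i\one_\wt, X')$, and to return to a space of the form $\Hom(\one_{\wt'}, -)$ you must then apply cyclicity, which produces $\Hom(\one_{\wt'}, X'\FF_i)$—a word of the \emph{same} length. An induction "on the length of that composite" therefore never reaches the base case. The missing ingredient is exactly the pair of $\EE\FF$-commutation moves \eqref{eq:algtools}(b)--(c) (the categorified commutator relations and Sto\v{s}i\'{c}-type decompositions), and the paper's induction is the resulting double induction on length \emph{and} on the minimal weights appearing in $X$; it is precisely the $N$-bounded quotient ($\one_\wt\cong 0$ for weights outside the box) that forces this double induction to terminate. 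Relatedly, you invoke the statement that properties (1)--(6) of Proposition~\ref{prop:qSlinkpoly} determine $(-,-)_N$ uniquely, but the paper's justification of that very claim is itself the annular evaluation algorithm—so one cannot avoid supplying the terminating recursion somewhere.
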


\begin{proof}
This is a consequence of the ``annular evaluation algorithm'' from \cite[Theorems 1.2 and 3.2]{QRS}, 
which, in particular, shows that the same algorithm can be used to compute both sides of 
\eqref{eq:dimformula}.

In some detail, first note that it suffices to establish the result when $\K = \Z$.
Further, since
\[
\Hom_{\cSSSn_q(\glm)}(\one_\wt, \qsh^k X) \cong \qsh^k \Hom_{\cSSSn_q(\glm)}(\one_\wt, X)
\]
and
\[
\Hom_{\cSSSn_q(\glm)}(\one_\wt, X_1 \oplus X_2 ) \cong 
\Hom_{\cSSSn_q(\glm)}(\one_\wt, X_1) \oplus \Hom_{\cSSSn_q(\glm)}(\one_\wt, X_2) \, ,
\]
we can assume that $X$ is given as a word in the $1$-morphisms $\EE_i^{(k)}$ and $\FF_j^{(\ell)}$.
By using the isomorphisms
\[
\EE_i^k \cong [k]! \EE_i^{(k)} \quad \text{and} \quad \FF_i^k \cong [k]! \FF_i^{(k)} \, ,
\]
the general case is a straightforward consequence of the case when $X$ is a word in $\EE_i$ and $\FF_j$, 
so we make this additional assumption.

The algorithm uses\footnote{The shift $\qsh^{\sum_{i=1}^m a_i(a_i-N)}$ 
here ensures that the assignment 
$X \mapsto \qsh^{\sum_{i=1}^m a_i(a_i-N)} \Hom_{\cSSSn_q(\glm)}(\one_\wt, X)$ 
is indeed trace-like} 
the following:
\begin{subequations}
	\label{eq:algtools}
\begin{equation}
\qsh^{\sum_{i=1}^m a_i(a_i-N)} \Hom_{\cSSSn_q(\glm)}(\one_\wt, X_1 \one_{\mathbf{b}} X_2) 
		\cong \qsh^{\sum_{i=1}^m b_i(b_i-N)} 
			\Hom_{\cSSSn_q(\glm)}(\one_{\mathbf{b}},X_2 \one_{\wt} X_1) \, 
\end{equation}
\begin{multline}
\Hom_{\cSSSn_q(\glm)}(\one_\wt,X_1 \EE_i \FF_i \one_{\mathbf{b}} X_2) 
		\cong \Hom_{\cSSSn_q(\glm)}(\one_\wt, X_1 \FF_i \EE_i \one_{\mathbf{b}} X_2) \\
			\oplus [b_i {-} b_{i+1}] \Hom_{\cSSSn_q(\glm)}(\one_\wt,X_1 X_2) 
				\quad \text{if} \ b_i {-} b_{i+1} = \langle \ee_i^\vee , \mathbf{b}\rangle \geq 0 \, ,
\end{multline}
\begin{equation}			
\Hom_{\cSSSn_q(\glm)}(\one_\wt, X_1 \EE_i \FF_i \one_{\mathbf{b}} X_2 ) 
		\stackrel{\oplus}{\subset} 
			\Hom_{\cSSSn_q(\glm)}(\one_\wt, X_1 \FF_i \EE_i \one_{\mathbf{b}} X_2)
				\quad \text{if} \ b_i {-} b_{i+1} = \langle \ee_i^\vee , \mathbf{b}\rangle < 0 \, ,
\end{equation}
\end{subequations}
all of which hold integrally, 
to show that $\Hom_{\cSSSn_q(\glm)}(\one_\wt, X)$ is isomorphic 
to a summand of a graded $\Z$-module 
of the form
\[
\bigoplus_{l} \qsh^{k_l + \sum_{i=1}^m a_{l,i}(a_{l,i}-N)} \End_{\cSSSn_q(\glm)}(\one_{\wt_l}) \, .
\]
The algorithm proceeds by inducting on both the length of the word $X$, 
and the minimal weights (with respect to the 
standard partial order on the $\glm$ weight lattice) 
which appear in $X$.
In doing so, it is crucial that we work in the ($N$-bounded) 
Schur quotient, which implies that $\one_\wt \cong 0$ for sufficiently large $\wt$.

Now, \cite[Proposition 3.11]{KL3}, equation \eqref{newbub}, and 
Definition \ref{def:CSQ} give a surjective ring homomorphism
\begin{equation}
	\label{eq:End1=Gr}
H^\ast(\Gr{a_{l,1}}{N}) \otimes \cdots \otimes H^\ast(\Gr{a_{l,m}}{N})
\twoheadrightarrow
\End_{\cSSSn_q(\glm)}(\one_{\wt_{l}}) \, .
\end{equation}
This map must be an isomorphism, 
since, after base change to a field, 
the dimension of the latter can be bounded below 
by $\prod_{j=1}^m q^{a_{l,j}(N-a_{l,j})} {N \brack a_{l,j}}$ 
e.g.~using the $2$-representations of $\cUU_q(\glm)$ 
from \cite{MY} or \cite{Cautis} (which factor through $\cSSSn_q(\glm)$).
Consequently, each 
$\qsh^{\sum_{i=1}^m a_{l,i}(a_{l,i}-N)} \End_{\cSSSn_q(\glm)}(\one_{\wt_{l}})$ 
is a free $\Z$-module 
(of graded rank $\prod_{i=1}^m {N \brack a_{l,i}}$)
so $\Hom_{\cSSSn_q(\glm)}(\one_\wt, X)$ is a summand of a free graded $\Z$-module, 
thus is a free graded $\Z$-module.

Further, this procedure gives a recipe for computing the left-hand side of \eqref{eq:dimformula}.
Indeed, the equations \eqref{eq:algtools} give
\begin{subequations}
	\label{eq:algtools2}
\begin{equation}
\qdim(
\qsh^{\sum_{i=1}^m a_i(a_i-N)} \Hom_{\cSSSn_q(\glm)}(\one_\wt, X_1 \one_{\mathbf{b}} X_2) )
		= \qdim( \qsh^{\sum_{i=1}^m b_i(b_i-N)} 
			\Hom_{\cSSSn_q(\glm)}(\one_{\mathbf{b}},X_2 \one_{\wt} X_1) ) \, 
\end{equation}
\begin{multline}
\qdim(
\Hom_{\cSSSn_q(\glm)}(\one_\wt,X_1 \EE_i \FF_i \one_{\mathbf{b}} X_2) )
	= \Hom_{\cSSSn_q(\glm)}(\one_\wt, X_1 \FF_i \EE_i \one_{\mathbf{b}} X_2) \\
			+ [b_i {-} b_{i+1}] \qdim(\Hom_{\cSSSn_q(\glm)}(\one_\wt,X_1 X_2) )
				\quad \text{if} \ b_i {-} b_{i+1} = \langle \ee_i^\vee , \mathbf{b}\rangle \geq 0 \, ,
\end{multline}
\begin{multline}		
\qdim (\Hom_{\cSSSn_q(\glm)}(\one_\wt, X_1 \EE_i \FF_i \one_{\mathbf{b}} X_2 ) )
		= \qdim (\Hom_{\cSSSn_q(\glm)}(\one_\wt, X_1 \FF_i \EE_i \one_{\mathbf{b}} X_2) ) \\
			- [b_{i+1} {-} b_{i}] \qdim(\Hom_{\cSSSn_q(\glm)}(\one_\wt,X_1 X_2) )
				\quad \text{if} \ b_i {-} b_{i+1} = \langle \ee_i^\vee , \mathbf{b}\rangle < 0 \, ,
\end{multline}
\end{subequations}
which therefore can be used to write
\[
\qdim \big( \qsh^{\sum_{i=1}^m a_i(a_i-N)} \Hom_{\cSSSn_q(\glm)}(\one_\wt, X) \big) 
= \sum_{l=1}^{L_X} r_l q^{d_l} \cdot \qdim 
\big( \qsh^{\sum_{i=1}^m a_{l,i} (a_{l,i}-N)} \End_{\cSSSn_q(\glm)}(\one_{\wt_{l}}) \big) \, .
\]

However, the equations \eqref{eq:algtools2} hold with each instance of 
\[
\qdim(
\qsh^{\sum_{i=1}^m a_i(a_i-N)} \Hom_{\cSSSn_q(\glm)}(\one_\wt, - ) )
\]
replaced by $(\one_{\wt},-)_N$, so the same recipe gives that
\[
(\one_{\wt}, x)_N
= \sum_{l=1}^{L_X} r_l q^{d_l} (\one_{\wt_{l}} , \one_{\wt_{l}} )_N \, .
\]
Since
\[
(\one_{\wt_{l}} , \one_{\wt_{l}} )_N
	= \prod_{i=1}^m {N \brack a_{l,i}}
	= \qdim \big( \qsh^{\sum_{i=1}^m a_{l,i} (a_{l,i}-N)} 
		\End_{\cSSSn_q(\glm)}(\one_{\wt_{l}}) \big) \, ,
\]
the result follows.
\end{proof}

We next discuss the complexes categorifying the quantum Weyl group elements from Definition \ref{eq:qWG} 
defined in \cite{CK,CKL,Cautis}  by Cautis, Kamnitzer, and Licata 
(following pioneering work of Chuang and Rouquier \cite{CR,Rou3}).
Adapted to our present setting of the categorified Schur quotient, 
these complexes are as follows.

\begin{defn}
	\label{def:Rickard}
For $a,b \geq 0$, the \emph{$2$-strand Rickard complex} is the chain complex
\begin{align*}
C_{a,b} &:= \big( \cdots \xrightarrow{d} \qsh^{-k} \tsh^k \FF^{(a-k)} \EE^{(b-k)} \one_{a,b} 
			\xrightarrow{d} \qsh^{-k-1} \tsh^{k+1} \FF^{(a-k-1)} \EE^{(b-k-1)} \one_{a,b} \xrightarrow{d} \cdots \big) \\
		&= \Big( {\textstyle \bigoplus_{k=0}^{\min(a,b)} \qsh^{-k} \tsh^k \FF^{(a-k)} \EE^{(b-k)} \one_{a,b}} \, , \, d \Big) 
			\in \dgCat[{\cSSS_q(\gln[2])}]
\end{align*}
with differential
\begin{equation}
	\label{eq:RickDiff}
d=
\begin{tikzpicture}[anchorbase,scale=1]
\draw[ultra thick,green,rdirected=.65] (-.375,.5) to [out=150,in=270] (-.5,1.125) node[above=-2pt,xshift=-3pt]{\scs$a{-}k{-}1$};
\draw[ultra thick,green,<-] (.5,1.125) node[above=-2pt,xshift=3pt]{\scs$b{-}k{-}1$} to [out=270,in=30] (.375,.5);
\draw[thick,green,rdirected=.6] (-.375,.5) to [out=30,in=150] (.375,.5);
\draw[ultra thick,green,rdirected=.55] (.375,.5) to [out=270,in=90] (.375,0) node[below=-2pt,xshift=1pt]{\scs$b{-}k$};
\draw[ultra thick,green,->] (-.375,.5) to [out=270,in=90] (-.375,0) node[below=-2pt,xshift=-1pt]{\scs$a{-}k$};
\node at (1.125,.75){\scs$(a,b)$};
\end{tikzpicture} \, .
\end{equation}
More generally, for $1 \leq i \leq m-1$, the \emph{$i^{th}$ Rickard complex} is the chain complex
\[
C(\beta_i) \one_\wt := \one_{(a_1,\ldots,a_{i-1})} \boxtimes C_{a_i,a_{i+1}} \boxtimes \one_{(a_{i+2},\ldots,a_m)}
\in \dgCat[{\cSSS_q(\glm)}] \, .
\]
\end{defn}

The Rickard complex $C(\beta_i) \one_\wt$ determines complexes in various quotients of $\dgCat[{\cSSS_q(\gln[2])}]$, 
(the homotopy category, the $N$-bounded quotient, etc.)
which will we denote using the same notation.
The following is a consequence of \cite{CK,CKL,ETW};
see the proof of \cite[Proposition 2.25]{HRW1} for further details.

\begin{thm}
	\label{thm:Rickard}
Let $\K$ be a field. 
The complex $C_{a,b}$ is invertible in $\hCat[{\cSSS_q(\gln[2])}]$, 
with homotopy inverse the complex
\begin{align*}
C^\vee_{b,a} &:= \big( \cdots \xrightarrow{d^\vee} \qsh^{k} \tsh^{-k} \FF^{(b-k)} \EE^{(a-k)} \one_{b,a} 
			\xrightarrow{d^\vee} \qsh^{k-1} \tsh^{-k+1} \FF^{(b-k+1)} \EE^{(a-k+1)} \one_{b,a} 
				\xrightarrow{d^\vee} \cdots \big) \, ,
	\\	&= \Big( {\textstyle \bigoplus_{k=0}^{\min(a,b)} \qsh^{k} \tsh^{-k} \FF^{(b-k)} \EE^{(a-k)} \one_{b,a}} \, , \, d^\vee \Big) 
\end{align*}
i.e.~$C^\vee_{b,a} \hComp C_{a,b} \simeq \one_{(a,b)} \simeq C_{b,a} \hComp C^\vee_{a,b}$. 
Here,
\begin{equation}
	\label{eq:dualRickDiff}
d^\vee = 
\begin{tikzpicture}[anchorbase,scale=1,yscale=-1]
\draw[ultra thick,green,->] (-.375,.5) to [out=150,in=270] (-.5,1.125) node[below=-2pt]{\scs$b{-}k$};
\draw[ultra thick,green,directed=.45] (.5,1.125) node[below=-2pt]{\scs$a{-}k$} to [out=270,in=30] (.375,.5);
\draw[thick,green,directed=.6] (-.375,.5) to [out=30,in=150] (.375,.5);
\draw[ultra thick,green,->] (.375,.5) to [out=270,in=90] (.375,0) node[above=-2pt,xshift=5pt]{\scs$a{-}k{+}1$};
\draw[ultra thick,green,rdirected=.5] (-.375,.5) to [out=270,in=90] (-.375,0) node[above=-2pt,xshift=-5pt]{\scs$b{-}k{+}1$};
\node at (1,.25){\scs$(b,a)$};
\end{tikzpicture} \, .
\end{equation}
Set $C(\beta_i^{-1}) \one_\wt 
:= \one_{(a_1,\ldots,a_{i-1})} \boxtimes C^\vee_{a_i,a_{i+1}} \boxtimes \one_{(a_{i+2},\ldots,a_m)}$.
Given a braid word $\beta_{i_1}^{\epsilon_1} \cdots \beta_{i_r}^{\epsilon_r}$, let
\[
C(\beta_{i_1}^{\epsilon_1} \cdots \beta_{i_r}^{\epsilon_r}) \one_\wt := 
C(\beta_{i_1}^{\epsilon_1}) \cdots (\beta_{i_r}^{\epsilon_r}) \one_\wt \, ,
\]
then these complexes satisfy the (colored) braid relations in $\hCat[{\cSSS_q(\glm)}]$ 
up to canonical homotopy equivalence. \qed
\end{thm}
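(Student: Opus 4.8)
The plan is to deduce the theorem from the established invertibility and braid relations for Rickard complexes in the (thick) categorified quantum group, and then transport those facts along the quotient functors onto $\cSSS_q(\glm)$ and $\cSSSn_q(\glm)$. First I would reduce everything to local statements involving at most three adjacent strands. Invertibility concerns only the two-strand complexes $C_{a,b}$ and $C^\vee_{b,a}$, so it is a statement in $\dgCat[\cSSS_q(\gln[2])]$. For the braid relations, the case $|i-j|>1$ is immediate: by the definition of the external tensor product $\boxtimes$, the complexes $C(\beta_i^{\pm})\one_\wt$ and $C(\beta_j^{\pm})\one_\wt$ are supported on disjoint tensor factors and literally commute, so the required homotopy equivalence is an honest equality. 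The relation $C(\beta_i)C(\beta_{i+1})C(\beta_i)\one_\wt \simeq C(\beta_{i+1})C(\beta_i)C(\beta_{i+1})\one_\wt$ involves only three adjacent strands and so can be checked in $\dgCat[\cSSS_q(\gln[3])]$. Finally, the Rickard complexes are images under the quotient functor of complexes over $\cUU_q(\glm)$ defined by the same formulas as in Definition \ref{def:Rickard}, and the quotient $2$-functors $\cUU_q(\glm)\to\cSSS_q(\glm)\to\cSSSn_q(\glm)$ are $\K$-linear and additive, hence send chain maps to chain maps and null-homotopies to null-homotopies and so preserve homotopy equivalences. Thus it suffices to prove invertibility in $\hCat[\cUU_q(\gln[2])]$ and the three-strand braid relation in $\hCat[\cUU_q(\gln[3])]$.

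For invertibility, I would expand the composite $C^\vee_{b,a}\hComp C_{a,b}$ into a complex whose underlying $1$-morphism is the direct sum obtained from the divided-power decompositions \eqref{eq:EF}, \eqref{eq:FE}, and \eqref{eq:Stosic}, and then perform Gaussian elimination, using the explicit differentials \eqref{eq:RickDiff} and \eqref{eq:dualRickDiff}, to cancel all contractible summands and leave exactly $\one_{(a,b)}$; the symmetric computation yields $C_{b,a}\hComp C^\vee_{a,b}\simeq\one_{(a,b)}$. This is precisely the invertibility argument of Chuang--Rouquier and Cautis--Kamnitzer(--Licata) \cite{CR,CK,CKL}, and it can equally be carried out in the foam model of \cite{ETW}. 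Because the present paper uses nonstandard sign conventions (cf. \S\ref{ssec:rescaling} and \S\ref{ss:thickcalc}), I would run these computations through Lauda's rescaling functor $\crazyF$, so that the cited formulas apply verbatim.

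For the three-strand braid relation I would either Gaussian-eliminate both $C(\beta_i)C(\beta_{i+1})C(\beta_i)\one_\wt$ and $C(\beta_{i+1})C(\beta_i)C(\beta_{i+1})\one_\wt$ to a common complex, following \cite{CKL,CK,ETW}, or invoke a uniqueness argument: working over a field, one passes to Karoubi envelopes and uses the rigidity of $\hCat$ of a graded Krull--Schmidt category (Remark \ref{rem:karoubianoverfield}), namely that an invertible complex is homotopy equivalent to a unique minimal complex and that homotopy equivalences between minimal complexes are unique up to an invertible scalar. Both composites are invertible (by the first part), and by Theorem \ref{thm:SchurCategorification} together with \eqref{eq:qWG} their classes in the Grothendieck ring both equal the image of the braid relation $\QW_i\QW_{i+1}\QW_i\one_\wt=\QW_{i+1}\QW_i\QW_{i+1}\one_\wt$ in the $q$-Schur algebra; this, together with control of the $\tsh$-support, pins them down up to canonical homotopy equivalence. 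The same rigidity, combined with the fact that $\Br_m$ is presented by exactly the braid and far-commutativity relations, lets one choose these equivalences coherently, so that any two braid words representing the same $\beta\in\Br_m$ give canonically homotopy-equivalent complexes (up to an overall invertible scalar), exactly as in \cite{CKL}; a self-contained account of this bookkeeping can be extracted from the proof of \cite[Proposition 2.25]{HRW1}.

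The step I expect to be the main obstacle is the three-strand braid relation: establishing it at the level of the abstract $2$-category $\cUU_q(\gln[3])$ rather than in a specific faithful $2$-representation, and carefully tracking signs in the present conventions. This is the one place where, rather than reproduce the (lengthy) argument, we genuinely rely on \cite{CK,CKL,ETW}.
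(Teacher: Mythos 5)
Your proposal takes essentially the same approach as the paper: Theorem~\ref{thm:Rickard} is stated there as ``a consequence of \cite{CK,CKL,ETW}; see the proof of \cite[Proposition 2.25]{HRW1} for further details,'' with no independent argument given, and your elaboration (reduction to local two- and three-strand computations, transport along the additive quotient functors, explicit Gaussian elimination, and translation of sign conventions via $\crazyF$) is precisely the reasoning those citations compress. One caution about your alternative ``rigidity'' route for the three-strand relation: having equal classes in the Grothendieck group (plus $\tsh$-support control) does \emph{not} pin down two invertible complexes up to homotopy equivalence, so that argument cannot by itself establish \emph{existence} of the equivalence; it yields only the \emph{canonicity} of an equivalence once one has been produced by the explicit (Gaussian-elimination) computation you cite as the primary route.
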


For each $\glm$ weight $\wt$ with $a_i \geq 0$ for $1 \leq i \leq m$,
Theorem \ref{thm:Rickard} canonically assigns
a complex $C(\beta)\one_\wt \in \hCat[\cSSS_q(\glm)]$ to each braid $\beta \in \Br_m$.
For each $N \geq 1$, we therefore obtain complexes 
$C(\beta)\one_\wt \in \hCat[\cSSSn_q(\glm)]$. 
(This complex is zero if $a_i \geq N$ for some $1 \leq i \leq m$.)
Following \cite[Section 6]{QR2}, we now recall a procedure that recovers the 
(colored) $\sln$ link homologies. 

The functor
\begin{equation}
	\label{eq:HomTrace}
\Hom_{\cSSSn_q(\glm)}\big(\one_\wt , - \big) \colon \one_\wt \cSSSn_q(\glm) \one_\wt \to \Vect^\Z_\K
\end{equation}
induces a functor on homotopy categories $\hCat[ \one_\wt \cSSSn_q(\glm) \one_\wt] \to \hCat[ \Vect^\Z_\K ]$, 
that we denote similarly. 
Since $\hCat[ \Vect^\Z_\K ] \cong \dCat[ \Vect^\Z_\K ] \cong \Vect^{\Z \times \Z}_\K$, 
the functor \eqref{eq:HomTrace} assigns a bi-graded vector space to
pairs $(\beta, \wt) \in \Br_m \times \Zge^m$ such that $\beta$ is $\wt$-balanced, 
meaning $C(\beta) \one_\wt = \one_\wt C(\beta) \one_\wt$.
In down-to-earth terms, this is simply the homology of the complex 
$\Hom_{\cSSSn_q(\glm)}\big(\one_\wt , \one_\wt C(\beta) \one_\wt \big)$ of bi-graded vector spaces.

The following is a repackaging of results from \cite{QR1,QR2,QRS} 
in the setup and conventions of the present paper;
see also \cite{Cautis}. 
As above, if $(\beta, \wt) \in \Br_m \times \Zge^m$ is a pair such that 
$C(\beta) \one_\wt = \one_\wt C(\beta) \one_\wt$, 
then the corresponding colored braid closure is denoted by $\mathcal{L}_\beta^{\wt}$.

\begin{thm}
	\label{thm:KhR}
Let $\K$ be a field.
If $\wt \in \Zge^m$ and $\bgen \in \Br_m$ is a braid such that 
$C(\beta) \one_\wt = \one_\wt C(\beta) \one_\wt$,
then the complex 
\begin{equation}
	\label{eq:KhRdef}
\llbracket \one_\wt \beta \one_\wt \rrbracket_N :=
\qsh^{\sum_{i=1}^m a_i(a_i-N)} \Hom_{\cSSSn_q(\glm)}\big(\one_\wt , \one_\wt C(\beta) \one_\wt \big) \in \hCat[ \Vect^\Z_\K ]
\end{equation}
is an invariant of the framed colored link $\mathcal{L}_\beta^{\wt}$.
Denote the homology of the complex $\llbracket \one_\wt \beta \one_\wt \rrbracket_N$ 
by $H_{\sln}(\mathcal{L}_\beta^{\wt})$.
Then, we have
\begin{equation}
	\label{eq:slNdecat}
\begin{aligned}
\bP_{\sln}(\mathcal{L}_\beta^{\wt}) 
	&= (-1)^{\sum_{i=1}^m a_i(N-a_i)} (-q^{1/N})^{\epsilon(\bgen,\wt)}
		\dim_{\qsh,\tsh} \big( H^{i}_{\sln}(\mathcal{L}_\beta^{\wt}) \big) \big\rvert_{t=-1} \\
	&= (-1)^{\sum_{i=1}^m a_i(N-a_i)} (-q^{1/N})^{\epsilon(\bgen,\wt)}
	\sum_{i} (-1)^i \dim_\qsh \big( H^{i}_{\sln}(\mathcal{L}_\beta^{\wt}) \big)
\end{aligned}
\end{equation}
i.e.~$H_{\sln}(\mathcal{L}_\beta^{\wt})$ categorifies 
(a multiple of) the colored $\sln$ link polynomial. \qed
\end{thm}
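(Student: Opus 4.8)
The plan is to deduce the theorem from the machinery already assembled — Theorems \ref{thm:Rickard}, \ref{thm:SchurCategorification}, \ref{thm:qWG} and Propositions \ref{prop:size}, \ref{prop:qSlinkpoly} — so that the real work is translating the results of \cite{QR1,QR2,QRS,Cautis} into the present conventions. First, by Theorem \ref{thm:Rickard} the complexes $C(\bgen)\one_\wt$ are invertible in $\hCat[\cSSSn_q(\glm)]$ and satisfy the braid relations, so $\bgen\mapsto C(\bgen)\one_\wt$ is a well-defined map $\Br_m\to\hCat[\cSSSn_q(\glm)]$, canonical up to homotopy equivalence. The degree-shifted representable functor $\qsh^{\sum_{i=1}^m a_i(a_i-N)}\Hom_{\cSSSn_q(\glm)}(\one_\wt,-)$ preserves homotopy equivalences, so $\llbracket\one_\wt\bgen\one_\wt\rrbracket_N$ is a well-defined object of $\hCat[\Vect_\K^\Z]$, equivalently, on passing to homology, a well-defined bigraded vector space $H_{\sln}(\mathcal{L}_\bgen^\wt)$. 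For invariance under (MI) I would use that this functor is trace-like: the biadjunctions of $\cUU_q(\glm)$ descend to $\cSSSn_q(\glm)$, yielding for $1$-morphisms $X_1\colon\wt\to\wtt$ and $X_2\colon\wtt\to\wt$ a natural isomorphism $\qsh^{\sum_{i=1}^m a_i(a_i-N)}\Hom(\one_\wt,X_2X_1)\cong\qsh^{\sum_{i=1}^m b_i(b_i-N)}\Hom(\one_\wtt,X_1X_2)$ compatible with differentials — precisely the ``annular'' isomorphism of \cite{QRS} invoked in the proof of Proposition \ref{prop:size} — so conjugating $\bgen$ changes the complex only by this isomorphism.

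For (MII) I would compute the partial closure of the two-strand Rickard complex. Stabilizing replaces $\bgen\in\Br_m$ by $\bgen\bgen_m^{\pm1}\in\Br_{m+1}$, adjoining a strand colored by the color $a_m$ of the component being stabilized, and closing that new strand off turns the last crossing into a curl. Using Theorem \ref{thm:Rickard} for $C_{a_m,a_m}^{\pm}$ together with the trace-like property, the partial trace of $C(\bgen\bgen_m^{\pm1})\one_{(\wt,a_m)}$ over the last strand is homotopy equivalent to a shift $\qsh^{\bullet}\tsh^{\bullet}$ of the partial trace of $C(\bgen)\one_\wt$, the shift being exactly the categorified framing/curl factor governing \eqref{eq:curlsforframing} for $\Lambda^{a_m}$-colored $\sln$ (equivalently, the factor tracked by $\epsilon(\bgen,\wt)$; the overall shift $\qsh^{\sum_{i=1}^m a_i(a_i-N)}$ in \eqref{eq:KhRdef} is chosen so this comes out uniformly, cf.~Remark \ref{rem:framing2}). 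Hence $\llbracket\one_{(\wt,a_m)}\bgen\bgen_m^{\pm1}\one_{(\wt,a_m)}\rrbracket_N\simeq\qsh^{\bullet}\tsh^{\bullet}\llbracket\one_\wt\bgen\one_\wt\rrbracket_N$, and by Theorems \ref{thm:Alexander} and \ref{thm:Markov} this shows $H_{\sln}(\mathcal{L}_\bgen^\wt)$, together with its implicit grading shift, is an invariant of the framed colored link $\mathcal{L}_\bgen^\wt$.

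For the decategorification statement I would argue term by term. Writing $C(\bgen)\one_\wt\cong\bigoplus_l\qsh^{j_l}\tsh^{k_l}Y_l$ for $1$-morphisms $Y_l$ of $\cSSSn_q(\glm)$, Proposition \ref{prop:size} gives that each $\qsh^{\sum_{i=1}^m a_i(a_i-N)}\Hom_{\cSSSn_q(\glm)}(\one_\wt,Y_l)$ is free of graded dimension $(\one_\wt,y_l)_N$, where $y_l\in\dSn(\glm)$ is the class of $Y_l$. Since the graded Euler characteristic of a bounded complex of finite-rank free $\K$-modules equals that of its homology, $\sum_i(-1)^i\dim_\qsh H^i_{\sln}(\mathcal{L}_\bgen^\wt)=\sum_l(-1)^{k_l}q^{j_l}(\one_\wt,y_l)_N=(\one_\wt,z)_N$, where $z\in\dSn(\glm)$ is the class of $C(\bgen)\one_\wt$ under $\Kzero{\Kar(\SSSn_q(\glm))}\cong\ZdSn(\glm)$ (Theorem \ref{thm:SchurCategorification}) with the convention $[\tsh X]=-[X]$. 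A direct comparison of Definition \ref{def:Rickard} with \eqref{eq:qWG} shows $[C(\bgen_i^{\pm1})\one_\wt]=\QW_i^{\pm}\one_\wt$, so $z=\QW_{i_1}^{\epsilon_1}\cdots\QW_{i_d}^{\epsilon_d}\one_\wt$, and Proposition \ref{prop:qSlinkpoly} gives $(\one_\wt,z)_N=(-q^{1/N})^{-\epsilon(\bgen,\wt)}\bP_{\sln}(\mathcal{L}_\bgen^\wt)$; rearranging yields \eqref{eq:slNdecat}, the sign $(-1)^{\sum a_i(N-a_i)}$ recording the discrepancy between the (skew-)symmetric pairing underlying $P_{\sln}$ in the pivotal conventions fixed here and the Euler characteristic of the $\Hom$-complex.

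The main obstacle is not any single deep input but the careful bookkeeping: verifying (MII) with exactly the right grading shift, checking that the Rickard complex decategorifies to $\QW_i^{\pm}$ with the correct powers of $-q$ and signs, and reconciling the three normalization factors $\qsh^{\sum_{i=1}^m a_i(a_i-N)}$, $(-q^{1/N})^{\epsilon(\bgen,\wt)}$ and $(-1)^{\sum a_i(N-a_i)}$ so that \eqref{eq:slNdecat} holds on the nose. A related subtlety is that the closure and partial-trace conventions of \cite{QR1,QR2,QRS}, together with their pivotal and sign conventions (cf.~\S\ref{ssec:rescaling}), must be matched with those fixed here; and since (MII) holds only up to shift, the invariance statement must be phrased for framed links throughout.
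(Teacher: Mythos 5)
The paper offers no proof of this theorem: it is stated with an embedded \qed and attributed to the literature (``a repackaging of results from \cite{QR1,QR2,QRS} in the setup and conventions of the present paper; see also \cite{Cautis}''), so there is nothing in-paper against which to compare a proof verbatim. Your outline faithfully reconstructs the standard architecture behind those citations, and most of it is sound: Theorem \ref{thm:Rickard} for well-definedness, cyclicity for MI, the explicit verification from Definition \ref{def:Rickard} and \eqref{eq:qWG} that $[C(\bgen_i^{\pm1})\one_\wt]=\QW_i^{\pm}\one_\wt$ in the triangulated Grothendieck group (with $[\tsh X]=-[X]$), Propositions \ref{prop:size} and \ref{prop:qSlinkpoly} for the decategorification, and the correct diagnosis of the pivotal discrepancy $(-1)^{\sum a_i(N-a_i)}$ (cf.~the remarks immediately following the theorem).

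There is, however, a genuine gap in the MII step. You write that ``using Theorem \ref{thm:Rickard} for $C_{a_m,a_m}^{\pm}$ together with the trace-like property'' the partial trace of the stabilized complex over the last strand is homotopy equivalent to a shift of the unstabilized complex. Neither of the cited ingredients yields this: Theorem \ref{thm:Rickard} gives invertibility and the braid relations, and cyclicity gives MI, but neither tells you that the one-strand closure of the two-strand Rickard complex simplifies to a grading shift of the identity. That simplification is a concrete chain-level computation — one must Gaussian eliminate the closed complex using categorified curl relations — and it is where the bulk of the work lies in the original sources. The present paper itself, when it needs this fact in equivariant form, does not deduce it from \ref{thm:Rickard} either but cites \cite[Lemma 14.8]{Wu} explicitly (see the proofs of Propositions \ref{prop:SLHM2+} and \ref{prop:SLHM2-}). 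So the outline is correct in spirit, but MII should be obtained by citing (or reproving) that homotopy equivalence directly rather than derived from the braid-relations package.
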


\begin{rem}
It follows from \cite[Theorem 4.12]{QR1} and \cite[Section 6]{QR2} that
the invariant $H_{\sln}(\mathcal{L}_\beta^{\wt})$ agrees with 
(colored) $\sln$ Khovanov--Rozansky link homology \cite{KhR,Wu,Yon}, 
up to a choice of conventions 
(for positive/negative crossings and grading shifts).
Our conventions for the Rickard complexes in Definition \ref{def:Rickard}
imply that if $\bgen \in \Br_m$, 
$\wt = (a_1,\ldots,a_m)$, and $\wt' = (a_1,\ldots,a_m,a_m)$, then
\[
\llbracket \one_{\wt'} \bgen_m^{\pm} \beta \one_{\wt'} \rrbracket_N 
\simeq \qsh^{\mp a_m(N-a_m+1)} \tsh^{\pm a_m} \llbracket \one_{\wt} \beta \one_{\wt} \rrbracket_N \, .
\]
In other words, 
a positive $a$-colored stabilization yields a shift of $\qsh^{-a(N-a+1)} \tsh^a$, 
while a negative $a$-colored stabilization yields the opposite shift of $\qsh^{a(N-a+1)} \tsh^{-a}$.
\end{rem}

\begin{rem}
The precise decategorification result given in \eqref{eq:slNdecat} follows from 
\eqref{eq:SHR} and Remark \ref{rem:Rconv}.
The astute reader will note that the first factor of $\pm1$ in \eqref{eq:slNdecat}
would not appear for the $\sln$ link polynomials defined in the conventions of \cite{CKM}.
They work with a different pivotal structure on $\Rep(U_q(\sln))$
than the one implicit in \S \ref{ss:QG};
in their conventions, 
the twist coefficients \eqref{eq:curlsforframing} lack the $\pm1$ factors.

The term $(-1)^{\sum_{i=1}^m a_i(N-a_i)}$ in \eqref{eq:slNdecat}
can be accounted for as follows.
If we only multiply by $(-q^{1/N})^{\pm a_i a_{i+1}}$ for each $\pm$-crossing, 
the result is the $\sln$ polynomial of the mirror link, 
computed as in \cite{CKM}. 
(The mirror since their $\pm$-crossing is our $\mp$-crossing.)
However, we (implicitly) work with the pivotal structure studied in \cite{ST}
while \cite{CKM} work with the standard pivotal structure 
(which has positive circle values and no signs in the twist coefficients).
To account for this discrepancy, 
we need to multiply by $(-1)^{a(N-a)}$ for each $a$-colored cap/cup pair in a link diagram.
For closures of braids $\bgen \in \Br_m$, there are $m$ such pairs.
\end{rem}

\subsection{An elaboration on Rickard canonicity}
	\label{ss:RickCan}

In order to define our type $B$ link homology, 
we will need to establish an equivariant analogue of Theorem \ref{thm:Rickard}. 
Among other things, Theorem \ref{thm:Rickard} states that when
two braid words represent the two braids, 
then their Rickard complexes are \emph{canonically} homotopy equivalent; 
we call this \emph{Rickard canonicity}. 
In this section, we establish an effective method for understanding this canonical homotopy equivalence. 
We focus on the special case when $N = 2n$, 
which is the case relevant to our type $B$ link homology.

Our method will be to study homotopy equivalences after applying a 2-functor from $\SSStn_q(\glm)$ 
to the following simplified setting.

\begin{defn}
Let $\Catn$ be the monoidal category of finitely generated free graded modules 
over $H^\ast(Gr_n(\mathbb{C}^{2n}))$. 
We treat this as a $2$-category with one object, 
denoted (by abuse of notation) as $\wtn$.
\end{defn}

We record some properties of this $2$-category, which are mostly tautological.
The identity $1$-morphism $\one_{\wtn}$ in $\Catn$, 
i.e.~the left regular module $H^\ast(Gr_n(\mathbb{C}^{2n}))$, 
is the only indecomposable $1$-morphism in $\Catn$, 
up to grading shift and isomorphism. 
Left multiplication gives an identification
$\End_{\Catn}(\one_\wtn) = H^\ast(Gr_n(\mathbb{C}^{2n}))$, 
and the latter may be identified with
the quotient of  the ring $\Sym(\X)$ of symmetric functions in one alphabet $\X$
by Schur polynomials indexed by partitions which do not fit in an $n\times n$ box.

\begin{prop}\label{P:there-is-Gamma}
The following assignments determine a well-defined lax 
$2$-functor
\[
\Gamma \colon \Kar(\SSStn_q(\glm)) \rightarrow\Catn \, .
\]
which is full on $2$-morphisms.
All objects of $\Kar(\SSStn_q(\glm))$ are sent to the unique object of $\Catn$. 
The identity $1$-morphisms $\one_{\wt}$ are sent to the zero module when $\wt \ne \wtn$, 
and $\one_{\wtn} \mapsto \one_{\wtn}$. 
All other generating $1$-morphisms $\EE_i$ and $\FF_i$ are sent to the zero module, 
and thus most generating $2$-morphisms are sent to zero. 
The new bubbles $h_r(\X_i)$ are sent to $h_r(\X)$ for all $r \ge 0$ and all $1 \le i \le m$.
\end{prop}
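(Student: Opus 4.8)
\textbf{Plan for proving Proposition \ref{P:there-is-Gamma}.}
The plan is to define $\Gamma$ directly on the generators and relations of $\SSStn_q(\glm)$ and check well-definedness, then address fullness and Karoubi extension. The key conceptual point is that $\Catn$ is a very degenerate $2$-category: it has one object, one indecomposable $1$-morphism $\one_\wtn$ (up to shift), and $\End_{\Catn}(\one_\wtn) \cong H^\ast(\Gr{n}{2n})$. So the only data in $\Gamma$ that is not forced to be zero is (a) the choice that $\one_\wtn \mapsto \one_\wtn$ and all other $\one_\wt \mapsto 0$, and (b) the action on the new bubbles $\one_\wtn \to \one_\wtn$, which must be an algebra map $\End_{\SSStn_q(\glm)}(\one_\wtn) \to H^\ast(\Gr{n}{2n})$. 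The first thing I would do is recall from the proof of Proposition \ref{prop:size} (specifically the isomorphism \eqref{eq:End1=Gr} specialized to $\wt = \wtn$, $N = 2n$) that there is a ring isomorphism
\[
\End_{\cSSStn_q(\glm)}(\one_\wtn) \cong \bigotimes_{i=1}^m H^\ast(\Gr{n}{2n}) \, ,
\]
sending $h_r(\X_i)$ to $h_r$ in the $i$-th tensor factor. Then $\Gamma$ on $\End(\one_\wtn)$ should be the multiplication map $\bigotimes_{i=1}^m H^\ast(\Gr{n}{2n}) \to H^\ast(\Gr{n}{2n})$; this sends $h_r(\X_i) \mapsto h_r(\X)$ for all $i$, as claimed, and it is a well-defined algebra homomorphism because $H^\ast(\Gr{n}{2n})$ is commutative.

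The main verification is that this assignment respects all the local relations of $\SSStn_q(\glm)$ from Definition \ref{def:CQG} (and the thick-calculus relations \eqref{eq:MStoHT}, \eqref{eq:explode} if one works with $\cSSStn_q(\glm)$). Here I would argue that for each defining relation, both sides are sent to $0$ unless the relation lives entirely in $\End(\one_\wtn)$ or can be post/pre-composed into it. Any diagram containing a strand (an $\EE_i$ or $\FF_i$) as part of its source or target $1$-morphism is sent to a map between zero modules, hence is automatically zero; this disposes of the dot-slide, quadratic/cubic KLR, extended $\sln[2]$, and mixed $\EE\FF$ relations, as well as all of \eqref{eq:MStoHT} and \eqref{eq:explode}, since every term there has a nontrivial $1$-morphism on the boundary. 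The relations that genuinely need checking are those among endomorphisms of $\one_\wt$: the new bubble relations (algebra compatibility, bubbles-to-new-bubbles \eqref{newbub}, dots-to-new-bubbles \eqref{dotstonewbubbles}), the symmetric-generator slide \eqref{eq:symslide}, and the infinite Grassmannian / non-positive degree bubble relations. For $\wt \ne \wtn$ these again map to zero; for $\wt = \wtn$ they become relations in $H^\ast(\Gr{n}{2n})$, and I would note that \eqref{newbub} together with the defining $\mathcal{J}^{\leq 2n}$-quotient is exactly what cuts $\Sym(\X_i)$ down to $H^\ast(\Gr{n}{2n})$, so compatibility is built in. The word ``lax'' in the statement is a hedge for the fact that the coherence $2$-isomorphisms $\Gamma(F)\Gamma(G) \xrightarrow{\cong} \Gamma(FG)$ need only be isomorphisms, not identities: when $FG$ contains a strand both sides are zero and there is nothing to say, and when $F = G = \one_\wtn$ the composition in $\Catn$ is associated to the multiplication of $H^\ast(\Gr{n}{2n})$ against itself, which is handled by choosing the obvious structure maps.

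For fullness on $2$-morphisms: a $\Hom$-space $\Hom_{\Catn}(M, M')$ between two $1$-morphisms is zero unless both $M$ and $M'$ are (shifts of sums of) $\one_\wtn$, in which case it is a sum of shifts of $H^\ast(\Gr{n}{2n})$; and every element of $H^\ast(\Gr{n}{2n})$ is $\Gamma$ of a new bubble (possibly times an identity), since the $h_r$ generate. So $\Gamma$ is surjective on each $\Hom$-space, giving fullness. Finally, to extend $\Gamma$ from $\SSStn_q(\glm)$ to its Karoubi envelope $\Kar(\SSStn_q(\glm))$, I would invoke the universal property of idempotent completion: since $\Catn$ is already idempotent-complete (finitely generated free modules over a graded-local ring, so Krull--Schmidt), any $2$-functor out of $\SSStn_q(\glm)$ extends uniquely (up to coherent isomorphism) to $\Kar(\SSStn_q(\glm))$, and fullness is inherited because images of idempotents go to images of idempotents. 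I expect the main obstacle to be purely bookkeeping: systematically confirming that every generating $2$-morphism with a nontrivial boundary $1$-morphism really is killed (so that one never needs to check a relation with a ``half-zero, half-nonzero'' appearance), and pinning down the lax coherence data precisely enough that the $2$-functor axioms hold on the nose. Neither of these is deep, but writing it cleanly requires care with the conventions of Definition \ref{def:CQG}.
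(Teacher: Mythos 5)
Your overall plan matches the paper's: define $\Gamma$ on the generators, observe that most relations die because they involve $\EE_i$ or $\FF_i$, handle fullness by noting the $h_r(\X)$ are in the image, and extend over $\Kar(-)$ using idempotent-completeness of $\Catn$. So the approach is the same.

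However, there is a genuine gap in your treatment of the one non-trivial relation. You correctly list \eqref{newbub} as needing a check, but then dismiss it with ``compatibility is built in,'' which is not an argument. The relation \eqref{newbub} is precisely the ``half-zero, half-nonzero'' situation you earlier claim can never arise: for $r>0$ its left-hand side is a \emph{real} bubble, which is a composition $\one_\wtn \to \FF_i\EE_i\one_\wtn \to \one_\wtn$ of cap/dot/cup (so $\Gamma$ kills it because it factors through a $1$-morphism sent to zero), while the right-hand side $\pm h_r(\X_{i+1}-\X_i)$ is a new bubble — a $2$-endomorphism of $\one_\wtn$ that $\Gamma$ does \emph{not} automatically kill. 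For $\Gamma$ to be well-defined one must verify that these two images agree, i.e.~that $\Gamma(\pm h_r(\X_{i+1}-\X_i)) = \pm h_r(\X-\X) = 0$ for $r>0$. This rests on two facts you do not state: first, in weight $\wtn$ (where $a_i = a_{i+1}$) every real bubble has strictly positive degree $2r$ with $r\ge 1$, so only the $r\ge 1$ cases of \eqref{newbub} are genuine relations (the $r=0$ case is a fake bubble and \eqref{newbub} is then merely a definition); second, $h_r(\X-\X)$ vanishes for $r>0$. Without these two observations the well-definedness of $\Gamma$ is unsubstantiated, and this is exactly the point the paper isolates as ``the only relation with any subtlety.''

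Two smaller remarks. Your list of relations ``among endomorphisms of $\one_\wt$'' to check includes \eqref{dotstonewbubbles} and \eqref{eq:symslide}, but both of these involve a strand on the boundary and are therefore killed automatically; only \eqref{newbub} genuinely requires a check. Also, your characterization of $\Gamma|_{\End(\one_\wtn)}$ as the multiplication map $\bigotimes_{i=1}^m H^\ast(\Gr{n}{2n}) \to H^\ast(\Gr{n}{2n})$ (via the isomorphism \eqref{eq:End1=Gr}) is a clean reformulation that the paper does not make, and it does immediately explain why the assignment is an algebra map; but it does not by itself resolve the \eqref{newbub} check above, since that check is about consistency between the algebra-level definition and the fact that $\Gamma$ kills stranded $1$-morphisms.
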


\begin{proof} 
Using the generators and relations for $\UU_q(\glm)$ in Definition \ref{def:CQG}, 
it is straightforward to check that the indicated assignments yield
a lax $2$-functor from $\UU_q(\glm)$ to $\Catn$. 
(This $2$-functor is lax since when $\wt \neq \wtn$, the structure map 
$H^\ast(Gr_n(\mathbb{C}^{2n})) = \one_{\Gamma(\wt)} \to \Gamma(\one_\wt) = 0$
is not an isomorphism.)

To elaborate, both sides of most relations go immediately to zero, 
since they involve the $1$-morphisms $\EE_i$ or $\FF_i$. 
The only relation with any subtlety is \eqref{newbub}.
Recall that \eqref{newbub} is not a relation for fake bubbles (merely a notational convenience), 
while for real bubbles it is a relation equating a real bubble with a new bubble. 
It is crucial that all real bubbles in the region $\wtn$ have strictly positive degree, 
so in this case the left-hand side of \eqref{newbub} is sent to zero, 
while the right-hand side $h_r(\X_i - \X_{i+1})$ is sent to $h_r(\X - \X)$, 
which is zero when $r > 0$.

The $2$-functor 
factors through $\SSStn_q(\glm)$ by Definition \ref{def:CSQ}
and is full on $2$-morphisms because all $h_r(\X)$ are in the image.
Since projective modules for $H^\ast(Gr_n(\mathbb{C}^{2n}))$ are free, 
$\Catn$ is Karoubian, and 
therefore there is an induced $2$-functor
$\Kar(\SSStn_q(\glm))\rightarrow\Catn$. \end{proof}

\begin{rem}\label{R:Gamma-on-FE}
All indecomposable $1$-morphisms in $\Kar(\SSStn_q(\glm))$
except (shifts of) $\one_{\wtn}$ are sent by $\Gamma$ to zero. In particular, 
\[
\Gamma(\FF_i^{(k)}\EE^{(k)}_i\one_{\wtn}) = 0, \qquad \text{for $k > 0$ and $i=1, \dots, m-1$.}
\]
Note that, by definition, the $1$-morphism $\one_{\wtn}$ survives in the quotient.

In fact, for any weight $\wt$ with $a_i \ne a_{i+1}$ for some $i$, 
that $\one_{\wt}$ is sent to zero is already 
implied by the fact that $\EE_i$ and $\FF_i$ are sent to zero. 
Indeed, in such weights there is at least one real bubble of degree zero, 
and consequently $\id_{\one_{\wt}} = h_0(\X_i - \X_{i+1})$ will be sent to zero.
\end{rem}

We will use the symbol $\Gamma$ to refer to several related $2$-functors. 
We need not work with the full Karoubi envelope, 
but can restrict $\Gamma$ to the partial idempotent completion $\cSSStn_q(\glm)$:
\[
\Gamma \colon \cSSStn_q(\glm)\rightarrow \Catn.
\]
Since $\Gamma$ is additive, it induces a $2$-functor between dg categories of chain complexes:
\begin{equation}
	\label{eq:complexquotient}
\Gamma \colon \dgCat[\cSSStn_q(\glm)] \to \dgCat[\Catn]
\end{equation}
and hence also between homotopy categories.
The targets of such functors provide a simplified setting to study Rickard complexes.

\begin{lem} 
	\label{lem:Gamma(C)}
The Rickard complexes $C(\bgen_{i}) \one_\wtn$ and $C(\bgen_{i}^{-1}) \one_\wtn$
are sent by $\Gamma$ to the complexes $\qsh^{-n}\tsh^{n}\one_{\wtn}$ and $\qsh^{n}\tsh^{-n}\one_{\wtn}$ 
(with zero differential), respectively. 
More generally, if $\bword = \bgen_{i_1}^{\epsilon_1} \cdots \bgen_{i_r}^{\epsilon_r}$ is a braid word
and $\epsilon(\bword) = \epsilon_1 + \cdots + \epsilon_r$ is the braid exponent, 
then the complex 
$C(\bword) \one_\wtn = C(\bgen_{i_1}^{\epsilon_1}) \one_\wtn \cdots C(\bgen_{i_r}^{\epsilon_r}) \one_\wtn$ 
maps under \eqref{eq:complexquotient} to the complex $(\qsh^{-n}\tsh^{n})^{\epsilon(\bword)}\one_{\wtn}$ 
supported in a single degree. 
\end{lem}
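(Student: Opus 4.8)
The strategy is to reduce everything to the two-strand case and then perform a direct computation using the explicit form of the Rickard differential. First I would observe that $\Gamma$ is a monoidal $2$-functor (being induced by a lax $2$-functor that is strict on the tensor subcategory generated by $\one_{\wtn}$), so that $\Gamma\big(C(\bword)\one_\wtn\big) = \Gamma\big(C(\bgen_{i_1}^{\epsilon_1})\one_\wtn\big) \circ \cdots \circ \Gamma\big(C(\bgen_{i_r}^{\epsilon_r})\one_\wtn\big)$, and the claim for a general braid word will follow immediately from the single-generator cases together with the observation that $\qsh$ and $\tsh$ are preserved by $\Gamma$ and that tensoring one-term complexes concatenates the shifts. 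So the heart of the matter is to show $\Gamma\big(C(\bgen_i)\one_\wtn\big) \simeq \qsh^{-n}\tsh^{n}\one_{\wtn}$ and $\Gamma\big(C(\bgen_i^{-1})\one_\wtn\big) \simeq \qsh^{n}\tsh^{-n}\one_{\wtn}$.

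\textbf{The two-strand computation.} By Definition \ref{def:Rickard}, $C(\bgen_i)\one_\wtn$ at the weight $\wtn = (n,n,\ldots,n)$ is, in the two relevant adjacent strands, the complex $C_{n,n} = \bigoplus_{k=0}^{n} \qsh^{-k}\tsh^{k}\,\FF^{(n-k)}\EE^{(n-k)}\one_{(n,n)}$ with differential \eqref{eq:RickDiff}. Applying $\Gamma$: by Remark \ref{R:Gamma-on-FE}, the term $\FF^{(n-k)}\EE^{(n-k)}\one_{(n,n)}$ is sent to $0$ for every $k < n$, while the $k = n$ term $\FF^{(0)}\EE^{(0)}\one_{(n,n)} = \one_{(n,n)}$ is sent to $\one_{\wtn}$. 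Hence $\Gamma(C_{n,n})$ has underlying object $\qsh^{-n}\tsh^{n}\one_{\wtn}$ concentrated in homological degree $n$, and its differential is forced to be zero (there being only one nonzero term). This gives $\Gamma\big(C(\bgen_i)\one_\wtn\big) = \qsh^{-n}\tsh^{n}\one_{\wtn}$ on the nose, not merely up to homotopy. The argument for $C^\vee_{n,n}$ (with differential \eqref{eq:dualRickDiff}) is identical: only the $k=n$ term $\FF^{(0)}\EE^{(0)}\one_{(n,n)}$ survives, now in homological degree $-n$ with shift $\qsh^{n}$, so $\Gamma\big(C(\bgen_i^{-1})\one_\wtn\big) = \qsh^{n}\tsh^{-n}\one_{\wtn}$.

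\textbf{Assembling the general case.} For a braid word $\bword = \bgen_{i_1}^{\epsilon_1}\cdots\bgen_{i_r}^{\epsilon_r}$, the complex $C(\bword)\one_\wtn$ is the composition (horizontal/tensor composite along the $m$-strand object) of the individual $C(\bgen_{i_s}^{\epsilon_s})\one_\wtn$; note that at weight $\wtn$ every intermediate weight appearing is again $\wtn$ because each generator is $\wtn$-balanced, so composition stays within $\one_\wtn\cSSStn_q(\glm)\one_\wtn$. Since $\Gamma$ is an additive monoidal $2$-functor, $\Gamma\big(C(\bword)\one_\wtn\big) = \Gamma\big(C(\bgen_{i_1}^{\epsilon_1})\one_\wtn\big)\circ\cdots\circ\Gamma\big(C(\bgen_{i_r}^{\epsilon_r})\one_\wtn\big)$, and by the two-strand computation each factor is $(\qsh^{-n}\tsh^{n})^{\epsilon_s}\one_{\wtn}$ (reading $\epsilon_s = -1$ as $\qsh^n\tsh^{-n}$). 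Since $\one_{\wtn}\circ\one_{\wtn} = \one_{\wtn}$ in $\Catn$ and the grading/homological shifts add, the composite is $(\qsh^{-n}\tsh^{n})^{\epsilon(\bword)}\one_{\wtn}$ supported in homological degree $n\,\epsilon(\bword)$, as claimed.

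\textbf{Expected main obstacle.} The only genuinely delicate point is verifying that $\Gamma$ is compatible with tensor (horizontal) composition in the strong sense needed here --- that is, that the structure maps $\Gamma(X)\circ\Gamma(Y)\to\Gamma(X\circ Y)$ are isomorphisms when $X, Y$ are $1$-endomorphisms of $\wtn$ built from $\EE_i,\FF_i$. This is where the laxness of $\Gamma$ (Proposition \ref{P:there-is-Gamma}) could in principle cause trouble. However, this is manageable: on objects of the form $\one_\wtn$ the structure map is the identity, and every term surviving $\Gamma$ in a Rickard complex is (a shift of) $\one_\wtn$, so the relevant composites only ever involve $\one_\wtn\circ\one_\wtn$. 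One should state this carefully --- perhaps as a preliminary lemma that $\Gamma$ restricted to the full sub-$2$-category on the object $\wtn$ is a strict monoidal $2$-functor onto $\Catn$ --- but no hard computation is involved. Everything else is bookkeeping with the explicit Rickard complexes and Remark \ref{R:Gamma-on-FE}.
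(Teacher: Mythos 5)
Your proof is correct and follows essentially the same route as the paper's (much terser) argument: kill all $\FF^{(n-k)}\EE^{(n-k)}\one_{\wtn}$ terms with $k<n$ via Remark \ref{R:Gamma-on-FE}, leaving only $\qsh^{\mp n}\tsh^{\pm n}\one_{\wtn}$, and then use monoidality of $\Gamma$ to handle general braid words. Your additional discussion of laxness is a reasonable precaution, but as you note it causes no trouble here since the only composites that survive involve $\one_{\wtn}\circ\one_{\wtn}$, on which the structure maps are identities.
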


\begin{proof} 
The first statement is an obvious consequence of Remark \ref{R:Gamma-on-FE} 
and the second follows from $\Gamma$ being a $2$-functor.
\end{proof}

The following lemma is our effective method to understand morphisms 
between Rickard complexes in the homotopy category.

\begin{lem}
	\label{L:detect-equivalence-cell-quotient}
Let $\K$ be a field and let
$\bword$ and $\bword'$ be two braid words for the same braid. 
The functor $\Gamma$ from \eqref{eq:complexquotient} induces an isomorphism
\begin{equation}
	\label{eq:quotientiso}
\Hom_{\hCat[\cSSStn_q(\glm)]}^{0} \big( C(\bword) \one_\wtn, C(\bword') \one_\wtn \big) 
	\xrightarrow{\cong} \Hom_{\hCat[\Catn]}^{0} 
	\big( \Gamma(C(\bword) \one_\wtn), \Gamma(C(\bword') \one_\wtn) \big) = \K \cdot \id
\end{equation}
(on the space of $\qsh$-degree zero morphisms). 
Further, a $\qsh$-degree zero 
chain map $f \colon C(\bword) \one_\wtn \to C(\bword') \one_\wtn$ in $\dgCat[\cSSStn_q(\glm)]$
will be a homotopy equivalence if and only if $\Gamma(f)$ is an isomorphism, 
and $f$ will be nulhomotopic if and only if $\Gamma(f) = 0$. 
\end{lem}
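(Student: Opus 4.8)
\textbf{Proof plan for Lemma \ref{L:detect-equivalence-cell-quotient}.}

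The plan is to reduce everything to the computation of a single $\Hom$-space in $\hCat[\cSSStn_q(\glm)]$, namely $\Hom^0_{\hCat[\cSSStn_q(\glm)]}(C(\bword)\one_\wtn, C(\bword')\one_\wtn)$, and to show it is one-dimensional with the isomorphism class of homotopy equivalences spanning it. First I would invoke Rickard canonicity (Theorem \ref{thm:Rickard}): since $\bword$ and $\bword'$ are braid words for the same braid, there \emph{is} a canonical homotopy equivalence $\Phi \colon C(\bword)\one_\wtn \xrightarrow{\simeq} C(\bword')\one_\wtn$, so this $\Hom$-space is nonzero. Applying $\Gamma$ to $\Phi$ and using Lemma \ref{lem:Gamma(C)}, both $\Gamma(C(\bword)\one_\wtn)$ and $\Gamma(C(\bword')\one_\wtn)$ are the complex $(\qsh^{-n}\tsh^n)^{\epsilon(\bword)}\one_\wtn$ concentrated in a single bidegree (note $\epsilon(\bword) = \epsilon(\bword')$ since the braids agree), and $\Gamma(\Phi)$ is a degree-zero chain map between them; since a homotopy equivalence is sent to a homotopy equivalence and the target has zero differential and is concentrated in one degree, $\Gamma(\Phi)$ is an isomorphism of $\one_\wtn$, hence a nonzero scalar. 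Thus the map in \eqref{eq:quotientiso} is surjective. The right-hand side $\Hom^0_{\hCat[\Catn]}(\Gamma(C(\bword)\one_\wtn), \Gamma(C(\bword')\one_\wtn))$ is exactly $\End^0_{\Catn}(\one_\wtn) = \K\cdot\id$, since $\End(\one_\wtn) = H^\ast(\Gr{n}{2n})$ is concentrated in nonnegative degrees with one-dimensional degree-zero part, and passing to the homotopy category of a category with zero differentials changes nothing.

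The crux is injectivity of \eqref{eq:quotientiso}, equivalently the statement that $\Hom^0_{\hCat[\cSSStn_q(\glm)]}(C(\bword)\one_\wtn, C(\bword')\one_\wtn)$ is at most one-dimensional. Here I would use the homotopy equivalence $\Phi$ to identify this $\Hom$-space with $\End^0_{\hCat[\cSSStn_q(\glm)]}(C(\bword)\one_\wtn)$, then prove the latter is one-dimensional. For this I would compute using the finite filtration on $C(\bword)\one_\wtn$ by homological degree: $\End^0$ in the homotopy category is computed by the zeroth homology of the total $\Hom$-complex $\Hom_{\dgCat}(C(\bword)\one_\wtn, C(\bword)\one_\wtn)$, and a standard spectral sequence (or direct filtration) argument bounds it by homs between the constituent $1$-morphisms of $C(\bword)\one_\wtn$, which are of the form $\FF_{i}^{(a)}\EE_{i}^{(b)}\one_\wtn$ and their horizontal composites. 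The key input is that all these $1$-morphisms live in $\cXXtn_q(\glm)$ (or are built from its generators), so their endomorphism algebras and hom-spaces are concentrated in nonnegative $\qsh$-degrees with one-dimensional degree-zero part for the identity — this is the endopositive family property recorded in Remark \ref{rem:FiEiep} together with Proposition \ref{prop:size}. Running the degree count (tracking the internal $\qsh$-shifts $\qsh^{-k}$ and homological $\tsh^k$ appearing in the Rickard differentials of Definition \ref{def:Rickard}), the only contribution to total degree zero comes from the identity endomorphism of each summand lying on the ``diagonal,'' and the differential on the total complex kills all but a one-dimensional space in homology. I expect \textbf{this degree bookkeeping to be the main obstacle}: one must carefully check that no off-diagonal hom contributes in total bidegree $(0,0)$ and that the induced differential has the expected rank, which is where the precise form of the Rickard complex and the positivity of $\cXXtn_q(\glm)$ morphism spaces are both essential.

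Granting the isomorphism \eqref{eq:quotientiso}, the final two assertions follow formally. Given a $\qsh$-degree zero chain map $f \colon C(\bword)\one_\wtn \to C(\bword')\one_\wtn$, its class $[f]$ in the homotopy category equals $\lambda \cdot [\Phi]$ for a unique $\lambda \in \K$, determined by $\Gamma(f) = \lambda\cdot\id$ under \eqref{eq:quotientiso}. If $\Gamma(f)$ is an isomorphism then $\lambda \neq 0$, so $[f] = \lambda[\Phi]$ is invertible in $\hCat[\cSSStn_q(\glm)]$, i.e.\ $f$ is a homotopy equivalence; conversely a homotopy equivalence has $\Gamma(f)$ an isomorphism since $\Gamma$ is a $2$-functor. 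Likewise $f$ is nulhomotopic iff $[f] = 0$ iff $\lambda = 0$ iff $\Gamma(f) = 0$. This completes the plan; the only nonformal ingredient is the dimension count in the middle paragraph, for which I would cite Remark \ref{rem:FiEiep}, Proposition \ref{prop:size}, and the explicit differentials \eqref{eq:RickDiff}, \eqref{eq:dualRickDiff}.
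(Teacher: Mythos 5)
Your outline is structurally correct and most of the steps (computing the right-hand side as $\K\cdot\id$, reducing to $\End^0_{\hCat[\cSSStn_q(\glm)]}(C(\bword)\one_\wtn)$ via Rickard canonicity, and the formal consequences in the final paragraph) match the paper's proof. But the crux—showing that $\End^0$ is one-dimensional—is where you diverge, and the route you sketch has a real gap, one you yourself flag as ``the main obstacle.''

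The filtration/spectral-sequence argument is harder than it looks and isn't actually supported by the ingredients you cite. For $m > 2$, the chain objects of $C(\bword)\one_\wtn$ are direct sums of tensor products $\FE_{i_1}^{(a_1)} \cdots \FE_{i_r}^{(a_r)}$ across multiple colors, and you would need control over \emph{all} $\Hom$-spaces between such products together with the Koszul differentials in the tensor-product complex. Remark~\ref{rem:FiEiep} only records endopositivity of the family $\{\FE_i^{(k)}\}$ for a single $i$; it does not bound degree-zero morphisms between different multi-color tensor products. Proposition~\ref{prop:size} gives graded dimensions but says nothing about the rank of the induced differential on the $\Hom$-complex. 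Nothing you've assembled rules out off-diagonal contributions surviving to $H^0$ of the $\Hom$-complex, and verifying this from scratch would be substantial work.

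The paper avoids all of this by using a second, stronger consequence of Theorem~\ref{thm:Rickard}: the complex $C(\bword)\one_\wtn$ is \emph{invertible} in $\hCat[\cSSStn_q(\glm)]$, so tensoring with its homotopy inverse $C(\bword)^{-1}$ is an equivalence that sends $\End^0_{\hCat}(C(\bword)\one_\wtn)$ isomorphically onto $\End^0_{\hCat}(\one_\wtn) = \K\cdot\id$. No filtration, no spectral sequence, no degree bookkeeping. Once you have this one-line replacement for your middle paragraph, the rest of your plan—identifying the target $\Hom$-space with $\K\cdot\id$ via Lemma~\ref{lem:Gamma(C)} and the observation $\epsilon(\bword)=\epsilon(\bword')$, noting $\Gamma(\id)=\id$ so the map is nonzero hence an isomorphism of rank-one $\K$-modules, and the final formal deductions—goes through exactly as you describe and matches the paper.
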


\begin{rem}
Since $\K$ is a field, it is immediate from the proof of Lemma \ref{L:detect-equivalence-cell-quotient} 
that $f$ is a homotopy equivalence if and only if $\Gamma(f)\ne 0$. 
However, the following proof of the above lemma is designed to work over $\Z$, 
once invertibility of Rickard complexes is established over $\Z$.
\end{rem}

\begin{proof} 
Let $\bword$ and $\bword'$ be braid words for the same braid.
Then, we necessarily have $\epsilon(\bword)=\epsilon(\bword')$,
so Lemma \ref{lem:Gamma(C)} gives that
$\Gamma(C(\bword) \one_\wtn) = \Gamma(C(\bword') \one_\wtn)$ 
is a complex supported in one degree, a shift of the free module $\one_{\wtn}$. 
Thus, the space
\[
\Hom_{\hCat[\Catn]}^{0} 
	\big( \Gamma(C(\bword) \one_\wtn), \Gamma(C(\bword') \one_\wtn) \big)
\]
of $\qsh$-degree zero morphisms in the homotopy category
consists solely of scalar multiples of the identity map. 
Since $\Gamma(C(\bword) \one_\wtn)$ and $\Gamma(C(\bword') \one_\wtn)$ 
are one-term complexes supported in the same homological degree, 
the morphism space in the homotopy category agrees with the space 
\[
\Hom_{\ChCat[\Catn]}^{0}
	\big( \Gamma(C(\bword) \one_\wtn), \Gamma(C(\bword') \one_\wtn) \big)
\]
of $\qsh$-degree zero chain maps
(which further agrees with the space 
of all $\qsh$-degree zero maps in $\dgCat[\Catn]$).
To summarize, 
\begin{equation}
	\label{eq:summary=}
\Hom_{\hCat[\Catn]}^{0} 
	\big( \Gamma(C(\bword) \one_\wtn), \Gamma(C(\bword') \one_\wtn) \big)
	= \K \cdot \id
	= \Hom_{\dgCat[\Catn]}^{\qsh\text{-deg} = 0} 
	\big( \Gamma(C(\bword) \one_\wtn), \Gamma(C(\bword') \one_\wtn) \big) \, ,
\end{equation}
i.e.~elements in the latter are homotopy equivalences if and only if they are isomorphisms 
and are nulhomotopic if and only if they are zero.

By Theorem \ref{thm:Rickard}, 
$C(\bword) \one_\wtn$ and $C(\bword') \one_\wtn$ are homotopy equivalent, 
and any choice of homotopy equivalence will induce an isomorphism
\[
\Hom_{\hCat[\cSSStn_q(\glm)]}^{0}(C(\bword) \one_\wtn, C(\bword') \one_\wtn)
	\cong \End_{\hCat[\cSSStn_q(\glm)]}^{0}(C(\bword) \one_{\wtn}) \, .
\]
Theorem \ref{thm:Rickard} also gives that $C(\bword)$ is invertible, 
so the functor of tensoring with the inverse Rickard complex induces an isomorphism 
\[
\End_{\hCat[\cSSStn_q(\glm)]}^{0}(C(\bword) \one_{\wtn}) 
	\cong \End_{\hCat[\cSSStn_q(\glm)]}^{0}(\one_{\wtn}) = \K \cdot \id \, .
\]
The functor $\Gamma$ sends 
$\id_{C(\bword) \one_{\wtn}} \in \End_{\hCat[\cSSStn_q(\glm)]}^{0}(C(\bword) \one_{\wtn})$ 
to the identity map of $(\qsh^{-n}\tsh^{n})^{\epsilon(\bword)}\one_{\wtn}$ in $\Catn$.
Using the functoriality of $\Gamma$, this then implies that \eqref{eq:quotientiso} is an isomorphism.

Since $\Gamma$ is additive, it maps homotopy equivalences to homotopy equivalences 
and nullhomotopic maps to nullhomotopic maps.
The final statement then follows from \eqref{eq:summary=} 
and the isomorphism \eqref{eq:quotientiso}.
\end{proof}

We use this lemma to pick distinguished canonical isomorphisms, thus clarifying Rickard canonicity.

\begin{prop}\label{P:rick-can}
Let $\K$ be a field and $\bgen \in \Br_m$ be a braid.
Given braid words $\bword$ and $\bword'$ for $\bgen$, let
$f_{\bword',\bword} \colon C(\bword) \one_\wtn \to C(\bword') \one_\wtn$
be a homotopy equivalence for which 
$\Gamma(f_{\bword',\bword}) = \id_{(\qsh^{-n}\tsh^{n})^{\epsilon(\bword)}\one_{\wtn}}$. 
If $f$ is another homotopy equivalence satisfying this property, then $f \sim f_{\bword',\bword}$. 
Moreover, if $\bword''$ is another braid word for $\bgen$, 
then $f_{\bword'',\bword'} \circ f_{\bword',\bword} \sim f_{\bword'',\bword}$.
\end{prop}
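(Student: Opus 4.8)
\textbf{Proof proposal for Proposition \ref{P:rick-can}.}

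The plan is to leverage Lemma \ref{L:detect-equivalence-cell-quotient}, which tells us that the $\qsh$-degree zero morphism space in the homotopy category between any two Rickard complexes (for braid words representing the same braid) is one-dimensional, spanned by a homotopy equivalence, and that $\Gamma$ detects both homotopy equivalences and nullhomotopic maps. First I would address existence: Theorem \ref{thm:Rickard} guarantees the existence of \emph{some} homotopy equivalence $g \colon C(\bword) \one_\wtn \to C(\bword') \one_\wtn$; by Lemma \ref{L:detect-equivalence-cell-quotient}, $\Gamma(g)$ is an isomorphism of one-term complexes, hence $\Gamma(g) = c \cdot \id_{(\qsh^{-n}\tsh^{n})^{\epsilon(\bword)}\one_{\wtn}}$ for some $c \in \K^\times$ (using $\epsilon(\bword) = \epsilon(\bword')$, which holds since $\bword, \bword'$ represent the same braid). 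Then $f_{\bword',\bword} := c^{-1} g$ is a homotopy equivalence with $\Gamma(f_{\bword',\bword}) = \id$, establishing existence of the distinguished class.

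Next, uniqueness. Suppose $f$ is another homotopy equivalence with $\Gamma(f) = \id$. By the isomorphism \eqref{eq:quotientiso} of Lemma \ref{L:detect-equivalence-cell-quotient}, the map
\[
\Hom_{\hCat[\cSSStn_q(\glm)]}^{0}\big( C(\bword) \one_\wtn, C(\bword') \one_\wtn \big)
\xrightarrow{\Gamma} \K \cdot \id
\]
is an isomorphism. Since both $f$ and $f_{\bword',\bword}$ map to $\id$ under $\Gamma$, they have the same image, so $f - f_{\bword',\bword}$ maps to $0$, whence $f - f_{\bword',\bword}$ is nullhomotopic, i.e.\ $f \sim f_{\bword',\bword}$. (Here I am using that the codomain is exactly $\K \cdot \id$, so injectivity of $\Gamma$ on homotopy classes of $\qsh$-degree zero maps gives the conclusion.)

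For the cocycle/compatibility statement, I would argue as follows. The composite $f_{\bword'',\bword'} \circ f_{\bword',\bword}$ is a homotopy equivalence $C(\bword) \one_\wtn \to C(\bword'') \one_\wtn$, since a composition of homotopy equivalences is a homotopy equivalence. Applying the functor $\Gamma$ (which is a $2$-functor, hence compatible with composition) gives
\[
\Gamma(f_{\bword'',\bword'} \circ f_{\bword',\bword}) = \Gamma(f_{\bword'',\bword'}) \circ \Gamma(f_{\bword',\bword}) = \id \circ \id = \id .
\]
So $f_{\bword'',\bword'} \circ f_{\bword',\bword}$ is a homotopy equivalence with $\Gamma$-image the identity, and by the uniqueness just proved it is homotopic to $f_{\bword'',\bword}$. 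I do not anticipate a serious obstacle here: the only point requiring care is confirming that $\epsilon(\bword) = \epsilon(\bword') = \epsilon(\bword'')$ so that all three complexes land in the same homological and internal degree after applying $\Gamma$ (this is immediate since the braid exponent is a braid invariant), and that $\Gamma$ is strictly compatible with composition of $2$-morphisms, which follows from it being a (lax) $2$-functor that is genuinely functorial on $2$-morphisms. The main conceptual content is entirely contained in Lemma \ref{L:detect-equivalence-cell-quotient}; this proposition is a formal consequence.
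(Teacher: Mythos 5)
Your proof is correct and follows the paper's own route: everything is a direct consequence of Lemma \ref{L:detect-equivalence-cell-quotient}, using linearity of $\Gamma$ on homotopy classes to get uniqueness (the difference of the two maps has $\Gamma$-image zero, hence is nullhomotopic) and $2$-functoriality of $\Gamma$ for the composition compatibility. Your write-up is more explicit than the paper's (which essentially says ``obvious from the lemma''), but the mathematical content and strategy are the same.
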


\begin{proof}
It follows from Lemma \ref{L:detect-equivalence-cell-quotient} that a chain map 
between such Rickard complexes is nullhomotopic if and only if 
it is zero after applying $\Gamma$. 
Thus, two chain maps between such Rickard complexes are homotopic 
if and only if they agree after one applies $\Gamma$. 
From this fact, the proposition is obvious.
\end{proof}

\begin{rem}
We have focused here on the case $\wt = \wtn$ for simplicity, 
and since this is the relevant setting for our spin link homology defined below. 
One might be able to make similar arguments for ``Rickard canonicity'' in arbitrary weight, 
by considering different quotient functors analogous to $\Gamma$. 
\end{rem}

%
\section{Spin link homology}
	\label{s:SLH}
%

At last, in this section we define our spin link homology theory.
First, in \S \ref{ss:schur-FE} we introduce a monoidal category $\BnFoam$, 
defined as a quotient of $\BFoam$, which inherits the involution $\tau$.
The equivariant category $\EBnFoam$ will be the setting for type $B_n$ spin-colored link homology. 
Then, in \S \ref{ss:eRickard}, we define complexes $C^{\tau}(\bgen) \in \dgCat[\EBnFoam]$
that determine a categorical braid group action.
The latter fact is proved in \S \ref{ss:eBraidrel} using a general result about 
lifting homotopy equivalences to equivariant categories that is established in \S \ref{ss:lifting}. 
In \S \ref{ss:eHomology} we apply an appropriate representable functor to $C^{\tau}(\bgen)$
which yields a chain complex of super vector spaces
whose homology is a bigraded super vector space valued 
braid invariant that is further invariant under the Markov moves, up to isomorphism.
Hence, by Theorem \ref{thm:Markov}, we obtain an invariant of links valued 
in isomorphism classes of bigraded super vector spaces.
When $n=1,2,3$, we prove this invariant categorifies 
the spin-colored quantum $\son$ link 
polynomials\footnote{Since $\spn[4] \cong \son[5]$,
we have thus also categorified the quantum $\spn[4]$ link invariant
colored by the defining representation.},
and we conjecture that this holds for all $n>3$ as well.
We therefore denote this new link homology theory by $H_{\son,\mathcal{S}}(\mathcal{L}_\bgen)$.

\begin{rem}
Our construction bootstraps on the properties of the Rickard complexes 
from Theorem \ref{thm:Rickard}, 
namely that they are invertible and satisfy the braid relations.
Since the proof of these results in the literature assumes that $\K$ is a field, 
we will make this assumption when working with the Rickard complexes 
(essentially from Section \ref{ss:eRickard} onward).
Consequently, our link invariants will be defined over a field.
However, once some folklore results are carefully established, 
it should be possible to refine our proofs to construct our link invariants over 
$\K = \Z[\frac{1}{2}]$, and thus over any commutative ring in which $2$ is invertible.
\end{rem}

\subsection{$\FF^{(k)}\EE^{(k)}\one_{\wtn}$-generated Schur quotient}\label{ss:schur-FE}

In \S \ref{ss:Rickard}, we saw that $\sln$ link homology may be formulated entirely 
in the setting of the $N$-bounded Schur quotient $\cSSSn_q(\glm)$ 
of the categorified quantum group $\cUU_q(\glm)$.
We also saw in \S \ref{s:inv} that, in order to guarantee that $\tau$ is an involution, 
we must pass to $\cXX_q(\glm)$, the full $2$-subcategory of $\cUU_q(\glm)$ generated 
by the $1$-morphisms $\FF^{(k)}_i \EE^{(k)}_i\one_{\wt}$ and $\EE^{(k)}_i \FF^{(k)}_i\one_{\wt}$.
Finally, to find a monoidal category categorifying 
the relations in the centralizer algebras studied in \S \ref{s:typeB}, 
we had to work in \S \ref{s:ECQG} 
with the equivariantization of the full $2$-subcategory $\cXX_q(\glm)\one_\wtn$. 
To define our spin link homology, 
we will work in a setting that combines these desired features.  

First, we verify that the $2$-automorphism $\tau$ 
descends to an automorphism of the $N$-bounded Schur quotient from Definition \ref{def:CSQ}. 
As discussed back in \S \ref{ss:nutshell}, 
and in light of the action of $\tau$ on weights, the relevant value here is $N=2n$.

\begin{lem}\label{lem:tau-aut-cSSStn}
If $\tau=\tau_n$ satisfies the conditions of Theorem \ref{thm:symmetry}, 
then it preserves the ideal $\mathcal{J}^{\leq 2n}$ from Definition \ref{def:CSQ}. 
Consequently, it induces an automorphism of $\cSSStn_q(\glm)$.
\end{lem}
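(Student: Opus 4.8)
The plan is to reduce the statement to a concrete computation about the action of $\tau = \tau_n$ on new bubble generators. Recall from Definition \ref{def:CSQ} that $\mathcal{J}^{\leq 2n}$ is the $2$-ideal of $\cSSS_q(\glm)$ generated by the new bubbles $\begin{tikzpicture}[anchorbase,scale=1]\node at (0,0){\NB{\Schur(\X_i)}};\node at (.6,.1){\scs$\wt$};\end{tikzpicture}$ for which the partition $\lambda$ does not fit inside an $a_i \times (2n-a_i)$ box, where $\wt = (a_1,\ldots,a_m)$. Since $\tau$ is already known (by Theorem \ref{thm:symmetry}, under the running hypotheses) to be a well-defined $2$-automorphism of $\UU_q(\glm)$ that descends to $\cSSS_q(\glm)$ (the latter because $\tau$ acts on bubbles by $\mathfrak{s}_\lambda(\X_i) \mapsto \mathfrak{s}_{\lambda^t}(\X_i)$, which is compatible with evaluating the formal alphabet $\X_i$ in weight $\wt$ to an alphabet of cardinality $a_i$ — note the cardinality only depends on $\wt$, and $\tau$ fixes the cardinality data since transposing a partition does not change the alphabet it lives in), it suffices to check that $\tau$ maps a generator of $\mathcal{J}^{\leq 2n}$ into $\mathcal{J}^{\leq 2n}$.

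First I would record the key combinatorial fact: a partition $\lambda$ fits inside an $a \times (2n-a)$ box if and only if its transpose $\lambda^t$ fits inside a $(2n-a) \times a$ box. Now $\tau$ sends the weight $\wtn + \wt$ to $\wtn - \wt$; writing $\wt = (a_1, \ldots, a_m)$ relative to $\wtn$, the entry $n + a_i$ in weight $\wtn + \wt$ becomes $n - a_i$ in weight $\wtn - \wt$. The crucial identity is the symmetry of the box dimensions under this involution: if $b_i := n + a_i$ is the $i$th entry of the source weight, then $2n - b_i = n - a_i$ is the $i$th entry of the target weight, so the box $b_i \times (2n - b_i)$ in the source corresponds to the box $(2n - b_i) \times b_i = (n-a_i) \times (n+a_i)$ in the target — which is exactly the transposed box. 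Combining with equation \eqref{eq:tauonSchur}, $\tau$ sends the bubble $\mathfrak{s}_\lambda(\X_i)$ in weight $\wtn + \wt$ to $\mathfrak{s}_{\lambda^t}(\X_i)$ in weight $\wtn - \wt$; by the box symmetry, $\lambda$ fails to fit in the source box precisely when $\lambda^t$ fails to fit in the target box. Hence $\tau$ maps each generator of $\mathcal{J}^{\leq 2n}$ to a generator of $\mathcal{J}^{\leq 2n}$ (in the image weight), so $\tau(\mathcal{J}^{\leq 2n}) \subseteq \mathcal{J}^{\leq 2n}$; applying the same argument to $\tau^{-1}$ (or using $\tau^4 = \id$ from Corollary \ref{cor:order4}) gives equality. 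Therefore $\tau$ descends to a well-defined $2$-automorphism of the quotient $\cSSStn_q(\glm)$.

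The main obstacle — really the only subtlety — is making sure the bookkeeping for the box dimensions under the weight involution $\wtn + \wt \mapsto \wtn - \wt$ is done correctly, in particular that the shift by $\wtn$ is exactly what turns the box $a \times (2n-a)$ into its transpose rather than into some unrelated box; this is where the choice $N = 2n$ is essential and where a sign or off-by-$n$ error would break everything. Everything else is formal: that $\tau$ is a $2$-functor and descends to $\cSSS_q(\glm)$ is already in hand from Theorem \ref{thm:symmetry} and the discussion of bubbles there, and passing from ``preserves the generating $2$-ideal'' to ``descends to the quotient $2$-category'' is standard. I would also remark in passing that the statement extends verbatim to the partially Karoubi-completed version, since $\tau$ extends to $\cUU_q(\glm)$ by Proposition \ref{prop:tauactsonthicker} and the ideal is generated by the same $2$-morphisms.
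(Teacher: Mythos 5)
Your proposal is correct and follows essentially the same argument as the paper: the key observation in both is that $\tau$ sends the weight $\wtn+\wt$ to $\wtn-\wt$ and transposes partitions via \eqref{eq:tauonSchur}, so the box $(n+a_i)\times(n-a_i)$ in the source becomes its transpose $(n-a_i)\times(n+a_i)$ in the target, and the generating condition for $\mathcal{J}^{\leq 2n}$ is preserved. Your extra remarks (descent to $\cSSS_q(\glm)$, using $\tau^4=\id$ to upgrade containment to equality, and the extension to the thick calculus) are sound but not needed beyond what the paper records.
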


\begin{proof}
Recall that $\mathcal{J}^{\leq N}$ is generated 
by all new bubble morphisms $\Schur(\X_i)\in \End(\one_{\wtn+\wt})$ 
such that the partition $\lambda$ does not fit inside an $(n+a_i) \times (n-a_i)$ box.
(Note that here we consider ambient weight $\wtn+\wt$.)
It follows from equation \eqref{eq:tauonSchur} that if $\Schur(\X_i)\in \End(\one_{n+\wt})$ is such that $\parti$ is not contained in an $(n+a_i) \times (n-a_i)$ box, 
then $\tau(\Schur(\X_i)) = \Schur[\parti^t](\X_i) \in \End(\one_{n-\wt})$ for $\parti^t$ not contained in an 
$(n-a_i) \times (n+a_i)$ box, as desired. 
\end{proof}

Recall that if $\tau$ satisfies the conditions of Theorem \ref{thm:involution}, 
then $\tau$ restricts to an involution on $\cXX_q(\glm)$ by Corollary \ref{cor:thickinvolution}. 
We now define a Schur quotient category on which $\tau$ is an involution. 

\begin{defn}\label{D:cXXtn-2cat}
Let $\cXXtn_q(\glm)$ be the quotient of the $2$-category $\cXX_q(\glm)$ 
from Definition \ref{D:cXX-2cat} 
by the intersection of $\cXX_q(\glm)$ with the ideal $\mathcal{J}^{\leq 2n}$.
\end{defn}

\begin{cor}
	\label{cor:atlastinv}
The involution $\tau$ on $\cXX_q(\glm)$ from Corollary \ref{cor:thickinvolution}
induces an involution $\tau$ on $\cXXtn_q(\glm)$. \qed
	\end{cor}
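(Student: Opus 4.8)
The plan is to verify that the involution $\tau$ on $\cXX_q(\glm)$ established in Corollary \ref{cor:thickinvolution} is compatible with passing to the quotient $\cXXtn_q(\glm)$ of Definition \ref{D:cXXtn-2cat}, and that the resulting induced functor is again an involution. Since $\cXXtn_q(\glm)$ is by construction the quotient of $\cXX_q(\glm)$ by the two-sided ideal $\mathcal{I} := \cXX_q(\glm) \cap \mathcal{J}^{\leq 2n}$, it suffices to show two things: first, that $\tau$ (viewed as an endofunctor of $\cXX_q(\glm)$) preserves the ideal $\mathcal{I}$, so that it descends to an endofunctor of the quotient; and second, that the descended functor squares to the identity.

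First I would establish that $\tau$ preserves $\mathcal{I}$. Since $\tau$ is an involution on $\cXX_q(\glm)$, it certainly maps $\cXX_q(\glm)$ to itself; it remains to check that $\tau$ maps $\mathcal{J}^{\leq 2n} \cap \cXX_q(\glm)$ into $\mathcal{J}^{\leq 2n}$. But this is exactly the content of Lemma \ref{lem:tau-aut-cSSStn}: that lemma (whose hypothesis, that $\tau = \tau_n$ satisfies the conditions of Theorem \ref{thm:symmetry}, is satisfied by our preferred $\tau$ from Theorem \ref{thm:involution}) shows $\tau$ preserves $\mathcal{J}^{\leq 2n}$ as an ideal of $\UU_q(\glm)$, via the computation with transpose partitions in equation \eqref{eq:tauonSchur}. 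Intersecting with the $\tau$-stable $2$-subcategory $\cXX_q(\glm)$ preserves stability, so $\tau(\mathcal{I}) \subseteq \mathcal{I}$. By symmetry (applying $\tau$ again, using $\tau^2 = \id$ on $\cXX_q(\glm)$) we in fact get $\tau(\mathcal{I}) = \mathcal{I}$. Therefore $\tau$ descends to a well-defined $2$-functor $\bar{\tau}$ on the quotient $\cXXtn_q(\glm)$, acting on objects, $1$-morphisms, and $2$-morphisms by the formulas induced from $\tau$.

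Next I would check that $\bar\tau$ is an involution. Since $\tau^2 = \id_{\cXX_q(\glm)}$ by Corollary \ref{cor:thickinvolution}, and the quotient functor $\pi \colon \cXX_q(\glm) \to \cXXtn_q(\glm)$ is surjective (on objects, $1$-morphisms, and $2$-morphisms) and satisfies $\pi \circ \tau = \bar\tau \circ \pi$ by construction of $\bar\tau$, we get $\bar\tau^2 \circ \pi = \bar\tau \circ \pi \circ \tau = \pi \circ \tau \circ \tau = \pi \circ \tau^2 = \pi$. As $\pi$ is surjective, this forces $\bar\tau^2 = \id_{\cXXtn_q(\glm)}$. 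Finally, one should note that $\bar\tau$ is again a monoidal $\K$-linear $2$-functor, since $\tau$ was, and monoidality/linearity descend through the quotient by a monoidal ideal. This completes the argument, and I would present it as precisely this two-line descent-plus-surjectivity reasoning, citing Lemma \ref{lem:tau-aut-cSSStn} and Corollary \ref{cor:thickinvolution}.

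I do not anticipate a serious obstacle here: the statement is a routine ``descent of an involution through a quotient by an invariant ideal'' argument, and all the real work — both that $\tau$ is an involution on $\cXX_q(\glm)$ and that it preserves the defining ideal of the Schur quotient — has already been done in Corollary \ref{cor:thickinvolution} and Lemma \ref{lem:tau-aut-cSSStn} respectively. The only point requiring any care is making sure the hypotheses of those two results are simultaneously in force for our fixed preferred choice of $\tau = \tau_n$ from Theorem \ref{thm:involution}, which they are (Theorem \ref{thm:involution} gives functions satisfying the conditions of both Theorem \ref{thm:symmetry} and the stronger involutivity conditions). If anything is mildly delicate, it is verifying that $\mathcal{I} = \cXX_q(\glm) \cap \mathcal{J}^{\leq 2n}$ is genuinely a two-sided $2$-ideal stable under $\tau$ rather than merely a $\K$-submodule, but this is immediate since it is an intersection of two such ideals, one of which ($\cXX_q(\glm)$, viewed as the set of $2$-morphisms factoring through its objects) and the other ($\mathcal{J}^{\leq 2n}$) are each $\tau$-stable two-sided ideals.
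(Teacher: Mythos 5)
Your argument is correct and matches what the paper implicitly intends: the corollary is stated with a bare \texttt{\textbackslash qed}, treating it as an immediate consequence of Corollary \ref{cor:thickinvolution} (involutivity on $\cXX_q(\glm)$) and Lemma \ref{lem:tau-aut-cSSStn} ($\tau$ preserves $\mathcal{J}^{\leq 2n}$), combined via the standard ``descend through a $\tau$-stable ideal, then use surjectivity of the quotient to verify $\bar\tau^2 = \id$'' reasoning. Your write-up simply makes explicit the routine descent argument the authors left unstated, so there is nothing to compare: same inputs, same route.
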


Finally, we arrive at our setting for type $B_n$ spin link homology. 

\begin{defn}
	\label{def:BnFoam}
Let $\BnFoam$ be the monoidal category $\one_{\wtn}\cXXtn_q(\glm)\one_{\wtn}$.
\end{defn}

\begin{cor}
The involution $\tau$ restricts from $\cXXtn_q(\glm)$ to a 
monoidal involution of $\BnFoam$.
\end{cor}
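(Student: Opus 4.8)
The statement to be proved is that the involution $\tau$ on $\cXXtn_q(\glm)$ from Corollary \ref{cor:atlastinv} restricts to a monoidal involution of the full monoidal subcategory $\BnFoam = \one_{\wtn}\cXXtn_q(\glm)\one_{\wtn}$. The plan is essentially a bookkeeping argument: gather the three properties that must be checked --- that $\tau$ preserves the object $\wtn$, that it maps $\BnFoam$ to itself (on both $1$- and $2$-morphisms), and that it remains an involution and monoidal after restriction --- and observe that each has already been established in the excerpt.

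First I would recall that $\tau = \tau_n$ sends a weight $\wt$ to $2\wtn - \wt$ (Definition \ref{def:symmetry}), so in particular $\tau(\wtn) = 2\wtn - \wtn = \wtn$; hence $\tau$ fixes the object $\wtn$ of $\cXXtn_q(\glm)$. Since a full $2$-subcategory picks out an object and takes all $1$- and $2$-morphisms between the chosen objects, and since $\tau$ is a $2$-functor, it restricts to a $2$-functor on the endomorphism category $\one_\wtn \cXXtn_q(\glm) \one_\wtn = \BnFoam$: concretely, $\tau$ sends $\EE_i^{(k)}\FF_i^{(k)}\one_{\wtn}$ to $\FF_i^{(k)}\EE_i^{(k)}\one_{\wtn}$ and vice versa (Proposition \ref{prop:tauactsonthicker} together with $\tau(\wtn)=\wtn$), both of which are among the generating $1$-endomorphisms of $\BnFoam$, so $\tau$ preserves $\BnFoam$ as claimed. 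Next, $\tau$ is an involution on $\cXXtn_q(\glm)$ by Corollary \ref{cor:atlastinv}, and restricting an involution to an invariant subcategory yields an involution, so $\tau|_{\BnFoam}$ satisfies $\tau \circ \tau = \id_{\BnFoam}$. Finally, monoidality: $\BnFoam$ inherits its monoidal structure from $1$-morphism composition in the endomorphism category of $\wtn$ (as noted after Definition \ref{D:cXX-2cat}), and $\tau$ is monoidal in the sense of being covariant for $1$-morphism composition (it is a $2$-functor, hence strictly compatible with horizontal composition of $1$- and $2$-morphisms along the fixed object $\wtn$), so the restriction is a monoidal functor.

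Assembling these observations gives the result: $\tau$ fixes $\wtn$, preserves the full subcategory $\BnFoam$ and acts on it compatibly with composition, and squares to the identity there, so it is a monoidal involution of $\BnFoam$. There is genuinely no obstacle here --- every ingredient is quoted from earlier results (Definition \ref{def:symmetry}, Proposition \ref{prop:tauactsonthicker}, Corollary \ref{cor:atlastinv}, Definition \ref{D:cXX-2cat}) --- and the only mild subtlety is noticing that passing to the $N$-bounded quotient $\cXXtn_q(\glm)$ does not disturb any of this, which is precisely the content of Lemma \ref{lem:tau-aut-cSSStn} and Corollary \ref{cor:atlastinv} (the ideal $\mathcal{J}^{\leq 2n}$ is $\tau$-stable). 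One could phrase the whole argument in a single sentence, but I would spell out the three points above so the reader sees that ``fixes the object,'' ``preserves the subcategory,'' and ``restricts to a monoidal involution'' each have an explicit justification.
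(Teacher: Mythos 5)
Your proof is correct and follows the same approach as the paper, just spelled out in more detail: the paper's proof consists of the single observation that $\tau(\one_{\wtn}) = \one_{\wtn}$, with the rest following immediately from Corollary \ref{cor:atlastinv} and the fact that $\tau$ is a $2$-involution. Your unpacking of the three checkpoints (fixes the object, preserves the subcategory, restricts to a monoidal involution) is a faithful expansion of that one-line argument.
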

\begin{proof}
In light of Corollary \ref{cor:atlastinv}, it remains to note that 
$\tau(\one_{\wtn}) = \one_{\wtn}$.
\end{proof}

\begin{rem}
The $2$-category $\cXXtn_q(\glm)$ can equivalently be described as the 
full $2$-subcategory of $\cSSStn_q(\glm)$ generated by all 
$\FF^{(k)}_i \EE^{(k)}_i\one_{\wt}$ and $\EE^{(k)}_i \FF^{(k)}_i\one_{\wt}$.
Similarly, $\BnFoam$ can be identified with the $2$-subcategory of 
$\cXXtn_q(\glm)$ generated by the $1$-morphisms
$\FE^{(k)}_i=\FF_i^{(k)}\EE_i^{(k)}\one_{\wtn}$ and $\EE_i^{(k)}\FF_i^{(k)}\one_{\wtn}$
for $i\in \{1, \ldots, m-1\}$ and $k\in \Z_{\ge 0}$.
Therefore, the following commutative diagram collects the various $2$-categories 
introduced thus far.
\[
\begin{tikzcd}
\BFoam \ar[r,hook] \ar[d, two heads] & \cXX_q(\glm) \ar[r,hook] \ar[d, two heads] 
	& \cUU_q(\glm) \ar[d, two heads] \\
\BnFoam \ar[r,hook] & \cXXtn_q(\glm) \ar[r,hook] & \cSSStn_q(\glm)
\end{tikzcd}
\]
Here, all arrows are full and the horizontal arrows are faithful.
\end{rem}

\begin{rem}
If $\K$ is a field of characteristic zero, then $\BnFoam$ is self-dual mixed.
Further, if $\K$ is also algebraically closed, then Hypothesis \ref{hypo:assumeme}
holds and Corollary \ref{cor:ECatsdm} implies that
$\EBnFoam$ is also self-dual mixed. 
\end{rem}

Before proceeding, 
we establish a decategorification result 
(Theorem \ref{thm:introKzero1})
from \S \ref{ss:introinter}.

\begin{lem}
	\label{lem:stillindecomp}
Let $\K$ be an integral domain.
The $1$-morphisms $\FEk_\grb \in \BnFoam$ are either indecomposable or zero, 
and they are non-zero if and only if $0\le k\le n$. 
\end{lem}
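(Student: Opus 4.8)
The plan is to reduce the statement to facts already established about $\BFoam$ and the category $\cSSStn_q(\glm)$. Recall from Remark \ref{rem:FiEiep} that $\{\FF_i^{(k)}\EE_i^{(k)}\one_\wtn\}_{k\ge 0}$ is an endopositive family in $\BFoam$; in particular, each $\FEk_\grb$ is indecomposable when viewed in $\BFoam$ (Remark \ref{rem:karoubianoverfield}, or directly: endopositive objects become indecomposable after base change to a field, and over an integral domain they have no nontrivial idempotent endomorphisms). Since $\BnFoam$ is a quotient of $\BFoam$, the quotient functor sends $\FEk_\grb$ to $\FEk_\grb$, and an indecomposable object either remains indecomposable or becomes zero in a quotient category (any idempotent decomposition of the image lifts to an idempotent decomposition of the preimage because the quotient functor is full and the endomorphism ring surjects). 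This already gives the dichotomy ``indecomposable or zero.''

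For the non-vanishing criterion, I would argue as follows. The category $\BnFoam$ is the full monoidal subcategory of $\one_\wtn\cSSStn_q(\glm)\one_\wtn$ generated by the $\FEk_i$ and $\EE_i^{(k)}\FF_i^{(k)}\one_\wtn$. The object $\FEk_\grb = \FF_i^{(k)}\EE_i^{(k)}\one_\wtn$ is non-zero in $\cSSStn_q(\glm)$ if and only if $\Hom_{\cSSStn_q(\glm)}(\FEk_\grb,\FEk_\grb)\neq 0$, equivalently (by biadjunction) $\Hom_{\cSSStn_q(\glm)}(\one_\wtn,\EE_i^{(k)}\FF_i^{(k)}\FF_i^{(k)}\EE_i^{(k)}\one_\wtn)\neq 0$. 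By Proposition \ref{prop:size}, the graded dimension of $\Hom_{\cSSStn_q(\glm)}(\one_\wtn, X)$ equals $(\one_\wtn, x)_{2n}$ where $x$ is the class of $X$ in $\dStn(\glm)$. Thus $\FEk_\grb \neq 0$ in $\BnFoam$ if and only if the class $\chi^{(k)} = \one_\wtn f_i^{(k)}e_i^{(k)}\one_\wtn$ is nonzero in the Schur quotient $\dStn(\gln[2])$ (restricting attention to the relevant $\sln[2]$-string, since the other coordinates of $\wtn$ are all $n$ and contribute trivially).

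The computation in the two-strand $\gln[2]$ setting is then explicit: using \eqref{eq:FE} with $(a_1,a_2)=(n,n)$, one has $\FF^{(k)}\EE^{(k)}\one_{(n,n)} \cong \bigoplus_{j=0}^{k}\bigoplus_{k-k \brack j}\EE^{(k-j)}\FF^{(k-j)}\one_{(n,n)}$ up to shift, and $\EE^{(\ell)}\one_{(n,n)} = \EE^{(\ell)}\one_{(n,n)}\colon (n,n)\to(n+\ell,n-\ell)$, which is zero in the $2n$-bounded Schur quotient exactly when $n+\ell > 2n$, i.e.~$\ell > n$. Alternatively, and perhaps more cleanly, I would invoke Corollary \ref{cor:K0(X2)}: the class of $(\FE^{(k)},\pm\swcl_k)$ in $\tauKzero{\EBFoam[2]}$ maps to $\nx^{(k)}$, and Remark \ref{rem:Xtoohigh} (together with the discussion that $\FEk_i$ is sent to the zero object in the quotient for $k>n$, stated just before Conjecture \ref{conj:itsthekernel}) records that $\nx_i^{(k)}\mapsto \xx_i^{(k)}$ under Wenzl's map, with $\xx^{(k)}=0$ for $k>n$ and $\{\xx^{(k)}\}_{k=0}^n$ a basis of $\End_{U_q(\son)}(S\otimes S)$ by Proposition \ref{itoxchangeofbasis}. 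So for $0\le k\le n$ the class is a nonzero basis element, forcing $\FEk_\grb\neq 0$, while for $k>n$ the decomposition $\FF^{(k)}\EE^{(k)}\one_{(n,n)}$ in $\cUU_q(\gln[2])$ has all summands $\EE^{(k-j)}\FF^{(k-j)}\one_{(n,n)}$ with $k-j$ ranging up to $k>n$ contributing, but after passing to $\cSSStn_q$ those with $k-j>n$ die and one checks the survivors cancel — actually the cleanest route is just that $\FF^{(k)}\EE^{(k)}\one_{(n,n)}$ for $k>n$ is already zero in $\cSSStn_q(\gln[2])$ because it factors through $\EE^{(k)}\one_{(n,n)}$ which passes through the weight $(n+k,n-k)$ with $n+k>2n$.

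The main obstacle I anticipate is keeping the bookkeeping honest when passing between $\BFoam$, $\cXXtn_q(\glm)$, and $\cSSStn_q(\glm)$: in particular making precise that ``indecomposable in $\BFoam$ $\Rightarrow$ indecomposable or zero in the quotient $\BnFoam$'' uses that the quotient functor is full and identity-on-objects, so endomorphism rings surject and idempotents (hence direct-sum decompositions) can be detected upstairs. Once that lifting principle is stated cleanly, the rest is a short appeal to Proposition \ref{prop:size} plus the elementary $\gln[2]$ weight-space bound $n+\ell\le 2n\iff\ell\le n$ for non-vanishing of $\EE^{(\ell)}\one_{(n,n)}$ in the $2n$-bounded quotient, which handles both directions of the ``if and only if.''
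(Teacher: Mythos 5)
Your overall strategy matches the paper's: fullness of $\BFoam\rightarrow\BnFoam$ handles "indecomposable or zero," and the non-vanishing criterion is a weight-space/Grothendieck-ring computation in the $2n$-bounded Schur quotient. Your route via Proposition \ref{prop:size} and the bilinear form is a perfectly reasonable (arguably cleaner, since it works directly over an integral domain rather than invoking Theorem \ref{thm:SchurCategorification}, which is stated over a field) alternative to the paper's one-line appeal to the Grothendieck ring. The weight bound $n+\ell\le 2n\iff\ell\le n$ for $\EE^{(\ell)}\one_\wtn$ is correct and gives the vanishing direction.

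There is, however, a genuine gap in your justification of the dichotomy. You assert as a general principle that ``an indecomposable object either remains indecomposable or becomes zero in a quotient category \ldots because any idempotent decomposition of the image lifts,'' but idempotents do \emph{not} lift through arbitrary surjective ring maps: $\Z\twoheadrightarrow\Z/6\cong\Z/2\times\Z/3$ is the standard counterexample, and one can realize it categorically (a one-object additive category with $\End=\Z$, quotient to $\End=\Z/6$). So fullness by itself does not yield the conclusion. What actually makes the argument work here is the endopositive structure together with freeness: $\End^0_{\BFoam}(\FEk_\grb)=\K\cdot\id$, fullness makes $\End^0_{\BnFoam}(\FEk_\grb)$ a quotient of $\K$, and that quotient is a \emph{free} $\K$-module by (biadjunction plus) Proposition \ref{prop:size}, hence is $0$ or $\K\cdot\id$. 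A positively graded ring whose degree-zero part is a domain has no nontrivial idempotents, so $\FEk_\grb$ is either zero or indecomposable. You already had all these ingredients on the table — you just attributed the conclusion to the wrong mechanism. A smaller quibble: the detour through Corollary \ref{cor:K0(X2)} and Wenzl's map does not directly show $\FEk_\grb\neq 0$ in $\BnFoam$, since that corollary lives in $\EBFoam[2]$, not $\EBnFoam[2]$; your first argument (bilinear-form non-vanishing for $0\le k\le n$) is the one to keep.
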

\begin{proof}
The first statement follows from fullness of the functor $\BFoam\rightarrow \BnFoam$. 
For the second statement, it suffices to argue that $\FEk_\grb$ is non-zero in $\cSSStn_q(\glm)$ 
if and only if $0\le k\le n$, which, in light of Theorem \ref{thm:SchurCategorification}, 
can be easily checked in the Grothendieck ring. 
\end{proof}

\begin{cor}
	\label{cor:XmnDecat}
Let $\K$ be an integral domain in which $2$ is invertible.
The direct sum decompositions from 
Theorems \ref{T:div-powers-in-twisted-K0} and \ref{T:iserre-in-twisted-K0} hold 
in $\EBnFoam$.
Hence, in the $m=2$ case,
there is an isomorphism of $\C(q)$-algebras
\[
\C(q) \ot_{\Z[q^\pm]} \tauKzero{\EBnFoam[2]} \xrightarrow{\cong} \End_{U_q(\son)}(S \ot S)
\]
that sends 
$[(\FE^{(k)}, (-1)^{{n+1 \choose 2}+{n-k+1 \choose 2}+k} \swcl_{k})]_{\tau} 
\mapsto \xx^{(k)}$.
\end{cor}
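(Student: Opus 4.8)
\textbf{Proof strategy for Corollary \ref{cor:XmnDecat}.}

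The plan is to establish the corollary in two stages: first, that the direct sum decompositions categorifying the relations survive passage to the quotient $\BnFoam$ (equivalently, that the weighted Grothendieck group computations carry over), and second, that in the $m=2$ case the resulting $\C(q)$-algebra isomorphism onto $\End_{U_q(\son)}(S \ot S)$ follows by comparing presentations.

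For the first stage, I would invoke the fullness of the quotient $2$-functor $\BFoam \to \BnFoam$ (as in Lemma \ref{lem:stillindecomp}) and the fact that all the morphisms and objects appearing in the decompositions of Theorems \ref{T:div-powers-in-twisted-K0} and \ref{T:iserre-in-twisted-K0} are defined in $\BFoam$. The key point is that a direct sum decomposition in $\BFoam$ maps to a direct sum decomposition in $\BnFoam$ (since the quotient functor is additive and full, hence preserves idempotent decompositions); moreover, by Lemma \ref{lem:stillindecomp}, the summands $\FEk_\grb$ remain indecomposable in $\BnFoam$ and vanish precisely when $k > n$. Applying $\tau$-equivariantization and Proposition \ref{prop:invMD}, the equivariant decompositions pass to $\EBnFoam$. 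Taking weighted Grothendieck groups (using Proposition \ref{prop:computeequivGgp}, whose hypotheses hold here since $\{(\FEk_\grb, \swcl_k)\}$ is an endopositive family by Proposition \ref{prop:eX}) gives that the relations from Theorems \ref{T:div-powers-in-twisted-K0} and \ref{T:iserre-in-twisted-K0} hold in $\tauKzero{\EBnFoam}$ — with the one crucial modification that, by Lemma \ref{lem:stillindecomp}, the class $[(\FE^{(n+1)}_\grb, \swcl_{n+1})]_\tau = 0$, so the recursion truncates at level $n$.

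For the second stage ($m=2$), I would combine Corollary \ref{cor:K0(X2)} with this truncation. Corollary \ref{cor:K0(X2)} gives an isomorphism $U'_{-q^2}(\som[2]) \xrightarrow{\cong} \C(q) \ot_{\Z[q^\pm]} \tauKzero{\EBFoam[2]}$, and the quotient functor $\BFoam[2] \to \BnFoam[2]$ induces a surjection $\tauKzero{\EBFoam[2]} \twoheadrightarrow \tauKzero{\EBnFoam[2]}$ whose kernel is generated by the class $[(\FE^{(n+1)}, \ldots)]_\tau$ — that is, by $\nx^{(n+1)}$ under the identification of Corollary \ref{cor:K0(X2)}. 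Hence $\C(q) \ot_{\Z[q^\pm]} \tauKzero{\EBnFoam[2]} \cong U'_{-q^2}(\som[2])^{\le n} = U'_{-q^2}(\som[2])/I^{>n}$. Now by Wenzl's Theorem \ref{thm:Wenzl} combined with Theorem \ref{T:iqWeyl=typeBbraid}, the surjective homomorphism $U'_{-q^2}(\som[2]) \to \End_{U_q(\son)}(S \ot S)$ sends $\nx^{(k)} \mapsto \xx^{(k)}$ for $0 \le k \le n$ and $\nx^{(n+1)} \mapsto 0$ (Remark \ref{rem:Xtoohigh}), so it factors through $U'_{-q^2}(\som[2])^{\le n}$. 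To see that the induced map $U'_{-q^2}(\som[2])^{\le n} \to \End_{U_q(\son)}(S \ot S)$ is an isomorphism, I would use that both sides are free $\C(q)$-modules of rank $n+1$: the right-hand side has basis $\{\xx^{(i)}\}_{i=0}^n$ by Proposition \ref{itoxchangeofbasis}, while the left-hand side is spanned by $\{\nx^{(k)}\}_{k=0}^n$ (the images of the basis of $U'_{-q^2}(\som[2])$ after killing $\nx^{(n+1)}$, using that $\nx^{(n+1)}$ generates $I^{>n}$ in the $m=2$ case), and the surjection sends one spanning set to a basis, hence is an isomorphism. Chasing through the identifications yields the stated formula $[(\FE^{(k)}, (-1)^{{n+1 \choose 2}+{n-k+1 \choose 2}+k} \swcl_k)]_\tau \mapsto \xx^{(k)}$.

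The main obstacle I anticipate is verifying cleanly that passage to the quotient $\BnFoam$ does not disturb the endopositive-family structure or introduce unexpected identifications among the $[(\FEk_\grb, \swcl_k)]_\tau$ — i.e.\ confirming that Hypothesis \ref{hypo:assumeme} and the hypotheses of Proposition \ref{prop:computeequivGgp} genuinely survive the quotient, and that the kernel of $\tauKzero{\EBFoam[2]} \to \tauKzero{\EBnFoam[2]}$ is exactly the ideal generated by $\nx^{(n+1)}$ rather than something larger. This requires knowing that the $\Hom$-spaces in $\BnFoam$ are still controlled (Remark \ref{rmk:homformula} and the Khovanov--Lauda $\Hom$ formula, adapted to the Schur quotient as in Proposition \ref{prop:size}), which is where the bulk of the technical care lies. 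Everything downstream is then a formal consequence of the presentations.
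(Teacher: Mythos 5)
Your proof is correct, but it takes a genuinely different route from the paper's in the second stage. The paper argues directly: Lemma \ref{lem:stillindecomp} and Proposition \ref{prop:computeequivGgp} give that $\{[(\FE^{(k)}, \ldots)]_\tau\}_{0 \le k \le n}$ is a $\Z[q^\pm]$-basis for $\tauKzero{\EBnFoam[2]}$, Proposition \ref{itoxchangeofbasis} gives that $\{\xx^{(k)}\}_{0 \le k \le n}$ is a $\C(q)$-basis for $\End_{U_q(\son)}(S\ot S)$, so the assignment is a vector space isomorphism for free; it is then an algebra homomorphism because Corollary \ref{cor:K0(X2)} and the recursion \eqref{E:xxk-recursion} show both sides satisfy the same quadratic relation $\xx^{(k)}\xx = (-1)^k``[k][k+1]"\xx^{(k)} + (-1)^k``[k+1]^2"\xx^{(k+1)}$, which determines multiplication since each side is generated as a unital algebra by $\xx^{(1)}$. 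You instead pass through the intermediary $U'_{-q^2}(\som[2])^{\le n}$: you identify the kernel of $\tauKzero{\EBFoam[2]}\twoheadrightarrow \tauKzero{\EBnFoam[2]}$ with the ideal $I^{>n}$ generated by $\nx^{(n+1)}$, invoke Wenzl's Theorem \ref{thm:Wenzl} together with Remark \ref{rem:Xtoohigh} to show the evaluation map factors through $U'_{-q^2}(\som[2])^{\le n}$, and close with a dimension count. Both arguments ultimately rest on the same inputs, but yours is more structurally informative: as a byproduct you prove Conjecture \ref{conj:itsthekernel} in the $m=2$ case (the kernel of Wenzl's map is exactly $I^{>n}$), whereas the paper's argument sidesteps the kernel entirely. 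The one small claim you should flesh out, though it is elementary, is the equality of the ideal generated by $\nx^{(n+1)}$ with the $\C(q)$-span of $\{\nx^{(k)}\}_{k>n}$: this follows because $U'_{-q^2}(\som[2])$ is the polynomial algebra on a single generator $b_1$ (hence commutative), so the ideal is spanned by $\nx^{(n+1)}\nx^r$, and the defining recursion in Definition \ref{def:idpb} shows each such product lies in the span of $\{\nx^{(k)}\}_{k>n}$ and conversely that each $\nx^{(k)}$ with $k>n+1$ lies in the ideal by induction. Your concern about the endopositive-family hypotheses surviving the quotient is legitimate but routine: fullness of $\BFoam\to\BnFoam$ gives surjections on $\Hom$-spaces, which preserve positivity of the grading and keep degree zero equal to $\K\cdot\id$ so long as the objects remain nonzero, which is exactly the content of Lemma \ref{lem:stillindecomp}.
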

\begin{proof}
The first claim follows since the functor $\BFoam \to \BnFoam$ intertwines the involution 
$\tau=\tau_n$ and sends $\FEk_i \in \BFoam$ to  $\FEk_i \in \BnFoam$.
For the second, 
Lemma \ref{lem:stillindecomp} implies that $\{ \FEk \}_{0 \leq k \leq n}$ 
constitutes a complete set of indecomposable objects in $\BnFoam$, 
up to isomorphism and shift.
Hence, 
Proposition \ref{prop:computeequivGgp}
implies that the elements 
$[(\FE^{(k)}, (-1)^{{n+1 \choose 2}+{n-k+1 \choose 2}+k} \swcl_{k})]_{\tau}$ 
for $0 \leq k \leq n$ give a basis for $\tauKzero{\EBnFoam[2]}$.
Proposition \ref{itoxchangeofbasis} then implies that the 
indicated assignment is an isomorphism of $\C(q)$-vector spaces.
Finally, Corollary \ref{cor:K0(X2)} and \eqref{E:xxk-recursion} 
imply that this map is a morphism of algebras.
\end{proof}

\subsection{Equivariant Rickard complexes}
	\label{ss:eRickard}

From here through the end of Section \ref{s:SLH}, 
we \textbf{assume that} $\K$ \textbf{is a field in which} $2 \neq 0$.
(We will occasionally re\"{e}mphasize this hypothesis when stating some of our major results.)

We now promote the Rickard complexes from Definition \ref{def:Rickard}
to the equivariant category $\EBnFoam$.
For $1 \leq \grb \leq m-1$, recall that
\[
C(\bgen_{\grb}) \one_{\wtn} 
	= \big( \cdots \xrightarrow{d} \qsh^{-k} \tsh^k \FF_{\grb}^{(n-k)} \EE_{\grb}^{(n-k)} \one_{\wtn} 
		\xrightarrow{d} \qsh^{-k-1} \tsh^{k+1} \FF_{\grb}^{(n-k-1)} \EE_{\grb}^{(n-k-1)} \one_{\wtn} \xrightarrow{d} \cdots \big) \, .
\]
Since the chain objects in this complex are all of the form $\FF_{\grb}^{(\ell)}\EE_{\grb}^{(\ell)}\one_{\wtn}$, 
this actually defines a complex
\[
C(\beta_{\grb}) := \big( \cdots \xrightarrow{d} \qsh^{-k} \tsh^k \FE_{\grb}^{(n-k)} \xrightarrow{d} \qsh^{-k-1} \tsh^{k+1} \FE_{\grb}^{(n-k-1)} \xrightarrow{d} \cdots \big)
		\in \dgCat[\BnFoam].
\]
Similarly, the inverse complex
\[
C(\beta_{\grb}^{-1}) := \big( \cdots \xrightarrow{d^{\vee}} \qsh^{k} \tsh^{-k} \FE_{\grb}^{(n-k)} 
	\xrightarrow{d^{\vee}} \qsh^{k-1} \tsh^{-k+1} \FE_{\grb}^{(n-k+1)} \xrightarrow{d^{\vee}} \cdots \big) 
\]
lies in $\BnFoam$. Henceforth, we denote the components of the corresponding differentials 
appearing in \eqref{eq:RickDiff} and \eqref{eq:dualRickDiff}
as follows:
\[
d_k \colon \qsh\FE_{\grb}^{(k)}\rightarrow \FE_{\grb}^{(k-1)} 
\quad \text{and} \quad 
d_k^{\vee} \colon \qsh\FE_{\grb}^{(k)}\rightarrow \FE_{\grb}^{(k+1)} \, .
\]

\begin{lem}
	\label{lem:eRickdiff}
The differentials in the Rickard complexes induce the following 
degree-zero morphisms in $\EBnFoam$:
\[
d_k \colon ( \qsh \FE^{(k)}_{\grb}, \swcl_{k}) \rightarrow (\FE^{(k-1)}_{\grb}, (-1)^{n-k+1}\swcl_{k-1})
\]
and
\[
d_k^{\vee} \colon ( \qsh \FE^{(k)}_{\grb}, (-1)^{n+k} \swcl_{k})\rightarrow (\FE^{(k+1)}_{\grb}, \swcl_{k+1}).
\]
\end{lem}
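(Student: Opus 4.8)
The plan is to verify directly that the Rickard differentials $d_k$ and $d_k^\vee$, which we already know are morphisms in $\BnFoam$, intertwine the stated equivariant structures. By Definition \ref{def:equivariant}, a morphism $f \colon X \to Y$ in $\BnFoam$ lifts to a morphism $(X, \varphi_X) \to (Y, \varphi_Y)$ in $\EBnFoam$ exactly when $\varphi_Y \circ f = \tau(f) \circ \varphi_X$. So for the first claim I would need to check that
\[
(-1)^{n-k+1} \swcl_{k-1} \circ d_k = \tau(d_k) \circ \swcl_{k}
\]
as morphisms $\qsh \FE^{(k)}_\grb \to \tau(\FE^{(k-1)}_\grb) = \EE^{(k-1)}_\grb\FF^{(k-1)}_\grb\one_\wtn$, and similarly that $\swcl_{k+1} \circ d_k^\vee = (-1)^{n+k}\, \tau(d_k^\vee) \circ \swcl_{k}$ for the second. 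Since $\swcl_k$ is a degree-zero isomorphism with $\tau(\swcl_k) = \swcr_k$ and $\swcr_k \circ \swcl_k = \id$ (Proposition \ref{prop:eX}), these equations are equivalent to computing $\tau(d_k)$ explicitly and comparing it, after pre- and post-composing with the appropriate crossing morphisms, to $\pm d_k$.

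First I would write out $d_k$ as the explicit string diagram \eqref{eq:RickDiff}: it is the composite of a thick split $\FE^{(k)}_\grb \to \FF^{(k-1)}_\grb\FF^{(1)}_\grb\EE^{(1)}_\grb\EE^{(k-1)}_\grb\one_\wtn$, a thin cap cancelling the middle $\FF^{(1)}_\grb\EE^{(1)}_\grb$ (which is the degree-$(1 + \langle\alpha_\grb^\vee,\wtn\rangle)$ cup/cap in weight $\wtn$, here of degree $1$ since $\langle\alpha_\grb^\vee,\wtn\rangle = 0$), and a thick merge. Then I would apply $\tau$ termwise using the formulas in Proposition \ref{prop:tauactsonthicker} and Corollary \ref{cor:ourthicktau}: $\tau$ sends the thick split/merge to the corresponding split/merge with reversed orientation and no sign (by \eqref{eq:tauonMS}, since $\langle\alpha_\grb^\vee,\wtn\rangle = 0$ these are in the ``$\equiv 0$'' case), and $\tau$ sends the thin cap to $\pm$ the thin cup according to \eqref{eq:tauonthickcap}. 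The key bookkeeping is tracking the sign contributed by the cap/cup, which depends on the ambient weight adjacent to it — here the weight $\wtn + \alpha_\grb$ has $a_\grb = n+1$, so the relevant sign involves $(-1)^{(n+1) - 1} = (-1)^n$ times a $\binom{1}{2} = 0$ contribution, and one must also account for the sign from reordering dots via \eqref{eqn:rearrangedots}/\eqref{eqn:sgnfromdemazure} inside the thick split and merge when $\tau$ is applied. Assembling all of these, together with the crossing signs from \eqref{eq:tauonthickcrossing} (which are trivial here since all strands share the color $\grb$, and $\tau^2 = \id$ on unicolored diagrams), should yield $\tau(d_k) = (-1)^? \, \swcr_{k-1} \circ d_k \circ \swcl_k$ with the exponent matching $n-k+1$; the second statement for $d_k^\vee$ is the analogous computation with \eqref{eq:dualRickDiff}, where the cap is replaced by a cup of degree $1 - \langle\alpha_\grb^\vee,\wtn\rangle = 1$, producing the sign $(-1)^{n+k}$ instead.

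A cleaner organizational route, which I would pursue in parallel to minimize diagram-chasing, is to use that $\swcl_k$, $\swcr_k$ and the split/merge maps are all monoidal building blocks and that $\tau$ acts on each factor by a known local sign; thus $\tau(d_k)$ differs from $d_k$ by a single global sign $\epsilon_k \in \{\pm 1\}$ once one normalizes the equivariant structures on source and target to be the given $\swcl$'s up to scalar, and this scalar is forced by consistency: composing two consecutive differentials $d_{k-1} \circ d_k = 0$ imposes no constraint, but requiring the signs to be compatible with $(\FE^{(0)}_\grb, \swcl_0) = (\one_\wtn, \id)$ being the monoidal unit, together with the single explicit computation at $k=n$ (the top of the complex, where $\FE^{(n)}_\grb = \Ya[0]$-related data is simplest), pins down the exponents recursively. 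The main obstacle I anticipate is purely the careful sign accounting: getting the cap/cup signs from Corollary \ref{cor:ourthicktau} right in the ambient weights $\wtn \pm \alpha_\grb$, and correctly combining them with the dot-reordering signs $(-1)^{\binom{k}{2}}$ hidden inside the definitions of thick split and merge \eqref{eq:explode}, \eqref{eqn:thickcapdefn}. Everything else is a routine consequence of Proposition \ref{prop:tauactsonthicker} and the fact (Corollary \ref{cor:ourthicktau}) that $\tau^2$ is the identity on unicolored thick diagrams, so no coherence subtleties arise.
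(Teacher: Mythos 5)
Your overall plan is the one the paper uses: verify $\varphi_Y\circ f = \tau(f)\circ\varphi_X$ directly by applying $\tau$ to the Rickard differential and comparing, which the paper does as equation \eqref{eq:tauRicknec} followed by the diagram chase \eqref{eq:tauRickdiff}. However, your sketch of where the signs come from contains a concrete error that would matter if you carried the computation out as written. You assert that the thin cap in $d_k$ sits ``in weight $\wtn$, here of degree $1$'' and that the ambient weight near it is $\wtn + \alpha_\grb$. In fact, the cap in \eqref{eq:RickDiff} sits between the two thickness-$(k-1)$ output strands, so the weight to its right is $\wtn + (k-1)\alpha_\grb$, giving $a_\grb = n+k-1$ and degree $2k-1$, not degree $1$; your values are only correct when $k=1$. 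Since the sign produced by \eqref{eq:tauonthickcap} and by the bubble evaluation \eqref{newbub} depends precisely on this weight, the derivation of the exponent $n-k+1$ needs this fixed.

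A secondary point: the formula ``$\tau(d_k) = (-1)^?\,\swcr_{k-1}\circ d_k\circ\swcl_k$'' does not typecheck. Since $\swcl_k\colon \FE^{(k)}\to\tau(\FE^{(k)})$ and $\swcr_k = (\swcl_k)^{-1}\colon \tau(\FE^{(k)})\to\FE^{(k)}$, the composite with matching domains/codomains is $\swcl_{k-1}\circ d_k\circ\swcr_k$, which is what solving $\tau(d_k)\circ\swcl_k = (-1)^{n-k+1}\swcl_{k-1}\circ d_k$ actually gives. This is presumably a transcription slip, but it is worth correcting before embarking on the diagram chase.

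Finally, note that even after applying $\tau$ termwise, you cannot directly ``compare to $\pm d_k$''; the composite $\tau(d_k)\circ\swcl_k$ has to be simplified using the $\U_q(\gln[2])$ relations (the paper's computation runs through \eqref{eq:bypass}, \eqref{eq:assoc}, the curl relation \eqref{eq:curl1}, \eqref{newbub}, \eqref{eq:explode}, and \eqref{eqn:sgnfromdemazure}) before it can be recognized as $(-1)^{n-k+1}\swcl_{k-1}\circ d_k$. The alternative ``cleaner organizational route'' you propose is unlikely to work as stated: $d_{k-1}\circ d_k = 0$ gives no constraint, compatibility with the monoidal unit fixes only the $k=0$ object, and the differentials at distinct $k$ are independent morphisms, so a single boundary computation does not propagate to determine an alternating sign like $(-1)^{n-k+1}$ recursively without further structural input.
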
 
\begin{proof}
Using the definitions of $\tau$ and $d$, we have
\[
\tau(d_k)=
\begin{tikzpicture}[anchorbase,scale=1]
\draw[ultra thick,green,->] (-.375,.5) to [out=150,in=270] (-.5,1.125) node[above=-2pt]{\scs$k{-}1$};
\draw[ultra thick,green,directed=.45] (.5,1.125) node[above=-2pt]{\scs$k{-}1$} to [out=270,in=30] (.375,.5);
\draw[thick,green,directed=.6] (-.375,.5) to [out=30,in=150] (.375,.5);
\draw[ultra thick,green,->] (.375,.5) to [out=270,in=90] (.375,0) node[below=-2pt]{\scs$k$};
\draw[ultra thick,green,rdirected=.5] (-.375,.5) to [out=270,in=90] (-.375,0) node[below=-2pt]{\scs$k$};
\node at (1,.75){\scs$\wtn$};
\end{tikzpicture}
\quad \text{and} \quad 
\tau(d_k^{\vee}) = 
\begin{tikzpicture}[anchorbase,scale=1,rotate=180]
\draw[ultra thick,green,->] (-.375,.5) to [out=150,in=270] (-.5,1.125) node[below=-2pt]{\scs$k$};
\draw[ultra thick,green,directed=.45] (.5,1.125) node[below=-2pt]{\scs$k$} to [out=270,in=30] (.375,.5);
\draw[thick,green,directed=.6] (-.375,.5) to [out=30,in=150] (.375,.5);
\draw[ultra thick,green,->] (.375,.5) to [out=270,in=90] (.375,0) node[above=-2pt]{\scs$k{+}1$};
\draw[ultra thick,green,rdirected=.5] (-.375,.5) to [out=270,in=90] (-.375,0) node[above=-2pt]{\scs$k{+}1$};
\node at (-1,.25){\scs$\wtn$};
\end{tikzpicture} \, .
\]
It suffices to show that
\begin{equation}
	\label{eq:tauRicknec}
\tau(d_k)\circ\swcl_{k} = (-1)^{n-k+1} \swcl_{k-1}\circ d_k
\quad \text{and} \quad 
(-1)^{n+k} \tau(d_k^{\vee})\circ \swcl_{k} = \swcl_{k+1}\circ d_k^{\vee} \, .
\end{equation} 
For the former, we compute
\begin{multline}
	\label{eq:tauRickdiff}
\begin{tikzpicture}[anchorbase,scale=1]
\draw[ultra thick,green,->] (-.25,.5) to [out=150,in=270] (-.25,1) node[above=-2pt,xshift=-3pt]{\scs$k{-}1$};
\draw[ultra thick,green,directed=.45] (.25,1) node[above=-2pt,xshift=3pt]{\scs$k{-}1$} to [out=270,in=30] (.25,.5);
\draw[thick,green,directed=.6] (-.25,.5) to [out=30,in=150] (.25,.5);
\draw[ultra thick,green,->] (.25,.5) to [out=270,in=90] (-.25,-.25) node[below=-2pt]{\scs$k$};
\draw[ultra thick,green,rdirected=.3] (-.25,.5) to [out=270,in=90] (.25,-.25) node[below=-2pt]{\scs$k$};
\node at (.625,.75){\scs$\wtn$};
\end{tikzpicture}
\stackrel{\eqref{eq:bypass}}{=}
\begin{tikzpicture}[anchorbase,scale=1]
\draw[ultra thick,green,->] (-.25,.5) to [out=150,in=270] (-.25,1) node[above=-2pt,xshift=-3pt]{\scs$k{-}1$};
\draw[ultra thick,green,directed=.45] (.25,1) node[above=-2pt,xshift=3pt]{\scs$k{-}1$} to [out=270,in=30] (.25,.5);
\draw[thick,green,directed=.6] (-.25,.5) to [out=30,in=150] (.25,.5);
\draw[ultra thick, green, directed=.7] (.125,.125) to (-.125,.125);
\draw[ultra thick,green,->] (-.125,.125) to [out=240,in=90] (-.25,-.25) node[below=-2pt]{\scs$k$};
\draw[ultra thick,green,rdirected=.5] (-.25,.5) to [out=270,in=120] (-.125,.125);
\draw[ultra thick,green,directed=.6] (.25,.5) to [out=270,in=60] (.125,.125);
\draw[ultra thick,green,rdirected=.6] (.125,.125) to [out=300,in=90] (.25,-.25) node[below=-2pt]{\scs$k$};
\node at (.625,.75){\scs$\wtn$};
\end{tikzpicture}
\stackrel{\eqref{eq:assoc}}{=}
\begin{tikzpicture}[anchorbase,scale=1]
\draw[ultra thick,green,->] (-.25,.125) to [out=120,in=270] (-.25,1) node[above=-2pt,xshift=-3pt]{\scs$k{-}1$};
\draw[ultra thick,green,directed=.45] (.25,1) node[above=-2pt,xshift=3pt]{\scs$k{-}1$} to [out=270,in=60] (.25,.125);
\draw[thick,green,directed=.6] (-.125,.375) to [out=120,in=180] (-.0625,.625) to (.0625,.625) to [out=0,in=60] (.125,.375);
\draw[ultra thick, green, directed=.7] (.125,.375) to (-.125,.375);
\draw[ultra thick,green,rdirected=.6] (-.25,.125) to [out=0,in=240] (-.125,.375);
\draw[ultra thick,green,directed=.8] (.25,.125) to [out=180,in=300] (.125,.375);
\draw[ultra thick,green,->] (-.25,.125) to [out=240,in=90] (-.25,-.25) node[below=-2pt]{\scs$k$};
\draw[ultra thick,green,rdirected=.6] (.25,.125) to [out=300,in=90] (.25,-.25) node[below=-2pt]{\scs$k$};
\node at (.625,.75){\scs$\wtn$};
\end{tikzpicture}
\stackrel{\eqref{eq:bypass}}{=}
\begin{tikzpicture}[anchorbase,scale=1]
\draw[ultra thick,green,->] (-.375,.125) to [out=120,in=270] (-.25,1) node[above=-2pt,xshift=-3pt]{\scs$k{-}1$};
\draw[ultra thick,green,directed=.45] (.25,1) node[above=-2pt,xshift=3pt]{\scs$k{-}1$} to [out=270,in=60] (.375,.125);
\draw[thick,green,directed=.55] (.25,.125) to [out=120, in=270] (-.25,.375) to [out=90,in=180] (-.125,.5) 
	to (.125,.5) to [out=0,in=90] (.25,.375) to [out=270,in=60] (-.25,.125);
\draw[ultra thick, green, directed=.7] (.25,.125) to [out=240,in=300] (-.25,.125);
\draw[ultra thick,green] (-.25,.125) to (-.375,.125);
\draw[ultra thick,green] (.25,.125) to (.375,.125);
\draw[ultra thick,green,->] (-.375,.125) to [out=240,in=90] (-.25,-.25) node[below=-2pt]{\scs$k$};
\draw[ultra thick,green,rdirected=.6] (.375,.125) to [out=300,in=90] (.25,-.25) node[below=-2pt]{\scs$k$};
\node at (.625,.75){\scs$\wtn$};
\end{tikzpicture}
\stackrel{\eqref{eq:curl1}}{=}
- \sum_{p+q=2k-2}
\begin{tikzpicture}[anchorbase,scale=1]
\draw[ultra thick,green,->] (-.375,.125) to [out=120,in=270] (-.375,1.25) node[above=-2pt,xshift=-3pt]{\scs$k{-}1$};
\draw[ultra thick,green,directed=.45] (.375,1.25) node[above=-2pt,xshift=3pt]{\scs$k{-}1$} to [out=270,in=60] (.375,.125);
\draw[thick,green,directed=.75] (.25,.125) to [out=120,in=60] node[black,pos=.3]{\scs$\bullet$} 
	node[black,pos=.3,above=-2pt]{\tiny$p$} (-.25,.125);
\draw[ultra thick, green, directed=.7] (.25,.125) to [out=240,in=300] (-.25,.125);
\draw[ultra thick,green] (-.25,.125) to (-.375,.125);
\draw[ultra thick,green] (.25,.125) to (.375,.125);
\draw[thick,green,<-] (.175,.75) arc[start angle=0,end angle=360,radius=.175] 
	node[pos=.25,black]{\scs$\bullet$} node[pos=.25,black,above=-2pt]{\tiny$\spadesuit{+}q$};
\draw[ultra thick,green,->] (-.375,.125) to [out=240,in=90] (-.25,-.25) node[below=-2pt]{\scs$k$};
\draw[ultra thick,green,rdirected=.6] (.375,.125) to [out=300,in=90] (.25,-.25) node[below=-2pt]{\scs$k$};
\node at (.75,.875){\scs$\wtn$};
\end{tikzpicture} \\
\stackrel{\eqref{newbub},\eqref{eq:explode},\eqref{eqn:sgnfromdemazure}}{=}
(-1)^{n-k+1}
\begin{tikzpicture}[anchorbase,scale=1]
\draw[ultra thick, green, directed=.7] (.125,.125) to (-.125,.125);
\draw[ultra thick,green,->] (-.125,.125) to [out=240,in=90] (-.25,-.25) node[below=-2pt]{\scs$k$};
\draw[ultra thick,green,rdirected=.5] (-.25,.5) node[above=-2pt,xshift=-3pt]{\scs$k{-}1$} to [out=270,in=120] (-.125,.125);
\draw[ultra thick,green,directed=.6] (.25,.5) node[above=-2pt,xshift=3pt]{\scs$k{-}1$} to [out=270,in=60] (.125,.125);
\draw[ultra thick,green,rdirected=.6] (.125,.125) to [out=300,in=90] (.25,-.25) node[below=-2pt]{\scs$k$};
\node at (.625,.25){\scs$\wtn$};
\end{tikzpicture}
\stackrel{\eqref{eq:bypass}}{=}
(-1)^{n-k+1}
\begin{tikzpicture}[anchorbase,yscale=-1]
\draw[ultra thick,green,->] (-.25,.5) to [out=180,in=270] (-.25,1) node[below=-2pt]{\scs$k$};
\draw[ultra thick,green,directed=.45] (.25,1) node[below=-2pt]{\scs$k$} to [out=270,in=0] (.25,.5);
\draw[thick,green,rdirected=.6] (-.25,.5) to [out=60,in=120] (.25,.5);
\draw[ultra thick,green,->] (.25,.5) to [out=240,in=90] (-.25,-.25) node[above=-2pt,xshift=-3pt]{\scs$k{-}1$};
\draw[ultra thick,green,rdirected=.3] (-.25,.5) to [out=300,in=90] (.25,-.25) node[above=-2pt,xshift=3pt]{\scs$k{-}1$};
\node at (.625,.25){\scs$\wtn$};
\end{tikzpicture}
=
(-1)^{n-k+1}
\begin{tikzpicture}[anchorbase,scale=1]
\draw[ultra thick,green,rdirected=.35] (-.25,.5) to [out=150,in=270] (.25,1.25) node[above=-2pt,xshift=3pt]{\scs$k{-}1$};
\draw[ultra thick,green,<-] (-.25,1.25) node[above=-2pt,xshift=-3pt]{\scs$k{-}1$} to [out=270,in=30] (.25,.5);
\draw[thick,green,rdirected=.6] (-.25,.5) to [out=30,in=150] (.25,.5);
\draw[ultra thick,green,rdirected=.5] (.25,.5) to [out=270,in=90] (.25,.125) node[below=-2pt]{\scs$k$};
\draw[ultra thick,green,->] (-.25,.5) to [out=270,in=90] (-.25,.125) node[below=-2pt]{\scs$k$};
\node at (.625,.75){\scs$\wtn$};
\end{tikzpicture} \, .
\end{multline}
The second equality in \eqref{eq:tauRicknec} follows similarly. 
\end{proof}

\begin{rem}\label{R:tau-of-chainmap-is-chainmap}
The functor $\tau$ extends from $\BnFoam$ to the category of complexes $\dgCat[\BnFoam]$. 
Explicitly, given a complex 
$C:= \big( \cdots \xrightarrow{d_Y} \tsh^k Y_k\xrightarrow{d_Y} \tsh^{k+1} Y_{k+1} \xrightarrow{d_Y} \cdots \big)$
over $\BnFoam$,
we can apply $\tau$ to each term in the complex, obtaining the complex
\begin{equation}
	\label{eq:tauC}
\tau(C):= \big( \cdots \xrightarrow{\tau(d_Y)} \tsh^k \tau(Y_k) \xrightarrow{\tau(d_Y)} \tsh^{k+1} \tau(Y_{k+1}) 
	\xrightarrow{\tau(d_Y)} \cdots \big) \, .
\end{equation}
If $f\in \Hom_{\ChCat[\BnFoam]}(C, C')$ is a chain map, 
then so is $\tau(f) \colon \tau(C)\rightarrow \tau(C')$. 
\end{rem}

\begin{rem}
	\label{rem:LCchainmap}
Lemma \ref{lem:eRickdiff} can be read as saying that the collection of maps 
$\{ (-1)^{k+1 \choose 2}\swcl_{n-k}\}_{0\le k\le n}$
gives a chain map $C(\beta_{\grb})\rightarrow \tau(C(\beta_{\grb}))$,
i.e.~a map in $\ChCat[\BnFoam]$.
For this, the relevant computation is that
\begin{equation}
	\label{eq:eRicksigns}
{k+1 \choose 2} + {k+2 \choose 2} = (k+1)^2 \equiv k+1 \ \ourmod 2
\end{equation}
which, by \eqref{eq:tauRicknec}, ensures that
$\tau(d_{n-k})\circ (-1)^{k+1 \choose 2} \swcl_{n-k} = (-1)^{k+2 \choose 2} \swcl_{n-k-1}\circ d_{n-k}$.
\end{rem}

\begin{defn}
	\label{def:eRickard}
For $1\le \grb\le m-1$, the \emph{$\grb^{th}$ equivariant Rickard complex} is the chain complex
\begin{equation}
	\label{eq:eRickard+}
C^{\tau}(\beta_{\grb}) := \Big( \bigoplus_{k=0}^{n} \qsh^{-k} \tsh^k (\FE_{\grb}^{(n-k)}, 
	(-1)^{{n+1 \choose 2}+{k+1 \choose 2}} \swcl_{n-k}) \, , \, d \Big)
				\in \dgCat[\EBnFoam]
\end{equation}
and the \emph{$\grb^{th}$ equivariant inverse Rickard complex} is
\begin{equation}
	\label{eq:eRickard-}
C^{\tau}(\beta_{\grb}^{-1}) := \Big( \bigoplus_{k=0}^{n} \qsh^{k} \tsh^{-k} (\FE_{\grb}^{(n-k)}, 
	(-1)^{{n+1 \choose 2}+{k+1 \choose 2}} \swcl_{n-k}) \, , \, d^{\vee} \Big) 
		\in \dgCat[\EBnFoam] \, .
\end{equation}
For a braid word $\beta_{i_1}^{\epsilon_1}\dots \beta_{i_r}^{\epsilon_r}$, the \emph{equivariant Rickard complex} is 
\[
C^{\tau}(\beta_{i_1}^{\epsilon_1}\dots \beta_{i_r}^{\epsilon_r}) 
	:= C^{\tau}(\beta_{i_1}^{\epsilon_1}) \cdots C^{\tau}(\beta_{i_r}^{\epsilon_r}) \in \dgCat[\EBnFoam] \, .
\]
\end{defn}

Observe that \eqref{eq:eRickard+} indeed determines a complex in $\dgCat[\EBnFoam]$, 
since
\[
d \colon (\FE_{\grb}^{(n-k)}, (-1)^{{n+1 \choose 2}+{k+1 \choose 2}} \swcl_{n-k}) 
	\to (\FE_{\grb}^{(n-k-1)}, (-1)^{{n+1 \choose 2}+{k+2 \choose 2}} \swcl_{n-k-1})
\]
is a morphism in $\EBnFoam$ if and only if 
\[
d \colon (\FE_{\grb}^{(n-k)}, \swcl_{n-k}) 
	\to (\FE_{\grb}^{(n-k-1)}, (-1)^{{k+1 \choose 2} + {k+2 \choose 2}} \swcl_{n-k-1})
\]
is such a morphism. 
This holds by Lemma \ref{lem:eRickdiff}, 
using \eqref{eq:eRicksigns}.
Similarly, \eqref{eq:eRickard-} gives a complex in $\dgCat[\EBnFoam]$. 
Our conventions (e.g. the choice of global sign $(-1)^{{n+1 \choose 2}}$) are such that both complexes have the monoidal unit 
$(\FE_{\grb}^{(0)}, \swcl_{0}) = (\one_{\wtn}, \id_{\one_{\wtn}})$ 
appearing in the extremal homological degree.

Although we know that the complexes $C(\beta_{i_1}^{\epsilon_1}\dots \beta_{i_r}^{\epsilon_{r}})$ 
satisfy the braid relations in $\mathcal{K}(\BnFoam)$, 
at this point it is not obvious that the equivariant Rickard complexes satisfy the braid relations in $\mathcal{K}(\EBnFoam)$, 
or even that \eqref{eq:eRickard+} and \eqref{eq:eRickard-} are homotopy inverses.
In the following subsection, 
we take a detour to prove a general result that will be used to establish these facts.

\subsection{Lifting to equivariant homotopy equivalences}
	\label{ss:lifting}

For this section, let $\Cat$ be a $\K$-linear category equipped with an involution $\sigma$, 
as defined in Remark \ref{rem:involution}.
Let $\For \colon \ECat\longrightarrow \Cat$ denote the ``forgetful functor'' which sends an equivariant object $(X, \varphi)$ 
to the underlying object $X$ and an equivariant morphism $f \colon (X,\varphi_X) \to (Y,\varphi_Y)$ to itself, 
now viewed as $f\in \Hom(X,Y)$. This functor $\For$ is faithful.

Recall from \S \ref{D:cats-of-complexes}
that $\dgCat$ denotes the dg category of chain complexes over $\Cat$.
In particular, the $\Hom$-spaces in $\dgCat$ are themselves complexes.
If $X = \big( \bigoplus_{i \in \Z} \tsh^i (X_i, \varphi_i), d_X \big)$ is a complex in $\dgECat$, 
then, applying the forgetful functor $\For$, 
we obtain a complex $\For(X) = \big( \bigoplus_{i \in \Z} \tsh^i X_i , d_X \big)$ in $\dgCat$. 

\begin{lem}
	\label{lem:sigastchainmap}
Let $X =\big( \bigoplus_{i\in \Z} \tsh^i (X_i, \varphi_i), d_X \big)$ 
and $Y = \big( \bigoplus_{i\in \Z} \tsh^i (Y_i, \psi_i), d_Y \big)$ 
be complexes in $\dgECat$. 
The map
\[
\sigast \colon \Hom_{\dgCat}(\For(X), \For(Y)) \to \Hom_{\dgCat}(\For(X), \For(Y))
\]
(given by applying $\sigast$ degree-wise) is a chain map. 
That is, if $D$ denotes the differential on the Hom complexes, then $D \circ \sigast = \sigast \circ D$. 
\end{lem}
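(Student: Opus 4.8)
The statement to prove is Lemma \ref{lem:sigastchainmap}: that $\sigast$, applied degree-wise to the $\Hom$-complex $\Hom_{\dgCat}(\For(X),\For(Y))$, commutes with the differential $D$. Recall that $D(f) = d_Y \circ f - (-1)^{|f|} f \circ d_X$ by Definition \ref{D:cats-of-complexes}, and that $\sigast$ was defined and studied in Proposition \ref{prop:sigast} (for the non-complex setting) by the formula $\sigast f = \sigma(\varphi_Y) \circ \sigma(f) \circ \varphi_X$ when $f \colon (X,\varphi_X) \to (Y,\varphi_Y)$. The first thing I would do is make the degree-wise meaning of $\sigast$ precise in the complex setting: a homogeneous morphism $f \in \Hom^k_{\Cat[\tsh^\pm]}(\For X, \For Y)$ is a collection of components $f_i \colon X_i \to Y_{i+k}$, and $\sigast f$ has components $(\sigast f)_i = \sigma(\psi_{i+k}) \circ \sigma(f_i) \circ \varphi_i$, using the equivariant structures $\varphi_i$ on $X_i$ and $\psi_j$ on $Y_j$. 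The key structural input is that the differentials $d_X$ and $d_Y$ are themselves $\sigast$-fixed: since $d_X$ is a morphism \emph{in} $\dgECat$, i.e.\ each component $d_{X,i} \colon X_i \to X_{i+1}$ is a morphism of equivariant objects $(X_i,\varphi_i) \to (X_{i+1},\varphi_{i+1})$, Corollary \ref{cor:HomAsInv} gives $\sigast d_X = d_X$, and likewise $\sigast d_Y = d_Y$.

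The core computation is then short. Using equation \eqref{eq:sigasthomo} from Proposition \ref{prop:sigast} (compatibility of $\sigast$ with composition) together with the fact that $\sigma$, being a functor, respects composition and degree, I would compute
\[
\sigast(D(f)) = \sigast(d_Y \circ f) - (-1)^{|f|} \sigast(f \circ d_X) = (\sigast d_Y) \circ (\sigast f) - (-1)^{|f|} (\sigast f) \circ (\sigast d_X) = d_Y \circ (\sigast f) - (-1)^{|f|} (\sigast f) \circ d_X = D(\sigast f).
\]
There is one subtlety to be careful about: the Koszul sign $(-1)^{|f|}$ in $D$. Since $\sigast$ preserves homological degree (it sends $\Hom^k$ to $\Hom^k$, as $\sigma$ does and composition with fixed isomorphisms does not shift degree), the sign $(-1)^{|f|}$ is unchanged by applying $\sigast$, so no extra signs appear. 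I should also confirm that \eqref{eq:sigasthomo} genuinely applies here even though $d_X$, $d_Y$ are not degree-zero: Proposition \ref{prop:sigast} is stated for morphisms in $\Cat$, and its proof of compatibility with composition is purely formal (inserting $\varphi_Z^{-1}\varphi_Z = \id$ between the two factors), so it extends verbatim to homogeneous morphisms of arbitrary degree in $\Cat[\tsh^\pm]$; I would note this extension explicitly, or simply invoke the convention stated at the start of \S\ref{ss:conv} that all constructions for categories extend to graded/complex settings componentwise.

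The only real work, and the place I'd be most careful, is bookkeeping: verifying that $\sigast$ on the $\Hom$-complex is well-defined (i.e.\ that $\sigast f$ is again a finite collection of morphisms of the correct degree, which is clear) and, more importantly, nailing down that $d_X$ and $d_Y$ are $\sigast$-invariant \emph{as elements of their respective endomorphism $\Hom$-complexes}. This follows because the hypothesis is that $X, Y \in \dgECat$, meaning by Definition \ref{D:cats-of-complexes} applied to $\ECat$ that $d_X \in \End^1_{\dgECat}(X)$, hence each component is an equivariant morphism, hence $\sigast$-fixed by Corollary \ref{cor:HomAsInv} — but I should phrase this carefully since $\sigast$ on $\End_{\Cat}(\For X)$ is defined using the \emph{same} equivariant structure $\varphi_X$ on both source and target copies, which matches exactly the convention in Proposition \ref{prop:sigast}. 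With that established, the displayed three-line computation above finishes the proof. I do not anticipate any genuine obstacle; this is a formal consequence of \eqref{eq:sigasthomo} and the $\sigast$-invariance of the differentials.
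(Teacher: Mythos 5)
Your proof is correct and follows essentially the same route as the paper's: observe that $d_X$ and $d_Y$ are $\sigast$-fixed because they are morphisms in $\dgECat$ (via Corollary \ref{cor:HomAsInv}), then use the multiplicativity \eqref{eq:sigasthomo} of $\sigast$ to move it past $D$ term-by-term. The additional bookkeeping you flag (degree preservation of $\sigast$, extension of \eqref{eq:sigasthomo} to nonzero-degree morphisms) is reasonable caution but the paper treats those points as implicit, as you ultimately do too.
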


\begin{proof}
Since $\big( \bigoplus_{i\in \Z} \tsh^i (X_i, \varphi_i), d_X \big)$ 
and $Y = \big( \bigoplus_{i\in \Z} \tsh^i (Y_i, \psi_i), d_Y \big)$ 
are complexes in $\dgECat$, their differentials are equivariant morphisms.
Hence, Corollary \ref{cor:HomAsInv} implies that both $d_X$ and $d_Y$ are fixed by $\sigast$.
We thus compute
\begin{align*}
\sigast( D(f) ) 
&= \sigast (d_Y \circ f) - (-1)^{|f|} \sigast (f \circ d_X) \\
&\stackrel{\eqref{eq:sigasthomo}}{=} (\sigast d_Y) \circ (\sigast f) - (-1)^{|f|} (\sigast f) \circ \sigast(d_X) \\
&= d_Y \circ (\sigast f) - (-1)^{|f|} (\sigast f) \circ d_X \\
&= D(\sigast(f)) \, .  \qedhere
\end{align*}
\end{proof}

Next, given a morphism $f \in \Hom_{\dgECat}(X,Y)$, 
we can consider the corresponding morphism $\For(f) \in \Hom_{\dgCat}\big( \For(X),\For(Y) \big)$. 
Note that if $f$ is a chain map then so is $\For(f)$. 
Recall that the space of chain maps is denoted $\Hom_{\ChCat} \subset \Hom_{\dgCat}$.

\begin{prop}
	\label{prop:lift-equivariant-homotopy}
Let $X =\big( \bigoplus_{i\in \Z} \tsh^i (X_i, \varphi_i), d_X \big)$ 
and $Y = \big( \bigoplus_{i\in \Z} \tsh^i (Y_i, \psi_i), d_Y \big)$ 
be complexes in $\dgECat$ and let $f \in \Hom_{\ChECat}(X, Y)$.
If $\For(f) \in \Hom_{\ChCat}\big( \For(X), \For(Y) \big)$
is a homotopy equivalence, then $f$ is a homotopy equivalence.
\end{prop}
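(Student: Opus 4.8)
The plan is to take a homotopy inverse and homotopies that exist downstairs (in $\dgCat$) and average them over the $\Z/2$-action to produce genuinely equivariant data upstairs (in $\dgECat$). Since $\For$ is faithful, it suffices to produce an equivariant chain map $g \colon Y \to X$ together with equivariant homotopies witnessing $g \circ f \sim \id_X$ and $f \circ g \sim \id_Y$. We are given that $\For(f)$ is a homotopy equivalence, so choose $g_0 \in \Hom_{\ChCat}\big(\For(Y),\For(X)\big)$ and $h_0 \in \Hom^{-1}_{\dgCat}\big(\For(X),\For(X)\big)$, $k_0 \in \Hom^{-1}_{\dgCat}\big(\For(Y),\For(Y)\big)$ with $g_0 \circ \For(f) - \id_{\For(X)} = D(h_0)$ and $\For(f) \circ g_0 - \id_{\For(Y)} = D(k_0)$.

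First I would symmetrize $g_0$: set $g := \tfrac12\big(g_0 + \sigast g_0\big)$, which makes sense because $2$ is invertible. By Lemma \ref{lem:sigastchainmap}, $\sigast$ is a chain map on the relevant $\Hom$-complexes, so $\sigast g_0$ is again a chain map; hence $g$ is a chain map, and since $\sigast$ is an involution, $\sigast g = g$, i.e.~$g$ is fixed by $\sigast$. By Corollary \ref{cor:HomAsInv}, being fixed by $\sigast$ is exactly the condition that $g$ descends to a morphism $Y \to X$ in $\ChECat$ (where we use that $X$ and $Y$ come equipped with their equivariant structures degreewise, and $\sigast$ is defined with respect to those structures). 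Second, I would check that $g$ is still a homotopy inverse of $\For(f)$ downstairs: since $\For(f)$ is fixed by $\sigast$ (because $f$ is equivariant, again by Corollary \ref{cor:HomAsInv}) and $\sigast$ respects composition by \eqref{eq:sigasthomo}, applying $\sigast$ to $g_0 \circ \For(f) - \id = D(h_0)$ and using Lemma \ref{lem:sigastchainmap} gives $(\sigast g_0) \circ \For(f) - \id = D(\sigast h_0)$; averaging, $g \circ \For(f) - \id_{\For(X)} = D\big(\tfrac12(h_0 + \sigast h_0)\big)$, so $g \circ \For(f) \sim \id_{\For(X)}$, and symmetrically $\For(f) \circ g \sim \id_{\For(Y)}$ with the averaged homotopy $\tfrac12(k_0 + \sigast k_0)$.

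Third, I would promote the averaged homotopies to equivariant ones. Write $h := \tfrac12(h_0 + \sigast h_0) \in \Hom^{-1}_{\dgCat}\big(\For(X),\For(X)\big)$; it is fixed by $\sigast$, hence, by Corollary \ref{cor:HomAsInv}, descends to a degree $-1$ morphism $X \to X$ in $\dgECat$, and the identity $D(h) = g \circ f - \id_X$ (now read in $\dgECat$, using faithfulness of $\For$ and that all the maps involved are equivariant) exhibits $g \circ f \sim \id_X$ in $\dgECat$. Likewise $k := \tfrac12(k_0 + \sigast k_0)$ descends and witnesses $f \circ g \sim \id_Y$. Therefore $f$ is a homotopy equivalence in $\dgECat$, as claimed. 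The only genuinely delicate point is bookkeeping: one must be consistent about the fact that the $\sigast$-action on each $\Hom$-complex is taken with respect to the fixed equivariant structures $\varphi_\bullet$ and $\psi_\bullet$ on the objects $X$ and $Y$, so that ``fixed by $\sigast$'' is synonymous with ``equivariant'' throughout; granting this, every step is a short averaging argument and there is no serious obstacle.
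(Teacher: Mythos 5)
Your argument is correct and follows essentially the same route as the paper: both proofs take a homotopy inverse and homotopies downstairs, average them over the $\Z/2$-action using $\tfrac12(\,\cdot\, + \sigast \cdot\,)$, and invoke Lemma \ref{lem:sigastchainmap}, Corollary \ref{cor:HomAsInv}, and \eqref{eq:sigasthomo} to verify that the averaged data is equivariant and still witnesses a homotopy equivalence. The only cosmetic difference is that you explicitly verify $g$ is a chain map descending to $\ChECat$ before checking the homotopy identities, whereas the paper folds this into a single chain of computations with $G$, $H_X$, $H_Y$.
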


\begin{proof}
Suppose $\For(f)$ is a homotopy equivalence.  
Thus, there exists degree-zero $g \in \Hom_{\dgCat} \big( \For(Y), \For(X) \big)$ such that $D(g)=0$,
and $\tsh$-degree $-1$ morphisms 
$h_X \colon \For(X) \to \For(X)$ and $h_Y \colon \For(Y) \to \For(Y)$ in $\dgCat$, such that 
\[
\id_X - g \circ f= D(h_X) \quad \text{and} \quad \id_Y- f\circ g= D(h_Y) \, .
\]

Set
\[
G := \frac{1}{2}(g + \sigast g) \, , \quad H_X := \frac{1}{2}(h_X + \sigast h_X) \, , \quad H_Y := \frac{1}{2}(h_Y + \sigast h_Y)
\]
and note that $G$, $H_X$, and $H_Y$ are all morphisms in $\dgECat$. 
Using Lemma \ref{lem:sigastchainmap}, we compute
\[
D(G) = \frac{1}{2} D(g+\sigast g)
=\frac{1}{2}(D(g)+D(\sigast g))
= \frac{1}{2}(D(g) +\sigast(D(g))) = 0
\]
and that
\begin{align*}
\id_X-G\circ f & = \id_X - \frac{1}{2} (g + \sigast g)\circ f \\
&= \frac{1}{2} \big( (\id_X - g\circ f) + (\id_X - (\sigast g) \circ f) \big) \\
&\!\!\!\!\!\!\!\stackrel{{Cor. \ref{cor:HomAsInv}}}{=} \frac{1}{2} \big( (\id_X - g\circ f) 
	+ ((\sigast \id_X) - (\sigast g) \circ (\sigast f) ) \big) \\
&= \frac{1}{2}\big( (\id_X- g\circ f) + \sigast( \id_X -  g \circ f) \big)\\
&=\frac{1}{2} \big( D(h_X)+ \sigast (D(h_X)) \big) \\
&=\frac{1}{2} \big( D(h_X)+D(\sigast h_X) \big) \\
&=D\left(\frac{1}{2}(h_X+\sigast h_X)\right)\\
&=D(H_X) \, . 
\end{align*}
A similar computation shows that $\id_Y- f\circ G= D(H_Y)$. 
Thus, $f$, $G$, $H_X$, and $H_Y$ give the data of a homotopy equivalence
in $\dgECat$.
\end{proof}

\subsection{Braid relations for equivariant Rickard complexes}
	\label{ss:eBraidrel}

We now prove the analogue of Theorem \ref{thm:Rickard} 
for the equivariant Rickard complexes.
In doing so, we will make use of the
$2$-functor $\Gamma \colon \Kar(\SSStn_q(\glm))\rightarrow \Catn$
from \S \ref{ss:RickCan}.
Since $\BnFoam \hookrightarrow \Kar(\SSStn_q(\glm))$, 
we can consider the $2$-functor $\Gamma$ restricted to $\BnFoam$, 
which we still denote by $\Gamma$. 
Observe that 
\[
\Gamma \colon \BnFoam \rightarrow \Catn
\]
is a full and strongly monoidal, which follows from Proposition \ref{P:there-is-Gamma}
and the observation that the monoidal unit $\one_\wtn \in \BnFoam$ is, 
by definition, sent to the left regular representation of 
$H^\ast(Gr_n(\mathbb{C}^{2n}))$, which is the monoidal unit in $\Catn$.
Recall that all indecomposable objects in $\BnFoam$ other than (shifts of) $\one_\wtn$ 
are sent to zero by $\Gamma$.

Given any (graded) commutative ring $R$, we can consider the monoidal category of 
finitely generated free (graded) $R$-modules $\Cat_{R}$. 
(The category $\Catn$ is the special case when $R = H^\ast(Gr_n(\mathbb{C}^{2n}))$.)
A ring automorphism of $R$ determines an additive, monoidal endofunctor of $\Cat_{R}$, 
which is the identity on objects and is determined on morphisms by acting via the automorphism 
on $\End_{\Cat_R}(R) \cong R$.

\begin{defn}
Define a monoidal involution of $\Catn$, which abusing notation we denote $\tau$, 
to be the monoidal functor induced by 
the (graded) ring automorphism of $H^\ast(Gr_n(\mathbb{C}^{2n}))$ 
which acts on the basis of Schur polynomials by
$\Schur(\X)\mapsto \mathfrak{s}_{\lambda^{t}}(\X)$.
\end{defn}

\begin{lem} \label{lem:tauoncellquot} 
We have the following equality of monoidal functors: $\Gamma\circ \tau = \tau \circ \Gamma$.  
\end{lem}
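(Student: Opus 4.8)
The plan is to verify the equality $\Gamma \circ \tau = \tau \circ \Gamma$ by checking it on objects, on generating $1$-morphisms, and on generating $2$-morphisms of $\BnFoam$, using that both composites are well-defined monoidal functors. Since $\BnFoam$ is generated (monoidally) by the $1$-morphisms $\FEk_i$ and $\EE_i^{(k)}\FF_i^{(k)}\one_{\wtn}$ and their grading shifts, and since $\tau$ on $\BnFoam$ preserves this class of $1$-morphisms (swapping the two families, up to the isomorphism $\EE_i^{(k)}\FF_i^{(k)}\one_\wtn\cong\FEk_i$ of \eqref{eq:EF=FE}), it suffices to check agreement on objects, on these generating $1$-morphisms, and on the generating $2$-morphisms of $\BnFoam$ (dots, crossings, cups, caps, merges, splits, new bubbles, all carrying the weight $\wtn$).

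First I would handle objects and $1$-morphisms. There is only one object in $\Catn$, so both composites agree trivially on objects. On $1$-morphisms: $\Gamma$ sends $\one_\wtn\mapsto\one_\wtn$ (the left regular module) and sends $\FEk_i$ for $k>0$ to the zero module, by Remark \ref{R:Gamma-on-FE}; meanwhile $\tau$ on $\Catn$ is the identity on objects/$1$-morphisms (it only rescales morphisms), so $\tau\circ\Gamma(\FEk_i)=\tau(0)=0=\tau(\Gamma(\FEk_i))$, and on the other side $\Gamma(\tau(\FEk_i))=\Gamma(\EE_i^{(k)}\FF_i^{(k)}\one_\wtn)=0$ for $k>0$ by the same remark, while for $k=0$ both give $\one_\wtn$. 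So the two composites agree on $1$-morphisms.

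Next, the $2$-morphism check. Most generating $2$-morphisms of $\BnFoam$ factor through a $1$-morphism of the form $\FEk_i$ with $k>0$ (or involve an $\EE_i$/$\FF_i$ strand in weight $\wtn$), hence are sent to zero by $\Gamma$; since $\tau$ on $\Catn$ fixes the zero morphism, and $\tau$ on $\BnFoam$ rescales such a $2$-morphism only by a sign (by Proposition \ref{prop:tauactsonthicker} and Corollary \ref{cor:ourthicktau}), applying $\Gamma$ afterward still yields zero. Thus both composites vanish on all such generators. The only generating $2$-morphisms that survive $\Gamma$ are the new bubbles $h_r(\X_i)\in\End(\one_\wtn)$ with $r\geq 0$: here $\Gamma$ sends $h_r(\X_i)\mapsto h_r(\X)$, equivalently $\Schur(\X_i)\mapsto\Schur(\X)$ in the Schur-polynomial basis of $\End_{\Catn}(\one_\wtn)\cong H^\ast(Gr_n(\C^{2n}))$. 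By \eqref{eq:tauonSchur}, $\tau$ on $\BnFoam$ sends $\Schur(\X_i)\mapsto\mathfrak{s}_{\lambda^t}(\X_i)$, and $\tau$ on $\Catn$ sends $\Schur(\X)\mapsto\mathfrak{s}_{\lambda^t}(\X)$ by definition, so $\Gamma(\tau(\Schur(\X_i)))=\Gamma(\mathfrak{s}_{\lambda^t}(\X_i))=\mathfrak{s}_{\lambda^t}(\X)=\tau(\Schur(\X))=\tau(\Gamma(\Schur(\X_i)))$.

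Having matched both composites on all generating data and verified they are each genuinely well-defined monoidal functors, $\Gamma\circ\tau=\tau\circ\Gamma$ follows. The only mild subtlety — and the step I would be most careful about — is bookkeeping the signs produced by $\tau$ on $\BnFoam$ when applied to the $2$-morphisms that $\Gamma$ kills: one must confirm that $\Gamma$ annihilates each such $2$-morphism \emph{before} the sign is applied (so that the sign is immaterial), which is exactly what Proposition \ref{P:there-is-Gamma} and Remark \ref{R:Gamma-on-FE} guarantee, since the relevant $2$-morphisms either contain an $\EE_i$- or $\FF_i$-strand or factor through $\FEk_i$ with $k>0$. One should also note that the positive-degree real bubbles in weight $\wtn$ are sent to zero by $\Gamma$ (being genuine compositions through positive-degree endomorphisms of $\one_\wtn$ that land in the ideal killing $\Schur$ for $\lambda$ not fitting in an $n\times n$ box is not quite it — rather, $h_r(\X-\X)=0$ for $r>0$ as in the proof of Proposition \ref{P:there-is-Gamma}), so there is no clash between the ``real bubble = new bubble'' relation \eqref{newbub} and the assignment on new bubbles; this consistency is already baked into the well-definedness of $\Gamma$, so no extra work is needed here.
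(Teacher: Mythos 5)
Your argument is correct and takes the same route as the paper: the paper's proof is a one-liner (``Immediate from the description of $\Gamma$ in Proposition \ref{P:there-is-Gamma} and the description of $\tau$ in Definition \ref{def:symmetry}'') and your write-up simply unpacks that check on objects, $1$-morphisms, and generating $2$-morphisms. The only nitpick is the parenthetical ``it only rescales morphisms'' for $\tau$ on $\Catn$ — it acts by the full ring automorphism of $H^\ast(\Gr{n}{2n})$, not just a sign — but this has no bearing on the step where it's used, namely that $\tau$ is the identity on objects and $1$-morphisms of $\Catn$.
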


\begin{proof} 
Immediate from the description of $\Gamma$ in Proposition \ref{P:there-is-Gamma} 
and the description of $\tau$ in Definition \ref{def:symmetry}.
\end{proof}

Now, let $\bword$ be a braid word and write the complex 
$C^{\tau}(\bword) \in \dgCat[\EBnFoam]$ as
\begin{equation}
	\label{eq:CTbword}
C^{\tau}(\bword) =: \left( \bigoplus_{j\in\Z} \tsh^j \big(C_j(\bword), \varphi_j(\bword) \big) , d_{\bword} \right) \, .
\end{equation}
If follows from Definition \ref{def:eRickard} that $C_j(\bword)$ is a direct sum of tensor products of 
shifts of objects of the form $\FE_i^{(k)}$, 
and that $\varphi_j(\bword)$ is a (diagonal) 
matrix whose entries are tensor products of the morphisms $\pm \swcl_k$.
Further, Remark \ref{rem:LCchainmap} implies that the collection of maps
$\big\{ \varphi_j(\bword) \big\}_{j \in \Z}$ gives a chain map 
$\varphi_{\bword} \colon C(\bword )\to \tau(C(\bword))$ in $\dgCat[\BnFoam]$. 

\begin{lem}
	\label{L:equivariant-structure-map-cell-quotient}
If $\bword$ is a braid word, 
then $\Gamma(\For(\varphi_{\bword})) = \id_{(\qsh^{-n}\tsh^{n})^{\epsilon(\bword)}\one_{\wtn}}$.
\end{lem}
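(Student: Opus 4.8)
\textbf{Proof plan for Lemma \ref{L:equivariant-structure-map-cell-quotient}.}
The plan is to reduce the statement to the case of a single-letter braid word and then compute directly, using that $\Gamma$ kills all summands $\FE_i^{(k)}$ with $k>0$ and sends $\FE_i^{(0)}=\one_{\wtn}$ to the monoidal unit of $\Catn$. First I would recall that, by definition, $C^{\tau}(\bword) = C^{\tau}(\bgen_{i_1}^{\epsilon_1})\cdots C^{\tau}(\bgen_{i_r}^{\epsilon_r})$, and that its structure map $\varphi_{\bword}$ is the horizontal composite (tensor product) $\varphi_{\bgen_{i_1}^{\epsilon_1}}\otimes\cdots\otimes\varphi_{\bgen_{i_r}^{\epsilon_r}}$ of the structure maps of the one-letter equivariant Rickard complexes, since the equivariant structure on a tensor product of equivariant objects is the tensor product of the structures (Proposition \ref{prop:invMD}). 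Because $\Gamma$ is strongly monoidal (noted above), $\Gamma(\For(\varphi_{\bword}))$ is the tensor product of the $\Gamma(\For(\varphi_{\bgen_{i}^{\pm}}))$, so it suffices to treat $\bword = \bgen_{\grb}$ and $\bword = \bgen_{\grb}^{-1}$, and moreover it suffices to check that $\Gamma(\For(\varphi_{\bgen_{\grb}^{\pm}})) = \id_{(\qsh^{-n}\tsh^{n})^{\pm n}\one_{\wtn}}$ (the exponent $\epsilon(\bgen_{\grb}^{\pm})=\pm1$ matching Lemma \ref{lem:Gamma(C)}).

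The main computation is then the following. By Definition \ref{def:eRickard}, the complex $C^{\tau}(\bgen_{\grb})$ has chain objects $\qsh^{-k}\tsh^{k}(\FE_{\grb}^{(n-k)},\pm\swcl_{n-k})$ for $0\le k\le n$, and the structure map $\varphi_{\bgen_{\grb}}$ is the diagonal chain map whose $k$-th component is $(-1)^{\binom{n+1}{2}+\binom{k+1}{2}}\swcl_{n-k} \colon \FE_{\grb}^{(n-k)}\to \tau(\FE_{\grb}^{(n-k)})$. Applying $\Gamma$: by Remark \ref{R:Gamma-on-FE}, $\Gamma(\FE_{\grb}^{(n-k)})=0$ for all $k<n$, so every component except $k=n$ is sent to zero. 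For $k=n$ the object is $\qsh^{-n}\tsh^{n}(\FE_{\grb}^{(0)},\swcl_0) = \qsh^{-n}\tsh^{n}(\one_{\wtn},\id_{\one_{\wtn}})$, and the structure-map component is $(-1)^{\binom{n+1}{2}+\binom{n+1}{2}}\swcl_0 = \swcl_0$; since $\swcl_0$ is by definition the identity $2$-morphism of $\one_{\wtn}$ (the $k=0$ case of the notation $\swcl_k$), we have $\Gamma(\For(\swcl_0)) = \id_{\Gamma(\one_{\wtn})} = \id_{\one_{\wtn}}$. Hence $\Gamma(\For(\varphi_{\bgen_{\grb}})) = \id_{\qsh^{-n}\tsh^{n}\one_{\wtn}}$, which is exactly $\id_{(\qsh^{-n}\tsh^{n})^{\epsilon(\bgen_{\grb})}\one_{\wtn}}$ by Lemma \ref{lem:Gamma(C)}. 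The case $\bgen_{\grb}^{-1}$ is identical, now with the surviving term in homological degree $k=n$ sitting in shift $\qsh^{n}\tsh^{-n}$; the sign bookkeeping is the same since the structure-map signs in \eqref{eq:eRickard-} agree with those in \eqref{eq:eRickard+}.

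Finally, I would assemble the two cases using monoidality of $\Gamma$ together with Lemma \ref{lem:tauoncellquot} (to see that the target of the resulting chain map really is $\tau\Gamma(C(\bword)) = \Gamma\tau(C(\bword))$, so the equality of chain maps makes sense) and Lemma \ref{lem:Gamma(C)} (to identify $\Gamma(C(\bword))$ with the one-term complex $(\qsh^{-n}\tsh^{n})^{\epsilon(\bword)}\one_{\wtn}$ on which $\tau$ acts trivially, so that $\id$ is the only $\qsh$-degree-zero chain endomorphism and there is no ambiguity in the statement). I do not anticipate a serious obstacle here; the only thing requiring care is confirming that the global signs $(-1)^{\binom{n+1}{2}}$ in Definition \ref{def:eRickard} are chosen precisely so that the surviving ($k=n$) component of $\varphi_{\bword}$ is $+\swcl_0$ rather than $-\swcl_0$ — i.e., that the signs multiply to $+1$ in the extremal homological degree — and that the shifts match $\epsilon(\bword)$; both are immediate from the definitions and from the additivity of $\epsilon$ under concatenation of braid words.
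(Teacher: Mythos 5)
Your proof is correct and follows essentially the same route as the paper's: reduce to a single-letter braid word by monoidality of $\Gamma\circ\For$, observe that $\Gamma$ kills all chain objects except the extremal $\FE_{\grb}^{(0)}=\one_{\wtn}$, and check that the surviving structure-map component is $(-1)^{\binom{n+1}{2}+\binom{n+1}{2}}\swcl_0 = \id_{\one_{\wtn}}$. The paper's argument is just a more compressed version of the same computation.
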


\begin{proof}
Since $\Gamma$ and $\For$ are monoidal,
it suffices to establish the result for $\bword = \bgen_i^{\pm1}$.
In this case, both $C(\bgen_i^{\pm1})$ and $\tau(C(\bgen_i^{\pm1}))$ 
are mapped by $\Gamma$ to 
$\qsh^{\mp n}\tsh^{\pm n}\FE_i^{(0)}=\qsh^{\mp n}\tsh^{\pm n}\one_{\wtn}$, 
and the sole component of the chain map $\Gamma(\varphi_{\bgen_i^{\pm1}})$ is 
$(-1)^{{n+1 \choose 2} + {n+1 \choose 2}}\swcl_{0} = \id_{\one_{\wtn}}$. 
\end{proof}

\begin{lem}
	\label{L:canonical-homotopy-equivalence-cell-quotient}
Let $\bword$ and $\bword'$ be two braid words for the same braid.
If $f \in \Hom_{\ChCat[\BnFoam]}(C(\bword), C(\bword'))$ is a chain map such that 
$\Gamma(f) = \id_{(\qsh^{-n}\tsh^{n})^{\epsilon(\bword)}\one_{\wtn}}$,
then the same is true for $\tau(f)$.
\end{lem}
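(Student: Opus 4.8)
The statement to prove, Lemma \ref{L:canonical-homotopy-equivalence-cell-quotient}, says that if $f \colon C(\bword) \to C(\bword')$ is a chain map in $\ChCat[\BnFoam]$ whose image under $\Gamma$ is the identity of $(\qsh^{-n}\tsh^{n})^{\epsilon(\bword)}\one_{\wtn}$, then $\Gamma(\tau(f))$ is also the identity. The plan is to exploit the commutation $\Gamma \circ \tau = \tau \circ \Gamma$ from Lemma \ref{lem:tauoncellquot}, together with the concrete description of the (target of the) functor $\Gamma$.

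\textbf{Key steps.} First I would observe that $\tau(f)$ is indeed a chain map $\tau(C(\bword)) \to \tau(C(\bword'))$ by Remark \ref{R:tau-of-chainmap-is-chainmap}, so it makes sense to apply $\Gamma$ to it. Next, using Lemma \ref{lem:tauoncellquot}, I compute
\[
\Gamma(\tau(f)) = \tau(\Gamma(f)) = \tau\big( \id_{(\qsh^{-n}\tsh^{n})^{\epsilon(\bword)}\one_{\wtn}} \big).
\]
Now the point is that $\tau$, as a monoidal functor on $\Catn$, is the identity on objects and acts on morphism spaces via the ring automorphism of $H^\ast(\Gr{n}{2n})$ sending $\Schur(\X) \mapsto \Schur[\parti^t](\X)$. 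In particular, since by Lemma \ref{lem:Gamma(C)} the complexes $\Gamma(C(\bword))$ and $\Gamma(C(\bword'))$ are each supported in a single homological degree, equal to a grading shift $(\qsh^{-n}\tsh^{n})^{\epsilon(\bword)}$ of the left regular module $\one_{\wtn}$, the identity map of this object is $1 \in \End_{\Catn}(\one_{\wtn}) \cong H^\ast(\Gr{n}{2n})$, i.e.~the unit $\Schur[\emptyset]$. Since $\emptyset^t = \emptyset$, the ring automorphism fixes $1$, so $\tau(\id) = \id$ and hence $\Gamma(\tau(f)) = \id_{(\qsh^{-n}\tsh^{n})^{\epsilon(\bword)}\one_{\wtn}}$, as desired. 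I should also note that $\epsilon(\bword) = \epsilon(\bword')$ since $\bword$ and $\bword'$ represent the same braid, so the shift on the target matches.

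\textbf{Main obstacle.} This lemma is genuinely short, so there is no real obstacle; the only care needed is bookkeeping. One must be careful that $f$ is a morphism \emph{in $\BnFoam$} (i.e.~built from the actual diagrammatic generators, not merely a formal equivariant morphism), so that $\tau(f)$ makes literal sense and $\Gamma(\tau(f)) = \tau(\Gamma(f))$ holds strictly — this is exactly what Lemma \ref{lem:tauoncellquot} provides, since $\Gamma$ and $\tau$ are honest ($2$-)functors on these categories. One should also double-check that applying $\Gamma$ and $\tau$ to a chain map commutes with the degreewise application used to define $\tau$ on complexes (Remark \ref{R:tau-of-chainmap-is-chainmap}) and on $\Gamma$ via its additive extension \eqref{eq:complexquotient}; this is automatic from functoriality. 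The upshot is that the lemma follows formally from $\Gamma \circ \tau = \tau \circ \Gamma$ and the triviality of the $\tau$-action on the identity endomorphism of $\one_{\wtn}$ in $\Catn$.
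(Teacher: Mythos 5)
Your proof is correct and follows essentially the same route as the paper's: invoke Remark \ref{R:tau-of-chainmap-is-chainmap} to justify that $\tau(f)$ is a chain map, then apply Lemma \ref{lem:tauoncellquot} to get $\Gamma(\tau(f)) = \tau(\Gamma(f)) = \tau(\id) = \id$. Your additional observation that $\tau$ fixes the unit $\Schur[\emptyset]$ of $H^\ast(\Gr{n}{2n})$ because $\emptyset^t = \emptyset$ is a nice unpacking of why $\tau$ preserves the identity map, but it is the same argument.
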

\begin{proof}
That $\tau(f)$ is a chain map follows from Remark \ref{R:tau-of-chainmap-is-chainmap}. 
The result then follows from Lemma \ref{lem:tauoncellquot}, 
since $\tau$ preserves the identity map of $\one_{\wtn}$ both before and after applying $\Gamma$.
\end{proof}

Pairing Lemmata \ref{L:detect-equivalence-cell-quotient}, 
\ref{L:equivariant-structure-map-cell-quotient}, and
\ref{L:canonical-homotopy-equivalence-cell-quotient}
with the results from \S \ref{ss:lifting}, 
we now establish that the complexes $C^{\tau}(\bgen_{i_1}^{\epsilon_1} \cdots \bgen_{i_r}^{\epsilon_r})$ 
canonically satisfy the braid relations in $\hCat[\EBnFoam]$.
Precisely, we have the following:

\begin{thm}
	\label{thm:Ctaubraid}
Let $\K$ be a field in which $2 \neq 0$.
In $\dgCat[\EBnFoam]$,
there are homotopy equivalences
\begin{equation}
	\label{eq:Ctaubraid}
C^{\tau}(\bgen_{i}\bgen_{i+1}\bgen_{i}) \simeq C^{\tau}(\bgen_{i+1}\bgen_{i}\bgen_{i+1})
\end{equation}
for $1 \leq i \leq m-2$ and
\begin{equation}
	\label{eq:Ctauinv}
C^{\tau}(\bgen_i)C^{\tau}(\bgen_i^{-1}) \simeq (\one_{\wtn}, \id_{\one_{\wtn}}) \simeq C^{\tau}(\bgen_i^{-1})C^{\tau}(\bgen_i)
\end{equation}
for $1 \leq i \leq m-1$. 
These homotopy equivalences can be chosen so that their image under 
$\Gamma \circ \For$ is the identity map of the appropriate shift of $\one_{\wtn}$. 
Consequently, given two braid words $\bword$ and $\bword'$ for the same braid, 
any two homotopy equivalences $C^{\tau}(\bword) \simeq C^{\tau}(\bword')$ 
that are given as compositions of \eqref{eq:Ctaubraid} and \eqref{eq:Ctauinv} agree 
in $\hCat[\EBnFoam]$.
\end{thm}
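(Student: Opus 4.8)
The strategy is to mirror the argument underlying Proposition \ref{P:rick-can} (Rickard canonicity downstairs), transporting it to the equivariant setting via the lifting results of \S\ref{ss:lifting}. The key point is that the forgetful-then-quotient functor $\Gamma \circ \For \colon \dgCat[\EBnFoam] \to \dgCat[\Catn]$ detects homotopy equivalences and nullhomotopies between equivariant Rickard complexes, so two equivariant homotopy equivalences that agree after applying $\Gamma \circ \For$ must already be homotopic in $\hCat[\EBnFoam]$.

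First I would establish the existence of the homotopy equivalences \eqref{eq:Ctaubraid} and \eqref{eq:Ctauinv} with the stated property. By Theorem \ref{thm:Rickard}, the underlying (non-equivariant) complexes $C(\bgen_i \bgen_{i+1} \bgen_i)$ and $C(\bgen_{i+1}\bgen_i\bgen_{i+1})$ are homotopy equivalent in $\hCat[\BnFoam]$, and $C(\bgen_i) C(\bgen_i^{-1}) \simeq \one_\wtn \simeq C(\bgen_i^{-1}) C(\bgen_i)$. Using Lemma \ref{L:detect-equivalence-cell-quotient} (together with Lemma \ref{lem:Gamma(C)} and Lemma \ref{L:equivariant-structure-map-cell-quotient}), we may choose these homotopy equivalences downstairs to be actual \emph{chain maps} $f$ whose image under $\Gamma$ is the identity of the relevant shift $(\qsh^{-n}\tsh^n)^{\epsilon}\one_\wtn$; this is exactly the normalization in Proposition \ref{P:rick-can}. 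The crucial observation is then that $f$ is automatically a morphism in $\ChCat[\EBnFoam]$: the chain objects of $C^\tau(\bword)$ are built from the equivariant objects $(\FE_i^{(k)}, \pm\swcl_k)$, the structure maps $\varphi_\bword$ assemble into the chain map of Remark \ref{rem:LCchainmap}, and one checks $\varphi_{\bword'} \circ f = \tau(f) \circ \varphi_\bword$ by applying $\Gamma\circ\For$ and invoking Lemma \ref{L:detect-equivalence-cell-quotient}: both sides become the identity after $\Gamma$, hence are equal. Then Proposition \ref{prop:lift-equivariant-homotopy} upgrades $f$ to a genuine homotopy equivalence in $\dgCat[\EBnFoam]$, and $\Gamma\circ\For$ of it is the identity by construction.

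For the final uniqueness statement, suppose $F, F' \colon C^\tau(\bword) \to C^\tau(\bword')$ are two homotopy equivalences obtained as compositions of the maps \eqref{eq:Ctaubraid} and \eqref{eq:Ctauinv}. By functoriality of $\Gamma \circ \For$, both $\Gamma(\For(F))$ and $\Gamma(\For(F'))$ equal the identity map of $(\qsh^{-n}\tsh^n)^{\epsilon(\bword)}\one_\wtn$, since each constituent map has this property and $\epsilon(\bword) = \epsilon(\bword')$. Now I would prove an equivariant analogue of Lemma \ref{L:detect-equivalence-cell-quotient}: the functor $\Gamma \circ \For$ induces an isomorphism on the space of degree-zero morphisms $\Hom^0_{\hCat[\EBnFoam]}(C^\tau(\bword), C^\tau(\bword')) \xrightarrow{\cong} \K\cdot\id$. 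For the surjectivity/detection part, observe that $\Gamma\circ\For$ of any of these composite homotopy equivalences is an isomorphism; for injectivity (i.e. that $\Gamma(\For(g)) = 0$ forces $g \sim 0$ for a chain map $g$ between such complexes), one uses that $C^\tau(\bword)$ is invertible in $\hCat[\EBnFoam]$ by \eqref{eq:Ctauinv}, so tensoring with the inverse reduces to showing $\End^0_{\hCat[\EBnFoam]}(\one_\wtn, \id) = \K\cdot\id$ is detected by $\Gamma\circ\For$ — and this follows since $\End_{\hCat[\BnFoam]}(\one_\wtn) = \K\cdot\id$ in degree zero (it equals $H^0$ of $\End^\bullet_{\dgCat[\BnFoam]}(\one_\wtn)$, which is concentrated in non-negative degrees), combined with Corollary \ref{cor:HomAsInv} identifying equivariant endomorphisms with $\tauast$-invariants. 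Since $F$ and $F'$ agree after $\Gamma\circ\For$, this isomorphism forces $F \sim F'$ in $\hCat[\EBnFoam]$.

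\textbf{Main obstacle.} The hardest step is establishing the equivariant detection lemma, i.e. that $\Gamma\circ\For$ is injective on $\Hom^0$ in the homotopy category between equivariant Rickard complexes. Downstairs this rested on invertibility of Rickard complexes (Theorem \ref{thm:Rickard}) and the one-dimensionality of $\End^0_{\hCat[\cSSStn_q(\glm)]}(\one_\wtn)$; the equivariant version needs \eqref{eq:Ctauinv} (which is part of what we are proving, so the argument must be organized so that invertibility is deduced \emph{before} uniqueness — this is fine, as invertibility only needs Proposition \ref{prop:lift-equivariant-homotopy} applied to the downstairs inverse) together with care that $\Gamma\circ\For$ genuinely reflects nullhomotopies of equivariant chain maps. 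The subtlety is that an equivariant nullhomotopy requires the homotopy to itself be $\tauast$-invariant; but here the averaging trick from the proof of Proposition \ref{prop:lift-equivariant-homotopy} (replacing $h$ by $\frac12(h + \sigast h)$, which is legitimate since $2$ is invertible) handles exactly this, so I expect this to go through without serious difficulty once the bookkeeping of signs and shifts is arranged carefully.
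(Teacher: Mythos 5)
Your proposal follows the same overall strategy as the paper—normalize the non-equivariant homotopy equivalences via $\Gamma$, lift to the equivariant category via Proposition \ref{prop:lift-equivariant-homotopy}, and establish uniqueness by an equivariant version of Lemma \ref{L:detect-equivalence-cell-quotient}—but there is a genuine gap in the existence step.

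You claim that the normalized chain map $f\colon C(\bword)\to C(\bword')$ (with $\Gamma(f) = \id$) is ``automatically a morphism in $\ChCat[\EBnFoam]$'' because both $\varphi_{\bword'}\circ f$ and $\tau(f)\circ\varphi_{\bword}$ become the identity after applying $\Gamma$, ``hence are equal.'' This inference is false. Lemma \ref{L:detect-equivalence-cell-quotient} (and the $\Gamma\circ\For$ machinery) detects when a chain map is a homotopy equivalence and when it is nullhomotopic; it identifies morphisms only up to homotopy in $\hCat$, not on the nose. The Rickard complexes are honest multi-term complexes, so $\Hom^0_{\ChCat}(C(\bword),\tau(C(\bword')))$ contains nonzero nullhomotopic maps and surjects onto $\Hom^0_{\hCat}\cong\K$ with nontrivial kernel. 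Thus you can only conclude $\varphi_{\bword'}\circ f \sim \tau(f)\circ\varphi_{\bword}$, which is not enough: an equivariant morphism requires the strict equality $\varphi_{\bword'}\circ f = \tau(f)\circ\varphi_{\bword}$, and without it $f$ does not lie in $\Hom_{\ChCat[\EBnFoam]}$, so Proposition \ref{prop:lift-equivariant-homotopy} cannot be invoked.

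The fix, which is what the paper does, is an additional averaging step that you used only for the homotopies but not for $f$ itself: replace $f$ by $f^{\tau}:=\tfrac12(f+\tauast f)$. Lemma \ref{lem:sigastchainmap} guarantees $\tauast f$ is again a chain map, so $f^\tau$ is a chain map; $f^\tau$ is manifestly $\tauast$-fixed, hence by Corollary \ref{cor:HomAsInv} it is a genuine morphism in $\ChCat[\EBnFoam]$; and since $\Gamma(f) = \Gamma(\tauast f) = \id$ (by Lemmas \ref{L:equivariant-structure-map-cell-quotient} and \ref{L:canonical-homotopy-equivalence-cell-quotient}), also $\Gamma(f^\tau) = \id$, so $f^\tau$ remains a homotopy equivalence downstairs by Lemma \ref{L:detect-equivalence-cell-quotient}. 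Only then can Proposition \ref{prop:lift-equivariant-homotopy} be applied. Your uniqueness argument is essentially correct and matches the paper, but it rests on the existence step, so the whole proof needs the symmetrization of $f$ inserted.
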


\begin{proof}
By Theorem \ref{thm:Rickard}, before accounting for equivariant structures,
there are homotopy equivalences 
\[
f_{i} \colon C(\beta_{i}\beta_{i+1}\beta_{i}) \xrightarrow{\simeq} C(\beta_{i+1}\beta_{i}\beta_{i+1})
\]
and
\[
g_{i} \colon \one_{\wtn} \xrightarrow{\simeq} C(\bgen_{i}) C(\bgen_{i}^{-1})
\, , \quad 
g_{i}' \colon \one_{\wtn} \xrightarrow{\simeq} C(\bgen_{i}^{-1}) C(\bgen_{i}) \, .
\] 
Moreover, using Lemma \ref{L:detect-equivalence-cell-quotient} we can choose
$f_{i}$, $g_{i}$, and $g_{i}'$ so that they each map to $\id_{\one_{\wtn}}$ 
under $\Gamma$. 

Combining Lemma \ref{L:equivariant-structure-map-cell-quotient} and 
Lemma \ref{L:canonical-homotopy-equivalence-cell-quotient}
with the definition of $\tauast$ from \eqref{eq:sigastdef},
we see that $\tauast f_{i}$, $\tauast g_{i}$, and $\tauast g_{i}'$ 
all also map to $\id_{\one_{\wtn}}$ under the quotient functor. 
It follows that the same is true for each of
\[
f_{i}^{\tau} := \frac{1}{2} \big( f_{i}+\tauast f_{i} \big)
\, , \quad
g_{i}^{\tau} := \frac{1}{2} \big( g_{i}+\tauast g_{i} \big)
\, , \quad
g_{i}'^{\tau} := \frac{1}{2} \big( g_{i}'+\tauast g_{i}' \big) \, .
\] 
Lemma \ref{L:detect-equivalence-cell-quotient} implies that each of 
$f_{i}^{\tau}$, $g_{i}^{\tau}$, and $g_{i}'^{\tau}$ 
are therefore homotopy equivalences in $\dgCat[\BnFoam]$.
Each of these maps is fixed by $\tauast$, 
so they give chain maps
\begin{equation}
	\label{eq:Ctaubraidmaps}
f_{i}^{\tau} \colon C^{\tau}(\beta_{i}\beta_{i+1}\beta_{i}) \to C^{\tau}(\beta_{i+1}\beta_{i}\beta_{i+1})
\, , \quad
g_{i}^{\tau} \colon \one_{\wtn} \to C^{\tau}(\bgen_{i}) C^{\tau}(\bgen_{i}^{-1})
\, , \quad
g_{i}'^{\tau} \colon \one_{\wtn} \to C^{\tau}(\bgen_{i}^{-1}) C^{\tau}(\bgen_{i})
\end{equation}
in $\dgCat[\EBnFoam]$.
Proposition \ref{prop:lift-equivariant-homotopy} then shows that each of the maps in 
\eqref{eq:Ctaubraidmaps} is a homotopy equivalence.

The exact same argument as in the proof of Lemma \ref{L:detect-equivalence-cell-quotient} implies that
\[
\Hom_{\hCat[\EBnFoam]} \big( C^{\tau}(\bword), C^{\tau}(\bword') \big) \cong \K
\]
when $\bword$ and $\bword'$ are words for the same braid.
Thus one recovers an equivariant version of Lemma \ref{L:detect-equivalence-cell-quotient}: 
if $f$ is a chain map in this morphism space, 
and $\Gamma(\For(f)) = c \cdot \id_{\one_{\wtn}}$ for $c \in \K$, 
then either $c \ne 0$ and $f$ is a homotopy equivalence, or $c=0$ and $f$ is nulhomotopic. 
Hence two chain maps $f, g$ in this morphism space are homotopic if and only if they agree 
after applying $\Gamma \circ \For$. 
The final statement of the theorem is an immediate consequence. \end{proof}

Theorem \ref{thm:Ctaubraid} shows that we can canonically associate 
a complex over $\EBnFoam$ to a braid $\bgen$ by choosing a braid word representative 
$\bword$ and considering $C^{\tau}(\bword)$.
We thus will slightly abuse notation and denote this complex 
as $C^{\tau}(\bgen)$ moving forward.

\subsection{Spin link homology}
	\label{ss:eHomology}

At last, we define our invariant.
We proceed analogously to \eqref{eq:KhRdef},
and then establish that we indeed obtain an invariant of framed links $\mathcal{L} \subset S^3$.
To simplify notation, we set
\[
(\one_{\wtn},\pm) := (\one_{\wtn},\pm \id_{\one_{\wtn}}) \in \EBnFoam \, .
\]
For the duration, when we consider the $\tauast$-action on $\Hom$-spaces of the form 
$\Hom_{\BnFoam} \big( \one_{\wtn} , X \big)$, it will be assumed with respect to the 
equivariant structure $+ \id_{\one_{\wtn}}$ on $\one_{\wtn}$
(and whatever equivariant structure $\varphi_X$ is currently being considered on $X$).

Recall from \S \ref{ss:conv} that $\sVect^\Z_\K$ denotes the category of $\Z$-graded 
super $\K$-vector spaces, and that we denote shift in super degree by $\ssh$.
We will implicitly view $\Vect^\Z_\K \subset \sVect^\Z_\K$ as the full subcategory 
of super vector spaces concentrated in super degree zero.

\begin{defn}
	\label{def:SLH}
Consider the additive functors $\RR^+,\RR^- \colon \EBnFoam \to \Vect^\Z_\K$ given by
\begin{equation}
	\label{eq:Hom1X}
\RR^+(x) :=
\Hom_{\EBnFoam}\big( (\one_{\wtn},+), x \big) \, , \quad
\RR^-(x) := \Hom_{\EBnFoam}\big( (\one_{\wtn},-) , x \big)
\end{equation}
and similarly denote the
induced functors $\RR^+, \RR^- \colon \hCat[\EBnFoam] \to \hCat[\Vect^\Z_\K]$ given by applying \eqref{eq:Hom1X} term-wise.
Let
\[
\SLC{\bgen}^+ 
	:= \qsh^{-mn^2} \RR^+ \big(C^{\tau}(\bgen) \big) \, , \quad
\SLC{\bgen}^- 
	:= \qsh^{-mn^2} \RR^- \big(C^{\tau}(\bgen) \big)
\]
and set
$\SLC{\bgen}
:= \SLC{\bgen}^+ \oplus \ssh \SLC{\bgen}^-
\in \hCat[\sVect^\Z_\K]$.
\end{defn}

It is immediate from Theorem \ref{thm:Ctaubraid} that 
$\SLC{\bgen} \in \hCat[\sVect^\Z_\K]$ is an invariant 
of braids $\bgen \in \brgroup$. 
We will establish that the assignment $\bgen \mapsto \SLC{\bgen}$ 
is further invariant under the Markov moves up to (appropriate) shift, 
and therefore is an invariant of the framed link $\mathcal{L}_\bgen$ given as the closure of $\bgen$.

Our approach is as follows.
Recall that the $\sln[2n]$ link invariant $\llbracket \bgen \one_{\wtn} \rrbracket_{2n}$ 
comes from applying the functor $\Hom(\one_{\wtn},-)$ to each term in the complex $C(\beta)$, 
yielding a complex whose chain group in degree $j$ is 
$\Hom_{\BnFoam} \big(\one_{\wtn} , C_j(\bgen) \big)$. 
Using Corollary \ref{cor:HomAsInv}, one has
\[
\Hom_{\EBnFoam} \big( (\one_{\wtn},+) ,  (C_j(\bgen),\varphi_j(\bgen)) \big) 
= \Hom_{\BnFoam} \big( \one_{\wtn} , C_j(\bgen) \big)^{\tau},
\]
the subspace of invariants under the $\tauast$-action. 
Thus $\SLC{\bgen}^+$ is a subcomplex of $\llbracket \bgen \one_{\wtn} \rrbracket_{2n}$, 
the subspace of $\tauast$-invariants. 
Similarly, $\SLC{\bgen}^-$ is a subspace of $\llbracket \bgen \one_{\wtn} \rrbracket_{2n}$, 
the subcomplex of $\tauast$-anti-invariants 
(i.e.~the isotypic component of the sign representation). 
Lemma \ref{lem:sigastchainmap} then implies that 
$\SLC{\bgen}^{\pm} \subset \llbracket \bgen \one_{\wtn} \rrbracket_{2n}$
are in fact subcomplexes.

By Theorem \ref{thm:KhR}, we know that $\llbracket \bgen \one_{\wtn} \rrbracket_{2n}$ satisfies the 
Markov moves, up to homotopy equivalence. 
We will observe that these homotopy equivalences restrict to $\SLC{\bgen}^{\pm}$,
and thus give Markov invariance for $\SLC{\bgen}$.

We begin with a result concerning group actions on chain complexes of vector spaces.

\begin{lem}
	\label{lem:GactC}
Let $G$ be a finite group, and suppose $|G|$ is invertible in $\K$. 
Let $(X,d_X), (Y,d_Y) \in \dgCat[\Vect_{\K}^\Z]$ be complexes of (graded) $\K$-vector spaces 
that admit a $\K$-linear action of $G$, 
i.e.~the chain groups $X_i$ and $Y_i$ are $G$-representations 
and the differentials $d_X$ and $d_Y$ commute with the $G$-action.
Suppose that $f \colon (X,d_X) \xrightarrow{\simeq} (Y,d_Y)$ is a homotopy equivalence such that 
$f|_{X_i} \colon X_i \to Y_i$ is a morphism of $G$-representations. 
Then, $f$ restricts to a homotopy equivalence between the $G$-invariant subcomplexes 
$(X^G,d_X)$ and $(Y^G,d_Y)$ of $(X,d_X)$ and $(Y,d_Y)$.
If $f$ is an isomorphism of chain complexes in $\ChCat[\Vect_{\K}^\Z]$, 
then the restriction is as well.
\end{lem}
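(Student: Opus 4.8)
The plan is to use the standard averaging/projection idempotent associated to a finite group whose order is invertible in $\K$. First I would define, for a $G$-representation $V$ over $\K$, the averaging operator $e_V := \frac{1}{|G|} \sum_{g \in G} g \colon V \to V$, which is a $\K$-linear idempotent with image $V^G$ and which is natural in $V$: if $\phi \colon V \to W$ is a morphism of $G$-representations then $\phi \circ e_V = e_W \circ \phi$. Applied degreewise to the complexes $(X,d_X)$ and $(Y,d_Y)$, naturality with respect to the differentials (which commute with $G$ by hypothesis) shows that $e_X := (e_{X_i})_i$ and $e_Y := (e_{Y_i})_i$ are chain maps, and their images are precisely the invariant subcomplexes $(X^G, d_X)$ and $(Y^G, d_Y)$, which are genuine subcomplexes since $d_X$ and $d_Y$ commute with $G$ and hence preserve invariants.

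Next I would observe that because each $f|_{X_i}$ is a map of $G$-representations, naturality of averaging gives $f \circ e_X = e_Y \circ f$ as chain maps $X \to Y$. Therefore $f$ restricts to a chain map $f^G \colon X^G \to Y^G$ (namely the composite $X^G \hookrightarrow X \xrightarrow{f} Y \xrightarrow{e_Y} Y^G$, which on $X^G$ agrees with $f$ followed by inclusion since $e_Y f|_{X^G} = f e_X|_{X^G} = f|_{X^G}$). The remaining point is that $f^G$ is a homotopy equivalence. Here I would \emph{not} simply restrict a chosen homotopy inverse $g$ and homotopies $h_X, h_Y$, since these need not be $G$-equivariant; instead I would average them. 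Set $g^G := e_X \circ g \circ e_Y$, $h_X^G := e_X \circ h_X \circ e_X$, and $h_Y^G := e_Y \circ h_Y \circ e_Y$; these are the restrictions of the averaged maps $\frac{1}{|G|^2}\sum_{g,g'} g \cdot (\text{map}) \cdot g'$ to the invariant subcomplexes. A direct computation, using $e_X^2 = e_X$, $e_Y^2 = e_Y$, $f e_X = e_Y f$, $d_X e_X = e_X d_X$, and the original identities $\id_X - g f = d_X h_X + h_X d_X$, $\id_Y - f g = d_Y h_Y + h_Y d_Y$, shows that on $X^G$ and $Y^G$ one has $\id - g^G f^G = d_X h_X^G + h_X^G d_X$ and $\id - f^G g^G = d_Y h_Y^G + h_Y^G d_Y$. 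This is entirely parallel to the averaging argument already carried out in the proof of Proposition \ref{prop:lift-equivariant-homotopy}, with $\tauast$ replaced by conjugation by elements of $G$. Finally, if $f$ is an isomorphism of complexes, then each $f|_{X_i}$ is an isomorphism of $G$-representations, so it restricts to an isomorphism $X_i^G \to Y_i^G$ (with inverse the restriction of $f|_{X_i}^{-1}$, which is automatically $G$-equivariant), and hence $f^G$ is an isomorphism of complexes.

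I do not expect a genuine obstacle here; the content is purely the invertibility of $|G|$ together with the naturality of the averaging idempotent. The only mild care needed is in checking that the \emph{averaged} homotopy inverse and homotopies, rather than naive restrictions, give the required identities on the invariant subcomplexes, but this is a routine manipulation of idempotents and is the same device used in \S \ref{ss:lifting}. In our applications $G = \Z/2 = \langle \tauast \rangle$ and $2$ is invertible in $\K$ by the standing hypothesis of this section, so the lemma applies and recovers the statement that $\tauast$-invariant (resp. anti-invariant) subcomplexes inherit homotopy equivalences.
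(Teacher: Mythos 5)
Your proposal is correct and uses essentially the same averaging device as the paper's proof: both replace the non-equivariant homotopy inverse and homotopies with $G$-averaged versions and restrict to the invariant subcomplexes. The only cosmetic difference is that you average on both sides using the idempotents $e_X, e_Y$ (so e.g.\ $g^G = e_X g e_Y$) whereas the paper averages on one side only ($k^G = \frac{1}{|G|}\sum_g (g\cdot)\circ k$); after restriction to $Y^G$ these coincide, and your isomorphism argument (equivariance of $f|_{X_i}^{-1}$) is an equivalent reformulation of the paper's observation that one may take $h_X=h_Y=0$.
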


\begin{proof}
Throughout, we let $Z = X$ or $Y$.
First, observe that $(Z^G,d_Z) \subset (Z,d_Z)$ is indeed a subcomplex, 
where 
\[
Z_{i}^G := \{z \in Z_i \mid g \cdot z = z \ \  \forall g \in G\} \, ,
\] 
since $d_Z \circ (g \cdot) = (g \cdot) \circ d_Z$ for all $g \in G$. 
Since $f|_{X_i}$ is assumed to be a morphism of $G$-representations, 
$f|_{X^G} \colon (X^G,d_X) \to (Y^G,d_Y)$ is a chain map,
so it remains to construct its homotopy inverse.

The argument is similar to the proof of Proposition \ref{prop:lift-equivariant-homotopy}.
Let $k \colon (Y,d_Y) \to (X,d_X)$ be the homotopy inverse to $f$, 
so there exist $h_X$ and $h_Y$ such that
\begin{equation}
	\label{eq:hedata}
\id_X - k \circ f= d_X \circ h_X + h_X \circ d_X 
\quad \text{and} \quad 
\id_Y- f\circ k = d_Y \circ h_Y + h_Y \circ d_Y \, .
\end{equation}
Set
\[
k^G := \frac{1}{|G|} \sum_{g \in G} (g\cdot) \circ k 
\, , \quad 
h_X^G := \frac{1}{|G|} \sum_{g \in G} (g\cdot) \circ h_X 
\, , \quad 
h_Y^G := \frac{1}{|G|} \sum_{g \in G} (g\cdot) \circ h_Y
\]
and note that $k^G \in \Hom_{\ChCat[\Vect_{\K}^{\Z}]}\big( (Y^G,d_Y) , (X^G,d_X) \big)$
and $h_Z^G \in \End_{\dgCat[\Vect_{\K}^{\Z}]}\big( (Z^G,d_Z) \big)$.
Averaging \eqref{eq:hedata} over $G$ 
(and using $f \circ (g \cdot) = (g \cdot) \circ f$ and $d_Z \circ (g \cdot) = (g \cdot) \circ d_Z$)
gives
\[
\left( \frac{1}{|G|} \sum_{g \in G} (g\cdot) \circ \id_X \right) - k^G \circ f= d_X \circ h_X^G + h_X^G \circ d_X 
\quad \text{and} \quad 
\left( \frac{1}{|G|} \sum_{g \in G} (g\cdot) \circ \id_Y \right) - f \circ k^G = d_Y \circ h_Y^G + h_Y^G \circ d_Y \, .
\]
Since $\left(\frac{1}{|G|} \sum_{g \in G} (g\cdot) \circ \id_Z \right)|_{Z^G} = \id_{Z^G}$, 
this shows that $f|_{X^G} \colon (X^G,d_X) \to (Y^G,d_Y)$ is a homotopy equivalence.

For the final statement, note that the statement that $f$ is an isomorphism
is equivalent to being able to choose maps 
so that \eqref{eq:hedata} is satisfied with $h_X = 0$ and $h_Y=0$. 
It then follows that $h_{X}^G = 0$ and $h_{Y}^G=0$,
so $f|_{X^G} \colon (X^G,d_X) \to (Y^G,d_Y)$ is 
an isomorphism of chain complexes as well.
\end{proof}

\begin{rem}
	\label{rem:GactC}
When $G = \Z/2$, 
Lemma \ref{lem:GactC} can also be applied to sign components $(X^\sgn,d_X)$ and $(Y^\sgn,d_Y)$, 
rather than trivial components $(X^G,d_X)$ and $(Y^G,d_Y)$. 
This is because, for an involutive linear map $\sigma$, 
the anti-invariants of $\sigma$ agree with the invariants of $-\sigma$. 
More generally though, 
an analogous argument to the proof of Lemma \ref{lem:GactC} 
(using other central idempotents in the group algebra) 
establishes homotopy equivalences (and isomorphisms) 
between subcomplexes of isotypic components of $(X,d_X)$ and $(Y,d_Y)$. \end{rem}

\begin{prop}
	\label{prop:SLHM1}
The complex $\SLC{\bgen}$ satisfies the first 
Markov move, up to isomorphism in $\ChCat[\sVect_{\K}^{\Z}]$. 
Explicitly, given braids $\bgen, \bgen' \in \brgroup$, 
there is an isomorphism
$\SLC{\bgen \bgen'}
\cong \SLC{\bgen' \bgen}$
of chain complexes of super $\K$-vector spaces.
\end{prop}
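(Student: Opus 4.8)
The first Markov move (MI) is conjugation invariance, so the goal is to produce an isomorphism $\SLC{\bgen\bgen'}^{\pm} \cong \SLC{\bgen'\bgen}^{\pm}$ of chain complexes of graded vector spaces, compatible with super degree. The plan is to bootstrap on the known conjugation invariance for type $A$ Khovanov--Rozansky homology from Theorem \ref{thm:KhR} (equivalently, the trace-like property underlying it), together with the observation described just before Lemma \ref{lem:GactC} that $\SLC{\bgen}^{\pm}$ is the subcomplex of $\llbracket \bgen \one_{\wtn} \rrbracket_{2n}$ on which $\tauast$ acts by $\pm 1$. First I would recall that conjugation invariance at the categorical level is realized by the canonical homotopy equivalence $C^\tau(\bgen\bgen') \simeq C^\tau(\bgen'\bgen)$ coming from Theorem \ref{thm:Ctaubraid}; applying $\RR^{\pm}$ gives a homotopy equivalence $\SLC{\bgen\bgen'}^{\pm} \simeq \SLC{\bgen'\bgen}^{\pm}$. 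To upgrade this to an honest isomorphism of complexes, I would instead argue directly: the complexes $C(\bgen\bgen')\one_{\wtn}$ and $C(\bgen'\bgen)\one_{\wtn}$ in $\dgCat[\BnFoam]$ are related by the ``rotation'' isomorphism coming from the biadjunction/cyclic structure on $\cUU_q(\glm)$ (in the spirit of the trace-like identities in the proof of Proposition \ref{prop:size} and \cite{QRS}), which, being built from cups, caps and crossings of the categorified quantum group, is a $\tauast$-equivariant chain isomorphism — here one uses that $\tau$ is a $2$-functor and that $C^\tau$ records precisely the equivariant structures $\varphi_j$ on the chain objects.

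The key steps, in order, are: (i) identify the explicit chain isomorphism $\Phi \colon C(\bgen\bgen')\one_{\wtn} \xrightarrow{\cong} C(\bgen'\bgen)\one_{\wtn}$ in $\ChCat[\BnFoam]$ realizing conjugation invariance (this is the ``partial trace'' isomorphism obtained by sliding the strands of $\bgen$ around the annulus, i.e.\ using the biadjunctions to move $C(\bgen')$ from the left to the right of $C(\bgen)$); (ii) check that $\Phi$ intertwines the chain maps $\varphi_{\bgen\bgen'}$ and $\varphi_{\bgen'\bgen}$ (the collections of $\pm\swcl_k$-type maps from Remark \ref{rem:LCchainmap}), so that $\Phi$ is fixed by $\tauast$ and hence descends to an isomorphism of the equivariant complexes $C^\tau(\bgen\bgen') \cong C^\tau(\bgen'\bgen)$ in $\ChCat[\EBnFoam]$; (iii) apply the additive functor $\RR^+$ (and separately $\RR^-$) and the global shift $\qsh^{-mn^2}$ to obtain isomorphisms $\SLC{\bgen\bgen'}^{+} \cong \SLC{\bgen'\bgen}^{+}$ and $\SLC{\bgen\bgen'}^{-} \cong \SLC{\bgen'\bgen}^{-}$ in $\ChCat[\Vect_{\K}^\Z]$; (iv) take the direct sum, with the super-degree shift $\ssh$ on the minus part, to conclude $\SLC{\bgen\bgen'} \cong \SLC{\bgen'\bgen}$ in $\ChCat[\sVect_{\K}^\Z]$. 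As an alternative to (ii), one can instead invoke Lemma \ref{lem:GactC}: the type $A$ isomorphism $\llbracket \bgen\bgen' \one_{\wtn} \rrbracket_{2n} \cong \llbracket \bgen'\bgen \one_{\wtn} \rrbracket_{2n}$ is a $\Z/2$-equivariant chain isomorphism for the $\tauast$-actions (because $\tau$ is a $2$-functor commuting with the cyclic structure used to build it), and hence restricts to isomorphisms on both the $\tauast$-invariant and (by Remark \ref{rem:GactC}) $\tauast$-anti-invariant subcomplexes, which are exactly $\SLC{\bgen\bgen'}^{\pm}$ up to the fixed shift $\qsh^{-mn^2}$.

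The main obstacle I anticipate is step (ii): verifying that the conjugation isomorphism is genuinely $\tauast$-equivariant rather than merely equivariant up to homotopy. This requires tracking the signs attached to the equivariant structures $(-1)^{\binom{n+1}{2}+\binom{k+1}{2}}\swcl_{n-k}$ on the chain objects of the Rickard complexes through the biadjunction moves, and confirming that $\tau$ applied to the rotation morphism equals the rotation morphism for the conjugate braid on the nose. This is where Corollary \ref{cor:ourthicktau} (the explicit action of $\tau$ on thick caps, cups and crossings) and the sign bookkeeping of \S\ref{ss:extended-to-thick} do the real work; the computation is of the same flavour as, but somewhat more involved than, the chain-map verification in Lemma \ref{lem:eRickdiff} and Remark \ref{rem:LCchainmap}. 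Everything downstream of this — applying $\RR^{\pm}$, reassembling the super vector space, and matching shifts — is routine. The Lemma \ref{lem:GactC} route sidesteps the explicit bookkeeping by only requiring $\Z/2$-equivariance of an already-established type $A$ isomorphism, which may in fact be the cleanest path and is the one I would write up.
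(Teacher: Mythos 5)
Your alternative route — applying $\Hom(\one_{\wtn},-)$ first, identifying the explicit type~$A$ rotation isomorphism $\llbracket \bgen\bgen'\one_{\wtn}\rrbracket_{2n} \cong \llbracket \bgen'\bgen\one_{\wtn}\rrbracket_{2n}$, verifying it is $\Z/2$-equivariant for the $\tauast$-actions, and then invoking Lemma~\ref{lem:GactC} together with Remark~\ref{rem:GactC} to restrict to the $\pm1$-eigenspaces — is exactly the paper's argument, including the $(-1)^{ij}$ Koszul sign on the rotation map. You were right to flag this as the cleanest path, and your instinct to write it up is sound.

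The primary route you sketch in steps~(i)--(ii) has a genuine gap, however. You posit a chain isomorphism $\Phi \colon C(\bgen\bgen')\one_{\wtn} \xrightarrow{\cong} C(\bgen'\bgen)\one_{\wtn}$ in $\ChCat[\BnFoam]$ ``obtained by sliding the strands of $\bgen$ around the annulus.'' No such isomorphism exists at this level: $C(\bgen\bgen')\one_{\wtn}$ and $C(\bgen'\bgen)\one_{\wtn}$ are only homotopy equivalent (by Theorem~\ref{thm:Rickard}), and their chain objects are genuinely different tensor products of $\FE^{(k)}_i$'s — there is nothing to ``rotate'' until you close off the diagram. The annular slide is a trace-type move; it becomes an honest isomorphism only after applying the representable functor $\Hom_{\cSSStn_q(\glm)}(\one_{\wtn},-)$, because then both sides are closed diagrams inside the weight-$\wtn$ region and the biadjunction lets you drag a factor around the back. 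So the construction must happen on the Hom-complexes, not on $C^\tau$ in $\ChCat[\EBnFoam]$, and you cannot establish $C^\tau(\bgen\bgen') \cong C^\tau(\bgen'\bgen)$ as chain complexes before applying $\RR^\pm$.

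One more small correction: you write that the Lemma~\ref{lem:GactC} route ``sidesteps the explicit bookkeeping by only requiring $\Z/2$-equivariance of an already-established type~$A$ isomorphism,'' attributing that equivariance to $\tau$ being a $2$-functor commuting with the cyclic structure. This is not automatic: $\tau$ acts on cups and caps with weight-dependent signs (see \eqref{eq:tauonthickcap}--\eqref{eq:tauonthickcup}), so one must actually check that the rotation map commutes with $\tauast$. The paper does this by a direct picture computation, and the reason it works is that in weight $\wtn$ the quantity $\langle \alpha_i^\vee, \mathbf{0}\rangle$ is even, so $\tau$ happens to act without sign on the particular caps and cups used in the rotation. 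So the bookkeeping is lighter than tracking equivariant structures on the Rickard complexes, but it is not zero, and you should state the weight-$\wtn$ reason explicitly when you write it up.
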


\begin{proof}
By Theorem \ref{thm:KhR}, there is a homotopy equivalence of chain complexes
\begin{equation}
	\label{eq:KhRM1}
f \colon
\llbracket \bgen \bgen' \rrbracket_{2n} \xrightarrow{\simeq} 
	\llbracket \bgen' \bgen \rrbracket_{2n} \, .
\end{equation}
In fact, this is an isomorphism of chain complexes, 
that we now explicitly describe.

In the notation of \eqref{eq:CTbword}, 
the term of the complex $\llbracket \bgen \bgen' \rrbracket_{2n}$ 
in homological degree $k$ takes the form
\[
\qsh^{-mn^2}
\bigoplus_{i+j=k} \Hom_{\BnFoam} \big( \one_{\wtn} , C_i(\bgen)C_j(\bgen') \big) \, .
\]
By definition, 
each of $C_i(\bgen)$ and $C_j(\bgen')$ can themselves be written as
\[
C_i(\bgen) = \bigoplus_{p=1}^{\ell_i} C_{i,p} \, , \quad
C_j(\bgen') = \bigoplus_{r=1}^{\ell'_i} C'_{j,r}
\]
where each $C_{i,p}$ and $C'_{j,q}$ is a tensor product of 
objects of the form $\FE_s^{(t)}$.
The isomorphism \eqref{eq:KhRM1} is then given summand-wise as follows:
\begin{equation}
	\label{eq:KhRM1map}
\begin{aligned}
\Hom_{\BnFoam} \big( \one_{\wtn} , C_{i,p}C'_{j,r} \big) &\xrightarrow{f}
\Hom_{\BnFoam} \big( \one_{\wtn} , C'_{j,r}C_{i,p} \big) \\
\begin{tikzpicture}[anchorbase]
	\fill[white] (-1,-.25) rectangle (1,.25);
	\draw[very thick] (-1,-.25) rectangle (1,.25);
	\node at (0,0) {$\xi$};
	\draw[very thick,rdirected=.5] (-.875,.25) to (-.875,.75);
	\draw[very thick,->] (-.75,.25) to (-.75,.75);
	\node at (-.5625,.5625) {$\mysdots$};
	\draw[very thick,rdirected=.5] (-.375,.25) to (-.375,.75);
	\draw[very thick,->] (-.25,.25) to (-.25,.75);
	\draw [thick, decorate, decoration = {brace}] (-1,.8125) to (-.125,.8125);
	\node at (-.5625,1.125) {$C_{i,p}$};
	\draw[very thick,rdirected=.5] (.25,.25) to (.25,.75);
	\draw[very thick,->] (.375,.25) to (.375,.75);
	\node at (.5625,.5625) {$\mysdots$};
	\draw[very thick,rdirected=.5] (.75,.25) to (.75,.75);
	\draw[very thick,->] (.875,.25) to (.875,.75);
	\draw [thick, decorate, decoration = {brace}] (.125,.8125) to (1,.8125);
	\node at (.5625,1.125) {$C'_{j,r}$};
	\node at (1.5,.25) {$\wtn$};
\end{tikzpicture}
&\mapsto
(-1)^{ij} \
\begin{tikzpicture}[anchorbase]
	\fill[white] (-1,-.25) rectangle (1,.25);
	\draw[very thick] (-1,-.25) rectangle (1,.25);
	\node at (0,0) {$\xi$};
	\draw[very thick,rdirected=.5] (-.875,.25) to [out=90,in=0] (-1,.5) 
		to [out=180,in=90] (-1.125,.25) to (-1.125,-.25) to [out=270,in=180] (.125,-.75) 
			to [out=0,in=270] (1.375,-.25) to (1.375,.75);
	\draw[very thick,->] (-.75,.25) to [out=90,in=0] (-1,.625) 
		to [out=180,in=90] (-1.25,.25) to (-1.25,-.25) to [out=270,in=180] (.125,-.875) 
			to [out=0,in=270] (1.5,-.25) to (1.5,.75);
	\node[rotate=270] at (.125,-1.0625) {$\mysdots$};
	\draw[very thick,rdirected=.5] (-.375,.25) to [out=90,in=0] (-1,1) 
		to [out=180,in=90] (-1.625,.25) to (-1.625,-.25) to [out=270,in=180] (.125,-1.25) 
			to [out=0,in=270] (1.875,-.25) to (1.875,.75);
	\draw[very thick,->] (-.25,.25) to [out=90,in=0] (-1,1.125) 
		to [out=180,in=90] (-1.75,.25) to (-1.75,-.25) to [out=270,in=180] (.125,-1.375) 
			to [out=0,in=270] (2,-.25) to (2,.75);
	\draw [thick, decorate, decoration = {brace}] (1.25,.8125) to (2.125,.8125);
	\node at (1.6875,1.125) {$C_{i,p}$};
	\draw[very thick,rdirected=.5] (.25,.25) to (.25,.75);
	\draw[very thick,->] (.375,.25) to (.375,.75);
	\node at (.5625,.5625) {$\mysdots$};
	\draw[very thick,rdirected=.5] (.75,.25) to (.75,.75);
	\draw[very thick,->] (.875,.25) to (.875,.75);
	\draw [thick, decorate, decoration = {brace}] (.125,.8125) to (1,.8125);
	\node at (.5625,1.125) {$C'_{j,r}$};
	\node at (2.5,.25) {$\wtn$};
\end{tikzpicture}
\end{aligned} \, .
\end{equation}
Here, the sign $(-1)^{ij}$ ensures that this indeed gives a chain map, 
since the differential on the complex $C_i(\bgen)C_j(\bgen')$ follows the usual 
Koszul sign rule for a tensor product of complexes.

As noted above, 
$\SLC{\bgen \bgen'}^{+}$ 
and $\SLC{\bgen' \bgen}^{+}$  
are precisely the $\Z/2$-invariant subcomplexes for the $\tauast$-actions on 
$\llbracket \bgen \bgen' \rrbracket_{2n}$ and $\llbracket \bgen' \bgen \rrbracket_{2n}$
(respectively), 
while $\SLC{\bgen \bgen'}^{-}$ 
and $\SLC{\bgen' \bgen}^{-}$ 
are the sign component subcomplexes.
By Lemma \ref{lem:GactC} and Remark \ref{rem:GactC}, 
it therefore suffices to show that the assignment 
\eqref{eq:KhRM1map} is a morphism of $\Z/2$-representations, 
i.e.~that $f \circ \tauast = \tauast \circ f$.
For this, we compute
\[
f \left(\tauast 
\begin{tikzpicture}[anchorbase]
	\fill[white] (-1,-.25) rectangle (1,.25);
	\draw[very thick] (-1,-.25) rectangle (1,.25);
	\node at (0,0) {$\xi$};
	\draw[very thick,rdirected=.5] (-.875,.25) to (-.875,.75);
	\draw[very thick,->] (-.75,.25) to (-.75,.75);
	\node at (-.5625,.5625) {$\mysdots$};
	\draw[very thick,rdirected=.5] (-.375,.25) to (-.375,.75);
	\draw[very thick,->] (-.25,.25) to (-.25,.75);
	\draw [thick, decorate, decoration = {brace}] (-1,.8125) to (-.125,.8125);
	\node at (-.5625,1.125) {$C_{i,p}$};
	\draw[very thick,rdirected=.5] (.25,.25) to (.25,.75);
	\draw[very thick,->] (.375,.25) to (.375,.75);
	\node at (.5625,.5625) {$\mysdots$};
	\draw[very thick,rdirected=.5] (.75,.25) to (.75,.75);
	\draw[very thick,->] (.875,.25) to (.875,.75);
	\draw [thick, decorate, decoration = {brace}] (.125,.8125) to (1,.8125);
	\node at (.5625,1.125) {$C'_{j,r}$};
	\node at (1.5,.25) {$\wtn$};
\end{tikzpicture} \right) =
f \left( \,
\begin{tikzpicture}[anchorbase]
	\fill[white] (-1,-.25) rectangle (1,.25);
	\draw[very thick] (-1,-.25) rectangle (1,.25);
	\node at (0,0) {$\tau(\xi)$};
	\draw[very thick,rdirected=.25] (-.75,.25) to [out=90,in=270] (-.875,.75);
	\draw[very thick,->] (-.875,.25) to [out=90,in=270] (-.75,.75);
	\node at (-.5625,.5625) {$\mysdots$};
	\draw[very thick,rdirected=.25] (-.25,.25) to [out=90,in=270] (-.375,.75);
	\draw[very thick,->] (-.375,.25) to [out=90,in=270] (-.25,.75);
	\draw [thick, decorate, decoration = {brace}] (-1,.8125) to (-.125,.8125);
	\node at (-.5625,1.125) {$C_{i,p}$};
	\draw[very thick,rdirected=.25] (.375,.25) to [out=90,in=270] (.25,.75);
	\draw[very thick,->] (.25,.25) to [out=90,in=270] (.375,.75);
	\node at (.5625,.5625) {$\mysdots$};
	\draw[very thick,rdirected=.25] (.875,.25) to [out=90,in=270] (.75,.75);
	\draw[very thick,->] (.75,.25) to [out=90,in=270] (.875,.75);
	\draw [thick, decorate, decoration = {brace}] (.125,.8125) to (1,.8125);
	\node at (.5625,1.125) {$C'_{j,r}$};
	\node at (1.5,.25) {$\wtn$};
\end{tikzpicture}
\right) = (-1)^{ij} \
\begin{tikzpicture}[anchorbase]
	\fill[white] (-1,-.25) rectangle (1,.25);
	\draw[very thick] (-1,-.25) rectangle (1,.25);
	\node at (0,0) {$\tau(\xi)$};
	\draw[very thick,rdirected=.5] (-.75,.25) to [out=90,in=0] (-1,.625) 
		to [out=180,in=90] (-1.125,.25) to (-1.125,-.25) to [out=270,in=180] (.125,-.75) 
			to [out=0,in=270] (1.375,-.25) to (1.375,.75);
	\draw[very thick,->] (-.875,.25) to [out=90,in=270] (-.75,.625) to [out=90,in=0] (-1,.875) 
		to [out=180,in=90] (-1.25,.25) to (-1.25,-.25) to [out=270,in=180] (.125,-.875) 
			to [out=0,in=270] (1.5,-.25) to (1.5,.75);
	\node[rotate=270] at (.125,-1.0625) {$\mysdots$};
	\draw[very thick,rdirected=.5] (-.25,.25) to [out=90,in=270] (-.375,.75) to [out=90,in=0] (-1,1.25) 
		to [out=180,in=90] (-1.625,.25) to (-1.625,-.25) to [out=270,in=180] (.125,-1.25) 
			to [out=0,in=270] (1.875,-.25) to (1.875,.75);
	\draw[very thick,->] (-.375,.25) to [out=90,in=270] (-.25,.75) to [out=90,in=0] (-1,1.375) 
		to [out=180,in=90] (-1.75,.25) to (-1.75,-.25) to [out=270,in=180] (.125,-1.375) 
			to [out=0,in=270] (2,-.25) to (2,.75);
	\draw [thick, decorate, decoration = {brace}] (1.25,.8125) to (2.125,.8125);
	\node at (1.6875,1.125) {$C_{i,p}$};
	\draw[very thick,rdirected=.25] (.375,.25) to [out=90,in=270] (.25,.75);
	\draw[very thick,->] (.25,.25) to [out=90,in=270] (.375,.75);
	\node at (.5625,.5625) {$\mysdots$};
	\draw[very thick,rdirected=.25] (.875,.25) to [out=90,in=270] (.75,.75);
	\draw[very thick,->] (.75,.25) to [out=90,in=270] (.875,.75);
	\draw [thick, decorate, decoration = {brace}] (.125,.8125) to (1,.8125);
	\node at (.5625,1.125) {$C'_{j,r}$};
	\node at (2.5,.25) {$\wtn$};
\end{tikzpicture},
\]
which agrees with the following calculation. 
\[
\tauast f \left( \,
\begin{tikzpicture}[anchorbase]
	\fill[white] (-1,-.25) rectangle (1,.25);
	\draw[very thick] (-1,-.25) rectangle (1,.25);
	\node at (0,0) {$\xi$};
	\draw[very thick,rdirected=.5] (-.875,.25) to (-.875,.75);
	\draw[very thick,->] (-.75,.25) to (-.75,.75);
	\node at (-.5625,.5625) {$\mysdots$};
	\draw[very thick,rdirected=.5] (-.375,.25) to (-.375,.75);
	\draw[very thick,->] (-.25,.25) to (-.25,.75);
	\draw [thick, decorate, decoration = {brace}] (-1,.8125) to (-.125,.8125);
	\node at (-.5625,1.125) {$C_{i,p}$};
	\draw[very thick,rdirected=.5] (.25,.25) to (.25,.75);
	\draw[very thick,->] (.375,.25) to (.375,.75);
	\node at (.5625,.5625) {$\mysdots$};
	\draw[very thick,rdirected=.5] (.75,.25) to (.75,.75);
	\draw[very thick,->] (.875,.25) to (.875,.75);
	\draw [thick, decorate, decoration = {brace}] (.125,.8125) to (1,.8125);
	\node at (.5625,1.125) {$C'_{j,r}$};
	\node at (1.5,.25) {$\wtn$};
\end{tikzpicture} \right) =
(-1)^{ij} \tauast \
\begin{tikzpicture}[anchorbase]
	\fill[white] (-1,-.25) rectangle (1,.25);
	\draw[very thick] (-1,-.25) rectangle (1,.25);
	\node at (0,0) {$\xi$};
	\draw[very thick,rdirected=.5] (-.875,.25) to [out=90,in=0] (-1,.5) 
		to [out=180,in=90] (-1.125,.25) to (-1.125,-.25) to [out=270,in=180] (.125,-.75) 
			to [out=0,in=270] (1.375,-.25) to (1.375,.75);
	\draw[very thick,->] (-.75,.25) to [out=90,in=0] (-1,.625) 
		to [out=180,in=90] (-1.25,.25) to (-1.25,-.25) to [out=270,in=180] (.125,-.875) 
			to [out=0,in=270] (1.5,-.25) to (1.5,.75);
	\node[rotate=270] at (.125,-1.0625) {$\mysdots$};
	\draw[very thick,rdirected=.5] (-.375,.25) to [out=90,in=0] (-1,1) 
		to [out=180,in=90] (-1.625,.25) to (-1.625,-.25) to [out=270,in=180] (.125,-1.25) 
			to [out=0,in=270] (1.875,-.25) to (1.875,.75);
	\draw[very thick,->] (-.25,.25) to [out=90,in=0] (-1,1.125) 
		to [out=180,in=90] (-1.75,.25) to (-1.75,-.25) to [out=270,in=180] (.125,-1.375) 
			to [out=0,in=270] (2,-.25) to (2,.75);
	\draw [thick, decorate, decoration = {brace}] (1.25,.8125) to (2.125,.8125);
	\node at (1.6875,1.125) {$C_{i,p}$};
	\draw[very thick,rdirected=.5] (.25,.25) to (.25,.75);
	\draw[very thick,->] (.375,.25) to (.375,.75);
	\node at (.5625,.5625) {$\mysdots$};
	\draw[very thick,rdirected=.5] (.75,.25) to (.75,.75);
	\draw[very thick,->] (.875,.25) to (.875,.75);
	\draw [thick, decorate, decoration = {brace}] (.125,.8125) to (1,.8125);
	\node at (.5625,1.125) {$C'_{j,r}$};
	\node at (2.5,.25) {$\wtn$};
\end{tikzpicture} 
= (-1)^{ij} \
\begin{tikzpicture}[anchorbase]
	\fill[white] (-1,-.25) rectangle (1,.25);
	\draw[very thick] (-1,-.25) rectangle (1,.25);
	\node at (0,0) {$\tau(\xi)$};
	\draw[very thick,->] (-.875,.25) to [out=90,in=0] (-1,.5) 
		to [out=180,in=90] (-1.125,.25) to (-1.125,-.25) to [out=270,in=180] (.125,-.75) 
			to [out=0,in=270] (1.375,-.25) to (1.375,.25) to [out=90,in=270] (1.5,.75);
	\draw[very thick,rdirected=.5] (-.75,.25) to [out=90,in=0] (-1,.625) 
		to [out=180,in=90] (-1.25,.25) to (-1.25,-.25) to [out=270,in=180] (.125,-.875) 
			to [out=0,in=270] (1.5,-.25) to (1.5,.25) to [out=90,in=270] (1.375,.75);
	\node[rotate=270] at (.125,-1.0625) {$\mysdots$};
	\draw[very thick,->] (-.375,.25) to [out=90,in=0] (-1,1) 
		to [out=180,in=90] (-1.625,.25) to (-1.625,-.25) to [out=270,in=180] (.125,-1.25) 
			to [out=0,in=270] (1.875,-.25) to (1.875,.25) to [out=90,in=270] (2,.75);
	\draw[very thick,rdirected=.5] (-.25,.25) to [out=90,in=0] (-1,1.125) 
		to [out=180,in=90] (-1.75,.25) to (-1.75,-.25) to [out=270,in=180] (.125,-1.375) 
			to [out=0,in=270] (2,-.25) to (2,.25) to [out=90,in=270] (1.875,.75);
	\draw [thick, decorate, decoration = {brace}] (1.25,.8125) to (2.125,.8125);
	\node at (1.6875,1.125) {$C_{i,p}$};
	\draw[very thick,rdirected=.25] (.375,.25) to [out=90,in=270] (.25,.75);
	\draw[very thick,->] (.25,.25) to [out=90,in=270] (.375,.75);
	\node at (.5625,.5625) {$\mysdots$};
	\draw[very thick,rdirected=.25] (.875,.25) to [out=90,in=270] (.75,.75);
	\draw[very thick,->] (.75,.25) to [out=90,in=270] (.875,.75);
	\draw [thick, decorate, decoration = {brace}] (.125,.8125) to (1,.8125);
	\node at (.5625,1.125) {$C'_{j,r}$};
	\node at (2.5,.25) {$\wtn$};
\end{tikzpicture}
\]
The last equality is most easily seen by applying 
\eqref{eq:tauonthickcap} and \eqref{eq:tauonthickcup}, 
which shows that, in the weights appearing in the computation, 
$\tau$ does not contribute any signs when acting on the caps/cups.
\end{proof}

Next, we establish invariance of $\SLC{\bgen}$ 
under the second Markov move, up to shift. 
In type $A$, Markov II invariance amounts to a homotopy equivalence
\begin{equation} \label{typeAmarkovII}
\llbracket \bgen \one_{\wtn} \rrbracket_{2n} \simeq 
\qsh^{\mp n(n+1)} \tsh^{\pm n}
\llbracket \bgen_m^{\pm 1} \bgen \one_{\wtn} \rrbracket_{2n}
\end{equation}
where $\bgen \in \brgroup$ (and\footnote{Recall that $\bgen_m^{\pm 1} \bgen \in \brgroup[m+1]$ 
is obtained by sending $\bgen \in \brgroup$ to its image after applying 
the standard inclusion $\brgroup \hookrightarrow \brgroup[m+1]$, 
and then multiplying with the new Artin generator or its inverse.} 
$\bgen_m^{\pm 1} \bgen \in \brgroup[m+1]$). 
Our approach, as in the proof of Proposition \ref{prop:SLHM1}, 
is to prove that the chain map realizing \eqref{typeAmarkovII} is a morphism 
of $\Z/2$-representations for the relevant $\tauast$-actions. 
Unfortunately, 
the homotopy equivalence \eqref{typeAmarkovII} is not explicitly given in the literature, 
but rather is typically proven indirectly; see e.g.~\cite[Lemma 14.7]{Wu}. 
However, using results of Wu \cite[Section 14.2]{Wu} 
concerning the ``topologically local'' nature of the known Markov II invariance, 
we are able to explicitly write down\footnote{The explicit form of this homotopy equivalence 
may be of independent interest, even in the setting of type $A$ link homologies.} 
the relevant homotopy equivalence and check the $\Z/2$-equivariance.

Let us explain the general outline in more detail. 
There are two essential ways in which $\llbracket \bgen \one_{\wtn} \rrbracket_{2n}$ 
a priori differs from $\llbracket \bgen_m^{\pm 1} \bgen \one_{\wtn} \rrbracket_{2n}$. 
The first is obvious: the complex $C(\bgen_m^{\pm 1} \bgen)$ is obtained from $C(\bgen)$ 
by tensoring with an extra factor $C(\bgen_m^{\pm 1})$. 
The second is more subtle: the computation of $\Hom$ spaces takes place in different categories
(associated to $m$ and $m+1$, respectively). 
In particular, the endomorphism ring of $\one_{\wtn}$ in $\BnFoam[m+1]$ 
has an extra alphabet's worth of symmetric functions $\Sym(\X_{m+1})$,
and therefore an extra tensor factor of $H^\ast(\Gr{n}{2n})$, 
compared to the endomorphism ring in $\BnFoam$.

In \cite[Lemma 14.8]{Wu}, Wu actually proves a stronger result than 
the homotopy equivalence \eqref{typeAmarkovII}.
His results imply that\footnote{This is the content, in our present setup, 
of the phrase ``knotted MOY graphs''
in Wu's Lemma 14.8.}
given any object $X \in \BnFoam$, 
there is a homotopy equivalence
\begin{equation} \label{objectmarkovII}
\llbracket \bgen_{m}^{\pm 1} X \rrbracket_{2n} 
\xrightarrow{\simeq} 
\qsh^{\mp n(n+1)} \tsh^{\pm n} \llbracket X \rrbracket_{2n}\, .
\end{equation}
(Here, we slightly abuse notation in setting
$\llbracket X \rrbracket_{2n} :=
\qsh^{-mn^2} \Hom_{\cSSStn_q(\glm)}(\one_{\wtn} , X)$ 
for $X \in \BnFoam$.)
Below, we explicitly construct a chain map for this homotopy equivalence.
By construction, our chain map is natural in $X$, 
hence these chain maps assemble into the desired homotopy equivalence 
$\llbracket \bgen_{m}^{\pm 1} C \rrbracket_{2n} 
\xrightarrow{\simeq} 
\qsh^{\mp n(n+1)} \tsh^{\pm n} \llbracket C \rrbracket_{2n}$ 
for any complex $C \in \dgCat[\BnFoam]$.

We now proceed with the argument in detail,
which uses
separate constructions of the chain map in \eqref{objectmarkovII} 
for $\bgen_m$ and $\bgen_m^{-1}$. 
We emphasize that our arguments do not constitute a re-proof of Markov II invariance for
$\sln[2n]$ link homology, as we rely on \cite{Wu} for the existence of the 
homotopy equivalence \eqref{objectmarkovII}.

\begin{prop}
	\label{prop:SLHM2+}
The complex $\SLC{\bgen}$ satisfies the 
positive second Markov move, up to homotopy equivalence and degree shift. 
Explicitly, given $\bgen \in \brgroup$, 
there is a homotopy equivalence
$\qsh^{-n(n+1)} \tsh^n \SLC{\bgen}
\simeq \SLC{\bgen_{m} \bgen}$
of chain complexes of super $\K$-vector spaces.
\end{prop}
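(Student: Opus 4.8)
The plan is to mirror the proof of Proposition \ref{prop:SLHM1}. Recall that, by Corollary \ref{cor:HomAsInv} and the discussion preceding this proposition, $\SLC{\bgen}^{+}$ and $\SLC{\bgen}^{-}$ are respectively the $\tauast$-invariant and $\tauast$-anti-invariant (sign-isotypic) subcomplexes of the $\sln[2n]$ homology $\llbracket \bgen \one_{\wtn} \rrbracket_{2n}$ computed in $\BnFoam$, while $\SLC{\bgen_{m}\bgen}^{\pm}$ are the corresponding subcomplexes of $\llbracket \bgen_{m}\bgen \one_{\wtn} \rrbracket_{2n}$ computed in $\BnFoam[m+1]$. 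It therefore suffices to produce a chain map realizing the type $A$ positive Markov II homotopy equivalence $\llbracket \bgen_{m}\bgen \one_{\wtn}\rrbracket_{2n} \simeq \qsh^{-n(n+1)}\tsh^{n}\llbracket \bgen\one_{\wtn}\rrbracket_{2n}$ that intertwines the two $\tauast$-actions, and then to invoke Lemma \ref{lem:GactC} together with Remark \ref{rem:GactC} (applicable since $2 \neq 0$ in $\K$) to deduce homotopy equivalences $\SLC{\bgen_{m}\bgen}^{\pm} \simeq \qsh^{-n(n+1)}\tsh^{n}\SLC{\bgen}^{\pm}$. Taking the direct sum and shifting the sign component by $\ssh$ then yields the claimed homotopy equivalence of complexes of super $\K$-vector spaces, with the grading shifts matching by the convention recorded after Theorem \ref{thm:KhR} (a positive $n$-colored stabilization at $N=2n$ contributes $\qsh^{-n(2n-n+1)}\tsh^{n} = \qsh^{-n(n+1)}\tsh^{n}$).

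First I would record the explicit chain map. Following \cite[Section 14.2]{Wu}, the positive Markov II homotopy equivalence \eqref{objectmarkovII} may be realized by a chain map $\phi_{X}\colon \llbracket \bgen_{m}X\rrbracket_{2n}\to \qsh^{-n(n+1)}\tsh^{n}\llbracket X\rrbracket_{2n}$ that is natural in $X \in \BnFoam$ and is assembled, by tensoring with identity morphisms, from the ``topologically local'' case $X = \one_{\wtn}$, where $\phi_{\one_{\wtn}}$ is an explicit map built from the $2$-strand Rickard complex $C_{n,n}$ and the cap-off of its extremal term $\FF_{m}^{(0)}\EE_{m}^{(0)}\one_{\wtn} = \one_{\wtn}$. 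Naturality applied to $X = C(\bgen)$ gives a chain map $\phi\colon \llbracket \bgen_{m}\bgen\one_{\wtn}\rrbracket_{2n}\to \qsh^{-n(n+1)}\tsh^{n}\llbracket \bgen\one_{\wtn}\rrbracket_{2n}$, still a homotopy equivalence. I would then check that $\phi$ is $\tauast$-equivariant. Because $\tau$ is monoidal and $\phi$ is natural in $X$, this reduces to the local statement that $\phi_{\one_{\wtn}}$ intertwines $\tauast$; here one uses that the extremal term of $C^{\tau}(\bgen_{m})$ carries the canonical equivariant structure $(\one_{\wtn},\id_{\one_{\wtn}})$ (the overall sign $(-1)^{\binom{n+1}{2}}$ in Definition \ref{def:eRickard} is chosen precisely so that $(-1)^{\binom{n+1}{2}+\binom{n+1}{2}}\swcl_{0} = \id_{\one_{\wtn}}$), and that by Corollary \ref{cor:ourthicktau} the functor $\tau$ contributes no signs to the caps, cups, and merge/split morphisms appearing in $\phi_{\one_{\wtn}}$ in the relevant weights --- exactly as in the final computation in the proof of Proposition \ref{prop:SLHM1}.

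The main obstacle is this last local computation: making Wu's (a priori non-constructive) Markov II invariance explicit enough to verify the $\Z/2$-equivariance. The subtle point, as noted in the setup above, is that passing from $m+1$ to $m$ strands removes the alphabet $\X_{m+1}$, so $\phi_{\one_{\wtn}}$ must absorb the extra tensor factor $H^{\ast}(\Gr{n}{2n}) \cong \Sym(\X_{m+1})/(\text{truncation})$ against the lower terms of the Rickard complex $C_{n,n}$; one must check that the action of $\tau$ on $\X_{m+1}$ (exchanging $\Schur \leftrightarrow \Schur[\parti^{t}]$, equivalently $e_{r} \leftrightarrow h_{r}$) is compatible with this cancellation. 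Once the equivariance of $\phi$ is established the rest is formal: being $\tauast$-equivariant, $\phi$ restricts to chain maps on the $\tauast$-isotypic subcomplexes; Lemma \ref{lem:GactC} and Remark \ref{rem:GactC} show these restrictions are homotopy equivalences; and assembling the invariant and $\ssh$-shifted anti-invariant parts gives $\qsh^{-n(n+1)}\tsh^{n}\SLC{\bgen} \simeq \SLC{\bgen_{m}\bgen}$, as claimed.
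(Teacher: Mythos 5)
Your high-level strategy — realize the type $A$ positive Markov II homotopy equivalence by an explicit chain map, check that it intertwines the $\tauast$-actions, and then invoke Lemma \ref{lem:GactC} and Remark \ref{rem:GactC} to restrict to the isotypic subcomplexes — matches the paper's. But the proposal explicitly flags the central technical step (making Wu's homotopy equivalence explicit enough to verify $\Z/2$-equivariance) as ``the main obstacle'' and then does not resolve it. That step \emph{is} the proof. Everything else is, as you say, formal.

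The paper closes this gap with a specific observation that your proposal does not contain and that your ``absorb the extra tensor factor against the lower terms of $C_{n,n}$'' description obscures: since the target $\qsh^{-n(n+1)}\tsh^{n}\llbracket X\rrbracket_{2n}$ is concentrated in a \emph{single} homological degree, a homotopy equivalence to it is completely determined by a degree-zero surjection
\[
f\colon \Hom_{\cSSStn_q(\gln[m+1])}(\one_{\wtn},X)\twoheadrightarrow \Hom_{\cSSStn_q(\glm)}(\one_{\wtn},X)
\]
defined on the top homological term of $\llbracket \bgen_m X\rrbracket_{2n}$, with $\Im(\delta)\subseteq \ker f$ for $\delta$ the penultimate differential. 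The map $f$ is then \emph{not} a cap-off of the Rickard complex tensored with identities — it is the alphabet-identification map sending $p(\X_{m+1})\cdot\xi\mapsto p(\X_m)\cdot\xi$, using the isomorphism \eqref{eq:Hom1Xiso}. With this description, $\tauast$-equivariance is visible at a glance (since $\tau$ acts on each alphabet by $e_r\leftrightarrow h_r$, uniformly), and $\Im(\delta)\subseteq\ker f$ reduces to $h_{r+1}(\X_m - \X_m)=0$ for $r\geq 0$. I would encourage you to look for a single-degree-concentration argument before reaching for a naturality/tensoring framework: here it trivializes the chain-map construction. As written, your proposal would not produce a complete proof without supplying precisely this content.
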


\begin{proof}
As discussed above, given any object $X \in \BnFoam$, 
\cite[Lemma 14.8]{Wu} implies that there is a homotopy equivalence
\begin{equation}
	\label{eq:KhRM2w+}
f \colon \llbracket \bgen_{m} X \rrbracket_{2n} 
\xrightarrow{\simeq} 
\qsh^{-n(n+1)} \tsh^{n} \llbracket X \rrbracket_{2n}\, .
\end{equation}
We will describe the homotopy equivalence $f$ explicitly, 
in such a way that it is natural $X$. 

For this, consider
\begin{multline}
	\label{eq:M2+expanded}
\llbracket \bgen_{m} X \rrbracket_{2n}
=
\qsh^{-(m+1) n^2} \Big( 
\Hom_{\cSSStn_q(\gln[m+1])}(\one_{\wtn} ,\FF_m^{(n)} \EE_m^{(n)} X ) \to \cdots \\
	\cdots \to \qsh^{1-n} \tsh^{n-1} \Hom_{\cSSStn_q(\gln[m+1])}(\one_{\wtn} , \FF_m \EE_m X )
		\xrightarrow{\delta} \qsh^{-n} \tsh^{n} 
		\Hom_{\cSSStn_q(\gln[m+1])}(\one_{\wtn} ,X )
\Big) \,
\end{multline}
and (as indicated) let $\delta$ denote the final differential in this complex. 
Given that 
\[
\qsh^{-n(n+1)} \tsh^{n} \llbracket X \rrbracket_{2n} 
= \qsh^{-mn^2-n(n+1)}\tsh^n \Hom_{\cSSStn_q(\glm)}(\one_{\wtn} , X )
\]
is a chain complex of graded $\K$-vector spaces that is non-zero only in homological degree $n$, 
the homotopy equivalence \eqref{eq:KhRM2w+} can be described by finding 
a surjective, degree-zero, $\K$-linear map
\begin{equation}
	\label{eq:M2+map}
f \colon 
\Hom_{\cSSStn_q(\gln[m+1])}(\one_{\wtn} ,X )
\twoheadrightarrow
\Hom_{\cSSStn_q(\glm)}(\one_{\wtn} , X ) 
\end{equation}
such that $\Im(\delta) \subset \ker(f)$.
Indeed, by \eqref{eq:KhRM2w+} we know that
\[
\qsh^{-n} \tsh^{n} \Hom_{\cSSStn_q(\gln[m+1])}(\one_{\wtn} ,X ) {\big /} \Im(\delta) 
\cong \qsh^{-n} \tsh^{n} \Hom_{\cSSStn_q(\glm)}(\one_{\wtn} ,X )
\]
thus such an $f$ will necessarily be a quasi-isomorphism 
of bounded complexes of $\Z$-graded $\K$-vector spaces, 
hence a homotopy equivalence.

To construct $f$, we examine $\Hom_{\cSSStn_q(\gln[m+1])}(\one_{\wtn} ,X )$. 
We know by \cite[Proposition 3.11]{KL3} and \eqref{newbub} that 
$\Hom_{\cSSStn_q(\gln[m+1])}(\one_{\wtn} ,X )$ is spanned by morphisms of the form
\begin{equation}
	\label{eq:Hom1Xbasis}
\begin{tikzpicture}[anchorbase]
	\node at (-.75,.5){\NB{p(\X_{m+1})}};
	\fill[white] (.25,0) rectangle (.75,.5);
	\draw[very thick] (.25,0) rectangle (.75,.5);
	\node at (.5,.25) {$\xi$};
	\draw[very thick] (.5,.5) to (.5,1) node[above]{$X$};
	\node at (1,.75){\scs$\wtn$};
\end{tikzpicture}
\end{equation}
with $p \in \Sym(\X)$ and $\xi \in \Hom_{\cSSStn_q(\glm)}(\one_{\wtn} ,X )$. 
Here, any new bubbles in $\Sym(\X_1| \cdots |\X_m)$ 
have been included as part of $\xi$, 
which is why only the last alphabet $\X_{m+1}$ remains. 
In fact, we can winnow \eqref{eq:Hom1Xbasis} down to a basis for 
$\Hom_{\cSSStn_q(\gln[m+1])}(\one_{\wtn} ,X )$. 
The map
\begin{equation}
	\label{eq:Hom1Xiso}
H^\ast(\Gr{n}{2n}) \otimes_{\K} \Hom_{\cSSStn_q(\glm)}(\one_{\wtn} ,X ) 
\to \Hom_{\cSSStn_q(\gln[m+1])}(\one_{\wtn} ,X )
\end{equation}
sending $p(\X) \otimes \xi$ to the morphism in \eqref{eq:Hom1Xbasis}
is surjective, and Proposition \ref{prop:size} implies that
\begin{align*}
\qdim\big( \Hom_{\cSSStn_q(\gln[m+1])}(\one_{\wtn} ,X ) \big)
&= q^{n^2} {2n \brack n} \qdim \big( \Hom_{\cSSStn_q(\glm)}(\one_{\wtn} ,X ) \big) \\ 
&= \qdim \Big( 
H^\ast(\Gr{n}{2n}) \otimes_{\K} \Hom_{\cSSStn_q(\glm)}(\one_{\wtn} ,X ) 
\Big) \, .
\end{align*}
Thus, \eqref{eq:Hom1Xiso} is an isomorphism,
and we can consider a basis in which the $p(\X)$ range amongst 
Schur polynomials associated to partitions contained in an $n \times n$ box.

Now, we define $f$ by the formula
\begin{equation}
f \left(
\begin{tikzpicture}[anchorbase]
	\node at (-.75,.5){\NB{p(\X_{m+1})}};
	\fill[white] (.25,0) rectangle (.75,.5);
	\draw[very thick] (.25,0) rectangle (.75,.5);
	\node at (.5,.25) {$\xi$};
	\draw[very thick] (.5,.5) to (.5,1) node[above]{$X$};
	\node at (1,.75){\scs$\wtn$};
\end{tikzpicture}
\right)
:=
\begin{tikzpicture}[anchorbase]
	\node at (-.5,.5){\NB{p(\X_{m})}};
	\fill[white] (.25,0) rectangle (.75,.5);
	\draw[very thick] (.25,0) rectangle (.75,.5);
	\node at (.5,.25) {$\xi$};
	\draw[very thick] (.5,.5) to (.5,1) node[above]{$X$};
	\node at (1,.75){\scs$\wtn$};
\end{tikzpicture}
\in \Hom_{\cSSStn_q(\glm)}(\one_{\wtn} ,X ) \, ,
\end{equation}
which identifies the alphabets $\X_{m+1}$ and $\X_m$.
By the above discussion,
this is a well-defined, degree-zero, $\K$-linear map
$\Hom_{\cSSStn_q(\gln[m+1])}(\one_{\wtn} ,X ) \to \Hom_{\cSSStn_q(\glm)}(\one_{\wtn} , X )$
which is surjective and satisfies the desired relation $f \circ \tauast = \tauast \circ f$.

Next, we establish that $\Im(\delta) \subset \ker(f)$. 
For this, we first claim that 
$\Hom_{\cSSStn_q(\gln[m+1])}(\one_{\wtn} , \FF_m \EE_m X )$
is spanned by morphisms of the form
\begin{equation} \label{separatemagenta}
\begin{tikzpicture}[anchorbase]
	\draw[thick,magenta,<-] (0,1) to [out=270,in=0] (-.25,.5) node[black]{$\bullet$} node[black,below]{\scs$r$}
		to [out=180,in=270] (-.5,1);
	\fill[white] (.25,0) rectangle (.75,.5);
	\draw[very thick] (.25,0) rectangle (.75,.5);
	\node at (.5,.25) {$\eta$};
	\draw[very thick] (.5,.5) to (.5,1) node[above]{$X$};
	\node at (1,.75){\scs$\wtn$};
\end{tikzpicture}
\end{equation}
with $\eta \in \Hom_{\cSSStn_q(\gln[m+1])}(\one_{\wtn} , X )$ and $r \geq 0$.
Here, 
we use {\color{magenta} magenta} to depict the final Dynkin label $m$. 
The reason that morphisms of the form \eqref{separatemagenta} 
constitute a spanning set is that the color
{\color{magenta} magenta} does not appear within the object $X$, 
so the basis in \cite[Proposition 3.11]{KL3} 
will separate {\color{magenta} magenta} from the other colors.

The differential $\delta$ takes a morphism as in \eqref{separatemagenta} 
and post-composes with an anti-clockwise {\color{magenta} magenta} cap. 
Thus $\Im(\delta)$ is spanned by elements of the form
\[
\begin{tikzpicture}[anchorbase]
	\draw[magenta,thick,->] (0,.5) arc[start angle=0,end angle=360,radius=.25] 
		node[pos=.75,black]{$\bullet$} node[pos=.75,black,below]{\scs$r$};
	\fill[white] (.25,0) rectangle (.75,.5);
	\draw[very thick] (.25,0) rectangle (.75,.5);
	\node at (.5,.25) {$\eta$};
	\draw[very thick] (.5,.5) to (.5,1) node[above]{$X$};
	\node at (1,.75){\scs$\wtn$};
\end{tikzpicture}
\stackrel{\eqref{realbubdef}}{=}
\begin{tikzpicture}[anchorbase]
	\draw[magenta,thick,->] (0,.5) arc[start angle=0,end angle=360,radius=.25] 
		node[pos=.75,black]{$\bullet$} node[pos=.75,black,below,xshift=-5pt]{\scs$\spadesuit{+}r{+}1$};
	\fill[white] (.25,0) rectangle (.75,.5);
	\draw[very thick] (.25,0) rectangle (.75,.5);
	\node at (.5,.25) {$\eta$};
	\draw[very thick] (.5,.5) to (.5,1) node[above]{$X$};
	\node at (1,.75){\scs$\wtn$};
\end{tikzpicture}
\stackrel{\eqref{newbub}}{=}
(-1)^n
\begin{tikzpicture}[anchorbase]
	\node at (-1.25,.5){\NB{h_{r+1}(\X_{m+1} - \X_m)}};
	\fill[white] (.25,0) rectangle (.75,.5);
	\draw[very thick] (.25,0) rectangle (.75,.5);
	\node at (.5,.25) {$\eta$};
	\draw[very thick] (.5,.5) to (.5,1) node[above]{$X$};
	\node at (1,.75){\scs$\wtn$};
\end{tikzpicture}
\]
with $\eta \in \Hom_{\cSSStn_q(\gln[m+1])}(\one_{\wtn} , X )$ and $r \geq 0$. 
We thus compute
\[
f \left(
\begin{tikzpicture}[anchorbase]
	\node at (-1.25,.5){\NB{h_{r+1}(\X_{m+1} - \X_m)}};
	\fill[white] (.25,0) rectangle (.75,.5);
	\draw[very thick] (.25,0) rectangle (.75,.5);
	\node at (.5,.25) {$\eta$};
	\draw[very thick] (.5,.5) to (.5,1) node[above]{$X$};
	\node at (1,.75){\scs$\wtn$};
\end{tikzpicture}
\right) =
\begin{tikzpicture}[anchorbase]
	\node at (-1.25,.5){\NB{h_{r+1}(\X_{m} - \X_m)}};
\end{tikzpicture}
f \left( \
\begin{tikzpicture}[anchorbase]
	\fill[white] (.25,0) rectangle (.75,.5);
	\draw[very thick] (.25,0) rectangle (.75,.5);
	\node at (.5,.25) {$\eta$};
	\draw[very thick] (.5,.5) to (.5,1) node[above]{$X$};
	\node at (1,.75){\scs$\wtn$};
\end{tikzpicture}
\right) = 0 \, .
\]

Finally, we conclude as sketched above. 
The homotopy equivalence \eqref{eq:KhRM2w+} is natural in $X$, 
so by applying it to every chain object we obtain a 
homotopy equivalence
\[
f \colon \llbracket \bgen_{m} C_j(\bgen) \one_{\wtn} \rrbracket_{2n}
\xrightarrow{{\eqref{eq:KhRM2w+}}}
\qsh^{-n(n+1)} \tsh^{n} \llbracket C_j(\bgen) \one_{\wtn} \rrbracket_{2n}
\]
that satisfies $f \circ \tauast = \tauast \circ f$.
Thus, using Lemma \ref{lem:GactC} and Remark \ref{rem:GactC}
as in the proof of Proposition \ref{prop:SLHM1}, 
we deduce that $\qsh^{-n(n+1)} \tsh^n \SLC{\bgen} \simeq \SLC{\bgen_{m} \bgen}$.
\end{proof}

\begin{prop}
	\label{prop:SLHM2-}
The complex $\SLC{\bgen}$ satisfies the 
negative second Markov move, up to homotopy equivalence and degree shift. 
Explicitly, given $\bgen \in \brgroup$, 
there is a homotopy equivalence
$\qsh^{n(n+1)} \tsh^{-n} \SLC{\bgen}
\simeq \SLC{\bgen_{m}^{-1} \bgen}$
of chain complexes of super $\K$-vector spaces.
\end{prop}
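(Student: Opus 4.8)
<br>

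The plan is to mirror the proof of Proposition \ref{prop:SLHM2+}, replacing the positive crossing $\bgen_m$ with the negative crossing $\bgen_m^{-1}$ throughout. As before, I would invoke \cite[Lemma 14.8]{Wu}, which guarantees that for any object $X \in \BnFoam$ there is a homotopy equivalence
\[
f \colon \llbracket \bgen_{m}^{-1} X \rrbracket_{2n}
\xrightarrow{\simeq}
\qsh^{n(n+1)} \tsh^{-n} \llbracket X \rrbracket_{2n} \, .
\]
The complex $\llbracket \bgen_{m}^{-1} X \rrbracket_{2n}$ is built from $C(\bgen_m^{-1})$,
which uses the differential $d^{\vee}$ of \eqref{eq:dualRickDiff} and has $\one_{\wtn}$
(equivalently $\FF_m^{(0)} \EE_m^{(0)} X = X$) appearing in \emph{homological degree} $-n$.
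So now $\qsh^{n(n+1)} \tsh^{-n} \llbracket X \rrbracket_{2n}$
is concentrated in homological degree $-n$,
and the homotopy equivalence amounts to constructing a \emph{surjective}, degree-zero, $\K$-linear map
\[
f \colon \Hom_{\cSSStn_q(\gln[m+1])}(\one_{\wtn} , X)
\twoheadrightarrow \Hom_{\cSSStn_q(\glm)}(\one_{\wtn} , X)
\]
whose kernel contains the image of the relevant differential of the truncated complex
$\qsh^{1} \tsh^{-1} \Hom_{\cSSStn_q(\gln[m+1])}(\one_{\wtn} , \EE_m \FF_m X)
\xrightarrow{\delta^{\vee}} \qsh^{0} \tsh^{0} \Hom_{\cSSStn_q(\gln[m+1])}(\one_{\wtn} , X)$.
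I would again take $f$ to be the map identifying the last alphabet $\X_{m+1}$ with $\X_m$,
using the isomorphism \eqref{eq:Hom1Xiso} to see that $\Hom_{\cSSStn_q(\gln[m+1])}(\one_{\wtn}, X)$
is a free module of the right rank over $H^\ast(\Gr{n}{2n})$.

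The two substantive checks are then: (i) $\Im(\delta^{\vee}) \subset \ker(f)$, and (ii) $f \circ \tauast = \tauast \circ f$. For (i), the differential $\delta^{\vee}$ now \emph{pre}-composes with a clockwise {\color{magenta} magenta} cup rather than a cap; using \cite[Proposition 3.11]{KL3} to separate the color $m$ from $X$, $\Im(\delta^{\vee})$ is spanned by morphisms obtained by capping off a dotted {\color{magenta} magenta} cup-cap pair, which via \eqref{realbubdef} and \eqref{newbub} equal (up to sign $(-1)^{n-1}$) scalar multiples of $h_{r+1}(\X_m - \X_{m+1})$ times a morphism factoring through $X$. Applying $f$ identifies $\X_{m+1}$ with $\X_m$, turning $h_{r+1}(\X_m - \X_m)$ into zero for $r \geq 0$, exactly as in the positive case. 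For (ii), the computation is the mirror of the one in Proposition \ref{prop:SLHM2+}: using \eqref{eq:tauonthickcap} and \eqref{eq:tauonthickcup}, the involution $\tau$ contributes no signs on the caps and cups appearing in the weights that arise, so $f$ commutes with $\tauast$; I would note that the equivariant structures on the chain objects of $C^{\tau}(\bgen_m^{-1})$ are the same $(-1)^{\binom{n+1}{2}+\binom{k+1}{2}}\swcl_{n-k}$ as in $C^{\tau}(\bgen_m)$ by Definition \ref{def:eRickard}, so naturality in $X$ and $\Z/2$-equivariance of $f$ transfer verbatim.

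Having established these two points, the conclusion follows exactly as before: naturality of $f$ in $X$ promotes it to a homotopy equivalence of complexes $f \colon \llbracket \bgen_m^{-1} C_j(\bgen) \one_{\wtn} \rrbracket_{2n} \xrightarrow{\simeq} \qsh^{n(n+1)} \tsh^{-n} \llbracket C_j(\bgen) \one_{\wtn} \rrbracket_{2n}$ intertwining the $\tauast$-actions, whence Lemma \ref{lem:GactC} together with Remark \ref{rem:GactC} (applied to both the trivial and sign isotypic components of the $\Z/2$-action) yields the desired homotopy equivalence $\qsh^{n(n+1)} \tsh^{-n} \SLC{\bgen} \simeq \SLC{\bgen_m^{-1} \bgen}$ of chain complexes of super $\K$-vector spaces. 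The main obstacle, as in the positive case, is the bookkeeping around the explicit form of the Markov II chain map --- which is not literally stated in the literature --- but since Proposition \ref{prop:SLHM2+} has already done essentially all of this work, here the only real content is tracking that the orientation reversal in $d^{\vee}$ versus $d$ and the sign $(-1)^{n-1}$ versus $(-1)^n$ from \eqref{newbub} do not disrupt the two key properties of $f$, and they do not.
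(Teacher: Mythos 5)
The proposal has a genuine gap that stems from a misreading of the shape of the complex $\llbracket \bgen_m^{-1} X \rrbracket_{2n}$. In the positive case, $C(\bgen_m)$ has $\one_{\wtn}$ at the \emph{top} homological degree and the differential $\delta$ maps \emph{into} the chain group $\Hom(\one_{\wtn},X)$; there the homotopy equivalence from the big complex to the shifted single-term complex is realized by a surjection whose kernel contains $\Im(\delta)$. In the negative case, $C(\bgen_m^{-1})$ reverses the $\tsh$-degrees: the object $\one_{\wtn} = \FE_m^{(0)}$ sits in the \emph{bottom} degree $-n$, and the differential $\delta$ (which post-composes with an anti-clockwise magenta cup, not a cap) maps \emph{out} of $\Hom(\one_{\wtn},X)$ into $\Hom(\one_{\wtn},\FF_m\EE_m X)$ — you can see this directly from \eqref{eq:dualRickDiff}, where $d^\vee$ increases thickness and increases $\tsh$-degree. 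Consequently the degree $-n$ cohomology is $\ker(\delta)$, a \emph{subspace}, and the chain map must be an injection of the single-term complex into $\ker(\delta)$, not a quotient map out of the big complex. Your truncated complex is drawn backwards; there is no differential pointing into $\Hom(\one_{\wtn},X)$, so ``$\Im(\delta^\vee)\subset\ker(f)$'' is not a meaningful condition.

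This is not a cosmetic issue, because the alphabet-identification map is structurally wrong for the job: it is not injective, so it cannot present $\Hom_{\cSSStn_q(\glm)}(\one_{\wtn},X)$ as a subspace of $\Hom_{\cSSStn_q(\gln[m+1])}(\one_{\wtn},X)$, and one cannot hope to show that its restriction to $\ker(\delta)$ is an isomorphism without entirely new input. What is actually needed is a new map going the other direction. The paper constructs it as multiplication by the new bubble $\Schur[n^n](\X_m + \X_{m+1})$ (the $n\times n$ box Schur polynomial in the \emph{sum} of alphabets). Injectivity is then proved using the Poincar\'e duality functional on $H^\ast(\Gr{n}{2n})$ together with the expansion $\Schur[n^n](\X_m+\X_{m+1}) = \sum_{\lambda\subseteq n^n}\Schur[\lambda^c](\X_m)\Schur[\lambda](\X_{m+1})$; the inclusion $\Im(f)\subset\ker(\delta)$ follows from a weight argument in weight $\wtn+\ee_m$, where the pieces of that expansion that could survive are forced to vanish because $\lambda$ and $\lambda^c$ cannot both fit in the required boxes; and $\tau$-equivariance is immediate since $n^n$ is self-transpose. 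None of these three verifications appear in, or reduce to, the positive case, so the assertion that ``Proposition \ref{prop:SLHM2+} has already done essentially all of this work'' is not accurate — the negative case requires a genuinely new construction.
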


\begin{proof}
The proof again follows that of Propositions \ref{prop:SLHM1} and \ref{prop:SLHM2+}.
Namely, we explicitly describe the homotopy equivalence
\begin{equation}
	\label{eq:KhRM2-map}
f \colon
\qsh^{n(n+1)} \tsh^{-n} \llbracket \bgen \rrbracket_{2n}
\xrightarrow{\simeq} \llbracket \bgen_{m}^{-1} \bgen \rrbracket_{2n} \, ,
\end{equation}
observe that $f \circ \tauast = \tauast \circ f$, 
and then deduce the result from 
Lemma \ref{lem:GactC} and Remark \ref{rem:GactC}. 
Again, the chain map $f$ in \eqref{eq:KhRM2-map} is induced 
from a map defined for each $X \in \BnFoam$ 
in \cite[Lemma 14.8]{Wu}
that is natural in $X$:
\begin{equation}
	\label{eq:KhRM2w-}
f \colon 
\qsh^{n(n+1)} \tsh^{-n} \llbracket X \rrbracket_{2n}
\xrightarrow{\simeq} \llbracket \bgen_{m}^{-1} X \rrbracket_{2n} \, .
\end{equation}

Now, we let $\delta$ denote the first differential in
\begin{multline}
	\label{eq:M2-expanded}
\llbracket \bgen_{m}^{-1} X \rrbracket_{2n}
=
\qsh^{-(m+1) n^2} \Big( 
\qsh^{n} \tsh^{-n} \Hom_{\cSSStn_q(\gln[m+1])}(\one_{\wtn} ,X ) 
	\xrightarrow{\delta} 
\qsh^{n-1} \tsh^{1-n} \Hom_{\cSSStn_q(\gln[m+1])}(\one_{\wtn} , \FF_m \EE_m X )
\to \\
\cdots \to \Hom_{\cSSStn_q(\gln[m+1])}(\one_{\wtn} ,\FF_m^{(n)} \EE_m^{(n)} X )
\Big).
\end{multline}
Since
\[
\qsh^{n(n+1)} \tsh^{-n} \llbracket X \rrbracket_{2n} 
= \qsh^{-mn^2+n(n+1)}\tsh^{-n} \Hom_{\cSSStn_q(\glm)}(\one_{\wtn} , X )
\]
is a chain complex of graded $\K$-vector spaces that is non-zero only in homological degree $-n$, 
the homotopy equivalence \eqref{eq:KhRM2w-} can be described by finding 
an injective, degree-zero, $\K$-linear map
\begin{equation}
	\label{eq:M2-map}
f \colon 
\qsh^{2n^2} \Hom_{\cSSStn_q(\glm)}(\one_{\wtn} , X ) 
\hookrightarrow
\Hom_{\cSSStn_q(\gln[m+1])}(\one_{\wtn} ,X )
\end{equation}
such that $\Im(f) \subset \ker(\delta)$.
Indeed, by \eqref{eq:KhRM2w-} we know that
\[
\ker \Big( \delta \colon \Hom_{\cSSStn_q(\gln[m+1])}(\one_{\wtn} ,X ) 
\to \qsh^{-1} \Hom_{\cSSStn_q(\gln[m+1])}(\one_{\wtn} , \FF_m \EE_m X )
\Big) \cong \qsh^{2n^2} \Hom_{\cSSStn_q(\glm)}(\one_{\wtn} , X ) 
\]
so such a map must be a homotopy equivalence.

Let $n^n$ denote the partition of $n^2$ 
whose Young diagram is an $n \times n$ box. 
Given $\xi \in \Hom_{\cSSStn_q(\glm)}(\one_{\wtn} ,X )$,
define $f$ in \eqref{eq:M2-map} by
\[
f \left( \
\begin{tikzpicture}[anchorbase]
	\fill[white] (.25,0) rectangle (.75,.5);
	\draw[very thick] (.25,0) rectangle (.75,.5);
	\node at (.5,.25) {$\xi$};
	\draw[very thick] (.5,.5) to (.5,1) node[above]{$X$};
	\node at (1,.75){\scs$\wtn$};
\end{tikzpicture}
\right)
:=
\begin{tikzpicture}[anchorbase]
	\node at (-1.25,.5){\NB{\Schur[n^n](\X_m{+}\X_{m+1})}};
	\fill[white] (.25,0) rectangle (.75,.5);
	\draw[very thick] (.25,0) rectangle (.75,.5);
	\node at (.5,.25) {$\xi$};
	\draw[very thick] (.5,.5) to (.5,1) node[above]{$X$};
	\node at (1,.75){\scs$\wtn$};
\end{tikzpicture} \, .
\]
Since the partition $n^n$ is self-transpose, 
we have $f \circ \tauast = \tauast \circ f$ by \eqref{eq:tauonSchur}.

The salient properties of $f$ will follow from the expansion
\begin{equation}
	\label{eq:Schurbox}
\Schur[n^{n}](\X_m{+}\X_{m+1}) = 
\sum_{\parti \subseteq n^n} \Schur[\parti^c](\X_m) \Schur(\X_{m+1}) \, .
\end{equation}
Being the cohomology ring of a compact orientable manifold, 
$H^\ast(\Gr{n}{2n})$ has a $\K$-linear functional $I$
which picks out the top cohomology class (corresponding to integration).
Explicitly, this map $I$ satisfies
\[ 
I(\Schur) = \begin{cases} 1 & \text{if } \parti = n^n \\ 0 & \text{else}. \end{cases} 
\]
Applying $I$ to elements in $H^\ast(\Gr{n}{2n}) \cong \Sym(\X_{m+1}) / \! \sim$
will take an element in 
$H^\ast(\Gr{n}{2n})^{\otimes 2} \cong \Sym(\X_m | \X_{m+1}) / \! \sim$
to an element of $H^\ast(\Gr{n}{2n}) \cong \Sym(\X_m) / \! \sim$. 

Applying this map to (the class of) $\Schur[n^n](\X_m + \X_{m+1})$ will yield $1$, 
by \eqref{eq:Schurbox}. 
Consequently, 
post-composing the isomorphism
\[
\Hom_{\cSSStn_q(\gln[m+1])}(\one_{\wtn} ,X ) 
\stackrel{\eqref{eq:Hom1Xiso}}{\cong}
H^\ast(\Gr{n}{2n}) \otimes_{\K} \Hom_{\cSSStn_q(\glm)}(\one_{\wtn} ,X ) 
\]
with the map 
$I \ot \id \colon H^\ast(\Gr{n}{2n}) \otimes_{\K} \Hom_{\cSSStn_q(\glm)}(\one_{\wtn} ,X ) 
\to \qsh^{2n^2} \Hom_{\cSSStn_q(\glm)}(\one_{\wtn} ,X )$,
one obtains a left inverse to $f$. 
Thus, $f$ is injective, as desired.

The map $\delta$ in \eqref{eq:M2-expanded} is given by 
post-composition with an anti-clockwise {\color{magenta} magenta} cup
(we again use {\color{magenta} magenta} to depict the final Dynkin label $m$).
We compute that
\begin{equation}
	\label{eq:deltaf}
\delta \circ f \left( \
\begin{tikzpicture}[anchorbase]
	\fill[white] (.25,0) rectangle (.75,.5);
	\draw[very thick] (.25,0) rectangle (.75,.5);
	\node at (.5,.25) {$\xi$};
	\draw[very thick] (.5,.5) to (.5,1) node[above]{$X$};
	\node at (1,.75){\scs$\wtn$};
\end{tikzpicture}
\right)
=
\begin{tikzpicture}[anchorbase]
	\draw[thick,magenta,<-] (-.5,1) to [out=270,in=0] (-.75,.5) to [out=180,in=270] (-1,1);
	\node at (-.25,-.75){\NB{\Schur[n^{n}](\X_m{+}\X_{m+1})}};
	\fill[white] (.25,0) rectangle (.75,.5);
	\draw[very thick] (.25,0) rectangle (.75,.5);
	\node at (.5,.25) {$\xi$};
	\draw[very thick] (.5,.5) to (.5,1) node[above]{$X$};
	\node at (1,.75){\scs$\wtn$};
\end{tikzpicture}
\stackrel{\eqref{eq:symslide}}{=}
\begin{tikzpicture}[anchorbase]
	\draw[thick,magenta,<-] (-.5,1) to [out=270,in=90] (0,-.25) to [out=270,in=0] (-1.5,-.75)
		to [out=180,in=270] (-3,-.25) to [out=90,in=270] (-1,1);
	\node at (-1.5,-.25){\NB{\Schur[n^{n}](\X_m{+}\X_{m+1})}};
	\fill[white] (.25,0) rectangle (.75,.5);
	\draw[very thick] (.25,0) rectangle (.75,.5);
	\node at (.5,.25) {$\xi$};
	\draw[very thick] (.5,.5) to (.5,1) node[above]{$X$};
	\node at (-1.25,.25){\scs$\wtn{+}\ee_m$};
	\node at (1,.75){\scs$\wtn$};
\end{tikzpicture} \, .
\end{equation}
In weight $\wtn+\ee_m$, 
we have that $\Schur(\X_m) = 0$ if $\parti$ does not fit inside an $(n-1) \times (n+1)$ box 
and $\Schur[\mu](\X_{m+1}) = 0$ if $\mu$ does not fit inside an $(n+1) \times (n-1)$ box. 
If $\parti$ fits inside both an $(n-1) \times (n+1)$ box and an $n \times n$ box, 
then its complement $\parti^c$ with respect to the $n \times n$ box 
does not fit inside an $(n+1) \times (n-1)$ box.
Thus, \eqref{eq:Schurbox} implies that the last morphism in \eqref{eq:deltaf} is zero, 
so $\Im(f) \subset \ker(\delta)$, as desired.
\end{proof}

Combining Theorem \ref{thm:Ctaubraid} and Propositions \ref{prop:SLHM1}, \ref{prop:SLHM2+}, and \ref{prop:SLHM2-} 
with Theorem \ref{thm:Markov}, we arrive at our main result.

\begin{thm}
	\label{thm:typebLH}
The homology of the complex $\SLC{\bgen}$ of $\Z$-graded super $\K$-vector spaces
is an invariant of the framed link $\mathcal{L}_\bgen$, 
up to isomorphism of $\Z^2$-graded super $\K$-vector spaces.
\end{thm}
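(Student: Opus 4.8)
The plan is to assemble Theorem \ref{thm:typebLH} from the pieces already established in this section, in essentially the same way that Theorem \ref{thm:KhR} assembles type $A$ link homology from Rickard canonicity plus Markov invariance. First I would invoke Theorem \ref{thm:Ctaubraid}: the equivariant Rickard complexes $C^{\tau}(\bword)$ for braid words $\bword$ representing a fixed braid $\bgen \in \brgroup$ are all canonically homotopy equivalent in $\hCat[\EBnFoam]$, so the complex $C^{\tau}(\bgen) \in \hCat[\EBnFoam]$ is well-defined up to canonical homotopy equivalence. Applying the additive functors $\RR^{\pm}$ term-wise and the shift $\qsh^{-mn^2}$, we obtain $\SLC{\bgen} = \SLC{\bgen}^+ \oplus \ssh \SLC{\bgen}^- \in \hCat[\sVect_{\K}^\Z]$, which is therefore a well-defined invariant of $\bgen \in \brgroup$ up to homotopy equivalence; passing to homology gives a well-defined $\Z^2$-graded super $\K$-vector space attached to each braid.

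Next I would invoke Propositions \ref{prop:SLHM1}, \ref{prop:SLHM2+}, and \ref{prop:SLHM2-}. Proposition \ref{prop:SLHM1} gives $\SLC{\bgen\bgen'} \cong \SLC{\bgen'\bgen}$ (in fact an isomorphism of chain complexes of super vector spaces, hence certainly a homology isomorphism), which handles Markov move (MI) — conjugation invariance. Proposition \ref{prop:SLHM2+} gives $\qsh^{-n(n+1)}\tsh^{n} \SLC{\bgen} \simeq \SLC{\bgen_m\bgen}$ and Proposition \ref{prop:SLHM2-} gives $\qsh^{n(n+1)}\tsh^{-n} \SLC{\bgen} \simeq \SLC{\bgen_m^{-1}\bgen}$, which handle the positive and negative cases of Markov move (MII) up to an overall shift by a power of $\qsh$ and $\tsh$. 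Taking homology and recalling that the shifts $\qsh,\tsh,\ssh$ act by relabelling the $\Z^2 \times \Z/2$-grading, these become isomorphisms of $\Z^2$-graded super $\K$-vector spaces after the appropriate regrading. Since we are only claiming invariance up to isomorphism of $\Z^2$-graded super vector spaces, the presence of these shifts is harmless: one simply records that the homology is a framed link invariant, i.e.~an invariant of the Markov equivalence class that also tracks writhe, exactly as in Remarks \ref{rem:framing} and \ref{rem:framing2}.

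Finally I would conclude by Alexander's theorem (Theorem \ref{thm:Alexander}) and Markov's theorem (Theorem \ref{thm:Markov}): every link $\mathcal{L} \subset S^3$ is the braid closure $\mathcal{L}_\bgen$ of some $\bgen \in \brgroup$, and two braids have isotopic closures if and only if they are related by a sequence of moves (MI) and (MII). Since $\bgen \mapsto H_*(\SLC{\bgen})$ is invariant under (MI) and, up to the shifts above, under (MII), the framed version of this construction (where one does not quotient by the writhe) descends to an invariant of the framed link $\mathcal{L}_\bgen$, valued in isomorphism classes of $\Z^2$-graded super $\K$-vector spaces, completing the proof.

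I do not anticipate a genuine obstacle here, since all the hard work has been done upstream. The one point requiring a little care is bookkeeping: one must check that the shift conventions in Definition \ref{def:SLH} and in Propositions \ref{prop:SLHM2+}--\ref{prop:SLHM2-} are mutually consistent so that the global shift $\qsh^{-mn^2}$ behaves correctly under adding a strand (i.e.~that $m \mapsto m+1$ changes $\qsh^{-mn^2}$ to $\qsh^{-(m+1)n^2}$, absorbing exactly the discrepancy $\qsh^{\mp n(n+1)}$ together with the extra $H^\ast(\Gr{n}{2n})$-factor tracked in the proof of Proposition \ref{prop:SLHM2+}); this is routine but worth stating explicitly. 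The only subtlety of substance — that the type $A$ Markov homotopy equivalences of \cite{Wu} are $\Z/2$-equivariant for the $\tauast$-action, so that they restrict to $\SLC{\bgen}^{\pm}$ — was already the content of Propositions \ref{prop:SLHM2+} and \ref{prop:SLHM2-}, via Lemma \ref{lem:GactC} and Remark \ref{rem:GactC}, so nothing new is needed.
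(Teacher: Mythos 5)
Your proposal is correct and follows essentially the same route as the paper's proof: invoke Theorem \ref{thm:Ctaubraid} for well-definedness on braids, Propositions \ref{prop:SLHM1}, \ref{prop:SLHM2+}, and \ref{prop:SLHM2-} for Markov invariance up to a writhe-determined shift, and then conclude framed-link invariance via the standard argument of Remark \ref{rem:framing2} together with Theorems \ref{thm:Alexander} and \ref{thm:Markov}. The paper makes the last step slightly more explicit by constructing the renormalized complex $\qsh^{n(n+1)\epsilon(\bgen)}\tsh^{-n\epsilon(\bgen)}\SLC{\bgen}$, showing that it is an unframed link invariant, and then observing that since $\epsilon(\bgen)$ equals the writhe of the braid closure, the non-renormalized $\SLC{\bgen}$ is a framed link invariant; your worry about the $\qsh^{-mn^2}$ prefactor under stabilization is already absorbed into the shifts stated in Propositions \ref{prop:SLHM2+}--\ref{prop:SLHM2-}, so no further bookkeeping is needed.
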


\begin{proof}
Let $\epsilon(\bgen)$ be the exponent sum of $\bgen$, and consider the renormalized invariant
\begin{equation}
	\label{eq:renorm}
\qsh^{n(n+1) \epsilon(\bgen)} \tsh^{-n \epsilon(\bgen)} \SLC{\bgen}
\end{equation}
which is invariant under the second Markov move
by Propositions \ref{prop:SLHM2+} and \ref{prop:SLHM2-}.
Since $\epsilon(\bgen)$ is unchanged by the braid relations and the first Markov move, 
Theorem \ref{thm:Ctaubraid} and Propositions \ref{prop:SLHM1} imply that \eqref{eq:renorm} 
is invariant under these moves.
Hence, by Theorem \ref{thm:Markov}, \eqref{eq:renorm} 
is an invariant of the (unframed) link $\mathcal{L}_{\bgen}$, 
up to homotopy equivalence.
Since $\epsilon(\bgen)$ equals the writhe of the link diagram for $\mathcal{L}_{\bgen}$ 
given by taking the braid closure, 
the non-renormalized invariant $\SLC{\bgen}$ 
is an invariant of the framed link $\mathcal{L}_{\bgen}$ 
(endowed with the blackboard framing), up to homotopy equivalence.
It follows that the homology of $\SLC{\bgen}$, 
which is $\Z^2$ graded via $\tsh$- and $\qsh$-degree, 
is an invariant of the framed link $\mathcal{L}_{\bgen}$.
\end{proof}

\begin{defn}
Let $H_{\son}(\mathcal{L}_\bgen^\mathcal{S}) \in \sVect_{\K}^{\Z \times \Z}$ 
denote the homology of the complex $\SLC{\bgen}$. 
\end{defn}

We will refer to these link invariants as \emph{Type $B$ spin link homology}, 
or simply as \emph{spin link homology}, 
in light of our decategorification results given in the following section.

\begin{rem}
As graded super vector spaces form a semisimple category, 
the homology of the complex $\SLC{\bgen}$ is uniquely determined by 
the isomorphism class of $\SLC{\bgen}$ in the homotopy category $\hCat[\sVect^\Z_\K]$. 
One can view spin link homology as an invariant valued in $\hCat[\sVect^\Z_\K]$ 
rather than in the category $\sVect_{\K}^{\Z \times \Z}$ 
of bigraded super vector spaces. \end{rem}

\subsection{Decategorification}
	\label{ss:decat}
	
We conclude this section by establishing the results 
from \S \ref{ss:nutshell} and \S \ref{ss:inmoredepth}
that relate our spin link homology to the spin-colored $\son$ link polynomials 
discussed in Section \ref{s:typeB}.

To begin, recall that if $\Cat$ is a $\K$-linear category
equipped with an involution $\sigma$, 
then there is an action of the group 
$\langle \sgn \rangle \cong \Z/2$ on $\ECat$ defined by $\sgn(X,\phi):=(X,-\phi)$.
Further, the $\sigma$-weighted Grothendieck group $\wKzero{\ECat}$
is defined by imposing the relation $[\sgn(X,\phi)] = -[(X,\phi)]$ on $\Kzero{\ECat}$.

Although the category $\sVect_{\K}^{\Z}$ does not arise in this way, 
it also carries a $\Z/2$ action, via the \emph{parity-reversing involution}
\[
\ssh \colon \sVect_{\K}^{\Z} \to \sVect_{\K}^{\Z}
\]
which e.g.~sends $\K^{m|n} \mapsto \K^{n|m}$. 
We can similarly impose the relation $[\ssh V] = - [V]$ on $\Kzero{\sVect_{\K}^{\Z}}$, 
and in the quotient
\[
K_0^{\ssh}(\sVect^{\Z}_{\K}) := \Kzero{\sVect_{\K}^{\Z}} {\big /} \big( [\ssh V] = - [V] \big)
\]
we find that
$[\K^{0|n}]_{\ssh} = - [\K^{n|0}]_{\ssh}$.
It follows that there is an isomorphism $K_0^{\ssh}(\sVect^{\Z}_{\K}) \cong \Z[q^\pm]$ 
given by $[V]_{\ssh} \mapsto \qdim(V)$.
(Recall that dimension of a super vector space is computed as in \eqref{eq:svsdim},
i.e.~it always means super dimension.)

Since the functor $\qsh^{-mn^2}(\RR^+ \oplus \ssh \RR^-)$ intertwines $\sgn$ and $\ssh$, 
there is a unique map $\wKzero{\EBnFoam} \rightarrow \Z[q^{\pm 1}]$ 
such that the following diagram commutes:
\[
\begin{tikzcd}[column sep=6.0em]
\hCat[\EBnFoam] \ar[r,"\qsh^{-mn^2}(\RR^+ \oplus \ssh \RR^-)"] \ar[d,mapsto,"K_0^\tau"'] 
	& \hCat[\sVect^\Z_\K] \ar[d,mapsto,"K_0^\ssh"] \\
\tauKzero{\EBnFoam} \ar[r] & \Z[q^{\pm 1}]
\end{tikzcd}
\]
This is essentially the framework for our decategorification results, 
as we now explain.

For an additive category $\Cat$, 
one can identify the triangulated Grothendieck group of 
the bounded homotopy category $\hCat[\Cat]$ with the split Grothendieck group of $\Cat$,
the map being given by taking the Euler characteristic 
(alternative sum of classes) of each complex.
See e.g.~\cite{RoseG} for a careful proof.
Hence, given a complex $C = \bigoplus_{i\in \Z}\tsh^i C^i$ in $\hCat[\ECat]$, 
we can consider its class $\sum_{i\in \Z}(-1)^i [C^i] \in \Kzero{\ECat}$ 
which therefore determines a well-defined class 
$[C]_{\sigma}  := \sum_{i\in \Z}(-1)^i [C^i]_{\sigma}$
in the quotient $\wKzero{\ECat}$.
Similarly, given $C = \bigoplus_{i\in \mathbb{Z}} \tsh^i C^i \in \hCat[\sVect_{\K}^{\Z}]$, 
we can consider 
$[C]_{\ssh} = \sum_{i\in \mathbb{Z}}(-1)^i \qdim(C^i) \in \Z[q^\pm] \cong K_0^{\ssh}(\sVect_{\K}^{\Z})$.

In all, we arrive at the following diagram:
\begin{equation}
	\label{eq:K0square}
\begin{tikzcd}[column sep=6.0em]
\hCat[\EBnFoam] \ar[r,"\qsh^{-mn^2}(\RR^+ \oplus \ssh \RR^-)"] \ar[d,mapsto,"K_0^\tau"'] 
	& \hCat[\sVect^\Z_\K] \ar[d,mapsto,"K_0^\ssh"] \\
\tauKzero{\EBnFoam} \ar[r] & \Z[q^{\pm 1}]
\end{tikzcd}
\quad , \quad
\begin{tikzcd}
C^{\tau}(\bgen) \ar[r,mapsto] \ar[d,mapsto] & \SLC{\bgen} \ar[d,mapsto] \\
{[C^{\tau}(\bgen)]_{\tau}} \ar[r,mapsto] & {[\SLC{\bgen}]_{\tau}}
\end{tikzcd} \, .
\end{equation}

\begin{nota}
To emphasize dependence on $\tau$, 
we write $[\SLC{\bgen}]_{\tau}$ instead of $[\SLC{\bgen}]_{\ssh}$ 
both in \eqref{eq:K0square} and for the duration.
Since the Euler characteristic of a complex in an abelian category 
always equals that of its homology, 
we have
\[
{[\SLC{\bgen}]_{\tau}} 
	= \dim_{\qsh,\tsh}\big( H_{\son}(\mathcal{L}_\bgen^\mathcal{S}) \big)\big\rvert_{t=-1}
	= \sum_{i} (-1)^i \dim_\qsh \big( H^{i}_{\son}(\mathcal{L}_\beta^\mathcal{S}) \big) \, .
\]
\end{nota}

\begin{thm}
	\label{thm:decat}
Assume that $n=1,2,3$.
If $\beta \in \Br_m$, then
\begin{equation}
	\label{eq:decat}
\bP_{\son}(\mathcal{L}_\beta^{\mathcal{S}}) 
	= (-1)^{n \epsilon(\bgen)+m\binom{n+1}{2}}
	q^{\frac{1}{2} n \epsilon(\bgen)} 
		\sum_{i} (-1)^i \dim_\qsh \big( H^{i}_{\son}(\mathcal{L}_\beta^\mathcal{S}) \big) \, .
\end{equation}
\end{thm}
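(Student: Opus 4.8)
The plan is to verify \eqref{eq:decat} by computing the right-hand side explicitly via the characterization of the spin link polynomial in Theorem \ref{thm:characterization}, using the categorified structures established earlier. First I would set $A_m^n := \C(q^{\frac{1}{2}}) \otimes_{\Z[q^{\pm}]} \tauKzero{\EBnFoam}$ and $\xx_i^{(k)} := [(\FE_i^{(k)}, (-1)^{\binom{n+1}{2}+\binom{n-k+1}{2}+k}\swcl_k)]_\tau$, and check that the hypotheses A1--A4 of Theorem \ref{thm:characterization} hold. Hypothesis A1 (the relations \eqref{eq:xiquad} and \eqref{eq:iSerre}) follows from Corollary \ref{cor:XmnDecat} and Corollary \ref{cor:K0(Xm)} --- which together say exactly that the devil's divided-power recursion and the devil's Serre relation hold in $\tauKzero{\EBnFoam}$. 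Hypothesis A4 (that $\bgen_i \mapsto q^{\frac{n}{2}}\sum_\ell q^{-\ell}\xx_i^{(\ell)}$ determines a braid group representation) should follow from Theorem \ref{thm:Ctaubraid}: the equivariant Rickard complexes satisfy the braid relations up to homotopy equivalence, and passing to classes in $\tauKzero{\EBnFoam}$ gives a braid-group action; one must then identify the class of $C^\tau(\bgen_i)$ with $q^{\frac{n}{2}}\sum_\ell q^{-\ell}\xx_i^{(\ell)}$ (up to the global sign and shift baked into Definition \ref{def:eRickard}), which is a sign bookkeeping exercise using the explicit equivariant structures in Definition \ref{def:eRickard} and the convention $(-1)^{\binom{n+1}{2}+\binom{k+1}{2}}\swcl_{n-k}$ there.

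Next I would establish A2 and A3, which concern the trace-like maps. The natural candidate for $T_m^n$ is $[\,\cdot\,]_\tau$ composed with the functor $\qsh^{-mn^2}(\RR^+\oplus\ssh\RR^-)$, i.e. the bottom arrow of the left square in \eqref{eq:K0square}, evaluated on classes. Concretely, $T_m^n(x) := K_0^\ssh$ applied to $\qsh^{-mn^2}\RR(x)$ where $\RR = \RR^+\oplus\ssh\RR^-$. The inclusion maps $\iota\colon A_{m-1}^n \to A_m^n$ should come from $-\boxtimes (\one_\wtn,\id)$, sending $\xx_i^{(k)} \mapsto \xx_i^{(k)}$ for $i \le m-2$; that this is $\C(q^{\frac12})$-linear and trace-like for $[\,\cdot\,]_\tau$ is straightforward. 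For A3, I would compute that $A_1^n \cong \C(q^{\frac12})$ (since $\BnFoam[1]$ has only $\one_\wtn$ as indecomposable) and that $T_1^n(1)$ equals the super dimension of $\qsh^{-n^2}H^\ast(\Gr{n}{2n})$ computed against the transpose involution, which by the computation in \S\ref{ss:nutshell} (and Example \ref{eg:devilscircle}) is $(-1)^{\binom{n+1}{2}}\prod_{i=1}^n(q^{2i-1}+q^{1-2i})$. The key relation \eqref{eq:TrX} that $T_m^n$ must satisfy is Conjecture \ref{conj:TrX} --- but crucially we are \emph{defining} $T_m^n$ categorically, so I would need to prove \eqref{eq:TrX} holds for this $T_m^n$. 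This is the step where I expect the main obstacle: showing that tensoring with $\xx_{m-1}^{(k)}$ and then applying the representable-functor-plus-trace picks up exactly the devil's-arithmetic coefficient $(-1)^{n(k+1)+\binom{n-k}{2}}\prod_{t=1}^{n-k}\frac{``[n+1-t][n+t]"}{``[t]^2"}$; one approach is to reduce it, via Corollary \ref{cor:XmnDecat} and Proposition \ref{itoxchangeofbasis}, to the categorical analogue of the "rotation" statement in Conjecture \ref{conj:Xrot}, and verify that directly in $\BnFoam$ using the explicit equivariant structures --- feasible for $n=1,2,3$ by direct diagrammatic computation, paralleling Proposition \ref{prop:TrX}, and this is precisely where the restriction $n \le 3$ (or Conjectures \ref{conj:R3}, \ref{conj:TrX}, \ref{conj:TrX2}) enters.

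With A1--A4 verified, Theorem \ref{thm:characterization} gives $T_m^n(R_m^n(\bgen)) = P_{\son}(\mathcal{L}_\bgen^S)$ for $n=1,2,3$. It remains to translate this equality into the stated form \eqref{eq:decat}. On one side, $T_m^n(R_m^n(\bgen))$ unwinds to $[\SLC{\bgen}]_\tau$ up to the global normalization: the Euler characteristic of $\qsh^{-mn^2}\RR(C^\tau(\bgen))$ is $\sum_i(-1)^i\dim_\qsh H^i_{\son}(\mathcal{L}_\bgen^\mathcal{S})$, while the definition of $R_m^n$ in A4 carries a factor $q^{\frac n2}$ per positive crossing and $q^{-\frac n2}$ per negative one (by Remark \ref{rem:inverse}), contributing $q^{\frac n2 \epsilon(\bgen)}$ overall --- but the complex $C^\tau(\bgen)$ built from Definition \ref{def:eRickard} differs from $q^{\frac n2}\sum q^{-\ell}\xx^{(\ell)}$ by a sign $(-1)^{n}$ per crossing and a grading shift, accounting for the $(-1)^{n\epsilon(\bgen)}$ factor; finally the $m$-fold appearance of $(\one_\wtn,\id)$ in the "closure" contributes $(-1)^{m\binom{n+1}{2}}$ from A3. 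On the other side, $P_{\son}(\mathcal{L}_\bgen^S)$ equals $\bP_{\son}(\mathcal{L}_\bgen^\mathcal{S})$ up to the conventions of Remark \ref{rem:Rconv}; one must check this matches, using $\bP_{\son}(\mathcal{L}^S)(q) = P_{\son}(\mirror\mathcal{L}^S)(q)$ and that the equivariant Rickard complexes are built from the \emph{inverse} Rickard complexes relative to the braiding convention of \S\ref{ss:linkpoly}. Carefully tracking these signs and powers of $q$ --- the bulk of which is routine but error-prone bookkeeping of the shifts in Definitions \ref{def:Rickard}, \ref{def:eRickard} and the $\qsh^{-mn^2}$ normalization --- yields \eqref{eq:decat}.
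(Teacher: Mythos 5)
Your proposal follows essentially the same blueprint as the paper's proof: both pass through Theorem \ref{thm:characterization} by identifying $A_m^n$, $\xx_i^{(k)}$, the maps $R_m^n$ and $T_m^n$ with the categorical structures on $\tauKzero{\EBnFoam}$, and both reduce A1 to the established decomposition results and A4 to the sign computation in Definition \ref{def:eRickard}. (The paper also does a preliminary reduction via $\bP_{\son}(\mathcal{L}^S) = P_{\son}(\mirror\mathcal{L}^S)$, working with $\mirror\bgen$ throughout, which your writeup only gestures at in the final bookkeeping step; and the paper bakes the sign $(-1)^{m\binom{n+1}{2}}$ directly into the definition of $T_m^n$ rather than deferring it, which keeps A3 honest.)

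The genuine difference is in how A2 is established for the intermediate cases $1 \le k \le n-1$. You propose verifying \eqref{eq:lastconj} by direct diagrammatic computation of the trace of $\tauast$ on $\Hom_{\BnFoam[2]}(\one_\wtn, \FE^{(k)})$, essentially carrying out the categorical analogue of Conjecture \ref{conj:Xrot} by hand for $n=1,2,3$. The paper acknowledges this is possible but tedious, and instead uses a shortcut: since $T_2^n(\xx_i^{(k)})$ must be some Laurent-polynomial multiple $p_{k,n}$ of $T_1^n(1)$, and since Markov II invariance (Propositions \ref{prop:SLHM2+} and \ref{prop:SLHM2-}) has already been proved categorically, one gets a pair of linear constraints \eqref{eq:M2+T}--\eqref{eq:M2-T} on the $p_{k,n}$ which, together with the easy cases $k = 0$ and $k = n$ (done as you propose, directly), pin down $p_{1,2}$, $p_{1,3}$, $p_{2,3}$ uniquely. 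This trades an open-ended diagrammatic calculation for a small linear system, and exploits work already done for the topological invariance. If you were to push to $n \ge 4$, your direct approach is exactly what Conjecture \ref{conj:TrX2} asks for, while the paper's linear-system trick underdetermines the $p_{k,n}$ once there are three or more unknowns — which is precisely why the restriction to $n \le 3$ appears.
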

\begin{proof}
Let $\mirror \bgen$ denote the mirror braid, 
i.e.~the braid obtained from $\bgen=\bgen_{i_1}^{\epsilon_1} \cdots \bgen_{i_\ell}^{\epsilon_\ell}$ 
by replacing each $\bgen_i^{\pm1}$ with $\bgen_i^{\mp}$.
By \eqref{eq:Pbar},
it suffices to show that 
\[
P_{\son}(\mathcal{L}_\beta^{\mathcal{S}}) =
	(-1)^{n \epsilon(\mirror \bgen)+m\binom{n+1}{2}}
	q^{\frac{1}{2} n \epsilon(\mirror \bgen)} {[\SLC{\mirror \bgen}]_{\tau}} \, .
\]
For this, we will use Theorem \ref{thm:characterization}.

Let $A_m^n = \C(q^{\frac{1}{2}}) \otimes_{\Z[q^\pm]} \tauKzero{\EBnFoam}$ and set
$\xx_i^{(k)} := [(\FE_{i}^{(k)}, (-1)^{{n+1 \choose 2}+{n-k+1 \choose 2}+k} \swcl_{k})]_{\tau}
	\in A_m^n$.
Corollaries \ref{cor:K0(X2)} and \ref{cor:K0(Xm)}
then imply that Condition A1.~of Theorem \ref{thm:characterization} holds.
Further,
Definition \ref{def:eRickard} implies that Condition A4.~holds, 
since $(-q^{\frac{1}{2}})^{n \epsilon(\mirror \bgen)} {[C^{\tau}(\mirror \bgen)]_{\tau}}$ 
for $\beta = \bgen_i$ evaluates to
\[
\bgen_i \mapsto 
(-q^{\frac{1}{2}})^{-n} \sum_{k=0}^{n} q^{k} (-1)^{n} \xx_i^{(n-k)}
	= (-q^{\frac{1}{2}})^{-n} (-q)^n \sum_{\ell=0}^{n} q^{-\ell} \xx_i^{(\ell)}
	= q^{\frac{n}{2}} \sum_{\ell=0}^{n} q^{-\ell} \xx_i^{(\ell)} \, .
\]

Let $T_m^n \colon A_m^n \to \C(q^{\frac{1}{2}})$ be given by 
$(-1)^{m \binom{n+1}{2}}$ times 
the bottom horizontal map in \eqref{eq:K0square}. 
For $(Y,\varphi_Y) \in \EBnFoam$, we thus have
\begin{equation}
	\label{eq:K0trace}
\begin{aligned}
T_m^n\big( [(Y,\varphi_Y)]_{\tau} \big) 
&= (-1)^{m\binom{n+1}{2}} q^{-mn^2} \Big(\qdim\big( 
	\Hom_{\EBnFoam}\big((\one_{\wtn},+),(Y,\varphi_Y)\big) \big) \\
	& \qquad \qquad \qquad \qquad \qquad \qquad - \qdim\big( 
		\Hom_{\EBnFoam}\big((\one_{\wtn},-),(Y,\varphi_Y)\big) \big) \Big) \\
&= (-1)^{m\binom{n+1}{2}} q^{-mn^2} \Big(\qdim\big( 
	\Hom_{\BnFoam}\big(\one_{\wtn},Y\big)^{\tau} \big)
		- \qdim\big( \Hom_{\BnFoam}\big(\one_{\wtn},Y\big)^{\sgn} \big) \Big) \\
&= (-1)^{m\binom{n+1}{2}} q^{-mn^2} \tr(\tauast |_{\Hom_{\BnFoam}(\one_{\wtn},Y)}) \, .
\end{aligned}
\end{equation}
In particular, \eqref{eq:K0trace} implies that $T_m^n$ is trace-like, 
since 
\eqref{eq:KhRM1map} gives an isomorphism
$\Hom_{\BnFoam}(\one_{\wtn},Y_1Y_2) \cong \Hom_{\BnFoam}(\one_{\wtn},Y_2Y_1)$ 
that intertwines the $\tauast$-action.

We now verify Condition A3.~from Theorem \ref{thm:characterization}.
First, Remark \ref{rem:gl1} and Definitions \ref{def:CSQ} and \ref{def:BnFoam} 
imply that ${\EuScript B}_1^{n}$ is the 
additive, $\Z$-graded, $\K$-linear, monoidal category generated 
by the monoidal unit $\one_n$. 
This implies that $A_1^n \cong \C(q^{\frac{1}{2}})$.
Further, 
the isomorphism in \eqref{eq:End1=Gr} gives that
$\End_{{\EuScript B}_1^{n}}(\one_n) \cong H^{\ast}\big(\Gr{n}{2n}\big)$,
with basis consisting of the new bubble generators for Schur functions 
$\Schur(\X_1)$ with $\parti$ contained in an $n \times n$ box.
By \eqref{eq:tauonSchur}, we have that 
\begin{equation}
	\label{eq:unknotcategorified1}
T_1^n\big( [(\one_n,+)]_{\tau} \big) 
	= (-1)^{\binom{n+1}{2}} q^{-n^2} \tr(\tauast |_{\End_{\BnFoam[1]}(\one_{n})}) 
	= (-1)^{\binom{n+1}{2}} 
		q^{-n^2} \sum_{\substack{\parti \subset n^n \\ \parti=\parti^{t}}} q^{2|\parti|}
\end{equation}	
and a fun inductive exercise gives that 
\begin{equation}
	\label{eq:unknotcategorified2}	
\sum_{\substack{\parti \subset n^n \\ \parti= \parti^{t}}} q^{2|\parti|} \\
	=q^{n^2} \prod_{i=1}^n (q^{2i-1} + q^{1-2i}) \, .
\end{equation}
so $T_1^n\big( [(\one_n,+)]_{\tau} \big) 
	= (-1)^{\binom{n+1}{2}} \prod_{i=1}^n (q^{2i-1} + q^{1-2i})$
as desired.

Finally, we confirm Condition A2. 
The requisite linear map $\iota \colon A_{m-1}^n \to A_m^n$ 
is induced from the functor $\EBnFoam[m-1] \to \EBnFoam$, 
so it remains to establish \eqref{eq:TrX}.
For this, \eqref{eq:K0trace} shows that 
we must relate the $\tauast$ actions on 
$\Hom_{\BnFoam[m-1]}(\one_{\wtn},Y)$ and $\Hom_{\BnFoam}(\one_{\wtn},\FE_{m-1}^{(k)}Y)$
for $Y \in \BnFoam[m-1]$ 
(for appropriate equivariant structures on $\one_{\wtn}$, $Y$, and $\FE_{m-1}$).

For this, consider the linear map
\begin{equation}
	\label{eq:Hom(1,XY)}
\Hom_{\BnFoam[2]}(\one_{\wtn} , \FE^{(k)}) 
	\otimes_{\K} \Hom_{\BnFoam[m-1]}(\one_{\wtn},Y)
		\to \Hom_{\BnFoam}(\one_{\wtn},\FE_{m-1}^{(k)}Y)
\end{equation}
given by precomposing horizontal composition with the map
\[
\Hom_{\BnFoam[2]}(\one_{\wtn} , \FE^{(k)}) \hookrightarrow
	\Hom_{\BnFoam}(\one_{\wtn} , \FE_{m-1}^{(k)})
\]
which includes into the last $\gln[2]$-string in $\BnFoam$.
(In lay terms, we color an element in $\Hom_{\BnFoam[2]}(\one_{\wtn} , \FE^{(k)})$ 
the color of the final Dynkin node in $\BnFoam$, 
and place it next to an element of $\Hom_{\BnFoam}(\one_{\wtn},Y)$.)
As in the proof of Proposition \ref{prop:SLHM2+},
\cite[Proposition 3.11]{KL3} and \eqref{newbub} imply that \eqref{eq:Hom(1,XY)} is surjective.
Further, it descends to a map
\begin{equation}
	\label{eq:Hom(1,XY)iso}
\Hom_{\BnFoam[2]}(\one_{\wtn} , \FE^{(k)}) 
	\otimes_{H^\ast(\Gr{n}{2n})} \Hom_{\BnFoam[m-1]}(\one_{\wtn},Y)
		\to \Hom_{\BnFoam}(\one_{\wtn},\FE_{m-1}^{(k)}Y)
\end{equation}
where $H^\ast(\Gr{n}{2n})$ acts by new bubbles 
(in the alphabet $\X_1$ in the first factor and the alphabet $\X_{m-1}$ in the second).
Computing dimensions using Proposition \ref{prop:size}, 
we see that \eqref{eq:Hom(1,XY)iso} is an isomorphism. 
Further, it is straightforward to see that this isomorphism intertwines the 
$\tauast \otimes \tauast$ action on the domain with the $\tauast$ action on the codomain 
(provided we compute it with respect to the equivariant structure on $\FE_{m-1}^{(k)}Y$ 
given as the tensor product of the equivariant structures on $\FE_{m-1}^{(k)}$ and $Y$).

We have thus reduced to verifying that
\begin{multline*}
(-1)^{2\binom{n+1}{2}} q^{-2n^2} 
	\tr(\tauast |_{\Hom_{\BnFoam[2]}(\one_{\wtn}, \FE^{(k)})}) \\
= (-1)^{n(k+1)+{n-k \choose 2}} \prod_{t=1}^{n-k} \frac{``[n+1-t][n+t]"}{``[t]^2"}
	 (-1)^{\binom{n+1}{2}} q^{-n^2} \tr(\tauast |_{\End_{\BnFoam[1]}(\one_{n})}) \\
\stackrel{\eqref{eq:unknotcategorified1},\eqref{eq:unknotcategorified2}}{=} 
	(-1)^{n(k+1)+{n-k \choose 2}} \prod_{t=1}^{n-k} \frac{``[n+1-t][n+t]"}{``[t]^2"}
		(-1)^{\binom{n+1}{2}} \prod_{i=1}^n (q^{2i-1} + q^{1-2i})
\end{multline*}
where here the $\tauast$ action is taken with respect to the equivariant objects
$(\FE^{(k)}, (-1)^{{n+1 \choose 2}+{n-k+1 \choose 2}+k} \swcl_{k})$
and $(\one_{\wtn},+)$.
Equivalently, we must show that
\begin{equation}
	\label{eq:lastconj}
q^{-2n^2} 
	\tr(\tauast |_{\Hom_{\BnFoam[2]}(\one_{\wtn}, \FE^{(k)})})
= (-1)^{-nk} 
	\prod_{t=1}^{n-k} \frac{``[n+1-t][n+t]"}{``[t]^2"} \prod_{i=1}^n (q^{2i-1} + q^{1-2i})
\end{equation}
for the action with respect to the equivariant objects 
$(\FE^{(k)}, +\swcl_{k})$ and $(\one_{\wtn},+)$.

When $k=0$, equation \eqref{eq:lastconj} becomes
\begin{equation}
	\label{eq:k=0lastconj}
q^{-2n^2} \tr(\tauast |_{\End_{\BnFoam[2]}(\one_{\wtn})})
	= \prod_{t=1}^{n} \frac{``[n+1-t][n+t]"}{``[t]^2"} \prod_{i=1}^n (q^{2i-1} + q^{1-2i}) \, .
\end{equation}
By Example \ref{eg:devilscircle}, the right-hand side is equal to 
$\big(\prod_{i=1}^n (q^{2i-1} + q^{1-2i})\big)^2$.
Since $\End_{\BnFoam[2]}(\one_{\wtn})$ has a basis consisting of 
(new bubbles for) products $\Schur(\X_1)\Schur[\mu](\X_2)$
with $\parti,\mu$ contained in an $n \times n$ box,
\eqref{eq:k=0lastconj} holds by \eqref{eq:tauonSchur} and \eqref{eq:unknotcategorified2}.

When $k=n$, equation \eqref{eq:lastconj} becomes
\begin{equation}
	\label{eq:k=nlastconj}
\tr(\tauast |_{\Hom_{\BnFoam[2]}(\one_{\wtn}, \FE^{(n)})})
= (-1)^{-n^2} q^{2n^2} \prod_{i=1}^n (q^{2i-1} + q^{1-2i})
\end{equation}
which we again verify directly.
Equation \eqref{dotstonewbubbles} and Proposition \ref{prop:size} imply that 
$\Hom_{\BnFoam[2]}(\one_{\wtn}, \FE^{(n)})$ has basis 
\[
\Bigg\{ 
\begin{tikzpicture}[anchorbase,scale=1]
\draw[ultra thick,->] (-.25,0) node[above=-2pt]{\scs $n$} to [out=270,in=180] (0,-.5) 
	to [out=0,in=270] (.25,0);
\node at (.5,-.75){\NB{\Schur}};
\node at (.75,-.25){\scs$\wtn$};
\end{tikzpicture}
\Bigg\}_{\parti \subset n^n}
\]
so we compute
\[
\tauast\Bigg( 
\begin{tikzpicture}[anchorbase,scale=1]
\draw[ultra thick,->] (-.25,0) node[above=-2pt]{\scs $n$} to [out=270,in=180] (0,-.5) 
	to [out=0,in=270] (.25,0);
\node at (.5,-.75){\NB{\Schur}};
\node at (.75,-.25){\scs$\wtn$};
\end{tikzpicture}
\Bigg)
\stackrel{\eqref{eq:tauonSchur}}{=}
\begin{tikzpicture}[anchorbase,scale=1]
\draw[ultra thick,<-] (.25,.625) to [out=270,in=90] (-.25,0) to [out=270,in=180] (0,-.5) 
	to [out=0,in=270] (.25,0) to [out=90,in=270] (-.25,.625) node[above=-2pt]{\scs $n$};
\node at (.75,-.25){\NB{\Schur[\parti^t]}};
\node at (.75,.25){\scs$\wtn$};
\end{tikzpicture}
\stackrel{\eqref{eq:thickcurl}}{=}
(-1)^{n^2} 
\begin{tikzpicture}[anchorbase,scale=1]
\draw[ultra thick,->] (-.25,0) node[above=-2pt]{\scs $n$} to [out=270,in=180] (0,-.5) 
	to [out=0,in=270] (.25,0);
\node at (.5,-.75){\NB{\Schur[\parti^t]}};
\node at (.75,-.25){\scs$\wtn$};
\end{tikzpicture} \, .
\]
Since a thickness-$n$ cup in $\gln[2]$ weight $(n,n)$ 
has degree $n^2$, equation \eqref{eq:unknotcategorified2} 
implies \eqref{eq:k=nlastconj}.

The remaining cases of \eqref{eq:TrX} are when 
$k=1$ (when $n=2,3$) and $k=2$ (when $n=3$). 
As for $k=0,n$, 
it is possible to verify \eqref{eq:lastconj} directly in these cases; 
however, the computations are tedious so we use a trick instead.
We necessarily have that $T_2^n(\xx_i^{(k)}) = p_{k,n} T_1^n\big( [(\one_n,+)]_{\tau} \big)$ 
for some Laurent polynomials $p_{k,n}$.
Further, the Markov II invariance established in Propositions \ref{prop:SLHM2+} and \ref{prop:SLHM2-}
implies presently that
\begin{equation}
	\label{eq:M2+T}
(-1)^{n+ {n+1 \choose 2}} q^{-n(n+1)} T_1^n\big( [(\one_n,+)]_{\tau} \big) 
	= (-1)^n \sum_{k=0}^n q^{-k} T_2^n(\xx_i^{(n-k)})
\end{equation}
and
\begin{equation}
	\label{eq:M2-T}
(-1)^{n+ {n+1 \choose 2}} q^{n(n+1)} T_1^n\big( [(\one_n,+)]_{\tau} \big) 
	= (-1)^n \sum_{k=0}^n q^{k} T_2^n(\xx_i^{(n-k)}) \, .
\end{equation}
Our computations for $T_2^n(\xx_i^{(0)})$ and $T_2^n(\xx_i^{(n)})$ pair with these equations 
to determine the outstanding values of $T_2^n(\xx_i^{(k)})$.

In detail, when $n=2$, equations \eqref{eq:M2+T} and \eqref{eq:M2-T} yield
\begin{align*}
-q^{-6} &= 1 + q^{-1} \cdot p_{1,2} - q^{-2}(q+q^{-1})(q^3+q^{-3}) \\
-q^{6} &= 1 + q \cdot p_{1,2} - q^{2}(q+q^{-1})(q^3+q^{-3})
\end{align*}
and it is straightforward to check that
\[
p_{1,2} = q^3+q^{-3} 
	= (-1)^{2(1+1)+ {2-1 \choose 2}} \prod_{t=1}^{2-1} \frac{``[2+1-t][2+t]"}{``[t]^2"}
\]
solves both, as desired.
When $n=3$, equations \eqref{eq:M2+T} and \eqref{eq:M2-T} give the system
\begin{align*}
q^{-12} &= 1 + q^{-1} \cdot p_{2,3} + q^{-2} \ p_{1,3} + q^{-3}(q+q^{-1})(q^3+q^{-3})(q^5+q^{-5}) \\
q^{12} &= 1 + q \cdot p_{2,3} + q^{2} \ p_{1,3} + q^{3}(q+q^{-1})(q^3+q^{-3})(q^5+q^{-5})
\end{align*}
and again it is straightforward linear algebra to verify that
\[
p_{2,3} = -(q^5 + q + q^{-1} +q^{-5})
	= (-1)^{3(2+1)+ {3-2 \choose 2}} \prod_{t=1}^{3-2} \frac{``[3+1-t][3+t]"}{``[t]^2"}
\]
and
\[
p_{1,3} = -(q^3+q^{-3})(q^5+q^{-5})
	= (-1)^{3(1+1)+ {3-1 \choose 2}} \prod_{t=1}^{3-1} \frac{``[3+1-t][3+t]"}{``[t]^2"}
\]
are the unique solutions, as desired.
\end{proof}

As initially stated in Theorem \ref{thm:introtrace}, 
we can reformulate Theorem \ref{thm:decat} in terms of 
the involution $\tau$ acting on $\Lambda^n$-colored $\sln[2n]$ link homology.

\begin{cor}
	\label{cor:decatastrace}
Let $\mathcal{L} \subset S^3$ be a link.
\begin{itemize}
\item For all $n \geq 1$, the $n$-colored $\sln[2n]$ Khovanov--Rozansky homology 
$H_{\sln[2n]}(\mathcal{L}^{\wtn})$ admits an involution $\tau$ that preserves the bidegree. 
The bigraded eigenspaces of $\tau$ are link invariants.
\item If $\bgen \in \Br_m$ and $n=1,2,3$, then
\begin{equation}
	\label{eq:introtrace}
\tr \big(
\begin{tikzcd}
H_{\sln[2n]}(\mathcal{L}_{\bgen}^{\wtn}) \arrow[loop left, looseness=3, "\tau"]
\end{tikzcd}
	\big) 
= (-1)^{n \epsilon(\bgen)+m\binom{n+1}{2}} q^{\frac{1}{2} n \epsilon(\bgen)}
	\bP_{\son}(\mathcal{L}_{\bgen}^S) \, .
\end{equation}
\end{itemize}
\end{cor}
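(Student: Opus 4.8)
\textbf{Proof plan for Corollary \ref{cor:decatastrace}.}

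The strategy is to observe that the entire construction in Section \ref{s:SLH} is precisely a decomposition of $\Lambda^n$-colored $\sln[2n]$ Khovanov--Rozansky homology into $\pm 1$-eigenspaces for a bidegree-preserving involution, so both bullet points should follow by unwinding definitions and appealing to results already proved. The key dictionary is this: $\sln[2n]$ link homology $H_{\sln[2n]}(\mathcal{L}_\bgen^{\wtn})$ is computed (Theorem \ref{thm:KhR}, using $N = 2n$ and weight $\wtn$) by the complex $\llbracket \bgen \one_{\wtn} \rrbracket_{2n} = \qsh^{-mn^2} \Hom_{\cSSStn_q(\glm)}(\one_{\wtn}, C(\bgen)\one_{\wtn})$, and by Corollary \ref{cor:HomAsInv} and Lemma \ref{lem:sigastchainmap} the operator $\tauast$ attached to the equivariant structures on the chain objects of $C^\tau(\bgen)$ acts as a chain map on this complex. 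Since the differentials of $C^\tau(\bgen)$ are equivariant morphisms, $\tauast$ commutes with the differential, hence descends to an involution $\tau$ on homology. It is bidegree-preserving because $\RR^{\pm}$ are degree-zero functors and $\tauast$ does not shift the $\qsh$- or $\tsh$-grading. The $+1$- and $-1$-eigenspaces of $\tauast$ on the complex are by definition $\SLC{\bgen}^+$ and $\SLC{\bgen}^-$ (up to the shift $\qsh^{-mn^2}$ already built into $H_{\sln[2n]}$), so their homologies --- namely the even and odd parts of $H_{\son}(\mathcal{L}_\bgen^{\mathcal{S}})$ --- are link invariants by Theorem \ref{thm:typebLH}. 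This proves the first bullet.

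For the first bullet I would spell out that the resulting involution is independent of the chosen braid word: two braid words for $\bgen$ give Rickard complexes related by the canonical homotopy equivalences of Theorem \ref{thm:Ctaubraid}, which are $\tauast$-equivariant (this is exactly how they were constructed, as averages $\frac{1}{2}(f + \tauast f)$), so the induced homotopy equivalence on $\Hom_{\cSSStn_q(\glm)}(\one_{\wtn}, -)$ intertwines the two copies of $\tau$. Likewise, Propositions \ref{prop:SLHM1}, \ref{prop:SLHM2+}, and \ref{prop:SLHM2-} were proved by checking that the relevant chain maps commute with $\tauast$, so the Markov moves respect the eigenspace decomposition and the eigenspaces are honest link invariants. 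Then for $n = 1,2,3$ (where Theorem \ref{thm:decat} applies unconditionally), the second bullet is a formal consequence: $\tr(\tau)$ on a $\Z\times\Z$-graded vector space equals $\dim(\text{$+1$-eigenspace}) - \dim(\text{$-1$-eigenspace})$, which is exactly the graded super dimension $[\SLC{\bgen}]_\tau = \sum_i (-1)^i \dim_\qsh H^i_{\son}(\mathcal{L}_\bgen^{\mathcal{S}})$ computed via \eqref{eq:svsdim} and the Euler characteristic. Plugging into \eqref{eq:decat} and solving for $\bP_{\son}(\mathcal{L}_\bgen^{\mathcal{S}})$ gives \eqref{eq:introtrace}; one only needs to note that the grading shift $\qsh^{-mn^2}$ in the definition of $\SLC{\bgen}$ matches the shift $\qsh^{\sum_i n(n-2n)} = \qsh^{-mn^2}$ in the definition of $H_{\sln[2n]}$ from \eqref{eq:KhRdef} with $a_i = n$, so the trace of $\tau$ on $H_{\sln[2n]}(\mathcal{L}_\bgen^{\wtn})$ literally equals $[\SLC{\bgen}]_\tau$.

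The only genuinely delicate point is the very last identification: making sure that the involution $\tau$ as it naturally lives on $H_{\sln[2n]}(\mathcal{L}^{\wtn})$ (defined using the equivariant structures on the chain objects of $C^\tau(\bgen)$) is genuinely an invariant of the \emph{link}, not just of the braid, and in particular is well-defined independent of all the auxiliary choices ($n$ versus conjugate $\tau_n$, the choice of braid presentation, etc.). This is where I would be most careful: I would argue that any two braid presentations are connected by braid relations and Markov moves, each realized by a $\tauast$-equivariant homotopy equivalence of complexes (Theorem \ref{thm:Ctaubraid} and the proofs of Propositions \ref{prop:SLHM1}--\ref{prop:SLHM2-}, invoking Lemma \ref{lem:GactC} and Remark \ref{rem:GactC}), so the induced isomorphisms on homology commute with $\tau$. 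Once that is granted, everything else is bookkeeping with signs and grading shifts already recorded in the excerpt. I do not anticipate any new computation being required beyond carefully transcribing the shift conventions.
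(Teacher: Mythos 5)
Your proposal is correct and follows essentially the same route as the paper: repackage Theorems \ref{thm:Ctaubraid} and \ref{thm:typebLH} to produce a well-defined bidegree-preserving involution $\tau$ on $H_{\sln[2n]}(\mathcal{L}^{\wtn})$, identify its $\pm 1$-eigenspaces with the even/odd parts of the super vector space $H_{\son}(\mathcal{L}^{\mathcal{S}})$, and invoke Theorem \ref{thm:decat}. You fill in more of the bookkeeping (grading shifts, the eigenspace/super-dimension dictionary, the role of $\tauast$-equivariant homotopy equivalences in the Markov moves) than the paper does, but the underlying argument is the same.
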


\begin{proof}
The arguments establishing Theorems \ref{thm:Ctaubraid} and \ref{thm:typebLH} 
can be repackaged as saying that the $\tauast$-action 
on the chain groups of $\llbracket \bgen \one_{\wtn} \rrbracket_{2n}$ 
descends to a well-defined involution on $H_{\sln[2n]}(\mathcal{L}^{\wtn})$.
The $\pm 1$-eigenspaces of this involution on 
$H_{\sln[2n]}^{i}(\mathcal{L}^{\wtn})$
are exactly the even/odd parts of the super vector space 
$H^{i}_{\son}(\mathcal{L}_\beta^\mathcal{S})$, 
so the result follows from Theorem \ref{thm:decat}.
\end{proof}

More generally, we propose that the equality stated in Theorem \ref{thm:decat} 
(and therefore in Corollary \ref{cor:decatastrace}) holds for all $n$.

\begin{conj}
	\label{conj:decat}
Given $\bgen \in \Br_m$, 
equation \eqref{eq:decat} additionally holds for $n \geq 4$. 
That is, Type $B$ spin link homology categorifies the spin-colored $\son$ link polynomial 
for all $n \geq 1$.
\end{conj}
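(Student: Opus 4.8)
The plan is to run the proof of Theorem \ref{thm:decat} verbatim, now for $n \ge 4$. Inspecting that argument, the only ingredients not yet available for arbitrary $n$ are the hypotheses labeled $(\ast)$ in Theorem \ref{thm:characterization}, namely Conjectures \ref{conj:R3} and \ref{conj:TrX}. Everything else is already in place for all $n$: the algebra $A_m^n := \C(q^{\frac{1}{2}}) \otimes_{\Z[q^\pm]} \tauKzero{\EBnFoam}$ together with the classes $\xx_i^{(k)} = [(\FE_{i}^{(k)},\varphi)]_\tau$ (for the appropriate equivariant structures $\varphi$) satisfies equations \eqref{eq:xidefn} and \eqref{eq:iSerre} by Corollaries \ref{cor:K0(X2)} and \ref{cor:K0(Xm)} (condition A1); the equivariant Rickard complexes give A4 through Theorem \ref{thm:Ctaubraid}; the unknot computations \eqref{eq:unknotcategorified1}--\eqref{eq:unknotcategorified2} give A3; and $T_m^n$ is trace-like by \eqref{eq:K0trace} together with the braid-like isomorphism \eqref{eq:KhRM1map}. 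Hence, once Conjectures \ref{conj:R3} and \ref{conj:TrX} are known for a given $n$, Theorem \ref{thm:characterization} applies and the bookkeeping of Theorem \ref{thm:decat} produces \eqref{eq:decat}; proving both for all $n$ yields Conjecture \ref{conj:decat} in full.

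For Conjecture \ref{conj:TrX} I would follow the reduction indicated in the text to the more elementary Conjecture \ref{conj:Xrot}, that the partial rotation of $\xx^{(k)}$ equals $\xx^{(n-k)}$; given \eqref{eq:ItoX} with the explicit coefficients \eqref{lambdacoeff}, identity \eqref{eq:TrX} then follows from Proposition \ref{itoxchangeofbasis}. To prove Conjecture \ref{conj:Xrot} I would expand $\xx^{(k)}$ in the basis $\{\II^{(n-\ell)}\}_\ell$ of $\End_{U_q(\son)}(S \otimes S)$ via \eqref{eq:ItoX}, compute the partial rotation on each $\II^{(\ell)} = \Ya[\ell]\circ\Pa[\ell]$ using pivotality (Remark \ref{R:coherent-self-duality}) and the bigon evaluations of Lemma \ref{bigon}, and match the result against the expansion of $\xx^{(n-k)}$. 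The identity that remains should be a family of relations among the quantities $``[m][n]"$, of the same nature as Lemma \ref{L:rho-recurrence} and the identities in Appendix \ref{S:combinatorial-identities}; I expect this to be routine if lengthy, and note that it already holds for $k = 0, n$ and for all $k$ when $n \le 3$. I emphasize that the ad hoc trick used in the proof of Theorem \ref{thm:decat} to recover the remaining trace values --- solving the two linear equations coming from the positive and negative Markov~II moves --- cannot work for $n \ge 4$, since there are then $n - 1 > 2$ unknowns, so genuinely new input on the traces is required.

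The main obstacle is Conjecture \ref{conj:R3}: that whenever elements $\xx_r^{(i)}$ satisfy \eqref{eq:xidefn} and \eqref{eq:iSerre}, a ``Reidemeister III''-type relation
\[
\xx_{i}^{(a)} \xx_{i\pm1}^{(b)} \xx_{i}^{(c)} = \xi\cdot \xx_{i\pm1}^{(a')} \xx_{i}^{(b')} \xx_{i\pm1}^{(c')} + \mathrm{LOT}_{a,b,c}
\]
holds for all $1 \le a,b,c \le n$. For any single fixed $n$ this is a finite check, as in Proposition \ref{prop:R3}, but a uniform proof needs more. I see two natural routes, each hard. The algebraic route is to establish a normal form for monomials in the $\xx_i^{(k)}$ in the algebra presented by \eqref{eq:xidefn} and the devil's Serre relations of Proposition \ref{prop:iSerreintro} --- e.g.~via a Bergman-style diamond-lemma argument ordering monomials by the underlying symmetric-group word --- from which both the RIII relations and the finite-dimensionality of $U'_{-q^2}(\som)^{\le n}$ (and hence, with Remark \ref{rem:itsthekernelsketch}, Conjecture \ref{conj:itsthekernel}) would follow. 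The diagrammatic route is to develop the type $B_n$ web calculus for intertwiners between the spin representation $S$ and the vector representation $V_1$, extending the relations of \S \ref{ss:Webson} past the partial list given there, and to read the RIII relations off from closed-graph evaluations; this is precisely a piece of Kuperberg's webs problem in type $B$ and is, to my mind, the deepest part of the whole program. With Conjectures \ref{conj:R3} and \ref{conj:TrX} in hand for a value of $n$, the rest --- invoking Theorem \ref{thm:characterization} and repeating the argument of Theorem \ref{thm:decat} --- is immediate.
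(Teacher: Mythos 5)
Your reduction is missing the third prerequisite that Remark \ref{rem:conditional} names explicitly: Conjecture \ref{conj:TrX2}. Verifying Condition A2 of Theorem \ref{thm:characterization} for the categorified algebra $A_m^n = \C(q^{\frac{1}{2}}) \otimes_{\Z[q^\pm]} \tauKzero{\EBnFoam}$ requires more than the trace-like property (which is all you address, via \eqref{eq:K0trace} and \eqref{eq:KhRM1map}): A2 also demands that $T_m^n$ satisfy \eqref{eq:TrX}, and the proof of Theorem \ref{thm:decat} reduces this to the categorified identity \eqref{eq:lastconj} about $\tr(\tauast|_{\Hom_{\BnFoam[2]}(\one_{\wtn},\FE^{(k)})})$, established only for $k=0$, $k=n$, and (via the Markov-II trick) for $1\le k\le n-1$ when $n\le 3$. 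You have correctly observed that this trick fails for $n\ge 4$, but you have filed the observation under the wrong heading: the Markov-II system of equations is a device for verifying \eqref{eq:lastconj} --- that is, Conjecture \ref{conj:TrX2}, a statement about the categorified trace $T_2^n$ on $\tauKzero{\EBnFoam[2]}$ --- whereas Conjecture \ref{conj:TrX} concerns the quantum trace $\trq$ on $\End_{U_q(\son)}(S^{\otimes m})$ and is verified for $n\le3$ by direct diagrammatic computation in Proposition \ref{prop:TrX}, with no Markov-II input. These are distinct statements about distinct traces. Corollary \ref{cor:XmnDecat} gives an \emph{algebra} isomorphism $\C(q)\otimes_{\Z[q^\pm]}\tauKzero{\EBnFoam[2]}\cong\End_{U_q(\son)}(S\otimes S)$, but nothing proved in the paper asserts that this isomorphism intertwines $T_2^n$ with $\trq$; indeed, establishing such a compatibility for all $m$ is essentially the content of Conjecture \ref{conj:decat} itself, so it cannot be assumed. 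Hence the sentence ``once Conjectures \ref{conj:R3} and \ref{conj:TrX} are known for a given $n$, Theorem \ref{thm:characterization} applies and the bookkeeping of Theorem \ref{thm:decat} produces \eqref{eq:decat}'' is not correct: one must add Conjecture \ref{conj:TrX2} to the list of hypotheses. Your sketches for attacking Conjectures \ref{conj:R3} and \ref{conj:TrX} (via normal forms or type $B$ webs, and via Conjecture \ref{conj:Xrot}, respectively) are reasonable, but you still need a third line of attack on \eqref{eq:lastconj} before the argument closes.
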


The proof of Theorem \ref{thm:decat} would suffice to prove Conjecture \ref{conj:decat} 
once additional technical details are verified.
Specifically, the proof of Theorem \ref{thm:decat} uses Theorem \ref{thm:characterization}, 
which holds unqualified only when $n=1,2,3$. 
For $n \geq 4$, 
Theorem \ref{thm:characterization} is conditional on Conjectures \ref{conj:R3} and \ref{conj:TrX}. 
Additionally, in the proof of Theorem \ref{thm:decat}
we only checked Condition A2.~of Theorem \ref{thm:characterization} when $k=0$, $k=n$, 
and for $1 \leq k \leq n-1$ when $n=1,2,3$. 
(Our ``trick'' used for these latter cases does not work for $n\geq 4$.) 
After establishing Condition A2.,~the rest of the proof of Theorem \ref{thm:decat} 
generalizes word-for-word.

\begin{rem}
	\label{rem:conditional}
Conjecture \ref{conj:decat} is therefore an immediate consequence of 
Conjectures \ref{conj:R3} and \ref{conj:TrX}
and the following conjecture.
\end{rem}

\begin{conj}
	\label{conj:TrX2}
Equation \eqref{eq:lastconj} additionally holds for all $n\geq 4$ and all $1 \leq k \leq n-1$.
\end{conj}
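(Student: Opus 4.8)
\textbf{Plan for Conjecture \ref{conj:TrX2}.}
The conjecture asks us to verify equation \eqref{eq:lastconj}, which, after the reductions carried out in the proof of Theorem \ref{thm:decat}, is equivalent to computing the graded trace of $\tauast$ on $\Hom_{\BnFoam[2]}(\one_{\wtn}, \FE^{(k)})$ with respect to the equivariant objects $(\FE^{(k)}, +\swcl_k)$ and $(\one_{\wtn},+)$, and matching it with $(-1)^{nk} q^{2n^2} \prod_{t=1}^{n-k} \frac{``[n+1-t][n+t]"}{``[t]^2"} \prod_{i=1}^n (q^{2i-1}+q^{1-2i})$. The cases $k=0$ and $k=n$ are already established by direct computation in the proof of Theorem \ref{thm:decat}, so the content is the range $1 \le k \le n-1$. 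The plan is to reinterpret this trace in terms of the weighted Grothendieck group $\tauKzero{\EBnFoam[2]}$. Specifically, $q^{-2n^2}\tr(\tauast|_{\Hom_{\BnFoam[2]}(\one_{\wtn},\FE^{(k)})})$ is, up to a sign and power of $q$ depending only on $n$ and $k$, the pairing in $\tauKzero{\EBnFoam[2]}$ of the class $[(\one_{\wtn},+)]_\tau$ with the class $[(\FE^{(k)}, \pm\swcl_k)]_\tau$, evaluated via the trace-like functor $T_2^n$ from the proof of Theorem \ref{thm:decat}.

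First I would make precise the relation between $(\FE^{(k)}, +\swcl_k)$ and the normalized generator $\xx^{(k)} = [(\FE^{(k)}, (-1)^{\binom{n+1}{2}+\binom{n-k+1}{2}+k}\swcl_k)]_\tau$ appearing in Corollary \ref{cor:K0(X2)}, so that the sought trace becomes $T_2^n(\xx^{(k)})$ up to an explicit sign $(-1)^{\binom{n+1}{2}+\binom{n-k+1}{2}+k}$. Then, using Proposition \ref{itoxchangeofbasis} to expand $\xx^{(k)} = \sum_{\ell \ge k} \prescript{n-k}{}\lambda_{n-\ell} \II^{(n-\ell)}$ in the rotation-invariant-friendly basis $\{\II^{(j)}\}$, the problem is reduced to computing $T_2^n(\II^{(j)})$ for all $j$, i.e.~the trace of $\tauast$ on $\Hom_{\BnFoam[2]}(\one_{\wtn}, \Ya[n-j]\circ\Pa[n-j])$-type objects, which after decategorification corresponds to the quantum trace of the idempotent-like endomorphisms $\II^{(j)} \in \End_{U_q(\son)}(S\otimes S)$. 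The key input here is Conjecture \ref{conj:Xrot} (or the weaker statement \eqref{eq:TrX} for all $k$, i.e.~Conjecture \ref{conj:TrX}): the ``rotation'' identity $\trq(\xx^{(k)}) = (-1)^{n(k+1)+\binom{n-k}{2}}\prod_{t=1}^{n-k}\frac{``[n+1-t][n+t]"}{``[t]^2"}\cdot(\text{circle value})$ is precisely what pins down the right-hand side of \eqref{eq:lastconj}.

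The cleanest route, then, is to deduce \eqref{eq:lastconj} for $1 \le k \le n-1$ from Conjecture \ref{conj:TrX} (equivalently Conjecture \ref{conj:Xrot}) together with Corollary \ref{cor:K0(Xm)}/\ref{cor:K0(X2)}, by showing that the categorified trace functor $T_2^n$ agrees with $\trq$ under the isomorphism $A_2^n \cong \End_{U_q(\son)}(S\otimes S)$ of Corollary \ref{cor:XmnDecat}. The argument would run: (1) $T_2^n$ is trace-like on $A_2^n$, established as in the proof of Theorem \ref{thm:decat}; (2) $A_1^n \cong \C(q^{1/2})$ and $T_1^n$ matches the unknot value, already done; (3) the Markov II relations \eqref{eq:M2+T} and \eqref{eq:M2-T} hold for $T_2^n$ for all $n$, as these follow from Propositions \ref{prop:SLHM2+} and \ref{prop:SLHM2-} verbatim; and (4) $T_2^n$ is determined on the basis $\{\xx^{(k)}\}$ by far-commutativity-free data, so steps (1)--(3) plus the decategorified braiding formula force $T_2^n = \trq$, whence \eqref{eq:lastconj} is equivalent to \eqref{eq:TrX}. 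The main obstacle is step (4): proving that being trace-like, having the correct rank-one base case, and satisfying the two Markov II identities \emph{uniquely} determines a linear functional on $A_2^n$ for $n \ge 4$, since the ``trick'' of solving the $2\times 2$ linear system \eqref{eq:M2+T}--\eqref{eq:M2-T} used for $n=1,2,3$ becomes an $(n+1)$-term system that need not have a unique solution without additional relations. Overcoming this will require either invoking Conjecture \ref{conj:R3} to get enough relations among products $\xx_i^{(a)}\xx_{i\pm1}^{(b)}\xx_i^{(c)}$ to force uniqueness (mirroring the role of A3.~in Theorem \ref{thm:characterization}), or computing $T_2^n(\II^{(j)})$ directly via the rotation identity \eqref{eq:TrX} applied to the endopositive objects $\FE^{(k)}$ and their structure maps $\swcl_k$, tracking signs through \eqref{eq:thickcurl}, \eqref{eq:tauonthickcap}, and \eqref{eq:tauonthickcup} as in the $k=n$ computation — a computation that is routine in principle but sign-delicate in practice.
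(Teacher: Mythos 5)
This statement is a conjecture that the paper leaves open; there is no ``paper's own proof'' to compare against. What you have written is (as you yourself say) a plan rather than a proof, and it does not close the conjecture. Two comments on the specific difficulties.

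First, you correctly identify that the linear-algebra ``trick'' from the proof of Theorem~\ref{thm:decat} stalls for $n \geq 4$: the two Markov~II identities \eqref{eq:M2+T}--\eqref{eq:M2-T} give only two equations in the $n-1$ unknowns $T_2^n(\xx^{(k)})$ for $1 \leq k \leq n-1$, so they underdetermine the functional. Your proposed remedy of ``invoking Conjecture~\ref{conj:R3}'' is unlikely to help: Conjecture~\ref{conj:R3} is a set of relations \emph{inside} the algebra $A_m^n$ and constrains the product structure, not the values of a trace functional on it. Since $T_2^n$ is already known to be trace-like and to satisfy Markov~II, adding R3 gives no new linear equations on $T_2^n(\xx^{(k)})$ for fixed $m=2$. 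To use R3 one would have to pass to $m=3$ and argue that trace-likeness on $A_3^n$ together with R3 and Markov~II pins down $T_2^n$; but that is precisely the content of Theorem~\ref{thm:characterization}, whose Condition~A2 is the very thing you are trying to establish. So this route is circular as stated.

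Second, your fallback of ``computing $T_2^n(\II^{(j)})$ directly via the rotation identity \eqref{eq:TrX}'' mixes levels: \eqref{eq:TrX} is a decategorified statement about $\trq$ in $\Rep(U_q(\son))$, while the left side of \eqref{eq:lastconj} is a graded trace of $\tauast$ on a $\Hom$-space in $\BnFoam[2]$. The isomorphism of Corollary~\ref{cor:XmnDecat} identifies the Grothendieck groups as algebras but does not identify the trace $T_2^n$ with $\trq$ --- that identification is exactly what Condition~A2 (i.e., \eqref{eq:lastconj}) is needed for. What a genuine direct computation would require is a \emph{categorical} rotation identity for the equivariant objects $(\FE^{(k)}, \swcl_k)$, i.e., a categorified version of Conjecture~\ref{conj:Xrot}, tracking the signs from \eqref{eq:thickcurl}, \eqref{eq:tauonthickcap}, \eqref{eq:tauonthickcup}. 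That computation is not carried out, and the paper lists TrX2 alongside TrX and R3 as a logically independent open problem (see Remark~\ref{rem:conditional}). In short, your plan reduces one open conjecture to two other open conjectures without establishing either, and the one concrete new idea (using R3 for uniqueness) appears not to provide constraints of the right kind.
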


\appendix

%
\section{Proof of Lemma \ref{L:rho-recurrence}}\label{S:combinatorial-identities}
%

The goal of this section is to establish the identity \eqref{E:rho-recurrence}.

\subsection{On the devil's product}\label{SS:devil}

We begin with an informal discussion on the devil's arithmetic, 
and the derivation of some useful identities.
Recall from Definition \ref{def:devil} that the devil's product is defined
for $m \leq n\in \Zge$ by
\begin{equation}
	\label{eq:devil}
``[m][n]" = ``[n][m]" := [n+m- 1] - [n+m - 3] + [n+m - 5] \mp \cdots + (-1)^{m-1} [n-m+1] \, .
\end{equation}
Using the relation $[2][n] = [n+1] + [n-1]$
involving the usual product of quantum integers, 
this implies the relation
\begin{equation} \label{E:2mn}
[2]\cdot ``[m][n]" = [n+m] + (-1)^{m-1}[n-m]
\end{equation}

Before presenting additional helpful identities, 
we warn the reader of two unintuitive aspects of devil's arithmetic 
already present above: lack of symmetry and dependence on parity.
When multiplying two quantum numbers, 
one can expand the product as a sum as follows.
\[ 
[2][n] = [n+1] + [n-1] \, , \quad 
[3][n] = [n+2] + [n] + [n-2] \, , \quad 
[4][n] = [n+3] + [n+1] + [n-1] + [n-3] \, , \quad \text{etc.}
\]
Multiplication is commutative, 
which gives rise to two ways to expand any product, 
e.g.
\begin{equation}
	\label{eq:qmult}
[3][5] = [7] + [5] + [3] \, , \quad [5][3] = [7] + [5] + [3] + [1] + [-1] \, ,
\end{equation}
and $[3][5] = [5][3]$ because $[1] + [-1] = 0$. 
In general, the equality between these two expansions follows from
a cancellation of positive and negative quantum integers.

In contrast, 
the devil's product does not admit two equivalent signed expansions! 
Paralleling the example in \eqref{eq:qmult},
we have
\[
``[3][5]" := [7] - [5] + [3] \, , \quad \text{which does \textbf{not} equal } [7] - [5] + [3] - [1] + [-1] \, . 
\]
When deriving formulas for $''[m][n]"$,
one cannot treat $m$ and $n$ symmetrically,
since \eqref{eq:devil} assumes that $m \leq n$.
Equation \eqref{E:2mn} further illustrates
the asymmetry between $m$ and $n$, 
as well as the dependence on the parity of the smaller number $m$.
An easy consequence of that relation is that the $q=1$ 
specialization of the devil's product is given by
\[
``[m][n]"|_{q=1} = \begin{cases} n & \text{ if $m$ is odd,} \\ 
				m & \text{ if $m$ is even.} \end{cases} 
\]

We now record some alternative formulae for the devil's product.

\begin{nota}
If $k \in \Z_{\ge 0}$, then we write $[2]_k := [2]_{q^k} := q^k+ q^{-k}$. 
\end{nota}

Note that $[2] = [2]_1$ divides $[2]_k$ if and only if $k$ is odd 
and that $[k]$ divides $[m]$ when $k$ divides $m$. 
Also, we record the identity
\begin{equation} \label{E:dividequantum} \frac{[k \cdot \ell]}{[k]} = [\ell]_{q^k} \, . \end{equation}

\begin{lem}
If $m\le n$ in $\Z_{\ge 0}$, then
\[
``[m][n]"=\begin{cases}
[2]_{n}[m/2]_{q^2} & \text{if $m$ is even}, \\
[2]_m [n/2]_{q^2} & \text{if $m$ is odd and $n$ is even}, \\
(\frac{[2]_m}{[2]}) [n] & \text{if $m$ and $n$ are odd}.
\end{cases}
\]
\end{lem}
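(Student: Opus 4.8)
The plan is to reduce all three formulas to the single identity \eqref{E:2mn}, which reads $[2]\cdot ``[m][n]" = [n+m] + (-1)^{m-1}[n-m]$, and then to evaluate its right-hand side in closed form. First I would record two elementary identities valid in $\Z[q^{\pm}]$ for $m \le n$: clearing denominators (using $[k] = (q^k-q^{-k})/(q-q^{-1})$) and regrouping the numerators gives
\[
[n+m] + [n-m] = [2]_m\,[n], \qquad [n+m] - [n-m] = [2]_n\,[m],
\]
the first because $q^{n+m}+q^{n-m}-q^{-n-m}-q^{-n+m} = (q^n-q^{-n})(q^m+q^{-m})$, the second because $q^{n+m}-q^{n-m}-q^{-n-m}+q^{-n+m} = (q^n+q^{-n})(q^m-q^{-m})$. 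I would also record the $\ell = 2$ instance of \eqref{E:dividequantum}, namely $[2k] = [2]\,[k]_{q^2}$ for all $k \ge 0$.

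The three cases then fall out of a parity analysis. If $m$ is even, then $(-1)^{m-1} = -1$, so \eqref{E:2mn} and the second identity give $[2]\cdot ``[m][n]" = [n+m]-[n-m] = [2]_n[m] = [2]_n\cdot[2]\,[m/2]_{q^2}$; cancelling $[2]$ (legitimate since $\Z[q^{\pm}]$ is a domain) yields $``[m][n]" = [2]_n[m/2]_{q^2}$. If $m$ is odd, then $(-1)^{m-1} = 1$, so $[2]\cdot ``[m][n]" = [n+m]+[n-m] = [2]_m[n]$. When $n$ is additionally even I would write $[n] = [2]\,[n/2]_{q^2}$ and cancel $[2]$ to get $``[m][n]" = [2]_m[n/2]_{q^2}$; when $n$ is odd I would instead observe that $[2] = q+q^{-1}$ divides $[2]_m = q^m+q^{-m}$ (because $m$ is odd), so that $[2]_m/[2] \in \Z[q^{\pm}]$ and $``[m][n]" = ([2]_m/[2])\,[n]$.

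The only points requiring care are bookkeeping: tracking which sign appears in \eqref{E:2mn} for each parity of $m$, and noticing that the factor of $[2]$ being cancelled is extracted from $[m]$ (when $m$ is even), from $[n]$ (when $m$ odd, $n$ even), or from $[2]_m$ (when both are odd) — the last of these also being what guarantees the third formula genuinely lies in $\Z[q^{\pm}]$. There is no serious obstacle here; the lemma is a routine corollary of \eqref{E:2mn}. As a consistency check one may specialize at $q=1$: the three right-hand sides become $2\cdot(m/2) = m$, $2\cdot(n/2) = n$, and $(2/2)\cdot n = n$ respectively, in agreement with the value of $``[m][n]"|_{q=1}$ recorded earlier in this subsection. (Alternatively, each case can be verified directly by summing the alternating sum in \eqref{eq:devil}, but routing through \eqref{E:2mn} is cleaner.)
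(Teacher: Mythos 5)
Your proposal is correct and takes essentially the same route as the paper: both start from \eqref{E:2mn}, factor the right-hand side (the paper writes the single numerator factorization $(q^n + (-1)^m q^{-n})(q^m + (-1)^{m-1} q^{-m})$, which, reading off the parity of $m$, gives exactly your two identities $[n+m]\pm[n-m]=[2]_n[m]$ or $[2]_m[n]$), and then cancel a factor of $[2]$ using the $k=2$ case of \eqref{E:dividequantum}. Your observation that $[2]\mid[2]_m$ when $m$ is odd is a helpful explicit justification of what the paper leaves implicit in the third case.
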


\begin{proof}
Expanding the quantum integers as $[k] = \frac{q^k - q^{-k}}{q-q^{-1}}$ 
on the right-hand side of \eqref{E:2mn}, we obtain
\[
[2]``[m][n]" = \frac{(q^{n+m} - q^{-n-m}) + (-1)^{m-1} (q^{n-m} - q^{m-n})}{q-q^{-1}} 
	= \frac{(q^n + (-1)^m q^{-n})(q^m + (-1)^{m-1} q^{-m})}{q - q^{-1}} \, .
\]
This is clearly equal to $[2]_n [m]$ or $[2]_m [n]$ depending on the parity of $m$. 
Now divide by $[2]$, 
and possibly use the $k=2$ case of \eqref{E:dividequantum}. \end{proof}


\begin{lem} We have
\begin{equation}\label{E:devils-numbers-2}
``[n]^2" = [n]_{q^2}.
\end{equation}
\end{lem}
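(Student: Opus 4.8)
The goal is to establish the identity $``[n]^2" = [n]_{q^2}$, where the left-hand side denotes the devil's product $``[n][n]"$. The plan is to simply apply the case distinctions of the previous lemma with $m = n$, since here both factors are equal.

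First I would note that when $m = n$, the hypothesis $m \le n$ is trivially satisfied, so the previous lemma applies. There are then two cases depending on the parity of $n$. If $n$ is even, the lemma gives $``[n][n]" = [2]_n [n/2]_{q^2}$. I would then rewrite $[2]_n = q^n + q^{-n} = (q^2)^{n/2} + (q^2)^{-n/2} = [2]_{(q^2)^{n/2}}$, and recognize that $[2]_{(q^2)^{n/2}} \cdot [n/2]_{q^2}$ is exactly the expansion of $[n]_{q^2}$ via the relation $[2]_{t^a}[a]_t = [2a]_t$ (which is the $k=2$, and more generally the doubling, instance of the quantum-integer product $[2]_{q^k}[k\ell/... ]$ reasoning used above, or directly $[2]_{s^a}[a]_s = [2a]_s$). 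Concretely, with $s = q^2$ and $a = n/2$ we get $[2]_{s^{n/2}}[n/2]_s = [n]_s = [n]_{q^2}$, as desired.

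If $n$ is odd, both $m = n$ and $n$ are odd, so the third case of the lemma gives $``[n][n]" = \left(\frac{[2]_n}{[2]}\right)[n]$. Here I would use $[2]_n/[2] = (q^n + q^{-n})/(q + q^{-1})$ and the identity $\frac{[2n]}{[2]} = [n]_{q^2}$ from \eqref{E:dividequantum} (with $k=2$, $\ell=n$), after observing that $\frac{[2]_n}{[2]}[n] = \frac{(q^n+q^{-n})(q^n - q^{-n})}{(q+q^{-1})(q-q^{-1})} = \frac{q^{2n}-q^{-2n}}{q^2 - q^{-2}} = [n]_{q^2}$. This is a direct computation, so there is essentially no obstacle here.

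I do not anticipate any serious difficulty: the statement is an immediate corollary of the preceding lemma once the algebraic identity $[2]_{t^a}[a]_t = [2a]_t$ (equivalently the two displayed forms of $[n]_{q^2}$) is invoked. The only minor care needed is to handle the even and odd cases uniformly, or at least to present both cleanly; the ``hard part,'' such as it is, is just bookkeeping with the $[2]_k$ notation and \eqref{E:dividequantum}. I would write the proof as a two-line case analysis with the computations inlined.

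\begin{proof}
Apply the previous lemma with $m = n$. If $n$ is even, then $``[n]^2" = [2]_{n}[n/2]_{q^2}$. Writing $[2]_n = (q^2)^{n/2} + (q^2)^{-n/2}$ and using that $\big((q^2)^{n/2} + (q^2)^{-n/2}\big)[n/2]_{q^2} = [n]_{q^2}$ gives the claim. If $n$ is odd, then $``[n]^2" = \big(\tfrac{[2]_n}{[2]}\big)[n]$, and
\[
\frac{[2]_n}{[2]}[n] = \frac{(q^n+q^{-n})(q^n - q^{-n})}{(q+q^{-1})(q-q^{-1})} = \frac{q^{2n}-q^{-2n}}{q^2 - q^{-2}} = [n]_{q^2} \, ,
\]
where the last equality is the $k=2$, $\ell = n$ case of \eqref{E:dividequantum}.
\end{proof}
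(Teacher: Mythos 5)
Your proof is correct, but it takes a more circuitous route than the paper's. The paper's proof is a one-liner: specialize \eqref{E:2mn} to $m=n$ to get $[2]\cdot``[n]^2" = [2n] + (-1)^{n-1}[0] = [2n]$, then divide by $[2]$ and invoke the $k=2$ case of \eqref{E:dividequantum}. This works uniformly in $n$ with no case split at all, since the $[n-m]$ term vanishes identically when $m=n$. You instead pass through the case-analysis lemma (which is itself a downstream consequence of \eqref{E:2mn}) and then re-assemble the answer parity-by-parity; each case closes out correctly, but you end up re-deriving the identity $(q^n+q^{-n})(q^n-q^{-n})/(q^2-q^{-2}) = [n]_{q^2}$ by hand, which is exactly what the direct route makes unnecessary. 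The direct approach is shorter and avoids the parity bookkeeping; your version has the minor pedagogical merit of illustrating the previous lemma, but if you notice that the $[n-m]$ term dies at $m=n$, that observation should prompt you to skip the case lemma entirely.
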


\begin{proof} 
An immediate consequence of \eqref{E:2mn} and the $k=2$ case of \eqref{E:dividequantum}.
\end{proof}

\subsection{The proof of Lemma \ref{L:rho-recurrence}} \label{SS:rhoforreal}

Recall that equation \eqref{E:rho-recurrence} defines
\[
\rho_{t}^{(\ell)} := (-1)^{\binom{\ell+2-t}{2}} + q^{-1}\dfrac{``[\ell+1-t][\ell+t]"}{``[t]^2"}\rho_{t+1}^{(\ell)} \, .
\]
recursively  for $1 \leq t \leq \ell$ by declaring that $\rho_{\ell+1}^{(\ell)} := 1$.
We aim to prove that $\rho_{1}^{(\ell)} = (q^{-2})^{\binom{\ell+1}{2}}$. 
To do so, we will unravel the recurrence to obtain a formula for $\rho_1^{(\ell)}$, 
which will depend on the value of $\ell$ modulo $4$.

\begin{example}\label{Ex:expand-rho}
Consider the case when $\ell=4n$, for $n\in \Z_{\ge 0}$. 
We find that
\begin{align*}
\rho_{1}^{(4n)} &= 1 + q^{-1}\frac{[2]_{q^{4n+1}}[2n]_{q^2}}{[1]_{q^2}}\rho_{2}^{(4n)} \\
&= 1 + q^{-1}\frac{[2]_{q^{4n+1}}[2n]_{q^2}}{[1]_{q^2}} 
	-q^{-2}\frac{[2]_{q^{4n+1}}[2]_{q^{4n-1}}[2n]_{q^2}[2n+1]_{q^{2}}}{[1]_{q^2}[2]_{q^2}}\rho_{3}^{(4n)} \\
&= \cdots \\
&= 1-q^{-2}[2]_{q^{4n+1}}[2]_{q^{4n-1}}{2n+1\brack 2}_{q^2} 
	+ q^{-4}[2]_{q^{4n+1}}[2]_{q^{4n-1}}[2]_{q^{4n+3}}[2]_{q^{4n-3}}{2n+2\brack 4}_{q^2} \pm \cdots \\
&\qquad +q^{-1}[2]_{q^{4n+1}}{2n\brack 1}_{q^2} 
	- q^{-3}[2]_{q^{4n+1}}[2]_{q^{4n-1}}[2]_{q^{4n+3}}{2n+1\brack 3}_{q^2} \pm \cdots
\end{align*}
where we have reordered the terms in the final equation. 
The assertion is that this sum simplifies to $(q^{-2})^{\binom{4n+1}{2}}$.
\end{example}

\begin{rem}
Before continuing, we sketch a proof of the $q=1$ specialization of the result, 
when $\ell=8$. This suggests a useful heuristic for the general arguments below.
By example \ref{Ex:expand-rho}, 
the assertion is that
\begin{equation}
	\label{eq:k=8,q=1}
\begin{aligned}
1 &= 2^0 {4 \choose 0} - 2^2 {5 \choose 2} + 2^4 {6 \choose 4} - 2^6 {7 \choose 6} + 2^8 {8 \choose 8} \\
&\quad +2^1 {4 \choose 1} - 2^3 {5 \choose 3} + 2^5 {6 \choose 5} - 2^7 {7 \choose 7} \, .
\end{aligned}
\end{equation}
Here, we have suggestively written $1$ as $2^0 {4 \choose 0}$ for the first term on the right-hand side.

We can organize this sum by highlighting the relevant entries of Pascal's triangle 
with solid (for the first row in \eqref{eq:k=8,q=1}) and dashed (for the second) circles, 
and labeling them by the appropriate coefficient $\pm 2^a$:
\begin{equation}
	\label{eq:PT}
\begin{tikzpicture}[anchorbase,scale=.5]
\node at (0,0) {$1$};
\node at (-1,-1) {$1$}; \node at (1,-1) {$1$};
\node at (-2,-2) {$1$}; \node at (0,-2) {$2$}; \node at (2,-2) {$1$};
\node at (-3,-3) {$1$}; \node at (-1,-3) {$3$}; \node at (1,-3) {$3$}; \node at (3,-3) {$1$}; 
\node at (-4,-4) {$1$}; \node at (-2,-4) {$4$}; \node at (0,-4) {$6$}; \node at (2,-4) {$4$}; \node at (4,-4) {$1$}; 
\node at (-5,-5) {$1$}; \node at (-3,-5) {$5$}; \node at (-1,-5) {$10$}; \node at (1,-5) {$10$}; 
	\node at (3,-5) {$5$}; \node at (5,-5) {$1$};
\node at (-6,-6) {$1$}; \node at (-4,-6) {$6$}; \node at (-2,-6) {$15$}; \node at (0,-6) {$20$}; 
	\node at (2,-6) {$15$}; \node at (4,-6) {$6$}; \node at (6,-6) {$1$};
\node at (-7,-7) {$1$}; \node at (-5,-7) {$7$}; \node at (-3,-7) {$21$}; \node at (-1,-7) {$35$}; 
	\node at (1,-7) {$35$}; \node at (3,-7) {$21$}; \node at (5,-7) {$7$}; \node at (7,-7) {$1$};
\node at (-8,-8) {$1$}; \node at (-6,-8) {$8$}; \node at (-4,-8) {$28$}; \node at (-2,-8) {$56$}; \node at (0,-8) {$70$}; 
	\node at (2,-8) {$56$}; \node at (4,-8) {$28$}; \node at (6,-8) {$8$}; \node at (8,-8) {$1$};
\draw (-4,-4) node[left=3pt,yshift=-3pt]{\tiny${+}2^0$} circle (.4);
\draw[dashed] (-2,-4) node[left=3pt,yshift=-3pt]{\tiny${+}2^1$} circle (.4);
\draw (-1,-5) node[left=3pt,yshift=-3pt]{\tiny${-}2^2$} circle (.4);
\draw[dashed] (1,-5) node[left=3pt,yshift=-3pt]{\tiny${-}2^3$} circle (.4);
\draw (2,-6) node[left=3pt,yshift=-3pt]{\tiny${+}2^4$} circle (.4);
\draw[dashed] (4,-6) node[left=3pt,yshift=-3pt]{\tiny${+}2^5$} circle (.4);
\draw (5,-7) node[left=3pt,yshift=-3pt]{\tiny${-}2^6$} circle (.4);
\draw[dashed] (7,-7) node[left=3pt,yshift=-3pt]{\tiny${-}2^7$} circle (.4);
\draw (8,-8) node[left=3pt,yshift=-3pt]{\tiny${+}2^8$} circle (.4);
\end{tikzpicture}
\end{equation}
Using the defining property of Pascal's triangle, 
we can remove each solid circle labeled by $\pm 2^a$ 
and add this value to the coefficient for both spots diagonally above it 
(and ignoring spots outside of the triangle). 
This leaves the corresponding sum unchanged, and yields the schematic:
\[
\begin{tikzpicture}[anchorbase,scale=.5]
\node at (0,0) {$1$};
\node at (-1,-1) {$1$}; \node at (1,-1) {$1$};
\node at (-2,-2) {$1$}; \node at (0,-2) {$2$}; \node at (2,-2) {$1$};
\node at (-3,-3) {$1$}; \node at (-1,-3) {$3$}; \node at (1,-3) {$3$}; \node at (3,-3) {$1$}; 
\node at (-4,-4) {$1$}; \node at (-2,-4) {$4$}; \node at (0,-4) {$6$}; \node at (2,-4) {$4$}; \node at (4,-4) {$1$}; 
\node at (-5,-5) {$1$}; \node at (-3,-5) {$5$}; \node at (-1,-5) {$10$}; \node at (1,-5) {$10$}; 
	\node at (3,-5) {$5$}; \node at (5,-5) {$1$};
\node at (-6,-6) {$1$}; \node at (-4,-6) {$6$}; \node at (-2,-6) {$15$}; \node at (0,-6) {$20$}; 
	\node at (2,-6) {$15$}; \node at (4,-6) {$6$}; \node at (6,-6) {$1$};
\node at (-7,-7) {$1$}; \node at (-5,-7) {$7$}; \node at (-3,-7) {$21$}; \node at (-1,-7) {$35$}; 
	\node at (1,-7) {$35$}; \node at (3,-7) {$21$}; \node at (5,-7) {$7$}; \node at (7,-7) {$1$};
\node at (-8,-8) {$1$}; \node at (-6,-8) {$8$}; \node at (-4,-8) {$28$}; \node at (-2,-8) {$56$}; \node at (0,-8) {$70$}; 
	\node at (2,-8) {$56$}; \node at (4,-8) {$28$}; \node at (6,-8) {$8$}; \node at (8,-8) {$1$};
\draw (-3,-3) node[left=3pt,yshift=-3pt]{\tiny${+}2^0$} circle (.4);
\draw (-2,-4) node[left=3pt,yshift=-3pt]{\tiny${-}2^1$} circle (.4);
\draw (0,-4) node[left=3pt,yshift=-3pt]{\tiny${-}2^2$} circle (.4);
\draw (1,-5) node[left=3pt,yshift=-3pt]{\tiny${+}2^3$} circle (.4);
\draw (3,-5) node[left=3pt,yshift=-3pt]{\tiny${+}2^4$} circle (.4);
\draw (4,-6) node[left=3pt,yshift=-3pt]{\tiny${-}2^5$} circle (.4);
\draw (6,-6) node[left=3pt,yshift=-3pt]{\tiny${-}2^6$} circle (.4);
\draw (7,-7) node[left=3pt,yshift=-3pt]{\tiny${+}2^7$} circle (.4);
\end{tikzpicture}
\]
The corresponding sum is exactly the $q=1$ specialization of $\rho_1^{(7)}$.
Iterating this procedure, we eventually see that
$\rho_1^{(8)}|_{q=1} = \cdots = \rho_1^{(0)}|_{q=1} = 1$.
\end{rem}

Trying to adapt this argument for generic $q$ is not straightforward. 
There are well-known analogues of Pascal's identity, which we record here:
\begin{equation}\label{E:q-pascal}
{x\brack y}_{q} = q^{y}{x-1 \brack y}_{q} + q^{y-x}{x-1 \brack y-1}_{q}
\end{equation}
and
\begin{equation}\label{E:q-pascal2}
{x\brack y}_{q} = q^{-y}{x-1 \brack y}_{q} + q^{-y+x}{x-1 \brack y-1}_{q}.
\end{equation}
However, 
the cancellation between solid and dashed circles in \eqref{eq:PT}, 
which simply used that $2^{m+1} - 2^{m} = 2^{m}$,
now requires a cancellation of Laurent polynomials which is not guaranteed 
by the corresponding $q=1$ cancellation. 
One must keep careful track of the powers of $q$, in both the quantum binomial coefficients 
(choosing between \eqref{E:q-pascal} and \eqref{E:q-pascal2} as appropriate) 
and the complicated ``powers of $2$'' appearing e.g.~in Example \ref{Ex:expand-rho}.

\begin{rem} 
In general, the scalars $\rho_t^{(\ell)}$ are structural coefficients in the type $B$ web algebra 
which describe how to rewrite the expression for the spin braiding 
$R_{S,S} = q^{\frac{n}{2}}\sum_{\ell \ge 0}q^{-\ell}\xx^{(\ell)}$, in the $\II^{(\ell)}$ basis; 
see the proof of Proposition \ref{P:decat-spin-rickard}. 
At $q=1$, one can use similar techniques to analyze the integers $\rho_t^{(\ell)}$ for $t > 1$. 
However, for generic $q$ and $t > 1$, the desired cancellations do not work! 
At present, we do not have a closed formula for $\rho_t^{(\ell)}$ for $t > 1$, 
and it seems to be a difficult problem. \end{rem}

Returning to the task at hand, 
we now introduce notation to help make the sums expressing $\rho_{1}^{(\ell)}$ more compact.

\begin{defn}
Given $c,d\in \Z_{\ge 0}$, 
write $(2)_c^d$ to denote the product consisting of the first $d$ terms of 
\[
[2]_{c+1}[2]_{c-1}[2]_{c+3}[2]_{c-3} \cdots
\]
and write $\{2\}_c^d$ to denote the product consisting of the first $d$ terms of 
\[
[2]_{c-1}[2]_{c+1}[2]_{c-3}[2]_{c+3} \cdots \, .
\]
For $a,b\in \Z_{\ge 0}$, and $\epsilon\in \{\pm 1\}$, 
we then set
\[
A(a,b,\epsilon) = \sum_{i \ge 0}(-1)^iq^{2i\epsilon}(2)_{a}^{2i}{b+i \brack 2i}_{q^2} \, , \quad 
B(a,b,\epsilon) = \sum_{i \ge 0}(-1)^iq^{2i\epsilon}(2)_{a}^{2i+1}{b+i \brack 2i+1}_{q^2}
\]
and
\[
A'(a,b,\epsilon) = \sum_{i \ge 0}(-1)^iq^{2i\epsilon}\{2\}_{a}^{2i}{b+i \brack 2i}_{q^2} \, , \quad 
B'(a,b,\epsilon) = \sum_{i \ge 0}(-1)^iq^{2i\epsilon}\{2\}_{a}^{2i+1}{b+i \brack 2i+1}_{q^2} \, .
\]
\end{defn}

Observe that, for all $a$ and $\epsilon$, 
\begin{equation}
	\label{eq:special-values-AB}
A(a,0, \epsilon) = 1 = A'(a,0, \epsilon) \quad \text{and} \quad 
B(a, 0 , \epsilon) = 0 = B'(a, 0 , \epsilon) \, .
\end{equation}

\begin{lem}\label{L:expand-rho-AB}
For $n \geq 0$,
\[
\rho_{1}^{(4n)} = A(4n, 2n,-1) + q^{-1} B(4n, 2n, -1) \, ,
\]
\[
\rho_{1}^{(4n+1)} = -A'(4n+2, 2n, -1) + q^{-1}B'(4n+2, 2n+1, -1) \, , 
\]
\[
\rho_{1}^{(4n+2)} = -A(4n+2, 2n+1, -1) - q^{-1}B(4n+2, 2n+1, -1) \, ,
\]
and
\[
\rho_{1}^{(4n+3)} = A'(4n+4, 2n+1, -1) - q^{-1}B'(4n+4, 2n+2, -1) \, . 
\]
\end{lem}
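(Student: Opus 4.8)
The plan is to unravel the recursion \eqref{E:rho-recurrence} into a single alternating sum, simplify each summand using the explicit formulae for the devil's product recorded in \S\ref{SS:devil}, and then match the result term-by-term with the definitions of $A$, $B$, $A'$, $B'$. First I would iterate \eqref{E:rho-recurrence} starting from $\rho_{\ell+1}^{(\ell)}=1$: writing $c_t := (-1)^{\binom{\ell+2-t}{2}}$ and $r_s := q^{-1}\,``[\ell+1-s][\ell+s]"/``[s]^2"$, one gets
\[
\rho_1^{(\ell)} \;=\; \sum_{t=1}^{\ell+1} c_t \prod_{s=1}^{t-1} r_s
\;=\; \sum_{t=1}^{\ell+1} (-1)^{\binom{\ell+2-t}{2}}\,q^{-(t-1)} \prod_{s=1}^{t-1} \frac{``[\ell+1-s][\ell+s]"}{``[s]^2"},
\]
where the $t=\ell+1$ term is exactly the base case $\rho_{\ell+1}^{(\ell)}=1$, consistent with the uniform formula since $\binom{1}{2}=0$; every factor is well-defined because $1\le s\le t-1\le\ell$ forces $1\le\ell+1-s\le\ell+s$.

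Next I would evaluate the product over $s$. By \eqref{E:devils-numbers-2} the denominator is $\prod_{s=1}^{t-1}``[s]^2"=\prod_{s=1}^{t-1}[s]_{q^2}$. For the numerator I would apply the case formulae for $``[m][n]"$ from \S\ref{SS:devil} with $m=\ell+1-s$, $n=\ell+s$; since $m+n=2\ell+1$ is odd, $m$ and $n$ have opposite parity, so each factor equals $[2]_{\ell+s}\,[(\ell+1-s)/2]_{q^2}$ when $\ell+1-s$ is even and $[2]_{\ell+1-s}\,[(\ell+s)/2]_{q^2}$ when $\ell+1-s$ is odd. As $s$ runs from $1$ to $t-1$ the $[2]_c$-factors interleave into the initial segment of length $t-1$ of one of the two sequences defining $(2)_a^d$ or $\{2\}_a^d$ — which sequence, and the value of $a$, being pinned down by $\ell\bmod 4$ (one computes $(2)_{4n}^{t-1}$ for $\ell=4n$, $(2)_{4n+2}^{t-1}$ for $\ell=4n+2$, $\{2\}_{4n+2}^{t-1}$ for $\ell=4n+1$, and $\{2\}_{4n+4}^{t-1}$ for $\ell=4n+3$). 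Meanwhile the surviving $[\,\cdot\,/2]_{q^2}$-factors form a product of consecutive quantum integers in $q^2$ which, divided by $\prod_{s=1}^{t-1}[s]_{q^2}$, collapses to a Gaussian binomial coefficient ${b+i \brack t-1}_{q^2}$, where $i=\lfloor (t-1)/2\rfloor$ and $b\in\{2n,2n+1\}$ is as in the statement.

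Then I would split the sum by the parity of $t-1$. For $t-1=2i$ the summand becomes $(-1)^{\binom{\ell+1-2i}{2}}q^{-2i}(2)_a^{2i}{b+i \brack 2i}_{q^2}$ (or its $\{2\}$ analogue), and for $t-1=2i+1$ it becomes $(-1)^{\binom{\ell-2i}{2}}q^{-2i-1}(2)_a^{2i+1}{b+i \brack 2i+1}_{q^2}$. The signs I would handle via the elementary identity $(-1)^{\binom{m}{2}}=-(-1)^{\binom{m-2}{2}}$ (since $\binom{m}{2}-\binom{m-2}{2}=2m-3$ is odd), which gives $(-1)^{\binom{\ell+1-2i}{2}}=(-1)^i(-1)^{\binom{\ell+1}{2}}$ and $(-1)^{\binom{\ell-2i}{2}}=(-1)^i(-1)^{\binom{\ell}{2}}$. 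Factoring out the global signs $(-1)^{\binom{\ell+1}{2}}$ and $(-1)^{\binom{\ell}{2}}$, recognising $q^{-2i}=q^{2i\epsilon}$ with $\epsilon=-1$, and pulling the spare $q^{-1}$ out of the odd part, the even-$t$ terms assemble into $(-1)^{\binom{\ell+1}{2}}A(a,b,-1)$ (or $A'$) and the odd-$t$ terms into $(-1)^{\binom{\ell}{2}}q^{-1}B(a,b,-1)$ (or $B'$). Evaluating $(-1)^{\binom{\ell+1}{2}}$ and $(-1)^{\binom{\ell}{2}}$ for $\ell\equiv 0,1,2,3 \pmod 4$ then produces exactly the four stated identities.

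The hard part is the interlocking modular bookkeeping across the four residue classes of $\ell$: one must get right simultaneously (i) whether $(2)_a^{t-1}$ or $\{2\}_a^{t-1}$ occurs and the exact value of $a$ (governed by the parity of $\ell+1-s$ at $s=1$, hence by $\ell\bmod 4$); (ii) the value $b\in\{2n,2n+1\}$ in the Gaussian binomial — which can even differ between the $A$- and $B$-parts when $\ell$ is odd; and (iii) the global sign extracted from $(-1)^{\binom{\ell+2-t}{2}}$. None of these steps is deep, but they must be coordinated; the case $\ell=4n$ is essentially the expansion already displayed in Example \ref{Ex:expand-rho}, and the other three residues are entirely parallel. (This lemma is the first half of the proof of Lemma \ref{L:rho-recurrence}; the remaining half, showing that each of the four right-hand sides equals $(q^{-2})^{\binom{\ell+1}{2}}$, is carried out separately using the $q$-Pascal identities \eqref{E:q-pascal}--\eqref{E:q-pascal2}.)
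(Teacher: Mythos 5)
Your proposal is correct and follows the same approach as the paper, which simply says to expand $\rho_1^{(\ell)}$ into a pair of alternating sums as illustrated in Example \ref{Ex:expand-rho}; you have spelled out the bookkeeping (parities, the choice of $(2)_a^d$ vs.\ $\{2\}_a^d$, the value of $b$, and the sign extraction via $\binom{m}{2}-\binom{m-2}{2}$ being odd) in more detail than the paper does, and your computations check out in the cases I verified.
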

\begin{proof}
Following Example \ref{Ex:expand-rho}, expand $\rho_{1}^{(\ell)}$ as a pair of alternating sums.
\end{proof}

\begin{lem}\label{L:A=ABB}
For $b \geq 1$, we have
\begin{equation}\label{E:AB1}
A(a,b,-1) = A(a,b-1,1) - q^{a-2b-1}B(a,b,-1) - q^{-a-2b+1}B(a,b,1) \, ,
\end{equation}
\begin{equation}\label{E:AB-1}
B(a,b,1) = q^{-2}B(a,b-1,-1)+q^{a+2b-1}A(a,b-1,1) + q^{-a+2b-3}A(a,b-1,-1) \, ,
\end{equation}
\begin{equation}\label{E:A'B'1}
A'(a,b,1) = A'(a,b-1, -1) -q^{a+2b+1}B'(a,b,1)-q^{-a+2b-1}B'(a,b,-1) \, ,
\end{equation}
and
\begin{equation}\label{E:A'B'-1}
B'(a,b,-1) = q^2B'(a,b-1, 1) + q^{a-2b+1}A'(a,b-1, -1) + q^{-a-2b+3}A'(a,b-1, 1).
\end{equation}
\end{lem}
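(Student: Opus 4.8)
The plan is to prove all four identities in \ref{L:A=ABB} by a single uniform mechanism, since each has the same shape: one of $A,B,A',B'$ at level $b$ is to be rewritten, via one application of a $q$-Pascal identity to the $q^2$-binomial coefficient in its definition, as a combination of the other three functions at level $b-1$. The two ingredients are (i) the $q$-Pascal identities \eqref{E:q-pascal} and \eqref{E:q-pascal2}, used with $q$ replaced by $q^2$, and (ii) the elementary one-step recursions
\[
(2)_a^{d} = (2)_a^{d-1}\,[2]_{a-(d-1)}\ \ (d\text{ even}),\qquad
(2)_a^{d} = (2)_a^{d-1}\,[2]_{a+(d-1)}\ \ (d\text{ odd}),
\]
together with the analogous recursions for $\{2\}_a^d$ in which the roles of $a+(d-1)$ and $a-(d-1)$ are interchanged (these follow immediately from the definitions of $(2)_a^d$ and $\{2\}_a^d$). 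Since $[2]_k = q^k+q^{-k}$, peeling off one factor of $(2)_a^\bullet$ or $\{2\}_a^\bullet$ splits a sum into two, with $q^{\pm(a\pm\cdots)}$ factored out in front; this is exactly what produces the two $B$-terms (resp.\ $A$-terms) on the right-hand sides, and the $(2)$ versus $\{2\}$ distinction is precisely what flips the signs of the $a$-dependent exponents between the unprimed identities \eqref{E:AB1}, \eqref{E:AB-1} and the primed identities \eqref{E:A'B'1}, \eqref{E:A'B'-1}.

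To carry this out for \eqref{E:AB1}, I would start from $A(a,b,-1) = \sum_{i\ge 0}(-1)^i q^{-2i}(2)_a^{2i}{b+i \brack 2i}_{q^2}$ and apply \eqref{E:q-pascal} with base $q^2$ (so $x = b+i$, $y = 2i$), giving ${b+i \brack 2i}_{q^2} = q^{4i}{b+i-1 \brack 2i}_{q^2} + q^{2i-2b}{b+i-1 \brack 2i-1}_{q^2}$. The first resulting sum is literally $A(a,b-1,1)$; in the second, only terms with $i\ge 1$ survive, and after the reindexing $j = i-1$ and the split $(2)_a^{2j+2} = (2)_a^{2j+1}[2]_{a-2j-1}$ it becomes $-q^{-2b}\bigl(q^{a-1}B(a,b,-1) + q^{-a+1}B(a,b,1)\bigr)$, which is exactly \eqref{E:AB1}. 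The other three identities follow the same template: for \eqref{E:AB-1} and \eqref{E:A'B'-1} one applies \eqref{E:q-pascal2} to ${b+i \brack 2i+1}_{q^2}$ and peels off $[2]_{a+2i+1}$ from $(2)_a^{2i+1}$ (resp.\ $[2]_{a-2i-1}$ from $\{2\}_a^{2i+1}$); for \eqref{E:A'B'1} one applies \eqref{E:q-pascal2} to ${b+i \brack 2i}_{q^2}$ and peels off $[2]_{a+2j+1}$ from $\{2\}_a^{2j+2}$. In each case the choice between \eqref{E:q-pascal} and \eqref{E:q-pascal2} is forced by the requirement that the ``diagonal'' term of the $q$-Pascal expansion match one of $A,B,A',B'$ with third argument $\pm 1$ rather than $\pm 3$.

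The only genuine subtlety, and the step most prone to sign errors, is the bookkeeping of exponents of $q$: one must simultaneously track (a) the exponent contributed by the chosen $q$-Pascal identity, (b) which term of the defining sequence is being peeled off — position $2k-1$ contributes $[2]_{a+(2k-1)}$ and position $2k$ contributes $[2]_{a-(2k-1)}$ for $(2)_a^\bullet$, and oppositely for $\{2\}_a^\bullet$ — and (c) the shift of $q^{2i\epsilon}$ under $i\mapsto i+1$. I expect no conceptual difficulty here, only the need to be scrupulous; each of the four computations is short enough to write out in full in a handful of lines. As a consistency check one can also specialize to $q=1$, where $(2)_a^d = \{2\}_a^d = 2^d$ and the four identities collapse to the binomial recursions underlying the Pascal-triangle manipulation sketched in the discussion preceding the statement.
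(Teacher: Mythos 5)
Your approach is correct and matches the paper's, which proves \eqref{E:AB1} and \eqref{E:A'B'1} in detail by exactly this combination of a $q$-Pascal step (with base $q^2$) followed by peeling the outermost factor off $(2)_a^{\bullet}$ or $\{2\}_a^{\bullet}$, and leaves the other two identities to the reader. Your worked-out derivation of \eqref{E:AB1} is correct and coincides with the paper's (up to an index shift). One slip: you assert that \eqref{E:q-pascal2} is used for both \eqref{E:AB-1} and \eqref{E:A'B'-1}, but for \eqref{E:A'B'-1} you need \eqref{E:q-pascal}, not \eqref{E:q-pascal2} --- your own criterion forces this, since the diagonal term of the Pascal expansion must turn the $q^{-2i}$ in $B'(a,b,-1)$ into the $q^{2i}$ of $B'(a,b-1,1)$, which requires the factor $q^{2(2i+1)}$ coming from \eqref{E:q-pascal}. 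The correct pairing is: third argument $-1$ on the left-hand side goes with \eqref{E:q-pascal}, third argument $+1$ goes with \eqref{E:q-pascal2}; with that fix, your peeled-off factors $[2]_{a+2i+1}$ and $[2]_{a-2i-1}$ (and $[2]_{a+2j+1}$ for \eqref{E:A'B'1}) are all correct.
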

\begin{proof}
We prove the first and third formula 
and leave the second and fourth to the reader.
We have
\begin{align*}
A(a,b,-1) &\overset{\eqref{E:q-pascal}}{=}A(a,b-1, 1) 
	+ \sum_{j \ge 0}(-1)^j q^{-2b}[2]_{a-2j+1}(2)_{a}^{2j-1}{b-1+j\brack 2j-1}_{q^2} \\
&=   A(a,b-1, 1) - \sum_{i\ge 0}(-1)^iq^{-2b}(q^{a-2i-1}+ q^{-a+2i+1})(2)_{a}^{2i+1}{b + i\brack 2i+1}_{q^2} \\
&= A(a,b-1,1) - q^{a-2b-1}B(a,b,-1) - q^{-a-2b+1}B(a,b,+1)
\end{align*}
and
\begin{align*}
A'(a,b,1) &\overset{\eqref{E:q-pascal2}}{=} A'(a,b-1, -1) 
	+ \sum_{j\ge 0} (-1)^jq^{2b}[2]_{a+2j-1}\{2\}_a^{2j-1}{b-1+j\brack 2j-1}_{q^2} \\
&= A'(a,b-1, -1) - \sum_{i\ge 0}(-1)^iq^{2b}(q^{a+2i+1} + q^{-a-2i-1})\{2\}_{a}^{2i+1}{b+i\brack 2i+1}_{q^2} \\
&=A'(a,b-1, -1) - q^{a+2b+1}B'(a,b,1) - q^{-a+2b-1}B'(a,b,-1) \, . \qedhere
\end{align*}
\end{proof}

\begin{lem}\label{L:rho-difference}
Fix $n \geq 0$.
For $0 \leq k \leq 2n$, we have
\begin{equation}\label{E:rho1-4n-AB}
\rho_{1}^{(4n)}= (-q^{-8n-2})^k\left(A(4n,2n-k, -1)+q^{2k-1}B(4n,2n-k, -1)\right)
\end{equation}
and
\begin{equation}\label{E:rho1-4n1-AB}
\rho_{1}^{(4n+1)} = (-q^{-8n-2})^{k}\left(-A'(4n+2, 2n-k, -1) + q^{-2k-1}B'(4n+2, 2n+1-k, -1)\right) \, .
\end{equation}
For $0 \leq k \leq 2n+1$ we have
\begin{equation}\label{E:rho1-4n2-AB}
\rho_{1}^{(4n+2)} =(-q^{-8n-6})^{k}\left(-A(4n+2, 2n+1-k, -1)-q^{2k-1}B(4n+2, 2n+1-k, -1)\right)
\end{equation}
and
\begin{equation}\label{E:rho1-4n3-AB}
\rho_{1}^{(4n+3)} =(-q^{-8n-6})^{k}\left(A'(4n+4, 2n+1-k, -1) - q^{-2k-1}B'(4n+4, 2n+2-k,-1)\right) \, .
\end{equation}
\end{lem}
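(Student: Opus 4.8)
The plan is to prove all four displayed families of identities simultaneously by induction on $k$, with the base case $k = 0$ being precisely Lemma \ref{L:expand-rho-AB}. Thus the entire content of the lemma is the inductive step, which in each of the four congruence classes of $\ell \bmod 4$ asserts the same thing: raising $k$ by one rewrites the bracketed expression for $\rho_1^{(\ell)}$ with the ``middle index'' of the $A,B$ (or $A',B'$) terms lowered by one, at the cost of exactly one extra factor $-q^{-8n-2}$ (when $\ell\equiv 0,1\bmod 4$) or $-q^{-8n-6}$ (when $\ell\equiv 2,3\bmod 4$). The engine is Lemma \ref{L:A=ABB}, which is the only input beyond arithmetic of exponents.

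I would carry out the step for \eqref{E:rho1-4n-AB} first, as it is cleanest. Put $a = 4n$ and $b = 2n-k$, so that $a - 2b = 2k$. Apply \eqref{E:AB1} to $A(a,b,-1)$: because $a - 2b - 1 = 2k-1$, the $B(a,b,-1)$-term it produces is $-q^{2k-1}B(a,b,-1)$, which cancels exactly against the $q^{2k-1}B(a,b,-1)$ already present in the bracket, leaving only $A(a,b-1,1)$ and a monomial multiple of $B(a,b,1)$. Now apply \eqref{E:AB-1} to $B(a,b,1)$; the three resulting exponents collapse (one of them to $0$, producing a second copy of $A(a,b-1,1)$ with opposite sign, and another to $-8n-2$), so the two copies of $A(a,b-1,1)$ cancel and what survives is precisely $-q^{-8n-2}$ times the $(k{+}1)$-instance $A(a,b-1,-1) + q^{2k+1}B(a,b-1,-1)$. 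Multiplying through by $(-q^{-8n-2})^k$ closes the induction. The case \eqref{E:rho1-4n2-AB} runs verbatim with $a = 4n+2$, using the same pair \eqref{E:AB1}, \eqref{E:AB-1}, while \eqref{E:rho1-4n1-AB} and \eqref{E:rho1-4n3-AB} run the same way using the primed pair \eqref{E:A'B'1}, \eqref{E:A'B'-1}. In every case the step from $k$ to $k+1$ is valid precisely when the middle index stays $\ge 1$, which is exactly the range of $k$ asserted in the lemma, since the recursions of Lemma \ref{L:A=ABB} require that index to be at least $1$.

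I expect the main obstacle to be the exponent bookkeeping in the two odd cases $\ell \equiv 1, 3 \bmod 4$. There the $A'$-summand and the $B'$-summand carry middle indices that differ by one (e.g.\ $2n-k$ for the $A'$-term versus $2n+1-k$ for the $B'$-term in \eqref{E:rho1-4n1-AB}), so the cancellation pattern is less transparent than in the even cases, and one must carefully match the powers of $q$ appearing in \eqref{E:A'B'1}, \eqref{E:A'B'-1} against the $\{2\}_a^d$ products hidden inside $A'$ and $B'$. It will also be necessary to verify at each stage that the relevant parity relation ($a = 2b + 2k$ in the $\ell = 4n$ case and its shifted analogues in the other three) is preserved under the substitution, so that the identifications of exponents above are legitimate.

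Finally, although not part of the statement of Lemma \ref{L:rho-difference}, I note for the record that Lemma \ref{L:rho-recurrence} then follows immediately: taking $k$ to its extremal value ($k = 2n$ when $\ell = 4n$ or $4n+1$, and $k = 2n+1$ when $\ell = 4n+2$ or $4n+3$) and invoking the special values \eqref{eq:special-values-AB} collapses each bracket to $1$, and a short comparison of the resulting exponent with $2\binom{\ell+1}{2}$ (the odd-power sign $(-1)^{2n+1}$ in the two cases where $k$ is odd being absorbed by the leading $-A(a,0,-1)$ or $-A'(a,0,-1)$) yields $\rho_1^{(\ell)} = (q^{-2})^{\binom{\ell+1}{2}}$.
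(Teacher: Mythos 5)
Your proposal is correct and follows essentially the same route as the paper's proof: induction on $k$ with base case Lemma \ref{L:expand-rho-AB} and inductive step driven by Lemma \ref{L:A=ABB}, applying \eqref{E:AB1} then \eqref{E:AB-1} in the even residue classes. One small imprecision: you say the odd cases ``run the same way using the primed pair \eqref{E:A'B'1}, \eqref{E:A'B'-1},'' but the application order must be reversed there --- one applies \eqref{E:A'B'-1} to the $B'$-term first (producing $A'(a,b,1)$) and only then \eqref{E:A'B'1}, since the bracket for $\rho_1^{(4n+1)}$ or $\rho_1^{(4n+3)}$ contains $A'(a,\cdot,-1)$, which is not what \eqref{E:A'B'1} rewrites; this is in any case forced by which four recursions Lemma \ref{L:A=ABB} actually supplies, so your argument self-corrects once carried out.
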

\begin{proof}
Induction on $k$. 
The $k=0$ base case is Lemma \ref{L:expand-rho-AB}. 
The induction step, in the case of equation \eqref{E:rho1-4n-AB}, 
is illustrated as follows: 
\begin{align*}
\rho_{1}^{(4n)} &= (-q^{-8n-2})^k\left(A(4n,2n-k, -1)+q^{2k-1}B(4n,2n-k, -1)\right) \\
&\!\!\!\!\overset{\eqref{E:AB1}}{=} (-q^{-8n-2})^k\Big(A(4n,2n-k-1, 1) - q^{4n-2(2n-k)-1}B(4n,2n-k, -1) \\
&\qquad - q^{-4n-2(2n-k)+1}B(4n,2n-k, 1)+q^{2k-1}B(4n,2n-k, -1)\Big) \\
&=(-q^{-8n-2})^k\left(A(4n,2n-k-1, 1) - q^{-8n+2k+1}B(4n,2n-k, 1)\right) \\
&\!\!\!\!\overset{\eqref{E:AB-1}}{=}(-q^{-8n-2})^k\Big(A(4n,2n-k-1, 1) - q^{-8n+2k+1}\big(q^{-2}B(4n,2n-k-1, -1) \\
&\qquad+ q^{4n+2(2n-k)-1}A(4n,2n-k-1, 1) + q^{-4n+2(2n-k)-3}A(4n,2n-k-1, -1)\big)\Big) \\
&=(-q^{-8n-2})^k\big(- q^{-8n+2k+1-2}B(4n,2n-k-1, -1) - q^{-8n-2}A(4n,2n-k-1, -1)\big) \\
&=(-q^{-8n-2})^{k+1}\big(A(4n,2n-(k+1), -1) + q^{2(k+1)-1}B(4n,2n-(k+1), -1) \big) \, .
\end{align*}
For equation \eqref{E:rho1-4n2-AB}, an identical argument applies. 
For Equations \ref{E:rho1-4n1-AB} and \ref{E:rho1-4n3-AB}, 
the argument is similar, but it instead employs
equation \eqref{E:A'B'-1} and then equation \eqref{E:A'B'1}.
\end{proof}

At last, we arrive at the main result of this appendix.

\begin{proof}[Proof of Lemma \ref{L:rho-recurrence}]
Recall that we must establish the identity
\[
\rho_1^{(\ell)}= (q^{-2})^{\binom{\ell+1}{2}} \, .
\]
Taking $k=2n$ in \eqref{E:rho1-4n-AB} and applying \eqref{eq:special-values-AB} gives
\[
\rho_1^{(4n)} = (-q^{-8n-2})^{2n} = (q^{-2})^{(4n+1)(2n)} = (q^{-2})^{\binom{4n+1}{2}} \, .
\]
A similar argument using 
the $k=2n+1$ case of equation \eqref{E:rho1-4n2-AB} 
shows that $\rho_1^{(4n+2)} = (q^{-2})^{\binom{4n+3}{2}}$. 

For the remaining cases, we also use that $B'(a,1, \epsilon) = [2]_{a-1}$ for all $a, \epsilon$. 
It then follows from \eqref{eq:special-values-AB} 
and the $k=2n$ case of \eqref{E:rho1-4n1-AB} 
that 
\[
\rho_1^{(4n+1)} = (-q^{-8n-2})^{2n}\left(-1 + q^{-4n-1}[2]_{4n+1}\right) 
	=(q^{-2})^{(4n+1)(2n+1)} = (q^{-2})^{\binom{4n+2}{2}} \, .
\]
Finally, the same argument using the $k=2n+1$ case of \eqref{E:rho1-4n3-AB}
gives $\rho_1^{(4n+3)}= (q^{-2})^{\binom{4n+4}{2}}$. 
\end{proof}

\section{Relation to Wenzl's approach using the $q$-Clifford algebra} 
	\label{S:wenzlapproach}

In \cite{Wenzl-Spin}, Wenzl constructs an endomorphism $C\in \End_{\C(q)}(S\otimes S)$ 
which commutes with the action of $U_q(\son)$, 
but does so using a non-standard coproduct. 
We review Wenzl's construction here
and slightly modify his conventions in order to construct an endomorphism of $S\otimes S$ 
which commutes with the action of $U_q(\son)$ via the coproduct in Definition \ref{quantumgroupconventions}. 
Our main result is that Wenzl's endomorphism agrees with the (diagrammatically defined) endomorphism 
\[
\hh^{(1)} = \begin{tikzpicture}[scale=.4, rotate=90, tinynodes, anchorbase]
	\draw[very thick,gray] (-1,0) node[below,yshift=2pt,xshift=2pt]{$S$} to (0,1);
	\draw[very thick,gray] (1,0) node[above,yshift=-4pt,xshift=2pt]{$S$} to (0,1);
	\draw[very thick,gray] (0,2.5) to (-1,3.5) node[below,yshift=2pt,xshift=-2pt]{$S$};
	\draw[very thick,gray] (0,2.5) to (1,3.5) node[above,yshift=-4pt,xshift=-2pt]{$S$};
	\draw[very thick] (0,1) to node[below,yshift=2pt]{$1$} (0,2.5);
\end{tikzpicture}
\]
from \eqref{eq:H1} in Section \ref{s:typeB}.

\begin{defn}
Let $\Cl_q(2n)$ be $\C(q)$ algebra with generators $\psi_i$, $\psi_i^*$, and $\omega_i^{\pm 1}$, for $i=1, \dots, n$, subject to the relations
\begin{gather*}
\psi_i^2=0=(\psi_i^*)^2 \, , \quad \psi_i\psi_j = -\psi_j\psi_i \, , \quad \psi_i\psi_j^*= -\psi_j^*\psi_i \, , \quad \psi_i^*\psi_j^* = -\psi_j^*\psi_i^* \, , \\
\omega_i\omega_i^{-1} = 1 = \omega_i^{-1}\omega_i \, , \quad \omega_i\omega_j = \omega_j\omega_i \, , \\ 
\omega_i\psi_i\omega_i^{-1} = q^2 \psi_i \, , \quad \omega_i\psi_i^*\omega_i^{-1} = q^{-2}\psi_i^* \, , \quad 
	\omega_i\psi_j\omega_i^{-1} = \psi_j \, , \quad \omega_i\psi_j^*\omega_i^{-1} = \psi_j^*\, , \\
\psi_i\psi_i^* + q^2\psi_i^*\psi_i = \omega_i^{-1} \, , \quad  \psi_i\psi_i^* + q^{-2}\psi_i^*\psi_i = \omega_i \, ,
\end{gather*}
for all $1\le i\ne j\le n$, 
\textbf{and} further quotiented by the relations
\begin{equation}\label{wenzlscliffreln}
\omega_i\psi_i = \psi_i \quad \text{and} \quad \psi_i^*\omega_i = \psi_i^* \quad 1\le i \le n. 
\end{equation}
\end{defn}

\begin{rem}
The algebra $\Cl_q(2n)$ is a version of the $q$-Clifford algebra.
The usual definition of the $q$-Clifford algebra \cite{Hayashi} does not impose the relation \eqref{wenzlscliffreln}. 
However, it is essential in establishing that Wenzl's element $C$ commutes with $U_q(\son)$. 
\end{rem}

Multiplying the relation $\psi_i\psi_i^* + q^{2}\psi_i^*\psi_i = \omega_i^{-1}$ on the left by $\omega_i$, 
and using $\omega_i \psi_i = \psi_i$ and $\omega_i\psi_i^*\omega_i^{-1} = q^{-2}\psi_i^*$, 
we deduce that
\begin{equation}
\psi_i\psi_i^* + \psi_i^*\psi_i = 1 \, , \quad 1\le i \le n \, .
\end{equation}

\begin{defn}
The \emph{volume element} in $\Cl_q(2n)$ is defined as
\[
f:=(\psi_1\psi_1^*- \psi_1^*\psi_1)(\psi_2\psi_2^*- \psi_2^*\psi_2)\dots (\psi_n\psi_n^*- \psi_n^*\psi_n) \, .
\]
\end{defn}

For $1\le i \le n$, it satisfies identities:
\[
\omega_if = f\omega_i \, , \quad \psi_if = -f\psi_i \, , \quad \psi_i^*f=-f\psi_i^* \, .
\]

\begin{lem}\label{lem:quantummapstoclif}
There is a homomorphism of algebras 
$U_q(\son)\rightarrow \Cl_q(2n)$ defined for $1\le i \le n-1$ by 
\[
E_i\mapsto \psi_i\psi_{i+1}^* \, , \quad F_i\mapsto \psi_{i+1}\psi_i^*\, , \quad K_i\mapsto \omega_i\omega_i^{-1}
\]
and
\[
E_n\mapsto \psi_n f \, , \quad F_n\mapsto f\psi_n^* \, , \quad K_n\mapsto q\omega_n \, . 
\]
\end{lem}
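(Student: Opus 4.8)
The plan is to verify directly that the stated assignments respect the defining relations of $U_q(\son)$ from Definition \ref{quantumgroupconventions}, specialized to the type $B_n$ root system of Example \ref{ex:son}, where recall $(\ee_i,\ee_i) = 2$ for short roots $\ee_n$ and $(\ee_i,\ee_i)=4$ for the long roots $\ee_i$ with $1 \leq i \leq n-1$, so $q_i = q^2$ for $i < n$ and $q_n = q$. First I would record the images once and for all: $E_i \mapsto \psi_i\psi_{i+1}^*$, $F_i \mapsto \psi_{i+1}\psi_i^*$, $K_i^{\pm 1} \mapsto \omega_i^{\pm 1}\omega_{i+1}^{\mp 1}$ for $1 \leq i \leq n-1$, and $E_n \mapsto \psi_n f$, $F_n \mapsto f\psi_n^*$, $K_n^{\pm 1} \mapsto q^{\pm 1}\omega_n^{\pm 1}$. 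Then I would check each family of relations (1)--(7) in turn.

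Relations (1) and (2) (invertibility and commutativity of the $K_i$) are immediate from the relations $\omega_i\omega_i^{-1}=1$ and $\omega_i\omega_j=\omega_j\omega_i$ in $\Cl_q(2n)$. For relations (3) and (4) I would use the conjugation relations $\omega_i\psi_i\omega_i^{-1}=q^2\psi_i$, $\omega_i\psi_i^*\omega_i^{-1}=q^{-2}\psi_i^*$, and the fact that $\omega_i$ commutes with $\psi_j,\psi_j^*$ for $j\neq i$; one computes, say, $K_i E_j K_i^{-1}$ and matches the resulting power of $q$ against $q^{(\ee_i,\ee_j)}$, treating separately the cases $|i-j|\geq 2$ (power $q^0$), $j = i\pm 1$ with both indices $< n$ (power $q^{\mp 2} $, and $(\ee_i,\ee_{i\pm1}) = -2$), and the interactions involving node $n$, where the extra scalar $q$ in $K_n \mapsto q\omega_n$ and the relation $\psi_n f = -f\psi_n$ must be tracked; here $(\ee_{n-1},\ee_n) = -2$ and $(\ee_n,\ee_n) = 2$, and the sign from commuting past $f$ does not affect $\omega_n$-conjugation. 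Relation (6) ($E_iF_j = F_jE_i$ for $i \neq j$) follows from the anticommutation relations among the $\psi$'s and $\psi^*$'s together with $f$ anticommuting with each $\psi_i,\psi_i^*$: when $|i-j|\geq 2$ the two elements involve disjoint index sets and simply anticommute twice, and when $j = i\pm 1$ one checks the relevant degree-$4$ monomials agree after reordering (again with care near node $n$, where $f$ appears).

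The genuinely delicate steps are relation (5), $E_iF_i - F_iE_i = \frac{K_i - K_i^{-1}}{q_i - q_i^{-1}}$, and the quantum Serre relations (7). For (5) with $i < n$, I would expand $\psi_i\psi_{i+1}^*\psi_{i+1}\psi_i^* - \psi_{i+1}\psi_i^*\psi_i\psi_{i+1}^*$, push all starred operators to the right using anticommutativity, and apply the mixed relations $\psi_j\psi_j^* + q^{\pm 2}\psi_j^*\psi_j = \omega_j^{\mp 1}$ to collapse the $\psi_{i+1}\psi_{i+1}^*$ and $\psi_i^*\psi_i$ factors; the bookkeeping should produce exactly $\frac{\omega_i\omega_{i+1}^{-1} - \omega_i^{-1}\omega_{i+1}}{q^2 - q^{-2}}$, which is $\frac{K_i - K_i^{-1}}{q_i - q_i^{-1}}$ since $q_i = q^2$. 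For $i = n$ I would compute $\psi_n f \cdot f\psi_n^* - f\psi_n^* \cdot \psi_n f$; here $f^2$ is a central sign (a product of the $\pm 1$ idempotent combinations $\psi_j\psi_j^*-\psi_j^*\psi_j$ squared, each equal to $1$ by $\psi_j\psi_j^*+\psi_j^*\psi_j=1$ and $(\psi_j)^2 = (\psi_j^*)^2 = 0$), the sign from $f\psi_n = -\psi_n f$ enters, and using $\psi_n\psi_n^* + q^{\pm 2}\psi_n^*\psi_n = \omega_n^{\mp 1}$ one should land on $\frac{q\omega_n - q^{-1}\omega_n^{-1}}{q - q^{-1}}$, matching $K_n$ with $q_n = q$ --- the factor $q$ in $K_n \mapsto q\omega_n$ is precisely what makes this work out, and checking this is the main obstacle, since it requires carefully reconciling Wenzl's non-standard normalization \eqref{wenzlscliffreln} with our coproduct conventions. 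Finally, for the Serre relations I would reduce to the rank-$2$ sub-root-systems: the $A_2$-type relations $E_iE_jE_i = E_i^{(2)}E_j + E_jE_i^{(2)}$ for adjacent $i,j \leq n-1$ follow from a direct monomial computation using only anticommutativity and $(\psi_k)^2 = 0$, and the $B_2$-type relation at the short/long pair $(\ee_n,\ee_{n-1})$ (the quartic Serre relation) is the remaining case, which I would verify by the same method, now carrying the volume element $f$ through; the fact that $\psi_n$ anticommutes with $f$ and $\psi_n^2 = 0$ should force the higher terms to vanish as required. Throughout, I would lean on the observation that $\Cl_q(2n)$ is the same algebra Wenzl uses (up to our sign/normalization conventions), so that the existence of the homomorphism is essentially his \cite[Section 3]{Wenzl-Spin}; the content here is confirming it is compatible with \emph{our} coproduct, which is exactly what the rescaling $K_n \mapsto q\omega_n$ (versus $K_i \mapsto \omega_i\omega_{i+1}^{-1}$) is engineered to achieve.
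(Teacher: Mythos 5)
Your proposal carries out the ``elementary generators and relations check'' that the paper simply delegates to Hayashi \cite[Theorem 3.2]{Hayashi}, so your approach coincides with the paper's. Two small corrections to the outline: the Serre relations are considerably easier than you anticipate, since $E_i^2 = 0$ for every $i$ --- including $i = n$, where $\psi_n f \psi_n f = -\psi_n^2 f^2 = 0$ using $f\psi_n = -\psi_n f$ --- so every Serre relation collapses to $E_i E_j E_i = 0$ (and likewise for $F$), a one-line anticommutation check rather than the delicate bookkeeping you describe; and your closing remark about coproduct compatibility belongs to the \emph{following} lemma, not this one: the present statement is purely about an algebra map, and the factor $q$ in $K_n \mapsto q\omega_n$ is already forced by the commutator relation alone, since $E_n F_n - F_n E_n = \psi_n\psi_n^* - \psi_n^*\psi_n$ (using $f^2 = 1$), while $(q\omega_n - q^{-1}\omega_n^{-1})/(q - q^{-1}) = \psi_n\psi_n^* - \psi_n^*\psi_n$ follows directly from the two mixed Clifford relations. (You were also right to read $K_i \mapsto \omega_i\omega_i^{-1}$ in the statement as a typo for $\omega_i\omega_{i+1}^{-1}$.)
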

\begin{proof}
An elementary generators and relations check; see \cite[Theorem 3.2]{Hayashi}.
\end{proof}

\begin{nota}
Given $I\subset \{1, \dots, n\}$ so that $I = \{i_1, \dots, i_d\}$ with $i_1< \cdots < i_d$, 
we write $\psi_I^*:=\psi_{i_1}^* \cdots \psi_{i_d}^*$. 
Also, let $\epsilon_I:=\prod_{i\in I}(-1)^{n-i+1}$.
\end{nota}

\begin{lem}
Let $\mathbf{I}$ be the left ideal $\Cl_q(2n)\cdot \langle \psi_i \mid 1\le i\le n \rangle$. 
There is an isomorphism of vector spaces
\[
\Cl_q(2n)/ \mathbf{I} \xrightarrow{\cong} S
\]
such that $\epsilon_I\psi_I^*\mapsto x_I$. 
Moreover, the induced action of $U_q(\son)$ on $S$ via the homomorphism in Lemma \ref{lem:quantummapstoclif} 
coincides with the action in Definition \ref{def:spin}. 
\end{lem}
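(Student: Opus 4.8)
The plan is to verify directly that the stated assignment $\epsilon_I \psi_I^* \mapsto x_I$ is compatible with the two module structures. First I would establish that $\{\epsilon_I \psi_I^* + \mathbf{I}\}_{I \subseteq \{1,\dots,n\}}$ is a basis for $\Cl_q(2n)/\mathbf{I}$. For this, note that the relations allow any element of $\Cl_q(2n)$ to be rewritten (using the $\omega$-$\psi$ commutation relations, the relation $\psi_i \psi_i^* + \psi_i^*\psi_i = 1$, and the relations \eqref{wenzlscliffreln}) as a $\C(q)$-linear combination of monomials of the form $\psi_A \psi_B^*$ with $A, B$ disjoint (the $\omega_i$ being absorbed via $\omega_i \psi_i = \psi_i$ and $\psi_i^* \omega_i = \psi_i^*$, together with $\omega_i\psi_i^*\omega_i^{-1}=q^{-2}\psi_i^*$, which lets us push all $\omega$'s past the $\psi^*$'s and then kill them). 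Modulo $\mathbf{I}$, every monomial with $A \neq \emptyset$ vanishes, so the images of $\psi_B^*$ span. Linear independence can be seen by exhibiting a $\C(q)$-linear action of $\Cl_q(2n)$ on a free module of rank $2^n$ for which these monomials act by linearly independent operators; alternatively, since the target $S$ is $2^n$-dimensional, it suffices once we check the map is well-defined and surjective, with injectivity then forced by a dimension count. I would use the latter route.

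Next I would check well-definedness: the generators $\psi_i \in \mathbf{I}$, and indeed $\psi_i \cdot \psi_I^* = 0$ in $\Cl_q(2n)/\mathbf{I}$ whenever $i \in I$ by anticommutativity and $\psi_i^2 = 0$, while for $i \notin I$ we get $\psi_i \psi_I^* = \pm \psi_{I \smallsetminus\{i\}}^* + (\text{terms ending in }\psi_i)$ using $\psi_i \psi_i^* = 1 - \psi_i^*\psi_i$ — but more carefully, $\psi_i \psi_I^*$ with $i \notin I$ has a nonzero image and this is where the $e_i$-action will be read off. The core of the proof is the computation of the induced $U_q(\son)$-action. Via Lemma \ref{lem:quantummapstoclif}, $e_i \mapsto \psi_i\psi_{i+1}^*$, so I would compute $\psi_i \psi_{i+1}^* \cdot \epsilon_I \psi_I^* \bmod \mathbf{I}$ for $1 \le i \le n-1$, using the anticommutation relations and $\psi_i\psi_i^* + \psi_i^*\psi_i = 1$, and check it equals $\epsilon_{(I\smallsetminus\{i\})\cup\{i+1\}} \psi_{(I\smallsetminus\{i\})\cup\{i+1\}}^*$ exactly when $i \in I$, $i+1 \notin I$, and is zero otherwise — matching relation (3) in Definition \ref{def:spin}. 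Similarly for $f_i \mapsto \psi_{i+1}\psi_i^*$ (matching relation (5)), for $k_i \mapsto \omega_i\omega_i^{-1}$... wait, actually $K_i \mapsto \omega_i\omega_{i+1}^{-1}$ in the Dynkin labelling, and one reads off $k_i^{\pm 1} x_I = q^{\pm \|i,I\|} x_I$ from $\omega_i \psi_I^* = q^{-2|\{i\}\cap I|}\psi_I^*$ and the analogous statement for $\omega_{i+1}$. The sign bookkeeping in $\epsilon_I$ is precisely calibrated to absorb the anticommutation signs so that no spurious signs appear; I would verify this term by term.

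The genuinely delicate part is the action of $e_n \mapsto \psi_n f$ and $f_n \mapsto f \psi_n^*$, and $k_n \mapsto q\omega_n$. Here the volume element $f$ enters, and one must compute $\psi_n f \cdot \epsilon_I \psi_I^* \bmod \mathbf{I}$ using $\psi_i^* f = -f\psi_i^*$, $\psi_i f = -f \psi_i$, and the explicit form of $f = \prod_{j=1}^n(\psi_j\psi_j^* - \psi_j^*\psi_j)$. Acting $f$ on $\psi_I^*$ gives $\psi_I^*$ scaled by a sign depending on $|I|$ and on which indices lie in $I$ (each factor $\psi_j\psi_j^* - \psi_j^*\psi_j$ acts on $\psi_I^* + \mathbf{I}$ as $+1$ if $j \notin I$ and $-1$ if $j \in I$, after commuting past the earlier $\psi^*$'s — again modulo signs), and then $\psi_n$ either annihilates (if $n \in I$) or produces $\psi_{I\cup\{n\}}^*$ up to sign. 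Reconciling this with relation (6), $f_n x_J = x_{J\cup\{n\}}$ if $n\notin J$, and relation (4), $e_n x_J = x_{J\smallsetminus\{n\}}$ if $n \in J$, together with $k_n^{\pm 1}x_J = q^{\pm(1+\|n,J\|)}x_J$ from $q\omega_n$, is the step I expect to require the most care: the factor $q$ in $K_n \mapsto q\omega_n$ and the weight shift by $\varpi_n = \frac12\sum \epsilon_i$ both stem from this $f$-twist, and the sign $\epsilon_I = \prod_{i\in I}(-1)^{n-i+1}$ must be shown to be exactly the combinatorial factor that makes these signs cancel. I would organize this by first proving the auxiliary identities $f \cdot \psi_I^* \equiv \big(\prod_{i\in I}(-1)^{?}\big)\psi_I^* \bmod \mathbf{I}$ and $\psi_n \cdot \psi_I^* \equiv (-1)^{|I|}\psi_I^*\psi_n + (\text{lower})$, assemble them, and then a routine but careful comparison with Definition \ref{def:spin} finishes the proof; surjectivity (hence bijectivity by dimension) is immediate once the $f_i$'s are shown to act transitively on the $x_I$'s starting from $x_\emptyset = \epsilon_\emptyset \psi_\emptyset^* = 1 + \mathbf{I}$.
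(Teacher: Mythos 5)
The paper does not prove this lemma at all: it cites \cite[Sections 2.1 and 4.1]{Hayashi} and stops. Your proposal is therefore taking a genuinely different route, namely a self-contained direct verification, and that is a reasonable thing to want. The spanning argument (reduce modulo $\mathbf{I}$ using $\omega_i\psi_i=\psi_i$, $\psi_i^*\omega_i=\psi_i^*$, and $\psi_i\psi_i^*+\psi_i^*\psi_i=1$ to land on monomials $\psi_B^*$), the dimension count, and the generator-by-generator comparison with Definition \ref{def:spin} — including the delicate $e_n,f_n,k_n$ cases where the volume element $f$ contributes the signs that $\epsilon_I$ is calibrated to absorb — is exactly the content that Hayashi's treatment supplies, and carrying it out here would buy a reader a proof that doesn't require chasing another reference. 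The cost is length; the citation is obviously shorter.

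One point you should tighten is the logic around well-definedness. As written, you want to send $\epsilon_I\psi_I^*\mapsto x_I$ and extend linearly, but a map defined on a spanning set is only well-defined once you know either that the $\psi_I^*+\mathbf{I}$ are linearly independent (which is part of what you're trying to prove) or that the assignment respects whatever relations exist. The passage where you ``check well-definedness'' by computing $\psi_i\cdot\psi_I^*\bmod\mathbf{I}$ is really a computation in the module $\Cl_q(2n)/\mathbf{I}$, not a check that the \emph{map} is well-defined. The clean fix is to reverse the roles: first make $S$ into a $\Cl_q(2n)$-module by specifying actions of $\psi_i,\psi_i^*,\omega_i^{\pm1}$ on the basis $\{x_I\}$ and verifying the $q$-Clifford relations (this is the genuine content, and the $\epsilon_I$-signs show up here), then observe that $x_\emptyset$ is cyclic with $\psi_i\cdot x_\emptyset=0$ for all $i$, so the module map $\Cl_q(2n)\to S$, $a\mapsto a\cdot x_\emptyset$ is well-defined, kills $\mathbf{I}$, and descends to a surjection $\Cl_q(2n)/\mathbf{I}\twoheadrightarrow S$; the spanning bound $\dim\Cl_q(2n)/\mathbf{I}\le 2^n$ then forces it to be an isomorphism, and you can read off $\epsilon_I\psi_I^*\mapsto x_I$ from $\psi_I^*\cdot x_\emptyset$. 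With that reorganization your outline is correct, and the subsequent comparison of the $U_q(\son)$-actions is then a matter of composing with the homomorphism from Lemma \ref{lem:quantummapstoclif} and recovering relations (1)--(6) of Definition \ref{def:spin}.
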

\begin{proof}
See \cite[Sections 2.1 and 4.1]{Hayashi}.
\end{proof}

\begin{defn}
Let $\Omega_k:= \omega_1\omega_2\dots \omega_k$ and define 
the following element in $\Cl_q(2n)\otimes \Cl_q(2n)$:
\[
C:=\frac{\Omega_nf\otimes \Omega_n^{-1}f}{[2]} 
	+ \sum_{k=1}^n (\Omega_{k-1}\otimes \Omega_{k-1}^{-1}) 
		\cdot (\psi_k\otimes \psi_k^* + \psi_k^*\otimes \psi_k) \, .
\]
Since $\Cl_q(2n)^{\otimes 2}$ acts on $S^{\otimes 2}$, 
we obtain an operator in $\End_{\C(q)}(S\otimes S)$, (also) denoted $C$. 
\end{defn}

\begin{lem}
The operator $C$ is in $\End_{U_q(\son)}(S\otimes S)$. 
\end{lem}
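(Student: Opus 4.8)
The plan is to verify directly that $C$ commutes with the image of each Chevalley generator of $U_q(\son)$, acting on $S \otimes S$ via the coproduct in Definition \ref{quantumgroupconventions}, using the representation of $U_q(\son)$ in the $q$-Clifford algebra from Lemma \ref{lem:quantummapstoclif}. First I would record the action of $\Delta(E_i), \Delta(F_i), \Delta(K_i^{\pm 1})$ as elements of $\Cl_q(2n)^{\otimes 2}$ (tensored with the appropriate identity and $\omega$-weight corrections coming from the coproduct formula $\Delta(e_i) = e_i \otimes k_i + 1 \otimes e_i$, etc.), translating $k_i$ into the relevant monomial in the $\omega_j$'s. The key observation is that the factor $\Omega_{k-1} \otimes \Omega_{k-1}^{-1}$ appearing in $C$ is precisely engineered so that the ``Jordan--Wigner string'' of $\omega$'s matches up with the string dictated by the coproduct; this is the role of Wenzl's extra relation \eqref{wenzlscliffreln}, which forces $\omega_i \psi_i = \psi_i$ and hence collapses many of the weight corrections.

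The computation then splits into two families of checks. For $1 \le i \le n-1$, one must show $C$ commutes with $\psi_i \psi_{i+1}^* \otimes k_i + 1 \otimes \psi_i \psi_{i+1}^*$ (and the analogous $F_i$ statement), and with the $K_i$-action, which is immediate since $C$ is built from weight-homogeneous pieces. The heart of the matter is that conjugating the summand $(\Omega_{k-1} \otimes \Omega_{k-1}^{-1})(\psi_k \otimes \psi_k^* + \psi_k^* \otimes \psi_k)$ by $\Delta(E_i)$ produces terms that telescope against the neighboring summands $k = i$ and $k = i+1$; this uses the anticommutation relations among the $\psi_j, \psi_j^*$, the relation $\psi_i \psi_i^* + \psi_i^* \psi_i = 1$, and the $\omega$-conjugation rules. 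For $i = n$, the spinorial generator $E_n \mapsto \psi_n f$ interacts with the ``volume element'' summand $\frac{1}{[2]}\Omega_n f \otimes \Omega_n^{-1} f$, and one checks this piece is exactly what is needed to absorb the boundary terms of the telescoping sum at $k = n$; the constant $[2]^{-1}$ is pinned down by this requirement, and the sign bookkeeping involves $f \psi_i = -\psi_i f$ and $\Omega_n f = f \Omega_n$.

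An alternative, and perhaps cleaner, route would be to cite \cite{Wenzl-Spin} directly: Wenzl constructs $C$ (in his conventions, using a twisted coproduct) and proves it is an intertwiner there. The work here is then purely one of translating conventions — checking that our $\Delta$ differs from Wenzl's by the twist he uses, and that the insertion of the $\Omega_{k-1}^{\pm 1}$ factors precisely accounts for this twist. I would present the proof in this style, reducing to a convention-matching lemma and invoking Wenzl's computation, with the direct verification sketched above available as a fallback. Either way, the main obstacle is the sign and weight bookkeeping: the anticommuting nature of the $\psi$'s combined with the nonsymmetric coproduct means one must be scrupulous about the order of tensor factors and the placement of $\omega$-strings, and it is easy to drop a sign in the $i=n$ boundary computation where the volume element $f$ (which anticommutes with each $\psi_j, \psi_j^*$ but commutes with each $\omega_j$) enters.
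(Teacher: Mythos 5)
Your proposal matches the paper's proof: the paper likewise reduces the claim to checking that the images of $\Delta(E_i)$, $\Delta(F_i)$, $\Delta(K_i^{\pm 1})$ under Lemma \ref{lem:quantummapstoclif} commute with $C$ inside $\Cl_q(2n)\otimes\Cl_q(2n)$, and then, rather than spelling out the telescoping computation you sketch, defers the verification to the rubric of Wenzl's proof of \cite[Lemma 3.4]{Wenzl-Spin}. Your two proposed routes (direct verification vs.\ citing Wenzl and matching conventions) effectively collapse into what the paper does, and your remarks on the role of relation \eqref{wenzlscliffreln}, the $\Omega_{k-1}\otimes\Omega_{k-1}^{-1}$ factors, and the $i=n$ boundary term involving the volume element are consistent with that computation.
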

\begin{proof}
It suffices to show that the elements
\[
\Delta(E_i)\mapsto\psi_i\psi_{i+1}^*\otimes \omega_i\omega_{i+1}^{-1} +  1\otimes \psi_i\psi_{i+1}^* \, , \quad 
\Delta(F_i) \mapsto \psi_{i+1}\psi_i^*\otimes 1 +  \omega_i^{-1}\omega_{i+1} \otimes \psi_{i+1}\psi_i^*
\]
for $1\le i \le n-1$, as well as the elements
\[
\Delta(E_n)\mapsto \psi_nf\otimes q\omega_n + 1\otimes \psi_nf 
\, , \quad \Delta(F_n) \mapsto  f\psi_n^*\otimes 1 + q^{-1}\omega_n^{-1}\otimes f\psi_n^* \, ,
\]
commute with $C$ in $\Cl_q(2n)\otimes \Cl_q(2n)$.
The calculation to verify this follows the rubric outlined in the proof of \cite[Lemma 3.4]{Wenzl-Spin}.
\end{proof}

\begin{lem}\label{lem:Ctriangles}
\[
C\circ \Ya[i] = (-1)^{n-i}\frac{[2(n-i)+1]}{[2]} \Ya[i]
\]
\end{lem}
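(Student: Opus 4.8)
The plan is as follows. Since $C$ commutes with the $U_q(\son)$-action via the coproduct of Definition \ref{quantumgroupconventions}, the composite $C\circ\Ya[i]$ is a $U_q(\son)$-module homomorphism $V_i\to S\otimes S$. By the decomposition \eqref{eq:StSdecomp} we have $\dim_{\C(q^{1/2})}\Hom_{U_q(\son)}(V_i, S\otimes S)=1$, so $C\circ\Ya[i]=\chi_i\Ya[i]$ for a unique scalar $\chi_i\in\C(q^{1/2})$, and it remains to compute $\chi_i$. Following the strategy in the proof of Lemma \ref{SS1triangle}, I would evaluate both sides on the highest weight vector $v_i^+$ and extract the coefficient of $x_{\{i+1,\ldots,n\}}\otimes x_\emptyset$ with respect to the basis $\{x_K\otimes x_L\}$: by Proposition \ref{prop:Y} this coefficient equals $1$ in $\Ya[i](v_i^+)=\sum_{I\sqcup J=\{i+1,\ldots,n\}}q^J x_I\otimes x_J$, so $\chi_i$ equals the coefficient of $x_{\{i+1,\ldots,n\}}\otimes x_\emptyset$ in $C\circ\Ya[i](v_i^+)=\sum_{I\sqcup J=\{i+1,\ldots,n\}}q^J\,C(x_I\otimes x_J)$.

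Next I would translate each of the $n+1$ summands of $C$ into operators on the basis $\{x_K\}$ of $S$ via the isomorphism $\Cl_q(2n)/\mathbf{I}\xrightarrow{\cong}S$, $\epsilon_K\psi_K^*\mapsto x_K$: the elements $\Omega_{k-1}=\omega_1\cdots\omega_{k-1}$ (and $\Omega_n$) act diagonally by explicit powers of $q^2$ read off from $\|j,-\|$ through Lemma \ref{lem:quantummapstoclif}; the volume element $f$ acts diagonally by an explicit sign on each $x_K$; the creation operator $\psi_k^*$ sends $x_K\mapsto\pm x_{K\cup\{k\}}$ (and $x_K\mapsto 0$ when $k\in K$); and the annihilation operator $\psi_k$ sends $x_K\mapsto\pm x_{K\setminus\{k\}}$ (and $x_K\mapsto 0$ when $k\notin K$), with all signs governed by the anticommutation relations and the relation \eqref{wenzlscliffreln}. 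A short bookkeeping argument then shows that the only terms $q^J\,C(x_I\otimes x_J)$ contributing a nonzero multiple of $x_{\{i+1,\ldots,n\}}\otimes x_\emptyset$ are: the volume-element summand applied to $I=\{i+1,\ldots,n\}$, $J=\emptyset$; and, for each $k\in\{i+1,\ldots,n\}$, the $\psi_k^*\otimes\psi_k$ part of the $k$-th summand applied to $I=\{i+1,\ldots,n\}\setminus\{k\}$, $J=\{k\}$ (the other parts being killed because $\psi_k$ annihilates $x_\emptyset$ and because $\psi_k^*$ on the right factor cannot decrease an index set). This yields an expression
\[
\chi_i=\tfrac{1}{[2]}\,c_{\mathrm{vol}}+\sum_{k=i+1}^{n}q^{\{k\}}\,c_k
\]
for explicit Laurent monomials $c_{\mathrm{vol}},c_k$ (products of the relevant signs and powers of $q$).

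The final step is to evaluate this sum and recognize it as $(-1)^{n-i}\tfrac{[2(n-i)+1]}{[2]}$; the terms form a one-sided geometric-type progression in $q$-powers, so this reduces to an elementary Laurent-polynomial identity of the same flavour as the concluding calculation in the proof of Lemma \ref{SS1triangle}. The main obstacle I anticipate is the careful determination of the monomials $c_{\mathrm{vol}}$ and $c_k$: correctly tracking (i) the signs from anticommuting $\psi_k^*$ past $\psi_I^*$ and $\psi_k$ past $\psi_J^*$, (ii) the normalization signs $\epsilon_I,\epsilon_J,\epsilon_{\{i+1,\ldots,n\}}$, (iii) the sign eigenvalue of $f$, and (iv) the powers of $q$ coming from $\Omega_{k-1}\otimes\Omega_{k-1}^{-1}$ (resp. $\Omega_n\otimes\Omega_n^{-1}$) acting on $x_I\otimes x_J$. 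Once these are pinned down, the remaining verification is routine, and, combined with a computation of $C$ on the top weight vector $x_\emptyset\otimes x_\emptyset$, this gives $C=\hh^{(1)}$ via the basis $\{\II^{(i)}\}$ of $\End_{U_q(\son)}(S\otimes S)$ and Lemma \ref{lem:H1reln}.
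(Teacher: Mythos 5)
Your proposal is correct and follows essentially the same path as the paper's proof: establish $C\circ\Ya[i]=\chi\Ya[i]$ by Schur's lemma, then extract $\chi$ as the coefficient of $x_{\{i+1,\ldots,n\}}\otimes x_\emptyset$ in $C\circ\Ya[i](v_i^+)$, observing (as the paper does via weight considerations) that only the volume-element term with $(I,J)=(\{i+1,\ldots,n\},\emptyset)$ and, for each $k\in\{i+1,\ldots,n\}$, the $\psi_k^*\otimes\psi_k$ part of the $k$-th summand with $(I,J)=(\{i+1,\ldots,n\}\setminus\{k\},\{k\})$ survive. The sign and $q$-power bookkeeping you flag as the main obstacle is indeed the bulk of the paper's computation, which proceeds exactly as you outline.
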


\begin{proof}
We proceed similarly to the proof of Lemma \ref{SS1triangle}. 
There is some scalar $\chi$ so that $C\circ \Ya[i]= \chi \Ya[i]$. 
Write $\pi_{\lbrace i+1, \ldots, n\rbrace\ot \emptyset}$ to denote the \emph{linear} operator which projects, 
with respect to the basis $\lbrace x_I\ot x_J\rbrace$, to $x_{\lbrace i+1, \ldots, n\rbrace}\ot x_{\emptyset}$. 
Then, $\chi \cdot q^{\emptyset}\cdot x_{\lbrace i+1, \ldots, n\rbrace}\ot x_{\emptyset} 
	= \pi_{\lbrace i+1, \ldots, n\rbrace \ot \emptyset}\circ C\circ \Ya[i](v_i^+)$. 

Using equation \eqref{highestwti}, it follows from weight considerations that
\begin{align*}
\pi_{\lbrace i+1, \ldots, n\rbrace \ot \emptyset}\circ C\circ \Ya[i](v_i^+) 
&= \sum_{\substack {I\cap J = \emptyset \\ I\cup J  = \lbrace i+1, \ldots, n\rbrace}} 
	q^{J} \pi_{\lbrace i+1, \ldots, n\rbrace \ot \emptyset}\circ C\cdot (x_I\otimes x_J) \\
&=\frac{q^{\emptyset}}{[2]}(\Omega_nf\otimes \Omega^{-1}_nf)\cdot x_{\{i+1, \dots, n\}}\otimes x_{\emptyset} \\
&\quad + \sum_{k=i+1}^{n}q^{\{k\}} (\Omega_{k-1}\otimes \Omega_{k-1}^{-1})\circ 
	(\psi_{k}^*\otimes \psi_{k}) \cdot x_{\{i+1, \dots, n\}\setminus \{k\}}\otimes x_{\{k\}} \\
&=\frac{\epsilon_{\{i+1, \dots, n\}}\epsilon_{\emptyset}(q^{-2})^{n-i}(-1)^{n-i}}{[2]} \psi_{\{i+1, \dots, n\}}^*\otimes 1\\
&\quad + \sum_{k=i+1}^nq^{\{k\}}\epsilon_{\{i+1,\dots, n\}\setminus \{k\}}\epsilon_{\{k\}}
	(q^{-2})^{k-i-1}(-1)^{k-i-1} \psi_{\{i+1, \dots, n\}}^*\otimes 1 \\
&=\frac{(q^{-2})^{n-i}(-1)^{n-i}}{[2]}x_{\{i+1, \dots, n\}}\otimes x_{\emptyset} \\
&\quad + \sum_{k=i+1}^n(-q)(-q^2)^{n-k}(q^{-2})^{k-i-1}(-1)^{k-i-1}x_{\{i+1, \dots, n\}}\otimes x_{\emptyset} \\
&=\frac{(-1)^{n-i}}{[2]}\Big(q^{-2(n-i)} + \sum_{k=i+1}^n(q+q^{-1})q^{2n-4k+2i+3}\Big) 
	x_{\{i+1, \dots, n\}}\otimes x_{\emptyset} \\
&=(-1)^{(n-i)}\frac{[2(n-i)+1]}{[2]}x_{\{i+1, \dots, n\}}\otimes x_{\emptyset} \, .
\end{align*}
Thus, $\chi = (-1)^{(n-i)}\frac{[2(n-i)+1]}{[2]}$.
\end{proof}

We now make precise the connection between Wenzl's $C$ 
and the diagrammatically defined endomorphisms from equation \eqref{eq:Hi}.

\begin{prop}\label{P:CequalsH1}
In $\End_{U_q(\son)}(S\otimes S)$, we have $C= \hh^{(1)}$.
\end{prop}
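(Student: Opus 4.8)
The plan is to show that $C$ and $\hh^{(1)}$ agree by expressing both in terms of a known basis of $\End_{U_q(\son)}(S\otimes S)$. The natural choice is the basis $\{\II^{(i)}\}_{i=0}^n$ from Proposition \ref{prop:Ibasis}, where $\II^{(i)} = \Ya[i]\circ \Pa[i]$ for $0\le i\le n-1$ and $\II^{(n)}=\id_{S\otimes S}$. First I would observe that, by Proposition \ref{prop:Ibasis}, there exist scalars $\chi, \lambda_0,\ldots,\lambda_{n-1}\in \C(q^{\frac{1}{2}})$ such that $C = \chi\,\id_{S\otimes S} + \sum_{\ell=0}^{n-1}\lambda_\ell \II^{(\ell)}$, and likewise (as already recorded in Lemma \ref{lem:H1reln}) $\hh^{(1)} = \frac{1}{[2]}\id_{S\otimes S} + \sum_{i=1}^n (-1)^{\binom{i}{2}}\frac{``[i][i+1]"}{d_i}\II^{(n-i)}$. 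So it suffices to pin down the coefficients of $C$ and check they match those of $\hh^{(1)}$.

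The key point is that the computation of these coefficients proceeds in exactly the same way for $C$ as it did for $\hh^{(1)}$ in Lemma \ref{lem:H1reln}. Namely, the coefficient $\chi$ is extracted by evaluating on the highest weight vector $x_\emptyset\otimes x_\emptyset\in S\otimes S$: since $\II^{(\ell)}(x_\emptyset\otimes x_\emptyset)=0$ for $0\le \ell\le n-1$, and one computes directly from the formula for $C$ that $C(x_\emptyset\otimes x_\emptyset) = \frac{1}{[2]}x_\emptyset\otimes x_\emptyset$ (the volume-element term contributes $\frac{1}{[2]}$ and all the $\psi_k\otimes\psi_k^* + \psi_k^*\otimes \psi_k$ terms annihilate $x_\emptyset\otimes x_\emptyset$ since $\psi_k^* x_\emptyset = x_{\{k\}}$ lands outside the relevant weight space while $\psi_k x_\emptyset = 0$), we get $\chi = \frac{1}{[2]}$. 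Then, to find $\lambda_{n-i}$, I would compose the expansion $C = \frac{1}{[2]}\id + \sum_\ell \lambda_\ell \II^{(\ell)}$ with $\Pa[n-i]$ on the left and $\Ya[n-i]$ on the right (equivalently, compose with $\Pa[n-i]$ and use that $\Pa[n-i]\circ\Ya[n-i] = (-1)^{\binom{i+1}{2}}d_i\cdot\id_{V_{n-i}}$ by \eqref{bigonequation} and $\Pa[n-i]\circ\Ya[n-j]=0$ for $i\ne j$ by \eqref{bigoniszero}). This is precisely where Lemma \ref{lem:Ctriangles} enters: it gives $C\circ\Ya[i] = (-1)^{n-i}\frac{[2(n-i)+1]}{[2]}\Ya[i]$, which is the exact analogue of Lemma \ref{SS1triangle} for $C$ in place of $\hh^{(1)}$. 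Composing with $\Pa[n-i]$ yields the equation
\[
(-1)^{n-(n-i)}\frac{[2(n-(n-i))+1]}{[2]}(-1)^{\binom{i+1}{2}}d_i
= \frac{1}{[2]}\cdot 0 + \lambda_{n-i}\cdot (-1)^{\binom{i+1}{2}}d_i,
\]
wait — more carefully, applying $\Pa[n-i]$ to both sides of $C\circ\Ya[n-i] = \big(\frac{1}{[2]}\id + \sum_\ell \lambda_\ell\II^{(\ell)}\big)\circ\Ya[n-i]$ and using $\Pa[n-i]\circ\II^{(\ell)}\circ\Ya[n-i] = \delta_{\ell,n-i}(-1)^{\binom{i+1}{2}}d_i\cdot(-1)^{\binom{i+1}{2}}d_i\cdot\Pa[n-i]$ together with $\Pa[n-i]\circ\id\circ\Ya[n-i] = (-1)^{\binom{i+1}{2}}d_i\cdot\Pa[n-i]$, one solves for $\lambda_{n-i}$ and obtains exactly the same value $(-1)^{\binom{i+1}{2}}(-1)^i\big(\frac{[2i+1]+(-1)^{i+1}}{[2]d_i}\big)$ as in the proof of Lemma \ref{lem:H1reln}, which simplifies via \eqref{eq:2devil} to $(-1)^{\binom{i}{2}}\frac{``[i][i+1]"}{d_i}$.

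Since both $C$ and $\hh^{(1)}$ have the same coefficients in the basis $\{\II^{(i)}\}_{i=0}^n$, they are equal. I do not expect any serious obstacle here: the substantive work (Lemma \ref{lem:Ctriangles}, and the parallel argument in Lemma \ref{lem:H1reln}) is already done. The only mild subtlety is bookkeeping the value $C(x_\emptyset\otimes x_\emptyset)=\frac{1}{[2]}x_\emptyset\otimes x_\emptyset$ cleanly — one must check that $\Omega_n f \otimes \Omega_n^{-1} f$ acts on $x_\emptyset\otimes x_\emptyset = \epsilon_\emptyset\psi_\emptyset^*\otimes\epsilon_\emptyset\psi_\emptyset^*$ as the identity (since $f$ fixes the ``empty'' vector up to a sign that squares away and $\Omega_n$ acts trivially on it), and that $\psi_k\otimes\psi_k^* + \psi_k^*\otimes\psi_k$ kills $x_\emptyset\otimes x_\emptyset$ because $\psi_k x_\emptyset = 0$ in $\Cl_q(2n)/\mathbf{I}$. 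Alternatively, one can sidestep this by noting that $\chi$ is forced to equal $\frac{1}{[2]}$ already from the structure $\lambda_{n-i}$ found above combined with any single additional evaluation, but the highest-weight-vector computation is cleanest. I would write this up as a short paragraph invoking Proposition \ref{prop:Ibasis}, Lemma \ref{lem:Ctriangles}, equations \eqref{bigonequation}, \eqref{bigoniszero}, \eqref{eq:2devil}, and comparing with Lemma \ref{lem:H1reln}.
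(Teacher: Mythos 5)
Your proposal is correct and follows essentially the same route as the paper: the paper's proof notes $C\cdot(x_\emptyset\otimes x_\emptyset) = \frac{1}{[2]}x_\emptyset\otimes x_\emptyset$ and then observes that, thanks to Lemma \ref{lem:Ctriangles} (which plays the role of Lemma \ref{SS1triangle}), the proof of Lemma \ref{lem:H1reln} applies verbatim with $C$ in place of $\hh^{(1)}$, yielding the same expansion in the basis $\{\II^{(i)}\}$. Your write-up simply unpacks those two steps in more detail.
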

\begin{proof}
Note that $C\cdot x_{\emptyset}\otimes x_{\emptyset} = \frac{1}{[2]}x_{\emptyset}\otimes x_{\emptyset}$. 
Thanks to Lemma \ref{lem:Ctriangles}, 
the proof of Lemma \ref{lem:H1reln} can be applied to $C$ in place of $\hh^{(1)}$, 
in which case we deduce that
\[
C= \dfrac{1}{[2]}\id_{S\ot S} + \sum_{k=1}^{n} (-1)^{\binom{k}{2}}\dfrac{``[k][k+1]"}{d_k} \II^{(n-k)} = \hh^{(1)} \, . 
\]
\end{proof}

\section{More on equivariant categories} \label{SS:equivariantsupplement}

In this appendix we discuss the basic structure of equivariant $\K$-linear categories, 
with minimal assumptions on the commutative base ring $\K$. 
We assume throughout that 
$\K$ is an integral domain and $2$ is invertible in $\K$.

We have two goals. 
The first goal is to justify that most of the results of \S\ref{sec:equiv}, 
including all the results we used when defining our categorical link invariant,
still hold over $\K$. 
The second goal is to shed some light on the difficulties involved in computing 
weighted Grothendieck groups, as they pertain to our conjectures, 
e.g.~the folded skew Howe duality of Conjecture \ref{conj:FSH}.
Even if these conjectures are proven for $\K = \C$, 
there are additional issues one would need to overcome to prove the result for other base rings $\K$, 
and we wish to point these out.

\subsection{When things are nice}\label{ss:nice}

\newcommand{\dd}{D}

We assume we are in the situation of Notation \ref{nota:mixedsigma} henceforth. 
Fix $b\in \BC^{\mathrm{fix}}$.
The main technical issue is that the distinguished object $X_b$ need not be equivariantizable. 

Since $X_b$ is assumed endopositive, 
there is a one-dimensional (by abuse of notation, this means ``free of rank $1$ over $\K$'') 
space of degree zero maps $X_b\rightarrow \sigma(X_b)$. 
Suppose this space is spanned by 
$\varphi_b \colon X_b\rightarrow \sigma(X_b)$, 
which is necessarily invertible, 
as otherwise $X_b$ and $\sigma(X_b)$ would not be isomorphic. 
Since $\sigma$ is an involution (hence an automorphism), 
$\sigma(\varphi_b)$ is also invertible, 
with inverse $\sigma(\varphi_b)^{-1} = \sigma(\varphi_b^{-1})$. 
Since $\End(X_b) = \K \cdot \id$, 
we must have that 
\begin{equation}
	\label{eq:sig(f)f}
\sigma(\varphi_b)\circ \varphi_b = \dd \cdot\id_{X_b}
\end{equation}
for $\dd \in \K^{\times}$.
Note that this implies that $\sigma(\varphi_b^{-1}) = \dd^{-1} \varphi_b$.

Now, in order to equip $X_b$ with an equivariant structure, 
we need $c\in \K$ such that $\sigma(c\cdot \varphi)\circ (c \cdot \varphi) = \id_{X_b}$. 
By \eqref{eq:sig(f)f}, it follows that an equivariant structure exists 
if and only if there exists $c\in \K$ such that $c^2 = \dd^{-1}$.
Equivalently, we see that $X_b$ is equivariantizable if and only if $\dd$ is a square. 
This is not always the case, 
although it is guaranteed if we assume $\K$ is an algebraically closed field
(hence, the comment after Hypothesis \ref{hypo:assumeme}).

\begin{nota}
Let $\EBC^{\mathrm{fix}}$ denote the subset of $b\in \BC^{\mathrm{fix}}$ 
such that there exists $\varphi_b \colon X_b\rightarrow \sigma(X_b)$ so $(X_b, \varphi_b)\in \ECat$.
\end{nota}

Clearly, if $\K$ has all square roots, then $\EBC^{\mathrm{fix}} = \BC^{\mathrm{fix}}$. 
This equality might still hold over general $\K$, 
depending on the structure of the category in question, but it is not guaranteed. 
In the next section, we will study additional equivariant objects associated to 
$b \in \BC^{\mathrm{fix}} \smallsetminus \EBC^{\mathrm{fix}}$ in the event that this set is non-empty.
Note that for $b\in \EBC^{\mathrm{fix}}$, 
Corollary \ref{cor:HomAsInv} implies that the equivariant object $(X_b,\varphi_b)$ is 
endopositive, as it identifies its endomorphism algebra with a subalgebra of $\End(X_b)$.

We now consider a general construction: the induction functor $\Cat \to \ECat$. 
Given any object $Y \in \Cat$, define
\begin{equation} \label{eq:psiY} 
\psi_Y \colon Y \oplus \sigma(Y) \to \sigma(Y) \oplus Y \, , \quad 
\psi_Y := \begin{pmatrix} 0 & \id \\ \id & 0 \end{pmatrix} \, . \end{equation}
It is straightforward to check that
$\ind Y := (Y \oplus \sigma(Y),\psi_Y)$ is then an equivariant object.
It is straightforward to extend this assignment to morphisms, 
thus we obtain a functor 
\begin{equation}
	\label{eq:ind}
\ind \colon \Cat \to \ECat \, , \quad Y \mapsto (Y \oplus \sigma(Y),\psi_Y) \, .
\end{equation}

An equivalent construction, 
tailored to the setting of Notation \ref{nota:mixedsigma}, is as follows. 
Let $b \in \BC$ (whether fixed or not) and choose an isomorphism 
$\varphi_b \colon X_{\sigma(b)} \to \sigma(X_b)$. 
We then can consider the following morphism in $\Cat$: 
\begin{equation} \label{eq:psib} 
\psi_b \colon X_b \oplus X_{\sigma(b)} \to \sigma(X_b) \oplus \sigma(X_{\sigma(b)}) \, , \quad 
\psi_b := \begin{pmatrix} 0 & \varphi_b \\ \sigma(\varphi_b^{-1}) & 0 \end{pmatrix} \, . \end{equation}
It is straightforward to verify that $(X_b\oplus X_{\sigma(b)}, \psi_b)\in \ECat$, 
and that $\varphi_b$ determines an isomorphism between this object and $\ind X_b$.

Next, we compute the space of endomorphisms of $(X_b\oplus X_{\sigma(b)}, \psi_b)$
in the case that $b \in \BC^{\mathrm{free}}$. 
Since there are no non-zero degree zero maps between $X_b$ and $X_{\sigma(b)}$, 
so any degree zero endomorphism of $X_b \oplus X_{\sigma(b)}$ is given by a diagonal matrix
\[ 
\delta = \begin{pmatrix} c_1\id_{X_b} & 0 \\ 0 & c_2\id_{\sigma(X_b)} \end{pmatrix}
\, , \quad c_1,c_2 \in \K \, .
\]
One checks that $\psi_b\circ \delta = \sigma(\delta)\circ \psi_b$ if and only if $c_1=c_2$, 
thus $\End_{\ECat}^0((X_b\oplus X_{\sigma(b)}, \psi_b)) = \K \cdot \id$. 
Since $X_b$ and $X_{\sigma(b)}$ are objects in an endopositive family,  
it follows that there are no endomorphisms of $(X_b\oplus X_{\sigma(b)}, \psi_b)$ of negative degree, 
hence this object is endopositive.
We have thus described the objects appearing in Lemma \ref{lem:phib}, 
and it is easy to verify the remaining claims appearing therein.

Finally, we study multiplicity spaces and decompositions for the objects, 
thus proving Proposition \ref{prop:SummandViaEV} 
under the assumption of Hypothesis \ref{hypo:assumeme}.
As a byproduct, this also establishes Proposition \ref{prop:indECat}.

Fix an equivariant object $(Y,\varphi_Y) \in \ECat$.
Suppose $b \in \EBC^{\mathrm{fix}}$, 
and equip $X_b$ with an equivariant structure $\varphi_b$. 
We hence can consider the $\sigbast$-action on $\Hom^k(X_b,Y)$, 
which we may diagonalize, since $2$ is assumed invertible. 

Suppose we have an inclusion map $\iota \in \Hom^k(X_b,Y)$ lying in the $+1$-eigenspace, 
which pairs against a projection map $p \colon Y \to X_b$ 
to give the identity map of $X_b$. 
By symmetrizing, we can also assume that $p$ lies in the $+1$-eigenspace for the 
$\sigbast$ action on $\Hom^{-k}(Y,X_b)$, 
so $\iota$ and $p$ induce inclusion and projection maps 
between $(X_b,\varphi_b)$ and $(Y,\varphi_Y)$. 
Similarly, inclusion/projection maps in the $-1$-eigenspace induce inclusion and projection maps 
between $(X_b, -\varphi_b)$ and $(Y,\varphi_Y)$. 
Moreover, this argument continues to work for families of orthogonal inclusions and projections. 
Finally, note that $\sigbast$ gives a semisimple action of $\Z/2$ on both $\Hom^k(X_b,Y)$ 
and the kernel of the graded composition pairing, 
so that an eigenbasis of $V^k(X_b,Y)$ can be lifted to eigenvectors in $\Hom^k(X_b,Y)$.

A separate argument holds for $b \in \BC^{\mathrm{free}}$, 
which does not require the $\sigast$ action
(but does involve similar formulae). 
Fix an isomorphism $\varphi_b \colon X_{\sigma(b)} \to \sigma(X_b)$
as above and choose arbitrary maps 
$\iota \in \Hom^{k}(X_b,Y)$ and $p \in \Hom^{-k}(Y,X_b)$. 
We can consider
the maps in $\ECat$
between $(X_b \oplus X_{\sigma(b)},\psi_b)$ and $(Y,\varphi_Y)$
given by the matrices
\begin{equation} \label{eq:inducedmaps} 
I(\iota) := \begin{pmatrix} \iota & \varphi_Y^{-1} \circ \sigma(\iota) \circ \varphi_b \end{pmatrix}
\quad \text{and} \quad
P(p) := \begin{pmatrix} p \\ \varphi_b^{-1} \circ \sigma(p) \circ \varphi_Y \end{pmatrix} \, .
\end{equation}
We leave the reader to confirm that these are indeed equivariant maps. 
Composing, we obtain 
\begin{equation}\label{eq:compositionofinducedmapsatfirst} 
P(p) \circ I(\iota) = \begin{pmatrix} p \circ \iota & p \circ \varphi_Y^{-1} \circ \sigma(\iota) \circ \varphi_b \\ 
	\varphi_b^{-1} \circ \sigma(p) \circ \varphi_Y \circ \iota 
		& \varphi_b^{-1} \circ \sigma(p \circ \iota) \circ \varphi_b \end{pmatrix} \, ,
\end{equation}
which is an endomorphism of $(X_b \oplus X_{\sigma(b)},\psi_b)$.

As observed above, since $b \in \BC^{\mathrm{free}}$, 
this matrix must be a multiple of the identity, 
so, in particular, the off-diagonal entries are zero.
In fact, we can explicitly compute which multiple.
We have $p\circ \iota = \beta^k_{X_b, Y}(p,\iota) \cdot \id_{X_b}$,
and so 
\[
\varphi_b^{-1} \circ \sigma(p \circ \iota) \circ \varphi_b 
	= \beta^k_{X_b, Y} (p, \iota) \cdot \id_{X_{\sigma(b)}}
	\]
is also the same multiple of the identity. 
Thus, the composition pairing of $I(\iota)$ and $P(p)$ 
agrees with the composition pairing of $\iota$ and $p$. 
Consequently, the multiplicity of $X_b$ in $Y$ exactly equals 
the multiplicity of $(X_b \oplus X_{\sigma(b)},\psi_b)$ in $(Y,\varphi_Y)$.

\begin{rem} 
We can say more under a common situation in representation theory, 
that of the \emph{top summand}. 

Let $I \subset \BC$.
Given an object $Z \in \CatBC$, the condition that $[Z] \in \Z[q^{\pm}] \cdot I$ 
is equivalent to the condition that only (shifts of) objects $\{ X_b\}_{b \in I}$ 
appear in the unique decomposition of $Z$ into distinguished summands. 
The associated \emph{thick ideal} $\Ide\subset \CatBC$ 
consists of the morphisms in $\CatBC$ 
that factor through an object $Z$ such that 
$[Z] \in \Z[q^{\pm}] \cdot I$.
If $I$ is closed under the action of $\sigma$ on $\BC$, 
then $\Ide$ is invariant under $\sigma$. 

Now, let $(Y,\varphi_Y)$ be an equivariant object in $\ECatBC$
and suppose that $Y = X \oplus Z$ where $X=X_c$ for $c \in \BC^{\mathrm{fix}} \smallsetminus I$ 
and $[Z] \in \Z[q^{\pm}] \cdot I$.
We think of $X$ as the top summand of $Y$, and $Z$ as consisting of ``lower terms.''
In this context, let us prove that $X$ is equivariantizable. 

Consider $X$ as the image of an idempotent $e \in \End(Y)$,
and note that $\varphi_X := \sigma(e) \circ \varphi_Y \circ e$ is a morphism from $X$ to $\sigma(X)$.
We compute
\begin{align*}
\sigma(\varphi_X) \circ \varphi_X 
&= e \circ \sigma(\varphi_Y) \circ \sigma(e) \circ \varphi_Y \circ e \\
&= e \circ \sigma(\varphi_Y) \circ \sigma(\id_Y + (e - \id_Y)) \circ \varphi_Y \circ e \\
&= e \circ \sigma(\varphi_Y) \circ \varphi_Y \circ e 
	+ e \circ \sigma(\varphi_Y) \circ \sigma(e - \id_Y) \circ \varphi_Y \circ e \, ,
\end{align*}
so $\sigma(\varphi_X) \circ \varphi_X \in e + e \Ide e$.
By endopositivity, the degree zero morphisms in $e \Ide e$ are zero,
thus $\sigma(\varphi_X) \circ \varphi_X = e$.
Since $e$ is the identity map of $X$, 
$\varphi_X$ is an equivariant structure.

This technique can be used to prove that many distinguished objects are equivariantizable.
For example, this provides an alternative route to showing that
all endopositive objects in $\BnFoam[2]$ are equivariantizable, 
via \cite[Example 2.21]{ELau}.
\end{rem}

\subsection{When things are not nice}\label{ss:notnice}

We continue to assume that we are in the setting of Notation \ref{nota:mixedsigma}.
In this section we discuss indecomposable objects associated 
with $b \in \BC^{\mathrm{fix}} \smallsetminus \EBC^{\mathrm{fix}}$. 
We do not know where to find an exploration of this theme in the literature.

Fix $b \in \BC^{\mathrm{fix}}$, possibly in $\EBC^{\mathrm{fix}}$, 
Once and for all, fix an isomorphism $\varphi_b \colon X_b \to \sigma(X_b)$
spanning $\Hom(X_b,\sigma(X_b))$
and define $\dd \in \K^{\times}$ by the formula $\sigma(\varphi_b) \circ \varphi_b = \dd \cdot \id$. 
Let $\indX_b := (X_b \oplus X_b, \psi_b)$ 
be the equivariant object defined in \eqref{eq:psib}. 
Note that
\[ 
\psi_b = \begin{pmatrix} 0 & \varphi_b \\ \sigma(\varphi_b^{-1}) & 0 
\end{pmatrix}
= \begin{pmatrix} 0 & \varphi_b \\ \dd^{-1} \varphi_b & 0 
\end{pmatrix} \, .
\]

To begin, we compute the degree zero endomorphisms of $\indX_b$.
Since $X_b$ is endopositive, 
a degree zero endomorphism of $X_b \oplus X_b$ in $\Cat$ consists of 
a $2 \times 2$ matrix of scalars multiples of $\id_{X_b}$. 
A computation shows that such
a matrix $M$ satisfies $\psi_b\circ M = \sigma(M)\circ \psi_b$
if and only if
\[
M = \begin{pmatrix}
	\alpha & \dd \cdot \beta \\ \beta & \alpha 
		\end{pmatrix}
	\cdot \id_{X_b}
\]
for some $\alpha, \beta \in \K$. 
Thus, 
$\End_{\ECat}^0(\indX_b)$ is spanned over $\K$ 
by the identity and by the map
\[
\gamma := \begin{pmatrix}
	0 & \dd \\ 1 & 0
	\end{pmatrix} \cdot \id_{X_b} \, .
\]
Since $\gamma^2 = \dd \cdot \id$, 
the ring $\End_{\ECat}^0(\indX_b)$ is isomorphic to 
$\K' := \K[x]/(x^2 - \dd)$, where $x$ acts by $\gamma$. 
Since $X_b$ is endopositive in $\Cat$, 
there are no endomorphisms of $\indX_b$ of negative degree.

If $\dd$ has a square root in $\K$,
or equivalently (as discussed in \S \ref{ss:nice} when $b \in \EBC^{\mathrm{fix}}$, 
then it is easy to decompose $\indX_b$ as a direct sum of two indecomposable equivariant objects.
Explicitly, we have
\[ 
\indX_b \cong (X_b, c \varphi_b) \oplus (X_b, -c \varphi_b)
	\]
where $\pm c$ are the two square roots of $\dd^{-1}$.
Correspondingly, in this case
$\K' = \K[x]/(x^2 - c^{-2}) \cong \K \times \K$.

We focus now on the case when $\dd$ does not have a square root in $\K$.
In this case 
$\End_{\ECat}^0(\indX_b)\cong \K'$ is a domain and consequently 
the object $\indX_b$ is indecomposable.

\begin{rem} 
An immediate consequence of this computation is that $\ECat$ 
will \textbf{not} be positively graded over $\K$ 
(in the sense of Definition \ref{def:mixed})
if $\EBC^{\mathrm{fix}} \ne \BC^{\mathrm{fix}}$. 
Indeed, it has an indecomposable object 
whose endomorphism ring in degree zero is not $\K \cdot \id$, 
but rather some degree two extension of $\K$.
Thus, \eqref{homdim} fails. 
It is possible to generalize Definition \ref{def:mixed}
to allows for field extensions, but we will not pursue this matter here. \end{rem}

Let $(Y,\varphi_Y)$ be an equivariant object.
We aim to analyze summands of $(Y,\varphi_Y)$ of the form $\indX_b$
analogously to our treatment in \S \ref{ss:nice} 
of the summands associated to $b \in \BC^{\mathrm{free}}$.
Fix $b \in \BC^{\mathrm{fix}} \smallsetminus \EBC^{\mathrm{fix}}$ and
define operators $\sigast'$ on $\iota \in \Hom(X_b,Y)$ and $p \in \Hom(Y,X_b)$ 
as follows
\begin{equation}
	\label{eq:sigast'}
\sigast' \iota := \varphi_Y^{-1} \circ \sigma(\iota) \circ \varphi_b \, , \quad 
\sigast' p := \varphi_b^{-1} \circ \sigma(p) \circ \varphi_Y \, . \end{equation}
Since $\sigma(\varphi_b)\circ \varphi_b = \dd \cdot \id_{X_b}$ and $\dd\ne 1$, 
the pair $(X_b,\varphi_b)$ is not an equivariant object, 
so, contrasting \eqref{eq:sigastdef}, 
these operators are not involutions! 
Instead, a computation analogous to the one in the proof of Proposition \ref{prop:sigast}
shows that
\begin{equation}
	\label{eq:sigast'2}
\sigast'(\sigast' \iota) = \dd \cdot \iota
\, , \quad \sigast'(\sigast' p) = \dd^{-1} \cdot p \, .
\end{equation}
Hence, we can endow $\Hom(X_b,Y)$ and $\Hom(Y,X_b)$ 
with $\K'$-module structures by letting $x \in \K'$ act 
by $\sigast'$ on $\Hom(X_b,Y)$ and $\dd \cdot \sigast'$ on $\Hom(Y,X_b)$. 

Observe, however, that 
$\End(X_b)$ is only a $\K$-module 
and the graded composition pairing is only $\K$-linear. 
Nevertheless, one has the following adjunction
\begin{equation} \label{wackyadjunction} 
\beta^k_{X_b, Y}(p,\sigast' \iota) = \dd \cdot \beta^k_{X_b, Y}(\sigast' p, \iota) 
\end{equation}
To see why, 
note that since $\sigma(\varphi_b) = D\cdot \varphi_b^{-1}$ and $\sigma(\varphi_Y^{-1}) = \varphi_Y$, 
we have that
\[
\sigma\left(\beta^k_{X_b, Y}(p,\sigast' \iota)\right) 
= \sigma(p \circ \varphi_Y^{-1} \circ \sigma(\iota) \circ \varphi_b )
= \sigma(p) \circ \sigma(\varphi_Y^{-1}) \circ \iota \circ \sigma(\varphi_b) 
	\]
is conjugate to
\[
\dd \cdot \beta^k_{X_b, Y}(\sigast' p, \iota)
= \dd \cdot \varphi_b^{-1} \circ \sigma(p) \circ \varphi_Y \circ \iota
= \sigma(\varphi_b) \circ \sigma(p) \circ \sigma(\varphi_Y^{-1}) \circ \iota \, .
\]
Since both the left- and right-hand sides of \eqref{wackyadjunction} 
are $\K$-multiples of the identity map, 
$\K$-linearity of $\sigma$ establishes \eqref{wackyadjunction}.

Now consider the $\K$-modules 
$\Hom_{\ECat}^k(\indX_b,(Y,\varphi_Y))$ and $\Hom_{\ECat}^{-k}((Y,\varphi_Y), \indX_b)$
which become $\K'$-modules under pre- and post-composition with $\End_{\ECat}^0(\indX_b)$. 
Adapting \eqref{eq:inducedmaps} to the present context, 
define $\K$-linear maps
\begin{equation}
	\label{eq:inducedmaps2}
I \colon \Hom^k(X_b,Y) \to \Hom_{\ECat}^k(\indX_b,(Y,\varphi_Y))
\, , \quad
P \colon \Hom^{-k}(Y,X_b) \to \Hom_{\ECat}^{-k}((Y,\varphi_Y), \indX_b)
\end{equation}
by
\[
I(\iota) := \begin{pmatrix} \iota & \sigast' \iota \end{pmatrix}
\quad \text{and} \quad
P(p) := \begin{pmatrix} p \\ \sigast' p \end{pmatrix} \, .
\]
Using \eqref{eq:sigast'2},
we compute
\begin{equation}
	\label{eq:K'lin}
I(\iota) \circ \gamma = I(\sigast' \iota) \, , \quad
\gamma \circ P(p) = P(\dd \cdot \sigast' p), 
\end{equation}
so the maps in \eqref{eq:inducedmaps2} are in fact $\K'$-module homomorphisms. 
The computation \eqref{eq:compositionofinducedmapsatfirst} is unchanged, 
but now the off-diagonal entries need not be zero.
Instead, \eqref{wackyadjunction} implies that
\begin{equation}\label{E:whackycomppair}
P(p) \circ I(\iota) 
	= \beta^k_{X_b, Y}(p,\iota) \cdot \id_{\indX_b} 
		+ \dd^{-1} \beta^k_{X_b, Y}(p,\sigast' \iota) \cdot \gamma \, .
\end{equation}
In particular, if one defines the \emph{wacky composition pairing} by
\begin{equation} 
\beta^k \colon \Hom^{-k}(Y,X_b) \times \Hom^{k}(X_b,Y) \to \K'
\, , \quad
(p,\iota) \mapsto P(p) \circ I(\iota)
\end{equation}
then \eqref{eq:K'lin} shows that this pairing is $\K'$-bilinear.

We claim the maps 
\eqref{eq:inducedmaps2} send the kernels of the 
graded composition pairing to those of the wacky composition pairing.
To see why, note that if $p\circ \iota = 0$ for all $p$, 
then is straightforward to verify that $p\circ (\sigast' \iota) = 0$ for all $p$.
Equation \eqref{E:whackycomppair} immediately implies that $P(p)\circ I(\iota) = 0$ for all $p$.
A similar argument shows that if $p\circ \iota = 0$ for all $\iota$, 
then $P(p)\circ I(\iota) = 0$ for all $\iota$.
Since the maps 
\eqref{eq:inducedmaps2} respect the kernels of composition pairings, 
one can similarly define a \emph{wacky non-degenerate composition pairing} 
between $V^{-k}(Y,X_b)$ and $V^{k}(X_b,Y)$.

If $\K$ is a field, 
one can continue as follows. 
The spaces $V^{-k}(Y,X_b)$ and $V^{k}(X_b,Y)$ are free over $\K'$, 
and hence their dimension over $\K$ is even. 
One can choose a $\K$ basis $\{p_1, x p_1, p_2, xp_2, \ldots, p_n, x p_n\}$ 
for $V^{-k}(Y,X_b)$ where the action of $x \in \K'$ is in rational canonical form. 
There is a dual basis, and \eqref{wackyadjunction} implies that 
$x$ is in (inverse) rational canonical form as well: 
if $\iota_1$ is dual to $p_1$ then $\sigast' \iota_1$ is dual to $\sigast' p_1$. 
The elements $\{p_1, \ldots, p_n\}$ and $\{\iota_1, \ldots, \iota_n\}$ 
form dual sets as bases over $\K'$, 
and one deduces the following result, 
which extends Proposition \ref{prop:SummandViaEV}.

\begin{prop}
	\label{prop:SummandViaEVfunky}
Suppose $\K$ is a field and let $(Y, \varphi_Y)$ be an object in $\ECat$.
\begin{enumerate}
\item For $b\in \BC^{\mathrm{free}}$, 
	the multiplicity of $\qsh^k (X_b\oplus X_{\sigma(b)}, \psi_b)$ in $(Y,\varphi_Y)$ 
	is equal to the multiplicity of $\qsh^k X_b$ in $Y$.

\item For $b\in \EBC^{\mathrm{fix}}$, 
	the multiplicity of $\qsh^k (X_b,\varphi_b)$ in $(Y,\varphi_Y)$ 
	equals the dimension of the $+1$-eigenspace of $\sigbast$ in $V^k(X_b, Y)$. 

\item For $b\in \EBC^{\mathrm{fix}}$, 
	the multiplicity of $\qsh^k (X_b,-\varphi_b)$ in $(Y,\varphi_Y)$ 
	equals the dimension of the {$-1$-eigenspace} of $\sigbast$ in $V^k(X_b, Y)$. 

\item For $b\in \BC^{\mathrm{fix}} \smallsetminus \EBC^{\mathrm{fix}}$, 
	the multiplicity of $\qsh^k (X_b\oplus X_b, \psi_b)$ in $(Y,\varphi_Y)$ is equal to 
	half the multiplicity of $\qsh^k X_b$ in $Y$, 
	which is equal to the dimension of $V^k(X_b,Y)$ as a $\K'$ vector space.
\end{enumerate}
\end{prop}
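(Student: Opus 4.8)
The plan is to deduce all four multiplicity statements from the general formalism of \S\ref{LIP}, applied in the equivariant category, using the classification of indecomposable equivariant objects. First I would record that $\ECatBC$ is Krull--Schmidt: each of the candidate indecomposables has graded local endomorphism ring, since for $(X_b,\pm\varphi_b)$ with $b\in\EBC^{\mathrm{fix}}$ the degree zero part is $\K\cdot\id$ and the rest positive (Corollary \ref{cor:HomAsInv}), for $(X_b\oplus X_{\sigma(b)},\psi_b)$ with $b\in\BC^{\mathrm{free}}$ the degree zero part is $\K\cdot\id$ by the computation in \S\ref{ss:nice}, and for $\indX_b$ with $b\in\BC^{\mathrm{fix}}\smallsetminus\EBC^{\mathrm{fix}}$ the degree zero part is the field $\K'=\K[x]/(x^2-\dd)$ with nothing in negative degree. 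Using the induction functor $\ind$ of \eqref{eq:ind}, every equivariant object with underlying object in $\CatBC$ is a summand of a finite sum of $\ind(X_b)$'s, and each $\ind(X_b)$ decomposes as $(X_b,\varphi_b)\oplus(X_b,-\varphi_b)$, as $(X_b\oplus X_{\sigma(b)},\psi_b)$, or as the indecomposable $\indX_b$, according to the type of $b$; hence the listed objects exhaust the indecomposables, the multiplicities in the statement are well defined, and each equals the graded rank of the relevant local composition pairing (Lemma \ref{lem:EMTWlemma}), valued in the residue division ring $\K$ in parts (1)--(3) and $\K'$ in part (4).

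Parts (1)--(3) are then exactly the computations of \S\ref{ss:nice}. For (2) and (3): with $X_b$ carrying the structure $\varphi_b$, the involution $\sigbast$ preserves the kernels of the graded composition pairing, hence acts (semisimply, as $2$ is invertible) on $V^k(X_b,Y)$; a $\pm1$-eigenvector inclusion, symmetrized and paired against a $\pm1$-eigenvector projection, gives inclusion/projection maps for $(X_b,\pm\varphi_b)$ inside $(Y,\varphi_Y)$, and conversely these arise from eigenvectors, so the multiplicity is the corresponding eigenspace dimension. For (1): the maps $I(\iota),P(p)$ of \eqref{eq:inducedmaps} satisfy $P(p)\circ I(\iota)=\beta^k_{X_b,Y}(p,\iota)\cdot\id$, so the composition pairing of induced maps agrees with $\beta^k_{X_b,Y}$; since $(X_b\oplus X_{\sigma(b)},\psi_b)$ is endopositive this equates the multiplicity of $\qsh^k(X_b\oplus X_{\sigma(b)},\psi_b)$ in $(Y,\varphi_Y)$ with the na\"ive rank of $\beta^k_{X_b,Y}$, which by Corollary \ref{cor:gLCP} is the multiplicity of $\qsh^k X_b$ in $Y$.

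The substance is part (4); fix $b\in\BC^{\mathrm{fix}}\smallsetminus\EBC^{\mathrm{fix}}$, so $\K'$ is a genuine quadratic field extension of $\K$. I would first check by a short computation (using $\sigma(\varphi_b)=\dd\,\varphi_b^{-1}$, as in \eqref{wackyadjunction}) that $\sigast'$ preserves $\rker(\beta^k_{X_b,Y})$ and $\lker(\beta^k_{X_b,Y})$, so that $V^k(X_b,Y)$ and $V^{-k}(Y,X_b)$ are modules over the field $\K'$, hence free. The maps $I,P$ of \eqref{eq:inducedmaps2} are $\K'$-linear by \eqref{eq:K'lin}, surjective, and by \eqref{E:whackycomppair} carry these kernels into the kernels of the wacky composition pairing, so the local composition pairing of $\indX_b$ in $(Y,\varphi_Y)$ is identified, through $I$ and $P$, with a $\K'$-bilinear pairing between $V^{-k}(Y,X_b)$ and $V^k(X_b,Y)$. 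This induced pairing is non-degenerate over $\K'$: if $P(p)\circ I(\iota)=0$ for all $\iota$, reading the $\id$-component in \eqref{E:whackycomppair} forces $\beta^k_{X_b,Y}(p,\iota)=0$ for all $\iota$, so $p$ lies in the left kernel, and symmetrically. Over the field $\K'$ a non-degenerate pairing between finite free modules has dual bases, so $\K'$-dual sets $\{\iota_j\},\{p_j\}$ give orthogonal families $I(\iota_j),P(p_j)$ exhibiting $\dim_{\K'}V^k(X_b,Y)$ copies of $\qsh^k\indX_b$ as a summand; this dimension is the $\K'$-rank of the local composition pairing, hence the multiplicity. Finally $\dim_\K V^k(X_b,Y)=2\dim_{\K'}V^k(X_b,Y)$, while by Corollary \ref{cor:gLCP} and Lemma \ref{lem:gLIP} $\dim_\K V^k(X_b,Y)$ is the multiplicity of $\qsh^k X_b$ in $Y$, giving both asserted equalities.

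I expect the main obstacle to be the bookkeeping in part (4): making precise that the $\dd$-twisted adjunction \eqref{wackyadjunction} renders the wacky pairing genuinely $\K'$-bilinear with respect to the two (slightly different) $\K'$-actions on $V^k(X_b,Y)$ and $V^{-k}(Y,X_b)$, and that its non-degeneracy over $\K'$ really is inherited from the $\K$-non-degeneracy of $\beta^k_{X_b,Y}$ on $V$. The cleanest route to the dual sets is the rational canonical form device sketched in \S\ref{ss:notnice}, but one must be careful that it is applied degree by degree so that the resulting decomposition respects the grading shifts.
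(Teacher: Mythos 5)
Your proof is correct, and in substance it is the paper's argument: parts (1)--(3) are the computations of \S\ref{ss:nice}, and part (4) runs the machinery of \S\ref{ss:notnice} (the maps $I,P$ of \eqref{eq:inducedmaps2}, the $\K'$-module structures on $V^{\pm k}$, the wacky pairing, and Lemma \ref{lem:EMTWlemma} to convert ranks of local composition pairings into multiplicities). The one place you diverge from the paper is the final step of part (4): the paper exhibits $\K'$-dual sets by choosing a $\K$-basis of $V^{-k}(Y,X_b)$ in which $x\in\K'$ acts in rational canonical form and observing, via \eqref{wackyadjunction}, that the $\beta^k$-dual basis is then in inverse canonical form; you instead observe directly (by reading the $\id$-component of \eqref{E:whackycomppair}) that the induced wacky pairing on $V^{-k}\times V^k$ is non-degenerate over the field $\K'$ and invoke the standard existence of dual bases. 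These are two packagings of the same fact; yours is arguably cleaner once one has verified $\K'$-bilinearity, while the paper's avoids having to carefully match the two slightly different $\K'$-actions on $V^{\pm k}$. You also front-load Krull--Schmidt (which is essentially Proposition \ref{prop:indECatfunky}), whereas the paper proves the multiplicity formulas first via Lemma \ref{lem:EMTWlemma} and deduces the classification afterwards --- a minor reordering, both of which are logically sound since Lemma \ref{lem:EMTWlemma} only needs the relevant object to have graded local endomorphism ring, not the full classification.
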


Recall from Notation \ref{not:ECatBC} that 
$\ECatBC$ denotes the full subcategory of $\ECat$ whose objects $(Y,\varphi_Y)$ satisfy $Y \in \Cat(\BC)$. 
When working over a field, the following version of Proposition \ref{prop:indECat}
classifies the indecomposable objects in this category without assuming Hypothesis \ref{hypo:assumeme}.

\begin{prop} \label{prop:indECatfunky}
Suppose $\K$ is a field. 
If $(X, \varphi_X)$ is an indecomposable object in $\ECatBC$, 
then exactly one of the following holds:
\begin{itemize}
\item $(X,\varphi_X) \cong \qsh^k (X_b, \varphi_b)$ for some $b\in \EBC^{\mathrm{fix}}$ and $k \in \Z$,
\item $(X,\varphi_X) \cong \qsh^k (X_b, -\varphi_b)$ for some $b\in \EBC^{\mathrm{fix}}$ and $k \in \Z$,
\item $(X,\varphi_X) \cong \qsh^k (X_b\oplus X_{\sigma(b)}, \psi_b) 
	\cong \qsh^k (X_b\oplus X_{\sigma(b)},-\psi_b)$ 
for some $b\in \BC^{\mathrm{free}}$ and $k \in \Z$, or
\item $(X,\varphi_X) \cong \qsh^k (X_b\oplus X_b, \psi_b) \cong \qsh^k (X_b\oplus X_b,-\psi_b)$ 
for some $b\in \BC^{\mathrm{fix}} \smallsetminus \EBC^{\mathrm{fix}}$ and $k \in \Z$.
\end{itemize}
Moreover, $\ECatBC$ is graded Krull--Schmidt.
\end{prop}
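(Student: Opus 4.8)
The plan is to follow the strategy behind Proposition \ref{prop:indECat}, replacing the appeal to algebraic closure (Hypothesis \ref{hypo:assumeme}) by the finer analysis of \S\ref{ss:notnice}. The proof rests on three pillars: (i) every object of $\ECatBC$ decomposes as a finite direct sum of grading shifts of the four families of candidate objects; (ii) each candidate has graded local endomorphism ring; and (iii) the four families are pairwise disjoint (and the grading shift is pinned down). Granting (i) and (ii), the general theory of graded Krull--Schmidt categories, e.g.\ \cite[Theorem 11.50]{soergelbook}, immediately yields that $\ECatBC$ is graded Krull--Schmidt with indecomposables exactly the candidates, and (iii) upgrades ``at least one of the four cases holds'' to ``exactly one.''

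For (i), fix $(Y,\varphi_Y) \in \ECatBC$ and induct on the number of distinguished summands of $Y$ in $\Cat$. Pick $b \in \BC$ minimal (in the standard partial order) among indices with $X_b$ a summand of $Y$. If $b \in \BC^{\mathrm{free}}$, the maps $I(\iota), P(p)$ of \eqref{eq:inducedmaps}, applied to a dual pair coming from a $\Cat$-decomposition of $Y$ (cf.\ Lemma \ref{lem:basisforV}), exhibit $\qsh^k(X_b \oplus X_{\sigma(b)},\psi_b)$ as a summand of $(Y,\varphi_Y)$. If $b \in \EBC^{\mathrm{fix}}$, eigenvectors of $\sigbast$ on $V^k(X_b,Y)$ (a semisimple action since $2$ is invertible) lift to orthogonal inclusion/projection maps splitting off $\qsh^k(X_b,\pm\varphi_b)$. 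If $b \in \BC^{\mathrm{fix}} \smallsetminus \EBC^{\mathrm{fix}}$, one uses that $V^k(X_b,Y)$ is a vector space over the quadratic field extension $\K' = \K[x]/(x^2-\dd)$ via the operator $\sigast'$ of \eqref{eq:sigast'}: choosing a $\K'$-basis together with its dual under the $\K'$-bilinear wacky non-degenerate composition pairing of \S\ref{ss:notnice} (non-degenerate and compatible with \eqref{wackyadjunction}) produces, via \eqref{eq:inducedmaps2}, orthogonal inclusion/projection maps realizing copies of $\qsh^k\indX_b$ as summands. In each case, endopositivity of $X_b$, of $X_b \oplus X_{\sigma(b)}$, or of $\indX_b$, together with Lemma \ref{lem:kernelisjacobson}, ensures the composition pairings are computed in the appropriate division-ring quotient, so standard lifting of dual sets applies. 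Splitting off all such summands strictly decreases the number of distinguished $\Cat$-summands, closing the induction.

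For (ii): each candidate is endopositive in $\ECatBC$ (Corollary \ref{cor:HomAsInv} in the $\EBC^{\mathrm{fix}}$ case, the endomorphism computations of \S\ref{ss:nice}--\S\ref{ss:notnice} otherwise), so its endomorphism ring is concentrated in non-negative degrees with degree-zero part $\K\cdot\id$ (first three families) or the field $\K'$ (last family); in every case this ring is graded local with graded Jacobson radical its strictly positive part. For (iii): objects from the $\EBC^{\mathrm{fix}}$ family have underlying $\Cat$-object $X_b$, indecomposable of rank one, while the other families have underlying object $X_b \oplus X_{\sigma(b)}$ or $X_b \oplus X_b$ of rank two; within the rank-two cases, $b \neq \sigma(b)$ versus $b = \sigma(b)$ (equivalently, two non-isomorphic versus two isomorphic summands), or the endomorphism ring $\K$ versus $\K'$, separates the $\BC^{\mathrm{free}}$ and $\BC^{\mathrm{fix}}\smallsetminus\EBC^{\mathrm{fix}}$ families. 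That $(X_b,\varphi_b)\not\cong(X_b,-\varphi_b)$ for $b\in\EBC^{\mathrm{fix}}$ is Lemma \ref{lem:phib}; $(X_b\oplus X_{\sigma(b)},\psi_b)\cong(X_b\oplus X_{\sigma(b)},-\psi_b)$ for $b\in\BC^{\mathrm{free}}$ is \eqref{notfixedwhathappens}; the analogous statement for $b\in\BC^{\mathrm{fix}}\smallsetminus\EBC^{\mathrm{fix}}$ follows from the explicit equivariant isomorphism $\mathrm{diag}(\id_{X_b},-\id_{X_b})$; and $X_b\not\cong\qsh^j X_b$ for $j\neq 0$ fixes the shift.

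The main obstacle I expect is the decomposition step (i) for $b\in\BC^{\mathrm{fix}}\smallsetminus\EBC^{\mathrm{fix}}$: one must verify carefully that $\sigast'$ preserves the kernels of the graded composition pairing so that the $\K'$-module structures on $V^k(X_b,Y)$ and $V^{-k}(Y,X_b)$ are well-defined, and that a $\K'$-basis together with its wacky-dual lifts to honest orthogonal idempotents in $\End_{\ECat}((Y,\varphi_Y))$ cutting out copies of $\indX_b$ — in particular that $X_b$ necessarily occurs with even $\Cat$-multiplicity in any such $(Y,\varphi_Y)$, which should fall out of the $\K'$-vector-space structure but requires care in bookkeeping the induction. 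The remaining cases are essentially contained in, or parallel to, \S\ref{ss:nice} and the already-established Proposition \ref{prop:indECat}.
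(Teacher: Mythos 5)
Your proof is correct in outline, but it takes a genuinely different --- and substantially more laborious --- route than the paper's. You build the decomposition of $(Y,\varphi_Y)$ by hand, inductively peeling off equivariant summands via the (wacky) composition pairing and the $\K'$-linear algebra of \S\ref{ss:notnice}; this is essentially the argument used to establish Proposition~\ref{prop:SummandViaEVfunky}, and it does yield the multiplicity formulae as a byproduct, at the cost of having to verify all the technical points you flag (that $\sigast'$ preserves kernels, that the wacky pairing is $\K'$-bilinear and non-degenerate, that $\K'$-bases lift to orthogonal idempotents, that multiplicities are even). The paper instead observes at the outset that $\ind Y \cong (Y,\varphi_Y)\oplus(Y,-\varphi_Y)$, so $(Y,\varphi_Y)$ is automatically a summand of $\ind Y$; since $\ind$ is additive and $Y$ decomposes as a sum of $X_b$'s in $\Cat$, one gets for free a decomposition $\ind Y \cong \bigoplus \qsh^{k}\,\ind X_b$, and the case analysis of \S\ref{ss:nice}--\S\ref{ss:notnice} already shows each $\ind X_b$ is either a sum of $(X_b,\pm\varphi_b)$ (when $b\in\EBC^{\mathrm{fix}}$) or indecomposable with graded local endomorphism ring $\K$ or $\K'$. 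At that point Krull--Schmidt theory (as in \cite[Theorem~11.50]{soergelbook}) both delivers the classification of indecomposables and establishes graded Krull--Schmidt for $\ECatBC$ in one stroke, with no induction and no appeal to the composition pairing at all. Two minor points about your version: the ``minimal in the standard partial order'' choice of $b$ is a non sequitur in this abstract setting (there is no such order on $\BC$, and none is needed --- endopositivity and Lemma~\ref{lem:btob'inker} already provide orthogonality for distinct $b$); and the evenness of the multiplicity in the non-equivariantizable fixed case, which you worry about, is indeed immediate from the $\K'$-vector space structure on $V^k(X_b,Y)$, so that concern is resolvable, just more work than the paper's route requires.
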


\begin{proof} 
We will analyze the possible indecomposable summands of an 
equivariant object $(Y, \varphi_Y)$.
Recall the functor $\ind \colon \Cat \to \ECat$ from \eqref{eq:ind} 
and observe that $(Y,\varphi_Y)$ is necessarily a summand of $\ind Y$.
(Precisely, we have $\ind Y \cong (Y,\varphi_Y) \oplus (Y, -\varphi_Y)$.)
We thus consider the possible indecomposable summands of $\ind Y$.

Since $\ind$ is additive, 
if $Y \in \Cat(\BC)$, 
then $\ind Y$ is a direct sum of shifts of $\ind X_b$ for various $b \in \BC$. 
If $b \in \EBC^{\mathrm{fix}}$, 
then (as above) $\ind X_b \cong (X_b, \varphi_b) \oplus (X_b, -\varphi_b)$. 
Otherwise $\ind X_b$ is either 
$(X_b\oplus X_{\sigma(b)}, \psi_b)$ for $b\in \BC^{\mathrm{free}}$
or
$(X_b\oplus X_b, \psi_b)$ for $b\in \BC^{\mathrm{fix}} \smallsetminus \EBC^{\mathrm{fix}}$.
In either of these latter cases, $\ind X_b$ is indecomposable
having degree zero endomorphism ring $\K$ or $\K'$ and no negative degree endomorphisms.

Regardless, we have a direct sum decomposition of $\ind Y$ into indecomposable objects 
whose endomorphism rings are graded local. 
It follows (e.g.~arguing as in \cite[Theorem 11.50]{soergelbook}) 
that $(Y,\varphi_Y)$ must decompose into these indecomposable summands.
This implies the graded Krull--Schmidt property, 
and further shows that if $(Y,\varphi_Y)$ is itself indecomposable, 
it must be one of the stated objects. \end{proof}

This yields the analogue of Proposition \ref{prop:computeequivGgp} in the present setup.

\begin{prop}
	\label{prop:computeequivGgpfunky}
Suppose $\K$ is a field. The $\sigma$-weighted Grothendieck group $\wKzero{\ECatBC}$ has basis
\[
\big\{ [(X_b, \varphi_b)]_{\sigma} \big\}_{b\in \EBC^{\mathrm{fix}}}
\]
in bijection with $\EBC^{\mathrm{fix}}$. 
Moreover, if $(Y, \varphi_Y)$ is any object in $\ECatBC$, then 
\begin{equation}
[(Y, \varphi_Y)]_{\sigma} 
= \sum_{k\in \Z}\sum_{b\in \EBC^{\mathrm{fix}}} \tr(\sigbast |_{V^k(X_b, Y)}) q^k [(X_b, \varphi_b)]_{\sigma} \, . 
\end{equation}
\qed
\end{prop}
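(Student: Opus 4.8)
The plan is to adapt the proof of Proposition \ref{prop:computeequivGgp} (following \cite[Prop. 3.17 and Cor. 3.19]{EliasFolding}) to the present setting where the hypothesis $\EBC^{\mathrm{fix}} = \BC^{\mathrm{fix}}$ may fail. By Proposition \ref{prop:indECatfunky}, the category $\ECatBC$ is graded Krull--Schmidt, so $\Kzero{\ECatBC}$ is the free abelian group on the classes of the indecomposable objects listed there: the $(X_b, \pm\varphi_b)$ for $b \in \EBC^{\mathrm{fix}}$, the $(X_b \oplus X_{\sigma(b)}, \psi_b)$ for $b \in \BC^{\mathrm{free}}$, and the $(X_b \oplus X_b, \psi_b)$ for $b \in \BC^{\mathrm{fix}} \smallsetminus \EBC^{\mathrm{fix}}$ (each up to grading shift). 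First I would pass to the weighted Grothendieck group $\wKzero{\ECatBC} = \Kzero{\ECatBC} \ot_\Z \Z[\tfrac12] / \big([\sgn(X,\varphi)] = -[(X,\varphi)]\big)$ and observe which generators survive. For $b \in \BC^{\mathrm{free}}$, Lemma \ref{lem:phib} gives $(X_b \oplus X_{\sigma(b)}, \psi_b) \cong (X_b \oplus X_{\sigma(b)}, -\psi_b)$, so $[(X_b \oplus X_{\sigma(b)}, \psi_b)]_\sigma = 0$. For $b \in \BC^{\mathrm{fix}} \smallsetminus \EBC^{\mathrm{fix}}$, I claim the analogous vanishing holds: the map $\gamma \in \End_{\ECat}^0(\indX_b)$ computed in \S\ref{ss:notnice}, or equivalently an explicit isomorphism built from $\varphi_b$, intertwines $(X_b \oplus X_b, \psi_b)$ with $(X_b \oplus X_b, -\psi_b)$, so that class is $2$-torsion and hence zero in $\wKzero{\ECatBC}$ (this is exactly why we base-change to $\Z[\tfrac12]$). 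Thus the only surviving generators are $[(X_b, \varphi_b)]_\sigma = -[(X_b, -\varphi_b)]_\sigma$ for $b \in \EBC^{\mathrm{fix}}$, giving the claimed basis; their $\Z[q^\pm]$-linear independence is inherited from independence in $\Kzero{\ECatBC}$ together with the fact that $(X_b, \varphi_b) \not\cong \qsh^k(X_{b'}, \pm\varphi_{b'})$ for distinct $b, b'$ or $k \neq 0$.

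Next I would establish the decomposition formula. Given $(Y, \varphi_Y) \in \ECatBC$, use the Krull--Schmidt decomposition into indecomposables and write $[(Y,\varphi_Y)]_\sigma$ as the corresponding $\Z[q^\pm]$-combination of generators. By the previous paragraph only the $b \in \EBC^{\mathrm{fix}}$ terms contribute, and by Proposition \ref{prop:SummandViaEVfunky}(2),(3) the coefficient of $\qsh^k(X_b, \varphi_b)$ is the dimension of the $+1$-eigenspace of $\sigbast$ on $V^k(X_b, Y)$, while that of $\qsh^k(X_b, -\varphi_b)$ is the dimension of the $-1$-eigenspace. Since $[(X_b, -\varphi_b)]_\sigma = -[(X_b, \varphi_b)]_\sigma$, the net coefficient of $q^k[(X_b,\varphi_b)]_\sigma$ is $\dim(+1\text{-eigenspace}) - \dim(-1\text{-eigenspace}) = \tr(\sigbast|_{V^k(X_b,Y)})$, using that $\sigbast$ is an involution and $2$ is invertible so it is diagonalizable. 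Summing over $k \in \Z$ (a finite sum since $Y$ has a bounded decomposition) and over $b \in \EBC^{\mathrm{fix}}$ yields \eqref{eq:KzeroDecomp}.

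The main obstacle is the claim that $[(X_b \oplus X_b, \psi_b)]_\sigma = 0$ for $b \in \BC^{\mathrm{fix}} \smallsetminus \EBC^{\mathrm{fix}}$, i.e.\ exhibiting an explicit isomorphism of equivariant objects $(X_b \oplus X_b, \psi_b) \cong (X_b \oplus X_b, -\psi_b)$. The natural candidate is the map given by a matrix such as $\left(\begin{smallmatrix} 1 & 0 \\ 0 & -1 \end{smallmatrix}\right)\cdot \id_{X_b}$ or a suitable combination involving $\gamma$; one must check it is genuinely an equivariant morphism, i.e.\ that it intertwines $\psi_b$ and $-\psi_b$ in the sense of Definition \ref{def:equivariant}, using $\sigma(\varphi_b) = \dd\cdot\varphi_b^{-1}$. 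The computation of $\End_{\ECat}^0(\indX_b) \cong \K' = \K[x]/(x^2-\dd)$ in \S\ref{ss:notnice} already records the relevant commutation relations, so this should reduce to a short matrix identity; I expect the only subtlety is a sign bookkeeping check that the candidate map squares to the identity (so it is an isomorphism, not merely a morphism). Once this is in hand, everything else is a routine transcription of the arguments in \S\ref{subsec:equivariantizationmixed} and the proof of Proposition \ref{prop:indECatfunky}, with $\EBC^{\mathrm{fix}}$ replacing $\BC^{\mathrm{fix}}$ throughout.
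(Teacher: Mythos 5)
Your proposal is correct and takes the same approach as the paper's (extremely terse) proof: combine Proposition \ref{prop:indECatfunky} with Proposition \ref{prop:SummandViaEVfunky}, observe that the new indecomposables $(X_b \oplus X_b, \psi_b)$ for $b \in \BC^{\mathrm{fix}}\smallsetminus\EBC^{\mathrm{fix}}$ vanish in $\wKzero{\ECatBC}$, and express the remaining coefficients as traces. Your candidate isomorphism $\left(\begin{smallmatrix}1&0\\0&-1\end{smallmatrix}\right)\cdot\id_{X_b}$ does indeed verify $(-\psi_b)\circ f = \sigma(f)\circ\psi_b$, filling in a detail that the paper leaves implicit inside the statement of Proposition \ref{prop:indECatfunky}.
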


\begin{proof} An immediate consequence of Propositions \ref{prop:SummandViaEVfunky} and \ref{prop:indECatfunky}.
The new indecomposable objects $(X_b\oplus X_b, \psi_b)$ are zero in the weighted Grothendieck group. \end{proof}

\begin{rem} 
When $\K$ is an algebraically closed field, 
Proposition \ref{prop:computeequivGgp} shows that the weighted Grothendieck group 
has a basis in bijection with $\BC^{\mathrm{fix}}$, 
while, for other fields, Proposition \ref{prop:computeequivGgpfunky} shows that
it has a basis in bijection with the subset $\EBC^{\mathrm{fix}}$.
Since the latter could be a proper subset, 
the size of the weighted Grothendieck group can depend on subtle features of $\K$. \end{rem}

When $\K$ is not a field, 
we do not how to imitate the arguments above. 
Even assuming that $V^k(X_b,Y)$ is free as a $\K$-module, 
we do not know how to deduce that $V^k(X_b,Y)$ is free as a $\K'$-module, or equivalently, 
that $x$ should act in rational canonical form. 
As a consequence, we do not know how to deduce that the multiplicity of $\indX_b$ 
in $(Y,\varphi_Y)$ is as large as expected.

\begin{example} 
Let $\K = \Z[\frac{1}{2}, \frac{1}{79}]$ and $\dd = 79$, so that $\K' = \Z[\frac{1}{2}, \frac{1}{79},\sqrt{79}]$. 
The ideal $(3,\sqrt{79}-1)$ inside $\K'$ is not principal, so it is not a free $\K'$-module. 
It is, however, a free $\K$-module of rank $2$. 
The question of extending a free module to a non-free module is related to the ideal class group; 
this is an example\footnote{Thanks to Nick Addington, Francis Dunn, Ellen Eischen, and Samantha Platt 
for finding and communicating this example.} 
where the ideal class group is cyclic of order 3. \end{example}

\begin{rem} As in the proof of Proposition \ref{prop:indECatfunky}, 
we know that every equivariant object $(Y,\varphi_Y)$ is a direct summand of $\ind Y$, 
which has a decomposition into various shifts of $\ind X_b$ whenever $Y \in \Cat(\BC)$. 
If $\K$ is local, and $\K[x]/(x^2 - \dd)$ is local for all $\dd$ arising from summands $X_b$ of $Y$, 
then we can prove Proposition \ref{prop:indECatfunky} with the same proof. 
Otherwise, we do not know how to rule out the possibility of $(Y,\varphi_Y)$ having 
even more exotic direct summands than the $\indX_b$. \end{rem}
	
\subsection{Considerations for categorification} \label{subsec:considerations}

Let $\Cat = \BnFoam$. 
In the body of this paper, we proved directly that our (endopositive) objects $\FEk_i$ are fixed by $\tau$ and have equivariant structures. 
Tensor products of these objects also admit equivariant structures; 
however, it is not obvious that any direct summand of such a tensor product will be equivariantizable. 

\begin{question}
For a given base ring $\K$, 
is every endopositive object in $\Kar(\BnFoam)$ either in a free orbit for $\tau$, 
or fixed by $\tau$ (up to isomorphism) and equivariantizable? 
\end{question}

If not, 
this would complicate the computation of the weighted Grothendieck group, 
which would depend on subtle properties of $\K$ (not just the characteristic). 
When $m=2$, we understand all indecomposable objects in $\BnFoam$: they all take the form $X_b$ for $b \in \tauEBC^{\mathrm{fix}}$. 
For $m > 2$, there is no comprehensive understanding of the indecomposable objects in $\BnFoam$. 
That said, all the direct summands we computed in order to prove the devil's Serre relations in Proposition \ref{T:iserre-in-twisted-K0}
are either in $\tauEBC^{\mathrm{fix}}$ or $\BC^{\mathrm{free}}$.
We are hopeful that $\BC^{\mathrm{fix}} = \tauEBC^{\mathrm{fix}}$, 
but, lacking much hard evidence, we find it irresponsible to posit such a conjecture at this time.

\bibliographystyle{plain}
\bibliography{TypeBLinkHomologyOneFile}
%

%
\end{document}